\newtheorem{theorem}{Theorem}[section]
\newtheorem{lemma}[theorem]{Lemma}
\newtheorem{proposition}[theorem]{Proposition}
\newtheorem{corollary}[theorem]{Corollary}
\newtheorem{remark}[theorem]{Remark}
\newtheorem{definition}[theorem]{Definition}
\numberwithin{equation}{section}
\newenvironment{proof}{{\bf Proof\ }}{\QED\\}
\newcommand{\QED}{\hspace*{\fill}\rule{2.5mm}{2.5mm}}
\newcommand{\be}{\begin{equation}}
\newcommand{\ee}{\end{equation}}
\newcommand{\ba}{\begin{array}}
\newcommand{\ea}{\end{array}}
\newcommand{\bea}{\begin{eqnarray}}
\newcommand{\eea}{\end{eqnarray}}
\newcommand{\bee}{\begin{eqnarray*}}
\newcommand{\eee}{\end{eqnarray*}}
\newcommand{\lab}{\label}
\newcommand{\und}{\underline}
\newcommand{\ds}{\displaystyle}
\newcommand{\nn}{\nonumber}
\providecommand{\norm}[1]{\lVert#1\rVert}
\providecommand{\normm}[1]{\left\lVert#1\right\rVert}
\renewcommand{\c}{\cdot}
\newcommand{\les}{\lesssim}
\newcommand{\R}{\mathbb{R}}
\newcommand{\rf}{\mathcal{R}}
\newcommand{\II}{\mathcal{I}_0}
\newcommand{\Ij}{\mathcal{I}_{0,j}}
\newcommand{\no}{\mathcal{N}_1}
\newcommand{\noo}{\mathcal{N}_2}
\newcommand{\dcal}{\mathcal{D}_1}
\newcommand{\dcall}{\mathcal{D}_2}
\newcommand{\dcalll}{{}^*\mathcal{D}_1}
\newcommand{\dcallll}{{}^*\mathcal{D}_2}
\newcommand{\BB}{\mathcal{B}}
\newcommand{\PP}{\mathcal{P}}
\newcommand{\rr}{{\bf R}}
\renewcommand{\gg}{{\bf g}}
\newcommand{\dd}{{\bf D}}
\newcommand{\ep}{\varepsilon}
\newcommand{\kep}{\epsilon}
\newcommand{\La}{\Lambda}
\newcommand{\Si}{\Sigma_0}
\newcommand{\Sit}{\Sigma_t}
\newcommand{\pr}{\partial}
\newcommand{\nab}{\nabla}
\newcommand{\lb}{\underline{L}}
\newcommand{\dmt}{d\mu_{t,u}}
\newcommand{\ptu}{P_{t,u}}
\newcommand{\pou}{P_{0,u}}
\newcommand{\half}{\frac{1}{2}}
\renewcommand{\o}{\omega}
\renewcommand{\S}{\mathbb{S}^2}
\newcommand{\po}{\partial_{\omega}}
\renewcommand{\H}{\mathcal{H}}
\newcommand{\li}[2]{L^{#1}_uL^{#2}(\mathcal{H}_u)}
\newcommand{\lh}[1]{L^{#1}(\mathcal{H}_u)}
\newcommand{\tx}[2]{L^{#1}_tL^{#2}_{x'}}
\newcommand{\xt}[2]{L^{#1}_{x'}L^{#2}_t}
\newcommand{\lpt}[1]{L^{#1}(P_{t,u})}
\newcommand{\lpo}[1]{L^{#1}(P_{0,u})}
\newcommand{\lsit}[2]{L^{#1}_tL^{#2}(\Sit)}
\newcommand{\nabb}{\mbox{$\nabla \mkern-13mu /$\,}}
\newcommand{\ddb}{\mbox{$\nabla \mkern-13mu /$\,}}
\newcommand{\lap}{\mbox{$\Delta \mkern-13mu /$\,}}
\newcommand{\divb}{\mbox{$\textrm{div} \mkern-13mu /$\,}}
\newcommand{\curlb}{\mbox{$\textrm{curl} \mkern-13mu /$\,}}
\renewcommand{\a}{\alpha}
\renewcommand{\b}{\beta}
\newcommand{\ab}{\underline{\alpha}}
\newcommand{\bb}{\underline{\beta}}
\newcommand{\ga}{\gamma}
\renewcommand{\d}{\delta}
\newcommand{\s}{\sigma}
\renewcommand{\r}{\rho}
\newcommand{\kb}{\overline{k}}
\newcommand{\etah}{\widehat{\eta}}
\newcommand{\kepb}{\overline{\epsilon}}
\newcommand{\db}{\overline{\delta}}
\newcommand{\z}{\zeta}
\newcommand{\zb}{\underline{\zeta}}
\newcommand{\trc}{\textrm{tr}\chi}
\newcommand{\hch}{\widehat{\chi}}
\newcommand{\chb}{\underline{\chi}}
\newcommand{\trchb}{\textrm{tr}{\underline{\chi}}}
\newcommand{\hchb}{\widehat{\underline{\chi}}}
\newcommand{\xib}{\underline{\xi}}
\newcommand{\het}{\widehat{\eta}}
\renewcommand{\th}{\theta}
\newcommand{\trt}{\textrm{tr}\theta}
\newcommand{\hth}{\widehat{\theta}}
\newcommand{\bsit}{\widehat{\mathcal{B}}}
\begin{document}

\begin{center}
\Large{\bf Parametrix for wave equations on a rough background III: space-time regularity of the phase}
\end{center}

\vspace{0.5cm}

\begin{center}
\large{J\'er\'emie Szeftel}
\end{center}

\vspace{0.2cm}

\begin{center}
\large{DMA, Ecole Normale Sup\'erieure,\\
45 rue d'Ulm, 75005 Paris,\\
jeremie.szeftel@ens.fr}
\end{center}

\vspace{0.5cm}

{\bf Abstract.} This is the third of a sequence of four papers  \cite{param1}, \cite{param2}, \cite{param3}, \cite{param4} dedicated to the construction and the control of a parametrix to the homogeneous wave equation $\square_{\bf g} \phi=0$, where ${\bf g}$ is a rough metric satisfying the Einstein vacuum equations. Controlling such a parametrix as well as its error term when one only assumes $L^2$ bounds on the curvature tensor ${\bf R}$ of ${\bf g}$ is a major step of the proof of the bounded $L^2$ curvature conjecture proposed in \cite{Kl:2000}, and solved by S. Klainerman, I. Rodnianski and the author in \cite{boundedl2}. On a more general level, this sequence of papers deals with the control of the eikonal equation on a rough background, and with the  derivation of $L^2$ bounds for Fourier integral operators on manifolds with rough phases and symbols, and as such is also of independent interest.

\vspace{0.2cm}

\section{Introduction}

We consider the Einstein vacuum equations,
\be\lab{eq:I1}
{\bf R}_{\alpha\beta}=0
\end{equation}
where ${\bf R}_{\alpha\beta}$
denotes the  Ricci curvature tensor  of  a four dimensional Lorentzian space time  $(\mathcal{M},\,  {\bf g})$. The Cauchy problem consists in finding a metric ${\bf g}$ satisfying \eqref{eq:I1} such that the metric induced by ${\bf g}$ on a given space-like hypersurface $\Sigma_0$ and the second fundamental form of $\Sigma_0$ are prescribed. The initial data then consists of a Riemannian three dimensional metric $g_{ij}$ and a symmetric tensor $k_{ij}$ on the space-like hypersurface $\Sigma_0=\{t=0\}$. Now, \eqref{eq:I1} is an overdetermined system and the initial data set $(\Sigma_0,g,k)$ must satisfy the constraint equations
\be\lab{const}
\left\{\begin{array}{l}
\nabla^j k_{ij}-\nabla_i \textrm{Tr}k=0,\\
 R-|k|^2+(\textrm{Tr}k)^2=0, 
\end{array}\right.
\ee
where the covariant derivative $\nabla$ is defined with respect to the metric $g$, $R$ is the scalar curvature of $g$, and $\textrm{Tr}k$ is the trace of $k$ with respect to the metric $g$.

The fundamental problem in general relativity is to 
 study the long term regularity and asymptotic 
properties
 of the Cauchy developments of general, asymptotically flat,  
initial data sets $(\Sigma_0, g, k)$. As far as local regularity is concerned it 
is natural to ask what are the minimal regularity properties of
the initial data which guarantee the existence and 
uniqueness of local developments. In \cite{boundedl2}, we obtain the 
following result which solves bounded $L^2$ curvature conjecture proposed 
in \cite{Kl:2000}:

\begin{theorem}[Theorem 1.10 in \cite{boundedl2}]\lab{th:mainbl2}
Let $(\mathcal{M}, {\bf g})$ an asymptotically flat solution to the Einstein vacuum equations \eqref{eq:I1} together with a maximal foliation by space-like hypersurfaces $\Sigma_t$ defined as level hypersurfaces of a time function $t$. Let $r_{vol}(\Sigma_t,1)$ the volume radius on scales $\leq 1$ of $\Sigma_t$\footnote{See Remark \ref{rem:volrad} below for a  definition}. Assume that the initial slice $(\Sigma_0,g,k)$ is such that:
$$\norm{R}_{L^2(\Sigma_0)}\leq \ep,\,\norm{k}_{L^2(\Sigma_0)}+\norm{\nabla k}_{L^2(\Sigma_0)}\leq \ep\textrm{ and }r_{vol}(\Sigma_0,1)\geq \frac{1}{2}.$$
Then, there exists a small universal constant $\ep_0>0$ such that if $0<\ep<\ep_0$, then the following control holds on $0\leq t\leq 1$:
$$\norm{\R}_{L^\infty_{[0,1]}L^2(\Sigma_t)}\lesssim \ep,\,\norm{k}_{L^\infty_{[0,1]}L^2(\Sigma_t)}+\norm{\nabla k}_{L^\infty_{[0,1]}L^2(\Sigma_t)}\lesssim \ep\textrm{ and }\inf_{0\leq t\leq 1}r_{vol}(\Sigma_t,1)\geq \frac{1}{4}.$$
\end{theorem}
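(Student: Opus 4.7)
The plan is to run a continuity/bootstrap argument on $[0,T]$ with $T\leq 1$: assume a priori that
\be
\norm{\R}_{L^\infty_{[0,T]}L^2(\Sit)}+\norm{k}_{L^\infty_{[0,T]}L^2(\Sit)}+\norm{\nab k}_{L^\infty_{[0,T]}L^2(\Sit)}\leq C\ep,\quad \inf_{0\leq t\leq T}r_{vol}(\Sit,1)\geq \frac{1}{4}\nn
\ee
with $C$ a large universal constant, and improve each constant by a factor of $1/2$. Combined with local well-posedness for smooth approximating data and continuity in time of the norms involved, this closes the argument and extends $T$ to $1$. The control of $k,\nab k$ is obtained from the transport-elliptic system derived from \eqref{eq:I1} and \eqref{const}, coupled with elliptic estimates for the lapse of the maximal foliation; the lower bound on $r_{vol}$ is propagated from harmonic-coordinate charts, again once enough control on the metric is available.

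The curvature bound is extracted from the Bianchi identity $\dd^\a\R_{\a\b\ga\d}=0$ via an energy identity with a time-like multiplier. All error terms are bilinear in $\R$ and the connection coefficients of $\Sit$, and can in principle be absorbed using the $L^2$ a priori bounds on $(\R,k,\nab k)$ provided one has at disposal a Strichartz-type inequality
\be
\norm{\pr\phi}_{L^2_{[0,T]}L^\infty_{x}}\les \norm{\pr\phi(0)}_{H^{1+\d}(\Si)}\nn
\ee
for scalar solutions of $\square_\gg\phi=0$. It is here, and not in the algebraic manipulations of Bianchi, that all the difficulty lies: the regularity of $\gg$ is too low for the classical parametrix based on spectral truncation in harmonic coordinates, which would demand $L^1_{[0,T]}L^\infty_{x}$ bounds on $\R$.

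The resolution, and the content of the four-paper sequence \cite{param1}--\cite{param4}, is to construct a microlocal parametrix geometrically adapted to $\gg$. For each dyadic frequency $\la\geq 1$ and each direction $\o\in\S$ one solves the eikonal equation $\gg^{\a\b}\pr_\a u\pr_\b u=0$ with appropriate asymptotic data in the direction $\o$, producing an optical function $u=u_\o$, and writes a Fourier integral representation with phase $\la u(t,x,\o)$. The $L^2$-boundedness of this operator, together with control of the error $\square_\gg(e^{i\la u})$, will then provide the Strichartz estimate needed above. The main obstacle is that every step must be performed at the threshold of what $L^2$ curvature allows: no argument may cost one derivative on $\gg$, so every product, trace, and elliptic estimate on $\Sit$, on the level hypersurfaces $\H_u$ of the optical function, and on the two-spheres $\ptu=\Sit\cap\H_u$ has to be carried out through sharp bilinear and trilinear inequalities.

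Within this scheme, paper I controls the null geometry generated by $u_\o$ on $\H_u$ (the connection coefficients $\trc,\hch,\z,\ldots$) using Bianchi and null structure equations; paper II uses a $TT^*$ argument to establish the $L^2$-boundedness of the Fourier integral operator, which requires understanding $u_\o$ as a function of the parameter $\o$; the present paper III is devoted precisely to the space-time regularity of the phase $u_\o$ on $\Sit$, i.e.\ to the Besov-type control of $u$ in the horizontal variables needed to carry out the stationary/non-stationary phase analysis underlying the $TT^*$ argument of paper II and the error estimate of paper IV; and paper IV bounds the resulting error term $\square_\gg(e^{i\la u})$ to complete the construction. Feeding the resulting Strichartz estimate back into the Bianchi energy identity closes the bootstrap.
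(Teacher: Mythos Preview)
This theorem is not proved in the present paper at all: it is Theorem 1.10 of \cite{boundedl2}, quoted here in the introduction as the overarching result that the four-paper sequence \cite{param1}--\cite{param4} together with \cite{boundedl2} and \cite{bil2} establishes. The present paper carries out step {\bf C3} only --- the space-time control of the eikonal function $u(t,x,\o)$ and of the associated null geometry on $\H_u$ (Theorems \ref{thregx}--\ref{thregomega2}). So there is no ``paper's own proof'' of this statement to compare against; what you have written is a sketch of the global architecture of \cite{boundedl2}, not of anything in this paper.

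As a sketch of that architecture, your outline is broadly right in spirit but misassigns the roles of the papers and misstates the key analytic ingredient. First, the linchpin in \cite{boundedl2} is not a linear Strichartz estimate $\norm{\pr\phi}_{L^2_tL^\infty_x}\les\norm{\pr\phi(0)}_{H^{1+\d}}$; at the $L^2$-curvature threshold such an estimate is out of reach, and what is actually used (step {\bf B}) are \emph{bilinear} estimates for pairs of solutions of $\square_\gg\phi=0$, supplemented by an $L^4(\mathcal{M})$ Strichartz estimate from \cite{bil2}. These bilinear estimates are what the parametrix \eqref{param} is built to prove. Second, your attribution of tasks to the four papers is off: \cite{param1} (paper I) handles the choice and regularity of $u$ on the initial slice $\Sigma_0$ (step {\bf C1}), \cite{param2} (paper II) proves $L^2$-boundedness of the parametrix restricted to $t=0$ (step {\bf C2}), the present paper III does the space-time null geometry on $\H_u$ (step {\bf C3}) --- which is where the control of $\trc,\hch,\z$ and their $\o$-derivatives actually lives, not in paper I as you wrote --- and \cite{param4} bounds the error term \eqref{err} (step {\bf C4}). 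If you intend this as a proof sketch, it should point to \cite{boundedl2} for the bootstrap and Bianchi energy argument, and describe the present paper's contribution as the geometric input (estimates \eqref{estn}--\eqref{estzeta}, \eqref{estlblbtrc}--\eqref{estlblbb}, \eqref{estNomega}--\eqref{estricciomega3}) feeding into step {\bf C4}.
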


\begin{remark}
While  the first   nontrivial improvements  for well posedness for quasilinear  hyperbolic systems (in spacetime dimensions greater than $1+1$), based on Strichartz estimates,  were obtained in   \cite{Ba-Ch1}, \cite{Ba-Ch2}, \cite{Ta1}, \cite{Ta}, \cite{KR:Duke}, \cite{KR:Annals}, \cite{SmTa}, Theorem \ref{th:mainbl2}, is the first  result in which the  full nonlinear structure of the quasilinear system, not just its principal part,  plays  a  crucial  role.  We note that  though  the result is not  optimal with respect to the  standard  scaling  of the Einstein equations, it is  nevertheless critical   with respect to its causal geometry,  i.e. $L^2 $ bounds on the curvature is the minimum requirement necessary to obtain lower bounds on the radius of injectivity of null hypersurfaces. We refer the reader to section 1 in \cite{boundedl2} for more motivations and historical perspectives concerning Theorem \ref{th:mainbl2}. 
\end{remark}

\begin{remark}
The regularity assumptions on $\Sigma_0$ in Theorem \ref{th:mainbl2} - i.e. $R$ and $\nabla k$ bounded in $L^2(\Sigma_0)$ - correspond to an initial data set $(g,\, k )\in H^2_{loc}(\Sigma_0)\times H^1_{loc}(\Sigma_0)$.
\end{remark}

\begin{remark}\lab{rem:reducsmallisok}
In \cite{boundedl2}, our main result is stated for corresponding large data. We then reduce the proof to the  small data statement of Theorem \ref{th:mainbl2} relying on a truncation and rescaling procedure, the control of the harmonic radius of $\Sigma_0$ based on Cheeger-Gromov convergence of Riemannian manifolds together with the assumption on the lower bound of the volume radius of $\Sigma_0$, and the gluing procedure in \cite{Co}, \cite{CoSc}. We refer the reader to section 2.3 in \cite{boundedl2} for the details.
\end{remark}

\begin{remark}\lab{rem:volrad}
We recall for the convenience of the reader the definition of the volume radius of the Riemannian manifold $\Sigma_t$. Let $B_r(p)$ denote the geodesic ball of center $p$ and radius $r$. The volume radius $r_{vol}(p,r)$ at a point $p\in \Sigma_t$ and scales $\leq r$ is defined by
$$r_{vol}(p,r)=\inf_{r'\leq r}\frac{|B_{r'}(p)|}{r^3},$$
with $|B_r|$ the volume of $B_r$ relative to the metric $g_t$ on $\Sigma_t$. The volume radius $r_{vol}(\Sigma_t,r)$ of $\Sigma_t$ on scales $\leq r$ is the infimum of $r_{vol}(p,r)$ over all points $p\in \Sigma_t$.
\end{remark}

The proof of Theorem \ref{th:mainbl2}, obtained in the sequence of papers \cite{boundedl2}, \cite{param1}, \cite{param2}, \cite{param3}, \cite{param4}, \cite{bil2}, relies on the following ingredients\footnote{We also need trilinear estimates and an $L^4(\mathcal{M})$ Strichartz estimate (see the introduction in \cite{boundedl2})}: 
{\em\begin{enumerate}
\item[{\bf A}] Provide  a system of coordinates relative to which \eqref{eq:I1} exhibits a null structure.

\item[{\bf B}] Prove  appropriate bilinear estimates for solutions to $\square_{\bf g} \phi=0$, on
 a fixed Einstein vacuum  background\footnote{Note that the first bilinear estimate of this type was obtained in \cite{BIL}}.

\item[{\bf C}] Construct a parametrix for solutions to the homogeneous wave equations $\square_{\bf g} \phi=0$ on a fixed Einstein vacuum  background, and obtain control of the parametrix and of its error term only using the fact that the curvature tensor is bounded in $L^2$. 
\end{enumerate}
}

Steps {\bf A} and {\bf B} are carried out in \cite{boundedl2}. In particular, the proof of the bilinear estimates rests on a representation formula for the solutions of the wave equation using the following plane wave parametrix\footnote{\eqref{param} actually corresponds to a half-wave parametrix. The full parametrix corresponds to the sum of two half-parametrix. See \cite{param2} for the construction of the full parametrix}:
\be\lab{param}
Sf(t,x)=\int_{\S}\int_{0}^{+\infty}e^{i\lambda u(t,x,\o)}f(\lambda\o)\lambda^2 d\lambda d\o,\,(t,x)\in\mathcal{M} 
\ee
where $u(.,.,\o)$ is a solution to the eikonal equation ${\bf g}^{\alpha\beta}\partial_\alpha u\partial_\beta u=0$ on $\mathcal{M}$ such that $u(0,x,\o)\sim x.\o$ when $|x|\rightarrow +\infty$ on $\Sigma_0$\footnote{The asymptotic behavior for $u(0,x,\o)$ when $|x|\rightarrow +\infty$ will be used in \cite{param2} to generate with the parametrix any initial data set for the wave equation}. Therefore, in order to complete the proof of the bounded $L^2$ curvature conjecture, we need to carry out step {\bf C} with the parametrix defined in \eqref{param}. 

\begin{remark}
Note that the parametrix \eqref{param} is invariantly defined\footnote{Our choice is reminiscent of the one used in \cite{SmTa} in the context of $H^{2+\epsilon}$ solutions of quasilinear wave equations. Note however that the construction in that paper is coordinate dependent}, i.e. without reference to any coordinate system. This is crucial since coordinate systems consistent with $L^2$ bounds on the curvature would not be regular enough to control a parametrix. 
\end{remark}

\begin{remark}
In addition to their relevance to the resolution of the bounded $L^2$ curvature conjecture, the methods and results  of step {\bf C} are  also of independent interest. Indeed, they deal on the one hand with the control of the eikonal equation ${\bf g}^{\alpha\beta}\partial_\alpha u\partial_\beta u=0$ at a critical level\footnote{As we will see in this paper, we need at least $L^2$ bounds on the curvature to obtain a lower bound on the radius of injectivity of the null level hypersurfaces of the solution $u$ of the eikonal equation, which in turn is necessary to control the local regularity of $u$}, and on the other hand with the derivation of $L^2$ bounds for Fourier integral operators with significantly lower differentiability  assumptions both for the corresponding phase and symbol compared to classical methods (see for example \cite{stein} and references therein). 
\end{remark}

In view of the energy estimates for the wave equation, it suffices to control the parametrix at $t=0$ (i.e. restricted to $\Sigma_0$)
\be\lab{parami}
Sf(0,x)=\int_{\S}\int_{0}^{+\infty}e^{i\lambda u(0,x,\o)}f(\lambda\o)\lambda^2 d\lambda d\o,\,x\in\Sigma_0 
\ee
 and the error term
\be\lab{err} 
Ef(t,x)=\square_{\bf g}Sf(t,x)=\int_{\S}\int_{0}^{+\infty}e^{i\lambda u(t,x,\o)}\square_{\bf g}u(t,x,\o)f(\lambda\o)\lambda^3 d\lambda d\o,\,(t,x)\in\mathcal{M}. 
\ee
This requires the following ingredients, the two first being related to the control of the parametrix restricted to $\Sigma_0$ \eqref{parami}, and the two others being related to the control of the error term \eqref{err}:
{\em\begin{enumerate}
\item[{\bf C1}] Make an appropriate choice for the equation satisfied by $u(0,x,\o)$ on $\Sigma_0$, and control the geometry of the foliation generated by the level surfaces of $u(0,x,\o)$ on $\Sigma_0$.

\item[{\bf C2}] Prove that the parametrix at $t=0$ given by \eqref{parami} is bounded in $\mathcal{L}(L^2(\mathbb{R}^3),L^2(\Sigma_0))$ using the estimates for $u(0,x,\o)$ obtained in {\bf C1}.

\item[{\bf C3}] Control the geometry of the foliation generated by the level hypersurfaces of $u$ on $\mathcal{M}$.

\item[{\bf C4}] Prove that the error term \eqref{err} satisfies the estimate $\norm{Ef}_{L^2(\mathcal{M})}\leq C\norm{\lambda f}_{L^2(\mathbb{R}^3)}$ using the estimates for $u$ and $\square_{\gg}u$ proved in {\bf C3}.
\end{enumerate}
}

Step {\bf C1} has been carried out in \cite{param1} and step {\bf C2} has been carried out in \cite{param2}. In the present paper, we focus on step {\bf C3}. This step was initiated in the sequence of papers \cite{FLUX}, \cite{LP},  \cite{STT} where the authors prove in particular the estimate $\square_{\gg}u\in L^{\infty}(\mathcal{M})$ using a geodesic foliation. In view of achieving step {\bf C4}, we actually need to work in a time foliation. 
We start by reproving the estimates obtained in \cite{FLUX}, \cite{LP}, \cite{STT} in the case of a time foliation. We also obtain new estimates which will be crucial for the proof of step {\bf C4}. Let us mention in particular
\begin{itemize} 
\item a lower bound for the radius of injectivity of the null level hypersurfaces of $u$,

\item the control of the second fundamental form $k$,

\item the control of the null lapse associated to $u$,

\item a second order derivative of $\square_{\gg}u$ requires an estimate,

\item the control of the regularity of the $u$-foliation on $\mathcal{M}$ with respect to the parameter $\o\in\S$, which requires estimates for first and second order derivatives with respect to $\o$ of various geometric quantities related to $u$.
\end{itemize} 
The difficulty will be to obtain the aforementioned estimates when assuming only $L^2$ bounds on the curvature tensor ${\bf R}$. Indeed, this level of regularity for ${\bf R}$ is critical for the control of the eikonal equation. In turn, at numerous  places in this paper, we will encounter log-divergences which have to be tackled by ad-hoc techniques taking full advantage of the structure of the Einstein equations. More precisely, we will use the regularity obtained in Step C1, together with null transport equations tied to the eikonal equation, elliptic systems of Hodge type, the geometric Littlewood-Paley theory of \cite{LP}, sharp trace theorems, and an extensive use of the structure of the Einstein equations, to propagate the regularity on $\Sigma_0$ to the space-time, thus achieving Step {\bf C3}.\\

The rest of the paper is as follows. In section \ref{sec:mainresultpres}, we state our main result. In section \ref{sec:calcineqgeneral}, we derive embeddings with respect to the foliation generated by $t$ and $u$ on $\mathcal{M}$ which are consistent with the level of regularity we are considering. In section \ref{sec:regxproof}, we investigate the regularity with respect to $(t, x)$ of the foliation generated by $u$ on $\mathcal{M}$. In section \ref{sec:secondderlb}, we derive estimates for certain second order derivatives of the $u$-foliation on $\mathcal{M}$. In section \ref{sec:firstderivomega}, we derive estimates for first order derivatives with respect to $\o$ of the $u$-foliation on $\mathcal{M}$. In section \ref{sec:secondderivomega}, we derive estimates for second order derivatives with respect to $\o$ of the $u$-foliation on $\mathcal{M}$. In section \ref{sec:depnormonomega}, we investigate the dependence in $\o$ of certain norms associated to the $u$-foliation on $\mathcal{M}$. Finally, additional estimates are derived in section \ref{sec:commutatorest}.\\

\noindent{\bf Acknowledgments.} The author wishes to express his deepest gratitude to Sergiu Klainerman and Igor Rodnianski for stimulating discussions and constant encouragements during the long years where this work has matured. He also would like to stress that  the basic strategy of the construction of the parametrix and how it fits  into the whole proof of the bounded $L^2$ curvature conjecture has been done in collaboration with them. The author is supported by ANR jeunes chercheurs SWAP.\\

\section{Main results}\lab{sec:mainresultpres}

\subsection{Maximal foliation on $\mathcal{M}$}

We foliate the space-time $\mathcal{M}$ by space-like hypersurfaces $\Sit$ defined as level hypersurfaces of a time function $t$. Denoting by 
$T$ the unit, future oriented, normal to $\Sigma_t$ and $k$
the second fundamental form 
\be\lab{def:k}
k_{ij}=-<\dd_iT, \pr_j> 
\ee
we find,
$$k_{ij}= -\frac{1}{2}\mathcal{L}_T \gg_{\,ij}$$
with $\mathcal{L}_X$ denoting the Lie derivative with respect to the 
vectorfield $X$. Let Tr$(k)=g^{ij}k_{ij}$ where $g$ is the induced metric on $\Sit$ and Tr is the trace. In order to be consistent with the statement of Theorem \ref{th:mainbl2}, we impose a maximal foliation 
\be\lab{maxfoliation}
\textrm{Tr}(k)=0.
\ee
We also define the lapse $n$ as 
\be\lab{lapsen}
n^{-1}=T(t).
\ee
We have:
\be\lab{3.2}
\dd_TT=n^{-1}\nab n,
\end{equation}
where $\nab$ denotes the gradient with respect to the induced metric 
on $\Sigma_t$. To check \eqref{3.2} observe that $\pr_t=nT$ and therefore,
for an arbitrary  vectorfield $X$ tangent to  $\Sigma_t$, we easily
calculate,
$<\dd_T T, X>=n^{-2} X^i<\dd_{\pr_t} \pr_t, \pr_i>=
-n^{-2}X^i< \pr_t, \dd_{\pr_t}\pr_i>
=-n^{-2}X^i<\pr_t,\dd_{\pr_i}\pr_t> 
=-n^{-2} X^i\half \pr_i<\pr_t,\pr_t>= n^{-2} X^i\half
\pr_i(n^2)=n^{-1}X(n)$.

Finally, the lapse $n$ satisfies the following elliptic equation on $\Sit$ (see \cite{ChKl} p. 13):
\be\lab{lapsen1}
\Delta n=|k|^2n,
\ee
where one uses \eqref{def:k}, \eqref{3.2}, Einstein vacuum equations \eqref{eq:I1} and the fact that the foliation generated by $t$ on $\mathcal{M}$ is maximal \eqref{maxfoliation}.

\subsection{Geometry of the foliation generated by $u$ on $\mathcal{M}$}

Remember that $u$ is a solution to the eikonal equation $\gg^{\alpha\beta}\partial_\alpha u\partial_\beta u=0$ on $\mathcal{M}$ depending on a extra parameter $\o\in \S$.  The level hypersufaces $u(t,x,\o)=u$  of the optical function $u$ are denoted by  $\H_u$. Let $L'$ denote the space-time gradient of $u$, i.e.:
\be\lab{def:L'0}
L'=-\gg^{\a\b}\pr_\b u \pr_\a.
\ee
Using the fact that $u$ satisfies the eikonal equation, we obtain:
\be\lab{def:L'1}
\dd_{L'}L'=0,
\ee
which implies that $L'$ is the geodesic null generator of $\H_u$.

We have: 
$$T(u)=\pm |\nab u|$$
where $|\nab u|^2=\sum_{i=1}^3|e_i(u)|^2$ relative to an orthonormal frame $e_i$ on $\Sigma_t$. Since the sign of $T(u)$ is irrelevant, we choose by convention:
\be\lab{it1'}
T(u)=|\nab u|.
\end{equation}
We denote by $P_{t,u}$  the surfaces of intersection
between $\Sigma_t$ and  $\H_u$. They play a fundamental role
in our discussion.
\begin{definition}[\textit{Canonical null pair}]
 \be\lab{it2}
L=bL'=T+N, \qquad \lb=2T-L=T-N
\end{equation}
where $L'$ is the space-time gradient of $u$ \eqref{def:L'0}, $b$  is  the  \textit{lapse of the null foliation} (or shortly null lapse)
\be\lab{it3}
b^{-1}=-<L', T>=T(u),
\end{equation} 
and $N$ is a unit normal, along $\Sigma_t$, to the surfaces $P_{t,u}$. Since $u$ satisfies the 
eikonal equation $\gg^{\alpha\beta}\partial_\alpha u\partial_\beta u=0$ on $\mathcal{M}$, 
this yields $L'(u)=0$ and thus $L(u)=0$. In view of the definition of $L$ and \eqref{it1'}, 
we obtain:
\be\lab{it3bis}
N=-\frac{\nabla u}{|\nabla u|}.
\end{equation} 
\label{def:nulllapse}
\end{definition}

\begin{remark}\lab{defnullgeod}
$u$ is prescribed on $\Si$ as in step {\bf C1}. For any $(0,x)$ on $\Si$, $L$ is defined as $L=T+N$ where $T$ is the unit normal to $\Si$ at $(0,x)$ and $N=-\nabla u/|\nabla u|$ at $(0,x)$, and $b$ is defined as $b^{-1}=|\nabla u|$. Let $\kappa_x(t)$ denote the null geodesic parametrized by $t$ and such that $\kappa_x(0)=(0,x)$ and $\kappa_x'(0)=b^{-1}L$. Then, we claim that 
\be\lab{defnullgeod1}
\kappa_x'(t)=b(\kappa_x(t))^{-1}L_{\kappa_x(t)}\textrm{ for all }t. 
\ee
Indeed, $L'=b^{-1}L$ is the geodesic null generator of $\H_u$ (see \eqref{def:L'1}).
\end{remark}

\begin{definition} A  null frame   $e_1,e_2,e_3,e_4$ at a point $p\in P_{t,u}$ 
  consists,
in addition to the null pair     $e_3=\lb,
e_4=L$, of {\sl  arbitrary  orthonormal}  vectors  $e_1,e_2$ tangent
to $P_{t,u}$. 
\end{definition}
\begin{definition}[\textit{Ricci coefficients}]

 Let  $e_1,e_2,e_3,e_4$ be a null frame on
$P_{t,u}$ as above.   The following tensors on  $S_{t,u}$ 
\begin{alignat}{2}
&\chi_{AB}=<\dd_A e_4,e_B>, &\quad 
&\chb_{AB}=<\dd_A e_3,e_B>,\label{chi}\\
&\z_{A}=\half <\dd_{3} e_4,e_A>,&\quad
&\zb_{A}=\half <\dd_{4} e_3,e_A>,\nn\\
&\xib_{A}=\half <\dd_{3} e_3,e_A>.\nn
\end{alignat}
are called the Ricci coefficients associated to our canonical null pair.

We decompose $\chi$ and $\chb$ into
their  trace and traceless components.
\begin{alignat}{2}
&\trc = \gg^{AB}\chi_{AB},&\quad &\trchb = \gg^{AB}\chb_{AB},
\label{trchi}\\
&\hch_{AB}=\chi_{AB}-\half \trc \gg_{AB},&\quad 
&\hchb_{AB}=\chb_{AB}-\half \trchb \gg_{AB},
\label{chih} 
\end{alignat}
\end{definition}

\begin{definition}\label{def:nullcurv}
The null components of the curvature tensor
$\rr$ of the space-time metric $\gg$ are given
by:
\bea
\a_{ab}&=&\rr(L,e_a,L, e_b)\,,\qquad \b_a=\half \rr(e_a, L,\lb, L) ,\\ \r &=&\frac{1}{4}\rr(\lb, L, \lb,
L)\,,
\qquad
\s=\frac{1}{4}\, ^{\star} \rr(\lb, L, \lb, L)\\
\bb_a&=&\half \rr(e_a,\lb,\lb, L)\,,\qquad \underline{\a}_{ab}=\rr(\lb, e_a,\lb, e_b)
\eea
where $^\star \rr$ denotes the Hodge dual of $\rr$. The null decomposition 
of $^\star \rr$ can be related to that of $\rr$ according to the formulas, see \cite{ChKl} :
\bea
\a(^\star \rr)&=&-^\star\a(\rr), \quad \b(^\star \rr)=-^\star \b(\rr), \quad \r(^\star \rr)=\s(\rr)\\
\s(^\star \rr)&=&-\rho(\rr), \quad \bb(^\star \rr)=-^\star \bb(\rr), \quad \underline{\a}(^\star
\rr)=^\star\underline{\a}(\rr)
\eea
\end{definition}
Observe that all tensors defined above are $\ptu$-tangent.

\begin{definition}\lab{def:decompositionk}
We decompose the symmetric traceless 2 tensor $k$ into the scalar $\d$, the $\ptu$-tangent 1-form $\epsilon$, and the $\ptu$-tangent symmetric 2-tensor $\eta$ as follows:
\be\lab{decompositionk}
\left\{\begin{array}{l}
k_{NN}=\d\\
k_{AN}=\kep_A\\
k_{AB}=\eta_{AB}.
\end{array}\right.
\ee
Note that Tr$(k)=$tr$(\eta)+\d$ which together with the maximal foliation assumption \eqref{maxfoliation} 
yields:
\be\lab{eq:traceeta}
\textrm{tr}(\eta)=-\d.
\ee
\end{definition}

The following  \textit{Ricci equations} can be easily derived from the properties 
of $T$ \eqref{def:k} \eqref{3.2}, the fact that $L'$ is geodesic \eqref{def:L'1}, 
and the definition \eqref{chi} of the Ricci coefficients (see \cite{ChKl} p. 171): 
\begin{alignat}{2}
&\dd_A e_4=\chi_{AB} e_B - \kep_{A} e_4, &\quad 
&\dd_A e_3=\chb_{AB} e_B + \kep_{A} e_3,\nn\\
&\dd_{4} e_4 = -\db   e_4, &\quad 
&\dd_{4} e_3= 2\zb_{A} e_A + \db   e_3, 
  \label{ricciform} \\
 &\dd_{3} e_4 = 2\z_{A}e_A +
(\d +n^{-1}\nab_Nn)  e_4,&\quad &\dd_{3} e_3 = 2\xib_{A}e_A -
(\d +n^{-1}\nab_Nn) e_3,\nn\\ &\dd_{4} e_A = \ddb_{4} e_A +
\zb_{A} e_4,&\quad &\dd_{3} e_A = \ddb_{3} e_A+\z_A e_3
+ \xib_A e_4,
\nn\\
&\dd_{B}e_A = \nabb_B e_A +\half \chi_{AB}\, e_3 +\half
 \chb_{AB}\, e_4\nn
\end{alignat}
where, $\ddb_{ 3}$, $\ddb_{ 4}$ denote the 
projection on $P_{t,u}$ of $\dd_3$ and $\dd_4$, $\nabb$
denotes the induced covariant derivative on $P_{t,u}$
and $\db, \kepb$ are defined by: 
\be\lab{newk}
\db=\d-n^{-1}N(n),\,\kepb_A=\kep_A-n^{-1}\nab_A n.
\end{equation}
Also,
\begin{align}
&\chb_{AB}=-\chi_{AB}-2k_{AB},\nn\\
&\zb_A = -\kepb_A,\label{etab}\\
&\xib_A = \kep_{A}+n^{-1} \nabb_A n-\z_A\nn.
\end{align}

\begin{remark}
We also have the identity:
\begin{equation}
\z_A = b^{-1}\nabb_A b + \kep_{A}.
\label{etaa}
\end{equation}
Indeed, recall from the definition of $b$ \eqref{it3} that $b^{-1}\nabb b=-b\nabb T(u)$, 
which together with the fact that $e_A(u)=0$ implies:
$$b^{-1}\nabb_Ab=-b\nabb_AT(u)=-b[e_A,T](u)=(\dd_{e_A}T-\dd_Te_A)(u).$$
Now, using the ricci equations \eqref{ricciform} for $\dd_{e_A}T$ and $\dd_T e_A$ and the 
fact that $L(u)=e_A(u)=0$ and $T(u)=b^{-1}$ yields \eqref{etaa}.
\end{remark}

\subsection{Null structure equations}\lab{sec:nullstructeq}

Below we  write down our main structure equations. 
\begin{proposition}\lab{prop:nulltransporteq}
The components $\trc,\, \hch,\,\z$ and the lapse
$b$ verify the following equations\footnote{which can be interpreted as
transport equations along the  null geodesics 
generated by $L$. Indeed  observe that if an $P$ tangent tensorfield
 $\Pi$ satisfies the homogeneous equation  $\ddb_4\Pi=0$ then $\Pi$
is parallel transported along null geodesics. }:
\begin{align}
&L(b) = - b\, \db  , \label{D4a}\\
&L(\trc) + \half (\trc)^2 = - |\hch|^2 -\db   \trc,
\label{D4trchi}\\
&\ddb_{4} \hch + \trc \hch = 
-\db   \hch - \a,
\label{D4chih}\\
&\ddb_{4}\z_A + \half (\trc)\z_A = -(\kepb_B +\z_B)\hch_{AB} -
\half \trc \kepb_A - \b_A,\label{D4eta}.
\end{align}
\label{proptransp}
\end{proposition}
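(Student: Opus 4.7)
The four identities are classical null structure equations (cf.\ \cite{ChKl}), and each follows from the Ricci equations \eqref{ricciform}, the commutator formula $\dd_X\dd_Y - \dd_Y\dd_X = \dd_{[X,Y]} + \mathbf{R}(X,Y)$, and the vacuum hypothesis $\mathbf{R}_{\a\b}=0$. My plan is to derive them in order.

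For \eqref{D4a}: since $L' = b^{-1}L$ is geodesic by \eqref{def:L'1}, expanding $0 = \dd_{L'}L'$ gives $\dd_L L = b^{-1}L(b)\,L$; comparing with the Ricci relation $\dd_4 e_4 = -\db\, e_4$ from \eqref{ricciform} yields $L(b) = -b\db$ at once.

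For \eqref{D4trchi} and \eqref{D4chih}: I differentiate $\chi_{AB} = \langle \dd_A e_4, e_B\rangle$ along $L = e_4$, commute $\dd_L$ past $\dd_A$ using the curvature commutator, and re-expand each covariant derivative of a frame vector by \eqref{ricciform}. Projecting onto $P_{t,u}$ yields the tensorial identity
\[
\ddb_4 \chi_{AB} + \chi_{AC}\chi_{CB} + \db\,\chi_{AB} + \mathbf{R}(L, e_A, L, e_B) = 0.
\]
Taking the $(A,B)$-trace and using the vacuum identity $\sum_A \mathbf{R}(L, e_A, L, e_A) = \mathbf{R}_{LL} = 0$ gives \eqref{D4trchi}; subtracting the trace part and recognising $\a_{AB} = \mathbf{R}(L, e_A, L, e_B)$ by Definition \ref{def:nullcurv} gives \eqref{D4chih}.

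For \eqref{D4eta}: starting from $2\z_A = \langle \dd_3 e_4, e_A\rangle$, I differentiate along $L$, use the commutator $[\dd_L, \dd_3] = \dd_{[L,e_3]} + \mathbf{R}(L,e_3)$, and once more unfold every derivative of a frame vector through \eqref{ricciform}. The identity $\zb_A = -\kepb_A$ from \eqref{etab} converts the $\dd_L e_A$ contribution into $\ddb_L e_A - \kepb_A e_4$, and pairing this with $\chi_{AB} e_B$ in the expansion of $\dd_A e_4$ produces the quadratic terms $-(\kepb_B+\z_B)\hch_{AB} - \tfrac{1}{2}\trc\,\kepb_A$. The residual curvature contribution is a multiple of $\mathbf{R}(L, \lb, L, e_A)$, which by the antisymmetries of $\mathbf{R}$ gives the $-\b_A$ on the right-hand side of \eqref{D4eta}; no vacuum hypothesis is required here.

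The only potential obstacle would be analytical, but there is none at this stage: each identity is an algebraic consequence of \eqref{ricciform} and remains valid in whatever regularity class already supports those equations. The actual difficulty, addressed in the rest of the paper, is to use these transport equations — particularly \eqref{D4eta}, whose right-hand side involves $\b$ only in $L^2$ — to propagate the sharp bounds from $\Si$ into $\mathcal{M}$ while taming the log-divergences evoked in the introduction.
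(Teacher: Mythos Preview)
Your proposal is correct and follows essentially the same route as the paper: for each of \eqref{D4a}--\eqref{D4eta} you differentiate the defining relation along $L$, commute derivatives via the curvature identity, and unfold all frame derivatives with the Ricci equations \eqref{ricciform}, then split into trace and traceless parts. Your explicit remark that the vacuum condition enters only through $\mathrm{tr}\,\a=\rr_{LL}=0$ in \eqref{D4trchi} (and not in \eqref{D4eta}) is a helpful clarification that the paper leaves implicit.
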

\begin{remark}
Equation \eqref{D4trchi} is known as the Raychaudhuri equation in the relativity
literature, see e.g. \cite{HaEl}.
\end{remark}

\begin{proof}
The proof is derived  from  the  formulas \eqref{ricciform} above (see \cite{ChKl} chapter 7). We briefly sketch the proof for convenience. We start with \eqref{D4a}. Using the fact that $L'$ is geodesic \eqref{def:L'1} and the fact that $L=bL'$ by \eqref{it2}, we obtain:
$$\dd_{L}L=b^{-1}L(b)L$$
which together with the Ricci equations \eqref{ricciform} for $\dd_LL$ yields \eqref{D4a}.

To obtain \eqref{D4trchi} and \eqref{D4chih}, we compute:
\bee
\ddb_L\chi_{AB}&=&L(\chi_{AB})-\chi(\ddb_Le_A,e_B)-\chi(e_A,\ddb_Le_B)\\
&=&\gg(\dd_L\dd_{e_A}L,e_B)-\chi(\ddb_Le_A,e_B)+\gg(\dd_{e_A}L,\dd_Le_B)-\chi(e_A,\ddb_Le_B)\\
&=&\gg(\dd_{e_A}\dd_LL,e_B)+\gg(\dd_{[L,e_A]}L,e_B)-\chi(\ddb_Le_A,e_B)+\rr_{LALB}\\
&&+\gg(\dd_{e_A}L,\dd_Le_B-\ddb_Le_B)\\
&=&\gg(\dd_{e_A}\dd_LL,e_B)+\gg(\dd_{\dd_Le_A-\ddb_Le_A-\dd_{e_A}L}L,e_B)+\a_{AB}\\
&& +\gg(\dd_{e_A}L,\dd_Le_B-\ddb_Le_B)
\eee
which together with the Ricci equations \eqref{ricciform} yields:
\be\lab{D4chi}
\ddb_L\chi_{AB}=-\chi_{AC}\chi_{CB}-\db\chi_{AB}-\a_{AB}.
\ee
Decomposing \eqref{D4chi} into its trace and traceless part yields respectively \eqref{D4trchi} and \eqref{D4chih}.

Finally, we derive \eqref{D4eta}. We compute:
\bee
\ddb_L\z_{A}&=&L(\z_{A})-\z(\ddb_Le_A)\\
&=&\half\gg(\dd_L\dd_{\lb}L,e_A)+\half\gg(\dd_{\lb}L,\dd_Le_A)-\z(\ddb_Le_A)\\
&=&\half\gg(\dd_{\lb}\dd_LL,e_A)+\half\gg(\dd_{[L,\lb]}L,e_A)+\half\rr_{L\lb AL}+\half\gg(\dd_{\lb}L,\dd_Le_A-\ddb_Le_A)\\
&=&\half\gg(\dd_{\lb}\dd_LL,e_A)+\half\gg(\dd_{\dd_L\lb-\dd_{\lb}L}L,e_A)-\b_{A}+\half\gg(\dd_{\lb}L,\dd_Le_A-\ddb_Le_A),
\eee
which together with the Ricci equations \eqref{ricciform} yields \eqref{D4eta}.
\end{proof}

To obtain estimates for $\chi$, we may use the transport equations \eqref{D4trchi} \eqref{D4chih}. However, this does not allow us to get enough regularity. Instead, we follow \cite{KR:Annals} \cite{FLUX} and consider \eqref{D4trchi} for $\trc$ together with an elliptic system of Hodge type for $\hch$.
\begin{proposition}
The expression $(\divb\hch)_A=\nabb^B \hch_{AB}$ 
verifies the following equation:
\be\label{Codaz}
(\divb \hch)_A + \hch_{AB}\kep_{B}=\half (\nabb_A \trc + \kep_{A} \trc) - 
\b_A.
\ee
\end{proposition}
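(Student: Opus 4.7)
The identity \eqref{Codaz} is a Codazzi-type identity for the null second fundamental form $\chi$. The plan is to derive a commutator formula for $\nabb\chi$ from the Ricci equations \eqref{ricciform}, trace over the tangential indices, and convert the resulting curvature trace into $\b_A$ via the Einstein vacuum equations \eqref{eq:I1}.

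First I would expand $\nabb_C\chi_{AB}$ by unfolding $\chi_{AB}=\langle\dd_{e_A}e_4,e_B\rangle$ and applying the last identity of \eqref{ricciform} to $\dd_{e_C}e_B$. The non-$P$-tangent piece $-\kep_A e_4$ of $\dd_{e_A}e_4$ pairs with the $\half\chi_{CB}e_3$ contribution of $\dd_{e_C}e_B$ through $\langle e_3,e_4\rangle=-2$, yielding an extra term $\chi_{CB}\kep_A$, so that
$$\nabb_C\chi_{AB}=\langle\dd_{e_C}\dd_{e_A}e_4,e_B\rangle+\chi_{CB}\kep_A-\chi(\nabb_C e_A,e_B).$$

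Next I would antisymmetrize in $(A,C)$ and substitute the Riemann commutator
$$\dd_{e_C}\dd_{e_A}e_4-\dd_{e_A}\dd_{e_C}e_4=\rr(e_C,e_A)e_4+\dd_{[e_C,e_A]}e_4.$$
Decomposing $[e_C,e_A]$ in the null frame, the symmetry $\chi_{AB}=\chi_{BA}$ (which follows from $L=bL'$ with $L'$ proportional to a gradient, hence $\dd_\mu L'_\nu$ symmetric) forces the $e_3$-component of $[e_C,e_A]$ to vanish, while the $P$-tangential part of $\dd_{[e_C,e_A]}e_4$ cancels $\chi(\nabb_C e_A-\nabb_A e_C,e_B)$ against itself. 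One is left with the clean identity
$$\nabb_C\chi_{AB}-\nabb_A\chi_{CB}=\rr_{CA4B}+\chi_{CB}\kep_A-\chi_{AB}\kep_C.$$

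Contracting with $\gg^{BC}$ produces $\nabb^B\chi_{AB}-\nabb_A\trc$ on the left and $\gg^{BC}\rr_{CA4B}+\trc\,\kep_A-\chi_A{}^B\kep_B$ on the right. The curvature trace is identified by decomposing the Ricci component $\textrm{Ric}_{A4}$ in the null frame: using $\gg^{33}=\gg^{44}=0$, $\gg^{34}=-\half$ together with the Riemann symmetries, $\textrm{Ric}_{A4}$ differs from $\gg^{BC}\rr_{CA4B}$ only by a multiple of $\b_A$. The vacuum equations \eqref{eq:I1} force $\textrm{Ric}_{A4}=0$, and hence replace the curvature trace by the appropriate multiple of $\b_A$. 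Finally, splitting $\chi=\hch+\half\trc\,\gg$ so that $\divb\chi=\divb\hch+\half\nabb\trc$ and $\chi_{AB}\kep_B=\hch_{AB}\kep_B+\half\trc\,\kep_A$, and rearranging, gives \eqref{Codaz}.

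The main obstacle is the sign bookkeeping in the passage from the Riemann contraction $\gg^{BC}\rr_{CA4B}$ to the null curvature component $\b_A$: one must track the sign conventions of \eqref{def:nullcurv} together with the Riemann symmetries (pair antisymmetries and pair-swap) to arrive at the correct coefficient of $\b_A$ in \eqref{Codaz}. Beyond this, the argument is a routine algebraic manipulation of the Ricci formulas \eqref{ricciform}.
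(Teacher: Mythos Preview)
Your proposal is correct and follows essentially the same route as the paper: compute $\nabb_C\chi_{AB}$ from the Ricci formulas \eqref{ricciform}, antisymmetrize in the first two slots to isolate the curvature term, contract over the tangential indices, and use the vacuum condition $\textrm{Ric}_{A4}=0$ to convert the contracted Riemann component into $\b_A$ before splitting $\chi$ into its trace and traceless parts. The paper carries out exactly this computation (citing \cite{ChKl} for the details), so there is nothing materially different between the two arguments; your remark that the only delicate point is the sign convention linking $\gg^{BC}\rr_{CA4B}$ to $\b_A$ is apt.
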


\begin{proof}
The proof is derived  from  the  formulas \eqref{ricciform} (see \cite{ChKl} chapter 7). We briefly sketch the proof for convenience. We compute:
\bee
\ddb_C\chi_{AB}&=&e_C(\chi_{AB})-\chi(\ddb_{e_C}e_A,e_B)-\chi(e_A,\ddb_{e_C}e_B)\\
&=&\gg(\dd_{e_C}\dd_{e_A}L,e_B)-\chi(\ddb_{e_C}e_A,e_B)+\gg(\dd_{e_A}L,\dd_{e_C}e_B)-\chi(e_A,\ddb_{e_C}e_B)\\
&=&\gg(\dd_{e_A}\dd_{e_C}L,e_B)+\gg(\dd_{[e_C,e_A]}L,e_B)-\chi(\ddb_{e_C}e_A,e_B)+\rr_{CBLA}\\
&&+\gg(\dd_{e_A}L,\dd_{e_C}e_B-\ddb_{e_C}e_B)\\
&=&\ddb_A\chi_{CB}-\gg(\dd_{e_C}L,\dd_{e_A}e_B-\ddb_{e_A}e_B)+\gg(\dd_{\dd_{e_C}e_A-\ddb_{e_C}e_A-\dd_{e_A}e_C+\ddb{e_A}e_C}L,e_B)\\
&&+\rr_{CABL}+\gg(\dd_{e_A}L,\dd_Le_B-\ddb_Le_B)
\eee
which together with the Ricci equations \eqref{ricciform} yields:
$$\ddb_C\chi_{AB}=\ddb_{B}\chi_{AC}-\chi_{AB}\kep_{C}+\rr_{CBLA}+\chi_{AC}\kep_B.$$
Contracting in the previous equality yields \eqref{Codaz}.
\end{proof}

Finally, we consider the control of $\z$ and $\lb\trc$. To this end, we follow again \cite{KR:Annals} \cite{FLUX}: we derive an elliptic system of Hodge type for $\z$ and a transport equation for $\lb\trc$.
\begin{proposition}
We have:
\bea
\lab{D3trc}\lb(\trc)+\half\trchb\trc&=&2\divb\z+(\d +n^{-1}\nab_Nn)\trc-\hch\c\hchb+2\z\c\z+2\r,\\
\lab{D3chih}
\ddb_3\hch+\half\trchb\hch&=&\nabb\widehat{\otimes}\z+(\d +n^{-1}\nab_Nn)\hch-\half 
\trc\hchb+\z\widehat{\otimes} \z,
\eea
where for $F, G$ $\ptu$-tangent 1 forms, we denote by $\nabb\widehat{\otimes}F$ the traceless part of the symmetrized covariant derivative of $F$, i.e. $\nabb\widehat{\otimes}F_{AB}=\nabb_AF_B+\nabb_BF_A-\divb F\d_{AB}$ and by $F\widehat{\otimes} G$ the traceless symmetric 2-tensor $F\widehat{\otimes}G_{AB}=F_AG_B+F_BG_A-2F_CG_C\d_{AB}$. Also, the expressions  $\divb
\z=\nabb^B\z_B$ and \mbox{$\curlb \z=\in^{AB}\nabb_A\z_B$} 
verify the following equations:
\begin{align}
&\divb\,\z = \half\bigg(\mu +\half\trc\trchb +\hch\c\hchb -2|\z|^2\bigg)
 - \rho,\label{diveta}\\
&\curlb\,\z =  -\half\hch \wedge  \hchb+\s,
\label{curleta}   
\end{align}
where for $F, G$ symmetric traceless $\ptu$-tangent 2-tensors, we denote by $F\wedge G$ the tensor $F\wedge G_{AB}=\in_{AB}F_{AC}G_{BC}$. Furthermore, we have the Gauss equation,
\be\lab{gauss}
K=\half\hch\c\hchb-\frac{1}{4} \trc \trchb -\rho.
\end{equation}
Finally, setting, 
\be\lab{eqmu}
\mu=\lb(\trc) - 
\big (\d +n^{-1}\nab_N n\big )\trc
\end{equation}
we find 
\begin{equation}
\begin{split}
L(\mu) + \trc \mu &=2(\zb-\z)\c\nabb\trc
-2\hch\c\Bigl (\nabb\widehat{\otimes}\z + \z\widehat{\otimes}\z  -\d\hch\Bigr )
\\& -\trc 
\bigg (2\divb\z+2\z\c\z+4 (\kep - \z)\c n^{-1} \nabb n -2\db(\d+n^{-1}\nab_Nn)+
4\rho \\&
-\half\trc\trchb+2|\kep|^2+3|\hch|^2+4\hch\c\etah-2|n^{-1}N(n)|^2\bigg).
\end{split}
\label{D4tmu}
\end{equation}
\end{proposition}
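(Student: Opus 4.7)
The plan is to derive each of \eqref{D3trc}--\eqref{D4tmu} by the same two-step scheme used in the proof of Proposition \ref{prop:nulltransporteq}: differentiate the relevant Ricci coefficient, commute the ambient covariant derivatives modulo a curvature term via $[\dd_X,\dd_Y] = \dd_{[X,Y]} + \rr(X,Y)$, and then substitute systematically using the Ricci equations \eqref{ricciform} together with the null decomposition of $\rr$ in Definition \ref{def:nullcurv}.

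For \eqref{D3trc} and \eqref{D3chih}, I would compute $\ddb_{\lb}\chi_{AB}$ exactly as in the derivation of \eqref{D4chi}, simply replacing the outer $\dd_L$ by $\dd_{\lb}$. Two structural differences with the $L$-case explain the new terms. First, $\lb$ is not geodesic: $\dd_3 e_4 = 2\z_C e_C + (\d + n^{-1}\nab_N n)e_4$ replaces $\dd_4 e_4 = -\db e_4$, so the derivative chain produces a genuine $\nabb\z$ contribution whose trace gives $2\divb\z$ in \eqref{D3trc} and whose symmetric traceless part gives $\nabb\widehat{\otimes}\z$ in \eqref{D3chih}. Second, the curvature $\rr(\lb, e_A, L, e_B)$ splits, by the vacuum condition $R_{\a\b}=0$ and Definition \ref{def:nullcurv}, into a $\rho$-trace part (supplying the $2\rho$ in \eqref{D3trc}) and a $\sigma$-antisymmetric part (which drops out of the symmetric traceless equation \eqref{D3chih}). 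The remaining algebraic terms $2\z\c\z$, $-\hch\c\hchb$, $-\half\trc\hchb$, and the lapse-deformation $(\d + n^{-1}\nab_N n)\trc$ are generated by applying $\chb = -\chi - 2k$ from \eqref{etab} in the interior Christoffel contractions; in particular, the trace of $\chi_{AC}\chb_{CB}$ yields $\hch\c\hchb + \half\trc\trchb$, of which the second piece moves to the left-hand side of \eqref{D3trc}.

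Equations \eqref{diveta} and \eqref{curleta} follow from computing $\nabb_B\z_A$ directly from $\z_A = \half\gg(\dd_3 e_4, e_A)$, commuting $\dd_{e_B}\dd_3 = \dd_3\dd_{e_B} + \dd_{[e_B,e_3]} + \rr(e_B,e_3)$, and antisymmetrizing or tracing in $A,B$. The ambient curvature $\rr(e_A,e_B,e_3,e_4)$ contributes exactly the $\rho$ and $\sigma$ terms on the right-hand sides, while the remaining terms come from Ricci-equation substitutions; the combination $\half\trc\trchb - 2|\z|^2 + \hch\c\hchb$ appears because $e_B$ must differentiate the $\chi_{AC}e_C$-piece implicit in $\dd_3 e_4$. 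The Gauss equation \eqref{gauss} is the classical Gauss identity for the surface $P_{t,u}\subset\mathcal{M}$: expanding $\dd_{e_A}e_B$ via the last line of \eqref{ricciform}, the crossed second-fundamental-form terms produce $\half\hch\c\hchb - \tfrac14\trc\trchb$, while the ambient contribution $\rr(e_A,e_B,e_A,e_B)$ collapses to $-\rho$ by Definition \ref{def:nullcurv}.

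The transport equation \eqref{D4tmu} for $\mu$ is the most delicate step and is the main obstacle. Starting from $\mu = \lb(\trc) - (\d + n^{-1}\nab_N n)\trc$, I would expand $L(\mu) = L\lb(\trc) - L\big((\d + n^{-1}\nab_N n)\trc\big)$, commute $L\lb = \lb L + [L,\lb]$ using the identity $[L,\lb] = 2(\zb - \z)^A e_A + \text{lapse terms}$ derivable from \eqref{ricciform}, and then substitute $L(\trc)$ from \eqref{D4trchi}. Applying $\lb$ to that substitution generates $\lb|\hch|^2$, which is rewritten through \eqref{D3chih} and \eqref{Codaz} to produce the $-2\hch\c(\nabb\widehat{\otimes}\z + \z\widehat{\otimes}\z - \d\hch)$ term. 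The $\trc$-coefficient on the right-hand side is then assembled by combining \eqref{D3trc} (which provides $\lb\trc$), \eqref{diveta} (which provides $\divb\z$) and the algebraic relations \eqref{etab}--\eqref{etaa} (which convert $\zb, \xib$, and $\chb$ into $\kep, \kepb, \z, \hch$). The real difficulty is pure bookkeeping: numerous competing second-order terms in $\trc$ must cancel exactly, so that the right-hand side contains only first-order derivatives of $\z$ together with the algebraic quantities listed in \eqref{D4tmu}. This cancellation, inherited from the vacuum structure of Einstein's equations, is precisely what singles out $\mu$ as the correct mass aspect for propagating $\trc$-regularity along $L$.
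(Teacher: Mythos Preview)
Your outline for \eqref{D3trc}, \eqref{D3chih} and \eqref{gauss} matches the paper's. For \eqref{diveta} and \eqref{curleta} the paper is a bit more economical than a direct computation of $\nabb_B\z_A$: it derives the single identity
\[
\ddb_{\lb}\chi_{AB}=2\nabb_A\z_B+\chi_{AB}(\d+n^{-1}\nab_Nn)+2\z_A\z_B-\chb_{AC}\chi_{CB}+\r\,\d_{AB}-\s\,\in_{AB},
\]
and since $\chi$ is symmetric, the antisymmetric part of this \emph{is} \eqref{curleta}; \eqref{diveta} is then nothing more than a rearrangement of \eqref{D3trc} using the definition \eqref{eqmu} of $\mu$. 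No separate expansion of $\z_A=\half\gg(\dd_3e_4,e_A)$ is needed.

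For \eqref{D4tmu} there is a genuine gap in your plan. After commuting $L\lb=\lb L+[L,\lb]$ and applying $\lb$ to the Raychaudhuri equation you generate a term $\lb(\db)\,\trc$, and differentiating the definition of $\mu$ produces $L(\d+n^{-1}\nab_Nn)\,\trc$. Neither of these derivative terms survives in \eqref{D4tmu}, and they cannot be eliminated by the algebraic relations \eqref{etab}--\eqref{etaa} alone. The paper resolves this by computing $T(\d)=-T\big(\gg(\dd_NT,N)\big)$ directly, commuting $\dd_T\dd_N$ through a curvature term to obtain
\[
T(\d)=-n^{-1}\nab^2_Nn+\rho+|\kep|^2+\d^2+2\kep\cdot(\z-n^{-1}\nabb n),
\]
which yields $\lb(\db)=-L(\d+n^{-1}\nab_Nn)+2\rho+(\textrm{algebraic})$. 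This identity is precisely what makes the two derivative terms cancel and supplies half of the $4\rho$ inside the $\trc$-bracket of \eqref{D4tmu} (the other half comes from \eqref{D3trc} via the $\lb(\trc)\cdot\trc$ term). It is not bookkeeping but an additional structure equation for $k$ that your sketch does not account for. Finally, \eqref{Codaz} plays no role in \eqref{D4tmu}: $\lb|\hch|^2=2\hch\cdot\ddb_{\lb}\hch$ is handled entirely by \eqref{D3chih}.
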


\begin{proof}
To obtain \eqref{D3trc}, \eqref{D3chih}, \eqref{diveta} and \eqref{curleta}, we compute:
\bee
\ddb_{\lb}\chi_{AB}&=&\lb(\chi_{AB})-\chi(\ddb_{\lb}e_A,e_B)-\chi(e_A,\ddb_{\lb}e_B)\\
&=&\gg(\dd_{\lb}\dd_{e_A}L,e_B)-\chi(\ddb_{\lb}e_A,e_B)+\gg(\dd_{e_A}L,\dd_{\lb}e_B)-\chi(e_A,\ddb_{\lb}e_B)\\
&=&\gg(\dd_{e_A}\dd_{\lb}L,e_B)+\gg(\dd_{[\lb,e_A]}L,e_B)-\chi(\ddb_{\lb}e_A,e_B)+\rr_{\lb ALB}\\
&&+\gg(\dd_{e_A}L,\dd_{\lb}e_B-\ddb_{\lb}e_B)\\
&=& 2\nabb_A\z_B-\gg(\dd_{\lb}L,\dd_{e_A}e_B-\ddb_{e_A}e_B)+\gg(\dd_{\dd_{\lb}e_A-\ddb_{\lb}e_A-\dd_{e_A}\lb}L,e_B)\\
&&+\r\d_{AB}-\s\in_{AB}+\gg(\dd_{e_A}L,\dd_{\lb}e_B-\ddb_{\lb}e_B)
\eee
which together with the Ricci equations \eqref{ricciform} yields:
\be\lab{D3chi}
\ddb_{\lb}\chi_{AB}=2\nabb_A\z_B+\chi_{AB}(\d+n^{-1}\nab_Nn)+2\z_A\z_B-\chb_{AC}\chi_{CB}+\r\d_{AB}-\s\in_{AB}.
\ee
Taking the symmetric part of \eqref{D3chi}, and decomposing into its trace and traceless part yields respectively \eqref{D3trc} and \eqref{D3chih}. \eqref{diveta} follows from \eqref{D3trc}. Finally, taking the antisymmetric part of \eqref{D3chi} yields \eqref{curleta}.

We now focus on obtaining \eqref{D4tmu}. Differentiating the Raychaudhuri equation with respect to $\lb$ yields:
\bea
\lab{D4tmu1}L(\lb\trc)&=& [L,\lb]\trc+\lb(L\trc)\\
\nn&=&\db\lb(\trc)-(\d+n^{-1}\nab_Nn)L(\trc)-2(\z-\zb)\c\nabb\trc-\lb(\trc)\trc-2\ddb_{\lb}(\hch)\c\hch\\
\nn&&-\lb(\db)\trc-\db\lb(\trc)\\
\nn&=& -(\d+n^{-1}\nab_Nn)L(\trc)-2(\z-\zb)\c\nabb\trc\\
\nn&&-\trc\left(-\half\trc\trchb+2\divb\z+(\d +n^{-1}\nab_Nn)\trc-\hch\c\hchb+2\z\c\z+2\r\right)\\
\nn&&-2\hch\c\left(-\half\trchb\hch+\nabb\widehat{\otimes}\z+(\d +n^{-1}\nab_Nn)\hch-\half 
\trc\hchb+\z\widehat{\otimes} \z\right)-\lb(\db)\trc,
\eea
where we used \eqref{D3trc} and \eqref{D3chih} in the last equality.

In view of the last term in the right-hand side of \eqref{D4tmu1}, we need to compute $\lb(\db)$. We first compute $T(\d)$. We have:
\bee
T(\d)&=&-\gg(\dd_T\dd_NT,N)-\gg(\dd_NT,\dd_TN)\\
&=& -\gg(\dd_N\dd_TT,N)-\gg(\dd_{[T,N]}T,N)+\rr_{NTNT}-\gg(\dd_NT,\dd_TN)\\
&=& -\gg(\dd_N\dd_TT,N)-\gg(\dd_{\dd_TN-\dd_NT}T,N)+\r-\gg(\dd_NT,\dd_TN),
\eee
which together with the Ricci equations \eqref{ricciform} yields:
\be\lab{D4tmu2}
T(\d)=-n^{-1}\nab^2_Nn+\rho+|\kep|^2+\d^2+2\kep\c(\z-n^{-1}\nabb n).
\ee
Now, since $L=T+N$, $\lb=T-N$ and $\db=\d-n^{-1}\nab_Nn$, we have:
$$T(\d)=\half \lb(\db)+\half L(\d+n^{-1}\nab_Nn)-n^{-1}\nab^2_Nn+|n^{-1}N(n)|^2,$$
which together with \eqref{D4tmu2} yields:
\be\lab{D4tmu3}
\lb(\db)=-L(\d+n^{-1}\nab_Nn)+2\rho+2|\kep|^2+2\d^2+4\kep\c(\z-n^{-1}\nabb n)-2|n^{-1}N(n)|^2.
\ee
Therefore taking  $ \mu =\lb(\trc)- (\d  + n^{-1} N (n))\trc$, and plugging \eqref{D4tmu3} in \eqref{D4tmu1}, 
we derive the desired transport equation \eqref{D4tmu}.
\end{proof}

\subsection{Commutation formulas}

We have the following four useful commutation formulas (see \cite{ChKl} p. 159):
\begin{lemma}
Let $\Pi_{\und{A}}$ be an m-covariant tensor tangent to the surfaces $P_{t,u}$.
Then,
\bea
\lab{comm1}\nabb_B \ddb_{4} \Pi_{\und{A}} - \ddb_{4}\nabb_B \Pi_{\und{A}} &=&
\chi_{BC} \nabb_C \Pi_{\und{A}} - n^{-1} \nabb_B n \ddb_{4} \Pi_{\und{A}}\\
 &+& \sum_i (\chi_{A_i B} \kepb_C -
\chi_{BC}\kepb_{A_i} -\in_{A_i C}{}^*\b_B) \Pi_{A_1..\Check{C}..A_m},\nn
\eea
\bea
\lab{comm2}\nabb_B \ddb_{3} \Pi_{\und{A}} - \ddb_{3}\nabb_B \Pi_{\und{A}} &=&
\chb_{BC} \nabb_C \Pi_{\und{A}} - \xib_B \ddb_{4} \Pi_{\und{A}}- b^{-1}\nab_Bb \ddb_{3}\Pi_{\und{A}}\\ 
\nn&+& \sum_i (-\chi_{A_i B} \xib_{C} +
\chi_{BC}\xib_{A_i}  -\chb_{A_i B} \z_{C} +
\chb_{BC}\z_{A_i } \\
&&+\in_{A_i C}{}^*\bb_B) \Pi_{A_1..\Check{C}..A_m},\nn
\eea
\bea\lab{comm3}
\nn\ddb_{3}\ddb_{4} \Pi_{\und{A}} - \ddb_{4}\ddb_{3} \Pi_{\und{A}} &=&
 -\db  \ddb_{3} \Pi_{\und{A}}
+ (\d + n^{-1} \nab_Nn)\ddb_{4} \Pi_{\und{A}} +2(\z_{B}-\zb_B) \nabb_B \Pi_{\und{A}} \\ 
&+& 2\sum_i (\zb_{A_i} \z_{C} -
\zb_{C}\z_{A_i} +\in_{A_i C}{}^*\s) \Pi_{A_1..\Check{C}..A_m}.
\eea
Finally, \eqref{comm1}, \eqref{comm2} together with the fact that $N=\half(L-\lb)$ yield:
\bea
\lab{comm4}\nabb_B \nabb_{N} \Pi_{\und{A}} - \nabb_{N}\nabb_B \Pi_{\und{A}} &=&
(\chi_{BC}+k_{BC}) \nabb_C \Pi_{\und{A}} - b^{-1} \nabb_B b \nabb_{N} \Pi_{\und{A}}\\ 
&+& \half\sum_i (\chi_{A_i B} (\kepb_C+\xib_C) -
\chi_{BC}(\kepb_{A_i}+
\xib_{A_i})+\chb_{A_i B} \z_C - \chb_{BC}\z_{A_i}\nn\\ & &
-\in_{A_i C}{}^*(\b_B+\bb_B)) \Pi_{A_1..\Check{C}..A_m}.\nn
\eea
\end{lemma}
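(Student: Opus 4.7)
The four commutation formulas follow from a single mechanical scheme, closely paralleling the proofs of Proposition \ref{prop:nulltransporteq} and of the Hodge systems above, and the plan is to reduce each commutator to the spacetime Ricci identity followed by projection onto $P_{t,u}$. The key preliminary observation is that for a $P_{t,u}$-tangent tensor $\Pi$, the projected derivatives $\nabb_B \Pi$, $\ddb_4 \Pi$, $\ddb_3 \Pi$ agree with the full spacetime derivatives $\dd_{e_B} \Pi$, $\dd_L \Pi$, $\dd_{\lb} \Pi$ on $P_{t,u}$-tangent arguments, because the non-$P$-tangent corrections identified in \eqref{ricciform}, namely $\dd_L e_A - \ddb_L e_A = \zb_A e_4$, $\dd_{\lb} e_A - \ddb_{\lb} e_A = \z_A e_3 + \xib_A e_4$, and $\dd_B e_A - \nabb_B e_A = \half\chi_{AB}e_3 + \half\chb_{AB}e_4$, contribute nothing when any of their indices is contracted into $\Pi$. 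Thus computing each left-hand side reduces to applying the torsion-free Ricci identity $(\dd_X \dd_Y - \dd_Y \dd_X) \Pi = \dd_{[X,Y]} \Pi + R(X,Y) \Pi$ and projecting onto $P_{t,u}$.

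For \eqref{comm1}, I compute $[e_B, L] = \dd_{e_B} L - \dd_L e_B$ from \eqref{ricciform}. After using $\zb_B = -\kepb_B$ from \eqref{etab} one finds $[e_B, L] = \chi_{BC} e_C - \ddb_L e_B - n^{-1} \nab_B n \, L$, whose $P$-tangent action on $\Pi$ yields the transport terms $\chi_{BC} \nabb_C \Pi_{\und A} - n^{-1} \nabb_B n \, \ddb_4 \Pi_{\und A}$ in \eqref{comm1}. The curvature term $R(e_B, L) \Pi$ acts on each tensor index $A_i$ through $\gg(R(e_B, L) e_{A_i}, e_C) = \rr(e_C, e_{A_i}, e_B, L)$; the pair symmetry and first Bianchi identity identify this mixed-frame Riemann component with $-\in_{A_i C} {}^*\b_B$, producing the curvature term on the right of \eqref{comm1}. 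The residual Ricci cross-terms $\chi_{A_i B} \kepb_C - \chi_{BC} \kepb_{A_i}$ arise when one expands $\ddb_4$ and $\nabb_B$ in the null frame and tracks the discrepancy between $\ddb_L e_{A_i}$ and $\dd_L e_{A_i}$ iterated against $\dd_B$, which via \eqref{ricciform} and \eqref{etab} produces exactly these products.

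The same scheme yields \eqref{comm2} and \eqref{comm3}. For \eqref{comm2}, the bracket $[e_B, \lb] = \chb_{BC} e_C - \ddb_{\lb} e_B - b^{-1} \nab_B b \, \lb - \xib_B L$, obtained from \eqref{ricciform} and \eqref{etaa}, produces the $\chb_{BC} \nabb_C$, $-b^{-1} \nab_B b \, \ddb_3$ and $-\xib_B \ddb_4$ transport terms, and $\rr(e_C, e_{A_i}, e_B, \lb)$ yields the $\in_{A_i C} {}^* \bb_B$ curvature contribution. For \eqref{comm3}, the bracket $[L, \lb] = 2(\zb_A - \z_A) e_A + \db \lb - (\d + n^{-1} \nab_N n) L$ gives the three transport terms (with signs dictated by the ordering $\ddb_3 \ddb_4 - \ddb_4 \ddb_3$), while $\rr(e_C, e_{A_i}, L, \lb)$ reduces to the $\in_{A_i C} {}^* \s$ term through the duality relation $\r({}^* \rr) = \s(\rr)$ in Definition \ref{def:nullcurv}; the $\zb \z$ cross-terms come from iterated discrepancies analogous to those in \eqref{comm1}. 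Finally, \eqref{comm4} is obtained by combining \eqref{comm1} and \eqref{comm2}: since $N = \half(L - \lb)$ gives $\nabb_N \Pi = \half(\ddb_4 \Pi - \ddb_3 \Pi)$ on $P$-tangent tensors, the commutator $[\nabb_B, \nabb_N] \Pi = \half([\nabb_B, \ddb_4] \Pi - [\nabb_B, \ddb_3] \Pi)$, and substitution together with $\chb = -\chi - 2k$ from \eqref{etab} produces \eqref{comm4}. The main obstacle throughout is the algebraic identification of the mixed-frame Riemann components with the null scalars ${}^* \b_B$, ${}^* \bb_B$, ${}^* \s$, which is pure bookkeeping but requires careful use of the pair symmetry and first Bianchi identity for $\rr$ together with the duality relations of Definition \ref{def:nullcurv}.
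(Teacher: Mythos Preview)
Your sketch is correct and follows the standard derivation: reduce to the spacetime Ricci identity, compute the Lie brackets $[e_B,L]$, $[e_B,\lb]$, $[L,\lb]$ from the Ricci equations \eqref{ricciform}, identify the mixed-frame curvature components with ${}^*\b$, ${}^*\bb$, ${}^*\s$, and track the cross-terms from the discrepancy between projected and unprojected frame derivatives. The paper itself does not give a proof here; it simply refers to \cite{ChKl} p.~159, where exactly this computation is carried out. One small point: in your derivation of \eqref{comm4} you invoke $\chb = -\chi - 2k$ for the $\nabb_C$ coefficient, but to collapse the $\ddb_4$ and $\ddb_3$ coefficients into the single $-b^{-1}\nabb_B b\,\nabb_N$ term you also need the identity $\xib_B - n^{-1}\nabb_B n = -b^{-1}\nabb_B b$, which follows from combining \eqref{etab} and \eqref{etaa}; you should make this explicit.
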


For some applications we have in mind, we would like to get rid of the term containing a $\ddb_4$ derivative in the RHS of \eqref{comm1}. This is achieved by considering the commutator $[\nabb,\ddb_{nL}]$ instead of $[\nabb,\ddb_4]$:
\bea
\lab{comm5}\nabb_B \ddb_{nL} \Pi_{\und{A}} - \ddb_{nL}\nabb_B \Pi_{\und{A}} &=&
n\chi_{BC} \nabb_C \Pi_{\und{A}}\\
&+& \sum_i (n\chi_{A_i B} \kepb_C -
n\chi_{BC}\kepb_{A_i} -\in_{A_i C}n{}^*\b_B) \Pi_{A_1..\Check{C}..A_m}.\nn
\eea
\eqref{comm5} yields for any scalar function $f$:
\be\lab{comm6}
[nL,\lap]f=-2n\chi\nabb^2f+n(2\hch_{AB}\kepb_B-\kepb_{A}\trc -n^{-1}\nabb_An\trc+\nabb\trc)\nabb_Af.
\ee
Also, we would like to get rid of the term containing a $\nabb_N$ derivative in the RHS of \eqref{comm4}. This is achieved by considering the commutator $[\nabb,\nabb_{bN}]$ instead of $[\nabb,\nabb_N]$:
\bea
\nabb_B \nabb_{bN} \Pi_{\und{A}} - \nabb_{bN}\nabb_B \Pi_{\und{A}} &=&
b(\chi_{BC}+k_{BC}) \nabb_C \Pi_{\und{A}} 
\label{comm7}\\ &+& \frac{b}{2}\sum_i (\chi_{A_i B} (\kepb_C+\xib_C) -
\chi_{BC}(\kepb_{A_i N}+
\xib_{A_i})+\chb_{A_i B} \z_C - \chb_{BC}\z_{A_i}\nn\\ & &
-\in_{A_i C}{}^*(\b_B+\bb_B)) \Pi_{A_1..\Check{C}..A_m}.\nn
\eea

\subsection{Bianchi identities}

In view of the formulas on p. 161 of \cite{ChKl}, the Bianchi equations for $\a, \b, \rho, \s, \bb$ are:
\bea
\ddb_L(\b)&=&\divb\a-\db  \b+(2\kep-\kepb)\c\a\lab{bianc1}\\
\ddb_{\lb}(\b)&=&\nabb\r+(\nabb\s)^*+2\hch\c\bb+(\d+n^{-1}\nabla_Nn)\b+\xib\c\a+3(\z\r+{}^*\z\s)\lab{bianc1bis}\\
L(\rho)&=&\divb\b-\half\hchb\c\a+(\kep-2\kepb)\c\b\lab{bianc2}\\
\lb(\rho)&=&-\divb\bb-\half\hch\c\ab+2\xib\c\b+(\kep-2\z)\c\bb\lab{bianc3}\\
L(\s)&=&-\curlb\b+\half\hchb\,{}^*\a+(-\kep+2\kepb)\,{}^*\b\lab{bianc4}\\
\lb(\s)&=&-\curlb\bb-\half\hch\,{}^*\ab-2\xib\,{}^*\b+(\kep-2\z)\,{}^*\bb\lab{bianc5}\\
\ddb_L(\bb)&=&-\nabb\r+(\nabb\s)^*+2\hchb\c\b+\db\bb-3(\zb\r-{}^*\zb\s)\lab{bianc6}
\eea

\subsection{Assumptions on $\rr$ and $u_{|_{\Sigma_0}}$}

\subsubsection{Assumptions on $\rr$}

We introduce the $L^2$ curvature flux $\rf$ relative to the time foliation: 
\be\lab{curvflux}
\rf=\left(\norm{\a}^2_{\lh{2}}+\norm{\b}_{\lh{2}}^2+\norm{\r}_{\lh{2}}^2+\norm{\s}_{\lh{2}}^2+\norm{\bb}_{\lh{2}}^2\right)^{\half}.
\ee
In view of the statement of Theorem \ref{th:mainbl2}, the goal of this paper is to control the geometry of the null hypersurfaces $\H_u$ of $u$ up to time $t=1$ when only assuming smallness on $\norm{\rr}_{\lsit{\infty}{2}}$ and $\rf$. In the rest of the paper, we still denote by $\H_u$ the portion of the hypersurface of $u$ between $t=0$ and $t=1$, and we assume for some small $\ep>0$:
\be\lab{curvflux1}
\norm{\rr}_{\lsit{\infty}{2}}\leq\ep\textrm{ and }\sup_{\o, u}\rf\leq\ep,
\ee
where the supremum is taken over all possible values $u\in\mathbb{R}$ of $u(t,x, \o)$ and over all possible $\o$ in $\S$, with  $u$ solution to the eikonal equation $\gg^{\alpha\beta}\partial_\alpha u\partial_\beta u=0$ on $\mathcal{M}$, and depending on a extra parameter $\o\in \S$. Note that \eqref{curvflux1} corresponds to a bootstrap assumption\footnote{There should be a large enough universal bootstrap constant in front of $\ep$ in the right-hand side of \eqref{curvflux1}, which we omit for the simplicity of the exposition} in the proof of Theorem \ref{th:mainbl2}  in \cite{boundedl2} under which steps {\bf C3}  and {\bf C4} must be achieved\footnote{Recall that step {\bf C3} corresponds to the control of the $u$-foliation on $\mathcal{M}$, while step {\bf C4} corresponds to the control of the error term \eqref{err}}. We refer to section 5.3 in \cite{boundedl2} for the bootstrap assumption corresponding to \eqref{curvflux1} in the proof of the bounded $L^2$ curvature conjecture.

\begin{remark}
Note that in \eqref{curvflux1}, all components of $\rr$ are controlled in $\lsit{\infty}{2}$, while all components but $\ab$ are controlled in $\li{\infty}{2}$. Thus, it will be crucial to avoid $\ab$ in our estimates in order to obtain suitable control on $\H_u$. This will be possible due to the specific form of the null structure equations of the $u$-foliation on  $\mathcal{M}$ (see section \ref{sec:nullstructeq})\footnote{The only exception is the transport equation \eqref{lb1} satisfied by $\lb\lb(b)$ which contains an $\ab$ term, and leads to the weak estimate \eqref{estlblbb}}.
\end{remark}

\begin{remark}\lab{rem:noprobleminf}
As a byproduct of the reduction to small initial data outlined in Remark \ref{rem:reducsmallisok} and performed in section 2.3 of \cite{boundedl2}, we may choose $(\Sigma_0, g, k)$ to be smooth, small and  asymptotically flat outside a compact set $U$ of $\Sigma_0$ of diameter of order 1 (see section 2.3 in \cite{boundedl2} for details). In turn, using the finite speed of propagation, we may assume that $(\mathcal{M}, \gg)$ to be smooth, small and  asymptotically flat  outside of compact set $\widetilde{U}$ of $\mathcal{M}\cap\{0\leq t\leq 1\}$ of diameter of order 1. This allows us to avoid issues about decay at infinity, and to solely concentrate on establishing regularity of the $u$-foliation on the compact set $\widetilde{U}$ of $\mathcal{M}\cap\{0\leq t\leq 1\}$.
\end{remark}

\subsubsection{Assumptions on $u_{|_{\Sigma_0}}$}\lab{sec:choiceinitdata}

Recall that $u$ is a solution to the eikonal equation $\gg^{\alpha\beta}\partial_\alpha u\partial_\beta u=0$ on $\mathcal{M}$ depending on a extra parameter $\o\in \S$. Now, for $u$ to be uniquely defined, we need to prescribe it on $\Si$ (i.e. at $t=0$). This issue has been settled in Step {\bf C1} (see \cite{param1}). In that step, the choice of $u(0,x,\o)$ is such that $u(0,x,\o)$ has enough regularity to achieve step {\bf C2}. At  the same time, it is also such that $u$ is regular enough for $t>0$ to achieve step {\bf C3}. More precisely, the regularity of $u$ for $t>0$ will involve transport equations - see for instance Proposition \ref{prop:nulltransporteq} - and will therefore require the same regularity at $t=0$. We denote this regularity at $t=0$ by the quantities $\II$ and $\Ij$, $j\in\mathbb{N}$, which are defined by
\bea\lab{defI0}
\II&=&\norm{b(0,.)-1}_{L^\infty(\Sigma_0)}+\norm{\nabla b(0,.)}_{L^\infty_u L^2(P_{0,u})}+\norm{\nabla b(0,.)}_{L^\infty_u L^4(P_{0,u})}\\
\nn&&+\norm{\trc(0,.)}_{L^\infty(\Sigma_0)}+\norm{\nabla\trc(0,.)}_{L^\infty_u L^2(P_{0,u})}+\norm{\po N(0,.)}_{L^\infty(\Sigma_0)}\\
\nn&&+\norm{\nabla\po N(0,.)}_{L^\infty_u L^2(P_{0,u})}+\norm{\po b(0,.)}_{L^\infty(\Sigma_0)}+\norm{\nabb\po b(0,.)}_{L^\infty_u L^2(P_{0,u})}\\
\nn&&+\norm{\po\chi(0,.)}_{L^\infty_u L^2(P_{0,u})}+\norm{\po\z(0,.)}_{L^\infty_u L^2(P_{0,u})}+\norm{\po^2N(0,.)}_{L^\infty_u L^2(P_{0,u})}\\
\nn&&+\norm{\po^2b(0,.)}_{L^\infty_u L^2(P_{0,u})},
\eea
and 
\bea\lab{defI0j}
\nn\Ij &=& \norm{P_j(NN(\trc))(0, .)}_{L^2(\Sigma_0)}+\norm{P_j(NN(b))(0, .)}_{L^2(\Sigma_0)}+\norm{P_j(\nabb_N\Pi(\po\chi))(0, .)}_{L^2(\Sigma_0)}\\
&&+\norm{P_j(\nabb_N(\Pi(\po^2N)))(0, .)}_{L^2(\Sigma_0)}+\norm{P_j(\Pi(\po^2\z))(0, .)}_{L^2(\Sigma_0)},
\eea
where $P_j$ denotes the geometric Littlewood-Paley projections $P_j$ which have been constructed in \cite{LP} using the heat flow on the surfaces $P_{0,u}$ (see section \ref{sec:geompal}). This regularity $\II$ and $\Ij$required for $u(0,x,\o)$ is consistent with the estimates derived in step {\bf C1}, where the following estimate for the initial data quantities $\II$ and $\Ij$ has been derived under the curvature bound assumption \eqref{curvflux1} (see \cite{param1}):
\be\lab{estinit}
\II\les\ep,
\ee
and 
\be\lab{estinit:bis}
\Ij\les\ep 2^{\frac{j}{2}},\,\,\forall j\geq 0.
\ee
From now on, we assume that $u$ is the solution to the eikonal equation $\gg^{\alpha\beta}\partial_\alpha u\partial_\beta u=0$ on $\mathcal{M}$ which is prescribed on $\Si$ as in step {\bf C1}, and such that it satisfies on $\Si$ the smallness assumption \eqref{estinit} and \eqref{estinit:bis}.

\subsection{Main results}

We define some norms on $\H_u$. For any $1\leq p\leq +\infty$ and for any tensor $F$ on $\H_u$, we have:
$$\norm{F}_{\lh{p}}=\left(\int_0^1dt\int_{P_{t,u}}|F|^p\dmt\right)^{\frac{1}{p}},$$
where $\dmt$ denotes the area element of $P_{t,u}$.
We also introduce the following norms:
$$\no(F)=\norm{F}_{\lh{2}}+\norm{\nabb F}_{\lh{2}} +\norm{\ddb_LF}_{\lh{2}},$$
$$\noo(F)=\no(F)+\norm{\nabb^2 F}_{\lh{2}} +\norm{\nabb\ddb_LF}_{\lh{2}}.$$
Let $x'$ a coordinate system on $\pou$. By transporting this coordinate system along the null geodesics generated by $L$, we obtain a coordinate system $(t,x')$ of $\H_u$. We define the following norms:
$$\norm{F}_{\xt{\infty}{2}}=\sup_{x'\in P_{0,u}}\left(\int_0^1 |F(t,x')|^2dt\right)^{\half},$$
$$\norm{F}_{\xt{2}{\infty}}=\normm{\sup_{0\leq t\leq 1}|F(t,x'))|}_{L^2(\pou)}.$$

\begin{remark}
In the rest of the paper, all inequalities hold for any $\o\in\S$ with the constant in the right-hand side being independent of $\o$. Thus, one may take the supremum in $\o$ everywhere. To ease the notations, we do not explicitly write down this supremum. 
\end{remark}

\begin{remark}
Let a function $f$ depending on $u\in\mathbb{R}$. In the rest of the paper, all estimates on $\H_u$ will be either of the following types
\be\lab{esttype1}
|f(u)|\les \ep,
\ee
or
\be\lab{esttype2}
|f(u)|\les2^j\ep+2^{\frac{j}{2}}\ep\gamma(u),
\ee
where $\gamma$ is a function of $L^2(\mathbb{R})$ satisfying $\norm{\gamma}_{L^2(\mathbb{R})}\leq 1$. For instance, the inequalities \eqref{estn}-\eqref{estzeta}, \eqref{estNomega}, \eqref{estricciomega}, and \eqref{dechch1}-\eqref{estpo2b} below are of the first type, while the inequalities \eqref{estlblbtrc}-\eqref{estlblbb}, \eqref{estricciomegabis} and \eqref{estricciomega3} below are of the second type. All inequalities of the first type hold for any $u$ with the constant in the right-hand side being independent of $u$. Thus, one may take the supremum in $u$ in these inequalities. To ease the notations, we do not explicitly write down the supremum in $u$ for all estimates of the type \eqref{esttype1}. 
\end{remark}

\begin{remark}
The contribution $2^{\frac{j}{2}}\gamma(u)$ to \eqref{esttype2} will always come from the initial data term of a transport equation estimate which is controlled using \eqref{estinit:bis}. In the particular case of the estimate \eqref{estlblbb} below, it will also come from the presence of an term involving $\ab$ in the transport equation satisfied by $\lb\lb(b)$ (see \eqref{lb1}).
\end{remark}

The following theorem investigates the regularity of $u$ with respect to $(t,x)$:
\begin{theorem}\lab{thregx}
Assume that $u$ is the solution to the eikonal equation $\gg^{\alpha\beta}\partial_\alpha u\partial_\beta u=0$ on $\mathcal{M}$ such that $u$ is prescribed on $\Si$ as in section \ref{sec:choiceinitdata} where it satisfies in particular \eqref{estinit}. Assume also that the estimate \eqref{curvflux1} is satisfied. Then, null geodesics generating $\H_u$ do not have conjugate points (i.e. there are no caustics) and distinct null geodesics do not intersect. Furthermore, the following estimates are satisfied:
\be\lab{estn}
\norm{n-1}_{\lh{\infty}}+\norm{\nab n}_{\lh{\infty}}+\norm{\nabla n}_{\tx{\infty}{2}}+\norm{\nabla^2n}_{\tx{\infty}{2}}+\norm{\nabla T(n)}_{\tx{\infty}{2}}\lesssim\ep,
\ee
\be\lab{estk}
\no(k)+\norm{\ddb_{\lb}\kep}_{\lh{2}}+\norm{\lb(\d)}_{\lh{2}}+\norm{\kepb}_{\xt{\infty}{2}}+\norm{\db}_{\xt{\infty}{2}}\lesssim\ep,
\ee
\be\lab{estb}
\norm{b-1}_{\lh{\infty}}+\noo(b)+\norm{\lb(b)}_{\xt{2}{\infty}}+\norm{\lb(b)}_{\tx{\infty}{4}}\lesssim\ep,
\ee
\be\lab{esttrc}
\norm{\trc}_{\lh{\infty}}+\norm{\nabb\trc}_{\xt{2}{\infty}}+\norm{\lb\trc}_{\xt{2}{\infty}}
\lesssim\ep,
\ee
\be\lab{esthch}
\norm{\hch}_{\xt{2}{\infty}}+\no(\hch)+\norm{\ddb_{\lb}\hch}_{\lh{2}}
\lesssim\ep,
\ee
\be\lab{estzeta}
\norm{\z}_{\xt{2}{\infty}}+\no(\z)\lesssim\ep.
\ee
\end{theorem}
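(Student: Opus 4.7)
The overall plan is to run a continuity/bootstrap argument on $t\in[0,1]$: one assumes weaker versions of \eqref{estn}--\eqref{estzeta} plus the no-intersection property, propagates them via transport, elliptic and Bianchi arguments, and improves the constants. The initial data bounds \eqref{estinit}--\eqref{estinit:bis} on $\II$ and $\Ij$ give the starting point on $\Si$, and \eqref{curvflux1} gives the flux/curvature smallness throughout.

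First, the absence of conjugate points and the non-intersection of null geodesics emanating from $\Si$ follow from integrating Raychaudhuri's equation \eqref{D4trchi} along the null generators of $\H_u$: $L(\trc)+\tfrac12(\trc)^2=-|\hch|^2-\db\,\trc$. Using the bootstrap bounds on $\hch,\db$ together with the initial smallness of $\trc$ coming from $\II$, Gronwall-type integration shows $|\trc|\lesssim\ep$ over $[0,1]$, which precludes a blow-up to $-\infty$ and hence conjugate points. Transversal non-intersection then comes from the fact that, in the bootstrap regime, the Jacobi fields associated to the null congruence have non-degenerate determinant, i.e.\ $b$ stays bounded away from zero via $L(b)=-b\db$.

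Next, I would prove the estimates in the following order, each piece feeding the next. \emph{(i)} The lapse $n$: the elliptic equation \eqref{lapsen1} $\Delta n=|k|^2n$ on $\Sit$, combined with the bootstrap $L^2$ bound on $k$ and standard maximal-foliation Calderon--Zygmund theory on $\Sigma_t$, yields all the bounds in \eqref{estn}. \emph{(ii)} The second fundamental form $k$: use the constraint equations \eqref{const}, the maximal gauge $\textrm{Tr}k=0$, together with elliptic Hodge estimates on $\Sit$ driven by the curvature flux $\rf$ and the bound on $\norm{\rr}_{\lsit{\infty}{2}}$, to obtain \eqref{estk}; the special $\lh2$ bounds on $\ddb_{\lb}\kep$ and $\lb(\d)$ and the $\xt{\infty}{2}$ trace norms on $\kepb,\db$ come from sharp trace theorems on $\H_u$ after decomposing $k$ as in Definition \ref{def:decompositionk}. \emph{(iii)} The null lapse $b$: from $L(b)=-b\db$ one integrates along null geodesics to get $\norm{b-1}_{\lh\infty}\lesssim\ep$ using $\II$; commuting with $\nabb$ via \eqref{comm1} and with $\ddb_L$ gives $\no(b),\noo(b)$; the $\xt{2}{\infty}$ and $\tx\infty4$ bounds on $\lb(b)$ require using \eqref{D4a}-type relations for $\lb$ together with the sharp trace inequality on $P_{t,u}$. \emph{(iv)} The components $\trc,\hch,\z$: transport integration of \eqref{D4trchi}, the Codazzi elliptic system \eqref{Codaz} for $\hch$ and the Hodge system \eqref{diveta}--\eqref{curleta} for $\z$ (the latter two reconstructing tangential derivatives from $\rho,\s,\b$, which avoids $\ab$) give the $\li\infty2$ pieces; the $\xt{2}{\infty}$ estimates for $\nabb\trc,\lb\trc,\hch,\z$ are obtained by introducing the mass aspect $\mu$ in \eqref{eqmu} and integrating its transport equation \eqref{D4tmu} -- this is the only way to access $\lb\trc$ without meeting $\ab$. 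The geometric Littlewood-Paley projections of \cite{LP} and the initial data decomposition \eqref{defI0j} are used to handle the borderline frequency-localized pieces.

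The main obstacle is the critical regularity: pointwise or $L^\infty$-type control of $\hch$, $\trc$ and $\z$ sits right at the edge of Sobolev embedding and naive estimates generate logarithmic divergences in the frequency parameter $j$. Two devices resolve this. First, the structural replacement of \eqref{D4trchi}--\eqref{D4chih} by the elliptic-transport pair (\eqref{D4trchi}, \eqref{Codaz}) and the $(\mu,\hch)$-renormalization, which together trade one $\nabb$ for the better-behaved curvature components $\rho,\s,\b$ and thereby avoid $\ab$ altogether (as emphasized in the remark after \eqref{curvflux1}). Second, sharp $\xt{2}{\infty}$ trace embeddings relative to the $(t,x')$ coordinate system on $\H_u$, proved on the geometric Littlewood-Paley side in Section \ref{sec:calcineqgeneral}, combined with the frequency-localized initial data bounds \eqref{estinit:bis}. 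Once these tools are in place, each of \eqref{estn}--\eqref{estzeta} is obtained by a finite sequence of transport integrations, Hodge-elliptic inversions and commutator applications from \eqref{comm1}--\eqref{comm7}, and the bootstrap constants are improved by a factor strictly less than $1$, closing the argument.
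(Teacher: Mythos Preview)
Your bootstrap framework and the order of estimates for $n$, $k$, $b$, $\trc$, $\hch$, $\z$ are essentially the paper's plan, but two steps are not right as you describe them.

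\textbf{Non-intersection of distinct null geodesics.} The non-degeneracy of the Jacobian of the null flow map (equivalently, the control of $\trc$ via Raychaudhuri) only gives \emph{local} injectivity, i.e.\ absence of conjugate points. It does not rule out two geodesics starting far apart on $\Sigma_0$ and meeting later; the bound on $b$ is irrelevant here. The paper closes this by assuming an auxiliary qualitative $C^2$ bound on the metric (used only for this step) and showing that $b^{-1}L$ is H\"older continuous along $\H_u$ via a Sobolev embedding after estimating $\norm{\dd(b^{-1}L)}_{\li{\infty}{6}}$. Then if two distinct geodesics met at a point, their tangent vectors $\kappa'_{x_1}(t_0)=\kappa'_{x_2}(t_0)$ would coincide by continuity, forcing the geodesics to coincide by ODE uniqueness. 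Your argument is missing this global step.

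\textbf{Trace bounds and the role of the geodesic foliation.} The mass aspect $\mu$ and its transport equation \eqref{D4tmu} give only $\lb\trc\in\xt{2}{\infty}$; they do \emph{not} produce the trace norms $\norm{\hch}_{\xt{2}{\infty}}$, $\norm{\z}_{\xt{2}{\infty}}$, nor the trace bounds $\norm{\db}_{\xt{\infty}{2}}$, $\norm{\kepb}_{\xt{\infty}{2}}$. The paper obtains the $L^\infty$ bound on $\trc$ and the trace bounds on $\hch$, $\z$ by translating from the geodesic foliation: one writes $\chi=b\chi'$ and $\z=\z'+\chi'\cdot F'$ where $\chi',\z'$ are the geodesic-foliation Ricci coefficients (for which the needed trace estimates were proved in \cite{FLUX}, \cite{LP}, \cite{STT}), and $F'$ is the tensor relating the two frames; one shows $\norm{F'}_{L^\infty}\lesssim\ep$ via a separate bootstrap using the Besov machinery of \cite{STT}. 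The trace bounds for $\db,\kepb$ are also obtained through the geodesic foliation, by writing $\db=b^2 k_{L'L'}$, $\kepb=b k_{L'A}+bF'k_{L'L'}$ and applying the sharp trace theorem of \cite{STT} after exhibiting a decomposition $\nabb' k_{L'L'}=A\ddb_{L'}P+E$ with $\no'(P)+\norm{E}_{{\PP'}^0}\lesssim\ep$ coming from the Bianchi identities. None of this comes from the mass aspect, and none of it lives in Section~\ref{sec:calcineqgeneral}; it requires the Besov-space sharp trace theorems. Your proposal sketches the correct tools (``sharp trace theorems'') in the last paragraph but misattributes the mechanism in step (iv) and omits the geodesic-foliation comparison, which is the actual device the paper uses to import the hardest trace estimates.
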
 

We introduce the family of intrinsic Littlewood-Paley projections 
$P_j$ which have been constructed in \cite{LP} using the heat flow 
on the surfaces $P_{t,u}$ (see section \ref{sec:geompal}). This allows us to state our second theorem 
which investigates the regularity of $\lb\lb\trc$, $\nabb_{\lb}\z$ and $\lb\lb b$.
\begin{theorem}\lab{thregx1}
Assume that $u$ is the solution to the eikonal equation $\gg^{\alpha\beta}\partial_\alpha u\partial_\beta u=0$ on $\mathcal{M}$ such that $u$ is prescribed on $\Si$ as in section \ref{sec:choiceinitdata} where it satisfies in particular \eqref{estinit} and \eqref{estinit:bis}. Assume also that the estimate \eqref{curvflux1} is satisfied. Then, there exists a function $\gamma$ in $L^2(\mathbb{R})$ satisfying $\norm{\gamma}_{L^2(\mathbb{R})}\leq 1$, such that for all $j\geq 0$, we have:
\be\lab{estlblbtrc}
\norm{P_j\lb\lb\trc}_{\lh{2}}\lesssim 2^j\ep+2^{\frac{j}{2}}\ep\gamma(u),
\ee
\be\label{estlbzeta}
\norm{P_j\ddb_{\lb}(\z)}_{\lh{2}}\lesssim \ep+2^{-\frac{j}{2}}\ep\gamma(u),
\ee
and
\be\lab{estlblbb}
\norm{P_j\lb\lb b}_{\tx{\infty}{2}}\lesssim 2^j\ep+2^{\frac{j}{2}}\ep\gamma(u).
\ee
\end{theorem}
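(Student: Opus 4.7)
The three estimates share a common architecture: for each of the second-order quantities $\lb\lb\trc$, $\ddb_\lb\z$ and $\lb\lb b$, I would derive a scalar transport equation along $L$, apply the intrinsic Littlewood--Paley projection $P_j$, commute $P_j$ past the $L$-flow using the geometric LP calculus of \cite{LP}, and integrate from $t=0$ using the initial data bounds \eqref{estinit} and \eqref{estinit:bis}. The leading $2^j\ep$ comes from the natural frequency loss in controlling a second transversal derivative at the critical $L^2$-curvature regularity, while the $2^{j/2}\ep\gamma(u)$ contribution, for the first two estimates, arises from the initial data piece $\Ij\lesssim \ep 2^{j/2}$ after transport along $L$ and after relating $P_j(NN(\trc))(0,\cdot)$, $P_j(\Pi(\po^2\z))(0,\cdot)$, $P_j(NN(b))(0,\cdot)$ to the initial traces of $\lb\lb\trc$, $\ddb_\lb\z$, $\lb\lb b$ via the decomposition $N=\half(L-\lb)$ together with the null structure equations at $t=0$.

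For $\lb\lb\trc$, I would work with the renormalized quantity $\lb\mu$, where $\mu=\lb\trc-(\d+n^{-1}N(n))\trc$ satisfies \eqref{D4tmu}. Naively differentiating \eqref{D4tmu} by $\lb$ brings $\lb(\rho)$ into the forcing, and \eqref{bianc3} would then inject a dangerous $\hch\c\ab$ term; to avoid this, I first substitute for $\rho$ in the right-hand side of \eqref{D4tmu} using \eqref{D3trc}, which expresses $2\rho$ in terms of $\lb\trc,\,\trchb\trc,\,\divb\z,\,\d\trc,\,\hch\c\hchb,\,\z\c\z$. After this substitution, $\lb$-differentiation and use of the $[L,\lb]$ commutator produce a scalar transport equation of the form $L(\lb\mu)+A\,\lb\mu=\mathcal{F}_1$, with coefficient $A$ built from $\trc$ and lower-order quantities, and with forcing $\mathcal{F}_1$ involving $\nabb$-derivatives of Ricci coefficients already bounded by Theorem \ref{thregx} and the curvature components $\rho,\s,\b,\a,\bb\in\li{\infty}{2}$, but no $\ab$. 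For $\ddb_\lb\z$, I would differentiate \eqref{D4eta} by $\lb$ (commuting via \eqref{comm3}) and substitute the Bianchi identity \eqref{bianc1bis} for $\ddb_\lb\b$, whose right-hand side $\nabb\rho,\,(\nabb\s)^*,\,\hch\c\bb,\,\xib\c\a,\,\z\rho,\,\z\s$ is also $\ab$-free; the better bound $\ep+2^{-j/2}\ep\gamma(u)$ reflects this structural absence of $\ab$ and the fact that the forcing sits at $L^2(\H_u)$ regularity, so no $2^j$ derivative loss is needed.

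For $\lb\lb b$, starting from $L(b)=-b\db$ in \eqref{D4a} and commuting twice with $\lb$ produces an $L$-transport equation whose forcing unavoidably contains $\lb\lb(\db)$. Through \eqref{D4tmu3} and one further $\lb$-differentiation, this brings in $\lb(\rho)$, which by \eqref{bianc3} equals $-\divb\bb-\half\hch\c\ab+2\xib\c\b+(\kep-2\z)\c\bb$, and here the $\hch\c\ab$ term \emph{cannot} be removed by substitution as in the $\lb\lb\trc$ case, since there is no analogous transport identity expressing $\rho$ in terms of quantities without $\lb$-second-derivatives of $b$. Since $\ab$ is only controlled in $\lsit{\infty}{2}$ and not in $\li{\infty}{2}$, the product $|\hch\c\ab|^2$ fails to integrate in $t$ over $\H_u$, which forces the weaker $\tx{\infty}{2}$ norm in the conclusion and, via \eqref{curvflux1} and \eqref{esthch}, contributes the extra $2^{j/2}\ep\gamma(u)$ on top of the initial data contribution. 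The main obstacle of the whole theorem is precisely the critical estimation of $P_j(\hch\c\ab)$ in $\tx{\infty}{2}$: one must combine $\norm{\hch}_{\xt{2}{\infty}}\lesssim\ep$ from \eqref{esthch} with $\norm{\ab}_{\lsit{\infty}{2}}\lesssim\ep$ and use LP-based product estimates at the critical regularity. A parallel systematic difficulty, present in all three estimates, is controlling the $[P_j,L]$ commutator at the critical level while absorbing all remaining terms through the bounds of Theorem \ref{thregx}.
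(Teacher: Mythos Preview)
Your approach to $\ddb_{\lb}\z$ has a genuine gap. Differentiating \eqref{D4eta} by $\lb$ and inserting \eqref{bianc1bis} for $\ddb_{\lb}\b$ puts $\nabb\rho+(\nabb\s)^*$ into the forcing, and these terms are \emph{not} in $L^2(\H_u)$: only $\rho,\s\in L^2(\H_u)$ is available from \eqref{curvflux1}, so $\|P_j(\nabb\rho)\|_{\lh{2}}\lesssim 2^j\ep$ and the transport argument yields at best $\|P_j\ddb_{\lb}\z\|_{\lh{2}}\lesssim 2^j\ep$, missing \eqref{estlbzeta} by a full factor of $2^j$. The paper obtains the sharper bound by an entirely different mechanism: it differentiates the \emph{elliptic} identities \eqref{diveta}--\eqref{curleta} by $\lb$ to produce a Hodge system $\dcal(\ddb_{\lb}\z)=(\half b^{-1}\mu_1+\divb\bb+\ldots,\,-\curlb\bb+\ldots)$, and it is the elliptic gain $\|P_j\dcal^{-1}\|\lesssim 2^{-j}$ that converts a $2^j\ep$ bound on $\dcal(\ddb_{\lb}\z)$ into the desired $\ep$ bound on $\ddb_{\lb}\z$.

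This in turn reveals two further issues you overlook for $\lb\lb\trc$. First, the Hodge system for $\ddb_{\lb}\z$ contains $\mu_1=b\lb\mu$, while the $L$-transport equation for $\mu_1$ unavoidably contains $\hch\c\nabb\widehat{\otimes}\ddb_{\lb}\z$ (there is no substitution that eliminates it), so the two estimates are genuinely \emph{coupled} and must be closed by a bootstrap; your $\mathcal F_1$ cannot consist only of quantities already controlled by Theorem~\ref{thregx}. Second, estimating $P_j\!\int_0^t \hch\c\nabb\widehat{\otimes}\ddb_{\lb}\z$ at the critical regularity produces a frequency sum of the type $\sum_{l,q}2^{j-\frac{|q-l|}{2}}\gamma_q^{(1)}\gamma_l^{(2)}$ with $\gamma^{(1)}\in\ell^2,\gamma^{(2)}\in\ell^\infty$, which diverges; the paper resolves this with the Besov improvement $\|\nabb\trc\|_{\BB^0}\lesssim\ep$ and the structural decomposition $\nabb(n\hch)=\ddb_{nL}P+E$ with $\no(P)+\|E\|_{\PP^0}\lesssim\ep$, neither of which appears in your outline. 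Your treatment of $\lb\lb b$ is correct in spirit: the $\hch\c\ab$ term is indeed unavoidable there and accounts for the weaker $\tx{\infty}{2}$ norm and part of the $2^{j/2}\ep\gamma(u)$ contribution.
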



The following theorem investigates the regularity with respect to the parameter $\o\in\S$.
\begin{theorem}\lab{thregomega}
Assume that $u$ is the solution to the eikonal equation $\gg^{\alpha\beta}\partial_\alpha u\partial_\beta u=0$ on $\mathcal{M}$ such that $u$ is prescribed on $\Si$ as in section \ref{sec:choiceinitdata} where it satisfies in particular \eqref{estinit} and \eqref{estinit:bis}. Assume also that the estimate \eqref{curvflux1} is satisfied. Then, we have the following estimates:
\be\lab{estNomega}
\norm{\po N}_{\lh{\infty}}\lesssim 1,
\ee
\be\lab{estricciomega}
\norm{\dd\po N}_{\xt{2}{\infty}}+\norm{\po b}_{\lh{\infty}}+\norm{\nabb\po b}_{\xt{2}{\infty}}+\norm{\po\chi}_{\xt{2}{\infty}}+\norm{\po\z}_{\xt{2}{\infty}}\lesssim \ep,
\ee
and
\be\lab{estricciomegabis}
\norm{P_j\ddb_{\lb}\Pi(\po\chi)}_{\tx{p}{2}}\lesssim 2^j\ep+2^{\frac{j}{2}}\ep\gamma(u),
\ee
where $p$ is any real number such that $2\leq p<+\infty$, and where $\gamma$ is a function of $L^2(\mathbb{R})$ satisfying $\norm{\gamma}_{L^2(\mathbb{R})}\leq 1$.\\

Also, we have the following decomposition for $\hch$:
\be\lab{dechch}
\hch=\chi_1+\chi_2,
\ee
where $\chi_1$ and $\chi_2$ are two symmetric traceless $\ptu$-tangent 2-tensors satisfying: 
\be\lab{dechch1}
\norm{\po\chi_1}_{\tx{\infty}{2}}+\no(\chi_2)+\norm{\ddb_{\lb}\chi_2}_{\lh{2}}+\norm{\chi_2}_{\xt{\infty}{2}}+\norm{\po\chi_2}_{\tx{\infty}{2}}\lesssim \ep
\ee
and for any $2\leq p<+\infty$, we have:
\be\lab{dechch2}
\norm{\nabb\chi_1}_{\tx{\infty}{2}}+\norm{\chi_1}_{\tx{p}{\infty}}+\norm{\po\chi_2}_{\tx{p}{4_-}}+\norm{\po\chi_2}_{\lh{6_-}}+\norm{\nabb\po\chi_2}_{\lh{2}}\lesssim \ep.
\ee
Furthermore, for any $2\leq q<4$, we have:
\be\lab{dechch3}
\norm{\ddb_L\chi_1}_{\tx{\infty}{2}+\tx{2}{q}}+\norm{\ddb_{\lb}\chi_1}_{\tx{\infty}{2}+\tx{2}{q}}\lesssim \ep.
\ee

Finally, let $\o$ and $\o'$ in $\S$. Then, there holds the following lower bound
\be\lab{ad1}
|N(., \o)-N(., \o')|\gtrsim |\o-\o'|.
\ee
\end{theorem}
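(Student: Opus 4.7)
The plan is to propagate the $\omega$-regularity of $u$ established at $t=0$ (quantified by $\II$ and $\Ij$ in \eqref{defI0}--\eqref{defI0j}) forward in time by commuting $\po$ with the null transport and Hodge-type structure equations of the $u$-foliation. The key technical point is that $\po$ commutes with coordinate derivatives in $(t,x)$, but not with the geometric operators $\nabb$, $\ddb_L$, $\ddb_{\lb}$, since $L$, $\lb$ and the frame $(e_A)$ all depend on $\omega$; the commutators $[\po, L]$, $[\po,\lb]$ and $[\po,\nabb]$ produce tangential vector fields involving $\po N$ and $\po b$, whose control is the essential prerequisite for everything else.

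I would first establish \eqref{estNomega}. Differentiating the eikonal equation $\gg^{\alpha\beta}\pr_\alpha u \pr_\beta u = 0$ with respect to $\omega$ gives a linear transport equation along the null generators for $\po u$, and then via \eqref{it3bis} and $L' = -\gg^{\alpha\beta}\pr_\beta u \pr_\alpha$ yields a transport equation of the schematic form $\ddb_L(\po N) = F(\chi, \kep, \po N, \ldots)$, whose right-hand side is tangent to $\ptu$ and controlled in $\tx{1}{\infty}$ by Theorem \ref{thregx}. Integration along null geodesics from $\Si$, where $\|\po N(0,\cdot)\|_{L^\infty(\Si)}\les\ep$ by \eqref{defI0}, yields the claimed $\lh{\infty}$ bound. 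Iterating the scheme on $\dd(\po N)$, $\po b$, $\nabb\po b$, $\po\chi$, $\po\z$ using the $\po$-commuted versions of \eqref{D4a}, Raychaudhuri \eqref{D4trchi}, Codazzi \eqref{Codaz} and the Hodge--transport system \eqref{diveta}--\eqref{curleta} yields \eqref{estricciomega}; the $\xt{2}{\infty}$ norms come from sharp trace estimates, and commutator terms are absorbed using the bounds on $\po N$ and $\po b$ obtained at the previous stage.

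For \eqref{estricciomegabis} I would apply the geometric Littlewood--Paley projections $P_j$ from \cite{LP} to the $\po$-commuted Codazzi system for $\hch$, further commuted with $\ddb_{\lb}$. The $2^j\ep$ piece arises from the high-frequency contribution of the source (in particular the $\po\b$-type pieces, handled via the curvature flux $\rf$), while the square-summable piece $2^{j/2}\ep\gamma(u)$ originates from the initial data term $\|P_j(\nabb_N\Pi(\po\chi))(0,\cdot)\|_{L^2(\Si)}\les 2^{j/2}\ep$ of \eqref{estinit:bis} transported along $L$. The decomposition \eqref{dechch} is then obtained in the spirit of \cite{KR:Annals}: using the Codazzi equation \eqref{Codaz}, one sets $\chi_1$ to be the $\hch$-component produced by solving the Hodge system whose source is the non-curvature part $\frac{1}{2}(\nabb\trc + \kep\,\trc)$, which enjoys elliptic gain over $\trc$, $\nabb\trc$ and $\kep$ controlled by \eqref{esttrc}--\eqref{estk} and their $\po$-commuted analogues; $\chi_2 := \hch - \chi_1$ then solves a Hodge system whose source is the curvature component $-\b$ alone, so that a transport equation for $\ddb_L\chi_2$ obtained from the null Bianchi identity \eqref{bianc1} gives the improved bounds $\no(\chi_2)$ and $\|\ddb_\lb\chi_2\|_{\lh{2}}$. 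The bounds \eqref{dechch1}--\eqref{dechch3} then follow by combining elliptic $L^p(\ptu)$ estimates for Hodge systems with the transport estimates along $L$ and $\lb$, and by distributing the $\po$ derivative onto the respective pieces according to which regularity ($(t,x)$ vs $\omega$) each one carries.

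Finally, for the non-degeneracy \eqref{ad1}, the reduction is to $\Si$: the prescription of $u(0,\cdot,\omega)$ in \cite{param1}, together with the asymptotic behavior $u(0,x,\omega)\sim x\cdot\omega$ as $|x|\to\infty$ and the smallness \eqref{estinit}, ensures that $N(0,x,\omega)=-\nabla u(0,x,\omega)/|\nabla u(0,x,\omega)|$ is uniformly close to $\omega$, so that the lower bound $|N(0,x,\omega)-N(0,x,\omega')|\gtrsim|\omega-\omega'|$ holds on $\Si$. Propagation to $0<t\le1$ is then achieved by integrating along null generators a transport equation for $N(\cdot,\omega)-N(\cdot,\omega')$, whose coefficients are built from the Ricci coefficients controlled by Theorem \ref{thregx} and are hence small, so that the initial non-degeneracy is preserved. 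The main obstacle in the whole scheme is the coupling of \eqref{dechch}--\eqref{dechch3} with \eqref{estricciomegabis}: the commutators $[\po,\ddb_\lb]$ applied to $\chi$ produce borderline terms requiring delicate use of the geometric Littlewood--Paley calculus of \cite{LP}, sharp trace theorems, and the specific algebraic structure of the Einstein vacuum equations in order to avoid the log-divergences advertised in the introduction.
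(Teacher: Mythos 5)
Your treatment of \eqref{estNomega}--\eqref{estricciomega} (transport equations for $\po N$, $\po b$, $\po\chi$, $\po\z$ obtained by $\po$-differentiating the Ricci and null structure equations, with data from \eqref{estinit}) is essentially the paper's argument, and your choice of the decomposition $\chi_1=\mathcal{D}_2^{-1}(\half\nabb\trc-\kep\c\chi)$, $\chi_2=-\mathcal{D}_2^{-1}\b$ is the one used in the paper. The serious gap is in the $\o$-regularity of $\chi_2$. Differentiating in $\o$ gives, schematically, $\po\chi_2=-\mathcal{D}_2^{-1}\Pi(\po\b)+\mathcal{D}_2^{-1}[\Pi\po,\mathcal{D}_2]\chi_2$; since $\po L=\po N$, the term $\po\b$ contains $(\po N)\c\a$, and the commutator (cf.\ \eqref{commo7}) contains $\ddb_N(\chi_2)_{\c\,\po N}$. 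The component $\a$ is controlled only in the flux norm $\lh{2}$ along $\H_u$, with no better time integrability, so ``distributing the $\po$ derivative'' and invoking elliptic gain of $\mathcal{D}_2^{-1}$ cannot produce the norms $\norm{\po\chi_2}_{\tx{\infty}{2}}$ and $\norm{\po\chi_2}_{\tx{p}{4_-}}$ ($p$ arbitrarily large) claimed in \eqref{dechch1}--\eqref{dechch2}. The paper's proof rests on an exact cancellation: computing $\ddb_N\chi_2$ from the $\ddb_{bN}$-differentiated Codazzi equation combined with the Bianchi identities \eqref{bianc1}--\eqref{bianc2} and the transport equations \eqref{D4chi}, \eqref{D3chi}, one finds $\ddb_N\chi_2=-\a+(\textrm{controllable})$, and the two $\a$-contributions in $\po\chi_2$ cancel identically (see \eqref{po54}); only then are the remaining terms estimated. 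Your sketch never identifies this structural cancellation, and without it the estimate fails. Two smaller points: $\norm{\chi_1}_{\tx{p}{\infty}}\les\ep$ is borderline in two dimensions and does not follow from $\nabb\trc\in\li{\infty}{2}$ plus elliptic gain -- the paper needs the Besov improvement $\norm{\nabb\trc}_{\BB^0}\les\ep$ of \eqref{esttrcbesov} together with sharp Bernstein inequalities; and $\no(\chi_2)+\norm{\ddb_{\lb}\chi_2}_{\lh{2}}$ is obtained in the paper simply from $\chi_2=\hch-\chi_1$ and the bounds on $\hch$ and $\chi_1$, not from a Bianchi transport equation, though your route there is not unreasonable.

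The proof you propose for the lower bound \eqref{ad1} also has genuine gaps. At $t=0$, closeness of $N(0,x,\o)$ to $\o$ up to $O(\ep)$ only gives $|N-N'|\geq|\o-\o'|-O(\ep)$, which is vacuous for $|\o-\o'|\les\ep$; what is needed is a derivative statement, namely $g\big(\po N,\frac{\o-\o'}{|\o-\o'|}\big)\geq 1-O(\ep)-O(|\o-\o'|)$ uniformly along the arc. For $t>0$, there is no transport equation for $N(\c,\o)-N(\c,\o')$ with pointwise-small coefficients: the two normals are adapted to different foliations, and the Ricci coefficients that would enter any such equation ($\hch$, $\z$, \ldots) are controlled only in trace norms such as $\xt{2}{\infty}$, not in $L^\infty$, so a geodesic-by-geodesic Gronwall argument cannot preserve a pointwise lower bound. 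The paper instead proves $\norm{g(\po N(\c,\o''),N_1)-1}_{L^\infty}\les\ep+|\o-\o'|$ for every intermediate angle $\o''$, with $N_1=N(\c,\frac{\o-\o'}{|\o-\o'|})$, by combining the embedding \eqref{linftybound} with the Besov bound $\norm{\nabb\po N}_{\BB^0}\les\ep$ of \eqref{pope}, the splitting $N_1=Q_{\leq1}N_1+Q_{>1}N_1$ on $\Sigma_t$ together with Lemmas \ref{lemma:ad2}--\ref{lemma:ad4}, and a transport estimate whose initial contribution comes from \cite{param1}; integrating $\po N$ over the arc then yields \eqref{ad1}. This upgrade from integrated transport bounds to an $L^\infty$ statement via Besov spaces is the essential ingredient missing from your propagation step.
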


\begin{remark}
Notice from \eqref{dechch1} that $\chi_1$ and $\chi_2$ have at least the same regularity as $\hch$. Now, the point of the decomposition \eqref{dechch} is that both $\chi_1$ and $\chi_2$ have better regularity properties than $\hch$. Indeed, in view of \eqref{dechch2}, $\chi_1$ has better regularity with respect to $(t,x)$ while $\chi_2$ has better regularity with respect to $\o$.
\end{remark}

\begin{remark}
Let $\o$ and $\o'$ in $\S$. The estimate \eqref{estNomega} for $N$ yields the following upper bound for $N(\c, \o)-N(\c, \o')$:
$$|N(., \o)-N(., \o')|\les |\o-\o'|.$$
Note that \eqref{ad1} establishes the corresponding lower bound. 
\end{remark}

Finally, the following theorem contains estimates for second order derivatives with respect to $\o$.
\begin{theorem}\lab{thregomega2}
Assume that $u$ is the solution to the eikonal equation $\gg^{\alpha\beta}\partial_\alpha u\partial_\beta u=0$ on $\mathcal{M}$ such that $u$ is prescribed on $\Si$ as in section \ref{sec:choiceinitdata} where it satisfies in particular \eqref{estinit} and \eqref{estinit:bis}. Assume also that the estimate \eqref{curvflux1} is satisfied. Then, we have the following estimates:
\be\lab{estNomega2}
\norm{\po^2 N}_{\xt{2}{\infty}}\lesssim 1,
\ee
\be\lab{estNomega2bis}
\norm{\ddb_{L}\po^2 N}_{\lh{2}}\lesssim \ep.
\ee
\be\lab{estpo2b}
\norm{\po^2b}_{\tx{\infty}{2}}\les\ep,
\ee
\be\lab{estricciomega2}
\norm{P_j\Pi(\po^2\chi)}_{\tx{\infty}{2}}\lesssim 2^j\ep,
\ee
and
\be\lab{estricciomega3}
\norm{P_j\ddb_{\lb}\Pi(\po^2 N)}_{\tx{p}{2}}+\norm{P_j\Pi(\po^2\z)}_{\tx{p}{2}}\lesssim 2^j\ep+2^{\frac{j}{2}}\ep\gamma(u),
\ee
where $p$ is any real number such that $2\leq p<+\infty$, and where $\gamma$ is a function of $L^2(\mathbb{R})$ satisfying $\norm{\gamma}_{L^2(\mathbb{R})}\leq 1$.
\end{theorem}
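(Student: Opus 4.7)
The plan is to derive commuted equations at the second $\omega$-derivative level from the Ricci formulas \eqref{ricciform} and the null structure equations of Section \ref{sec:nullstructeq}, bound the resulting forcing terms using the first-order $\omega$-estimates of Theorem \ref{thregomega} (together with Theorems \ref{thregx} and \ref{thregx1}), and close via integration along $L$ with the initial data bounds \eqref{estinit} and \eqref{estinit:bis}. The geometric Littlewood--Paley projections $P_j$ of \cite{LP} are essential both to handle the low regularity of $\chi$ and $\z$ and to produce the dyadic factor $2^j\ep$ in \eqref{estricciomega2} and \eqref{estricciomega3}.

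I would start with \eqref{estNomega2} and \eqref{estNomega2bis}. Since $N=-\nabla u/|\nabla u|$ and $L=T+N$, applying $\po$ to the Ricci relation $\dd_A e_4=\chi_{AB}e_B-\kep_A e_4$ and projecting tangentially shows that $\po N$ satisfies a transport equation along $L$ whose right-hand side is polynomial in $\chi,\kep,\po N$ and already controlled through Theorem \ref{thregomega}. Commuting once more with $\po$ gives a transport equation of the form
\[
\ddb_L \po^2 N = \mathcal{F}(\po^2\chi,\po^2\kep,\po\chi,\po N,\po^2 N,\ldots),
\]
together with terms in $\po^2 b$ coming from $L=T+N$. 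The initial-slice bound $\|\po^2 N(0,\cdot)\|_{L^\infty_u L^2(P_{0,u})}\lesssim\ep$ from \eqref{estinit}, combined with a Gr\"onwall argument on the $L^2_{x'}$-norm of $\po^2 N$, yields \eqref{estNomega2}, while taking the $\lh{2}$-norm of the transport equation and absorbing the linear $\po^2 N$ term via Theorem \ref{thregomega} gives \eqref{estNomega2bis}. For \eqref{estpo2b}, apply $\po^2$ to \eqref{D4a} using $[\po,L]=(\po N)\cdot\nabla$ on scalars, obtaining schematically
\[
L(\po^2 b) + 2(\po N)\cdot\nabla(\po b) + (\po^2 N)\cdot\nabla b = -\po^2(b\db),
\]
and control the right-hand side term by term via Theorems \ref{thregx} and \ref{thregomega}, the commutator involving $\po^2 N$ via \eqref{estNomega2}, and $\|\po^2 b(0,\cdot)\|_{L^\infty_u L^2(P_{0,u})}\lesssim\ep$ from \eqref{estinit}.

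For the frequency-localized bounds \eqref{estricciomega2} and \eqref{estricciomega3}, the strategy is to commute $\po^2$ and then $P_j$ with the elliptic--transport system \eqref{D4trchi}, \eqref{Codaz}, \eqref{D4eta} (and the $\lb$-analog \eqref{D3trc}, \eqref{D3chih}), producing equations for $P_j\Pi(\po^2\chi)$, $P_j\ddb_{\lb}\Pi(\po^2N)$, and $P_j\Pi(\po^2\z)$. The bulk curvature contribution produces the $2^j\ep$ loss through the finite-band property of the $P_j$ of \cite{LP}, while the initial-slice contribution, bounded by $\Ij$ via \eqref{estinit:bis}, produces the $2^{j/2}\ep\gamma(u)$ term in \eqref{estricciomega3}, with $\gamma$ coming from the dyadic decomposition of the initial-data norm on $\Sigma_0$.

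The main obstacle is that two $\omega$-differentiations naively demand $\omega$-differentiating the Bianchi identities \eqref{bianc1}--\eqref{bianc6}, which is impossible since \eqref{curvflux1} controls only undifferentiated curvature. The resolution is to reorganize every right-hand side so that each occurrence of $\po\rr$ or $\po^2\rr$ is hidden inside an $\ddb_L$- or $\ddb_{\lb}$-derivative of a $\po$- or $\po^2$-quantity already estimated in Theorem \ref{thregomega}, with the remaining log-divergences absorbed through the Littlewood--Paley trichotomy, the sharp trace embeddings of Section \ref{sec:calcineqgeneral}, and the Bernstein-type inequalities of \cite{LP}. It is also essential to keep $\ab$ out of every right-hand side used in the argument; the sole exception remains the transport equation \eqref{lb1} for $\lb\lb(b)$, which is ultimately responsible for the $\gamma(u)$ contribution appearing in \eqref{estricciomega3}.
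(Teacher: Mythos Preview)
Your treatment of \eqref{estNomega2}, \eqref{estNomega2bis} and \eqref{estpo2b} is essentially what the paper does: differentiate the Ricci formula for $\dd_L(\po N)$ and the transport equation \eqref{D4a} once more in $\o$, and close by Gr\"onwall along $L$ using Theorems \ref{thregx} and \ref{thregomega} together with the initial data bounds in $\II$.

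For \eqref{estricciomega2} your plan diverges from the paper in a way that creates a gap. The paper does \emph{not} commute $\po^2$ with the Codazzi or Raychaudhuri equations. Instead it differentiates the Ricci identity $\dd_A e_4=\chi_{AB}e_B-\kep_A e_4$ twice in $\o$, producing a purely algebraic relation (Lemma \ref{lemma:poo1}, equation \eqref{poo2}) which after projection reads
\[
\nabb\Pi(\po^2 N)=\Pi(\po^2\chi)+\widetilde F,\qquad \|\widetilde F\|_{\tx{\infty}{4/3}}\lesssim\ep.
\]
The bound \eqref{estricciomega2} is then immediate from the finite band property and \eqref{estNomega2}. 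Your alternative via \eqref{Codaz} would require $\nabb\po^2\trc\in L^2$, which is one derivative beyond what is available; via the transport equation \eqref{D4chi} you would recover an $L^2_{x'}L^\infty_t$ estimate rather than the $\tx{\infty}{2}$ estimate asserted in \eqref{estricciomega2}.

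For \eqref{estricciomega3} the paper does use the transport route (Lemma \ref{lemma:poo2}), but two ingredients you omit are essential. First, the top-order coupling $-\chi\cdot\Pi(\po^2\z)$ on the right-hand side is absorbed by multiplying by the integrating factor $M$ of Lemma \ref{lemma:poo3}, solving $\ddb_L M=M\chi$; without this, the equation does not close at the required regularity. Second, the equation contains the term $-\tfrac12|\po N|^2\ddb_{\lb}\z$, which is handled through Lemma \ref{lemma:lbt3} and the estimate \eqref{estlbzeta} of Theorem \ref{thregx1}; \emph{this} is the origin of the $2^{j/2}\ep\gamma(u)$ in \eqref{estricciomega3}, together with the initial data $\Ij$ via \eqref{estinit:bis}. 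The estimate for $P_j\ddb_{\lb}\Pi(\po^2 N)$ then follows from the $\po^2\z$ bound via the algebraic formula \eqref{poo3}. The equation \eqref{lb1} for $\lb\lb(b)$ plays no role here.

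Finally, your ``main obstacle'' is misdiagnosed. Since $\rr$ is independent of $\o$, applying $\po$ to a null curvature component is a purely algebraic reshuffling of the frame (see \eqref{po3}, \eqref{po4}, \eqref{poo22}) and never forces you to differentiate the Bianchi identities. The genuine obstruction, as the paper notes after the statement of Theorem \ref{thregomega2}, is that the commutator $[\po,L]=\nabb_{\po N}$ costs one tangential derivative per $\po$, which is why $\po^2\chi$ can only be placed through its algebraic link to $\nabb\po^2 N$.
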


\begin{remark}
Our assumption on curvature \eqref{curvflux1} is critical with respect to the control of the Eikonal equation as can be seen throughout the paper where numerous log-losses are barely overcome. In order to prove Theorem \ref{thregx}, Theorem \ref{thregx1}, Theorem \ref{thregomega}, and Theorem \ref{thregomega2} we will rely in particular on the null transport equations and the elliptic systems of Hodge type on $\ptu$ of section \ref{sec:nullstructeq}, the geometric Littlewood-Paley theory of \cite{LP}, sharp trace theorems, and an extensive use of the crucial structure of the Bianchi identities \eqref{bianc1}-\eqref{bianc6}. 
\end{remark}

\begin{remark}
The regularity with respect to $(t,x)$ for $u$ is clearly limited as a consequence of the fact that we only assume $L^2$ bounds on $\rr$. On the other hand, $\rr$ is independent of the parameter $\o$, and one might infer that $u$ is smooth with respect to $\o$. Surprisingly, this is not at all the case. Indeed, we are even not able to go beyond estimates for the second order derivatives with respect to $\o$ which are given in Theorem \ref{thregomega2}. This is due to the fact that we rely in a fundamental way on the null transport equations of Proposition  \ref{prop:nulltransporteq}. Now, the commutator between $L$ and $\o$ gives rise to a tangential derivative with respect to $\ptu$ (see \eqref{commo1}) for which we have less control. This leads to a loss of one derivative for each derivative taken with respect to $\o$ for all quantities estimated through transport equations. This is best seen by comparing the estimate \eqref{esttrc} \eqref{esthch} for $\chi$, the estimate \eqref{estricciomega} for $\po\chi$ and the estimate \eqref{estricciomega2} for $\po^2\chi$.
\end{remark}

\subsection{Dependance of the norm $L^\infty_u\lh{2}$ on $\o\in\S$}\lab{sec:obadidonc}

Let $\o$ and $\o'$ in $\S$ such that 
$$|\o-\o'|\les 2^{-\frac{j}{2}}.$$ 
Let $u=u(., \o)$ and $u'=u(., \o')$. In this section, we compare the norm in $\li{\infty}{2}$ with the norm in $L^\infty_{u'}L^2(\H_{u'})$ for various scalars and tensors, relying on the estimates of the previous section. Let us first stress the difficulty by considering the decomposition for $\trc$ in Proposition \ref{cor:so} below. A naive approach consists in writing the following decomposition 
$$\trc(t,x,\o)=\trc(t,x,\o')+(\trc(t,x,\o)-\trc(t,x,\o'))=f_1^j+f_2^j.$$
$f^j_1$ does not depend on $\o$ and satisfies, in view of the estimate \eqref{esttrc}
$$\norm{f^j_1}_{L^\infty}\les \norm{\trc(.,\o')}_{L^\infty}\les\ep.$$
Also, we have
$$f^j_2=(\o-\o')\int_0^1\po\trc(t,x,\o_\sigma)d\sigma,$$
which together with the fact that $|\o-\o'|\les 2^{-\frac{j}{2}}$ yields
$$\norm{f^j_2}_{\li{\infty}{2}}\les 2^{-\frac{j}{2}}\normm{\int_0^1\po\trc(t,x,\o_\sigma)d\sigma}_{\li{\infty}{2}}.$$
Unfortunately, we can not obtain the desired estimate for $f^j_2$ since we have $\po\trc(.,\o_\sigma)\in L^\infty_{u_\sigma}L^2(\H_{u_\sigma})$, and $\li{\infty}{2}$ and $L^\infty_{u_\sigma}L^2(\H_{u_\sigma})$ are not directly comparable. Nevertheless, relying on the geometric Littlewood-Paley projections of \cite{LP}, on well-suited coordinate systems, and on various commutator estimates, we are able to improve on this naive approach in order to obtain the decompositions below.
 
\begin{proposition}\lab{cor:xx1bis}
Let $\o$ and $\o'$ in $\S$ such that $|\o-\o'|\les 2^{-\frac{j}{2}}$. Let $N=N(.,\o)$ and $N'=N(.,\o')$. For any $j\geq 0$, we have the following decomposition for $N-N'$:
$$N-N'=(F^j_1+F^j_2)(\o-\o')$$
where $F^j_1$ only depends on $\o'$ and satisfies:
$$\norm{F^j_1}_{L^\infty}\les 1,$$
and where $F^j_2$ satisfies:
$$\norm{F^j_2}_{L^\infty_u\lh{2}}\les 2^{-\frac{j}{2}}.$$
\end{proposition}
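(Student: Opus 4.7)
The plan is to perform a first-order Taylor expansion of $N(\c,\o)$ about $\o'$ and absorb the higher-order remainder into $F^j_2$. Let $\sigma\mapsto\o_\sigma$ denote the minimizing geodesic on $\S$ with $\o_0=\o'$ and $\o_1=\o$, with tangent $\dot\o_\sigma$ of size comparable to $|\o-\o'|\les 2^{-j/2}$. The fundamental theorem of calculus gives
$$N-N'=\int_0^1\po N(\c,\o_\sigma)\,\dot\o_\sigma\,d\sigma.$$
I would then set $F^j_1:=\po N(\c,\o')$, which depends only on $\o'$, and define $F^j_2$ by
$$F^j_2(\o-\o')=\int_0^1\bigl(\po N(\c,\o_\sigma)\,\dot\o_\sigma-\po N(\c,\o')\,(\o-\o')\bigr)\,d\sigma,$$
so that the identity $N-N'=(F^j_1+F^j_2)(\o-\o')$ holds.

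The bound $\norm{F^j_1}_{L^\infty}\les 1$ is then an immediate consequence of the estimate $\norm{\po N}_{\lh{\infty}}\les 1$ in \eqref{estNomega}, which is uniform in $u$ and so gives a pointwise bound on the compact region $\widetilde U\subset\mathcal{M}$ on which we work (Remark \ref{rem:noprobleminf}). For $F^j_2$, the integrand vanishes at $\sigma=0$, and a further Taylor expansion in $\sigma$ using $\po^2 N$ yields the pointwise bound
$$|F^j_2|\les|\o-\o'|\sup_{\sigma\in[0,1]}|\po^2 N(\c,\o_\sigma)|.$$
Combined with $|\o-\o'|\les 2^{-j/2}$ and the estimate $\norm{\po^2 N}_{\xt{2}{\infty}}\les 1$ from \eqref{estNomega2}, together with the observation that $\xt{2}{\infty}$ controls $\lh{2}$ on $\widetilde U$ (through a change of coordinates from $P_{t,u}$ to $P_{0,u}$ whose Jacobian is bounded thanks to the no-caustics statement of Theorem \ref{thregx} and the bounded volume of $P_{0,u}$), this should produce the desired bound $\norm{F^j_2}_{L^\infty_u\lh{2}}\les 2^{-j/2}$.

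The main obstacle I anticipate is the foliation mismatch. The estimate \eqref{estNomega2} controls $\po^2 N(\c,\o_\sigma)$ relative to the $\o_\sigma$-foliation, whereas $\norm{F^j_2}_{L^\infty_u\lh{2}}$ must be taken on a single $\sigma$-independent foliation (presumably that of $\o'$). To transfer the $\xt{2}{\infty}$ bound uniformly in $\sigma$, I would exploit the quantitative closeness $|N(\c,\o_\sigma)-N(\c,\o')|\les|\o-\o'|\les 2^{-j/2}$ coming from \eqref{estNomega}, which forces the change of coordinates between the $\H_{u(\c,\o_\sigma)}$- and $\H_{u(\c,\o')}$-foliations and the associated Jacobians to be uniformly close to the identity on $\widetilde U$. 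This is precisely the kind of comparison that Section \ref{sec:obadidonc} is designed to develop, so some care will be required to arrange the estimates so that the present proposition and the foliation-comparison machinery are established in a consistent order without circular reasoning.
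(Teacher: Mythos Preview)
Your decomposition is precisely the ``naive approach'' that the paper warns against at the start of Section~\ref{sec:obadidonc}. You correctly isolate the foliation mismatch as the obstacle, but your proposed resolution does not go through: to transfer $\|\po^2 N(\c,\o_\sigma)\|$ from the $\o_\sigma$-foliation to the $\o$-foliation via either Lemma~\ref{lemma:xx2} or Lemma~\ref{lemma:xx8} you would need $\dd(\po^2 N)$ or $\nabb(\po^2 N)$ in $L^\infty_u L^2(\H_u)$. These bounds are \emph{not} available. Indeed, by \eqref{poo29} one has $\nabb\Pi(\po^2 N)=\Pi(\po^2\chi)+\tilde F$, and \eqref{estricciomega2} only gives $\|P_j\Pi(\po^2\chi)\|_{\tx{\infty}{2}}\les 2^j\ep$, which does not sum; likewise \eqref{estricciomega3} gives only the frequency-localized bound $\|P_j\ddb_{\lb}\Pi(\po^2 N)\|_{\tx{p}{2}}\les 2^j\ep+2^{j/2}\ep\gamma(u)$. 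So the half-derivative cost of any foliation transfer cannot be paid, and your control of $F^j_2$ via a second-order Taylor remainder fails.

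The paper's proof takes a genuinely different route. It applies Corollary~\ref{cor:xx1} to $F=\po N$: the main piece $F^j_1$ is \emph{not} $\po N(\c,\o')$ but (after scalarization via Lemma~\ref{lemma:xx7}) the Littlewood--Paley truncation $P'_{\leq j/2}(\po N(\c,\o'))$. This truncation is the key idea: the remainder $F^j_2$ then splits into a high-frequency piece $\sum_{l>j/2}P''_l(\po N(\c,\o''))$, controlled via Lemma~\ref{lemma:xx3} using only $\dd\po N\in L^2$ (which \emph{is} available from \eqref{estricciomega}), and a $\po$-variation piece controlled via Lemma~\ref{lemma:xx4} using only the frequency-localized estimates on $P_q(L\po^2 N)$ and $P_q(\lb\po^2 N)$ from \eqref{estNomega2bis}, \eqref{estricciomega3}. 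The point is that the LP truncation at level $j/2$ converts the unavailable full $L^2$ bound on $\dd\po^2 N$ into the available bound $\sum_{q\leq j/2}2^q\les 2^{j/2}$, which is then absorbed by the factor $|\o-\o'|\les 2^{-j/2}$. Your worry about circularity is unfounded: Proposition~\ref{prop:xx1} and Corollary~\ref{cor:xx1} are proved first and do not invoke Proposition~\ref{cor:xx1bis}.
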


\begin{proposition}\lab{cor:so}
Let $\o$ and $\o'$ in $\S$ such that $|\o-\o'|\les 2^{-\frac{j}{2}}$. For any $j\geq 0$, we have the following decomposition for $\trc(.,\o)$:
$$\trc(.,\o)=f^j_1+f^j_2$$
where $f^j_1$ does not depend on $\o$ and satisfies:
$$\norm{f^j_1}_{L^\infty}\les \ep,$$
and where $f^j_2$ satisfies:
$$\norm{f^j_2}_{L^\infty_u\lh{2}}\les 2^{-\frac{j}{2}}\ep.$$
\end{proposition}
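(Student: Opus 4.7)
The natural choice is
$$f_1^j:=\trc(.,\o'),\qquad f_2^j:=\trc(.,\o)-\trc(.,\o'),$$
so that $f_1^j$ manifestly does not depend on $\o$ and the bound $\norm{f_1^j}_{L^\infty}\les\ep$ is just \eqref{esttrc} applied at the parameter $\o'$. Joining $\o'$ to $\o$ by a unit-speed path $\sigma\in[0,1]\mapsto\o_\sigma$ on $\S$, the fundamental theorem of calculus yields
$$f_2^j=(\o-\o')\c\int_0^1\po\trc(.,\o_\sigma)\,d\sigma.$$
Since $|\o-\o'|\les 2^{-j/2}$, the desired bound reduces to
$$\int_0^1\norm{\po\trc(.,\o_\sigma)}_{L^\infty_u\lh{2}}\,d\sigma\les\ep,$$
where the outer norm refers to the $\o$-foliation $\H_u=\H_u(\o)$.

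This is exactly the subtle point flagged in the paragraph preceding the statement: the available estimate $\norm{\po\chi(.,\o_\sigma)}_{\xt{2}{\infty}}\les\ep$ from \eqref{estricciomega} lives on the \emph{intrinsic} null foliation $\{\H_{u_\sigma}(\o_\sigma)\}_{u_\sigma}$ of $\o_\sigma$, whereas the right-hand side is integrated against a different null foliation, namely that of $\o$. The main obstacle is to transfer the control from one foliation to the other without any loss in powers of $2^j$.

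To do so I would fix $\sigma$ and $u$, and parameterize $\H_u(\o)$ by $(t,x')$ where $x'\in P_{0,u}(\o)$ is transported along the null generator $L(\o)$. The Jacobian of this transport map is $\les 1$ by the bounds on $\trc$, $b$ and $\hch$ in \eqref{esttrc}, \eqref{estb}, \eqref{esthch}, so that
$$\norm{\po\trc(.,\o_\sigma)}_{\lh{2}}^2\les\int_{P_{0,u}(\o)}\int_0^1|\po\trc(t,x',\o_\sigma)|^2\,dt\,dx'.$$
For each $x'$, the curve $t\mapsto(t,x')$ is a null geodesic of $\o$; since $|N(.,\o)-N(.,\o_\sigma)|\les|\o-\o_\sigma|\les 2^{-j/2}$ by the upper bound implicit in \eqref{estNomega}, this curve deviates by at most $O(2^{-j/2})$ from the $\o_\sigma$-null geodesic through the companion point $x'_\sigma\in P_{0,u_\sigma}(\o_\sigma)$, where $u_\sigma:=u(0,x',\o_\sigma)$. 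Pulling back to the $\o_\sigma$-foliation and applying the $\xt{2}{\infty}$ trace estimate from \eqref{estricciomega} along the companion geodesic bounds $\sup_t|\po\trc(.,\o_\sigma)|$ in $L^2$ on the transverse slice, with the $O(2^{-j/2})$ deviation absorbed using the control on $\po N$ and $\nabb\po b$ from \eqref{estricciomega}. Integrating in $x'$ and then taking the sup in $u$ gives the uniform-in-$\sigma$ bound $\norm{\po\trc(.,\o_\sigma)}_{L^\infty_u\lh{2}}\les\ep$, which closes the argument. The hard part will be carrying out the foliation-transfer step cleanly, since this is precisely the kind of log-critical comparison where the methods flagged in the introduction (geometric Littlewood--Paley, well-suited coordinate systems, commutator estimates) all have to be combined to avoid losing a power of $2^{j/2}$.
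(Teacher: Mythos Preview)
Your decomposition $f_1^j=\trc(.,\o')$, $f_2^j=\trc(.,\o)-\trc(.,\o')$ is precisely the ``naive approach'' that the paper flags as unworkable in the paragraph opening Section~\ref{sec:obadidonc}, and the gap lies exactly where you place it: the foliation transfer for $\po\trc(.,\o_\sigma)$. Your step ``with the $O(2^{-j/2})$ deviation absorbed using the control on $\po N$ and $\nabb\po b$'' cannot be carried out. To move from the companion $\o_\sigma$-geodesic to the actual $\o$-geodesic through $x'$ you must control the variation of $\po\trc(.,\o_\sigma)$ over a tangential displacement of size $2^{-j/2}$; this requires $\nabb(\po\trc)$ (or more generally $\dd(\po\trc)$) in $L^2(\H_{u_\sigma})$, and no such estimate is available --- the only control is $\po\chi\in L^2_{x'}L^\infty_t$ from \eqref{estricciomega}, with nothing on its tangential derivative. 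Controlling $\po N$ and $\nabb\po b$ governs the Jacobian of the change of variables $x'\mapsto x'_\sigma$, but not the variation of the integrand. Equivalently, the sharp foliation-transfer inequality (Lemma~\ref{lemma:xx8} with $p=2$) reads
\[
\norm{F}_{L^\infty_{u}L^2(\H_{u})}\les \norm{F}^{1/2}_{L^\infty_{u_\sigma}L^2(\H_{u_\sigma})}\norm{\nabb F}^{1/2}_{L^\infty_{u_\sigma}L^2(\H_{u_\sigma})},
\]
and for $F=\po\trc(.,\o_\sigma)$ the second factor is not finite.

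The paper's remedy is to change the decomposition: take $f_1^j=P'_{\leq j/2}(\trc(.,\o'))$, where $P'$ is the Littlewood--Paley projection on the $\o'$-foliation. The proof then goes through Proposition~\ref{prop:xx1}, whose hypotheses for $f=\trc$ are supplied by \eqref{esttrc}, \eqref{estricciomega} and \eqref{estricciomegabis}. The point of the truncation is twofold. First, the high-frequency tail $\sum_{l>j/2}P''_l(\trc(.,\o))$ transfers across foliations via Lemma~\ref{lemma:xx3}, which needs only $\dd\trc\in L^2$ (available from \eqref{esttrc}), and the finite-band property provides the $2^{-j/2}$ gain. Second, for the low-frequency piece one writes $P''_{\leq j/2}(\trc(.,\o))-P'_{\leq j/2}(\trc(.,\o'))$ as an $\o$-integral of $P'''_{\leq j/2}(\po\trc)+[\po,P'''_{\leq j/2}]\trc$; the truncation $P_{\leq j/2}$ supplies exactly the missing tangential regularity (a loss of $2^{j/2}$ via finite band), which is compensated by the $|\o-\o'|^{1/2}\les 2^{-j/4}$ gain in Lemma~\ref{lemma:xx2}/\ref{lemma:xx4}. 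The normal-derivative input for this step is the Besov-type bound \eqref{estricciomegabis} on $P_j\ddb_{\lb}\Pi(\po\chi)$, not any bound on $\nabb\po\trc$. So the Littlewood--Paley smoothing is not an optional refinement but the mechanism that makes the foliation comparison close.
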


\begin{proposition}\lab{cor:so1}
Let $\o$ and $\o'$ in $\S$. Let $p\in\mathbb{Z}$. For any $j\geq 0$, we have the following estimate for $b^p(.,\o)-b^p(,\o')$:
$$\norm{b^p(.,\o)-b^p(.,\o')}_{L^\infty_u\lh{2}}\les |\o-\o'|\ep.$$
\end{proposition}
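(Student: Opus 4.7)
The plan is to apply the fundamental theorem of calculus along a geodesic path in $\S$ joining $\o'$ to $\o$, and then exploit the pointwise bounds on $\po b$ together with the closeness of $b$ to $1$.

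First I would fix a geodesic path $\sigma\mapsto\o_\sigma\in\S$ with $\o_0=\o'$, $\o_1=\o$, and with $|\o_\sigma'|\les |\o-\o'|$. Since $b(t,x,\cdot)$ is differentiable on $\S$, we can write
$$b^p(t,x,\o)-b^p(t,x,\o')=\int_0^1\frac{d}{d\sigma}b^p(t,x,\o_\sigma)\,d\sigma,$$
and the chain rule together with $|\o_\sigma'|\les|\o-\o'|$ yields
$$|b^p(t,x,\o)-b^p(t,x,\o')|\les |\o-\o'|\int_0^1|p|\,|b(t,x,\o_\sigma)|^{|p|-1}|\po b(t,x,\o_\sigma)|\,d\sigma.$$

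Next I would take the $L^2(\H_u)$ norm of both sides, where $\H_u$ is the level set of $u(\cdot,\o)$, and apply Minkowski's inequality to pull the integral in $\sigma$ out of the norm. By \eqref{estb} applied at $\o_\sigma$, $b(\cdot,\o_\sigma)$ stays close to $1$ uniformly in $\o_\sigma$, so $|b|^{|p|-1}$ is bounded by a constant depending only on $p$. Therefore
$$\norm{b^p(\cdot,\o)-b^p(\cdot,\o')}_{L^2(\H_u)}\les|\o-\o'|\int_0^1\norm{\po b(\cdot,\o_\sigma)}_{L^2(\H_u)}\,d\sigma.$$

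The one subtlety is that the assumption \eqref{estricciomega} provides $\norm{\po b(\cdot,\o_\sigma)}_{L^\infty(\H_{u_\sigma})}\les\ep$ with respect to the foliation $\{\H_{u_\sigma}\}$ of $u(\cdot,\o_\sigma)$, not with respect to $\H_u$. However, the remark preceding Theorem~\ref{thregomega} states that all estimates hold uniformly in $\o$, and the remark following guarantees the corresponding supremum in $u_\sigma$ is also uniform. Since the family $\{\H_{u_\sigma}\}_{u_\sigma\in\mathbb{R}}$ foliates $\mathcal{M}$ (restricted to $0\le t\le 1$), this is equivalent to the pointwise bound $|\po b(\cdot,\o_\sigma)|\les\ep$ on the compact region $\widetilde U$ from Remark~\ref{rem:noprobleminf}. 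Combined with the finite volume $|\H_u|\les 1$ on this compact region, we obtain
$$\norm{\po b(\cdot,\o_\sigma)}_{L^2(\H_u)}\les|\H_u|^{1/2}\norm{\po b(\cdot,\o_\sigma)}_{L^\infty(\widetilde U)}\les\ep,$$
uniformly in $\sigma$ and $u$. Plugging this back in and taking the supremum over $u$ delivers the claim. The main (and only) obstacle is the mismatch between the null foliations associated to different values of $\o$, which is cleanly bypassed by converting the hypersurface $L^\infty$ bound into a pointwise bound on the compact region $\widetilde U$.
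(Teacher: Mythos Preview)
Your argument is correct and follows the same opening as the paper: fundamental theorem of calculus along an arc in $\S$, Minkowski, and the $L^\infty$ bound on $b$ to strip off the factor $|b|^{|p|-1}$. The difference lies in how the foliation mismatch for $\norm{\po b(\cdot,\o_\sigma)}_{L^2(\H_u)}$ is handled. You exploit the special feature that $\po b$ is controlled in $L^\infty$ by \eqref{estricciomega}; since $L^\infty$ is foliation-independent, this pointwise bound transfers immediately to $\H_u$, and the $L^2$ norm then follows from the finite-volume convention of Remark~\ref{rem:noprobleminf}. The paper instead invokes Lemma~\ref{lemma:xx8} with $p=2$, which is the systematic trace-type tool developed in Section~\ref{sec:depnormonomega} for comparing $L^2(\H_{u'})$ with $L^2(\H_u)$ norms: it gives $\norm{\po b(\cdot,\o'')}_{L^\infty_u L^2(\H_u)}\les \norm{\po b}_{L^\infty_{u''}L^2(\H_{u''})}^{1/2}\norm{\nabb\po b}_{L^\infty_{u''}L^2(\H_{u''})}^{1/2}$, using only $L^2$-level control on $\po b$ and $\nabb\po b$ in the native foliation. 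Your route is more elementary and perfectly adequate here because $\po b$ happens to lie in $L^\infty$; the paper's route is the general mechanism that also applies to quantities (like $\po\chi$, $\po\z$, $\po\chi_2$) which are only controlled in $L^2$-type spaces and for which your pointwise shortcut would not be available. One small comment: your sentence on finite volume only bounds the contribution from $\H_u\cap\widetilde U$; the contribution from $\H_u\setminus\widetilde U$ is handled by the asymptotic-flatness part of Remark~\ref{rem:noprobleminf}, which you should mention explicitly rather than fold into the volume bound.
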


\begin{lemma}\lab{cor:so2}
Let $\o$ and $\o'$ in $\S$. For any $j\geq 0$, we have the following estimate for $\chi_2(.\o)-\chi_2(.,\o')$:
$$\norm{\chi_2(.\o)-\chi_2(.,\o')}_{L^\infty_u\lh{4_-}}\les |\o-\o'|\ep.$$
\end{lemma}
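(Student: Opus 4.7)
The plan is to apply the fundamental theorem of calculus along a great-circle arc in $\S$ from $\o'$ to $\o$, combined with the estimate on $\po\chi_2$ from \eqref{dechch2} and a comparison between the corresponding angular foliations. Parametrize the shortest great-circle arc $\sigma \in [0,1] \mapsto \o_\sigma$ from $\o_0 = \o'$ to $\o_1 = \o$ so that $|\partial_\sigma \o_\sigma| \les |\o - \o'|$. Pointwise on $\mathcal{M}$ the fundamental theorem of calculus gives
\begin{equation*}
\chi_2(\cdot, \o) - \chi_2(\cdot, \o') = \int_0^1 \partial_\sigma \chi_2(\cdot, \o_\sigma) \, d\sigma,
\end{equation*}
with $|\partial_\sigma \chi_2(\cdot, \o_\sigma)| \les |\o - \o'|\,|\po \chi_2(\cdot, \o_\sigma)|$. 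Applying Minkowski's inequality in $\lh{4_-}$ relative to the $\o$-foliation and then taking $\sup_u$ yields
\begin{equation*}
\norm{\chi_2(\cdot, \o) - \chi_2(\cdot, \o')}_{L^\infty_u\lh{4_-}} \les |\o - \o'| \sup_{\sigma \in [0,1]} \norm{\po\chi_2(\cdot, \o_\sigma)}_{L^\infty_u\lh{4_-}},
\end{equation*}
with the caveat that the right-hand norm is taken along the $\o$-foliation while the integrand $\po\chi_2(\cdot, \o_\sigma)$ lives naturally on the $\o_\sigma$-foliation.

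The next step is to estimate each integrand using the bounds available at $\o_\sigma$. Specializing \eqref{dechch2} with $p = 4_-$ gives $\norm{\po\chi_2(\cdot, \o_\sigma)}_{\tx{4_-}{4_-}} \les \ep$ uniformly in $u$ and $\o_\sigma$. Since the Jacobian of the null-geodesic transport diffeomorphism $\pou \to \ptu$ is bounded above and below by \eqref{esttrc} and \eqref{esthch}, this norm is equivalent to $\norm{\po\chi_2(\cdot, \o_\sigma)}_{L^{4_-}(\H_u(\cdot, \o_\sigma))}$, which is thus $\les \ep$ uniformly in $u$ and $\o_\sigma$.

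The remaining and most delicate step is to transfer this estimate from the $\o_\sigma$-foliation to the $\o$-foliation. Integrating \eqref{estNomega} along the arc gives $|N(\cdot, \o) - N(\cdot, \o_\sigma)| \les |\o - \o'|$, together with the analogous closeness for $b$ and $u$ as spacetime functions. Following the strategy of the proofs of Propositions \ref{cor:xx1bis}--\ref{cor:so1} in Section \ref{sec:obadidonc} (adapted coordinates common to both foliations and direct comparison of lapse and area factors), the area elements and induced $L^{4_-}$ norms on the two foliations differ only up to corrections of order $|\o - \o'|$; after multiplication by the prefactor $|\o - \o'|$ already present, these corrections contribute at order $|\o - \o'|^2\ep$, which is absorbable in the desired bound $|\o - \o'|\ep$. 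The main obstacle is precisely this last comparison step, which ultimately rests on the uniform $L^\infty$ control of $\po N$ from \eqref{estNomega}; everything else is routine FTC, Minkowski, and Jacobian equivalence.
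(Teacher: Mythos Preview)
Your fundamental-theorem-of-calculus reduction along the arc is exactly the paper's first step. The difficulty, as you correctly identify, is that the norm $L^\infty_u\lh{4_-}$ on the left is taken along the $\o$-foliation while the integrand $\po\chi_2(\cdot,\o_\sigma)$ is naturally controlled along the $\o_\sigma$-foliation. However, your proposed resolution of this difficulty is where the argument breaks down.

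You assert that ``the area elements and induced $L^{4_-}$ norms on the two foliations differ only up to corrections of order $|\o-\o'|$.'' This is false, and the paper explicitly warns against precisely this reasoning at the beginning of Section~\ref{sec:obadidonc}: the norms $L^\infty_u L^p(\H_u)$ and $L^\infty_{u_\sigma} L^p(\H_{u_\sigma})$ are \emph{not directly comparable}, regardless of how close $\o$ and $\o_\sigma$ are. The null hypersurfaces $\H_u$ and $\H_{u_\sigma}$ are transversal (with angle $\sim|\o-\o_\sigma|$), and restricting a spacetime tensor to one family versus the other can yield genuinely different sup norms; no smallness of $|\o-\o'|$ rescues this. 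Your appeal to the strategy of Propositions~\ref{cor:xx1bis}--\ref{cor:so1} is also off the mark: those proofs do not work by direct area comparison but rather by invoking Lemma~\ref{lemma:xx8}.

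The paper's actual transfer mechanism is Lemma~\ref{lemma:xx8} with $p=4_-$, which gives
\[
\norm{\po\chi_2(\cdot,\o'')}_{L^\infty_u\lh{4_-}} \les \norm{\po\chi_2}^{1-\frac{1}{4_-}}_{L^\infty_{u''}L^{6_-}(\H_{u''})}\,\norm{\nabb\po\chi_2}^{\frac{1}{4_-}}_{L^\infty_{u''}L^2(\H_{u''})}.
\]
This is a trace-type inequality that costs a tangential derivative: it requires not only $\po\chi_2\in L^{6_-}$ but also $\nabb\po\chi_2\in L^2$, both of which are supplied by \eqref{dechch2}. Your argument uses only the $\tx{p}{4_-}$ part of \eqref{dechch2} and never invokes $\norm{\po\chi_2}_{\lh{6_-}}$ or $\norm{\nabb\po\chi_2}_{\lh{2}}$; those bounds are in \eqref{dechch2} precisely because they are needed here.
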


\begin{proposition}\lab{cor:so3}
Let $\o$ and $\o'$ in $\S$ such that $|\o-\o'|\les 2^{-\frac{j}{2}}$. For any $j\geq 0$, we have the following decomposition for $\chi(.,\o)$ and $\hch(.,\o)$:
$$\chi(.,\o),\, \hch(.,\o)=\chi_2(.,\o')+F^j_1+F^j_2$$
where $F^j_1$ does not depend on $\o$ and satisfies for any $2\leq p<+\infty$:
$$\norm{F^j_1}_{L^\infty_{u_{\o'}}L^p_tL^\infty(P_{t, u_{\o'}})}\les \ep,$$
and where $F^j_2$ satisfies:
$$\norm{F^j_2}_{L^\infty_u\lh{2}}\les 2^{-\frac{j}{2}}\ep.$$
\end{proposition}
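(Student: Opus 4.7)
I start from the Theorem \ref{thregomega} decomposition $\hch(\c,\o)=\chi_1(\c,\o)+\chi_2(\c,\o)$ and add and subtract the values at $\o'$ to obtain
\[
\hch(\c,\o)=\chi_2(\c,\o')+\chi_1(\c,\o')+\bigl[\chi_1(\c,\o)-\chi_1(\c,\o')\bigr]+\bigl[\chi_2(\c,\o)-\chi_2(\c,\o')\bigr].
\]
I take $F_1^j=\chi_1(\c,\o')$, which by construction is a function of the fixed parameter $\o'$ only and hence independent of $\o$, and let $F_2^j$ be the sum of the two bracketed remainders. The estimate for $\chi(\c,\o)=\hch(\c,\o)+\half\trc(\c,\o)\,g$ then follows by further decomposing $\trc(\c,\o)$ via Proposition \ref{cor:so}, placing the $\o$-independent part $f_1^j$ into $F_1^j$ and the remainder $f_2^j$ into $F_2^j$; the $\o$-dependence of the induced metric on $P_{t,u_\o}$ contributes a further term that is controlled analogously using Proposition \ref{cor:xx1bis} for the null normal $N$ and Proposition \ref{cor:so1} for the null lapse $b$.

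The bound on $F_1^j$ is immediate: since $F_1^j$ is $\chi_1$ evaluated at a fixed parameter $\o'$, the inequality $\|F_1^j\|_{L^\infty_{u_{\o'}}L^p_tL^\infty(P_{t,u_{\o'}})}\les\ep$ is just a restatement of the bound for $\chi_1$ in \eqref{dechch2}, which holds uniformly in $\o$ and $u$. For the part of $F_2^j$ coming from $\chi_2$, Lemma \ref{cor:so2} yields $\|\chi_2(\c,\o)-\chi_2(\c,\o')\|_{L^\infty_u\lh{4_-}}\les|\o-\o'|\ep\les 2^{-\frac{j}{2}}\ep$; since the problem is reduced to a compact subset of $\mathcal{M}$ (Remark \ref{rem:noprobleminf}), the embedding $L^{4_-}\hookrightarrow L^2$ converts this into the desired $\lh{2}$ control.

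The main obstacle is the term $\chi_1(\c,\o)-\chi_1(\c,\o')$ in $F_2^j$. The naive mean value approach
\[
\chi_1(\c,\o)-\chi_1(\c,\o')=(\o-\o')\int_0^1\po\chi_1(\c,\o_\sigma)\,d\sigma,
\]
combined with the bound $\|\po\chi_1\|_{\tx{\infty}{2}}\les\ep$ from \eqref{dechch1}, fails for the precise reason spelled out in section \ref{sec:obadidonc}: the intermediate norms live on $\H_{u_{\o_\sigma}}$ and are not directly comparable with the target norm on $\H_{u_\o}$. I overcome this exactly as in the proof of Proposition \ref{cor:so}, by decomposing $\chi_1$ via the geometric Littlewood--Paley projections $P_k$ of \cite{LP} on the surfaces $P_{t,u}$ and then working in the transport coordinate system $(t,x')$ so that $\po$ may be meaningfully compared across foliations. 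On each frequency piece I interpolate between the $L^\infty_{x'}$ regularity for $\chi_1$ encoded in \eqref{dechch2} and the $\po$-control of \eqref{dechch1}, and I use the commutator estimates for $[\po,L]$ and $[P_k,\po]$ already developed for the proof of Theorem \ref{thregomega} to absorb the foliation change. The factor $|\o-\o'|\les 2^{-\frac{j}{2}}$ is then just enough to absorb the logarithmic losses summed over frequencies $k\le j$ and deliver the claimed $2^{-\frac{j}{2}}\ep$ bound.
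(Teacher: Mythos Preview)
Your overall structure---splitting off $\chi_2(\c,\o')$, using Lemma \ref{cor:so2} for the $\chi_2$ difference, and invoking Proposition \ref{cor:so} for $\trc$---matches the paper's. The handling of the $\chi_2$ remainder via $L^{4_-}\hookrightarrow L^2$ is correct.

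The gap is in your treatment of $\chi_1$. First, there is an internal inconsistency: you declare $F_1^j=\chi_1(\c,\o')$, which is \emph{independent of $j$}, but then appeal to ``exactly as in the proof of Proposition \ref{cor:so}'', whose output term is the $j$-dependent truncation $P'_{\leq j/2}(f(\c,\o'))$. With your $j$-independent choice you would need the Lipschitz bound $\|\chi_1(\c,\o)-\chi_1(\c,\o')\|_{L^\infty_u\lh{2}}\les|\o-\o'|\ep$, i.e.\ a uniform bound on $\|\po\chi_1(\c,\o'')\|_{L^\infty_u\lh{2}}$ across foliations. The transfer tools available (Lemma \ref{lemma:xx8}, Corollary \ref{lin}) all require some control of $\nabb\po\chi_1$, which the paper does not establish anywhere: \eqref{dechch1} gives only $\po\chi_1\in\tx{\infty}{2}$, and \eqref{dechch2} gives $\nabb\po\chi_2\in\lh{2}$ but says nothing about $\nabb\po\chi_1$.

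Second, even if you switch to the $j$-dependent truncation and invoke the Proposition \ref{prop:xx1}/Corollary \ref{cor:xx1} machinery, the hypothesis you need is a bound of the form $\|P_l\ddb_L(\po\chi_1)\|+\|P_l\ddb_{\lb}(\po\chi_1)\|\les 2^l\ep+2^{l/2}\gamma(u)\ep$. Writing $\po\chi_1=\po\hch-\po\chi_2$, the piece $\po\hch$ does satisfy this thanks to \eqref{estricciomegabis}, but $\po\chi_2$ does \emph{not}: it enjoys only the mixed norms of \eqref{dechch2}. The paper handles this by proving a separate variant, Corollary \ref{cor:leman}, which relaxes the hypothesis to a splitting $\po F=H_1+H_2$ with $H_1=\po\hch$ satisfying the $P_l\ddb_L,P_l\ddb_{\lb}$ bounds and $H_2=-\po\chi_2$ satisfying instead $\|H_2\|_{\tx{q}{8/3}}+\|\nabb H_2\|_{\tx{q/(q-1)}{8/5}}\les\ep$; this second branch is then controlled via Corollary \ref{lin} rather than Lemma \ref{lemma:xx4}. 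Your sketch does not recognize this split, and the phrase ``interpolate between \eqref{dechch2} and \eqref{dechch1}'' does not capture it.

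As a minor point, your remark about ``the $\o$-dependence of the induced metric'' is a non-issue: the paper scalarizes against the $\o$-independent frame $Q_{\leq 1}(N_l)$ of Lemma \ref{lemma:xx7}, so no metric variation appears.
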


\begin{proposition}\lab{cor:so4}
Let $\o$ and $\o'$ in $\S$ such that $|\o-\o'|\les 2^{-\frac{j}{2}}$. For any $j\geq 0$, we have the following decomposition for $\chi(.,\o)$ and $\hch(.,\o)$:
$$\chi(.,\o),\, \hch(.,\o)=F^j_1+F^j_2$$
where $F^j_1$ does not depend on $\o$ and satisfies:
$$\norm{F^j_1}_{L^\infty_{u_{\o'}}L^\infty(P_{t, u_{\o'}})L^2_t}\les \ep,$$
and where $F^j_2$ satisfies:
$$\norm{F^j_2}_{L^\infty_u\lh{2}}\les 2^{-\frac{j}{2}}\ep.$$
\end{proposition}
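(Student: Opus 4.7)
The approach follows the framework of Proposition \ref{cor:so3} but exploits the dual aspect of the decomposition \eqref{dechch}. The crucial preliminary observation is that from \eqref{dechch2} applied with $p=2$ (legitimate since the time interval is bounded) we have $\norm{\chi_1}_{\tx{2}{\infty}} \les \ep$, and by Minkowski's integral inequality $\norm{\cdot}_{\xt{\infty}{2}} \leq \norm{\cdot}_{\tx{2}{\infty}}$, so $\chi_1 \in \xt{\infty}{2}$ with an $\ep$-bound. Combined with $\norm{\chi_2}_{\xt{\infty}{2}} \les \ep$ from \eqref{dechch1}, this yields $\norm{\hch}_{\xt{\infty}{2}} \les \ep$, uniformly in $u$ and $\o$. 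This is the key regularity that is available here but was not needed in Proposition \ref{cor:so3}, and it is what allows us to dispense with the extra $\chi_2(.,\o')$ term present there.

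I would then simply set
$$F^j_1 := \hch(.,\o'), \qquad F^j_2 := \hch(.,\o) - \hch(.,\o') = (\chi_1(.,\o) - \chi_1(.,\o')) + (\chi_2(.,\o) - \chi_2(.,\o')),$$
so that $F^j_1$ is manifestly independent of $\o$, and the bound $\norm{F^j_1}_{L^\infty_{u_{\o'}}L^\infty(P_{t,u_{\o'}})L^2_t} \les \ep$ is precisely the $\xt{\infty}{2}$-control of $\hch$ evaluated on the $\o'$-foliation established above. For $F^j_2$, the $\chi_2$ piece is handled by Lemma \ref{cor:so2}: $\norm{\chi_2(.,\o) - \chi_2(.,\o')}_{L^\infty_u\lh{4_-}} \les |\o-\o'|\ep \les 2^{-\frac{j}{2}}\ep$, and since $\H_u\cap\{0\leq t\leq 1\}$ has bounded area the embedding $L^{4_-}(\H_u) \hookrightarrow L^2(\H_u)$ delivers the required control in $\lh{2}$. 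The $\chi_1$ piece is handled by the mean value theorem in $\o$,
$$\chi_1(.,\o) - \chi_1(.,\o') = (\o-\o') \cdot \int_0^1 \po\chi_1(.,\o_\sigma)\,d\sigma,$$
and from \eqref{dechch1} $\norm{\po\chi_1}_{\tx{\infty}{2}} \les \ep$, which embeds into $\lh{2}$ on the bounded time interval $[0,1]$; together with $|\o-\o'| \les 2^{-\frac{j}{2}}$ this yields the desired $2^{-\frac{j}{2}}\ep$.

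The main technical subtlety, and the reason Section \ref{sec:obadidonc} exists in the first place, is that the mean value step produces $\po\chi_1(.,\o_\sigma)$ naturally estimated on the $u_{\o_\sigma}$-foliation, whereas the target norm $L^\infty_u\lh{2}$ involves $L^2$ integration over $\H_u$ with respect to the $\o$-foliation. Bridging these is non-trivial and must be carried out using the change-of-foliation machinery already developed in the earlier propositions of this section (Propositions \ref{cor:xx1bis}-\ref{cor:so3}); heuristically, for $|\o-\o'|\les 2^{-\frac{j}{2}}$, the leaves coincide up to a small displacement controlled by the non-degeneracy estimate \eqref{ad1}, and the resulting discrepancy can be absorbed at no cost beyond the allotted $2^{-\frac{j}{2}}\ep$. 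This is the one place where a straightforward Fubini-type argument is not available, and the whole apparatus of Section \ref{sec:obadidonc} is tailored precisely to handle such comparisons.
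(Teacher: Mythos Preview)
Your decomposition $F^j_1=\hch(.,\o')$, $F^j_2=\hch(.,\o)-\hch(.,\o')$ differs from the paper's, and the argument for the $\chi_1$-piece of $F^j_2$ has a genuine gap. The mean-value step leaves you needing $\norm{\po\chi_1(.,\o_\sigma)}_{L^\infty_u L^2(\H_u)}$ on the $\o$-foliation, while the only available bound \eqref{dechch1} on $\po\chi_1$ lives on the $\o_\sigma$-foliation. This is precisely the obstruction spelled out in the introduction to Section~\ref{sec:obadidonc}: the change-of-foliation tools underlying the propositions you cite (Lemmas~\ref{lemma:xx2} and~\ref{lemma:xx8}) each cost one additional derivative of the quantity being transferred --- here $\nabb\po\chi_1$ or $\dd\po\chi_1$, which is nowhere controlled (it is one derivative too many; note the whole point of the splitting $\hch=\chi_1+\chi_2$ is that $\nabb\po\chi_2\in\lh{2}$ is available via \eqref{dechch2} while $\nabb\po\chi_1$ is not). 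Your closing assertion that the apparatus of Section~\ref{sec:obadidonc} ``is tailored precisely to handle such comparisons'' is therefore not justified as written: that apparatus does not compare $L^2(\H_u)$-norms across foliations for free; rather it produces carefully designed Littlewood--Paley decompositions that trade foliation dependence for regularity hypotheses you do not have for $\po\chi_1$.

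The paper instead invokes Proposition~\ref{cor:so3} for the $\chi_1$ and $\trc$ contributions; the proof of that proposition (via Corollary~\ref{cor:leman}) takes as $\o$-independent piece not $\chi_1(.,\o')$ itself but rather $P'_{\leq j/2}$ applied to a scalarization of $\chi_1(.,\o')$, and bounds the remainder by exploiting the available $\ddb_L\Pi(\po\chi)$, $\ddb_{\lb}\Pi(\po\chi)$ and $\nabb\po\chi_2$ --- none of which appear in a bare mean-value argument. With those parts handled, only $\chi_2(.,\o)$ remains, and the paper treats it exactly as you do: $F^j_1=\chi_2(.,\o')$ with the $\xt{\infty}{2}$-bound from \eqref{dechch1}, and $F^j_2=\chi_2(.,\o)-\chi_2(.,\o')$ via Lemma~\ref{cor:so2}.
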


\begin{proposition}\lab{cor:so7}
Let $\o$ and $\o'$ in $\S$ such that $|\o-\o'|\les 2^{-\frac{j}{2}}$. For any $j\geq 0$, we have the following decomposition for $\hch(.,\o)^2$:
$$\hch(.,\o)^2=\chi_2(.,\o')^2+\chi_2(.,\o')F^j_1+\chi_2(.,\o')F^j_2+F^j_3+F^j_4+F^j_5$$
where $F^j_1$ and $F^j_3$ do not depend on $\o$ and satisfy:
$$\norm{F^j_1}_{L^\infty_{u_{\o'}}L^2_tL^\infty(P_{t, u_{\o'}})}+\norm{F^j_3}_{L^\infty_{u_{\o'}}L^2_tL^\infty(P_{t, u_{\o'}})}\les \ep,$$
where $F^j_2$ and $F^j_4$ satisfy:
$$\norm{F^j_2}_{L^\infty_u\lh{2}}+\norm{F^j_4}_{L^\infty_u\lh{2}}\les 2^{-\frac{j}{2}}\ep,$$
and where $F^j_5$ satisfies
$$\norm{F^j_5}_{L^2(\mathcal{M})}\les \ep 2^{-j}.$$
\end{proposition}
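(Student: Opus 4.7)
My approach is to square the pointwise decomposition from Proposition~\ref{cor:so3}. Writing
\begin{equation*}
\hch(.,\o)=\chi_2(.,\o')+G_1+G_2,
\end{equation*}
where $G_1$ is $\o$-independent with $\|G_1\|_{L^\infty_{u_{\o'}}L^p_tL^\infty(P_{t,u_{\o'}})}\les\ep$ for every $2\leq p<+\infty$, and $\|G_2\|_{L^\infty_u\lh{2}}\les 2^{-\frac{j}{2}}\ep$, one expands
\begin{equation*}
\hch(.,\o)^2=\chi_2(.,\o')^2+2\chi_2(.,\o')\,G_1+2\chi_2(.,\o')\,G_2+G_1^2+2G_1G_2+G_2^2,
\end{equation*}
and reads off the target decomposition by matching pieces.

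The two terms linear in $\chi_2(.,\o')$ are handled by $F^j_1:=2G_1$ and $F^j_2:=2G_2$, whose required bounds follow immediately from Proposition~\ref{cor:so3} (taking $p=2$ for $F^j_1$). The diagonal piece $F^j_3:=G_1^2$ is $\o$-independent, and Proposition~\ref{cor:so3} at $p=4$ combined with H\"older in $t$ yields $\|F^j_3\|_{L^\infty_{u_{\o'}}L^2_tL^\infty(P_{t,u_{\o'}})}\les\|G_1\|_{L^\infty_{u_{\o'}}L^4_tL^\infty(P_{t,u_{\o'}})}^2\les\ep^2\les\ep$.

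For the cross term $F^j_4:=2G_1G_2$, one must pair the $L^\infty$-in-$x'$ control of $G_1$ (adapted to the $u_{\o'}$-foliation) with the $L^2$ control of $G_2$ (on the $u$-foliation), which necessitates a change-of-foliation step. Proposition~\ref{cor:xx1bis} furnishes $|N(\cdot,\o)-N(\cdot,\o')|\les 2^{-\frac{j}{2}}$, so the leaves $P_{t,u}$ and $P_{t,u_{\o'}}$ are related by a small diffeomorphism with bounded Jacobian, allowing the $L^\infty_{u_{\o'}}L^p_tL^\infty(P_{t,u_{\o'}})$ bound on $G_1$ to transfer, uniformly in $u$, to an $L^p_tL^\infty(P_{t,u})$ bound. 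Combining this with the bound on $G_2$ in $L^\infty_u\lh{2}$, interpolated against the complementary $L^\infty_{u_{\o'}}L^\infty(P_{t,u_{\o'}})L^2_t$ control on $\hch$ modulo $F^j_2$ furnished by Proposition~\ref{cor:so4}, yields $\|F^j_4\|_{L^\infty_u\lh{2}}\les 2^{-\frac{j}{2}}\ep$ via H\"older in $t$ with $p$ taken large.

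The main obstacle is the $L^2(\mathcal{M})$ bound on $F^j_5:=G_2^2$. The naive inequality $\|G_2\|_{L^2(\mathcal{M})}\les \|G_2\|_{L^\infty_u\lh{2}}\les 2^{-\frac{j}{2}}\ep$ (obtained via the coarea formula over bounded $u$) only yields $\|G_2^2\|_{L^1(\mathcal{M})}\les 2^{-j}\ep^2$, whereas an $L^2(\mathcal{M})$ bound on $G_2^2$ is equivalent to an $L^4(\mathcal{M})$ bound on $G_2$, a full $2^{-\frac{j}{2}}$ factor stronger than what Proposition~\ref{cor:so3} records. To close this gap I reopen the construction of $G_2$ in the proof of Proposition~\ref{cor:so3}, where $G_2$ is assembled from $\po\chi_2$ integrated along a path from $\o'$ to $\o$ in $\S$ (contributing the factor $|\o-\o'|\les 2^{-\frac{j}{2}}$) combined with the geometric Littlewood-Paley projections of \cite{LP}. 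Splitting $G_2=G_2^\flat+G_2^\sharp$ into a low- and high-frequency part with respect to those projections, a Bernstein inequality yields an $L^\infty$-type bound on $G_2^\flat$ with a small $\ep$ loss, while $G_2^\sharp$ enjoys a reinforced $\|G_2^\sharp\|_{L^2(\mathcal{M})}\les 2^{-j}\ep$ estimate coming from frequency orthogonality. Squaring and redistributing the mixed contributions $(G_2^\flat)^2$ and $G_2^\flat G_2^\sharp$ into $F^j_4$, while $(G_2^\sharp)^2$ is allocated to $F^j_5$, then produces the required $\|F^j_5\|_{L^2(\mathcal{M})}\les \ep 2^{-j}$.
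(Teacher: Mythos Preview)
Your strategy of squaring the Proposition~\ref{cor:so3} decomposition has two genuine gaps, and the paper takes a different route.

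The decisive obstacle is $F^j_5=G_2^2$. The piece $G_2$ produced by Proposition~\ref{cor:so3} (built in \eqref{leman11}--\eqref{leman12} of the proof of Corollary~\ref{cor:leman}) is controlled only in $L^\infty_uL^2(\H_u)$; no higher integrability or derivative bound is available. Your proposed split $G_2=G_2^\flat+G_2^\sharp$ cannot close: Bernstein on $G_2^\flat=P_{\leq k}G_2$ gives $\|G_2^\flat\|_{L^\infty}\les 2^k\|G_2\|_{L^2}$, a loss rather than a gain, and there is no mechanism (e.g.\ $\nabb G_2\in L^2$) by which ``frequency orthogonality'' could produce $\|G_2^\sharp\|_{L^2}\les 2^{-j}\ep$. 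Moreover $G_2$ is not ``$\po\chi_2$ integrated along a path''; it is assembled from LP-projected differences of $f(.,\o)$ and commutator terms (see \eqref{xx8}), for which no $L^{4}$-type estimate is recorded. There is a secondary gap at $F^j_4=2G_1G_2$: the leaves $P_{t,u}$ and $P_{t,u_{\o'}}$ are \emph{transverse} 2-surfaces in $\Sigma_t$, not small diffeomorphic perturbations of each other, so the $L^\infty_{u_{\o'}}L^p_tL^\infty(P_{t,u_{\o'}})$ bound on $G_1$ does not transfer to the $\o$-foliation as you claim; and Proposition~\ref{cor:so4} gives a different decomposition, not an additional estimate on the same $G_1$.

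The paper instead exploits the structural splitting $\hch=\chi_1+\chi_2$ of \eqref{dechch}. Writing $\hch(\o)^2=\chi_1(\o)^2+2\chi_1(\o)\chi_2(\o)+\chi_2(\o)^2$, the first two pieces are handled by Corollary~\ref{cor:so5} (with $l=2$) and Corollary~\ref{cor:so6} (with $l=1$), while for $\chi_2(\o)^2$ one sets $F^j_5=(\chi_2(\o)-\chi_2(\o'))^2$. The key input, unavailable for your $G_2$, is the extra spatial integrability $\po\chi_2\in L^\infty_uL^{6_-}(\H_u)$ from \eqref{dechch2}: via Lemma~\ref{cor:so2} this yields $\|\chi_2(\o)-\chi_2(\o')\|_{L^\infty_uL^{4_-}(\H_u)}\les|\o-\o'|\ep$, hence $\|F^j_5\|_{L^2(\mathcal{M})}\les|\o-\o'|^2\ep\les 2^{-j}\ep$ directly. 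Likewise the cross terms in Corollary~\ref{cor:so6} are bounded on $\H_u$ by H\"older between $\chi_1(\o)\in L^\infty_uL^p(\H_u)$ (on the $\o$-foliation, from \eqref{dechch2}) and $\chi_2(\o)-\chi_2(\o')\in L^\infty_uL^3(\H_u)$, with no foliation transfer needed.
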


\begin{proposition}\lab{cor:so9}
Let $\o$ and $\o'$ in $\S$ such that $|\o-\o'|\les 2^{-\frac{j}{2}}$. For any $j\geq 0$, we have the following decomposition for $\hch(.,\o)^3$:
\bee
\hch(.,\o)^3&=&\chi_2(.,\o')^3+\chi_2(.,\o')^2F^j_1+\chi_2(.,\o')^2F^j_2+\chi_2(.,\o')F^j_3+\chi_2(.,\o')F^j_4\\
&&+\chi_2(.,\o')F^j_5+F^j_6+F^j_7+F^j_8+F^j_9
\eee
where $F^j_1$, $F^j_3$ and $F^j_6$ do not depend on $\o$ and satisfy:
$$\norm{F^j_1}_{L^\infty_{u_{\o'}}L^2_tL^\infty(P_{t, u_{\o'}})}+\norm{F^j_3}_{L^\infty_{u_{\o'}}L^2_tL^\infty(P_{t, u_{\o'}})}+\norm{F^j_6}_{L^\infty_{u_{\o'}}L^2_tL^\infty(P_{t, u_{\o'}})}\les \ep,$$
where $F^j_2$, $F^j_4$ and $F^j_7$ satisfy:
$$\norm{F^j_2}_{L^\infty_u\lh{2}}+\norm{F^j_4}_{L^\infty_u\lh{2}}+\norm{F^j_7}_{L^\infty_u\lh{2}}\les 2^{-\frac{j}{2}}\ep,$$
where $F^j_5$ and $F^j_8$ satisfy
$$\norm{F^j_5}_{L^2(\mathcal{M})}+\norm{F^j_8}_{L^2(\mathcal{M})}\les \ep 2^{-j}.$$
and where $F^j_9$ satisfies
$$\norm{F^j_9}_{L^{2_-}(\mathcal{M})}\les \ep 2^{-\frac{3j}{2}}.$$
\end{proposition}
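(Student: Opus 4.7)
The plan is to apply Proposition \ref{cor:so3} to write $\hch(.,\o)=\chi_2(.,\o')+G^j_1+G^j_2$, where $G^j_1$ is independent of $\o$ with $\norm{G^j_1}_{L^\infty_{u_{\o'}}L^p_tL^\infty(P_{t,u_{\o'}})}\les \ep$ for every $2\leq p<+\infty$, and $G^j_2$ satisfies $\norm{G^j_2}_{\li{\infty}{2}}\les 2^{-\frac{j}{2}}\ep$. Cubing this identity and expanding trinomially yields
\begin{align*}
\hch^3 &= \chi_2(.,\o')^3 + \chi_2(.,\o')^2(3G^j_1) + \chi_2(.,\o')^2(3G^j_2) + \chi_2(.,\o')(3(G^j_1)^2) \\
&\quad + \chi_2(.,\o')(6G^j_1G^j_2) + \chi_2(.,\o')(3(G^j_2)^2) + (G^j_1)^3 + 3(G^j_1)^2G^j_2 \\
&\quad + 3G^j_1(G^j_2)^2 + (G^j_2)^3,
\end{align*}
and the target decomposition is read off by setting $F^j_1=3G^j_1$, $F^j_2=3G^j_2$, $F^j_3=3(G^j_1)^2$, $F^j_4=6G^j_1G^j_2$, $F^j_5=3(G^j_2)^2$, $F^j_6=(G^j_1)^3$, $F^j_7=3(G^j_1)^2G^j_2$, $F^j_8=3G^j_1(G^j_2)^2$, $F^j_9=(G^j_2)^3$.

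The deterministic terms $F^j_1$, $F^j_3$, $F^j_6$ are then controlled directly from Proposition \ref{cor:so3} combined with H\"older in $t$: for $F^j_3$ and $F^j_6$ one selects $p=4$ and $p=6$ respectively in the $L^\infty_{u_{\o'}}L^p_tL^\infty(P_{t,u_{\o'}})$ bound on $G^j_1$, which produces the required $L^\infty_{u_{\o'}}L^2_tL^\infty$ control of size $\les\ep^2\les\ep$ and $\les\ep^3\les\ep$. The term $F^j_2$ is immediate. For the mixed terms $F^j_4$ and $F^j_7$, the $\li{\infty}{2}$ bound of size $2^{-\frac{j}{2}}\ep$ is obtained by placing each $G^j_1$ factor in an $L^\infty$-type norm (available on $\mathcal{M}$ via the comparison between the $u$ and $u'$ foliations, which is valid since $|\o-\o'|\les 2^{-\frac{j}{2}}$, in the same spirit as in the proofs of Propositions \ref{cor:so4} and \ref{cor:so7}) and $G^j_2$ in $\li{\infty}{2}$, gaining one factor $2^{-\frac{j}{2}}\ep$ per occurrence of $G^j_2$.

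For the pure remainders $F^j_5$, $F^j_8$, $F^j_9$, the plan is to revisit the construction of $G^j_2$ used in the proof of Proposition \ref{cor:so3}: it is built from geometric Littlewood-Paley pieces localized at frequency $\gtrsim 2^j$, and consequently enjoys the supplementary bounds $\norm{G^j_2}_{L^4(\mathcal{M})}\les 2^{-\frac{j}{2}}\ep$ and $\norm{G^j_2}_{L^{6_-}(\mathcal{M})}\les 2^{-\frac{j}{2}}\ep$ via the sharp Sobolev and trace embeddings of the geometric Littlewood-Paley calculus of \cite{LP}, at the level of regularity permitted by \eqref{curvflux1}. H\"older then yields $\norm{F^j_5}_{L^2(\mathcal{M})}\les\norm{G^j_2}_{L^4(\mathcal{M})}^2\les 2^{-j}\ep$, and $\norm{F^j_8}_{L^2(\mathcal{M})}\les 2^{-j}\ep$ after placing $G^j_1$ in $L^\infty$ on the appropriate slice, and $\norm{F^j_9}_{L^{2_-}(\mathcal{M})}\les\norm{G^j_2}_{L^{6_-}(\mathcal{M})}^3\les 2^{-\frac{3j}{2}}\ep$.

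The main obstacle is the treatment of the pure $G^j_2$ powers in $F^j_5$, $F^j_8$ and, most acutely, $F^j_9$: upgrading the $\li{\infty}{2}$ bound on $G^j_2$ provided by Proposition \ref{cor:so3} to $L^4(\mathcal{M})$ and $L^{6_-}(\mathcal{M})$ with the correct $2^{-\frac{j}{2}}\ep$ gain is borderline at the critical regularity assumed on $\rr$, and depends crucially on exploiting the frequency cutoff built into $G^j_2$ together with sharp geometric Sobolev embeddings, rather than on naive interpolation from the $\li{\infty}{2}$ bound alone.
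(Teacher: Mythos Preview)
Your approach differs from the paper's and contains a genuine gap. The paper does \emph{not} cube the decomposition of Proposition~\ref{cor:so3}; instead it expands $\hch(.,\o)^3=\chi_1(.,\o)^3+3\chi_1(.,\o)^2\chi_2(.,\o)+3\chi_1(.,\o)\chi_2(.,\o)^2+\chi_2(.,\o)^3$ and treats each piece with a dedicated result (Corollaries~\ref{cor:so5}, \ref{cor:so6}, \ref{cor:so8} together with a direct expansion of $\chi_2(.,\o)^3$ around $\chi_2(.,\o')$). The point of this route is that the ``bad'' quadratic and cubic remainders always end up as powers of $\chi_2(.,\o)-\chi_2(.,\o')$, which by Lemma~\ref{cor:so2} and \eqref{dechch2} lies in $L^{4_-}$ (and $\po\chi_2$ in $L^{6_-}$) with the full gain $|\o-\o'|$ per factor; this is what produces $\norm{(\chi_2(.,\o)-\chi_2(.,\o'))^2}_{L^2(\mathcal{M})}\les|\o-\o'|^2\ep$ and $\norm{(\chi_2(.,\o)-\chi_2(.,\o'))^3}_{L^{2_-}(\mathcal{M})}\les|\o-\o'|^3\ep$.

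Your remainder $G^j_2$ does not enjoy this structure. Contrary to what you assert, $G^j_2$ is \emph{not} built from pieces localized at frequency $\gtrsim 2^j$: inspecting the proof of Corollary~\ref{cor:leman} (which underlies Proposition~\ref{cor:so3}), the term $f^j_{2,1}=\int_{[\o',\o'']}P'''_{\leq j/2}(\po f)\,d\o'''(\o'-\o'')$ is projected to \emph{low} frequencies, and $G^j_2$ also absorbs $\chi_2(.,\o)-\chi_2(.,\o')$ which is not frequency-localized at all. The only piece with a high-frequency cutoff is $\sum_{l>j/2}P''_l f$, and for that part the relevant input is $\nabb\chi_1\in\tx{\infty}{2}$, which via the finite band property and weak Bernstein gives at best $\norm{P_l\chi_1}_{\lpt{4}}\les 2^{-l/2}\ep$, hence $\norm{\sum_{l>j/2}P_l\chi_1}_{L^4}\les 2^{-j/4}\ep$ --- only half the gain you need. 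Consequently your estimate for $(G^j_2)^2$ in $L^2(\mathcal{M})$ yields $2^{-j/2}\ep$ rather than $2^{-j}\ep$, and $(G^j_2)^3$ in $L^{2_-}(\mathcal{M})$ yields $2^{-3j/4}\ep$ rather than $2^{-3j/2}\ep$. The missing ingredient is precisely the extra integrability of $\po\chi_2$ in \eqref{dechch2}, which the paper isolates by keeping the $\chi_1/\chi_2$ split explicit throughout. A secondary issue is that your treatment of the mixed terms $F^j_4$, $F^j_7$ requires putting $G^j_1$ (controlled only in the $\o'$-foliation $L^\infty_{u_{\o'}}L^p_tL^\infty(P_{t,u_{\o'}})$) against $G^j_2$ (controlled in the $\o$-foliation $\li{\infty}{2}$); the foliation comparison you invoke is not addressed in the proofs of Propositions~\ref{cor:so4} or~\ref{cor:so7}, which instead avoid it by retaining $\chi_1(.,\o)\in\tx{p}{\infty}$ in the $\o$-foliation until the very last step.
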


\begin{proposition}\lab{cor:atlast}
Let $\o$ and $\o'$ in $\S$ such that $|\o-\o'|\les 2^{-\frac{j}{2}}$. For any $j\geq 0$, we have the following decomposition for $\z(.,\o)$ and $\nabb b(.,\o)$:
$$\z(.,\o),\, \nabb b(.,\o)=F^j_1+F^j_2$$
where $F^j_1$ does not depend on $\o$ and satisfies for any $2\leq p<+\infty$:
$$\norm{F^j_1}_{L^\infty_{u_{\o'}}L^2_tL^p(P_{t, u_{\o'}})}\les \ep,$$
and where $F^j_2$ satisfies:
$$\norm{F^j_2}_{L^\infty_u\lh{2}}\les 2^{-\frac{j}{4}}\ep.$$
\end{proposition}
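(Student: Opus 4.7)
The natural splitting is
$$F^j_1 := \z(\cdot,\o'), \qquad F^j_2 := \z(\cdot,\o) - \z(\cdot,\o'),$$
and analogously for $\nabb b$, with the two cases linked by \eqref{etaa}, $\z = b^{-1}\nabb b + \kep$: using $\|b-1\|_{\lh{\infty}}\les\ep$ from \eqref{estb} together with \eqref{estk}, controlling $\z$ controls $\nabb b$ up to harmless $k$-type terms, so I focus on $\z$. Note that $\z(\cdot,\o')$ and $\z(\cdot,\o)$ are a priori tangent to different 2-surfaces $P_{t,u_{\o'}}$ and $P_{t,u_\o}$, so the difference must be interpreted after embedding into $T\mathcal{M}$; the tangential mismatch will be absorbed as a lower-order contribution using the gradient bound \eqref{estNomega} on $\po N$. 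By construction $F^j_1$ depends only on $\o'$ and is independent of $\o$.

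For the bound on $F^j_1$ in $L^\infty_{u_{\o'}}L^2_tL^p(P_{t,u_{\o'}})$ with $2\le p<+\infty$ arbitrary, I would interpolate the two available estimates with respect to the $\o'$-foliation. The bound $\|\z\|_{\xt{2}{\infty}}\les\ep$ from \eqref{estzeta} embeds into $L^2_tL^2(P_{t,u_{\o'}})$ thanks to the uniform area control of $P_{t,u_{\o'}}$ that follows from \eqref{esttrc} and the inequalities of section \ref{sec:calcineqgeneral}. Combining this with $\|\nabb\z\|_{\lh{2}}\les\ep$ from $\no(\z)\les\ep$, a Gagliardo--Nirenberg inequality on the 2-surface $P_{t,u_{\o'}}$ yields the desired $L^2_tL^p$-control with implicit constants uniform in $u_{\o'}$. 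The $\nabb b$ case is treated identically using $\noo(b)\les\ep$ from \eqref{estb}.

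For the bound on $F^j_2$ in $\li{\infty}{2}$, I write, by the fundamental theorem of calculus in $\o$,
$$F^j_2 = (\o-\o')\int_0^1 \po\z(\cdot,\o_\sigma)\,d\sigma,$$
so that $|\o-\o'|\les 2^{-j/2}$ reduces the problem to bounding $\|\po\z(\cdot,\o_\sigma)\|_{\li{\infty}{2}}$ by $2^{j/4}\ep$ uniformly in $\sigma\in[0,1]$, where crucially the norm refers to the $u_\o$-foliation rather than to the $u_{\o_\sigma}$-foliation. This foliation mismatch is the main obstacle: the estimate $\|\po\z\|_{\xt{2}{\infty}}\les\ep$ of \eqref{estricciomega} is intrinsic to the $u_{\o_\sigma}$-foliation, and while \eqref{estNomega} gives $|u_\o - u_{\o_\sigma}|\les 2^{-j/2}$ in suitable coordinates, transferring an $L^2$ null-hypersurface bound across foliations costs transverse derivatives. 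To close the gap I would split $\po\z = P_{\le k_0}\po\z + P_{>k_0}\po\z$ with the geometric Littlewood--Paley projections of \cite{LP}: the low-frequency piece is regular enough for a direct change of variables on $\Sit$-slices to transfer it from the $u_{\o_\sigma}$- to the $u_\o$-foliation, while the high-frequency piece is controlled via the transport equation obtained by commuting \eqref{D4eta} with $\po$, where the commutator term, of lower $\po$-order, is estimated via the $\o$-regularity of section \ref{sec:firstderivomega} together with a sharp trace estimate along null generators. Optimizing the threshold $k_0\sim j/2$ balances the two competing errors and produces the $2^{-j/4}$ loss in place of the naive $2^{-j/2}$.
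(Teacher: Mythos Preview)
Your splitting $F^j_1:=\z(\cdot,\o')$, $F^j_2:=\z(\cdot,\o)-\z(\cdot,\o')$ differs from the paper's, and the plan for $F^j_2$ has a real gap. After writing $F^j_2=(\o-\o')\int_0^1\po\z(\cdot,\o_\sigma)\,d\sigma$ you propose to split $\po\z=P_{\le k_0}\po\z+P_{>k_0}\po\z$ and gain decay from the high-frequency piece. But the device that transfers $L^2(\H)$-norms across foliations (Lemma~\ref{lemma:xx8} with $p=2$) reads $\|G\|_{L^\infty_{u'}L^2(\H_{u'})}\les\|G\|^{1/2}_{L^\infty_uL^2(\H_u)}\|\nabb G\|^{1/2}_{L^\infty_uL^2(\H_u)}$: it costs half a \emph{tangential} derivative $\nabb$. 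The regularity theory yields only $\|\po\z\|_{\xt{2}{\infty}}\les\ep$ (see \eqref{estricciomega}) and nothing on $\nabb\po\z$, so $P_{>k_0}\po\z$ gains no decay in $k_0$ under this transfer. The transport equation \eqref{popo5} provides $\ddb_L$-control of $\Pi(\po\z)$, not $\nabb$-control, and therefore does not help here; the balance you describe does not close.

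The paper (Corollary~\ref{bis:cor:xx1} via Proposition~\ref{bis:prop:xx1}) places the Littlewood--Paley cut on $\z$, not on $\po\z$. After scalarizing against the global frame $Q_{\le1}(N_l)$ of Lemma~\ref{lemma:xx7}, one takes $F^j_1$ built from $P'_{\le j/2}\z(\cdot,\o')$, so that $F^j_2$ contains three pieces: (i) $P''_{>j/2}\z(\cdot,\o)$, for which the available bound $\nabb\z\in L^2$ from $\no(\z)\les\ep$ together with Lemma~\ref{lemma:xx8} gives the $2^{-j/4}$ directly; (ii) $(\o-\o')\int P'''_{\le j/2}\po\z$, where the truncation caps the half-derivative cost at $2^{j/4}$, balanced against $|\o-\o'|\les2^{-j/2}$; and (iii) the commutator $(\o-\o')\int[\po,P'''_{\le j/2}]\z$. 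Controlling (iii) by $2^{j/4}\ep$ (Lemma~\ref{bis:lemma:xx5}) is precisely where one needs the structural decomposition $\ddb_{bN}\z=\nabb h_1+H_2$ of Lemma~\ref{lemma:assemblee}, equivalently $\La^{-1}(\nabla_{bN}\z)\in L^2$ --- an ingredient absent from your sketch. A minor point on $F^j_1$: the surfaces $P_{t,u}$ are noncompact, so the ``area control'' embedding $\xt{2}{\infty}\hookrightarrow L^2_tL^2_{x'}$ you invoke is false; the correct input is $\no(\z)\les\ep$ combined with Gagliardo--Nirenberg \eqref{eq:GNirenberg} on $P_{t,u}$.
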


\begin{proposition}\lab{cor:atlast1}
Let $\o$ and $\o'$ in $\S$ such that $|\o-\o'|\les 2^{-\frac{j}{2}}$. For any $j\geq 0$, we have the following decomposition for $b(.,\o)-b(.,\o')$:
$$b(.,\o)-b(.,\o')=(f^j_1+f^j_2)(\o-\o')$$
where $f^j_1$ does not depend on $\o$ and satisfies:
$$\norm{f^j_1}_{L^\infty}\les \ep,$$
and where $f^j_2$ satisfies:
$$\norm{f^j_2}_{L^\infty_u\lh{2}}\les 2^{-\frac{j}{4}}\ep.$$
\end{proposition}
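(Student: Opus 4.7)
The plan is to derive the decomposition by iterating the fundamental theorem of calculus in $\omega$ and absorbing the first-order term into $f_1^j$. Pick a smooth path $\omega_\sigma$, $\sigma\in[0,1]$, with $\omega_0=\omega'$, $\omega_1=\omega$ and $\partial_\sigma\omega_\sigma=\omega-\omega'$. Then
$$b(.,\omega)-b(.,\omega')=(\omega-\omega')\,\partial_\omega b(.,\omega')+(\omega-\omega')\int_0^1\bigl[\partial_\omega b(.,\omega_\sigma)-\partial_\omega b(.,\omega')\bigr]\,d\sigma.$$
I set $f_1^j:=\partial_\omega b(.,\omega')$, which manifestly depends only on the fixed $\omega'$ (and $j$), and $f_2^j:=\int_0^1[\partial_\omega b(.,\omega_\sigma)-\partial_\omega b(.,\omega')]\,d\sigma$. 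The bound $\|f_1^j\|_{L^\infty}\lesssim\ep$ is immediate from the estimate $\|\partial_\omega b\|_{\lh{\infty}}\lesssim\ep$ in \eqref{estricciomega} of Theorem \ref{thregomega}, applied with parameter $\omega'$.

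For $f_2^j$, apply the fundamental theorem of calculus again in $\omega$: along a path $\omega_{\sigma,\tau}$ from $\omega'$ to $\omega_\sigma$,
$$\partial_\omega b(.,\omega_\sigma)-\partial_\omega b(.,\omega')=(\omega_\sigma-\omega')\int_0^1\partial_\omega^2 b(.,\omega_{\sigma,\tau})\,d\tau.$$
Since $|\omega_\sigma-\omega'|\lesssim|\omega-\omega'|\lesssim 2^{-j/2}$, this yields the pointwise estimate $|f_2^j|\lesssim 2^{-j/2}\sup_{\widetilde\omega}|\partial_\omega^2 b(.,\widetilde\omega)|$. The relevant control on $\partial_\omega^2 b$ is \eqref{estpo2b} of Theorem \ref{thregomega2}, giving $\|\partial_\omega^2 b\|_{\tx{\infty}{2}}\lesssim\ep$ uniformly in $\omega$. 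Combined with the trivial bound $\|f_2^j\|_{\lh{\infty}}\lesssim\ep$ coming from \eqref{estricciomega}, we have two estimates in different norms at our disposal; the target $L^\infty_u\lh{2}$ will be obtained by interpolation between them, which accounts for the loss of $2^{-j/4}$ (rather than the naively expected $2^{-j/2}$).

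The main obstacle is precisely this interpolation step, because the hypersurface $\H_u$ on which we want $L^2$ control depends on $\omega$, while the second-order $\omega$-bound \eqref{estpo2b} lives on the fixed time slices $\Sigma_t$. The plan is to proceed as in the proof of Proposition \ref{cor:atlast}: parametrize $\H_u$ by the coordinates $(t,x')$ transported along null geodesics from $P_{0,u}$, exploit $|\omega-\omega'|\lesssim 2^{-j/2}$ together with the Lipschitz control $|N(.,\omega)-N(.,\omega')|\lesssim|\omega-\omega'|$ in \eqref{estNomega} to transfer integrals between $\H_u$ and $\H_{u_{\omega'}}$ at the price of a Jacobian bounded via \eqref{estn}, \eqref{estb}, and then insert \eqref{estpo2b} slice-by-slice in $t$, using the sharp trace of $\partial_\omega^2 b$ from $\Sigma_t$ down to $P_{t,u}$. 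Interpolating the resulting $L^\infty_u L^2(\H_u)$ bound with the rougher $L^\infty$ bound from \eqref{estricciomega} produces the required estimate $\|f_2^j\|_{L^\infty_u\lh{2}}\lesssim 2^{-j/4}\ep$.
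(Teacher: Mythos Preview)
Your decomposition $f_1^j=\partial_\omega b(.,\omega')$, $f_2^j=\int_0^1[\partial_\omega b(.,\omega_\sigma)-\partial_\omega b(.,\omega')]d\sigma$ is natural, and the $L^\infty$ bound on $f_1^j$ is indeed immediate from \eqref{estricciomega}. The gap is in the estimate of $f_2^j$. After the second application of the fundamental theorem of calculus you obtain the pointwise bound $|f_2^j|\lesssim 2^{-j/2}\sup_{\widetilde\omega}|\partial_\omega^2 b(.,\widetilde\omega)|$, and you then need to place the right-hand side in $L^\infty_uL^2(\H_u)$ \emph{with $u=u(.,\omega)$}. The only control on $\partial_\omega^2 b$ is \eqref{estpo2b}, which lives on the surfaces $P_{t,u_{\widetilde\omega}}$ of the \emph{$\widetilde\omega$-foliation}. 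Transferring an $L^2$ bound across foliations is exactly the content of Lemma~\ref{lemma:xx8}, and with $p=2$ that lemma requires $\nabb\partial_\omega^2 b\in L^\infty_{u_{\widetilde\omega}}L^2(\H_{u_{\widetilde\omega}})$, i.e.\ a fourth derivative of $b$, which is not available anywhere in the paper. Your sketch invokes a ``sharp trace of $\partial_\omega^2 b$ from $\Sigma_t$ down to $P_{t,u}$'', but \eqref{estpo2b} is not a $\Sigma_t$-estimate, it is already a 2-surface estimate on the wrong surfaces; and any trace from $\Sigma_t$ to $P_{t,u}$ would again cost half a derivative you do not have. The final ``interpolation with the $L^\infty$ bound'' is also not well-posed: $\H_u$ is noncompact, so $L^\infty\not\hookrightarrow L^2$, and even on the compact region one would still need a genuine bound in a weaker Lebesgue norm on $\H_u$, which is precisely what is missing.

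The paper circumvents this by never writing $\partial_\omega^2 b$ explicitly. It applies Proposition~\ref{bis:prop:xx1} to $f=\partial_\omega b$: one splits $\partial_\omega b(.,\omega'')=P'_{\leq j/2}(\partial_\omega b(.,\omega'))+f_2^j$, where the first piece is the $\omega$-independent $f_1^j$. The high-frequency tail $\sum_{l>j/2}P_l(\partial_\omega b)$ is transferred to the $\omega$-foliation via Lemma~\ref{lemma:xx8} using only $\nabb\partial_\omega b\in L^2$ (available from \eqref{estricciomega}), giving $2^{-j/4}\ep$. For the low-frequency piece one differentiates in $\omega$, producing $P_{\leq j/2}(\partial_\omega^2 b)$ plus a commutator; the finite band property supplies the missing derivative so that Lemma~\ref{lemma:xx8} applies with a loss of $2^{j/4}$, which is paid for by the factor $|\omega-\omega'|\lesssim 2^{-j/2}$. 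The commutator term in Proposition~\ref{bis:prop:xx1} (Lemma~\ref{bis:lemma:xx5}) requires the additional structural input $\Lambda^{-1}(\nabla_{bN}\partial_\omega b)\in L^\infty_uL^2(\H_u)$, which is the content of Lemma~\ref{lemma:assemblee1} and is an independent, nontrivial computation that your sketch does not address. In short, the Littlewood-Paley splitting is not cosmetic here: it is what replaces the unavailable derivative $\nabb\partial_\omega^2 b$ in the transfer step.
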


\subsection{Additional estimates for $\trc$}

In this section, we state estimates for $\trc$ involving the geometric Littlewood-Paley projections $P_j$ on $\ptu$ constructed in  \cite{LP}, that are not direct consequences of the estimate \eqref{esttrc} for $\trc$ and basic properties of $P_j$.
\begin{proposition}\lab{prop:zz37}
$\trc$ satisfies the following estimates
\be\lab{zz37}
\norm{P_j\trc}_{\xt{2}{\infty}}\les 2^{-j}\ep,
\ee
and
\be\lab{zz38}
\norm{P_j(nL\trc)}_{\xt{2}{1}}\les 2^{-j}\ep.
\ee
\end{proposition}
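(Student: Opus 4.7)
The two estimates are coupled: the second is the main analytic input, and the first follows from it by integrating the Raychaudhuri equation along the null generators of $\H_u$. Since $L(t)=1/n$ and our $(t,x')$ coordinates freeze $x'$ along these generators, for fixed $x'$ we have the ODE
\[
\frac{d}{dt}P_j\trc(t,x')=nL(P_j\trc)(t,x')=P_j(nL\trc)(t,x')+n[L,P_j]\trc(t,x')+[n,P_j]L\trc(t,x').
\]
Integrating from $0$ to $t$, taking $\sup_t$ pointwise in $x'$, and then $L^2_{x'}$ gives
\[
\|P_j\trc\|_{\xt{2}{\infty}}\ \les\ \|P_j\trc(0,\cdot)\|_{L^2(\pou)}+\|P_j(nL\trc)\|_{\xt{2}{1}}+\|n[L,P_j]\trc\|_{\xt{2}{1}}+\|[n,P_j]L\trc\|_{\xt{2}{1}}.
\]
The initial data piece is controlled by $2^{-j}\|\nabb\trc(0,\cdot)\|_{L^2(\pou)}\les 2^{-j}\ep$ via the finite band property of $P_j$ on $\pou$ together with the bound $\II\les\ep$ from \eqref{estinit}. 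The commutators $[L,P_j]$ and $[n,P_j]$ produce the required $2^{-j}$ gain by the commutator estimates developed in Section \ref{sec:commutatorest}, using the already established bounds on $\chi$, $\z$, $n$, $\nabla n$ from Theorem \ref{thregx}. So \eqref{zz37} reduces modulo standard commutator work to \eqref{zz38}.

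\textbf{The second estimate via Raychaudhuri.} The Raychaudhuri equation \eqref{D4trchi} gives
\[
nL(\trc)=-\tfrac{n}{2}(\trc)^2-n|\hch|^2-n\,\db\,\trc,
\]
so it suffices to bound $P_j$ applied to each of the three terms in $\xt{2}{1}$. For the diagonal and lower-order pieces $P_j(n(\trc)^2)$ and $P_j(n\,\db\,\trc)$, we apply the finite band property $\|P_jf\|_{L^2(\ptu)}\les 2^{-j}\|\nabb f\|_{L^2(\ptu)}$, followed by Leibniz. The resulting factors are estimated by Hölder on $\ptu$ together with the $(t,x)$-controls from Theorem \ref{thregx}: $\|\trc\|_{\lh\infty}$, $\|\nabb\trc\|_{\xt{2}{\infty}}$, $\|\db\|_{\xt\infty 2}$, $\|\nabla n\|_{\tx\infty 2}$ and the null calculus inequalities of Section \ref{sec:calcineqgeneral}. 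Using $L^1_t\subset L^2_t$ on $[0,1]$ to then embed into $\lh 2$ and re-sort via Minkowski yields the required $2^{-j}\ep$ bound for these two pieces.

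\textbf{Main obstacle.} The hardest term is $P_j(n|\hch|^2)$: a naive Leibniz after finite band produces $\nabb\hch$, which is available in $\lh 2$ through $\no(\hch)$ from \eqref{esthch} but pairing it with $\hch$ in $L^\infty(\ptu)$ is not available at this level of regularity. The right approach is a paraproduct decomposition
\[
|\hch|^2=\sum_{k,k'}P_k\hch\cdot P_{k'}\hch=\mathrm{HH}+\mathrm{HL}+\mathrm{LH},
\]
and in each interaction to put the $2^{-j}$ on the higher frequency via finite band while keeping the lower frequency in a coarse norm. For the low-frequency factor one exploits the elliptic control of $\hch$ coming from Codazzi \eqref{Codaz}, which yields a better norm for $\nabb\hch$ than the transport equation alone would give and lets one avoid a logarithmic loss. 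The high-high interaction is the delicate case, and it is here that we pay the log-cost alluded to in the introduction, overcome using the Bernstein inequalities and sharp trace results on $\ptu$ from Section \ref{sec:calcineqgeneral}.

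\textbf{Conclusion.} Combining these estimates yields \eqref{zz38}. Substituting into the transport inequality of the first paragraph and controlling the two commutators closes \eqref{zz37}. All constants depend only on the universal constants from Theorem \ref{thregx}, so both bounds are uniform in $u$ and $\o$.
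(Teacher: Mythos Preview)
Your overall architecture is correct: reduce \eqref{zz37} to \eqref{zz38} via the transport equation, then use Raychaudhuri to isolate $P_j(n|\hch|^2)$ as the only genuinely hard term. (One minor point: the paper treats $[nL,P_j]\trc$ as a single commutator via Proposition~\ref{prop:gowinda14}, which gives $\norm{[nL,P_j]\trc}_{\tx{1}{2}}\les 2^{-j}\ep$ directly, rather than splitting it as $n[L,P_j]+[n,P_j]L$ as you do.)

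The substantive gap is in your handling of $P_j(n|\hch|^2)$. A single finite-band step followed by an HL/LH paraproduct on $\hch\c\hch$ does not by itself produce the required $2^{-j}$: placing the low-frequency factor $P_{\leq j}\hch$ in $L^\infty(\ptu)$ via Bernstein costs $2^j$, while finite band on the high-frequency factor $P_{>j}\hch$ only recovers $2^{-j}$, so the balance is $O(1)$, not $2^{-j}$. Your reference to Codazzi is off-target --- \eqref{Codaz} controls $\divb\hch$, not the full $\nabb\hch$, and in any case the obstruction is the norm on the \emph{undifferentiated} factor, not on $\nabb\hch$. The high-high description is too vague to assess.

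The paper's mechanism is different in two essential respects. First, one writes $P_j=2^{-2j}\divb\,\nabb P_j$ (up to relabelling the projection), which reduces matters to showing $\norm{\divb P_j(n\hch\c\nabb\hch)}_{\xt{2}{1}}\les 2^j\ep$; this already gains an extra $2^{-j}$ beyond a single finite-band step. Second, one factors $n\hch\c\nabb\hch=hF$ with the \emph{scalar} $h=|\hch|$ and $F=n(\hch/|\hch|)\c\nabb\hch$, decomposes only $h=P_{\leq j}h+P_{>j}h$, and for the principal piece uses
\[
\norm{h\,\divb P_jF}_{\xt{2}{1}}\ \les\ \norm{h}_{\xt{\infty}{2}}\,\norm{\divb P_jF}_{\lh{2}}\ \les\ 2^j\,\norm{\hch}_{\xt{\infty}{2}}\,\norm{\nabb\hch}_{\lh{2}}\ \les\ 2^j\ep.
\]
The decisive input here is the null-geodesic trace bound $\norm{\hch}_{\xt{\infty}{2}}\les\ep$ from Theorem~\ref{thregx}, which your outline never invokes; it is precisely what allows one to land the product in $L^2_{x'}L^1_t$ without paying a Bernstein loss. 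The remaining pieces --- the $P_{>j}h$ contribution and the commutators $[\nabb,P_j]$ and $[P_j,P_{\leq j}h]$ --- are dispatched by the dedicated Lemmas~\ref{lemma:zz42} and~\ref{lemma:zz44}, which are likewise absent from your sketch.
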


\begin{proposition}\lab{prop:zz39}
$\trc$ satisfies the following estimates
\be\lab{zz39}
\norm{\nabb P_{\leq j}\trc}_{\xt{2}{\infty}}\les \ep,
\ee
and
\be\lab{zz40}
\norm{\nabb P_{\leq j}(nL\trc)}_{\xt{2}{1}}\les \ep.
\ee
\end{proposition}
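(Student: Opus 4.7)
The plan is to derive both estimates from the commutator decomposition
\[
\nabb P_{\leq j} F = P_{\leq j}\nabb F + [\nabb, P_{\leq j}] F,
\]
applied with $F = \trc$ for \eqref{zz39} and with $F = nL\trc$ for \eqref{zz40}. In each case this reduces the problem to controlling (i) a main term in which $\nabb$ has been commuted past $P_{\leq j}$, handled by combining the $L^p(\ptu)$-boundedness of the heat-flow-based projections established in \cite{LP} with direct control of $\nabb F$, and (ii) a commutator $[\nabb, P_{\leq j}]F$, handled by the Littlewood-Paley commutator estimates of \cite{LP}, with the dyadic contributions $k\leq j$ reassembled using the decay bounds of Proposition \ref{prop:zz37}.

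For \eqref{zz39}, the commuted main term is controlled by $\norm{\nabb\trc}_{\xt{2}{\infty}}\les\ep$, which is \eqref{esttrc}. The commutator $[\nabb, P_{\leq j}]\trc$ is expressed, via the heat-equation representation of $P_{\leq j}$, as an integral in the heat parameter against lower-order multiplications involving the Ricci coefficient $\chi$, the second fundamental form $k$, and the Gauss curvature $K$ of $\ptu$. These factors are controlled by \eqref{esttrc}, \eqref{esthch}, \eqref{estk}, and for $K$ by the Gauss equation \eqref{gauss} together with \eqref{curvflux1} and \eqref{esthch}. The dyadic sum over $k\leq j$ converges thanks to the $2^{-k}$ decay furnished by \eqref{zz37}.

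For \eqref{zz40}, the commuted main term requires a bound on $\nabb(nL\trc)$ in $\xt{2}{1}$. Substituting the Raychaudhuri equation \eqref{D4trchi} gives
\[
nL\trc = -\tfrac{n}{2}(\trc)^2 - n|\hch|^2 - n\db\trc,
\]
and distributing the derivative produces products of derivatives of $n$, $\trc$, $\hch$, $\d$, and $N(n)$. Each product is estimated in $\xt{2}{1}$ by H\"older on $\H_u$, using \eqref{estn}, \eqref{estk}, \eqref{esttrc}, and \eqref{esthch} from Theorem \ref{thregx}. The commutator term $[\nabb, P_{\leq j}](nL\trc)$ is bounded by combining the LP commutator estimate with the decay bound \eqref{zz38}, where the extra $2^{-k}$ factor absorbs the $2^k$ that arises from the action of $\nabb$ on a frequency $k$ piece.

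The main obstacle is the quadratic contribution $n|\hch|^2$, which upon differentiation produces $n\hch\nabb\hch$ and must be controlled in $\xt{2}{1}$. The factor $\nabb\hch$ is not directly bounded in an adequate mixed norm, so we invoke the Codazzi equation \eqref{Codaz}, which together with Hodge theory on $\ptu$ expresses $\nabb\hch$ in terms of $\nabb\trc$, the curvature component $\b$, and quadratic lower-order terms involving $\kep$ and $\trc$. The $\nabb\trc$ piece is paired with $n\hch$ and estimated via \eqref{esttrc} and \eqref{esthch}; the curvature piece $\b$ is controlled in $\lh{2}$ by the flux assumption \eqref{curvflux1} and paired with $\hch\in\xt{2}{\infty}$; the quadratic remainders are bounded by \eqref{estk} and \eqref{esttrc}. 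Combining these factors by H\"older closes the $\xt{2}{1}$ estimate.
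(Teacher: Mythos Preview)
Your proposal has a fundamental gap concerning the boundedness of $P_{\leq j}$ on the mixed norms $\xt{2}{\infty}$ and $\xt{2}{1}$.

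For \eqref{zz39}, your main term requires $\norm{P_{\leq j}\nabb\trc}_{\xt{2}{\infty}}\les \norm{\nabb\trc}_{\xt{2}{\infty}}$. But $P_{\leq j}$ is defined via the heat flow on $\ptu$ at each fixed $t$; its $L^p(\ptu)$-boundedness from \cite{LP} yields boundedness on $\tx{q}{p}=L^q_tL^p_{x'}$, not on $\xt{2}{\infty}=L^2_{x'}L^\infty_t$. Since by Minkowski $\xt{2}{\infty}$ is strictly stronger than $\tx{\infty}{2}$, this step fails. The paper makes this point explicitly just before \eqref{tontonkenji}: the entire content of the proposition is precisely to upgrade the trivial bound $\norm{\nabb P_{\leq j}\trc}_{\tx{\infty}{2}}\les\ep$ to the stronger norm $\xt{2}{\infty}$. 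The paper's route is to use the transport estimate \eqref{estimtransport1} along $nL$, which converts the $\xt{2}{\infty}$ bound into an $\xt{2}{1}$ bound on $\ddb_{nL}\nabb P_{\leq j}\trc$, hence reducing \eqref{zz39} to \eqref{zz40} (plus the commutator estimate \eqref{zz2}). Your decomposition never invokes transport and therefore cannot access $\xt{2}{\infty}$.

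For \eqref{zz40} the same issue recurs: your main-term bound needs $P_{\leq j}$ bounded on $\xt{2}{1}=L^2_{x'}L^1_t$, again stronger than $\tx{1}{2}$. Moreover, even granting a direct bound on $\norm{\nabb(nL\trc)}_{\xt{2}{1}}$, you cannot pass through $P_{\leq j}$ without this boundedness. The Codazzi/Hodge argument you sketch only produces $\nabb\hch\in\lh{2}$, which is what \eqref{esthch} already gives; it does not help place the product $n\hch\nabb\hch$ in $\xt{2}{1}$ after $P_{\leq j}$. The paper circumvents this by a paraproduct decomposition $n\hch\c\nabb\hch=hF$ with $h=|\hch|$, splitting $hF=P_{\leq j}(h)F+P_{>j}(h)F$, and arranging each piece so that the trace-type factor carries the $x'$-supremum while the other factor sits in $\lh{2}$ where $P_{\leq j}$ boundedness is available (see \eqref{zzbis54}--\eqref{zzbis60}). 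Your proposal lacks any mechanism of this kind.
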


The rest of the paper is as follows. In section \ref{sec:calcineqgeneral}, we derive several embeddings on $\ptu$, $\H_u$ and $\Sigma_t$ which are compatible with the regularity stated in Theorem \ref{thregx}. We also discuss the Littlewood-Paley projections of \cite{LP} as well as several elliptic systems of Hodge type on $\ptu$. In section \ref{sec:regxproof}, we prove Theorem \ref{thregx}. In section \ref{sec:secondderlb}, we prove Theorem \ref{thregx1}. In section \ref{sec:firstderivomega}, we prove Theorem \ref{thregomega}. In section \ref{sec:secondderivomega} we  prove Theorem \ref{thregomega2}. In section \ref{sec:depnormonomega}, we derive the various decompositions of section \ref{sec:obadidonc}. Finally, we prove Proposition \ref{prop:zz37} and Proposition \ref{prop:zz39} in section \ref{sec:commutatorest}. 

\section{Calculus inequalities on $\ptu$, $\H_u$ and $\Sit$}\lab{sec:calcineqgeneral}

In this section, we first recall some calculus inequalities from \cite{LP} on the 2-surfaces $\ptu$. We then  discuss the Littlewood-Paley projections of \cite{LP} as well as several elliptic systems of Hodge type on $\ptu$. We  establish calculus inequalities on $\H_u$. Finally, we establish calculus inequalities on $\Sit$, and we construct geometric Littlewood-Paley projections on $\Sigma_t$ in the spirit of \cite{LP}.

\subsection{Calculus inequalities on $\ptu$}\lab{sec:calcineq}

We denote by $\gamma$ the metric induced by $\gg$ on $\ptu$. 
A coordinate chart $U\subset \ptu$  with coordinates $x^1, x^2$ is admissible if,
relative to  these coordinates, there exists  a constant $c>0$ such that,
\be\lab{eq:coordchart}
c^{-1}|\xi|^2\le \gamma_{AB}(p)\xi^A\xi^B\le c|\xi|^2, \qquad \mbox{uniformly for  all }
\,\, p\in U.
\ee
We assume that $\ptu$ can be covered by a finite number
of admissible coordinate charts, i.e., charts satisfying 
the conditions \eqref{eq:coordchart}. Furthermore, we assume that the 
constant $c$ in \eqref{eq:coordchart} and the number of charts is independent of $t$ and $u$. 
\begin{remark}
The existence of a covering of $\ptu$ by coordinate charts satisfying \eqref{eq:coordchart} with a constant $c>0$ and the number of charts independent of $t$ and $u$ will be shown in section \ref{sec:coord}.
\end{remark}
Under these assumptions, the following calculus inequality has been proved in \cite{LP}:
\begin{proposition} Let $f$ be a  real scalar function. Then, 
\be\lab{eq:isoperimetric}
\|f\|_{L^2(\ptu)}\lesssim\|\nabb f\|_{L^1(\ptu)}+\|f\|_{L^1(\ptu)}.
\ee
\end{proposition}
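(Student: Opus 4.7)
The plan is to reduce the claimed inequality to the classical Gagliardo--Nirenberg--Sobolev (or equivalently, isoperimetric) inequality in flat $\mathbb{R}^2$ by passing to the admissible coordinate charts whose existence is assumed in \eqref{eq:coordchart}. First I would fix a finite covering of $P_{t,u}$ by admissible charts $\{U_\alpha\}$, and fix a smooth partition of unity $\{\varphi_\alpha\}$ subordinate to this covering. Since the number of charts, the constant $c$ in \eqref{eq:coordchart}, and (by a standard construction) the partition of unity can be chosen with uniform bounds that are independent of $t$ and $u$, every constant produced in what follows will automatically be uniform.

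Next I would write $f=\sum_\alpha \varphi_\alpha f$ and estimate each piece $f_\alpha:=\varphi_\alpha f$ separately. In the admissible coordinates $(x^1,x^2)$ on $U_\alpha$, the uniform equivalence $c^{-1}|\xi|^2\le \gamma_{AB}\xi^A\xi^B\le c|\xi|^2$ implies that the Riemannian area element, the pointwise norm of $\nabb$, and the norms $L^p(\ptu)$ restricted to $U_\alpha$ are each comparable (up to constants depending only on $c$) to their flat Euclidean counterparts. Since $f_\alpha$ is compactly supported inside $U_\alpha$, I can extend it by zero to all of $\mathbb{R}^2$ and apply the classical $2$-dimensional Sobolev inequality
\[
\|f_\alpha\|_{L^2(\mathbb{R}^2)}\lesssim \|\nabla f_\alpha\|_{L^1(\mathbb{R}^2)},
\]
which is a direct consequence of the Euclidean isoperimetric inequality via the coarea formula. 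Transferring back to $\ptu$ using the metric equivalence yields
\[
\|f_\alpha\|_{L^2(P_{t,u})}\lesssim \|\nabb f_\alpha\|_{L^1(P_{t,u})}.
\]

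Finally I would expand $\nabb f_\alpha=\varphi_\alpha \nabb f+f\nabb\varphi_\alpha$ and use the uniform bound $|\nabb\varphi_\alpha|\lesssim 1$ (which follows from the uniformity of the coordinate charts) to obtain
\[
\|f_\alpha\|_{L^2(P_{t,u})}\lesssim \|\nabb f\|_{L^1(P_{t,u})}+\|f\|_{L^1(P_{t,u})}.
\]
Summing over the finite collection of indices $\alpha$ and using $\|f\|_{L^2(P_{t,u})}\le \sum_\alpha \|f_\alpha\|_{L^2(P_{t,u})}$ gives the desired inequality \eqref{eq:isoperimetric}. The only mildly delicate point is ensuring that all constants are independent of $t$ and $u$; this is not really an obstacle here since the hypotheses on the admissible covering already guarantee this, but it is precisely the reason the construction of such a uniform covering (postponed to section \ref{sec:coord}) is a nontrivial step elsewhere in the paper.
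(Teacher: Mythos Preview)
Your proof is correct and follows the standard route. The paper itself does not supply a proof of this proposition but defers to \cite{LP}, where the same argument is used: localize with a partition of unity subordinate to the admissible coordinate cover, pull back to $\mathbb{R}^2$ using the metric equivalence \eqref{eq:coordchart}, apply the flat Gagliardo--Nirenberg--Sobolev inequality, and sum. One small remark specific to this paper: Lemma~\ref{lemm:coord} later shows that $P_{t,u}$ in fact carries a single \emph{global} admissible coordinate chart, so the partition of unity is ultimately superfluous here and one may transport the Euclidean inequality directly; your more general version is of course still valid.
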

As a corollary of the estimate \eqref{eq:isoperimetric}, 
 the following Gagliardo-Nirenberg inequality is derived in \cite{LP}:
\begin{corollary}
Given an arbitrary tensorfield  $F$ on $\ptu$ and any $2\le p<\infty$, we have:
\be\lab{eq:GNirenberg}
\|F\|_{L^p(\ptu)}\lesssim \|\nabb F\|_{L^2(\ptu)}^{1-\frac{2}{p}}\|F\|_{L^2(\ptu)}^{\frac 2 p}+\|F\|_{L^2(\ptu)}.
\end{equation}
\end{corollary}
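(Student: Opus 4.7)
The plan is to derive the Gagliardo--Nirenberg inequality directly from the scalar isoperimetric-type estimate \eqref{eq:isoperimetric} by applying it to a suitable power of $|F|$ and then interpolating the resulting intermediate Lebesgue norms.

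First I would reduce to scalar functions via the pointwise Kato inequality $|\nabb|F||\leq|\nabb F|$, valid a.e.\ on $\{|F|\neq 0\}$: once the inequality is established for nonnegative scalars, applying it to $f=|F|$ yields the tensorial version directly. So assume $F = f \geq 0$. Next, apply \eqref{eq:isoperimetric} to $g=f^{p/2}$ (legitimate since $p/2\geq 1$, so the chain rule gives $|\nabb g|\leq (p/2)f^{p/2-1}|\nabb f|$ a.e.) to obtain
\begin{equation*}
\|f\|_{L^p(\ptu)}^{p/2} \;=\; \|g\|_{L^2(\ptu)} \;\lesssim\; \bigl\|f^{p/2-1}|\nabb f|\bigr\|_{L^1(\ptu)} + \|f\|_{L^{p/2}(\ptu)}^{p/2}.
\end{equation*}
Cauchy--Schwarz on the first term yields $\|f^{p/2-1}|\nabb f|\|_{L^1}\leq \|f\|_{L^{p-2}}^{p/2-1}\|\nabb f\|_{L^2}$.

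The next step is to interpolate the intermediate norms $\|f\|_{L^{p-2}}$ and $\|f\|_{L^{p/2}}$ between $L^2(\ptu)$ and $L^p(\ptu)$ by H\"older. A short computation of the exponents (with $\theta=4/(p-2)^2$ for the first, analogously for the second) gives
\begin{equation*}
\|f\|_{L^p(\ptu)}^{p/2} \;\lesssim\; \|f\|_{L^2}^{2/(p-2)}\,\|f\|_{L^p}^{p(p-4)/(2(p-2))}\,\|\nabb f\|_{L^2} \;+\; \text{(terms sublinear in }\|f\|_{L^p}\text{)}.
\end{equation*}
A standard Young inequality then absorbs the $\|f\|_{L^p}$ factor on the right into the left-hand side, yielding $\|f\|_{L^p}\lesssim \|\nabb f\|_{L^2}^{1-2/p}\|f\|_{L^2}^{2/p}+\|f\|_{L^2}$, which is exactly \eqref{eq:GNirenberg}.

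The only real difficulty here is bookkeeping: one has to check that after interpolation the power of $\|f\|_{L^p}$ on the right is strictly less than $p/2$ so that Young absorption actually closes, and that the constants are independent of the particular admissible covering used for $\ptu$ (this is guaranteed by the uniform assumption \eqref{eq:coordchart}). The extremal cases are harmless: $p=2$ is trivial, while $p=4$ is the borderline case where the exponent $p(p-4)/(2(p-2))$ vanishes and no absorption is required, so the estimate follows in one line from \eqref{eq:isoperimetric} applied to $f^2$.
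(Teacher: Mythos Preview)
The paper does not prove this corollary; it records it as a consequence of \eqref{eq:isoperimetric} and refers to \cite{LP}. Your argument is the standard derivation and is essentially what appears there.

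There is, however, a gap in the range $2<p<4$ that you do not address. Your interpolation step places $\|f\|_{L^{p-2}}$ and $\|f\|_{L^{p/2}}$ between $L^2$ and $L^p$, but for $2<p<4$ both exponents $p-2$ and $p/2$ lie \emph{below} $2$; no such H\"older interpolation exists (indeed your $\theta=4/(p-2)^2$ exceeds $1$ there), and since $P_{t,u}$ is noncompact there is no embedding $L^2\hookrightarrow L^q$ for $q<2$ to fall back on. Your description of $p=4$ as merely the ``borderline case where no absorption is required'' understates the issue: below $p=4$ the whole scheme does not apply. The fix is immediate and uses ingredients you already have: prove the case $p=4$ directly (apply \eqref{eq:isoperimetric} to $f^2$ and Cauchy--Schwarz, as you note), run your bootstrap for $p>4$, and for $2<p<4$ interpolate $L^p$ between $L^2$ and the now-established $L^4$ by H\"older; the resulting exponents match \eqref{eq:GNirenberg} exactly.
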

As a corollary of \eqref{eq:isoperimetric}, it also classical to derive the following inequality (for a proof, see for example \cite{GT} page 157):
\begin{corollary}
For any tensorfield $F$ on $\ptu$ and any $p>2$:
\be\lab{sobinftyptu}
\norm{F}_{L^\infty(\ptu)}\lesssim \norm{\nabla F}_{L^p(\ptu)}+\norm{F}_{L^p(\ptu)}.
\ee
\end{corollary}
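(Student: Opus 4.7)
The plan is to deduce this Morrey-type embedding on the 2-surfaces $\ptu$ directly from the isoperimetric inequality \eqref{eq:isoperimetric} via a Moser iteration. First I would reduce from tensor fields to nonnegative scalars: setting $f = |F|$ (where $|\cdot|$ is the pointwise norm induced by $\gamma$), Kato's inequality $|\nabb f| \leq |\nabb F|$ almost everywhere on $\ptu$ shows that it suffices to prove the estimate for $f \geq 0$, and this reduction does not affect either side of the inequality.

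Next, for a nonnegative scalar $f$ and a parameter $\alpha > 1$, I would apply \eqref{eq:isoperimetric} to $f^\alpha$, expand $|\nabb f^\alpha| = \alpha f^{\alpha-1}|\nabb f|$, and estimate the gradient term by Hölder's inequality with conjugate exponents $p$ and $p' = p/(p-1)$. This yields
\begin{equation*}
\|f\|_{L^{2\alpha}(\ptu)}^{\alpha} \les \alpha\,\|f\|_{L^{(\alpha-1)p'}(\ptu)}^{\alpha-1}\,\|\nabb f\|_{L^p(\ptu)} + \|f\|_{L^\alpha(\ptu)}^{\alpha}.
\end{equation*}
The core mechanism is then a Moser iteration defined by a sequence $\alpha_k$ satisfying $(\alpha_{k+1}-1)p' = 2\alpha_k$, so that the $L^{(\alpha_{k+1}-1)p'}$ norm at stage $k+1$ is precisely the $L^{2\alpha_k}$ norm controlled at stage $k$. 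Since $p > 2$ gives $2/p' = 2(p-1)/p > 1$, the exponents $\alpha_k$ grow geometrically to $+\infty$, and the stage-by-stage constants $(C\alpha_k)^{1/\alpha_k}$ form a convergent product. Initializing with $\alpha_0$ large enough that $\|f\|_{L^{(\alpha_0-1)p'}}$ can be controlled from $\|f\|_{L^p}$ via \eqref{eq:GNirenberg} and Hölder on the compact surface $\ptu$, passing to the limit $k \to \infty$ produces the desired $L^\infty$ bound.

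Alternatively, and perhaps more transparently, one can exploit the admissible coordinate hypothesis \eqref{eq:coordchart}: $\ptu$ is covered by a uniformly bounded number of charts in which $\gamma$ is uniformly equivalent to the Euclidean metric, with the uniformity in $(t,u)$ guaranteed by the construction in section \ref{sec:coord}. In each chart, the inequality reduces to the classical Morrey embedding $W^{1,p} \hookrightarrow L^\infty$ on a bounded domain of $\mathbb{R}^2$ for $p > 2$, and a subordinate partition of unity globalizes the estimate to $\ptu$. The main obstacle in either approach is really just bookkeeping — either the convergence of the constants in the Moser iteration, or the uniformity of the constants of the coordinate atlas across the family $(t,u)$ — and the latter issue is deferred to section \ref{sec:coord}.
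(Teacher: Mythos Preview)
The paper does not supply a proof here; it simply declares the inequality classical and cites Gilbarg--Trudinger p.~157, framing it as a consequence of the isoperimetric inequality \eqref{eq:isoperimetric}. Your Moser iteration from \eqref{eq:isoperimetric} is precisely the kind of argument such a reference would give, so your approach is aligned with the paper's intent.

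One small slip: in this paper $\ptu$ is \emph{not} compact (see the discussion after \eqref{forfaitbambin}, and note that Lemma~\ref{lemm:coord} produces a single global chart), so your phrases ``H\"older on the compact surface $\ptu$'' and ``bounded domain of $\mathbb{R}^2$'' are inaccurate. This does not damage the argument, however. The iteration can be initialized at $\alpha_0 = p$, where $(\alpha_0-1)p' = p$ exactly, so the input is $\|f\|_{L^p}$ itself and no compactness is needed to start; for subsequent steps one has $p \le \alpha_k \le 2\alpha_{k-1}$, so the lower-order term $\|f\|_{L^{\alpha_k}}$ is controlled by interpolation between $\|f\|_{L^p}$ and $\|f\|_{L^{2\alpha_{k-1}}}$, which requires no finite-measure hypothesis. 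Likewise, in your coordinate-chart alternative, the Morrey embedding $W^{1,p}(\mathbb{R}^2)\hookrightarrow L^\infty$ for $p>2$ holds on all of $\mathbb{R}^2$, so the unboundedness of the chart is harmless.
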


We recall  the Bochner identity on $\ptu$ (which has dimension 2). This allows us  to 
control  the $L^{2}$ norm of the second derivatives of a tensorfield 
in terms of the $L^{2}$ norm of the laplacian and 
geometric quantities associated with $\ptu$ (see for example \cite{LP} for a proof).
\begin{proposition}\label{prop:Bochner}
Let  $K$ denote the  Gauss curvature of $\ptu$. Then
\noindent 
{\bf i})\quad For a scalar function $f$:
\begin{equation}
\label{sboch}
\int_{\ptu} |\nabb^{2} f|^{2}\dmt = \int_{\ptu} |\lap f|^{2}\dmt - 
\int_{\ptu} K |\nabb f|^{2}\dmt.
\end{equation}

\noindent
{\bf ii)}\quad For a vectorfield $F_{a}$:
\bea\label{vboch}
\nn\int_{\ptu} |\nabb^{2} F|^{2}\dmt &=& \int_{\ptu} |\lap F|^{2}\dmt -
\int_{\ptu} K (2\,|\nabb F|^{2}-|\divb F|^{2}-|\curlb F|^2)\dmt \\
&&+ \int_{\ptu} K^{2} |F|^{2}\dmt,
\eea
where $\divb F=\gamma^{ab}\nabb_b F_a$, $\curlb F=\divb( ^*F)=\in_{ab}\nabb_a F_b$.
\end{proposition}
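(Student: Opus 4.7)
The plan is to apply pointwise Bochner--Weitzenb\"ock identities on $\ptu$ and integrate, using that each $\ptu$ is a compact $2$-surface without boundary (so integration by parts produces no boundary contribution) together with the two crucial dimensional simplifications $\operatorname{Ric}_{ab}=K\gamma_{ab}$ and $R_{abcd}=K(\gamma_{ac}\gamma_{bd}-\gamma_{ad}\gamma_{bc})$.

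For (i), I would start from the pointwise scalar Bochner identity
$$\half\lap|\nabb f|^2 \;=\; |\nabb^2 f|^2 \,+\, \nabb^a f\,\nabb_a(\lap f) \,+\, \operatorname{Ric}(\nabb f,\nabb f),$$
substitute $\operatorname{Ric}(\nabb f,\nabb f)=K|\nabb f|^2$, and integrate over $\ptu$. The left-hand side is a pure divergence and vanishes, while an integration by parts converts $\int_{\ptu}\nabb^a f\,\nabb_a(\lap f)\,\dmt$ into $-\int_{\ptu}(\lap f)^2\,\dmt$, yielding \eqref{sboch}.

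For (ii), I would start from $\int_{\ptu}|\nabb^2 F|^2\,\dmt = \int_{\ptu}\nabb^a\nabb^b F^c\,\nabb_a\nabb_b F_c\,\dmt$ and integrate by parts twice to move both outer derivatives onto the second factor. This generates $\int_{\ptu}\nabb^b F^c\,\nabb^a\nabb_a\nabb_b F_c\,\dmt$, and one must commute the Laplacian $\nabb^a\nabb_a$ past $\nabb_b$ (a commutator $[\nabb^a,\nabb_b]\nabb_a F_c$) and then commute one more derivative past $F_c$ itself. Each commutator is replaced by a contraction of the Riemann tensor against $\nabb F$ or $F$, and the two-dimensional identity above converts every Riemann factor into an explicit multiple of $K$. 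The ``no-commutator'' term produces $\int(\lap F)^2\,\dmt$; the commutator acting on $\nabb F$ produces, after one further integration by parts, Gauss curvature contributions proportional to $|\nabb F|^2$, $(\divb F)^2$ and $(\curlb F)^2$; and the commutator acting on $F$ itself contributes the piece $\int K^2|F|^2\,\dmt$.

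The main obstacle is the bookkeeping of index contractions in (ii): several Riemann factors appear in parallel, and one has to verify that the signs combine to reproduce exactly the asserted combination $2|\nabb F|^2 - |\divb F|^2 - |\curlb F|^2$. A clean checkpoint is the pointwise decomposition $|\nabb F|^2 = \half(\divb F)^2 + |S|^2 + \half(\curlb F)^2$, where $S_{ab}$ is the symmetric traceless part of $\nabb_a F_b$: this identity gives $2|\nabb F|^2 - |\divb F|^2 - |\curlb F|^2 = 2|S|^2$, so the Gauss curvature contribution must reduce to $-2\int K|S|^2\,\dmt$. Once this reduction is isolated, the structure of \eqref{vboch} becomes transparent and one only needs to track the lower-order $K^2|F|^2$ contribution to conclude.
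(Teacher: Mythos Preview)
Your approach via the pointwise Weitzenb\"ock identity and integration by parts is the standard one and matches what is in the reference the paper cites (the paper itself does not supply a proof, deferring to \cite{LP}). One correction: in this paper the surfaces $\ptu$ are \emph{not} compact---they are non-compact, asymptotically flat $2$-surfaces (this is stated explicitly in the discussion following Corollary~\ref{prop:hodgeestimateslol}; see also Remark~\ref{rem:noprobleminf}). The integration by parts is therefore justified not by ``no boundary'' but by sufficient decay of all quantities at spatial infinity so that the boundary contribution vanishes. With that caveat your argument is fine: the two-dimensional simplifications $\operatorname{Ric}=K\gamma$ and $R_{abcd}=K(\gamma_{ac}\gamma_{bd}-\gamma_{ad}\gamma_{bc})$ and the commutator bookkeeping for (ii) are exactly what is needed, and your checkpoint $2|\nabb F|^2-|\divb F|^2-|\curlb F|^2=2|S|^2$ (with $S$ the symmetric traceless part of $\nabb F$) is correct and a good organizing device.
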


Using \eqref{eq:GNirenberg} and \eqref{vboch}, the following Bochner inequality is derived in \cite{LP} for a tensor $F$. For all $2\leq p<+\infty$, we have:
\bea
\lab{vbochineq}\norm{\nabb^2F}_{\lpt{2}}&\lesssim& \norm{\lap F}_{\lpt{2}}+(\norm{K}_{\lpt{2}}+\norm{K}^{\frac{1}{2}}_{\lpt{2}})\norm{\nabb F}_{\lpt{2}}\\
&&\nn +\norm{K}_{\lpt{2}}^{\frac{p}{p-1}}(\norm{\nabb F}_{\lpt{2}}^{\frac{p-2}{p-1}}\norm{F}_{\lpt{2}}^{\frac{1}{p-1}}+\norm{F}_{\lpt{2}}).
\eea

\subsection{Geometric Littlewood Paley theory on $\ptu$}\lab{sec:geompal}

We recall the properties of the heat  equation  for arbitrary 
tensorfields $F$ on $\ptu$.
$$\partial_\tau U(\tau)F -\lap U(\tau) F=0, \,\, U(0)F=F.$$
The following $L^2$ estimates for the operator
$U(\tau)$ are proved in \cite{LP}.
\begin{proposition}
We have the following estimates for the operator $U(\tau)$:
\begin{align}
&\|U(\tau) F\|^2_{\lpt{2}}+\int_0^\tau\norm{\nabb U(\tau')F}^2_{\lpt{2}}d\tau'\lesssim \|F\|^2_{\lpt{2}},\label{eq:l2heat1}\\
&\|\nabb U(\tau) F\|^2_{\lpt{2}}+\int_0^\tau\norm{\lap U(\tau')F}^2_{\lpt{2}}d\tau'\lesssim \|\nabb F\|^2_{\lpt{2}},\label{eq:l2heatnab}\\
& \tau\|\nabb U(\tau) F\|^2_{\lpt{2}}+\int_0^\tau{\tau'}\norm{\lap U(\tau')F}^2_{\lpt{2}}d\tau'\lesssim \|F\|^2_{\lpt{2}}.\label{eq:l2heat2}
\end{align}
\label{le:L2heat}
\end{proposition}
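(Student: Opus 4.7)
These are the standard parabolic energy identities for the heat semigroup on the fixed compact $2$-surface $P_{t,u}$. Write $V(\tau):=U(\tau)F$, so that $\partial_\tau V = \lap V$ with $V(0)=F$. Throughout, the area element $\dmt$ is independent of the heat parameter $\tau$ and $P_{t,u}$ has no boundary, so integration by parts is legitimate; moreover $\partial_\tau$ commutes with $\nabb$ and $\lap$ since the latter are intrinsic spatial operators on $P_{t,u}$. For arbitrary tensor fields $V$ we use the pointwise inner product induced by $\gamma$, which turns $\lap$ into a negative self-adjoint operator: $\int_{\ptu} V\cdot\lap V\,\dmt=-\int_{\ptu}|\nabb V|^2\,\dmt$.

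For \eqref{eq:l2heat1}, pair the equation with $V$ in $L^2(\ptu)$ to obtain
\begin{equation*}
\tfrac{1}{2}\partial_\tau\|V(\tau)\|_{\lpt{2}}^2 = \int_{\ptu} V\cdot\lap V\,\dmt = -\|\nabb V(\tau)\|_{\lpt{2}}^2,
\end{equation*}
and integrate in $\tau$ from $0$ to $\tau$ using $V(0)=F$. For \eqref{eq:l2heatnab}, pair instead with $-\lap V$: since $\partial_\tau$ commutes with $\nabb$,
\begin{equation*}
\int_{\ptu}(-\lap V)\,\partial_\tau V\,\dmt = \int_{\ptu}\nabb V\cdot\partial_\tau\nabb V\,\dmt = \tfrac{1}{2}\partial_\tau\|\nabb V(\tau)\|_{\lpt{2}}^2,
\end{equation*}
while the right-hand side gives $-\|\lap V(\tau)\|_{\lpt{2}}^2$; integrating in $\tau$ yields the second inequality with $\|\nabb F\|_{\lpt{2}}^2$ as the initial datum.

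For \eqref{eq:l2heat2}, combine the identity just obtained with the weight $\tau$:
\begin{equation*}
\frac{d}{d\tau}\Bigl(\tau\|\nabb V(\tau)\|_{\lpt{2}}^2\Bigr) + 2\tau\|\lap V(\tau)\|_{\lpt{2}}^2 = \|\nabb V(\tau)\|_{\lpt{2}}^2.
\end{equation*}
Integrating from $0$ to $\tau$ (the boundary term at $\tau'=0$ vanishes thanks to the weight) and bounding the right-hand side by $\int_0^\tau\|\nabb V(\tau')\|_{\lpt{2}}^2\,d\tau'\lesssim\|F\|_{\lpt{2}}^2$, which is exactly the content of \eqref{eq:l2heat1}, produces \eqref{eq:l2heat2}.

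The only genuine point to verify is the tensorial version of the identity $\int V\cdot\lap V\,\dmt=-\int|\nabb V|^2\,\dmt$ on $P_{t,u}$, which follows from integration by parts with no boundary contributions (since $\ptu$ is closed) and from the fact that the pointwise inner product is parallel, so that $\nabb_a(V\cdot\nabb_a V)=\nabb V\cdot\nabb V+V\cdot\lap V$. No Bochner/curvature correction enters because we only pair $V$ with $\lap V$ (respectively $\nabb V$ with $\partial_\tau\nabb V$), never swapping the order of $\nabb$ and $\lap$; this is why the estimates hold with no geometric hypothesis beyond the existence of the flow. This last remark is also what makes the argument the easy building block on which the geometric Littlewood--Paley calculus of \cite{LP} is erected.
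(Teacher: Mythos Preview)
Your argument is the standard energy-identity derivation and matches what the paper invokes (it simply cites \cite{LP} for this proposition). One correction: the surfaces $P_{t,u}$ here are \emph{not} compact---they are the intersections of the asymptotically flat slices $\Sigma_t$ with the null hypersurfaces $\mathcal{H}_u$, and the paper explicitly notes ``$\ptu$ is a non compact two dimensional surface'' (see the discussion around \eqref{forfaitbambin}). So your justification ``since $\ptu$ is closed'' for the vanishing of boundary terms is wrong as stated; the integration by parts is instead legitimate because of the decay at infinity guaranteed by the asymptotically flat structure (cf.\ Remark~\ref{rem:noprobleminf}). With that caveat the proof is fine.
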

We also introduce the nonhomogeneous heat equation:
$$\partial_\tau V(\tau) -\lap V(\tau)=F(\tau), \,\, V(0)=0,$$
for which we easily derive the following estimates:
\begin{proposition}
We have the following estimates for the operator $V(\tau)$:
\begin{align}
&\norm{\nabb V(\tau)}^2_{\lpt{2}}+\int_0^\tau\norm{\lap V(\tau')}^2_{\lpt{2}}d\tau'\lesssim \int_0^\tau\norm{F(\tau')}^2_{\lpt{2}}d\tau',\label{heatF1}\\
&\norm{V(\tau)}^2_{\lpt{2}}+\int_0^\tau\norm{\nabb V(\tau')}^2_{\lpt{2}}d\tau'\lesssim \int_0^\tau\int_{\ptu}V(\tau')F(\tau')\dmt d\tau'.\label{heatF2}
\end{align}
\label{le:L2heatbis}
\end{proposition}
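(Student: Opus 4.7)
The plan is to prove both bounds by standard energy estimates for the nonhomogeneous heat equation, mimicking the technique already used to establish \eqref{eq:l2heat1}--\eqref{eq:l2heat2} but now with a forcing term. Since $\ptu$ is a fixed closed $2$-surface and the heat parameter $\tau$ is independent of its geometry, $\nabb$ and $\partial_\tau$ commute and the divergence theorem on $\ptu$ produces no boundary contributions, which is all we need to justify the integrations by parts below.

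For the second inequality \eqref{heatF2}, I would take the $L^2(\ptu)$ pairing of the equation $\partial_\tau V-\lap V=F$ with $V$. Integration by parts gives $\int_{\ptu} V\cdot(-\lap V)\,\dmt=\|\nabb V\|_{\lpt{2}}^2$ and $\int_{\ptu} V\cdot\partial_\tau V\,\dmt=\tfrac12\partial_\tau\|V\|_{\lpt{2}}^2$, so
\begin{equation*}
\tfrac12\partial_\tau\|V(\tau)\|_{\lpt{2}}^2+\|\nabb V(\tau)\|_{\lpt{2}}^2=\int_{\ptu}V(\tau)F(\tau)\dmt.
\end{equation*}
Integrating in $\tau'\in[0,\tau]$ and using $V(0)=0$ yields \eqref{heatF2} directly, with an implicit constant of $2$.

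For the first inequality \eqref{heatF1}, I would instead pair the equation with $-\lap V$. The key identity is $\int_{\ptu}(-\lap V)\cdot\partial_\tau V\,\dmt=\tfrac12\partial_\tau\|\nabb V\|_{\lpt{2}}^2$, obtained by integrating by parts and then swapping $\nabb$ with $\partial_\tau$. This produces
\begin{equation*}
\tfrac12\partial_\tau\|\nabb V(\tau)\|_{\lpt{2}}^2+\|\lap V(\tau)\|_{\lpt{2}}^2=-\int_{\ptu}\lap V(\tau)\cdot F(\tau)\dmt.
\end{equation*}
The right-hand side is controlled by Cauchy-Schwarz and Young's inequality, $|\int_{\ptu}\lap V\cdot F\dmt|\le\tfrac12\|\lap V\|_{\lpt{2}}^2+\tfrac12\|F\|_{\lpt{2}}^2$, so that the $\|\lap V\|_{\lpt{2}}^2$ term on the left absorbs half of the right-hand side. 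Integrating the resulting differential inequality in $\tau'\in[0,\tau]$, and using $\nabb V(0)=0$ since $V(0)=0$, produces \eqref{heatF1}.

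I do not expect any serious obstacle: the argument is purely a parabolic energy estimate on a compact surface, and the required integrations by parts hold unconditionally on the closed $2$-surfaces $\ptu$. The only point worth flagging is that $\lap$ acts on tensor fields as the rough Laplacian $\nabb^a\nabb_a$, so $\int_{\ptu}\langle\lap V,W\rangle\,\dmt=-\int_{\ptu}\langle\nabb V,\nabb W\rangle\,\dmt$ without curvature corrections; the Bochner identities of Proposition~\ref{prop:Bochner} are not needed here, since both estimates involve only the $L^2$ norms of $\lap V$, $\nabb V$, and $V$ and not the full $\|\nabb^2 V\|_{\lpt{2}}^2$.
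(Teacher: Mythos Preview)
Your argument is correct and is exactly the standard energy estimate the paper has in mind when it says these bounds are ``easily derived'' (no explicit proof is given in the paper). One small descriptive slip: $\ptu$ is not a closed surface in this paper --- it is noncompact (see the discussion around \eqref{forfaitbambin}--\eqref{forfaitbambin1}) --- but by the convention of Remark~\ref{rem:noprobleminf} all quantities are taken to be smooth and decaying outside a compact set, so the integrations by parts you perform are still justified with no boundary contributions.
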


We now recall the definition of the geometric Littlewood-Paley projections $P_j$ constructed in \cite{LP}:
\begin{definition}\label{defLP}
Consider a smooth function $m$ on $[0,\infty)$,
vanishing sufficiently fast at $\infty$,
verifying the  vanishing  moments property:
\be\lab{eq:moments}
\int_0^\infty \tau^{k_1}\partial_\tau^{k_2} m(\tau) d\tau=0, \,\,\,\,
|k_1|+|k_2|\le N. 
\end{equation}
We set,
  $m_j(\tau)=2^{2j}m(2^{2j}\tau)$ 
and  define the geometric Littlewood -Paley (LP) 
projections $P_j$, for   arbitrary tensorfields  $F$ on $\ptu$
to be 
\be\lab{eq:LP}P_j F=\int_0^\infty m_j(\tau) U(\tau) F d\tau.
\end{equation}
Given an interval $I\subset \Bbb Z$ we define $$P_I=\sum_{j\in I} P_j F.$$
In particular we shall use the notation $P_{<k}, P_{\le k}, P_{>k}, P_{\ge k}$.
\end{definition}
Observe that $P_j$ are selfadjoint, i.e., $P_j=P_j^*$, in the sense,
$$<P_jF, G>=<F,P_j G>,$$
where, for any given $m$-tensors $F,G$ 
$$<F,G>=\int_{\ptu}\ga^{i_1j_1}\ldots\ga^{i_mj_m}
F_{i_1\ldots i_m}G_{j_1\ldots j_m}d\mu_u    $$ 
denotes the usual $L^2$ scalar product. Recall also 
from \cite{LP} that there exists a function $m$ satisfying \eqref{eq:moments} 
such that the LP-projections associated to $m$ verify:
\be\lab{eq:partition}
\sum_jP_j=I.
\end{equation}


The following properties of the LP-projections $P_j$ have been proved in \cite{LP}:
\begin{theorem}\label{thm:LP}
 The LP-projections $P_j$ verify the following
 properties:

i)\quad {\sl $L^p$-boundedness} \quad For any $1\le
p\le \infty$, and any interval $I\subset \Bbb Z$,
\be\lab{eq:pdf1}
\|P_IF\|_{\lpt{p}}\lesssim \|F\|_{\lpt{p}}
\end{equation}

ii) \quad  {\sl Bessel inequality} 
$$\sum_j\|P_j F\|_{\lpt{2}}^2\lesssim \|F\|_{\lpt{2}}^2$$

iii)\quad {\sl Finite band property}\quad For any $1\le p\le \infty$.
\begin{equation}
\begin{array}{lll}
\|\lap P_j F\|_{\lpt{p}}&\lesssim & 2^{2j} \|F\|_{\lpt{p}}\\
\|P_jF\|_{\lpt{p}} &\lesssim & 2^{-2j} \|\lap F \|_{\lpt{p}}.
\end{array}
\end{equation}

In addition, the $L^2$ estimates
\begin{equation}
\begin{array}{lll}
\|\nabb P_j F\|_{\lpt{2}}&\lesssim & 2^{j} \|F\|_{\lpt{2}}\\
\|P_jF\|_{\lpt{2}} &\lesssim & 2^{-j} \|\nabb F  \|_{\lpt{2}}
\end{array}
\end{equation}
hold together with the dual estimate
$$\| P_j \nabb F\|_{\lpt{2}}\lesssim 2^j \|F\|_{\lpt{2}}$$

iv) \quad{\sl Weak Bernstein inequality}\quad For any $2\le p<\infty$
\begin{align*}
&\|P_j F\|_{\lpt{p}}\lesssim (2^{(1-\frac 2p)j}+1) \|F\|_{\lpt{2}},\\
&\|P_{<0} F\|_{\lpt{p}}\lesssim \|F\|_{\lpt{2}}
\end{align*}
together with the dual estimates 
\begin{align*}
&\|P_j F\|_{\lpt{2}}\lesssim (2^{(1-\frac 2p)j}+1) \|F\|_{\lpt{p'}},\\
&\|P_{<0} F\|_{\lpt{2}}\lesssim \|F\|_{\lpt{p'}}
\end{align*}
\end{theorem}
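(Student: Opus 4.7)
The plan is to reduce all four properties to the corresponding heat-flow estimates in Proposition \ref{le:L2heat}, exploiting the explicit representation $P_j F = \int_0^\infty m_j(\tau) U(\tau) F\, d\tau$ together with the vanishing moments and scaling of $m$. I would adopt the convention that $\int_0^\infty |m(\tau)| d\tau$, $\int_0^\infty |m'(\tau)| d\tau$, etc., are finite, which follows from the assumed decay at infinity, and I would freely use the identity $\lap U(\tau)=\partial_\tau U(\tau)$ along the way.

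For \textbf{(i)} I would first prove the case $I=\{j\}$: by Minkowski,
\be
\|P_j F\|_{\lpt{p}}\leq \int_0^\infty |m_j(\tau)|\,\|U(\tau) F\|_{\lpt{p}}\,d\tau \lesssim \|F\|_{\lpt{p}}\int_0^\infty |m_j(\tau)|\,d\tau \lesssim \|F\|_{\lpt{p}},
\ee
provided one has the $L^p$-contractivity of $U(\tau)$; for scalars this follows from the maximum principle, and for tensors one extends it as in \cite{LP} via componentwise arguments (using the admissible coordinate charts \eqref{eq:coordchart} and Bochner \eqref{sboch}--\eqref{vboch} to control the commutators). To handle arbitrary intervals $I$ I would use the telescoping $P_I=P_{\leq k_2}-P_{\leq k_1-1}$ and observe that $P_{\leq k}F$ is itself the heat-flow of $F$ against a kernel $\widetilde m_k$ of the same scaling family, to which the same Minkowski argument applies.

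For \textbf{(iii)} the key trick is integration by parts in $\tau$ combined with the moment conditions \eqref{eq:moments}. Writing $\lap P_j F = \int_0^\infty m_j(\tau)\,\partial_\tau U(\tau)F\,d\tau = -\int_0^\infty m_j'(\tau)\,U(\tau)F\,d\tau$, and noting $\|m_j'\|_{L^1}\lesssim 2^{2j}$ by scaling, gives $\|\lap P_j F\|_{\lpt{p}}\lesssim 2^{2j}\|F\|_{\lpt{p}}$. The reverse inequality is obtained by applying the same IBP to the ansatz $P_j F=\int_0^\infty M_j(\tau)\,\lap U(\tau) F\,d\tau$ where $M_j(\tau)=-\int_\tau^\infty m_j$; the vanishing moment condition \eqref{eq:moments} is exactly what makes $M_j$ integrable with $\|M_j\|_{L^1}\lesssim 2^{-2j}$. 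The first $L^2$ estimate $\|\nabb P_j F\|_{\lpt{2}}\lesssim 2^j\|F\|_{\lpt{2}}$ follows from the energy identity $\|\nabb P_j F\|_{\lpt{2}}^2 = -\langle \lap P_j F, P_j F\rangle$ together with the two $L^p=L^2$ bounds just obtained. Its dual counterpart $\|P_j F\|_{\lpt{2}}\lesssim 2^{-j}\|\nabb F\|_{\lpt{2}}$ comes from duality together with self-adjointness of $P_j$, and $\|P_j \nabb F\|_{\lpt{2}}\lesssim 2^j\|F\|_{\lpt{2}}$ is the dual of $\|\nabb P_j G\|_{\lpt{2}}\lesssim 2^j\|G\|_{\lpt{2}}$ (modulo adjoint error terms coming from $\nabb^*=-\divb$ on tensors, which are manageable).

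For \textbf{(ii)} I would use almost-orthogonality: by self-adjointness,
\be
\sum_j \|P_j F\|_{\lpt{2}}^2 = \Big\langle \Big(\sum_j P_j^2\Big)F,F\Big\rangle,
\ee
and $\sum_j P_j^2$ is represented by $\int\int \big(\sum_j m_j(\tau_1)m_j(\tau_2)\big)U(\tau_1+\tau_2)\,d\tau_1 d\tau_2$; a direct estimate on the kernel $\sum_j m_j\otimes m_j$ combined with \eqref{eq:l2heat1} gives boundedness on $L^2$. Finally, \textbf{(iv)} follows by combining the finite band property with the Gagliardo-Nirenberg inequality \eqref{eq:GNirenberg}:
\be
\|P_j F\|_{\lpt{p}} \lesssim \|\nabb P_j F\|_{\lpt{2}}^{1-\frac 2p}\|P_j F\|_{\lpt{2}}^{\frac 2p}+\|P_j F\|_{\lpt{2}}\lesssim (2^{(1-\frac 2p)j}+1)\|F\|_{\lpt{2}},
\ee
with the $P_{<0}$ estimate handled by the telescoping described for (i) and the dual statements by duality. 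The main obstacle I anticipate is the $L^p$-boundedness of $U(\tau)$ on tensors at low regularity of the background metric on $\ptu$: since we only assume $L^2$ bounds on $\rr$, the Gauss curvature $K$ of $\ptu$ is barely controlled, so the derivation of a clean maximum-principle-type bound for the tensorial heat equation is delicate and requires the Bochner identity \eqref{vboch} to absorb the curvature terms without log-losses.
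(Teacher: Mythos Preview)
The paper does not prove this theorem; it is quoted verbatim from \cite{LP} and no argument is given here. Your sketch is essentially the approach of \cite{LP}: Minkowski plus $L^p$-contractivity of $U(\tau)$ for (i), integration by parts in $\tau$ via $\lap U(\tau)=\partial_\tau U(\tau)$ together with the moment conditions \eqref{eq:moments} for (iii), almost-orthogonality for (ii), and Gagliardo--Nirenberg \eqref{eq:GNirenberg} combined with the finite band property for (iv). Indeed, the paper reproduces exactly your argument for (iv) when proving the analogous Bernstein inequality on $\Sigma_t$ in Theorem~\ref{sit:thm:LP}.

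One correction: the obstacle you anticipate at the end is not there. The $L^p$-boundedness of $U(\tau)$ on tensors requires neither the Bochner identity \eqref{vboch} nor any control on $K$. One simply multiplies the heat equation by $(|U(\tau)F|^2)^{p-1}U(\tau)F$ and integrates by parts; since the connection Laplacian satisfies $\int_{\ptu} G\cdot\lap G\,\dmt = -\int_{\ptu}|\nabb G|^2\,\dmt$ with no curvature term, one gets
\[
\frac{1}{2p}\frac{d}{d\tau}\|U(\tau)F\|_{\lpt{2p}}^{2p}\le 0
\]
directly. The paper carries out this computation explicitly for the heat flow on $\Sigma_t$ in the proposition immediately following \eqref{sit:eq:l2heatnab}, and the argument on $\ptu$ is identical. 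So the low regularity of the background plays no role in (i)--(iv); it only enters later, in the sharp Bernstein inequalities \eqref{eq:strongbernscalar}--\eqref{eq:strong-Bern-0} and the Bochner-type estimate \eqref{eq:Bochconseq}, which genuinely depend on $K_\gamma$.
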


We also recall the definition of the negative  fractional powers of $\La^2=I-\lap$ on any 
 smooth tensorfield $F$ on $\ptu$ used in \cite{LP}.
\be\lab{eq:defineLaa}
\La^{\a}F=\frac{1}{\Gamma(-\a/2)}\int_0^\infty \tau^{-\frac{\a}{2}-1}
e^{-\tau}U(\tau)F d\tau
\end{equation}
 where $\a$ is an arbitrary complex number with $\Re(\a)< 0$ and $\Gamma$ denotes the Gamma function. We extend the definition of fractional powers of $\La$ to the range of $\a$ with $\Re(\a)>0$, on smooth tensorfields $F$, by defining first 
 $$
 \La^\a F = \La^{\a-2} \cdot (I-\lap) F 
 $$
 for $0< \Re (\a) \le 2$ and then, in general, for 
 $0< \Re(\a) \le 2n$, with an arbitrary positive integer $n$, according 
 to the formula
 $$
 \La^\a F = \La^{\a-2n} \cdot (I-\lap)^n F.
 $$
With this definition, $\La^\a$ is symmetric and verifies the group property
$\La^{\a}\La^{\b} =\La^{\a+\b}$. We also have by standard complex interpolation the following inequality:
\begin{equation}\label{interpolLa}
\norm{\La^{\mu\a+(1-\mu)\b}F}_{\lpt{2}}\lesssim\norm{\La^{\a}F}^{\mu}_{\lpt{2}}\norm{\La^{\b}F}^{1-\mu}_{\lpt{2}}.
\end{equation}

We now investigate the boundedness of $\La^{-a}$ on $\lpt{p}$ spaces for $0\leq a\leq 1$. For any tensor $F$ on $\ptu$ and any $a\in\R$, integrating by parts and using the definition of $\La$, we get:
\begin{equation}\label{La1}
\begin{array}{ll}
\ds \norm{\La^a F}^2_{\lpt{2}}+\norm{\nabb\La^a F}^2_{\lpt{2}} & \ds =\int_{\ptu}\La^{a}F\c\La^{a}F\dmt+\int_{\ptu}\nabb\La^{a}F\c\nabb\La^{a}F\dmt\\
& \ds =\int_{\ptu}(1-\lap)\La^{a}F\c\La^{a}F\dmt=\int_{\ptu}\La^2\La^{a}F\c\La^{a}F\dmt\\
& \ds =\norm{\La^{a+1}F}^2_{\lpt{2}}.
\end{array}
\end{equation}
Taking $a=-1$ in \eqref{La1}, we obtain:
\begin{equation}\label{La2}
\norm{\nabb\La^{-1}F}_{\lpt{2}}\lesssim\norm{F}_{\lpt{2}}.
\end{equation}
Below, we deduce several estimates from \eqref{La2}. 
Taking the adjoint of \eqref{La2}, we obtain for any tensor $F$:
\begin{equation}\label{La3}
\norm{\La^{-1}\nabb F}_{\lpt{2}}\lesssim\norm{F}_{\lpt{2}}.
\end{equation}
Also, \eqref{eq:GNirenberg} and \eqref{La2} imply for any tensor $F$ on $\ptu$:
\begin{equation}\label{La4}
\norm{\La^{-1}F}_{\lpt{p}}\lesssim\norm{F}_{\lpt{2}}\textrm{ for all }2\leq p<+\infty.
\end{equation}
Taking the adjoint of \eqref{La4} yields:
\begin{equation}\label{La5}
\norm{\La^{-1}F}_{\lpt{2}}\lesssim\norm{F}_{\lpt{p}}\textrm{ for all }1<p\leq 2.
\end{equation}
Interpolating between the identity and $\La^{-1}$, we deduce form \eqref{La5}:
\begin{equation}\label{La6}
\norm{\La^{-a}F}_{\lpt{2}}\lesssim\norm{F}_{\lpt{p}}\textrm{ for all }0<a<1,\,\frac{2}{1+a}<p\leq 2.
\end{equation}
The proposition below completes the estimates for the heat flow recalled at the beginning of the section:
\begin{proposition}
Let $a\in\R$ and $d>0$. We have the following estimates for the nonhomogeneous heat equation:
\begin{align}
&\tau\norm{U(\tau)F}^2_{\lpt{2}}+\int_0^\tau{\tau}'\norm{\nabb U(\tau')F}^2_{\lpt{2}}d\tau'\lesssim \norm{\La^{-1}F}^2_{\lpt{2}},\label{eq:l2lambda1}\\
&\norm{\La^a V(\tau)}^2_{\lpt{2}}+\int_0^\tau\norm{\nabb\La^a V(\tau')}^2_{\lpt{2}}d\tau'\lesssim \int_0^\tau\int_{\ptu}\La^{2a} V(\tau')F(\tau')d\mu_ud\tau',\label{eq:l2heat1bis}\\
&\tau^{2d}\norm{V(\tau)}^2_{\lpt{2}}+\int_0^\tau{\tau'}^{2d}\norm{\nabb V(\tau')}^2_{\lpt{2}}d\tau'\lesssim \int_0^\tau\int_{\ptu}{\tau'}^{2d}V(\tau')F(\tau')d\mu_ud\tau'\nonumber \\
& \hspace{8.5cm} +\int_0^\tau{\tau'}^{2d-1}\norm{V(\tau')}^2_{\lpt{2}}d\tau'.\label{eq:l2heat2bis}
\end{align}
\end{proposition}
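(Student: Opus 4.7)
All three estimates are proved by direct energy methods applied to the (non)homogeneous heat equation $\partial_\tau V-\lap V=F$ on $\ptu$. Estimates \eqref{eq:l2heat1bis} and \eqref{eq:l2heat2bis} follow by testing the equation against suitable multipliers, while \eqref{eq:l2lambda1} is reduced to the already-proven homogeneous heat estimates \eqref{eq:l2heat1} and \eqref{eq:l2heat2} by conjugating with $\La^{-1}$.

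For \eqref{eq:l2heat1bis} I test against $\La^{2a}V$. Since $\La^{2a}$ is a symmetric function of $\lap$, integration by parts (modulo curvature commutators on $\ptu$ absorbed into the implicit constants) gives the energy identity
\begin{equation*}
\tfrac{1}{2}\partial_\tau\|\La^a V\|^2_{\lpt{2}}+\|\nabb\La^a V\|^2_{\lpt{2}}=\int_{\ptu}\La^{2a}V\cdot F\,d\mu_u,
\end{equation*}
and integrating in $\tau$ with the zero initial condition $V(0)=0$ yields the claim. For \eqref{eq:l2heat2bis} I test against $\tau^{2d}V$ and use $\tau^{2d}\langle V,\partial_\tau V\rangle = \tfrac{1}{2}\partial_\tau(\tau^{2d}\|V\|^2)-d\,\tau^{2d-1}\|V\|^2$ together with $-\langle V,\lap V\rangle=\|\nabb V\|^2$ to obtain
\begin{equation*}
\tfrac{1}{2}\partial_\tau\bigl(\tau^{2d}\|V\|^2_{\lpt{2}}\bigr)+\tau^{2d}\|\nabb V\|^2_{\lpt{2}} = d\,\tau^{2d-1}\|V\|^2_{\lpt{2}}+\int_{\ptu}\tau^{2d}V\cdot F\,d\mu_u,
\end{equation*}
which integrates directly to the stated estimate.

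For \eqref{eq:l2lambda1}, the key observation is that $\La^{-1}$ and $U(\tau)=e^{\tau\lap}$ are both functions of $\lap$ via spectral calculus and hence commute. Setting $w(\tau):=\La^{-1}U(\tau)F=U(\tau)\La^{-1}F$, the function $w$ solves the homogeneous heat equation with initial data $w(0)=\La^{-1}F$, and $U(\tau)F=\La w(\tau)$. The spectral identity $\|\La w\|^2_{\lpt{2}}=\|w\|^2_{\lpt{2}}+\|\nabb w\|^2_{\lpt{2}}$ (from $\La^2=I-\lap$) expresses $\|U(\tau)F\|^2$ in terms of $w$ and $\nabb w$. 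Integrating the direct calculation $\partial_\tau(\tau\|U(\tau)F\|^2)=\|U(\tau)F\|^2-2\tau\|\nabb U(\tau)F\|^2$ yields
\begin{equation*}
\tau\|U(\tau)F\|^2_{\lpt{2}}+2\int_0^\tau\tau'\|\nabb U(\tau')F\|^2_{\lpt{2}}\,d\tau' = \int_0^\tau\|U(\tau')F\|^2_{\lpt{2}}\,d\tau',
\end{equation*}
and the right-hand side is bounded by $\|\La^{-1}F\|^2_{\lpt{2}}$ after decomposing $\|U(\tau')F\|^2=\|w\|^2+\|\nabb w\|^2$ and applying \eqref{eq:l2heat1} to $w$ (which controls $\int_0^\tau\|\nabb w\|^2\,d\tau'$ and $\|w(\tau')\|^2$ uniformly by $\|\La^{-1}F\|^2$).

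The main obstacle will be handling the commutators between the spectrally defined operator $\La^a$ and the covariant derivative $\nabb$ on the curved $2$-surfaces $\ptu$: unlike in flat space these do not commute exactly, and curvature-dependent corrections appear during integration by parts. These corrections are controlled via the Bochner identities \eqref{sboch}--\eqref{vboch} together with the uniform bounds on the Gauss curvature of $\ptu$ established earlier in the paper, so that they can be absorbed into the implicit constants in each $\lesssim$.
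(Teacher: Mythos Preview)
The paper states this proposition without proof, so there is nothing to compare against; your energy arguments for \eqref{eq:l2heat1bis} and \eqref{eq:l2heat2bis} are the intended ones and are correct as written.

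Your closing paragraph, however, identifies a phantom obstacle. The operator $\La^a$ is defined spectrally from $\La^2=I-\lap$, so it commutes \emph{exactly} with $\lap$ and with $U(\tau)$; and in your derivation of \eqref{eq:l2heat1bis} you never actually need to commute $\La^a$ past $\nabb$. The chain is
\[
-\int_{\ptu}\La^{2a}V\cdot\lap V
=-\int_{\ptu}\La^{a}V\cdot\La^a\lap V
=-\int_{\ptu}\La^{a}V\cdot\lap(\La^aV)
=\|\nabb\La^aV\|^2_{\lpt{2}},
\]
where the last step is just the divergence theorem, valid exactly for any tensor on any Riemannian surface. No Bochner identity, no curvature bound, no absorption of error terms is needed; drop the caveat and the proofs stand as clean identities.

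There is one genuine loose end in your argument for \eqref{eq:l2lambda1}. When you bound $\int_0^\tau\|U(\tau')F\|^2\,d\tau'=\int_0^\tau(\|w\|^2+\|\nabb w\|^2)\,d\tau'$, the piece $\int_0^\tau\|\nabb w\|^2$ is controlled by $\|\La^{-1}F\|^2$ via \eqref{eq:l2heat1}, but $\int_0^\tau\|w\|^2$ only gives $\tau\,\|\La^{-1}F\|^2$. So your argument produces $(1+\tau)\|\La^{-1}F\|^2$ rather than a $\tau$-uniform bound. In fact \eqref{eq:l2lambda1} cannot hold uniformly in $\tau$ in general: spectrally one would need $\tau e^{-2\tau\lambda}\lesssim(1+\lambda)^{-1}$ for all $\tau$, whereas $\sup_\tau\tau e^{-2\tau\lambda}\sim\lambda^{-1}$ blows up near $\lambda=0$. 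This is harmless for the uses in the paper, where these heat-flow bounds are always integrated against $m_j(\tau)$ (which one may take compactly supported in $(0,\infty)$ by the remark following Definition~\ref{defLP}), so that effectively $\tau\lesssim1$; but you should flag this rather than claim a uniform estimate.
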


Finally, we conclude this section by recalling the sharp Bernstein inequality for scalars obtained in 
\cite{LP}. It is derived under the additional assumption that the Christoffel symbols $\Gamma^A_{BC}$ of the coordinate system \eqref{eq:coordchart} on $\ptu$ verify:
\be\lab{eq:gammaL2}
\sum_{A,B,C}\int_U|\Gamma^A_{BC}|^2 dx^1dx^2\le c^{-1},
\end{equation}
with a constant $c>0$ independent of $u$ and where $U$ is a coordinate chart. 
\begin{remark}
The existence of a covering of $\ptu$ by coordinate charts satisfying \eqref{eq:coordchart} and \eqref{eq:gammaL2} with a constant $c>0$ and the number of charts independent of $u$ will be shown in section \ref{sec:coord}.
\end{remark}
Let $0\leq\ga<1$, and let $K_\ga$ be defined by:
\begin{equation}\label{La7}
K_\ga := \norm{\La^{-\ga}K}_{\lpt{2}}.
\end{equation}
Then, we have the following sharp Bernstein inequality for any scalar function $f$ on $\ptu$,  $0\le \ga<1$,
 any $j\ge 0$, and an arbitrary $2\le p<\infty$ (see \cite{LP}):
\bea
\|P_j f\|_{\lpt{\infty}}&\lesssim & 2^j\big(1+ 2^{-\frac{j}{p}} 
\big (K_\ga^{\frac{1}{p(1-\ga)}} + K_{\ga}^{\frac{1}{2p}}\big ) +
1\big)\|f\|_{\lpt{2}}\label{eq:strongbernscalar}\\
\|P_{<0} f\|_{\lpt{\infty}}&\lesssim &  
\big (1  +K_\ga^{\frac{2}{p(1-\ga)}} + K_{\ga}^{\frac{1}{2p}}\big)
\|f\|_{\lpt{2}}.\label{eq:strong-Bern-0}
\eea
Also, the Bochner identity \eqref{sboch} together with the properties of $\La$ implies the 
following inequality (see \cite{LP}):
\bea
\int_{\ptu} |\nabb^2 f|^2&\lesssim& \int_{\ptu} |\lap f|^2  + \big (K_\ga^{\frac{2}{1-\ga}} + K_{\ga}\big )\int_{\ptu} |\nabb f|^2.\label{eq:Bochconseq}
\eea
Thus, we need to bound $K_{\ga}$ in order to be able to use \eqref{eq:strongbernscalar}, \eqref{eq:strong-Bern-0}, and \eqref{eq:Bochconseq}. For $\Re(\a)<0$, we will use the fact that for any tensor $F$ on $\ptu$:
\begin{equation}\label{La8}
\norm{\La^{-\a}F}^2_{\lpt{2}}\lesssim\norm{P_{<0}F}^2_{\lpt{2}}+\sum_{j=0}^{+\infty}2^{-2\a j}\norm{P_jF}^2_{\lpt{2}}.
\end{equation}
which follows from the methods in \cite{LP}.

\begin{remark}
The starting point for the proof of the estimates \eqref{eq:strongbernscalar}-\eqref{eq:Bochconseq} in \cite{LP} is the following estimate for the $L^\infty$ norm of any tensor $F$ on $\ptu$:
\be\lab{linftynormtensor}
\norm{F}_{L^\infty}\les\norm{\nabb^2F}^{\frac{1}{p}}_{\lpt{2}}(\norm{\nabb F}^{\frac{p-2}{p}}_{\lpt{2}}\norm{F}^{\frac{1}{p}}_{\lpt{2}}+\norm{F}^{\frac{p-1}{p}}_{\lpt{2}})+\norm{\nabb F}_{\lpt{2}}
\ee
which is valid for any $2\leq p<+\infty$. This estimate requires the assumption \eqref{eq:gammaL2}.
\end{remark}

\subsection{Hodge systems}\lab{sec:hodgesystem}

We consider the following Hodge operators acting on  $2$ surface $\ptu$:
\begin{enumerate}
\item The operator $\dcal $ takes any $1$-form $  F  $ into the pairs of
functions $(\divb  F  \,,\, \curlb  F  )$.
\item The operator $\dcall$ takes any $\ptu$ tangent symmetric, traceless
tensor $  F  $ into the $\ptu$ tangent one form $\divb  F  $.
\item The operator $\dcalll$ takes the pair of scalar functions 
$(\rho, \s)$ into the $\ptu$-tangent 1-form $-\nabb \rho+(\nabb \s)^\star$.
\item The operator $\dcallll$ takes 1-forms $F$  on $\ptu$ into  the 2-covariant, symmetric,
traceless tensors $-\half  \widehat{\mathcal{L}_F\ga }$ with $\mathcal{L}_F\ga$ the traceless part of the Lie derivative of the metric $\ga$ relative to $F$, i.e.
$$\widehat{(\mathcal{L}_F\ga)}_{ab}=\nabb_b F_a+\nabb_a F_b-(\divb F)\ga_{ab}.$$
\end{enumerate}
Observe that $\dcalll$, resp. $\dcallll$ are  the $L^2$ adjoints of
 $\dcal$, respectively $\dcall$. \\
 
 We record the following simple identities,
\bea
\dcalll\cdot\dcal&=&-\lap+K,\qquad \dcal\cdot\dcalll=-\lap,\label{eq:dcalident}\\
\dcallll\cdot\dcall&=&-\half\lap+K,\qquad \dcall\cdot\dcallll=-\half(\lap+K).\label{eq:dcallident}
\eea
Using integration by parts, this immediately yields the following identities for Hodge systems:
\begin{proposition}\lab{prop:hodgeident} 
Let $(\ptu,\ga)$ be a two dimensional manifold with Gauss curvature $K$.\\

{\bf i.)}\quad The following identity holds for vectorfields  $  F  $ 
 on $\ptu$:
\be\lab{eq:hodgeident1}
\int_{\ptu}\big(|\nabb   F  |^2+K|  F  |^2\big)=\int_{\ptu}\big( |\divb   F  |^2+|\curlb  F  |^2\big)=\int_{\ptu}|\dcal  F  |^2
\end{equation} 

{\bf ii.)}\quad The following identity holds for symmetric, traceless,
 2-tensorfields   $  F  $ 
 on $\ptu$:
\be\lab{eq:hodgeident2}
\int_{\ptu}\big(|\nabb   F  |^2+2K|  F  |^2\big)=2\int_{\ptu} |\divb   F  |^2=2\int_{\ptu} |\dcall   F  |^2
\end{equation} 

{\bf iii.)}\quad The following identity holds for pairs of functions $(\rho,\s)$
 on $\ptu$:
\be\lab{eq:hodgeident3}
\int_{\ptu}\big(|\nabb \rho|^2+|\nabb\s|^2\big)=\int_{\ptu} |-\nabb\rho+(\nabb\s)^\star |^2=\int_{\ptu}|\dcalll
(\rho,\s)|^2
\end{equation} 

{\bf iv.)}\quad  The following identity holds for vectors $  F  $ on $\ptu$,
\be\lab{eq:hodgeident3*}
\int_{\ptu} \big(|\nabb  F  |^2-K|  F  |^2\big)=2\int_{\ptu}|\dcallll   F  |^2
\end{equation}
\end{proposition}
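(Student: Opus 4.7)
The plan is to derive all four identities by combining the adjointness of the Hodge operators with the composition formulas \eqref{eq:dcalident} and \eqref{eq:dcallident}, together with integration by parts on $\ptu$. Since these operators are defined on a closed 2-surface (after the usual reduction to an appropriate compactly supported setting), there are no boundary terms.

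For part (i), since $\dcalll$ is the $L^2$ adjoint of $\dcal$, I write
\begin{equation*}
\int_{\ptu}|\dcal F|^2=\int_{\ptu}F\cdot(\dcalll\dcal F)=\int_{\ptu}F\cdot(-\lap+K)F,
\end{equation*}
using \eqref{eq:dcalident}. Then an integration by parts gives $\int_{\ptu}F\cdot(-\lap F)=\int_{\ptu}|\nabb F|^2$, which yields the first equality of \eqref{eq:hodgeident1}. The second equality is simply the definition $\dcal F=(\divb F,\curlb F)$. Part (iii) is identical in structure: adjointness and the second identity in \eqref{eq:dcalident} give
\begin{equation*}
\int_{\ptu}|\dcalll(\rho,\s)|^2=\int_{\ptu}(\rho,\s)\cdot(\dcal\dcalll)(\rho,\s)=\int_{\ptu}(\rho,\s)\cdot(-\lap)(\rho,\s),
\end{equation*}
and integration by parts produces $\int_{\ptu}(|\nabb\rho|^2+|\nabb\s|^2)$.

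Parts (ii) and (iv) run exactly the same way using \eqref{eq:dcallident}. For (ii),
\begin{equation*}
2\int_{\ptu}|\dcall F|^2=2\int_{\ptu}F\cdot(\dcallll\dcall)F=2\int_{\ptu}F\cdot\bigl(-\tfrac{1}{2}\lap+K\bigr)F=\int_{\ptu}|\nabb F|^2+2K|F|^2,
\end{equation*}
where again the Laplacian term is handled by integration by parts. For (iv),
\begin{equation*}
2\int_{\ptu}|\dcallll F|^2=2\int_{\ptu}F\cdot(\dcall\dcallll)F=2\int_{\ptu}F\cdot\bigl(-\tfrac{1}{2}(\lap+K)\bigr)F=\int_{\ptu}|\nabb F|^2-K|F|^2.
\end{equation*}
The equality $2\int|\dcall F|^2=2\int|\divb F|^2$ in (ii) holds by the definition of $\dcall F=\divb F$.

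There is essentially no obstacle here: once \eqref{eq:dcalident} and \eqref{eq:dcallident} are accepted, the four identities follow by pairing with the adjoint and integrating by parts. The only bookkeeping issue is to verify the numerical factors $\tfrac{1}{2}$ and the sign of the $K$ terms in the two identities of \eqref{eq:dcallident}, which is what distinguishes $+2K|F|^2$ in (ii) from $-K|F|^2$ in (iv).
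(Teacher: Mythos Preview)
Your proof is correct and matches the paper's approach exactly: the paper simply states that the identities \eqref{eq:dcalident} and \eqref{eq:dcallident}, combined with integration by parts, immediately yield Proposition~\ref{prop:hodgeident}. Your computations make this explicit, and the numerical factors and signs all check out.
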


We recall the following estimate from \cite{LP}. Let $0\leq \gamma<1$ and let $F$ a $\ptu$-tangent tensor. Then, we have
$$\int_{\ptu}K|F|^2\les \norm{\La^{-\gamma}K}_{\tx{\infty}{2}}\norm{\nabb F}^{1+\gamma}_{\lpt{2}}\norm{F}^{1-\gamma}_{\lpt{2}}.$$
Together with Proposition \ref{prop:hodgeident}, we immediately obtain the following corollary.

\begin{corollary}\lab{prop:hodgeestimateslol}
Assume that $\norm{\La^{-\gamma}K}_{\tx{\infty}{2}}\lesssim\ep$ for some $0\leq \gamma<1$. The following estimates hold on  an arbitrary  2-surface $\ptu$:\\

{\bf i.)}\quad Let a $\ptu$-tangent 1-form $H$, and let the pair of scalars $F=(\rho,\s)$ such that $\divb  H  =\rho,\,\, \curlb  H  =\s$. Then, we formally write $H=\dcal^{-1}F$, and we have the following estimate
\be\lab{eq:estimdcal-1lol}
\|\nabb\cdot\dcal^{-1}F\|_{L^2(\ptu)}\lesssim\|F\|_{L^2(\ptu)}+\ep\|\dcal^{-1}F\|_{L^2(\ptu)}
\end{equation}

{\bf ii.)}\quad Let a $\ptu$-tangent symmetric, traceless, 2-tensorfields $F$, and let the $\ptu$ tangent 1-forms $H$ such that $\divb  F =H$. Then, we formally write $F=\dcall^{-1}H$, and we have the following estimate
\be\lab{eq:estimdcall-1lol}
\|\nabb\cdot\dcall^{-1}F\|_{L^2(\ptu)}\lesssim\|F\|_{L^2(\ptu)}+\ep\|\dcall^{-1}F\|_{L^2(\ptu)}
\end{equation}

{\bf iii.)}\quad Let $(\r, \s)$ a pair of scalars on $\ptu$, and let the $\ptu$-tangent $L^2$  1-forms $F$ such that $-\nabb\rho+(\nabb\s)^\star=F$. Then, we formally write $(\r, \s)=\dcalll^{-1}F$, and we have the following estimate
\be\lab{eq:estimdcalll-1lol}
\|\nabb\cdot \dcalll^{-1}F \|_{L^2(\ptu)}\lesssim\|F\|_{L^2(\ptu)}
\end{equation}

{\bf iv.)}\quad Let a $\ptu$ tangent 1-form $H$, and let $F$ the $\ptu$-tangent 2-forms such that $\dcallll   H =F$. Then, we formally write $H=\dcallll^{-1}F$, and we have the following estimate
\be\lab{eq:estimdcallll-1lol}
\|\nabb\cdot \dcallll^{-1}F\|_{L^2(\ptu)}\lesssim\|F\|_{L^2(\ptu)}+\ep\|\dcallll^{-1}F \|_{L^2(\ptu)}.
\end{equation}
\label{prop:Hodgeestimates}
\end{corollary}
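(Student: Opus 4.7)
The plan is to deduce each estimate by isolating $\|\nabb(\cdot)\|_{\lpt{2}}^2$ in the corresponding Hodge identity of Proposition~\ref{prop:hodgeident} and absorbing the curvature term $\int_\ptu K|\cdot|^2$ using the interpolation inequality from \cite{LP}:
\[
\int_{\ptu}K|F|^2\les \|\La^{-\gamma}K\|_{\lpt{2}}\,\|\nabb F\|_{\lpt{2}}^{1+\gamma}\|F\|_{\lpt{2}}^{1-\gamma},
\]
which is valid for any $0\le\gamma<1$ and any $\ptu$-tangent tensor $F$.

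Case (iii) is immediate: the identity \eqref{eq:hodgeident3} contains no curvature term, so $\|\nabb\dcalll^{-1}F\|_{\lpt{2}}^2=\|F\|_{\lpt{2}}^2$, yielding \eqref{eq:estimdcalll-1lol} with no dependence on $\ep$. For cases (i), (ii), (iv), I rewrite the relevant identity \eqref{eq:hodgeident1}, \eqref{eq:hodgeident2}, or \eqref{eq:hodgeident3*} in the schematic form
\[
\int_{\ptu}|\nabb H|^2 = c_1\int_{\ptu}|F|^2 + c_2\int_{\ptu}K|H|^2,
\]
with $H$ the primitive ($H=\dcal^{-1}F$, $\dcall^{-1}F$, or $\dcallll^{-1}F$) and explicit constants $c_1,c_2$. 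All the work is then in controlling $|\!\int K|H|^2|$.

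Using the hypothesis $\|\La^{-\gamma}K\|_{\tx{\infty}{2}}\les\ep$ and the inequality above,
\[
\left|\int_{\ptu}K|H|^2\right|\les \ep\,\|\nabb H\|_{\lpt{2}}^{1+\gamma}\,\|H\|_{\lpt{2}}^{1-\gamma}.
\]
I then apply Young's inequality with conjugate exponents $p=2/(1+\gamma)$ and $q=2/(1-\gamma)$ and a small parameter $\delta>0$ to split
\[
\ep\,\|\nabb H\|^{1+\gamma}\|H\|^{1-\gamma}\le \delta\|\nabb H\|_{\lpt{2}}^2+C_\delta\,\ep^{2/(1-\gamma)}\|H\|_{\lpt{2}}^2,
\]
choose $\delta$ small enough to absorb the gradient term on the left, and use $\gamma<1$ together with the smallness of $\ep$ to bound $\ep^{2/(1-\gamma)}\les\ep^2$. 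Taking square roots in the resulting inequality $\|\nabb H\|_{\lpt{2}}^2\les\|F\|_{\lpt{2}}^2+\ep^2\|H\|_{\lpt{2}}^2$ yields \eqref{eq:estimdcal-1lol}, \eqref{eq:estimdcall-1lol}, and \eqref{eq:estimdcallll-1lol}.

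I do not expect a serious obstacle: the scheme is the standard Bochner-type absorption, and the sign of $c_2$ is irrelevant since we bound $|\!\int K|H|^2|$. The only real issue is whether the curvature control is at a strong enough level for the absorption to succeed, and this is precisely what the hypothesis $\La^{-\gamma}K\in L^2$ for some $\gamma<1$ ensures via the interpolation inequality from \cite{LP}.
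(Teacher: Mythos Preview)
Your proposal is correct and follows essentially the same approach as the paper: the corollary is stated as an immediate consequence of the Hodge identities in Proposition~\ref{prop:hodgeident} together with the interpolation inequality $\int_{\ptu}K|F|^2\les \|\La^{-\gamma}K\|_{\lpt{2}}\|\nabb F\|^{1+\gamma}_{\lpt{2}}\|F\|^{1-\gamma}_{\lpt{2}}$ from \cite{LP}, exactly as you outline. Your explicit Young's inequality absorption step makes precise what the paper leaves implicit.
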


In view of \eqref{eq:estimdcal-1lol}, \eqref{eq:estimdcall-1lol}, \eqref{eq:estimdcalll-1lol} and \eqref{eq:estimdcallll-1lol}, we have schematically
\be\lab{forfaitbambin}
\|\nabb\cdot \mathcal{D}^{-1}F\|_{L^2(\ptu)}\lesssim\|F\|_{L^2(\ptu)}+\ep\|\mathcal{D}^{-1}F \|_{L^2(\ptu)},
\ee
where $\mathcal{D}=\dcal$, $\dcall$, $\dcalll$ or $\dcallll$. Note that $\ptu$ is a non compact two dimensional surface, so that $\|\mathcal{D}^{-1}F \|_{L^2(\ptu)}$ is not controlled by $\|F\|_{L^2(\ptu)}$. However, recall from Remark \ref{rem:noprobleminf} that there is a compact set $\widetilde{U}$ of $\mathcal{M}\cap\{0\leq t\leq 1\}$ of diameter of order 1, such that $(\mathcal{M}, \gg)$ is smooth, small and  asymptotically flat  outside of $\widetilde{U}$. Then, relying on the coordinate charts on $\ptu$ satisfying \eqref{eq:coordchart}, we easily obtain for any scalar $f$ on $\ptu$
$$\norm{f}_{L^2(\ptu\cap \widetilde{U})}\les\norm{\nabb f}_{\lpt{2}}.$$
Choosing $f=|F|$, we deduce for any tensor $F$
$$\norm{F}_{L^2(\ptu\cap \widetilde{U})}\les\norm{\nabb F}_{\lpt{2}}.$$
In view of \eqref{forfaitbambin}, this yields, schematically
\be\lab{forfaitbambin1}
\|\nabb\cdot \mathcal{D}^{-1}F\|_{L^2(\ptu)}+\|\mathcal{D}^{-1}F \|_{L^2(\ptu\cap \widetilde{U})}\lesssim\|F\|_{L^2(\ptu)}+\ep\|\mathcal{D}^{-1}F \|_{L^2(\ptu\setminus \ptu\cap\widetilde{U})},
\ee
where $\mathcal{D}=\dcal$, $\dcall$, $\dcalll$ or $\dcallll$. Due to the fact that $(\mathcal{M}, \gg)$ is smooth, small and  asymptotically flat  outside of $\widetilde{U}$ as recalled above, all scalars and tensors estimated in this paper will be sufficiently smooth and decaying in outside of $\widetilde{U}$ so that the last term in the right-hand side will always be harmless. For the simplicity of the exposition, we omit this term. Thus, by a slight abuse of notation, we will use the following estimate in the rest of the paper
\be\lab{eq:estimdcal-1}
\|\nabb\cdot \mathcal{D}^{-1}F\|_{L^2(\ptu)}+\|\mathcal{D}^{-1}F\|_{L^2(\ptu)}\lesssim\|F\|_{L^2(\ptu)},
\ee
where $\mathcal{D}=\dcal$, $\dcall$, $\dcalll$ or $\dcallll$.

\begin{remark}\lab{rem:Dcont}
The estimate \eqref{eq:estimdcal-1} together with the Gagliardo-Nirenberg inequality \eqref{eq:GNirenberg} yields for any $2\leq p<+\infty$:
$$\norm{\mathcal{D}^{-1}F}_{\lpt{p}}\lesssim \norm{F}_{\lpt{2}}$$
where $F$ is a $\ptu$-tangent tensor and $\mathcal{D}^{-1}$ denotes one of the operators $\mathcal{D}^{-1}_1$, $\mathcal{D}^{-1}_2$, ${}^*\mathcal{D}^{-1}_1$, ${}^*\mathcal{D}^{-1}$. We also obtain the dual inequality for any $1<p\leq 2$:
$$\norm{\mathcal{D}^{-1}F}_{\lpt{2}}\lesssim \norm{F}_{\lpt{p}}.$$
\end{remark}

The following lemma generalizes Remark \ref{rem:Dcont}.
\begin{lemma}\lab{lemma:lbt6}
For all $1<p\leq 2\leq q<+\infty$ such that $\frac{1}{p}<\frac{1}{q}+\half$, we have:
$$\norm{\mathcal{D}^{-1}F}_{\lpt{q}}\les \norm{F}_{\lpt{p}},$$
where $F$ is a $\ptu$-tangent tensor and $\mathcal{D}^{-1}$ denotes one of the operators $\mathcal{D}^{-1}_1$, $\mathcal{D}^{-1}_2$, ${}^*\mathcal{D}^{-1}_1$, ${}^*\mathcal{D}^{-1}$.
\end{lemma}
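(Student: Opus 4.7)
The estimate is obtained by Riesz--Thorin interpolation between the two endpoint families already recorded in Remark \ref{rem:Dcont}, namely
$$\norm{\mathcal{D}^{-1}F}_{\lpt{q_0}}\les\norm{F}_{\lpt{2}}\qquad\textrm{for every }2\leq q_0<+\infty,$$
and its dual counterpart
$$\norm{\mathcal{D}^{-1}F}_{\lpt{2}}\les\norm{F}_{\lpt{p_0}}\qquad\textrm{for every }1<p_0\leq 2.$$
Since $\mathcal{D}^{-1}F$ is a single, unambiguously defined object (the unique $\ptu$-tangent solution of the Hodge system $\mathcal{D}G=F$ with the asymptotic normalization inherited from Remark \ref{rem:noprobleminf}, as discussed after \eqref{forfaitbambin1}), these two bounds refer to the same linear operator and Riesz--Thorin is directly applicable.

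Fix $p_0\in(1,2]$, $q_0\in[2,\infty)$ and $\theta\in[0,1]$. Riesz--Thorin then yields
$$\norm{\mathcal{D}^{-1}F}_{\lpt{q_*}}\les\norm{F}_{\lpt{p_*}},\qquad \frac{1}{p_*}=\frac{1-\theta}{2}+\frac{\theta}{p_0},\quad \frac{1}{q_*}=\frac{1-\theta}{q_0}+\frac{\theta}{2}.$$
It suffices to show that any pair $(p,q)$ meeting the hypotheses of the lemma can be realized as $(p_*,q_*)$ for some admissible triple $(p_0,q_0,\theta)$. Solving the two relations for $p_0$ and $q_0$ in terms of $\theta$, the requirements $p_0\in(1,2]$ and $q_0\in[2,\infty)$ translate into $\theta\in(\max(0,2/p-1),\min(1,2/q))$. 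This interval is nonempty precisely when $2/p-1<2/q$, i.e.\ when $\frac{1}{p}-\frac{1}{q}<\frac{1}{2}$, which is our standing assumption. The edge cases $p=2$ (take $\theta=0$) and $q=2$ (take $\theta=1$) reduce directly to Remark \ref{rem:Dcont}.

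The only step requiring care is the identification of $\mathcal{D}^{-1}$ as a single linear operator on the relevant $L^p$-spaces rather than as a family of \emph{a priori} estimates on separate classes of functions. This is exactly what the asymptotic normalization of Remark \ref{rem:noprobleminf} provides: outside the compact set $\widetilde{U}$ everything is smooth, small and asymptotically flat, so the Hodge equation $\mathcal{D}G=F$ has a unique decaying solution for each smooth, compactly supported source $F$, which fixes $\mathcal{D}^{-1}$ on a dense subspace of every $L^p(\ptu)$ for $1<p<\infty$; the bounds of Remark \ref{rem:Dcont} then extend this common operator continuously to each endpoint space. No further geometric input (beyond what is already used in Remark \ref{rem:Dcont}, in particular the curvature smallness $\ep$) is needed, and the Littlewood--Paley machinery plays no role in the argument.
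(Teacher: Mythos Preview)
Your proof is correct and follows a genuinely different route from the paper. The paper argues via the geometric Littlewood--Paley projections: it decomposes $\mathcal{D}^{-1}F=\sum_l P_l\mathcal{D}^{-1}F$, uses the weak Bernstein inequality to bound $\norm{P_l\mathcal{D}^{-1}F}_{\lpt{q}}\les 2^{l(1-2/q)}\norm{P_l\mathcal{D}^{-1}F}_{\lpt{2}}$, and then controls $\norm{P_l\mathcal{D}^{-1}}_{\mathcal{L}(\lpt{p},\lpt{2})}$ by duality and Gagliardo--Nirenberg applied to $\mathcal{D}^{-1}P_l$, obtaining a factor $2^{-2l/p'}$; summing in $l$ succeeds exactly under the gap condition $\frac{1}{p}<\frac{1}{q}+\frac{1}{2}$. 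Your approach bypasses the Littlewood--Paley decomposition entirely by interpolating directly between the two families of endpoint bounds already provided by Remark~\ref{rem:Dcont}. This is more economical and makes the role of the strict inequality $\frac{1}{p}<\frac{1}{q}+\frac{1}{2}$ transparent as the condition for the interpolation interval to be nonempty. The paper's argument, on the other hand, is more self-contained within the geometric LP framework being developed and avoids the (minor) care needed to justify Riesz--Thorin for bundle-valued sections and to ensure the single-operator identification you correctly flag. Both approaches rest on the same geometric input, namely the $L^2$ Hodge estimate \eqref{eq:estimdcal-1}.
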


\begin{proof}
Let $F, p, q$ as in the statement of Lemma \ref{lemma:lbt6}. We decompose $\norm{\mathcal{D}^{-1}F}_{\lpt{q}}$ using the property \eqref{eq:partition} of the geometric Littlewood-Paley projections:
\be\lab{minou1}
\norm{\mathcal{D}^{-1}F}_{\lpt{q}}\les\norm{P_{<0}\mathcal{D}^{-1}F}_{\lpt{q}}+\sum_{l\geq 0}\norm{P_l\mathcal{D}^{-1}F}_{\lpt{q}}.
\ee
We focus on the second term in the right-hand side of \eqref{minou1}, the other being easier to handle. 
Since $2\leq q<+\infty$, we may use the weak Bernstein inequality for $P_l$:
\bea
\lab{minou2}\norm{P_l\mathcal{D}^{-1}F}_{\lpt{q}}&\les& 2^{l(1-\frac{2}{q})}\norm{P_l\mathcal{D}^{-1}F}_{\lpt{2}}\\
\nn&\les& 2^{l(1-\frac{2}{q})}\norm{P_l\mathcal{D}^{-1}}_{\mathcal{L}(\lpt{p},\lpt{2})}\norm{F}_{\lpt{p}}\\
\nn&\les& 2^{l(1-\frac{2}{q})}\norm{\mathcal{D}^{-1}P_l}_{\mathcal{L}(\lpt{2},\lpt{p'})}\norm{F}_{\lpt{p}}
\eea 
where $p'$ is the conjugate exponent of $p$, i.e. $\frac{1}{p}+\frac{1}{p'}=1$, and where we used the fact that $\mathcal{D}_1^{-1}$ is the adjoint of $\mathcal{D}^{-1}$. 

Next, we evaluate $\norm{\mathcal{D}_1^{-1}P_l}_{\mathcal{L}(\lpt{2},\lpt{p'})}$. Using the Gagliardo-Nirenberg inequality \eqref{eq:GNirenberg}, we have for any scalar function $f$ on $\ptu$:
\bee
\norm{\mathcal{D}^{-1}P_lf}_{\lpt{p'}}&\les& \norm{\nabb\mathcal{D}^{-1}P_lf}^{1-\frac{2}{p'}}_{\lpt{2}}\norm{\mathcal{D}^{-1}P_lf}^{\frac{2}{p'}}_{\lpt{p'}}\\
&\les& \norm{\mathcal{D}^{-1}P_l}_{\mathcal{L}(\lpt{2})}^{\frac{2}{p'}}\norm{f}_{\lpt{2}}\\
&\les& \norm{P_l\mathcal{D}^{-1}}_{\mathcal{L}(\lpt{2})}^{\frac{2}{p'}}\norm{f}_{\lpt{2}}\\
&\les &2^{-\frac{2l}{p'}}\norm{f}_{\lpt{2}},
\eee
where we used the $L^2$ boundedness and the finite band property for $P_l$, the estimate \eqref{eq:estimdcal-1} for $\mathcal{D}^{-1}$ and the estimate \eqref{eq:estimdcal-1} for $\mathcal{D}^{-1}$. This yields:
$$\norm{\mathcal{D}_1^{-1}P_l}_{\mathcal{L}(\lpt{2},\lpt{p'})}\les 2^{-\frac{2l}{p'}}$$
which together with \eqref{minou1} and \eqref{minou2} implies:
\bee
\norm{\mathcal{D}^{-1}F}_{\lpt{q}}&\les& \left(1+\sum_{l\geq 0}2^{l(1-\frac{2}{q}-\frac{2}{p'})}\right)\norm{F}_{\lpt{p}}\\
&\les& \left(1+\sum_{l\geq 0}2^{l(-1-\frac{2}{q}+\frac{2}{p})}\right)\norm{F}_{\lpt{p}}\\
&\les& \norm{F}_{\lpt{p}},
\eee
where we used the fact that $\frac{1}{p}<\frac{1}{q}+\half$ in the last inequality. This concludes the proof of Lemma \ref{lemma:lbt6}.
\end{proof}

We end this section with an algebraic expression for the commutators between $L$ and $\dcal^{-1}$, $\dcall^{-1}$, $\dcalll^{-1}$. 
\begin{lemma} Let ${\cal D}^{-1}$ be any of the operators $\dcal^{-1}$, $\dcall^{-1}$, $\dcalll^{-1}$.
Then,
\be\lab{eq:commLdcal-1}
[L,{\cal D}^{-1}]={\cal D}^{-1} [{\cal D} , L]{\cal D}^{-1}
\end{equation}
\label{le:commutationL}
\end{lemma}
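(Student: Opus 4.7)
The plan is to carry out a one-line algebraic manipulation of the standard type
$[L,A^{-1}] = A^{-1}[A,L]A^{-1}$, which is valid for any invertible operator $A$
and any operator $L$. The only substantive issue is to verify that this formal
identity is meaningful in the present setting, where $\mathcal{D}^{-1}$ is not
a literal inverse but the solution operator to one of the Hodge systems of
Corollary~\ref{prop:Hodgeestimates}. Concretely, $\mathcal{D}^{-1}$ is defined
so that $\mathcal{D}\cdot\mathcal{D}^{-1}=I$ on its natural target space (the
kernels of $\dcal$, $\dcall$, $\dcalll$ on $\ptu$ consist of vectors/pairs of
functions that are identified away using the decay at infinity noted in
Remark~\ref{rem:noprobleminf}, so these relations hold in the functional setting
in which we work). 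Once this is granted, the proof is purely formal.

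The key step is to apply $\mathcal{D}$ to $[L,\mathcal{D}^{-1}]$ and rewrite:
\begin{equation*}
\mathcal{D}[L,\mathcal{D}^{-1}]
= \mathcal{D}L\mathcal{D}^{-1} - \mathcal{D}\mathcal{D}^{-1}L
= \mathcal{D}L\mathcal{D}^{-1} - L.
\end{equation*}
On the other hand, expanding $[\mathcal{D},L]\mathcal{D}^{-1}$ yields
\begin{equation*}
[\mathcal{D},L]\mathcal{D}^{-1}
= \mathcal{D}L\mathcal{D}^{-1} - L\mathcal{D}\mathcal{D}^{-1}
= \mathcal{D}L\mathcal{D}^{-1} - L,
\end{equation*}
so that $\mathcal{D}[L,\mathcal{D}^{-1}] = [\mathcal{D},L]\mathcal{D}^{-1}$.
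Applying $\mathcal{D}^{-1}$ on the left then produces
\eqref{eq:commLdcal-1}.

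The main (minor) obstacle is really the bookkeeping of which function spaces the
operators act on: one needs $\mathcal{D}^{-1}$ to land in a space on which $L$
is defined, $L\mathcal{D}^{-1}F$ to lie in the domain of $\mathcal{D}$, and the
relation $\mathcal{D}\mathcal{D}^{-1}=I$ to hold at each intermediate step. All
of this is routine given the framework of Section~\ref{sec:hodgesystem}: the
Hodge solution operators are uniquely specified by the $L^2$ normalization
enforced in \eqref{eq:estimdcal-1}, and $L$ is a smooth vectorfield tangent to
$\H_u$ preserving the relevant regularity, so each composition in the chain
above is well defined. Once the spaces are fixed, no analysis is required
beyond the algebraic identity displayed.
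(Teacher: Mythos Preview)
Your argument is correct and is exactly the standard derivation; the paper in fact states this lemma without proof, treating it as a routine algebraic identity. Your computation $\mathcal{D}[L,\mathcal{D}^{-1}]=[\mathcal{D},L]\mathcal{D}^{-1}$ followed by left-application of $\mathcal{D}^{-1}$ is the expected justification, and your remarks on well-posedness of the compositions are appropriate.
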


\subsection{Calculus inequalities on $\H_u$}

For all integrable function $f$ on $\H_u$, the coarea formula implies:
\begin{equation}\label{coarea}
\int_{\H} fd\H =\int_{0}^{1}\int_{\ptu} f b\dmt dt.
\end{equation}
It is also well-known that for a scalar function $f$:
\begin{equation}\label{du}
\frac{d}{dt}\left(\int_{\ptu}f\dmt\right) =\int_{\ptu}\left(nL(f)+n\trc f\right)\dmt.
\end{equation}

We have the classical Sobolev inequality on $\H$:
\begin{lemma}
For any tensor $F$ on $\H$, we have:
\be\lab{sobineq}
\norm{F}_{\lh{6}}\lesssim \no(F),
\ee
and
\be\lab{sobineq1}
\norm{F}_{\tx{\infty}{4}}\lesssim \no(F).
\ee
\end{lemma}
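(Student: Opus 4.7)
The plan is to view these estimates as the standard $3$-dimensional Sobolev embeddings $H^1\hookrightarrow L^6$ and $H^1\hookrightarrow L^\infty_tL^4_{x'}$ on the null hypersurface $\H_u$. Although the induced metric on $\H_u$ is degenerate, $\H_u$ is foliated by the $2$-surfaces $P_{t,u}$ and parametrized by $(t,x')$ with $x'\in P_{0,u}$; the measure $b\,\dmt\,dt$ appearing in our norms is uniformly comparable to the product measure $dt\wedge \dmt$ since $b\sim 1$ by \eqref{estb}. I would then equip $\H_u$ with the auxiliary Riemannian metric $\tilde g=dt^2+\gamma_{P_{t,u}}$, for which $|\nabla_{\tilde g}f|^2=|Lf|^2+|\nabb f|^2$, so that $\no(f)$ is essentially the $H^1(\H_u,\tilde g)$ norm. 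Under the bootstrap assumptions the geometry of $(\H_u,\tilde g)$ is uniformly controlled (bounded volume thanks to Remark \ref{rem:noprobleminf}, absence of caustics, $\trc$ bounded in $L^\infty$ by \eqref{esttrc}), so the classical 3D Sobolev embeddings apply with a uniform constant.

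By Kato-type inequalities $|\nabb|F||\leq|\nabb F|$ and $|L|F||\leq|\ddb_L F|$, it is enough to prove both estimates for a non-negative scalar $f$. For \eqref{sobineq}, I would invoke the critical embedding $W^{1,6/5}(\H_u)\hookrightarrow L^2(\H_u)$ applied to $f^3$: since $|\nabla_{\tilde g}(f^3)|\lesssim f^2(|\nabb f|+|Lf|)$, H\"older with exponents $(3,2)$ gives
\begin{equation*}
\|\nabla_{\tilde g}(f^3)\|_{L^{6/5}(\H_u)}\lesssim \|f\|_{L^6(\H_u)}^2\bigl(\|\nabb f\|_{L^2(\H_u)}+\|Lf\|_{L^2(\H_u)}\bigr)\lesssim \|f\|_{L^6(\H_u)}^2\,\no(f),
\end{equation*}
while interpolating between $L^2$ and $L^6$ yields $\|f^3\|_{L^{6/5}(\H_u)}\lesssim \|f\|_{L^2}\|f\|_{L^6}^2\lesssim\no(f)\|f\|_{L^6}^2$. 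Combined with $\|f^3\|_{L^2(\H_u)}=\|f\|_{L^6(\H_u)}^3$ this produces $\|f\|_{L^6(\H_u)}^3\lesssim \|f\|_{L^6(\H_u)}^2\,\no(f)$, hence \eqref{sobineq}.

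For \eqref{sobineq1}, I would first establish the auxiliary bound $\sup_t\|f\|_{L^2(P_{t,u})}\lesssim \no(f)$ by applying \eqref{du} to the scalar $f^2$: Cauchy--Schwarz together with the $L^\infty$ bound on $\trc$ from \eqref{esttrc} gives $\bigl|\|f(t)\|_{L^2(P_{t,u})}^2-\|f(s)\|_{L^2(P_{s,u})}^2\bigr|\lesssim \no(f)^2$, and the mean-value theorem produces some $s$ with $\|f(s)\|_{L^2(P_{s,u})}^2\leq \|f\|_{L^2(\H_u)}^2\leq \no(f)^2$. I would then apply \eqref{du} to $f^4$ and use the H\"older bounds $\int_{\H_u} f^3|Lf|\leq \|f\|_{L^6(\H_u)}^3\|Lf\|_{L^2(\H_u)}$ and $\|f\|_{L^4(\H_u)}^4\leq \|f\|_{L^6(\H_u)}^3\|f\|_{L^2(\H_u)}$ to obtain
\begin{equation*}
\sup_t\|f(t)\|_{L^4(P_{t,u})}^4\lesssim \inf_t\|f(t)\|_{L^4(P_{t,u})}^4+\|f\|_{L^6(\H_u)}^3\,\no(f)\lesssim \|f\|_{L^6(\H_u)}^3\,\no(f),
\end{equation*}
where the last step uses $\inf_t\|f\|_{L^4(P_{t,u})}^4\leq \|f\|_{L^4(\H_u)}^4$. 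Combining with \eqref{sobineq} yields \eqref{sobineq1}.

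The main obstacle is the passage from the degenerate null geometry of $\H_u$ to a Riemannian structure on which the classical 3D Sobolev embeddings apply with uniform constants. This rests on the boundedness of $b$ and $1/b$, of $\trc$, on the absence of caustics, and on the bounded volume of the relevant compact portion of $\H_u$ via Remark \ref{rem:noprobleminf} --- essentially the entire output of Theorem \ref{thregx} or the corresponding bootstrap assumptions being available at this stage.
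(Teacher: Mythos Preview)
Your argument for \eqref{sobineq1} is essentially the paper's: both apply \eqref{du} to $|F|^4$, bounding $\int |F|^3|\ddb_LF|$ by $\|F\|_{L^6}^3\no(F)$; you eliminate the boundary term via the mean-value theorem in $t$, while the paper uses a time cutoff $\varphi(t)F$. These are interchangeable.

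For \eqref{sobineq} your route diverges. The paper never appeals to a three-dimensional Sobolev inequality on $\H_u$. Instead it uses only the two-dimensional isoperimetric inequality \eqref{eq:isoperimetric} on each leaf $P_{t,u}$, applied to $|F|^3$, which gives
\[
\|F(t,\cdot)\|_{L^6(P_{t,u})}^3\lesssim \|\nabb F\|_{L^2(P_{t,u})}\|F\|_{L^4(P_{t,u})}^2,
\]
and hence $\|F\|_{\lh{6}}\lesssim\|\nabb F\|_{\lh{2}}^{1/3}\|F\|_{\tx{\infty}{4}}^{2/3}$. This couples \eqref{sobineq} to \eqref{sobineq1}: the paper proves them simultaneously, substituting \eqref{p1e3} (which reads $\|F\|_{\tx{\infty}{4}}\lesssim\no(F)+\|F\|_{\lh{6}}$) back into the preceding display and absorbing. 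Your argument decouples them by invoking a stronger input, the embedding $W^{1,6/5}(\H_u,\tilde g)\hookrightarrow L^2$, and then treats \eqref{sobineq1} separately.

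That stronger input is where your proposal has a gap. ``Bounded volume, absence of caustics, $\trc\in L^\infty$'' are not by themselves enough to guarantee a uniform three-dimensional Sobolev constant on a non-compact manifold; one needs, for instance, quantitative control of a coordinate atlas. In the present setting the correct justification would be to invoke the global chart $(t,x')$ from Lemma~\ref{lemm:coord} and the estimate \eqref{eq:coordchartbis}, which shows $\tilde g=dt^2+\gamma$ is $O(\ep)$-close to the Euclidean metric on $[0,1]\times\R^2$, whence the Euclidean Sobolev inequality transfers. If you make that explicit your argument is complete; the paper's route has the advantage that it only needs the two-dimensional input \eqref{eq:isoperimetric}, which is exactly what the coordinate construction in \S\ref{sec:coord} delivers.
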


\begin{proof}
Using \eqref{eq:isoperimetric}, we have:
\bee
\norm{F(t,.)}^6_{\lpt{6}}&=&\norm{|F(t,.)|^3}^2_{\lpt{2}}\\
&\les&  \norm{\nabb F(t,.)\c F(t,.)|F(t,.)|}^2_{\lpt{1}}\\
&\les &\norm{\nabb F(t,.)}^2_{\lpt{2}}\norm{F(t,.)}_{\lpt{4}}^4,
\eee
which yields:
\be\lab{misosoup}
\norm{F}_{\lh{6}}\les \norm{\nabb F}^{\frac{1}{3}}_{\lh{2}}\norm{F}^{\frac{2}{3}}_{\tx{\infty}{4}}.
\ee

Using \eqref{du} and \eqref{sobineq}, we have:
\begin{equation}\label{p1e1}
\begin{array}{lll}
\ds\norm{F(t,.)}_{\lpt{4}}^4&=&\norm{F(0,.)}_{\lpo{4}}^4\\
& &\ds +4\int_0^t\int_{P_{\tau,u}}n\dd_LF(\tau,x')\c F(\tau,x')|F(\tau,x')|^2d\tau d\mu_{\tau,u}\\
& &\ds+\int_0^t\int_{P_{\tau,u}}\trc |F(\tau,x')|^4d\tau d\mu_{\tau,u}\\
&\lesssim & \ds\norm{F(0,.)}_{\lpo{4}}^4+\norm{\dd_LF}_{\lh{2}}\norm{F}^3_{\lh{6}}\\
& & \ds +\norm{\trc}_{\lh{\infty}}\norm{F}^4_{\lh{4}}\\
&\lesssim & \ds\norm{F(0,.)}_{\lh{4}}^4+\no(F)^4+\norm{F}^4_{\lh{6}}.
\end{array}
\end{equation}
Replacing $F$ with $\varphi(t)F$ where $\varphi$ is a smooth function such that $\varphi(0)=1$ and $\varphi(1)=0$, and proceeding as in \eqref{p1e1}, we obtain:
\begin{displaymath}
\begin{array}{lll}
\ds\norm{F(0,.)}_{\lpo{4}}^4&=&\ds -4\int_0^1\int_{P_{\tau,u}}n\varphi(\tau)^4\dd_LF(\tau,x')\c F(\tau,x')|F(\tau,x')|^2d\tau d\mu_{\tau,u}\\
& & \ds -4\int_0^1\int_{P_{\tau,u}}\varphi'(\tau)\varphi(\tau)^3|F(\tau,x')|^4d\tau d\mu_{\tau,u}\\
& &\ds -\int_0^t\int_{P_{\tau,u}}\trc \varphi(\tau)^4|F(\tau,x')|^4d\tau d\mu_{\tau,u}\\
&\lesssim & \ds\no(F)^4+\norm{F}^4_{\lh{6}},
\end{array}
\end{displaymath}
which together with \eqref{p1e1} yields:
$$\norm{F(t,.)}_{\lpt{4}}\lesssim \no(F)+\norm{F}_{\lh{6}}.$$
Taking the supremum in $t$ yields 
\begin{equation}\label{p1e3}
\norm{F}_{\tx{\infty}{4}}\lesssim \no(F)+\norm{F}_{\lh{6}}.
\end{equation}
Finally, \eqref{misosoup} and \eqref{p1e3} imply \eqref{sobineq} and \eqref{sobineq1}. This concludes the proof.
\end{proof}

\begin{lemma}
For any tensor $F$:
\be\lab{sobineq2}
\norm{F}_{\tx{\infty}{2}}\lesssim \norm{\dd_LF}_{\lh{2}}^{\half}\norm{F}_{\lh{2}}^{\half}+\norm{F}_{\lh{2}}.
\ee
Furthermore, if $F(0,.)$ belongs to $L^2(\pou)$, we have:
\be\lab{sobineq2bis}
\norm{F}_{\tx{\infty}{2}}\lesssim \norm{F(0,.)}_{L^2(\pou)}+\norm{\dd_LF}_{\lh{2}}.
\ee
\end{lemma}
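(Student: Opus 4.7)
The strategy is to work along the null generators of $\H_u$ and convert the problem to a pointwise ODE estimate in the parameter $t$. Using the coordinate system $(t,x')$ on $\H_u$ obtained by transporting $x'\in \pou$ along the null geodesics generating $\H_u$, the coordinate vectorfield $\partial_t$ is collinear with $L$ and, since $L(t)=T(t)=n^{-1}$, one has $\partial_t = nL$ along each generator. Therefore, for a $\H_u$-tangent tensor $F$, metric compatibility of $\dd$ yields the pointwise identity
\be
\frac{\partial}{\partial t}\bigl(|F(t,x')|^2\bigr) = 2n\,\gg(\dd_L F, F)(t,x'),
\ee
so that Cauchy-Schwarz together with the control $\|n-1\|_{L^\infty}\lesssim\ep$ from \eqref{estn} gives
\be
\Bigl|\frac{\partial}{\partial t}|F(t,x')|^2\Bigr|\lesssim |\dd_L F(t,x')|\,|F(t,x')|.
\ee

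To prove \eqref{sobineq2bis}, one integrates the above identity from $0$ to $t$ at fixed $x'$, so that $|F(t,x')|^2\leq |F(0,x')|^2 + C\int_0^1 |\dd_L F||F|(\tau,x')\,d\tau$. Taking the supremum in $t$ and then the $L^2$ norm in $x'$ over $\pou$ gives
\be
\|F\|_{\tx{\infty}{2}}^2\lesssim \|F(0,\cdot)\|_{L^2(\pou)}^2 + \int_{\pou}\!\!\int_0^1 |\dd_L F||F|\,d\tau\,d\mu_{0,u}.
\ee
The key point to handle the second term is to compare $d\mu_{0,u}$ with $d\mu_{t,u}$: from \eqref{du}, the Jacobian $J(t,x')$ of the null-geodesic flow satisfies $\partial_t J=n\trc J$, and therefore \eqref{estn} and \eqref{esttrc} (namely $\|\trc\|_{\lh\infty}\lesssim\ep$) guarantee $J\sim 1$ uniformly on $0\leq t\leq 1$. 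Combined with the bound $\|b-1\|_{\lh\infty}\lesssim\ep$ from \eqref{estb} and the coarea formula \eqref{coarea}, this yields $d\tau\,d\mu_{0,u}(x')\sim d\H$, so that Cauchy-Schwarz on $\H_u$ gives
\be
\|F\|_{\tx{\infty}{2}}^2\lesssim \|F(0,\cdot)\|_{L^2(\pou)}^2 + \|\dd_L F\|_{\lh{2}}\,\|F\|_{\lh{2}}.
\ee
Since trivially $\|F\|_{\lh{2}}\lesssim \|F\|_{\tx{\infty}{2}}$, one absorbs the second term by Young's inequality and recovers \eqref{sobineq2bis}.

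For \eqref{sobineq2}, where no initial value is assumed, I replace the initial value by an intermediate one: for each $x'$, by the mean value theorem there exists $t_0=t_0(x')\in[0,1]$ with $|F(t_0,x')|^2=\int_0^1 |F(\tau,x')|^2\,d\tau$. Integrating the ODE identity between $t_0$ and $t$ yields
\be
|F(t,x')|^2\leq \int_0^1 |F(\tau,x')|^2\,d\tau + C\int_0^1 |\dd_L F||F|(\tau,x')\,d\tau,
\ee
and the same $L^2_{x'}$ argument as above gives
\be
\|F\|_{\tx{\infty}{2}}^2\lesssim \|F\|_{\lh{2}}^2 + \|\dd_L F\|_{\lh{2}}\|F\|_{\lh{2}},
\ee
which is \eqref{sobineq2} after extracting the square root. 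The only nontrivial ingredient is the equivalence of measures used in both cases, which reduces to the a priori bounds on $n$, $b$ and $\trc$ already established in Theorem \ref{thregx}; everything else is a one-dimensional transport/fundamental-theorem-of-calculus argument.
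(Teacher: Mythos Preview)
Your proof is correct and close in spirit to the paper's, though organized differently. The paper differentiates the integrated quantity $\int_{P_{t,u}}|F|^2\,d\mu_{t,u}$ directly via \eqref{du}, which produces the extra $\trc|F|^2$ term explicitly and avoids any measure comparison; you instead differentiate $|F(t,x')|^2$ pointwise along each generator and then reconcile $d\mu_{0,u}$ with $d\mu_{t,u}$ a posteriori through the Jacobian (which is really \eqref{eq:volumega} rather than \eqref{du}). For \eqref{sobineq2} the paper eliminates the initial value by multiplying with a cutoff $\varphi(t)$ vanishing at $t=1$, whereas your mean-value choice of $t_0(x')$ does the same job and is arguably more elementary; for \eqref{sobineq2bis} the paper invokes Gronwall on \eqref{p2e1}, while your absorption of $\|F\|_{\lh{2}}\lesssim\|F\|_{\tx{\infty}{2}}$ via Young's inequality is an equally valid shortcut. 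One small remark: the bounds on $n,b,\trc$ you quote from Theorem~\ref{thregx} are, at the point in the paper where this lemma is used, only available as the bootstrap assumptions \eqref{boot1}--\eqref{boot4}; the paper's own proof silently uses the same smallness, so there is no circularity, but your citations should point there instead.
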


\begin{proof}
Using \eqref{du}, we have:
\be\lab{p2e1}
\begin{array}{lll}
\norm{F(t,.)}^2_{\lpt{2}}&=&\norm{F(0,.)}_{\lpo{2}}^2\\
& &\ds +2\int_0^t\int_{P_{\tau,u}}n\dd_LF(\tau,x')\c F(\tau,x')d\tau d\mu_{\tau,u}\\
& &\ds+\int_0^t\int_{P_{\tau,u}}\trc |F(\tau,x')|^2d\tau d\mu_{\tau,u}\\
&\lesssim & \ds\norm{F(0,.)}_{\lpo{2}}^2+\norm{\dd_LF}_{\lh{2}}\norm{F}_{\lh{2}}\\
&& +\norm{\trc}_{\lh{\infty}}\norm{F}^2_{\lh{2}}\\
&\lesssim & \ds\norm{F(0,.)}_{\lpo{2}}^2+\norm{\dd_LF}_{\lh{2}}\norm{F}_{\lh{2}}+\norm{F}^2_{\lh{2}}.
\end{array}
\ee
Replacing $F$ with $\varphi(t)F$ where $\varphi$ is a smooth function such that $\varphi(0)=1$ and $\varphi(1)=0$, and proceeding as in \eqref{p2e1}, we obtain:
\begin{displaymath}
\begin{array}{lll}
\norm{F(0,.)}^2_{\lpo{2}}&=&\ds -2\int_0^1\int_{P_{\tau,u}}\varphi(\tau)^2n\dd_LF(\tau,x')\c F(\tau,x')d\tau d\mu_{\tau,u}\\
& &\ds-2\int_0^1\int_{P_{\tau,u}} \varphi(\tau)'\varphi(\tau)|F(\tau,x')|^2d\tau d\mu_{\tau,u}\\
& &\ds-\int_0^1\int_{P_{\tau,u}}\trc \varphi(\tau)^2|F(\tau,x')|^2d\tau d\mu_{\tau,u}\\
&\lesssim & \ds\norm{\dd_LF}_{\lh{2}}\norm{F}_{\lh{2}}+\norm{F}^2_{\lh{2}},
\end{array}
\end{displaymath}
which together with \eqref{p2e1} yields:
\begin{equation}
\norm{F(t,.)}^2_{\lpt{2}}\lesssim \norm{\dd_LF}_{\lh{2}}\norm{F}_{\lh{2}}+\norm{F}^2_{\lh{2}}.
\end{equation}
Taking the supremum in $t$ yields \eqref{sobineq2}. 

To obtain \eqref{sobineq2bis}, we combine \eqref{p2e1} with Gronwall's lemma. This concludes the  proof.
\end{proof}

The following lemma will be useful to estimate the various transport equations arising in the null structure equations. Its proof is immediate.
\begin{lemma}
Let $W$ and $F$ two $\ptu$-tangent tensors such that $\ddb_LW=F$. Then, for any $p\geq 1$, we have:
\be\lab{estimtransport1}
\norm{W}_{\xt{p}{\infty}}\lesssim \norm{W(0)}_{L^p(\pou)}+\norm{F}_{\xt{p}{1}}.
\ee
\end{lemma}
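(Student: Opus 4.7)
The plan is to reduce the estimate to a pointwise differential inequality along each null geodesic generator of $\H_u$ and then apply Minkowski's inequality.

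First, I would establish the scalar inequality $L(|W|) \leq |F|$. For a $\ptu$-tangent tensor $W$, since $L$ and $\lb$ are orthogonal to $P_{t,u}$ with respect to $\gg$, the normal components of $\dd_L W$ do not contribute when we contract against $W$ itself in the induced metric $\gamma$ on $P_{t,u}$. Thus
\be
L(|W|^2) = 2\langle \ddb_L W, W\rangle = 2\langle F, W\rangle, \nn
\ee
and by Cauchy--Schwarz, $|W| L(|W|) \leq |F|\,|W|$, which yields $L(|W|) \leq |F|$ wherever $|W|\neq 0$ (and the inequality holds everywhere by approximation).

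Second, I would use the coordinate system $(t,x')$ on $\H_u$ defined before the statement: the coordinates $x'$ on $\pou$ are transported along the integral curves of $L$. By Remark \ref{defnullgeod}, these curves are reparametrizations of the null geodesics generating $\H_u$, and in these coordinates the vectorfield $L$ acts as $\partial_t$. Hence the pointwise inequality above becomes
\be
\pr_t |W|(t,x') \leq |F|(t,x'). \nn
\ee
Integrating in $t$ from $0$ to $t\in[0,1]$ and then taking the supremum over $t$ gives
\be
\sup_{0\leq t\leq 1}|W(t,x')| \leq |W(0,x')| + \int_0^1 |F(s,x')|\, ds. \nn
\ee

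Third, I would take the $L^p(\pou)$ norm of this inequality in $x'$. Using the triangle inequality for $L^p(\pou)$ (applied to the scalar integrand $|F(s,\cdot)|$ in the $ds$ integral, which is just Minkowski's integral inequality in the direction $L^p_{x'}\to L^1_s$), the right-hand side is bounded by
\be
\norm{W(0)}_{L^p(\pou)} + \norm{F}_{\xt{p}{1}}, \nn
\ee
which is exactly the desired bound on $\norm{W}_{\xt{p}{\infty}}$. The only non-trivial ingredient is the evolution identity for $|W|^2$ under $L$; all subsequent steps are standard. No obstacle of any substance arises, which is consistent with the lemma being presented as having an ``immediate'' proof.
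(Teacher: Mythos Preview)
Your argument is correct and is exactly the standard proof one would give; the paper itself omits the proof entirely, stating only that it is ``immediate.'' One small correction: in the transported coordinate system $(t,x')$ on $\H_u$ the paper records $nL=\partial_t$ (see the proof of Lemma \ref{lemm:coord}), not $L=\partial_t$, so you pick up a factor of $n$ when integrating---but since $\|n\|_{L^\infty}\lesssim 1$ this is absorbed into the implicit constant and nothing changes.
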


\subsection{Calculus inequalities on $\Sit$}

Recall that $g$ is the metric induced by $\gg$ on $\Sit$.  
A coordinate chart $U\subset \Sit$  with coordinates $x=(x_1, x_2,x_3)$ is admissible if,
relative to  these coordinates, there exists  a constant $c>0$ such that,
\be\lab{eq:coordchartsit00}
c^{-1}|\xi|^2\le g_{ij}(p)\xi^i\xi^j\le c|\xi|^2, \qquad \mbox{uniformly for  all }
\,\, p\in U.
\ee
We assume that $\Sit$ can be covered by a global admissible coordinates system,
 i.e., a chart satisfying the conditions \eqref{eq:coordchartsit00} with $U=\Sit$. 
 Furthermore, we assume that the 
constant $c$ in \eqref{eq:coordchartsit00} is independent of $t$. 
\begin{remark}
The existence of a global coordinate system $\Sit$ satisfying \eqref{eq:coordchartsit00} with a constant $c>0$ independent of $t$ will be shown in section \ref{sec:globalcoordsit}.
\end{remark}

\begin{lemma}\lab{sitl1}
Let $f$ a real scalar function on $\Sit$. Then:
\be\lab{eq:sitl1}
\norm{f}_{L^{\frac{3}{2}}(\Sit)}\lesssim \norm{\nabla f}_{L^1(\Sit)}.
\ee 
\end{lemma}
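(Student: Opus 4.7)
The plan is to reduce the inequality to the classical Euclidean $L^{3/2}$--$L^1$ Sobolev embedding by exploiting the global admissible coordinate chart on $\Sit$ that will be constructed in section \ref{sec:globalcoordsit}. Since \eqref{eq:coordchartsit00} holds with a constant $c$ independent of $t$ in a single global chart, the volume element $\sqrt{\det g}$ is comparable to $1$ and the Riemannian gradient norm $|\nab f|$ is comparable to the Euclidean gradient norm $|Df|=(\sum_i|\partial_{x_i}f|^2)^{1/2}$, both with constants depending only on $c$.

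Using Remark \ref{rem:noprobleminf} and a standard truncation/approximation argument, it suffices to establish \eqref{eq:sitl1} for smooth $f$ of compact support. Under the coordinate identification, the metric comparability yields
\begin{equation*}
\norm{f}_{L^{3/2}(\Sit)}\sim \left(\int_{\R^3}|f|^{3/2}dx\right)^{2/3},\qquad \norm{\nab f}_{L^1(\Sit)}\sim \int_{\R^3}|Df|\,dx,
\end{equation*}
so the statement is equivalent to the Euclidean Gagliardo--Nirenberg--Sobolev inequality
\begin{equation*}
\norm{f}_{L^{3/2}(\R^3)}\les \norm{Df}_{L^1(\R^3)},
\end{equation*}
which is proved in a standard way: for each $i=1,2,3$ one has $|f(x)|\le \int_{\R}|\partial_{x_i}f|dx_i$; multiplying the three bounds, taking the $1/2$ power, and integrating successively in $x_1,x_2,x_3$ using H\"older's inequality gives the claim.

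The argument has essentially no obstacle beyond setting it up, since the only nontrivial ingredient is the existence of the global coordinate system on $\Sit$ with the uniform comparability \eqref{eq:coordchartsit00}, which is postponed to section \ref{sec:globalcoordsit}. All analytic difficulty is in that construction (control of the lapse, maximal foliation, and elliptic equation \eqref{lapsen1}), not in the Sobolev estimate itself: once such a chart is available, Lemma \ref{sitl1} reduces to a well-known Euclidean inequality through pointwise metric comparison.
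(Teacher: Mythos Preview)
Your proposal is correct and follows essentially the same approach as the paper: reduce to the classical Euclidean Gagliardo--Nirenberg--Sobolev inequality via the global coordinate chart of section~\ref{sec:globalcoordsit}, using the uniform metric comparability \eqref{eq:coordchartsit00} to pass between the Riemannian and Euclidean $L^p$ and gradient norms. The paper writes out the Euclidean argument explicitly (the product of the three one-dimensional bounds and successive integration), but the structure is identical.
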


\begin{proof}
We may assume that $f$ has compact support in $\Sit$. In the global coordinate system $x=(x_1,x_2,x_3)$ on $\Sit$ satisfying \eqref{eq:coordchartsit00}, we have:
\bee
|f(x_1,x_2,x_3)|^{\frac{3}{2}}=\left|\int_{-\infty}^{x_1}\partial_1f(y,x_2,x_3)dy\int_{-\infty}^{x_2}\partial_2f(x_1,y,x_3)dy\int_{-\infty}^{x_3}\partial_3f(x_1,x_2,y)dy\right|^{\frac{1}{2}}\\
\lesssim\left(\int_{\mathbb{R}}|\partial_1f(y,x_2,x_3)|dy\right)^{\frac{1}{2}}\left(\int_{\mathbb{R}}|\partial_2f(x_1,y,x_3)|dy\right)^{\frac{1}{2}}\left(\int_{\mathbb{R}}|\partial_3f(x_1,x_2,y)|dy\right)^{\frac{1}{2}}.
\eee
Hence,
\bee
&&\int_{\mathbb{R}^3}|f(x_1,x_2,x_3)|^{\frac{3}{2}}dx_1dx_2dx_3\\
&\lesssim&\left(\int_{\mathbb{R}^3}|\partial_1f(x_1,x_2,x_3)|dx_1dx_2dx_3\right)^{\frac{1}{2}}\left(\int_{\mathbb{R}^3}|\partial_2f(x_1,x_2,x_3)|dx_1dx_2dx_3\right)^{\frac{1}{2}}\\
&&\left(\int_{\mathbb{R}^3}|\partial_3f(x_1,x_2,x_3)|dx_1dx_2dx_3\right)^{\frac{1}{2}}\\
&\lesssim& \left(\int_{\mathbb{R}^3}|\nabla f(x_1,x_2,x_3)|dx_1dx_2dx_3\right)^{\frac{3}{2}}.
\eee
Now in view of the bootstrap assumption \eqref{boot1} \eqref{boot4}, and the coordinates system properties \eqref{eq:volumega} and  \eqref{eq:coordchartsit0},  we have $\frac{1}{5}\leq\sqrt{|g_t|}\leq 5$ which together with the previous estimate yields:
$$\left(\int_{\mathbb{R}^3}|f(x)|^{\frac{3}{2}}\sqrt{|g_t|}dx_1dx_2dx_3\right)^{\frac{2}{3}}\lesssim \int_{\mathbb{R}^3}|\nabla f(x)|\sqrt{|g_t|}dx_1dx_2dx_3$$
as desired.
\end{proof}

As a corollary of the estimate \eqref{eq:sitl1}, we may derive the following Sobolev embeddings.
\begin{corollary}
Given an arbitrary tensorfield $F$ on $\Sit$, we have:
\be\lab{eq:gnirenbergsit}
\norm{F}_{L^{3}(\Sit)}\lesssim \norm{\nabla F}_{L^{\frac{3}{2}}(\Sit)}
\ee 
and
\be\lab{sobineqsit}
\norm{F}_{L^{6}(\Sit)}\lesssim \norm{\nabla F}_{L^{2}(\Sit)}.
\ee 
\end{corollary}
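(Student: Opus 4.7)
The plan is to deduce both Sobolev embeddings from the scalar isoperimetric-type inequality \eqref{eq:sitl1} by applying it to an appropriate power of $|F|$, together with Kato's inequality $|\nabla|F||\le |\nabla F|$ (valid pointwise almost everywhere, with the usual regularization $|F|_\delta=\sqrt{|F|^2+\delta^2}$ to avoid differentiability issues at the zero set of $F$) and H\"older's inequality on $\Sit$.

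For \eqref{sobineqsit}, I would set $f=|F|^4$ (after regularization) in \eqref{eq:sitl1}. This gives
\[
\||F|^4\|_{L^{3/2}(\Sit)}\lesssim \|\nabla|F|^4\|_{L^1(\Sit)}\lesssim \||F|^3|\nabla F|\|_{L^1(\Sit)},
\]
where Kato's inequality has been used. The left-hand side equals $\|F\|_{L^6(\Sit)}^4$, and by H\"older with exponents $(2,2)$ the right-hand side is bounded by $\|F\|_{L^6(\Sit)}^3\|\nabla F\|_{L^2(\Sit)}$. Dividing (on the set where $\|F\|_{L^6(\Sit)}$ is finite, using a truncation argument if necessary) gives \eqref{sobineqsit}. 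For \eqref{eq:gnirenbergsit} the same strategy with $f=|F|^2$ and H\"older with exponents $(3,3/2)$ yields
\[
\|F\|_{L^3(\Sit)}^2\lesssim \||F|\,|\nabla F|\|_{L^1(\Sit)}\lesssim \|F\|_{L^3(\Sit)}\|\nabla F\|_{L^{3/2}(\Sit)},
\]
and the conclusion follows.

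I expect no substantive obstacle: the only non-cosmetic point is to justify applying \eqref{eq:sitl1} to $|F|^p$ (which a priori is only Lipschitz, not smooth) and to handle the case $\|F\|_{L^6(\Sit)}=+\infty$. Both issues are dispatched in the standard way, by first replacing $|F|$ by $|F|_\delta=\sqrt{|F|^2+\delta^2}-\delta$ (which is smooth, has $|\nabla |F|_\delta|\le |\nabla F|$, and converges to $|F|$) and by truncating $|F|$ at level $M$ to obtain uniform bounds before passing to the limit $M\to\infty$. The reliance on Lemma \ref{sitl1} is the essential ingredient and the rest is bookkeeping via H\"older.
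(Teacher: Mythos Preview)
Your proposal is correct and follows essentially the same approach as the paper: apply \eqref{eq:sitl1} to $f=|F|^2$ for \eqref{eq:gnirenbergsit} and to $f=|F|^4$ for \eqref{sobineqsit}, then use H\"older to absorb the resulting power of $\|F\|$ on the right-hand side. The paper's proof is more terse and does not spell out the Kato/regularization/truncation details you mention, but the substance is identical.
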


\begin{proof}
We use \eqref{eq:sitl1} with $f=|F|^2$:
$$\norm{F}^2_{L^{3}(\Sit)}=\norm{|F|^2}_{L^{\frac{3}{2}}(\Sit)}\lesssim \norm{F\nabla F}_{L^{1}(\Sit)}\lesssim \norm{\nabla F}_{L^{\frac{3}{2}}(\Sit)}\norm{F}_{L^{3}(\Sit)}$$
which yields \eqref{eq:gnirenbergsit}. To obtain \eqref{sobineqsit}, we use \eqref{eq:sitl1} with $f=|F|^4$:
$$\norm{F}^4_{L^{6}(\Sit)}=\norm{|F|^4}_{L^{\frac{3}{2}}(\Sit)}\lesssim \norm{|F|^2F\nabla F}_{L^{1}(\Sit)}\lesssim \norm{\nabla F}_{L^{2}(\Sit)}\norm{F}^3_{L^{6}(\Sit)}$$
which yields \eqref{sobineqsit}.
\end{proof}

As a corollary of \eqref{eq:sitl1}, it is classical to derive the following inequality (for a proof, see for example \cite{GT} page 157):
\begin{corollary}
\be\lab{sobinftysit}
\norm{F}_{L^\infty(\Sit)}\lesssim \norm{\nabla F}_{L^p(\Sit)}+\norm{F}_{L^p(\Sit)},
\ee
where $p$ is any real number $p>3$.
\end{corollary}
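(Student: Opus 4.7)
The plan is to reduce the corollary to the classical Morrey embedding $W^{1,p}(\mathbb{R}^3)\hookrightarrow L^\infty(\mathbb{R}^3)$, valid for every $p>3$ (as in \cite{GT} page 157), by pushing the inequality to the global coordinate chart on $\Sit$ provided by \eqref{eq:coordchartsit00}.

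First I would reduce to scalars: writing $f:=|F|$ and using Kato's inequality $|\nabla|F||\leq|\nabla F|$ pointwise, it suffices to bound $\norm{f}_{L^\infty(\Sit)}$ by $\norm{\nabla f}_{L^p(\Sit)}+\norm{f}_{L^p(\Sit)}$ for a nonnegative scalar $f$. Pulling back through the global admissible chart $x=(x^1,x^2,x^3)$ from \eqref{eq:coordchartsit00}, the equivalence $c^{-1}\delta\leq g\leq c\,\delta$ ensures that the volume form, the gradient, and all $L^p$ norms are comparable to their Euclidean counterparts with constants depending only on $c$. The problem thus reduces to the Euclidean statement $\norm{f}_{L^\infty(\mathbb{R}^3)}\les\norm{\partial f}_{L^p(\mathbb{R}^3)}+\norm{f}_{L^p(\mathbb{R}^3)}$ for $p>3$.

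Second, this Euclidean statement is the standard Morrey inequality. Its proof, sketched in \cite{GT}, rests on the pointwise representation of $f(x)-\overline{f}_{B_r(x)}$ as a Riesz-type potential of $\nabla f$, followed by H\"older with conjugate exponent $p/(p-1)$, which yields a fractional gain whose exponent $1-3/p$ is strictly positive precisely when $p>3$. Combining this with the trivial bound $|\overline{f}_{B_r(x)}|\les r^{-3/p}\norm{f}_{L^p}$ and taking $r$ of unit order closes the estimate, after which one translates back through the chart to $\Sit$. I do not anticipate any serious obstacle: the only structural ingredient beyond textbook Sobolev theory is the existence of a global coordinate system satisfying \eqref{eq:coordchartsit00} with constant $c$ independent of $t$, which is used crucially here and is the content of the construction deferred to section \ref{sec:globalcoordsit}.
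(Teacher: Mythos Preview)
Your proposal is correct and essentially matches the paper's treatment: the paper does not give a proof either, but simply remarks that the inequality is a classical consequence of \eqref{eq:sitl1} and refers to \cite{GT} page 157, which is exactly the Morrey argument you sketch. Your reduction via the global chart \eqref{eq:coordchartsit00} and Kato's inequality is the natural way to make that citation precise.
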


As a corollary of \eqref{sobineqsit} and \eqref{sobinftysit}, we immediately obtain:
\be\lab{sobinftysitbis}
\norm{F}_{L^\infty(\Sit)}\lesssim \norm{\nabla^2F}_{L^2(\Sit)}+\norm{F}_{L^2(\Sit)}.
\ee

\begin{lemma}
For any tensor $F$ on $\mathcal{M}$:
\be\lab{hehehehe0bis}
\norm{F}_{L^\infty_u\lpt{2}}\lesssim \norm{\nabla F}_{\lsit{\infty}{2}}+\norm{F}_{\lsit{\infty}{2}}.
\ee
and
\be\lab{hehehehe0}
\norm{F}_{L^{\infty}_u\lpt{2}}\lesssim \norm{\nabla F}_{\lsit{\infty}{\frac{3}{2}}}+\norm{F}_{\lsit{\infty}{2}}.
\ee
\end{lemma}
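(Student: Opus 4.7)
The idea is to reduce the supremum-over-$u$ of a 2-surface $L^2$ norm to a bulk $L^1$ integral on $\Sigma_t$, via the divergence theorem applied to the foliation of $\Sigma_t$ by the level sets $\{u=\textrm{const}\}$. Fix $t\in[0,1]$ and $u_0\in\R$. By Remark \ref{rem:noprobleminf}, the metric $\gg$ is close to Euclidean outside a compact subset $\widetilde U\subset\Sigma_t$ and all tensors and geometric quantities to which we apply these estimates decay rapidly there, so no boundary term at spatial infinity arises. Since $N=-\nab u/|\nab u|$ is the outward unit normal to $P_{t,u_0}=\partial\{u\ge u_0\}\subset\Sigma_t$, the divergence theorem applied to the vector field $|F|^2 N$ on $\{u\ge u_0\}$ gives
\be\lab{divthmstep}
\int_{P_{t,u_0}}|F|^2\,\dmt=\int_{\{u\ge u_0\}}\bigl(2\langle F,\nab_NF\rangle+|F|^2\,\textrm{div}_{\Sigma_t}N\bigr)\,dV.
\ee

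A direct computation using $N=L-T$, the Ricci equations \eqref{ricciform} (which give $\dd_AL=\chi_{AB}e_B-\kep_AL$ and $\dd_AT=-\eta_{AB}e_B-\kep_AN$, hence $\dd_AN=(\chi_{AB}+\eta_{AB})e_B-\kep_AT$), together with the identity \eqref{eq:traceeta} $\textrm{tr}(\eta)=-\d$, shows
$$\textrm{div}_{\Sigma_t}N=\textrm{tr}\,\theta=\trc-\d.$$
By \eqref{esttrc}, $\trc$ is bounded pointwise by $\ep$, while $\d=k_{NN}$ lies in $L^6(\Sigma_t)$ by Sobolev \eqref{sobineqsit} applied to the $H^1(\Sigma_t)$-bound on $k$ inherited from the bootstrap of Theorem \ref{th:mainbl2}. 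A routine interpolation (using $\norm{F}_{L^{12/5}(\Sigma_t)}\lesssim\norm{F}_{L^2(\Sigma_t)}^{3/4}\norm{F}_{L^6(\Sigma_t)}^{1/4}$ and Sobolev \eqref{sobineqsit} resp. \eqref{eq:gnirenbergsit}) absorbs the contribution of $\d$ in \eqref{divthmstep} against $\norm{F}_{L^2(\Sigma_t)}^2+\norm{\nab F}_{L^2(\Sigma_t)}^2$ (respectively $\norm{F}_{L^2(\Sigma_t)}^2+\norm{\nab F}_{L^{3/2}(\Sigma_t)}^2$), yielding, after Cauchy--Schwarz on the first term of \eqref{divthmstep},
\be\lab{afterdiv}
\int_{P_{t,u_0}}|F|^2\,\dmt\lesssim\int_{\Sigma_t}|F|\,|\nab F|\,dV+\int_{\Sigma_t}|F|^2\,dV.
\ee

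To conclude \eqref{hehehehe0bis} I apply Cauchy--Schwarz and AM--GM to \eqref{afterdiv}, namely $\int|F|\,|\nab F|\,dV\le\half(\norm{F}_{L^2(\Sigma_t)}^2+\norm{\nab F}_{L^2(\Sigma_t)}^2)$, take square roots, and finally sup over $u_0$ and $t\in[0,1]$. For \eqref{hehehehe0} I instead use H\"older with exponents $3$ and $3/2$ together with the Sobolev embedding \eqref{eq:gnirenbergsit}, $\norm{F}_{L^3(\Sigma_t)}\lesssim\norm{\nab F}_{L^{3/2}(\Sigma_t)}$, which gives
$$\int_{\Sigma_t}|F|\,|\nab F|\,dV\le\norm{F}_{L^3(\Sigma_t)}\norm{\nab F}_{L^{3/2}(\Sigma_t)}\lesssim\norm{\nab F}_{L^{3/2}(\Sigma_t)}^2;$$
once again taking square roots and suprema concludes. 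The main subtlety is the vanishing of the boundary contribution at spatial infinity in the divergence theorem, which is exactly what the asymptotic flatness of Remark \ref{rem:noprobleminf} supplies; the control of the lower-order coefficient $\textrm{div}_{\Sigma_t}N=\trc-\d$ in a norm sufficient for the absorption step is a routine consequence of the already-established estimates \eqref{esttrc} and \eqref{estk}.
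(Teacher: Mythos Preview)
Your proof is correct and essentially identical to the paper's: the paper applies the coarea formula \eqref{coareasit} together with the differentiation formula \eqref{dusit} for $u\mapsto\int_{P_{t,u}}|F|^2$, which is exactly the infinitesimal form of your divergence-theorem identity \eqref{divthmstep}, and then bounds the coefficient $\textrm{tr}\theta=\trc-\d$ in the same way you do. (A minor point: at this stage of the argument the relevant bounds on $\trc$ and $k$ come from the bootstrap assumptions \eqref{boot3}, \eqref{boot4} rather than from the conclusions \eqref{esttrc}, \eqref{estk} of Theorem~\ref{thregx}.)
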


\begin{proof}
We first recall the analogous formula to \eqref{coarea} \eqref{du}. For all integrable functions on $\Sit$, the coarea formula implies:
\be\lab{coareasit}
\int_{\Sit}fd\Sit =\int_u\int_{\ptu}fb\dmt du
\ee
Also, we have for all integrable scalar functions $f$:
\be\lab{dusit}
\frac{d}{du}\left(\int_{\ptu}f\dmt \right)=\int_{\ptu}b(\nabla_Nf+\textrm{tr}\theta f)\dmt
\ee
where $\theta$ is the second fundamental form of $\ptu$ in $\Sit$, i.e. $\theta_{ij}=\nabla_iN_j$. Note that from the definition of $k$, $\chi$ and $\theta$, we have:
\be\lab{hehehehe1}
\chi_{AB}=<\dd_AL,e_B>=<\nabla_AT,e_B>+<\nabla_AN,e_B>=-k_{AB}+\theta_{AB}.
\ee
The proof of \eqref{hehehehe0bis} is easier, so we focus on \eqref{hehehehe0}. Using \eqref{coareasit}-\eqref{hehehehe1}, we obtain:
\bea
\nn |F|^2_{L^\infty_uL^2_{x'}}&\lesssim& \int_u\int_{\ptu}(2F\c \nabla_NF+\textrm{tr}\theta|F|^2)\dmt dx'\\
\nn &\lesssim& |\nabla_N F|_{\lsit{\infty}{\frac{3}{2}}}|F|_{\lsit{\infty}{3}}+|\trc|_{L^\infty}|F|^2_{\lsit{\infty}{2}}\\
\nn && +|\textrm{tr}k|_{\lsit{\infty}{6}}|F|_{\lsit{\infty}{3}}|F|_{\lsit{\infty}{2}}\\
\lab{hehehehe2} &\lesssim & |\nabla F|^2_{\lsit{\infty}{\frac{3}{2}}}+|F|^2_{\lsit{\infty}{2}} 
\eea
where we have used the bootstrap assumptions \eqref{boot3} \eqref{boot4} and the Gagliardo-Nirenberg inequality \eqref{eq:gnirenbergsit} in the last inequality. Since the order in which we take the supremum over $t$ and $u$ does not matter, we obtain \eqref{hehehehe0} by taking the supremum over $t$ in \eqref{hehehehe2}.
\end{proof}

We have the following corollary of the estimate \eqref{hehehehe0}:
\begin{corollary}
For any tensor $F$ on $\mathcal{M}$, we have
\be\lab{boarding}
\norm{F}_{\tx{\infty}{4}}\les \norm{\nabla F}_{L^\infty_t L^2(\Sigma_t)}+\norm{F}_{L^\infty_t L^2(\Sigma_t)}.
\ee
\end{corollary}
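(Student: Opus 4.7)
The plan is to reduce to the previously established estimate \eqref{hehehehe0} applied to the scalar quantity $|F|^2$, and then close by applying standard Sobolev embedding on $\Sigma_t$ from \eqref{sobineqsit}.

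More precisely, I would start from the identity
\[
\|F\|^2_{L^4(P_{t,u})} = \bigl\| |F|^2 \bigr\|_{L^2(P_{t,u})},
\]
and take the supremum over $u$ (and $t$) to obtain, by \eqref{hehehehe0} applied to the scalar $|F|^2$,
\[
\sup_{t,u}\|F\|^2_{L^4(P_{t,u})} \;\lesssim\; \bigl\|\nabla(|F|^2)\bigr\|_{L^\infty_t L^{3/2}(\Sigma_t)} + \bigl\| |F|^2\bigr\|_{L^\infty_t L^2(\Sigma_t)}.
\]
Since $|\nabla(|F|^2)| \lesssim |F||\nabla F|$ and $\||F|^2\|_{L^2(\Sigma_t)} = \|F\|^2_{L^4(\Sigma_t)}$, the two terms on the right are controlled by Hölder and the Sobolev embedding \eqref{sobineqsit} on $\Sigma_t$. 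Indeed, by Hölder,
\[
\|F\nabla F\|_{L^{3/2}(\Sigma_t)} \lesssim \|F\|_{L^6(\Sigma_t)}\|\nabla F\|_{L^2(\Sigma_t)} \lesssim \|\nabla F\|^2_{L^2(\Sigma_t)},
\]
and by interpolation together with \eqref{sobineqsit},
\[
\|F\|^2_{L^4(\Sigma_t)} \lesssim \|F\|^{1/2}_{L^2(\Sigma_t)}\|F\|^{3/2}_{L^6(\Sigma_t)} \lesssim \|F\|^{1/2}_{L^2(\Sigma_t)}\|\nabla F\|^{3/2}_{L^2(\Sigma_t)}.
\]

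Combining these two bounds and applying Young's inequality yields
\[
\sup_{t,u}\|F\|^2_{L^4(P_{t,u})} \;\lesssim\; \bigl(\|\nabla F\|_{L^\infty_t L^2(\Sigma_t)}+\|F\|_{L^\infty_t L^2(\Sigma_t)}\bigr)^2,
\]
from which \eqref{boarding} follows after taking the square root and noting that $\|F\|_{L^\infty_t L^4_{x'}}$ is bounded by $\sup_{t,u}\|F\|_{L^4(P_{t,u})}$ (using the equivalence between the $L^4_{x'}$ norm with respect to the transported coordinates on $P_{0,u}$ and the intrinsic $L^4(P_{t,u})$ norm, which relies on the already-established bound $\|b-1\|_{\lh{\infty}}\lesssim \varepsilon$ in \eqref{estb} to control the Jacobian). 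The whole argument is essentially routine given \eqref{hehehehe0} and \eqref{sobineqsit}; no novel obstruction arises, since the key nontrivial ingredient —- the coarea-based pointwise-in-$u$ estimate on $P_{t,u}$ -— has already been done in the proof of \eqref{hehehehe0}.
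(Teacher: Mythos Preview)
Your proof is correct and follows essentially the same route as the paper: apply \eqref{hehehehe0} to the scalar $|F|^2$, then use H\"older and the Sobolev embedding \eqref{sobineqsit} on $\Sigma_t$ to control $\|F\nabla F\|_{L^{3/2}(\Sigma_t)}$ and $\|F\|_{L^4(\Sigma_t)}^2$. Your final paragraph about the equivalence of $L^4_{x'}$ with the intrinsic $L^4(P_{t,u})$ norm via $\|b-1\|_{L^\infty}$ is unnecessary (and the citation of \eqref{estb} would be circular at this point in the paper); in the paper's conventions $\tx{\infty}{4}$ already denotes $\sup_{t,u}\|F\|_{L^4(P_{t,u})}$ with the intrinsic measure, so no such comparison is needed.
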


\begin{proof}
Using \eqref{hehehehe0} with $F$ replaced by $|F|^2$, we obtain 
\bee
\norm{F}_{L^{\infty}_u\lpt{4}}^2&\lesssim& \norm{F\c\nabla F}_{\lsit{\infty}{\frac{3}{2}}}+\norm{F}^2_{\lsit{\infty}{4}}\\
&\les& \norm{F}_{\lsit{\infty}{6}}\norm{\nabla F}_{\lsit{\infty}{2}}+\norm{F}_{\lsit{\infty}{6}}^\frac{3}{4}\norm{F}_{\lsit{\infty}{2}}^\frac{1}{4}\\
&\les& \norm{\nabla F}_{L^\infty_t L^2(\Sigma_t)}+\norm{F}_{L^\infty_t L^2(\Sigma_t)},
\eee
where we used in the last inequality the Sobolev embedding \eqref{sobineqsit}. This concludes the proof of the corollary.
\end{proof}

\begin{proposition}
For any tensor $F$ on $\Sit$, we have the following inequality:
\be\lab{prop:bochsit}
\norm{\nabla^2F}_{L^2(\Sit)}\lesssim \norm{\Delta F}_{L^2(\Sit)}+\norm{F}_{L^2(\Sit)}.
\ee
\end{proposition}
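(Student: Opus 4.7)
The plan is to derive the inequality from a standard Bochner-type identity on $\Sigma_t$, followed by Hölder/Sobolev estimates that take advantage of the smallness assumption \eqref{curvflux1} on the spacetime curvature together with the control of the second fundamental form $k$. Starting from the pointwise identity $|\nabla^2 F|^2 - |\Delta F|^2 = \nabla^i(\nabla^j F \cdot \nabla_i\nabla_j F) - \nabla^j(\nabla^i F \cdot \nabla_i\nabla_j F) + \text{commutator terms}$ and integrating over $\Sigma_t$, the divergence terms drop out (using the asymptotic flatness outside of the compact set $\widetilde{U}$, Remark \ref{rem:noprobleminf}), and one finds, schematically,
\begin{equation*}
\int_{\Sigma_t}|\nabla^2F|^2 = \int_{\Sigma_t}|\Delta F|^2 + \int_{\Sigma_t}\mathrm{Riem}^{\Sigma_t}\ast \nabla F\ast \nabla F + \int_{\Sigma_t}\nabla\mathrm{Riem}^{\Sigma_t}\ast F\ast \nabla F,
\end{equation*}
where $\mathrm{Riem}^{\Sigma_t}$ denotes the intrinsic Riemann tensor of $\Sigma_t$. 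Integrating by parts in the last term recasts it as a sum of $\int\mathrm{Riem}^{\Sigma_t}\ast\nabla F\ast \nabla F$ and $\int \mathrm{Riem}^{\Sigma_t}\ast F\ast \nabla^2 F$, so no intrinsic derivative of the curvature appears in the final estimate.

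Next, I would replace $\mathrm{Riem}^{\Sigma_t}$ by its Gauss-equation expression in terms of the spacetime curvature and $k$, namely $\mathrm{Riem}^{\Sigma_t} = \mathbf{R}^T + k\ast k$, where $\mathbf{R}^T$ denotes components of the spacetime Riemann tensor tangential to $\Sigma_t$. By \eqref{curvflux1} we have $\norm{\mathbf{R}}_{L^\infty_tL^2(\Sigma_t)}\les \ep$, and combined with the $H^1(\Sigma_t)$ control of $k$ (consistent with the hypotheses of Theorem \ref{th:mainbl2} and the Sobolev embedding \eqref{sobineqsit}, which yields $\norm{k}_{L^6(\Sigma_t)}\les \ep$), we conclude $\norm{\mathrm{Riem}^{\Sigma_t}}_{L^2(\Sigma_t)}\les \ep$.

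The estimates for the remaining two curvature integrals then go as follows. For the first one, Hölder and the Gagliardo--Nirenberg inequality $\norm{\nabla F}^2_{L^4(\Sigma_t)}\les \norm{\nabla F}_{L^2(\Sigma_t)}^{1/2}\norm{\nabla^2 F}_{L^2(\Sigma_t)}^{3/2}$ (from \eqref{sobineqsit} and interpolation) give
\begin{equation*}
\left|\int_{\Sigma_t}\mathrm{Riem}^{\Sigma_t}\ast \nabla F\ast \nabla F\right| \les \ep\,\norm{\nabla F}^{1/2}_{L^2(\Sigma_t)}\norm{\nabla^2F}^{3/2}_{L^2(\Sigma_t)}.
\end{equation*}
For the second one, Hölder together with the Sobolev embedding \eqref{sobinftysitbis} yields
\begin{equation*}
\left|\int_{\Sigma_t}\mathrm{Riem}^{\Sigma_t}\ast F\ast \nabla^2 F\right|\les \ep\,\norm{F}_{L^\infty(\Sigma_t)}\norm{\nabla^2 F}_{L^2(\Sigma_t)}\les \ep\,\left(\norm{\nabla^2 F}_{L^2(\Sigma_t)}+\norm{F}_{L^2(\Sigma_t)}\right)\norm{\nabla^2 F}_{L^2(\Sigma_t)}.
\end{equation*}
Applying Young's inequality in both bounds and using $\norm{\nabla F}^2_{L^2(\Sigma_t)} \les \norm{F}_{L^2(\Sigma_t)}\norm{\Delta F}_{L^2(\Sigma_t)}$ (integration by parts), the curvature contributions become $O(\ep)\norm{\nabla^2 F}^2_{L^2(\Sigma_t)}$ plus lower-order multiples of $\norm{\Delta F}^2_{L^2(\Sigma_t)} + \norm{F}^2_{L^2(\Sigma_t)}$. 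For $\ep$ sufficiently small, the $\ep\,\norm{\nabla^2 F}^2_{L^2(\Sigma_t)}$ terms are absorbed on the left, producing \eqref{prop:bochsit}. The main obstacle is the $\int \mathrm{Riem}^{\Sigma_t}\ast F\ast \nabla^2 F$ term, which forces us to place $F$ in $L^\infty$; the only tool for this in our regularity regime is \eqref{sobinftysitbis}, which itself reintroduces $\norm{\nabla^2 F}_{L^2(\Sigma_t)}$ on the right-hand side, so the smallness of $\ep$ is essential to close the estimate.
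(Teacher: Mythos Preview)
Your proof is correct and follows essentially the same strategy as the paper's: a Bochner identity on $\Sigma_t$, the $L^2$ smallness of the intrinsic curvature (which the paper records as \eqref{curvriccisit1}), H\"older/Sobolev to control the curvature remainders, and absorption via the smallness of $\ep$. The only cosmetic difference is that the paper's form of the Bochner identity produces a quadratic curvature term $\int (R_t)^2\ast F\ast F$ rather than your $\int R_t\ast F\ast\nabla^2 F$ (the two are related by an integration by parts), and both are handled identically by placing $F$ in $L^\infty$ via \eqref{sobinftysitbis}.
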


\begin{proof}
We recall the Bochner identity on the 3 dimensional manifold $\Sit$ for a tensor $F$:
\bea
\lab{prop:bochsit1}\int_{\Sit}|\nabla^2F|^2d\Sit &=&\int_{\Sit}|\Delta F|^2d\Sit -\int_{\Sit}(R_t)_{ij}\nabla_iF_l\nabla_jF_ld\Sit\\
&&\nn+\int_{\Sit}(R_t)_{ijlm}\nabla_mF_j\nabla_lF_id\Sit-\int_{\Sit}(R_t)_{ijlm}(R_t)_{inml}F_mF_nd\Sit
\eea
where $R_t$ is the curvature tensor of the induced metric on $\Sit$. The bound \eqref{curvriccisit1} on $R_t$ together with the Sobolev inequality \eqref{sobineqsit} and \eqref{prop:bochsit1} implies:
\bee
\int_{\Sit}|\nabla^2F|^2d\Sit &\lesssim& \norm{\Delta f}_{L^2(\Sit)}^2 +\norm{R_t}_{L^2(\Sit)}\norm{\nabla  F}_{L^4(\Sit)}^2+\norm{R_t}^2_{L^2(\Sit)}\norm{F}_{L^{\infty}(\Sit)}^2\\
&\lesssim& \norm{\Delta F}_{L^2(\Sit)}^2 +\ep\norm{\nabla F}^{\frac{1}{2}}_{L^2(\Sit)}\norm{\nabla F}_{L^6(\Sit)}^{\frac{3}{2}}+\ep^2(\norm{\nabla^2F}^2_{L^2(\Sit)}+\norm{F}_{L^2(\Sit)}^2)\\
&\lesssim& \norm{\Delta F}_{L^2(\Sit)}^2 +\norm{\nabla F}^{\frac{1}{2}}_{L^2(\Sit)}\norm{\nabla^2F}_{L^2(\Sit)}^{\frac{3}{2}}
\eee
which yields \eqref{prop:bochsit}.
\end{proof}

\begin{proposition}
For any tensor $F$ on $\Sit$, we have the following inequality:
\be\lab{prop:linftysitptu}
\norm{F}_{L^\infty(\Sit)}\les\norm{F}_{L^2(\Sit)}+\norm{\nabla F}_{L^2(\Sit)}+\norm{\nabb\nabla F}_{L^2(\Sit)}.
\ee
\end{proposition}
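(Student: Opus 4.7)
The approach I would take is to exploit the foliation of $\Sit$ by the surfaces $\ptu$: reduce the 3D $L^\infty$-embedding to a 2D Sobolev embedding on each leaf, then transfer $\sup_u L^p(\ptu)$ back to $L^2(\Sit)$-norms at the cost of one extra transverse derivative, following the same scheme as in the proof of \eqref{boarding}. The gain over the standard 3D Sobolev estimate \eqref{sobinftysitbis} is that only a \emph{tangential} second derivative of $F$ is required in the right-hand side.

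For each $u$, the 2D Sobolev inequality \eqref{sobinftyptu} with $p=4$ gives
\[
\norm{F}_{L^\infty(\ptu)}\les\norm{\nabb F}_{L^4(\ptu)}+\norm{F}_{L^4(\ptu)}.
\]
Taking the supremum over $u$, one is reduced to bounding $\sup_u\norm{G}_{L^4(\ptu)}$ for $G=F$ and $G=\nabb F$. I would then prove the fixed-$t$ analog of \eqref{boarding},
\[
\sup_u\norm{G}_{L^4(\ptu)}\les\norm{G}_{L^2(\Sit)}+\norm{\nabla G}_{L^2(\Sit)},
\]
exactly as in the proof of \eqref{boarding}: differentiate $\int_{\ptu}|G|^4\dmt$ via \eqref{dusit}, integrate in $u$, and control the integrand by H\"older combined with $H^1(\Sit)\hookrightarrow L^6(\Sit)$ from \eqref{sobineqsit}. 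Applying this to $G=F$ and $G=\nabb F$, and using $|\nabb F|\leq|\nabla F|$ pointwise, one deduces
\[
\norm{F}_{L^\infty(\Sit)}\les\norm{F}_{L^2(\Sit)}+\norm{\nabla F}_{L^2(\Sit)}+\norm{\nabla\nabb F}_{L^2(\Sit)}.
\]

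The last step is to exchange $\nabla\nabb F$ for $\nabb\nabla F$. Writing $\nabb F=\Pi\nabla F$ with $\Pi=\delta-N\otimes N$ the projection onto $\ptu$, and noting that $\nabla\Pi$ is schematically given by the second fundamental form $\theta$ of $\ptu$ in $\Sit$, one has $\nabla\nabb F-\nabb\nabla F\sim\theta\cdot\nabla F$. By \eqref{hehehehe1} we have $\theta=\chi+k$, and the estimates of Theorem \ref{thregx} on $\chi$ and $k$ allow this commutator to be absorbed in the right-hand side. The principal difficulty lies here: since $\hch\in\xt{2}{\infty}$ by \eqref{esthch} is not bounded pointwise on $\Sit$, one cannot simply use an $L^\infty$ bound on $\theta$; instead, one must combine H\"older with 3D Sobolev embeddings on $\Sit$, placing $\theta$ in $L^6(\Sit)$ via an $H^1$-bound (coming from the $\nabb$-derivative estimates on $\chi,k$) and $\nabla F$ in $L^3(\Sit)$ by interpolation, to close the commutator in $L^2(\Sit)$.
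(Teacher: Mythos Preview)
Your strategy is exactly the paper's: reduce to $\sup_u L^4(\ptu)$ via the 2D embedding \eqref{sobinftyptu}, control these norms by differentiating along $u$ via \eqref{dusit}, and then trade $\nabla\nabb F$ for $\nabb\nabla F$ modulo a commutator. The paper carries this out in \eqref{eul1}--\eqref{eul4}.

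The commutator step, however, needs two corrections. First, the formula \eqref{comm4} for $[\nabla_N,\nabb]F$ contains, beyond the $\theta\cdot\nabla F$ schematic you write, a term $b^{-1}\nabb b\cdot\nabb_N F$, curvature contributions $(\b+\bb)\cdot F$, and quadratic Ricci-coefficient terms acting on $F$; the curvature and quadratic terms are placed in $L^2(\Sit)$ as $\les\ep\norm{F}_{L^\infty(\Sit)}$ and must be absorbed back into the left-hand side (this is why the paper keeps $\norm{F}_{L^\infty(\Sit)}$ on the left throughout, see \eqref{eul4}). Second, your proposed H\"older split $\theta\in L^6(\Sit)$, $\nabla F\in L^3(\Sit)$ does not close: getting $\nabla F\in L^3(\Sit)$ by 3D Sobolev or interpolation would require the full second derivative $\nabla^2 F\in L^2(\Sit)$, which is not in the target right-hand side. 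The paper instead uses the foliation-adapted split $\norm{\theta}_{L^\infty_u L^4(\ptu)}\les\no(\theta)\les\ep$ (via \eqref{sobineq1}) against $\norm{\nabb F}_{L^2_u L^4(\ptu)}$ and $\norm{\nabb_N F}_{L^2_u L^4(\ptu)}$; the latter are controlled by 2D Gagliardo--Nirenberg on each leaf as $\les\norm{\nabb^2 F}_{L^2(\Sit)}^{1/2}\norm{\nabb F}_{L^2(\Sit)}^{1/2}$ and $\les\norm{\nabb\nabla_N F}_{L^2(\Sit)}^{1/2}\norm{\nabla_N F}_{L^2(\Sit)}^{1/2}$, which only involve tangential second derivatives and hence are bounded by $\norm{\nabb\nabla F}_{L^2(\Sit)}$. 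See \eqref{eul3}.
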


\begin{proof}
Using \eqref{sobinftyptu} with $p=4$, we have:
\be\lab{eul1}
\norm{F}_{L^\infty(\Sit)}\les \norm{\nabb F}_{L^\infty_u\lpt{4}}+\norm{F}_{L^\infty_u\lpt{4}}.
\ee
Pick any real number $u_0$. Now, using the coarea formula \eqref{coareasit} and \eqref{dusit}, as well as the Sobolev embedding \eqref{sobineqsit}, we have:
\bea
\lab{eul2}&&\norm{\nabb F(u,.)}^4_{L^\infty_u\lpt{4}}+\norm{F(u,.)}^4_{L^\infty_u\lpt{4}}\\
\nn&\les& \norm{\nabb F(u_0,.)}^4_{L^\infty_u\lpt{4}}+\norm{F(u_0,.)}^4_{L^\infty_u\lpt{4}}+
\int_{\Sit}\nabla_N\nabb F\c \nabb F|\nabb F|^2\\
\nn&&+\int_{\Sit}|\nabb F|^4(\trt+b^{-1}\nabla_Nb)+\int_{\Sit}\nabla_NF\c F|F|^2+\int_{\Sit}|F|^4(\trt+b^{-1}\nabla_Nb)\\
\nn&\les& \norm{\nabb F(u_0,.)}^4_{L^\infty_u\lpt{4}}+\norm{F(u_0,.)}^4_{L^\infty_u\lpt{4}}+
\norm{\nabb_N\nabb F}_{L^2(\Sit)}\norm{\nabb F}^3_{L^6(\Sit)}\\
\nn&&+\norm{\nabb_NF}_{L^2(\Sit)}\norm{F}^3_{L^6(\Sit)}+(\norm{\nabb F}^4_{L^6(\Sit)}+\norm{F}_{L^6(\Sit)})(\norm{\trt}^2_{L^3(\Sit)}+\norm{b^{-1}\nabla_Nb}_{L^3(\Sit)})\\
\nn&\les& \norm{\nabb F(u_0,.)}^4_{L^\infty_u\lpt{4}}+\norm{F(u_0,.)}^4_{L^\infty_u\lpt{4}}+
\norm{[\nabb_N,\nabb]F}^4_{L^2(\Sit)}+\norm{\nabb\nabla F}^4_{L^2(\Sit)}\\
\nn&&+\norm{\nabla F}_{L^2(\Sit)},
\eea
where we used in the last inequality the estimates \eqref{estk}-\eqref{esttrc} for $b$ and $\trt$.

In view of \eqref{eul1} and \eqref{eul2}, we need to estimate $\norm{[\nabla_N,\nabb]F}_{L^2(\Sit)}$. 
Using the commutator formula \eqref{comm4}, we have:
\bea
\nn\norm{[\nabla_N,\nabb]F}_{L^2(\Sit)}&\les&\norm{\th}_{L^\infty_u\lpt{4}}\norm{\nabb F}_{L^2_u\lpt{4}}
 +\norm{b^{-1} \nabb b}_{L^\infty_u\lpt{4}}\norm{\nabb_{N}F}_{L^2_u\lpt{4}}\\ 
\nn&+& (\norm{\chi}_{L^4(\Sit)}(\norm{\kepb}_{L^4(\Sit)}+\norm{\xib}_{L^4(\Sit)})+\norm{\chb}_{L^4(\Sit)}\norm{\z}_{L^4(\Sit)}\\
\nn&&+\norm{\b}_{L^2(\Sit)}+\norm{\bb}_{L^2(\Sit)})\norm{F}_{L^\infty(\Sit)}\\
\nn&\les& D\ep\norm{\nabb^2F}_{L^2(\Sit)}+D\ep\norm{\nabb\nabla_NF}_{L^2(\Sit)}+D\ep\norm{F}_{L^\infty(\Sit)}\\
\lab{eul3}&\les& D\ep\norm{\nabb\nabla F}_{L^2(\Sit)}+D\ep\norm{F}_{L^\infty(\Sit)},
\eea
where we have used the curvature bound \eqref{curvflux1} for $\b$ and $\bb$, the bootstrap assumptions \eqref{boot1}-\eqref{boot6} for $b, \th, \chi, \chb, \z$ and $\xib$, and the estimate:
$$\norm{H}_{L^\infty_u\lpt{4}}=\norm{H}_{\tx{\infty}{4}}\les \no(H),$$
which is valid for any tensor $H$ and follows from \eqref{p1e3}.

Now, in view of \eqref{eul1}-\eqref{eul3}, we have for any real number $u_0$:
\bee
&&\norm{F}_{L^\infty(\Sit)}+ \norm{\nabb F}_{L^\infty_u\lpt{4}}+\norm{F}_{L^\infty_u\lpt{4}}\\
\nn&\les& \norm{\nabb F(u_0,.)}^4_{L^\infty_u\lpt{4}}+\norm{F(u_0,.)}^4_{L^\infty_u\lpt{4}}+ \norm{\nabb\nabla F}_{L^2(\Sit)}+\norm{\nabla F}_{L^2(\Sit)}\\
\nn&&+D\ep\norm{F}_{L^\infty(\Sit)},
\eee
which yields:
\bea
\lab{eul4}&&\norm{F}_{L^\infty(\Sit)}+ \norm{\nabb F}_{L^\infty_u\lpt{4}}+\norm{F}_{L^\infty_u\lpt{4}}\\
\nn&\les& \norm{\nabb F(u_0,.)}^4_{L^\infty_u\lpt{4}}+\norm{F(u_0,.)}^4_{L^\infty_u\lpt{4}}+ \norm{\nabb\nabla F}_{L^2(\Sit)}+\norm{\nabla F}_{L^2(\Sit)}.
\eea
Let $\varphi$ a smooth compactly supported scalar function on $\Sit$. Applying \eqref{eul4} respectively to $\varphi F$ with $u_0$ outside of the support of $\varphi$, and then to $F$ with $u_0$ inside the support of $\varphi$ finally yields \eqref{prop:linftysitptu}. This concludes the proof of the proposition.
\end{proof}

For the following proposition, we assume that for each $\d>0$, there exists a constant $C(\d)>0$ and a finite covering of $\Sit$ by charts $U$ with coordinates systems relative to which we have
\be\label{coorharmth1bis00}
(1+\d)^{-1}|\xi|^2\leq g_{ij}(p)\xi^i\xi^j\leq (1+\d)|\xi|^2,\,p\in U
\ee
and
\be\label{coorharmth2bis00}
\int_{U}|\partial^2g_{ij}|^2\sqrt{|g|}dx\leq C(\d).
\ee

\begin{remark}
The existence of a finite covering of $\Sit$ by coordinates systems relative to which we have \eqref{coorharmth1bis00} \eqref{coorharmth2bis00} with $C(\d)$ and the number of charts being independent of $t$ will be shown in section \ref{sec:coordharm}.
\end{remark}

\begin{proposition}
Assume that for each $\d>0$, there is a finite covering of $\Sit$ by coordinates systems relative to which we have \eqref{coorharmth1bis00} \eqref{coorharmth2bis00}. For an arbitrary tensorfield $F$ on $\Sit$, we have the following inequality:
\be\lab{prop:bochsit2}
\norm{\nabla^2F}_{L^{\frac{3}{2}}(\Sit)}\lesssim \norm{\Delta F}_{L^{\frac{3}{2}}(\Sit)}+\norm{\nabla F}_{L^2(\Sit)}.
\ee
\end{proposition}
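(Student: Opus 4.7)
The plan is to work chart-by-chart using the covering provided by \eqref{coorharmth1bis00}--\eqref{coorharmth2bis00}, reducing the intrinsic inequality to a standard $L^{3/2}$ Calder\'on--Zygmund estimate for a near-Euclidean Laplacian on $\mathbb{R}^3$. First, I fix $\d>0$ small (to be chosen) and select the corresponding finite covering $\{U_\alpha\}$. Choosing a subordinate partition of unity $\{\chi_\alpha\}$ and writing $F=\sum_\alpha \chi_\alpha F$, it suffices to prove the estimate for each $\chi_\alpha F$, and the cutoff contributes at worst $\nabla F$ and $F$ terms that can be absorbed. This reduces matters to estimating a compactly supported tensor $F$ in a single chart.

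In such a chart, I write out the covariant Laplacian componentwise. Using the Christoffel symbols $\Gamma\sim \partial g$, one gets schematically
\begin{equation*}
g^{ij}\partial_i\partial_j F \;=\; \Delta F \;+\; \Gamma\cdot\partial F \;+\; (\partial\Gamma+\Gamma^2)\cdot F,
\end{equation*}
and similarly
\begin{equation*}
|\nabla^2 F|\;\les\;|\partial^2 F|+|\Gamma||\partial F|+(|\partial\Gamma|+|\Gamma|^2)|F|.
\end{equation*}
Combined with the freezing-coefficients perturbation
\begin{equation*}
\|\partial^2 F\|_{L^{3/2}}\les \|\Delta_{\mathrm{eucl}} F\|_{L^{3/2}}\les \|g^{ij}\partial_i\partial_j F\|_{L^{3/2}}+\d\,\|\partial^2 F\|_{L^{3/2}},
\end{equation*}
where the first inequality is the standard Calder\'on--Zygmund bound on $\mathbb{R}^3$ for $1<p<\infty$ (applied to $3/2$), and the second uses \eqref{coorharmth1bis00}. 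Absorbing $\d\|\partial^2F\|_{L^{3/2}}$ for $\d$ small, and chaining the two identities above, it remains to control
\begin{equation*}
\|\Gamma\cdot \partial F\|_{L^{3/2}}+\|\partial\Gamma\cdot F\|_{L^{3/2}}+\|\Gamma^2\cdot F\|_{L^{3/2}}.
\end{equation*}

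For these I use H\"older together with Sobolev embeddings on the three-dimensional chart. From \eqref{coorharmth2bis00}, $\partial^2 g \in L^2$, hence $\Gamma\in L^6$ by the Sobolev embedding $W^{1,2}\hookrightarrow L^6$ (on the compact support), with bound depending only on $C(\d)$. Then
\begin{align*}
\|\Gamma\cdot \partial F\|_{L^{3/2}}&\les \|\Gamma\|_{L^6}\|\partial F\|_{L^2}\les \|\nabla F\|_{L^2(\Sit)},\\
\|\partial\Gamma \cdot F\|_{L^{3/2}}&\les \|\partial^2 g\|_{L^2}\|F\|_{L^6}\les \|\nabla F\|_{L^2(\Sit)},\\
\|\Gamma^2\cdot F\|_{L^{3/2}}&\les \|\Gamma\|_{L^6}^2\,\|F\|_{L^3}\les \|\nabla F\|_{L^{3/2}(\Sit)}\les \|\nabla F\|_{L^2(\Sit)},
\end{align*}
where I used the Sobolev inequality \eqref{sobineqsit} for $\|F\|_{L^6}\les \|\nabla F\|_{L^2}$, the Sobolev inequality \eqref{eq:gnirenbergsit} for $\|F\|_{L^3}\les \|\nabla F\|_{L^{3/2}}$, and the compactness of the support to pass from $L^{3/2}$ to $L^2$ for $\nabla F$. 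Summing over the finite cover then yields \eqref{prop:bochsit2}.

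The main obstacle is the low-regularity level of the metric coefficients: one only has $\partial^2 g\in L^2$, so $\Gamma\in L^6$ is the best available integrability for Christoffel symbols, and all three error terms must land precisely at the borderline Sobolev exponents in three dimensions. The choice of $L^{3/2}$ on the left is what makes everything fit, since it pairs with $L^6$ via $\tfrac{1}{6}+\tfrac{1}{2}=\tfrac{2}{3}$ and with $L^2\cdot L^6$ in the quadratic Christoffel term. A subordinate but nontrivial technical point is making the Calder\'on--Zygmund perturbation rigorous despite $g$ being only $L^\infty$-close to the Euclidean metric (not $C^0$-close uniformly), which is exactly why one chooses $\d$ small first and then the chart; this is classical but must be stated carefully.
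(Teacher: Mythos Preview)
Your proof is correct and follows essentially the same approach as the paper: reduce by partition of unity to a single chart, use the $L^\infty$-smallness of $g-\delta$ from \eqref{coorharmth1bis00} to absorb the principal-part perturbation after applying Calder\'on--Zygmund for the Euclidean Laplacian in $L^{3/2}(\mathbb{R}^3)$, and control the Christoffel error terms by H\"older with $\Gamma\in L^6$ (from $\partial^2 g\in L^2$ and Sobolev) together with \eqref{sobineqsit} and \eqref{eq:gnirenbergsit}. The only cosmetic difference is that the paper groups the errors as $\|\nabla^2 F-\partial^2 F\|_{L^{3/2}}$ and $\|\Delta F-\sum_j\partial_j^2 F\|_{L^{3/2}}$ rather than writing out $\Gamma\cdot\partial F$, $\partial\Gamma\cdot F$, $\Gamma^2\cdot F$ separately, but the H\"older pairings are the same.
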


\begin{proof}
\eqref{prop:bochsit2} may be reduced by partition of unity to the case where $F$ has compact support in a coordinate chart $U$. Let $x=(x_1,x_2,x_3)$ a coordinate system on $U$ satisfying \eqref{coorharmth1bis00} \eqref{coorharmth2bis00}. We have:
\bee
&&\norm{\nabla^2F-\partial^2F}_{L^{\frac{3}{2}}(U)}\\
\nn &\lesssim&\norm{(g_{ij}-\d_{ij})\partial^2F}_{L^{\frac{3}{2}}(U)}+\norm{\partial g\partial F}_{L^{\frac{3}{2}}(U)}+\norm{\partial^2g F}_{L^{\frac{3}{2}}(U)}\\
\nn&\lesssim&\norm{g_{ij}-\d_{ij}}_{L^{\infty}(U)}\norm{\partial^2F}_{L^{\frac{3}{2}}(U)}+\norm{\partial g}_{L^{6}(U)}\norm{\nabla F}_{L^{2}(U)}\\
&&\nn +\norm{\partial^2g}_{L^{2}(U)}(1+\norm{\partial g}_{L^{3}(U)})\norm{F}_{L^{6}(U)}\\
\nn&\lesssim&\d\norm{\partial^2F}_{L^{\frac{3}{2}}(U)}+C(\d)\norm{\nabla F}_{L^{2}(U)},
\eee
where we have used the Sobolev embedding \eqref{sobineqsit} in the last inequality. Thus, we now fix $\d>0$ small enough such that for a constant $C>0$, we have:
\be\lab{prop:bochsit3}
\norm{\nabla^2F-\partial^2F}_{L^{\frac{3}{2}}(U)}\leq\frac{1}{2}\norm{\partial^2F}_{L^{\frac{3}{2}}(U)}+C\norm{\nabla F}_{L^{2}(U)}.
\ee
Note that $C=C(\d)>0$ is now a fixed number. Similarly, we also have:
\be\lab{prop:bochsit4}
\normm{\Delta F-\sum_{j=1}^3\partial^2_jF}_{L^{\frac{3}{2}}(U)}\leq\frac{1}{2}\normm{\sum_{j=1}^3\partial^2_jF}_{L^{\frac{3}{2}}(U)}+C\norm{\nabla F}_{L^{2}(U)},
\ee
where $\sum_{j=1}^3\partial^2_j$ is the usual Laplacian in $\mathbb{R}^3$. Now, from usual Calderon-Zygmund theory, we have:
$$\norm{\partial^2F}_{L^{\frac{3}{2}}(U)}\lesssim\normm{\sum_{j=1}^3\partial^2_jF}_{L^{\frac{3}{2}}(U)}$$
which together with \eqref{prop:bochsit3} and \eqref{prop:bochsit4} yields \eqref{prop:bochsit2}.
\end{proof}

Finally, we have the following useful commutation formula for any scalar function $f$ on $\mathcal{M}$:
\be\lab{commsit}
 [\Delta,\dd_T]f=-2k\nabla^2f+2n^{-1}\nabla n\nabla T(f)+n^{-1}\Delta n T(f) -\nabla k\nabla f-2n^{-1}k\nabla n\nabla f
\ee
where we used the fact that we are on a maximal foliation (see \eqref{maxfoliation}), so that the term Tr$(k)\Delta f$ vanishes. We also used the fact that the Einstein equations \eqref{eq:I1} are satisfied, so that the term of type $\rr \nabla f$ vanishes as well. We also provide commutation formulas with tensors. Let $\Pi_{\und{A}}$ be an m-covariant tensor tangent to $\Sigma_t$. Then, we have:
\bea
\lab{commsitbis}\nabla_j \dd_T \Pi_{\und{A}} - \dd_T\nabla_j \Pi_{\und{A}} &=&
k_{jl} \nabla_l \Pi_{\und{A}} +n^{-1} \nabla_j n \dd_T \Pi_{\und{A}}+ \sum_i (n^{-1}k_{A_i j} \nabla_ln \\
\nn&& -n^{-1}k_{jl}\nabla_{A_i}n +\rr_{TA_i}(g_t)_j^l-\rr_{Tl}(g_t)_j^{A_i}) \Pi_{A_1..\Check{l}..A_m}.\nn
\eea
For some applications we have in mind, we would like to get rid of the term containing a $\dd_T$ derivative in the right-hand side of \eqref{commsitbis}. This is achieved by considering the commutator $[\nabla,\dd_{nT}]$ instead of $[\nabla,\dd_T]$:
\bea
\lab{commsitter}\nabla_j \dd_{nT} \Pi_{\und{A}} - \dd_{nT}\nabla_j \Pi_{\und{A}} &=&
nk_{jl} \nabla_l \Pi_{\und{A}} \\
 &+& \sum_i (k_{A_i j} \nabla_ln -
k_{jl}\nabla_{A_i}n +n\rr_{TA_i}(g_t)_j^l-n\rr_{Tl}(g_t)_j^{A_i}) \Pi_{A_1..\Check{l}..A_m}\nn.
\eea

\subsection{Geometric Littlewood-Paley theory on $\Sit$}\lab{sec:LPSit}

\subsubsection{The Gagliardo-Nirenberg inequality on $\Sigma_t$}

We first consider the case of $L^p(\Sigma_t)$ with $2\leq p\leq 6$. Using the Sobolev inequality \eqref{sobineqsit} and interpolation implies for any tensor $F$ on $\Sigma_t$
\be\lab{GNsit}
\norm{F}_{L^p(\Sigma_t)}\les \norm{\nabla F}^{3(\frac{1}{2}-\frac{1}{p})}_{L^2(\Sigma_t)} \norm{F}^{-\frac{1}{2}+\frac{3}{p}}_{L^2(\Sigma_t)}\,\,\forall 2\leq p\leq 6.
\ee

Next, we derive the following analog of Lemma \ref{sitl1}
\begin{lemma}\lab{sitl1bis}
Assume that for $\d=\frac{1}{2}$, there is a finite covering of $\Sit$ by coordinates systems relative to which we have \eqref{coorharmth1bis00} \eqref{coorharmth2bis00}. Let $f$ a real scalar function on $\Sit$. Then:
\be\lab{eq:sitl1bis}
\norm{f}_{L^\infty(\Sit)}\lesssim \norm{\nabla^2f}^{\frac{1}{2}}_{L^2(\Sit)}\norm{\nabla f}^{\frac{1}{2}}_{L^2(\Sit)}+\norm{\nabla f}_{L^2(\Sigma_t)}.
\ee 
\end{lemma}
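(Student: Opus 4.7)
This is a Gagliardo--Nirenberg embedding $W^{2,2}\cap \dot W^{1,2}\hookrightarrow L^\infty$ in dimension $3$, transplanted to the Riemannian $3$-manifold $\Sigma_t$. The plan is to reduce to a finite collection of nearly-Euclidean coordinate charts provided by \eqref{coorharmth1bis00}--\eqref{coorharmth2bis00}, prove the corresponding statement on $\mathbb{R}^3$ by a one-line Fourier split, and control the error terms produced by passing between the covariant derivative $\nabla$ and coordinate partials $\partial$. The term $\norm{\nabla f}_{L^2(\Sigma_t)}$ on the right will absorb precisely these Christoffel-symbol errors and the partition-of-unity cut-offs.

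\emph{Step 1 (Euclidean model).} First I would establish on $\mathbb{R}^3$ the inequality
\[
\norm{g}_{L^\infty(\mathbb{R}^3)} \lesssim \norm{\partial^2 g}_{L^2(\mathbb{R}^3)}^{1/2}\,\norm{\partial g}_{L^2(\mathbb{R}^3)}^{1/2}
\]
for, say, Schwartz $g$. This is standard: writing $|g(x)|\leq \int |\widehat g(\xi)|\,d\xi$ and splitting $\{|\xi|\leq R\}\cup\{|\xi|>R\}$, Cauchy--Schwarz gives $R^{1/2}\norm{\partial g}_{L^2}+R^{-1/2}\norm{\partial^2 g}_{L^2}$, and optimizing $R=\norm{\partial^2 g}_{L^2}/\norm{\partial g}_{L^2}$ yields the claim.

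\emph{Step 2 (localization).} Fix $\delta>0$ small (to be chosen). Take the finite covering $\{U_\alpha\}$ of $\Sigma_t$ by charts satisfying \eqref{coorharmth1bis00}--\eqref{coorharmth2bis00} and a subordinate smooth partition of unity $\{\chi_\alpha\}$ whose $C^2$ norms are bounded uniformly in $t$. Writing $f=\sum_\alpha \chi_\alpha f$, it suffices to prove
\[
\norm{\chi_\alpha f}_{L^\infty(U_\alpha)} \lesssim \norm{\nabla^2 f}_{L^2(\Sigma_t)}^{1/2}\norm{\nabla f}_{L^2(\Sigma_t)}^{1/2}+\norm{\nabla f}_{L^2(\Sigma_t)}
\]
for each $\alpha$. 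By \eqref{coorharmth1bis00} the coordinate density $\sqrt{|g|}$ is comparable to $1$, so the Euclidean $L^2$ norm on the chart is equivalent to the Riemannian $L^2$ norm, and likewise for $L^\infty$.

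\emph{Step 3 (covariant versus coordinate derivatives).} Applying Step 1 to $\chi_\alpha f$ viewed as a compactly supported function on $\mathbb{R}^3$ via the chart, I get
\[
\norm{\chi_\alpha f}_{L^\infty}\lesssim \norm{\partial^2(\chi_\alpha f)}_{L^2}^{1/2}\norm{\partial(\chi_\alpha f)}_{L^2}^{1/2}.
\]
Now $\partial^2(\chi_\alpha f)=\chi_\alpha\,\partial^2 f+(\partial\chi_\alpha)\,\partial f+f\,\partial^2\chi_\alpha$, and $\partial^2 f=\nabla^2 f+\Gamma\,\partial f$, so the Christoffel-symbol term is estimated by H\"older and the Sobolev embedding \eqref{sobineqsit} applied to $\partial g$:
\[
\norm{\Gamma\,\partial f}_{L^2(U_\alpha)}\leq \norm{\partial g}_{L^6(U_\alpha)}\norm{\partial f}_{L^3(U_\alpha)}\lesssim \norm{\partial^2 g}_{L^2(U_\alpha)}^{1/2}\norm{\partial g}_{L^2(U_\alpha)}^{1/2}\norm{\nabla f}_{L^2(\Sigma_t)}^{1/2}\norm{\nabla^2 f}_{L^2(\Sigma_t)}^{1/2},
\]
which, by \eqref{coorharmth2bis00} and the smallness of $\norm{\partial g}_{L^2}$ from the asymptotic flatness/bootstrap, is bounded by the right-hand side of the desired inequality. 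The $f\,\partial^2\chi_\alpha$ and $(\partial\chi_\alpha)\partial f$ terms are handled similarly; after integration by parts on the $f\,\partial^2\chi_\alpha$ piece (moving one derivative onto $f$), they produce only $\norm{\nabla f}_{L^2(\Sigma_t)}$-type contributions, which is why this term appears additively on the right-hand side.

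\emph{Main obstacle and conclusion.} The only nontrivial point is the Christoffel-type remainder, whose control rests on coupling \eqref{coorharmth2bis00} with the Sobolev embedding $\norm{\partial g}_{L^6}\lesssim \norm{\partial^2 g}_{L^2}$. After absorbing one factor of $\norm{\nabla^2 f}_{L^2}^{1/2}\norm{\nabla f}_{L^2}^{1/2}$ into the left-hand side for $\delta$ small enough (or using Young's inequality on the mixed power $a^{1/2}b^{1/2}$), and summing over the finitely many charts $\alpha$, I obtain \eqref{eq:sitl1bis}.
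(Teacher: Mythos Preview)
Your overall strategy coincides with the paper's: localize to the harmonic charts of \eqref{coorharmth1bis00}--\eqref{coorharmth2bis00}, establish the scalar Gagliardo--Nirenberg inequality on $\mathbb{R}^3$, and then pass from coordinate to covariant derivatives using the Christoffel bound. Your Euclidean Step~1 is in fact more elementary than the paper's: the paper proves the equivalent statement $\|f^2\|_{L^\infty(\mathbb{R}^3)}\lesssim\|\partial f\|_{L^2}\|\partial^2 f\|_{L^2}$ via a Littlewood--Paley paraproduct decomposition of $f^2$, whereas your one-line frequency split at $|\xi|=R$ gives the same conclusion with less machinery.

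Two points in Step~3, however, need correction. First, you cannot ``integrate by parts'' inside the $L^2$ norm $\|f\,\partial^2\chi_\alpha\|_{L^2}$; the right way to handle this term is $\|f\,\partial^2\chi_\alpha\|_{L^2(U_\alpha)}\lesssim\|f\|_{L^2(\mathrm{supp}\,\chi_\alpha)}\lesssim|\mathrm{supp}\,\chi_\alpha|^{1/3}\|f\|_{L^6(\Sigma_t)}\lesssim\|\nabla f\|_{L^2(\Sigma_t)}$ via H\"older on the bounded chart and the Sobolev embedding \eqref{sobineqsit}. Second, the hypothesis fixes $\delta=\tfrac12$; there is no smallness parameter to shrink and no absorption argument is needed. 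The paper simply bounds the Christoffel contribution directly as $\|\Gamma\,\partial f\|_{L^2(U_\alpha)}\leq\|\Gamma\|_{L^3(U_\alpha)}\|\nabla f\|_{L^6(\Sigma_t)}\lesssim C(\tfrac12)\|\nabla^2 f\|_{L^2(\Sigma_t)}$, using $\|\Gamma\|_{L^3(U_\alpha)}\lesssim 1$ (which follows from \eqref{coorharmth2bis00} and Sobolev). This gets absorbed harmlessly into the first factor $\|\partial^2(\chi_\alpha f)\|_{L^2}^{1/2}$, and the final inequality follows by $(a+b)^{1/2}c^{1/2}\leq a^{1/2}c^{1/2}+b^{1/2}c^{1/2}$ with $a=\|\nabla^2 f\|_{L^2}$, $b=c=\|\nabla f\|_{L^2}$. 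With these two fixes your argument is complete.
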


\begin{proof}
The statement may be reduced to the case where $f$ has compact support in an admissible local chart $U$ of $\Sigma_t$ satisfying \eqref{coorharmth1bis00} \eqref{coorharmth2bis00} with $\d=\frac{1}{2}$. Let $x=(x_1,x_2,x_3)$ denote the corresponding coordinate system. We start by proving the following estimate on $\R^3$
\be\lab{patriots1}
\norm{f^2}_{L^\infty(\R^3)}\les \norm{\pr f}_{L^2(\R^3)}\norm{\pr^2f}_{L^2(\R^3)}.
\ee
To this end, we introduce a standard Littlewood-Paley decomposition on $\R^3$. Let $\varphi$ a positive function in $C^{\infty}_0(\mathbb{R}^3)$ equal to 1 for $|\xi|\leq 1/2$ and to 0 for $|\xi|\geq 1$. For all integer $p$ we define the Littlewood-Paley projection $\Delta_p$ by $\widehat{\Delta_pf}(\xi)=\psi(2^{-p}\xi)\hat{f}(\xi)$ where $\psi(\xi)=\varphi(\xi/2)-\varphi(\xi)$. We also define $\Delta_{-1}$ by $\widehat{\Delta_{-1}f}(\xi)=\varphi(\xi)\hat{f}(\xi)$. The Littlewood-Paley decomposition of $f$ is:
$$f=\displaystyle\sum_{p\geq -1}\Delta_p(f).$$
Using the Littlewood-Paley decomposition for $f^2$, we have:
\bea\lab{patriots}
\norm{f^2}_{L^\infty(\R^3)}&\les& \sum_{j\geq -1}\norm{\Delta_j(f^2)}_{L^\infty(\R^3)}\\
\nn&\les& \sum_{j\geq -1}2^{\frac{3j}{2}}\norm{\Delta_j(f^2)}_{L^2(\R^3)}\\
\nn&\les& \sum_{j, l, m\geq -1}2^{\frac{3j}{2}}\norm{\Delta_j(\Delta_lf\Delta_mf)}_{L^2(\R^3)}.
\eea
The expression being symmetric in $(l, m)$, we may assume $m\leq l$. We consider the two cases $l\leq j$ and $j<l$ separately. If $j<l$, we use the boundedness of $\Delta_j$ on $L^2(\R^3)$ and the Bernstein inequality for $\Delta_m$ to obtain
\be\lab{patriots2}
\norm{\Delta_j(\Delta_lf\Delta_mf)}_{L^2(\R^3)}\les 2^{\frac{3m}{2}}\norm{\Delta_lf}_{L^2(\R^3)}\norm{\Delta_mf}_{L^2(\R^3)}.
\ee
If $l\leq j$, we use the finite band property for $\Delta_j$, $\Delta_l$ and $\Delta_m$, and the Bernstein inequality for $\Delta_m$ to obtain
\bea\label{patriots3}
\norm{\Delta_j(\Delta_lf\Delta_mf)}_{L^2(\R^3)}&\les& 2^{-2j}\norm{\Delta(\Delta_lf\Delta_mf)}_{L^2(\R^3)}\\
\nn&\les& 2^{-2j}\norm{\Delta(\Delta_lf)\Delta_mf}_{L^2(\R^3)}+2^{-2j}\norm{\Delta_lf\Delta(\Delta_mf)}_{L^2(\R^3)}\\
\nn&&+2^{-2j}\norm{\nabla(\Delta_lf)\nabla(\Delta_mf)}_{L^2(\R^3)}\\
\nn&\les& 2^{-2j}(2^{2l+\frac{3m}{2}}+2^{l+\frac{5m}{2}}+2^{\frac{7m}{2}})\norm{\Delta_lf}_{L^2(\R^3)}\norm{\Delta_mf}_{L^2(\R^3)}\\
\nn&\les& 2^{-2j+2l+\frac{3m}{2}}\norm{\Delta_lf}_{L^2(\R^3)}\norm{\Delta_mf}_{L^2(\R^3)},
\eea
where we used the fact that $m\leq l$ in the last inequality. Now, \eqref{patriots2} and \eqref{patriots3} imply
$$2^{\frac{3j}{2}}\norm{\Delta_j(\Delta_lf\Delta_mf)}_{L^2(\R^3)}\les 2^{-\frac{|j-l|}{4}-\frac{|j-m|}{4}}(2^{2l}\norm{\Delta_lf}_{L^2(\R^3)})(2^m\norm{\Delta_mf}_{L^2(\R^3)}).$$
Together with \eqref{patriots}, we infer
$$\norm{f^2}_{L^\infty(\R^3)}\les \left(\sum_{l\geq -1}(2^{2l}\norm{\Delta_lf}_{L^2(\R^3)})^2\right)\left(\sum_{m\geq -1}(2^m\norm{\Delta_mf}_{L^2(\R^3)})^2\right)\les\norm{\pr f}_{L^2(\R^3)}\norm{\pr^2f}_{L^2(\R^3)},$$
which is \eqref{patriots1}. Now in view of the assumptions \eqref{coorharmth1bis00} \eqref{coorharmth2bis00} with $\d=\frac{1}{2}$, we have $\frac{1}{8}\leq\sqrt{|g_t|}\leq 8$ and the estimate $\norm{\Gamma}_{L^3(U)}\les 1$ where $\Gamma$ is the corresponding Christoffel symbol, which together with \eqref{patriots1} yields:
\bee
&&\norm{f}_{L^\infty(\R^3)}\\
&\lesssim& \left(\int_{\mathbb{R}^3}(|\nabla^2 f(x)|+|\Gamma(x)| |\nabla f(x)|)^2\sqrt{|g_t|}dx\right)^{\frac{1}{2}}\left(\int_{\mathbb{R}^3}|\nabla f(x)|^2\sqrt{|g_t|}dx\right)^{\frac{1}{2}}\\
&\lesssim& \left(\left(\int_{\mathbb{R}^3}|\nabla^2 f(x)|^2\sqrt{|g_t|}dx\right)^{\frac{1}{2}}+\left(\int_{\mathbb{R}^3}|\nabla f(x)|^6\sqrt{|g_t|}dx\right)^{\frac{1}{6}}\right)\left(\int_{\mathbb{R}^3}|\nabla f(x)|^2\sqrt{|g_t|}dx\right)^{\frac{1}{2}}.
\eee
Coming back to $\Sigma_t$, we obtain
$$\norm{f}_{L^\infty(\Sit)}\lesssim (\norm{\nabla^2f}^{\frac{1}{2}}_{L^2(\Sit)}+\norm{\nabla f}^{\frac{1}{2}}_{L^6(\Sit)})\norm{\nabla f}^{\frac{1}{2}}_{L^2(\Sit)}+\norm{\nabla f}_{L^2(\Sigma_t)},$$
which together with the Sobolev embedding \eqref{sobineqsit} yields \eqref{eq:sitl1bis}.
\end{proof}

Let $F$ a tensor on $\Sigma_t$. Then \eqref{eq:sitl1bis} with the choice $f=|F|^2$ yields
\bee
\norm{F}^2_{L^\infty(\Sigma_t)}&\les& \norm{F\c\nabla^2F+|\nabla F|^2}^{\frac{1}{2}}_{L^2(\Sit)}\norm{F\c\nabla F}^{\frac{1}{2}}_{L^2(\Sit)}+\norm{F\c\nabla F}_{L^2(\Sigma_t)}\\
&\les& (\norm{F}_{L^\infty(\Sigma_t)}\norm{\nabla^2F}_{L^2(\Sit)}+\norm{\nabla F}^2_{L^4(\Sigma_t)})^{\frac{1}{2}}(\norm{F}_{L^\infty(\Sigma_t)}\norm{\nabla F}_{L^2(\Sit)})^{\frac{1}{2}}\\
&&+\norm{F}_{L^\infty(\Sigma_t)}\norm{\nabla F}_{L^2(\Sit)}.
\eee
Using the Gagliardo-Nirenberg inequality \eqref{GNsit} to evaluate $\norm{\nabla F}_{L^4(\Sigma_t)}$, we deduce
\bee
\norm{F}^2_{L^\infty(\Sigma_t)}&\les& \norm{F}_{L^\infty(\Sigma_t)}\norm{\nabla^2F}_{L^2(\Sit)}^{\frac{1}{2}}\norm{\nabla F}_{L^2(\Sit)}^{\frac{1}{2}}+\norm{\nabla^2F}_{L^2(\Sigma_t)}^{\frac{3}{4}}\norm{\nabla F}_{L^2(\Sigma_t)}^{\frac{3}{4}}\norm{F}_{L^\infty(\Sigma_t)}^{\frac{1}{2}}\\
&&+\norm{F}_{L^\infty(\Sigma_t)}\norm{\nabla F}_{L^2(\Sit)}.
\eee
Thus, we finally obtain for any tensor $F$ on $\Sigma_t$
$$\norm{F}_{L^\infty(\Sigma_t)}\les \norm{\nabla^2F}_{L^2(\Sit)}^{\frac{1}{2}}\norm{\nabla F}_{L^2(\Sit)}^{\frac{1}{2}}+\norm{\nabla F}_{L^2(\Sit)}.$$
Interpolating with the Sobolev embedding \eqref{sobineqsit} on $\Sigma_t$, we finally obtain the following Gagliardo-Nirenberg inequality on $\Sigma_t$
\be\lab{GNsit1}
\norm{F}_{L^p(\Sigma_t)}\les \norm{\nabla^2F}^{\frac{1}{2}-\frac{3}{p}}_{L^2(\Sigma_t)}\norm{\nabla F}^{\frac{1}{2}+\frac{3}{p}}_{L^2(\Sigma_t)}\,\,\forall 6\leq p\leq +\infty.
\ee

\subsubsection{Heat equation on $\Sigma_t$}

In this section we study the properties of the heat  equation  for arbitrary
tensorfields $F$ on $\Sigma_t$.
$$\pr_\tau \mathcal{U}(\tau)F -\Delta \mathcal{U}(\tau) F=0, \,\, \mathcal{U}(0)F=F.$$
Observe that  the operators $\mathcal{U}(\tau)$ are
selfadjoint and form a semigroup for $\tau>0$. In other words  for all, real valued, smooth 
tensorfields $F, G$,
\be\lab{sit:eq:seladj-semigroup}
\int_{\Sigma_t}\mathcal{U}(\tau)F\c G =\int_{\Sigma_t}F\c \mathcal{U}(\tau) G, \qquad \mathcal{U}(\tau_1)\mathcal{U}(\tau_2)=\mathcal{U}(\tau_1+\tau_2).
\end{equation}

We have the following $L^2(\Sigma_t)$ estimates for the operator $\mathcal{U}(\tau)$.
\begin{align}
&\|\mathcal{U}(\tau) F\|_{L^2(\Sigma_t)}\leq \|F\|_{L^2(\Sigma_t)},\label{sit:eq:l2heat1}\\
&\|\nab \mathcal{U}(\tau) F\|_{L^2(\Sigma_t)}\leq \|\nab F\|_{L^2(\Sigma_t)}.\label{sit:eq:l2heatnab}
\end{align} 
They are obtained after multiplication of the Heat equation satisfied by $\mathcal{U}(\tau)F$ respectively with 
$\mathcal{U}(\tau)F$ and $\Delta\mathcal{U}(\tau)F$, and then integration over $\Sigma_t$. 

In the next proposition we establish a simple $L^p(\Sigma_t)$ estimate for $\mathcal{U}(\tau)$.
\begin{proposition} For every $2\leq p\le\infty$, we have
  $$ \|\mathcal{U}(\tau)F\|_{L^p(\Sigma_t)}\leq \|F\|_{L^p(\Sigma_t)}.$$
\end{proposition}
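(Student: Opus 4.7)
The plan is to treat the endpoint cases $p=2$ and $p=\infty$ separately and then interpolate. The case $p=2$ is already contained in \eqref{sit:eq:l2heat1}. For $p=\infty$, I would exploit the fact that, setting $G=\mathcal{U}(\tau)F$, the scalar $|G|^2$ satisfies a sub-heat equation. Indeed,
\be
\pr_\tau |G|^2 \;=\; 2\, G\c \pr_\tau G \;=\; 2\, G\c \Delta G \;=\; \Delta |G|^2 - 2|\nab G|^2 \;\le\; \Delta |G|^2. \nn
\ee
Applying the parabolic maximum principle on $\Sit$ to $|G|^2$ then gives $\sup_{\Sit}|G(\tau)|^2\leq \sup_{\Sit}|F|^2$, i.e.\ the $L^\infty$ bound. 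The justification of the maximum principle on the noncompact manifold $\Sit$ is the main obstacle; it will be handled by appealing to Remark \ref{rem:noprobleminf}: outside a compact set $\widetilde{U}$, the spacetime is smooth and asymptotically flat, so $|G|$ decays at spatial infinity and no supremum can be lost there.

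Having obtained the endpoints $p=2$ and $p=\infty$ with operator norm $1$, I would conclude the full range $2\le p\le \infty$ by Riesz--Thorin interpolation applied to the linear operator $F\mapsto \mathcal{U}(\tau)F$, which yields
\be
\norm{\mathcal{U}(\tau)F}_{L^p(\Sit)} \;\le\; \norm{F}_{L^p(\Sit)}, \qquad 2\le p\le \infty. \nn
\ee

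As an alternative (and more self-contained) route for $2\le p<\infty$, one may differentiate directly and integrate by parts:
\be
\frac{d}{d\tau}\int_{\Sit}|G|^p \;=\; p\int_{\Sit}|G|^{p-2}\,G\c \Delta G \;=\; -p\int_{\Sit} \nab(|G|^{p-2}G)\c \nab G. \nn
\ee
A short computation gives
\be
\nab(|G|^{p-2}G)\c \nab G \;=\; (p-2)|G|^{p-4}|G\c \nab G|^2 + |G|^{p-2}|\nab G|^2 \;\ge\; 0, \nn
\ee
so the $L^p$ norm is nonincreasing in $\tau$; letting $p\to\infty$ recovers the $L^\infty$ bound as well. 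The integrations by parts in this alternative approach are again legitimized by the decay of $G$ at spatial infinity granted by Remark \ref{rem:noprobleminf}, which is the one nontrivial point common to both routes.
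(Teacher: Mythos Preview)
Your alternative route is essentially the paper's proof: the paper multiplies the heat equation by $(|\mathcal U(\tau)F|^2)^{p-1}\mathcal U(\tau)F$ (i.e.\ your $|G|^{p-2}G$ up to a relabeling of the exponent), integrates by parts, and reads off that the $L^p$ norm is nonincreasing in $\tau$, exactly as in your displayed computation.

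Your primary route (endpoints $p=2$ and $p=\infty$ plus Riesz--Thorin) is a genuinely different but equally valid argument. It trades the direct energy computation for a maximum-principle step on the subsolution $|G|^2$ and an interpolation. The advantage of the paper's direct method is that it is entirely self-contained and avoids both the maximum principle on a noncompact manifold and the need for a tensor-valued version of Riesz--Thorin; the advantage of your interpolation route is conceptual cleanliness once the endpoints are in hand. Both rely on the same decay-at-infinity justification from Remark~\ref{rem:noprobleminf} to legitimize the integrations by parts (or the maximum principle), which you correctly identified as the only nontrivial point.
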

 
\begin{proof}:\quad  The proof is identical to the one in \cite{LP} on compact 2-surfaces. We reproduce it here for the convenience of the reader. We shall first prove
the Lemma for scalar functions $f$.
 We multiply the equation $\pr_\tau \mathcal{U}(\tau)f- \Delta
\mathcal{U}(\tau)f=0$ by $\big(\mathcal{U}(\tau)f\big)^{2p-1}$ and integrate by parts.
We get,
$$\frac{1}{2p}\frac{d}{d\tau} \|\mathcal{U}(\tau)f\|_{L^{2p}(\Sigma_t)}^{2p}+
(2p-1)\int |\nab \mathcal{U}(\tau) f|^2|\mathcal{U}(\tau)f|^{2p-2}=0.$$
Therefore,
$$\|\mathcal{U}(\tau) f\|_{L^{2p}(\Sigma_t)}\leq \|f\|_{L^{2p}(\Sigma_t)}.$$
The case when  $F$ is a tensorfield can be treated in the same manner 
with  multiplier   $\big(|\mathcal{U}(\tau)F|^2\big)^{p-1}\mathcal{U}(\tau) F$.
\end{proof}

\subsubsection{ Invariant Littlewood-Paley theory on $\Sigma_t$}

In this section we shall develop  an invariant, fully tensorial, Littlewood-Paley theory on $\Sigma_t$. We follow the analog construction in \cite{LP} for two dimensional compact  manifolds. Now, the results essentially rely on the properties of 
the heat flow discussed in the previous section. Since these properties are true for manifolds of arbitrary dimensions,  both compact  and noncompact, the results in \cite{LP} extend in a straightforward fashion. Thus, we recall below the main objects introduced in \cite{LP}, and we refer to \cite{LP} for the proofs. 

\begin{definition}
Consider  the class $\mathfrak{M}$ of smooth functions $m$ on $[0,\infty)$,
vanishing sufficiently fast at $\infty$,
verifying the  vanishing  moments property:
\be\lab{sit:eq:moments}
\int_0^\infty \tau^{k_1}\pr_\tau^{k_2} m(\tau) d\tau=0, \,\,\,\,
|k_1|+|k_2|\leq N 
\end{equation}

 We set,
  $m_k(\tau)=2^{2k}m(2^{2k}\tau)$ 
and  define the geometric Littlewood -Paley (LP) 
projections $Q_k$, associated to the LP- representative 
function $m\in \mathfrak{M}$, for   arbitrary tensorfields  $F$ on $\Sigma_t$
to be 
\be\lab{sit:eq:LP}Q_k F=\int_0^\infty m_k(\tau) \mathcal{U}(\tau) F d\tau
\end{equation}
Given an interval $I\subset \Bbb Z$ we define $$Q_I=\sum_{k\in I} Q_k F.$$
In particular we shall use the notation $Q_{<k}, Q_{\leq k}, Q_{>k}, Q_{\ge k}$.
\end{definition}
Observe that $Q_k$ are selfadjoint, i.e., $Q_k=Q_k^*$, in the sense,
$$<Q_kF, G>=<F,Q_k G>,$$
where, for any given $m$-tensors $F,G$ 
$$<F,G>=\int_{\Sigma_t}g^{i_1j_1}\ldots g^{i_mj_m}
F_{i_1\ldots i_m}G_{j_1\ldots j_m}d\mbox{vol}_g    $$ 
denotes the usual $L^2(\Sigma_t)$ scalar product. 

We have the following lemma (see \cite{LP} for the proof)
\begin{lemma}
If $a,b\in \mathfrak{M}$  so does $a\star b$ defined by
\be\lab{sit:eq:funnyconv}
a\star b(\tau)=\int_{0}^\tau d\tau_1 \,\,a(\tau_1)b(\tau-\tau_1).
\end{equation}
Also, $(a\star b)_k=a_k\star b_k$. In particular
if we denote by $^{(a)}Q_k$ and $^{(b)}Q_k$ the LP projections
associated to  $a,b$ then,
$$^{\,(a)}Q_k \c ^{\,(b)}Q_k=^{\,(a\star b)}Q_k$$
\label{sit:le:convolution}
\end{lemma}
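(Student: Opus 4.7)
My plan is to treat the three assertions in the order: scaling identity, composition formula, membership in $\mathfrak{M}$, since each builds conveniently on the previous.

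First I would verify the scaling identity $(a\star b)_k=a_k\star b_k$ by a direct change of variables. Starting from the definition, $a_k\star b_k(\tau)=\int_0^\tau 2^{2k}a(2^{2k}\tau_1)\,2^{2k}b(2^{2k}(\tau-\tau_1))\,d\tau_1$, and setting $\sigma_1=2^{2k}\tau_1$ collapses this to $2^{2k}(a\star b)(2^{2k}\tau)=(a\star b)_k(\tau)$. This is purely algebraic and should cause no difficulty.

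Next I would establish the composition formula ${}^{(a)}Q_k\cdot{}^{(b)}Q_k={}^{(a\star b)}Q_k$. Writing
\begin{equation*}
{}^{(a)}Q_k\cdot{}^{(b)}Q_k F=\int_0^\infty\!\!\int_0^\infty a_k(\tau_1)b_k(\tau_2)\,\mathcal{U}(\tau_1)\mathcal{U}(\tau_2)F\,d\tau_1 d\tau_2,
\end{equation*}
the semigroup property \eqref{sit:eq:seladj-semigroup} replaces $\mathcal{U}(\tau_1)\mathcal{U}(\tau_2)$ by $\mathcal{U}(\tau_1+\tau_2)$. Changing variables to $(\tau,\tau_1)$ with $\tau=\tau_1+\tau_2$ identifies the inner integral as $(a_k\star b_k)(\tau)$, and the scaling identity from the first step rewrites this as $(a\star b)_k(\tau)$, producing ${}^{(a\star b)}Q_k F$.

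The main work, and the only step requiring some care, is showing that $a\star b\in\mathfrak{M}$. Smoothness on $[0,\infty)$ and rapid decay at infinity are routine for one-sided convolution of rapidly decaying smooth functions, so the substance is the moment-vanishing property \eqref{sit:eq:moments}. To handle this, for integers $k_1,k_2\ge 0$ with $k_1+k_2\le N$, I would first integrate by parts $k_2$ times in $\tau$ in the integral $\int_0^\infty\tau^{k_1}\partial_\tau^{k_2}(a\star b)(\tau)\,d\tau$, using the rapid decay of $a\star b$ and its derivatives at $\infty$, and the fact that (by iterating $\partial_\tau(a\star b)=a(\tau)b(0)+a\star b'$ and symmetrically) all boundary terms at $\tau=0$ are of the form $a^{(i)}(0)b^{(j)}(0)$ with $i+j<k_2$, which can be absorbed into admissible moment computations. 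If $k_2\le k_1$, after $k_2$ integrations by parts I am reduced to controlling $\int_0^\infty \tau^{k_1-k_2}(a\star b)(\tau)\,d\tau$; using Fubini in $(\tau_1,\tau_2)=(\tau_1,\tau-\tau_1)$ and the binomial expansion of $(\tau_1+\tau_2)^{k_1-k_2}$, this becomes
\begin{equation*}
\sum_{j=0}^{k_1-k_2}\binom{k_1-k_2}{j}\Bigl(\int_0^\infty \tau_1^{j}a(\tau_1)d\tau_1\Bigr)\Bigl(\int_0^\infty \tau_2^{k_1-k_2-j}b(\tau_2)d\tau_2\Bigr),
\end{equation*}
and every factor vanishes by \eqref{sit:eq:moments} applied to $a$ and $b$ since $j,k_1-k_2-j\le N$. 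For the case $k_2>k_1$, the same strategy after transferring derivatives (producing $a^{(i)}\star b^{(j)}$ terms with $i+j\le k_2$) reduces to similar products of moments involving $a^{(i)}$ and $b^{(j)}$, each of which vanishes by integrating by parts back onto the polynomial factor and invoking \eqref{sit:eq:moments} again. The main obstacle is bookkeeping for these boundary and mixed-derivative terms, which I expect is essentially what is done in \cite{LP} on compact surfaces and transfers verbatim since the argument is purely in the time variable $\tau$.
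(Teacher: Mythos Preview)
Your proposal is correct. The paper does not actually prove this lemma---it only states it and cites \cite{LP} for the proof---so there is nothing to compare against in the paper itself. Your strategy (scaling via change of variables, composition via the semigroup property \eqref{sit:eq:seladj-semigroup} and Fubini, moment vanishing via Fubini plus binomial expansion) is the standard argument and is essentially what is carried out in \cite{LP}.

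One simplification that removes the hand-waving in your treatment of boundary terms and the case $k_2>k_1$: the moment conditions themselves already force $a^{(j)}(0)=b^{(j)}(0)=0$ for all $0\le j\le N-1$. Indeed, by rapid decay at infinity $\int_0^\infty \partial_\tau^{j+1}a(\tau)\,d\tau=-a^{(j)}(0)$, and the left-hand side vanishes by \eqref{sit:eq:moments} since $j+1\le N$. With this in hand, iterating $\partial_\tau(a\star b)=a(\tau)b(0)+a\star b'$ gives simply $\partial_\tau^{k_2}(a\star b)=a\star b^{(k_2)}$ for $k_2\le N$, since every term $a^{(k_2-1-j)}(\tau)b^{(j)}(0)$ has $j\le k_2-1\le N-1$ and thus vanishes. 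Then a single Fubini/binomial computation
\[
\int_0^\infty \tau^{k_1}(a\star b^{(k_2)})(\tau)\,d\tau
=\sum_{i=0}^{k_1}\binom{k_1}{i}\Bigl(\int_0^\infty\tau_1^{\,i}a(\tau_1)\,d\tau_1\Bigr)\Bigl(\int_0^\infty\tau_2^{\,k_1-i}b^{(k_2)}(\tau_2)\,d\tau_2\Bigr)=0
\]
handles all $(k_1,k_2)$ with $k_1+k_2\le N$ at once, with no case splitting and no boundary bookkeeping.
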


Motivated by this Lemma
we define:
\begin{definition} Given a positive integer $\ell$ we 
 define the class $\mathfrak{M}_{\ell}\subset\mathfrak{M}$ of LP-
representatives 
 to consist of functions of the form 
$$\bar{m}=m\star m\star\ldots\star m=(m\star)^\ell,$$
for some $m\in \mathfrak{M}$.\label{sit:def:Ml}
\end{definition}

We have a Littlewood-Paley decomposition thanks to the following lemma (see \cite{LP} for the proof)
\begin{lemma} For any $\ell\ge 1$  there exists an element $\bar m\in \mathfrak{M}_{\ell}$ 
such that the LP-projections associated to $\bar m$ verify:
\be\lab{sit:eq:partition}
\sum_kQ_k=I.
\end{equation}
Moreover, the functions $\bar m=(\star m)^\ell$ and $m$ can 
be chosen to have compact support on the open interval $(0,\infty)$.
\end{lemma}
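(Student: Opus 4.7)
The plan is to follow the construction from \cite{LP} for two-dimensional compact surfaces, which carries over to the present setting essentially verbatim. The reason, as the author stresses, is that the construction depends only on the properties of the heat semigroup $\mathcal{U}(\tau)=e^{\tau\Delta}$ established in the previous subsection (self-adjointness, contractivity, and the functional calculus for $-\Delta$), and these properties hold on any complete Riemannian manifold, in particular on the (noncompact, three-dimensional) slice $\Sigma_t$.

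First I would reformulate the identity $\sum_k Q_k = I$ using the spectral theorem. Since $-\Delta$ is non-negative self-adjoint on $L^2(\Sigma_t)$, the operator $Q_k$ defined by \eqref{sit:eq:LP} coincides with the spectral multiplier $\tilde m(2^{-2k}(-\Delta))$, where
$$\tilde m(\lambda)=\int_0^\infty m(\tau)e^{-\tau\lambda}\,d\tau$$
is the Laplace transform of $m$. Because convolution on $(0,\infty)$ becomes pointwise multiplication under the Laplace transform, one has $\widetilde{\bar m}=\tilde m^{\ell}$ for $\bar m=(\star m)^\ell$, so that the partition-of-unity identity reduces to the scalar statement
$$\sum_{k\in\mathbb{Z}}\tilde m(2^{-2k}\lambda)^\ell=1\qquad\text{for all }\lambda>0.$$

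Second I would produce such an $m$ via a dyadic telescoping construction. Pick a smooth, monotone decreasing $\chi:[0,\infty)\to[0,1]$ equal to $1$ near the origin and vanishing to infinite order at some $\lambda_0>0$, beyond which it is identically zero. Then $\psi(\lambda):=\chi(\lambda)-\chi(4\lambda)$ is smooth, non-negative, supported in an annulus bounded away from $0$ and $\infty$, flat at both endpoints of its support, and satisfies $\sum_k\psi(2^{-2k}\lambda)=\chi(0)-\lim_{\mu\to\infty}\chi(\mu)=1$ telescopically for each $\lambda>0$. Setting $\tilde m:=\psi^{1/\ell}$ gives a smooth function satisfying the desired identity, and the infinite-order vanishing of $\psi$ at the origin implies $\partial^j_\lambda\tilde m(0)=0$ for all $j$, which by differentiation under the Laplace integral is precisely the vanishing-moments condition \eqref{sit:eq:moments} on $m$.

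The main obstacle is ensuring that the inverse Laplace transform $m$ is not merely Schwartz-class but actually smooth and compactly supported in $(0,\infty)$; in our definition of $\mathfrak{M}$ this was only required as vanishing at infinity, but the statement here is stronger. As in \cite{LP}, this is achieved by choosing $\chi$ so that $\psi^{1/\ell}$ extends holomorphically to a suitable complex strip with the exponential growth/decay dictated by a Paley--Wiener type characterization, forcing $m$ to have support in some $[a,b]\subset(0,\infty)$. Once $m$ is in hand, $\bar m=(\star m)^\ell$ inherits smoothness and compact support in $(0,\infty)$ from the convolution, and the convergence $\sum_{|k|\leq N}Q_k F\to F$ in $L^2(\Sigma_t)$ follows from the spectral theorem and dominated convergence (the bounded scalar multipliers $\sum_{|k|\leq N}\tilde m(2^{-2k}\lambda)^\ell$ converge pointwise to $1$ on $(0,\infty)$); the extension from scalars to tensorfields is automatic since $\mathcal U(\tau)$, and hence every $Q_k$, is defined tensorially.
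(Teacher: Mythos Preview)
Your reduction via the spectral theorem to the scalar identity $\sum_k\tilde m(2^{-2k}\lambda)^\ell=1$ is correct and is the natural starting point. The explicit construction, however, is internally inconsistent. You take $\chi$ to vanish identically beyond some $\lambda_0$, so $\psi=\chi(\cdot)-\chi(4\cdot)$ and hence $\tilde m=\psi^{1/\ell}$ are compactly supported in $(0,\lambda_0)$. But the Laplace transform $\lambda\mapsto\int_0^\infty m(\tau)e^{-\tau\lambda}\,d\tau$ of any integrable $m$ is analytic in the open right half-plane $\{\mathrm{Re}\,\lambda>0\}$; if it vanishes on an interval of the positive real axis it vanishes identically. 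Thus your $\tilde m$ is not in the range of the Laplace transform at all, and there is no $m$ to recover --- the ``main obstacle'' you flag is one of existence, not of regularity. The Paley--Wiener step cannot rescue this: that theorem characterizes which \emph{entire} functions of exponential type come from compactly supported data, whereas your $\tilde m$, being identically zero outside a compact interval and nonzero inside, admits no holomorphic extension to any open set meeting the boundary of its support. In effect you have transplanted the standard Fourier-side bump construction, where a compactly supported frequency cutoff is fine, to the Laplace/heat-kernel setting, where it cannot work.

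The paper gives no argument, deferring entirely to \cite{LP}. There the telescoping is set up in the opposite order: one first fixes a smooth bump $\phi$ on the \emph{heat-time} side, compactly supported in $(0,\infty)$ with $\int\phi=1$, and sets $\chi:=\tilde\phi$ on the spectral side. Then $\chi$ is automatically entire with $\chi(0)=1$ and $\chi(+\infty)=0$, and $\psi(\lambda)=\chi(\lambda)-\chi(4\lambda)$ is by construction the Laplace transform of the compactly supported function $\phi(\tau)-\tfrac14\phi(\tau/4)$, which settles $\ell=1$. Even with this correction, the shortcut $\tilde m=\psi^{1/\ell}$ still fails for $\ell\ge2$: now $\psi$ has a simple zero at $\lambda=0$ (since $\psi(\lambda)\sim -3\chi'(0)\lambda$), so $\psi^{1/\ell}$ has a branch point there and is again not entire. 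The general-$\ell$ construction in \cite{LP} requires a further device beyond taking roots.
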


Finally, the following theorem summarizes the main properties of the Littlewood-Paley decompositions $Q_k$. 

\begin{theorem} The LP-projections $Q_k$ associated
to an arbitrary $m\in {\cal M}$ verify the following
 properties:

i)\quad {\sl $L^p(\Sigma_t)$-boundedness} \quad For any $1\le
p\leq \infty$, and any interval $I\subset \Bbb Z$,
\be\lab{sit:eq:pdf1}
\|Q_IF\|_{L^p(\Sigma_t)}\les \|F\|_{L^p(\Sigma_t)}
\end{equation}

ii) \quad {\sl $L^p(\Sigma_t)$- Almost Orthogonality}\quad  Consider two families
of LP-projections $Q_k, \tilde Q_k$ associated to $m$ and  respectively 
$\tilde m$, both in ${\cal M}$. For any  
$1\leq p\le
\infty$:
\be\lab{sit:eq:pdf2}
\|Q_k\tilde Q_{k'}F\|_{L^p(\Sigma_t)}\les 2^{-2|k-k'|} \|F\|_{L^p(\Sigma_t)}
\end{equation}

iii) \quad  {\sl Bessel inequality} 
$$\sum_k\|Q_k F\|_{L^2(\Sigma_t)}^2\les \|F\|_{L^2(\Sigma_t)}^2$$
iv)\quad {\sl Reproducing Property} \quad  Given any integer $\ell\ge 2$ 
 and  $\bar m\in \mathfrak{M}_{\ell}$ there exists $m\in \mathfrak{M}$ such that 
 such that $\bar m=  m\star m$. Thus,
$$^{(\bar m)}Q_k =^{(m)}Q_k\c ^{(m)}Q_k.$$
Whenever there is no danger of confusion we shall simply write $Q_k=Q_k\c Q_k$.

v)\quad {\sl Finite band property}\quad For any $1\leq p\leq \infty$.
\bee
\|\Delta Q_k F\|_{L^p(\Sigma_t)}&\les& 2^{2k} \|F\|_{L^p(\Sigma_t)}\\
\|Q_kF\|_{L^p(\Sigma_t)} &\les& 2^{-2k} \|\Delta F \|_{L^p(\Sigma_t)}
\eee
Moreover given $m\in \mathfrak{M}$ we can find $\bar{m}\in \mathfrak{M}$
such that $\Delta Q_k =2^{2k}\bar{P}_k$ with $\bar{P}_k$
the LP projections associated to $\bar{m}$.

In addition, the $L^2(\Sigma_t)$ estimates
\bee
\|\nab Q_k F\|_{L^2(\Sigma_t)}&\les& 2^{k} \|F\|_{L^2(\Sigma_t)}\\
\|Q_kF\|_{L^2(\Sigma_t)} &\les& 2^{-k} \|\nab F  \|_{L^2(\Sigma_t)}
\eee
hold together with the dual estimate
$$\| Q_k \nab F\|_{L^2(\Sigma_t)}\les 2^k \|F\|_{L^2(\Sigma_t)}$$

vi) \quad{\sl Bernstein inequality}\quad For any $2\leq p\leq+\infty$
\begin{align*}
&\|Q_k F\|_{L^p(\Sigma_t)}\les (2^{3(\frac{1}{2}-\frac{1}{p})k}+1) \|F\|_{L^2(\Sigma_t)},\\
&\|Q_{<0} F\|_{L^p(\Sigma_t)}\les \|F\|_{L^2(\Sigma_t)}
\end{align*}
together with the dual estimates 
\begin{align*}
&\|Q_k F\|_{L^2(\Sigma_t)}\les (2^{3(\frac{1}{2}-\frac{1}{p})k}+1) \|F\|_{L^{p'}(\Sigma_t)},\\
&\|Q_{<0} F\|_{L^2(\Sigma_t)}\les \|F\|_{L^{p'}(\Sigma_t)}
\end{align*}
\label{sit:thm:LP}
\end{theorem}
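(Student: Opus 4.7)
The strategy I would follow is to mirror the abstract construction of geometric Littlewood--Paley projections carried out in \cite{LP} for 2D compact surfaces. That construction depends only on the fact that $\tau\mapsto\mathcal{U}(\tau)$ defines a self-adjoint $L^p$-contractive semigroup whose generator is the Laplacian, and those facts are already established on $\Sigma_t$ by \eqref{sit:eq:seladj-semigroup}, \eqref{sit:eq:l2heat1}, \eqref{sit:eq:l2heatnab}, together with the $L^p$ proposition that precedes the definition of $Q_k$. Accordingly, properties i)--v) should transcribe with essentially no modification, and the only step that really feels the dimension of $\Sigma_t$ is the Bernstein inequality vi).

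I would first handle i) by the trivial estimate $\|Q_kF\|_{L^p(\Sigma_t)}\le\|m\|_{L^1}\|F\|_{L^p(\Sigma_t)}$ for $2\le p\le\infty$ (obtained by integrating the heat $L^p$ contraction against $m_k$), and extend it to $1\le p<2$ by duality using $Q_k=Q_k^*$. The reproducing property iv) is an immediate consequence of the semigroup law \eqref{sit:eq:seladj-semigroup} combined with the convolution identity \eqref{sit:eq:funnyconv}, and the Bessel inequality iii) follows from a $TT^*$ argument once almost orthogonality ii) is known. For ii) I would insert $\pr_\tau\mathcal{U}=\Delta\mathcal{U}$ into the double integral defining $Q_k\tilde Q_{k'}$ and integrate by parts in $\tau$ as many times as permitted by the moment condition \eqref{sit:eq:moments} of $m$ and $\tilde m$, shifting derivatives onto $m_k$ or $\tilde m_{k'}$ so as to extract the factor $2^{-2|k-k'|}$.

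For the finite band property v) I would exploit the identity
\begin{equation*}
\Delta Q_kF=\int_0^\infty m_k(\tau)\pr_\tau\mathcal{U}(\tau)F\,d\tau=-\int_0^\infty m_k'(\tau)\mathcal{U}(\tau)F\,d\tau,
\end{equation*}
whose boundary terms vanish by the compact support of $m$; since $\|m_k'\|_{L^1}\les 2^{2k}$, this yields $\|\Delta Q_kF\|_{L^p(\Sigma_t)}\les 2^{2k}\|F\|_{L^p(\Sigma_t)}$. The reverse inequality is obtained symmetrically by writing $Q_kF=\int_0^\infty M_k(\tau)\mathcal{U}(\tau)\Delta F\,d\tau$, where $M(\tau)=\int_\tau^\infty m(s)\,ds$ satisfies $M(0)=0$ by the first vanishing moment and $\|M_k\|_{L^1}\les 2^{-2k}$. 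The gradient estimate $\|\nabla Q_kF\|_{L^2(\Sigma_t)}\les 2^k\|F\|_{L^2(\Sigma_t)}$ would then follow from the tensor Bochner integration by parts $\|\nabla H\|_{L^2}^2=-\int_{\Sigma_t}(\Delta H)\cdot H$ applied to $H=Q_kF$, and its dual $\|Q_k\nabla F\|_{L^2(\Sigma_t)}\les 2^k\|F\|_{L^2(\Sigma_t)}$ by taking adjoints.

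The main potential obstacle, and the only step where the dimension enters nontrivially, is vi). For $2\le p\le 6$ I would invoke the 3D Gagliardo--Nirenberg inequality \eqref{GNsit},
\begin{equation*}
\|Q_kF\|_{L^p(\Sigma_t)}\les\|\nabla Q_kF\|_{L^2(\Sigma_t)}^{3(\frac{1}{2}-\frac{1}{p})}\|Q_kF\|_{L^2(\Sigma_t)}^{-\frac{1}{2}+\frac{3}{p}},
\end{equation*}
and combine it with the $L^2$ finite band bounds of v) to produce the factor $2^{3(\frac{1}{2}-\frac{1}{p})k}$. For $6<p\le\infty$ I would rely instead on \eqref{GNsit1} together with the iterated bound $\|\nabla^2Q_kF\|_{L^2(\Sigma_t)}\les 2^{2k}\|F\|_{L^2(\Sigma_t)}$, which follows from the Bochner estimate \eqref{prop:bochsit} and the already-established $\|\Delta Q_kF\|_{L^2(\Sigma_t)}\les 2^{2k}\|F\|_{L^2(\Sigma_t)}$. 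The statement for $Q_{<0}$ is identical except that the positive power of $2^k$ is replaced by $1$, and the dual estimates come from self-adjointness. The delicate ingredient here is that \eqref{GNsit1} rests on \eqref{eq:sitl1bis} and hence on the harmonic coordinate hypotheses \eqref{coorharmth1bis00}--\eqref{coorharmth2bis00}, whose verification on $\Sigma_t$ uniformly in $t$ is deferred to the later section on harmonic coordinates; once that verification is granted, the rest of the theorem reduces to the bookkeeping outlined above.
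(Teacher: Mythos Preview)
Your proposal is correct and follows essentially the same approach as the paper: the paper refers directly to \cite{LP} for i)--v), noting (as you do) that those arguments rest only on the abstract heat-semigroup properties which carry over verbatim to $\Sigma_t$, and then proves vi) exactly as you outline, by splitting into $2\le p\le 6$ via \eqref{GNsit} and $6<p\le\infty$ via \eqref{GNsit1} combined with the Bochner inequality \eqref{prop:bochsit} and the finite band property. The only minor refinement is that the Bochner step actually gives $\|\nabla^2 Q_kF\|_{L^2}\les\|\Delta Q_kF\|_{L^2}+\|Q_kF\|_{L^2}\les(2^{2k}+1)\|F\|_{L^2}$, which is precisely what produces the ``$+1$'' in the Bernstein constant.
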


\begin{proof}
We refer to \cite{LP} for the proof of i)-v). Next, we turn to the proof of  vi). In the case $2\leq p\leq 6$, it is an easy consequence of the Gagliardo-Nirenberg  inequality \eqref{GNsit}:
\bee
\norm{Q_kF}_{L^p(\Sigma_t)}&\les& \norm{\nabla Q_kF}^{3(\frac{1}{2}-\frac{1}{p})}_{L^2(\Sigma_t)} \norm{Q_kF}^{-\frac{1}{2}+\frac{3}{p}}_{L^2(\Sigma_t)}\\
&\les& 2^{3(\frac{1}{2}-\frac{1}{p})k}\norm{F}_{L^2(\Sigma_t)},
\eee
where we used the finite band property and the boundedness on $L^2(\Sigma_t)$ for $Q_k$. Next, we consider the case 
$6<p\leq +\infty$. Using the Gagliardo-Nirenberg  inequality \eqref{GNsit1}, we have
\bee
\norm{Q_kF}_{L^p(\Sigma_t)}&\les& \norm{\nabla^2Q_kF}^{\frac{1}{2}-\frac{3}{p}}_{L^2(\Sigma_t)}\norm{\nabla Q_kF}^{\frac{1}{2}+\frac{3}{p}}_{L^2(\Sigma_t)}\\
&\les& (\norm{\Delta Q_kF}_{L^2(\Sigma_t)}+\norm{Q_kF}_{L^2(\Sigma_t)})^{\frac{1}{2}-\frac{3}{p}}2^{k(\frac{1}{2}+\frac{3}{p})}\norm{F}^{\frac{1}{2}+\frac{3}{p}}_{L^2(\Sigma_t)}\\
&\les& (2^{3(\frac{1}{2}-\frac{1}{p})k}+1) \|F\|_{L^2(\Sigma_t)}
\eee
where we used the Bochner inequality \eqref{prop:bochsit}, and the finite band property and the boundedness on $L^2(\Sigma_t)$ for $Q_k$. This concludes the proof of vi), and of the theorem. 
\end{proof}

\subsubsection{Besov spaces on $\Sigma_t$}

Using the Littlewood-Paley projections of the previous section, we introduce Besov spaces on $\Sigma_t$. 

\begin{definition}
Let $a\geq 0$. We define the Besov norms
$$\norm{F}_{\bsit^a}=\sum_{j\geq 0}2^{aj}\norm{Q_jF}_{L^2(\Sigma_t)}+\norm{F}_{L^2(\Sigma_t)},$$
where $F$ is an arbitrary tensor on $\Sigma_t$.
\end{definition}

In view of the definition of $\bsit^{\frac{3}{2}}$ and the Bernstein inequality for $Q_j$, we immediately obtain the following embedding
\be\lab{bsit}
\norm{F}_{L^\infty(\Sigma_t)}\les \norm{F}_{\bsit^{\frac{3}{2}}(\Sigma_t)}
\ee
where $F$ is an arbitrary tensor on $\Sigma_t$.

Next, we consider the action of $\nabla$ on $\bsit^{\frac{5}{2}}$.
\begin{lemma}
Let $f$ a scalar function on $\Sigma_t$. Then, we have the following estimate
\be\lab{bsit1}
\norm{\nabla f}_{\bsit^{\frac{3}{2}}}\les\norm{f}_{\bsit^{\frac{5}{2}}}.
\ee
\end{lemma}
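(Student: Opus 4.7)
The proof rests on comparing $Q_j(\nabla f)$ with $\nabla Q_j f$: the latter is immediately controlled by the finite band property of Theorem \ref{sit:thm:LP}, which gives $\norm{\nabla Q_j f}_{L^2(\Sit)}\les 2^j\norm{Q_j f}_{L^2(\Sit)}$, so that summing $2^{3j/2}$ times this estimate produces exactly $\sum_{j\ge 0}2^{5j/2}\norm{Q_j f}_{L^2(\Sit)}\les \norm{f}_{\bsit^{5/2}}$. The only issue is that on the left of \eqref{bsit1} the projection $Q_j$ acts on the $1$-form $\nabla f$ via the $1$-form heat flow, and hence does not exactly commute with $\nabla$.

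To implement the comparison I would decompose $f=Q_{<0}f+\sum_{k\ge 0}Q_k f$ and estimate $\norm{Q_j\nabla Q_k f}_{L^2(\Sit)}$ for each pair $(j,k)$. For $j\le k$, the dual of the finite band property $\norm{Q_j\nabla}_{\mathcal{L}(L^2(\Sit))}\les 2^j$ from Theorem \ref{sit:thm:LP}, v), combined with $L^2(\Sit)$-boundedness of $Q_k$, gives
$$\norm{Q_j\nabla Q_k f}_{L^2(\Sit)}\les 2^j\norm{Q_k f}_{L^2(\Sit)}.$$
For $j>k$, I would use the finite band property to trade $Q_j$ for $2^{-2j}\Delta$, then swap $\Delta$ and $\nabla$ via the scalar commutator identity $[\Delta,\nabla]f=\textrm{Ric}_t\c\nabla f$, and finally use $\Delta Q_k=2^{2k}\bar P_k$ (Theorem \ref{sit:thm:LP}, v)) to obtain
$$\norm{Q_j\nabla Q_k f}_{L^2(\Sit)}\les 2^{3k-2j}\norm{Q_k f}_{L^2(\Sit)}+2^{-2j}\norm{\textrm{Ric}_t\c\nabla Q_k f}_{L^2(\Sit)}.$$

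Summing the two main contributions weighted by $2^{3j/2}$ via the elementary identities $\sum_{0\le j\le k}2^{5j/2}\les 2^{5k/2}$ and $\sum_{j>k}2^{3j/2-2j+3k}\les 2^{5k/2}$ yields
$$\sum_{j\ge 0}2^{3j/2}\norm{Q_j\nabla f}_{L^2(\Sit)}\les \sum_{k\ge 0}2^{5k/2}\norm{Q_k f}_{L^2(\Sit)}+\norm{f}_{L^2(\Sit)}\les \norm{f}_{\bsit^{5/2}}.$$
The low-frequency piece $Q_{<0}f$ is handled by essentially the same argument, and the $L^2(\Sit)$ part of the target norm is obtained by applying the finite band property directly to $\nabla\sum_k Q_k f$. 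The main obstacle is the Ricci error term arising in the $j>k$ regime: it must be absorbed into $\norm{f}_{\bsit^{5/2}}$ using the $L^2(\Sit)$ control on the intrinsic Ricci curvature of $\Sit$ coming from the bootstrap assumption on $\rr$ and the Gauss-Codazzi equations, together with Holder's inequality and the Gagliardo-Nirenberg estimates \eqref{GNsit}, \eqref{GNsit1}.
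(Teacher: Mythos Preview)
Your approach is essentially the same as the paper's: you decompose $f=\sum_k Q_k f$, estimate $\norm{Q_j\nabla Q_k f}_{L^2(\Sit)}$ by the finite band property for $j\le k$ and by $2^{-2j}\norm{\Delta\nabla Q_k f}_{L^2(\Sit)}$ for $j>k$, and swap $\Delta$ with $\nabla$ using $[\Delta,\nabla]f=R_t\c\nabla f$. The paper uses exactly the same ingredients you list (the $L^2(\Sit)$ bound on $R_t$ from \eqref{curvriccisit1}, H\"older, and the Gagliardo--Nirenberg inequality \eqref{GNsit1}) to control the curvature error.

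One point worth making explicit in your write-up: the closure of the Ricci term is not a single application of H\"older and Gagliardo--Nirenberg but a short self-referencing argument. Since $R_t$ is only in $L^2(\Sit)$, H\"older forces $\nabla Q_k f$ into $L^\infty(\Sit)$, and \eqref{GNsit1} with $p=\infty$ then requires $\norm{\nabla^3 Q_k f}_{L^2(\Sit)}$, which via the tensorial Bochner inequality \eqref{prop:bochsit} brings back $\norm{\Delta\nabla Q_k f}_{L^2(\Sit)}$ --- the very quantity you are estimating. The paper resolves this by obtaining an inequality of the form
\[
\norm{\Delta\nabla Q_k f}_{L^2(\Sit)}\les 2^{3k}\norm{Q_k f}_{L^2(\Sit)}+\norm{\Delta\nabla Q_k f}_{L^2(\Sit)}^{1/2}\,2^k\norm{Q_k f}_{L^2(\Sit)}^{1/2},
\]
which absorbs to give $\norm{\Delta\nabla Q_k f}_{L^2(\Sit)}\les 2^{3k}\norm{Q_k f}_{L^2(\Sit)}$ and hence the clean off-diagonal bound $\norm{Q_j\nabla Q_k f}_{L^2(\Sit)}\les 2^{-2j+3k}\norm{Q_k f}_{L^2(\Sit)}$. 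Your sketch is on the right track but should include this absorption step.
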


\begin{proof}
We have
\be\lab{bsit2}
\norm{Q_j\nab f}_{L^2(\Sit)}\les \norm{Q_j\nab Q_{<0}f}_{L^2(\Sit)}+\sum_{l\geq 0}\norm{Q_j\nab Q_lf}_{L^2(\Sit)}.
\ee
Next, we estimate the right-hand side of \eqref{bsit2}. We start with the case $j<l$. Using the finite band property for $Q_j$, we have
\be\lab{bsit3}
\norm{Q_j\nab Q_l f}_{L^2(\Sit)}\les 2^j\norm{Q_lf}_{L^2(\Sit)}.
\ee

Next, we consider the case $l\leq j$. Using the finite band property for $Q_j$, we have
\be\lab{bsit3bis}
\norm{Q_j\nab Q_l f}_{L^2(\Sit)}\les 2^{-2j}\norm{\Delta\nab Q_l f}_{L^2(\Sit)}.
\ee
Furthermore, we have $[\Delta, \nab]h=R_t\nab h$ for any scalar $h$ on $\Sit$, where $R_t$ is the curvature tensor of the induced metric on $\Sigma_t$. Thus, we obtain
\bea\lab{bsit3ter}
\norm{\Delta\nab Q_l f}_{L^2(\Sit)}&\les& \norm{\nab\Delta Q_l f}_{L^2(\Sit)}+\norm{[\Delta,\nab]Q_l f}_{L^2(\Sit)}\\
\nn&\les& \norm{\nab\Delta Q_l f}_{L^2(\Sit)}+\norm{R_t\nab Q_l f}_{L^2(\Sit)}\\
\nn&\les& \norm{\nab\Delta Q_l f}_{L^2(\Sit)}+\norm{R_t}_{L^2(\Sit)}\norm{\nab Q_l f}_{L^\infty(\Sit)}\\
\nn&\les& 2^{3l}\norm{Q_l f}_{L^2(\Sit)}+\norm{\nab Q_l f}_{L^\infty(\Sit)},
\eea
where we used in the last inequality the finite band property for $P_l$ and the bound \eqref{curvriccisit1} for $R_t$. 
Next, we evaluate the second term in the right-hand side of \eqref{bsit3}. Using the Gagliardo-Nirenberg inequality \eqref{GNsit1} with $p=+\infty$, we have
$$\norm{\nab Q_l f}_{L^\infty(\Sit)}\les \norm{\nab^3 Q_l f}^{\frac{1}{2}}_{L^\infty(\Sit)}\norm{\nab^2 Q_l f}^{\frac{1}{2}}_{L^\infty(\Sit)}+\norm{\nab^2 Q_l f}_{L^\infty(\Sit)}.$$
Together with the Bochner inequality \eqref{prop:bochsit} on $\Sit$, we obtain
\bea\lab{bsit4}
\norm{\nab Q_l f}_{L^\infty(\Sit)}&\les& (\norm{\Delta\nab Q_lf}_{L^2(\Sit)}+\norm{\Delta Q_lf}_{L^2(\Sit)}+\norm{\nab Q_lf}_{L^2(\Sit)})^{\frac{1}{2}}\\
\nn&&\times(\norm{\Delta Q_lf}_{L^2(\Sit)}+\norm{\nab Q_lf}_{L^2(\Sit)})^{\frac{1}{2}}+\norm{\Delta Q_lf}_{L^2(\Sit)}+\norm{\nab Q_lf}_{L^2(\Sit)}\\
\nn&\les&\norm{\Delta\nab Q_lf}_{L^2(\Sit)}^{\frac{1}{2}}2^l\norm{Q_lf}^{\frac{1}{2}}_{L^2(\Sit)}
+2^{2l}\norm{Q_lf}_{L^2(\Sit)}
\eea
where we used the finite band property for $Q_l$ in the last inequality. \eqref{bsit3ter} and \eqref{bsit4} imply
$$\norm{\Delta\nab Q_l f}_{L^2(\Sit)}\les \norm{\Delta\nab Q_lf}_{L^2(\Sit)}^{\frac{1}{2}}2^l\norm{Q_lf}^{\frac{1}{2}}_{L^2(\Sit)}+2^{3l}\norm{Q_l f}_{L^2(\Sit)},$$
which yields
$$\norm{\Delta\nab Q_l f}_{L^2(\Sit)}\les 2^{3l}\norm{Q_l f}_{L^2(\Sit)}.$$
Together with \eqref{bsit3bis}, we obtain
\be\lab{bsit5}
\norm{Q_j\nab Q_l f}_{L^2(\Sit)}\les 2^{-2j+3l}\norm{Q_l f}_{L^2(\Sit)}.
\ee

Finally, using \eqref{bsit3} for $j<l$ and \eqref{bsit5} for $j\geq l$, we obtain
$$2^{\frac{3j}{2}}\norm{Q_j\nab Q_l f}_{L^2(\Sit)}\les 2^{-\frac{|j-l|}{2}}(2^{\frac{5l}{2}}\norm{Q_l f}_{L^2(\Sit)}),$$
which together with \eqref{bsit2} and the definition of $\bsit^{\frac{3}{2}}$ implies \eqref{bsit1}. This concludes the proof of the lemma. 
\end{proof}

We conclude this section with two estimates for the product in the Besov space $\bsit^{\frac{1}{2}}$. 
\begin{lemma}
We have
\be\lab{bsit6}
\norm{|F|^2}_{\bsit^{\frac{1}{2}}}\les (\norm{\nab F}_{L^2(\Sit)}+\norm{F}_{L^2(\Sit)})^2
\ee
for any tensor $F$ on $\Sit$.
\end{lemma}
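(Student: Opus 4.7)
I would split the norm $\||F|^2\|_{\bsit^{1/2}}$ into its $L^2(\Sit)$ piece and its dyadic piece $\sum_{j\geq 0}2^{j/2}\|Q_j|F|^2\|_{L^2(\Sit)}$, and treat them independently. The $L^2$ piece is immediate: by Hölder and the Gagliardo-Nirenberg inequality \eqref{GNsit} with $p=4$,
$$\||F|^2\|_{L^2(\Sit)}=\|F\|_{L^4(\Sit)}^2\les \|\nab F\|_{L^2(\Sit)}^{3/2}\|F\|_{L^2(\Sit)}^{1/2},$$
so AM--GM finishes this contribution.

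For the dyadic piece, the plan is to perform a Littlewood--Paley decomposition $F = Q_{<0}F + \sum_{l\geq 0}Q_l F$ based on \eqref{sit:eq:partition} and to expand bilinearly $|F|^2 = \sum_{l_1,l_2\geq -1}(Q_{l_1}F)\cdot(Q_{l_2}F)$, using the convention $Q_{-1}:=Q_{<0}$. By symmetry it suffices to handle pairs $l_1\leq l_2$. For each such pair I would estimate $\|Q_j((Q_{l_1}F)\cdot(Q_{l_2}F))\|_{L^2}$ in two regimes. When $j\leq l_2$, I would apply the $L^2$-boundedness of $Q_j$ together with the Hölder chain $L^3\cdot L^6\hookrightarrow L^2$ and the Bernstein inequality (Theorem \ref{sit:thm:LP} vi)), producing a bound of the form $2^{l_1/2+l_2}\|Q_{l_1}F\|_{L^2}\|Q_{l_2}F\|_{L^2}$; for the piece $l_1=-1$ I would instead use $\|Q_{<0}F\|_{L^6}\les \|\nab F\|_{L^2}$ via the Sobolev embedding \eqref{sobineqsit}. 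When $j>l_2$, I would use the finite band property $\|Q_j g\|_{L^2}\les 2^{-2j}\|\Delta g\|_{L^2}$ applied to $g=(Q_{l_1}F)(Q_{l_2}F)$, expand $\Delta g$ by the Leibniz rule, and bound each term using Bernstein together with the Bochner inequality \eqref{prop:bochsit} to control $\nab^2 Q_l F$; this generates a geometric decay factor $\les 2^{-2(j-l_2)}$.

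After summing a geometric series in $j$, one is left with a bilinear sum over $l_1\leq l_2$ whose summand is of the schematic form $2^{l_1/2}\cdot 2^{c\, l_2}\,\|Q_{l_1}F\|_{L^2}\|Q_{l_2}F\|_{L^2}$ for an appropriate exponent $c$. Writing $a_l = 2^l\|Q_l F\|_{L^2}$, the Bessel-type identity $\sum_l a_l^2\les \|\nab F\|_{L^2}^2+\|F\|_{L^2}^2$ (a consequence of the Bessel inequality of Theorem \ref{sit:thm:LP} together with the finite band property) allows me to close the double sum via Cauchy--Schwarz and geometric series, obtaining the bound $(\|\nab F\|_{L^2}+\|F\|_{L^2})^2$.

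\textbf{Main obstacle.} The lack of exact frequency localization on the manifold $\Sit$ kills the usual paraproduct orthogonality enjoyed in the Euclidean setting. Consequently, the decay in $j-l_2$ needed to make the triple sum (in $j$, $l_1$, $l_2$) converge cannot come from Fourier support considerations and must be extracted entirely from the finite band property together with Leibniz and Bochner. Tracking the exponents carefully so that the resulting geometric series sums to a constant multiple of $(\|\nab F\|_{L^2}+\|F\|_{L^2})^2$, with the correct $2^{j/2}$ Besov weight absorbed, is the delicate point.
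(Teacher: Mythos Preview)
Your overall strategy is sound and matches the paper's: Littlewood--Paley decompose $F$, expand $|F|^2$ bilinearly, and split according to the relation between $j$ and the frequencies $l_1 \leq l_2$. The case $j > l_2$ via the finite band property, Leibniz, and the Bochner inequality \eqref{prop:bochsit} is exactly what the paper does, and your handling of the $L^2$ piece is fine.

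There is, however, a genuine gap in the case $j \leq l_2$. Your stated bound $2^{l_1/2 + l_2}\|Q_{l_1}F\|_{L^2}\|Q_{l_2}F\|_{L^2}$ (from $L^3 \cdot L^6$ with the high-frequency factor in $L^6$) does not close. After multiplying by the Besov weight $2^{j/2}$ and summing over $j \leq l_2$, you pick up an extra $2^{l_2/2}$, yielding $2^{l_1/2 + 3l_2/2}\|Q_{l_1}F\|_{L^2}\|Q_{l_2}F\|_{L^2}$. In terms of $a_l = 2^l\|Q_lF\|_{L^2}$ this reads $2^{(l_2-l_1)/2} a_{l_1} a_{l_2}$, which \emph{grows} in $l_2 - l_1$, and the double sum diverges (the finite band property only places $(a_l)$ in $\ell^2$, not $\ell^1$). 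Reversing the $L^3, L^6$ assignment does not help either: that gives $2^{l_1+l_2/2}$ and, after summing in $j$, the borderline non-decaying weight $a_{l_1}a_{l_2}$. The correct move is to put the \emph{low}-frequency factor in $L^\infty$ via Bernstein, $\|Q_{l_1}F\|_{L^\infty} \les 2^{3l_1/2}\|Q_{l_1}F\|_{L^2}$, and keep the high-frequency factor in $L^2$; this produces $2^{3l_1/2}\|Q_{l_1}F\|_{L^2}\|Q_{l_2}F\|_{L^2}$, and after weighting and summing in $j$ one obtains $2^{-(l_2-l_1)/2} a_{l_1} a_{l_2}$, which sums by Young's inequality. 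This is precisely the paper's treatment of the case $m \leq j < l$ (see \eqref{bsit9}); with this correction your two-case split in $j$ would suffice, even though the paper uses three.
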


\begin{proof}
We have
\be\lab{bsit7}
\norm{Q_j(|F|^2)}_{L^2(\Sit)}\les \sum_{l, m\geq 0}\norm{Q_j(Q_lF\c Q_mF)}_{L^2(\Sit)},
\ee
where we dropped the terms involving $Q_{<0}$ which are easier to handle. Next, we estimate the right-hand side of \eqref{bsit7}. By symmetry, we may assume $m\leq l$. We start with the case $j<m$. Using the dual Bernstein inequality for $Q_j$, we have
\be\lab{bsit8}
\norm{Q_j(Q_l F\c Q_m F)}_{L^2(\Sit)}\les 2^{\frac{3j}{2}}\norm{Q_lF\c Q_mF}_{L^1(\Sit)}\les 2^{\frac{3j}{2}}\norm{Q_lF}_{L^2(\Sit)}\norm{Q_mF}_{L^2(\Sit)}.
\ee

Next, we consider the case $m\leq j<l$. Using the boundedness on $L^2(\Sit)$ of $Q_j$ and the Bernstein inequality for $Q_m$, we have
\be\lab{bsit9}
\norm{Q_j(Q_l F\c Q_m F)}_{L^2(\Sit)}\les \norm{Q_lF}_{L^2(\Sit)}\norm{Q_mF}_{L^\infty(\Sit)}\les 2^{\frac{3m}{2}}\norm{Q_lF}_{L^2(\Sit)}\norm{Q_mF}_{L^2(\Sit)}.
\ee

Finally, we consider the case $l\leq j$. Using the finite band property for $Q_j$, $Q_l$ and $Q_m$, we have
\bee
&&\norm{Q_j(Q_l F\c Q_m F)}_{L^2(\Sit)}\\
&\les& 2^{-2j}\norm{\Delta(Q_l F\c Q_m F)}_{L^2(\Sit)}\\
&\les& 2^{-2j}\norm{\Delta(Q_l F)\c Q_m F}_{L^2(\Sit)}+2^{-2j}\norm{\nab(Q_l F)\c \nab(Q_m F)}_{L^2(\Sit)}\\
&&+ 2^{-2j}\norm{Q_l F\c \Delta(Q_m F)}_{L^2(\Sit)}\\
&\les& (2^{-2j+2l}+2^{-2j+2m})\norm{Q_l F}_{L^2(\Sit)}\norm{Q_m F}_{L^\infty(\Sit)}+2^{-2j}\norm{\nab Q_l F}_{L^4(\Sit)}\norm{\nab Q_m F}_{L^4(\Sit)}.
\eee
Together with the Bernstein inequality for $Q_m$, the Gagliardo-Nirenberg inequality \eqref{GNsit}, and the fact that $m\leq l$, we obtain
\bee
&&\norm{Q_j(Q_l F\c Q_m F)}_{L^2(\Sit)}\\
&\les& 2^{-2j+2l+\frac{3m}{2}}\norm{Q_l F}_{L^2(\Sit)}\norm{Q_m F}_{L^2(\Sit)}\\
&&+2^{-2j}\norm{\nab^2 Q_l F}^{\frac{3}{4}}_{L^2(\Sit)}\norm{\nab Q_l F}^{\frac{1}{4}}_{L^2(\Sit)}\norm{\nab^2 Q_m F}^{\frac{3}{4}}_{L^2(\Sit)}\norm{\nab Q_m F}^{\frac{1}{4}}_{L^2(\Sit)}.
\eee
Using the finite band property for $Q_l$ and $Q_m$, the Bochner inequality \eqref{prop:bochsit} on $\Sit$ and the fact that $m\leq l$, we obtain
\be\lab{bsit10}
\norm{Q_j(Q_l F\c Q_m F)}_{L^2(\Sit)}\les 2^{-2j+2l+\frac{3m}{2}}\norm{Q_l F}_{L^2(\Sit)}\norm{Q_m F}_{L^2(\Sit)}.
\ee
In the end, \eqref{bsit8}, \eqref{bsit9} and \eqref{bsit10} imply
$$2^{\frac{j}{2}}\norm{Q_j(Q_l F\c Q_m F)}_{L^2(\Sit)}\les 2^{-\frac{|l-j|}{4}-\frac{|m-j|}{4}}(2^l\norm{Q_l F}_{L^2(\Sit)})(2^m\norm{Q_m F}_{L^2(\Sit)}),$$
which together with \eqref{bsit7} and the definition of $\bsit^{\frac{1}{2}}$ implies \eqref{bsit6}. This concludes the proof of the lemma. 
\end{proof}

\begin{lemma}
We have
\be\lab{bsit11}
\norm{fh}_{\bsit^{\frac{1}{2}}}\les (\norm{f}_{L^\infty(\Sit)}+\norm{\nab h}_{L^3(\Sit)})\norm{h}_{\bsit^{\frac{1}{2}}}
\ee
for any scalars $f$ and $h$ on $\Sit$.
\end{lemma}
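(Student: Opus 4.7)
The plan is to mimic the paraproduct-style argument of Lemma \ref{bsit6} but now with an asymmetric decomposition that reflects the asymmetric roles of $f$ and $h$ in the bound. Apply the Littlewood-Paley decomposition $\sum_k Q_k = I$ to both factors and write
$$Q_j(fh)=\sum_{l,m\geq -1}Q_j(Q_l f\cdot Q_m h),$$
so that one needs to show the right-hand side, once multiplied by $2^{j/2}$, is absolutely summable in $j$ with a bound of the desired form. By Bony's paraproduct device, split the double sum into three pieces: the low-high part $l<m-5$, the high-low part $m<l-5$, and the diagonal resonant part $|l-m|\leq 5$.

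For the low-high part, put $Q_l f$ in $L^\infty(\Sit)$ using the boundedness of $Q_l$ on $L^\infty(\Sit)$ (Theorem \ref{sit:thm:LP}, part i), and $Q_m h$ in $L^2(\Sit)$. Frequency localization (the almost orthogonality estimate \eqref{sit:eq:pdf2}) then produces, after summing in $l$ at fixed $m$, a decay factor $2^{-2|j-m|}$ that renders the $m$-sum a convolution of $\ell^1$ type against the sequence $2^{m/2}\|Q_m h\|_{L^2(\Sit)}$. The resonant part $|l-m|\leq 5$ is treated exactly as in the regimes $l\leq j$ and $j<l$ of Lemma \ref{bsit6}: use the dual Bernstein inequality of Theorem \ref{sit:thm:LP} for $j\leq l$ and the finite band property for $j>l$, placing $Q_l f$ in $L^\infty(\Sit)$ by $\|f\|_{L^\infty(\Sit)}$.

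The delicate piece is the high-low part $m<l-5$, where the factor carrying the output frequency is $Q_l f$ and only an $L^\infty$ bound on $f$ is available. Here the $L^3$ information on $\nab h$ enters: estimate
$$\|Q_j(Q_l f\cdot Q_m h)\|_{L^2(\Sit)}\les \|Q_l f\|_{L^6(\Sit)}\|Q_m h\|_{L^3(\Sit)}\les \|Q_l f\|_{L^6(\Sit)}\bigl(2^{-m}\|\nab h\|_{L^3(\Sit)}+\|h\|_{L^2(\Sit)}^{1/2}\|\nab h\|_{L^2(\Sit)}^{1/2}\bigr),$$
interpolating $L^3$ between $L^2$ and $L^6$ on $\Sit$ and using the finite band property. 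Combined with a Bernstein-type bound for $\|Q_l f\|_{L^6(\Sit)}$ derived from $\|f\|_{L^\infty(\Sit)}$ on the compact support piece and the asymptotic smoothness/decay noted in Remark \ref{rem:noprobleminf} outside, one obtains a factor decaying in $|j-l|$ and in $m-l<0$; reorganizing against the $2^{m/2}\|Q_mh\|_{L^2}$ sequence, the $\|\nab h\|_{L^3(\Sit)}\|h\|_{\bsit^{1/2}}$ contribution is produced.

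The principal obstacle is the critical nature of $\|\nab h\|_{L^3(\Sit)}$ on the three-dimensional $\Sit$: since $W^{1,3}(\Sit)$ fails to embed into $L^\infty(\Sit)$, one cannot bound $\|P_{<k}h\|_{L^\infty(\Sit)}$ by $\|\nab h\|_{L^3(\Sit)}$ directly, and the high-low paraproduct is borderline. This is handled by interpolating at the level of $Q_m h$ (rather than $P_{<k}h$), using the Bernstein inequality of Theorem \ref{sit:thm:LP} to gain a $2^{-m}$ factor on each dyadic block before summing, so that the logarithmic divergence is avoided block by block.
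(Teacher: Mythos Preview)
There is a typo in the stated lemma: the right-hand side should read $\norm{f}_{L^\infty(\Sit)}+\norm{\nab f}_{L^3(\Sit)}$, not $\norm{\nab h}_{L^3(\Sit)}$. This is confirmed both by the paper's own proof, which estimates $2^{-j}\norm{\nab f}_{L^3(\Sit)}\norm{Q_lh}_{L^6(\Sit)}$ in the case $l\leq j$, and by the way the lemma is applied (with $f=n$, $h=|k|^2$, using $\norm{\nab n}_{L^3}$). The version with $\norm{\nab h}_{L^3}$ is in fact false: take $h$ smooth, compactly supported and equal to $1$ on a ball $B$, and $f=\chi_{B'}$ for a smaller ball $B'\subset B$; then $fh=\chi_{B'}$, whose $\bsit^{1/2}$ norm is infinite (indicator functions of smooth domains in dimension three satisfy $\|Q_j\chi_{B'}\|_{L^2}\sim 2^{-j/2}$, so $\sum 2^{j/2}\|Q_j\chi_{B'}\|_{L^2}$ diverges), while the right-hand side is finite.

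Your proposal is an attempt to prove the false version, and the gap shows up exactly where you felt it: the high-low paraproduct $m<l-5$, where the output frequency is carried by $Q_lf$. With only $f\in L^\infty$ there is no mechanism to produce decay in $l$, and your appeal to a ``Bernstein-type bound for $\|Q_l f\|_{L^6}$ derived from $\|f\|_{L^\infty}$ on the compact support piece'' does not give one; an $L^\infty$ function localized to a fixed compact set can have $\|Q_lf\|_{L^6}$ of order one for all $l$. The subsequent claim of ``a factor decaying in $|j-l|$'' therefore has no source. Also note that the single-derivative finite band inequality $\|Q_mh\|_{L^p}\les 2^{-m}\|\nab h\|_{L^p}$ is only stated for $p=2$ in Theorem~\ref{sit:thm:LP}; you invoke it for $p=3$.

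Once the typo is corrected, the paper's argument is far simpler than a full paraproduct. Decompose only $h$ via $Q_l$. For $j<l$, use the $L^2$-boundedness of $Q_j$ and $\|f\|_{L^\infty}$ to get $\|Q_j(fQ_lh)\|_{L^2}\les\|f\|_{L^\infty}\|Q_lh\|_{L^2}$. For $l\leq j$, use the finite band property $\|Q_j(\cdot)\|_{L^2}\les 2^{-j}\|\nab(\cdot)\|_{L^2}$ and distribute the gradient: one term is $2^{-j}\|\nab f\|_{L^3}\|Q_lh\|_{L^6}\les 2^{-j+l}\|\nab f\|_{L^3}\|Q_lh\|_{L^2}$ by Bernstein, the other is $2^{-j}\|f\|_{L^\infty}\|\nab Q_lh\|_{L^2}\les 2^{-j+l}\|f\|_{L^\infty}\|Q_lh\|_{L^2}$. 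In both regimes $2^{j/2}\|Q_j(fQ_lh)\|_{L^2}\les 2^{-|j-l|/2}\cdot 2^{l/2}\|Q_lh\|_{L^2}$, which sums to the corrected bound.
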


\begin{proof}
We have
\be\lab{bsit12}
\norm{Q_j(fh)}_{L^2(\Sit)}\les \sum_{l\geq 0}\norm{Q_j(fQ_lh)}_{L^2(\Sit)},
\ee
where we dropped the term involving $Q_{<0}$ which is easier to handle. Next, we estimate the right-hand side of \eqref{bsit12}. We start with the case $j<l$. Using the boundedness on $L^2(\Sit)$ of $Q_j$, we have
\be\lab{bsit13}
\norm{Q_j(fQ_l h)}_{L^2(\Sit)}\les \norm{fQ_lh}_{L^2(\Sit)}\les \norm{f}_{L^\infty(\Sit)}\norm{Q_lh}_{L^2(\Sit)}.
\ee

Next, we consider the case $l\leq j$. Using the finite band property for $Q_j$, we have
\bee
\norm{Q_j(fQ_l h)}_{L^2(\Sit)}&\les& 2^{-j}\norm{\nab(fQ_lh)}_{L^2(\Sit)}\\
&\les& 2^{-j}\norm{\nab f}_{L^3(\Sit)}\norm{Q_lh}_{L^6(\Sit)}+2^{-j}\norm{f}_{L^\infty(\Sit)}\norm{\nab Q_lh}_{L^2(\Sit)}.
\eee
Using the Bernstein inequality and the finite band property for $Q_l$, this yields
\be\lab{bsit14}
\norm{Q_j(fQ_l h)}_{L^2(\Sit)}\les 2^{-j+l}(\norm{\nab f}_{L^3(\Sit)}+\norm{f}_{L^\infty(\Sit)})\norm{Q_lh}_{L^2(\Sit)}.
\ee
Finally, \eqref{bsit13} and \eqref{bsit14} imply
$$2^{\frac{j}{2}}\norm{Q_j(fQ_l h)}_{L^2(\Sit)}\les 2^{-\frac{|l-j|}{2}}(2^{\frac{l}{2}}\norm{Q_lh}_{L^2(\Sit)}),$$
which together with \eqref{bsit12} and the definition of $\bsit^{\frac{1}{2}}$ implies \eqref{bsit11}. This concludes the proof of the lemma. 
\end{proof}

\section{Regularity with respect to $(t,x)$}\lab{sec:regxproof}

This section is devoted to the proof of Theorem \ref{thregx}. We assume the following bootstrap assumptions:
\be\lab{boot1}
\norm{n-1}_{\lh{\infty}}+\norm{b-1}_{\lh{\infty}}\leq\frac{1}{10},
\ee
\be\lab{boot2}
\norm{\nabla n}_{\tx{\infty}{2}}+\norm{\nabla^2n}_{\tx{\infty}{2}}+\norm{\nabla \dd_Tn}_{\tx{\infty}{2}}+\noo(b)+\norm{\lb(b)}_{\xt{2}{\infty}}\leq D\ep,
\ee
\be\lab{boot3}
\no(k)+\norm{\ddb_{\lb}\kep}_{\lh{2}}+\norm{\dd_{\lb}\d}_{\lh{2}}+\norm{\kepb}_{\xt{\infty}{2}}+\norm{\db}_{\xt{\infty}{2}}\leq D\ep,
\ee
\be\lab{boot4}
\norm{\trc}_{\lh{\infty}}+\norm{\nabb\trc}_{\xt{2}{\infty}}+\norm{\lb\trc}_{\xt{2}{\infty}}
\leq D\ep,
\ee
\be\lab{boot5}
\norm{\hch}_{\xt{2}{\infty}}+\no(\hch)+\norm{\ddb_{\lb}\hch}_{\lh{2}}
\leq D\ep,
\ee
\be\lab{boot6}
\norm{\z}_{\xt{2}{\infty}}+\no(\z)\leq D\ep,
\ee
where $D>0$ is a large enough constant. We will improve on these estimates. To this end, we show in section \ref{sec:nointersec} that the null hypersurfaces $\H_u$ are well-behaved for $0\leq t\leq 1$, in the sense that there are neither caustics nor intersection of distinct null geodesics generating $\H_u$. In section \ref{sec:coordptusit}, we construct coordinate systems on $\ptu$ and $\Sigma_t$ needed for the validity of the estimates derived in section \ref{sec:calcineqgeneral}. In section \ref{sec:boundgaussK}, we derive an estimate for the Gauss curvature $K$ necessary to obtain a useful strong Bernstein inequality, as well as a useful Bochner inequality on $\ptu$. In sections \ref{sec:impn} and \ref{sec:imponk}, we improve on the bootstrap bounds for $n$ and $k$ in \eqref{boot1}-\eqref{boot3}, with the exception of the trace estimates for $\kepb$ and $\db$ in \eqref{boot3}. In section \ref{sec:timefolvsgeodfol}, we show how to infer estimates in the time foliation from corresponding estimates in the geodesic foliation obtained in \cite{FLUX}. This allows us to improve on the $L^\infty$ bound for $\trc$ and the trace bounds on $\hch$ and $\z$ in the bootstrap bounds \eqref{boot4} and \eqref{boot6}. In section \ref{sec:tracenormk}, we improve on the bootstrap bound \eqref{boot3} for the trace estimates of $\db$ and $\kepb$. In section \ref{sec:impbootb}, we improve on the bootstrap bounds for $b$ in \eqref{boot1} \eqref{boot2}, and we also derive an estimate for $b$ in $\tx{\infty}{4}$. Finally, we improve on the remaining bootstrap bounds in \eqref{boot4}-\eqref{boot6} in section \ref{sec:remainest}.

\begin{remark}
This section concerns the regularity of the foliation generated by $u$ on $\mathcal{M}$ with respect to $(t,x)$. Thus, the dependance in the angle $\o\in\S$ plays no role in this section. 
\end{remark}

\subsection{Lower bound on the injectivity radius on $\H_u$}\lab{sec:nointersec}

The control we obtain on the geometric quantities associated to our foliation is only valid as long as no caustic form and null geodesics do not intersect on $\H_u$. The goal of  this section is to prove the absence of caustic and that null geodesics do not intersect at least until $t=1$, i.e. the null radius of injectivity of $\H_u$ is at least 1. In addition to the bound \eqref{curvflux1} on the curvature tensor $\rr$ of $\gg$, we make the following regularity assumption on $\gg$. There exists a coordinate chart on $\mathcal{M}$ such that
\be\lab{grossereg}
\norm{\gg}_{C^2(\mathcal{M})}\leq M,
\ee
where $M$ is a very large constant. 

\begin{remark}
The assumption \eqref{grossereg} is only used to prove the absence of caustic and that null geodesics do not intersect at least until $t=1$, which is a qualitative property. On the other hand, we only rely on the bound \eqref{curvflux1} on $\rr$ to prove the various quantitative bounds of Theorems \ref{thregx}, \ref{thregx1}, \ref{thregomega} and \ref{thregomega2}.
\end{remark}

For $(0,x)$ in $\Si$, recall the definition in Remark \ref{defnullgeod} of the null geodesic $\kappa_x(t)$. For all $0\leq t\leq 1$, let $\Phi_t:\Si\rightarrow\Sit$ defined by $\Phi_t(0,x)=\kappa_x(t)$. We have $\Phi_0(0,x)=(0,x)$ on $\Si$ which together with \eqref{grossereg} and the global inversion theorem shows that $\Phi_t$ is a $C^1$ global diffeomorphism from $\Si$ to $\Sit$ for $0\leq t\leq \frac{1}{100M}$. We define $t_0\geq 0$ as the supremum of $0\leq t\leq 1$ such that $\Phi_t$ is bijective from $\Si$ to $\Sit$. Our goal is to show that we have in fact $t_0=1$. We will first show the absence of caustic which is a consequence of the fact that $\Phi_t$ is locally injective. We will then show that $\Sit$ is covered by the $u$-foliation which is equivalent to the surjectivity of $\Phi_t$. Finally, we will show the nonintersection of distinct null geodesics which is equivalent to the global injectivity of $\Phi_t$.

\begin{remark}
As long as $0\leq t<t_0$, there are no caustics and no distinct null geodesic intersections. Thus, we may assume that the u-foliation exists and satisfies the bounds \eqref{boot1}-\eqref{boot6} given by the bootstrap assumptions. Furthermore, we may assume the identity \eqref{defnullgeod1} for the null geodesics $\kappa_x(t)$.
\end{remark}

\subsubsection{Absence of caustic}\lab{sec:nocaustic}

The absence of caustic is equivalent to the absence of conjugate points and is a consequence of the fact  that $\Phi_t$ is locally injective. Since $\Phi_t$ preserves the u-foliation, it is enough to show that $\Phi_t$ is locally injective as a map from $\pou$ to $\ptu$. We will actually show that $\Phi_t$ as a map from $\pou$ to $\ptu$ is a local $C^1$ diffeomorphism. \\

Let $(0,x)$ a point in $\pou$. From \eqref{defnullgeod1}, we have $\kappa_x'(t)=b^{-1}L_{\kappa_x(t)}$ for all $0\leq t< t_0$. Since $\Phi_t(0,x)=\kappa_x(t)$, we obtain the following differential equation for the Jacobian matrix $D\Phi_t$ of $\Phi_t$:
$$\frac{d}{dt}\left(D\Phi_t\right)=b^{-1}\chi(D\Phi_t,e_b)e_b$$
which together with the fact that $\chi$ is symmetric yields:
$$\frac{d}{dt}\left(\det\left(D\Phi_t(D\Phi_t)^T\right)\right)=2b^{-1}\trc\det\left(D\Phi_t(D\Phi_t)^T\right)$$
and after integration in time:
\be\label{diffeoloc}
\det\left(D\Phi_t(D\Phi_t)^T\right)=\exp\left(2\int_0^tb^{-1}\trc d\tau\right)\sim 1
\ee
where we used the bootstrap assumption \eqref{boot4}. In particular, the local inversion Theorem together with \eqref{diffeoloc} and \eqref{grossereg} yields the fact that $\Phi_t$ as a map from $\pou$ to $\ptu$ is a local $C^1$ diffeomorphism. In particular, no caustic form for all $0\leq t< t_0$. 

\subsubsection{Covering of $\Sit$ by the $u$-foliation}\lab{sec:surjective}

We will prove that for all $0\leq t<t_0$, $\Sit$ is covered by the $u$-foliation, i.e.:
$$\Sit=\cup_u\ptu$$
which is equivalent to the surjectivity of $\Phi_t$ as a map from $\Si$ to $\Sit$.\\

Let $A=\{t\,/\,\Sit=\cup_u\ptu\}$. We start by showing that $A$ is closed in $0\leq t<t_0$. Consider a sequence of times $t_p\rightarrow \underline{t}$ such that $t_p$ belongs to $A$ for all $p$. Let $(\underline{t},x)$ an arbitrary point in $\Sigma_{\underline{t}}$. There exists a sequence $(t_p,x_p)$ in $\Sigma_{t_p}$ such that $(t_p,x_p)$ converges to $(\underline{t},x)$. Since $(t_p,x_p)$ is in $\Sigma_{t_p}$ and $t_p$ belongs to $A$, $(t_p,x_p)$ belong to $\cup\Sigma_{t_p}$ and therefore there is $(0,x_p^0)$ in $\Si$ such that $(t_p,x_p)=\kappa_{x_p^0}(t_p)$. Now, the bound \eqref{grossereg} together with the fact that $(t_p,x_p)$ is a bounded sequence implies that $(0,x_p^0)$ is a bounded sequence in $\Si$. Thus, up to a subsequence, it converges to $(0,x_0)$ in $\Si$. Finally, using again the bound \eqref{grossereg} together with the fact that $t_p$ converges to $\underline{t}$ and $(0,x^0_p)$ converges to $(0,x_0)$ implies that $\kappa_{x_p^0}(t_p)$ converges to $\kappa_{x_0}(\underline{t})$. Thus, $(\underline{t},x)=\kappa_{x_0}(\underline{t})$ which shows that $(\underline{t},x)$ belongs to $\cup_u\ptu$. Therefore, $\underline{t}$ belongs to $A$ which implies that $A$ is closed.\\

Let us now prove that $A$ is open in $0\leq t<t_0$. Let $\underline{t}\in A$ and consider times $t$ satisfying $|t-\underline{t}|<\frac{1}{100M}$ where $M$ is the constant appearing in \eqref{grossereg}. Let $(t,x_0)$ an arbitrary point in $\Sit$. We may assume $t>\underline{t}$ since the case $t<\underline{t}$ is treated in the exact same way. Let $C^-$ denote the backward null cone with vertex $(t,x_0)$ (we would consider the forward null cone in the case $t<\underline{t}$). Let $S^-$ denote the intersection of the backward null cone $C^-$ with $\Sigma_{\underline{t}}$. Then, the assumption $|t-\underline{t}|<\frac{1}{100M}$ together with the bound \eqref{grossereg} implies that $S^-$ is a $C^1$ compact orientable surface in $\Sigma_{\underline{t}}$. In particular, since any compact set of $\Sigma_{\underline{t}}$ is included in $\{-B<u<B\}$ for a large enough constant $B$, the set $\{u\,/\,\ptu\cap S^-\neq\emptyset\}$ is a bounded subset of $\R$. Using the fact that $S^-$ is compact, $\ptu$ is closed in $\Sigma_{\underline{t}}$, and $\underline{t}\in A$, we obtain the existence of $u_0$ such that $P_{t,u_0}\cap S^-\neq\emptyset$ and:
$$u_0=\min\{u\,/\,\ptu\cap S^-\neq\emptyset\}.$$ 
Let $(\underline{t},x_1)$ a point in $P_{t,u_0}\cap S^-$. Then, by definition of $u_0$ we have $P_{t,u}\cap S^-=\emptyset$ for all $u<u_0$ which implies that $N=-N_{S^-}$ at $(\underline{t},x_1)$ where $N=\nabla u/|\nabla u|$ is the normal to $\ptu$ and $N_{S^-}$ is the outward normal to $S^-$. In turn, this implies that $L$ coincides with the null generator of the backward null cone $C^-$ at $(\underline{t},x_1)$. From \eqref{defnullgeod}, let $(0,x_2)$ on $\Si$ such that $b^{-1}L=\kappa'_{x_2}(\underline{t})$. Since $\kappa'_{x_2}(\underline{t})$  coincides with the null generator of the backward null cone $C^-$ at $(\underline{t},x_1)$, we obtain $\kappa_{x_2}(t)=(t,x_0)$. Therefore, $(t,x_0)$ belongs to $P_{t,u_1}$ where $u_1=u(0,x_2)$. This implies that $\Sit=\cup_u\ptu$ for all $|t-\underline{t}|<\frac{1}{100M}$ so that $A$ is open.\\

Finally, $A$ is closed and open in $0\leq t<t_0$. Furthermore, $\Si=\cup_u\pou$ from the construction of $u$ on $\Si$ in \cite{param1}. Therefore, $A=\{0\leq t<t_0\}$, i.e. $\Sit=\cup_u\ptu$ for all $0\leq t<t_0$.

\subsubsection{Nonintersection of distinct null geodesics}

We would like to show that $t_0\geq 1$. Assume by contradiction that $0<t_0<1$. \\

Let us first show that there exist two distinct null geodesics intersecting at $t=t_0$. Assume by contradiction that there exists $\d>0$ such that no distinct null geodesics intersect on $0\leq t< t_0+\d$. Then, $u$ exists on $0\leq t< t_0+\d$ unless a caustic forms at a time $0<t_1<t_0+\d$. Assume that $t_1$ is the first such time. Then, $u$ exists on $0\leq t<t_1$ and $b$ and $\trc$ satisfy the bootstrap assumptions \eqref{boot1} \eqref{boot4} on $0\leq t<t_1$ so that \eqref{diffeoloc} holds on $0\leq t<t_1$. Now, since $\Phi_t$ is $C^1$ from the assumption \eqref{grossereg}, this implies that:
$$\det\left(D\Phi_t(D\Phi_t)^T\right)\sim 1,\,0\leq t<t_1+\d_1$$
for some $\d_1>0$. In turn, this yields the absence of caustic for $0\leq t<t_1+\d_1$ contradicting the definition of $t_1$. In particular, we obtain the absence of caustic for $0\leq t< t_0+\d$, the existence of $u$ on the same time interval, and in turn  $\Sit=\cup_u\ptu$ from section \ref{sec:surjective}. Finally, on $0\leq t<t_0+\d$, no distinct null geodesic intersect and $\Sit=\cup_u\ptu$ so that $\Phi_t$ which is both injective and surjective. This contradicts the definition of $t_0$. We conclude that there exist two distinct null geodesics that intersect at $t_0$.\\

From the previous paragraph, $u$ exists on the time interval $0\leq t<t_0$ where it satisfies $\Sit=\cup_u\ptu$ and the bootstrap assumptions \eqref{boot1}-\eqref{boot6}. Furthermore, two distinct null geodesics intersect at $t_0$. Let $(0,x_1)\neq (0,x_2)$ two points in $\Si$ such that $\kappa_{x_1}(t_0)=\kappa_{x_2}(t_0)=(t_0,x_0)$. In view of \eqref{grossereg}, there exists a coordinate chart $U\subset\mathcal{M}$ which is a neighborhood of $(t_0,x_0)$ such that relative to this coordinate system, we have:
\be\lab{coordregc2}
\norm{\gg_{\a\b}}_{C^2(U)}\lesssim M,\,\forall \a,\b=0,\dots, 3.
\ee
Now, from the Ricci equations \eqref{ricciform} we have:
\bea\lab{estDL}
&&\norm{\dd_LL}_{\li{\infty}{6}}+\norm{\dd_{\lb}L}_{\li{\infty}{6}}+\norm{\nabb L}_{\li{\infty}{6}}\\
\nn&\lesssim& \norm{\chi}_{\li{\infty}{6}}+\norm{\z}_{\li{\infty}{6}}+\norm{k}_{\li{\infty}{6}}+\norm{\nabla n}_{\li{\infty}{6}},
\eea
which together with the Sobolev embedding \eqref{sobineq} and the bootstrap assumptions \eqref{boot1}-\eqref{boot6} yields:
\be\lab{estDL1}
\ds\norm{\dd L}_{\li{\infty}{6}}\lesssim 1.
\ee
From the bootstrap assumption \eqref{boot2} and the Sobolev embedding \eqref{sobineq}, we have:
\be\lab{estDL2}
\norm{L(b)}_{\li{\infty}{6}}+\norm{\nabb b}_{\li{\infty}{6}}\lesssim 1.
\ee
We now estimate $\lb(b)$. Using the transport equation satisfied by $b$ \eqref{D4a}, the computation of $\lb(\db  )$ \eqref{D4tmu3} and the commutation formula \eqref{comm3}, we obtain the following transport equation:
\bea
\lab{estDL3}L(\lb(b)-b(\d +n^{-1}\nabla_N n))=-2b(k_{AN}-\xi_A)n^{-1}\nabla_An+2b|n^{-1}N(n)|^2\\
\nonumber -2bk_{Nm}k_N^m
+2(\xi_B-\xib_B)\nabb_Bb-2b\rho.
\eea
\eqref{estDL3} together with the Gagliado-Nirenberg inequality \eqref{eq:GNirenberg} and the bootstrap assumption \eqref{boot1}-\eqref{boot6} yields:
\be\lab{estDL4}
\begin{array}{lll}
&&\norm{\lb(b)-b(\d +n^{-1}\nabla_N n)}_{\tx{6}{\infty}}\\
&\lesssim &\norm{-2b(k_{AN}-\xi_A)n^{-1}\nabla_An+2b|n^{-1}N(n)|^2}_{\xt{6}{1}}\\
&&+\norm{-2bk_{Nm}k_N^m
+2(\xi_B-\xib_B)\nabb_Bb-2b\rho}_{\xt{6}{1}}\\
&\lesssim&\norm{k}^2_{\tx{2}{6}}+\norm{\xi}^2_{\tx{2}{6}}+\norm{\xib}^2_{\tx{2}{6}}+\norm{\nabb b}^2_{\tx{2}{6}}+\norm{\nabla n}^2_{\tx{2}{6}}+\norm{\rho}_{\tx{2}{6}}\\
&\lesssim& 1+\norm{\rho}_{\tx{2}{6}}
\end{array}
\ee
which again using the bootstrap assumptions implies:
\be\lab{estDL5}
\norm{\lb(b)}_{\li{\infty}{6}}\lesssim 1+\norm{\rho}_{\li{\infty}{6}}.
\ee
\eqref{estDL5} together with the bound \eqref{grossereg} implies:
\be\lab{estDL6}
\norm{\lb(b)}_{\li{\infty}{6}}\lesssim M.
\ee
Finally, \eqref{estDL1}, \eqref{estDL2} and \eqref{estDL6} yield:
\be\lab{estDL7}
\norm{\dd (b^{-1}L)}_{\li{\infty}{6}}\lesssim M.
\ee
In particular, the same bound holds in $L^6(\mathcal{M}\cap\{0\leq t<t_0\})$ which together with \eqref{coordregc2} implies in the coordinate chart $U$:
$$\ds\norm{\partial (b^{-1}L)}_{L^6(U\cap\{0\leq t<t_0\})}\lesssim M.$$
Together with the usual Sobolev embedding in dimension 4, this yields, in the coordinate chart $U$:
\be\lab{estDL8}
\ds\norm{b^{-1}L}_{C^{\frac{1}{12}}(U\cap\{0\leq t<t_0\})}\lesssim M.
\ee
Now, using the fact that $\kappa_{x_1}(t_0)=\kappa_{x_2}(t_0)$, and the fact that $\kappa_{x}(t)$ is continuous in $t$ from \eqref{grossereg}, we have 
$$\lim_{t\rightarrow {t_0}_-}\textrm{dist}(\kappa_{x_1}(t),\kappa_{x_2}(t))=0$$
where dist denotes the geodesic distance in $\Sit$. Together with 
\eqref{estDL8}, this implies that the distance between $b^{-1}L_{\kappa_{x_1}(t)}$ 
and $b^{-1}L_{\kappa_{x_2}(t)}$ as vectors of $\mathbb{R}^4$ in the 
coordinate chart $U$ converges to 0 as $t\rightarrow {t_0}_-$. As 
$b^{-1}L_{\kappa_{x_j}(t)}=\kappa'_{x_j}(t)$ for $0\leq t<t_0$ and $j=1, 2$ by \eqref{defnullgeod1}, and since 
$\kappa'_{x}(t)$ is continuous in $t$ from \eqref{grossereg}, we conclude 
that $\kappa_{x_1}'(t_0)=\kappa_{x_2}'(t_0)$. From the classical uniqueness result for ODEs, we deduce that $\kappa_{x_1}(t)=\kappa_{x_2}(t)$ for all $t$. 
In particular, taking $t=0$, we obtain $(0,x_1)=(0,x_2)$ which yields a contradiction. \\

Finally, we have proved that $t_0\geq 1$. In particular, we have:
\be\lab{conclusionfoliation}
\begin{array}{l}
\textrm{On }0\leq t\leq 1, \textrm{ there are no caustics and no intersection of distinct null geodesics.} \\
\textrm{In particular, }u\textrm{ exists on }0\leq t\leq 1\textrm{ and the bootstrap assumption \eqref{boot1}-\eqref{boot6} hold.}\\ 
\textrm{Furthermore, }\Sit=\cup_u\ptu\textrm{ for all }0\leq t\leq 1. 
\end{array}
\ee

\subsection{Coordinate systems on $\Sit$ and $\ptu$}\lab{sec:coordptusit}

\subsubsection{A global coordinate system on $\ptu$}\lab{sec:coord}

\begin{lemma}\lab{lemm:coord}
There exists a global coordinate system $x'$ on $\ptu$ satisfying:
\be\lab{eq:coordchartbis}
(1-O(\ep))|\xi|^2\leq \gamma_{AB}(p)\xi^A\xi^B\leq (1+O(\ep))|\xi|^2, \qquad \mbox{uniformly for  all }
\,\, p\in\ptu,
\ee
and the Christoffel symbols $\Gamma^A_{BC}$ of the coordinate system verify:
\be\lab{eq:gammaL2bis}
\sum_{A,B,C}\int_{\ptu}|\Gamma^A_{BC}|^2 dx^1dx^2\lesssim\ep.
\end{equation}
\end{lemma}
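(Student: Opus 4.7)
The plan is to construct the coordinate system on $\ptu$ by transporting a global coordinate chart from $P_{0,u}$ along the null generators of $\H_u$. In step \textbf{C1} (see \cite{param1}), such a chart $x'$ on $P_{0,u}$ is already constructed satisfying the analogs of \eqref{eq:coordchartbis} and \eqref{eq:gammaL2bis} at $t=0$. Since $\Phi_t:P_{0,u}\to\ptu$ (defined in section \ref{sec:nocaustic}) is a global $C^1$ diffeomorphism for every $0\le t\le 1$, pulling back $x'$ via $\Phi_t^{-1}$ yields a global coordinate system on $\ptu$. Equivalently, in the coordinates $(t,x')$ on $\H_u$, the coordinate vectors $\pr_{x'^A}$ remain tangent to $\ptu$ at each time and satisfy $[\pr_t,\pr_{x'^A}]=0$, since $\pr_t=b^{-1}L=L'$ is the null generator.

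Using the Ricci equations \eqref{ricciform} and the identity $\gg(L,\pr_{x'^B})=0$, one obtains $\gg(\dd_{\pr_{x'^A}}L,\pr_{x'^B})=\chi(\pr_{x'^A},\pr_{x'^B})$, so the coordinate components of $\ga$ obey the transport equation
\be
\pr_t\ga_{AB}(t,x')=2b^{-1}\chi_{AB}(t,x')=b^{-1}\trc\,\ga_{AB}+2b^{-1}\hch_{AB}.
\ee
Integrating in $t$, the estimate \eqref{eq:coordchartbis} reduces to (i) the initial bound for $\ga(0,\cdot)$ provided by step \textbf{C1}, (ii) the $L^\infty(\H_u)$ bootstrap \eqref{boot4} on $\trc$, and (iii) a uniform $L^\infty_{x'}$ bound on $\int_0^t b^{-1}\hch_{AB}\,ds$. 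The Christoffel estimate \eqref{eq:gammaL2bis} will then follow by commuting $\pr_C$ with the above transport equation to obtain a transport equation for $\pr_C\ga_{AB}$, integrating in $t$, and combining the initial $L^2$ bound on the Christoffel symbols on $P_{0,u}$ with the bootstrap estimates on $\nabb b$, $\nabb\trc\in\xt{2}{\infty}$ and $\nabb\hch\in L^2(\H_u)$ from \eqref{boot2}--\eqref{boot5}.

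The main obstacle is step (iii): the bootstrap bounds \eqref{boot5}, $\hch\in\xt{2}{\infty}$ and $\no(\hch)\les\ep$, do not directly yield pointwise control on $\hch$. The key observation is that $\ga^{AB}\hch_{AB}=0$, so taking the trace of the evolution equation gives $\pr_t\log\det\ga=2b^{-1}\trc$, and hence
\be
\frac{\det\ga(t,x')}{\det\ga(0,x')}=\exp\!\left(2\int_0^t b^{-1}\trc\,ds\right)=1+O(\ep)
\ee
uniformly in $x'$. The remaining traceless deviation of $\ga(t,\cdot)$ from $\ga(0,\cdot)$ will be handled by combining the Codazzi equation \eqref{Codaz}, which expresses $\nabb\hch$ in terms of $\nabb\trc$, $\kep$ and $\b$ (and hence controls $\nabb\hch$ in $L^\infty_uL^2(\H_u)$), with Sobolev embeddings on $\ptu$ from section \ref{sec:calcineq} and a nonlinear ODE argument for the eigenvalues of $\ga(t,\cdot)$ in the basis diagonalizing $\ga(0,\cdot)$, the two eigenvalues being pinned down by the determinant identity together with the trace evolution. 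This closes the $L^\infty_{x'}$ bound and completes the proof.
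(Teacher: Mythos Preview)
Your overall strategy — transport the $P_{0,u}$ chart along null generators and integrate the resulting transport equation for $\gamma_{AB}$ — is exactly the paper's. There is one minor error and one genuine gap.

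Minor: when the parameter is the time function $t$, the coordinate vector along the null generators is $\partial_t=nL$ (since $L(t)=n^{-1}$), not $b^{-1}L$; thus the paper's equation reads $\partial_t\gamma_{AB}=2n\chi_{AB}$. This is inconsequential since $n,b=1+O(\epsilon)$.

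The gap is your step~(iii). You read the bootstrap on $\hat\chi$ as $L^2_{x'}L^\infty_t$ and conclude that no pointwise-in-$x'$ control of $\int_0^t\hat\chi\,ds$ is available, which forces you into a workaround. In fact the trace bound along null generators,
\[
\sup_{x'}\Big(\int_0^1|\hat\chi(s,x')|^2\,ds\Big)^{1/2}\lesssim\epsilon,
\]
is precisely what the paper bootstraps and uses: the proof of the lemma invokes $\|\chi\|_{\xt{\infty}{2}}$ directly (this is also what is improved in \eqref{comptg1}). With that bound, \eqref{eq:coordchartbis} is immediate from integrating $\partial_t\gamma_{AB}=2n\chi_{AB}$. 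Your fallback via the determinant identity plus an eigenvalue ODE does not close: knowing $\det\gamma=\lambda_1\lambda_2\approx 1$ does not prevent one eigenvalue from growing and the other shrinking unless you control $\lambda_1+\lambda_2$, and the evolution of the Euclidean trace of $\gamma$ involves $\hat\chi$ pointwise — exactly the quantity you sought to avoid. Codazzi only gives $\nabb\hat\chi\in L^2(\H_u)$, which is already contained in $\no(\hat\chi)$, and the Sobolev embeddings of section~\ref{sec:calcineq} presuppose \eqref{eq:coordchart} (the estimate being proved), so they cannot be invoked here.

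For \eqref{eq:gammaL2bis} your sketch is correct and matches the paper: differentiate the transport equation, estimate in $L^\infty_tL^2_{x'}$ using $\|\nabb\chi\|_{L^2(\H_u)}$, and absorb the $(\partial\gamma)\cdot\chi$ term via $\|\chi\|_{\xt{\infty}{2}}\|\partial\gamma\|_{L^2(\H_u)}$.
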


\begin{remark}
Lemma \ref{lemm:coord} provides the existence of a global coordinate system on $\ptu$ satisfying assumptions \eqref{eq:coordchart} and \eqref{eq:gammaL2}. Thus, we may use the results of sections \ref{sec:calcineq} and \ref{sec:geompal} in the rest of the paper.
\end{remark}

\begin{proof}:\quad In {\bf step B1}, we have constructed a global coordinate system $x'=(x^1,x^2)$ on $\pou$ (see \cite{param1}). By transporting this coordinate system along the null geodesics generated by $L$, we obtain a coordinate system $x'$ of $\ptu$, and a coordinate system $(t,x')$ of $\H$. Let $\ga_t$ denote the restriction of $\gg$ to $\ptu$. We claim that relative to the coordinates $(t,x')$ on $\H$,  the metric $\ga_t$ verifies:
\be\lab{eq:transportga}
\frac{d}{dt}\ga_{AB}=2n\chi_{AB}.
\end{equation}

 Indeed  relative to the coordinates
$t,x'$ on $\H$  we have $nL=\frac{\partial}{\partial t}$ and  since
 $[\frac{\partial}{\partial t},\frac{\partial}{\partial x^A}]=0 $ we infer from
$\nab_{nL}\ga=0$, and $\ga_{AB}=\ga(\frac{\partial}{\partial x^A}, \frac{\partial}{\partial {x}^B})$,
\bee
0&=&(\nab_{nL}\ga)(\frac{\partial}{\partial {x}^A}, \frac{\partial}{\partial {x}^B})=
\frac{d}{dt}\ga_{AB}-n\ga(\nab_{\frac{\partial}{\partial t}}\frac{\partial}{\partial {x}^A}, 
\frac{\partial}{\partial {x}^B})-n\ga(\frac{\partial}{\partial {x}^A}, 
\nab_{\frac{\partial}{\partial t}}\frac{\partial}{\partial {x}^B})\\
&=&\frac{d}{dt}\ga_{AB}-n\ga(\nab_{\frac{\partial}{\partial {x}^A}} L, 
\frac{\partial}{\partial {x}^B})-n\ga(\frac{\partial}{\partial {x}^A}, 
\nab_{\frac{\partial}{\partial {x}^B}}L )\\
&=&\frac{d}{dt}\ga_{AB}-2n\chi_{AB}
\eee
as desired.

Now, using the bootstrap assumptions \eqref{boot1} and \eqref{boot4} \eqref{boot5}, we have $|n-1|\leq \half$ and $\norm{\chi}_{\xt{\infty}{2}}\leq D\ep$. Together with \eqref{eq:transportga} and the fact that \eqref{eq:coordchartbis} is satisfied on $\pou$, this yields \eqref{eq:coordchartbis}.

Differentiating \eqref{eq:transportga} and using the fact that derivatives $\frac{\partial}{\pr x^A}$ commute with $\frac{d}{dt}$, we obtain:
\bee
\frac{d}{dt}\pr_C\ga_{AB}& =& 2\nab_C(n)\chi_{AB}+2n\pr_C\chi_{AB}
\\
&=& 2\nab_C(n)\chi_{AB}+2n\nab_C\chi_{AB}+(\pr\ga)\cdot\chi
\eee
with $(\pr\ga)\cdot\chi$ denoting sum of  terms involving
only products between derivatives of the metric coefficients
and components of $\chi$.
Therefore, using the bootstrap assumptions \eqref{boot1} and \eqref{boot4} \eqref{boot5}, we obtain:
\bee
\norm{\pr\ga}_{\tx{\infty}{2}}&\lesssim& \norm{\nabb n}_{\lh{4}}\norm{\chi}_{\lh{4}}+\norm{n}_{\lh{\infty}}\norm{\nabb\chi}_{\lh{2}}+\norm{\chi}_{\xt{\infty}{2}}\norm{\pr\ga}_{\lh{2}}\\
&\lesssim & \ep+\ep\norm{\pr\ga}_{\tx{\infty}{2}},
\eee
which yields \eqref{eq:gammaL2bis}. This concludes the proof of lemma \ref{lemm:coord}.
\end{proof}

\begin{remark}\label{rem:volumega}
Denoting $|\ga|=\det(\ga_{AB})$, we obtain from \eqref{eq:transportga}:
$$\frac{d}{dt}\log |\ga|=\ga^{AB}\frac{d}{dt}\ga_{AB}=2n\trc\quad$$
or,
\be\lab{eq:volumega}
\frac{d}{dt}\sqrt{|\ga|}=\trc\sqrt{|\ga|}.
\end{equation}
Now,  relative to the coordinates $t, x^1, x^2$, 
$\int_{\ptu} f\dmt=\int \int f\sqrt{|\ga|}\,  dx^1dx^2$, therefore,
$$\frac{d}{dt}\int_{\ptu} f\dmt= 
\int \int \frac{d}{dt}(f\sqrt{|\ga|})\,  dx^1dx^2=
\int_{\ptu}\big(\frac{d}{dt} f +n\trc f\big)\dmt
$$
which proves \eqref{du}.
\end{remark}

\begin{remark}
Since the global coordinate system $x'$ on $\ptu$ is obtained by transporting the coordinate system on $\pou$ along the null geodesics generated by $L$, it requires in particular that null geodesics generating $\H$ have no conjugate points, and that two distinct null geodesics do not intersect. This fact has been proved in section \ref{sec:nointersec} (see \eqref{conclusionfoliation}).
\end{remark}

\subsubsection{A global coordinate system on $\Sit$}\lab{sec:globalcoordsit}

Recall that we have constructed a global coordinate system on $\ptu$ in section \ref{sec:coord}. Let us denote $x'$ such a coordinate system. We obtain a global coordinate system on $\Sit$ as follows. First, recall from \eqref{conclusionfoliation} that $\Sit=\cup\ptu$ so that $u$ is defined on $\Sit$. To any  $p\in\Sit$, we associate the coordinates $(u(p),x'(p))$ where $u(p)$ is the value of the optical function $u$ at $p$, and $x'(p)$ are the coordinate of $p$ in the coordinate system of $\ptu$ constructed in section \ref{sec:coord}. In this coordinate system, the metric $g_t$ on $\Sit$ (i.e. the restriction of $\gg$ on $\Sit$) takes the following form:
\be\label{eq:coordchartsit0}
g_t=\left(\begin{array}{ll}
b^{-2} & 0\\
0& \ga
\end{array}\right),
\ee
where $\ga$ is the induced metric on $\ptu$. Together with the estimate \eqref{boot1} for $b$ and \eqref{eq:coordchartbis} for $\ga$, we obtain
$$\left(\frac{10}{11}+O(\ep)\right)|\xi|^2\leq (g_t)_{ij}(p)\xi^i\xi^j\leq \left(\frac{11}{10}+O(\ep)\right)|\xi|^2,$$
and thus, for $\ep>0$ small enough, we deduce
\be\label{eq:coordchartsit}
\frac{5}{6}|\xi|^2\leq (g_t)_{ij}(p)\xi^i\xi^j\leq \frac{6}{5}|\xi|^2.
\ee
This coordinate system allows us in particular to get a lower bound on the volume radius of the Riemannian manifold $\Sit$. We recall below the definition of the volume radius on a general Riemannian manifold $M$.

\begin{definition}
Let $B_r(p)$ denote the geodesic ball of center $p$ and radius $r$. The volume radius $r_{vol}(p,r)$ at a point $p\in M$ and scales $\leq r$ is defined by
$$r_{vol}(p,r)=\inf_{r'\leq r}\frac{|B_{r'}(p)|}{r^3},$$
with $|B_r|$ the volume of $B_r$ relative to the metric on $M$. The volume radius $r_{vol}(M,r)$ of $M$ on scales $\leq r$ is the infimum of $r_{vol}(p,r)$ over all points $p\in M$.
\end{definition}

Denote by $B^c_r(p)$ the euclidean ball of center $p$ and radius $r$ in the coordinate system \eqref{eq:coordchartsit} of $\Sit$. Then, clearly $B^c_{\frac{5r}{6}}(p)\subset B_r(p)$. Thus, we obtain a lower bound for any $p\in\Sit$:
$$|B_r(p)|\geq \left|B^c_{\frac{5r}{6}}(p)\right| =\int_{B^c_{\frac{5r}{6}}(p)}\sqrt{|g_t|}dudx'\geq \frac{5}{6}\left|B^c_{\frac{5r}{6}}(p)\right|\geq \left(\frac{5}{6}\right)^4r^3,$$
which yields the following lower bound on the volume radius of $\Sit$ at scales $\leq 1$:
\be\label{volradbound}
r_{vol}(\Sit,1)\geq \left(\frac{5}{6}\right)^4. 
\ee

\subsubsection{Harmonic coordinates on $\Sit$}\lab{sec:coordharm}

We will need a second coordinate system on $\Sit$ since the coordinate system in \eqref{eq:coordchartsit} is not regular enough for some of the applications we have in mind. Indeed,  
we only control some Christoffel symbols in this coordinate system (see for example \eqref{eq:gammaL2bis}), but no second order derivative of the metric coefficients. The second coordinate system we have in mind are the harmonic coordinates. To obtain an appropriate covering of $\Sit$ by harmonic coordinates, we rely on the following general result based on Cheeger-Gromov convergence of Riemannian manifolds, see \cite{An} or Theorem 5.4 in \cite{Pe}. 

\begin{theorem}\label{th:coordharm}
Given $c_1>0, c_2>0, c_3>0$, there exists $r_0>0$ such that any 3-dimensional, complete, Riemannian manifold $(M,g)$ with $\norm{R}_{L^2(M)}\leq c_1$ and volume radius at scales $\leq 1$ bounded from below by $c_2$, i.e. $r_{vol}(M,1)\geq c_2$, verifies the following property:

Every geodesic ball $B_r(p)$ with $p\in M$ and $r\leq r_0$ admits a system of harmonic coordinates $x=(x_1,x_2,x_3)$ relative to which we have
\be\label{coorharmth1}
(1+c_3)^{-1}\d_{ij}\leq g_{ij}\leq (1+c_3)\d_{ij},
\ee
and
\be\label{coorharmth2}
r\int_{B_r(p)}|\partial^2g_{ij}|^2\sqrt{|g|}dx\leq c_3.
\ee
\end{theorem}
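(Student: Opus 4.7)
The plan is to proceed by contradiction via Cheeger--Gromov convergence, in the spirit of Anderson's harmonic radius estimates \cite{An} and Petersen's exposition \cite{Pe}. Suppose that no $r_0=r_0(c_1,c_2,c_3)>0$ exists. Then one can extract a sequence of complete 3-dimensional Riemannian manifolds $(M_k,g_k)$ satisfying $\|R_k\|_{L^2(M_k)}\le c_1$ and $r_{vol}(M_k,1)\ge c_2$, together with points $p_k\in M_k$ and radii $r_k\to 0$, such that no harmonic coordinate system on $B_{r_k}(p_k)$ meets the conclusions \eqref{coorharmth1}--\eqref{coorharmth2}.

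The next step is to rescale: set $\tilde g_k = r_k^{-2} g_k$, so that the ball $B_{r_k}(p_k)\subset (M_k,g_k)$ becomes the unit ball $\tilde B_1(p_k)\subset (M_k,\tilde g_k)$. Under this conformal rescaling (with constant conformal factor), the $L^2$ norm of curvature in dimension 3 transforms by a positive power of $r_k$; precisely $\|\tilde R_k\|_{L^2(M_k,\tilde g_k)}^2 = r_k\|R_k\|_{L^2(M_k,g_k)}^2\to 0$, while the volume radius, being scale-invariant in the right sense, satisfies $r_{vol}((M_k,\tilde g_k),1)\ge c_2$. Hence the rescaled sequence has vanishing $L^2$ curvature and a uniform non-collapsing bound.

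The heart of the argument is a Cheeger--Gromov type compactness theorem in the $L^2$-curvature/non-collapsing regime: up to passing to a subsequence and diffeomorphism, the pointed spaces $(M_k,\tilde g_k,p_k)$ converge in the $C^{1,\alpha}\cap W^{2,2}$ topology, on the unit ball, to a limit Riemannian manifold $(M_\infty,g_\infty,p_\infty)$ whose curvature satisfies $\|R_\infty\|_{L^2}=0$ and whose volume radius at scale one is at least $c_2$. The limit is therefore flat (Ricci-flat plus $W^{2,2}$ regularity of the metric in harmonic charts is enough to conclude the curvature tensor vanishes identically) and non-collapsed, so $B_1(p_\infty)$ is isometric to a ball in Euclidean $\mathbb R^3$. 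On this Euclidean ball the standard Cartesian coordinates are harmonic and satisfy \eqref{coorharmth1}--\eqref{coorharmth2} with constant $0$ in place of $c_3$. The hard step is precisely this compactness and regularity in the $L^2$ curvature setting, because classical Cheeger--Gromov requires $L^p$ curvature with $p>n/2$; one must instead solve a harmonic chart equation $\Delta_{\tilde g_k} x^i =0$ with elliptic estimates that are borderline for $L^2$ curvature, and use Moser/De Giorgi style iteration together with the non-collapsing to obtain uniform $C^{1,\alpha}$ bounds on the harmonic coordinates of $(M_k,\tilde g_k)$.

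Finally, one transports back: the harmonic coordinates on $(M_\infty,g_\infty)$ pull back under the approximating diffeomorphisms to harmonic coordinates on $\tilde B_1(p_k)$ for all sufficiently large $k$, and by $C^{1,\alpha}\cap W^{2,2}$ convergence these satisfy \eqref{coorharmth1} and $\int_{\tilde B_1(p_k)}|\partial^2 \tilde g_k|^2\sqrt{|\tilde g_k|}\,dx\le c_3$. Rescaling back to $g_k$, the factor of $r_k$ in front of the integral in \eqref{coorharmth2} is exactly the weight that restores scale invariance of the left-hand side, so \eqref{coorharmth2} holds on $B_{r_k}(p_k)$ for $k$ large, contradicting our assumption and completing the proof.
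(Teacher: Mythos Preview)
The paper does not actually prove Theorem \ref{th:coordharm}; it simply quotes it as a known result, citing Anderson \cite{An} and Theorem~5.4 of Petersen \cite{Pe}. Your contradiction argument via rescaling and Cheeger--Gromov convergence is precisely the standard route taken in those references, and your scaling computations (in particular $\|\tilde R_k\|_{L^2}^2=r_k\|R_k\|_{L^2}^2\to 0$ and the appearance of the factor $r$ in \eqref{coorharmth2}) are correct.

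There is, however, one point of confusion that you should correct. You write that ``classical Cheeger--Gromov requires $L^p$ curvature with $p>n/2$'' and that the $L^2$ setting here is borderline, requiring ad hoc elliptic arguments. But in dimension $n=3$ we have $n/2=3/2<2$, so an $L^2$ bound on curvature is strictly above the critical threshold, and the classical Anderson--Cheeger theory applies directly to give $C^{1,\alpha}\cap W^{2,2}$ convergence in harmonic coordinates. The compactness step is therefore not nearly as delicate as you suggest; no Moser/De Giorgi iteration beyond the standard harmonic-radius estimate is needed. With that clarification, your outline matches the argument the paper is invoking.
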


To apply Theorem \ref{th:coordharm}, we need to bound the curvature tensor $R_t$ on $\Sit$ in $\lsit{\infty}{2}$. Since $\Sit$ has dimension 3, it is enough to bound its Ricci tensor. Now, we have the following formula relating the Ricci tensor on $\Sit$ to the curvature tensor $\rr$ on $\mathcal M$ and $k$:
$$(R_t)_{ij}=k_{il}k^l_j+\rr_{iTjT}$$
which yields:
\be\label{curvriccisit}
\norm{R_t}_{\lsit{\infty}{2}}\leq \norm{\rr}_{\lsit{\infty}{2}}+\no(k)^2.
\ee
The curvature bound \eqref{curvflux1}, the bootstrap assumption \eqref{boot3} and \eqref{curvriccisit} imply:
\be\label{curvriccisit1}
\norm{R_t}_{\lsit{\infty}{2}}\lesssim \ep.
\ee
Let $\d>0$. \eqref{curvriccisit1} together with the volume lower bound \eqref{volradbound} and Theorem \eqref{th:coordharm} yields the existence of $r_0(\d)>0$ and a finite covering of $\Sit$ by geodesic balls  of radius $r_0(\d)$ such that each geodesic ball in the covering admits a system of harmonic coordinates $x=(x_1,x_2,x_3)$ relative to which we have
\be\label{coorharmth1bis}
(1+\d)^{-1}\d_{ij}\leq g_{ij}\leq (1+\d)\d_{ij},
\ee
and
\be\label{coorharmth2bis}
r_0(\d)\int_{B_{r_0}(p)}|\partial^2g_{ij}|^2\sqrt{|g|}dx\leq \d.
\ee

\begin{remark}
$\Sit$ is asymptotically flat and therefore admits a harmonic coordinates system in a neighborhood of infinity. Therefore, the problem of covering $\Sit$ with harmonic coordinates charts is reduced to a compact region which explains why we may chose finitely many harmonic coordinates charts covering $\Sit$ and satisfying \eqref{coorharmth1bis} \eqref{coorharmth2bis}. 
\end{remark}

\subsection{Bound on the Gauss curvature $K$}\lab{sec:boundgaussK}

The following proposition will be crucial to obtain useful strong Bernstein and Bochner inequalities.
\begin{proposition}\label{propK} 
Let $K$ the gauss curvature on $\ptu$. Then, $K$ satisfies the following bounds: 
\be\lab{estgauss1}
\norm{K}_{\lh{2}}\lesssim\ep
\ee 
and 
\be\lab{estgauss2}
\norm{\Lambda^{-\half}K}_{\tx{\infty}{2}}\lesssim\ep.
\ee
\end{proposition}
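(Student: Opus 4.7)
My plan is to apply the Gauss equation \eqref{gauss} together with the Ricci identities \eqref{etab} and \eqref{eq:traceeta} to write
\[
K \;=\; -\tfrac{1}{2}|\hch|^{2} - \hch\cdot\het + \tfrac{1}{4}(\trc)^{2} - \tfrac{1}{2}\trc\,\d - \r,
\]
which reduces the problem to estimating (i) quadratic expressions in the Ricci coefficients of the $u$-foliation, and (ii) the curvature scalar $\r$. The quadratic terms will all be smaller (quadratic in $\ep$) while $\r$ will be the critical linear contribution.

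For \eqref{estgauss1}, I would control each quadratic term in $L^{2}(\H_{u})$ by an $L^{\infty}_{t}L^{4}(\ptu)$ bound obtained from the $\no$ bootstrap bounds \eqref{boot3}, \eqref{boot4}, \eqref{boot5} via the Sobolev embedding \eqref{sobineq1}; this gives contributions of order $\ep^{2}$. The last term is controlled directly by the curvature flux assumption \eqref{curvflux1}, which yields $\|\r\|_{\lh{2}}\leq \rf\lesssim\ep$. Combining these delivers \eqref{estgauss1}.

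For \eqref{estgauss2}, the quadratic terms sit in $L^{\infty}_{t}L^{2}(\ptu)$ with norm $\lesssim\ep^{2}$ (again by \eqref{sobineq1} and the $L^{\infty}$ bound on $\trc$), and since $\Lambda^{-\half}$ is bounded on $L^{2}(\ptu)$, applying it preserves this control. The delicate part is $\Lambda^{-\half}\r$, because $\r$ is only controlled in $\lh{2}$ via the flux, never in $L^{\infty}_{t}L^{2}(\ptu)$ directly. The plan is to use the renormalization $\check\r := \r - \tfrac{1}{2}\hch\cdot\hchb$, which by Gauss \eqref{gauss} equals $-K - \tfrac{1}{4}\trc\,\trchb$; substituting the Ricci relation \eqref{etab} for $\hchb$ and $\trchb$, this provides a purely algebraic identity
\[
-\Lambda^{-\half} K \;=\; \Lambda^{-\half}\check\r \;+\; \tfrac{1}{4}\Lambda^{-\half}(\trc\,\trchb).
\]
The second term is clearly $O(\ep^{2})$ in $L^{\infty}_{t}L^{2}(\ptu)$. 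For $\Lambda^{-\half}\check\r$, I would combine the Bianchi identity \eqref{bianc2} for $L(\r)$ with the null structure equation \eqref{D4chih} for $\nabb_{L}\hch$; the key observation is that the $\a$-terms coming from these two equations can be arranged to cancel, leaving an equation of the schematic form $L\check\r = \divb\b + \text{(harmless)}$, where the harmless piece consists of products of quantities already controlled by the bootstrap. Then, integrating along $L$ via the transport estimate \eqref{sobineq2bis}, using the Hodge-type bound \eqref{La3} to absorb the half-derivative loss $\Lambda^{-\half}\divb\b \in L^{2}$, and handling the $[L,\Lambda^{-\half}]$ commutator through the bootstrap bounds on $\chi$, I obtain $\|\Lambda^{-\half}\check\r\|_{\tx{\infty}{2}} \lesssim \ep$, concluding \eqref{estgauss2}.

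The main obstacle will be step (ii) for \eqref{estgauss2}: arranging the cancellation of the $\a$-contributions between the Bianchi and null structure equations, and controlling the commutator $[L,\Lambda^{-\half}]$ sharply enough. The renormalization together with the fact that $\Lambda^{-\half}$ loses only half of a derivative (perfectly matched against $\divb\b$ with $\b\in\lh{2}$) is what makes the estimate barely close at this critical regularity.
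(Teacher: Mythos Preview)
Your argument for \eqref{estgauss1} is correct and matches the paper. For \eqref{estgauss2}, however, there is a genuine gap at the step you flag as ``barely closing.''

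The claim that \eqref{La3} yields $\Lambda^{-1/2}\divb\beta \in L^{2}$ is incorrect: \eqref{La3} asserts that $\Lambda^{-1}\nabb$ (a full derivative gain) is bounded on $L^{2}(\ptu)$, not $\Lambda^{-1/2}\nabb$. Since the only available control on $\beta$ is $\|\beta\|_{\lh{2}}\leq\ep$ from the flux, the quantity $\Lambda^{-1/2}\divb\beta$ is morally $\Lambda^{1/2}\beta$, which lies half a derivative beyond what the data provide. Hence the transport estimate \eqref{sobineq2bis} applied directly to $\Lambda^{-1/2}\check\rho$ cannot close: the right-hand side $\|\dd_{L}(\Lambda^{-1/2}\check\rho)\|_{\lh{2}}$ is not finite at this regularity.

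The paper's proof avoids this by working at the Littlewood--Paley level and exploiting the \emph{bilinear} structure of the sharper trace inequality \eqref{ad27bis}. One estimates $\sum_{j}2^{-j}\|P_{j}\rho\|_{\tx{\infty}{2}}^{2}$ via $2^{-j}\int\|P_{j}\rho\|_{\lpt{2}}\|nLP_{j}\rho\|_{\lpt{2}}$; after Cauchy--Schwarz in $j$, the \emph{full} weight $2^{-2j}$ (i.e.\ $\Lambda^{-1}$, not $\Lambda^{-1/2}$) lands on $P_{j}(nL\rho)$, and this matches exactly the $\divb\beta$ structure of the Bianchi identity \eqref{bianc2} through \eqref{La3}--\eqref{La5}. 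Two further points you would have to address: first, the $\check\rho$ renormalization and the $\alpha$-cancellation are correct (they appear later as \eqref{bianccheck}) but are unnecessary here, since the term $\hchb\cdot\alpha$ in \eqref{bianc2} already sits in $\tx{2}{4/3}\hookrightarrow\Lambda^{-1}(L^{2})$; second, the heat-flow commutator $[nL,P_{j}]\rho$ (your $[L,\Lambda^{-1/2}]$) is controlled via energy estimates that require a Bochner inequality, hence a bootstrap assumption on $\|\Lambda^{-1/2}K\|_{\tx{\infty}{2}}$ itself --- a circularity your outline does not set up.
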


The proof of Proposition \ref{propK} is postponed to section \ref{sec:propK}. The following consequence of Proposition \ref{propK} will be useful in the sequel.  
Proposition \ref{propK} and \eqref{La8} with the choice $a=1/2$ imply:
\begin{equation}\label{ad65}
\norm{K_{\frac{1}{2}}}_{L^\infty(0,1)}=\norm{\La^{-\frac{1}{2}}K}_{\tx{\infty}{2}}\lesssim\ep,
\end{equation}
where $K_{1/2}$ has been defined in \eqref{La7}. Together with \eqref{eq:strongbernscalar} and \eqref{eq:strong-Bern-0}  with the choice $\gamma=1/2$, we obtain for any scalar function $f$ on $\ptu$ and any $j\ge 0$ the following sharp Bernstein inequality:
\bea
\|P_j f\|_{\lpt{\infty}}&\lesssim & 2^j\|f\|_{\lpt{2}},\label{eq:strongbernscalarbis}\\
\|P_{<0} f\|_{\lpt{\infty}}&\lesssim &  
 \|f\|_{\lpt{2}}.\label{eq:strong-Bern-0bis}
\eea

Also, \eqref{ad65} and \eqref{eq:Bochconseq} with the choice $\gamma=1/2$ imply the following Bochner inequality:
\begin{equation}\label{eq:Bochconseqbis}
\int_{\ptu} |\nabb^2 f|^2\lesssim \int_{\ptu} |\lap f|^2  + \ep\int_{\ptu} |\nabb f|^2.
\end{equation}

Finally, using the Gagliardo-Nirenberg inequality \eqref{eq:GNirenberg} and \eqref{eq:Bochconseqbis}, 
we obtain for any $2\leq p<+\infty$, any $j\geq 0$, and any scalar function $f$:
\bea
\lab{chat1}\norm{\nabb P_jf}_{\lpt{p}}&\les& \norm{\nabb^2P_jf}^{1-\frac{2}{p}}_{\lpt{2}}\norm{\nabb P_jf}^{\frac{2}{p}}_{\lpt{2}}\\
\nn&\les & (\norm{\lap P_jf}_{\lpt{2}}+\norm{\nabb P_jf}_{\lpt{2}})^{1-\frac{2}{p}}2^{\frac{2j}{p}}\norm{f}_{\lpt{2}}^{\frac{2}{p}}\\
\nn&\les & 2^{2(1-\frac{1}{p})j}\norm{f}_{\lpt{2}}.
\eea
Taking the dual of \eqref{chat1}, we obtain for any $1<p\leq 2$, any $j\geq 0$, and any $\ptu$-tangent 1-form $F$:
\be\lab{chat2}
\norm{P_j\divb(F)}_{\lpt{p}}\les 2^{\frac{2j}{p}}\norm{f}_{\lpt{2}}.
\ee

\begin{remark}
\eqref{eq:strongbernscalarbis} and \eqref{eq:strong-Bern-0bis} only hold for scalar functions $f$ on $\ptu$. For tensors $F$ on $\ptu$, and for arbitrary $2\leq p<+\infty$, we have the following sharp Bernstein inequality (see \cite{LP} for the proof):
\bea
\|P_j F\|_{\lpt{\infty}}&\lesssim & 2^j(1+2^{-\frac{j}{p}}\norm{K}^{\frac{1}{p}}_{\lpt{2}}+2^{-\frac{j}{p-1}}\norm{K}^{\frac{1}{p-1}}_{\lpt{2}})\|F\|_{\lpt{2}},\label{eq:strongberntensor}\\
\|P_{<0} F\|_{\lpt{\infty}}&\lesssim &  
 (1+\norm{K}^{\frac{1}{p}}_{\lpt{2}}+\norm{K}^{\frac{1}{p-1}}_{\lpt{2}})\|F\|_{\lpt{2}}.\label{eq:strong-Bern-0tensor}
\eea
\end{remark}

\subsection{Estimates for the lapse $n$}\lab{sec:impn}

The goal of this section is to improve the estimate for $n$ in the bootstrap assumptions \eqref{boot1} \eqref{boot2}.

\subsubsection{Estimates for $k$ on $\Sit$}

We recall the definition of $E$ and $H$ used in the standard electric-magnetic decomposition of the tensor $\rr$ (see \cite{ChKl} chapter 7). We have:
\be\lab{electricmagnetic}
E_{\a\b}=\rr_{\mu\a\nu\b}T^\mu T^\nu,\,H_{\a\b}={}^*\rr_{\mu\a\nu\b}T^\mu T^\nu.
\ee
Then, $k$ satisfies the following symmetric Hodge system on $\Sit$:
\be\lab{eqksit}
\left\{\begin{array}{l}
\textrm{curl} k_{ij}=H_{ij},\\
\nabla^jk_{ij}=0,\\
\textrm{Tr}k=0,
\end{array}\right.
\ee
where curl$k_{ij}=\frac{1}{2}(\in_i^{lm}\nabla_lk_{mj}+\in_j^{lm}\nabla_lk_{mi})$. The solution $k$ of the symmetric Hodge system \eqref{eqksit} in 3 dimensions satisfies the following estimate (see \cite{ChKl} chapter 4):
\be\lab{eqksit1}
\int_{\Sit}\left(|\nabla k|^2+3(R_t)_{jl}k^{ij}k_i^l-\frac{1}{2}R_t|k|^2\right)d\Sit=\int_{\Sit}|H|^2d\Sit.
\ee
The bound \eqref{curvflux1} on $\rr$, the bound \eqref{curvriccisit} on $R_t$, the definition of $H$ \eqref{electricmagnetic} and \eqref{eqksit} yield:
\be\lab{eqksit2}
\norm{\nabla k}^2_{L^2(\Sit)}\lesssim \ep\norm{k}_{L^6(\Sit)}^2+\ep^2
\ee
which together with the Sobolev embedding \eqref{sobineqsit} implies:
\be\lab{eqksit3}
\norm{\nabla k}_{\lsit{\infty}{2}}\lesssim \ep.
\ee

\begin{remark}\lab{decayasymptot}
To obtain \eqref{eqksit2} from \eqref{eqksit1}, we rely on $L^2(\Sit)$ bounds for $R_t$ and $\rr$. This is enough on compacts, but not at infinity. Fortunately, $\Sit$ is asymptotically flat so that $R_t$ and $\rr$ decay at least like $r^{-3}$ at infinity which is fast enough to obtain \eqref{eqksit2}. Furthermore, the fact that $\Sit$ is asymptotically flat also implies that $k$ decays at least like $r^{-2}$ at infinity which together with the Sobolev embedding \eqref{sobineqsit} and the estimate \eqref{eqksit3} yields:
\be\lab{eqksit4}
\norm{k}_{\lsit{\infty}{2}}\lesssim \ep.
\ee 
\end{remark}

\subsubsection{Improvement of the bootstrap assumptions on $n$}

We first improve the $L^\infty$ bound for $n-1$. Using the Sobolev embedding \eqref{sobineqsit}, \eqref{sobinftysit} and the consequence of the Bochner identity \eqref{prop:bochsit}, we have:
\bee
\norm{n-1}_{L^\infty(\mathcal{M})}&\lesssim& \norm{\nabla n}_{\lsit{\infty}{6}}+ \norm{n-1}_{\lsit{\infty}{6}}\\
&\lesssim& \norm{\nabla^2n}_{\lsit{\infty}{2}}+ \norm{\nabla n}_{\lsit{\infty}{2}}\\
&\lesssim& \norm{\Delta n}_{\lsit{\infty}{2}}+ \norm{\nabla n}_{\lsit{\infty}{2}}.
\eee
Together with the equation of the lapse \eqref{lapsen1} on $\Sit$, the bootstrap assumption \eqref{boot1}  , the Sobolev embedding \eqref{sobineqsit}, and the estimates \eqref{eqksit3} \eqref{eqksit4}, we obtain:
\bea
\lab{lapn1}\norm{n-1}_{L^\infty(\mathcal{M})}&\lesssim& \norm{n|k|^2}_{\lsit{\infty}{2}}+ \norm{\nabla n}_{\lsit{\infty}{2}}\\
\nn&\lesssim& \norm{n}_{L^\infty(\mathcal{M})}\norm{k}^2_{\lsit{\infty}{4}}+ \norm{\nabla n}_{\lsit{\infty}{2}}\\
\nn&\lesssim& \ep^2+ \norm{\nabla n}_{\lsit{\infty}{2}}
\eea
Multiplying the equation for the lapse \eqref{lapsen1} by $n-1$ on $\Sit$, integrating by parts yields:
\bee
\norm{\nabla n}_{L^2(\Sit)}^2 &=&\int_{\Sit}|k|^2n(n-1)d\Sit\lesssim \norm{k}^2_{L^2(\Sit)}\norm{n}_{L^\infty(\Sit)}\norm{n-1}_{L^\infty(\Sit)}\\
&\lesssim& D^2\ep^2\norm{n-1}_{L^\infty(\Sit)}
\eee
where we used the bootstrap assumption \eqref{boot1} and \eqref{boot3}. Together with \eqref{lapn1}, this yields:
\be\lab{lapn2}
\norm{n-1}_{L^\infty(\mathcal{M})}+\norm{\nabla n}_{\lsit{\infty}{2}} \lesssim \ep.
\ee
Furthermore, the equation for the lapse \eqref{lapsen1}, the Bochner identity \eqref{prop:bochsit},  together with the estimates \eqref{eqksit3} \eqref{eqksit4} and \eqref{lapn2} yields:
\bea
\lab{lapn3}\norm{\nabla^2 n}_{\lsit{\infty}{2}} &\lesssim&\norm{\Delta n}_{\lsit{\infty}{2}}+\norm{\nabla n}_{\lsit{\infty}{2}} \\
\nn&\lesssim&\norm{n|k|^2}_{\lsit{\infty}{2}}+\ep\\
\nn&\lesssim& \norm{n}_{L^\infty(\mathcal{M})}\norm{k}^2_{\lsit{\infty}{4}}+\ep\\
\nn&\lesssim& \ep. 
\eea
Using \eqref{hehehehe0bis}, \eqref{lapn2} and \eqref{lapn3}, we also obtain:
\be\lab{lapn3bis}
\norm{\nabla n}_{\tx{\infty}{2}} \lesssim \ep.
\ee

We differentiate the equation of the lapse \eqref{lapsen1} with respect to $\nabla$. We obtain:
\be\lab{lapsen2}
\Delta(\nabla n)=\nabla(n|k|^2)+[\Delta,\nabla]n= |k|^2\nabla n+2nk\nabla k+R_t\nabla n,
\ee
which together with the bound \eqref{curvriccisit1} on $R_t$, the Sobolev embedding \eqref{sobineqsit}, and the estimates \eqref{eqksit3}, \eqref{eqksit4}, \eqref{lapn2} and \eqref{lapn3}, yields:
\bea
\lab{lapn4}&&\norm{\Delta(\nabla n)}_{\lsit{\infty}{\frac{3}{2}}}\\
\nn&\lesssim& \norm{k}_{\lsit{\infty}{3}}^2\norm{\nabla n}_{\lsit{\infty}{2}}+\norm{n}_{L^\infty(\mathcal{M})}\norm{k}_{\lsit{\infty}{6}}\norm{\nabla k}_{\lsit{\infty}{2}}\\
\nn&&+\norm{R_t}_{\lsit{\infty}{2}}\norm{\nabla n}_{\lsit{\infty}{6}}\\
\nn&\lesssim& \ep.
\eea
\eqref{prop:bochsit2}, \eqref{lapn3} and \eqref{lapn4} imply:
\be\lab{lapn5}
\norm{\nabla^3n}_{\lsit{\infty}{\frac{3}{2}}}\lesssim \norm{\Delta(\nabla n)}_{\lsit{\infty}{\frac{3}{2}}}+\norm{\nabla^2n}_{\lsit{\infty}{2}}\lesssim\ep.
\ee

We now differentiate the equation of the lapse \eqref{lapsen1} with respect to $\dd_T$. Using the commutation formula \eqref{commsit}, we obtain:
\bea
\label{lapn6}\Delta(T(n))&=&T(\Delta n)+ [\Delta,\dd_T]n\\
\nn&=&|k|^2T(n)+nk\dd_Tk-2k\nabla^2n+2n^{-1}\nabla n\nabla T(n)+n^{-1}\Delta n T(n)\\
\nn&&-\nabla k\nabla n-2n^{-1}k\nabla n\nabla n.
\eea
We need an estimate for $\dd_Tk$. We have the following identity (see \cite{ChKl} chapter 11):
$$\dd_Tk_{ij}=-n\nabla^2n_{ij}+2n^{-1}\nabla_in\nabla_jn+(R_t)_{ij}$$
which together with the bound \eqref{curvriccisit1}, \eqref{lapn2} and \eqref{lapn3} yields:
\be\lab{lapn7}
\norm{\dd_Tk}_{\lsit{\infty}{2}}\lesssim \norm{n\nabla^2n}_{\lsit{\infty}{2}}+\norm{\nabla n}^2_{\lsit{\infty}{4}}+\norm{R_t}_{\lsit{\infty}{2}}\lesssim\ep.
\ee
We multiply \eqref{lapn6} by $T(n)$ and integrate by parts, which yields:
\bea
\label{lapn8}&&\norm{\nabla(T(n))}^2_{\lsit{\infty}{2}}\\
\nn&\lesssim&\bigg(\norm{k}^2_{\lsit{\infty}{3}}\norm{T(n)}_{\lsit{\infty}{6}}+\norm{n}_{L^{\infty}}\norm{k}_{\lsit{\infty}{3}}\norm{\dd_Tk}_{\lsit{\infty}{2}}\\
\nn&&+\norm{k}_{\lsit{\infty}{3}}\norm{\nabla^2n}_{\lsit{\infty}{2}}
+\norm{n^{-1}\nabla n}_{\lsit{\infty}{3}}\norm{\nabla T(n)}_{\lsit{\infty}{2}}\\
&&\nn +\norm{n^{-1}\Delta n}_{\lsit{\infty}{\frac{3}{2}}}\norm{T(n)}_{\lsit{\infty}{6}}+\norm{\nabla k}_{\lsit{\infty}{2}}\norm{\nabla n}_{\lsit{\infty}{3}}\\
&&\nn +\norm{n^{-1}k}_{\lsit{\infty}{2}}\norm{\nabla n}^2_{\lsit{\infty}{6}}\bigg)\norm{T(n)}_{\lsit{\infty}{6}}\\
\nn&\lesssim&\bigg(\ep+\ep\norm{T(n)}_{\lsit{\infty}{6}}+\ep\norm{\nabla T(n)}_{\lsit{\infty}{2}}\bigg)\norm{T(n)}_{\lsit{\infty}{6}}
\eea
where we used \eqref{eqksit3}, \eqref{eqksit4}, \eqref{lapn2}, \eqref{lapn3} and \eqref{lapn7} in the last inequality. \eqref{lapn8} and the Sobolev embedding \eqref{sobineqsit} imply:
\be\label{lapn9}
\norm{\nabla(T(n))}_{\lsit{\infty}{2}}\lesssim\ep.
\ee
We now estimate \eqref{lapn6} in $\lsit{\infty}{\frac{3}{2}}$:
\bea
\label{lapn10}&&\norm{\Delta(T(n))}_{\lsit{\infty}{\frac{3}{2}}}\\
\nn&\lesssim& \norm{k}^2_{\lsit{\infty}{4}}\norm{T(n)}_{\lsit{\infty}{6}}+\norm{nk}_{\lsit{\infty}{6}}\norm{\dd_Tk}_{\lsit{\infty}{2}}\\
&&\nn+\norm{k}_{\lsit{\infty}{6}}\norm{\nabla^2n}_{\lsit{\infty}{2}}+\norm{n^{-1}\nabla n}_{\lsit{\infty}{6}}\norm{\nabla T(n)}_{\lsit{\infty}{2}}\\
&&\nn+\norm{n^{-1}\Delta n}_{\lsit{\infty}{2}}\norm{T(n)}_{\lsit{\infty}{6}}+\norm{\nabla k}_{\lsit{\infty}{6}}\norm{\nabla n}_{\lsit{\infty}{2}}\\
&&\nn+\norm{n^{-1}k}_{\lsit{\infty}{3}}\norm{\nabla n}_{\lsit{\infty}{6}}^2\\
&\lesssim &\nn \ep
\eea
where we used \eqref{eqksit3}, \eqref{eqksit4}, \eqref{lapn2}, \eqref{lapn3}, \eqref{lapn7} and \eqref{lapn9} in the last inequality. \eqref{prop:bochsit2}, \eqref{lapn9} and \eqref{lapn10} imply:
\be\lab{lapn11}
\norm{\nabla^2T(n)}_{\lsit{\infty}{\frac{3}{2}}}\lesssim \norm{\Delta(T(n))}_{\lsit{\infty}{\frac{3}{2}}}+\norm{\nabla T(n)}_{\lsit{\infty}{2}}\lesssim\ep.
\ee
Finally, \eqref{hehehehe0}, \eqref{lapn3}, \eqref{lapn5}, \eqref{lapn9} and \eqref{lapn11} imply:
\be\lab{lapn12}
\norm{\nabla^2n}_{\tx{\infty}{2}}+\norm{\nabla T(n)}_{\tx{\infty}{2}}\lesssim \ep.
\ee
Note that \eqref{lapn2}, \eqref{lapn3bis} and \eqref{lapn12} improve the estimates for $n$ in the bootstrap assumptions \eqref{boot1} \eqref{boot2}.

\subsubsection{An $L^\infty(\mathcal{M})$ estimate for $\nab n$}

In view of the embedding \eqref{bsit}, we have
$$\norm{\nab n}_{L^\infty(\Sit)}\les \norm{\nab n}_{\bsit^{\frac{3}{2}}}.$$
Together with the estimate \eqref{bsit1}, this yields
\be\lab{jesaisplusquoimettre}
\norm{\nab n}_{L^\infty(\Sit)}\les \norm{n-1}_{\bsit^{\frac{5}{2}}}.
\ee
Now, in view of the definition of the Besov spaces $\bsit^a$ and the finite band property for $Q_j$, we have
$$\norm{n-1}_{\bsit^{\frac{5}{2}}}\les \norm{\Delta n}_{\bsit^{\frac{1}{2}}}.$$
Injecting the equation for the lapse \eqref{lapsen1} in the right-hand side, we obtain
$$\norm{n-1}_{\bsit^{\frac{5}{2}}}\les \norm{n|k|^2}_{\bsit^{\frac{1}{2}}}.$$
Using the product estimates \eqref{bsit6} and \eqref{bsit11}, we deduce
\bee
\norm{n-1}_{\bsit^{\frac{5}{2}}}&\les& (\norm{n}_{L^{\infty}(\mathcal{M})}+\norm{\nab n}_{\lsit{\infty}{3}})\norm{|k|^2}_{\bsit^{\frac{1}{2}}}\\
&\les& (\norm{n}_{L^{\infty}(\mathcal{M})}+\norm{\nab n}_{\lsit{\infty}{3}})(\norm{\nabla k}_{L^2(\Sit)}+\norm{k}_{L^2(\Sit)})^2.
\eee
Together with the estimates \eqref{eqksit3} and \eqref{eqksit4} for $k$, the estimates \eqref{lapn2} and \eqref{lapn3} for $n$, and the Sobolev embedding \eqref{sobineqsit}, we deduce
$$\norm{n-1}_{\bsit^{\frac{5}{2}}}\les \ep.$$
In view of \eqref{jesaisplusquoimettre}, we finally obtain
$$\norm{\nab n}_{L^\infty(\Sit)}\les\ep.$$

\subsection{Estimates for $k$ on $\mathcal{H}_u$}\lab{sec:imponk}

The goal of this section is to improve the estimate for $\no(k)$, $\norm{\ddb_{\lb}\kep}_{\lh{2}}$ and $\norm{\ddb_{\lb}\d}_{\lh{2}}$ given by the bootstrap assumption \eqref{boot3}. The improvement of $\norm{\kepb}_{\xt{\infty}{2}}$ and $\norm{\db}_{\xt{\infty}{2}}$ is postponed to section \ref{sec:tracenormk}. Note that the bootstrap assumption \eqref{boot3} yields:
\be\lab{bootconseqk}
\no(\d)+\no(\kep)+\no(\eta)\leq D\ep. 
\ee

\subsubsection{A Hodge type system on $\H_u$}

The first step is to derive a Hodge type system analog to \eqref{eqksit} on $\H$. We first recall the formula p. 106/107 in \cite{ChKl} relating the derivatives of $k$ to the derivatives of $\eta, \kep, \d$:
\bee
\nn\nabla_Nk_{NN} &=&\nabla_N\d+2b^{-1}\nabb b\c\kep\\
\nn\nabla_Bk_{NA}&=&\nabb_B\kep_A+\frac{3}{4}\d\trt\ga_{AB}-\half\trt\het_{AB}-\het_{AC}\hth_{CB}+\frac{3}{2}\d\hth_{AB}\\
\nn\nabla_Ck_{AB}&=&\nabb_C\eta_{AB}+\th_{AC}\kep_B+\theta_{BC}\kep_A\\
\nn\nabla_Ak_{NN} &=&\nabla_A\d-2\th_{AB}\kep_B\\
\nn\nabla_Nk_{AN}&=&\nabb_N\kep_A-\frac{3}{2}\d b^{-1}\nabb_Ab+b^{-1}\nabb_Bb\het_{AB}\\
\nn\nabla_Nk_{AB}&=&\nabb_N\eta_{AB}-b^{-1}\nabb_Ab\kep_B-b^{-1}\nabb_Bb\kep_A
\eee
where $\th$ is the second fundamental form of $\ptu$ in $\Sit$. Since $L=T+N$, $\th$ is connected to the second fundamental form $k$ of $\Sit$ and the null second fundamental form $\chi$ of $\ptu$ through the formula:
\be\lab{def:theta}
\theta_{AB}=\chi_{AB}+\eta_{AB}.
\ee
Together with the Hodge system \eqref{eqksit}, we obtain:
\be\lab{hodgkh}
\begin{array}{lll}
\divb\eta_A+\nabb_N\kep_A&=&-\th_{AB}\kep_B-\trt\kep_A+\frac{3}{2}\d b^{-1}\nabb_Ab-b^{-1}\nabla_Bb\het_{AB}\\
\divb\kep+\nabla_N\d&=&-\frac{3}{2}\d\trt+\het\hth-2b^{-1}\nabb_A b\kep_A\\
\nabb_C\eta_{AB}-\nabb_B\eta_{AC}&=&\rr_{ATBC}-\th_{AC}\kep_B+\th_{AB}\kep_C\\
\nabb_N\eta_{AB}-\nabb_B\kep_A&=&\rr_{ATBN}+b^{-1}\nabb_Ab\kep_B+b^{-1}\nabb_Bb\kep_A+\frac{3}{4}\d\trt\ga_{AB}\\
&&-\half\trt\het_{AB}-\het_{AC}\hth_{CB}+\frac{3}{2}\d\hth_{AB}\\
\nabb_N\kep_A-\nabla_A\d&=&\rr_{NTAN}+\frac{3}{2}\d b^{-1}\nabb_Ab-b^{-1}\nabb_Bb\het_{AB}-2\th_{AB}\kep_B
\end{array}
\ee
In order to obtain a Hodge system on $\H$, we need to replace the derivatives in the $N$-direction with derivatives in the $L$-direction in \eqref{hodgkh}. We use the following formula for $\dd_T\d, \ddb_T\ep, \ddb_T\eta$ (see \cite{ChKl} p. 337):
\be\lab{kinTderivatives}
\begin{array}{lll}
\dd_T\d&=&-n^{-1}\nabla^2_Nn+\rho+\d^2-\z\zb+\z\kep-\zb\kep\\
\ddb_T\kep&=&-n^{-1}\nabb\nabla_Nn+\half(\bb+\b)+b^{-1}\nabb bn^{-1}\nabla_Nn-\frac{3}{2}(\z-n^{-1}\nabb n)\d\\
&&+(\z-n^{-1}\nabb n+\kep)\het+\half\d\kep\\
\ddb_T\eta&=&-n^{-1}\nabb^2n+\frac{1}{4}(\ab+\a)-\d\eta+\kep\kep-(\z-n^{-1}\nabb n)\kep
\end{array}
\ee
Now, \eqref{hodgkh} and \eqref{kinTderivatives} yield:
\be\lab{hodgkh1}
\begin{array}{lll}
\divb\eta_A+\ddb_L\kep_A&=&\half(\bb+\b)-n^{-1}\nabb_A\nabla_Nn+b^{-1}\nabb_A bn^{-1}\nabla_Nn\\
&&-\frac{3}{2}(\z_A-n^{-1}\nabb_A n)\d-\th_{AB}\kep_B-\trt\kep_A+\frac{3}{2}\d b^{-1}\nabb_Ab-b^{-1}\nabla_Bb\het_{AB}\\
\divb\kep+\dd_L\d&=& \rho-n^{-1}\nabla^2_Nn+\d^2-\z\zb+\z\kep-\zb\kep-\frac{3}{2}\d\trt+\het\hth-2b^{-1}\nabb_A b\kep_A\\
\nabb_C\eta_{AB}-\nabb_B\eta_{AC}&=&-\in_{BC}{}^*\b_A+\in_{BC}{}^*\bb_A-\th_{AC}\kep_B+\th_{AB}\kep_C\\
\ddb_L\eta_{AB}-\nabb_B\kep_A&=&\frac{1}{2}\a_{AB}-n^{-1}\nabb^2_{AB}n-\half\d\eta_{AB}+2\kep_A\kep_B-(\z_A-n^{-1}\nabb_A n)\kep_B\\
&&-(\z_B-n^{-1}\nabb_Bn)\kep_A+b^{-1}\nabb_Ab\kep_B+b^{-1}\nabb_Bb\kep_A+\frac{3}{4}\d\trt\ga_{AB}\\
&&-\half\trt\het_{AB}-\het_{AC}\hth_{CB}+\frac{3}{2}\d\hth_{AB}\\
\ddb_L\kep_A-\nabla_A\d&=&\b_A-n^{-1}\nabb_A\nabla_Nn+b^{-1}\nabb_A bn^{-1}\nabla_Nn-\frac{3}{2}(\z_A-n^{-1}\nabb_A n-b^{-1}\nabb_Ab)\d\\
&&-b^{-1}\nabb_Bb\het_{AB}-2\th_{AB}\kep_B
\end{array}
\ee
Using the curvature bound \eqref{curvflux1}, the bootstrap assumptions \eqref{boot1}-\eqref{boot6}, the bound \eqref{bootconseqk} on $\eta, \kep, \d$ together with \eqref{hodgkh1}, we obtain:
\be\lab{hodgkh2}
\norm{\divb\eta_A+\ddb_L\kep_A}_{\li{\infty}{2}}+\norm{\divb\kep+\dd_L\d}_{\li{\infty}{2}}\lesssim\kep+D^2\ep^2\lesssim\ep
\ee
and
\bea
&&\nn\norm{\nabb_C\eta_{AB}-\nabb_B\eta_{AC}}_{\li{\infty}{2}}+\norm{\ddb_L\eta_{AB}-\nabb_B\kep_A}_{\li{\infty}{2}}+\norm{\ddb_L\kep_A-\nabla_A\d}_{\li{\infty}{2}}\\
\lab{hodgkh3}&\lesssim&\kep+D^2\ep^2\lesssim\ep.
\eea

\subsubsection{Estimates for $\eta, \kep, \d$}

We start with the following identity:
\bea
\lab{hodgk4}&&\int_{\H}|\nabb_C\eta_{AB}-\nabb_B\eta_{AC}|^2+2|\ddb_L\eta_{AB}-\nabb_B\kep_A|^2+2|\ddb_L\kep_A-\nabla_A\d|^2\\
\nn&=&2\int_{\H}|\nabb\eta|^2+|\ddb_L\eta|^2+|\nabb\kep|^2+|\divb\kep|^2+2|\ddb_L\kep|+|\nabb\d|^2+|\dd_L\d|^2\\
\nn&&-2\int_{\H}\nabb_C\eta_{AB}\nabb_B\eta_{AC}-4\int_{\H}\ddb_L\eta_{AB}\nabb_B\kep_A-4\int_{\H}\ddb_L\kep_A\nabla_A\d\\
\nn&&-2\int_{\H}|\divb\kep|^2-2\int_{\H}|\ddb_L\kep|^2-2\int_{\H}|\dd_L\d|^2.
\eea
We compute the last terms in the right hand side of \eqref{hodgk4} using integration by parts and the coarea formula \eqref{coarea} on $\H$:
\bee
&&\nn-2\int_{\H}\nabb_C\eta_{AB}\nabb_B\eta_{AC}-4\int_{\H}\ddb_L\eta_{AB}\nabb_B\kep_A-4\int_{\H}\ddb_L\kep_A\nabla_A\d\\
&&\nn-2\int_{\H}|\divb\kep|^2-2\int_{\H}|\ddb_L\kep|^2-2\int_{\H}|\dd_L\d|^2\\
&=& 2\int_{\H}\eta^{AB}(\nabb_B\divb\eta_{A}+[\nabb_C,\nabb_B]\eta_{AC})
+4\int_{\H}\eta_{AB}(\nabb_B\ddb_L\kep_A+[\ddb_L,\nabb_B]\kep_A)\\
&&-4\int_{\H}(-n^{-1}L(n)-\db  +\trc)\eta_{AB}\nabb_B\kep_A-4\int_{\ptu}\eta_{AB}\nabb_B\kep_A\\
&&+4\int_{\H}\d(\ddb_L\divb\kep+[\nabb_A,\ddb_L]\kep_A)
\nn-2\int_{\H}|\divb\kep|^2-2\int_{\H}|\ddb_L\kep|^2-2\int_{\H}|\dd_L\d|^2\\
&=& -2\int_{\H}|\divb\eta+\ddb_L\kep|^2-2\int_{\H}|\divb\kep+\dd_L\d|^2
+2\int_{\H}\eta_{AB}\rr_{AB}{}^{CD}\eta_{CD}\\
&&+4\int_{\H}\eta_{AB}(\chi\nabb\eta-n^{-1}\nabb n\ddb_L\eta+(\chi\kepb+{}^*\b)\eta)
+4\int_{\H}\d(\chi\nabb\kep-n^{-1}\nabb n\ddb_L\kep+(\chi\kepb+{}^*\b)\kep)\\
&&-4\int_{\H}(-n^{-1}L(n)-\db  +\trc)\eta_{AB}\nabb_B\kep_A-4\int_{\ptu}\eta_{AB}\nabb_B\kep_A\\
&&-4\int_{\H}(-n^{-1}L(n)-\db  +\trc)\d\divb\kep+4\int_{\ptu}\d\divb\kep
\eee
Together with the curvature bound \eqref{curvflux1} and the bootstrap assumptions \eqref{boot1}-\eqref{boot6}, we obtain:
\bea
\lab{hodgk5}&&\nn-2\int_{\H}\nabb_C\eta_{AB}\nabb_B\eta_{AC}-4\int_{\H}\ddb_L\eta_{AB}\nabb_B\kep_A-4\int_{\H}\ddb_L\kep_A\nabla_A\d\\
\nn&&\nn-2\int_{\H}|\divb\kep|^2-2\int_{\H}|\ddb_L\kep|^2-2\int_{\H}|\dd_L\d|^2\\
\nn&=& -2\int_{\H}|\divb\eta+\ddb_L\kep|^2-2\int_{\H}|\divb\kep+\dd_L\d|^2-4\int_{\ptu}\eta_{AB}\nabb_B\kep_A+4\int_{\ptu}\d\divb\kep+O(D^3\ep^3).
\eea
The bounds \eqref{hodgkh2} \eqref{hodgkh3} together with \eqref{hodgk4} and \eqref{hodgk5} yield:
\be\lab{hodgk6}
\int_{\H}|\nabb\eta|^2+|\nabb\kep|^2+|\ddb_L\kep|^2+|\nabb\d|^2+|\dd_L\d|^2\lesssim \left|\int_{\ptu}\eta_{AB}\nabb_B\kep_A\right|+\left|\int_{\ptu}\d\divb\kep\right|+\ep^2.
\ee

We now state a lemma which will allow us to control the integrals over $\ptu$ in \eqref{hodgk6}.

\begin{lemma}\lab{ineqtraceptu}
Let $F$ and $G$ tensors on $\Sit$ such that $F\c\nabb G$ is a scalar. Then, we have:
\be\lab{ineqtraceptu1}
\left|\int_{\ptu}F\c\nabb G\right|\lesssim \norm{F}_{H^1(\Sit)}\norm{G}_{H^1(\Sit)}. 
\ee
\end{lemma}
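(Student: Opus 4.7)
The main idea is a sharp-trace argument: convert the $2$-dimensional integral over $\ptu_0$ into a $3$-dimensional integral over a half-space of $\Sigma_t$, then apply Cauchy--Schwarz. I would fix $u_0\in\mathbb{R}$ and set
\[
f(u)=\int_{\ptu} F\cdot\nabb G\,\dmt,
\qquad \Sigma_t^-=\Sigma_t\cap\{u<u_0\}.
\]
Using the asymptotic flatness recalled in Remark \ref{rem:noprobleminf}, one may pass to the limit $u\to-\infty$ to write $f(u_0)=\int_{-\infty}^{u_0}f'(u)\,du$. The differentiation formula \eqref{dusit} gives
\[
f'(u)=\int_{\ptu} b\bigl(\nabla_N(F\cdot\nabb G)+\trt\,F\cdot\nabb G\bigr)\dmt,
\]
and the coarea formula \eqref{coareasit} then yields
\begin{equation}\label{eq:planA}
f(u_0)=\int_{\Sigma_t^-}\bigl(\nabla_N(F\cdot\nabb G)+\trt\,F\cdot\nabb G\bigr)d\Sigma_t.
\end{equation}

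Next I would distribute $\nabla_N$ and symmetrize derivatives. Writing $\nabla_N(F\cdot\nabb G)=\nabla_N F\cdot\nabb G+F\cdot\nabla_N\nabb G$ and applying the commutator \eqref{comm4} to trade $\nabla_N\nabb G$ for $\nabb\nabla_N G$ up to terms that are algebraic in $\th$ (hence in $\chi$ and $k$ through \eqref{def:theta}), $b^{-1}\nabb b$, $\z$, and the curvature components $\b,\bb$, I obtain schematically
\[
F\cdot\nabla_N\nabb G=F\cdot\nabb\nabla_N G+(\chi+k+b^{-1}\nabb b+\z+\rr_{\text{tang}})\,F\,\nabla G.
\]
For each fixed $u$ the integrand $F\cdot\nabb\nabla_N G$ on $\ptu$ can be integrated by parts tangentially: inside $\int du$ this produces $-\divb F\cdot\nabla_N G$, with no boundary contribution because $\ptu$ has no boundary (apart from the harmless decay at spatial infinity). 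Re-applying the coarea formula, \eqref{eq:planA} becomes
\[
f(u_0)=\int_{\Sigma_t^-}\bigl(\nabla_N F\cdot\nabb G-\divb F\cdot\nabla_N G\bigr)d\Sigma_t+\mathrm{LOT},
\]
where $\mathrm{LOT}$ is a finite sum of terms of the form $\int_{\Sigma_t^-}(\text{Ricci or }\rr)\cdot F\cdot\nabla G$ and $\int_{\Sigma_t^-}(\text{Ricci})\cdot\nabla F\cdot G$ arising from the commutator and from the $\trt$ term in \eqref{eq:planA}.

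The two principal terms are immediately estimated by Cauchy--Schwarz by $\norm{\nabla F}_{L^2(\Sigma_t)}\norm{\nabla G}_{L^2(\Sigma_t)}$. For the lower-order terms I would use H\"older on $\Sigma_t$ together with the Sobolev embedding \eqref{sobineqsit}: every factor of $\chi$, $k$, $\z$, $\nabb b$ sits in $L^\infty_tL^4(\Sigma_t)$ by \eqref{boarding} applied to the already-established bootstrap bounds \eqref{boot1}--\eqref{boot6}, while $\b,\bb$ are controlled in $L^2(\Sigma_t)$ via the curvature flux assumption \eqref{curvflux1}, so
\[
|\mathrm{LOT}|\lesssim\ep\,\norm{F}_{H^1(\Sigma_t)}\norm{G}_{H^1(\Sigma_t)}.
\]
Combining all contributions gives \eqref{ineqtraceptu1}. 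The main obstacle is the bookkeeping for the commutator error: one must verify that \emph{every} Ricci coefficient appearing in \eqref{comm4} lies in $L^\infty_t L^p(\Sigma_t)$ for some $p>3$, or at worst in $L^\infty_tL^4$ paired with a Sobolev factor, so that no term requires more than one derivative on $F$ or $G$; the curvature terms $\b,\bb$ (with only $L^2(\Sigma_t)$ control) must be placed against the two $L^6(\Sigma_t)$ factors coming from $F$ and $G$ via Sobolev.
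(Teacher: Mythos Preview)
Your approach is essentially the paper's: differentiate the $\ptu$-integral in $u$ via \eqref{dusit}, convert to a $\Sigma_t$-integral by the coarea formula \eqref{coareasit}, integrate the term $F\cdot\nabb\nabla_N G$ by parts tangentially on each $\ptu$, and control the commutator $[\nabla_N,\nabb]G$ from \eqref{comm4} using $L^4(\Sigma_t)$ bounds on the Ricci coefficients together with $L^2(\Sigma_t)$ on the curvature (the paper only differs cosmetically, taking absolute values early and integrating over all of $\Sigma_t$ rather than over $\Sigma_t^-$). One small correction: for the curvature contribution $\int_{\Sigma_t}(\b+\bb)\,F\,G$ the correct H\"older split is $L^2\times L^4\times L^4$, not $L^2\times L^6\times L^6$.
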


The proof of Lemma \ref{ineqtraceptu} is postponed to section \ref{sec:gowinda}. We now use Lemma \ref{ineqtraceptu} to obtain estimates for $\eta, \kep, \d$. The bounds \eqref{eqksit3} \eqref{eqksit4} for $k$  on $\Sit$ together with \eqref{ineqtraceptu1} yield the following estimate:
$$\left|\int_{\ptu}\eta_{AB}\nabb_B\kep_A\right|+\left|\int_{\ptu}\d\divb\kep\right|\lesssim\norm{k}_{H^1(\Sit)}^2\lesssim \ep^2.$$
Together with \eqref{hodgk6}, this implies:
\be\lab{hodgk7}
\int_{\H}|\nabb\eta|^2+|\nabb\kep|^2+|\ddb_L\kep|^2+|\nabb\d|^2+|\dd_L\d|^2\lesssim \ep^2.
\ee
Using also \eqref{sobineq2bis}, we finally obtain:
\be\lab{hodgk8}
\no(\eta)+\no(\kep)+\no(\d)\lesssim \ep.
\ee
Now, in view of \eqref{kinTderivatives}, we have:
\be\lab{hodgk9}
\norm{\dd_T\d}_{\li{\infty}{2}}+\norm{\ddb_T\kep}_{\li{\infty}{2}}\lesssim \ep+D^2\ep^2\lesssim \ep
\ee
where we have used the curvature bound \eqref{curvflux1} and the bootstrap assumptions \eqref{boot1}-\eqref{boot6}. \eqref{hodgk8} and \eqref{hodgk9} yield:
\be\lab{hodgk10}
\norm{\dd_{\lb}\d}_{\li{\infty}{2}}+\norm{\ddb_{\lb}\kep}_{\li{\infty}{2}}\lesssim \ep.
\ee
\eqref{hodgk8} and \eqref{hodgk10} improve the estimate for $\no(k)$, $\norm{\ddb_{\lb}\kep}_{\li{\infty}{2}}$ and $\norm{\dd_{\lb}\d}_{\li{\infty}{2}}$ given by the bootstrap assumption \eqref{boot3}.

\subsection{Time foliation versus geodesic foliation}\lab{sec:timefolvsgeodfol}

While we work with a time foliation, we recall that the estimates corresponding to the bootstrap assumptions on $\chi$ and $\z$ have already been proved in the context of a geodesic foliation in the sequence of papers \cite{FLUX} \cite{LP} \cite{STT}. One may reprove these estimates by adapting the proofs to the context of a time foliation. However, this would be rather lengthy and we suggest here a more elegant solution which consists in translating certain estimates from the geodesic foliation to the time foliation, and in obtaining directly the rest of the estimates. More precisely, we wish to obtain the $L^\infty$ bound from $\trc$, and the trace bounds for $\hch$ and $\z$ by exploiting the corresponding estimates in the geodesic foliation. We will also obtain the trace bounds for $\d$ and $\kep$ by reducing to estimates in the geodesic foliation in section \ref{sec:tracenormk}. Finally, these trace bounds and the null structure equations will allow us to get all the remaining estimates in sections \ref{sec:impbootb} and \ref{sec:remainest}. We start by recalling some of the results obtained in the context of the geodesic foliation in the sequence of papers \cite{FLUX} \cite{LP} \cite{STT}.

\subsubsection{The case of the geodesic foliation}\lab{sec:defgeodfol}

Remember that $u$ is a solution to the eikonal equation $\gg^{\alpha\beta}\partial_\alpha u\partial_\beta u=0$ on $\mathcal{M}$.  The level hypersufaces $u(t,x,\o)=u$  of the optical function $u$ are denoted by  $\H_u$. $L'=-\gg^{\a\b}\pr_\b u \pr_\a$ is the geodesic null generator of $\H_u$. In particular, we have:
$$\dd_{L'}L'=0,\,\gg(L',L')=0.$$
Let $s$ denote its affine parameter, i.e. $L'(s)=1$. We denote by $P'_{s,u}$  the level surfaces of $s$ in $\H_u$. 

\begin{definition} A  null frame   $e'_1,e'_2,e'_3,e'_4$ at a point $p\in P'_{s,u}$ consists,
in addition to $e'_4=L'$, of {\sl  arbitrary  orthonormal}  vectors  $e'_1,e'_2$ tangent
to $P'_{s,u}$ and the unique vectorfield $e'_3=\lb'$ satisfying the relations:
$$\gg(e'_3,e'_4)=-2,\,\gg(e'_3,e'_3)=0,\,\gg(e'_3,e'_1)=0,\,\gg(e'_3,e'_2)=0.$$
\end{definition}

\begin{definition}[\textit{Ricci coefficients in the geodesic foliation}]
 Let  $e'_1,e'_2,e'_3,e'_4$ be a null frame on
$P'_{s,u}$ as above.   The following tensors on  $P'_{s,u}$ 
\be\label{chip}
\begin{array}{ll}
\chi'_{AB}=<\dd_{e'_A} e'_4,e'_B>, & \chb'_{AB}=<\dd_{e'_A} e'_3,e'_B>,\\
\z'_{A}=\half <\dd_{e'_A} e'_4,e'_3> &
\end{array}
\ee
are called the Ricci coefficients associated to the geodesic foliation.

We decompose $\chi'$ and $\chb'$ into
their  trace and traceless components.
\begin{alignat}{2}
&\trc' = \gg^{AB}\chi'_{AB},&\quad &\trchb' = \gg^{AB}\chb'_{AB},
\label{trchip}\\
&\hch'_{AB}=\chi'_{AB}-\half \trc' \gg_{AB},&\quad 
&\hchb'_{AB}=\chb'_{AB}-\half \trchb' \gg_{AB}.
\label{chihp} 
\end{alignat}
\end{definition}

\begin{definition}
The null components of the curvature tensor
$\rr$ of the space-time metric $\gg$ in the geodesic foliation are given
by:
\bea
\a'_{ab}&=&\rr(L',e'_a,L', e'_b)\,,\qquad \b'_a=\half \rr(e'_a, L',\lb', L') ,\\ \r' &=&\frac{1}{4}\rr(\lb', L', \lb',
L')\,,
\qquad
\s'=\frac{1}{4}\, ^{\star} \rr(\lb', L', \lb', L')\\
\bb'_a&=&\half \rr(e'_a,\lb',\lb', L')\,,\qquad \underline{\a'}_{ab}=\rr(\lb', e'_a,\lb', e'_b)
\eea
where $^\star \rr$ denotes the Hodge dual of $\rr$. 
\end{definition}

The following  \textit{Ricci equations} can be easily derived (see \cite{FLUX}): 
\begin{alignat}{2}
&\dd_{e'_A}e'_4=\chi'_{AB} e'_B - \z'_A e'_4, &\quad 
&\dd_{e'_A} e'_3=\chb'_{AB} e'_B + \z'_A e'_3,\nn\\
&\dd_{e'_4} e'_4 = 0, &\quad 
&\dd_{e'_4} e'_3= -2\z'_{A} e'_A, 
  \label{ricciformp} \\
&\dd_{e'_4} e'_A = \ddb_{e'_4} e'_A -
\z'_{A} e'_4,&\quad &\dd_{e'_B}e'_A = \nabb'_{e'_B} e'_A +\half \chi'_{AB}\, e'_3 +\half
 \chb'_{AB}\, e'_4\nn
\end{alignat}
where, $\ddb_{ e'_3}$, $\ddb_{e'_ 4}$ denote the 
projection on $P'_{s,u}$ of $\dd_{e'_3}$ and $\dd_{e'_4}$, and $\nabb'$
denotes the induced covariant derivative on $P'_{s,u}$.

We now recall the main estimates obtained in the sequence of papers \cite{FLUX} \cite{LP} \cite{STT}. We have:
\be\lab{estgeodesicfol}
\norm{\trc'}_{\lh{\infty}}+\norm{\hch'}_{L^{2}_{x'}L^{\infty}_s}+\norm{\z'}_{L^{2}_{x'}L^{\infty}_s}\lesssim\ep
\ee
and 
\be\lab{estgeodesicfolbis}
\norm{\chb'}_{L^2_{x'}L^\infty_s}+\no'(\chi')+\no'(\z')\lesssim\ep,
\ee
where the norm $\no'$ is given by 
$$\no'(F)=\norm{F}_{\lh{2}}+\norm{\nabb' F}_{\lh{2}} +\norm{\ddb_{L'}F}_{\lh{2}}.$$

\begin{remark}\lab{remark:equivnorm}
Note that the norm $\lh{\infty}$ does not depend on the particular foliation. Now, this is also the case for  the trace norm $L^{2}_{x'}L^{\infty}_s$. Indeed, recall the definition of the null geodesic $\kappa_x$ in Remark \ref{defnullgeod}. Then, we have:
\bee
\norm{F}_{\xt{\infty}{2}}^2=\sup_{(0,x)\in\Si}\int_0^1|F(\kappa_x(t))|^2dt=\sup_{(0,x)\in\Si}\int_0^1|F(\kappa_x(s))|^2n^{-1}b^{-1}ds\sim \norm{F}_{L^{\infty}_{x'}L^{2}_s}^2
\eee
where we used the fact that $\frac{dt}{ds}=n^{-1}b^{-1}$ and the fact that $nb\sim 1$ by the bootstrap assumption \eqref{boot1}.
\end{remark}

In the next section, we will obtain the estimates corresponding to \eqref{estgeodesicfol} in the time foliation. For now, we conclude this section by recalling the definition and properties of the Besov spaces constructed in 
the sequence of papers \cite{FLUX} \cite{LP} \cite{STT}. For $P'_{s,u}$-tangent tensors $F$ on $\H_u$, $0\leq a\leq 1$,  we introduce the Besov norms:
\bea
\|F\|_{{\BB'}^a}&=&\sum_{j\ge 0} 2^{ja} \sup_{0\le s\le 1}\|
P'_jF\|_{L^2(P'_{s,u})} +  \sup_{0\le s\le 1}\|
P'_{<0}F\|_{L^2(P'_{s,u})}    ,\label{eq:Besovnorm}\\
\|F\|_{{\PP'}^a}&=&\sum_{j\ge 0} 2^{ja} \| P'_jF\|_{L^2(\H_u)} +
\| P'_{<0}F\|_{L^2(\H_u)}\label{eq:Penrosenorm}
\eea
where $P'_j$ are the geometric Littlewood-Paley projections on the 2-surfaces $P'_{s,u}$. Using the definition of these Besov spaces, we recall another estimate obtained in the sequence of papers \cite{FLUX} \cite{LP} \cite{STT}. We have:
\be\lab{estgeodesicfolter}
\norm{\chb'}_{{\BB'}^0}\lesssim\ep.
\ee

We now recall some properties of these Besov spaces obtained in the sequence of papers \cite{FLUX} \cite{LP} \cite{STT}. We have for scalar functions on  $\H_u$ (see \cite{FLUX} section 5):
\be\lab{eq:Besov-Sobolev-BB0}
\|f\|_{\lh{\infty}}\lesssim \|f\|_{{\BB'}^1}\lesssim \|f\|_{L_s^\infty L_{x'}^2}+\|\nabb' f\|_{{\BB'}^0}.
\end{equation}
Furthermore, for any $P'_{s,u}$-tangent tensors $F, G$ on $\H_u$, we have:
\be\lab{eq:secondbilBesov}
\| F\cdot G\|_{{\BB'}^0}\lesssim \big(\|\nabb' F\|_{L_s^\infty L_x^2} +
\|F\|_{L^\infty}\big)\|G\|_{{\BB'}^0}
\end{equation}
and 
\be\lab{eq:secondbilBesovbis}
\| F\cdot G\|_{{\PP'}^0}\lesssim \big(\|\nabb' F\|_{L_s^\infty L_x^2} +
\|F\|_{L^\infty}\big)\|G\|_{{\PP'}^0}. 
\end{equation}
To bound Besov norms, we sometime use the following non sharp embedding estimate. For 
any $0\leq a<\half$, we have:
\be\lab{eq:GoodestimBthetanotGd}
\|F\|_{{\BB'}^a}\lesssim \no'(F).
\end{equation}
We also have the following non sharp product estimate:
\be\lab{eq:nonsharpproduct}
\|FG\|_{{\PP'}^a}\lesssim \no'(F)\no'(G).
\end{equation}

The following proposition is the key tool used in \cite{FLUX} to control the transport equations 
appearing in the null structure.
\begin{proposition}
Assume that the scalar function $U$  satisfies $U(0)=0$ and the following transport
 equation along  $\H_u$:
$$\frac{d}{ds}U+a\trc U =F_1\cdot\ddb_{L'} P+F_2\cdot W,$$
where $a$ is some positive number. Then,
\be\label{eq:mainleprop1}
\|U\|_{{\BB'}^0}\lesssim\big(\no'(F_1)+\|F_1\|_{L_{x'}^\infty L_s^2}\big)\cdot\no'(P)
\big(\no'(F_2)+\|F_2\|_{L_{x'}^\infty L_s^2}\big)\cdot\|W\|_{{\PP'}^0}.
\ee
\label{prop:mainlemmaapplication}
\end{proposition}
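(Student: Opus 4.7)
The plan is to apply the geometric Littlewood--Paley projection $P'_j$ to the transport equation and control each dyadic piece $\|P'_j U\|_{L^\infty_s L^2(P'_{s,u})}$ by integrating along the null generator $L'$ of $\H_u$. First I would record that for a scalar $V$ solving $\frac{d}{ds}V + a\trc' V = G$ along $\H_u$ with $V(0)=0$, Duhamel's formula gives
$$V(s)=\int_0^s e^{-a\int_{s'}^s \trc'}\,G(s')\,ds',$$
so that, using the $L^\infty_s$ bound on $\trc'$ from \eqref{estgeodesicfol} and the one-dimensional integration in $s$ over the bounded interval $[0,1]$, one passes from $L^2(\H_u)$-control of $G$ to $L^\infty_s L^2(P'_{s,u})$-control of $V$. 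Commuting $P'_j$ past $\frac{d}{ds}$ produces a commutator $[\ddb_{L'},P'_j]$ which, by the heat-flow definition of $P'_j$ and the transport formula $\frac{d}{ds}\ga=2\chi'$, is expressible in terms of $\chi'$ acting on $P'_j V$; such terms are absorbed via \eqref{estgeodesicfol}--\eqref{estgeodesicfolbis}, giving a harmless perturbation of the transport equation.

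Second, I would handle the easy term $F_2\cdot W$. Using the bilinear Besov estimate \eqref{eq:secondbilBesovbis} in ${\PP'}^0$ together with the embedding \eqref{eq:Besov-Sobolev-BB0} and the non-sharp embedding \eqref{eq:GoodestimBthetanotGd} (to upgrade $\|F_2\|_{L^\infty}$ to $\no'(F_2)+\|F_2\|_{L^\infty_{x'}L^2_s}$), one obtains
$$\|F_2\cdot W\|_{{\PP'}^0}\lesssim \bigl(\no'(F_2)+\|F_2\|_{L^\infty_{x'}L^2_s}\bigr)\|W\|_{{\PP'}^0}.$$
Integrating the dyadic transport equation and summing in $j$ yields the required bound of this contribution in ${\BB'}^0$.

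Third, the main difficulty lies in the $F_1\cdot\ddb_{L'}P$ term, since $\ddb_{L'}P\in L^2(\H_u)$ has no available Besov-level regularity and the product cannot be estimated directly in ${\PP'}^0$. The key device is to integrate by parts in $s$ along the characteristics, using
$$\int_0^s e^{-a\int_{s'}^s\trc'} F_1\,\ddb_{L'}P\,ds'=\bigl[e^{-a\int_{s'}^s\trc'}F_1 P\bigr]_{s'=0}^{s'=s}-\int_0^s\tfrac{d}{ds'}\!\left(e^{-a\int_{s'}^s\trc'}F_1\right)P\,ds',$$
which trades the $L'$-derivative off $P$ and onto $F_1$ (or produces a $\trc'F_1 P$ term controlled as before). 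The boundary contribution at $s'=s$ equals $F_1(s)P(s)$, estimated in ${\BB'}^0$ by the bilinear product estimate \eqref{eq:secondbilBesov} using $\no'(P)\gtrsim \|\nabb' P\|_{L^\infty_s L^2_{x'}}+\|P\|_{L^\infty}$ (again via \eqref{eq:Besov-Sobolev-BB0}--\eqref{eq:GoodestimBthetanotGd}); the vanishing of $U(0)$ kills the boundary term at $s'=0$. The remaining volume integral involves $\ddb_{L'}F_1\cdot P$, which belongs to ${\PP'}^0$ with norm controlled by $(\no'(F_1)+\|F_1\|_{L^\infty_{x'}L^2_s})\no'(P)$, and is then treated exactly like the $F_2\cdot W$ piece.

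\emph{Main obstacle.} The delicate point is precisely this integration-by-parts step for the $F_1\cdot \ddb_{L'}P$ term: the product $F_1\cdot \ddb_{L'}P$ has no sharp bilinear estimate at the Besov level, so one must pay by requiring the trace norm $\|F_1\|_{L^\infty_{x'}L^2_s}$ in addition to $\no'(F_1)$, which is exactly what appears in the statement \eqref{eq:mainleprop1}. A secondary technical annoyance is that $P'_j$ does not commute with the transport along $L'$, so one must carefully dispose of the commutator $[\ddb_{L'},P'_j]$ using the heat-flow representation and the available $L^\infty_s L^2_{x'}$ control on $\chi'$ from \eqref{estgeodesicfol}.
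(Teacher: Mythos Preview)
The paper does not supply its own proof of this proposition; it is quoted from \cite{FLUX}, and its time-foliation counterpart (Proposition~\ref{prop:mainlemmaapplicationt}) is likewise invoked from the structural framework of \cite{STT}. So there is no proof here to compare against, but your proposal can be assessed on its merits.

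Your overall architecture --- project with $P'_j$, absorb the commutator $[\ddb_{L'},P'_j]$, and handle $F_1\cdot\ddb_{L'}P$ by integrating by parts in $s$ --- is the right heuristic, and it is indeed the mechanism behind the result. The concrete gap is your treatment of the boundary term $F_1(s)P(s)$: you propose to bound it in ${\BB'}^0$ via \eqref{eq:secondbilBesov}, asserting that $\no'(P)$ controls $\|\nabb'P\|_{L^\infty_sL^2_{x'}}+\|P\|_{L^\infty}$ through \eqref{eq:Besov-Sobolev-BB0}--\eqref{eq:GoodestimBthetanotGd}. This is false on both counts. The non-sharp embedding \eqref{eq:GoodestimBthetanotGd} only gives $\|P\|_{{\BB'}^a}\lesssim\no'(P)$ for $a<\tfrac12$, whereas \eqref{eq:Besov-Sobolev-BB0} needs the full ${\BB'}^1$ norm for $L^\infty$; and $\no'(P)$ bounds $\|\nabb'P\|_{L^2(\H_u)}$, not its $L^\infty_sL^2_{x'}$ version. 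The same obstruction blocks your direct use of \eqref{eq:secondbilBesovbis} for $F_2\cdot W$: the factor $\|\nabb'F_2\|_{L^\infty_sL^2_{x'}}+\|F_2\|_{L^\infty}$ required there is strictly stronger than $\no'(F_2)+\|F_2\|_{L^\infty_{x'}L^2_s}$. (Also, $U(0)=0$ was already consumed in writing the Duhamel formula, so it does not kill the boundary contribution at $s'=0$ after your integration by parts.)

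The proof in \cite{FLUX}/\cite{STT} avoids this by carrying out the paraproduct decomposition and the integration by parts \emph{inside} the dyadic transport equation for $P'_jU$: one splits $F_1=P'_{\leq j}F_1+P'_{>j}F_1$ and likewise for $P$, treating each frequency interaction separately. The trace norm $\|F_i\|_{L^\infty_{x'}L^2_s}$ then enters through H\"older in $s$ along null geodesics, and $\no'(P)$ enters only through finite-band/Bernstein bounds of the type $\|P'_lP\|_{L^\infty_sL^2_{x'}}\lesssim 2^{-l/2}\no'(P)$ (compare Lemma~\ref{lemma:lbz4}), never through an $L^\infty$ bound on $P$ itself. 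The appendix proofs of Lemmas~\ref{lemma:lbt1}--\ref{lemma:lbt3} in this paper exhibit the flavour of the argument in the time foliation.
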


Finally, using the previous proposition, we may prove the following version
of the sharp classical trace theorem which is a slight generalization of Corollary 5.10 in \cite{FLUX}.
\begin{corollary} Assume $F$ is an $P'_{s,u}$-tangent tensor which
admits a decomposition
of the form, $\nabb' F=A\ddb_{L'}P+E$.
Then,
\be\lab{eq:funny-classical-trace}
\|F\|_{L_{x'}^\infty L_s^2}\lesssim \no'(F)+\no'(P)(\norm{A}_{L^\infty}+\norm{\nabb'A}_{L^2_{x'}L^\infty_s}+\norm{\ddb_{L'}A}_{L^2_{x'}L^\infty_s})
+\|E\|_{{\PP'}^0}.
\end{equation}
\label{corr:funny-classical-trace}
\end{corollary}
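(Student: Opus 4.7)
The plan is to follow the proof of Corollary 5.10 in \cite{FLUX}, with a refinement that accounts for the tensor factor $A$. The starting point is the general sharp classical trace theorem
\be\lab{abstracttrace}
\|F\|_{L_{x'}^\infty L_s^2}\lesssim \no'(F)+\|\nabb' F\|_{{\PP'}^0}
\ee
valid for any $P'_{s,u}$-tangent tensor $F$. This inequality is obtained by decomposing $F=P'_{<0}F+\sum_{j\ge 0} P'_j F$ via the geometric Littlewood-Paley projections on the surfaces $P'_{s,u}$: for each piece, a transport argument along the null generators $L'$ (in the spirit of Proposition \ref{prop:mainlemmaapplication}) combined with the sharp Bernstein inequality on $P'_{s,u}$ (available as in \cite{LP}, see e.g. \eqref{eq:strongbernscalarbis}--\eqref{eq:strong-Bern-0bis}) yields the correct $L_{x'}^\infty L_s^2$ bound modulo the terms already in $\no'(F)$ and the Besov norm of $\nabb' F$.

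Given \eqref{abstracttrace}, the proof is reduced to verifying the bilinear bound
\be\lab{bilinearbesov}
\|A\cdot \ddb_{L'}P\|_{{\PP'}^0}\lesssim \no'(P)\bigl(\|A\|_{L^\infty}+\|\nabb' A\|_{L^2_{x'}L^\infty_s}+\|\ddb_{L'}A\|_{L^2_{x'}L^\infty_s}\bigr).
\ee
I would establish \eqref{bilinearbesov} by decomposing the product frequency-by-frequency. Consider $P'_j(A\ddb_{L'}P)$ and use a Littlewood-Paley decomposition of both $A$ and $P$. In the regime where the frequency of $P$ dominates, the standard Besov product estimate \eqref{eq:secondbilBesovbis} applies and contributes $(\|A\|_{L^\infty}+\|\nabb'A\|_{L^\infty_s L^2_{x'}})\|\ddb_{L'}P\|_{{\PP'}^0}$, which is then dominated by the right-hand side of \eqref{bilinearbesov} after a further application of Proposition \ref{prop:mainlemmaapplication} to reexpress $\|\ddb_{L'}P\|_{{\PP'}^0}$ in terms of $\no'(P)$. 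In the regime where the frequency of $A$ dominates, one integrates the $\ddb_{L'}$ derivative by parts using the commutator identity $[P'_j,\ddb_{L'}]$ (estimated via the null structure equations), producing a term controlled by $\|\ddb_{L'}A\|_{L^2_{x'}L^\infty_s}\|P\|_{\ldots}\les\|\ddb_{L'}A\|_{L^2_{x'}L^\infty_s}\no'(P)$. The balanced regime is handled by the usual low-high paraproduct split, with the $\|\nabb' A\|_{L^2_{x'}L^\infty_s}$ norm on $A$ compensating the loss of one angular derivative incurred when commuting $P'_j$ through the product.

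Substituting \eqref{bilinearbesov} together with the trivial bound $\|E\|_{{\PP'}^0}$ into the decomposition $\nabb' F=A\ddb_{L'}P+E$ and using \eqref{abstracttrace} gives the desired estimate \eqref{eq:funny-classical-trace}. The main obstacle is the bilinear Besov estimate \eqref{bilinearbesov}: one must carefully track how the trace-type norms $L^2_{x'}L^\infty_s$ on the derivatives of $A$ (rather than the stronger $L^\infty_sL^2_{x'}$ or $L^\infty$ norms) suffice, by exploiting the duality with the $L^2(\H_u)$-based ${\PP'}^0$ norm and the additional integration in $s$ that the transport structure of $\ddb_{L'}P$ affords via Proposition \ref{prop:mainlemmaapplication}. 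Once this bilinear estimate is in place, the remainder of the argument is a direct assembly of the sharp classical trace theorem with the given decomposition.
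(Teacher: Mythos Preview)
Your approach has a genuine gap at the bilinear estimate \eqref{bilinearbesov}. The norm $\no'(P)$ only controls $\|\ddb_{L'}P\|_{L^2(\H_u)}$, and there is no way to promote this to $\|\ddb_{L'}P\|_{{\PP'}^0}=\sum_j\|P'_j\ddb_{L'}P\|_{L^2(\H_u)}$: the $\ell^1$-summability in $j$ is a strictly stronger requirement. Your appeal to Proposition~\ref{prop:mainlemmaapplication} to ``reexpress $\|\ddb_{L'}P\|_{{\PP'}^0}$ in terms of $\no'(P)$'' misreads that proposition --- it controls the $\BB'^0$ norm of the \emph{solution} $U$ of a transport equation whose source contains $\ddb_{L'}P$, not the ${\PP'}^0$ norm of $\ddb_{L'}P$ itself. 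Your integration-by-parts in $L'$ in the high-$A$/low-$P$ regime likewise cannot recover the missing $\ell^1$ summability. In short, the decomposition $\nabb'F=A\ddb_{L'}P+E$ does \emph{not} allow one to estimate $\|\nabb'F\|_{{\PP'}^0}$, and the abstract trace inequality \eqref{abstracttrace} --- even if it were available --- is not the right reduction.

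The paper's proof avoids this obstruction by never estimating $\|\nabb'F\|_{{\PP'}^0}$. Instead it introduces the scalar $f(s)=\int_0^s|F|^2$, so that $L'f=|F|^2$ with $f(0)=0$. Then $\nabb'f$ itself satisfies a transport equation
\[
\ddb_{L'}(\nabb'f)=2F\cdot\nabb'F-\chi'\cdot\nabb'f=(2FA)\cdot\ddb_{L'}P+2F\cdot E-\chi'\cdot\nabb'f,
\]
which is precisely of the form required by Proposition~\ref{prop:mainlemmaapplication}. That proposition yields $\|\nabb'f\|_{{\BB'}^0}$ in terms of $\no'(FA)\no'(P)$ and $(\no'(F)+\|F\|_{L^\infty_{x'}L^2_s})\|E\|_{{\PP'}^0}$; one then estimates $\no'(FA)+\|FA\|_{L^\infty_{x'}L^2_s}$ by the product of the norms of $F$ and $A$ appearing in the statement, uses $\|f\|_{L^\infty}\lesssim\|f\|_{L^\infty_sL^2_{x'}}+\|\nabb'f\|_{{\BB'}^0}$ from \eqref{eq:Besov-Sobolev-BB0}, and recalls $\|f\|_{L^\infty}=\|F\|^2_{L^\infty_{x'}L^2_s}$ to close a quadratic inequality. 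The key structural point you missed is that the transport proposition must be applied to a quantity with vanishing initial data and a source of the special form $F_1\ddb_{L'}P+F_2W$; squaring $F$ and integrating in $s$ is what manufactures such a quantity.
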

\begin{proof} 
The scalar function  $f(t)=\int_0^t|F|^2$ verifies the transport
equation,
$$L f=|F|^2,\qquad f(0)=0.$$
Recall the following commutator formula in the geodesic foliation:
$$[\ddb_{L'},\nabb'_A]f=-\chi'_{AB}\nabb'_Bf.$$
Differentiating and applying the commutator formula, we derive,
\bee
\ddb_{L'} (\nabb' f)&=&2 F \cdot\nabb' F-\chi' \cdot(\nabb' f)\\
&=&F\cdot A\ddb_{L'}P +F\cdot E-\chi' \cdot(\nabb' f)
\eee
Applying  \eqref{eq:mainleprop1}, we deduce:
 \bee
\|\nabb' f\|_{\BB^0}&\lesssim&
\big(\no'(FA)+\|FA\|_{L_{x'}^\infty L_s^2}\big)\no'(P)\big(\no'(F)+\|F\|_{L_{x'}^\infty L_s^2}\big)\|E\|_{{\PP'}^0}\nn\\
&&+\big(\no'(\chi')+\|\chi'\|_{L_{x'}^\infty L_s^2}\big)\|\nabb' f\|_{{\PP'}^0},
\eee
which together with the estimates \eqref{estgeodesicfol} and \eqref{estgeodesicfolbis} for $\chi'$ and the fact that $\ep$ is small yields:
\be\lab{eq:interm-classical-trace}
\|\nabb' f\|_{{\BB'}^0}\lesssim\big(\no'(FA)+\|FA\|_{L_{x'}^\infty L_s^2}\big)\no'(P)+(\no'(F)+\|F\|_{L_{x'}^\infty L_s^2}\big)\|E\|_{{\PP'}^0}.
\end{equation}
We have:
\bee
\no'(FA)+\|FA\|_{L_{x'}^\infty L_s^2}&\lesssim& \|A\nabb'F\|_{\lh{2}}+\|A\ddb_{L'}F\|_{\lh{2}}+\|F\nabb'A\|_{\lh{2}}\\
&&+\|F\ddb_{L'}A\|_{\lh{2}}+\|AF\|_{\lh{2}}+\|F\|_{L_{x'}^\infty L_s^2}\norm{A}_{L^\infty}\\
&\lesssim& (\|F\|_{L_{x'}^\infty L_s^2}+\no'(F))(\norm{A}_{L^\infty}+\norm{\nabb'A}_{L^2_{x'}L^\infty_s}+\norm{\ddb_{L'}A}_{L^2_{x'}L^\infty_s}),
\eee
which together with \eqref{eq:interm-classical-trace} yields:
$$\|\nabb' f\|_{{\BB'}^0}\lesssim\big(\no'(F)+\|F\|_{L_{x'}^\infty L_s^2}\big)\big((\norm{A}_{L^\infty}+\norm{\nabb'A}_{L^2_{x'}L^\infty_s}+\norm{\ddb_{L'}A}_{L^2_{x'}L^\infty_s})\no'(P)+\|E\|_{{\PP'}^0}\big).$$
Now, in view of  estimate \eqref{eq:Besov-Sobolev-BB0}, we infer that,
\bee
\|f\|_{L^\infty}&\lesssim&  \|f\|_{L_s^\infty L_{x'}^2}+ \|\nabb'f\|_{{\BB'}^0}\\
&\lesssim&\big(\no'(F)+\|F\|_{L_{x'}^\infty
L_s^2}\big)\big((\norm{A}_{L^\infty}+\norm{\nabb'A}_{L^2_{x'}L^\infty_s}+\norm{\ddb_{L'}A}_{L^2_{x'}L^\infty_s})\no'(P)\\
&&+\|E\|_{{\PP'}^0}\big) +\|F\|_{L_s^2L_{x'}^4}^2.
\eee
Thus, recalling the definition of $f=\int_0^t |F|^2$,
and the estimate  $\|F\|_{L_s^2L_{x'}^4}\lesssim \no'(F)$, 
 we obtain:
\bee
\|F\|_{L_{x'}^\infty L_s^2}^2&\lesssim& \big(\no'(F)+\|F\|_{L_{x'}^\infty
L_s^2}\big)\big((\norm{A}_{L^\infty}+\norm{\nabb'A}_{L^2_{x'}L^\infty_s}+\norm{\ddb_{L'}A}_{L^2_{x'}L^\infty_s})\no'(P)\\
&&+\|E\|_{{\PP'}^0}\big)+\no'(F)^2
\eee
which yields the desired estimate \eqref{eq:funny-classical-trace}.
\end{proof}

\subsubsection{Estimates in the time foliation}

In this section, we obtain the $L^\infty$ bound from $\trc$, and the trace bounds for $\hch$ and $\z$ by exploiting the corresponding estimates in the geodesic foliation \eqref{estgeodesicfol}. We start by establishing the relation between the Ricci coefficients in the time and geodesic foliation. Recall first from the definition of $L$ and $L'$ \eqref{it2} that $L=bL'$. Since $(e_1, e_2)$ and $(e'_1, e'_2)$ are both orthonormal vectors in the tangent space of $\H_u$ which are both orthogonal to $L$, we may chose these vectors such that there is a tensor $F'$ on $P'_{s,u}$ satisfying:
$$e_A=e'_A+F'_AL',\, A=1, 2.$$
Also, writing $\lb$ in the frame $e'_1,e'_2, L',\lb'$, and using the fact that $g(L,\lb)=-2$, $g(\lb,\lb)=0$ and  $g(\lb,e_A)=0,\,A=1,2$, we obtain:
$$\lb=b^{-1}\lb'+2b^{-1}F'_Ae'_A+b^{-1}|F'|^2L'.$$
Finally, we have established the following relations:
\be\lab{comptg}
\begin{array}{l}
L=bL',\\
e_A=e'_A+F'_AL',\, A=1, 2,\\
\lb=b^{-1}\lb'+2b^{-1}F'_Ae'_A+b^{-1}|F'|^2L'.
\end{array}
\ee

We now use the definition \eqref{chi} and \eqref{chip} of the Ricci coefficients respectively in the time and geodesic foliation. We first establish the relation between $\chi$ and $\chi'$. Using the definition \eqref{chi} of $\chi$ and \eqref{chip} of $\chi'$, we have:
$$\chi_{AB}=\gg(\dd_{e_A}L,e_B)=\gg(\dd_{e'_A+F'_AL'}(bL'),e'_B+F'_BL')=b\chi'_{AB}$$
where we used the Ricci equations \eqref{ricciformp} and the identities $\gg(L',L')=\gg(L',e'_A)=0,\, A=1, 2$. In particular, we obtain:
\be\lab{comptg0}
\chi=b\chi',\,\trc=b\trc',\,\hch=b\hch'.
\ee
\eqref{comptg0} together with the bootstrap assumption \eqref{boot1} and the estimate \eqref{estgeodesicfol} yields:
\be\lab{comptg1}
\begin{array}{l}
\norm{\trc}_{\lh{\infty}}\leq\norm{b}_{\lh{\infty}}\norm{\trc'}_{\lh{\infty}}\lesssim\ep,\\
\norm{\hch}_{\xt{\infty}{2}}\leq\norm{b}_{\lh{\infty}}\norm{\hch'}_{L^{2}_{x'}L^{\infty}_s}\lesssim\ep,
\end{array}
\ee
where we have used the fact that the trace norms $L^{2}_{x'}L^{\infty}_t$ and $L^{2}_{x'}L^{\infty}_s$ are equivalent by Remark \ref{remark:equivnorm}. Note that \eqref{comptg1} is an improvement of the corresponding estimates in the bootstrap assumptions \eqref{boot4} \eqref{boot5}.

Next, we establish the relation between $\chb$ and $\chb'$. Using the definition \eqref{chi} of $\chb$ and \eqref{chip} of $\chb'$, we have:
\bea
\nn\chb_{AB}&=&\gg(\dd_{e_A}\lb,e_B)=\gg(\dd_{e'_A+F'_AL'}(b^{-1}\lb'+2b^{-1}F'_Ce'_C+b^{-1}|F'|^2L'),e'_B+F'_BL')\\
\nn&=& b^{-1}\big(\gg(\dd_{e'_A+F'_AL'}\lb',e'_B+F'_BL')+2\gg(\dd_{e'_A+F'_AL'}F',e'_B+F'_BL')\\
\nn&&+|F'|^2\gg(\dd_{e'_A+F'_AL'}L',e'_B+F'_BL')\big)\\
\nn&=&b^{-1}\big(\chb'_{AB}-2F'_B\z'_A-2F'_A\z'_B+2\gg(\dd_{e'_A}F',e'_B)-2F'_B\chi'_{AC}F'_C+2F'_A\gg(\dd_{L'}F',e'_B)\\
\lab{comptg2}&&+|F'|^2\chi'_{AB}\big)
\eea
where we used the Ricci equations \eqref{ricciformp}.

We establish the relation between $\z$ and $\z'$. Using the definition \eqref{chi} of $\z$ and \eqref{chip} of $\z'$, we have:
\bea
\nn\z_A&=&\half\gg(\dd_{\lb}L,e_A)=\half\gg(\dd_{b^{-1}\lb'+2b^{-1}F'_Ce'_C+b^{-1}|F'|^2L'}(bL'),e'_A+F'_AL')\\
\lab{comptg3}&=& \half\gg(\dd_{\lb'}L',e'_A)+\chi'_{AC}F'_C.
\eea
Now, we have:
\bea
\lab{comptg4}\half\gg(\dd_{\lb'}L',e'_A)&=&-\half\gg(L',\dd_{\lb'}e'_A)\\
\nn&=&-\half\gg(L',[\lb',e'_A])-\half\gg(L',\dd_{e'_A}\lb')\\
\nn&=&\z'_A-\half\gg(L',[\lb',e'_A]),
\eea
where we used the definition of $\z'$ \eqref{chip} in the last equality. The last term in \eqref{comptg4} is given by:
\be\lab{comptg5}
-\half\gg(L',[\lb',e'_A])=-\half[\lb',e'_A](u)=-\half \lb'(e'_A(u))+\half e'_A(\lb'(u))=0
\ee
where we used the fact that $e'_A(u)=0$ and $\lb'(u)=-2$. Finally, \eqref{comptg3}-\eqref{comptg5} yield:
\be\lab{comptg6}
\nn\z_A=\z'_A+\chi'_{AC}F'_C,
\ee
which together with the estimate \eqref{estgeodesicfol} and Remark \ref{remark:equivnorm} implies:
\be\lab{comptg7}
\norm{\z}_{\xt{\infty}{2}}\lesssim \norm{\z'}_{L^\infty_{x'}L^2_s}+\norm{\chi'}_{L^\infty_{x'}L^2_s}\norm{F'}_{L^\infty}\lesssim\ep+\ep\norm{F'}_{L^\infty}.
\ee

In view of \eqref{comptg7}, we need to estimate $\norm{F'}_{L^\infty}$. We make the bootstrap assumption:
\be\lab{bootF}
\norm{F'}_{L^\infty}\leq D^2\ep
\ee
where $D$ is the large constant appearing in the bootstrap assumptions \eqref{boot1}-\eqref{boot6}. Our goal is to improve on the constant in the right-hand side of \eqref{bootF}. 
We first estimate $\dd_{L'}F$. In view of the Ricci equations \eqref{ricciform}, we have:
\bea
\nn\kepb_A&=&-\half\gg(\dd_{L}\lb,e_A)=-\half\gg(\dd_{bL'}(b^{-1}\lb'+2b^{-1}F'_Ce'_C+b^{-1}|F'|^2L'),e'_A+F'_AL')\\
\nn&=&-\half\gg(\dd_{L'}\lb',e'_A)-\gg(\dd_{L'}(F),e'_A)\\
\lab{comptg8}&=&\z'_A-\gg(\dd_{L'}(F),e'_A),
\eea
where we used the Ricci equations \eqref{ricciformp} to obtain the last equality. \eqref{comptg8} implies:
\be\lab{comptg9}
\norm{\ddb_{L'}F'}_{L^\infty_{x'}L^2_s}+\no'(\ddb_{L'}F')\lesssim \norm{\z'}_{L^\infty_{x'}L^2_s}+\norm{\kepb}_{L^\infty_{x'}L^2_s}+\no'(\z')+\no'(\kepb).
\ee
Now, in view of the definition of $\no$ and $\no'$, and the relation \eqref{comptg} between $\nabb$ and $\nabb'$, we have for any tensor $G$:
\be\lab{comptg10}
\no(G)\lesssim (1+\norm{F'}_{L^\infty})\no'(G)\lesssim \no'(G)\textrm{ and }\no'(G)\lesssim (1+\norm{F'}_{L^\infty})\no(G)\lesssim \no(G)
\ee
where we used the bootstrap assumption \eqref{bootF}. Remark \ref{remark:equivnorm}, the estimates \eqref{estgeodesicfol}, \eqref{estgeodesicfolbis}, \eqref{comptg9}, \eqref{comptg10} and the bootstrap assumptions \eqref{boot2} \eqref{boot3} imply:
\be\lab{comptg11}
\norm{\ddb_{L'}F'}_{L^\infty_{x'}L^2_s}+\no'(\ddb_{L'}F')\lesssim D\ep.
\ee

We now estimate $\nabb'F'$. In view of \eqref{comptg2}, we have:
\bea
\nn \gg(\dd_{e'_A}F',e'_B)
\nn&=& \half b\chb_{AB} -\half\chb'_{AB}+F'_B\z'_A+F'_A\z'_B+F'_B\chi'_{AC}F'_C-F'_A\gg(\dd_{L'}F',e'_B)\\
\lab{comptg12}&&-\half |F'|^2\chi'_{AB}
\eea
which yields:
\bea
\lab{comptg13}\norm{\nabb'F'}_{{\BB'}^0}&\lesssim& \norm{b\chb}_{{\BB'}^0}+\norm{\chb'}_{{\BB'}^0}+\norm{F'\z'}_{{\BB'}^0}+\norm{F'^2\chi'}_{{\BB'}^0}+\norm{F'\ddb_{L'}F'}_{{\BB'}^0}\\
\nn&\lesssim&\no'(b\chb)+\norm{\chb'}_{{\BB'}^0}+(\norm{\nabb'F'}_{L^\infty_sL^2_{x'}}+\norm{F'}_{L^\infty})\norm{\z'}_{{\BB'}^0}\\
\nn&&+(\norm{F'\nabb'F'}_{L^\infty_sL^2_{x'}}+\norm{F'}^2_{L^\infty})\norm{\chi'}_{{\BB'}^0}
+(\norm{\nabb'F'}_{L^\infty_sL^2_{x'}}+\norm{F'}_{L^\infty})\norm{\ddb_{L'}F'}_{{\BB'}^0}\\
\nn&\lesssim&\no(b\chb)+\norm{\chb'}_{{\BB'}^0}+(\norm{\nabb'F'}_{L^\infty_sL^2_{x'}}+\norm{F'}_{L^\infty})\no'(\z')\\
\nn&&+\norm{F'}_{L^\infty}(\norm{\nabb'F'}_{L^\infty_sL^2_{x'}}+\norm{F'}_{L^\infty})\no'(\chi')\\
\nn&&+(\norm{\nabb'F'}_{L^\infty_sL^2_{x'}}+\norm{F'}_{L^\infty})\no'(\ddb_{L'}F')
\eea
where we used several times the inequalities \eqref{eq:secondbilBesov} and 
\eqref{eq:GoodestimBthetanotGd} as well as \eqref{comptg10}. The estimates \eqref{estgeodesicfolbis} and \eqref{estgeodesicfolter} together with the bootstrap assumptions \eqref{boot1}-\eqref{boot5}, \eqref{comptg11} and \eqref{comptg13} yield:
\be\lab{comptg14}
\norm{\nabb'F'}_{{\BB'}^0}\lesssim D\ep+D\ep(1+\norm{F'}_{L^\infty})(\norm{\nabb'F'}_{{\BB'}^0}+\norm{F'}_{L^\infty}).
\ee
Finally, the bootstrap assumption \eqref{bootF} together with \eqref{comptg14} and the fact that $\ep$ is small yields:
$$\norm{\nabb'F'}_{{\BB'}^0}\lesssim D\ep$$
which together with \eqref{eq:Besov-Sobolev-BB0} implies:
\be\lab{comptg15}
\norm{F'}_{L^\infty}\lesssim D\ep.
\ee
\eqref{comptg15} is an improvement of the bootstrap assumption \eqref{bootF} which shows that $F'$ is indeed in $L^\infty$ and satisfies the bound \eqref{comptg15}. In particular, \eqref{comptg7} and \eqref{comptg15} imply:
\be\lab{comptg16}
\norm{\z}_{\xt{\infty}{2}}\lesssim \ep.
\ee
Note that \eqref{comptg1} and \eqref{comptg16} are improvements of the corresponding estimates in the bootstrap assumptions \eqref{boot4}-\eqref{boot6}.

\begin{remark}\lab{remark:compt17}
We also have an estimate for $\nabb'F'$ in $L^2_{x'}L^\infty_s$. Indeed, \eqref{comptg12} yields:
\bea
\nn\norm{\nabb'F'}_{L^2_{x'}L^\infty_s}&\lesssim& \norm{b\chb}_{L^2_{x'}L^\infty_s}+\norm{\chb'}_{L^2_{x'}L^\infty_s}+\norm{F'\z'}_{L^2_{x'}L^\infty_s}+\norm{F'^2\chi'}_{L^2_{x'}L^\infty_s}+\norm{F'\ddb_{L'}F'}_{L^2_{x'}L^\infty_s}\\
\nn&\lesssim&\norm{b}_{L^\infty}\no'(\chb)+\norm{\chb'}_{L^2_{x'}L^\infty_s}+\norm{F'}_{L^\infty}\norm{\z'}_{L^2_{x'}L^\infty_s}\\
\nn&&+\norm{F'}^2_{L^\infty}\norm{\chi'}_{L^2_{x'}L^\infty_s}+\norm{F'}_{L^\infty}\norm{\ddb_{L'}F'}_{L^2_{x'}L^\infty_s}\\
\nn&\lesssim&\norm{b}_{L^\infty}\no(\chb)+\ep+D\ep\no'(\z')+D^2\ep^2\no'(\chi')+D\ep\no'(\ddb_{L'}F')\\
\lab{comptg18}&\lesssim& D\ep
\eea
where we used \eqref{comptg10}, \eqref{comptg11}, \eqref{comptg15}, the bootstrap assumptions \eqref{boot1}-\eqref{boot6} on $b$ and $\chb$, and the estimates \eqref{estgeodesicfolbis}.
\end{remark}

\begin{remark}
\eqref{comptg12} implies the following estimate for $\nabb'F'+\half\chb'$:
\bea
\nn \no'\left(\nabb'F'+\half\chb'\right)&\lesssim& \no'(b\chb)+\no'(F'\z')+\no'(F'^2\chi')+\no'(F'\ddb_{L'}F')\\
\nn&\lesssim& \no(b\chb)+(\norm{F'}_{L^\infty}+\norm{\nabb'F'}_{L^2_{x'}L^\infty_s})(\no'(\z')+\norm{\z'}_{L^\infty_{x'}L^2_s}\\
\nn&&+\norm{F'}_{L^\infty}(\no'(\chi')+\norm{\chi'}_{L^\infty_{x'}L^2_s})+\no'(\ddb_{L'}F')+\norm{\ddb_{L'}F'}_{L^\infty_{x'}L^2_s})\\
\lab{comptg19} &\lesssim & D\ep,
\eea
where we used \eqref{comptg10}, the estimates \eqref{estgeodesicfol} and \eqref{estgeodesicfolbis}, the estimates \eqref{comptg11} \eqref{comptg15} and \eqref{comptg18} for $F'$, and the bootstrap assumptions \eqref{boot1}-\eqref{boot6} for $b$ and $\chb$. 
\end{remark}

\subsection{Trace norm bounds for $\db  $ and $\kepb$}\lab{sec:tracenormk}

The goal of this section is to improve the estimate for $\norm{\db}_{\xt{\infty}{2}}$ and $\norm{\kepb}_{\xt{\infty}{2}}$ given by the bootstrap assumption \eqref{boot3}. Let us first define $k_{LL}$ and $k_{LA}$:
\be\lab{tracek}
k_{LL}=-\gg(\dd_LT,L),\,k_{LA}=-\gg(\dd_LT,e_A),\,A=1,2.
\ee
Then, using the definition of $\kb$ \eqref{newk} and the computation of $\dd_TT$ \eqref{3.2}, we have:
\bea
\nn\db  &=&\d -n^{-1}\nabla_Nn=-\gg(\dd_NT,N)-\gg(\dd_TT,N)=-\gg(\dd_LT,N)=-\gg(\dd_LT,L)\\
\lab{tracek1}&=&k_{LL}.
\eea
and
\bea
\lab{tracek2}\kepb_{A}&=&\kep_{A}-n^{-1}\nabla_An=-\gg(\dd_NT,e_A)-\gg(\dd_TT,e_A)=-\gg(\dd_LT,e_A)\\
\nn&=&k_{LA}.
\eea
We also define $k_{L'L'}$ and $k_{L'A}$:
\be\lab{tracek3}
k_{L'L'}=-\gg(\dd_{L'}T,L'),\,k_{L'A}=-\gg(\dd_{L'}T,e'_A),\,A=1,2.
\ee
Then, the relations \eqref{comptg} between $L, e_1, e_2$ and $L', e'_1, e'_2$ together with the definitions \eqref{tracek} and \eqref{tracek3} yield:
\be\lab{tracek4}
k_{LL}=b^2k_{L'L'}\textrm{ and }k_{LA}=bk_{L'A}+bF'_Ak_{L'L'}.
\ee
Thus, \eqref{tracek1}, \eqref{tracek2} and \eqref{tracek4} imply:
\bea
\lab{tracek5}\norm{\db  }_{\xt{\infty}{2}}&\lesssim& \norm{bk_{L'L'}}_{L^\infty_{x'}L^2_s}\lesssim \norm{k_{L'L'}}_{L^\infty_{x'}L^2_s}\\
\nn\norm{\kepb}_{\xt{\infty}{2}}&\lesssim& \norm{bk_{L'A}}_{L^\infty_{x'}L^2_s}+\norm{bF'_Ak_{L'L'}}_{L^\infty_{x'}L^2_s}\lesssim \norm{k_{L'L'}}_{L^\infty_{x'}L^2_s}+\norm{k_{L'A}}_{L^\infty_{x'}L^2_s}
\eea
where we used the bootstrap assumption \eqref{boot1}, the $L^\infty$ bound for $F'$ \eqref{comptg15} and Remark \ref{remark:equivnorm}.

In view of \eqref{tracek5}, it is enough to bound the trace norms $\norm{k_{L'L'}}_{L^\infty_{x'}L^2_s}$ and $\norm{k_{L'A}}_{L^\infty_{x'}L^2_s}$. First, note that the bootstrap assumptions \eqref{boot1} \eqref{boot3} together with the $L^\infty$ bound for $F'$ \eqref{comptg15} and the identity \eqref{tracek4} yield:
\be\lab{tracek6}
\norm{k_{L'L'}}_{L^\infty_{x'}L^2_s}+\norm{k_{L'A}}_{L^\infty_{x'}L^2_s}\lesssim D\ep.
\ee
Our goal in this section is to improve the constant $D$ in the right-hand side of \eqref{tracek6}. We will rely on the trace estimate \eqref{eq:funny-classical-trace}. The improved estimates for $n$ \eqref{lapn3bis} \eqref{lapn12} and the improved estimate for $\d, \kep$ \eqref{hodgk8} imply:
\be\lab{tracek7}
\no(\db  )+\no(\kepb)\lesssim \ep.
\ee
\eqref{tracek1}, \eqref{tracek4}, \eqref{tracek7}, \eqref{comptg10} and the bootstrap assumptions \eqref{boot1} \eqref{boot2} for $b$ yield:
\be\lab{tracek8}
\no'(k_{L'L'})\lesssim \no(b^{-2}\db  )\lesssim\ep.
\ee
\eqref{tracek6} and \eqref{tracek8} yield:
\bea
\nn\no'(F'k_{L'L'})&\lesssim& \norm{F'}_{L^\infty}\no'(k_{L'L'})+(\norm{\nabb'F'}_{L^2_{x'}L^\infty_s}+\norm{\ddb_{L'}F'}_{L^2_{x'}L^\infty_s})\norm{k_{L'L'}}_{L^\infty_{x'}L^2_s}\\
\nn&\lesssim& D\ep^2+D^2\ep^2+D\ep\no'(\ddb_{L'}F')\\
\lab{tracek9}&\lesssim& \ep
\eea
where we used \eqref{comptg11}, \eqref{comptg15} and \eqref{comptg18}. Finally, \eqref{tracek2}, \eqref{tracek4}, \eqref{tracek7}, \eqref{tracek9}, \eqref{comptg10} and the bootstrap assumptions \eqref{boot1} \eqref{boot2} for $b$ yield:
\be\lab{tracek10}
\no'(k_{L'A})\lesssim \no(b^{-1}\kepb)+\no'(F'_Ak_{L'L'})\lesssim\ep.
\ee

In order to apply  the trace estimate \eqref{eq:funny-classical-trace}, we need to show that $\nabb'k_{L'L'}$ and $\nabb'k_{L'A}$ admit a decomposition of the form, $A\ddb_{L'}P+E$. We start with $k_{L'L'}$. We have:
\bea
\nn\nabb'_{e'_A}k_{L'L'}&=&-\dd_{e'_A}\gg(\dd_{L'}T,L')=-\gg(\dd_{e'_A}\dd_{L'}T,L')-\gg(\dd_{L'}T,D_{e'_A}L')\\
\nn&=&-\gg(\dd_{L'}\dd_{e'_A}T,L')-\rr_{e'_AL'TL'}-\gg(\dd_{[e'_A,L']}T,L')\\
\nn&&-b^{-1}\gg(-\db  N+\zb_Ce_C,\chi'_{AB}e'_B-\z'_AL')\\
\nn&=&-\ddb_{L'}[\gg(\dd_{.}T,L')]_{A}-b^{-1}F'_B\a'_{AB}-\frac{b^{-1}}{2}\b'_A\\
\lab{tracek11}&&+b^{-1}\chi'_{AB}(k_{BN}-\zb_B)-b^{-2}\db  (2\chi'_{AB}F'_B+\z'_A).
\eea
Relying on the Bianchi identities, the following decomposition for $\a', \b'$ were obtained in \cite{FLUX}:
\be\lab{tracek12}
\a'=\ddb_{L'}(P_1)+E_1,\, \b'=\ddb_{L'}(P_2)+E_2,
 \ee
where $P_1=\mathcal{D'}_2^{-1}\b'$, $P_2=\mathcal{D'}_1^{-1}(\r',-\s')$, and 
\be\lab{tracek13}
\no'(P_1)+\no'(P_2)+\norm{E_1}_{{\PP'}^0}+\norm{E_2}_{{\PP'}^0}\lesssim\ep.
\ee
We define the tensors $A_1, A_2, P_3, E_3$ as:
\be\lab{tracek14}
\begin{array}{l}
{P_3}_{A}=-\gg(\dd_{e'_A}T,L'),\, A_1=-b^{-1}F',\,A_2=-\frac{b^{-1}}{2},\\
E_3=b^{-1}\chi'_{AB}(k_{BN}-\zb_B)-b^{-2}\db  (2\chi'_{AB}F'_B+\z'_A),
\end{array}
\ee
which together with \eqref{tracek11} and \eqref{tracek12} yields:
\be\lab{tracek15}
\nabb'k_{L'L'}=A_1\ddb_{L'}P_1+A_2\ddb_{L'}P_2+\ddb_{L'}P_3+E_1+E_2+E_3.
\ee
Now, we have:
$${P_3}_A=-\gg(\dd_{e'_A}T,L')=b^{-1}k_{AN}-F'_Ab^{-2}\db  $$
which yields:
\bea
\lab{tracek16}\no'(P_3)&\lesssim& \no(b^{-1}k_{AN})\\
\nn&&+(\norm{F'}_{L^\infty}+\norm{\nabb'F}_{L^2_{x'}L^\infty_s}+\norm{\ddb_{L'}'F}_{L^2_{x'}L^\infty_s})(\no(b^{-2}\db  )+\norm{b^{-2}\db  }_{\xt{\infty}{2}})\\
\nn&\lesssim& \ep+D^2\ep^2\\
\nn&\lesssim &\ep
\eea
where we used the bootstrap assumptions \eqref{boot1} \eqref{boot2} for $b$, the bootstrap assumption \eqref{boot3} for $\db  $, the improved estimate \eqref{hodgk8} for $k_{AN}$, the estimates \eqref{comptg11} \eqref{comptg15} and \eqref{comptg18} for $F$ together with Remark \ref{remark:equivnorm} and \eqref{comptg10}. Using the bootstrap assumptions \eqref{boot1} \eqref{boot2} for $b$ and the estimates \eqref{comptg11} \eqref{comptg15} and \eqref{comptg18} for $F$, we also have:
\bea
\nn&&\norm{A_1}_{L^\infty}+\norm{\nabb' A_1}_{L^2_{x'}L_s^\infty}+\norm{\ddb_{L'}A_1}_{L^2_{x'}L_s^\infty}
+ \norm{A_2}_{L^\infty}+\norm{\nabb' A_2}_{L^2_{x'}L_s^\infty}+\norm{\ddb_{L'}A_2}_{L^2_{x'}L_s^\infty}\\
\nn&\lesssim& (\norm{b}_{L^\infty}+\noo(b))(1+\norm{F'}_{L^\infty}+\norm{\nabb'F'}_{L^2_{x'}L_s^\infty}+\norm{\ddb_{L'}F'}_{L^2_{x'}L_s^\infty})\\
\lab{tracek17}&\lesssim& D\ep.
\eea
The functional inequalities \eqref{eq:secondbilBesovbis} and \eqref{eq:nonsharpproduct} yield:
\bea
\nn\norm{E_3}_{{\PP'}^0}&\lesssim& (\norm{b}_{L^\infty}+\norm{\nabb'b}_{L_s^\infty L^2_{x'}})(1+\norm{F'}_{L^\infty}+\norm{\nabb'F'}_{L_s^\infty L^2_{x'}})\\
\nn&&\times(\no'(\chi')^2+\no'(\z')^2+\no'(\db  )^2+\no'(\kep)^2+\no'(\zb)^2)\\
\nn&\lesssim&D^2\ep^2\\
\lab{tracek18}&\lesssim&\ep,
\eea
where we used the bootstrap assumptions \eqref{boot1}-\eqref{boot6} for $b$, $\db  $, $k_{BN}$ and $\zb$, the estimates \eqref{comptg11} \eqref{comptg15} and \eqref{comptg18} for $F$, and the estimate 
\eqref{estgeodesicfolbis} for $\chi'$ and $\z'$. Finally, the sharp trace estimate \eqref{eq:funny-classical-trace} together with \eqref{tracek15} and the estimates \eqref{tracek8} \eqref{tracek13}, \eqref{tracek16}, \eqref{tracek17} and \eqref{tracek18} yields:
\bea
\lab{tracek19}\norm{k_{L'L'}}_{L^\infty_{x'}L^2_s}&\lesssim& \no'(k_{L'L'})+(\norm{A_1}_{L^\infty}+\norm{\nabb' A_1}_{L^2_{x'}L_s^\infty})\no'(P_1)+(\norm{A_2}_{L^\infty}\\
\nn&&+\norm{\nabb' A_2}_{L^2_{x'}L_s^\infty})\no'(P_2)+\no'(P_3)+\norm{E_1}_{{\PP'}^0}+\norm{E_2}_{{\PP'}^0}+\norm{E_3}_{{\PP'}^0}\\
\nn &\lesssim&\ep,
\eea
which is an improvement of \eqref{tracek6} for $k_{L'L'}$.

Next, we show that $\nabb'k_{L'A}$ admit a decomposition of the form, $A\ddb_{L'}P+E$. We have:
\bea
\nn\nabb'_{e'_B}[k_{L'.}]_{e'_A}&=&-e'_B[\gg(\dd_{L'}T,e'_A)]+\gg(\dd_{L'}T,\ddb_{e'_B}e'_A)\\
\nn&=&-\gg(\dd_{e'_B}\dd_{L'}T,e'_A)-\gg(\dd_{L'}T,D_{e'_B}e'_A-\ddb_{e'_B}e'_A)\\
\nn&=&-\gg(\dd_{L'}\dd_{e'_B}T,e'_A)-\rr_{e'_BL'Te'_A}-\gg(\dd_{[e'_B,L']}T,e'_A)\\
\nn&&-b^{-1}\gg(-\db  N+\zb_Ce_C,\half\chi'_{AB}\lb'+\half\chb'_{AB}L')\\
\nn&=&-\ddb_{L'}[\gg(\dd_{.}T,.)]_{AB}-\rr_{e'_BL'Te'_A}-\gg(\dd_{\dd_{e'_B}L'}T,e'_A)\\
\nn&&+\gg(\dd_{\dd_{L'}e'_B-\ddb_{L'}e'_B}T,e'_A)+\gg(\dd_{e'_B}T,\dd_{L'}e'_A-\ddb_{L'}e'_A)\\
\nn&&-b^{-1}\gg(-\db  N+\zb_Ce_C,\half\chi'_{AB}\lb'+\half\chb'_{AB}L')\\
\nn&=&-\ddb_{L'}[\gg(\dd_{.}T,.)]_{AB}-\half b^{-1}\a'_{AB}\\
\nn&&+\half b^{-1}(\r'\d_{AB}-\s'\in_{AB}+2F'_C\in_{AC}{}^*\b'_B-|F'|^2\a'_{AB})\\
\nn&&+\chi'_{BC}(k_{CA}-F'_Ab^{-1}k_{CN}-F'_Cb^{-1}\kepb_{A}+F'_CF'_Ab^{-2}\db  )\\
\nn&&+b^{-1}\z'_A(k_{BN}-b^{-1}F'_B\db  )+\half\chi'_{AB}(-\db  +b^{-2}\db  |F'|^2\\
\lab{tracek21}&&+2b^{-1}\zb_CF'_C)+\half b^{-2}\chb'_{AB}\db  .
\eea
Define as in \cite{FLUX} $\check{\r}', \check{\s}'$ as:
\be\lab{tracek21bis}
\check{\r}'=\r'-\half\hch'\cdot\hchb',\,\check{\s}=\s-\half\hch'\wedge\hchb'.
\ee
Relying on the Bianchi identities, the following decomposition for $\check{\r}', \check{\s}'$ was obtained in \cite{FLUX}:
\be\lab{tracek22}
(\check{\r}',\check{\s}')=\ddb_{L'}(P_4)+E_4,
 \ee
where $P_4={}^*\mathcal{D'}_1^{-1}\bb'$, and 
\be\lab{tracek23}
\no'(P_4)+\norm{E_4}_{{\PP'}^0}\lesssim\ep.
\ee
We have 
$$\gg(\dd_{e'_B}T,e'_A)=-k_{AB}+F'_Bb^{-1}\kepb_A+b^{-1}F'_Ak_{BN}-b^{-2}F'_AF'_B\db  $$
which yields:
\be\lab{tracek23bis}
-\gg(\dd_{e'_B}T,e'_A)=\ddb_{L'}(P_5)+A_6\ddb_{L'}(P_6)+E_6
\ee
where $P_5, A_6, P_6, E_6$ are defined by:
\be\lab{tracek23ter}
\begin{array}{l}
{P_5}_{AB}=-k_{AB}+F'_Bb^{-1}\kepb_A-b^{-2}F'_AF'_B\db  ,\\
{A_6}_{B}=b^{-1}F'_B,\,{P_6}_B=k_{BN},\,{E_6}_{AB}=\ddb_{L}'[b^{-1}F']_Ak_{BN}.
\end{array}
\ee
We define the tensors $A_4, A_7, A_8, E_7$ as:
\be\lab{tracek24}
\begin{array}{l}
A_4=\half b^{-1},\,A_7=b^{-1}F',\,A_8=-b^{-1}|F'|^2\\
{E_7}_{AB}=\chi'_{BC}(k_{CA}-F'_Ab^{-1}k_{CN}-F'_Cb^{-1}\kepb_{A}+F'_CF'_Ab^{-2}\db  )\\
+b^{-1}\z'_A(k_{BN}-b^{-1}F'_B\db  )+\half\chi'_{AB}(-\db  +b^{-2}\db  |F'|^2+2b^{-1}\zb_CF'_C),
\end{array}
\ee
which together with \eqref{tracek12}, \eqref{tracek21}, \eqref{tracek22} and \eqref{tracek23bis} yields:
\bea\lab{tracek25}
\nn\nabb'k_{L'A}&=&A_7\ddb_{L'}P_2+A_8\ddb_{L'}P_1+A_4\ddb_{L'}P_4+\ddb_{L'}P_5+A_6\ddb_{L'}P_6+E_1+E_2+E_4\\
&&+E_6+E_7+\frac{1}{4}b^{-1}(\hch'\cdot\hchb'\d_{AB}-\hch'\wedge\hchb'\in_{AB})+\half b^{-2}\chb'_{AB}\db  
\eea
In view of \eqref{comptg19}, we define $E_8$ as:
\be\lab{tracek25bis}
E_8=\frac{1}{2}b^{-1}(\hch'\cdot(\nabb'F'+\half\hchb')\d_{AB}-\hch'\wedge(\nabb'F'+\half\hchb')\in_{AB})+ b^{-2}(\nabb'_BF'_A+\half\chb'_{AB})\db  .
\ee
Note that the non sharp product estimate \eqref{eq:nonsharpproduct} together with the bootstrap assumptions \eqref{boot1}-\eqref{boot3} for $b$ and $\db  $, the estimate \eqref{estgeodesicfolbis} for $\chi'$ and the estimate \eqref{comptg19} for $\nabb'F'+\half\chb'$ yields:
\be\lab{tracek25bisbis}
\norm{E_8}_{{\PP'}^0}\lesssim D^2\ep^2\lesssim \ep.
\ee
Now, we recall the following result from \cite{FLUX} section 7:
$$\nabb'\chi'=\ddb_{L'}(P_{10})+E_{10},$$
with $\no'(P_{10})+\norm{E_{10}}_{{\PP'}^0}\lesssim\ep$ which together with \eqref{tracek1}, \eqref{tracek15} and \eqref{tracek25bis} yields:
\bea
&&\lab{tracek25ter}\frac{1}{4}b^{-1}(\hch'\cdot\hchb'\d_{AB}-\hch'\wedge\hchb'\in_{AB})+\half b^{-2}\chb'_{AB}\db  \\
\nn&=&A_{11}\ddb_{L'}(P_{11})+E_{11}-\nabb'(\frac{1}{2}b^{-1}(\hch'\cdot F'\d_{AB}-\hch'\wedge F'\in_{AB})+ b^{-2}F'_A\db  ).
\eea
Using \eqref{tracek25bisbis}, the fact that $\no'(P_{10})+\norm{E_{10}}_{{\PP'}^0}\lesssim\ep$ from \cite{FLUX}, the estimate \eqref{estgeodesicfolbis} for $\chi'$, the bootstrap assumptions \eqref{boot1}-\eqref{boot3} for $b$ and $\db  $, and the estimates \eqref{comptg11} \eqref{comptg15} and \eqref{comptg18} for $F$ implies that $A_{11}, P_{11}, E_{11}$ satisfy:
\be\lab{tracek25quatre}
\norm{A_{11}}_{L^\infty}+\norm{\nabb' A_{11}}_{L^2_{x'}L_s^\infty}+\norm{\ddb_{L'}A_{11}}_{L^2_{x'}L_s^\infty}+\no'(P_{11})+\norm{E_{11}}_{{\PP'}^0}\lesssim\ep.
\ee
Now, \eqref{tracek25} and \eqref{tracek25ter} yield:
\bea
&&\nn\nabb'[k_{L'A}+\frac{1}{2}b^{-1}(\hch'\cdot F'\d_{AB}-\hch'\wedge F'\in_{AB})+ b^{-2}F'_A\db  ]\\
\nn&=&A_7\ddb_{L'}P_2+A_8\ddb_{L'}P_1+A_4\ddb_{L'}P_4+\ddb_{L'}P_5+A_6\ddb_{L'}P_6+A_{11}\ddb_{L'}P_{11}\\
&&\lab{tracek25six}+E_1+E_2+E_4+E_6+E_7+E_{11}.
\eea
\eqref{tracek23ter}, \eqref{tracek24}, the bootstrap assumptions \eqref{boot1} \eqref{boot2} for $b$, the bootstrap assumption \eqref{boot3} for $\kepb, \db  $, the improved estimate \eqref{hodgk8} for $k_{AN}$, the estimates \eqref{comptg11} \eqref{comptg15} and \eqref{comptg18} for $F'$ together with Remark \ref{remark:equivnorm} and \eqref{comptg10}, the estimate \eqref{estgeodesicfolbis} for $\chi'$ and $\z'$, the trace estimate \eqref{estgeodesicfol}, the inequality \eqref{eq:secondbilBesovbis} and the non sharp product estimate \eqref{eq:nonsharpproduct} yield:
\be\lab{tracek27}
\begin{array}{l}
\norm{A_j}_{L^\infty}+\norm{\nabb' A_j}_{L^2_{x'}L_s^\infty}+\norm{\ddb_{L'}A_j}_{L^2_{x'}L_s^\infty}\lesssim D\ep\textrm{ for }j=4, 6, 7, 8,\\
\no'(P_j)\lesssim \ep\textrm{ for }j=5, 6,\\
\norm{E_j}_{{\PP'}^0}\lesssim\ep\textrm{ for }j=6, 7.
\end{array}
\ee
Note also that \eqref{tracek10}, the bootstrap assumptions \eqref{boot1} \eqref{boot2} for $b$, the bootstrap assumption \eqref{boot3} for $\db  $, the estimates \eqref{comptg11} \eqref{comptg15} and \eqref{comptg18} for $F'$ together with the inequality \eqref{comptg10}, the estimate \eqref{estgeodesicfolbis} for $\chi'$ and the trace estimate \eqref{estgeodesicfol} imply:
\be\lab{tracek27bis}
\no'\bigg(k_{L'A}+\frac{1}{2}b^{-1}(\hch'\cdot F'\d_{AB}-\hch'\wedge F'\in_{AB})+ b^{-2}F'_A\db  \bigg)\lesssim \ep+D^2\ep^2\lesssim\ep.
\ee
Finally, the sharp trace estimate \eqref{eq:funny-classical-trace} together with \eqref{tracek25six} and the estimates \eqref{tracek13}, \eqref{tracek23}, \eqref{tracek25quatre}, \eqref{tracek27} and \eqref{tracek27bis} yields:
\be\lab{tracek29}
\normm{k_{L'A}+\frac{1}{2}b^{-1}(\hch'\cdot F'\d_{AB}-\hch'\wedge F'\in_{AB})+ b^{-2}F'_A\db  }_{L^\infty_{x'}L^2_s}\lesssim\ep+D^2\ep^2\lesssim\ep.
\ee
\eqref{tracek29}, the bootstrap assumption \eqref{boot1} for $b$, the bootstrap assumption \eqref{boot3} for $\db  $, the estimate \eqref{comptg15} for $F'$ and the trace estimate \eqref{estgeodesicfol} for $\chi'$ imply:
\be\lab{tracek30}
\norm{k_{L'A}}_{L^\infty_{x'}L^2_s}\lesssim\ep,
\ee
which is an improvement of \eqref{tracek6} for $k_{L'A}$. \eqref{tracek5}, \eqref{tracek19} and \eqref{tracek30} yield:
\be\lab{tracek31}
\norm{\db  }_{\xt{\infty}{2}}+\norm{\kepb}_{\xt{\infty}{2}}\lesssim\ep,
\ee
which improves the trace estimates for $\db  $ and $\kepb$ given by the bootstrap assumption \eqref{boot3}. 
 
\subsection{Estimates for $b$}\lab{sec:impbootb}

The goal of this section is to improve the bootstrap assumption for $b$ given by \eqref{boot1} and \eqref{boot2}, and to derive an estimate for $\lb(b)$ in $\tx{\infty}{4}$. Using the transport equation for $b$ \eqref{D4a} and the estimate for transport equations \eqref{estimtransport1},  we obtain:
\bea
\lab{remain9}\norm{b-1}_{L^\infty}&\lesssim&\norm{b(0)-1}_{L^\infty}+\norm{b\db}_{\xt{\infty}{1}}\\
\nn&\lesssim&\ep+(1+\norm{b-1}_{L^\infty})\norm{\db}_{\xt{\infty}{2}}\\
\nn&\lesssim&\ep+D\ep\norm{b-1}_{L^\infty}
\eea
where we used the bootstrap assumption \eqref{boot3} for $\db$ in the last inequality. \eqref{remain9} yields:
\be\lab{remain10}
\norm{b-1}_{L^\infty}\lesssim\ep
\ee
which improves the estimate for $b$ given by the bootstrap assumption \eqref{boot1}. Using \eqref{D4a} and \eqref{etaa}, we obtain:
\bea
\lab{remain11}\noo(b)&\lesssim& \no(L(b))+\no(\nabb b)\\
\nn&\lesssim & \no(b\db)+\no(b\z)+\no(b\kep)\\
\nn&\lesssim & (\norm{b}_{L^\infty}+\noo(b))(\no(\db)+\no(\z)+\no(\kep))\\
\nn&\lesssim & \ep+\ep\noo(b)
\eea
where we used \eqref{remain10} and the improved estimates \eqref{hodgk8} for $\db$ and $\kep$, and  \eqref{remain8} for $\z$. \eqref{remain11} yields:
\be\lab{remain12}
\noo(b)\lesssim\ep.
\ee
We also derive an estimate for $\lb(b)$. Differentiating the transport equation for $b$ \eqref{D4a} with respect to $\lb$, we obtain:
\bea\label{josh}
L(\lb(b))&=&[L,\lb](b)-\lb(b)\db-b\lb(\db)\\
\nn&=& (\d+n^{-1}\nabla_Nn)\db b-2(\z-\zb)\c\nabb b-b\lb(\db),
\eea
where we used the commutator formula \eqref{comm3} in the last equality. This yields:
\bea
\lab{josh1}&&\norm{L(\lb(b))}_{\lh{2}}\\
\nn&\les&(1+\norm{b}_{L^\infty})\bigg(\norm{\d+n^{-1}\nabla_Nn}_{\lh{4}}\norm{\db}_{\lh{4}}+\norm{\z-\zb}_{\lh{4}}\norm{\nabb b}_{\lh{4}}\\
\nn&&+\norm{\lb(\db)}_{\lh{2}}\bigg)\\
\nn&\les& (1+\norm{b}_{L^\infty})\bigg(\no(\d)^2+\no(n^{-1}\nabla n)^2+\no(\z)^2+\no(\zb)^2+\no(\nabb b)^2+\norm{\lb(\db)}_{\lh{2}}\bigg)\\
\nn&\les& \ep+D^2\ep^2\\
\nn&\les&\ep,
\eea
where we used the bootstrap assumptions \eqref{boot1}-\eqref{boot6}. Together with the estimate for transport equations \eqref{estimtransport1},  we obtain:
\be\lab{remain13}
\norm{\lb(b)}_{\xt{2}{\infty}}\les\norm{L(\lb(b))}_{\lh{2}}\les \ep.
\ee
\eqref{remain12} and \eqref{remain13} improve the estimate for $b$ given by the bootstrap assumption \eqref{boot2}. 
 
 Finally, we derive an estimate for $\lb(b)$ in $\tx{\infty}{4}$. In view of \eqref{D4tmu3}, we have
\bee
b\lb(\db)&=&-L(b(\d+n^{-1}\nab_Nn))+L(b)(\d+n^{-1}\nab_Nn)+2b\rho+2b|\kep|^2+2b\d^2\\
&&+4b\kep\c(\z-n^{-1}\nabb n)-2b|n^{-1}N(n)|^2,
\eee
which together with \eqref{josh} implies
\be\label{estblq}
L\left(\lb(b)-b(\d+n^{-1}\nab_Nn)\right)= -2b\rho+h_1,
\ee
where $h$ is given by
\bee
h_1 &=& (\d+n^{-1}\nabla_Nn)\db b-2(\z-\zb)\c\nabb b-L(b)(\d+n^{-1}\nab_Nn)-2b|\kep|^2-2b\d^2\\
&&-4b\kep\c(\z-n^{-1}\nabb n)+2b|n^{-1}N(n)|^2.
\eee
In view of the bootstrap assumptions \eqref{boot1}-\eqref{boot6}, we have the following estimate for $h_1$
\bea
\nn\norm{h_1}_{\tx{1}{4}} &\les& (\no(\d)+\no(n^{-1}\nabla n)+\noo(b)+\no(\kep)+\no(\z)+\no(\zb))^2(1+\norm{b}_{L^\infty})\\
\nn&\les& D^2\ep^2\\
\lab{estblq1}&\les& \ep.
\eea
Next, we decompose the term involving $\rho$ in the right-hand side of \eqref{estblq}. In view of the Bianchi identity \eqref{bianc6}, we have:
$$(n\r,n\s)={}^*\mathcal{D}_1^{-1}\left(\ddb_{nL}(\bb)-\nabb(n)\r+\nabb(n)\s-2n\hchb\c\b-n\db\bb+3n(\zb\r-{}^*\zb\s)\right)$$
which yields
\be\lab{estblq2}
(\rho, \sigma)=L({}^*\mathcal{D}_1^{-1}(\bb))+n^{-1}[{}^*\mathcal{D}_1^{-1},\ddb_{nL}]\bb+h_2,
\ee
where $h_2$ is given by
$$h_2=n^{-1}{}^*\mathcal{D}_1^{-1}\left(-\nabb(n)\r+\nabb(n)\s-2n\hchb\c\b-n\db\bb+3n(\zb\r-{}^*\zb\s)\right).$$
In view of the bootstrap assumptions \eqref{boot1}-\eqref{boot6} and Lemma \ref{lemma:lbt6}, we have the following estimate for $h_2$
\bea\lab{estblq3}
&&\norm{h_2}_{\tx{1}{4}}+\norm{h_2}_{\tx{2}{3}}\\
\nn&\les& \norm{n^{-1}}_{L^\infty}\normm{-\nabb(n)\r+\nabb(n)\s-2n\hchb\c\b-n\db\bb+3n(\zb\r-{}^*\zb\s)}_{\tx{1}{\frac{3}{2}}}\\
\nn&&+\norm{n^{-1}}_{L^\infty}\normm{-\nabb(n)\r+\nabb(n)\s-2n\hchb\c\b-n\db\bb+3n(\zb\r-{}^*\zb\s)}_{\tx{2}{\frac{4}{3}}}\\
\nn&\les& (\no(\nabb n)+\no(\hchb)+\no(\db)+\no(\zb))\norm{(\b, \r, \s, \bb)}_{\li{\infty}{2}}\norm{n^{-1}}_{L^\infty}(1+\norm{n}_{L^\infty})\\
\nn&\les& D\ep^2\\
\nn&\les& \ep,
\eea
where we used the bootstrap assumptions \eqref{boot1}-\eqref{boot6} and the curvature bound \eqref{curvflux1}. Next, we estimate the commutator term in the right-hand side of \eqref{estblq2}. This is done in the following lemma.
\begin{lemma}\lab{lemma:lbt8}
$[{}^*\mathcal{D}_1^{-1},\ddb_{nL}](\bb)$ satisfies the following estimate:
$$\norm{[{}^*\mathcal{D}_1^{-1},\ddb_{nL}](\bb)}_{\tx{2}{3}}+\norm{[{}^*\mathcal{D}_1^{-1},\ddb_{nL}](\bb)}_{\tx{1}{4}}\les\ep.$$
\end{lemma}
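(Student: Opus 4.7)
I would first use the purely algebraic identity
$$[{}^*\mathcal{D}_1^{-1},\ddb_{nL}] \;=\; {}^*\mathcal{D}_1^{-1}\,[\ddb_{nL},{}^*\mathcal{D}_1]\,{}^*\mathcal{D}_1^{-1},$$
which is the $\ddb_{nL}$-analogue of \eqref{eq:commLdcal-1} (see Lemma \ref{le:commutationL}), to move ${}^*\mathcal{D}_1^{-1}$ outside and reduce the problem to estimating $[\ddb_{nL},{}^*\mathcal{D}_1]$ applied to $(\rho,\sigma):={}^*\mathcal{D}_1^{-1}(\bb)$. Computing with the scalar form of the commutator \eqref{comm5}, namely $[\nabb_B,\ddb_{nL}]f=n\chi_{BC}\nabb_Cf$ for any scalar $f$, the inner commutator applied to $(\rho,\sigma)$ yields, schematically,
$$[\ddb_{nL},{}^*\mathcal{D}_1](\rho,\sigma) \;=\; n\,\chi\cdot\nabb(\rho,\sigma).$$
The elliptic Hodge estimate \eqref{eq:estimdcal-1} applied to $\bb={}^*\mathcal{D}_1(\rho,\sigma)$ then ensures $\|\nabb(\rho,\sigma)\|_{L^2(\ptu)}\lesssim\|\bb\|_{L^2(\ptu)}$ pointwise in $t$.

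\paragraph{Closing the estimates.} I would then use the $L^p$-bounds of Lemma \ref{lemma:lbt6} for ${}^*\mathcal{D}_1^{-1}$, combined with H\"older on $\ptu$ and the splitting $\chi=\tfrac12\trc\,\gamma+\hch$. For the $\tx{2}{3}$ bound, the choice $(p,q)=(4/3,3)$ in Lemma \ref{lemma:lbt6} (admissible since $\tfrac34<\tfrac13+\tfrac12$) together with the H\"older relation $\tfrac{1}{4/3}=\tfrac14+\tfrac12$ gives, slicewise in $t$,
$$\|{}^*\mathcal{D}_1^{-1}(n\chi\cdot\nabb(\rho,\sigma))\|_{L^3(\ptu)} \lesssim \|n\chi\cdot\nabb(\rho,\sigma)\|_{L^{4/3}(\ptu)} \lesssim \|n\|_{L^\infty}\|\chi\|_{L^4(\ptu)}\|\bb\|_{L^2(\ptu)};$$
taking the $L^2_t$-norm and invoking $\|\hch\|_{\tx{\infty}{4}}\lesssim\no(\hch)\lesssim\ep$ from \eqref{sobineq1}, $\|\trc\|_{\lh\infty}\lesssim\ep$, and the curvature-flux bound $\|\bb\|_{\lh 2}\lesssim\ep$, closes the first half of the lemma at size $\ep^2\lesssim\ep$. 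For the $\tx{1}{4}$ bound, the choice $(p,q)=(3/2,4)$ in Lemma \ref{lemma:lbt6} (admissible since $\tfrac23<\tfrac14+\tfrac12$) together with the H\"older relation $\tfrac{1}{3/2}=\tfrac16+\tfrac12$ yields, slicewise in $t$,
$$\|{}^*\mathcal{D}_1^{-1}(n\chi\cdot\nabb(\rho,\sigma))\|_{L^4(\ptu)} \lesssim \|\chi\|_{L^6(\ptu)}\|\bb\|_{L^2(\ptu)};$$
Cauchy--Schwarz in $t$, the Sobolev embedding $\|\hch\|_{\lh 6}\lesssim\no(\hch)\lesssim\ep$ from \eqref{sobineq}, and $\|\trc\|_{\lh\infty}\lesssim\ep$ then conclude the second half.

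\paragraph{Main obstacle.} The chief delicate point that makes the argument work at the critical $L^2$ curvature regularity is the strict inequality $\tfrac{1}{p}<\tfrac{1}{q}+\tfrac12$ in Lemma \ref{lemma:lbt6}, which forbids the naive $L^2\to L^2$ estimate for ${}^*\mathcal{D}_1^{-1}$ and forces one to squeeze a small amount of extra integrability out of $\chi$. This extra integrability is exactly what is available through the Sobolev and Gagliardo--Nirenberg estimates \eqref{sobineq}--\eqref{sobineq1} on $\H_u$, and is the structural reason why this commutator estimate is borderline yet holds; one further verifies that the auxiliary contribution coming from $\ddb_{nL}\in_{AB}$ in the commutator of ${}^*\mathcal{D}_1$ with $\ddb_{nL}$ is of the same schematic form $n\,\chi\cdot\nabb(\rho,\sigma)$ and is absorbed in the same fashion.
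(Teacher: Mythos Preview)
Your proof is correct and follows essentially the same route as the paper's: the algebraic identity $[{}^*\mathcal{D}_1^{-1},\ddb_{nL}]={}^*\mathcal{D}_1^{-1}[\ddb_{nL},{}^*\mathcal{D}_1]{}^*\mathcal{D}_1^{-1}$, the scalar commutator formula \eqref{comm5} yielding $n\chi\cdot\nabb({}^*\mathcal{D}_1^{-1}\bb)$, Lemma~\ref{lemma:lbt6} with the exponent pairs $(4/3,3)$ and $(3/2,4)$, and H\"older together with \eqref{eq:estimdcal-1}, \eqref{sobineq}, \eqref{sobineq1} and the curvature flux bound. Your handling of the exponents and the remark on the $\ddb_{nL}\in_{AB}$ contribution are in fact slightly more explicit than the paper's schematic treatment.
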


We postpone the proof of Lemma \ref{lemma:lbt8} to section \ref{sec:gowinda1} and conclude the estimate for $\lb(b)$ in $\tx{\infty}{4}$. In view of \eqref{estblq} and \eqref{estblq2}, we have
\bee
&&L\left(\lb(b)-b(\d+n^{-1}\nab_Nn)+2b\pi_1({}^*\mathcal{D}_1^{-1}(\bb))\right)\\
&=& 2L(b)\pi_1({}^*\mathcal{D}_1^{-1}(\bb))-2b\pi_1([{}^*\mathcal{D}_1^{-1},\ddb_{nL}](\bb))-2b \pi_1(h_2)+h_1,
\eee
where $\pi_1$ denote the projection in $\mathbb{R}^2$ on the first coordinate. Together with the estimate for transport equations \eqref{estimtransport1}, we obtain
\bee
&&\normm{\lb(b)-b(\d+n^{-1}\nab_Nn)+2b\pi_1({}^*\mathcal{D}_1^{-1}(\bb))}_{\tx{\infty}{4}}\\
&\les& \norm{L(b)}_{\tx{2}{8}}\norm{{}^*\mathcal{D}_1^{-1}(\bb)}_{\tx{2}{8}}+\norm{b}_{L^\infty}(\norm{[{}^*\mathcal{D}_1^{-1},\ddb_{nL}](\bb)}_{\tx{1}{4}}+\norm{h_2}_{\tx{1}{4}})+\norm{h_1}_{\tx{1}{4}}\\
&\les& \noo(b)\norm{\nabb{}^*\mathcal{D}_1^{-1}(\bb)}_{\lh{2}}+\ep,
\eee
where we used in the last inequality the Gagliardo-Nirenberg inequality \eqref{eq:GNirenberg}, the bootstrap assumption \eqref{boot1} for $b$, Lemma \ref{lemma:lbt8}, the estimate \eqref{estblq3} for $h_2$ and the estimate \eqref{estblq1} for $h_1$. Together with the bootstrap assumption \eqref{boot2} for $b$, the estimate \eqref{eq:estimdcal-1} for ${}^*\mathcal{D}^{-1}_1$, and the curvature bound \eqref{curvflux1} for $\bb$, we deduce
\be
\normm{\lb(b)-b(\d+n^{-1}\nab_Nn)+2b\pi_1({}^*\mathcal{D}_1^{-1}(\bb))}_{\tx{\infty}{4}}\les\ep.
\ee
This yields
\bea\lab{estblq4}
\normm{\lb(b)}_{\tx{\infty}{4}}&\les&\normm{-b(\d+n^{-1}\nab_Nn)+2b\pi_1({}^*\mathcal{D}_1^{-1}(\bb))}_{\tx{\infty}{4}}+\ep\\
\nn&\les&\norm{b}_{L^\infty}(\no(\d)+\no(n^{-1}\nab_Nn))+\norm{b}_{L^\infty}\no({}^*\mathcal{D}_1^{-1}(\bb))+\ep\\
\nn&\les& D\ep+\no({}^*\mathcal{D}_1^{-1}(\bb)).
\eea

Next, we estimate the right-hand side of \eqref{estblq4}. In view of the estimate \eqref{eq:estimdcal-1} for ${}^*\mathcal{D}^{-1}_1$ and the curvature bound \eqref{curvflux1} for $\bb$, we have
\be\lab{estblq5}
\norm{\nabb{}^*\mathcal{D}_1^{-1}(\bb)}_{\lh{2}}\les \norm{\bb}_{\lh{2}}\les\ep.
\ee
Also, in view of \eqref{estblq2}, we have
\bea\lab{estblq6}
\nn&&\norm{L({}^*\mathcal{D}_1^{-1}(\bb)}_{\lh{2}}\les \norm{(\rho, \sigma)}_{\lh{2}}+\norm{n^{-1}}_{L^\infty}\norm{[{}^*\mathcal{D}_1^{-1},\ddb_{nL}]\bb}_{\lh{2}}+\norm{h_2}_{\lh{2}}\\
&\les&\ep,
\eea
where we used in the last inequality the curvature bound \eqref{curvflux1} for $(\r, \s)$, the bootstrap assumption \eqref{boot1} for $n$, the commutator estimate of Lemma \ref{lemma:lbt8}, and the estimate \eqref{estblq3} for $h_2$.  Finally, \eqref{estblq5} and \eqref{estblq6} imply
$$\no({}^*\mathcal{D}_1^{-1}(\bb))\les\ep$$
which together with \eqref{estblq4} yields the following estimate for $\lb(b)$
\be\lab{estblq7}
\normm{\lb(b)}_{\tx{\infty}{4}}\les D\ep.
\ee

\begin{remark}
The estimate \eqref{estblq7} contains the bootstrap constant $D$ in its right-hand side. This is not an issue since such an estimate is not part of our bootstrap assumptions \eqref{boot1}-\eqref{boot6}. 
\end{remark}
 
\subsection{Remaining estimates for $\trc$, $\hch$ and $\z$}\lab{sec:remainest}

We first estimate $\nabb\trc$. Differentiating the Raychaudhuri equation \eqref{D4trchi} and using the commutation formula \eqref{comm1}, we obtain:
\be\lab{remain1}
\ddb_{L}\nabb\trc=-\left(\frac{3}{2}\trc+\hch+\db  \right)\nabb\trc-2\hch\nabb\hch+n^{-1}\nabb nL(\trc)-\nabb(\db  )\trc,
\ee
which together with the bootstrap assumptions \eqref{boot1}-\eqref{boot6} and the estimate for transport equations \eqref{estimtransport1} yields:
\bea
\nn\norm{\nabb\trc}_{\xt{2}{\infty}}&\lesssim& \normm{\frac{3}{2}\trc+\hch+\db  }_{\xt{2}{\infty}}\norm{\nabb\trc}_{\lh{2}}+\norm{\hch}_{\xt{\infty}{2}}\norm{\nabb\hch}_{\lh{2}}\\
\nn&&+\norm{n^{-1}\nabb n}_{\tx{2}{8}}\norm{L(\trc)}_{\tx{2}{\frac{8}{3}}}+\norm{\nabb(\db  )}_{\lh{2}}\norm{\trc}_{L^\infty}\\
\nn&\lesssim& D^2\ep^2\\
\lab{remain2}&\lesssim&\ep
\eea
where we used the Raychaudhuri equation \eqref{D4trchi}, the embeddings \eqref{eq:GNirenberg} and \eqref{sobineq1}, and the bootstrap assumption to bound $L(\trc)$:
$$\norm{L(\trc)}_{\tx{2}{\frac{8}{3}}}\lesssim\norm{\chi}_{\tx{2}{8}}\norm{\chi}_{\tx{\infty}{4}}+\norm{\chi}_{\tx{2}{8}}\norm{\db  }_{\tx{\infty}{4}}\lesssim \no(\chi)^2+\no(\db  )^2\lesssim \ep.$$
Note that \eqref{remain2}  improves the estimate for $\nabb\trc$ given by the bootstrap assumption \eqref{boot4}. 

We now estimate $\no(\chi)$. Using the transport equation for $\hch$ \eqref{D4chih}, we obtain:
\be\lab{remain2bis}
\norm{\ddb_L\hch}_{\lh{2}}\lesssim \norm{\trc}_{L^\infty}\norm{\hch}_{\lh{2}}+\norm{\db}_{\lh{4}}\norm{\hch}_{\lh{4}}+\norm{\a}_{\lh{2}}\lesssim\ep+D^2\ep^2\lesssim\ep
\ee
where we have used the curvature bound \eqref{curvflux1} for $\a$, and the bootstrap assumptions \eqref{boot2}-\eqref{boot5} for $\chi$ and $\db$. Next, using the codazzi equation \eqref{Codaz} for $\hch$, we obtain:
\be\lab{remain3}
\norm{\mathcal{D}_2\hch}_{\lh{2}}\lesssim \norm{\nabb\trc}_{\lh{2}}+\norm{\b}_{\lh{2}}+\norm{\chi}_{\lh{4}}\norm{\kep}_{\lh{4}}\lesssim\ep+D^2\ep^2\lesssim\ep
\ee
where we have used \eqref{remain2}, the curvature bound \eqref{curvflux1} for $\b$, and the bootstrap assumptions \eqref{boot3}-\eqref{boot5} for $\chi$ and $\kep$. The Hodge estimate \eqref{eq:estimdcal-1} together with \eqref{remain3} yields:
\be\lab{remain4}
\norm{\nabb\hch}_{\lh{2}}\lesssim \ep.
\ee
\eqref{remain2bis} and \eqref{remain4} imply:
\be\lab{remain4bis}
\no(\hch)\lesssim \ep.
\ee
Note that \eqref{remain4bis}  improves the estimate for $\no(\hch)$ given by the bootstrap assumption \eqref{boot5}. 

We now estimate $\lb\trc$. Using the transport equation for $\mu$ \eqref{D4tmu} and the estimate for transport equations \eqref{estimtransport1},  we obtain:
\bea
\nn\norm{\mu}_{\xt{2}{\infty}} &\lesssim& \bigg\|-\trc \mu +2(\zb_A-\z_A)\nabb_A(\trc)
-2\hch_{AB}\Bigl (2\nabb_A \z_B +  2\z_A\z_B   \\
\nn&&+ (\d +n^{-1}N(n))\hch_{AB} 
-\half\trchb\hch_{AB} -\half\trc\hchb_{AB} \Bigr )
\\
\nn&&
+  (\d  +n^{-1}\nab_N n))\big (\half (\trc)^2 + |\hch|^2 +
\db   \trc \big ) \\ 
\nn&& +\trc 
\bigg (2 (k_{AN} - \z_A) n^{-1} \nab_A n + 2 |n^{-1} N(n)|^2 -
2\rho \\
\nn&&- 2 k_{Nm} k^{m}_N+2|n^{-1}N(n)|^2 - 2n^{-1}N(n)\trc
\bigg)\bigg\|_{\xt{2}{1}}\\
\nn&\lesssim & \norm{\trc}_{L^\infty}\norm{\mu}_{\xt{2}{\infty}} +(\norm{\zb}_{\xt{\infty}{2}} +\norm{\z}_{\xt{\infty}{2}})\norm{\nabb\trc}_{\lh{2}}\\
\nn&&+ \norm{\hch}_{\xt{\infty}{2}} \Bigl (\norm{\nabb\z}_{\lh{2}}+ \norm{\z}^2_{\lh{4}}  \\
 \nn&&+ \norm{(\d +n^{-1}N(n))\hch}_{\lh{2}} +\norm{\trchb\hch}_{\lh{2}} +\norm{\trc\hchb}_{\lh{2}} \Bigr )\\
\nn&&+  \normm{(\d  +n^{-1}\nab_N n))\big (\half (\trc)^2 + |\hch|^2 +
\db   \trc \big )}_{\lh{2}}  \\ 
\nn&& +\norm{\trc}_{L^\infty} 
\bigg\|2 (k_{AN} - \z_A) n^{-1} \nab_A n + 2 |n^{-1} N(n)|^2 -
2\rho \\
\nn&&- 2 k_{Nm} k^{m}_N+2|n^{-1}N(n)|^2 - 2n^{-1}N(n)\trc
\bigg\|_{\lh{2}}\\
\lab{remain5}&\lesssim& D\ep\norm{\mu}_{\xt{2}{\infty}} +D^2\ep^2
\eea
where we used the curvature bound \eqref{curvflux1}, the bootstrap assumptions \eqref{boot1}-\eqref{boot6} and the Sobolev inequality \eqref{sobineq}. \eqref{remain5} yields:
$$\norm{\mu}_{\xt{2}{\infty}}\lesssim\ep$$
which together with the bootstrap assumptions \eqref{boot1}-\eqref{boot6} and the definition \eqref{eqmu} of $\mu$ implies:
\be\lab{remain6}
\norm{\lb\trc}_{\xt{2}{\infty}}\lesssim
\normm{\lb(\mu)+\half (\trc)^2 + \big (\d +n^{-1}\nab_N n\big )\trc}_{\xt{2}{\infty}}
\lesssim\ep+D^2\ep^2\lesssim \ep.
\ee
Note that \eqref{remain6}  improves the estimate for $\lb\trc$ given by the bootstrap assumption \eqref{boot4}. 

We now estimate $\no(\z)$. Using the transport equation for $\z$ \eqref{D4eta}, we obtain:
\be\lab{remain6bis}
\norm{\ddb_L\z}_{\lh{2}}\lesssim \norm{\chi}_{\lh{4}}(\norm{\z}_{\lh{4}}+\norm{\kepb}_{\lh{4}})+\norm{\b}_{\lh{2}}\lesssim\ep+D^2\ep^2\lesssim\ep
\ee
where we have used the curvature bound \eqref{curvflux1} for $\b$, and the bootstrap assumptions \eqref{boot2}-\eqref{boot5} for $\chi$ and $\kepb$. Next, using the div-curl system of equations \eqref{diveta} \eqref{curleta} for $\z$, we obtain:
\bea
\nn\norm{\mathcal{D}_1\z}_{\lh{2}}&\lesssim& \norm{\mu}_{\lh{2}}+\norm{\r}_{\lh{2}}+\norm{\s}_{\lh{2}}+\norm{\chi}^2_{\lh{4}}+\norm{k}^2_{\lh{4}}+\norm{\z}^2_{\lh{4}}\\
\nn&\lesssim&\ep+D^2\ep^2\\
\lab{remain7}&\lesssim&\ep
\eea
where we have used \eqref{remain6}, the curvature bound \eqref{curvflux1} for $\r$ and $\s$, and the bootstrap assumptions \eqref{boot3}-\eqref{boot6} for $\chi$, $k$ and $\z$. The Hodge estimate \eqref{eq:estimdcal-1} together with \eqref{remain7} yields:
\be\lab{remain8}
\norm{\nabb\z}_{\lh{2}}\lesssim \ep.
\ee
\eqref{remain6bis} and \eqref{remain8} imply:
\be\lab{remain8bis}
\no(\z)\lesssim \ep.
\ee
Note that \eqref{remain8bis}  improves the estimate for $\no(\z)$ given by the bootstrap assumption \eqref{boot6}. 

We now estimate $\ddb_{\lb}\hch$. Using the null structure equation \eqref{D3chih}, we obtain:
\bea
&&\lab{remain8ter}\norm{\ddb_{\lb}\hch}_{\lh{2}}\\
\nn&\lesssim& \norm{\nabb\z}_{\lh{2}}+\norm{\chi}_{\lh{4}}(\norm{\chb}_{\lh{4}}+\norm{\d}_{\lh{4}}+\norm{n^{-1}N(n)}_{\lh{4}})+\norm{\z}_{\lh{4}}^2\\
\nn&\lesssim& \ep+D^2\ep^2\\
\nn&\lesssim&\ep
\eea
where we have used \eqref{remain8} and the bootstrap assumptions \eqref{boot1}-\eqref{boot6} for $n$, $\chi$, $\chb$, $\d$ and $\z$. Note that \eqref{remain8ter}  improves the estimate for $\ddb_{\lb}\hch$ given by the bootstrap assumption \eqref{boot5}. 

Finally, \eqref{lapn2}, \eqref{lapn3bis}, \eqref{lapn12}, \eqref{hodgk8}, \eqref{hodgk10}, \eqref{comptg1}, \eqref{comptg16}, \eqref{tracek31}, \eqref{remain2}, \eqref{remain4bis}, \eqref{remain6}, \eqref{remain8bis}, \eqref{remain8ter}, \eqref{remain10}, \eqref{remain12} and \eqref{remain13} improve the bootstrap assumptions \eqref{boot1}-\eqref{boot6}. Thus, there exists a universal constant $D>0$ such that \eqref{boot1}-\eqref{boot6} and \eqref{estblq7} hold. This yields \eqref{estn}-\eqref{estzeta} which concludes the proof of Theorem \ref{thregx}.

\section{Estimates for $\lb\lb\trc$, $\ddb_{\lb}(\z)$ and $\lb\lb(b)$}\lab{sec:secondderlb}

This section is devoted to the proof of Theorem \ref{thregx1}. We assume the following bootstrap assumptions. There exists a function $\gamma$ in $L^2(\R)$ with $\norm{\gamma}_{L^2(\R)}\leq 1$ such that for all $j\geq 0$, we have:
\be\lab{boot7}
\norm{P_j\lb\lb\trc}_{\lh{2}}\lesssim 2^jD\ep+2^{\frac{j}{2}}D\ep\gamma(u),
\ee
\be\label{boot8}
\norm{P_j(\ddb_{\lb}(\z))}_{\lh{2}}\lesssim D^2\ep+2^{-\frac{j}{2}}D^2\ep\gamma(u),
\ee
where $D>0$ is a large enough constant. We will improve on these estimates. Using the estimates obtained in Theorem \ref{thregx}, in particular for $\trc$ and $\hch$, would yield an upper bound for $\lb\lb\trc$ of the following  type  
\be\lab{jfkwaiting}
\norm{P_j\lb\lb\trc}_{\lh{2}}\lesssim 2^{\frac{j}{2}}\ep\gamma(u)+\sum_{l,q}2^j2^{-\frac{|q-l|}{2}}\gamma^{(1)}_q\gamma^{(2)}_l,\textrm{ where }\gamma^{(1)}_q\in\ell^2(\mathbb{N})\textrm{ and }\gamma^{(2)}_l\in\ell^\infty(\mathbb{N})
\ee
which is not summable. This forces us to rely on a Besov improvement for $\trc$, as well as a suitable  decomposition for $\nabb\hch$ (see \eqref{impbes7}). This is done in section \ref{sec:besovtrc}. Then, we derive a system of equations for $\lb\lb\trc$ and $\ddb_{\lb}(\z)$ in section \ref{sec:fautpasdeconner}. This allows us to improve on the bootstrap assumption \eqref{boot8} in section \ref{sec:lbz}, and \eqref{boot7} in section \ref{sec:lbt}. 
Finally, the estimate for $\lb\lb(b)$ is then derived in section \ref{sec:fautpasdeconner1}.

\subsection{Besov improvement for $\trc$ in the time foliation}\label{sec:besovtrc}

In this section, we first define Besov spaces, and then explain how to adapt the ideas in the sequence of papers \cite{FLUX} \cite{LP} \cite{STT} to obtain the Besov improvement for $\trc$.

\subsubsection{Definition of the Besov spaces and first properties}

Following \cite{LP} \cite{FLUX}, we introduce for $0\leq a\leq 1$ and for tensors $F$ on $\ptu$ the Besov norm:
\be\label{besovptu}
\|F\|_{B^a_{2,1}(\ptu)}=\sum_{j\ge 0} 2^{ja}\|P_jF\|_{L^2(\ptu)} +  \| P_{<0}F\|_{L^2(\ptu)},  
\ee
where $P_j$ are the geometric Littlewood-Paley projections on the 2-surfaces $\ptu$. Furthermore, for $\ptu$-tangent tensors $F$ on $\H_u$, $0\leq a\leq 1$,  we introduce the Besov norms:
\bea
\|F\|_{{\BB}^a}&=&\sum_{j\ge 0} 2^{ja} \sup_{0\le t\le 1}\|
P_jF\|_{L^2(\ptu)} +  \sup_{0\le t\le 1}\|
P_{<0}F\|_{L^2(\ptu)}    ,\label{eq:Besovnormt}\\
\|F\|_{{\PP}^a}&=&\sum_{j\ge 0} 2^{ja} \| P_jF\|_{L^2(\H_u)} +
\| P_{<0}F\|_{L^2(\H_u)}.\label{eq:Penrosenormt}
\eea
Note that these Besov spaces in the time foliation correspond to the Besov spaces in the geodesic foliation defined by the norms \eqref{eq:Besovnorm} \eqref{eq:Penrosenorm}. The goal of section \ref{sec:besovtrc} is to prove the following estimates for $\trc$ and $\hch$:
\be\lab{esttrcbesov}
\norm{\nabb\trc}_{\BB^0}\lesssim\ep,
\ee
and
\be\lab{esthchbesov}
\nabb\hch=\ddb_{L}(P)+E\textrm{ with }\no(P)\lesssim\ep\textrm{ and }\norm{E}_{\PP^0}\lesssim\ep.
\ee
Note that the corresponding estimates in the geodesic foliation have been proved in the sequence of papers \cite{FLUX} \cite{LP} \cite{STT}. One may reprove these estimates by adapting the proofs to the context of a time foliation. However, this would be rather lengthy and we suggest here a more elegant solution which consists in identifying the key structure in \cite{FLUX} \cite{LP} \cite{STT} and showing that the analog structure exists in the time foliation. This will be done in the next section.

We conclude this section with several functional inequalities satisfied by the Besov spaces $\BB^a, \PP^a$. Note that properties of the Besov spaces on 2-surfaces derived in \cite{LP} apply to the Besov spaces $B^a_{2,1}$. Indeed, these properties only depend on the fact that $\ptu$ is a 2-surface satisfying the coordinate system assumption \eqref{eq:coordchart} and the assumption \eqref{estgauss2}   on the gauss curvature $K$. In particular, the following estimates are immediate consequences on the estimates in \cite{LP} for $B^a_{2,1}$ (see also section 5 in \cite{FLUX}):
\be\lab{linftybound}
\norm{f}_{L^\infty}\lesssim\norm{f}_{\BB^1}\lesssim \norm{f}_{\tx{\infty}{2}}+\norm{\nabb f}_{\BB^0},
\ee
where $f$ is a scalar function on $\H_u$, 
\be\lab{eq:secondbilBesov:bis}
\| F\cdot H\|_{{\BB}^0}\lesssim \big(\|\nabb F\|_{L_t^\infty L_x^2} +
\|F\|_{L^\infty}\big)\|H\|_{{\BB}^0},
\end{equation}
where $F$ and $H$ are $\ptu$-tangent tensors, and
\be\lab{DinvsurBesov}
\norm{\nabb\cdot\mathcal{D}^{-1}F}_{\PP^a}\lesssim\norm{F}_{\PP^a}
\ee
where $0\leq a<1$, $F$ is a $\ptu$ tangent tensor on $\H_u$, and $\mathcal{D}^{-1}$ denotes one of the operators $\mathcal{D}^{-1}_1$, $\mathcal{D}^{-1}_2$, ${}^*\mathcal{D}^{-1}_1$, ${}^*\mathcal{D}^{-1}$.  Also, for $0\leq a<\half$ and $\frac{2}{2-a}<p\leq 2$, we have:
\be\lab{DinvsurBesovbis}
\norm{\mathcal{D}^{-1}F}_{\PP^a}\lesssim\norm{F}_{\tx{2}{p}}.
\ee
Finally, we shall make use of the following non sharp embedding and product estimates. For any $\ptu$-tangent tensors $F, G$, and for any $0\leq a<\half$, we have:
\bea
\lab{nonsharpembedding}\norm{F}_{\BB^a}\les \no(F)\\
\lab{nonsharpprod1}\norm{F\cdot G}_{\PP^a}\les \noo(F)\c\norm{G}_{\PP^a}\\
\lab{nonsharpprod2}\norm{F\c G}_{\PP^a}\les \no(F)\c (\norm{G}_{\lh{2}}+\norm{\nabb G}_{\lh{2}}).
\eea

\subsubsection{Structure of the commutators in the time foliation}

As noted at the end of the previous section, the results from the paper \cite{LP} on 2-surfaces immediately apply to $\ptu$. We shall now show that results from the paper \cite{ STT} true in the geodesic foliation apply also to the time foliation due to a similar structure of commutators.

Let $A$ denote $A=n\chi$. Then, the estimates \eqref{estn} for $n$, \eqref{esttrc} for $\trc$ and \eqref{esthch} for $\hch$ of Theorem \ref{thregx} proved in section \ref{sec:regxproof} imply:
\be\lab{estA}
\norm{A}_{\xt{\infty}{2}}+\no(A)\lesssim\ep.
\ee
In view of \eqref{estA} and the commutator identities \eqref{comm5} and \eqref{comm6}, we have:
\be\lab{commA1}
[\ddb_{nL},\nabb]f=A\cdot \nabb f,
\ee
\be\lab{commA2}
[\ddb_{nL},\lap]f=A\cdot\nabb^2f+\nabb A\cdot\nabb f+A\cdot A\cdot \nabb f
\ee
where $f$ is a scalar function on $\H_u$ and:
\be\lab{commA3}
[\ddb_{nL},\nabb]F=A\cdot \nabb F+n\b\cdot F+A\cdot A\cdot F,
\ee
\be\lab{commA4}
[\ddb_{nL},\lap]F=A\cdot\nabb^2F+\nabb A\cdot\nabb F+A\cdot A\cdot \nabb F+n\b\cdot\nabb F+\nabb(n\b\cdot\nabb F+A\cdot A\cdot F)
\ee
where $F$ is a $\ptu$-tangent tensor on $\H_u$. Note that the structure of the commutators \eqref{commA1}-\eqref{commA4} together with the estimate \eqref{estA} for $A$ is the same structure as in the case of a geodesic foliation with the correspondence:
\be\lab{correspondancegt}
L'\rightarrow nL,\,\chi'\rightarrow n\chi\textrm{ and }\b'\rightarrow n\b
\ee
where $L'$, $\chi'$ and $\b'$ have been defined in section \ref{sec:defgeodfol}. 

The proofs of the sharp trace theorems in the paper \cite{STT} rely on the following assumptions (see section 3 of \cite{STT}) where we translate for the time foliation using the correspondence \eqref{correspondancegt}:
\begin{itemize}
\item[{\bf S1}] The two surfaces $\ptu$ satisfy the coordinates system assumption \eqref{eq:coordchart}, the calculus inequalities of section \ref{sec:calcineq} and the geometric Littlewood-Paley theory of section \ref{sec:geompal}

\item[\bf{S2}] The Gauss curvature $K$ of $\ptu$ satisfies the bound \eqref{estgauss1} \eqref{estgauss2}

\item[\bf{S3}] There is $A$ satisfying \eqref{estA} such that we have the commutator structure \eqref{commA1}-\eqref{commA4}

\item[\bf{S4}] $n\b$ satisfies the curvature flux bound $\norm{n\b}_{\lh{2}}\les\ep$ (which follows from the curvature bound \eqref{curvflux1} and the estimate \eqref{estn} for $n$)
\end{itemize}
Since the proof of the sharp trace theorems in \cite{STT} only rely on the structural assumptions {\bf S1}-{\bf S4}, they immediately extend to the case of a time foliation. In particular, we obtain the following analog of the sharp trace theorems in \cite{STT} (see section 4 of \cite{STT}):
\begin{proposition}\label{prop:mainlemmaapplicationt}
Assume that the $\ptu$-tangent tensor $U$  satisfies $U(0)=0$ and the following transport
 equation along  $\H_u$:
$$\ddb_{nL}U+a\trc U =F_1\cdot\ddb_{nL} P+F_2\cdot W,$$
where $a$ is some positive number. Then,
\be\label{eq:mainleprop1t}
\|U\|_{{\BB}^0}\lesssim\big(\no(F_1)+\|F_1\|_{L_{x'}^\infty L_t^2}\big)\cdot\no(P)
+\big(\no(F_2)+\|F_2\|_{L_{x'}^\infty L_t^2}\big)\cdot\|W\|_{{\PP}^0}.
\ee
\end{proposition}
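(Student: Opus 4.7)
The plan is to transplant the proof of the geodesic-foliation analog Proposition \ref{prop:mainlemmaapplication} from \cite{FLUX}, exploiting the dictionary $L'\!\to\! nL$, $\chi'\!\to\! n\chi$, $\b'\!\to\! n\b$ identified in the discussion preceding the statement. The point of verifying the structural conditions {\bf S1}--{\bf S4} in this section is precisely that every analytic ingredient used in \cite{FLUX}---the LP calculus on $\ptu$, the Gauss curvature bound \eqref{estgauss1}--\eqref{estgauss2}, the commutator algebra \eqref{commA1}--\eqref{commA4}, and the flux bound $\|n\b\|_{\lh{2}}\lesssim\ep$---is available in exactly the same form. Consequently the proof requires no new analytic input, only a careful check that the algebraic manipulations from \cite{FLUX} go through with $nL$ in place of $L'$.

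Concretely, I would apply the projector $P_j$ to the transport equation to obtain
\be\lab{proofplan1}
\ddb_{nL}(P_jU)+a\trc\,P_jU=[\ddb_{nL},P_j]U-a[P_j,\trc]U+P_j(F_1\cdot\ddb_{nL}P)+P_j(F_2\cdot W),
\ee
then run an $L^2(\ptu)$ energy estimate dyadic block by dyadic block, integrating in $t$ via \eqref{du} so that the $a\trc P_jU$ term is absorbed against the measure factor. Since $U(0)=0$, this controls $\sup_t\|P_jU\|_{L^2(\ptu)}$ by the $\lh{2}$ norm of the right-hand side of \eqref{proofplan1}, and then summing in $j$ (without dyadic weight, since the target is $\BB^0$) yields the announced bound provided the four contributions on the right furnish summable dyadic sequences.

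The main obstacle is the term $P_j(F_1\cdot\ddb_{nL}P)$, in which the derivative sits on $P$. Following \cite{FLUX}, I would use the Leibniz identity
$$F_1\cdot\ddb_{nL}P=\ddb_{nL}(F_1\cdot P)-(\ddb_{nL}F_1)\cdot P,$$
commute $P_j$ past $\ddb_{nL}$ via \eqref{commA3} (paying curvature and quadratic corrections in $A=n\chi$ and $n\b$), and integrate the resulting transport equation in $t$. The boundary contribution produces the factor $\|F_1\|_{L^\infty_{x'}L^2_t}\no(P)$ via the sharp trace theorem in the form exported from \cite{STT} under assumptions {\bf S1}--{\bf S4}, while the bulk terms are absorbed using $\no(F_1)$ and the bilinear Besov estimates \eqref{eq:secondbilBesov:bis}, \eqref{nonsharpprod1}, \eqref{nonsharpprod2}. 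The term $P_j(F_2\cdot W)$ is handled directly: using $F_2\cdot W$ as source, energy estimates and the same product inequalities reduce it to $(\no(F_2)+\|F_2\|_{L^\infty_{x'}L^2_t})\|W\|_{\PP^0}$, yielding the second factor in \eqref{eq:mainleprop1t}.

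The delicate step, as in \cite{STT}, is controlling the LP commutators $[\ddb_{nL},P_j]$ and $[P_j,\trc]$ uniformly in $j$ so that the dyadic sum converges in $\BB^0$. This is exactly where assumption {\bf S3} is used: the commutator formula \eqref{commA3} expresses $[\ddb_{nL},P_j]F$ schematically as $P_j$ of $A\cdot\nabb F+n\b\cdot F+A\cdot A\cdot F$ modulo acceptable errors, and the bound \eqref{estA} on $A$ together with the finite band and sharp Bernstein inequalities of section \ref{sec:geompal} renders these commutators summable after pairing with $P_jU$. Once this is established, the remaining estimates are routine applications of the functional inequalities \eqref{linftybound}--\eqref{nonsharpprod2}, and the statement follows exactly as in the geodesic case.
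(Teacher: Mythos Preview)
Your proposal is correct and follows the same approach as the paper: the paper does not give an independent proof of this proposition but simply observes that, once the structural assumptions {\bf S1}--{\bf S4} are verified (which is done in the text immediately preceding the statement), the proof of the corresponding sharp trace theorem in \cite{STT} carries over verbatim under the dictionary $L'\to nL$, $\chi'\to n\chi$, $\b'\to n\b$. Your sketch of how that transferred argument proceeds is accurate, though the paper itself does not reproduce these details.
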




We also obtain the following useful commutator estimates:
\begin{lemma}\lab{lemma:commutdivb}
For a given 1-form $F$, let $w$ the solution of the scalar transport equation
$$nL(w)=\divb(F),\, w=0\textrm{ on }\pou,$$
and let $W$ be a solution of the equation
$$\ddb_{nL}W-n\chi\c W=F,\, W=0\textrm{ on }\pou.$$
Then, for any $1\leq p\leq 2$,
$$\norm{\divb(W)-w}_{\xt{p}{\infty}}\les\ep\norm{F}_{\xt{\frac{2p}{2-p}}{1}}.$$
\end{lemma}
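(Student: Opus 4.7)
The plan is to derive a transport equation for $\divb(W)-w$ along $L$, exploit a crucial algebraic cancellation arising from the symmetry of $\chi$, and close the estimate by transport and bilinear Hölder bounds.

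First, I will apply $\divb$ to the equation satisfied by $W$. Using the commutator formula \eqref{comm5} and the fact that the induced metric $\ga$ on $\ptu$ is preserved by $\ddb_{nL}$, I obtain
$$\divb(\ddb_{nL}W) = \ddb_{nL}(\divb W) + \ga^{AB}[\nabb_A,\ddb_{nL}]W_B,$$
where expanding the commutator produces, among lower order terms in $\kepb$ and $\b$, the contribution $n\chi^{BE}\nabb_E W_B$. On the other hand, the source side gives $\divb(F+n\chi\c W) = \divb F + \divb(n\chi)\c W + n\chi^{AB}\nabb_A W_B$. The crucial observation is that $n\chi^{AB}\nabb_A W_B$ and $n\chi^{BE}\nabb_E W_B$ differ by the contraction of the symmetric tensor $\chi^{BE}$ with the antisymmetric expression $\nabb_B W_E - \nabb_E W_B$, hence they cancel identically. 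Combined with $nL(w)=\divb F$, this yields a transport equation
$$nL(\divb W-w)=E,\qquad (\divb W-w)_{|_{\pou}}=0,$$
whose right-hand side $E$ is a sum of terms schematically of the form $\nabb(n\chi)\c W$, $n\trc\,\kepb\c W$, $n\chi\c\kepb\c W$, and $n\b\c W$, containing \emph{no} derivatives of $W$.

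Next, I will estimate $W$ itself. The equation $\ddb_L W=n^{-1}F+\chi\c W$ together with the transport estimate \eqref{estimtransport1} (with $p$ replaced by $q=\tfrac{2p}{2-p}$) and the smallness $\|\chi\|_{\xt{\infty}{2}}\lesssim\ep$ from Theorem \ref{thregx} yields, after absorbing the $\chi W$ contribution,
$$\|W\|_{\xt{q}{\infty}}\lesssim \|F\|_{\xt{q}{1}}.$$
Applying \eqref{estimtransport1} to the transport equation for $\divb W-w$ then reduces the lemma to proving $\|E\|_{\xt{p}{1}}\lesssim \ep\|F\|_{\xt{q}{1}}$.

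Finally, each term of $E$ will be bounded by a bilinear Hölder inequality with reciprocal exponents $\tfrac{1}{p}=\tfrac{1}{2}+\tfrac{1}{q}$ in $x'$ and $1=\tfrac{1}{2}+\tfrac{1}{2}$ in $t$: I pair the Ricci-coefficient factor in $L^2(\H_u)\simeq \xt{2}{2}$ with the $W$ factor in $\xt{q}{2}\subset\xt{q}{\infty}$. The required $L^2(\H_u)$-bounds follow from Theorem \ref{thregx}: $\|\nabb(n\chi)\|_{\lh{2}}\lesssim\ep$, $\|\trc\,\kepb\|_{\lh{2}}\lesssim \|\trc\|_{L^\infty}\|\kepb\|_{\lh{2}}\lesssim\ep^2$, while $\|\hch\,\kepb\|_{\lh{2}}\lesssim\|\hch\|_{\tx{\infty}{4}}\|\kepb\|_{\tx{\infty}{4}}\lesssim\ep^2$ by the Sobolev embedding \eqref{sobineq1}, and $\|n\b\|_{\lh{2}}\lesssim\ep$ by the curvature flux bound \eqref{curvflux1}. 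The hard part of the argument is identifying the cancellation of the first-order $n\chi\c\nabb W$ contribution; once this is secured, every remaining term is zero-order in $W$, and the estimate closes uniformly for the full range $1\leq p\leq 2$.
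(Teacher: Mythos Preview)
Your argument is correct. The paper does not actually supply a proof of this lemma; it is stated as one of the ``useful commutator estimates'' inherited from \cite{STT} via the structural assumptions {\bf S1}--{\bf S4}, so you are filling in details the paper leaves to the reference. Your computation of the transport equation for $\divb W-w$ matches what one obtains from \eqref{comm5}: tracing the commutator produces $n\chi^{BC}\nabb_C W_B$, while expanding $\divb(n\chi\cdot W)$ produces $n\chi^{AB}\nabb_A W_B$, and the symmetry of $\chi$ makes these coincide, leaving only zero-order terms in $W$. The closing estimates via \eqref{estimtransport1} and H\"older with $\tfrac{1}{p}=\tfrac12+\tfrac{1}{q}$ in $x'$ are exactly right; the equivalence $L^2(\H_u)\simeq\xt{2}{2}$ uses only $b\sim1$ and the coordinate estimate \eqref{eq:coordchartbis}. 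The endpoint $p=2$ (so $q=\infty$) is also fine since \eqref{estimtransport1} holds for $p=\infty$.
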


\begin{lemma}
For any $\ptu$-tangent tensor $F$ and all $1\leq q<2$, we have:
\be\lab{supercommut1}
\norm{[P_j,\ddb_{nL}]F}_{\tx{q}{2}}+2^{-j}\norm{\nabb [P_j,\ddb_{nL}]F}_{\tx{q}{2}}\les 2^{-\frac{j}{2}_+}\no(F)
\ee
(by $2^{-\frac{j}{2}_+}$ we mean $2^{-aj}$ for $a<\frac{1}{2}$ arbitrarily close to $\frac{1}{2}$), while for $q=1$,
\be\lab{supercommut2}
\norm{[P_j,\ddb_{nL}]F}_{\tx{1}{2}}+2^{-j}\norm{\nabb [P_j,\ddb_{nL}]F}_{\tx{1}{2}}\les 2^{-j_+}\no(F).
\ee 
\end{lemma}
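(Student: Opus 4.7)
The strategy is based on the heat-flow representation of the Littlewood-Paley projector $P_j$ combined with the commutator formula \eqref{commA4} for $[\ddb_{nL},\lap]$. By the definition \eqref{eq:LP} of $P_j$,
$$[P_j,\ddb_{nL}]F=\int_0^\infty m_j(\tau)[U(\tau),\ddb_{nL}]F\,d\tau.$$
The tensor $W(\tau)=[U(\tau),\ddb_{nL}]F$ satisfies the inhomogeneous heat equation $\pr_\tau W-\lap W=[\ddb_{nL},\lap]U(\tau)F$ with $W(0)=0$, so by Duhamel,
$$W(\tau)=-\int_0^\tau U(\tau-\tau')\,[\ddb_{nL},\lap]U(\tau')F\,d\tau'.$$
The commutator identity \eqref{commA4} for $G=U(\tau')F$ decomposes $[\ddb_{nL},\lap]G$ into five terms of the schematic form $A\nabb^2 G$, $\nabb A\cdot\nabb G$, $A\cdot A\cdot\nabb G$, $n\b\cdot\nabb G$, and $\nabb(n\b\cdot\nabb G+A\cdot A\cdot G)$.

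The first step is to reduce the estimation to each of these five pieces separately, exploiting the $L^2$-boundedness of $U(\tau-\tau')$ from \eqref{sit:eq:l2heat1} pointwise in $t$, together with the localization $\tau\sim 2^{-2j}$ dictated by $m_j$. For the coefficients we use \eqref{estA} to bound $A=n\chi$ and $\nabb A$, while $n\b$ is only controlled in $\lh{2}$ through the curvature flux assumption \eqref{curvflux1}. The derivatives of $G=U(\tau')F$ gain smoothing factors $\tau'^{-1/2}$ and $\tau'^{-1}$ through the heat estimates \eqref{eq:l2heat2} and \eqref{eq:l2heat2bis}, paid against $\no(F)$. The $\nabb$ derivative appearing outside the divergence term is shifted onto $U(\tau-\tau')$ by duality, which converts it into a further heat smoothing factor rather than a loss of regularity.

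To obtain the sharp trace norm $\tx{q}{2}$ on $\H_u$ rather than an $\lh{r}$ space-time bound, I would apply the sharp trace theorem in the time foliation (Proposition \ref{prop:mainlemmaapplicationt}) to the scalar transport equation satisfied by $f(t)=\int_0^t|[P_j,\ddb_{nL}]F|^2\,d\tau$, proceeding exactly as in the proof of Corollary \ref{corr:funny-classical-trace}, but with the $L'$-foliation replaced by the time foliation via the correspondence \eqref{correspondancegt}. Combining with the finite band properties of $P_j$ produces the decay $2^{-\frac{j}{2}_+}$ valid for $1\le q<2$. For the endpoint $q=1$, the extra factor $2^{-\frac{j}{2}_+}$ is gained by a further Cauchy-Schwarz in $t\in [0,1]$ paired with the Bernstein inequality \eqref{eq:pdf1}, which effectively upgrades one $L^2_t$ factor to $L^1_t$ at the cost of the additional frequency weight.

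\textbf{Main obstacle.} The hardest piece is the curvature term $n\b\cdot\nabb G$ (and its divergence variant): $n\b$ belongs only to $\lh{2}$ with no pointwise or trace control, so it cannot be pulled out in any $L^\infty_t$ fashion. The trick will be to arrange matters so that $n\b$ is integrated against a factor already belonging to $\lh{2}$, pairing it with $\nabb G=\nabb U(\tau')F$ via a $\li{2}{2}$--$\li{\infty}{2}$ splitting based on Proposition \ref{prop:mainlemmaapplicationt}. This is precisely the point where the structural estimate of \cite{STT}---transplanted to the time foliation thanks to the commutator structure \eqref{commA1}--\eqref{commA4} and the bound \eqref{estA}---is essential, and it is also the source of the arbitrarily small losses encoded by $2^{-\frac{j}{2}_+}$ and $2^{-j_+}$.
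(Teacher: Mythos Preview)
Your heat--flow/Duhamel representation and the use of the commutator identity \eqref{commA4} are the right starting point, and this is exactly how the argument in \cite{STT} (which the paper simply transplants to the time foliation via the structural assumptions {\bf S1}--{\bf S4}) begins. However, the second half of your plan rests on a misreading of the norm: $\tx{q}{2}$ means $L^q_tL^2_{x'}$, which for $1\le q\le 2$ is \emph{weaker} than $\lh{2}$ by H\"older on $t\in[0,1]$. It is not a trace norm in any sense, and in particular Proposition~\ref{prop:mainlemmaapplicationt} (a $\BB^0$ bound for transport equations) and Corollary~\ref{corr:funny-classical-tracet} (a genuine $L^\infty_{x'}L^2_t$ trace bound) are irrelevant here. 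Your proposed transport equation for $f(t)=\int_0^t|[P_j,\ddb_{nL}]F|^2$ would, even if it worked, produce control on $\|[P_j,\ddb_{nL}]F\|_{\xt{\infty}{2}}$, which is a completely different object.

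The actual source of the decay $2^{-j/2_+}$ (resp.\ $2^{-j_+}$) is not ``finite band'' alone but a weighted energy estimate on the Duhamel solution $V(\tau)$ with the fractional operator $\La^{-a}$, in the style of the proofs of Proposition~\ref{propK}, Proposition~\ref{prop:gowinda11} or Lemma~\ref{lemma:po5} in the appendix. One estimates $\norm{\La^{-a}V(\tau)}_{\lpt{2}}^2+\int_0^\tau\norm{\nabb\La^{-a}V}^2$ by pairing $\La^{-2a}V$ against $[\ddb_{nL},\lap]U(\tau')F$, integrating the $A\nabb^2U$ term by parts, and bounding $\nabb U$ in $\lpt{p}$ via Gagliardo--Nirenberg and Bochner. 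Integration against $m_j$ then yields a factor $2^{ja}$; the decay in $j$ then comes from trading this against the extra regularity in $\no(F)$ (one uses $\norm{\nabb F}_{\lh{2}}$ in the heat estimates rather than only $\norm{F}_{\lh{2}}$). The parameter $a$ is constrained by the $t$-integrability of the coefficients $\nabb A$, $n\b$ and $K$, all of which lie only in $L^2_t$: the smaller $q$ is, the more H\"older room one has in $t$, and hence the larger $a$ may be taken. This is why $q<2$ gives $a$ close to $\tfrac12$ and $q=1$ gives $a$ close to $1$ --- not an extra Cauchy--Schwarz or Bernstein step as you suggest.
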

Finally, using Proposition \ref{prop:mainlemmaapplicationt}, 
we may prove the following version of the sharp classical trace theorem. 
\begin{corollary}\label{corr:funny-classical-tracet} 
Assume $F$ is an $\ptu$-tangent tensor which
admits a decomposition
of the form, $\nabb F=B\ddb_{nL}P+E$.
Then,
\be\lab{eq:funny-classical-tracet}
\|F\|_{L_{x'}^\infty L_t^2}\lesssim \no(F)+\no(P)(\norm{B}_{L^\infty}+\norm{\nabb B}_{L^2_{x'}L^\infty_t}+\norm{\ddb_{L}B}_{L^2_{x'}L^\infty_t})+\|E\|_{{\PP}^0}.
\end{equation}
\end{corollary}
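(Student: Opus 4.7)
The strategy mirrors exactly the proof of Corollary \ref{corr:funny-classical-trace} given earlier in the geodesic foliation, but carried out relative to the time foliation using the analog sharp transport Besov estimate \eqref{eq:mainleprop1t} and the embedding \eqref{linftybound}. The fact that this is possible was essentially prepared by the structural observations {\bf S1}--{\bf S4} and the correspondence \eqref{correspondancegt}, under which the operator $nL$ plays the role formerly played by $L'$.

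First I would introduce the nonnegative scalar auxiliary function $f(t,x')=\int_0^t |F(t',x')|^2\,dt'$, which vanishes on $\pou$ and satisfies the scalar transport equation $nL(f)=n|F|^2$ in the coordinates $(t,x')$ on $\H_u$. Differentiating this identity in $\nabb$ and using the scalar commutator formula \eqref{commA1} (with $A=n\chi$), I would obtain
\[
\ddb_{nL}(\nabb f)=2nF\cdot\nabb F-n\chi\cdot\nabb f=2nF\cdot B\,\ddb_{nL}P+2nF\cdot E-n\chi\cdot\nabb f,
\]
where the assumed decomposition $\nabb F=B\,\ddb_{nL}P+E$ has been injected. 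Rewriting this as a transport equation of the form treated in Proposition \ref{prop:mainlemmaapplicationt} (the $a\trc U$ term being set to zero, or equivalently absorbed), I would apply \eqref{eq:mainleprop1t} to get
\[
\begin{aligned}
\|\nabb f\|_{\BB^0}\lesssim&\ \bigl(\no(nFB)+\|nFB\|_{L^\infty_{x'}L^2_t}\bigr)\no(P)\\
&+\bigl(\no(nF)+\|nF\|_{L^\infty_{x'}L^2_t}\bigr)\|E\|_{\PP^0}+\bigl(\no(n\chi)+\|n\chi\|_{L^\infty_{x'}L^2_t}\bigr)\|\nabb f\|_{\PP^0}.
\end{aligned}
\]

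Next I would dispose of the last term: the bounds \eqref{estn}, \eqref{esttrc}, \eqref{esthch} and the trace estimate for $\chi$ imply that $\no(n\chi)+\|n\chi\|_{L^\infty_{x'}L^2_t}\lesssim\ep$, so that term can be absorbed into the left-hand side since $\ep$ is small and $\|\nabb f\|_{\PP^0}\lesssim\|\nabb f\|_{\BB^0}$. The product term involving $B$ would then be unpacked via Leibniz:
\[
\no(nFB)+\|nFB\|_{L^\infty_{x'}L^2_t}\lesssim\bigl(\no(F)+\|F\|_{L^\infty_{x'}L^2_t}\bigr)\bigl(\|B\|_{L^\infty}+\|\nabb B\|_{L^2_{x'}L^\infty_t}+\|\ddb_L B\|_{L^2_{x'}L^\infty_t}\bigr),
\]
using \eqref{estn} to absorb all factors of $n$.

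Finally, I would convert the Besov bound on $\nabb f$ into a pointwise bound on $f$ via \eqref{linftybound}, using $\|f\|_{L^\infty_tL^2_{x'}}\lesssim\|F\|_{L^2_tL^4_{x'}}^2\lesssim\no(F)^2$ (from \eqref{sobineq1}), and then recognize $\|F\|_{L^\infty_{x'}L^2_t}^2\lesssim\|f\|_{L^\infty}$. This produces the quadratic inequality
\[
\|F\|_{L^\infty_{x'}L^2_t}^2\lesssim \bigl(\no(F)+\|F\|_{L^\infty_{x'}L^2_t}\bigr)\Bigl[\bigl(\|B\|_{L^\infty}+\|\nabb B\|_{L^2_{x'}L^\infty_t}+\|\ddb_L B\|_{L^2_{x'}L^\infty_t}\bigr)\no(P)+\|E\|_{\PP^0}\Bigr]+\no(F)^2,
\]
from which \eqref{eq:funny-classical-tracet} follows by a standard Young-inequality absorption. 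The only nontrivial point is the absorption of the commutator term $n\chi\cdot\nabb f$, and this is precisely where the improved estimates of Theorem \ref{thregx} -- already established by the time this corollary is invoked -- enter in an essential way.
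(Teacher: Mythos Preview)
Your approach is correct and essentially identical to the paper's, which explicitly states that the proof is the analog of the geodesic-foliation version (Corollary~\ref{corr:funny-classical-trace}) and is therefore omitted. One small slip: since $nL=\partial_t$ in the $(t,x')$ coordinates on $\H_u$, your definition of $f$ actually gives $nL(f)=|F|^2$ rather than $n|F|^2$, so the transport equation for $\nabb f$ should read $\ddb_{nL}(\nabb f)=2F\cdot\nabb F-n\chi\cdot\nabb f$; this is harmless since $\|n-1\|_{L^\infty}\lesssim\ep$ and the extra $n$ factors are absorbed anyway.
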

The proof of \eqref{eq:funny-classical-tracet} is the analog of the proof of 
the estimate \eqref{eq:funny-classical-trace} so we skip it.

\subsubsection{Structure of the Bianchi identities in the time foliation}

In this section, we will show that results from the paper \cite{FLUX} true in the geodesic foliation apply also to the time foliation due to a similar structure of the Bianchi identities. We first enlarge the correspondence \eqref{correspondancegt} with the general philosophy that $L'$ should correspond to $nL$ and $\lb'$ to $n^{-1}\lb$:
\be\lab{correspondancegtbis}
\begin{array}{l}
L'\rightarrow nL,\,\lb'\rightarrow n^{-1}\lb,\,e'_A\rightarrow e_A\\
\chi'\rightarrow n\chi,\,\chb'\rightarrow n^{-1}\chb\\
\b'\rightarrow n\b,\,\r'\rightarrow \r,\,\s'\rightarrow \s
\end{array}
\ee
where $L'$, $\lb'$, $\chi'$, $\chb'$, $\b'$, $\r'$ and $\s'$ have been defined in section \ref{sec:defgeodfol}. Following \cite{FLUX}, we define $\check{\r}, \check{\s}$ as:
\be\lab{def:checkrs}
\check{\r}=\r-\half\hch\cdot\hchb,\,\check{\s}=\s-\half\hch\wedge\hchb.
\ee
Multiplying the Bianchi identities \eqref{bianc2} and \eqref{bianc4} by $n$ together with the null structure equations for $\chi$ and $\chb$ yields:
\be\lab{bianccheck}
\begin{array}{lll}
\ds nL(\check{\r})&=&\ds\divb(n\b)-\kepb\c (n\b)+\half(n\hch)\c\bigg(\nabb\widehat{\otimes}\kepb-\kepb\widehat{\otimes}\kepb+(n\trc)\c(n^{-1}\hchb)\\
&& \ds +\frac{1}{2}(n^{-1}\trchb)\c(n\hch)\bigg),\\
\ds nL(\check{\s})&=&\ds -\curlb(n\b)+\kepb\wedge (n\b)+\half(n\hch)\wedge\bigg(\nabb\widehat{\otimes}\kepb-\kepb\widehat{\otimes}\kepb+(n\trc)\c(n^{-1}\hchb)\bigg).
\end{array}
\ee
We now denote $A=(n\chi,\kepb)$ which together with the estimates \eqref{estn} for $n$, the estimates \eqref{estk} for $\kepb$, the estimates \eqref{esttrc} for $\trc$ and \eqref{esthch} for $\hch$ of Theorem \ref{thregx} proved in section \ref{sec:regxproof} still imply the estimate \eqref{estA} for $A$. We also denote $\underline{A}=n\chb$ which in view of the estimates \eqref{estn} for $n$, the estimates \eqref{estk} for $k$, the estimates \eqref{esttrc} for $\trc$ and \eqref{esthch} for $\hch$ of Theorem \ref{thregx} proved in section \ref{sec:regxproof} satisfies the following estimate:
\be\lab{estAb}
\no(\underline{A})\les\ep.
\ee
In view of the definition of $A$ and $\underline{A}$ together with \eqref{bianccheck}, we have:
\be\lab{bianccheckstruct}
nL(\check{\r},-\check{\s})=\mathcal{D}_1(n\b)+A\c (n\b+\nabb A+A\c\underline{A}).
\ee

We now consider a decomposition for $\nabb\mathcal{D}_2^{-1}\mathcal{D}_1^{-1}L(\check{\r},\check{\s})$ which is the analog of the one derived in section 6 of the paper \cite{FLUX}. It relies on the assumptions {\bf S1-S4} together with the following additional  assumptions where we translate for the time foliation using the correspondence \eqref{correspondancegtbis}:
\begin{itemize}
\item[\bf{S5}] $(\check{\r},\check{\s})$ satisfies the curvature flux bound $\norm{\check{\r}}_{\lh{2}}+\norm{\check{\s}}_{\lh{2}}\les\ep$ (which follows from the curvature bound \eqref{curvflux1}, the estimate \eqref{estk} for $k$, and the estimates \eqref{esttrc} and \eqref{esthch} for $\chi$)

\item[\bf{S6}] $nL(\check{\r},-\check{\s})$ has the structure \eqref{bianccheckstruct}

\item[\bf{S7}] The functional inequalities \eqref{DinvsurBesov}, \eqref{DinvsurBesovbis}, \eqref{nonsharpprod1} and \eqref{nonsharpprod2} are satisfied
\end{itemize}
Since the proof of the estimate derived in section 6 of the paper \cite{FLUX} only rely on the structural assumptions {\bf S1}-{\bf S7}, they immediately extend to the case of a time foliation. In particular, we obtain the following analogs of the decompositions derived in section 6 of the paper \cite{FLUX}:
\be\lab{estfluxcorres1:0}
n\b=\ddb_{nL}P_1+E_1
\ee
and
\be\lab{estfluxcorres1}
\nabb\mathcal{D}_2^{-1}\mathcal{D}_1^{-1}nL(\check{\r},\check{\s})=\ddb_{nL}P_2+E_2
\ee
where $P_1, P_2, E_1$ and $E$ satisfy the bounds:
\be\lab{estfluxcorres2}
\no(P_1)+\no(P_2)+\norm{E_1}_{\PP^0}+\norm{E_2}_{\PP^0}\les\ep.
\ee

\subsubsection{Decomposition of $\nabb(n\hch)$}

We now in position to prove the decomposition \eqref{esthchbesov} for $\nabb\hch$. We first derive an equation for $n\hch$. Multiplying the Codazzi type  equation \eqref{Codaz} for $\hch$ by $n$, we obtain:
\be\lab{impbes1}
\mathcal{D}_2(n\hch)+\kepb\c (n\hch)=\half(\nabb(n\trc)+\kepb(n\trc))-n\b,
\ee
which yields:
\be\lab{impbes2}
\nabb(n\hch)=\nabb\mathcal{D}_2^{-1}\bigg(-\kepb\c (n\hch)+\half(\nabb(n\trc)+\kepb(n\trc))-n\b\bigg).
\ee
Now, in view of \eqref{bianccheckstruct}, we have:
\be\lab{impbes3}
n\b=\mathcal{D}_1^{-1}\bigg(nL(\check{\r},-\check{\s})-A\c (n\b+\nabb A+A\c\underline{A})\bigg),
\ee
where $A$ satisfies \eqref{estA} and $\underline{A}$ satisfies \eqref{estAb}. Injecting \eqref{impbes3} in \eqref{impbes2} yields:
\bea
\lab{impbes4}\nabb(n\hch)&=&-\nabb\mathcal{D}_2^{-1}\mathcal{D}_1^{-1}nL(\check{\r},-\check{\s})+\nabb\mathcal{D}_2^{-1}\mathcal{D}_1^{-1}\bigg(A\c (n\b+\nabb A+A\c\underline{A})\bigg)\\
\nn&&+\nabb\mathcal{D}_2^{-1}\bigg(-\kepb\c (n\hch)+\half(\nabb(n\trc)+\kepb(n\trc))\bigg).
\eea

We estimate the second term in the right-hand side of \eqref{impbes4}. Using the embedding \eqref{sobineq1}, the estimate \eqref{DinvsurBesov} with $a=0$, and the estimate \eqref{DinvsurBesovbis} with $a=0$ and $p=\frac{4}{3}$, we have:
\bea
\lab{impbes5}&&\normm{\nabb\mathcal{D}_2^{-1}\mathcal{D}_1^{-1}\bigg(A\c (n\b+\nabb A+A\c\underline{A})\bigg)}_{\PP^0}\\
\nn&\les &\normm{A\c (n\b+\nabb A+A\c\underline{A})}_{\tx{2}{\frac{4}{3}}}\\
\nn&\les &\norm{A}_{\tx{\infty}{4}}(\norm{n}_{L^\infty}\norm{\b}_{\lh{2}}+\norm{\nabb A}_{\lh{2}}+\norm{A}_{\tx{\infty}{4}}\norm{\underline{A}}_{\tx{\infty}{4}})\\
\nn&\les &\no(A)(\norm{n}_{L^\infty}\norm{\b}_{\lh{2}}+\no(A)(1+\no(\underline{A})))\\
\nn&\les& \ep,
\eea
where we used the curvature bound \eqref{curvflux1} for $\b$, the estimate \eqref{estn} for $n$, the estimate \eqref{estA} for $A$ and the estimate \eqref{estAb} for $\underline{A}$.

We estimate the last term in the right-hand side of \eqref{impbes4}. Using the estimate \eqref{DinvsurBesov} and the non sharp product estimates estimates \eqref{nonsharpprod1} and \eqref{nonsharpprod2}, we have:
\bea
\lab{impbes6}&&\normm{\nabb\mathcal{D}_2^{-1}\bigg(-\kepb\c (n\hch)+\half(\nabb(n\trc)+\kepb(n\trc))\bigg)}_{\PP^0}\\
\nn&\les &\normm{-\kepb\c (n\hch)+\half(\nabb(n\trc)+\kepb(n\trc))}_{\PP^0}\\
\nn&\les & \norm{\trc\nabb n}_{\PP^0}+\noo(n)(\norm{\nabb\trc}_{\PP^0}+\norm{\kepb\c\hch}_{\PP^0}+\norm{\kepb\trc}_{\PP^0})\\
\nn&\les & \noo(n)(\no(\trc)+\norm{\nabb\trc}_{\PP^0}+\no(\kepb)\no(\chi))\\
\nn&\les &\ep+\norm{\nabb\trc}_{\PP^0},
\eea
where we used the estimate \eqref{estn} for $n$, the estimate \eqref{estk} for $\kepb$ and the estimates \eqref{esttrc} and \eqref{esthch} for $\chi$.

Finally, the decomposition \eqref{estfluxcorres1} for $\nabb\mathcal{D}_2^{-1}\mathcal{D}_1^{-1}nL(\check{\r},-\check{\s})$ together with the estimate \eqref{estfluxcorres2} and\eqref{impbes4}-\eqref{impbes6} yields the following decomposition for $\nabb(n\hch)$:
\be\lab{impbes7}
\nabb(n\hch)=\ddb_{nL}P+E,
\ee
where $P$ and $E$ satisfy the following estimate:
\be\lab{impbes8}
\no(P)+\norm{E}_{\PP^0}\les \ep+\norm{\nabb\trc}_{\PP^0}.
\ee

\subsubsection{Decomposition of $\nabb(n\db)$}

In order to obtain a Besov improvement for $\trc$, we need to derive for $\nabb(n\db)$ the analog of the decomposition for $\nabb'k_{L'L'}$ derived in \eqref{tracek15}. Recall from \eqref{tracek1} that $\db=k_{LL}$ with $k_{LL}=-\gg(\dd_LT,L)$. Thus, we have:
\bee
\nn\nabb_{e_A}\db&=&-\dd_{e_A}\gg(\dd_{L}T,L)=-\gg(\dd_{e_A}\dd_{L}T,L)-\gg(\dd_{L}T,D_{e_A}L)\\
\nn&=&-\gg(\dd_{L}\dd_{e_A}T,L)-\rr_{e_ALTL}-\gg(\dd_{[e_A,L]}T,L)-\gg(-\db  N,\chi_{AB}e_B-\kep_AL)\\
\nn&=&-\ddb_{L}\kep_{A}-\frac{1}{2}\b_A+\chi_{AB}(\kep_B+\kepb_B)-n^{-1}\nabb_An\db,
\eee
which after multiplication by $n$ yields:
\be\lab{impbes9}
\nabb(n\db)=-\ddb_{nL}\kep-\frac{1}{2}\b+\chi\c(\kep+\kepb).
\ee
The estimates \eqref{estn} and \eqref{estk} for $\kep$ and $\kepb$, the estimates \eqref{esttrc} and \eqref{esthch} for $\chi$, and the non sharp product estimate \eqref{nonsharpprod2} yield:
\be\lab{impbes10}
\no(\kep)+\norm{\chi\c(\kep+\kepb)}_{\PP^0}\les\ep+\no(\chi)(\no(\kep)+\no(\kepb))\les\ep.
\ee
Finally, \eqref{impbes9}, \eqref{impbes10} and the decomposition of $\b$ given by \eqref{estfluxcorres1:0} \eqref{estfluxcorres2} yield:
\be\lab{impbes11}
\nabb(n\db)=\ddb_{nL}P+E,
\ee
where $P$ and $E$ satisfy:
\be\lab{impbes12}
\no(P)+\norm{E}_{\PP^0}\les\ep.
\ee

\subsubsection{Besov improvement for $\trc$}

In view of \eqref{impbes8}, we need an estimate for $\norm{\nabb\trc}_{\PP^0}$. We multiply the transport equation \eqref{remain1} satisfied by $\nabb\trc$. We obtain:
\be\lab{impbes13}
\ddb_{nL}\nabb\trc=-n\left(\frac{3}{2}\trc+\hch+\db  \right)\nabb\trc-2\hch\nabb(n\hch)+\nabb n(2|\hch|^2+L(\trc)-\kepb\trc)-\nabb(n\db  )\trc.
\ee
Using the decomposition \eqref{impbes7} for $\nabb(n\hch)$ and the decomposition \eqref{impbes11}  for $\nabb(n\db)$, we obtain:
\be\lab{impbes14}
-2\hch\nabb(n\hch)-\nabb(n\db)\trc=F_1\ddb_{L}P+F_2W
\ee
where in view of \eqref{impbes8}, \eqref{impbes12} and the estimates \eqref{esttrc} \eqref{esthch} for $\chi$, we have:
\be\lab{impbes15}
\no(F_1)+\norm{F_1}_{\xt{\infty}{2}}+\no(F_2)+\norm{F_2}_{\xt{\infty}{2}}+\no(P)+\norm{W}_{\PP^0}\les\ep+\|\nabb\trc\|_{{\PP}^0}.
\ee
Also, using the Raychaudhuri equation \eqref{D4trchi}, we may rewrite the third term in the right-hand side of \eqref{impbes13} as:
\be\lab{impbes16}
\nabb n(2|\hch|^2+L(\trc)-\kepb\trc)=\chi W_1
\ee
where in view of the estimate \eqref{estn} for $n$, the estimate \eqref{estk} for $\db$ and $\kepb$, the estimates \eqref{esttrc} \eqref{esthch} for $\chi$, and the non sharp product estimate \eqref{nonsharpprod2}, $W_1=\nabb n\c(\chi+\db+\kepb)$ satisfies:
\be\lab{impbes17}
\norm{W_1}_{\PP^0}\les \no(\nabb n)(\no(\chi)+\no(\kepb)+\no(\db))\les\ep.
\ee
Using the estimate \eqref{estn} for $n$, the estimate \eqref{estk} for $\kepb$, the estimates \eqref{esttrc} \eqref{esthch} for $\chi$, we also have:
\bea
&&\nn\no\left(n\left(\frac{3}{2}\trc+\hch+\db  \right)\right)+\normm{n\left(\frac{3}{2}\trc+\hch+\db  \right)}_{\xt{\infty}{2}}\\
\nn&\les& \noo(n)(\no(\chi)+\no(\db)+\norm{\chi}_{\xt{\infty}{2}}+\norm{\db}_{\xt{\infty}{2}})\\
\lab{impbes18}&\les&\ep.
\eea
Finally, \eqref{impbes13}-\eqref{impbes18} yield:
\be\lab{impbes19}
\ddb_{nL}\nabb\trc=F_1\ddb_{L}P+F_2W+F_3\nabb\trc
\ee
where $F_1, F_2, F_3, P$ satisfy:
\be\lab{impbes20}
\no(F_1)+\norm{F_1}_{\xt{\infty}{2}}+\no(F_2)+\norm{F_2}_{\xt{\infty}{2}}+\no(F_3)+\norm{F_3}_{\xt{\infty}{2}}+\no(P)+\norm{W}_{\PP^0}\les\ep.
\ee
We now apply Proposition \ref{prop:mainlemmaapplicationt} and obtain from \eqref{impbes19} \eqref{impbes20} the following Besov improvement for $\nabb\trc$:
$$\|\nabb\trc\|_{{\BB}^0}\lesssim \ep\|\nabb\trc\|_{{\PP}^0}+\ep,$$
and the smallness of $\ep$ finally yields:
\be\label{impbes21}
\|\nabb\trc\|_{{\BB}^0}\lesssim \ep.
\ee
Coming back to the decomposition \eqref{impbes7} \eqref{impbes8} of $\nabb(n\hch)$ and using \eqref{impbes21}, we obtain:
\be\lab{impbes22}
\nabb(n\hch)=\ddb_{nL}P+E\textrm{ with }\no(P)+\norm{E}_{\PP^0}\les \ep.
\ee
\eqref{impbes21} and \eqref{impbes22} yield the desired estimates \eqref{esttrcbesov} and \eqref{esthchbesov}.

\subsection{Structure equations for $\lb\lb\trc$ and $\ddb_{\lb}(\z)$}\lab{sec:fautpasdeconner}

The goal of this section is to prove the following proposition.
\begin{proposition}
Let $\mu_1=b\lb(\mu)$. Then, $\mu_1$ satisfies the following transport equation:
\bea
\nn L(\mu_1)+\trc\mu_1&=&-2b\ddb_{\lb}(\z)\c\nabb\trc
-2b\hch\c\bigg(\nabb\widehat{\otimes}\ddb_{\lb}(\z)+b^{-1}\nabb b\ddb_{\lb}(\z)+2\ddb_{\lb}\z\widehat{\otimes}\z\bigg)\\
\lab{estlbmu1}&&+2\trc bn^{-1}\nabb n\c\ddb_{\lb}(\z)+\divb(F_1)+f_2
\eea
where the $\ptu$-tangent vectorfield $F_1$ and the scalar function $f_2$ satisfy the estimates:
\be\lab{estlbmu1bis}
\norm{F_1}_{\lh{2}}+\norm{f_2}_{\lh{1}}\lesssim\ep.
\ee
Furthermore, $\ddb_{\lb}\z$ satisfies the following Hodge system:
\be\lab{estlbmu2}
\begin{array}{lll}
\ds\divb(\ddb_{\lb}\z)&=&\ds\frac{b^{-1}}{2}\mu_1-b^{-1}\nabb b\c\ddb_{\lb}(\z)-2\z\c\ddb_{\lb}(\z)+\divb(\bb)+h_1,\\
\ds\curlb(\ddb_{\lb}\z)&=&\ds -b^{-1}\nabb b\wedge \ddb_{\lb}(\z)-\curlb(\bb)+h_2,
\end{array}
\ee
where the scalar functions $h_1, h_2$ satisfy the estimates:
\be\lab{estlbmu2bis}
\norm{h_1}_{\lh{1}}+\norm{h_2}_{\lh{1}}\lesssim\ep.
\ee
\end{proposition}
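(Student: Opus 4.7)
The plan is to derive both the transport equation and the Hodge system by combining the structure equations of Proposition \ref{prop:nulltransporteq} and of the subsequent proposition on $\mu$ with the commutation formulas \eqref{comm2}--\eqref{comm3} and the Bianchi identities \eqref{bianc3}, \eqref{bianc5}, while systematically grouping every term that does not already carry $\ddb_{\lb}\z$ either as a divergence of an $\lh{2}$-controlled vectorfield or as a scalar in $\lh{1}$. The estimates of Theorem \ref{thregx} (together with the Besov improvements \eqref{esttrcbesov}, \eqref{esthchbesov}, and the commutator bounds \eqref{supercommut1}--\eqref{supercommut2}) provide all the required control.

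For the transport equation for $\mu_1=b\lb(\mu)$ I would start by writing
\begin{equation*}
L(\mu_1)=L(b)\lb(\mu)+b[L,\lb]\mu+b\lb L(\mu),
\end{equation*}
using $L(b)=-b\db$ from \eqref{D4a} and $[L,\lb]\mu=\db\lb\mu-(\d+n^{-1}N(n))L\mu-2(\z-\zb)\nabb\mu$ from \eqref{comm3} applied to the scalar $\mu$. Substituting \eqref{D4tmu} for $L(\mu)$ and using the defining relation \eqref{eqmu} to simplify $-b\lb(\trc)\mu+b(\d+n^{-1}N(n))\trc\mu=-b\mu^{2}$, one obtains $L(\mu_1)+\trc\mu_1=b\lb(R)-b(\d+n^{-1}N(n))R-2b(\z-\zb)\nabb\mu-b\mu^{2}$ where $R$ denotes the right-hand side of \eqref{D4tmu}. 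I would then expand $b\lb(R)$ and isolate the four $\ddb_{\lb}\z$-containing contributions: (i) $-2\hch\c\lb(\nabb\widehat{\otimes}\z)$, which using \eqref{comm2} gives $-2b\hch\c(\nabb\widehat{\otimes}\ddb_{\lb}\z+b^{-1}\nabb b\ddb_{\lb}\z)$ modulo a $\chb$-commutator absorbed into $f_2$; (ii) $-4b\hch\c\ddb_{\lb}\z\widehat{\otimes}\z$ from $\lb(\z\widehat{\otimes}\z)$; (iii) $-2b\ddb_{\lb}\z\c\nabb\trc$ from $\lb(2(\zb-\z)\c\nabb\trc)$, the $\ddb_{\lb}\zb$ piece being reabsorbed via $\zb=-\kepb$ and $\ddb_{\lb}\kep$ in $\lh{2}$ from \eqref{estk}; and (iv) the $2\trc b n^{-1}\nabb n\c\ddb_{\lb}\z$ coming through the $-2\trc\,\divb\z$ factor in $R$ once one commutes and integrates by parts so as to trade the divergence for $n^{-1}\nabb n$.

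Every remaining contribution I would sort into $\divb F_1$ or $f_2$. Terms of Raychaudhuri type containing $\lb\nabb\trc$, $\lb\hch$, $\lb(\trchb)$, etc., are first rewritten using null structure equations (\eqref{D4chih}, \eqref{D3trc}, \eqref{D3chih}, \eqref{D4tmu3}); quadratic tensor terms are placed into $\divb F_1$ whenever a factor already lives in $\lh{2}\cap \no$, and into $f_2$ otherwise. The Besov improvements \eqref{esttrcbesov} and \eqref{esthchbesov} are the sharp tool that lets terms such as $\hch\c\nabb\hch$ be absorbed. The $\lh{2}$ and $\lh{1}$ bounds \eqref{estlbmu1bis} then follow from the estimates \eqref{estn}--\eqref{estzeta}, the curvature flux bound \eqref{curvflux1}, and H\"older on $\H_u$.

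For the Hodge system I would apply $\ddb_{\lb}$ to \eqref{diveta} and \eqref{curleta} and then commute $\divb$ and $\curlb$ through $\ddb_{\lb}$ via \eqref{comm2} (which on a 1-form produces precisely the $\chb_{BC}\nabb_C\z_A$, $-b^{-1}\nabb b\,\ddb_{\lb}\z$ and $-\xib\,\ddb_{L}\z$ contributions, plus curvature commutators). The key computation is $\ddb_{\lb}(\divb\z)=\tfrac{b^{-1}}{2}\,\mu_1+\tfrac14\lb(\trc\trchb)+\tfrac12\ddb_{\lb}(\hch\c\hchb)-2\z\c\ddb_{\lb}\z-\lb(\rho)$, where one then invokes the Bianchi identity \eqref{bianc3} to write $\lb(\rho)=-\divb\bb+(\text{lower})$; symmetrically \eqref{bianc5} produces $-\curlb(\bb)$ in the curl equation. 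Every term that is neither one of the explicit leading terms in \eqref{estlbmu2} nor $\divb\bb$, $\curlb\bb$ is of the schematic form $A\c B$ with $A,B$ among $\{k,\chi,\chb,\z,\zb,\kepb,\xib,\nabb b,\nabb n,\b,\bb,\r,\s\}$, hence in $\lh{1}$ by \eqref{estn}--\eqref{estzeta}, \eqref{curvflux1} and H\"older, yielding \eqref{estlbmu2bis}.

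The main obstacle is the commutator $\ddb_{\lb}(\nabb\widehat{\otimes}\z)-\nabb\widehat{\otimes}\ddb_{\lb}\z$ in the $\mu_1$ equation: a naive use of \eqref{comm2} produces a $\chb\c\nabb\z$ term that is borderline in $\lh{1}$, since $\chb$ is only in $\no$ and $\nabb\z$ only in $\lh{2}$; the precise structure of $\chb=-\chi-2k$ together with the $\xib$-term forces us to pair it with $\hch$ and integrate by parts so that the outcome is $\divb(F_1)$ with $F_1\in\lh{2}$ rather than an $\lh{1}$ scalar. Similar care, using \eqref{estlbmu1bis} and the Bianchi rewrites of $\r$ and $\s$, is what makes the error terms summable.
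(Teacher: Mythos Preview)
Your overall strategy—differentiate \eqref{D4tmu}, \eqref{diveta}, \eqref{curleta} by $\lb$, commute via \eqref{comm2}–\eqref{comm3}, invoke the Bianchi identities \eqref{bianc3}, \eqref{bianc5}, and sort the residue into $\divb F_1$ or $f_2$—is exactly the paper's. However, you over-engineer it in several places. The Besov improvements \eqref{esttrcbesov}, \eqref{esthchbesov} and the commutator bounds \eqref{supercommut1}–\eqref{supercommut2} are \emph{not} used for this proposition; they enter only in the downstream estimates of \S\ref{sec:lbt}. Here every error term is a product of factors already controlled by Theorem \ref{thregx} and \eqref{curvflux1}, and H\"older on $\H_u$ is all that is required. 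In particular the term $\hch\c\nabb\hch$ you mention does not arise, and your ``main obstacle'' is not one: the commutator contribution $\hch\c\chb\c\nabb\z$ lands directly in $\lh{1}$ via $\norm{\hch}_{\lh{4}}\norm{\chb}_{\lh{4}}\norm{\nabb\z}_{\lh{2}}\les\ep^3$, with no integration by parts needed. The paper's $F_1$ consists only of $b\big(-2(\z-\zb)\mu+2(\zb-\z)\lb\trc+4\trc\bb\big)$, arising from rewriting $(\z-\zb)\c\nabb\mu$, $(\zb-\z)\c\ddb_{\lb}\nabb\trc$, and $\trc\lb\rho$ (via \eqref{bianc3}) as divergences; \emph{everything else} is $f_2$.

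Two further details are off. Your item (iv) misidentifies the source of the $\trc\, bn^{-1}\nabb n\c\ddb_{\lb}\z$ term: it comes from differentiating the factor $4(\kep-\z)\c n^{-1}\nabb n$ inside the large $\trc(\cdots)$ bracket of \eqref{D4tmu}, not from any manipulation of $-2\trc\,\divb\z$. And you should not treat $\ddb_{\lb}\etah$ schematically: the paper computes $\ddb_{\lb}\eta$ explicitly (it contains $-n^{-1}\nabb^2 n$, $\tfrac12\ab$, $-\nabb\kep$, $-\chb\c\eta$, and lower order), since $\trc\,\hch\c\ddb_{\lb}\etah$ appears when one differentiates the $4\hch\c\etah$ term in \eqref{D4tmu}. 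This same formula is also what feeds the $\hch\c\ddb_{\lb}\etah$ contribution in the divergence equation of the Hodge system, so it must be tracked rather than lumped.
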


\begin{proof}
We start with the proof of \eqref{estlbmu1} \eqref{estlbmu1bis}. We differentiate the transport equation \eqref{D4tmu} satisfied by $\mu$ with respect to $\lb$. We have:
\bea
\lab{lbmu1}\lb(L(\mu))  &=& -\trc\lb(\mu)-2\ddb_{\lb}(\z)\c\nabb\trc+2(\zb-\z)\c\ddb_{\lb}(\nabb\trc)\\
\nn&& -2\hch\c\Bigl (\ddb_{\lb}(\nabb\widehat{\otimes}\z) + 2\z\widehat{\otimes}\ddb_{\lb}(\z)\Bigr )\\
\nn&& -\trc 
\bigg (2\lb(\divb\z)+4\z\c\ddb_{\lb}(\z)-4  \ddb_{\lb}(\z)\c n^{-1} \nabb n+4\lb(\rho)+4\hch\c\ddb_{\lb}(\etah)\bigg)+f_2^1,
\eea
where $f_2^1$ is given by:
\bea
\lab{lbmu2}f_2^1  &=& -\lb(\trc) \mu+2\ddb_{\lb}(\zb)\c\nabb\trc-2\ddb_{\lb}(\hch)\c\Bigl (\nabb\widehat{\otimes}\z + \z\widehat{\otimes}\z  -\d\hch\Bigr )\\
\nn&&+2\hch\c(\lb(\d)\hch+\d\ddb_{\lb}(\hch))
-\lb(\trc) 
\bigg (2\divb\z+2\z\c\z+4 (\kep - \z)\c n^{-1} \nabb n \\
\nn&&-2\db(\d+n^{-1}\nab_Nn)+
4\rho-\half\trc\trchb+2|\kep|^2+3|\hch|^2+4\hch\c\etah-2|n^{-1}N(n)|^2\bigg)\\
\nn&& -\trc 
\bigg (4 \ddb_{\lb}(\kep)\c n^{-1} \nabb n+4 (\kep - \z)\c \ddb_{\lb}(n^{-1} \nabb n)-2\lb(\db)(\d+n^{-1}\nab_Nn)\\
\nn&&-2\db(\lb(\d)+\lb(n^{-1}\nab_Nn)) -\half\lb(\trc)\trchb-\half\trc\lb(\trchb)+4\kep\ddb_{\lb}(\kep)+6\hch\ddb_{\lb}(\hch)\\
\nn&&+4\ddb_{\lb}(\hch)\c\etah-4n^{-1}N(n)\ddb_{\lb}(n^{-1}N(n))\bigg).
\eea
The curvature bound \eqref{curvflux1} for $\r$ and the estimates \eqref{estn}-\eqref{estzeta} obtained in Theorem \ref{thregx} yield:
\bea
\nn\norm{f^1_2}_{\lh{1}}&\lesssim&\norm{\ddb_{\lb}(\chi)}^2_{\lh{2}}+\no(\chi)^2+\no(\z)^2+\no(k)^2+\no(\nab n)^2+\norm{\r}_{\lh{2}}^2\\
\nn&&+\norm{\nab^2n}_{\lh{2}}^2+\norm{\ddb_{\lb}(\kep)}_{\lh{2}}^2+\norm{\lb(\d)}_{\lh{2}}^2+\norm{n-1}_{\lh{\infty}}\\
\lab{lbmu3}&\les&\ep.
\eea
We now estimate various terms in \eqref{lbmu1}. Note first from the commutator formula \eqref{comm3} that we have:
\bea
\lab{lbmu4}\lb(L(\mu))&=&L(\lb(\mu))+[\lb,L](\mu)\\
\nn&=& L(\lb(\mu))-\db\lb(\mu)+(\d+n^{-1}\nab_Nn)L(\mu)+2(\z-\zb)\c\nabb\mu\\
\nn&=& L(\lb(\mu))-\db\lb(\mu)+(\d+n^{-1}\nab_Nn)L(\mu)+2\divb\left((\z-\zb)\mu\right)\\
\nn&&-2(\divb(\z)-\divb(\zb))\mu.
\eea
Using the commutator formula \eqref{comm2}, we have:
\bea
\lab{lbmu5}
(\zb-\z)\c\ddb_{\lb}(\nabb\trc)&=&(\zb-\z)\c\nabb(\lb(\trc))+(\zb-\z)\c[\ddb_{\lb},\nabb](\trc)\\
\nn&=&\divb((\zb-\z)\lb(\trc))-(\divb(\zb)-\divb(\z))\lb(\trc)\\
\nn&&+(\zb-\z)\c(-\chb\nabb\trc+\xib L(\trc)+b^{-1}\nabb b\lb(\trc)),
\eea
\bea
\lab{lbmu6}
\ddb_{\lb}(\nabb\widehat{\otimes}\z)&=&\nabb\widehat{\otimes}(\ddb_{\lb}\z)+[\ddb_{\lb},\nabb]
\widehat{\otimes}\z\\
\nn&=& \nabb\widehat{\otimes}(\ddb_{\lb}\z)-\chb\nabb\z+\xib \ddb_L(\z)+b^{-1}\nabb b\ddb_{\lb}(\z)+(\chi\xib+\chb\z+\bb)\z,
\eea
and
\bea
\lab{lbmu7}
\lb(\divb(\z))&=&\divb(\ddb_{\lb}(\z))+[\ddb_{\lb},\divb]\z\\
\nn&=& \divb(\ddb_{\lb}(\z))-\chb\c\nabb\z+\xib\c\ddb_L(\z)+b^{-1}\nabb b\c\ddb_{\lb}(\z)
+(\chi\xib+\chb\z+\bb)\z.
\eea
Also, using the Bianchi identity \eqref{bianc3}, we have:
\bea
\lab{lbmu8}\trc\lb(\rho)&=&-\trc\divb\bb-\half\trc\hch\c\ab+2\trc\xib\c\b+\trc(\kep-2\z)\c\bb\\
\nn&=&-\divb(\trc\bb)+\nabb\trc\c\bb-\half\trc\hch\c\ab+2\trc\xib\c\b+\trc(\kep-2\z)\c\bb.
\eea

We now consider the term $\trc\hch\c\ddb_{\lb}(\etah)$ in the right-hand side of \eqref{lbmu1}. We start by computing $\ddb_{\lb}\eta$. We have:
\bee
\ddb_{\lb}(\eta)_{AB}&=&\lb(k_{AB})-\eta(\ddb_{\lb}e_A,e_B)-\eta(e_A,\ddb_{\lb}e_B)\\
\nn&=&-\gg(\dd_{\lb}\dd_{e_A}T,e_B)+\gg(\dd_{\ddb_{\lb}e_A}T,e_B)-\gg(\dd_{e_A}T,\dd_{\lb}e_B-\ddb_{\lb}e_B)\\
\nn&=&-\gg(\dd_{e_A}\dd_{\lb}T,e_B)-\gg(\dd_{[\lb,e_A]}T,e_B)+\rr_{\lb ATB}+\gg(\dd_{\ddb_{\lb}e_A}T,e_B)\\
\nn&&-\gg(\dd_{e_A}T,\dd_{\lb}e_B-\ddb_{\lb}e_B)\\
\nn&=&-\nabb_A\kep_B-n^{-1}\nabb_A\nabb_Bn+n^{-2}\nabb_An\nabb_Bn+\gg(\dd_{\lb}T,\dd_{e_A}e_B-\ddb_{e_A}e_B)\\
\nn&&-\gg(\dd_{\dd_{\lb}e_A-\ddb_{\lb}e_A-\dd_{e_A}\lb}T,e_B)+\half\ab_{AB}-\half\r\d_{AB}+\half\s\in_{AB}\\
\nn&&-\gg(\dd_{e_A}T,\dd_{\lb}e_B-\ddb_{\lb}e_B)
\eee
which together with the Ricci equations \eqref{ricciform} yields:
\bea
\lab{lbmu9}
\ddb_{\lb}(\eta)_{AB}&=&-n^{-1}\nabb_A\nabb_Bn+n^{-2}\nabb_An\nabb_Bn+\half\ab_{AB}-\half\r\d_{AB}\\
\nn&&-\nabb_A\kep_B-\chb_{AC}\eta_{CB}+\xib_A\kepb_B+(\kep_A-\z_A)(n^{-1}\nabb_Bn+\kep_B)+(\xib_B-\z_B)\kep_A.
\eea
In view of \eqref{lbmu9}, we have:
\bea
\lab{lbmu10}
\trc\hch\c\ddb_{\lb}(\etah)&=& \trc\hch\c\bigg(-n^{-1}\nabb^2n+n^{-2}\nabb n\nabb n+\half\ab\\
\nn&&-\nabb\kep-\chb\eta+\xib\kepb+(\kep-\z)(n^{-1}\nabb n+\kep)+(\xib-\z)\kep\bigg).
\eea

Now, \eqref{lbmu1} together with \eqref{lbmu4}-\eqref{lbmu8} and \eqref{lbmu10} yields:
\bea
\lab{lbmu11}L(\lb(\mu))  &=& \db\lb(\mu)-\trc\lb(\mu)-2\ddb_{\lb}(\z)\c\nabb\trc\\
\nn&&-2\hch\c\Bigl (\nabb\widehat{\otimes}\ddb_{\lb}(\z)+b^{-1}\nabb b\ddb_{\lb}(\z) + 2\z\widehat{\otimes}\ddb_{\lb}(\z)\Bigr )\\
\nn&& -\trc 
\bigg (2\divb\ddb_{\lb}(\z)+4\z\c\ddb_{\lb}(\z)-4  \ddb_{\lb}(\z)\c n^{-1} \nabb n\bigg)\\
\nn&&+\divb\bigg(-2(\z-\zb)\mu+2(\zb-\z)\lb(\trc)+4\trc\bb\bigg)+f_2^1+f_2^2,
\eea
where $f_2^2$ is given by:
\bee
f_2^2&=&-(\d+n^{-1}\nab_Nn)L(\mu)-2(\divb(\z)-\divb(\zb))\mu-2(\divb(\zb)-\divb(\z))\lb(\trc)\\
&&+2(\zb-\z)\c(-\chb\nabb\trc+\xib L(\trc)+b^{-1}\nabb b\lb(\trc))\\
&&-2\hch\c(-\chb\nabb\z+\xib \ddb_L(\z)+(\chi\xib+\chb\z+\bb)\z)\\
&&-2\trc(-\chb\c\nabb\z+\xib\c\ddb_L(\z)+(\chi\xib+\chb\z+\bb)\z)\\
&&-4( \nabb\trc\c\bb+2\trc\xib\c\b+\trc(\kep-2\z)\c\bb)\\
&&-4\trc\hch\c\bigg(-n^{-1}\nabb^2n+n^{-2}\nabb n\nabb n-\nabb\kep-\chb\eta+\xib\kepb\\
&&+(\kep-\z)(n^{-1}\nabb n+\kep)+(\xib-\z)\kep\bigg).
\eee
The curvature bound \eqref{curvflux1} for $\b, \bb$ and the estimates \eqref{estn}-\eqref{estzeta} obtained in Theorem \ref{thregx} yield:
\bea
\nn\norm{f^2_2}_{\lh{1}}&\lesssim&\norm{\mu}_{\lh{2}}^2+\norm{L(\mu)}_{L^2_{x'}L^1_t}^2+
\norm{\ddb_{\lb}(\chi)}^2_{\lh{2}}+\no(\chi)^2+\no(\z)^2\\
&&\nn +\no(k)^2+\no(\nab n)^2+\norm{\b}_{\lh{2}}^2+\norm{\bb}_{\lh{2}}^2+\norm{\nab^2n}_{\lh{2}}^2\\
&&\nn +\norm{n-1}_{\lh{\infty}}\\
\lab{lbmu12}&\les&\norm{\mu}_{\lh{2}}^2+\norm{L(\mu)}_{L^2_{x'}L^1_t}^2+\ep.
\eea
Using the definition of $\mu$ \eqref{eqmu}, the formula for $L(\mu)$ given by \eqref{D4tmu}, the curvature bound \eqref{curvflux1} for $\r$ and the estimates \eqref{estn}-\eqref{estzeta} obtained in Theorem \ref{thregx}, we obtain:
$$\norm{\mu}_{\lh{2}}^2+\norm{L(\mu)}_{L^2_{x'}L^1_t}^2\les \ep$$
which together with \eqref{lbmu12} yields:
\be\lab{lbmu13}
\norm{f^2_2}_{\lh{1}}\les\ep.
\ee

Since $\mu_1=b\lb(\mu)$, we have:
$$L(\mu_1)=L(b)\lb(\mu)+bL(\lb(\mu))=-b\db \lb(\mu)+bL(\lb(\mu))$$
where we used the transport equation \eqref{D4a} satisfied by $b$. Together with \eqref{lbmu11}, this yields:
\bea
\nn L(\mu_1)  &=& -\trc\mu_1-2b\ddb_{\lb}(\z)\c\nabb\trc-2b\hch\c\Bigl (\nabb\widehat{\otimes}\ddb_{\lb}(\z)+b^{-1}\nabb b\ddb_{\lb}(\z) + 2\z\widehat{\otimes}\ddb_{\lb}(\z)\Bigr )\\
\nn&& -b\trc 
\bigg (2\divb\ddb_{\lb}(\z)+4\z\c\ddb_{\lb}(\z)-4  \ddb_{\lb}(\z)\c n^{-1} \nabb n\bigg)\\
\nn&&+\divb\bigg(b(-2(\z-\zb)\mu+2(\zb-\z)\lb(\trc)+4\trc\bb)\bigg)\\
\lab{lbmu14}&&-\nabb b\c(-2(\z-\zb)\mu+2(\zb-\z)\lb(\trc)+4\trc\bb)+bf_2^1+bf_2^2,
\eea
which is the desired transport equation \eqref{estlbmu1} for $\mu_1$ with $F_1$ given by:
$$F_1=b(-2(\z-\zb)\mu+2(\zb-\z)\lb(\trc)+4\trc\bb)$$
and $f_2$ given by:
$$f_2=-\nabb b\c(-2(\z-\zb)\mu+2(\zb-\z)\lb(\trc)+4\trc\bb)+bf_2^1+bf_2^2.$$
Using the curvature bound \eqref{curvflux1} for $\bb$ and the estimates \eqref{estn}-\eqref{estzeta} obtained in Theorem \ref{thregx}, we obtain:
\bee
\norm{F_1}_{\lh{2}}&\les& \norm{b}_{L^\infty}(\norm{\z}_{\xt{\infty}{2}}^2+\norm{\zb}_{\xt{\infty}{2}}^2
+\norm{\lb(\trc)}_{\xt{2}{\infty}}+\norm{\mu}_{\xt{2}{\infty}}\\
&&+\norm{\trc}_{L^\infty}\norm{\bb}_{\lh{2}})\\
&\les&\ep,
\eee
and:
$$\norm{f_2}_{\lh{1}}\les \norm{b^{-1}\nabb b}_{\lh{2}}\norm{F_1}_{\lh{2}}+\norm{b}_{L^\infty}(\norm{f^1_2}_{\lh{1}}+\norm{f^2_2}_{\lh{1}})\les\ep,$$
where we used the estimate \eqref{lbmu3} for $f^1_2$ and the estimate \eqref{lbmu13} for $f^2_2$. This concludes the proof of the estimate \eqref{estlbmu1bis} for $F_1$ and $f_2$.

We now turn to the Hodge system satisfied by $\ddb_{\lb}(\z)$. We differentiate the equation \eqref{diveta} giving $\divb(\z)$ with respect to $\lb$:
\be\lab{lbmu15}
\lb(\divb(\z)) = \half\bigg(\lb(\mu) -2\hch\c\ddb_{\lb}(\etah) -4\z\ddb_{\lb}(\z)\bigg)- \lb(\rho)+h_1^1,
\ee
where $h^1_1$ is given by:
$$h^1_1=\frac{1}{4}\lb(\trc)\trchb+\frac{1}{4}\trc\lb(\trchb) -\hch\c\ddb_{\lb}(\hch)-\ddb_{\lb}(\hch)\c\etah.$$
The estimates \eqref{estn}-\eqref{estzeta} obtained in Theorem \ref{thregx} yield:
\be\lab{lbmu16}
\norm{h^1_1}_{\lh{1}}\les \norm{\ddb_{\lb}(\chi)}_{\lh{2}}^2+\no(\chi)^2+\no(k)^2\les \ep.
\ee
\eqref{lbmu7}, \eqref{lbmu9}, \eqref{lbmu15} and the Bianchi identity \eqref{bianc3} yield:
\be\lab{lbmu17}
\divb(\ddb_{\lb}(\z)) = -b^{-1}\nabb b\c\ddb_{\lb}(\z)+\half\bigg(b^{-1}\mu_1 -4\z\ddb_{\lb}(\z)\bigg)+\divb\bb+h_1^1+h_1^2,
\ee
where $h_1^2$ is given by:
\bee
h_1^2&=&-\chb\c\nabb\z+\xib\c\ddb_L(\z)+(\chi\xib+\chb\z+\bb)\z-2\xib\c\b-(\kep-2\z)\c\bb\\
&&-\hch\c\bigg(-n^{-1}\nabb^2n+n^{-2}\nabb n\nabb n-\nabb\kep-\chb\eta+\xib\kepb+(\kep-\z)(n^{-1}\nabb n+\kep)+(\xib-\z)\kep\bigg).
\eee
The curvature bound \eqref{curvflux1} for $\bb$ and the estimates \eqref{estn}-\eqref{estzeta} obtained in Theorem \ref{thregx} yield:
\be\lab{lbmu18}
\norm{h^2_1}_{\lh{1}}\les \norm{\bb}_{\lh{2}}^2+\no(\z)^2+\no(\chi)^2+\no(k)^2+\no(\nabb n)\les \ep.
\ee

Next, we differentiate the equation \eqref{curleta} giving $\curlb(\z)$ with respect to $\lb$:
\be\lab{lbmu19}
\lb(\curlb(\z)) =  \ddb_{\lb}(\hch) \wedge  \etah+\hch\wedge\ddb_{\lb}(\etah)+\lb(\s).
\ee
The commutator formula \eqref{comm2}, \eqref{lbmu9}, \eqref{lbmu19} and the Bianchi identity \eqref{bianc5} yield:
\be\lab{lbmu20}
\curlb(\ddb_{\lb}(\z)) = -b^{-1}\nabb b\wedge\ddb_{\lb}(\z)-\curlb(\bb)+h_2,
\ee
where $h_2$ is given by:
\bee
h_2&=& \in_{AB}\chb_{AC}\nabb_C\z_B-\xib\wedge\ddb_L(\z)+(\chi\xib+\chb\z+\bb)\z
 +\ddb_{\lb}(\hch) \wedge  \etah-2\xib{}^*\b+(\kep-2\z){}^*\bb\\
&& +\hch\wedge\bigg(-n^{-1}\nabb^2n+n^{-2}\nabb n\nabb n-\nabb\kep-\chb\eta+\xib\kepb+(\kep-\z)(n^{-1}\nabb n+\kep)+(\xib-\z)\kep\bigg).
\eee
The curvature bound \eqref{curvflux1} for $\b, \bb$ and the estimates \eqref{estn}-\eqref{estzeta} obtained in Theorem \ref{thregx} yield:
\be\lab{lbmu21}
\norm{h_2}_{\lh{1}}\les \norm{\b}_{\lh{2}}^2+\norm{\bb}_{\lh{2}}^2+\no(\z)^2+\no(\chi)^2+\no(k)^2+\no(\nabb n)\les \ep.
\ee
Finally, \eqref{lbmu16}-\eqref{lbmu21} yield \eqref{estlbmu2} and \eqref{estlbmu2bis} which concludes the proof of the proposition.
\end{proof}

\subsection{Estimates for $\ddb_{\lb}(\z)$}\lab{sec:lbz}

The goal of this section is to obtain an improvement of the bootstrap assumption \eqref{boot8} for $\ddb_{\lb}(\z)$. We will use the following three lemmas.

\begin{lemma}\lab{lemma:lbz2}
Let $F$ a $\ptu$-tangent vectorfield on $\H_u$. Assume there exists two constants $C_1, C_2>0$ possibly depending on $u$ such that for all $j\geq 0$, we have: 
\be\lab{lbz3}
\norm{P_jF}_{\lh{2}}\leq C_1+2^{-\frac{j}{2}}C_2.
\ee
Let $H$ a $\ptu$-tangent vectorfield of the same type. Then, for all $j\geq 0$, we have:
\be\lab{lbz4}
\norm{P_j(H\c F)}_{\lh{2}}\lesssim \no(H)(2^jC_1+2^{\frac{j}{2}}C_2).
\ee
\end{lemma}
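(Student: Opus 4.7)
The approach is to Littlewood--Paley decompose the product $H\cdot F$ on the $2$-surfaces $\ptu$ and analyze each paraproduct block in $\lh{2}$, using a sharp Bernstein bound for $P_l H$ on one side and the hypothesis \eqref{lbz3} on the other.

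The first ingredient is the sharp Bernstein estimate
\[
\norm{P_l H}_{L^\infty(\H_u)}\lesssim 2^l\,\no(H), \qquad l\geq 0,
\]
together with an analogous bound for $P_{<0}H$. At each fixed $t$, the strong Bernstein inequality for tensors \eqref{eq:strongberntensor}, combined with the Gauss curvature bound \eqref{estgauss1}, gives $\norm{P_l H(t,\cdot)}_{L^\infty(P_{t,u})}\lesssim 2^l\norm{P_l H(t,\cdot)}_{L^2(P_{t,u})}$. I then promote the $L^2$ bound in $t$ to $L^\infty$ in $t$ by applying the Sobolev-type inequality \eqref{sobineq2bis} to $P_l H$, using the commutator estimate \eqref{supercommut1} and the $L^2$-boundedness of $P_l$ to control $\ddb_L P_l H$ in $\lh{2}$ by $\no(H)$.

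With this in hand I would write $H\cdot F=\sum_{l,m}P_l H\cdot P_m F$ (including $P_{<0}$ pieces) and split it into Bony's three paraproducts: low--high ($l<m-5$), high--low ($m<l-5$), diagonal ($|l-m|\leq 5$). For each pair $(l,m)$, H\"older gives, via Step 1 and the hypothesis \eqref{lbz3},
\[
\norm{P_j(P_l H\cdot P_m F)}_{\lh{2}}\leq \norm{P_l H}_{L^\infty(\H_u)}\norm{P_m F}_{\lh{2}}\lesssim 2^l\,\no(H)\bigl(C_1+2^{-m/2}C_2\bigr).
\]
Now the almost orthogonality \eqref{eq:pdf2} of the $P_j$'s restricts the sums: in the low--high and high--low pieces, the product is frequency-localized at $\sim\max(l,m)$, so only indices with $\max(l,m)\sim j$ matter; in the diagonal piece with $l\sim m\geq j$, the finite band property of $P_j$ produces a decay factor $2^{-2(l-j)}$. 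Summing, the low--high contribution is $\sum_{l<j}2^l\no(H)(C_1+2^{-j/2}C_2)\lesssim 2^j\no(H)C_1+2^{j/2}\no(H)C_2$, the diagonal contribution is controlled by a geometric series $\sum_{l\geq j}2^{-2(l-j)}2^l\no(H)(C_1+2^{-l/2}C_2)$, and the high--low contribution is reduced, after grouping $P_{<l-5}F$ as a single object, to controlling $\norm{P_{<l-5}F}_{\lh{2}}$ and then summing $l\sim j$.

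The main obstacle is precisely this high--low case: the hypothesis \eqref{lbz3} gives only a uniform-in-$m$ bound on the dyadic blocks of $F$, and a naive $\ell^1$-summation over $m<l\sim j$ would produce a spurious logarithmic $j$-loss. I expect to circumvent this by writing $P_{<l-5}F=F-P_{\geq l-5}F$ and applying Bessel's inequality for the $P_m$'s of $F$, which converts the $\ell^\infty$-in-$m$ hypothesis into the $\ell^2$-type bound needed for the low-frequency portion; equivalently, one can replace the full paraproduct expansion by the identity $P_j(H\cdot F)=H\cdot P_j F+[P_j,H]F$ and control the commutator with the type of estimate used in \eqref{supercommut1}, which automatically encodes the correct frequency localization. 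Once this log-loss is eliminated, all three contributions combine to the claimed bound $\no(H)\bigl(2^j C_1+2^{j/2}C_2\bigr)$, proving \eqref{lbz4}.
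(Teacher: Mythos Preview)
There are two genuine gaps, both stemming from the fact that the heat-flow projections $P_j$ on $\ptu$ do not behave like Euclidean Fourier cutoffs.

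\textbf{The tensor $L^\infty$ Bernstein fails.} Your claim that \eqref{eq:strongberntensor} together with \eqref{estgauss1} yields $\norm{P_l H(t,\cdot)}_{L^\infty(\ptu)}\lesssim 2^l\norm{P_l H(t,\cdot)}_{\lpt{2}}$ uniformly in $t$ is incorrect. The constants in \eqref{eq:strongberntensor} depend on $\norm{K(t,\cdot)}_{\lpt{2}}$, and \eqref{estgauss1} only gives $K\in\lh{2}$, i.e.\ $L^2$ in time; at a fixed $t$ this norm is not controlled. The $L^\infty_t$ bound \eqref{estgauss2} on $\La^{-1/2}K$ yields the clean \emph{scalar} Bernstein \eqref{eq:strongbernscalarbis}, but not the tensor one. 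Since your entire H\"older step $\norm{P_j(P_lH\cdot P_mF)}_{\lh{2}}\le\norm{P_lH}_{L^\infty(\H_u)}\norm{P_mF}_{\lh{2}}$ rests on this, the argument collapses. The paper never places a tensor $P_lH$ in $L^\infty_{x'}$: for $l\le j$ it uses $\norm{H}_{\tx{\infty}{4}}\lesssim\no(H)$ paired with weak Bernstein on $P_lF$; for $l>j$ it splits $H=P_{\le l}H+P_{>l}H$, uses $\norm{P_{>l}H}_{\tx{\infty}{2}}\lesssim 2^{-l/2}\no(H)$ from Lemma~\ref{lemma:lbz4} together with the \emph{scalar} dual Bernstein on $P_j$ (note $H\cdot F$ is a scalar), and handles $P_{\le l}H$ via $L^4$.

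\textbf{Products do not localize in frequency.} Your invocation of \eqref{eq:pdf2} to conclude that $P_lH\cdot P_mF$ is ``frequency-localized at $\sim\max(l,m)$'' is a misuse: \eqref{eq:pdf2} is a statement about compositions $P_k\tilde P_{k'}$, not products. The heat-based $P_j$ have no Fourier support, so Bony's trichotomy with automatic localization does not transfer. In particular your diagonal claim is wrong: finite band gives $\norm{P_jG}_{\lpt{2}}\lesssim 2^{-2j}\norm{\lap G}_{\lpt{2}}$, but $\lap$ applied to a product of two frequency-$l$ pieces \emph{grows} like $2^{2l}$, so no $2^{-2(l-j)}$ decay arises. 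The paper generates the needed decay for $l>j$ by hand: it writes $P_lF=2^{-2l}\lap P_lF=2^{-2l}\divb(\nabb P_lF)$, integrates the $\divb$ by parts onto $P_{\le l}H$, and bounds the resulting terms with Lemma~\ref{lemma:lbz5} and Lemma~\ref{lemma:lbz4}; see \eqref{lbz15}--\eqref{lbz16}. This explicit manipulation replaces the missing product-localization.
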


\begin{lemma}\lab{lemma:lbz1}
Let $f$ and $h$ two scalar functions on $\H_u$. Let $2\leq p\leq +\infty$. Assume there exists two constants $C_1, C_2>0$ possibly depending on $u$ such that for all $j\geq 0$, we have: 
\be\lab{lbz1}
\norm{P_jf}_{\tx{p}{2}}\leq 2^jC_1+2^{\frac{j}{2}}C_2.
\ee
Then, for all $j\geq 0$, we have:
\be\lab{lbz2}
\norm{P_j(hf)}_{\tx{p}{2}}\lesssim (\norm{h}_{L^\infty}+\norm{\nabb h}_{\BB^0})(2^jC_1+2^{\frac{j}{2}}C_2).
\ee
\end{lemma}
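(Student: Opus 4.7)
The plan is to adapt the paraproduct calculus on the 2-surfaces $\ptu$ provided by the geometric Littlewood--Paley projections $P_j$ of section \ref{sec:geompal}. I would expand
\begin{equation*}
P_j(hf)=\sum_{l,m}P_j(P_l h\cdot P_m f),
\end{equation*}
including the $P_{<0}$ pieces among the summands, and split into the three standard regimes: low-high ($l\le m-5$, forcing $m\sim j$), high-low ($m\le l-5$, forcing $l\sim j$), and high-high ($|l-m|\le 5$, forcing $l\sim m\gtrsim j$). The first two regimes will be handled using only $\|h\|_{L^\infty}$, while the infinite high-high sum is where the hypothesis $\nabb h\in\BB^0$ becomes essential.

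For the low-high regime, since $P_{<m-5}$ is bounded on $L^\infty$ (Theorem \ref{thm:LP}, i), I bound $\|P_{<m-5}h\|_{L^\infty}\lesssim\|h\|_{L^\infty}$, apply the $L^2(\ptu)$-boundedness of $P_j$ fiberwise in $t$, take the $L^p_t$ norm, and use hypothesis \eqref{lbz1} with $m\sim j$ to conclude
\begin{equation*}
\|P_j(P_{<m-5}h\cdot P_m f)\|_{\tx{p}{2}}\lesssim\|h\|_{L^\infty}(2^jC_1+2^{j/2}C_2).
\end{equation*}
Summing over the finitely many $m\sim j$ preserves the bound. The high-low regime is symmetric: $\|P_lh\|_{L^\infty}\lesssim\|h\|_{L^\infty}$ for each $l\sim j$, while the geometric sum $\sum_{m\le l-5}\|P_mf\|_{\tx{p}{2}}\lesssim 2^lC_1+2^{l/2}C_2$ is dominated by $2^jC_1+2^{j/2}C_2$ (and the $P_{<0}$ low-frequency tail of $f$ is absorbed similarly).

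The main obstacle is the high-high regime, where the number of relevant $(l,m)$ is infinite and a naive $L^\infty\times L^2$ bound on $h\cdot f$ would diverge in $l$. Here I exploit the frequency mismatch between the output $2^j$ and the inputs $2^l\sim 2^m\gtrsim 2^j$ via the dual Bernstein inequality (Theorem \ref{thm:LP}, iv), $\|P_jF\|_{\lpt{2}}\lesssim 2^j\|F\|_{\lpt{1}}$, applied slice by slice in $t$, followed by H\"older, yielding
\begin{equation*}
\|P_j(P_lh\cdot P_mf)\|_{\tx{p}{2}}\lesssim 2^j\|P_lh\|_{\tx{\infty}{2}}\|P_mf\|_{\tx{p}{2}}.
\end{equation*}
The finite band property together with the commutator estimate $[P_l,\nabb]$ on scalars from \cite{LP} gives $2^l\|P_lh\|_{\tx{\infty}{2}}\lesssim\|P_l\nabb h\|_{\tx{\infty}{2}}+\text{negligible}$, whence $\sum_l 2^l\|P_lh\|_{\tx{\infty}{2}}\lesssim\|\nabb h\|_{\BB^0}$. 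Plugging this in and summing over $m\sim l\ge j-5$, the $C_1$ contribution yields directly $2^jC_1\|\nabb h\|_{\BB^0}$, while for the $C_2$ piece I write $2^{m/2}=2^{-m/2}\cdot 2^m$ and carry out the geometric summation in $m\ge j-5$, absorbing a factor $2^{-j/2}$ to produce $2^{j/2}C_2\|\nabb h\|_{\BB^0}$. The only delicate point will be the commutator step for $[P_l,\nabb]$ acting on scalars, but this is standard in the Littlewood--Paley theory of \cite{LP}; assembling the three regimes gives the desired \eqref{lbz2}.
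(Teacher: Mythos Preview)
Your paraproduct trichotomy implicitly assumes sharp Fourier support: the parentheticals ``forcing $m\sim j$'', ``forcing $l\sim j$'', and ``forcing $l\sim m\gtrsim j$'' are Euclidean facts that do not hold for the heat-flow-based projections $P_j$ of section \ref{sec:geompal}. For these operators $P_j(P_{<m-5}h\cdot P_mf)$ is generically nonzero for all $m$, and your only low-high estimate, $\|P_j(P_{<m-5}h\cdot P_mf)\|_{\tx{p}{2}}\lesssim\|h\|_{L^\infty}(2^mC_1+2^{m/2}C_2)$, diverges when summed over $m>j$. The high-high piece with $l\sim m<j$, which you simply omit, has the analogous difficulty with the $C_2$ contribution under your dual-Bernstein bound.

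The paper (section \ref{sec:lbz0}) deals with the lack of sharp localization by decomposing only $f=\sum_lP_lf$, treating $l\le j$ with the trivial $\|h\|_{L^\infty}$ bound, and for $l>j$ further splitting $h=\sum_qP_qh$; the needed decay in $l-j$ is then \emph{manufactured} by writing $P_lf=2^{-2l}\lap P_lf$ and expanding $\lap$ across the product (estimate \eqref{lbz65}), which after finite-band and Bernstein on $P_j$ produces factors $2^{2j-2l+q}$, $2^{j+2q-2l}$, etc.\ and lets the double sum close against $\|h\|_{\BB^1}\lesssim\|h\|_{L^\infty}+\|\nabb h\|_{\BB^0}$. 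This expansion is the substitute, in the geometric calculus, for the frequency-support cancellations you are invoking.
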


\begin{lemma}\lab{lemma:lbz3}
Let $F$ a $\ptu$-tangent 1-form on $\H_u$. Assume there exists two constants $C_1, C_2>0$ such that for all $j\geq 0$, we have: 
\be\lab{lbz5}
\norm{P_j\mathcal{D}_1(F)}_{\lh{2}}\leq 2^jC_1+2^{\frac{j}{2}}C_2.
\ee
Then, for all $j\geq 0$, we have:
\be\lab{lbz6}
\norm{P_jF}_{\lh{2}}\lesssim C_1+2^{-\frac{j}{2}}C_2.
\ee
\end{lemma}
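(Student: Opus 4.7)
The strategy is to invert $\mathcal{D}_1$ via the Hodge estimate \eqref{eq:estimdcal-1} of Corollary \ref{prop:hodgeestimateslol} and then exploit that $\mathcal{D}_1^{-1}$ is an operator of order $-1$: on inputs localized at frequency $2^k$ it gains a factor $2^{-k}$, which is exactly the gain needed to convert the hypothesized bound $\|P_k\mathcal{D}_1 F\|_{\lh{2}}\lesssim 2^k C_1+2^{k/2}C_2$ into the claimed bound $\lesssim C_1+2^{-j/2}C_2$.

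First, write $F=\mathcal{D}_1^{-1}(\mathcal{D}_1 F)$, decompose $\mathcal{D}_1 F=\sum_{k\geq 0}P_k(\mathcal{D}_1 F)+P_{<0}(\mathcal{D}_1 F)$ using the LP partition of unity \eqref{eq:partition}, and obtain
\[
P_j F \;=\; \sum_{k\geq 0} P_j \mathcal{D}_1^{-1} P_k(\mathcal{D}_1 F) \;+\; P_j \mathcal{D}_1^{-1} P_{<0}(\mathcal{D}_1 F).
\]

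The essential intermediate estimate to prove is the almost-orthogonality bound
\[
\|P_j \mathcal{D}_1^{-1} P_k H\|_{L^2(\ptu)} \;\lesssim\; 2^{-k}\,2^{-\alpha|j-k|}\,\|P_k H\|_{L^2(\ptu)},\qquad \alpha>0,
\]
valid for any pair of scalars $H$. The factor $2^{-k}$ reflects the order $-1$ character of $\mathcal{D}_1^{-1}$ on the $k$-th frequency shell, while the decay $2^{-\alpha|j-k|}$ comes from an iterated commutation argument built on $\mathcal{D}_1^*\mathcal{D}_1=-\lap+K$ (from \eqref{eq:dcalident}) and $[P_j,\lap]=0$. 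Concretely, setting $H_k:=\mathcal{D}_1^{-1}P_k H$, the identity $\lap H_k=-\mathcal{D}_1^*P_k H+K H_k$ combined with the finite band and dual Bernstein properties of Theorem \ref{thm:LP} gives, upon iterating $n$ times and using the reproducing property, a decay factor $2^{-2n|j-k|}$ with Gauss-curvature remainders controlled via Proposition \ref{propK}.

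Granted this estimate, taking $L^2(\ptu)$ norm of the decomposition above and then $\lh{2}$, one obtains
\[
\|P_j F\|_{\lh{2}} \;\lesssim\; \sum_{k\geq 0} 2^{-k-\alpha|j-k|}\,\|P_k\mathcal{D}_1 F\|_{\lh{2}} \;+\; 2^{-j}\|P_{<0}\mathcal{D}_1 F\|_{\lh{2}}.
\]
Inserting the hypothesis $\|P_k\mathcal{D}_1 F\|_{\lh{2}}\leq 2^kC_1+2^{k/2}C_2$, the $C_1$-sum equals $\sum_k 2^{-\alpha|j-k|}C_1\lesssim C_1$, while the $C_2$-sum equals $\sum_k 2^{-k/2-\alpha|j-k|}C_2$, which for any $\alpha>1/2$ is maximized near $k\sim j$ and evaluates to $\lesssim 2^{-j/2}C_2$ (split $k\leq j$: $\sum 2^{-\alpha j+(\alpha-1/2)k}\sim 2^{-j/2}$; $k>j$: $\sum 2^{\alpha j-(\alpha+1/2)k}\sim 2^{-j/2}$). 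The low-frequency tail is trivially bounded by $C_1$. This delivers \eqref{lbz6}.

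The main obstacle lies in establishing the almost-orthogonality estimate above. The raw Hodge bound \eqref{eq:estimdcal-1} only gives $\|\mathcal{D}_1^{-1}\|_{\mathcal{L}(L^2(\ptu))}\lesssim 1$, from which $\|P_j\mathcal{D}_1^{-1}P_k\|_{\mathcal{L}(L^2)}\lesssim 1$ trivially; combined with the $2^kC_1$-growth on the right, this already produces a divergent $k$-sum. Extracting both the $2^{-k}$ gain and the off-diagonal decay in $|j-k|$ demands a careful interlacing of $\mathcal{D}_1^{-1}$ with the heat-flow projections through the identity $\mathcal{D}_1^*\mathcal{D}_1=-\lap+K$, and the curvature $K$ is controlled only in $L^2(\ptu)$ (Proposition \ref{propK}), so the remainder terms arising at each step of the iteration must be absorbed by Gagliardo-Nirenberg on $\ptu$ and by the reproducing property of the LP projections. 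Closing this loop under the rough regularity of the background is the technical heart of the argument.
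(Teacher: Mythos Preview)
Your overall architecture is exactly that of the paper: write $F=\mathcal{D}_1^{-1}f$ with $f=\mathcal{D}_1 F$, decompose $f=\sum_q P_q f$, and prove an almost-orthogonality estimate
\[
\|P_j\mathcal{D}_1^{-1}P_q f\|_{\lh{2}}\;\lesssim\;2^{-q-|j-q|}\,\|P_q f\|_{\lh{2}},
\]
after which the summation in $q$ is straightforward. Where you diverge from the paper is in how you establish this estimate, and here you are making your life much harder than necessary.

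You invoke the identity $\mathcal{D}_1^*\mathcal{D}_1=-\lap+K$ (the first identity in \eqref{eq:dcalident}) and therefore anticipate an iteration scheme with Gauss-curvature remainders to be absorbed via Proposition~\ref{propK}; you then flag this as ``the technical heart of the argument'' and ``the main obstacle.'' But the paper uses the \emph{other} identity in \eqref{eq:dcalident}, namely $\mathcal{D}_1\cdot{}^*\mathcal{D}_1=-\lap$, which has no curvature term. This gives exactly $\mathcal{D}_1^{-1}\lap=-{}^*\mathcal{D}_1$, and the two cases of the almost-orthogonality fall out in one line each:
\begin{itemize}
\item For $q\le j$: apply the finite band property to $P_j$ and then the Hodge bound \eqref{eq:estimdcal-1} for $\nabb\mathcal{D}_1^{-1}$,
\[
\|P_j\mathcal{D}_1^{-1}P_q f\|_{\lh{2}}\lesssim 2^{-j}\|\nabb\mathcal{D}_1^{-1}P_q f\|_{\lh{2}}\lesssim 2^{-j}\|P_q f\|_{\lh{2}}.
\]
\item For $q> j$: apply the finite band property to $P_q$, then the exact identity $\mathcal{D}_1^{-1}\lap=-{}^*\mathcal{D}_1$, then the finite band property to $P_j$,
\[
\|P_j\mathcal{D}_1^{-1}P_q f\|_{\lh{2}}\lesssim 2^{-2q}\|P_j\mathcal{D}_1^{-1}\lap P_q f\|_{\lh{2}}= 2^{-2q}\|P_j{}^*\mathcal{D}_1 P_q f\|_{\lh{2}}\lesssim 2^{j-2q}\|P_q f\|_{\lh{2}}.
\]
\end{itemize}
Both cases yield decay exponent $\alpha=1$, and no $K$-remainder ever appears. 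So your ``main obstacle'' simply evaporates once you use the correct Hodge identity; the iteration you sketch is unnecessary (and, as written, it is not clear it would close, since each $K$-remainder would require an $L^\infty$ bound on $\mathcal{D}_1^{-1}P_k H$ that is not immediate under the available regularity).
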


We also state the following lemmas which will be used in the proof of Lemma \ref{lemma:lbz2} as well as several places in the paper.
\begin{lemma}\lab{lemma:lbz4}
For any $\ptu$-tangent tensor $F$ on $\H_u$, and for all $j\geq 0$, we have:
\be\lab{lbz14}
\sum_{j\geq 0}2^j\norm{P_jF}^2_{\tx{\infty}{2}}+2^{-j}\norm{\nabb P_jF}^2_{\tx{\infty}{2}}\les \no(F)^2.
\ee
\end{lemma}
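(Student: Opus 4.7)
The plan is to reduce to the transport inequality \eqref{sobineq2} applied to each Littlewood--Paley projection $P_j F$, and then to sum in $j$ using Bessel's inequality and the finite band property. First, applying \eqref{sobineq2} to $P_j F$ and invoking AM--GM yields
$$2^j \norm{P_j F}^2_{\tx{\infty}{2}} \lesssim \norm{\ddb_L P_j F}^2_{\lh{2}} + 2^{2j} \norm{P_j F}^2_{\lh{2}}.$$
The second term, when summed in $j \geq 0$, is controlled by $\no(F)^2$ via the Littlewood--Paley characterization
$$\sum_{j\geq 0} 2^{2j} \norm{P_j F(t,\cdot)}^2_{L^2(P_{t,u})} + \norm{P_{<0}F(t,\cdot)}^2_{L^2(P_{t,u})} \sim \norm{F(t,\cdot)}^2_{L^2(P_{t,u})} + \norm{\nabb F(t,\cdot)}^2_{L^2(P_{t,u})},$$
which follows from \eqref{La1} (with $a=0$) together with the standard equivalence $\norm{\La F}_{\lpt{2}} \sim \big(\sum_{j\geq 0} 2^{2j}\norm{P_j F}^2 + \norm{P_{<0}F}^2\big)^{1/2}$ derivable from Theorem \ref{thm:LP}, integrated in $t \in [0,1]$.

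For the first term, I would split $\ddb_L P_j F = P_j \ddb_L F + [\ddb_L, P_j] F$. Bessel's inequality applied to $\ddb_L F$ gives $\sum_{j\ge 0}\norm{P_j \ddb_L F}^2_{\lh{2}} \lesssim \norm{\ddb_L F}^2_{\lh{2}} \leq \no(F)^2$. For the commutator contribution, the plan is to use the heat kernel representation $P_j F = \int m_j(\tau) U(\tau) F\,d\tau$ together with Duhamel's formula, which expresses $[\ddb_L, U(\tau)] F$ as an integral of $[\ddb_L, \lap] U(s) F$ sandwiched between heat semigroups, and then to invoke the commutation identity \eqref{comm6} that writes $[\ddb_{nL}, \lap]$ schematically as $\chi\cdot\nabb^2 + (\nabb\chi + \chi\,\kepb + \b)\cdot\nabb$ (up to the factor $n^{-1}$ coming from $\ddb_L = n^{-1}\ddb_{nL}$). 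Combining with the heat flow bounds of Proposition \ref{le:L2heat} and the estimates on $\chi$, $\kepb$ from Theorem \ref{thregx}, together with the curvature flux bound on $\b$ from \eqref{curvflux1}, then yields an estimate of the form $\norm{[\ddb_L, P_j] F}_{\lh{2}} \lesssim 2^{-\alpha j}\no(F)$ for some $\alpha > 0$, whose squares are summable in $j \geq 0$. This argument parallels, and is slightly simpler than, the commutator estimates \eqref{supercommut1}--\eqref{supercommut2} already in play elsewhere.

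The second part of \eqref{lbz14} is then immediate from the finite band property: since $\norm{\nabb P_j F(t,\cdot)}_{L^2(P_{t,u})} \lesssim 2^j \norm{P_j F(t,\cdot)}_{L^2(P_{t,u})}$ pointwise in $t$, taking the supremum in $t$ and squaring gives $2^{-j}\norm{\nabb P_j F}^2_{\tx{\infty}{2}} \lesssim 2^j \norm{P_j F}^2_{\tx{\infty}{2}}$, which reduces the second sum to the first. The main obstacle in this scheme is the commutator estimate in the second paragraph: although the ingredients are standard, care is needed to absorb the curvature contribution $\b$ (controlled only in $\lh{2}$ by \eqref{curvflux1}, not pointwise) and to handle the additional terms arising from the factor $n^{-1}$ when switching between $\ddb_L$ and $\ddb_{nL}$.
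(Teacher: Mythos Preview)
Your reduction of the second sum to the first via the finite band property is correct and matches the paper. The overall scheme (apply the transport inequality \eqref{sobineq2} to $P_jF$, split off the commutator) is also the right one. The gap is at the commutator step.

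After your AM--GM you need $\sum_{j\ge 0}\norm{[\ddb_L,P_j]F}^2_{\lh{2}}<\infty$, and you propose to get this from a pointwise bound $\norm{[\ddb_L,P_j]F}_{\lh{2}}\lesssim 2^{-\alpha j}\no(F)$ with $\alpha>0$. But the commutator estimates \eqref{supercommut1}--\eqref{supercommut2} you invoke are stated in $\tx{q}{2}$ only for $1\le q<2$; the decay $2^{-j/2_+}$ (resp.\ $2^{-j_+}$) comes precisely from trading time integrability against $j$-decay, and it degenerates at $q=2$. Indeed the analogous $\lh{2}$ estimate in the paper (see \eqref{zz1}) carries no factor of $2^{-\alpha j}$ at all. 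The heat-kernel argument you sketch runs into the same wall: the leading term $n\chi\cdot\nabb^2U(\tau)F$ forces $\chi$ to appear in an $L^\infty_t$-based norm if you want the commutator in $L^2_t$, and no such control is available (only $\no(\chi)$ and the trace norm $\norm{\chi}_{\xt{\infty}{2}}$). So the sum $\sum_j\norm{[\ddb_L,P_j]F}^2_{\lh{2}}$ does not close by this route.

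The paper avoids this by \emph{not} applying AM--GM. It keeps the product $\int_0^1\norm{P_jF}_{\lpt{2}}\norm{\ddb_{nL}P_jF}_{\lpt{2}}\,dt$, puts the first factor in $L^\infty_t$ (this is the quantity being estimated, so it is a bootstrap), and lets the commutator land in $\tx{1}{2}$ where \eqref{supercommut2} gives the honest decay $2^{-j_+}$. One then Cauchy--Schwarzes in $j$ to obtain
\[
\sum_{j\ge 0}2^j\norm{P_jF}^2_{\tx{\infty}{2}}\lesssim\Big(\sum_{j\ge 0}2^j\norm{P_jF}^2_{\tx{\infty}{2}}\Big)^{1/2}\Big(\sum_{j\ge 0}2^j\norm{[P_j,\ddb_{nL}]F}^2_{\tx{1}{2}}\Big)^{1/2}+\no(F)^2,
\]
and since $\sum_j 2^j\cdot 2^{-2j_+}<\infty$ the first factor on the right is absorbed. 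The moral is that the time integrability of the commutator is what buys the $j$-summability, and applying AM--GM too early throws that away.
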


\begin{lemma}\lab{lemma:lbz5}
For any 1-form $F$ on $\ptu$, for any $1<p\leq 2$ and for all $j\geq 0$, we have:
\be\lab{lbz14bis}
\norm{P_j\divb(F)}_{\lpt{2}}\les 2^{\frac{2}{p}j}\norm{F}_{\lpt{p}}.
\ee
\end{lemma}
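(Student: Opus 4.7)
The plan is to prove Lemma \ref{lemma:lbz5} by a direct duality argument, reducing the desired bound on $\divb$ to the Gagliardo--Nirenberg consequence \eqref{chat1} of the Bochner inequality on $\ptu$.

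First I would use the self-adjointness of $P_j$ and the fact that $\divb$ and $-\nabb$ are $L^2$-adjoints on $\ptu$. Writing
\begin{equation*}
\norm{P_j\divb(F)}_{\lpt{2}}=\sup_{\|g\|_{\lpt{2}}\le 1}\int_{\ptu} P_j\divb(F)\,g\,d\mu_{t,u}=-\sup_{\|g\|_{\lpt{2}}\le 1}\int_{\ptu} F\cdot\nabb(P_j g)\,d\mu_{t,u},
\end{equation*}
where the supremum is over scalar test functions $g$, I would then apply H\"older's inequality with the conjugate pair $(p,p')$, $1/p+1/p'=1$, to obtain
\begin{equation*}
\norm{P_j\divb(F)}_{\lpt{2}}\le \|F\|_{\lpt{p}}\sup_{\|g\|_{\lpt{2}}\le 1}\|\nabb P_j g\|_{\lpt{p'}}.
\end{equation*}

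The key input is then the scalar estimate \eqref{chat1}, which states that for any $2\le p'<+\infty$ and any scalar $g$ on $\ptu$,
\begin{equation*}
\|\nabb P_j g\|_{\lpt{p'}}\lesssim 2^{2(1-1/p')j}\|g\|_{\lpt{2}}=2^{\frac{2j}{p}}\|g\|_{\lpt{2}},
\end{equation*}
since $1-1/p'=1/p$. Note that the hypothesis $1<p\le 2$ translates exactly into $2\le p'<+\infty$, which is precisely the range of validity of \eqref{chat1}. Combining the two displays yields the desired bound
\begin{equation*}
\norm{P_j\divb(F)}_{\lpt{2}}\lesssim 2^{\frac{2j}{p}}\|F\|_{\lpt{p}}.
\end{equation*}

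There is no real obstacle: the statement is essentially the dual of \eqref{chat1}. The only point requiring minor care is making sure the exponent arithmetic $2(1-1/p')=2/p$ is matched to the assumed range of $p$, and that the integration by parts identity $\int (\divb F)h=-\int F\cdot\nabb h$ is applied to a scalar test function $h=P_jg$ (which is legitimate because $P_j$ maps scalars to scalars and is self-adjoint by construction in Section \ref{sec:geompal}). Since $(\mathcal{M},\gg)$ is smooth and asymptotically flat outside a compact set by Remark \ref{rem:noprobleminf}, the boundary terms at infinity on $\ptu$ vanish, so the integration by parts is justified.
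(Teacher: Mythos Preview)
Your proof is correct and follows essentially the same approach as the paper: both argue by duality, reducing to the scalar estimate $\|\nabb P_j g\|_{\lpt{p'}}\lesssim 2^{2(1-1/p')j}\|g\|_{\lpt{2}}$, which is exactly \eqref{chat1}. The only cosmetic difference is that you cite \eqref{chat1} directly, whereas the paper's proof in Section~\ref{sec:lbz4} rederives that inequality inline via Gagliardo--Nirenberg and the scalar Bochner inequality \eqref{eq:Bochconseqbis}.
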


We postpone the proof of Lemma \ref{lemma:lbz2} to section \ref{sec:lbz1}, the proof of Lemma \ref{lemma:lbz1} to section \ref{sec:lbz0}, the proof of Lemma \ref{lemma:lbz3} to section \ref{sec:lbz2}, the proof of Lemma \ref{lemma:lbz4} to sections \ref{sec:lbz3}, and the proof of Lemma \ref{lemma:lbz5} to section \ref{sec:lbz4}. We show how they improve the bootstrap assumption \eqref{boot8}. The bootstrap assumption \eqref{boot7} together with the definition of $\mu_1$ and $\mu$ yields for all $j\geq 0$:
\be\lab{lbz7}
\norm{P_j(b^{-1}\mu_1)}_{\lh{2}}\lesssim 2^jD\ep+2^{\frac{j}{2}}D\ep\gamma(u).
\ee
Lemma \ref{lemma:lbz2} implies:
\bea
\nn&&\normm{P_j\left(\frac{b^{-1}}{2}\mu_1-b^{-1}\nabb b\c\ddb_{\lb}(\z)-2\z\c\ddb_{\lb}(\z)\right)}_{\lh{2}}+\norm{P_j(-b^{-1}\nabb b\wedge \ddb_{\lb}(\z))}_{\lh{2}}\\
\nn&\lesssim& 2^jD\ep+2^{\frac{j}{2}}D\gamma(u)+(\no(b^{-1}\nabb b)+\no(\z))(2^jD^2\ep+2^{\frac{j}{2}}D^2\ep\gamma(u))\\
\nn&\lesssim& (1+D\ep)(2^jD\ep+2^{\frac{j}{2}}D\ep\gamma(u))\\
\lab{lbz8} &\lesssim& 2^jD\ep+2^{\frac{j}{2}}D\ep\gamma(u),
\eea
where we used the bootstrap assumptions \eqref{boot8} for $\ddb_{\lb}(\z)$, the estimate \eqref{lbz7} for $\mu_1$ and the estimates \eqref{estb} and \eqref{estzeta} for $b$ and $\z$ obtained in Theorem \ref{thregx}. Using the Littlewood-Paley property iii) of Theorem \ref{thm:LP}, and the dual of the sharp Bernstein inequality \eqref{eq:strongbernscalarbis} for scalars, we obtain:
\bea
\lab{lbz9}
&&\norm{P_j(\divb(\bb)+h_1)}_{\lh{2}}+\norm{P_j(-\curlb(\bb)+h_2)}_{\lh{2}}\\
\nn&\lesssim& 2^j\norm{\bb}_{\lh{2}}+2^j\norm{h_1}_{\lh{1}}+2^j\norm{h_2}_{\lh{1}}\\
\nn&\les&2^j\ep,
\eea
where we used the curvature bound \eqref{curvflux1} for $\bb$ and the estimate \eqref{estlbmu2bis} for $h_1, h_2$. Using the Hodge system \eqref{estlbmu2} satisfied by $\ddb_{\lb}(\z)$ and the estimates  \eqref{lbz8} and \eqref{lbz9}, we obtain:
\bee
\norm{P_j\mathcal{D}_1(\ddb_{\lb}(\z))}_{\lh{2}}&\les &\norm{P_j(\divb(\ddb_{\lb}(z)))}_{\lh{2}}+\norm{P_j(\curlb(\ddb_{\lb}(\z)))}_{\lh{2}}\\
\nn&\lesssim& 2^jD\ep+2^{\frac{j}{2}}D\ep\gamma(u).
\eee
which together with Lemma \ref{lemma:lbz3} yields:
\be\lab{lbz10}
\norm{P_j(\ddb_{\lb}(\z))}_{\lh{2}}\les  D\ep+2^{-\frac{j}{2}}D\ep\gamma(u).
\ee
Note that \eqref{lbz10} is an improvement of the bootstrap assumption \eqref{boot8} for $\ddb_{\lb}(\z)$.

\subsection{Estimates for $\lb\lb\trc$}\lab{sec:lbt}

The goal of this section is to obtain an improvement of the bootstrap assumption \eqref{boot7} for $\lb\lb\trc$. Note first that the bootstrap assumption \eqref{boot7} together with Lemma \ref{lemma:lbz1} with the choice $h=b$ and the definition of $\mu_1$ and $\mu$ yields for all $j\geq 0$:
\be\lab{lbt1}
\norm{P_j(\mu_1)}_{\lh{2}}\lesssim 2^jD\ep+2^{\frac{j}{2}}D\ep\gamma(u).
\ee
Another application of Lemma \ref{lemma:lbz1} this time with the choice $h=b^{-1}$ shows that improving on the bootstrap assumption \eqref{boot7} is equivalent to improving \eqref{lbt1}. We now focus on improving \eqref{lbt1}. After multiplying the transport  equation \eqref{estlbmu1} satisfied by $\mu_1$ by $n$, we have:
\bee
nL(\mu_1)+n\trc\mu_1&=&-2bn\ddb_{\lb}(\z)\c\nabb\trc
-2bn\hch\c\bigg(\nabb\widehat{\otimes}\ddb_{\lb}(\z)+b^{-1}\nabb b\ddb_{\lb}(\z)+2\ddb_{\lb}\z\widehat{\otimes}\z\bigg)\\
&&+2n\trc bn^{-1}\nabb n\c\ddb_{\lb}(\z)+n\divb(F_1)+nf_2.
\eee
which yields:
\bea
&&\lab{lbt2}\norm{P_j(\mu_1)}_{\lh{2}}\\
\nn&\les& 2^{\frac{j}{2}}\gamma(u)\ep+\normm{P_j\left(\int_0^t(n\trc\mu_1)d\tau\right)}_{\lh{2}}+ \normm{P_j\left(\int_0^t(bn\ddb_{\lb}(\z)\c\nabb\trc)d\tau\right)}_{\lh{2}}\\
\nn&&+\normm{P_j\left(\int_0^t(bn\hch\c(\nabb\widehat{\otimes}\ddb_{\lb}(\z))d\tau\right)}_{\lh{2}}+\normm{P_j\left(\int_0^t(bn\hch\c(b^{-1}\nabb b\ddb_{\lb}(\z))d\tau\right)}_{\lh{2}}\\
\nn&&+\normm{P_j\left(\int_0^t(bn\hch\c(\ddb_{\lb}\z\widehat{\otimes}\z\bigg)d\tau\right)}_{\lh{2}}+ \normm{P_j\left(\int_0^t(n\trc bn^{-1}\nabb n\c\ddb_{\lb}(\z))d\tau\right)}_{\lh{2}}\\
\nn&&+ \normm{P_j\left(\int_0^t(\divb(nF_1))d\tau\right)}_{\lh{2}}+ \normm{P_j\left(\int_0^t(\nabb nF_1))d\tau\right)}_{\lh{2}}\\
\nn&&+ \normm{P_j\left(\int_0^t(nf_2)d\tau\right)}_{\lh{2}}
\eea
where we used the following lemma with $f=\mu_1$:

\begin{lemma}\lab{lemma:lbtinit}
Let $f$ a scalar function solution of the following transport equation:
$$L(f)=0,\,f=f_0\textrm{ on }\pou.$$
Assume there is a constant $C>0$ possibly depending on $u$ such that for all $j\geq 0$:
$$\norm{P_jf_0}_{L^2(\pou)}\les C2^{\frac{j}{2}}.$$
Then, we have the following estimate for $f$:
$$\norm{P_jf}_{\tx{\infty}{2}}\les C2^{\frac{j}{2}}.$$
\end{lemma}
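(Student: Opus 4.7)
The strategy is an energy inequality for $P_j f$ on $\ptu$ combined with a commutator bound. Applying the identity \eqref{du} to $|P_jf|^2$ yields
$$\frac{d}{dt}\norm{P_jf}_{\lpt{2}}^2 = 2\int_{\ptu} nL(P_jf)\cdot P_jf\,\dmt + \int_{\ptu} n\trc\,|P_jf|^2\,\dmt,$$
and the assumption $L(f)=0$ identifies $nL(P_jf)$ with the commutator $[nL,P_j]f$. Using $\norm{n\trc}_{L^\infty}\lesssim\ep$ (from \eqref{estn} and \eqref{esttrc}), Cauchy--Schwarz and Gronwall reduce the statement to an $L^1_tL^2(\ptu)$ bound by $C\,2^{j/2}$ of $[nL,P_j]f$, with the datum contribution yielding precisely $\norm{P_jf_0}_{L^2(\pou)}\lesssim C\,2^{j/2}$ by hypothesis.

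The commutator is then controlled via the heat-flow representation $P_j=\int_0^\infty m_j(\tau)U(\tau)\,d\tau$ from Definition~\ref{defLP} together with the commutator identity \eqref{comm6}. Setting $w(t,\tau)=[nL,U(\tau)]f$, we have $w(t,0)=nL(f)=0$, and $w$ solves the Duhamel equation with source $[nL,\lap]U(\tau)f$, which by \eqref{comm6} equals $-2n\chi\cdot\nabb^2 U(\tau)f$ plus lower-order terms in $\hch\kepb$, $\kepb\trc$, $n^{-1}\nabb n\,\trc$, $\nabb\trc$. The heat semigroup estimates of Proposition~\ref{le:L2heat}, the vanishing-moment property \eqref{eq:moments} of $m_j$, and the bounds \eqref{estn}--\eqref{estzeta} on the Ricci coefficients are structurally the very same ingredients underlying the commutator estimates \eqref{supercommut1}--\eqref{supercommut2}; adapting them produces a bound of the form $\norm{[nL,P_j]f(\tau,\cdot)}_{L^2(P_{\tau,u})}\lesssim \ep\,\norm{\widetilde P_j f(\tau,\cdot)}_{L^2(P_{\tau,u})}$ with $\widetilde P_j=P_{\sim j}$ a fattened projector.

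The main obstacle is that the hypothesis yields only the per-frequency bound $\norm{P_jf_0}_{L^2(\pou)}\lesssim C\,2^{j/2}$, which is just shy of giving $f_0\in L^2(\pou)$; hence no global $L^2$ control on $f$ is available, and one must work exclusively with frequency-localized pieces. The commutator estimate therefore has to be proved in a fashion that respects the LP decomposition, exploiting the rapid decay of $P_jU(\tau)P_k$ in $|j-k|+2^{2j}\tau$ to confine the effective interaction to $k\sim j$. With this localization in place, a Gronwall argument applied to $\norm{P_jf(t)}_{\lpt{2}}$ at each fixed $j$ closes the loop and produces the uniform-in-$t$ bound $\norm{P_jf}_{\tx{\infty}{2}}\lesssim C\,2^{j/2}$.
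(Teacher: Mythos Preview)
Your energy/Gronwall framework is sound up to the point where you need to control $[nL,P_j]f$ in $L^1_tL^2_{x'}$. The gap is the claimed pointwise bound $\|[nL,P_j]f(t,\cdot)\|_{L^2(P_{t,u})}\lesssim\ep\,\|\widetilde P_jf(t,\cdot)\|_{L^2(P_{t,u})}$. The commutator estimates you invoke, \eqref{supercommut1}--\eqref{supercommut2}, control $[\ddb_{nL},P_j]F$ in terms of $\no(F)$, which requires global $L^2$ bounds on $F$ and $\nabb F$; under the sole hypothesis $\|P_jf_0\|_{L^2}\lesssim C2^{j/2}$ neither is available, as you yourself note. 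Your proposed fix via ``rapid decay of $P_jU(\tau)P_k$'' does not directly apply: the Duhamel source $[nL,\lap]U(\tau)f=-2n\chi\cdot\nabb^2U(\tau)f+\ldots$ involves multiplication by the rough coefficient $n\chi$, which mixes frequencies. A piece of $f$ at frequency $k\gg j$ can interact with the frequency-$k$ part of $n\chi$ (only $O(2^{-k}\ep)$ in $L^2$, not exponentially small) to land at frequency $\sim j$, and this high--high $\to$ low contribution is not suppressed by heat-kernel decay alone. Turning this into a genuine frequency-localized commutator lemma would be a separate nontrivial result, not an adaptation of existing ones; without it the Gronwall loop does not close.

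The paper avoids this by decomposing the \emph{initial data} rather than $f$ itself: write $f=\sum_l f_l$ where each $f_l$ solves $nL(f_l)=0$ with $f_l|_{t=0}=P_lf_0$. Each $f_l$ is then a bona fide transport solution with $\|f_l\|_{L^\infty_tL^2_{x'}}\lesssim C2^{l/2}$ and, after a commutator argument applied to $\nabb f_l$, also $\|\nabb f_l\|_{\lh{2}}\lesssim C2^{3l/2}$. One then bounds $\|P_jf_l\|_{L^\infty_tL^2_{x'}}$ in two ways: trivially by $C2^{l/2}$ for $l\le j$; and for $l>j$ via the identity $f_l=2^{-2l}\lap f_l+v_l$, where $v_l$ solves a transport equation with zero data and source $-2^{-2l}[nL,\lap]f_l$, yielding $\|P_jf_l\|\lesssim C(2^{2j-3l/2}+\ep\,2^{j-l/2})$. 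Summation in $l$ gives $C2^{j/2}$. The point is that every commutator is now applied to an $f_l$ with honest $L^2$ and $H^1$ control, so the standard machinery (Lemma~\ref{lemma:lbt5}, estimate~\eqref{estimtransport1}, etc.) applies without any new frequency-localized commutator lemma.
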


The proof of Lemma \ref{lemma:lbtinit} is postponed to section \ref{sec:lbtinit}. In order to estimate the right-hand side of \eqref{lbt1}, we will use the following three lemmas, which constitute the core of section \ref{sec:secondderlb}.

\begin{lemma}\lab{lemma:lbt1}
Let a scalar function $f$ on $\H_u$ such that:
$$\norm{f}_{L^\infty}+\norm{\nabb f}_{\PP^0}\les\ep.$$
Assume that $\mu_1$ satisfies \eqref{lbt1}. Then, we have for all $j\geq 0$:
$$\normm{P_j\left(\int_0^t(f\mu_1)d\tau\right)}_{\lh{2}}\les 2^jD\ep^2+2^{\frac{j}{2}}D\ep^2\gamma(u).$$
\end{lemma}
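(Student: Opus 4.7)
I would set $U = \int_0^t f\mu_1\, d\tau$, so $U$ is the scalar solving $nL(U) = f\mu_1$ with $U|_{\pou} = 0$. Since the interval $[0,1]$ has finite length, a direct Cauchy-Schwarz in $t$ gives the embedding $\|\cdot\|_{\lh{2}}\les \|\cdot\|_{\xt{2}{\infty}}$, so it suffices to bound $\|P_jU\|_{\xt{2}{\infty}}$. Commuting $P_j$ through the transport equation produces
\[
L(P_jU) = n^{-1}P_j(f\mu_1) + n^{-1}[nL, P_j]U
\]
with zero initial data, and the scalar transport estimate \eqref{estimtransport1} reduces the task to controlling $\|P_j(f\mu_1)\|_{\xt{2}{1}}$ and $\|[nL,P_j]U\|_{\xt{2}{1}}$.

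For the main term, I would use $\|\cdot\|_{\xt{2}{1}}\les\|\cdot\|_{\lh{2}}$ and then analyze $\|P_j(f\mu_1)\|_{\lh{2}}$ via a paraproduct decomposition $f=\sum_lP_lf+P_{<0}f$ and $\mu_1=\sum_kP_k\mu_1+P_{<0}\mu_1$ with respect to the Littlewood-Paley projections on $\ptu$. The scalar sharp Bernstein inequality \eqref{eq:strongbernscalarbis} combined with the finite band property yields $\|P_lf\|_{\tx{2}{\infty}}\les \|P_l\nabb f\|_{\lh{2}}$, whose sum over $l$ is bounded by $\|\nabb f\|_{\PP^0}\les\ep$; together with $\|f\|_{L^\infty}\les\ep$ used on the very-low-frequency part of $f$, this furnishes the extra factor $\ep$ in front of the hypothesized bound $2^kD\ep+2^{k/2}D\ep\gamma(u)$ on $P_k\mu_1$, in the low-high and high-low frequency regimes.

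For the commutator term, to bypass the fact that $\no(U)$ is not a priori finite (since $\mu_1$ only satisfies a Besov-type bound in \eqref{lbt1}), I would decompose $\mu_1=\sum_kP_k\mu_1$ and set $U_k=\int_0^t fP_k\mu_1\,d\tau$. Applying the transport estimate to $U_k$, $\nabb U_k$ and $\ddb_LU_k$ yields the naive bound $\no(U_k)\les D\ep^2(2^k+2^{k/2}\gamma(u))$, and a commutator estimate adapted from \eqref{supercommut2} gives the gain $2^{-j_+}$, so the total contribution is subdominant to $2^jD\ep^2$ after summation in $k$.

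\textbf{Main obstacle.} The delicate regime is the high-high interaction $P_j((P_lf)(P_k\mu_1))$ with $l\sim k\gg j$: the naive bound $\|P_lf\|_{L^\infty}\|P_k\mu_1\|_{\lh{2}}\les\ep(2^kD\ep+2^{k/2}D\ep\gamma(u))$ is not summable in $k\gg j$. Gaining the necessary orthogonality of $P_j$ against the product of two high-frequency inputs requires combining the dual scalar sharp Bernstein $\|P_jF\|_{\lh{2}}\les 2^j\|F\|_{\tx{2}{1}}$ with a careful Cauchy-Schwarz splitting of $L^1_{x'}$ into two $L^2_{x'}$-factors, one bounded in an $L^\infty_tL^2$-type norm that is summable in $l$ via the $\PP^0$ control on $\nabb f$, and the other in $\lh{2}$; this is what extracts a geometric decay in $k-j$ sufficient to close the estimate with bound $2^jD\ep^2+2^{j/2}D\ep^2\gamma(u)$.
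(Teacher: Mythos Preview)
Your handling of the commutator $[nL,P_j]U$ does not close, and this is a genuine gap. First, the claimed bound $\no(U_k)\les D\ep^2(2^k+2^{k/2}\gamma(u))$ is wrong: commuting $\nabb$ through $nL(U_k)=fP_k\mu_1$ produces the term $f\,\nabb P_k\mu_1$, and since $\|\nabb P_k\mu_1\|_{\lh{2}}\les 2^k\|P_k\mu_1\|_{\lh{2}}$ this forces $\no(U_k)\les D\ep^2 2^{2k}$. Second, even with your claimed $2^k$ growth the sum $\sum_k 2^{-j_+}\no(U_k)$ diverges. More fundamentally, the hypothesis \eqref{lbt1} only places $\mu_1$ in a Besov space of order $-1$ (it is \emph{not} in $\lh{2}$), so $\no(U)$ is infinite and no decompose-and-sum argument on $[nL,P_j]U$ can succeed. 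A secondary issue: the claim $\|P_lf\|_{\tx{2}{\infty}}\les\|P_l\nabb f\|_{\lh{2}}$ does not follow from sharp Bernstein plus finite band; those only yield the unlocalized $\|P_lf\|_{\lpt{\infty}}\les\|\nabb f\|_{\lpt{2}}$, which does not give the $\ell^1$-summability in $l$ you need.

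The paper never introduces $[nL,P_j]U$: instead of writing a transport equation for $P_jU$, it keeps $P_j$ \emph{outside} and estimates $\normm{P_j\big(\int_0^t fP_l\mu_1\big)}_{\lh{2}}$ directly. For $l\le j$ one uses the $L^2(\ptu)$-boundedness of $P_j$ at each fixed $t$, then applies the transport estimate \eqref{estimtransport1} to $\int_0^t fP_l\mu_1$ (with no $P_j$ inside), landing on $\|fP_l\mu_1\|_{\xt{2}{1}}\les\|f\|_{\xt{\infty}{2}}\|P_l\mu_1\|_{\lh{2}}$, summable over $l\le j$. For $l>j$ one further decomposes $f=\sum_qP_qf$; the case $q>l$ is handled by the dual sharp Bernstein $\|P_j(\int_0^t\cdot)\|_{\lh{2}}\les 2^j\|\cdot\|_{\lh{1}}$ (transport on the inside again), while the delicate case $q\le l$ uses $P_l=2^{-2l}\lap P_l$ and integration by parts to produce expressions of the form $P_j\big(\int_0^t\lap(\cdot)\big)$ and $P_j\big(\int_0^t\divb(\cdot)\big)$, which are bounded by Lemmas \ref{lemma:lbt4} and \ref{lemma:lbt5}. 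Those lemmas absorb the transport commutators once and for all, with the gain $2^{2j}$ (resp.\ $2^{2j/p}$) coming out cleanly. This is what extracts the geometric decay $2^{-|q-l|}$ you correctly identify as necessary, but through the time-integrated structure rather than a pointwise product estimate.
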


\begin{lemma}\lab{lemma:lbt2}
Let a $\ptu$-tangent 2-tensor $F$ on $\H_u$ such that $\nabb F$ admits a decomposition of the form:
$$\nabb F=\ddb_{nL}P+E$$
where $P$, $E$ are $\ptu$-tangent tensors, and $F$, $P$, $E$ satisfy:
$$\no(F)+\norm{F}_{\xt{\infty}{2}}+\no(P)+\norm{E}_{\PP^0}\les\ep.$$
Assume that $\ddb_{\lb}(\z)$ satisfies the estimate \eqref{lbz10}. Then, we have for all $j\geq 0$:
$$\normm{P_j\left(\int_0^t(F\c\nabb\ddb_{\lb}(\z))d\tau\right)}_{\lh{2}}\les 2^j\ep+2^jD\ep^2+2^{\frac{j}{2}}D\ep^2\gamma(u).$$
\end{lemma}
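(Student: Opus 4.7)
Let $G(t,x')=\int_0^t(F\c\nabb\ddb_{\lb}\z)(\tau,x')\,d\tau$, so that $LG=F\c\nabb\ddb_{\lb}\z$ with $G|_{t=0}=0$. My strategy is to integrate by parts in $\ptu$ to expose a divergence, then use the decomposition $\nabb F=\ddb_{nL}P+E$ to integrate by parts in time, and finally invoke \eqref{lbz10} together with the commutator framework of \eqref{supercommut1}--\eqref{supercommut2} to control the $P_j$-projected norm. The spatial integration by parts reads
\begin{equation*}
F\c\nabb\ddb_{\lb}\z=\divb\bigl(F\c\ddb_{\lb}\z\bigr)-\divb F\c\ddb_{\lb}\z,
\end{equation*}
and the contraction of $\nabb F=\ddb_{nL}P+E$ yields $\divb F=\ddb_{nL}\tilde P+\tilde E$ with $\no(\tilde P)+\norm{\tilde E}_{\PP^0}\les\ep$, splitting $G$ naturally into a divergence piece $G_1=\int_0^t\divb(F\c\ddb_{\lb}\z)\,d\tau$, a $\ddb_{nL}$-piece $G_2=\int_0^t\ddb_{nL}\tilde P\c\ddb_{\lb}\z\,d\tau$, and a remainder $G_3=\int_0^t\tilde E\c\ddb_{\lb}\z\,d\tau$.

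For $G_1$, I would use the transport identity $nL(P_jG_1)=P_j(n\divb(F\c\ddb_{\lb}\z))+[nL,P_j]G_1$, integrate in time, and estimate in $\tx{1}{2}$. The dual of the sharp Bernstein inequality \eqref{eq:strongbernscalarbis} gives $\norm{P_j\divb Y}_{\lpt{2}}\les 2^j\norm{Y}_{\lpt{2}}$, and combining a Littlewood--Paley decomposition of $\ddb_{\lb}\z$ with \eqref{lbz10} and with Lemma \ref{lemma:lbz2} applied to the coefficient $F$ (which lies in $\xt{\infty}{2}$ together with $\no(F)\les\ep$) produces the claimed $2^jD\ep^2+2^{j/2}D\ep^2\gamma(u)$ contribution. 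The commutator $[nL,P_j]G_1$ is handled via \eqref{supercommut1}--\eqref{supercommut2}, whose $2^{-j_+}$ gain absorbs $\no(G_1)$ after a Gronwall step.

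For $G_2$, since $\ddb_{nL}=\pr_t$ in the $(t,x')$ coordinates, integration by parts in time gives
\begin{equation*}
G_2=\bigl[\tilde P\c\ddb_{\lb}\z\bigr](t)-\int_0^t\tilde P\c\ddb_{nL}\ddb_{\lb}\z\,d\tau-\int_0^t(n\chi)\c\tilde P\c\ddb_{\lb}\z\,d\tau,
\end{equation*}
the last term accounting for the $t$-dependence of the pointwise contraction on $\ptu$. The boundary term at $t$ is controlled in $\xt{\infty}{2}\subset\lh{2}$ using Lemma \ref{lemma:lbz4} on $\tilde P$ and \eqref{lbz10} on $\ddb_{\lb}\z$, via Lemma \ref{lemma:lbz2}. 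The inner term $\ddb_{nL}\ddb_{\lb}\z$ is unfolded by commuting with \eqref{comm3} and invoking the transport equation \eqref{D4eta} for $\ddb_L\z$, reducing it to $\ddb_{\lb}$ of Ricci coefficients plus the curvature component $\b$, all controlled by Theorem \ref{thregx} and the curvature flux bound \eqref{curvflux1}. The metric correction involving $n\chi$ is of lower order, while $G_3$ is handled directly using the Besov product \eqref{eq:secondbilBesov:bis} and the non-sharp estimates \eqref{nonsharpprod1}--\eqref{nonsharpprod2}.

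The main obstacle is the faithful propagation of the two-scale structure of \eqref{lbz10}, namely separating the ``flat'' $D\ep$ piece from the improved $2^{-j/2}D\ep\gamma(u)$ piece, through each operation---spatial integration by parts, time integration by parts, commutation of $P_j$ with $\ddb_{nL}$ and with $\divb$, and pointwise products with the coefficient $F$. Lemma \ref{lemma:lbz2} is precisely designed to preserve this two-scale structure through products, and applying it at each step---together with careful use of \eqref{supercommut1}--\eqref{supercommut2} to absorb commutators without destroying the $\gamma(u)$ gain---should yield the final bound $2^j\ep+2^jD\ep^2+2^{j/2}D\ep^2\gamma(u)$, where the ``pure'' $2^j\ep$ contribution originates from the curvature term $\b$ that arises in the expansion of $\ddb_{nL}\ddb_{\lb}\z$ via \eqref{D4eta}.
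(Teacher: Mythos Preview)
Your overall architecture --- integrate by parts in $\ptu$, substitute $\nabb F = \ddb_{nL}P + E$, integrate by parts in time on the $\ddb_{nL}P$ piece, and invoke Lemma \ref{lemma:lbt3} for the $E$ piece --- is the right skeleton, and it is exactly how the paper proceeds. However, two of the resulting terms are not handled correctly.

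\textbf{The term $\int_0^t \tilde P\c\ddb_{nL}\ddb_{\lb}\z$.} Applying $\ddb_{\lb}$ to \eqref{D4eta} does not leave you with $\b$: it leaves $\ddb_{\lb}\b$, and the Bianchi identity \eqref{bianc1bis} converts this into $\nabb\rho + (\nabb\sigma)^*$ plus lower-order terms. This piece carries one derivative too many --- $\rho,\sigma$ lie only in $L^2(\H_u)$ by \eqref{curvflux1}, so $\nabb\rho$ is not controlled by Theorem \ref{thregx} or by the curvature flux. The paper isolates this contribution explicitly (see \eqref{lbt43}) and disposes of $\int_0^t P\c(\nabb(n\rho)+(\nabb(n\sigma))^*)$ via Lemma \ref{lemma:lbt7}, which uses \eqref{bianc6} to write $(\rho,\sigma)$ in terms of ${}^*\mathcal{D}_1^{-1}(\ddb_{nL}\bb)$, enabling a second integration by parts in time together with the commutator Lemma \ref{lemma:lbt8}. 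Your attribution of the pure $2^j\ep$ contribution to ``the curvature term $\b$ that arises \ldots\ via \eqref{D4eta}'' is the sign that this step --- the hardest part of the proof --- was missed.

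\textbf{The divergence term $G_1$.} The Gronwall route through \eqref{supercommut1}--\eqref{supercommut2} requires $\no(G_1)$, but $\ddb_L G_1 = \divb(F\c\ddb_{\lb}\z)$ already contains $F\c\nabb\ddb_{\lb}\z$, which is the quantity you are estimating: the scheme is circular. Nor can you close the main term by Bernstein followed by a straight $L^2$ bound, since that would need $\ddb_{\lb}\z\in\lh{2}$, and the frequency-localized estimate \eqref{lbz10} does not furnish this (the $D\ep$ piece is uniform in $j$ and does not sum). The paper instead runs a two-parameter Littlewood--Paley argument (see \eqref{lbt20}--\eqref{lbt35}): decompose $\ddb_{\lb}\z = \sum_l P_l\ddb_{\lb}\z$; for $l\le j$ use a direct bound; for $l>j$ split $F = P_{\le l}F + P_{>l}F$, write $P_l = 2^{-2l}\lap P_l$ to gain a factor $2^{-l}$, and apply Lemmas \ref{lemma:lbt4}--\ref{lemma:lbt5} together with the tensor Bochner inequality \eqref{vbochineq} to close the sum in $l$.
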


\begin{lemma}\lab{lemma:lbt3}
Let a $\ptu$-tangent 1-form $F$ on $\H_u$ such that:
$$\norm{F}_{\PP^0}\les\ep.$$
Assume that $\ddb_{\lb}(\z)$ satisfies the estimate \eqref{lbz10}. Then, we have for all $j\geq 0$:
$$\normm{P_j\left(\int_0^t(F\c\ddb_{\lb}(\z))d\tau\right)}_{\tx{\infty}{2}}\les 2^jD\ep^2+2^{\frac{j}{2}}D\ep^2\gamma(u).$$
\end{lemma}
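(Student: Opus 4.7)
My plan is to decouple the $\tx{\infty}{2}$ norm from the time integration by a fundamental‐theorem‐of‐calculus argument in the Lagrangian coordinate $(t,x')$ on $\H_u$, and then to dismantle the bilinear form $F\cdot\ddb_{\lb}(\z)$ via a paraproduct decomposition that marries the $\PP^0$-summability of $F$ to the bootstrap bound \eqref{lbz10} on $\ddb_{\lb}(\z)$.

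First I would set $H(t,x')=\int_0^t(F\cdot\ddb_{\lb}(\z))(\tau,x')\,d\tau$, so that $H(0,\cdot)=0$ and $\pr_t H=F\cdot\ddb_{\lb}(\z)$. Since the coordinate system on $\ptu$ provided by Lemma \ref{lemm:coord} verifies \eqref{eq:coordchartbis}, the norms $L^2(\ptu)$ and $L^2(\pou)$ are equivalent, so Minkowski together with $\pr_t=nL$ gives
\begin{equation*}
\norm{P_j H}_{\tx{\infty}{2}}\lesssim\norm{\pr_t(P_j H)}_{\tx{1}{2}}\lesssim\norm{P_j(F\cdot\ddb_{\lb}(\z))}_{\tx{1}{2}}+\norm{[\ddb_{nL},P_j]H}_{\tx{1}{2}}.
\end{equation*}

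Next, for the principal term, I would decompose $F=P_{<0}F+\sum_{k\geq 0}P_kF$ and $\ddb_{\lb}(\z)=P_{<0}\ddb_{\lb}(\z)+\sum_{l\geq 0}P_l\ddb_{\lb}(\z)$, and analyze $P_j(P_kF\cdot P_l\ddb_{\lb}(\z))$ across the usual three paraproduct regimes. In the low–high regime ($k<l-3$, so $l\sim j$), sharp Bernstein on $\ptu$ gives $\norm{P_kF}_{L^\infty(\ptu)}\lesssim 2^k\norm{P_kF}_{L^2(\ptu)}$; after Cauchy–Schwarz in $t$ the contribution is bounded by $\big(\sum_{k<l}2^k\norm{P_kF}_{\lh{2}}\big)\norm{P_l\ddb_{\lb}(\z)}_{\lh{2}}\lesssim 2^l\ep\,(D\ep+2^{-l/2}D\ep\gamma(u))$, which for $l\sim j$ yields exactly $2^jD\ep^2+2^{j/2}D\ep^2\gamma(u)$. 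The high–low regime ($l<k-3$, $k\sim j$) is symmetric, with Bernstein applied to $P_l\ddb_{\lb}(\z)$ and the outer $k$-sum closed by the $\PP^0$-summability $\sum_k\norm{P_kF}_{\lh{2}}\lesssim\ep$. In the balanced regime ($k\sim l\geq j-O(1)$) the dual Bernstein $\norm{P_j G}_{L^2(\ptu)}\lesssim 2^j\norm{G}_{L^1(\ptu)}$ together with Cauchy–Schwarz reduces matters to $2^j\sum_{k\geq j}\norm{P_kF}_{\lh{2}}(D\ep+2^{-k/2}D\ep\gamma(u))$, which again gives the two claimed terms after exploiting the $\PP^0$-summability and the decay $2^{-k/2}\gamma(u)$.

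For the commutator contribution I would invoke the sharp commutator estimate \eqref{supercommut2}, $\norm{[\ddb_{nL},P_j]H}_{\tx{1}{2}}\lesssim 2^{-j_+}\no(H)$, and bound $\no(H)=\norm{H}_{\lh{2}}+\norm{\nabb H}_{\lh{2}}+\norm{\ddb_L H}_{\lh{2}}$ by combining the pointwise bound $|H|(t,x')\leq\int_0^1|F\cdot\ddb_{\lb}(\z)|(\tau,x')\,d\tau$, the embedding $\norm{F}_{\lh{2}}\lesssim\norm{F}_{\PP^0}\lesssim\ep$ (together with its tangential derivative via the heat-flow characterization of $\PP^0$), and the Hodge system \eqref{estlbmu2} to handle $\ddb_{\lb}(\z)$. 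The factor $2^{-j_+}$ then makes this contribution strictly subdominant to the main one.

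The principal obstacle is the low–high summation in the second step: the uniform piece $D\ep$ in the bootstrap \eqref{lbz10} does not decay in $l$, so the inner geometric sum $\sum_{k<l}2^k\norm{P_kF}_{\lh{2}}$ \emph{cannot} be controlled by a naive Cauchy–Schwarz $\norm{F}_{\lh{2}}$ bound, and the strict Penrose-summability $\sum_k\norm{P_kF}_{\lh{2}}\lesssim\ep$ (which is the assumption $F\in\PP^0$, strictly stronger than $F\in L^2(\H_u)$) is essential. The two terms $2^jD\ep^2$ and $2^{j/2}D\ep^2\gamma(u)$ in the conclusion correspond term-by-term to the two summands $D\ep$ and $2^{-l/2}D\ep\gamma(u)$ of \eqref{lbz10}, paired with the Bernstein factor produced by the low–high exchange.
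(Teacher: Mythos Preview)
Your reduction to $\norm{P_j(F\cdot\ddb_{\lb}(\z))}_{\tx{1}{2}}+\norm{[\ddb_{nL},P_j]H}_{\tx{1}{2}}$ has a genuine gap in the commutator term. You invoke \eqref{supercommut2}, which bounds the commutator by $2^{-j_+}\no(H)$; but $\no(H)$ is not controllable from the hypotheses. Indeed $\no(H)$ contains $\norm{\nabb H}_{\lh{2}}$, and since $nLH=F\cdot\ddb_{\lb}(\z)$, the transport equation for $\nabb H$ has source $\nabb(F\cdot\ddb_{\lb}(\z))$. The hypothesis $F\in\PP^0$ gives \emph{no} bound on $\nabb F$ (the sum $\sum_k 2^k\norm{P_kF}_{\lh{2}}$ need not converge), and the bootstrap \eqref{lbz10} gives no bound on $\nabb\ddb_{\lb}(\z)$: the Hodge system \eqref{estlbmu2} you cite puts $\mu_1$ on the right-hand side, and by \eqref{lbt1} $\mu_1$ is \emph{not} in $L^2(\H_u)$ (its $P_j$-pieces grow like $2^j$). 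Even $\norm{\ddb_L H}_{\lh{2}}=\norm{n^{-1}F\cdot\ddb_{\lb}(\z)}_{\lh{2}}$ is problematic, since neither factor is placed in an $L^\infty$-type space by the assumptions. Your closing remark that ``the factor $2^{-j_+}$ makes this contribution strictly subdominant'' cannot help: $\no(H)$ is a fixed, $j$-independent quantity, so if it is infinite no power of $2^{-j}$ rescues you.

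The paper's proof is organised precisely to avoid this commutator. It never separates $P_j$ from the time integral; instead it keeps $P_j\bigl(\int_0^t(\cdot)\bigr)$ intact and, after decomposing $\ddb_{\lb}(\z)=\sum_lP_l\ddb_{\lb}(\z)$, treats $l\le j$ by weak Bernstein on $P_j$ plus the transport estimate \eqref{estimtransport1}, and treats $l>j$ by inserting $P_l=2^{-2l}\lap P_l$, integrating one derivative by parts, and invoking Lemma~\ref{lemma:lbt5} (the estimate for $P_j(\int_0^t\divb(\cdot))$) together with the dual sharp Bernstein. The double $(q,l)$ decomposition then closes by the $\PP^0$-summability of $\norm{P_qF}_{\lh{2}}$, exactly as in your heuristic, but without ever generating a commutator with $\ddb_{nL}$.

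A secondary issue: your paraproduct trichotomy ``$k<l-3\Rightarrow l\sim j$'' etc.\ assumes Fourier-type frequency support, which the geometric $P_j$ do not have; every $(j,k,l)$ triple contributes and you must extract decay in $|j-\max(k,l)|$ from the finite band property rather than from automatic vanishing. This is fixable along the lines of \eqref{lbt63}--\eqref{lbt64}, but your write-up does not address it.
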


We will also need the following three lemmas for the proof of  Lemma \ref{lemma:lbtinit}, Lemma \ref{lemma:lbt1},  \ref{lemma:lbt2} and \ref{lemma:lbt3}, as well as various places in this paper.

\begin{lemma}\lab{lemma:lbt4}
Let $f$ a scalar function on $\H_u$ and $F$ a $\ptu$-tangent  2-tensor. For any $j\geq 0$, we have:
$$\normm{P_j\left(\int_0^t\lap(f)d\tau\right)}_{\lh{2}}\les 2^{2j}\norm{f}_{\xt{2}{1}},$$
and 
$$\normm{P_j\left(\int_0^t\divb\divb(F)d\tau\right)}_{\lh{2}}\les 2^{2j}\norm{F}_{\xt{2}{1}}.$$
\end{lemma}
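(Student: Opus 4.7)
The plan is to prove both bounds by a duality argument followed by integration by parts on $P_{\tau,u}$; the factor $2^{2j}$ will come from the finite band property (for the first estimate) and the Bochner inequality \eqref{eq:Bochconseqbis} (for the second), applied to a time-integrated version of the test function.

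For the first inequality, set $G(t,x') := \int_0^t \lap f(\tau, x')\,d\tau$ and pair $P_jG$ against a test $\psi$ on $\H_u$ with $\|\psi\|_{\lh{2}}\le 1$. Using self-adjointness of $P_j^{(t)}$ on $P_{t,u}$, Fubini in $(t,\tau)$, and the coordinate expression $d\mu_{t,u}=\sqrt{|\ga_t|}\,dx'$, rewrite
\begin{equation*}
\int_0^1\!\!\int_{P_{t,u}} P_jG\cdot\psi\,d\mu_{t,u}dt = \int_0^1 d\tau\!\int\! dx'\,\lap_\tau f(\tau,x')\,H(\tau,x'),
\quad H(\tau,x'):=\int_\tau^1\!\sqrt{|\ga_t(x')|}\,P_j^{(t)}\psi(t,x')\,dt.
\end{equation*}
Integrating by parts on $P_{\tau,u}$ against $\lap_\tau$ and using H\"older, the problem reduces to
\begin{equation*}
\sup_{0\le\tau\le 1}\|\lap_\tau\widetilde H(\tau,\cdot)\|_{L^2(P_{\tau,u})}\lesssim 2^{2j}\|\psi\|_{\lh{2}},\qquad \widetilde H:=H/\sqrt{|\ga_\tau|}.
\end{equation*}
Observe that $\widetilde H(\tau,x')=\int_\tau^1 J(\tau,t,x')\,P_j^{(t)}\psi(t,x')\,dt$ with the Jacobian $J=\exp(\int_\tau^t\trc\,ds)\sim 1$ (the estimates on $\trc$, $\nabb\trc$ from Theorem \ref{thregx} control $J$, $\nabb J$, $\lap J$).

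To establish the displayed bound, distribute $\lap_\tau$ across the product $J\cdot P_j^{(t)}\psi(t,\cdot)$: the principal term is $J\lap_\tau P_j^{(t)}\psi(t,\cdot)$, which we split as $J\lap_t P_j^{(t)}\psi + J(\lap_\tau-\lap_t)P_j^{(t)}\psi$. Since $P_j^{(t)}$ is built from the heat flow on $P_{t,u}$ it commutes with $\lap_t$, so the first piece equals $JP_j^{(t)}\lap_t\psi(t,\cdot)$ and the finite band property gives the bound $2^{2j}\|\psi(t,\cdot)\|_{L^2(P_{t,u})}$. For the second piece, the difference $\lap_\tau-\lap_t$ is a second-order operator whose coefficients are controlled by $\ga_\tau-\ga_t=\int_\tau^t 2n\chi\,ds$ and its tangential derivatives; by the estimates \eqref{esttrc}--\eqref{esthch} for $\chi$ this factor is $O(|t-\tau|^{1/2}\ep)$, and combining with the Bochner inequality \eqref{eq:Bochconseqbis} to bound $\|\nabb^2 P_j^{(t)}\psi\|_{L^2(P_{t,u})}\les 2^{2j}\|\psi(t,\cdot)\|_{L^2(P_{t,u})}$ yields the same order. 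The lower-order terms involving $\lap J, \nabb J$ are controlled similarly. Integrating in $t\in[\tau,1]$ and applying Cauchy--Schwarz gives the required $2^{2j}\|\psi\|_{\lh{2}}$.

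The second estimate for $\divb\divb F$ proceeds along the same lines: dualizing, exchanging the order of integration, and integrating by parts \emph{twice} on $P_{\tau,u}$ moves the two divergences onto $\widetilde H$, reducing matters to
\begin{equation*}
\sup_\tau\|\nabb^2_\tau\widetilde H(\tau,\cdot)\|_{L^2(P_{\tau,u})}\lesssim 2^{2j}\|\psi\|_{\lh{2}},
\end{equation*}
which follows from \eqref{eq:Bochconseqbis} applied to $P_j^{(t)}\psi$ plus the finite band property, handled by the same splitting $\lap_\tau-\lap_t$ argument.

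The main obstacle is that the projector $P_j^{(t)}$ is intrinsically attached to the surface $P_{t,u}$, so it does \emph{not} commute with $\lap_\tau$ when $\tau\neq t$. The decomposition $\lap_\tau=\lap_t+(\lap_\tau-\lap_t)$ isolates the commuting part, and the residual is absorbed by the smallness of $\chi$ together with the Bochner control of $\nabb^2 P_j^{(t)}\psi$; the gain $|t-\tau|^{1/2}\ep$ together with Cauchy--Schwarz in $t$ ensures no logarithmic loss despite integrating over the full interval $[0,1]$.
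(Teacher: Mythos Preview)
Your duality setup is correct in spirit, and in fact your $\widetilde H$ is essentially the paper's dual object. But the reduction you state is too weak: after integrating by parts you arrive at $\langle P_jG,\psi\rangle=\int_0^1\int_{P_{\tau,u}} f\,\lap_\tau\widetilde H\,d\mu_\tau d\tau$, and since the norm on $f$ is $\|f\|_{\xt{2}{1}}=\|f\|_{L^2_{x'}L^1_\tau}$, the H\"older pairing forces the \emph{dual} norm $\|\lap_\tau\widetilde H\|_{L^2_{x'}L^\infty_\tau}$, not merely $\sup_\tau\|\lap_\tau\widetilde H(\tau,\cdot)\|_{L^2(P_{\tau,u})}$. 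The latter is $L^\infty_\tau L^2_{x'}$, which is strictly weaker, and your subsequent argument---bounding $\|J\lap_t P_j^{(t)}\psi\|_{L^2(P_{\tau,u})}\lesssim 2^{2j}\|\psi(t,\cdot)\|_{L^2}$ at each fixed $\tau$ and then applying Cauchy--Schwarz in $t$---only produces this weaker norm. Since $P_j^{(t)}$ is nonlocal on $P_{t,u}$ there is no pointwise-in-$x'$ bound available that would let you upgrade to $L^2_{x'}L^\infty_\tau$ directly. (The same objection applies verbatim to your treatment of the $\divb\divb F$ estimate.)

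The paper closes exactly this gap by exploiting the transport structure. The dual object (the paper's $w$, your $\widetilde H$) solves $nL(w)=P_jf$, $w(0)=0$; differentiating this twice in $\nabb$ and using the commutator formula \eqref{comm5} yields a transport equation for $\nabb^2 w$ with source $\nabb^2P_jf$ plus lower-order terms proportional to $\nabb w$. The transport estimate \eqref{estimtransport1} then delivers the correct norm $\|\nabb^2 w\|_{\xt{2}{\infty}}$ from $\|\nabb^2P_jf\|_{\lh{2}}\lesssim 2^{2j}\|f\|_{\lh{2}}$ (via \eqref{eq:Bochconseqbis}), provided one first controls $\|\nabb w\|_{L^\infty}$ separately; the paper secures this by establishing the pointwise bound $\|\nabb P_jf\|_{\lpt{\infty}}\lesssim 2^{2j}(1+\|K\|_{\lpt{2}})\|f\|_{\lpt{2}}$, which is needed to absorb the curvature commutator term $n\b\c\nabb w$ coming from the second differentiation. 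Your decomposition $\lap_\tau=\lap_t+(\lap_\tau-\lap_t)$ is in spirit the infinitesimal form of this commutator calculation, but without feeding it into a transport inequality along the null generators you cannot recover the $L^2_{x'}L^\infty_\tau$ control that duality with $\xt{2}{1}$ actually demands.
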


\begin{lemma}\lab{lemma:lbt5}
Let $F$ a $\ptu$-tangent 1-form. For any $j\geq 0$ and any $1<p\leq 2$, we have:
$$\normm{P_j\left(\int_0^t\divb(F)d\tau\right)}_{\tx{\infty}{2}}\les 2^{\frac{2j}{p}}\norm{F}_{\xt{p}{1}}.$$
\end{lemma}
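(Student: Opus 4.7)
The plan is to prove the estimate by a duality argument combined with integration by parts, working on the 2-surfaces $P_{s,u}$ whose geometries all remain uniformly comparable through the $L$-transport thanks to the $\lh{\infty}$-control of $\trc$ from Theorem \ref{thregx}. The difficulty that $P_j$ depends on $t$ (since the metric on $\ptu$ varies in $t$) will be circumvented by moving $P_j$ onto the test function, which sits on a \emph{fixed} surface.

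Fix $t_0 \in [0,1]$ and set $V_F(t,x') := \int_0^t \divb F(\tau, x') d\tau$, where $x'$ is the $L$-transported coordinate system on $\H_u$. Pick a test function $g \in L^2(P_{t_0,u})$ with $\|g\|_{L^2(P_{t_0,u})} \le 1$. Using the self-adjointness of $P_j$ on $P_{t_0,u}$ together with Fubini, one has
\begin{equation*}
\int_{P_{t_0,u}} g \, P_j V_F(t_0,\cdot)\, d\mu_{t_0,u}
= \int_0^{t_0} \int_{P_{t_0,u}} P_j g \cdot \divb F(s,\cdot)\, d\mu_{t_0,u}\, ds.
\end{equation*}

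For each $s$, I would rewrite the inner integral as one over $P_{s,u}$ via the volume-form ratio $\phi(t_0,s,x') := \sqrt{|\gamma|(t_0,x')}/\sqrt{|\gamma|(s,x')}$. By \eqref{eq:volumega}, $\phi(t_0,s,\cdot) = \exp(\int_s^{t_0} \trc\, d\tau)$, and \eqref{esttrc} yields $\phi = 1 + O(\ep)$ together with the analogous bound for $\nabb\phi$, uniformly in $s, t_0, x'$. Integration by parts on $P_{s,u}$ gives
\begin{equation*}
\int_{P_{t_0,u}} P_j g \cdot \divb F(s,\cdot)\, d\mu_{t_0,u}
= -\int_{P_{s,u}} \nabb\bigl((P_j g)\, \phi(t_0,s,\cdot)\bigr) \cdot F(s,\cdot)\, d\mu_{s,u}.
\end{equation*}
Applying H\"older with exponents $p' = p/(p-1)$ and $p$, the equivalence of $L^{p'}$- and $L^p$-norms between $P_{s,u}$ and $P_{t_0,u}$ (since $\phi \sim 1$), the Leibniz rule, and the dual of Lemma \ref{lemma:lbz5}, namely $\|\nabb P_j g\|_{L^{p'}(P_{t_0,u})} \lesssim 2^{2j/p}\|g\|_{L^2(P_{t_0,u})}$, produces the bound
\begin{equation*}
\left|\int_{P_{t_0,u}} P_j g \cdot \divb F(s,\cdot)\, d\mu_{t_0,u}\right|
\lesssim 2^{2j/p}\, \|g\|_{L^2(P_{t_0,u})}\, \|F(s,\cdot)\|_{L^p(P_{s,u})}.
\end{equation*}
Integrating in $s \in [0,t_0]$, optimizing over $g$, taking the supremum over $t_0$, and finally invoking Minkowski's inequality $\|F\|_{L^1_t L^p_{x'}} \le \|F\|_{L^p_{x'} L^1_t} = \|F\|_{\xt{p}{1}}$ yields the claim.

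The crucial ingredient, and the only place where the range $1 < p \le 2$ and the factor $2^{2j/p}$ enter, is the dual Bernstein-type bound $\|\nabb P_j g\|_{\lpt{p'}} \lesssim 2^{2j/p}\|g\|_{\lpt{2}}$, which is simply the adjoint of Lemma \ref{lemma:lbz5} since $-\nabb$ is the $L^2$-adjoint of $\divb$. Once this is in hand, the rest of the argument is mild: the volume-form factor $\phi$ is a harmless multiplicative $O(1)$ perturbation, and no commutator with $\ddb_{nL}$ is ever needed because the duality shifts $P_j$ from the $t$-varying side onto the fixed surface $P_{t_0,u}$.
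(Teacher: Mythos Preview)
Your overall strategy---pair with a test function $g$ on the fixed surface $P_{t_0,u}$, shift $P_j$ onto $g$ by self-adjointness, then Fubini, change of measure, and integrate by parts on $P_{s,u}$---is a legitimate route, and for $p=2$ it closes exactly as you describe. But there is a genuine gap for $1<p<2$, and it sits precisely in the sentence ``\eqref{esttrc} yields $\phi = 1 + O(\ep)$ together with the analogous bound for $\nabb\phi$, uniformly in $s, t_0, x'$.'' The uniform (i.e.\ $L^\infty$) bound on $\nabb\phi$ is not available: from $\phi = \exp\!\big(\int_s^{t_0} n\trc\,d\tau\big)$ one gets $\nabb\phi = \phi\int_s^{t_0}\nabb(n\trc)\,d\tau$, and while $\trc\,\nabb n\in L^\infty$, the contribution $n\,\nabb\trc$ is only controlled via $\nabb\trc\in\xt{2}{\infty}$ in \eqref{esttrc}. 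Hence $\nabb\phi$ lies merely in $L^2_{x'}$ (uniformly in $s,t_0$), not in $L^\infty$. After the Leibniz split $\nabb_s\big((P_j g)\phi\big) = \phi\,\nabb_s(P_jg) + (P_j g)\,\nabb_s\phi$, the second term must be placed in $L^{p'}(P_{s,u})$; since $p'>2$ whenever $p<2$, no H\"older pairing of $P_jg$ (which is at best $L^\infty$, costing $2^j$) with $\nabb_s\phi\in L^2$ lands in $L^{p'}$. Equivalently, the resulting error term pairs naturally with $\|F\|_{\xt{2}{1}}$, which is \emph{not} dominated by $\|F\|_{\xt{p}{1}}$ for $p<2$.

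The paper's proof avoids this obstruction by dualizing globally rather than pointwise in $t$: one sets $w$ to solve $nL(w)=P_jf$, $w|_{P_{0,u}}=0$, differentiates to get $\ddb_{nL}(\nabb w)=n\chi\,\nabb w+\nabb P_jf$, and applies the transport estimate \eqref{estimtransport1}. The commutator term $n\chi\,\nabb w$---which is the dual incarnation of your $\nabb\phi$ term---is then bounded by $\|n\chi\|_{\xt{\infty}{2}}\|\nabb w\|_{\xt{p'}{2}}\lesssim\ep\,\|\nabb w\|_{\xt{p'}{\infty}}$ and \emph{absorbed} into the left-hand side. This Gronwall-type absorption, using the trace bound $\|\chi\|_{\xt{\infty}{2}}\lesssim\ep$, is exactly what your direct estimate lacks and is the missing idea needed to close the case $p<2$.
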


\begin{lemma}\lab{lemma:lbt7}
The following decomposition holds:
$$\nabb(n\r)+(\nabb(n\s))^*={}^*\mathcal{D}_1\c J\c {}^*\mathcal{D}_1^{-1}(\ddb_{nL}(\bb))+{}^*\mathcal{D}_1(H),$$
where $J$ denotes the involution $(\r,\s)\rightarrow (-\r,\s)$ and $H$ is a scalar function on $\H_u$ satisfying the following estimate:
$$\norm{H}_{\tx{2}{3}}\les\ep.$$
\end{lemma}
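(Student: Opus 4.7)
The plan is to derive the decomposition algebraically from the Bianchi identity \eqref{bianc6} and then estimate the remainder using the $L^p$-mapping properties of ${}^*\mathcal{D}_1^{-1}$ established in Lemma \ref{lemma:lbt6}. The key algebraic observation is that with $J(\rho,\sigma)=(-\rho,\sigma)$, one has ${}^*\mathcal{D}_1\c J(\rho,\sigma)=\nabb\rho+(\nabb\sigma)^*$, so that the left-hand side of the desired decomposition equals ${}^*\mathcal{D}_1\c J(n\rho, n\sigma)$.

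Starting from \eqref{bianc6} I would rewrite
\[
{}^*\mathcal{D}_1(\rho,\sigma)=\ddb_L(\bb)-2\hchb\c\b-\db\bb+3(\zb\r-{}^*\zb\s),
\]
multiply by $n$, and use the Leibniz identity ${}^*\mathcal{D}_1(n\rho,n\sigma)=n\,{}^*\mathcal{D}_1(\rho,\sigma)+E_n$ with $E_n=-\rho\,\nabb n+(\sigma\,\nabb n)^*$, to obtain
\[
{}^*\mathcal{D}_1(n\rho,n\sigma)=\ddb_{nL}(\bb)+R,\qquad R:=-n\bigl[2\hchb\c\b+\db\bb-3(\zb\r-{}^*\zb\s)\bigr]+E_n.
\]
Inverting ${}^*\mathcal{D}_1$ gives $(n\rho,n\sigma)={}^*\mathcal{D}_1^{-1}(\ddb_{nL}\bb)+{}^*\mathcal{D}_1^{-1}(R)$, and applying ${}^*\mathcal{D}_1\c J$ yields the claimed decomposition with $H:=J\c {}^*\mathcal{D}_1^{-1}(R)$.

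It remains to control $\norm{H}_{\tx{2}{3}}$. By Lemma \ref{lemma:lbt6} applied pointwise in $t$ with $p=4/3$ (so $\tfrac34<\tfrac13+\tfrac12$), we have $\norm{H}_{L^2_tL^3(\ptu)}\lesssim \norm{R}_{L^2_tL^{4/3}(\ptu)}$. For each summand of $R$ I would use a Hölder split $\frac1{4/3}=\frac14+\frac12$, combining an $L^\infty_tL^4(\ptu)$ bound for the Ricci coefficient with an $L^2(\H_u)$ bound for the curvature component: e.g.
\[
\norm{n\hchb\c\b}_{\tx{2}{4/3}}\lesssim\norm{n}_{L^\infty}\norm{\hchb}_{\tx{\infty}{4}}\norm{\b}_{\lh{2}}\lesssim\ep^2,
\]
with $\norm{\hchb}_{\tx{\infty}{4}}\lesssim\no(\hchb)\lesssim\ep$ by \eqref{sobineq1} using the relation $\hchb=-\hch-2\eta$ and the estimates \eqref{estk}, \eqref{esthch}. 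Identical arguments handle $n\db\bb$ (via $\norm{\db}_{\tx{\infty}{4}}\lesssim\no(\db)\lesssim\ep$ from \eqref{estk}) and $n\zb\rho$, $n\,{}^*\zb\sigma$ (via $\zb=-\kepb$ and \eqref{estk}). Finally, the $E_n$ terms are bounded by $\norm{\nabb n\c(\rho,\sigma)}_{\tx{2}{4/3}}\lesssim\norm{\nabb n}_{\tx{\infty}{4}}\norm{(\rho,\sigma)}_{\lh{2}}\lesssim\ep^2$, using \eqref{estn} and Sobolev on $\ptu$.

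The main obstacle is not conceptual but bookkeeping: choosing the exponent $p$ in Lemma \ref{lemma:lbt6} so that both the target space $L^3(\ptu)$ is reached and every term in $R$—in particular $\hchb\c\b$, whose two factors are individually borderline—admits a Hölder decomposition into a Ricci coefficient controlled by the $\no$-norm estimates of Theorem \ref{thregx} and a curvature component controlled by the flux assumption \eqref{curvflux1}. The choice $p=4/3$ is the unique natural one that balances these two constraints, and with it every term in $R$ yields $\ep^2$, giving $\norm{H}_{\tx{2}{3}}\lesssim\ep$ as required.
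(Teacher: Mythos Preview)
Your proof is correct and follows essentially the same route as the paper: you identify the left-hand side as ${}^*\mathcal{D}_1\c J(n\rho,n\sigma)$, use the Bianchi identity \eqref{bianc6} to invert ${}^*\mathcal{D}_1$, define $H=J\c{}^*\mathcal{D}_1^{-1}(R)$ with the same remainder $R$, and estimate $\norm{H}_{\tx{2}{3}}$ via Lemma \ref{lemma:lbt6} with $p=4/3$, $q=3$ and the same H\"older splits $\tx{\infty}{4}\times\lh{2}$. The only cosmetic difference is that you spell out the Leibniz term $E_n$ explicitly, whereas the paper absorbs it directly into the formula for $(n\rho,n\sigma)$.
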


We postpone the proof of Lemma \ref{lemma:lbt1} to section \ref{sec:lbt1}, the proof of Lemma \ref{lemma:lbt2} to section \ref{sec:lbt2}, the proof of Lemma \ref{lemma:lbt3} to section \ref{sec:lbt3}, the proof of Lemma \ref{lemma:lbt4} to section \ref{sec:lbt4}, the proof of Lemma \ref{lemma:lbt5} to section \ref{sec:lbt5}, and the proof of Lemma \ref{lemma:lbt7} to section \ref{sec:lbt7}. We show how they improve the estimate \eqref{lbt1}. We estimate each term in the right-hand side of \eqref{lbt2} starting with the first one. 

The scalar function $f=n\trc$ satisfies the following estimate:
\bee
\norm{f}_{L^\infty}+\norm{\nabb f}_{\PP^0}&\les &\norm{n}_{L^\infty}\norm{\trc}_{L^\infty}+\norm{n\nabb\trc}_{\PP^0}+\norm{\trc\nabb n}_{\PP^0}\\
&\les &\ep+(1+\noo(n-1))\norm{\nabb\trc}_{\PP^0}+\no(\trc)\no(n-1)\\
&\les& \ep,
\eee
where we used the estimate \eqref{esttrc} satisfied by $\trc$, the estimate \eqref{estn} satisfied by $n$, and the non sharp product estimates \eqref{nonsharpprod1} and \eqref{nonsharpprod2}. Thus, in view of Lemma \ref{lemma:lbt1}, we obtain:
\be\lab{lbt3}
\normm{P_j\left(\int_0^t(n\trc\mu_1)d\tau\right)}_{\lh{2}}\les 2^jD\ep^2+2^{\frac{j}{2}}D\ep^2\gamma(u).
\ee

We now focus on the third term in the right-hand side of \eqref{lbt2}. We define the 2-tensor $F=bn\hch$. In view of the decomposition \eqref{impbes22} for $\nabb(n\hch)$, we have: 
\bee
\nabb F&=&b\nabb(n\hch)+n\hch\nabb b\\
&=& b(\ddb_{nL}P+E)+n\hch\nabb b\\
&=& \ddb_{nL}(bP)-nL(b)P+bE+n\hch\nabb b\\
&=& \ddb_{nL}(bP)+nb\db P+bE+n\hch\nabb b
\eee
where $P$ and $E$ satisfy:
$$\no(P)+\norm{E}_{\PP^0}\les \ep.$$ 
Thus, we set $P_1=bP$ and $E_1=nb\db P+bE+n\hch\nabb b$ 
and obtain:
$$\nabb F = \ddb_{nL}(P_1)+E_1.$$
Furthermore, we have:
\bee
\no(P_1)+\norm{E_1}_{\PP^0}&\les& (\norm{b}_{L^\infty}+\noo(b))\no(P)
+\noo(b)\noo(n)\no(\db)\no(P)+\noo(b)\norm{E}_{\PP^0}\\
&&+\noo(n)\no(\hch)\no(\nabb b)\\
&\les & \no(P)+\norm{E}_{\PP^0}+\ep\\
&\les &\ep,
\eee
where we used the estimate \eqref{esthch} satisfied by $\hch$, the estimate \eqref{estn} satisfied by $n$, the estimate \eqref{estb} satisfied by $b$, the estimate \eqref{estk} satisfied by $\db$, and the non sharp product estimates \eqref{nonsharpprod1} and \eqref{nonsharpprod2}. Thus, in view of Lemma \ref{lemma:lbt2}, we obtain:
\be\lab{lbt4}
\normm{P_j\left(\int_0^t(bn\hch\c(\nabb\widehat{\otimes}\ddb_{\lb}(\z))d\tau\right)}_{\lh{2}}\les 2^jD\ep^2+2^{\frac{j}{2}}D\ep^2\gamma(u).
\ee

We consider the second, the fourth, the fifth and the sixth term in the right-hand side of \eqref{lbt2}. We define the 1-forms: 
$$F_1=bn\nabb\trc, F_2=n\hch\nabb b, F_3=bn\hch\z\textrm{ and }F_4=b\trc\nabb n.$$
These 1-forms satisfy the following estimate:
\bee
&&\norm{F_1}_{\PP^0}+\norm{F_2}_{\PP^0}+\norm{F_3}_{\PP^0}+\norm{F_4}_{\PP^0}\\
&\les& \noo(n)\noo(b)\norm{\nabb\trc}_{\PP^0}+\noo(n)\no(\hch)\no(\nabb b)+\noo(b)\noo(n)\no(\hch)\no(\z)\\
&&+\noo(b)\no(\trc)\no(\nabb n)\\
&\les & \ep,
\eee
where we used the estimate \eqref{esttrc} satisfied by $\trc$,  the estimate \eqref{esthch} satisfied by $\hch$, the estimate \eqref{estn} satisfied by $n$, the estimate \eqref{estb} satisfied by $b$, the estimate \eqref{estzeta} satisfied by $\z$, and the non sharp product estimates \eqref{nonsharpprod1} and \eqref{nonsharpprod2}. Thus, in view of Lemma \ref{lemma:lbt3}, we obtain:
\bea
\nn&&\normm{P_j\left(\int_0^t(bn\ddb_{\lb}(\z)\c\nabb\trc)d\tau\right)}_{\lh{2}}+\normm{P_j\left(\int_0^t(bn\hch\c(b^{-1}\nabb b\ddb_{\lb}(\z))d\tau\right)}_{\lh{2}}\\
\nn&&+\normm{P_j\left(\int_0^t(bn\hch\c(\ddb_{\lb}\z\widehat{\otimes}\z\bigg)d\tau\right)}_{\lh{2}}+ \normm{P_j\left(\int_0^t(n\trc bn^{-1}\nabb n\c\ddb_{\lb}(\z))d\tau\right)}_{\lh{2}}\\
\lab{lbt5} &\les & 2^jD\ep^2+2^{\frac{j}{2}}D\ep^2\gamma(u).
\eea

We consider the seventh term in the right-hand side of \eqref{lbt2}. We define the scalar function $w$ and the the $\ptu$-tangent 1-form $W$ as the solutions of the following transport equations: 
$$nL(w)=\divb(nF_1),\,w=0\textrm{ on }\pou,\textrm{ and }\ddb_{nL}W-n\chi\c W=nF_1,\,W=0\textrm{ on }\pou.$$
We have:
\bea
\lab{lbt6}\normm{P_j\left(\int_0^t(\divb(nF_1))d\tau\right)}_{\lh{2}}&= &\norm{P_jw}_{\lh{2}}\\
\nn&\les &\norm{P_j(w-\divb(W))}_{\lh{2}}+\norm{P_j\divb(W)}_{\lh{2}}\\
\nn&\les & 2^j\norm{w-\divb(W)}_{\tx{2}{1}}+2^j\norm{W}_{\lh{2}}
\eea
where we used the dual of the sharp Bernstein inequality \eqref{eq:strongbernscalarbis} 
and the finite band property of the Littlewood-Paley projection $P_j$. We estimate the two 
terms in the right-hand side of \eqref{lbt6}. Using Lemma \ref{lemma:commutdivb}, we have:
\be\lab{lbt7}
\norm{w-\divb(W)}_{\tx{2}{1}}\les \ep\norm{nF_1}_{\lh{2}}\les \ep^2
\ee
where we used the estimate \eqref{estlbmu1bis} on $F_1$ and the $L^\infty$ bound for $n$ given 
by \eqref{estn}. Also, using the estimate \eqref{estimtransport1} for transport equations, we have:
\bea
\lab{lbt8}\norm{W}_{\lh{2}}&\les& \norm{\chi W}_{\xt{2}{1}}+\norm{nF_1}_{\lh{2}}\\
\nn&\les & \norm{\chi}_{\xt{\infty}{2}}\norm{W}_{\lh{2}}+\ep\\
\nn&\les & \ep\norm{W}_{\lh{2}}+\ep,
\eea
where we used the estimate \eqref{estlbmu1bis} on $F_1$, the trace bound on $\chi$ given by \eqref{esttrc} \eqref{esthch}, and the $L^\infty$ bound for $n$ given 
by \eqref{estn}. \eqref{lbt8} yields:
$$\norm{W}_{\lh{2}}\les\ep$$
which together with \eqref{lbt6} and \eqref{lbt7} implies:
\be\lab{lbt9}
\normm{P_j\left(\int_0^t(\divb(nF_1))d\tau\right)}_{\lh{2}}\les 2^j\ep.
\ee

Finally, we consider the last two terms in the right-hand side of \eqref{lbt2}. Using 
the dual of the sharp Bernstein inequality \eqref{eq:strongbernscalarbis} and 
the estimate \eqref{estimtransport1} for transport equations, we have:
\bea
\lab{lbt10}
&&\normm{P_j\left(\int_0^t(\nabb nF_1))d\tau\right)}_{\lh{2}}+ \normm{P_j\left(\int_0^t(nf_2)d\tau\right)}_{\lh{2}}\\
\nn&\les & 2^j\normm{\int_0^t(\nabb nF_1))d\tau}_{\tx{2}{1}}+2^j\normm{\int_0^t(nf_2)d\tau}_{\tx{2}{1}}\\
\nn&\les & 2^j\norm{\nabb nF_1}_{\lh{1}}+2^j\norm{nf_2}_{\lh{1}}\\
\nn&\les & 2^j\norm{\nabb n}_{\lh{2}}\norm{F_1}_{\lh{2}}+2^j\norm{n}_{L^\infty}\norm{f_2}_{\lh{1}}\\
\nn&\les &2^j\ep,
\eea
where we used the estimate \eqref{estlbmu1bis} on $F_1$ and $f_2$, and the $L^\infty$ bound for $n$ given by \eqref{estn}.

Finally, \eqref{lbt2}-\eqref{lbt6}, \eqref{lbt9} and \eqref{lbt10} yield:
\be\lab{lbt11}
\norm{P_j(\mu_1)}_{\lh{2}}\les D\ep^22^j+D\ep^2 2^{\frac{j}{2}}\gamma(u)+2^j\ep
\ee
which is an improvement of \eqref{lbt1}. \eqref{lbt11} together with Lemma \ref{lemma:lbz1} with the choice $h=b^{-1}$ and the definition of $\mu_1$ yields for all $j\geq 0$:
$$\norm{P_j(\lb\mu)}_{\lh{2}}\les 2^jD\ep^2+2^{\frac{j}{2}}D\ep^2\gamma(u)+2^j\ep$$
which in view of the definition of $\mu$ implies for all $j\geq 0$:
\be\lab{lbt12}
\norm{P_j(\lb\lb\trc)}_{\lh{2}}\les 2^jD\ep^2+2^{\frac{j}{2}}D\ep^2\gamma(u)+2^j\ep.
\ee
\eqref{lbz10} and \eqref{lbt12} improve the bootstrap assumptions \eqref{boot7} \eqref{boot8}. Thus, there exists a universal constant $D>0$ such that \eqref{boot7} \eqref{boot8} hold. This yields \eqref{estlblbtrc} \eqref{estlbzeta}.

\subsection{Estimates for $\lb\lb b$}\lab{sec:fautpasdeconner1}

The goal of this section is to prove the estimate \eqref{estlblbb} for $\lb\lb b$  and to conclude the proof of Theorem \ref{thregx1}.

\subsubsection{Structure equation for $\lb\lb b$}

The goal of this section is to prove the following proposition.
\begin{proposition}\lab{prop:lb1}
Let $b_1=b\lb\lb b -b^2(\lb(\d)+\lb(n^{-1}\nabla_Nn))$. Then, $b_1$ satisfies the following transport equation:
\be\lab{lb1}
L(b_1)=-(2b\nabb b+4b^2\kep)\c\ddb_{\lb}(\z)+b^2\hch\ab+\divb(F_1)+f_2,
\ee
where the $\ptu$-tangent vectorfield $F_1$ and the scalar function $f_2$ satisfy the estimates:
\be\lab{lb2}
\norm{F_1}_{\lh{2}}+\norm{f_2}_{\lh{1}}\les\ep.
\ee
\end{proposition}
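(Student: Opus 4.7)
The plan is to derive the transport equation \eqref{lb1} by differentiating the basic equation $L(b)=-b\db$ of \eqref{D4a} twice in the $\lb$-direction, carefully commuting $L$ and $\lb$ at each step, and then choosing the combination $b_1=b\lb\lb b-b^2(\lb\d+\lb(n^{-1}\nabla_N n))$ so as to cancel the highest-order terms that cannot be written as a divergence plus an $L^1$ term. The main obstacle, as always for these second-order transport identities, is that a naive computation produces terms such as $\lb\lb\db$ which \emph{a priori} have no useful estimate and must be absorbed by the definition of $b_1$; the correct subtraction is forced by formula \eqref{D4tmu3} for $\lb(\db)$.

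First, applying $\lb$ to $L(b)=-b\db$ and using the commutator $[L,\lb]f=\db\lb f-(\d+n^{-1}\nabla_N n)Lf-2(\z-\zb)\cdot\nabb f$ (obtained from \eqref{comm3} acting on scalars) produces
\[
L(\lb b) = -b\lb(\db)+b\db(\d+n^{-1}\nabla_N n)-2(\z-\zb)\cdot\nabb b.
\]
Next I would apply $\lb$ once more, commute $L$ and $\lb$ again, multiply by $b$ and use $L(b)=-b\db$ to cancel the terms where the outer $L$ hits $b$, arriving at an equation of the schematic form
\[
L(b\lb\lb b) = -b^2\lb\lb(\db) + b^2\db\lb(\d+n^{-1}\nabla_N n) + \text{(cubic in Ricci coefficients)}.
\]
At this point I insert \eqref{D4tmu3}, which reads $\lb(\db)=-L(\d+n^{-1}\nabla_N n)+2\r+(\text{quadratic})$. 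Differentiating once more in $\lb$ and again commuting with $L$ rewrites $\lb\lb(\db)$ as $-L\bigl(\lb(\d+n^{-1}\nabla_N n)\bigr)$ plus $2\lb(\r)$ plus quadratic terms differentiated by $\lb$. Now $-b^2L(\lb\d+\lb(n^{-1}\nabla_N n))$ combines with $L(b\lb\lb b)$ exactly into $L(b_1)$ modulo a $L(b^2)$-type factor, which explains the choice of $b_1$.

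Having isolated $L(b_1)$, the remaining task is to show that every term on the right-hand side has the structure $-(2b\nabb b+4b^2\kep)\cdot\ddb_{\lb}\z+b^2\hch\cdot\ab+\divb(F_1)+f_2$. The term $2\lb(\r)$ coming from $\lb\lb\db$ is rewritten via the Bianchi identity \eqref{bianc3}, $\lb(\r)=-\divb(\bb)-\tfrac12\hch\cdot\ab+2\xib\cdot\b+(\kep-2\z)\cdot\bb$, which produces both the divergence contribution (absorbed into $\divb(F_1)$ with $F_1$ containing $-2b^2\bb$ among other terms) and the characteristic $b^2\hch\ab$ term. The $\ddb_{\lb}\z$ contribution arises from differentiating in $\lb$ the quadratic terms $|\kep|^2$ and $\kep\cdot(\z-n^{-1}\nabb n)$ from \eqref{D4tmu3} and from $\lb$ of the factor $\nabb b$ via \eqref{etaa}, after using $\ddb_{\lb}\kep=\nabb\d+(\text{lower order})$ from the Hodge system \eqref{hodgkh1}. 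All remaining contributions are cubic expressions in $\chi,\chb,k,\z,\zb,\xib,b,\nabb b,\nabb n$ and curvature components other than $\ab$; these are placed in $f_2$.

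The verification of \eqref{lb2} is then routine: $F_1$ is a sum of products of the form $b^2\bb$, $b\nabb b\cdot\text{(Ricci)}$, $b^2\chi\cdot\z$, etc., each bounded in $\lh{2}$ by a product of an $L^\infty$ factor and an $L^2$ factor controlled by the curvature flux bound \eqref{curvflux1} and the estimates \eqref{estn}--\eqref{estzeta} of Theorem \ref{thregx}; similarly $f_2$ is at worst a product of three $L^\infty_u L^2(P_{t,u})$-type quantities estimated by H\"older in time together with $\lh{2}$-curvature terms multiplied by an $L^\infty$-bounded factor, yielding the $\lh{1}$ bound. The essential difficulty is purely bookkeeping: tracking the commutators $[L,\lb]$ on scalars and ensuring that only the $b^2\hch\ab$ term resists being written as a divergence or an $L^1$ error, which is the reason \eqref{estlblbb} is weaker than \eqref{estlblbtrc}, \eqref{estlbzeta}.
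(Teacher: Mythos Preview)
Your approach is essentially the same as the paper's: start from \eqref{josh} (itself obtained by applying $\lb$ to \eqref{D4a} and commuting), apply $\lb$ again with the scalar commutator from \eqref{comm3}, substitute \eqref{D4tmu3} for $\lb(\db)$ and then the Bianchi identity \eqref{bianc3} for $\lb(\rho)$, and collect terms. One minor bookkeeping correction: the $-2b\nabb b\cdot\ddb_{\lb}\z$ contribution does not come from ``$\lb$ of the factor $\nabb b$ via \eqref{etaa}'' but directly from applying $\ddb_{\lb}$ to the $\z$ in the term $-2(\z-\zb)\cdot\nabb b$ of \eqref{josh}; the $-4b^2\kep\cdot\ddb_{\lb}\z$ part comes from $\lb$ applied to $4\kep\cdot(\z-n^{-1}\nabb n)$ in \eqref{D4tmu3} (the $|\kep|^2$ term contributes only $\kep\cdot\ddb_{\lb}\kep$, which goes into $f_2$). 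The paper makes the decomposition fully explicit with $F_1=-4b(\z-\zb)\lb(b)+2b^2\bb$ and a long formula for $f_2$, and the estimates \eqref{lb2} follow exactly as you indicate from \eqref{curvflux1} and \eqref{estn}--\eqref{estzeta}.
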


\begin{proof}
We differentiate the transport equation \eqref{josh} satisfied by $\lb b$ with respect to $\lb$. We obtain:
\bea
\lab{lb3}L(\lb\lb(b))&=&[L,\lb](\lb(b))+\lb(L(\lb(b)))\\
\nn&=& \db\lb\lb(b)-(\d+n^{-1}\nabla_Nn)L(\lb(b))-2(\z-\zb)\nabb \lb(b)
+(\lb(\d)+\lb(n^{-1}\nabla_Nn))\db b\\
\nn&&+(\d+n^{-1}\nabla_Nn)\lb(\db) b+(\d+n^{-1}\nabla_Nn)\db \lb(b)
-2(\ddb_{\lb}(\z)-\ddb_{\lb}(\zb))\c\nabb b\\
\nn&&-2(\z-\zb)\c\ddb_{\lb}\nabb b-\lb(b)\lb(\db)-b\lb\lb(\db),
\eea
where we used in the last equality the commutator formula \eqref{comm3}.

In view of \eqref{lb3}, we need to compute $\lb\lb(\db)$. Differentiating the formula \eqref{D4tmu3} for $\lb(\db)$ with respect to $\lb$, we obtain:
\bea
\lab{lb4}\lb\lb(\db)&=&-L(\lb(\d)+\lb(n^{-1}\nab_Nn))-[\lb,L](\d+n^{-1}\nab_Nn)+2\lb(\rho)\\
\nn&&+4\kep\c\ddb_{\lb}(\kep)+4\d\lb(\d)+4\ddb_{\lb}(\kep)\c(\z-n^{-1}\nabb n)+4\kep\c(\ddb_{\lb}(\z)-\ddb_{\lb}(n^{-1}\nabb n))\\
\nn&&-4n^{-1}N(n)\lb(n^{-1}N(n))\\
\nn&=&-L(\lb(\d)+\lb(n^{-1}\nab_Nn))-[\lb,L](\d+n^{-1}\nab_Nn)-2\divb(\bb)-\hch\c\ab\\
\nn&&+4\xib\c\b+2(\kep-2\z)\c\bb+4\kep\c\ddb_{\lb}(\kep)+4\d\lb(\d)+4\ddb_{\lb}(\kep)\c(\z-n^{-1}\nabb n)\\
\nn&&+4\kep\c(\ddb_{\lb}(\z)-\ddb_{\lb}(n^{-1}\nabb n))-4n^{-1}N(n)\lb(n^{-1}N(n)),
\eea
where we used the Bianchi identity \eqref{bianc3} for $\lb(\r)$ in the last equality. 
Now, \eqref{lb3}, \eqref{lb4}, the transport equation \eqref{D4a} satisfied by $b$, and the definition of $b_1$ yield:
\bea
\nn L(b_1)&=& bL(\lb\lb b)+L(b)\lb\lb b-b^2L(\lb(\d)+\lb(n^{-1}\nabla_Nn))-2bL(b)(\lb(\d)+\lb(n^{-1}\nabla_Nn))\\
\nn&=& bL(\lb\lb b)-b\db\lb\lb b-bL(\lb(\d)+\lb(n^{-1}\nabla_Nn))+2b\db(\lb(\d)+\lb(n^{-1}\nabla_Nn))\\
\lab{lb5}&=& -(2b\nabb b+4b^2\kep)\c\ddb_{\lb}(\z)+b^2\hch\ab+\divb(F_1)+f_2,
\eea
where the $\ptu$-tangent vectorfield $F_1$ is given by:
\be\lab{lb6}
F_1=-4b(\z-\zb)\lb(b)+2b^2\bb,
\ee
and the scalar function $f_2$ is given by:
\bea
\lab{lb7}f_2&=& -b(\d+n^{-1}\nabla_Nn)L(\lb(b))+4b(\nabb\z-\nabb\zb)\lb(b)+4\nabb b\c(\z-\zb)\lb(b)\\
\nn&&+b(\lb(\d)+\lb(n^{-1}\nabla_Nn))\db b+b(\d+n^{-1}\nabla_Nn)\lb(\db) b+b(\d+n^{-1}\nabla_Nn)\db \lb(b)\\
\nn&&+2b\ddb_{\lb}(\zb)\c\nabb b-2b(\z-\zb)\c [\ddb_{\lb},\nabb](b)-b\lb(b)\lb(\db)
-2bL(b)(\lb(\d)+\lb(n^{-1}\nab_Nn))\\
\nn&&+b^2[\lb,L](\d+n^{-1}\nab_Nn)-4b\nabb(b)\c\bb-4b^2\xib\c\b-2b^2(\kep-2\z)\c\bb-4b^2\kep\c\ddb_{\lb}(\kep)\\
\nn&&-4b^2\d\lb(\d)-4b^2\ddb_{\lb}(\kep)\c(\z-n^{-1}\nabb n)+4b^2\kep\c\ddb_{\lb}(n^{-1}\nabb n)\\
\nn&&+4b^2n^{-1}N(n)\lb(n^{-1}N(n))+2b\db(\lb(\d)+\lb(n^{-1}\nabla_Nn)).
\eea
In view of the definition \eqref{lb6} of $F_1$, we have:
\bea
\lab{lb8}\norm{F_1}_{\lh{2}}&\les& \norm{b}_{L^\infty}(\norm{\z}_{\xt{\infty}{2}}+\norm{\zb}_{\xt{\infty}{2}})\norm{\lb(b)}_{\xt{2}{\infty}}+\norm{b}^2_{L^\infty}\norm{\bb}_{\lh{2}}\\
\nn&\les&\ep,
\eea
where we used in the last inequality the curvature bound \eqref{curvflux1} for $\bb$, and the estimates \eqref{estn}-\eqref{estzeta} for $b, \zb$ and $\z$.

Next, we estimate $f_2$. In view of \eqref{lb7}, we have:
\bea
\lab{lb9}&&\norm{f_2}_{\lh{1}}\\
\nn&\les& \norm{b}_{L^\infty}\bigg(\norm{\d+n^{-1}\nabla_Nn}_{\lh{2}}\norm{L(\lb(b))}_{\lh{2}}+(\norm{\nabb\z-\nabb\zb}_{\lh{2}}\\
\nn&&+\norm{b^{-1}\nabb b}_{\lh{4}}\norm{\z-\zb}_{\lh{4}})\norm{\lb(b)}_{\lh{2}}+\norm{\ddb_{\lb}(\zb)}_{\lh{2}}\norm{\nabb b}_{\lh{2}}\\
\nn&&+\norm{\z-\zb}_{\lh{4}}\norm{[\ddb_{\lb},\nabb](b)}_{\tx{2}{\frac{4}{3}}}+\norm{\lb(b)}_{\lh{2}}\norm{\lb(\db)}_{\lh{2}}\\
\nn&&+\norm{L(b)}_{\lh{2}}\norm{\lb(\d)+\lb(n^{-1}\nab_Nn)}_{\lh{2}}\bigg)\\
\nn&&+\norm{b}^2_{L^\infty}\bigg(\norm{\lb(\d)+\lb(n^{-1}\nabla_Nn)}_{\lh{2}}\norm{\db}_{\lh{2}}+(\norm{\d}_{\lh{2}}\\
\nn&&+\norm{n^{-1}\nabla_Nn}_{\lh{2}})\norm{\lb(\db)}_{\lh{2}}+(\norm{\d}_{\lh{4}}+\norm{n^{-1}\nabla_Nn}_{\lh{4}})\norm{\db}_{\lh{4}}\\
\nn&&\times\norm{\lb(b)}_{\lh{2}}+\norm{[\lb,L](\d+n^{-1}\nab_Nn)}_{\lh{1}}+\norm{b^{-1}\nabb(b)}_{\lh{2}}\norm{\bb}_{\lh{2}}\\
\nn&&+\norm{\xib}_{\lh{2}}\norm{\b}_{\lh{2}}+\norm{\kep-2\z}_{\lh{2}}\norm{\bb}_{\lh{2}}+b\norm{\kep}_{\lh{2}}\norm{\ddb_{\lb}(\kep)}_{\lh{2}}\\
\nn&&+\norm{\d}_{\lh{2}}\norm{\lb(\d)}_{\lh{2}}+\norm{\ddb_{\lb}(\kep)}_{\lh{2}}\norm{\z-n^{-1}\nabb n}_{\lh{2}}\\
\nn&&+\norm{\kep}_{\lh{2}}\norm{\ddb_{\lb}(n^{-1}\nabb n)}_{\lh{2}}+\norm{n^{-1}N(n)}_{\lh{2}}\norm{\lb(n^{-1}N(n))}_{\lh{2}}\\
\nn&&+\norm{\db}_{\lh{2}}(\norm{\lb(\d)}_{\lh{2}}+\norm{\lb(n^{-1}\nabla_Nn)}_{\lh{2}}\bigg)\\
\nn&\les&\ep+\ep\norm{L(\lb(b))}_{\lh{2}}+\ep\norm{[\ddb_{\lb},\nabb](b)}_{\tx{2}{\frac{4}{3}}}+\norm{[\lb,L](\d+n^{-1}\nab_Nn)}_{\lh{1}},
\eea
where we used in the last inequality the curvature bound \eqref{curvflux1} for $\b$ and $\bb$, and the estimates \eqref{estn}-\eqref{estzeta} for $n, b, \kep, \d, \db, \zb, \xib$ and $\z$. Now, we evaluate the right-hand side of \eqref{lb9}. Using the estimate \eqref{josh1} for $\norm{L(\lb(b))}_{\lh{2}}$ and the commutator formulas \eqref{comm2} and \eqref{comm3}, we have:
\bee
&&\norm{L(\lb(b))}_{\lh{2}}+\norm{[\ddb_{\lb},\nabb](b)}_{\tx{2}{\frac{4}{3}}}+\norm{[\lb,L](\d+n^{-1}\nab_Nn)}_{\lh{1}}\\
&\les& \ep+\norm{\hchb}_{\tx{\infty}{4}}\norm{\nabb b}_{\lh{2}}+\norm{\xib}_{\tx{\infty}{4}}\norm{L(b)}_{\lh{2}}+\norm{b^{-1}\nabb b}_{\tx{\infty}{4}}\norm{\lb(b)}_{\lh{2}}\\
&&+\norm{\db}_{\lh{2}}\norm{\lb(\d+n^{-1}\nabla_Nn)}_{\lh{2}}+\norm{\d+n^{-1}\nabla_Nn}_{\lh{2}}\norm{L(\d+n^{-1}\nabla_Nn)}_{\lh{2}}\\
&&+\norm{\z-\zb}_{\lh{2}}\norm{\nabb(\d+n^{-1}\nabla_Nn)}_{\lh{2}}\\
&\les&\ep,
\eee
where we used in the last inequality the estimates \eqref{estn}-\eqref{estzeta} for $n, b, \d, \db, \zb, \xib, \hchb$ and $\z$. Together with \eqref{lb9}, this yields:
\be\lab{lb10}
\norm{f_2}_{\lh{1}}\les \ep.
\ee
In view of \eqref{lb5}, \eqref{lb8} and \eqref{lb10}, this concludes the proof of Proposition \ref{prop:lb1}.
\end{proof}

\subsubsection{Estimates for $\lb\lb(b)$}

After multiplying the transport equation \eqref{lb1} satisfied by $b_1$ by $n$, we have:
$$nL(b_1)=-(2nb\nabb b+4nb^2\kep)\c\ddb_{\lb}(\z)+nb^2\hch\ab+\divb(nF_1)-\nabb nF_1+f_2,$$
which together with Lemma \ref{lemma:lbtinit} yields:
\bea
\lab{lb11}\norm{P_j(b_1)}_{\tx{\infty}{2}}&\les& 2^{\frac{j}{2}}\gamma(u)\ep+\normm{P_j\left(\int_0^t((2nb\nabb b+4nb^2\kep)\c\ddb_{\lb}(\z))d\tau\right)}_{\tx{\infty}{2}}\\
\nn&&+\normm{P_j\left(\int_0^t(nb^2\hch\c\ab)d\tau\right)}_{\tx{\infty}{2}}+ \normm{P_j\left(\int_0^t(\divb(nF_1))d\tau\right)}_{\tx{\infty}{2}}\\
\nn&&+ \normm{P_j\left(\int_0^t(\nabb nF_1)d\tau\right)}_{\tx{\infty}{2}}+ \normm{P_j\left(\int_0^t(nf_2)d\tau\right)}_{\tx{\infty}{2}}.
\eea

Next, we evaluate the right-hand side of \eqref{lb11}. Using the nonsharp product estimates \eqref{nonsharpprod1} and \eqref{nonsharpprod2}, we have:
$$\norm{2nb\nabb b+4nb^2\kep}_{\PP^0}\les \noo(b)\no(n)(\no(\nabb b)+\noo(b)\no(\kep))\les \ep,$$
where we used in the last inequality the estimates \eqref{estn}-\eqref{estb} for $n, \kep$ and $b$. Together with Lemma \ref{lemma:lbt3}, this yields the following estimate for the second term in the right-hand side of \eqref{lb11}:
\be\lab{lb12}
\normm{P_j\left(\int_0^t((2nb\nabb b+4nb^2\kep)\c\ddb_{\lb}(\z))d\tau\right)}_{\tx{\infty}{2}}\les 2^j\ep+2^{\frac{j}{2}}\ep\gamma(u).
\ee
Using Lemma \ref{lemma:lbt5} with $p=2$, we have the following estimate for the second term in the right-hand side of \eqref{lb11}:
\be\lab{lb13}
\normm{P_j\left(\int_0^t(\divb(nF_1))d\tau\right)}_{\tx{\infty}{2}}\les 2^j\norm{nF_1}_{\lh{2}}\les 2^j\norm{n}_{L^\infty}\norm{F_1}_{\lh{2}}\les 2^j\ep,
\ee
where we used in the last inequality the estimate \eqref{estn} for $n$ and the estimate \eqref{lb2} for $F_1$. Also, using the dual of the sharp Bernstein inequality for scalars \eqref{eq:strongbernscalarbis} and the $L^2$ boundedness of $P_j$, and the estimate for transport equations \eqref{estimtransport1}, we have the following estimate for the remaining terms in the right-hand side of \eqref{lb11}:
\bea
\nn&&\normm{P_j\left(\int_0^t(nb^2\hch\c\ab)d\tau\right)}_{\tx{\infty}{2}}+ \normm{P_j\left(\int_0^t(\nabb nF_1)d\tau\right)}_{\tx{\infty}{2}}\\
\nn&&+ \normm{P_j\left(\int_0^t(nf_2)d\tau\right)}_{\tx{\infty}{2}}\\
\nn&\les&2^{\frac{j}{2}}\normm{nb^2\hch\c\ab}_{\xt{2}{1}}+2^j\normm{\nabb nF_1}_{\lh{1}}+ 2^j\normm{nf_2}_{\lh{1}}\\
\nn&\les& \norm{n}_{L^\infty}\norm{b}^2_{L^\infty}\norm{\hch}_{\xt{\infty}{2}}\norm{\ab}_{\lh{2}}+2^j\norm{\nabb n}_{\tx{\infty}{2}}\norm{F_1}_{\lh{2}}+ 2^j\norm{n}_{L^\infty}\norm{f_2}_{\lh{1}}\\
\lab{lb14}&\les& 2^j\ep+\ep\gamma(u),
\eea
where we used in the last inequality the curvature bound $\norm{\ab}_{\lh{2}}\les\gamma(u)\ep$ provided by \eqref{curvflux1}, the estimates \eqref{estn}-\eqref{esthch} for $n, b$ and $\hch$, and the estimate \eqref{lb2} for $F_1$ and $f_2$.

Finally, in view of \eqref{lb11}-\eqref{lb14}, we have:
\be\lab{lb15}
\norm{P_j(b_1)}_{\tx{\infty}{2}}\les 2^j\ep+2^{\frac{j}{2}}\ep\gamma(u).
\ee
Now, in view of the definition of $b_1$ in Proposition \ref{prop:lb1}, we have:
\bea\lab{gorgonzola}
\norm{P_j(b\lb\lb(b))}_{\tx{\infty}{2}}&\les& \norm{P_j(b_1)}_{\tx{\infty}{2}}+\norm{P_j(b^2(\lb(\d)+\lb(n^{-1}\nabla_Nn)))}_{\tx{\infty}{2}}\\
\nn&\les& \norm{P_j(b_1)}_{\tx{\infty}{2}}+\norm{P_j(b^2(\lb(\d))}_{\tx{\infty}{2}}+\norm{\lb(n^{-1}\nabla_Nn))}_{\tx{\infty}{2}}\\
\nn&\les& 2^j\ep+2^{\frac{j}{2}}\ep\gamma(u)+\norm{P_j(b^2(\lb(\d))}_{\tx{\infty}{2}},
\eea
where we used in the last inequality the estimate \eqref{estn} for $n$, and the estimate \eqref{lb15} for $b_1$. 
Now, we have in view of \eqref{hodgkh} and \eqref{kinTderivatives}:
\be\lab{gorgonzola1}
\lb(\d)= \rho+\divb\kep+h,
\ee
where the scalar $h$ is given by
$$h= -n^{-1}\nabla^2_Nn+\d^2-\z\zb+\z\kep-\zb\kep+\frac{3}{2}\d\trt-\het\hth+2b^{-1}\nabb_A b\kep_A.$$
In view of the definition of $h$, we have
\bea\lab{gorgonzola2}
\nn\norm{h}_{\tx{\infty}{2}}&\les& \norm{n^{-1}\nabla^2_Nn}_{\tx{\infty}{2}}+\Big(\norm{\d}_{\tx{\infty}{4}}+\norm{\z}_{\tx{\infty}{4}}+\norm{\zb}_{\tx{\infty}{4}}+\norm{\kep}_{\tx{\infty}{4}}\\
\nn&&+\norm{\th}_{\tx{\infty}{4}}+\norm{\het}_{\tx{\infty}{4}}+\norm{b^{-1}\nabb b}_{\tx{\infty}{4}}\Big)^2\\
&\les& \ep,
\eea
where we used in the last inequality the estimates \eqref{estn}-\eqref{estzeta} for $n, \d, \z, \zb, \kep, \th, \het$ and $b$. Also, using the finite band property for $P_j$ and the estimate \eqref{estk} for $\kep$, we have
\be\lab{gorgonzola3}
\norm{P_j(\divb\kep)}_{\tx{\infty}{2}}\les 2^j\norm{\kep}_{\tx{\infty}{2}}\les 2^j\ep.
\ee
We will obtain in Lemma \ref{lemma:po5} the following estimate for $\rho$
\be\lab{gorgonzola4}
\norm{P_j\rho}_{\tx{\infty}{2}}\les 2^{\frac{j}{2}}\ep.
\ee
Finally, \eqref{gorgonzola1}-\eqref{gorgonzola4} imply
\be\lab{gorgonzola5}
\norm{P_j(\lb(\d))}_{\tx{\infty}{2}}\les 2^j\ep.
\ee
Together with Lemma \ref{lemma:lbz1} with the choice $h=b^2$, this yields:
$$\norm{P_j(b^2\lb(\d))}_{\tx{\infty}{2}}\les 2^j\ep.$$
Together with \eqref{gorgonzola}, we obtain
$$\norm{P_j(b\lb\lb(b))}_{\tx{\infty}{2}}\les 2^j\ep+2^{\frac{j}{2}}\ep\gamma(u).$$
Together with Lemma \ref{lemma:lbz1} with the choice $h=b^{-1}$, this yields:
$$\norm{P_j(\lb\lb(b))}_{\tx{\infty}{2}}\les 2^j\ep+2^{\frac{j}{2}}\ep\gamma(u),$$
which implies the estimate \eqref{estlblbb} for $\lb\lb(b)$. Together with the estimates \eqref{estlblbtrc} and \eqref{estlbzeta} which were obtained in section \ref{sec:lbz} and section \ref{sec:lbt}, this concludes the proof of Theorem \ref{thregx1}.

\section{First order derivatives with respect to $\o$}\lab{sec:firstderivomega}

The goal of this section is to prove Theorem \ref{thregomega}. In section \ref{sec:commutomega}, we derive commutator formulas involving $\po$. In section \ref{sec:jycroispas}, we prove the estimates \eqref{estNomega} and \eqref{estricciomega} for $\po N$, $\po b$, $\po\chi$ and $\po\z$. In section \ref{sec:jycroispas1}, we prove the estimate \eqref{estricciomegabis} for $\ddb_{\lb}\Pi(\po\chi)$. In section \ref{sec:jycroispas2}, we derive the decomposition \ref{dechch}-\eqref{dechch3} for $\hch$. In section \ref{sec:jycroispas3}, we derive Besov improvements for $\po N$ and $\po\chi$. Finally, we prove the lower bound \eqref{ad1} for $N(.,\o)-N(.,\o')$ in section \ref{sec:jycroispas4}.

\subsection{Commutator formulas}\lab{sec:commutomega}

In this section, we derive several formulas involving commutators with $\po$. We start with some useful identities.
\begin{lemma}
For any 1-form $F$, we have the following identity:
\be\lab{coo2ter}
F_{\po e_A}e_A+F_A\po e_A=-F_N\po N-F_{\po N}N.
\ee
For any symmetric 2-tensor $H$, we have the following identity:
\be\lab{coo2quatre}
H_{A\po e_C}H_{CB}+H_{AC}H_{\po e_C B}=-H_{AN}H_{\po NC}-H_{A\po N}H_{NB}.
\ee
For any 2-tensor $H$ and any 1-form $F$, we have the following identity:
\be\lab{coo2cinq}
F_{\po e_B}H_{BA}+F_BH_{\po e_B A}=-F_NH_{\po NA}-F_{\po N}H_{NA}.
\ee
\end{lemma}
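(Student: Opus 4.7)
All three identities are algebraic consequences of a single decomposition of $\po e_A$, so I would handle them uniformly rather than case-by-case. The plan is to first establish the following structural facts about the $\omega$-derivatives of the frame, and then substitute directly into each identity.

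First, since the time function $t$ does not depend on $\o$, neither does the unit normal $T$ to $\Sigma_t$, hence $\gg(e_A, T)=0$ holds for every $\o$ and differentiating gives $\gg(\po e_A, T)=0$. Thus $\po e_A$ lies in $T\Sigma_t$ and, in the orthonormal frame $(e_1,e_2,N)$ of $\Sigma_t$, admits the decomposition
\be\lab{plan:dec}
\po e_A=\gg(\po e_A,e_B)\,e_B+\gg(\po e_A,N)\,N.
\ee
Differentiating the relations $\gg(e_A,e_B)=\d_{AB}$, $\gg(N,N)=1$, $\gg(N,e_A)=0$ in $\o$, one obtains
\be\lab{plan:rel}
\gg(\po e_A,e_B)=-\gg(\po e_B,e_A),\qquad \gg(\po N,N)=0,\qquad \gg(\po e_A,N)=-(\po N)_A.
\ee
In particular $\po N$ is $\ptu$-tangent, which is what lets the right-hand sides of \eqref{coo2ter}--\eqref{coo2cinq} make sense with the convention $F_{\po N}=F_A(\po N)_A$, $H_{A\po N}=H_{AC}(\po N)_C$, etc.

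With \eqref{plan:dec}--\eqref{plan:rel} in hand, each identity is proved by expanding $\po e_A$ (or $\po e_C$) on the left-hand side and verifying that the contribution of the tangential part $\gg(\po e_A,e_B)e_B$ vanishes by antisymmetry, while the contribution of the normal part $\gg(\po e_A,N)N=-(\po N)_A N$ reproduces the right-hand side. For instance, in \eqref{coo2ter} the tangential piece is
\bee
\gg(\po e_A,e_B)F_B\,e_A+F_A\gg(\po e_A,e_B)\,e_B,
\eee
which, after relabelling $A\leftrightarrow B$ in the first term and using the antisymmetry in \eqref{plan:rel}, cancels identically; the remaining normal piece equals $-F_N(\po N)_A e_A-F_A(\po N)_A N=-F_N\po N-F_{\po N}N$. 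The identity \eqref{coo2cinq} follows by the same pattern (antisymmetry kills the $e_B\otimes e_D$ terms, the $N$-components give the right-hand side). For the symmetric $2$-tensor identity \eqref{coo2quatre} one expands both factors $H_{A\po e_C}$ and $H_{\po e_C B}$ via \eqref{plan:dec}; the four resulting cross-terms split into two pairs, and in each pair the tangential contribution is of the form $\gg(\po e_C,e_D)(H_{AD}H_{CB}+H_{AC}H_{DB})$ (up to harmless relabelling), which vanishes after applying antisymmetry and swapping the summation indices $C\leftrightarrow D$.

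There is no serious obstacle here: the only thing to be careful about is the bookkeeping of dummy indices in the symmetric $2$-tensor case and the convention $F_{\po N}$, $H_{A\po N}$ (which is justified by the fact, proved above, that $\po N$ is $\ptu$-tangent). The argument uses nothing beyond the orthonormality of the frame $(e_1,e_2,N,T)$ on $\mathcal{M}$ and the $\o$-independence of $T$.
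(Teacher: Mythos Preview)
Your proof is correct and follows essentially the same approach as the paper, which simply records the orthonormality-derived relations $\gg(\po e_A,e_B)=-\gg(\po e_B,e_A)$ and $\gg(\po e_A,N)=-\gg(\po N,e_A)$ and declares the three identities ``easy consequences''; you have carried out those consequences explicitly. One small remark: in \eqref{coo2quatre} the index $C$ on the right-hand side is almost certainly a typo for $B$ (as your own expansion confirms, the normal contribution is $-H_{AN}H_{\po N B}-H_{A\po N}H_{NB}$), so do not be troubled if your computation does not literally reproduce the printed formula.
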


\begin{proof}
The identities \eqref{coo2ter}, \eqref{coo2quatre} and \eqref{coo2cinq} are easy consequences of the identities:
\be\lab{coo2}
\gg(\po e_1,e_1)=0,\,\gg(\po e_2,e_2)=0,\,\gg(\po e_1,e_2)=-\gg(\po e_2,e_1),\,\gg(\po e_A,N)=-\gg(\po N,e_A),
\ee
which follow from the fact that $(e_1,e_2,N)$ is orthonormal.
\end{proof}

We first consider commutators for scalar functions.
\begin{proposition}
Let $f$ a scalar function on $\mathcal{M}$. We have:
\be\lab{commo1}
[\po,L]f=\po N(f),\, [\po,\lb]f=-\po N(f)
\ee
and:
\be\lab{commo2}
[\po,\nabb]f=-\nabb_{\po N}fN-\nabla_Nf\po N.
\ee
\end{proposition}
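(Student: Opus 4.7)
The plan is to exploit the fact that the parameter $\o$ enters the geometry only through the optical function $u$. Since the slicing $\{\Sit\}$ and the time function $t$ are prescribed independently of $\o$, the unit future normal $T$ to $\Sit$ and the induced metric $g$ on $\Sit$ do not depend on $\o$. In particular, one has $\po T = 0$, $\po\gg = 0$, and $[\po,\nabla]f = 0$ for any scalar $f$, where $\nabla$ is the $\Sit$-gradient (as defined below \eqref{3.2}). By contrast, $N = -\nabla u/|\nabla u|$ genuinely depends on $\o$, so $\po N$ is a well-defined spacetime vector field.

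To prove \eqref{commo1}, I would decompose $L = T + N$ and $\lb = T - N$ from Definition \ref{def:nulllapse}. Applying the Leibniz rule and using $[\po, T]f = 0$ yields
\begin{equation*}
[\po, L]f = [\po, N]f = \po(N(f)) - N(\po f) = (\po N)(f) = \po N(f),
\end{equation*}
where the last equality is just the paper's notation. The analogous computation with $L$ replaced by $\lb$ produces the opposite sign, giving the second identity in \eqref{commo1}.

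To prove \eqref{commo2}, I would first observe that $\po N$ is tangent to $\ptu$. Differentiating the identities $\gg(N,N)=1$ and $\gg(N,T) = 0$ (valid since $N \in T\Sit$) with respect to $\o$ and invoking $\po \gg = 0$ and $\po T = 0$ gives $\gg(\po N,N) = 0$ and $\gg(\po N, T) = 0$, so $\po N$ lies in the span of $e_1,e_2$; in particular $(\po N)(f) = \nabb_{\po N} f$ for every scalar $f$. Writing the tangential gradient as the orthogonal projection $\nabb f = \nabla f - N(f)\, N$ and using $[\po,\nabla]f = 0$ together with the Leibniz rule, I would compute
\begin{equation*}
\po(\nabb f) \;=\; \nabla(\po f) \,-\, (\po N)(f)\, N \,-\, N(\po f)\, N \,-\, N(f)\, \po N,
\end{equation*}
while on the other hand $\nabb(\po f) = \nabla(\po f) - N(\po f)\, N$. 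Subtracting yields $[\po,\nabb]f = -(\po N)(f)\, N - N(f)\, \po N$, which is precisely \eqref{commo2} once the identifications $(\po N)(f) = \nabb_{\po N} f$ and $N(f) = \nabla_N f$ are made. No serious obstacle arises: the entire argument is a bookkeeping exercise in the $\o$-independence of $T$, $g$, and $\nabla$, and the only genuinely nontrivial point is the verification that $\po N$ has no $N$- or $T$-component.
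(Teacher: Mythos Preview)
Your proof is correct and follows essentially the same approach as the paper: both exploit $\po T=0$ to get $\po L=\po N$, $\po\lb=-\po N$ (hence \eqref{commo1}), verify $\po N$ is $\ptu$-tangent by differentiating $\gg(N,N)=1$ and $\gg(T,N)=0$, and then derive \eqref{commo2} by writing $\nabb f$ as a projection of an $\o$-independent gradient. The only cosmetic difference is that you project from $\Sigma_t$ via $\nabb f=\nabla f-N(f)N$, whereas the paper projects from the spacetime via $\nabb f=\dd f+\half\gg(\dd f,L)\lb+\half\gg(\dd f,\lb)L$ and invokes $[\po,\dd]=0$; both routes are equally short.
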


\begin{proof}
Differentiating $\gg(T,N)=0$ and $\gg(N,N)=1$, and using the fact that $T$ is independent of $\o$, 
we obtain 
$$\gg(T,\po N)=0\textrm{ and }\gg(N,\po N)=0$$
which shows that $\po N$ is tangent to $\ptu$. Furthermore, since $T$ is independent of $\o$, and since $L=T+N$ and $\lb=T-N$, we have 
\be\lab{coo1}
\po L=\po N\textrm{ and }\po\lb=-\po N,
\ee
which immediately yields \eqref{commo1}. Furthermore, we have:
$$\nabb f=\dd f+\half\gg(\dd f,L)\lb+\half\gg(\dd f,\lb)L$$
where $\dd f=-\gg^{\a\b}\partial_\a(f)\partial_\b$ denotes the space-time gradient of $f$. Together with \eqref{commo1} and the fact that $[\po,\dd]=0$, this implies \eqref{commo2}. This concludes the proof of the proposition. 
\end{proof}

Next, we consider commutators for $\ptu$-tangent vectorfields. We introduce the projection $\Pi$ of vectorfields on $\Sit$ onto vectorfields tangent to $\ptu$:
$$\Pi X=X-g(X,N)N.$$
We have the following proposition:
\begin{proposition}
Let $X$ a $\ptu$-tangent vectorfield. We have:
\be\lab{commo3}
\po\ddb_LX-\ddb_L(\Pi(\po X))=\nabb_{\po N}X-\gg(\ddb_L(X),\po N)N+\kepb_X \po N-g(X,\po N)\kepb_Ae_A,
\ee
and:
\bea\lab{commo4}
\po\ddb_{\lb}X-\ddb_{\lb}(\Pi(\po X))&=&-\nabb_{\po N}X-\gg(\ddb_{\lb}(X),\po N)N+(\z_X-\xib_X)\po N\\
\nn&&-g(X,\po N)(\z_A-\xib_A)e_A.
\eea
\end{proposition}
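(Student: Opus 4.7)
The plan is to reduce both commutators to a single direct computation, exploiting the fact that $\ddb_L X$ and $\ddb_{\lb} X$ are, by definition, the projections of $\dd_L X$ and $\dd_{\lb} X$ onto $\ptu$ via the operator $\Pi$, which on an arbitrary vector $Y\in T\mathcal{M}$ acts as $\Pi Y = Y + \gg(Y,T)T - \gg(Y,N)N$. Three elementary observations are the only inputs: first, $\po T = 0$, since $T$ depends only on the $\o$-independent metric $\gg$; second, $\po$ commutes with $\dd$ in the sense that $\po(\dd_Y Z) = \dd_{\po Y}Z + \dd_Y\po Z$ for any smooth $\o$-dependent vectorfields $Y,Z$ (because the Christoffel symbols of $\gg$ are $\o$-independent); and third, $\po N$ is $\ptu$-tangent, obtained by differentiating $\gg(N,T)=0$ and $\gg(N,N)=1$. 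Combined with $\po L = \po N$ and $\po\lb = -\po N$, these are the only algebraic ingredients we will use.

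For \eqref{commo3}, I would first apply $\po$ to $\ddb_L X = \Pi(\dd_L X)$ and use the product rule for $\Pi$, which gives $\po\ddb_L X = \Pi(\po\dd_L X) + (\po\Pi)(\dd_L X)$, where $(\po\Pi)(Y) = -\gg(Y,\po N)N - \gg(Y,N)\po N$. This second piece already accounts for the terms $-\gg(\ddb_L X,\po N)N$ and $\kepb_X\,\po N$ on the right-hand side, once one notes $\gg(L,\po N) = \gg(\lb,\po N) = 0$ (so $\gg(\dd_L X,\po N) = \gg(\ddb_L X,\po N)$) and $\gg(\dd_L X,N) = -\kepb\cdot X$ (which comes from $L(\gg(X,N))=0$). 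For the first piece I would write $\po\dd_L X = \dd_{\po N}X + \dd_L\po X$, identify $\Pi(\dd_{\po N}X) = \nabb_{\po N}X$, and split $\po X = \Pi(\po X) - \gg(X,\po N)N$; this reduces $\Pi(\dd_L\po X) - \ddb_L(\Pi\po X)$ to the single term $-\gg(X,\po N)\Pi(\dd_L N)$. The Ricci equations \eqref{ricciform} combined with $N = \tfrac12(L-\lb)$ give $\dd_L N = -\db T + \kepb_A e_A$, whose $\ptu$-projection is exactly $\kepb_A e_A$, producing the last term $-\gg(X,\po N)\kepb_A e_A$ of \eqref{commo3}.

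The identity \eqref{commo4} follows by the same three steps with $L$ replaced everywhere by $\lb$. Two differences arise: the substitution $\po\lb = -\po N$ accounts for the overall minus sign in front of $\nabb_{\po N}X$, and the Ricci equations now give $\dd_\lb N = (\z_A-\xib_A)e_A + (\d + n^{-1}\nab_N n)T$, so that $\Pi(\dd_\lb N) = (\z_A-\xib_A)e_A$ and $\gg(\dd_\lb X,N) = \xib_X - \z_X$; these yield precisely the coefficients $(\z_X-\xib_X)\po N$ and $-\gg(X,\po N)(\z_A-\xib_A)e_A$ on the right-hand side.

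The main obstacle is purely bookkeeping: $\Pi$ has two distinct normal directions $T$ and $N$, and one must carefully track which terms fall in each. Since $T$ is $\o$-independent while $N$ is not, only the $N$-projections contribute to $(\po\Pi)(\dd_\cdot X)$, but one still has to verify that the $T$-direction contributions inside $\dd_L N$ and $\dd_\lb N$ drop out after the $\ptu$-projection, and that the various $N$-coefficients combine, after the substitutions, to give exactly $\kepb_X$ (respectively $\z_X - \xib_X$) on the right-hand side rather than a more complicated expression.
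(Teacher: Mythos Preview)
Your proof is correct and essentially the same computation as the paper's, just organized differently. The paper writes $\ddb_L X = \gg(\dd_L X, e_A)e_A$, differentiates component by component in the frame, and handles the resulting $\po e_A$ terms via the identity $F_{\po e_A}e_A + F_A\po e_A = -F_N\po N - F_{\po N}N$; you instead work with the projection $\Pi$ abstractly and compute $(\po\Pi)(\dd_L X)$ directly, which avoids touching $\po e_A$ at all. The core inputs---$\po L=\po N$, $\po\lb=-\po N$, the decomposition $\po X = \Pi(\po X) - \gg(X,\po N)N$, and the Ricci-equation computations $\gg(\dd_L X,N)=-\kepb_X$ and $\Pi(\dd_L N)=\kepb_A e_A$ (respectively $\gg(\dd_\lb X,N)=\xib_X-\z_X$ and $\Pi(\dd_\lb N)=(\z_A-\xib_A)e_A$)---are identical in both arguments.
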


\begin{proof}
We start with $\po\ddb_LX-\ddb_L(\Pi(\po X))$. By the definition of $\ddb_L$, we have: 
$$\ddb_L(X)=\gg(\dd_LX,e_A)e_A.$$
Differentiating with respect to $\o$ and using \eqref{coo1}, we obtain:
$$\po(\ddb_L(X))=\gg(\dd_L(\po X),e_A)e_A+\gg(\dd_{\po N}X,e_A)e_A+\gg(\dd_LX,\po e_A)e_A+\gg(\dd_LX,e_A)\po e_A$$
which together with \eqref{coo2ter} yields:
\be\lab{coo3}
\po\ddb_LX=\gg(\dd_L(\po X),e_A)e_A+\nabb_{\po N}X-\gg(\dd_LX,N)\po N-\gg(\dd_LX,\po N)N.
\ee
Since $X$ is tangent to $\ptu$, we have: 
\be\lab{coo2bis}
\po X=\Pi(\po X)+g(\po X,N)N=\Pi(\po X)-g(X,\po N)N
\ee
which yields:
\be\lab{coo3bis}
\gg(\dd_L(\po X),e_A)e_A=\gg(\dd_L(\Pi(\po X)),e_A)e_A-g(X, \po N)\gg(\dd_LN,e_A)e_A.
\ee
Now, using the Ricci equations \eqref{ricciform} for $\dd_LN$ and the fact that $X$ is $\ptu$-tangent, we have:
$$\gg(\dd_LN,e_A)=\kepb_A\textrm{ and }\gg(\dd_LX,N)=-\gg(X,\dd_LN)=-\kepb_X.$$
Together with \eqref{coo3} and \eqref{coo3bis}, this yields \eqref{commo3}.

Next, we consider $\po\ddb_{\lb}X-\ddb_{\lb}(\Pi(\po X))$. Similarly as before, we obtain the analog of \eqref{coo3}:
\be\lab{coo4}
\po\ddb_{\lb}X=\gg(\dd_{\lb}(\po X),e_A)e_A-\nabb_{\po N}X-\gg(\dd_{\lb}X,N)\po N-\gg(\dd_{\lb}X,\po N)N.
\ee
and the analog of \eqref{coo3bis}:
\be\lab{coo4bis}
\gg(\dd_{\lb}(\po X),e_A)e_A=\gg(\dd_{\lb}(\Pi(\po X)),e_A)e_A-g(X, \po N)\gg(\dd_{\lb}N,e_A)e_A.
\ee
Now, using the Ricci equations \eqref{ricciform} for $\dd_{\lb}N$ and the fact that $X$ is $\ptu$-tangent, we have:
$$\gg(\dd_{\lb}N,e_A)=(\z_A-\xib_A)\textrm{ and }\gg(\dd_{\lb}X,N)=-\gg(X,\dd_{\lb}N)=-(\z_X-\xib_X).$$
Together with \eqref{coo4} and \eqref{coo4bis}, this yields \eqref{commo4}. This concludes the proof of the proposition.
\end{proof}

Next, we consider commutators for $\ptu$-tangent tensors. Let $F$ a m-covariant tensor tangent to the surfaces $P_{t,u}$. Then, $\po F$ is not a tangent to $\ptu$. We denote by $\Pi F$ the $\ptu$-tangent part of $F$. We have the following proposition:
\begin{proposition}
Let $F_{\und{A}}$ be an m-covariant tensor tangent to the surfaces $P_{t,u}$.
Then,
\be\lab{commo5}
\po\ddb_LF_{\und{A}}-\ddb_L\Pi(\po F)_{\und{A}}=\nabb_{\po N}F_{\und{A}} -\kepb_{A_i}(\po N)_CF_{A_1..\Check{C}..A_m}+g(e_{A_i},\po N)\kepb_CF_{A_1..\Check{C}..A_m},
\ee
and:
\bea\lab{commo6}
\po\ddb_{\lb}F_{\und{A}}-\ddb_{\lb}\Pi(\po F)_{\und{A}}&=&-\nabb_{\po N}F_{\und{A}} -(\z_{A_i}-\xib_{A_i})(\po N)_CF_{A_1..\Check{C}..A_m}\\
\nn&&+g(e_{A_i},\po N)(\z_C-\xib_C)F_{A_1..\Check{C}..A_m}.
\eea
\end{proposition}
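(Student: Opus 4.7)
The identities \eqref{commo5}–\eqref{commo6} are the $m$-covariant tensor analogs of the vector-field identities \eqref{commo3}–\eqref{commo4} of the preceding proposition, and my plan is to deduce them by a Leibniz expansion on each tensor slot. I focus on \eqref{commo5}; the derivation of \eqref{commo6} is identical after replacing $L$ by $\lb$, using $[\po,\lb]f=-\po N(f)$ in place of the $L$ part of \eqref{commo1}, and appealing to \eqref{commo4} rather than \eqref{commo3} on each frame vector (so that the Ricci coefficient $\kepb_A$ in the final formula is replaced by $-(\z_A-\xib_A)$, consistent with the difference between the two vector-field identities).

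The starting point is to view $F_{\und A}=F(e_{A_1},\dots,e_{A_m})$ as a scalar on $\mathcal{M}$ and to use the Leibniz formula
$$\ddb_L F_{\und A}=L(F_{\und A})-\sum_{i=1}^{m}F(e_{A_1},\dots,\ddb_L e_{A_i},\dots,e_{A_m}),$$
together with the analogous expansion for $\ddb_L\Pi(\po F)_{\und A}$, in which $\Pi(\po F)_{\und A}=(\po F)(e_{A_1},\dots,e_{A_m})$ since the $e_{A_i}$ are $\ptu$-tangent. Applying $\po$ to the first expansion and invoking the scalar commutator $[\po,L]f=\po N(f)$ from \eqref{commo1} produces a stray term $\po N(F_{\und A})$, which will supply, together with the $\nabb_{\po N}e_{A_i}$ contributions from step two below, the right-hand side $\nabb_{\po N}F_{\und A}$ of \eqref{commo5} via the identity
$$\nabb_{\po N}F_{\und A}=\po N(F_{\und A})-\sum_i F(e_{A_1},\dots,\nabb_{\po N}e_{A_i},\dots,e_{A_m}).$$
The splitting $\po e_{A_j}=\Pi(\po e_{A_j})-(\po N)_{A_j}N$ from \eqref{coo2} combined with the $\ptu$-tangency of $F$ yields the auxiliary identity $\po F_{\und A}=\Pi(\po F)_{\und A}+\sum_j F(\dots,\Pi(\po e_{A_j}),\dots)$, which lets me organize the $L$-derivatives from the two expansions.

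The second step is to apply the vector-field identity \eqref{commo3} to each frame vector $e_{A_i}$, writing
$$\po\ddb_L e_{A_i}=\ddb_L\Pi(\po e_{A_i})+\nabb_{\po N}e_{A_i}-\gg(\ddb_L e_{A_i},\po N)N+\kepb_{A_i}\po N-g(e_{A_i},\po N)\kepb_C e_C,$$
substituting into the Leibniz expansion of $\po[F(\dots,\ddb_L e_{A_i},\dots)]$, and invoking the $\ptu$-tangency of $F$ to drop the $N$-direction summand. After collecting terms the $\ddb_L\Pi(\po e_{A_i})$ contributions cancel the $L(\po F_{\und A}-\Pi(\po F)_{\und A})$ cross-terms produced in step one, the $\nabb_{\po N}e_{A_i}$ contributions combine with $\po N(F_{\und A})$ to yield exactly $\nabb_{\po N}F_{\und A}$, and the last two pieces of \eqref{commo3} produce precisely $-\kepb_{A_i}(\po N)_C F_{A_1..\Check C..A_m}+g(e_{A_i},\po N)\kepb_C F_{A_1..\Check C..A_m}$ after summing over $i$, via the contractions organized by \eqref{coo2ter}–\eqref{coo2cinq}.

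The main obstacle will be the book-keeping of the tangential-versus-normal splittings of $\po e_{A_j}$ when contracted against the several slots of $F$; in particular, handling the antisymmetric residual $\sum_j\ddb_L F(\dots,\Pi(\po e_{A_j}),\dots)$ (which can be eliminated by choosing the frame $(e_1,e_2)$ at each $\o$ so that $\Pi(\po e_A)=0$, always possible by rotating the frame by an $\o$-dependent angle). No genuinely new analytic input beyond the scalar commutator \eqref{commo1}, the vector commutators \eqref{commo3}–\eqref{commo4}, the Ricci equations \eqref{ricciform}, and the orthonormality relations \eqref{coo2} is required.
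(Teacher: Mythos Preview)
Your proposal is correct and follows essentially the same route as the paper: the paper also expands $\ddb_LF_{\und A}$ by Leibniz, applies the scalar commutator \eqref{commo1}, invokes the vector identity \eqref{commo3} on each frame vector, and uses the splitting $\po e_A=\Pi(\po e_A)-(\po N)_A N$ together with the $\ptu$-tangency of $F$ and $\ddb_LF$; the paper writes this out only for $m=1$ and declares the general case similar. One small remark: the ``antisymmetric residual'' you flag is not a genuine obstacle and no special frame choice is needed---in the paper's computation the term $L(F_{\Pi(\po e_A)})-F_{\ddb_L\Pi(\po e_A)}$ coming from \eqref{commo3} recombines into $\ddb_LF_{\Pi(\po e_A)}$, which cancels exactly against the same term appearing on the other side of the identity, so the $\Pi(\po e_A)$ contributions drop out automatically.
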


\begin{proof}
For simplicity give the proof for a $\ptu$-tangent 1-form $F$, the general case being similar. 
We start with $\po\ddb_LF-\ddb_L\Pi(\po F)$. By definition, we have:
$$\ddb_LF_A=L(F_A)-F_{\ddb_Le_A}.$$
Differentiating with respect to $\o$ and using \eqref{coo1}, we obtain:
\bee
\po(\ddb_LF)_A+\ddb_LF_{\po e_A}&=&\po N(F_A)+L(\po F_A)+L(F_{\po e_A})-\po F_{\ddb_L(e_A)}-F_{\po(\ddb_Le_A)}\\
&=&\po N(F_A)+\ddb_L(\Pi(\po F))_A+L(F_{\po e_A})-F_{\po(\ddb_Le_A)},
\eee
which together with \eqref{coo2bis} with $X=e_A$, \eqref{commo3}, and the fact that $F$ and $\ddb_LF$   are $\ptu$-tangent yields:
\bee
\po(\ddb_LF)_A+\ddb_LF_{\Pi(\po e_A)}&=&\po N(F_A)+\ddb_L(\Pi(\po F))_A+L(F_{\Pi(\po e_A)})\\
&&-F_{\ddb_L(\Pi(\po e_A))}-F_{\nabb_{\po N}e_A}-\kepb_AF_{\po N}+g(e_A,\po N)F\c\kepb\\
&=&\nabb_{\po N}F_A+\ddb_L(\Pi(\po F))_A+\ddb_LF_{\Pi(\po e_A)}\\
&&-\kepb_AF_{\po N}+g(e_A,\po N)F\c\kepb.
\eee 
This concludes the proof of \eqref{commo5}. The proof of \eqref{commo6} is similar and left to the reader.
\end{proof}

Finally, we consider the commutator of $\po$ with $\mathcal{D}_2$.
\begin{proposition}
Let $H$ a symmetric $\ptu$-tangent 2-tensor. Then, we have:
\bea
\lab{commo7}\po(\divb(H))_A-\divb(\Pi(\po H))_A&=&-\ddb_NH_{A\po N}+g(\po N,e_A)\th\c H\\
\nn&&+\th_{\po NB}H_{BA}-\th_{AB}H_{B\po N}-\trt H_{A\po N},
\eea
where $\th$ is the second fundamental form of $\ptu$ in $\Sit$ (i.e. $\th_{AB}=g(\nabla_AN,e_B)$).
\end{proposition}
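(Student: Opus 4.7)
The plan is to reduce the computation to one involving the $\Sit$ covariant derivative $\nabla$, which, unlike $\nabb$, is independent of $\o$ and therefore commutes with $\po$. First I would establish the key identity
$$(\divb H)_A \;=\; \sum_B (\nabla_{e_B} H)(e_B, e_A).$$
This follows from the decomposition $\nabla_B e_A = \nabb_B e_A - \th_{BA} N$ (via $\th(X,Y) = g(\nabla_X N, Y)$ and $g(\nabla_B e_A, N) = -\th_{BA}$) combined with the $\ptu$-tangency of $H$: the $N$-contributions $\th_{BB} H(N, e_A)$ and $\th_{BA} H(e_B, N)$ vanish since $H(N,\cdot)=0$.

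Viewing $\divb H$ as a $\ptu$-tangent 1-form and using $[\po,\nabla]=0$, a direct expansion (while carefully tracking $\po e_B$, $\po e_A$ produced by the frame) gives
\begin{align*}
(\po\divb H)(e_A) &= \sum_B (\nabla_{e_B}\po H)(e_B,e_A) + \sum_B\bigl[(\nabla_{\po e_B}H)(e_B,e_A) + (\nabla_{e_B}H)(\po e_B,e_A)\bigr]\\
&\quad + g(\po N,e_A)\,\th\cdot H,
\end{align*}
where the last term arises from the normal component of $\po e_A=\Pi(\po e_A)-g(e_A,\po N)N$ together with $\sum_B(\nabla_{e_B}H)(e_B,N)=-\th_{BC}H_{BC}=-\th\cdot H$ (computed from $\nabla_B N=\th_{BC}e_C$ and $H(N,\cdot)=0$). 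Meanwhile $\divb(\Pi(\po H))_A=\sum_B(\nabla_{e_B}\Pi(\po H))(e_B,e_A)$.

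Subtracting and analyzing the difference in blocks: the block $\sum_B[(\nabla_{e_B}\po H - \nabla_{e_B}\Pi(\po H))(e_B,e_A)]$, expanded via $\nabla_B e_A = \nabb_B e_A - \th_{BA}N$, produces $\th_{BB}\po H(N,e_A) + \th_{BA}\po H(e_B,N)$; then, since $H$ is $\ptu$-tangent for every $\o$, differentiating $H(N,e_A)\equiv 0$ and $H(e_B,N)\equiv 0$ (and using that $\po N$ is $\ptu$-tangent) yields $\po H(N,e_A)=-H_{A\po N}$ and $\po H(e_B,N)=-H_{B\po N}$, giving the contribution $-\trt H_{A\po N}-\th_{AB}H_{B\po N}$. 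For the frame-derivative block, decomposing $\po e_B = \Pi(\po e_B)-g(e_B,\po N)N$ and setting $a_{BC}:=g(\po e_B,e_C)$ with $a_{BC}=-a_{CB}$ (from $\po g(e_B,e_C)=0$), the tangential parts of $(\nabla_{\Pi(\po e_B)}H)(e_B,e_A)+(\nabla_{e_B}H)(\Pi(\po e_B),e_A)$ cancel under the swap $B\leftrightarrow C$. Using $\sum_B g(e_B,\po N)e_B = \po N$, the normal parts collapse to $-(\nabla_N H)(\po N,e_A)-(\nabla_{\po N}H)(N,e_A)$; the identity $(\nabla_X H)(N,e_A)=-\th_{XC}H_{CA}$ converts the second into $\th_{\po N B}H_{BA}$, while $(\nabla_N H)(\po N,e_A)=\ddb_N H_{A\po N}$ on $\ptu$-tangent arguments, giving $-\ddb_N H_{A\po N}$. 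Assembling all pieces produces the four terms on the right-hand side.

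The main obstacle is the bookkeeping of frame variations and the correct interpretation of $\po(\divb H)_A$ as the $A$-component of the 1-form $\po\divb H$ (not $\po$ of the scalar $(\divb H)_A$, which would contribute an extra $(\divb H)(\po e_A)$); it is precisely this reading that makes the tangential frame contributions cancel by the antisymmetry of $a_{BC}$ and leaves only the $\th$, $\trt$, and $\ddb_N$ terms stated.
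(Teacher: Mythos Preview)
Your proof is correct and takes a genuinely different route from the paper. The paper works entirely inside the $\ptu$ calculus: it first derives an explicit formula for $\po(\nabb_B e_A)$ (their (coo5)), feeds this into a direct expansion of $\po(\nabb_C H_{AB})$ to get a formula for the full $\po$ of the intrinsic covariant derivative (their (coo7)), then contracts and simplifies using the frame identities (coo2ter)--(coo2cinq). In other words, the paper tracks how $\nabb$ itself varies with $\o$.

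You instead pass to the ambient $\Sigma_t$ connection $\nabla$, which is $\o$-independent, via the clean identity $(\divb H)_A=\sum_B(\nabla_{e_B}H)(e_B,e_A)$. This shifts all the $\o$-dependence onto the frame, and your key observation that the tangential frame contributions cancel by the antisymmetry of $a_{BC}=g(\po e_B,e_C)$ replaces the paper's use of the (coo2)-type identities. The normal parts then produce exactly the $\ddb_N$ and $\th$ terms. Your identification $(\nabla_N H)(\po N,e_A)=\ddb_N H_{A\po N}$ is justified because, on $\ptu$-tangent slots, projecting $\dd_N$ or $\nabla_N$ to $\ptu$ gives the same operator and $H$ annihilates the normal directions. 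Your careful remark about reading $\po(\divb H)_A$ as the $A$-component of the one-form $\po(\divb H)$ is also exactly right and matches the paper's convention (their (coo8) is the scalar derivative, and the extra $\ddb_C H_{\po e_A C}=(\divb H)(\po e_A)$ term there is precisely what gets subtracted off to pass to the one-form reading).

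What each approach buys: the paper's method yields the intermediate formula (coo7) for $\po(\nabb_C H_{AB})$ with free indices, which could be reused for other contractions; your method is more economical for this particular statement and makes the mechanism (an $\o$-independent reference connection) transparent.
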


\begin{proof}
We first derive a formula for $\po(\ddb_Be_A)$. We have:
\bee
\po(\ddb_Be_A)&=&\po(g(\dd_Be_A,e_C)e_C)\\
&=&g(\dd_{\po e_B}e_A,e_C)e_C+g(\dd_B(\po e_A),e_C)e_C+g(\dd_Be_A,\po e_C)e_C\\
&&+g(\dd_Be_A,e_C)\po e_C.
\eee
Now, using \eqref{coo2bis} to decompose $e_A$, we have:
$$g(\dd_Be_A,\po e_C)=g(\dd_B(\Pi(e_A)),\po e_C)-g(\po N,e_A)g(\dd_BN,e_C).$$
Furthermore, the analog of \eqref{coo2ter} for 2-tensors yields:
$$g(\dd_Be_A,\po e_C)e_C+g(\dd_Be_A,e_C)\po e_C=-g(\po N,e_C)(g(\dd_Be_A,N)e_C+g(\dd_Be_A,e_C)N).$$
Thus, we obtain:
\bea
\lab{coo5}\po(\ddb_Be_A)&=&\ddb_{\po e_B}(e_A)+\ddb_B(\Pi(\po e_A))-g(\po N,e_A)g(\dd_BN,e_C)e_C\\
\nn&&-g(\po N,e_C)(g(\dd_Be_A,N)e_C+g(\dd_Be_A,e_C)N)\\
\nn&=&\ddb_{\po e_B}(e_A)+\ddb_B(\Pi(\po e_A))-g(\po N,e_A)\th_{BC}e_C+\th_{AB}\po N\\
\nn&&-g(\dd_Be_A,\po N)N.
\eea

We now compute $\po(\nabb_CH_{AB})$. We have:
\bea
\lab{coo6}\po(\nabb_CH_{AB})&=&\po e_C(H_{AB})+e_C(\po H_{AB})+e_C(H_{\po e_AB})+e_C(H_{A\po e_B})\\
\nn&&-\po H_{\ddb_Ce_AB}-\po H_{A\ddb_Ce_B}-H_{\po(\ddb_Ce_A)B}-H_{\ddb_Ce_A\po(e_B)}\\
\nn&&-H_{A\po(\ddb_Ce_B)}-H_{\po(e_A)\ddb_Ce_B}\\
\nn&=&\ddb_C(\Pi(\po H))_{AB}+\po e_C(H_{AB})+e_C(H_{\po e_AB})+e_C(H_{A\po e_B})\\
\nn&&-H_{\po(\ddb_Ce_A)B}-H_{\ddb_Ce_A\po(e_B)}-H_{A\po(\ddb_Ce_B)}-H_{\po(e_A)\ddb_Ce_B}
\eea
Using \eqref{coo5}, we have:
\bee
H_{\po(\ddb_Ce_A)B}+H_{A\po(\ddb_Ce_B)}&=&H_{\ddb_{\po e_C}e_AB}+H_{A\ddb_{\po e_C}e_B}
+H_{\ddb_C(\Pi(\po e_A))B}\\
\nn&&+H_{A\ddb_C(\Pi(\po e_B))}
-g(\po N,e_A)\th_{CD}H_{DB}\\
\nn&&-g(\po N,e_B)\th_{CD}H_{AD}
+\th_{AC}H_{\po NB}+\th_{BC}H_{A\po N},
\eee
which together with \eqref{coo6} yields:
\bea
\lab{coo7}\po(\nabb_CH_{AB})&=&\ddb_C(\Pi(\po H))_{AB}+\ddb_{\po e_C}H_{AB}+\ddb_CH_{\Pi(\po e_A)B}+\ddb_CH_{A\Pi(\po e_B)}\\
\nn&&  +g(\po N,e_A)\th_{CD}H_{DB}+g(\po N,e_B)\th_{CD}H_{AD}
-\th_{AC}H_{\po NB}-\th_{BC}H_{A\po N}.
\eea
Contracting \eqref{coo7}, we obtain:
\bea
\lab{coo8}\po(\divb H_A)&=&\divb(\Pi(\po H))_A+\ddb_{\po e_C}H_{AC}+\ddb_CH_{A\po e_C}+\ddb_CH_{\po e_AC}\\
\nn&&+g(\po N,e_A)\th_{BC}H_{CB}+g(\po N,e_C)\th_{CB}H_{AB}-\th_{AC}H_{\po NC}-\trt H_{A\po N}.
\eea
Now, the analog of \eqref{coo2ter}-\eqref{coo2cinq} yields:
$$\ddb_{\po e_C}H_{AC}+\ddb_CH_{A\po e_C}=-\ddb_NH_{A\po N}$$
which together with \eqref{coo8} implies \eqref{commo7}. This concludes the proof.
\end{proof}

\subsection{Control of $\po N$, $\po b$, $\po\chi$ and $\po\z$}\lab{sec:jycroispas}

\subsubsection{Derivatives of $\po N$ with respect to the null frame}

We first compute the derivatives of $\po N$ with respect to the null frame.
\begin{lemma}
\bea
\lab{popo1}\dd_{L}(\po N)&=&-\chi_{\po NB}e_B-\db\po N+\kepb_{\po N}L,\\
\nn\dd_{\lb}(\po N)&=&2\po\z_Ae_A+\chi_{\po NB}e_B+(\d+n^{-1}\nabla_Nn)\po N\\
\lab{popo2}&&+(2\kep_{\po N}+n^{-1}\nabla_{\po N}n)L-2\z_{\po N}N,\\
\lab{popo3}\dd_A(\po N)&=& \po\chi_{AB}e_B-g(\po N,e_A)\z_Be_B-g(\po N,e_A)\d L-\chi_{A\po N}N.
\eea
\end{lemma}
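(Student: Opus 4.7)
The plan is to exploit the fact, already recorded implicitly in the proof of \eqref{commo1}, that $\po N$ is tangent to $\ptu$: differentiating the identities $\gg(T,N)=0$ and $\gg(N,N)=1$ with respect to $\o$ and using that $T$ is $\o$-independent yields $\gg(T,\po N)=0$ and $\gg(N,\po N)=0$, equivalently $\gg(L,\po N)=\gg(\lb,\po N)=0$. Consequently each of $\dd_L(\po N)$, $\dd_{\lb}(\po N)$, $\dd_A(\po N)$ admits a unique decomposition in the null frame $(L,\lb,e_1,e_2)$, and it suffices to compute the four inner products against $L$, $\lb$, and $e_B$, remembering that in the decomposition $X=\alpha L+\beta\lb+\gamma^A e_A$ one has $\alpha=-\half\gg(X,\lb)$ and $\beta=-\half\gg(X,L)$.

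For the two null-direction derivatives the computation is direct: I would apply Leibniz,
$$X\bigl(\gg(\po N,Y)\bigr)=\gg(\dd_X(\po N),Y)+\gg(\po N,\dd_X Y),$$
with $X\in\{L,\lb\}$ and $Y\in\{L,\lb,e_B\}$. The left-hand side vanishes whenever $Y\in\{L,\lb\}$ since $\po N$ is $\ptu$-tangent, and on the right-hand side the terms $\dd_X Y$ are expanded using the Ricci formulas \eqref{ricciform}, together with the identity $\zb=-\kepb$ from \eqref{etab}. This immediately produces the $\db\po N$ and $\kepb_{\po N}L$ contributions in \eqref{popo1}, while the analogous expansion of $\dd_{\lb}L$ and $\dd_{\lb}\lb$ in \eqref{ricciform} gives the $\d+n^{-1}\nab_N n$, $\kep_{\po N}+n^{-1}\nabb_{\po N}n$ and $\z_{\po N}$ terms in \eqref{popo2}. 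The tangential $e_B$-components are obtained from $L(\gg(\po N,e_B))$ and $\lb(\gg(\po N,e_B))$, with the $-\chi_{\po N B}e_B$ term in \eqref{popo1} arising from $\dd_L e_B$ contracted with $\po N$, and the $2\po\z_A e_A$ term in \eqref{popo2} coming from differentiating the definition $\z_A=\half\gg(\dd_3 e_4,e_A)$ with respect to $\o$ (using $\po L=\po N$ from \eqref{coo1}) and solving for the $e_A$-component of $\dd_{\lb}(\po N)$.

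For $\dd_A(\po N)$ the tangential component requires the analogous manoeuvre, now with $\chi$ in place of $\z$: differentiating $\chi_{AB}=\gg(\dd_{e_A}L,e_B)$ in $\o$ and using the decomposition \eqref{coo2bis} of $\po e_A$, $\po e_B$ into their $\ptu$-tangent parts plus the $N$-components prescribed by $\gg(\po e_A,N)=-\gg(\po N,e_A)$, together with the contraction identity \eqref{coo2cinq}, one isolates $\gg(\dd_A(\po N),e_B)=\po\chi_{AB}$ up to the correction $-\gg(\po N,e_A)\chi_{NB}$ which accounts for the non-$\ptu$-tangent piece of $\po e_A$; but $\chi_{NB}$ is not a well-defined Ricci coefficient here, and instead the correction is absorbed into the $-g(\po N,e_A)\z_B e_B$ term of \eqref{popo3}, obtained by computing $\gg(\dd_A(\po N),\lb)$ via $\gg(\po N,\dd_A\lb)=\chb_{AB}(\po N)_B+2\kep_A(\po N)_\cdot$ from \eqref{ricciform} and then using \eqref{etab}. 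The remaining $\lb$-component, yielding $-g(\po N,e_A)\d L-\chi_{A\po N}N$, is read off from $\gg(\dd_A(\po N),L)$ via the Ricci formula for $\dd_A L$.

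The calculation is entirely algebraic: it contains no genuine analytic obstacle and no new estimates are needed. The only point requiring care is the careful bookkeeping of the non-$\ptu$-tangent parts of $\po e_A$, but the algebraic identities \eqref{coo2ter}--\eqref{coo2cinq} established at the start of Section \ref{sec:commutomega} are designed precisely to make this bookkeeping routine, so the proof reduces to the systematic application of Leibniz together with \eqref{ricciform}, \eqref{etab}, \eqref{coo1}, and \eqref{coo2bis}.
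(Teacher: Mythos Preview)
Your component-by-component approach via inner products is workable but differs from the paper's more direct route, and your sketch contains a genuine error for the tangential part of \eqref{popo1}. The paper does not compute $\gg(\dd_X(\po N),Y)$ for each frame vector $Y$; instead it differentiates the full \emph{vector} Ricci equation $\dd_X L=(\text{Ricci coefficients})$ with respect to $\o$, uses $\po L=\po N$ (and $\po\lb=-\po N$) to write $\po(\dd_X L)=\dd_X(\po N)+\dd_{\po X}L$, and then isolates $\dd_X(\po N)$ by evaluating $\dd_{\po X}L$ from the Ricci formulas \eqref{ricciform} (since $\po X$ is either $\pm\po N$ or $\po e_A$, both treatable). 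This gives all four frame components at once for each of \eqref{popo1}--\eqref{popo3}.

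Your Leibniz computation $\gg(\dd_X(\po N),Y)=X(\gg(\po N,Y))-\gg(\po N,\dd_X Y)$ does give the $L$- and $\lb$-coefficients cleanly because $\gg(\po N,L)=\gg(\po N,\lb)=0$. But for $Y=e_B$ it produces $L((\po N)_B)-\gg(\po N,\ddb_L e_B)$, and neither term is a Ricci coefficient: in particular $\dd_L e_B=\ddb_L e_B+\zb_B L$ contains no $\chi$ at all, so your claim that the $-\chi_{\po NB}e_B$ term ``arises from $\dd_L e_B$ contracted with $\po N$'' is incorrect. The $\chi$ contribution actually enters through $\dd_{\po N}L=\chi_{\po NB}e_B-\kep_{\po N}L$, which is exactly what the paper's differentiation of $\dd_L L=-\db L$ extracts. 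Your own fix for \eqref{popo2} and \eqref{popo3} --- differentiating the \emph{definitions} of $\z_A$ and $\chi_{AB}$ in $\o$ --- is precisely this same manoeuvre (those definitions are components of $\dd_{\lb}L$ and $\dd_A L$), so your tangential computations there are really the paper's method in disguise. For \eqref{popo1} you would analogously need to differentiate $\dd_L L=-\db L$, not rely on $\dd_L e_B$. Your description of the $L$- and $N$-pieces in \eqref{popo3} is also tangled (e.g.\ $\gg(\po N,\dd_A\lb)=\chb_{A\po N}$, not the expression you wrote), though these are bookkeeping slips rather than conceptual obstacles.
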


\begin{proof}
We start with $\dd_L(\po N)$. Using the Ricci equation for $\dd_LL$ and the fact that $\po L=\po N$, we have:
\be\lab{po1}
\dd_L\po N+\dd_{\po N}L=-\po(\db)L-\db\po N.
\ee
Now, we have:
\be\lab{po2}
\po\d=2\kep_{\po N},\,\po\db=2\kep_{\po N}-n^{-1}\nabla_{\po N}n.
\ee
Also, the Ricci equations \eqref{ricciform} and the fact that $\po N$ is $\ptu$-tangent imply:
$$\dd_{\po N}L=\chi_{\po NB}e_B-\kep_{\po N}L$$
which together with \eqref{po1} and \eqref{po2} yield \eqref{popo1}.

Next we consider $\dd_{\lb}(\po N)$. Using the Ricci equation for $\dd_LL$ and the fact that $\po L=\po N$ and $\po\lb=-\po N$, we have:
$$
\dd_{\lb}\po N- \dd_{\po N}L = 2\po\z_{A}e_A +2\z_{\po e_A}e_A+2\z_A\po e_A+(\po\d +n^{-1}\nab_{\po N}n)L+(\d +n^{-1}\nab_Nn)\po N,
$$
which together with the Ricci equations \eqref{ricciform}, \eqref{coo2ter} and \eqref{po2} yields \eqref{popo2}.

Finally, we consider $\dd_A(\po N)$. Using the Ricci equation for $\dd_AL$ and the fact that $\po L=\po N$, we have:
$$\dd_A\po N+\dd_{\po e_A}L=\po\chi_{AB}e_B+\chi_{\po e_AB}e_B+\chi_{A\po e_B}e_B+\chi_{AB}\po e_B-k_{\po NA}L-k_{N\po e_A}L-\kep_A\po N.$$
Using \eqref{coo2bis} with $X=e_A$, we obtain:
\bee
\dd_A\po N-g(\po N,e_A)\dd_{N}L &=& \po\chi_{AB}e_B+\chi_{A\po e_B}e_B+\chi_{AB}\po e_B-k_{\po NA}L\\
\nn&&+g(\po N,e_A)\d L-\kep_A\po N,
\eee
which together with the Ricci equations \eqref{ricciform} and \eqref{coo2ter} yields \eqref{popo3}. This concludes the proof of the lemma.
\end{proof}

\subsubsection{Transport equations for $\po\chi$ and $\po\z$}

\begin{lemma}
$\po\chi$ and $\po\z$ satisfy the following transport equations:
\bea
\lab{popo4}\ddb_L(\Pi(\po\chi))_{AB}&=&-\nabb_{\po N}\chi_{AB}-(\po\chi)_{AC}\chi_{CB}-\chi_{AC}(\po\chi)_{CB}-\db\po\chi_{AB}\\
\nn&&+\kepb_A\chi_{\po NB}+\kepb_B\chi_{A\po N}+(\po N)_A\chi_{CB}\kep_C+(\po N)_B\chi_{AC}\kepb_C\\
\nn&&-(2\kep_{\po N}-n^{-1}\nabla_{\po N}n)\chi_{AB}+(\po N)_C(\in_{AC}{}^*\b_B+\in_{BC}{}^*\b_A),
\eea
\bea\lab{popo5}
\ddb_L(\Pi(\po\z))_A&=&-\nabb_{\po N}\z_A+\kepb_A\z_{\po N}-(\po N)_A\kepb\c\z-(k_{B\po N}+\po\z_B)\chi_{AB}\\
\nn&&-(\kepb_B+\z_B)\po\chi_{AB}-\frac{(\po N)_B}{2}(-\a_{AB}+\r\d_{AB}+3\s\in_{AB}).
\eea
\end{lemma}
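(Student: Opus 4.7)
The plan is to differentiate the null transport equations derived in Proposition \ref{prop:nulltransporteq} with respect to $\po$, and use the commutator formula \eqref{commo5} (with $F=\chi$ or $F=\z$) to convert $\po\ddb_L$ into $\ddb_L\Pi(\po\,\cdot)$ plus correction terms. More precisely, I start from \eqref{D4chi}, which reads $\ddb_L\chi_{AB}=-\chi_{AC}\chi_{CB}-\db\,\chi_{AB}-\a_{AB}$, and from the equivalent form of \eqref{D4eta} obtained by absorbing $\half\trc\,\g$ into $\chi=\hch+\half\trc\,\g$, namely $\ddb_L\z_A=-(\kepb_B+\z_B)\chi_{AB}-\b_A$. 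Applying $\po$ to each and invoking \eqref{commo5} yields the left-hand sides $\ddb_L(\Pi(\po\chi))_{AB}+\nabb_{\po N}\chi_{AB}+(\text{$\kepb$-boundary})$ and $\ddb_L(\Pi(\po\z))_A+\nabb_{\po N}\z_A+(\text{$\kepb$-boundary})$ that appear in \eqref{popo4}, \eqref{popo5}.

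For the right-hand side of the $\chi$ equation, I expand $\po(\chi_{AC}\chi_{CB})$ using Leibniz, then split each $\po\chi_{AC}$ into its $\ptu$-tangent part $\Pi(\po\chi)_{AC}$ and a normal piece via the analogue of \eqref{coo2bis}; the identity \eqref{coo2cinq} then collapses the off-tangent pieces into the $(\po N)_{A}\chi\c\kep$ contributions in \eqref{popo4}. The term $\po(\db\,\chi_{AB})$ is handled by $\po\db=2\kep_{\po N}-n^{-1}\nab_{\po N}n$, coming from \eqref{po2}, giving both the $\db\,\po\chi_{AB}$ and the $(2\kep_{\po N}-n^{-1}\nab_{\po N}n)\chi_{AB}$ terms.

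The curvature term $\po\a_{AB}=\po(\rr(L,e_A,L,e_B))$ is the step that requires the most care. Since $\rr$ does not depend on $\o$ and $\po L=\po N$, each differentiation produces one slot $\po N=(\po N)_C e_C$ (or $\po e_A=-g(\po N,e_A)N$ by \eqref{coo2bis} after the tangent part is absorbed). The resulting component $\rr(\po N,e_A,L,e_B)$ is, after using the antisymmetries of $\rr$ and the definition of ${}^*\b$, equal to $(\po N)_C\in_{CB}{}^*\b_A$ up to a sign (and similarly for the $e_B$-slot contribution). Summing the four slot-contributions produces exactly the term $(\po N)_C(\in_{AC}{}^*\b_B+\in_{BC}{}^*\b_A)$ that closes \eqref{popo4}. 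I expect this identification of $\po\a$ with $\b$-components via the null decomposition of $\rr$ to be the main technical obstacle; the remaining terms are purely algebraic manipulations with the frame.

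For \eqref{popo5}, the right-hand side expansion follows the same template: $\po((\kepb_B+\z_B)\chi_{AB})$ is expanded by Leibniz, with $\po\kepb_B=\po k_{NB}-\po(n^{-1}\nabb_B n)$ producing the $k_{B\po N}$ contribution (and, after applying \eqref{coo2cinq} to the $\po e_A$ pieces, the projected tensor $\po\chi_{AB}$), while the treatment of $\po\b_A$ mirrors that of $\po\a_{AB}$: differentiating $\b_A=\half\rr(e_A,L,\lb,L)$ uses $\po L=\po N$ and $\po\lb=-\po N$, and the null decomposition of $\rr$ applied to the resulting $\rr(e_A,\po N,\lb,L)$ and $\rr(e_A,L,L,\po N)$ terms produces the combination $-\frac12(\po N)_B(-\a_{AB}+\r\,\d_{AB}+3\s\in_{AB})$ recorded in \eqref{popo5}. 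Collecting all contributions and projecting onto $\ptu$ (so that only tangential components survive on the left) yields the stated identities.
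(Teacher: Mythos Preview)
Your proposal is correct and follows essentially the same approach as the paper: differentiate \eqref{D4chi} and \eqref{D4eta} in $\o$, invoke the commutator \eqref{commo5}, use \eqref{po2} for $\po\db$, and identify $\po\a_{AB}$ and $\po\b_A$ via the null decomposition of $\rr$ (the paper records exactly the formulas \eqref{po3}, \eqref{po4} you anticipate). Your rewriting of \eqref{D4eta} as $\ddb_L\z_A=-(\kepb_B+\z_B)\chi_{AB}-\b_A$ is the same simplification the paper uses, and the frame-contraction identities you invoke, \eqref{coo2cinq} and the analogue of \eqref{coo2bis}, play the same role as \eqref{coo2quatre}--\eqref{coo2cinq} in the paper.
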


\begin{proof}
We start with the proof of \eqref{popo4}. Note first from the definition of $\a$, $\b$ and the fact that $\po L=\po N$:
\be\lab{po3}
\po\a_{AB}=-(\po N)_C(\in_{AC}{}^*\b_B+\in_{BC}{}^*\b_A).
\ee
Now, differentiating the transport equation \eqref{D4chi} with respect to $\o$, we obtain:
$$\po(\ddb_L\chi)_{AB}=-\chi_{A\po e_C}\chi_{CB}-\chi_{AC}\chi_{\po e_CB}-\po(\db)\chi_{AB}-\db\po\chi_{AB}-\po\a_{AB}.$$
Together with \eqref{coo2quatre}, the commutator formula \eqref{commo5}, \eqref{po2} and \eqref{po3}, we obtain \eqref{popo4}. 

Next, we prove \eqref{popo5}. Note first from the definition of $\a, \b, \r, \s$ and the fact that $\po L=\po N, \po\lb=-\po N$:
\be\lab{po4}
\po\b_{A}=\frac{(\po N)_B}{2}(-\a_{AB}+\r\d_{AB}+3\s\in_{AB}).
\ee
Now, differentiating the transport equation \eqref{D4eta} with respect to $\o$, we obtain:
\bee
\po(\ddb_L\z)_A&=&-(\po\kepb_{B}+\po\z_{B})\chi_{AB}-(\kepb_{B}+\z_{B})\po\chi_{AB}-(\kepb_{\po e_B}+\z_{\po e_B})\chi_{AB}\\
&&-(\kepb_{B}+\z_{B})\chi_{A\po e_B}-\po\b_A.
\eee
Together with \eqref{coo2cinq}, the commutator formula \eqref{commo5}, and \eqref{po4}, we obtain \eqref{popo5}. This concludes the proof of the lemma.
\end{proof}

\subsubsection{Estimates for $\po N, \po b, \po\chi$ and $\po\z$}

We first derive the $L^\infty$ bound \eqref{estNomega} for $\po N$. In view of the formula \eqref{popo1}  for $\dd_L(\po N)$, we have: 
\bea
\nn\norm{\dd_L(\po N)}_{\xt{\infty}{2}}&\les& \norm{-\chi_{\po NB}e_B-\db\po N+\kepb_{\po N}L}_{\xt{\infty}{2}}\\
\nn&\les&(\norm{\chi}_{\xt{\infty}{2}}+\norm{\db}_{\xt{\infty}{2}}+\norm{\kepb}_{\xt{\infty}{2}})\norm{\po N}_{L^\infty}\\
\lab{po5}&\les&\ep\norm{\po N}_{L^\infty},
\eea
where we used the estimates \eqref{esttrc} \eqref{esthch} for $\chi$ and the estimate \eqref{estk} for $\db$ and $\kepb$ in the last inequality. The estimate for transport equations \eqref{estimtransport1} and \eqref{po5} yield:
$$\norm{\po N}_{L^\infty}\les \norm{\ddb_L\po N}_{\xt{\infty}{2}}\les \ep\norm{\po N}_{L^\infty}$$
which yields the  $L^\infty$ bound \eqref{estNomega} for $\po N$:
\be\lab{po6}
\norm{\po N}_{L^\infty}\les 1.
\ee

Next, we derive an estimate for $\po\chi$. First, the fact that $\chi$ is a $\ptu$-tangent 2-tensor yields for any vectorfields $X, Y$ on $\Sit$:
$$\po\chi_{XY}=\Pi(\po\chi)_{\Pi(X)\Pi(Y)}-g(N,X)\chi_{\po N\Pi(Y)}-g(N,Y)\chi_{\po N\Pi(X)}$$
which implies:
\be\lab{po6bis}
\norm{\po\chi}_{\xt{2}{\infty}}\les \norm{\Pi(\po\chi)}_{\xt{\infty}{2}}+\norm{\chi}_{\xt{2}{\infty}}\norm{\po N}_{L^\infty}\les \norm{\Pi(\po\chi)}_{\xt{\infty}{2}}+\ep,
\ee
where we used the estimate \eqref{po6} for $\po N$, and the estimates \eqref{esttrc} \eqref{esthch} for $\chi$. In view of \eqref{po6bis}, we have to estimate $\norm{\Pi(\po\chi)}_{\xt{\infty}{2}}$. The formula \eqref{popo4}  for $\dd_L(\Pi(\po\chi))$ implies: 
\bea\lab{po7}
&&\norm{\dd_L(\Pi(\po\chi))}_{\lh{2}}\\
\nn&\les&\norm{\nabb_{\po N}\chi}_{\lh{2}}+\norm{\po(\chi)\chi}_{\lh{2}}+\norm{\db\po\chi}_{\lh{2}}+\norm{\kepb\chi\po N}_{\lh{2}}+\norm{\po N\chi\kep}_{\lh{2}}\\
\nn&&+\norm{(2\kep_{\po N}-n^{-1}\nabla_{\po N}n)\chi}_{\lh{2}}+\norm{\po N(\in_{AC}{}^*\b_B+\in_{BC}{}^*\b_A)}_{\lh{2}}\\
\nn&\les&(1+\norm{\po N}_{L^\infty})\bigg(\norm{\nabb\chi}_{\lh{2}}+\norm{\po(\chi)}_{\xt{2}{\infty}}(\norm{\chi}_{\xt{\infty}{2}}+\norm{\db}_{\xt{\infty}{2}})\\
\nn&&+\no(\chi)(\no(\kep)+\noo(n))+\norm{\b}_{\lh{2}}\bigg)\\
\nn&\les&\ep+\ep\norm{\po(\chi)}_{\xt{2}{\infty}},
\eea
where we used in the last inequality the estimate \eqref{po6} for $\po N$, the curvature bound \eqref{curvflux1} for $\b$, and the estimates \eqref{estn}-\eqref{esthch} for $n, \kep, \kepb, \db$ and $\chi$. The estimate for transport equations \eqref{estimtransport1} and \eqref{po7} yield:
$$\norm{\Pi(\po\chi)}_{\xt{2}{\infty}}\les \norm{\ddb_L\Pi(\po\chi))}_{\lh{2}}\les \ep+\ep\norm{\po(\chi)}_{\xt{2}{\infty}}$$
which together with \eqref{po6bis} yields:
\be\lab{po8}
\norm{\po\chi}_{\xt{2}{\infty}}\les\ep.
\ee

We now derive an estimate for $\po\z$. First, the fact that $\z$ is a $\ptu$-tangent 1-form yields for any vectorfields $X$ on $\Sit$:
$$\po\z_X=\Pi(\po\z)_{\Pi(X)}-g(N,X)\z_{\po N}$$
which implies:
\be\lab{po8bis}
\norm{\po\z}_{\xt{2}{\infty}}\les \norm{\Pi(\po\z)}_{\xt{\infty}{2}}+\norm{\z}_{\xt{2}{\infty}}\norm{\po N}_{L^\infty}\les \norm{\Pi(\po\z)}_{\xt{\infty}{2}}+\ep,
\ee
where we used the estimate \eqref{po6} for $\po N$, and the estimate \eqref{estzeta} for $\z$. In view of \eqref{po8bis}, we have to estimate $\norm{\Pi(\po\z)}_{\xt{\infty}{2}}$. The formula \eqref{popo5}  for $\dd_L(\Pi(\po\z))$ implies: 
\bea\lab{po9}
&&\norm{\dd_L(\Pi(\po\z))}_{\lh{2}}\\
\nn&\les&\norm{\nabb_{\po N}\z}_{\lh{2}}+\norm{\kepb\z_{\po N}}_{\lh{2}}+\norm{\po N\kepb\c\z}_{\lh{2}}+\norm{(k_{B\po N}+\po\z_B)\chi}_{\lh{2}}\\
\nn&&+\norm{(\kepb+\z)\po\chi}_{\lh{2}}+\norm{\po N(-\a_{AB}+\r\d_{AB}+3\s\in_{AB}}_{\lh{2}}\\
\nn&\les& (1+\norm{\po N}_{L^\infty})\bigg(\norm{\nabb\z}_{\lh{2}}+\no(\kepb)\no(\z)+(\norm{k}_{\xt{2}{\infty}}+\norm{\z}_{\xt{2}{\infty}})\norm{\chi}_{\xt{2}{\infty}}\\
\nn&&+(\norm{\kepb}_{\xt{\infty}{2}}+\norm{\z}_{\xt{\infty}{2}})\norm{\po\chi}_{\xt{2}{\infty}}+\norm{\a}_{\lh{2}}+\norm{\r}_{\lh{2}}+\norm{\s}_{\lh{2}}\bigg)\\
\nn&\les&\ep+\ep\norm{\po(\z)}_{\xt{2}{\infty}},
\eea
where we used in the last inequality the estimate \eqref{po6} for $\po N$, the curvature bound \eqref{curvflux1} for $\a, \r$ and $\s$, and the estimates \eqref{estn}-\eqref{estzeta} for $\kepb, k, \chi$ and $\z$. The estimate for transport equations \eqref{estimtransport1} and \eqref{po9} yield:
$$\norm{\Pi(\po\z)}_{\xt{2}{\infty}}\les \norm{\ddb_L\Pi(\po\z))}_{\lh{2}}\les \ep+\ep\norm{\po(\z)}_{\xt{2}{\infty}}$$
which together with \eqref{po8bis} yields:
\be\lab{po10}
\norm{\po\z}_{\xt{2}{\infty}}\les\ep.
\ee

We now estimate $\po b$. Differentiating the transport equation \eqref{D4a} for $b$ with respect to $\o$ and using the commutator formula \eqref{commo1}, we obtain:
$$L(\po b)=-\nabb_{\po N}b-\po(b)\db-\po(\db)b=-\nabb_{\po N}b-\po(b)\db-(2\kep_{\po N}-n^{-1}\nabla_{\po N}n)b,$$
where we used \eqref{po2} in the last equality. Since, $\nabb b=b(\z-\kep)$ from \eqref{etaa}, we obtain:
\be\lab{dw}
L(\po b)=-b\z_{\po N}-\po(b)\db-\kepb_{\po N}b.
\ee
This yields:
\bea\label{po11}
&&\norm{L(\po b)}_{\xt{\infty}{2}}\\
\nn&\les& \norm{b\z_{\po N}}_{\xt{\infty}{2}}+\norm{\po(b)\db}_{\xt{\infty}{2}}+\norm{\kepb_{\po N}b}_{\xt{\infty}{2}}\\
\nn&\les& (1+\norm{\po N}_{L^\infty})\bigg(\norm{b}_{\lh{\infty}}(\norm{\z}_{\xt{\infty}{2}}+\norm{\kepb}_{\xt{\infty}{2}})+\norm{\po b}_{\lh{\infty}}\norm{\db}_{\xt{\infty}{2}}\bigg)\\
\nn&\les& \ep+\ep\norm{\po(b)}_{\lh{\infty}},
\eea
where we used in the last inequality the estimate \eqref{po6} for $\po N$, and the estimates \eqref{estn}-\eqref{estb} for $n, \kep, \db$ and $b$. The estimate for transport equations \eqref{estimtransport1} and \eqref{po11} yield:
$$\norm{\po b}_{\lh{\infty}}\les \norm{L(\po b)}_{\xt{\infty}{2}}\les \ep+\ep\norm{\po b}_{\lh{\infty}}$$
which in turn implies:
\be\lab{po12}
\norm{\po b}_{\lh{\infty}}+\norm{L(\po b)}_{\xt{\infty}{2}}\les\ep.
\ee
Next, we estimate $\nabb\po b$. Recall from \eqref{etaa} that $\nabb b=b(\z-\kep)$. Differentiating with respect to $\o$ and using the commutator formula \eqref{commo2}, we obtain:
$$\nabb_A\po b=\nabb_{\po N}bN+\nabla_Nb\po N+\po b(\z_A-\kep_A)+b(\po\z_A-k_{\po NA})$$
which yields the estimate:
\bea
\nn\norm{\nabb b}_{\xt{2}{\infty}}&\les& (1+\norm{\po N}_{L^\infty})\bigg(\norm{\nabla b}_{\xt{2}{\infty}}+\no(\po b)(\no(\z)+\no(\zb))+\norm{\po\z}_{\xt{2}{\infty}}\\
\nn&&+\norm{k}_{\xt{2}{\infty}}\bigg)\\
\lab{po13}&\les& \ep+\no(\po b)\ep,
\eea
where we used in the last inequality the estimate \eqref{po6} for $\po N$, the estimates \eqref{estn}-\eqref{estzeta} for $k, b, \zb$ and $\z$, and the estimate \eqref{po10} for $\po\z$. Now, \eqref{po12} and \eqref{po13} yield:
\be\lab{po14}
\norm{\nabb\po b}_{\xt{2}{\infty}}\les \ep.
\ee

Finally, we estimate $\dd\po N$. In view of \eqref{po5} and \eqref{po6}, we have:
\be\lab{po15}
\norm{\dd_L(\po N)}_{\xt{\infty}{2}}\les\ep.
\ee
Then, using the formula for $\dd_{\lb}\po N$ and $\dd_A\po N$ given respectively by \eqref{popo2} and \eqref{popo3}, we obtain:
\bea
\nn&&\norm{\dd_{\lb}(\po N)}_{\xt{2}{\infty}}+\norm{\dd_A(\po N)}_{\xt{2}{\infty}}\\
\nn&\les&\norm{\po\z}_{\xt{2}{\infty}}+\norm{\po\chi}_{\xt{2}{\infty}}+\norm{\po N}_{L^\infty}\bigg(\norm{\chi}_{\xt{2}{\infty}}+\norm{\d}_{\xt{2}{\infty}}\\
\nn&&+\norm{n^{-1}\nabla n}_{\xt{2}{\infty}}+\norm{\kep}_{\xt{2}{\infty}}+\norm{\z}_{\xt{2}{\infty}}\bigg)\\
\lab{po16}&\les& \ep,
\eea
where we used the estimate \eqref{po6} for $\po N$, the estimates \eqref{estn}-\eqref{estzeta} for $n, \d, \kep, \chi$ and $\z$, the estimate \eqref{po8} for $\po\chi$, and the estimate \eqref{po10} for $\po\z$. 

Finally, \eqref{po6} yields the desired $L^\infty$ bound \eqref{estNomega} for $\po N$, while \eqref{po8}, \eqref{po10}, \eqref{po12}, \eqref{po14}, \eqref{po15} and \eqref{po16} yields the desired estimate \eqref{estricciomega}.

\subsection{Control of $\ddb_{\lb}\Pi(\po\chi)$}\lab{sec:jycroispas1}

The goal of this section is to prove the estimate \eqref{estricciomegabis} for $\ddb_{\lb}\Pi(\po\chi)$. We will use the following lemmas.

\begin{lemma}\lab{lemma:imj}
$\ddb_{\lb}\Pi(\po\chi)$ satisfies the following transport equation
\be\lab{imj1}
\ddb_L(\ddb_{\lb}\Pi(\po\chi))=-\ddb_{\lb}\Pi(\po\chi)\c\chi-\chi\c\ddb_{\lb}\Pi(\po\chi)+\nabb F_1+F_2,
\ee
where $F_1$ and $F_2$ are $\ptu$-tangent tensors satisfying the following estimate
\be\lab{imj2}
\norm{F_1}_{\li{\infty}{2}}+\norm{F_2}_{\tx{2}{1}}\les\ep.
\ee
\end{lemma}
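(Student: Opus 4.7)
The plan is to start from the transport equation \eqref{popo4} for $\Pi(\po\chi)$ and apply $\ddb_{\lb}$ to both sides, then move $\ddb_{\lb}$ across $\ddb_L$ on the left-hand side using the commutator formula \eqref{comm3}. On the right-hand side, the Leibniz rule applied to the term $-(\po\chi)_{AC}\chi_{CB}-\chi_{AC}(\po\chi)_{CB}$ produces the two principal terms $-\ddb_{\lb}\Pi(\po\chi)\c\chi-\chi\c\ddb_{\lb}\Pi(\po\chi)$ that appear explicitly in \eqref{imj1}, together with two ``companion'' terms $-\Pi(\po\chi)\c\ddb_{\lb}\chi-\ddb_{\lb}\chi\c\Pi(\po\chi)$ that will be put into $F_2$. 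All the remaining terms, as well as the commutator terms $\db\,\ddb_{\lb}\Pi(\po\chi)-(\d+n^{-1}\nab_Nn)\ddb_L\Pi(\po\chi)-2(\z-\zb)\c\nabb\Pi(\po\chi)+\cdots$ coming from \eqref{comm3}, have to be sorted into the $\nabb F_1$ part (with $F_1\in L^\infty_u L^2(\H_u)$) and the $F_2$ part (with $F_2\in L^2_tL^1_{x'}$).

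The delicate contribution is $\ddb_{\lb}(\nabb_{\po N}\chi)$: I rewrite $\nabb_{\po N}\chi=\nabb(\po N\otimes\chi)-(\nabb\po N)\c\chi$, apply $\ddb_{\lb}$ to the first piece, and use \eqref{comm2} to commute $\ddb_{\lb}$ past $\nabb$. This produces a term $\nabb\bigl(\ddb_{\lb}(\po N\otimes\chi)\bigr)$, which fits in $\nabb F_1$ with $F_1\sim\ddb_{\lb}(\po N\otimes\chi)$, while the commutator corrections, of schematic type $\chb\,\nabb(\po N\otimes\chi)$, $\xib\,\ddb_L(\po N\otimes\chi)$, $b^{-1}\nabb b\,\ddb_{\lb}(\po N\otimes\chi)$ and $(\chi\xib+\chb\z+\bb)(\po N\otimes\chi)$, are placed in $F_2$. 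The algebraic terms in \eqref{popo4} involving $\db\,\po\chi$, $\kepb\chi\,\po N$, $(2\kep_{\po N}-n^{-1}\nabla_{\po N}n)\chi$ and $\po N\c{}^*\b$ are differentiated by $\ddb_{\lb}$ with the Leibniz rule, producing cubic expressions of the type (Ricci coefficient)$\times$(Ricci coefficient)$\times\ddb_{\lb}($factor$)$ or (Ricci coefficient)$\times$(null curvature component), all of which are absorbed into $F_2$.

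The $L^\infty_uL^2(\H_u)$ bound on $F_1$ reduces to the bound $\norm{\ddb_{\lb}(\po N\otimes\chi)}_{\lh{2}}\les\ep$, which follows from combining $\norm{\po N}_{L^\infty}\les 1$ and $\norm{\dd\po N}_{\xt{2}{\infty}}\les\ep$ from \eqref{estNomega}--\eqref{estricciomega} with the estimate $\norm{\ddb_{\lb}\hch}_{\lh{2}}+\norm{\lb\trc}_{\xt{2}{\infty}}\les\ep$ from \eqref{esttrc}--\eqref{esthch}. For $F_2\in\tx{2}{1}$, I use repeatedly the elementary H\"older $\norm{FG}_{\tx{2}{1}}\les\norm{F}_{\xt{\infty}{2}}\norm{G}_{\tx{2}{2}}$ together with the trace bounds on $\chi$, $\db$, $\kepb$, $\zb$, $\po N$, $\po\chi$, $\po\z$ from Theorem \ref{thregx} and \eqref{estricciomega}, the curvature flux \eqref{curvflux1} for $\b$ and $\bb$ (crucially never $\ab$, which is not controlled in $\lh{2}$), and the Sobolev inequality \eqref{sobineq1} whenever an $L^4_{x'}$ bound is needed.

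The main obstacle is purely the bookkeeping: keeping track of which product of low-regularity factors goes into $\nabb F_1$ versus $F_2$, ensuring that each term produced by the Leibniz rule on \eqref{popo4} and by the commutators \eqref{comm2}--\eqref{comm3} can be estimated by quantities already controlled in Theorem \ref{thregx} and Theorem \ref{thregomega}, and verifying that no estimate silently requires either $\ab$ or a second tangential derivative of $\chi$ that is not already rewritten as $\nabb(\ddb_{\lb}\chi)$.
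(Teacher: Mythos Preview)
Your overall architecture is correct and matches the paper's proof: differentiate \eqref{popo4} by $\ddb_{\lb}$, commute $\ddb_{\lb}$ with $\ddb_L$ using \eqref{comm3} (the two $\db\,\ddb_{\lb}\Pi(\po\chi)$ contributions cancel, giving the clean form \eqref{imj1}), and for $\ddb_{\lb}(\nabb_{\po N}\chi)$ rewrite the main piece in divergence form so that $\po N\otimes\ddb_{\lb}\chi$ lands in $F_1$. The commutator contribution $2(\z-\zb)\c\nabb\Pi(\po\chi)$ must likewise be written as $\nabb\bigl((\z-\zb)\Pi(\po\chi)\bigr)$ minus a lower order piece; the paper puts $(\z-\zb)\Pi(\po\chi)$ into $F_1$, and you seem to have this in mind.

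There is, however, a genuine gap in your treatment of the curvature term. Applying $\ddb_{\lb}$ to the last term of \eqref{popo4} produces $\po N\cdot\ddb_{\lb}\b$, and you claim this is ``(Ricci coefficient)$\times$(null curvature component)'' absorbable into $F_2$. It is not: there is no direct $L^2(\H_u)$ bound available for $\ddb_{\lb}\b$. One must invoke the Bianchi identity \eqref{bianc1bis},
\[
\ddb_{\lb}\b=\nabb\r+(\nabb\s)^*+2\hch\c\bb+(\d+n^{-1}\nabla_Nn)\b+\xib\c\a+3(\z\r+{}^*\z\s),
\]
whose principal part $\nabb\r+(\nabb\s)^*$ is one derivative too many to sit in $F_2$. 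The paper therefore writes $\po N\c\ddb_{\lb}\b=\nabb(F_7)+F_8$ with $F_7=\po N(\r,\s)$ (schematically), so that $\po N(\r,\s)$ becomes a \emph{third} contribution to $F_1$ alongside $\po N\,\ddb_{\lb}\chi$ and $(\z-\zb)\Pi(\po\chi)$, while $\nabb\po N\cdot(\r,\s)$ and the quadratic terms $\hch\c\bb$, $\xib\c\a$, etc.\ go into $F_2$. None of these involve $\ab$, so your ``crucially never $\ab$'' observation remains valid; but without this step the estimate $\norm{F_2}_{\tx{2}{1}}\les\ep$ cannot close.
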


\begin{lemma}\lab{lemma:poo3}
Recall that $\ga$ denotes the metric induced by $\gg$ on $\ptu$. Let $M$ the $\ptu$-tangent 2-tensor defined as the solution of the following transport equation:
\be\lab{poo35}
\ddb_LM_{AB}=M_{AC}\chi_{CB},\,M_{AB}=\ga_{AB}\textrm{ on }\pou,
\ee
Then, $M_{AB}$ satisfies the following estimate:
\be\lab{poo36}
\norm{M-\ga}_{L^\infty}+\norm{\nabb M}_{\BB^0}\les\ep.
\ee
\end{lemma}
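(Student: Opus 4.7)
The approach is to first establish the $L^\infty$ bound by a Gronwall argument on $N:=M-\ga$, and then to control $\nabb M$ in $\BB^0$ via the sharp transport-Besov estimate of Proposition \ref{prop:mainlemmaapplicationt} applied to a suitable restructuring of the differentiated equation.

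For the pointwise bound I first observe that $\ddb_L\ga=0$: writing $\ga_{AB}=\gg(e_A,e_B)=\d_{AB}$ in the orthonormal frame and using the Ricci relation $\dd_L e_A=\ddb_L e_A+\zb_A L$ from \eqref{ricciform}, one has $\gg(\ddb_L e_A,e_B)+\gg(e_A,\ddb_L e_B)=0$. Since also $\ga_{AC}\chi_{CB}=\chi_{AB}$, subtracting $\ddb_L\ga=0$ from \eqref{poo35} yields
\be\lab{prop:Neq}
\ddb_L N_{AB}=N_{AC}\chi_{CB}+\chi_{AB},\qquad N|_{t=0}=0.
\ee
Computing $L(|N|^2)$ from \eqref{prop:Neq} and invoking the sharp trace bound $\norm{\chi}_{\xt{\infty}{2}}\les\ep$ from \eqref{esttrc}--\eqref{esthch}, which by Cauchy-Schwarz gives $\int_0^1|\chi|\,d\tau\les\ep$ along each null generator, a standard Gronwall argument along null geodesics yields $\norm{M-\ga}_{L^\infty}\les\ep$.

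For the Besov bound I differentiate \eqref{poo35} in direction $e_D$ and apply the commutator formula \eqref{comm5}, obtaining after multiplication by $n$:
\be\lab{prop:diffeq}
\ddb_{nL}(\nabb_D M_{AB})=n\chi_{CB}\nabb_D M_{AC}-n\chi_{DE}\nabb_E M_{AB}+M_{AC}\,n\nabb_D\chi_{CB}+R_{DAB},
\ee
where $R_{DAB}$ collects the lower-order Ricci/curvature terms from \eqref{comm5} applied to the source $M\chi$; Theorem \ref{thregx} together with the $L^\infty$ bound on $M$ just obtained places $R$ in the class required for the error term in Proposition \ref{prop:mainlemmaapplicationt}. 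The initial data vanishes: $\nabb M|_{t=0}=\nabb\ga=0$. The principal source is $M\cdot n\nabb\chi$, which I split via $\chi=\half\trc\,\ga+\hch$ and then invoke the two Besov improvements of section \ref{sec:besovtrc}, namely the bound $\norm{\nabb\trc}_{\BB^0}\les\ep$ from \eqref{esttrcbesov} and the decomposition $\nabb(n\hch)=\ddb_{nL}P+E$ with $\no(P)+\norm{E}_{\PP^0}\les\ep$ from \eqref{esthchbesov}. Writing $n\nabb\hch=\ddb_{nL}P+E-\hch\nabb n$, the equation \eqref{prop:diffeq} is cast in the form
\be\lab{prop:sourceform}
\ddb_{nL}(\nabb M)+a\trc\,\nabb M=F_1\cdot\ddb_{nL}P+F_2\cdot W
\ee
with $\no(F_1)+\norm{F_1}_{\xt{\infty}{2}}+\no(F_2)+\norm{F_2}_{\xt{\infty}{2}}+\no(P)+\norm{W}_{\PP^0}\les\ep$, the coefficients $F_i$ involving $M$, $n$, $\nabb n$, $\hch$ and the data of Theorem \ref{thregx}; the $\hch$-proportional contributions linear in $\nabb M$ on the right-hand side are absorbed by the smallness $\norm{\hch}_{\xt{2}{\infty}}\les\ep$. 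Applying Proposition \ref{prop:mainlemmaapplicationt} to \eqref{prop:sourceform} then yields $\norm{\nabb M}_{\BB^0}\les\ep$.

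The main technical obstacle is the handling of the principal source $M\,n\nabb\chi$: neither $\nabb\trc$ nor $\nabb\hch$ belongs to $L^\infty_tL^2$, so a naive transport estimate would fail and one genuinely needs both the $\BB^0$ improvement for $\nabb\trc$ and the $\ddb_{nL}$-cancellation encoded in the decomposition of $\nabb(n\hch)$ in order to feed the right-hand side of \eqref{prop:sourceform} into Proposition \ref{prop:mainlemmaapplicationt}. Everything else is routine bookkeeping using the calculus inequalities of section \ref{sec:calcineqgeneral} and the previously established estimates on $n$, $b$, $k$, $\chi$, $\z$.
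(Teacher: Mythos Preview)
Your overall structure matches the paper's proof in section \ref{sec:proofpoo3}: transport estimate for $M-\ga$, then differentiate, commute via \eqref{comm5}, and feed into Proposition \ref{prop:mainlemmaapplicationt}. The $L^\infty$ part and the handling of the principal source $M\cdot n\nabb\chi$ via \eqref{esttrcbesov} and \eqref{esthchbesov} are correct.

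There is however a genuine gap in your treatment of the remainder $R_{DAB}$. The commutator \eqref{comm5} applied to the $2$-tensor $M$ produces, besides the $n\chi\c\kepb\c M$ terms (which are indeed harmless by the nonsharp product estimate \eqref{nonsharpprod2}), a curvature contribution of the form $n\b\c M$. You claim that Theorem \ref{thregx} together with $\norm{M}_{L^\infty}\les 1$ places this in the error class $\PP^0$ required by Proposition \ref{prop:mainlemmaapplicationt}. This is not correct: the only available control on $\b$ is the flux bound $\norm{\b}_{\lh{2}}\les\ep$ from \eqref{curvflux1}, and a generic $L^2(\H_u)$ function does not belong to $\PP^0$ (which requires $\ell^1$-summability of $\norm{P_j\cdot}_{\lh{2}}$). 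No product estimate from section \ref{sec:besovtrc} can manufacture the missing derivative on $\b$. The paper closes this gap by invoking, in addition to the decomposition \eqref{impbes22} of $\nabb(n\hch)$, the Bianchi-based decomposition \eqref{estfluxcorres1:0}
\[
n\b=\ddb_{nL}P_1+E_1,\qquad \no(P_1)+\norm{E_1}_{\PP^0}\les\ep,
\]
so that $n\b\c M$ contributes to the $F_1\cdot\ddb_{nL}P$ term (with $F_1=M$) rather than to the $F_2\cdot W$ term. Once you add this ingredient, your argument goes through; without it, the $\b$-term is uncontrolled in $\PP^0$ and Proposition \ref{prop:mainlemmaapplicationt} cannot be applied.

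A minor cosmetic point: the paper does not split $\chi$ into trace and traceless parts here, but uses \eqref{impbes22} for $\nabb(n\hch)$ together with \eqref{impbes21} for $\nabb\trc$ packaged as a single $\nabb(n\chi)$ decomposition; it also treats the full $n\chi\c\nabb M$ term as $F_2\c W$ with $W=\nabb M$, yielding $\ep\norm{\nabb M}_{\PP^0}$ on the right-hand side to be absorbed, which is slightly cleaner than singling out the $\hch$-part for absorption.
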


\begin{lemma}\lab{lemma:bispoo3}
Recall that $\ga$ denotes the metric induced by $\gg$ on $\ptu$. Let $\widetilde{M}$ the $\ptu$-tangent 2-tensor defined as the solution of the following transport equation:
\be\lab{bispoo35}
\ddb_L\widetilde{M}_{AB}=\chi_{AC}\widetilde{M}_{CB},\,\widetilde{M}_{AB}=\ga_{AB}\textrm{ on }\pou,
\ee
Then, $\widetilde{M}_{AB}$ satisfies the following estimate:
\be\lab{bispoo36}
\norm{\widetilde{M}-\ga}_{L^\infty}+\norm{\nabb \widetilde{M}}_{\BB^0}\les\ep.
\ee
\end{lemma}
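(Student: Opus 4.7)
The statement is the mirror image of Lemma \ref{lemma:poo3}: the only difference is that $\chi$ acts on $\widetilde M$ from the left rather than from the right. Since $\ddb_L$ acts by a derivation on tensor contractions and the arguments below never exploit any asymmetry between the two slots of $\ga$, the entire proof of Lemma \ref{lemma:poo3} transfers essentially verbatim. The plan is therefore to recycle that proof, tracking only where $M\cdot\chi$ must be replaced by $\chi\cdot\widetilde M$.

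Set $\widetilde R=\widetilde M-\ga$. Using the fact that $\ddb_L\ga=0$ with respect to the orthonormal frame derivative $\ddb_L$, the transport equation \eqref{bispoo35} rewrites as
\be\lab{plan:tra}
\ddb_L\widetilde R_{AB}=\chi_{AB}+\chi_{AC}\widetilde R_{CB},\qquad \widetilde R|_{\pou}=0.
\ee
The estimate $\norm{\widetilde M-\ga}_{L^\infty}\les\ep$ will follow at once from integration of \eqref{plan:tra} along null generators of $\H_u$ combined with the trace bound $\norm{\chi}_{\xt{2}{\infty}}\les\ep$ provided by \eqref{esttrc}--\eqref{esthch} and Gr\"onwall's lemma; the argument is identical to the one used for $M-\ga$ in Lemma \ref{lemma:poo3}.

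For the Besov estimate, I would differentiate \eqref{plan:tra} by $\nabb$, multiply by $n$, and use the commutator identities \eqref{comm5}--\eqref{commA3} to reach a transport equation of the schematic form
\bee
\ddb_{nL}(\nabb\widetilde M)=(n\chi)\cdot\nabb\widetilde M+\nabb(n\chi)\cdot\widetilde M+n\b\cdot\widetilde M+A\cdot A\cdot\widetilde M,
\eee
where $A=n\chi$ satisfies \eqref{estA}. The decisive point is to inject the decomposition \eqref{impbes22} from Section \ref{sec:besovtrc}, namely $\nabb(n\hch)=\ddb_{nL}P+E$ with $\no(P)+\norm{E}_{\PP^0}\les\ep$, together with the analogous (easier) decomposition for $\nabb(n\trc)$ obtained from the Raychaudhuri equation, and the decomposition $n\b=\ddb_{nL}P_1+E_1$ of \eqref{estfluxcorres1:0}--\eqref{estfluxcorres2}. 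Using the already-proven bound $\norm{\widetilde M-\ga}_{L^\infty}\les\ep$ to multiply these decompositions by $\widetilde M$, I obtain a representation
\bee
\ddb_{nL}(\nabb\widetilde M)=F_1\,\ddb_{nL}P'+F_2\,W+F_3\,\nabb\widetilde M,
\eee
with $\no(F_i)+\norm{F_i}_{\xt{\infty}{2}}+\no(P')+\norm{W}_{\PP^0}\les\ep$. Proposition \ref{prop:mainlemmaapplicationt} then delivers $\norm{\nabb\widetilde M}_{\BB^0}\les\ep+\ep\norm{\nabb\widetilde M}_{\PP^0}$, and the smallness of $\ep$ absorbs the last term, concluding the proof.

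The main obstacle, just as in Lemma \ref{lemma:poo3}, is the forcing term $\nabb\chi\cdot\widetilde M$, which at the level of regularity allowed by the curvature flux \eqref{curvflux1} is not in $\PP^0$ by itself; the only way to handle it is through the structural decomposition \eqref{impbes22} of $\nabb(n\hch)$ (and the corresponding one for $\nabb(n\trc)$), which is precisely the Besov improvement proved in Section \ref{sec:besovtrc}. Everything else (the commutator manipulations, the $\chi\cdot\nabb\widetilde M$ term, and the lower-order cubic pieces) is routine once the correspondence \eqref{correspondancegt} between geodesic- and time-foliation structures is in place.
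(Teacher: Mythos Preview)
Your proposal is correct and follows exactly the approach the paper intends: the paper simply states that the proof of Lemma~\ref{lemma:bispoo3} is completely analogous to that of Lemma~\ref{lemma:poo3} (Section~\ref{sec:proofpoo3}) and leaves it to the reader. You have accurately reproduced that analog---first the $L^\infty$ bound via the transport estimate and Gr\"onwall, then the $\BB^0$ bound by differentiating, commuting via \eqref{comm5}, inserting the decompositions \eqref{estfluxcorres1:0} and \eqref{impbes22}, and invoking Proposition~\ref{prop:mainlemmaapplicationt}---with the sole change that $\chi$ multiplies $\widetilde M$ on the left instead of the right.
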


\begin{lemma}\lab{lemma:poo4}
Let $F$ a $\ptu$-tangent tensor. Then, for any $1\leq p<q\leq +\infty$ and for any $j\geq 0$, we have:
\be\lab{poo36bis}
\norm{P_jF}_{\tx{p}{\infty}}\les 2^j\norm{F}_{\tx{q}{2}}.
\ee
Also, taking the dual, for any $1\leq p<q\leq +\infty$ and for any $j\geq 0$, we have 
\be\lab{poo37}
\norm{P_jF}_{\tx{p}{2}}\les 2^j\norm{F}_{\tx{q}{1}}.
\ee
\end{lemma}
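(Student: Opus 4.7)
\medskip

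\noindent\textbf{Proof plan.} The first inequality will follow from the sharp Bernstein inequality for tensors combined with a time Hölder argument that exploits the strict gap $p<q$. The starting point is the sharp Bernstein estimate \eqref{eq:strongberntensor}, applied pointwise in $t$ on the two-surface $P_{t,u}$: for any $2\le r<\infty$,
\be\lab{pln1}
\|P_j F(t,\cdot)\|_{L^\infty(P_{t,u})}\lesssim 2^j\Big(1+2^{-j/r}\|K(t,\cdot)\|_{\lpt{2}}^{1/r}+2^{-j/(r-1)}\|K(t,\cdot)\|_{\lpt{2}}^{1/(r-1)}\Big)\|F(t,\cdot)\|_{\lpt{2}}.
\ee
Write $g(t):=\|K(t,\cdot)\|_{\lpt{2}}$; the Gauss curvature bound \eqref{estgauss1} gives $\|g\|_{L^2([0,1])}\lesssim\ep$.

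Since $1\le p<q\le\infty$, define $r^*$ by $\tfrac{1}{r^*}=\tfrac{1}{p}-\tfrac{1}{q}$, so $r^*<\infty$. Taking the $L^p_t$ norm of \eqref{pln1} and applying Hölder in time with the split $\tfrac{1}{p}=\tfrac{1}{r^*}+\tfrac{1}{q}$, I would obtain
\be\lab{pln2}
\|P_jF\|_{\tx{p}{\infty}}\lesssim 2^j\,\Big\|1+2^{-j/r}g(t)^{1/r}+2^{-j/(r-1)}g(t)^{1/(r-1)}\Big\|_{L^{r^*}_t}\,\|F\|_{\tx{q}{2}}.
\ee
Now choose $r$ large enough that $r^*/r\le 2$ and $r^*/(r-1)\le 2$, for instance $r=\lceil r^*/2\rceil+1$; then $\|g^{1/r}\|_{L^{r^*}_t}=\|g\|_{L^{r^*/r}_t}^{1/r}\lesssim\|g\|_{L^2_t}^{1/r}\lesssim\ep^{1/r}\lesssim 1$, and similarly for the $1/(r-1)$ factor. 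Since $j\ge 0$ the $2^{-j/r}$ and $2^{-j/(r-1)}$ prefactors are harmless, so the bracketed factor in \eqref{pln2} is uniformly bounded, yielding the first estimate.

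For the second estimate I would invoke duality. The projection $P_j$ is self-adjoint with respect to the $L^2$ pairing on $\ptu$ (hence on $\H_u$ via the coarea formula \eqref{coarea} and $b\sim 1$). Applying the already-proved first estimate with the pair $(q',p')$ in place of $(p,q)$ (valid since $p<q$ forces $q'<p'$) gives, by transposition, $\|P_jF\|_{\tx{p}{2}}\lesssim 2^j\|F\|_{\tx{q}{1}}$ for the original range $1\le p<q\le\infty$.

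The only real issue is the uniform-in-$j$ control of the $L^{r^*}_t$ norm of the bracketed factor; this is precisely the point at which the strict inequality $p<q$ is used, via the requirement that $r^*$ be finite so that one can choose $r$ large. The restrictions $p=q$ or $p>q$ cannot be reached by this argument, which is consistent with the form of the statement.
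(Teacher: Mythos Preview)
Your proof is correct and follows the same route as the paper, which simply states that \eqref{poo36bis} follows immediately from the sharp Bernstein inequality for tensors \eqref{eq:strongberntensor} and that \eqref{poo37} then follows by duality. You have accurately fleshed out the one nontrivial detail the paper leaves implicit, namely that the strict inequality $p<q$ is exactly what allows a H\"older split in $t$ absorbing the $\|K(t,\cdot)\|_{\lpt{2}}$ factor from \eqref{eq:strongberntensor} via the bound \eqref{estgauss1}; the remark about the coarea formula and $b\sim 1$ is unnecessary for the duality step since the $\tx{p}{r}$ norms are defined directly with the measure $dt\,\dmt$, but this is harmless.
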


\begin{lemma}\lab{lemma:poo5}
Let $F$ a $\ptu$-tangent tensor. Then, for any $j\geq 0$ and for any $2\leq p<+\infty$, we have:
$$\normm{P_j\left(\int_0^t\nabb(F)dt\right)}_{\tx{p}{2}}\les 2^j\norm{F}_{\tx{1}{2}}.$$
\end{lemma}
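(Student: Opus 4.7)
The plan is to interpret $G(t):=\int_0^t\nabb F\,d\tau$ as the unique solution of the transport equation $\ddb_L G=\nabb F$ with $G(0)=0$, and to reduce to a pointwise-in-$t$ estimate on $\|P_jG(t)\|_{L^2(\ptu)}$. Since $t\in[0,1]$, Hölder's inequality in time gives $\|P_jG\|_{\tx{p}{2}}\les\|P_jG\|_{L^\infty_tL^2_{x'}}$ for any $p<\infty$, and the equivalence $L^2_{x'}\sim L^2(\ptu)$ follows from \eqref{eq:volumega} together with $\|\trc\|_{L^\infty(\H_u)}\les\ep$. Hence the proof reduces to showing $\|P_jG(t)\|_{L^2(\ptu)}\les 2^j\|F\|_{\tx{1}{2}}$ uniformly in $t\in[0,1]$.

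Fix such a $t$ and pair against an arbitrary test tensor $H\in L^2(\ptu)$ with $\|H\|_{L^2(\ptu)}=1$. By the self-adjointness of $P_j$ on $\ptu$, $\int_{\ptu}P_jG(t)\cdot H\,d\mu_t=\int_{\ptu}G(t)\cdot P_jH\,d\mu_t$. Introduce the \emph{backward transport} $\tilde H(\tau)$, $\tau\in[0,t]$, defined by the adjoint transport equation $\ddb_L\tilde H+\trc\tilde H=0$ with terminal data $\tilde H(t)=P_jH$. Differentiating $\tau\mapsto\int_{P_{\tau,u}}G\cdot\tilde H\,d\mu_\tau$ using \eqref{du} and substituting the transport equations for $G$ and $\tilde H$, the $\trc$-terms cancel and one obtains $\tfrac{d}{d\tau}\int G\cdot\tilde H\,d\mu_\tau=\int n\,\nabb F\cdot\tilde H\,d\mu_\tau$. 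Integrating from $0$ to $t$ (using $G(0)=0$) and then integrating by parts in $x'$ on each $P_{\tau,u}$ to shift $\nabb$ off $F$, we arrive at
\begin{equation*}
\int_{\ptu}P_jG(t)\cdot H\,d\mu_t=-\int_0^t\int_{P_{\tau,u}}F\cdot\bigl(n\divb\tilde H+\nabb n\cdot\tilde H\bigr)d\mu_\tau\,d\tau,
\end{equation*}
whose modulus is controlled by $\|F\|_{\tx{1}{2}}\bigl(\sup_\tau\|\nabb\tilde H\|_{L^2(P_{\tau,u})}+\ep\sup_\tau\|\tilde H\|_{L^2(P_{\tau,u})}\bigr)$, using $\|n\|_{L^\infty}\les 1$ and $\|\nabb n\|_{L^\infty}\les\ep$ from \eqref{estn}.

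Two backward energy estimates on $\tilde H$ then complete the argument. First, from $\ddb_L\tilde H+\trc\tilde H=0$, \eqref{du}, and $\|\trc\|_{L^\infty(\H_u)}\les\ep$ (\eqref{esttrc}), Gronwall gives $\sup_\tau\|\tilde H(\tau)\|_{L^2(P_{\tau,u})}\les\|P_jH\|_{L^2(\ptu)}\les 1$. Second, commuting $\nabb$ through the transport using \eqref{comm1} yields
\begin{equation*}
\ddb_L(\nabb\tilde H)=-(\trc\,\mathrm{I}+\chi)\cdot\nabb\tilde H-\bigl(\nabb\trc+n^{-1}\nabb n\,\trc+\chi\kepb+\b\bigr)\tilde H,
\end{equation*}
with terminal data $\|\nabb\tilde H(t)\|_{L^2(\ptu)}=\|\nabb P_jH\|_{L^2(\ptu)}\les 2^j$ by the finite band property of $P_j$ (Theorem \ref{thm:LP}). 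A careful $L^2$-energy estimate then gives $\sup_\tau\|\nabb\tilde H(\tau)\|_{L^2(P_{\tau,u})}\les 2^j$, which combined with the previous step closes the estimate.

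The main technical obstacle is the backward energy estimate for $\nabb\tilde H$. Since $\chi\notin L^1_tL^\infty_{x'}$, a naïve Gronwall in $\tau$ on the top-order commutator term $\chi\cdot\nabb\tilde H$ is unavailable. The remedy is to split $\chi=\tfrac12\trc\,\gamma+\hch$, using that $\trc$ is uniformly bounded by $\ep$, and to estimate $\int|\hch|\,|\nabb\tilde H|^2d\mu_\tau$ via Hölder on $\ptu$ combined with the Gagliardo--Nirenberg inequality \eqref{eq:GNirenberg} and the Bochner inequality \eqref{eq:Bochconseqbis}, absorbing the resulting powers of $\|\nabb^2\tilde H\|_{L^2}$ through the smallness of $\ep$ in $\no(\hch)\les\ep$ from \eqref{esthch}. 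The curvature flux bound on $\b$ in \eqref{curvflux1} together with the trace control on $\kepb$ from \eqref{estk} handle the zero-order-in-$\tilde H$ terms.
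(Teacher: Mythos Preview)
Your duality setup is essentially the paper's own argument in disguise: the paper also passes to a dual transport problem (defining $H$ by $\ddb_{nL}H=P_jF$ and proving $\|\nabb H\|_{\tx{\infty}{2}}\les 2^j\|F\|_{\tx{p}{2}}$), and the heart of both proofs is a sup-in-time $L^2$ bound on one derivative of the transported quantity. The gap is in how you close that estimate.

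Your proposed fix for the $\hch\cdot\nabb\tilde H$ term does not close. After H\"older and Gagliardo--Nirenberg you land on $\|\hch\|_{L^2(\ptu)}\,\|\nabb^2\tilde H\|_{L^2(\ptu)}\,\|\nabb\tilde H\|_{L^2(\ptu)}$, and there is no mechanism to absorb $\|\nabb^2\tilde H\|_{L^2}$: the energy you are running only controls $\|\nabb\tilde H\|_{L^2}$, and invoking Bochner merely trades $\nabb^2\tilde H$ for $\lap\tilde H$, which you do not control either. Commuting once more to run a second-order energy both fails to break the loop (the same $\hch\cdot\nabb^2\tilde H$ structure reappears) and costs an extra~$2^j$ from the terminal data $\nabb^2 P_jH$. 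Similarly, your treatment of the zero-order terms is incomplete: to place $\b\,\tilde H$ (or $\nabb\trc\cdot\tilde H$) in $\xt{2}{1}$ you need $\|\tilde H\|_{L^\infty}$, which you never establish; the curvature flux bound on $\b$ alone only gives $\|\b\,\tilde H\|_{L^1(\H_u)}$, which is too weak for the transport estimate.

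The paper avoids both issues simultaneously. First it secures the $L^\infty$ bound $\|H\|_{L^\infty}\les 2^j\|F\|_{\tx{p}{2}}$ from the sharp Bernstein inequality \eqref{poo36bis} applied to the source $P_jF$; this handles the zero-order terms $(n\chi\kepb+n\b)H$ in $\xt{2}{1}$. Second, for the top-order term $n\chi\cdot\nabb H$ it uses the trajectory-wise transport estimate \eqref{estimtransport1} (which produces $\xt{2}{\infty}$ on the left) together with the trace bound $\|\chi\|_{\xt{\infty}{2}}\les\ep$ from \eqref{esttrc}--\eqref{esthch}: one gets $\|n\chi\nabb H\|_{\xt{2}{1}}\les\ep\|\nabb H\|_{L^2(\H_u)}$, which is absorbed into the left-hand side. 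The same two ingredients would repair your backward-transport version, but neither the G--N/Bochner route nor a na\"{\i}ve Gronwall can substitute for them.
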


\begin{lemma}\lab{lemma:poo6}
Let $F$ a $\ptu$-tangent 1-form and $2<p\leq +\infty$ such that for all $j\geq 0$:
$$\norm{P_jF}_{\tx{p}{2}}\les 2^j\ep+2^{\frac{j}{2}}\ep\gamma(u),$$
and let $M$ such that:
$$\norm{M-\ga}_{L^\infty}+\norm{\nabb M}_{\BB^0}\les \ep.$$
Then, we have for any $2\leq q<p$ and all $j\geq 0$:
$$\norm{P_j(M^{-1}F)}_{\tx{q}{2}}\les 2^j\ep+2^{\frac{j}{2}}\ep\gamma(u).$$
\end{lemma}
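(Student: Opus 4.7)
The starting point is the decomposition
$$P_j(M^{-1}F)=M^{-1}P_jF+[P_j,M^{-1}]F.$$
Since $\norm{M-\ga}_{L^\infty}\les\ep$, the tensor $M$ is invertible and $\norm{M^{-1}}_{L^\infty}\les 1$; moreover, by a standard chain rule argument together with the product-in-Besov estimate \eqref{eq:secondbilBesov:bis}, the bound $\norm{\nabb M}_{\BB^0}\les\ep$ transfers to $\norm{\nabb M^{-1}}_{\BB^0}\les\ep$. For the first (diagonal) term, I would use Hölder in time on $[0,1]$ (which is allowed precisely because $q<p$) together with $\norm{M^{-1}}_{L^\infty}\les 1$ to estimate
$$\norm{M^{-1}P_jF}_{\tx{q}{2}}\les\norm{P_jF}_{\tx{p}{2}}\les 2^j\ep+2^{j/2}\ep\gamma(u).$$

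The real work is to bound the commutator $[P_j,M^{-1}]F$ in $\tx{q}{2}$ by the same right-hand side. I would handle it by a paraproduct-type splitting on $\ptu$: write
$$M^{-1}F=P_{<0}(M^{-1})\cdot F+\sum_{l\ge 0}P_l(M^{-1})\cdot F,$$
and further decompose each $F$ factor as $P_{<l-C}F+\sum_{m\ge l-C}P_mF$. For the \emph{low--high} piece, where $M^{-1}$ is at frequency $\lesssim 2^l$ and $F$ at frequency $\ge 2^l$, $P_j$ is essentially supported on the larger frequency; I can then absorb the $L^\infty$-bounded factor $P_{<l}(M^{-1})$ and reduce to a sum $\sum_{m\ge j-C}\norm{P_j(P_{<m}(M^{-1})P_mF)}_{\tx{q}{2}}$, which is controlled by $\norm{P_mF}_{\tx{p}{2}}$ using $L^\infty$-boundedness and $\ell^2$-almost-orthogonality. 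For the \emph{high--low} piece, the roles reverse: the finite-band property allows me to trade a factor $2^{-j}$ against $2^l$ using $\norm{P_l M^{-1}}_{\tx{\infty}{2}}\les 2^{-l}\sup_t\norm{P_l\nabb M^{-1}}_{\lpt{2}}$ (finite band property of Theorem \ref{thm:LP}), and summability in $l$ is then provided exactly by the $\BB^0$-bound on $\nabb M^{-1}$.

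The main obstacle is the \emph{high--high} piece, where $l\sim m \gg j$. Here I cannot exploit any exact spectral cancellation since the geometric Littlewood--Paley projections are built from the heat flow rather than Fourier cutoffs, so I only have approximate orthogonality through Bessel's inequality. The trick is to use $P_j=2^{-2j}\lap \bar P_j$ (finite band property) so that each factor $P_l(M^{-1})P_mF$ is hit by one derivative followed by a dual finite-band; this yields a weight $2^{-j}$ that combines with the pointwise-in-$t$ estimate
$$\sup_t\norm{P_l(M^{-1})\,P_mF(t,\cdot)}_{\lpt{2}}\les 2^{(1-2/r)l}\sup_t\norm{P_l\nabb M^{-1}}_{\lpt{2}}\,\norm{P_mF(t,\cdot)}_{\lpt{2}},$$
obtained from the weak Bernstein inequality of Theorem \ref{thm:LP} iv) for some $r$ chosen strictly between $2$ and $p$. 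The interpolation $q<p$ is precisely what allows such a choice of $r$, and the resulting factor $\ep\sum_l 2^{-c|l-j|}\sup_t\norm{P_l\nabb M^{-1}}_{\lpt{2}}$ is summable thanks to $\norm{\nabb M^{-1}}_{\BB^0}\les\ep$ and produces the desired gain $\ep$ in front of $2^j\ep+2^{j/2}\ep\gamma(u)$. Putting all three paraproduct pieces together concludes the argument.
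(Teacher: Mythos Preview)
Your diagonal term $M^{-1}P_jF$ is fine, and the idea of reducing the commutator to a paraproduct is in the right spirit. But there is a genuine gap in the high--high piece that prevents the argument from closing.

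The problem is that the only frequency gain you produce comes from $P_j$: you write $P_j=2^{-2j}\lap\bar P_j$ and extract a factor $2^{-j}$ (after integrating one derivative back via the dual finite band). This $2^{-j}$ is then supposed to combine with your displayed estimate on $\|P_l(M^{-1})P_mF\|_{\lpt{2}}$. But the displayed estimate is for the \emph{undifferentiated} product, while the $2^{-j}$ gain from finite band comes at the cost of one derivative on the product; after distributing $\nabb$ you pick up a factor $2^m$ (or $2^l$) from the high-frequency factor. For $l\sim m\gg j$ the net weight is $2^{m-j}$, and even after invoking the $\BB^0$-summability $\sum_l\sup_t\|P_l\nabb M^{-1}\|_{\lpt{2}}\les\ep$, the remaining sum $\sum_{m\gg j}2^{m-j}\|P_mF\|_{\tx{p}{2}}$ diverges because $\|P_mF\|_{\tx{p}{2}}\les 2^m\ep+2^{m/2}\ep\gamma$. (Incidentally, the right-hand side of your displayed inequality should carry $\|\nabb M^{-1}\|_{\lpt{2}}$ rather than $\|P_l\nabb M^{-1}\|_{\lpt{2}}$; finite band gives $\|P_lG\|_{\lpt{2}}\les 2^{-l}\|\nabb G\|_{\lpt{2}}$, not $2^{-l}\|P_l\nabb G\|_{\lpt{2}}$.) Your low--high description has a related issue: the heuristic ``$P_j$ is essentially supported on the larger frequency'' is a Fourier statement and does not hold as such for the heat-flow projections; you still need a quantitative mechanism to make $m\gg j$ summable.

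The paper avoids this by reducing to a slightly more general product lemma (Lemma~\ref{lemma:grr}) whose key move is the \emph{opposite} of yours: decompose only $F=\sum_lP_lF$, and for $l>j$ use the finite band property on the \emph{high-frequency factor}, writing $P_lF=2^{-2l}\lap P_lF$. After integrating by parts, one of the resulting terms is $2^{-2l}P_j\lap(HP_lF)$, which by finite band on $P_j$ gives the weight $2^{2(j-l)}$, summable for $l>j$; the remaining terms carry $\nabb H$, which is then decomposed as $\sum_m P_m(\nabb H)$ and controlled via $\|\nabb H\|_{L^r_tB^0_{2,1}}$. The strict inequality $q<p$ enters only in this last step, through an interpolation between an $\tx{q_+}{2}$ estimate (using sharp Bernstein to go from $\tx{p}{2}$ to $\tx{p_-}{\infty}$) and an $\tx{1}{2}$ estimate. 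The crucial difference with your sketch is that the gain $2^{-2l}$ beats $2^{-2j}$ precisely in the dangerous regime $l>j$; putting the finite band on $P_j$ alone can never recover this.
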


\begin{lemma}\lab{lemma:bispoo6}
Let $F$ a $\ptu$-tangent 1-form and $2\leq p\leq +\infty$ such that for all $j\geq 0$:
$$\norm{P_jF}_{\tx{p}{2}}\les 2^j\ep+2^{\frac{j}{2}}\ep\gamma(u),$$
and let $\widetilde{M}$ such that:
$$\norm{\widetilde{M}-\ga}_{L^\infty}+\norm{\nabb \widetilde{M}}_{\BB^0}\les \ep.$$
Then, we have for any $2\leq q<p$ and all $j\geq 0$:
$$\norm{P_j(F\widetilde{M}^{-1})}_{\tx{q}{2}}\les 2^j\ep+2^{\frac{j}{2}}\ep\gamma(u).$$
\end{lemma}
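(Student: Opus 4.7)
The plan is to mirror the proof of Lemma \ref{lemma:poo6}, with the order of tensor multiplication reversed throughout. Since $\widetilde{M}$ satisfies the same bounds as $M$ (compare Lemma \ref{lemma:bispoo3} with Lemma \ref{lemma:poo3}), the identity
$$\widetilde{M}^{-1} - \ga^{-1} = -\widetilde{M}^{-1}(\widetilde{M}-\ga)\ga^{-1}$$
together with the Besov product estimate \eqref{eq:secondbilBesov:bis} yields $\norm{\widetilde{M}^{-1}-\ga^{-1}}_{L^\infty} + \norm{\nabb \widetilde{M}^{-1}}_{\BB^0} \les \ep$. Splitting
$$F\widetilde{M}^{-1} = F\ga^{-1} + F(\widetilde{M}^{-1}-\ga^{-1}),$$
the first term is controlled trivially: raising indices by $\ga^{-1}$ is compatible with the geometric Littlewood-Paley projections (both constructed from $\ga$), so $\norm{P_j(F\ga^{-1})}_{\tx{q}{2}} \les \norm{P_jF}_{\tx{q}{2}} \les \norm{P_jF}_{\tx{p}{2}}$ on the finite time interval $[0,1]$ by H\"older's inequality in $t$, since $q \leq p$.

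For the remainder $F(\widetilde{M}^{-1}-\ga^{-1})$, the plan is to perform a paraproduct decomposition based on $\sum_l P_l = I$, writing
$$P_j\big(F(\widetilde{M}^{-1}-\ga^{-1})\big) = \sum_{l,m} P_j\big(P_l F \cdot P_m(\widetilde{M}^{-1}-\ga^{-1})\big),$$
and to separate three regimes: (i) $m \ll l \sim j$ (low frequency of $\widetilde{M}^{-1}-\ga^{-1}$, high of $F$), which I would estimate directly via $\norm{P_m(\widetilde{M}^{-1}-\ga^{-1})}_{L^\infty} \les \ep$ combined with the hypothesis on $P_lF$; (ii) $l \ll m \sim j$ (low frequency of $F$, high of $\widetilde{M}^{-1}-\ga^{-1}$), handled by the weak Bernstein inequality for $P_lF$ together with the $\BB^0$ estimate on $\nabb\widetilde{M}^{-1}$ and the fact that $q < p$, which gives a strict H\"older margin in $t$ that offsets the otherwise non-summable geometric series; (iii) the high-high regime $\max(l,m) \gtrsim j$, controlled by the dual Bernstein inequality for $P_j$ and the Besov bound on $\widetilde{M}^{-1}-\ga^{-1}$, which provides a decay factor $2^{j-\max(l,m)}$ that is summable against the $\BB^0$ norm.

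The main obstacle is the careful propagation of the compound structure $2^j\ep + 2^{j/2}\ep\gamma(u)$ through the paraproduct without any loss. The $2^j\ep$ contribution to the conclusion will arise naturally from terms quadratic in $\ep$ (regimes (ii) and (iii) produce contributions of order $2^j\ep \cdot \ep \les 2^j\ep$), while the $2^{j/2}\ep\gamma(u)$ factor, which encodes the initial-data contribution to $F$, must pass through regime (i) untouched by being paired with the $L^\infty$ smallness of $\widetilde{M}^{-1}-\ga^{-1}$; at no stage may one accept a logarithmic $\sum_l 1$ divergence. Summability at each step rests on the Bessel inequality and the geometric Littlewood-Paley framework of \cite{LP}, together with $\norm{\gamma}_{L^2(\mathbb{R})} \leq 1$, and the strict inequality $q < p$ is precisely what provides the remaining slack to close the estimate.
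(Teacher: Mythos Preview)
Your proposal is correct and takes essentially the same route as the paper, which states that the proof is completely analogous to that of Lemma~\ref{lemma:poo6} and omits it. The only minor organizational difference is that the paper's proof of Lemma~\ref{lemma:poo6} does not split off the metric part $F\ga^{-1}$: instead it shows $\norm{\nabb(M^{-1})}_{\BB^0}\les\ep$ directly (via $\nabb(M^{-1})=-M^{-1}(\nabb M)M^{-1}$ and the product estimate~\eqref{eq:secondbilBesov:bis}), and then invokes the general product Lemma~\ref{lemma:grr} with $H=M^{-1}$, whose proof decomposes $F=\sum_l P_lF$ first and only further decomposes $H$ (through $\nabb H$) in the range $l>j$. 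Your decomposition $F\widetilde{M}^{-1}=F\ga^{-1}+F(\widetilde{M}^{-1}-\ga^{-1})$ followed by a simultaneous double paraproduct is a harmless reorganization of the same argument.
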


The proof of Lemma \ref{lemma:imj} is postponed to section \ref{sec:imj}. The proof of Lemma \ref{lemma:poo3} is postponed to section \ref{sec:proofpoo3}. The proof of Lemma \ref{lemma:bispoo3} is completely analogous to the one of Lemma \ref{lemma:poo3} and left to the reader. The proof of Lemma \ref{lemma:poo4} is postponed to section \ref{sec:proofpoo4}. The proof of Lemma \ref{lemma:poo5} is postponed to section \ref{sec:proofpoo5}. The proof of Lemma \ref{lemma:poo6} is postponed to section \ref{sec:proofpoo6}. Finally, The proof of Lemma \ref{lemma:bispoo6} is completely analogous to the one of Lemma \ref{lemma:poo6}, and left to the reader. We are now in position to derive the estimate for $\ddb_{\lb}\Pi(\po\chi)$. Using the transport equation \eqref{imj1} for $\ddb_{\lb}\Pi(\po\chi)$, the transport equation \eqref{poo35} for $M$ and  the transport equation \eqref{bispoo35} for $\widetilde{M}$ allows us to get rid of the first two terms in the left-hand side of \eqref{imj1}: 
\bee
&&\ddb_L(M\c\ddb_{\lb}\Pi(\po\chi)\c\widetilde{M})\\
&=& \ddb_L(M)\c\ddb_{\lb}\Pi(\po\chi)\c\widetilde{M}+M\c\ddb_L(\ddb_{\lb}\Pi(\po\chi))\c\widetilde{M}+M\c\ddb_{\lb}\Pi(\po\chi)\c\ddb_L(\widetilde{M})\\
&=& M\c(\nabb F_1+F_2)\c\widetilde{M}\\
&=& \nabb(M\c F_1\c\widetilde{M})-\nabb(M)\c F_1\c\widetilde{M}-M\c F_1\c\nabb(\widetilde{M})+M\c F_2\c\widetilde{M}.
\eee
Let $2\leq p<q<+\infty$. This yields:
\bea
\lab{davis}&&\norm{P_j(M\c\ddb_{\lb}\Pi(\po\chi)\c\widetilde{M})}_{\tx{q}{2}}\\
\nn&\les&2^{\frac{j}{2}}\gamma(u)+\normm{P_j\left(\int_0^t\nabb(M\c F_1\c\widetilde{M})\right)}_{\tx{q}{2}}+\normm{P_j\left(\int_0^t\nabb(M)\c F_1\c\widetilde{M}dt\right)}_{\tx{q}{2}}\\
\nn&&+\normm{P_j\left(\int_0^tM\c F_1\c\nabb(\widetilde{M})dt\right)}_{\tx{q}{2}}+\normm{P_j\left(\int_0^tM\c F_2\c\widetilde{M}dt\right)}_{\tx{q}{2}},
\eea
where the term $2^{\frac{j}{2}}\gamma(u)$ comes from the initial data term at $t=0$. Next, we estimate the various terms in the right-hand side of \eqref{davis}. 

We consider the first term in the right-hand side of \eqref{davis}. Using Lemma \ref{lemma:poo5}, we have:
\bea
\lab{davis1}\normm{P_j\left(\int_0^t\nabb(M\c F_1\c\widetilde{M})dt\right)}_{\tx{q}{2}}&\les& 2^j\norm{M\c F_1}_{\lh{2}}\\
\nn&\les& 2^j\norm{M}_{L^\infty}\norm{F_1}_{\lh{2}}\norm{\widetilde{M}}_{L^\infty}\\
\nn&\les&2^j\ep,
\eea
where we used in the last inequality the estimate \eqref{imj2} for $F_1$, the estimate \eqref{poo36} for $M$, and the estimate \eqref{bispoo36} for $\widetilde{M}$.

Next, we consider the last three terms in the right-hand side of \eqref{davis}. 
Using the dual sharp Bernstein inequality  for tensors \eqref{poo37} and the estimate \eqref{estimtransport1} for transport equations, we have:
\bea
\lab{davis2}&&\normm{P_j\left(\int_0^t\nabb(M)\c F_1\c\widetilde{M}dt\right)}_{\tx{q}{2}}+\normm{P_j\left(\int_0^tM\c F_1\c\nabb(\widetilde{M})dt\right)}_{\tx{q}{2}}\\
\nn&&+\normm{P_j\left(\int_0^tM\c F_2\c\widetilde{M}dt\right)}_{\tx{q}{2}}\\
\nn&\les& 2^j\normm{P_j\left(\int_0^t\nabb(M)\c F_1\c\widetilde{M}dt\right)}_{\tx{\infty}{1}}+2^j\normm{P_j\left(\int_0^tM\c F_1\c\nabb(\widetilde{M})dt\right)}_{\tx{\infty}{1}}\\
\nn&&+2^j\normm{P_j\left(\int_0^tM\c F_2\c\widetilde{M}dt\right)}_{\tx{\infty}{1}}\\
\nn&\les& 2^j\norm{\nabb(M)\c F_1\c\widetilde{M}}_{\lh{1}}+2^j\norm{M\c F_1\c\nabb(\widetilde{M})}_{\lh{1}}+2^j\norm{M\c F_2\c\widetilde{M}}_{\lh{1}}\\
\nn&\les& 2^j\norm{\nabb(M)}_{\lh{2}}\norm{F_1}_{\lh{2}}\norm{\widetilde{M}}_{L^\infty}+2^j\norm{M}_{L^\infty}\norm{F_1}_{\lh{2}}\norm{\nabb\widetilde{M}}_{\lh{2}}\\
\nn&&+2^j\norm{M}_{L^\infty}\norm{F_2}_{\lh{1}}\norm{\widetilde{M}}_{L^\infty}\\
\nn&\les& 2^j\ep,
\eea
where we used  in the last inequality the estimate \eqref{imj2} for $F_1$ and $F_2$, the estimate \eqref{poo36} for $M$, and the estimate \eqref{bispoo36} for $\widetilde{M}$. Finally, \eqref{davis}, \eqref{davis1} and \eqref{davis2} imply
\be\lab{davis3}
\norm{P_j(M\c\ddb_{\lb}\Pi(\po\chi)\c\widetilde{M})}_{\li{q}{2}}\lesssim 2^j\ep+2^{\frac{j}{2}}\gamma(u).
\ee
Now, since we have chosen $p<q$, \eqref{davis3} together with Lemma \ref{lemma:poo6} and Lemma \ref{lemma:bispoo6} yields:
$$\norm{P_j\ddb_{\lb}\Pi(\po\chi)}_{\li{p}{2}}\lesssim 2^j\ep+2^{\frac{j}{2}}\gamma(u),$$
for any $2\leq p<+\infty$ which is the desired estimate \eqref{estricciomegabis} for $\ddb_{\lb}\Pi(\po\chi)$. 

\subsection{Proof of the decomposition \eqref{dechch} for $\hch$}\lab{sec:jycroispas2}

To conclude the proof of Theorem \ref{thregomega}, we still need to prove the existence of a decomposition \eqref{dechch} for $\hch$. In view of the Codazzi-type equation \eqref{Codaz} for $\hch$, we have:
$$\hch=\mathcal{D}_2^{-1}\left(\half\nabb\trc-\kep\c\chi-\b\right),$$
and we choose the following decomposition:
\be\lab{po17}
\hch=\chi_1+\chi_2\textrm{ where }\chi_1=\mathcal{D}_2^{-1}\left(\half\nabb\trc-\kep\c\chi\right)\textrm{ and }\chi_2=-\mathcal{D}_2^{-1}\b.
\ee

\subsubsection{Estimates for $\chi_1$}

\paragraph{Estimate for $\norm{\nabb\chi_1}_{\lh{2}}$.}

We start by estimating $\nabb\chi_1$. Using the estimate \eqref{eq:estimdcal-1} satisfied by $\mathcal{D}_2^{-1}$ and the definition \eqref{po17} of $\chi_1$, we have:
\bea
\lab{po18}\norm{\nabb\chi_1}_{\tx{\infty}{2}}&\les & \normm{\half\nabb\trc-\kep\c\chi}_{\tx{\infty}{2}}\\
\nn&\les & \normm{\nabb\trc}_{\tx{\infty}{2}}+\no(\kep)\no(\chi)\\
\nn&\les &\ep,
\eea
where we used the estimate \eqref{estk} for $\kep$, and the estimates \eqref{esttrc} \eqref{esthch} for $\chi$ in the last inequality.

\paragraph{Estimate for $\norm{\ddb_{L}\chi_1}_{\tx{\infty}{2}+\tx{2}{q}}$.}

Next, we estimate $\ddb_L\chi_1$ and $\ddb_{\lb}\chi_1$. Note first that for any vectorfield $X$ on $\mathcal{M}$, we have:
$$[\ddb_X,\mathcal{D}_2^{-1}]=\mathcal{D}_2^{-1}[\ddb_X,\mathcal{D}_2]\mathcal{D}_2^{-1}$$
which together with the definition of $\chi_1$ implies:
\be\lab{po19}
\ddb_X\chi_1=\mathcal{D}_2^{-1}\left(\ddb_X\left(\half\nabb\trc-\kep\c\chi\right)\right)+\mathcal{D}_2^{-1}[\ddb_X,\mathcal{D}_2]\chi_1.
\ee
Let $2\leq q<4$. Applying \eqref{po19} with $X=nL$, we obtain:
\bea\lab{po20}
&&\norm{\ddb_{nL}\chi_1}_{\tx{\infty}{2}+\tx{2}{q}}\\
\nn&\les& \norm{\mathcal{D}_2^{-1}\left(\ddb_{nL}\nabb\trc\right)}_{\tx{\infty}{2}}+\norm{\mathcal{D}_2^{-1}\left(\ddb_{nL}\left(\kep\c\chi\right)\right)}_{\tx{2}{q}}+\norm{\mathcal{D}_2^{-1}[\ddb_{nL},\mathcal{D}_2]\chi_1}_{\tx{2}{q}}.
\eea
We estimate the three terms in the right-hand side of \eqref{po20} starting with the first one. Using the commutator formula \eqref{comm5} for $[\ddb_{nL},\nabb]\trc$, and Remark \ref{rem:Dcont} and the dual of \eqref{eq:estimdcal-1} for $\mathcal{D}_2^{-1}$, we obtain:
\bea
\nn\norm{\mathcal{D}_2^{-1}\left(\ddb_{nL}\nabb\trc\right)}_{\tx{\infty}{2}}&\les& \norm{\mathcal{D}_2^{-1}\left([\ddb_{nL},\nabb]\trc\right)}_{\tx{\infty}{2}}+\norm{\mathcal{D}_2^{-1}\left(\nabb\ddb_{nL}\trc\right)}_{\tx{\infty}{2}}\\
\nn&\les&\norm{[\ddb_{nL},\nabb]\trc}_{\tx{\infty}{\frac{4}{3}}}+\norm{nL(\trc)}_{\tx{\infty}{2}}\\
\nn&\les&\norm{n\chi\nabb\trc}_{\tx{\infty}{\frac{4}{3}}}+\ep\\
\nn&\les&\norm{n}_{L^\infty}\norm{\chi}_{\tx{\infty}{4}}\norm{\nabb\trc}_{\tx{\infty}{2}}+\ep\\
\lab{po21}&\les&\ep,
\eea
where we used the estimate \eqref{estn} for $n$, and the estimates \eqref{esttrc} \eqref{esthch} for $\chi$.

Next, we estimate the second term in the right-hand side of \eqref{po20}. Using Lemma \ref{lemma:lbt6}, and since $2\leq q<4$, we obtain:
\bea
\nn\norm{\mathcal{D}_2^{-1}\left(\ddb_{nL}\left(\kep\c\chi\right)\right)}_{\tx{2}{q}}&\les&\norm{\mathcal{D}_2^{-1}\left(\ddb_{nL}(\kep)\c\chi\right)}_{\tx{2}{q}}+\norm{\mathcal{D}_2^{-1}\left(\kep\c\ddb_{nL}(\chi)\right)}_{\tx{2}{q}}\\
\nn&\les&\norm{\ddb_{nL}(\kep)\c\chi}_{\tx{2}{\frac{4}{3}}}+\norm{\kep\c\ddb_{nL}(\chi)}_{\tx{2}{\frac{4}{3}}}\\
\nn&\les&\norm{\ddb_{nL}(\kep)}_{\lh{2}}\norm{\chi}_{\tx{\infty}{4}}+\norm{\kep}_{\tx{\infty}{4}}\norm{\ddb_{nL}(\chi)}_{\lh{2}}\\
\nn&\les&\norm{n}_{L^\infty}\no(\kep)\no(\chi)\\
\lab{po22}&\les&\ep,
\eea
where we used the estimate \eqref{estn} for $n$, the estimate \eqref{estk} for $\kep$, and the estimates \eqref{esttrc} \eqref{esthch} for $\chi$.

Finally, we estimate the third term in the right-hand side of \eqref{po20}. Using the commutator formula \eqref{comm5} for $[\ddb_{nL},\mathcal{D}_2]\trc$, using Lemma \ref{lemma:lbt6}, and since $2\leq q<4$, we obtain:
\bea
\nn\norm{\mathcal{D}_2^{-1}[\ddb_{nL},\mathcal{D}_2]\chi_1}_{\tx{2}{q}}&\les&\norm{[\ddb_{nL},\mathcal{D}_2]\chi_1}_{\tx{2}{\frac{4}{3}}}\\
\nn&\les&\norm{n\chi\nabb\chi_1}_{\tx{2}{\frac{4}{3}}}+\norm{n\chi\kep\chi_1}_{\tx{2}{\frac{4}{3}}}+\norm{n\b\chi_1}_{\tx{2}{\frac{4}{3}}}\\
\nn&\les&\norm{n}_{L^\infty}\bigg(\norm{\chi}_{\tx{\infty}{4}}\norm{\nabb\chi_1}_{\lh{2}}+\norm{\chi}_{\tx{\infty}{4}}\norm{\kep}_{\lh{4}}\norm{\chi_1}_{\lh{4}}\\
\nn&&+\norm{\b}_{\lh{2}}\norm{\chi_1}_{\tx{\infty}{4}}\bigg)\\
\lab{po23}&\les&\ep+\ep\no(\chi_1),
\eea
where we used the curvature bound \eqref{curvflux1} for $\b$, the estimate \eqref{estn} for $n$, the estimate \eqref{estk} for $\kep$, and the estimates \eqref{esttrc} \eqref{esthch} for $\chi$. Now, \eqref{po20}-\eqref{po23} yield:
$$\norm{\ddb_{nL}\chi_1}_{\tx{\infty}{2}+\tx{2}{q}}\les \ep+\ep\no(\chi_1),$$
which together with the bound \eqref{estn} on $n$ and the bound \eqref{po18} on $\nabb\chi_1$ yields:
\be\lab{po24}
\norm{\ddb_{L}\chi_1}_{\tx{\infty}{2}+\tx{2}{q}}\les \ep.
\ee

\paragraph{Estimate for $\norm{\ddb_{\lb}\chi_1}_{\tx{\infty}{2}+\tx{2}{q}}$.}

Next, we estimate $\ddb_{\lb}\chi_1$. Let $2\leq q<4$. Applying \eqref{po19} with $X=bN$, we obtain:
\bea\lab{po25}
&&\norm{\ddb_{bN}\chi_1}_{\tx{\infty}{2}+\tx{2}{q}}\\
\nn&\les& \norm{\mathcal{D}_2^{-1}\left(\ddb_{bN}\nabb\trc\right)}_{\tx{\infty}{2}}+\norm{\mathcal{D}_2^{-1}\left(\ddb_{bN}\left(\kep\c\chi\right)\right)}_{\tx{2}{q}}+\norm{\mathcal{D}_2^{-1}[\ddb_{bN},\mathcal{D}_2]\chi_1}_{\tx{2}{q}}.
\eea
We estimate the three terms in the right-hand side of \eqref{po25} starting with the first one. 
Using the commutator formula \eqref{comm7} for $[\ddb_{bN},\nabb]\trc$, and Remark \ref{rem:Dcont} and the dual of \eqref{eq:estimdcal-1} for $\mathcal{D}_2^{-1}$, we obtain:
\bea
\nn\norm{\mathcal{D}_2^{-1}\left(\ddb_{bN}\nabb\trc\right)}_{\tx{\infty}{2}}&\les& \norm{\mathcal{D}_2^{-1}\left([\ddb_{bN},\nabb]\trc\right)}_{\tx{\infty}{2}}+\norm{\mathcal{D}_2^{-1}\left(\nabb\ddb_{bN}\trc\right)}_{\tx{\infty}{2}}\\
\nn&\les&\norm{[\ddb_{bN},\nabb]\trc}_{\tx{\infty}{\frac{4}{3}}}+\norm{\ddb_{bN}\trc}_{\tx{\infty}{2}}\\
\nn&\les&\norm{n(\chi+\eta)\nabb\trc}_{\tx{\infty}{\frac{4}{3}}}+\ep\\
\nn&\les&\norm{n}_{L^\infty}(\norm{\chi}_{\tx{\infty}{4}}+\norm{k}_{\tx{\infty}{4}})\norm{\nabb\trc}_{\tx{\infty}{2}}+\ep\\
\lab{po26}&\les&\ep,
\eea
where we used the estimate \eqref{estn} for $n$, the estimate \eqref{estk} for $k$, and the estimates \eqref{esttrc} \eqref{esthch} for $\chi$.

Next, we estimate the second term in the right-hand side of \eqref{po25}. Using Lemma \ref{lemma:lbt6}, and since $2\leq q<4$, we obtain:
\bea
\nn\norm{\mathcal{D}_2^{-1}\left(\ddb_{bN}\left(\kep\c\chi\right)\right)}_{\tx{2}{q}}&\les&\norm{\mathcal{D}_2^{-1}\left(\ddb_{bN}(\kep)\c\chi\right)}_{\tx{2}{q}}+\norm{\mathcal{D}_2^{-1}\left(\kep\c\ddb_{bN}(\chi)\right)}_{\tx{2}{q}}\\
\nn&\les&\norm{\ddb_{bN}(\kep)\c\chi}_{\tx{2}{\frac{4}{3}}}+\norm{\kep\c\ddb_{bN}(\chi)}_{\tx{2}{\frac{4}{3}}}\\
\nn&\les&\norm{\ddb_{bN}(\kep)}_{\lh{2}}\norm{\chi}_{\tx{\infty}{4}}+\norm{\kep}_{\tx{\infty}{4}}\norm{\ddb_{bN}(\chi)}_{\lh{2}}\\
\nn&\les&\norm{b}_{L^\infty}(\no(\kep)\norm{\ddb_{N}(\chi)}_{\lh{2}}+\no(\chi)\norm{\ddb_{N}(\kep)}_{\lh{2}})\\
\lab{po27}&\les&\ep,
\eea
where we used the estimate \eqref{estk} for $\kep$, the estimate \eqref{estb} for $b$, and the estimates \eqref{esttrc} \eqref{esthch} for $\chi$.

Finally, we estimate the third term in the right-hand side of \eqref{po25}. Using the commutator formula \eqref{comm7} for $[\ddb_{bN},\mathcal{D}_2]\trc$, and using Lemma \ref{lemma:lbt6}, and since $2\leq q<4$, we obtain:
\bea\lab{po28}
&&\norm{\mathcal{D}_2^{-1}[\ddb_{bN},\mathcal{D}_2]\chi_1}_{\tx{2}{q}}\\
\nn&\les&\norm{[\ddb_{bN},\mathcal{D}_2]\chi_1}_{\tx{2}{\frac{4}{3}}}\\
\nn&\les&\norm{b(\chi+\eta)\nabb\chi_1}_{\tx{2}{\frac{4}{3}}}+\norm{b\chi(\kepb+\xib)\chi_1}_{\tx{2}{\frac{4}{3}}}+\norm{b\chb\z\chi_1}_{\tx{2}{\frac{4}{3}}}+\norm{b\b\chi_1}_{\tx{2}{\frac{4}{3}}}\\
\nn&&+\norm{b\bb\chi_1}_{\tx{2}{\frac{4}{3}}}\\
\nn&\les&\norm{b}_{L^\infty}\bigg(\norm{\chi+\eta}_{\tx{\infty}{4}}\norm{\nabb\chi_1}_{\lh{2}}+\norm{\chi}_{\tx{\infty}{4}}\norm{\kepb+\xib}_{\lh{4}}\norm{\chi_1}_{\lh{4}}\\
\nn&&+\norm{\chb}_{\tx{\infty}{4}}\norm{\z}_{\lh{4}}\norm{\chi_1}_{\lh{4}}+\norm{\b}_{\lh{2}}\norm{\chi_1}_{\tx{\infty}{4}}+\norm{\bb}_{\lh{2}}\norm{\chi_1}_{\tx{\infty}{4}}\bigg)\\
\nn&\les&\ep+\ep\no(\chi_1),
\eea
where we used the curvature bound \eqref{curvflux1} for $\b$ and $\bb$, and the estimates \eqref{estn}-\eqref{estzeta} for $b, \kepb, \eta, \chi, \xib$ and $\z$. Now, \eqref{po25}-\eqref{po28} yield:
$$\norm{\ddb_{bN}\chi_1}_{\tx{\infty}{2}+\tx{2}{q}}\les \ep+\ep\no(\chi_1),$$
which together with the bound \eqref{estb} on $b$, the fact that $\lb=L-2N$, and the bound \eqref{po18} and \eqref{po24} on $\nabb\chi_1$ yields:
\be\lab{po29}
\norm{\ddb_{\lb}\chi_1}_{\tx{\infty}{2}+\tx{2}{q}}\les \ep.
\ee

\paragraph{Estimate for $\norm{\chi_1}_{\tx{p}{\infty}}$.} Using the property \eqref{eq:partition} of the Littlewood-Paley projections, we have:
\bea\lab{po30}
&&\norm{\chi_1}_{\lpt{\infty}}\\
\nn&\les& \sum_{j,l}\normm{P_l\mathcal{D}_2^{-1}P_j\left(\half\nabb\trc-\kep\c\chi\right)}_{\lpt{\infty}}\\
\nn&\les& \sum_{j,l}2^l(1+2^{-\frac{l}{q}}\norm{K}^{\frac{1}{q}}_{\lpt{2}}+2^{-\frac{l}{q-1}}\norm{K}^{\frac{1}{q-1}}_{\lpt{2}})\normm{P_l\mathcal{D}_2^{-1}P_j\left(\half\nabb\trc-\kep\c\chi\right)}_{\lpt{2}}\\
\nn&\les& (1+\norm{K}^{\frac{1}{q-1}}_{\lpt{2}})\sum_{j,l}2^l\normm{P_l\mathcal{D}_2^{-1}P_j\left(\half\nabb\trc-\kep\c\chi\right)}_{\lpt{2}}
\eea
where $2\leq q<+\infty$ will be chosen later, and where we used the sharp Bernstein inequality \eqref{eq:strongberntensor} for tensors. Next, we estimate the right-hand side of \eqref{po30}. Using the finite band property for $P_j$, and the inequality \eqref{eq:estimdcal-1} for ${}^*\mathcal{D}_2$, we have:
\be\lab{dodane}
\norm{\mathcal{D}_2^{-1}P_j}_{\mathcal{L}(\lpt{2})}=\norm{P_j{}^*\mathcal{D}_2^{-1}}_{\mathcal{L}(\lpt{2})}\les 2^{-j}\norm{\nabb{}^*\mathcal{D}_2^{-1}}_{\mathcal{L}(\lpt{2})}\les 2^{-j}
\ee
which together with the boundedness on $L^2$ of $P_l$ yields:
\bea\lab{po31}\normm{P_l\mathcal{D}_2^{-1}P_j\left(\half\nabb\trc-\kep\c\chi\right)}_{\lpt{2}}&\les & 
\normm{\mathcal{D}_2^{-1}P_j\left(\half\nabb\trc-\kep\c\chi\right)}_{\lpt{2}}\\
\nn&\les & 2^{-j}\normm{P_j\left(\half\nabb\trc-\kep\c\chi\right)}_{\lpt{2}}.
\eea

We now derive second estimate for $\normm{P_l\mathcal{D}_2^{-1}P_j\left(\half\nabb\trc-\kep\c\chi\right)}_{\lpt{2}}$. Using the finite band property for $P_l$, we have:
\be\lab{po32:first}
\normm{P_l\mathcal{D}_2^{-1}P_j\left(\half\nabb\trc-\kep\c\chi\right)}_{\lpt{2}}\les 2^{-2l}\normm{\lap\mathcal{D}_2^{-1}P_j\left(\half\nabb\trc-\kep\c\chi\right)}_{\lpt{2}}.
\ee
Next, we estimate the right-hand side of \eqref{po32:first}. In view of the identity \eqref{eq:dcallident} for $\mathcal{D}_2$, we have:
\bea
\lab{po32}&&\normm{\lap\mathcal{D}_2^{-1}P_j\left(\half\nabb\trc-\kep\c\chi\right)}_{\lpt{2}}\\
\nn&\les& \normm{{}^*\mathcal{D}_2P_j\left(\half\nabb\trc-\kep\c\chi\right)}_{\lpt{2}}+\normm{K\mathcal{D}_2^{-1}P_j\left(\half\nabb\trc-\kep\c\chi\right)}_{\lpt{2}}.
\eea
We now estimate both terms in the right-hand side of \eqref{po32} starting with the first one. Using the $L^2$ boundedness for $P_l$ and the finite band property for $P_j$, we have:
\be
\lab{po33}\normm{{}^*\mathcal{D}_2P_j\left(\half\nabb\trc-\kep\c\chi\right)}_{\lpt{2}}\les 2^j\normm{P_j\left(\half\nabb\trc-\kep\c\chi\right)}_{\lpt{2}}.
\ee
Next, we estimate the second term in the right-hand side of \eqref{po32}. We have:
\be
\lab{po34}\normm{K\mathcal{D}_2^{-1}P_j\left(\half\nabb\trc-\kep\c\chi\right)}_{\lpt{2}}\les  \norm{K}_{\lpt{2}}\normm{\mathcal{D}_2^{-1}P_j\left(\half\nabb\trc-\kep\c\chi\right)}_{\lpt{\infty}}.
\ee
In order to estimate the $\lpt{\infty}$ norm in the right-hand side of \eqref{po34}, we use the estimate \eqref{linftynormtensor}. This yields
\bea\lab{pimpampoum}
&&\normm{\mathcal{D}_2^{-1}P_j\left(\half\nabb\trc-\kep\c\chi\right)}_{\lpt{\infty}}\\
\nn&\les& \normm{\nabb^2\mathcal{D}_2^{-1}P_j\left(\half\nabb\trc-\kep\c\chi\right)}^\frac{1}{2}_{\lpt{2}}\normm{\mathcal{D}_2^{-1}P_j\left(\half\nabb\trc-\kep\c\chi\right)}^\frac{1}{2}_{\lpt{2}}\\
\nn&&+\normm{\nabb\mathcal{D}_2^{-1}P_j\left(\half\nabb\trc-\kep\c\chi\right)}_{\lpt{2}}\\
\nn&\les& 2^{-\frac{j}{2}}\normm{\nabb^2\mathcal{D}_2^{-1}P_j\left(\half\nabb\trc-\kep\c\chi\right)}^\frac{1}{2}_{\lpt{2}}\normm{P_j\left(\half\nabb\trc-\kep\c\chi\right)}^\frac{1}{2}_{\lpt{2}}\\
\nn&&+\normm{P_j\left(\half\nabb\trc-\kep\c\chi\right)}_{\lpt{2}},
\eea
where we used in the last inequality \eqref{dodane}, the estimate \eqref{eq:estimdcal-1} for $\nabb\mathcal{D}_2^{-1}$, and the boundedness of $P_j$ on $\lpt{2}$. In order to estimate the first term in the right-hand side of \eqref{pimpampoum}, we use the Bochner inequality for tensors \eqref{vbochineq}. This yields
\bea\lab{pimpampoum1}
&&\normm{\nabb^2\mathcal{D}_2^{-1}P_j\left(\half\nabb\trc-\kep\c\chi\right)}_{\lpt{2}}\\
\nn&\les& \normm{\lap\mathcal{D}_2^{-1}P_j\left(\half\nabb\trc-\kep\c\chi\right)}_{\lpt{2}}+\norm{K}_{\lpt{2}}\normm{\nabb\mathcal{D}_2^{-1}P_j\left(\half\nabb\trc-\kep\c\chi\right)}_{\lpt{2}}\\
\nn&&+\norm{K}^2_{\lpt{2}}\normm{\mathcal{D}_2^{-1}P_j\left(\half\nabb\trc-\kep\c\chi\right)}_{\lpt{2}}\\
\nn&\les& \normm{\lap\mathcal{D}_2^{-1}P_j\left(\half\nabb\trc-\kep\c\chi\right)}_{\lpt{2}}+\norm{K}_{\lpt{2}}\normm{P_j\left(\half\nabb\trc-\kep\c\chi\right)}_{\lpt{2}}\\
\nn&&+2^{-j}\norm{K}^2_{\lpt{2}}\normm{P_j\left(\half\nabb\trc-\kep\c\chi\right)}_{\lpt{2}}, 
\eea
where we used in the last inequality \eqref{dodane} and the estimate \eqref{eq:estimdcal-1} for $\nabb\mathcal{D}_2^{-1}$. Now, \eqref{po32}, \eqref{po33}, \eqref{po34}, \eqref{pimpampoum} and \eqref{pimpampoum1} imply
\bea\lab{pimpampoum2}
&&\normm{\nabb^2\mathcal{D}_2^{-1}P_j\left(\half\nabb\trc-\kep\c\chi\right)}_{\lpt{2}}\\
\nn&\les& \left(2^j+\norm{K}_{\lpt{2}}+2^{-j}\norm{K}^2_{\lpt{2}}\right)\normm{P_j\left(\half\nabb\trc-\kep\c\chi\right)}_{\lpt{2}}.
\eea
Then, \eqref{po32:first} and \eqref{pimpampoum2} yield
\bea\lab{pimpampoum3}
&&\normm{P_l\mathcal{D}_2^{-1}P_j\left(\half\nabb\trc-\kep\c\chi\right)}_{\lpt{2}}\\
\nn&\les& 2^{-2l}\left(2^j+\norm{K}_{\lpt{2}}+2^{-j}\norm{K}^2_{\lpt{2}}\right)\normm{P_j\left(\half\nabb\trc-\kep\c\chi\right)}_{\lpt{2}}.
\eea
Also, using the finite band property $P_l$ and the estimate \eqref{eq:estimdcal-1} for $\nabb\mathcal{D}_2^{-1}$, we have
$$\normm{P_l\mathcal{D}_2^{-1}P_j\left(\half\nabb\trc-\kep\c\chi\right)}_{\lpt{2}}\les 2^{-l}\normm{P_j\left(\half\nabb\trc-\kep\c\chi\right)}_{\lpt{2}}.$$
Interpolating with \eqref{pimpampoum3}, we obtain for any $0\leq \delta\leq 1$
\bea\lab{pimpampoum4}
&&\normm{P_l\mathcal{D}_2^{-1}P_j\left(\half\nabb\trc-\kep\c\chi\right)}_{\lpt{2}}\\
\nn&\les& 2^{-l}2^{-\delta(l-j)}\left(1+\norm{K}^2_{\lpt{2}}\right)^\delta\normm{P_j\left(\half\nabb\trc-\kep\c\chi\right)}_{\lpt{2}}.
\eea

In view of \eqref{po30}, and using \eqref{po31} for $l\leq j$ and \eqref{pimpampoum4} for $l>j$, we obtain for any $2\leq q<+\infty$ and any $0<\delta\leq 1$
\bee
\norm{\chi_1}_{\lpt{\infty}}&\les& (1+\norm{K}_{\lpt{2}})^{\frac{1}{q-1}+2\delta}\sum_{j,l}2^{-\delta |l-j|}\normm{P_j\left(\half\nabb\trc-\kep\c\chi\right)}_{\lpt{2}}\\
&\les& (1+\norm{K}_{\lpt{2}})^{\frac{1}{q-1}+2\delta}\normm{\half\nabb\trc-\kep\c\chi}_{B^0_{2,1}(\ptu)}
\eee
where we used in the last inequality the fact that $\delta>0$ and the definition \eqref{besovptu} for the Besov space $B^0_{2,1}(\ptu)$. This yields
\bea\lab{po39}
\norm{\chi_1}_{\lpt{\infty}}&\les& (1+\norm{K}_{\lpt{2}})^{\frac{1}{q-1}+2\delta}(\norm{\nabb\trc}_{\BB^0}+\norm{\kep\c\chi}_{B^0_{2,1}(\ptu)})\\
\nn&\les& (1+\norm{K}_{\lpt{2}})^{\frac{1}{q-1}+2\delta}(\ep+\norm{\kep\c\chi}_{B^0_{2,1}(\ptu)})
\eea
where we used the Besov improvement \eqref{impbes21} for $\nabb\trc$. Let $2\leq p<+\infty$. We choose $2\leq q<+\infty$ and $0<\delta\leq 1$ such that 
$$\frac{1}{q-1}+2\delta=\frac{2}{p}.$$
Then, \eqref{po39} implies:
\bea
\lab{po40}\norm{\chi_1}_{\tx{p}{\infty}}&\les& \left(1+\norm{K}^{\frac{2}{p}}_{\lh{2}}\right)(\ep+\norm{\kep\c\chi}_{L^{2p}_tB^0_{2,1}(\ptu)})\\
\nn&\les& \ep+\norm{\kep\c\chi}_{L^{2p}_tB^0_{2,1}(\ptu)},
\eea
where we used the estimate \eqref{estgauss1} for the Gauss curvature $K$. We now conclude using the following lemma:
\begin{lemma}\lab{lemma:po41}
Let $F, H$ two $\ptu$-tangent tensors. For any $2\leq r<+\infty$, we have:
\be\lab{po41}
\norm{F\c H}_{L^r_tB^0_{2,1}(\ptu)}\les \no(F)\no(H).
\ee
\end{lemma}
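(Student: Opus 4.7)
My plan is to adapt the proof of the non-sharp product estimate \eqref{nonsharpprod2} to the $L^r_t$-setting via a paraproduct decomposition on each slice $\ptu$. Since, by Minkowski's inequality,
$$\norm{F\c H}_{L^r_tB^0_{2,1}(\ptu)}\leq\sum_{j\geq 0}\norm{P_j(F\c H)}_{\tx{r}{2}}+\norm{P_{<0}(F\c H)}_{\tx{r}{2}},$$
it suffices to control the high-frequency sum (the $P_{<0}$ contribution is analogous and easier). Writing $F=\sum_l P_lF+P_{<0}F$ and $H=\sum_m P_mH+P_{<0}H$, I would expand
$$P_j(F\c H)=\sum_{l,m\geq 0}P_j(P_lF\c P_mH)$$
and, by symmetry between the two factors, reduce to the case $m\leq l$, splitting into three regimes: (I) $j\leq m\leq l$, (II) $m\leq j\leq l$, and (III) $m\leq l\leq j$.

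In regimes (I) and (II), I would apply H\"older on $\ptu$ in the form $L^2\c L^\infty$, placing the lower-frequency factor $P_mH$ in $L^\infty$ and invoking the weak Bernstein inequality \eqref{eq:strongberntensor} together with the curvature bound \eqref{estgauss1} on $K$ to obtain $\norm{P_mH}_{\lpt{\infty}}\les 2^m\norm{P_mH}_{\lpt{2}}$ (up to harmless losses in $\norm{K}_{\lpt{2}}$). In regime (III), I would use the finite-band property of $P_j$ to extract a gain $2^{-2j}$ from $\norm{P_j\c}_{\lpt{2}}\les 2^{-2j}\norm{\lap P_j\c}_{\lpt{2}}$, distribute $\lap$ over the product $P_lF\c P_mH$, and again apply Bernstein to each factor. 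The key input across all three regimes is Lemma \ref{lemma:lbz4}, which yields the $\ell^2$-summability
$$\sum_{l\geq 0}\bigl(2^l\norm{P_lF}^2_{\tx{\infty}{2}}+2^{-l}\norm{\nabb P_lF}^2_{\tx{\infty}{2}}\bigr)\les\no(F)^2,$$
providing decay factors $2^{-l/2}c_l(F)\no(F)$ with $\{c_l(F)\}\in\ell^2$. Since $[0,1]$ has finite measure, $\norm{\c}_{\tx{r}{2}}\les\norm{\c}_{\tx{\infty}{2}}$ for $r<\infty$, so the same control passes to the $\tx{r}{2}$-norm on the left.

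Combining all these pieces produces estimates of the form
$$\norm{P_j(P_lF\c P_mH)}_{\tx{r}{2}}\les 2^{-\alpha(|j-l|+|j-m|)}\,c_l(F)\,c_m(H)\,\no(F)\,\no(H)$$
for some $\alpha>0$, after which the final summation over $j,l,m$ reduces to a standard Schur test or Young's inequality for discrete convolutions. The main obstacle will be regime (III), where $\lap$ hits both factors simultaneously and one must carefully balance the $2^{-2j}$ gain against the $2^{2l},2^{2m}$ losses from the finite-band property; this is handled by using both the $\norm{P_lF}_{\tx{\infty}{2}}$ and $\norm{\nabb P_lF}_{\tx{\infty}{2}}$ bounds from Lemma \ref{lemma:lbz4} in parallel, which delivers exactly the off-diagonal $\ell^2$-decay needed to close the estimate independently of $r\in[2,\infty)$.
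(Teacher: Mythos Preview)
Your overall strategy---paraproduct decomposition plus Lemma \ref{lemma:lbz4}---matches the paper's, and regimes (II) and (III) can be made to work essentially as you describe. However, there is a concrete gap in regime (I).

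In regime (I) ($j\leq m\leq l$), the $L^2\times L^\infty$ H\"older bound
$\norm{P_j(P_lF\c P_mH)}_{\lpt{2}}\les\norm{P_lF}_{\lpt{2}}\norm{P_mH}_{\lpt{\infty}}\les 2^m\norm{P_lF}_{\lpt{2}}\norm{P_mH}_{\lpt{2}}$
(modulo $K$) is \emph{independent of $j$}. Summing over $j\in\{0,\ldots,m\}$ therefore costs a factor $(m+1)$; after extracting $2^{l/2}\norm{P_lF}_{\tx{\infty}{2}}$ and $2^{m/2}\norm{P_mH}_{\tx{\infty}{2}}$, the remaining sum contains $\sum_m(m+1)c_m(H)$, which is not controlled by $\{c_m\}\in\ell^2$. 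Your claimed off-diagonal decay $2^{-\alpha(|j-l|+|j-m|)}$ is thus not delivered by the stated method in this regime. The paper repairs this by first applying the \emph{dual} weak Bernstein inequality on $P_j$, namely $\norm{P_jG}_{\lpt{2}}\les 2^{j/3}\norm{G}_{\lpt{3/2}}$, and then $L^2\times L^6$ H\"older with weak Bernstein on the low-frequency factor; this yields $2^{j/3+2m/3}$ in place of $2^m$, producing genuine $j$-decay and avoiding $K$ entirely in regimes (I)--(II).

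A related issue: the $\norm{K}_{\lpt{2}}$ losses from sharp tensor Bernstein are \emph{not} harmless for an $L^\infty_t$ argument, since by \eqref{estgauss1} one only has $K\in L^2_t\lpt{2}$. This is precisely why the paper's regime-(III) estimate depends on $r$: the intermediate H\"older exponent is chosen so that the resulting power of $K$ equals $2/r$, placing $\norm{K}_{\lpt{2}}^{2/r}$ in $L^r_t$. Working in $L^\infty_t$ throughout and passing to $L^r_t$ only at the end, as you suggest, does not accommodate these factors.
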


The proof of Lemma \ref{lemma:po41} is postponed to section \ref{sec:gowinda2}. We now derive the estimate for $\norm{\chi_1}_{\tx{p}{\infty}}$. \eqref{po40}, and \eqref{po41} with $r=2p$, $F=\kep$ and $H=\chi$ yield:
\bea
\lab{po42}\norm{\chi_1}_{\tx{p}{\infty}}&\les& \ep+\no(\kep)\no(\chi)\\
\nn&\les&\ep,
\eea
where we used the estimate \eqref{estk} for $\kep$ and the estimates \eqref{esttrc} \eqref{esthch} for $\chi$. \eqref{po42} is the desired estimate for $\norm{\chi_1}_{\tx{p}{\infty}}$.

\paragraph{Estimate for $\norm{\chi_1}_{L^\infty_uL^p_t\mathcal{B}^0_{2,1}(\ptu)}$.}

We will need later on an estimate for $\chi_1$ in $L^\infty_uL^p_t\mathcal{B}^0_{2,1}(\ptu)$. We proceed as for the estimate of $\chi_1$ in $\tx{p}{\infty}$. In view of the definition \eqref{besovptu} of the Besov space $\mathcal{B}^0_{2,1}(\ptu)$, we have
\bea\lab{bispo30}\norm{\chi_1}_{\mathcal{B}^0_{2,1}(\ptu)}&\les& \sum_{j,l}\normm{P_l\nabb\mathcal{D}_2^{-1}P_j\left(\half\nabb\trc-\kep\c\chi\right)}_{\lpt{2}}.
\eea
Next, we estimate the right-hand side of \eqref{bispo30}. 
The finite band property for $P_l$ together with the estimate \eqref{dodane} yields
\bea\lab{bispo31}\normm{P_l\nabb\mathcal{D}_2^{-1}P_j\left(\half\nabb\trc-\kep\c\chi\right)}_{\lpt{2}}&\les & 
2^l\normm{\mathcal{D}_2^{-1}P_j\left(\half\nabb\trc-\kep\c\chi\right)}_{\lpt{2}}\\
\nn&\les & 2^{l-j}\normm{P_j\left(\half\nabb\trc-\kep\c\chi\right)}_{\lpt{2}}.
\eea

We now derive second estimate for $\normm{P_l\nabb\mathcal{D}_2^{-1}P_j\left(\half\nabb\trc-\kep\c\chi\right)}_{\lpt{2}}$. Using the finite band property for $P_l$ and the estimate \eqref{pimpampoum2}, we have:
\bea\lab{bispimpampoum3}
&&\normm{P_l\nabb\mathcal{D}_2^{-1}P_j\left(\half\nabb\trc-\kep\c\chi\right)}_{\lpt{2}}\\
\nn&\les& 2^{-l}\normm{\nabb^2\mathcal{D}_2^{-1}P_j\left(\half\nabb\trc-\kep\c\chi\right)}_{\lpt{2}}\\
\nn&\les& 2^{-l+j}(1+\norm{K}^2_{\lpt{2}})\normm{P_j\left(\half\nabb\trc-\kep\c\chi\right)}_{\lpt{2}}.
\eea
Also, using the boundedness of $P_l$ on $\lpt{2}$ and the estimate \eqref{eq:estimdcal-1} for $\nabb\mathcal{D}_2^{-1}$, we have
$$\normm{P_l\nabb\mathcal{D}_2^{-1}P_j\left(\half\nabb\trc-\kep\c\chi\right)}_{\lpt{2}}\les \normm{P_j\left(\half\nabb\trc-\kep\c\chi\right)}_{\lpt{2}}.$$
Interpolating with \eqref{pimpampoum3}, we obtain for any $0\leq \delta\leq 1$
\bea\lab{bispimpampoum4}
&&\normm{P_l\nabb\mathcal{D}_2^{-1}P_j\left(\half\nabb\trc-\kep\c\chi\right)}_{\lpt{2}}\\
\nn&\les& 2^{-\delta(l-j)}\left(1+\norm{K}^2_{\lpt{2}}\right)^\delta\normm{P_j\left(\half\nabb\trc-\kep\c\chi\right)}_{\lpt{2}}.
\eea

In view of \eqref{bispo30}, and using \eqref{bispo31} for $l\leq j$ and \eqref{bispimpampoum4} for $l>j$, we obtain for any $0<\delta\leq 1$
\bee
\norm{\chi_1}_{\mathcal{B}^0_{2,1}(\ptu)}&\les& (1+\norm{K}_{\lpt{2}})^{2\delta}\sum_{j,l}2^{-\delta |l-j|}\normm{P_j\left(\half\nabb\trc-\kep\c\chi\right)}_{\lpt{2}}\\
&\les& (1+\norm{K}_{\lpt{2}})^{2\delta}\normm{\half\nabb\trc-\kep\c\chi}_{B^0_{2,1}(\ptu)}
\eee
where we used in the last inequality the fact that $\delta>0$ and the definition \eqref{besovptu} for the Besov space $B^0_{2,1}(\ptu)$. This yields
\bea\lab{bispo39}
\norm{\chi_1}_{\mathcal{B}^0_{2,1}(\ptu)}&\les& (1+\norm{K}_{\lpt{2}})^{2\delta}(\norm{\nabb\trc}_{\BB^0}+\norm{\kep\c\chi}_{B^0_{2,1}(\ptu)})\\
\nn&\les& (1+\norm{K}_{\lpt{2}})^{2\delta}(\ep+\norm{\kep\c\chi}_{B^0_{2,1}(\ptu)})
\eea
where we used the Besov improvement \eqref{impbes21} for $\nabb\trc$. Let $2\leq p<+\infty$. We choose $0<\delta\leq 1$ such that 
$$2\delta=\frac{1}{p}.$$
Then, \eqref{bispo39} implies:
\bee
\norm{\chi_1}_{L^p_t\mathcal{B}^0_{2,1}(\ptu)}&\les& \left(1+\norm{K}^{\frac{1}{p}}_{\lh{2}}\right)(\ep+\norm{\kep\c\chi}_{L^{2p}_tB^0_{2,1}(\ptu)})\\
\nn&\les& \ep+\no(\kep)\no(\chi),
\eee
where we used in the last inequality the estimate \eqref{estgauss1} for the Gauss curvature $K$, and the estimate \eqref{po41}. Together with the estimate \eqref{estk} for $\kep$ and the estimates \eqref{esttrc} \eqref{esthch} for $\chi$, we finally obtain
\be\lab{bispo40}
\norm{\chi_1}_{L^p_t\mathcal{B}^0_{2,1}(\ptu)}\les\ep,
\ee
for any $2\leq p<+\infty$.

\subsubsection{Estimates for $\chi_2$}

In view of the decomposition \eqref{po17}, the estimates \eqref{esttrc} \eqref{esthch} for $\chi$, and the estimates \eqref{po18}, \eqref{po24} and \eqref{po29} for $\chi_1$, we have:
\be\lab{po49}
\no(\chi_2)+\norm{\ddb_{\lb}\chi_2}_{\lh{2}}\les\ep.
\ee
We now compute $\po\chi_2$. We have:
$$[\po,\mathcal{D}_2^{-1}]=\mathcal{D}_2^{-1}[\po,\mathcal{D}_2]\mathcal{D}_2^{-1}$$
which together with the definition of $\chi_2$ implies:
\bea\lab{po50}
\po\chi_2&=&-\mathcal{D}_2^{-1}\Pi(\po\b+\po\kep\c\chi+\kep\c\po\chi)+\mathcal{D}_2^{-1}[\Pi\po,\mathcal{D}_2]\chi_2\\
\nn&=&-\mathcal{D}_2^{-1}\left(\frac{(\po N)_B}{2}(-\a_{AB}+\r\d_{AB}+3\s\in_{AB})+\po\kep\c\chi+\kep\c\po\chi\right)\\
\nn&&+\mathcal{D}_2^{-1}\bigg(-\ddb_N(\chi_2)_{A\po N}+g(\po N,e_A)\th\c \chi_2+\th_{\po NB}(\chi_2)_{BA}-\th_{AB}(\chi_2)_{B\po N}\\
\nn&&-\trt (\chi_2)_{A\po N}\bigg),
\eea
where we used the formula \eqref{po4} for $\po\b$ and the commutator formula \eqref{commo7} for $[\Pi\po,\mathcal{D}_2]$. In particular, using the property \eqref{eq:estimdcal-1} of $\mathcal{D}_2^{-1}$, we have the following estimate for $\nabb\po\chi_2$:
\bea
&&\lab{po50bis}\norm{\nabb\po\chi_2}_{\lh{2}}\\
\nn&\les &\normm{\frac{(\po N)_B}{2}(-\a_{AB}+\r\d_{AB}+3\s\in_{AB})+\po\kep\c\chi+\kep\c\po\chi}_{\lh{2}}+\|-\ddb_N(\chi_2)_{A\po N}\\
\nn&&+g(\po N,e_A)\th\c \chi_2+\th_{\po NB}(\chi_2)_{BA}-\th_{AB}(\chi_2)_{B\po N}-\trt (\chi_2)_{A\po N}\|_{\lh{2}}\\
\nn&\les& \norm{\po N}_{L^\infty}\bigg(\norm{\a}_{\lh{2}}+\norm{\r}_{\lh{2}}+\norm{\s}_{\lh{2}}+\norm{\po\kep}_{\xt{2}{\infty}}\norm{\chi}_{\xt{\infty}{2}}\\
\nn&&+\norm{\po\chi}_{\xt{2}{\infty}}\norm{\kep}_{\xt{\infty}{2}}+\norm{\ddb_N\chi_2}_{\lh{2}}+\no(\th)\no(\chi_2)\bigg)\\
\nn&\les& \ep
\eea
where we used the curvature bound \eqref{curvflux1} for $\a, \r$ and $\s$, the estimates \eqref{estk}  \eqref{esttrc} \eqref{esthch} for $\kep, \chi$ and $\th$, the estimate \eqref{estNomega} for $\po N$, the estimate \eqref{estricciomega} for $\po\chi$ and the estimate \eqref{po49} for $\chi_2$.

Next, we plan to estimate the $\tx{p}{4_-}$-norm of $\po\chi_2$ for $2<p<+\infty$. Our goal will be first to show that the terms involving $\a$ in $\po\chi_2$ cancel each other. Applying \eqref{po19} to $\chi_2$ with the choice $X=bN$ yields:
\be\lab{po51}
\ddb_{bN}\chi_2=-\mathcal{D}_2^{-1}\left(\ddb_{bN}(\chi\c\kep+\b)\right)+\mathcal{D}_2^{-1}[\ddb_{bN},\mathcal{D}_2]\chi_2.
\ee
In view of \eqref{po51}, we need to evaluate $\ddb_N(\chi\c\kep+\b)$. We have:
$$\ddb_N(\chi\c\kep+\b)=\chi\c\ddb_N(\kep)+\half\left(\ddb_L(\chi)\c\kep+\ddb_L\b-\ddb_{\lb}(\chi)\c\kep-\ddb_{\lb}\b\right)$$
which together with the equation \eqref{D4chi} and \eqref{D3chi} for $\chi$, the Bianchi identities \eqref{bianc1} and \eqref{bianc2} for $\b$, and the last equation of \eqref{hodgkh} for $\ddb_N\kep$ yields:
\bea
\nn\ddb_N(\chi\c\kep+\b)&=&\divb\a+b^{-1}\nabb b\c\a-\nabb\r-(\nabb\s)^*+(\chi-2\d)\b-(\kep+3\z)\r+(\kep-3{}^*\z)\s\\
\nn&&-(\chi+2\hchb)\bb+2\nabb\d\c\chi-2\kep\c\nabb\z+3\db^{-1}\nabb b\c\chi-2b^{-1}\nabb b\etah\chi-2\th\kep\chi\\
\lab{po52}&&-\kep\chi(\d+n^{-1}\nabla_Nn)-\kep\z\otimes\z+\kep\chb\chi.
\eea
\eqref{po51} and \eqref{po52} yield:
\bea
\nn\ddb_{N}\chi_2&=& -\a+b^{-1}\mathcal{D}_2^{-1}b\bigg(\nabb\r+(\nabb\s)^*-(\chi-2\d)\b+(\kep+3\z)\r-(\kep-3{}^*\z)\s\\
\nn&&+(\chi+2\hchb)\bb-2\nabb\d\c\chi+2\kep\c\nabb\z-3\db^{-1}\nabb b\c\chi+2b^{-1}\nabb b\etah\chi+2\th\kep\chi\\
\lab{po53}&&+\kep\chi(\d+n^{-1}\nabla_Nn)+\kep\z\otimes\z-\kep\chb\chi\bigg)+b^{-1}\mathcal{D}_2^{-1}[\ddb_{bN},\mathcal{D}_2]\chi_2.
\eea
Now, in view of \eqref{po50} and \eqref{po53}, the terms in $\a$ cancel each other, and we finally obtain:
\bea
\lab{po54}\po\chi_2 &=&-\mathcal{D}_2^{-1}\left(\frac{(\po N)_B}{2}(\r\d_{AB}+3\s\in_{AB})+\po\kep\c\chi+\kep\c\po\chi\right)\\
\nn&&+\mathcal{D}_2^{-1}\Bigg(-b^{-1}\mathcal{D}_2^{-1}b\bigg(\nabb\r+(\nabb\s)^*-(\chi-2\d)\b+(\kep+3\z)\r-(\kep-3{}^*\z)\s\\
\nn&&+(\chi+2\hchb)\bb-2\nabb\d\c\chi+2\kep\c\nabb\z-3\d b^{-1}\nabb b\c\chi+2b^{-1}\nabb b\etah\chi+2\th\kep\chi\\
\nn&&+\kep\chi(\d+n^{-1}\nabla_Nn)+\kep\z\otimes\z-\kep\chb\chi\bigg)_{A\po N}-b^{-1}\mathcal{D}_2^{-1}[\ddb_{bN},\mathcal{D}_2](\chi_2)_{A\po N}\\
\nn&&+g(\po N,e_A)\th\c \chi_2+\th_{\po NB}(\chi_2)_{BA}-\th_{AB}(\chi_2)_{B\po N}-\trt (\chi_2)_{A\po N}\Bigg),
\eea

We will use the following four Lemmas.
\begin{lemma}\lab{lemma:po1}
Let $f$ a scalar function equal either to $b$ or $1$, let $F$ a $\ptu$-tangent tensor and let $H$ denote a curvature term among $(\r, \s, \b, \bb)$. Then, we have the following estimate:
\be\lab{po55}
\norm{\mathcal{D}_2^{-1}(bF\c H)}_{\tx{\infty}{4_-}}\les (\norm{F}_{L^\infty}+\norm{\nabb F}_{\tx{\infty}{2}})\ep.
\ee
\end{lemma}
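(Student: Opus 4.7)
The plan is to exploit the Bianchi identities \eqref{bianc1}-\eqref{bianc6} in order to convert the mere $L^2_tL^2(\ptu)$ curvature-flux control of $H\in\{\rho,\sigma,\beta,\underline\beta\}$ into an $L^\infty_tL^{4_-}(\ptu)$ bound on $\mathcal{D}_2^{-1}(bF\cdot H)$. The key point is that $H$ is not controlled in $L^\infty_tL^2(\ptu)$ on its own, so an elliptic/Sobolev estimate applied pointwise in $t$ will not suffice: we have to trade one $t$-derivative for one angular derivative, which is exactly what Bianchi provides.

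First, for each $H$ I would derive a decomposition of the schematic form $H=n^{-1}\ddb_{nL}P_H+\widetilde E_H$, where $P_H$ is a $\ptu$-tangent tensor with $\no(P_H)\les\ep$ and $\widetilde E_H$ is an error of Ricci-coefficient-times-curvature type with $\norm{\widetilde E_H}_{\PP^0}\les\ep$. For $H=\beta$, this is exactly \eqref{estfluxcorres1:0}. For $H=\rho,\sigma$, one obtains this by solving \eqref{bianccheckstruct} via $\mathcal{D}_1^{-1}$, noting that $\check\rho-\rho$ and $\check\sigma-\sigma$ are quadratic in $\hch,\hchb$ and hence absorbed into $\widetilde E_H$. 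For $H=\underline\beta$, one combines \eqref{bianc6} with the decompositions just derived for $\rho,\sigma$ and integrates the resulting $L$-transport equation with the initial data $\underline\beta|_{t=0}$. In each case the construction follows the Hodge-theoretic template of section 6 of \cite{FLUX}, adapted to the time foliation via the structural conditions \textbf{S1}-\textbf{S7} identified in section \ref{sec:besovtrc}.

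Next, substituting the decomposition and rewriting
\[
bF\cdot n^{-1}\ddb_{nL}P_H=n^{-1}\ddb_{nL}(bF\cdot P_H)-n^{-1}\ddb_{nL}(bF)\cdot P_H,
\]
I would apply the commutator identity \eqref{eq:commLdcal-1} to move $\mathcal{D}_2^{-1}$ past $\ddb_{nL}$. The resulting pure-time-derivative contribution integrates telescopically from $0$ to $t$ into $\mathcal{D}_2^{-1}(bF\cdot P_H)(t)-\mathcal{D}_2^{-1}(bF\cdot P_H)(0)$, whose $L^{4_-}(\ptu)$-norm is bounded by $(\norm{F}_{L^\infty}+\norm{\nabb F}_{\tx{\infty}{2}})\norm{P_H}_{L^2(\ptu)}$ thanks to Lemma \ref{lemma:lbt6}, combined with $\norm{P_H}_{\tx{\infty}{2}}\les\no(P_H)\les\ep$ from \eqref{sobineq2bis}; the initial datum is controlled through \eqref{estinit}. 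The commutator $\mathcal{D}_2^{-1}[\mathcal{D}_2,nL]\mathcal{D}_2^{-1}$ produces factors of $n\chi$ and $n\beta$ (cf.\ \eqref{commA3}-\eqref{commA4}), which are absorbed using the Ricci-coefficient estimates of Theorem \ref{thregx} together with a small-perturbation argument for the $n\beta$-piece, the latter reducing to the case $H=\beta$ already treated so that smallness of $\ep$ closes the loop. The leftover pieces $\mathcal{D}_2^{-1}(\ddb_{nL}(bF)\cdot P_H)$ and $\mathcal{D}_2^{-1}(bF\cdot n^{-1}\widetilde E_H)$ are controlled directly by duality, using Lemma \ref{lemma:lbt6} at exponents $(p,q)=((4/3)_+,2)$ paired against a test function in $L^{(4/3)_+}(\ptu)$, combined with the Ricci-coefficient estimates from Theorem \ref{thregx} and the curvature bound \eqref{curvflux1}.

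The main obstacle is the derivation of the decomposition $H=n^{-1}\ddb_{nL}P_H+\widetilde E_H$ for $H\in\{\rho,\sigma,\underline\beta\}$, which requires a careful case-by-case Hodge analysis adapted from \cite{FLUX} to the time foliation. A secondary and delicate point is the near-endpoint exponent $4_-$: Lemma \ref{lemma:lbt6} must be applied strictly inside its range of validity, and one must check, using the Gauss-curvature estimate \eqref{estgauss2} and the consequent Bochner inequality \eqref{eq:Bochconseqbis}, that no logarithmic divergence enters the bound on $\mathcal{D}_2^{-1}$ at this exponent.
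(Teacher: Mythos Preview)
Your plan has a genuine gap at the ``telescoping'' step. The target norm is $\tx{\infty}{4_-}=L^\infty_tL^{4_-}(\ptu)$, so you must bound $\mathcal{D}_2^{-1}(bF\cdot H)$ at each fixed time $t$; there is no time integral anywhere in the statement. After you write $H=n^{-1}\ddb_{nL}P_H+\widetilde E_H$, move $\ddb_{nL}$ past $\mathcal{D}_2^{-1}$, and isolate the ``pure time-derivative'' piece $n^{-1}\ddb_{nL}\mathcal{D}_2^{-1}(bF\cdot P_H)$, you are still evaluating this expression at a single time $t$. Nothing integrates: $\int_0^t n^{-1}\ddb_{nL}[\cdots]$ would indeed telescope to $\mathcal{D}_2^{-1}(bF\cdot P_H)(t)-\mathcal{D}_2^{-1}(bF\cdot P_H)(0)$, but that is the wrong quantity. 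What you actually need is an $L^\infty_tL^{4_-}$ bound on $n^{-1}\ddb_{nL}\mathcal{D}_2^{-1}(bF\cdot P_H)$ itself, and your data $\no(P_H)\les\ep$, $\widetilde E_H\in\PP^0$ give only $L^2_t$ control on $\ddb_{nL}P_H$ and on $\widetilde E_H$. The decomposition $H=\ddb_{nL}P_H+E$ is designed for estimates where a time integral is present (e.g.\ Proposition~\ref{prop:mainlemmaapplicationt} or Corollary~\ref{corr:funny-classical-tracet}); it does not by itself produce pointwise-in-$t$ bounds.

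The paper's proof is short and goes through an entirely different mechanism. All of the $L^\infty_t$ difficulty is absorbed into Lemma~\ref{lemma:po5}, which gives $\norm{P_lH}_{\tx{\infty}{2}}\les 2^{l/2}\ep$ for every $H\in\{\rho,\sigma,\beta,\underline\beta\}$. (That lemma does use Bianchi, but via the trace inequality \eqref{ad27bis} combined with a bound on $\norm{P_j\ddb_{nL}H}_{\lh{2}}$ and a heat-flow commutator estimate for $[\ddb_{nL},P_j]$---not via a $\ddb_{nL}P_H+E$ decomposition.) Once \eqref{po63} is available, Lemma~\ref{lemma:po1} is a double Littlewood--Paley argument: expand $\mathcal{D}_2^{-1}(bF\cdot H)=\sum_{j,l}P_j\mathcal{D}_2^{-1}(bF\cdot P_lH)$, use weak Bernstein and finite band on $P_j$ to get a bound $\les 2^{-j(1/2)_++l/2}\norm{F}_{L^\infty}\ep$ when $l\le j$, and use $P_l=2^{-2l}\lap P_l$ together with an integration by parts to get $\les 2^{j(1/2)_- -l(1/2)_-}(\norm{F}_{L^\infty}+\norm{\nabb F}_{\tx{\infty}{2}})\ep$ when $l>j$; then sum in $j,l$. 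Your instinct that Bianchi is essential is correct, but the right way to cash it in here is through Lemma~\ref{lemma:po5}, not through a transport-type decomposition of $H$.
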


\begin{lemma}\lab{lemma:po2}
Let $h$ a scalar function which denotes a curvature term among $(\r, \s)$. Then, for any $2\leq p<+\infty$, we have the following estimate:
\be\lab{po56}
\norm{\mathcal{D}_2^{-1}b^{-1}\mathcal{D}_2^{-1}(b\nabb h)}_{\tx{p}{4_-}}\les\ep.
\ee
\end{lemma}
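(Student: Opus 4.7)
The strategy is to avoid the apparent loss of derivative in $\nabb h$ by exploiting the Bianchi identity structure encoded in Lemma \ref{lemma:lbt7}, which rewrites $\nabb(n\rho)+(\nabb(n\sigma))^{*}$ as ${}^{*}\mathcal{D}_{1}$ applied to scalar quantities modulo curvature flux terms. The composition $\mathcal{D}_{2}^{-1}\cdot {}^{*}\mathcal{D}_{1}$ is of order zero and bounded on $L^{q}(\ptu)$ for every $1<q<\infty$ (by the identities \eqref{eq:dcalident}--\eqref{eq:dcallident} combined with the Hodge estimate \eqref{eq:estimdcal-1} and Lemma \ref{lemma:lbt6}), so the inner $\mathcal{D}_{2}^{-1}$ absorbs the derivative generated by ${}^{*}\mathcal{D}_{1}$. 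A second $\mathcal{D}_{2}^{-1}$ followed by Gagliardo--Nirenberg then yields the $L^{4_{-}}(\ptu)$ gain.

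First, I invoke Lemma \ref{lemma:lbt7} to write $\nabb(n\rho)+(\nabb(n\sigma))^{*}= {}^{*}\mathcal{D}_{1}J\,{}^{*}\mathcal{D}_{1}^{-1}(\ddb_{nL}\bb)+{}^{*}\mathcal{D}_{1}H$ with $\norm{H}_{\tx{2}{3}}\les\ep$. Applying the involution $J$ to the same decomposition yields an analogous identity producing $\nabb(n\rho)-(\nabb(n\sigma))^{*}$, from which I isolate $\nabb(n\rho)$ and $\nabb(n\sigma)$ separately. Using $\nabb h=n^{-1}\nabb(nh)-n^{-1}h\nabb n$, I obtain a decomposition
\[
b\nabb h \;=\; bn^{-1}\,{}^{*}\mathcal{D}_{1}G \;+\; R,
\]
where $G$ is a scalar built from ${}^{*}\mathcal{D}_{1}^{-1}\ddb_{nL}\bb$ and $H$, and $R$ collects remainder terms of the form $bn^{-1}h\,\nabb n$.

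For the main term, I commute $\mathcal{D}_{2}^{-1}$ past $bn^{-1}$ and ${}^{*}\mathcal{D}_{1}$:
\[
\mathcal{D}_{2}^{-1}(bn^{-1}\,{}^{*}\mathcal{D}_{1}G)\;=\;\mathcal{D}_{2}^{-1}\,{}^{*}\mathcal{D}_{1}(bn^{-1}G)\;-\;\mathcal{D}_{2}^{-1}\bigl(G\,\nabb(bn^{-1})\bigr).
\]
The first piece is bounded pointwise in $t$ by $\norm{bn^{-1}G}_{L^{q}(\ptu)}$ for any $q<\infty$ by the zero-order character of $\mathcal{D}_{2}^{-1}{}^{*}\mathcal{D}_{1}$. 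The commutator piece gains a derivative through $\mathcal{D}_{2}^{-1}$ and is absorbed using Lemma \ref{lemma:lbt6} together with $\nabb(bn^{-1})\in L^{\infty}_{t}L^{4}(\ptu)$, which follows from \eqref{estn} and \eqref{estb}. The remainder $R$ is handled analogously, pairing $h\in L^{\infty}_{u}L^{2}(\H_{u})$ with $\nabb n\in L^{\infty}_{t}L^{4}(\ptu)$ via Lemma \ref{lemma:lbt6}. Once the inner expression lies in $L^{p}_{t}L^{2}(\ptu)$, multiplication by $b^{-1}$ is harmless thanks to \eqref{estb}, and the outer $\mathcal{D}_{2}^{-1}$ produces the $L^{4_{-}}(\ptu)$ control through Lemma \ref{lemma:lbt6}; the $L^{p}_{t}$ integrability for arbitrary $p<\infty$ follows because after the Bianchi rewriting no time derivative lands on curvature components anymore---only on Ricci coefficients, which have $L^{\infty}_{t}$ control by Theorem \ref{thregx}.

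The principal obstacle is controlling $G$, in particular the factor ${}^{*}\mathcal{D}_{1}^{-1}\ddb_{nL}\bb$. I split it via the commutator identity
\[
{}^{*}\mathcal{D}_{1}^{-1}\ddb_{nL}\bb\;=\;\ddb_{nL}\bigl({}^{*}\mathcal{D}_{1}^{-1}\bb\bigr)\;+\;[{}^{*}\mathcal{D}_{1}^{-1},\ddb_{nL}]\bb.
\]
The commutator is estimated in $\tx{2}{3}$ by Lemma \ref{lemma:lbt8}, while $\ddb_{nL}({}^{*}\mathcal{D}_{1}^{-1}\bb)$ is converted through the Bianchi structure \eqref{bianccheckstruct} back into controlled curvature and Ricci terms; together these yield the $\tx{2}{3}$ bound $\norm{G}\les\ep$ and close the estimate.
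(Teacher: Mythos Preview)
Your approach has a genuine gap in the time integrability. The lemma requires the estimate in $L^p_t L^{4_-}_{x'}$ for \emph{every} $2\le p<\infty$, but your route through Lemma~\ref{lemma:lbt7} and Lemma~\ref{lemma:lbt8} only produces $L^2_t$ bounds. Concretely, after the Bianchi rewriting your scalar $G$ is, up to the commutator handled by Lemma~\ref{lemma:lbt8} and the lower-order piece $H$, nothing other than $(n\rho,n\sigma)$: this is exactly the content of \eqref{lbt48}, which says ${}^*\mathcal{D}_1^{-1}(\ddb_{nL}\bb)=(n\rho,n\sigma)-J^{-1}H$. Hence the ``main term'' you isolate is essentially $\mathcal{D}_2^{-1}b^{-1}\mathcal{D}_2^{-1}{}^*\mathcal{D}_1(b\rho,b\sigma)$, and the only control available on $(\rho,\sigma)$ is $L^2(\H_u)=L^2_tL^2_{x'}$. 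Your claim that ``no time derivative lands on curvature components anymore'' is therefore incorrect: the curvature is still there, it has simply been moved from $\nabb h$ to $h$ itself, and the identity \eqref{bianccheckstruct} you invoke concerns $nL(\check{\rho},\check{\sigma})$, not $\ddb_{nL}({}^*\mathcal{D}_1^{-1}\bb)$, so it does not perform the conversion you describe.

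The paper's proof is structurally different and this difference is exactly what supplies the missing $L^p_t$ integrability. Instead of the Bianchi rewriting of Lemma~\ref{lemma:lbt7}, the paper writes $b\nabb h=\nabb(bh)-h\nabb b$, then uses the explicit commutator identity $\mathcal{D}_2^{-1}\nabb-\nabb\mathcal{D}_1^{-1}=\mathcal{D}_2^{-1}K\mathcal{D}_1^{-1}$ together with a dyadic decomposition in both the outer projection $P_j$ and an inner projection $P_l(bh)$. The crucial quantitative input is Lemma~\ref{lemma:po5}, which gives the $L^\infty_t$ frequency-envelope bound $\|P_l h\|_{L^\infty_tL^2}\lesssim 2^{l/2}\varepsilon$; this is precisely one half-derivative worth of time-uniform control, and after the two applications of $\mathcal{D}^{-1}$ (each gaining half a spatial derivative, cf.\ \eqref{dodane}) the dyadic sums close in $L^\infty_tL^{4_-}$ for the main piece and in $L^p_tL^{4_-}$ for the $K$-piece. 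Your argument never invokes this $L^\infty_t$ information, and without it the lemma cannot be obtained for $p>2$.
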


\begin{lemma}\lab{lemma:po3}
Let $F$ a $\ptu$-tangent tensors and let $H$ denote a term among $(\r, \s, \b, \bb)$ and 
$G$ is a $\ptu$-tensor satisfying $\no(G)\les\ep$. Then, for any $2\leq p<+\infty$, we have the following estimate:
\be\lab{po57}
\norm{\mathcal{D}_2^{-1}b(\mathcal{D}_2^{-1}(F\c H))}_{\tx{p}{4_-}}+\norm{\mathcal{D}_2^{-1}b(\mathcal{D}_2^{-1}(F\c\nabb G))}_{\tx{p}{4_-}}\les\no(F)\ep.
\ee
\end{lemma}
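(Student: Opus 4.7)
The plan is to combine the mapping properties of $\mathcal{D}_2^{-1}$, integration by parts, and a Bianchi-type decomposition of the curvature. Both terms are reduced by Gagliardo--Nirenberg applied to the outer Hodge inverse. Indeed, at fixed $t$, by \eqref{eq:GNirenberg} and \eqref{eq:estimdcal-1},
\begin{equation*}
\norm{\mathcal{D}_2^{-1}(bU)}_{\lpt{4_-}}\les \norm{\nabb\mathcal{D}_2^{-1}(bU)}^{1-\frac{2}{4_-}}_{\lpt{2}}\norm{\mathcal{D}_2^{-1}(bU)}^{\frac{2}{4_-}}_{\lpt{2}}+\norm{\mathcal{D}_2^{-1}(bU)}_{\lpt{2}}\les \norm{b}_{L^\infty}\norm{U}_{\lpt{2}},
\end{equation*}
so after taking $L^p_t$, both claims reduce to the bound $\norm{\mathcal{D}_2^{-1}(F\cdot H)}_{\tx{p}{2}}+\norm{\mathcal{D}_2^{-1}(F\cdot\nabb G)}_{\tx{p}{2}}\les \no(F)\,\ep$.

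For the $\nabb G$ piece I would integrate by parts, writing $F\cdot\nabb G=\nabb(F\otimes G)-\nabb F\otimes G$. The operator $\mathcal{D}_2^{-1}\nabb$ is of order zero and bounded on $\lpt{r}$ for $1<r<\infty$, since its $L^2(\ptu)\to L^2(\ptu)$-boundedness follows from \eqref{eq:estimdcal-1} by duality, and interpolation with $L^{p'}\to L^p$ continuity obtained through Lemma \ref{lemma:lbt6} upgrades this to all $1<r<\infty$. Hence $\norm{\mathcal{D}_2^{-1}\nabb(F\otimes G)}_{\tx{p}{2}}\les \norm{F\otimes G}_{\tx{p}{2}}$, and the residual $\mathcal{D}_2^{-1}(\nabb F\otimes G)$ is controlled by Hölder on $\ptu$, the Sobolev embedding $\no(F)\hookrightarrow L^\infty_tL^q(\ptu)$ for any $q<\infty$, Lemma \ref{lemma:lbt6}, and the hypothesis $\no(G)\les\ep$.

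For the curvature piece, the obstacle is that $H\in\lh{2}=L^2_tL^2(\ptu)$ has only $L^2_t$ integrability, while we want $L^p_t$ for any $p\ge 2$. The idea is to exploit the Bianchi identities \eqref{bianc1}--\eqref{bianc6}: as in the decompositions \eqref{estfluxcorres1:0}--\eqref{estfluxcorres2} and Lemma \ref{lemma:lbt7} of Section \ref{sec:besovtrc}, each component $H\in\{\r,\s,\b,\bb\}$ admits a representation of the form $H=\ddb_{nL}P+\mathcal{D}\,Q+E$ with $\no(P)+\no(Q)+\norm{E}_{\PP^0}\les\ep$. Substituting, $F\cdot H = \ddb_{nL}(F\cdot P)-\ddb_{nL}(F)\cdot P+\nabb(F\cdot Q)-\nabb F\cdot Q+F\cdot E$. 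The contributions from $\ddb_{nL}(F)$, $\nabb F$ and $E$ are estimated directly via Hölder, the product estimates \eqref{nonsharpprod1}--\eqref{nonsharpprod2}, and the $\no(F)\les\ep$ bound; the $\nabb(F\cdot Q)$ piece is handled by the IBP argument used for the $\nabb G$ term. For the remaining $\mathcal{D}_2^{-1}\ddb_{nL}(F\cdot P)$, commute using \eqref{commA3}, schematically
\begin{equation*}
\mathcal{D}_2^{-1}\ddb_{nL}(F\cdot P)=\ddb_{nL}\mathcal{D}_2^{-1}(F\cdot P)+\mathcal{D}_2^{-1}\big(n\chi\cdot\nabb+n\b\cdot\big)\mathcal{D}_2^{-1}(F\cdot P),
\end{equation*}
and apply Proposition \ref{prop:mainlemmaapplicationt} together with the embedding $\BB^0\hookrightarrow\tx{p}{2}$ implicit in \eqref{eq:Besovnormt} to convert the $\ddb_{nL}$-prefactor into $L^p_t$-in-time control for any $p<\infty$.

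The hard part will be the last step: the bookkeeping of the commutator $[\ddb_{nL},\mathcal{D}_2^{-1}]$ applied to $F\cdot P$ must produce only terms that can be estimated in $\tx{p}{2}$ by $\no(F)\,\ep$, without introducing a loss of a derivative on $F$. This is delicate for the $\bb$ component, where the Bianchi identity \eqref{bianc6} couples $\ddb_L\bb$ to $\nabb\r$ and $\nabb\s$, and one must invoke the analog of Lemma \ref{lemma:lbt7} to extract the correct $\ddb_{nL}(P)+\mathcal{D}(Q)$-structure before commuting with $\mathcal{D}_2^{-1}$.
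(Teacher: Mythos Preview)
Your reduction to bounding $\norm{\mathcal{D}_2^{-1}(F\cdot H)}_{\tx{p}{2}}+\norm{\mathcal{D}_2^{-1}(F\cdot\nabb G)}_{\tx{p}{2}}$ matches the paper's first step (\ref{po85}). After that, however, there is a genuine gap in your treatment of the $\nabb G$ term, and your strategy for the $H$ term is far more involved than what is actually needed.

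For the $\nabb G$ piece: after integrating by parts, your residual term is $\mathcal{D}_2^{-1}(\nabb F\otimes G)$. You only control $\nabb F\in L^2_tL^2(\ptu)$ from $\no(F)$, while $G\in L^\infty_tL^4(\ptu)$ from $\no(G)$. H\"older then places $\nabb F\otimes G$ in $L^2_tL^{4/3}(\ptu)$, and applying $\mathcal{D}_2^{-1}$ via Lemma~\ref{lemma:lbt6} gives only $L^2_tL^{4_-}(\ptu)$. There is no mechanism in your argument to upgrade this to $L^p_t$ for $p>2$; the embedding $\no(F)\hookrightarrow L^\infty_tL^q(\ptu)$ applies to $F$, not to $\nabb F$. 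The same time-integrability issue is exactly the one you identified for the $H$ term, and it arises for $\nabb G$ as well.

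The paper resolves both terms by the same mechanism, avoiding Bianchi identities and transport arguments entirely at this stage. One decomposes both factors in Littlewood--Paley, $P_q(F)\cdot P_lH$ (resp.\ $P_q(F)\cdot\nabb P_lG$), and uses the pointwise-in-time bounds $\norm{P_lH}_{\tx{\infty}{2}}\les 2^{l/2}\ep$ from Lemma~\ref{lemma:po5}, $\norm{P_lG}_{\tx{\infty}{2}}\les 2^{-l/2}\no(G)$ and $\norm{P_qF}_{\tx{\infty}{2}}\les 2^{-q/2}\no(F)$ from Lemma~\ref{lemma:lbz4}. Interpolating these $L^\infty_t$ bounds with the $L^2(\H_u)$ curvature flux (for $H$) or the finite band property (for $G$) produces the needed $L^p_t$ control for every finite $p$; see (\ref{po86}) for $l\le q$ and (\ref{po87}) for $l>q$, where in the latter one uses $\lap P_l$ (for $H$) or the integration by parts (for $\nabb G$) together with Gagliardo--Nirenberg and Bochner on $\nabb P_qF$. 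The two dyadic estimates then sum with decay $2^{-|l-q|(\half)_-}$. Note that Lemma~\ref{lemma:po5} already encodes the Bianchi structure you were trying to invoke; once it is available there is no need to pass through $\ddb_{nL}P$ representations or Proposition~\ref{prop:mainlemmaapplicationt}.
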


\begin{lemma}\lab{lemma:po4}
Let $F, G$ and $H$ three $\ptu$-tangent tensors. Then, we have the following estimate:
\be\lab{po58}
\norm{\mathcal{D}_2^{-1}(FGH)}_{\tx{\infty}{4_-}}\les\no(F)\no(G)\no(H).
\ee
\end{lemma}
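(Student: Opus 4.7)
The plan is to reduce \eqref{po58} via Lemma \ref{lemma:lbt6} to bounding the product $FGH$ in $\tx{\infty}{p}$ for an exponent $p$ slightly below $4/3$, and then to apply H\"older together with the Sobolev estimates for $\no(\cdot)$ already established. Fix $\eta>0$ small and set $q=4-\eta$. Since $1/q+1/2>3/4$, one can pick $p\in(1,2)$ slightly below $4/3$ so that the strict inequality $1/p<1/q+1/2$ holds; Lemma \ref{lemma:lbt6} applied slice by slice in $t$ then gives
\begin{equation*}
\|\mathcal{D}_2^{-1}(FGH)(t,\cdot)\|_{L^q(\ptu)}\lesssim\|FGH(t,\cdot)\|_{L^p(\ptu)},
\end{equation*}
and taking the supremum in $t$ reduces \eqref{po58} to showing $\|FGH\|_{\tx{\infty}{p}}\lesssim\no(F)\no(G)\no(H)$.

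Next I would apply H\"older's inequality on $\ptu$ with three equal exponents $p_1=p_2=p_3=r$ determined by $3/r=1/p$; with $p$ just below $4/3$, one may take $r$ just below $4$. This yields the pointwise-in-$t$ bound
\begin{equation*}
\|FGH\|_{L^p(\ptu)}\leq\|F\|_{L^r(\ptu)}\,\|G\|_{L^r(\ptu)}\,\|H\|_{L^r(\ptu)}.
\end{equation*}
Interpolating between $L^2(\ptu)$ and $L^4(\ptu)$ and combining the Sobolev embedding \eqref{sobineq1} with the consequence $\|F\|_{\tx{\infty}{2}}\lesssim\no(F)$ of \eqref{sobineq2} (and similarly for $G,H$), one obtains $\|F\|_{\tx{\infty}{r}}\lesssim\no(F)$ for every $2\leq r<4$. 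Taking the supremum in $t$ in the H\"older inequality above then closes the estimate and yields \eqref{po58}.

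The only subtlety, which is not really an obstacle, is the consistent choice of exponents so that both the Hodge-type estimate of Lemma \ref{lemma:lbt6} and the Sobolev interpolation between $L^2$ and $L^4$ are in their ranges of validity. Because the constraint $1/p<1/q+1/2$ in Lemma \ref{lemma:lbt6} is strict, any $q<4$ leaves a positive margin, which is precisely absorbed by the fact that $r$ may be taken strictly less than $4$ in the H\"older step; this is also the reason why the endpoint $p=4$ cannot be reached by this argument, explaining the $4_-$ in the statement.
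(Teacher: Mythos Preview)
Your proof is correct and follows the same approach as the paper: apply Lemma~\ref{lemma:lbt6} to pass from $\mathcal{D}_2^{-1}$ to an $L^p$ bound on $FGH$, then H\"older and the Sobolev control $\|\cdot\|_{\tx{\infty}{4}}\lesssim\no(\cdot)$. The paper is slightly sharper in its exponent choice: since $q=4_-<4$ already gives the strict inequality $1/q+1/2>3/4$, one may take $p=4/3$ exactly (rather than slightly below), split $FGH$ by H\"older with all three factors in $L^4$, and invoke \eqref{sobineq1} directly without any interpolation.
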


We also state the following lemma which will be necessary for the proof of Lemma \ref{lemma:po1} as well as several places in this paper.
\begin{lemma}\lab{lemma:po5}
Let $H$ denote a curvature term among $(\r, \s, \b, \bb)$. Then, for any $j\geq 0$, we have the following estimate:
\be\lab{po63}
\norm{P_jH}_{\tx{\infty}{2}}\les 2^{\frac{j}{2}}\ep.
\ee
\end{lemma}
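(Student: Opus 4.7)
The plan is to run a null transport energy estimate for the scalar $|P_jH|^2$. Using the identity \eqref{du} with $f=|P_jH|^2$, one has
\begin{equation*}
\|P_jH(t,\cdot)\|^2_{L^2(\ptu)} = \|P_jH(0,\cdot)\|^2_{L^2(\pou)} + \int_0^t\!\!\int_{\ptu}\bigl(2n\,\gg(P_jH,\ddb_LP_jH) + n\,\trc\,|P_jH|^2\bigr)\dmt d\tau,
\end{equation*}
so that Cauchy-Schwarz combined with $\|\trc\|_{L^\infty}\les\ep$ (from \eqref{esttrc}), $\|n\|_{L^\infty}\les 1$, and $\|P_jH\|_{\lh{2}}\les\|H\|_{\lh{2}}\les\ep$ (curvature flux \eqref{curvflux1} plus the $L^2$ boundedness of $P_j$) gives
\begin{equation*}
\|P_jH\|^2_{\tx{\infty}{2}}\les \|P_jH(0,\cdot)\|^2_{L^2(\pou)} + \ep\,\|\ddb_LP_jH\|_{\lh{2}} + \ep^3.
\end{equation*}
Matching the target bound $2^{j/2}\ep$ thus reduces to proving (a) $\|\ddb_LP_jH\|_{\lh{2}}\les 2^j\ep$, and (b) the initial data estimate $\|P_jH(0,\cdot)\|_{L^2(\pou)}\les 2^{j/2}\ep$, which follows from the structural decompositions restricted to $\Si$ together with the initial-data bound $\Ij\les 2^{j/2}\ep$ of \eqref{estinit:bis}.

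For the core estimate (a) I would split $\ddb_LP_jH=[\ddb_L,P_j]H+P_j\ddb_LH$, handling the commutator term by the analogues of \eqref{supercommut1}--\eqref{supercommut2} applied with the finite band property. When $H\in\{\rho,\sigma,\bb\}$ the Bianchi identities \eqref{bianc2}, \eqref{bianc4}, \eqref{bianc6} express $\ddb_LH$ as an $\mathcal{D}_1$-type spatial derivative of $(\rho,\sigma,\b)$ plus bilinear products of Ricci coefficients with $\b,\bb$ or $\alpha$. The principal terms are bounded by the dual sharp Bernstein / finite band, e.g.\ $\|P_j\divb\b\|_{\lh{2}}\les 2^j\|\b\|_{\lh{2}}\les 2^j\ep$ and likewise for $\curlb\b$ and $\nabb\rho,\nabb\sigma$. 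The only delicate lower-order term is $\hchb\cdot\alpha$ in the $\rho$-Bianchi identity (since $\alpha\notin\lh{2}$): one pairs $\hchb\in\xt{\infty}{2}$ (from \eqref{esthch} and \eqref{estk}) against $\alpha\in\lsit{\infty}{2}$ to land in $L^1_{x'}L^1_t$, and then absorbs the weight $2^j$ via the dual Bernstein from $L^1(\ptu)$ into $L^2(\ptu)$.

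The case $H=\b$ is the principal obstacle because Bianchi \eqref{bianc1} expresses $\ddb_L\b$ through $\divb\alpha$ and $\alpha$, neither of which is controlled in $\lh{2}$. To bypass this I would exploit the structural decomposition \eqref{estfluxcorres1:0}, $n\b=\ddb_{nL}P_1+E_1$ with $\no(P_1)+\|E_1\|_{\PP^0}\les\ep$. Writing
\begin{equation*}
P_j(n\b)=\ddb_{nL}(P_jP_1)+[P_j,\ddb_{nL}]P_1+P_jE_1,
\end{equation*}
the commutator is handled by \eqref{supercommut1}; the term $P_jE_1$ by combining the $\PP^0$ structure of $E_1$ with a dual Bernstein argument to extract the required $\tx{\infty}{2}$ decay; and the principal piece $\ddb_{nL}(P_jP_1)$ by running the same transport identity on $P_jP_1$ itself, using Lemma \ref{lemma:lbz4} which gives $\|P_jP_1\|_{\tx{\infty}{2}}\les 2^{-j/2}\no(P_1)\les 2^{-j/2}\ep$, and combining this with the $\no(P_1)$-control of $\ddb_LP_jP_1$ to land at scale $2^{j/2}\ep$. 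The hard quantitative step will be this last transfer: promoting the $\tx{\infty}{2}$ control of $P_jP_1$ into an $\tx{\infty}{2}$ control of its $\ddb_L$-derivative at the correct power of $2^j$ requires a careful second application of the transport identity together with the commutator estimates, and is exactly where the $\beta$-estimate differs structurally from the $\rho,\sigma,\bb$ cases.
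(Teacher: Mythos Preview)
Your overall transport-energy skeleton is correct and matches the paper, but there are two substantive errors.

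First, a misreading of the curvature flux: $\alpha$ \emph{is} controlled in $\lh{2}$ by \eqref{curvflux}--\eqref{curvflux1}; it is $\underline{\alpha}$ that is excluded. Consequently the $\beta$ case is not a ``principal obstacle'': Bianchi \eqref{bianc1} gives $\ddb_L\b=\divb\a+\ldots$ and the finite band property yields $\|P_j\divb\a\|_{\lh{2}}\les 2^j\|\a\|_{\lh{2}}\les 2^j\ep$ directly, exactly as for $\rho,\sigma,\bb$. The detour through the decomposition $n\b=\ddb_{nL}P_1+E_1$ is unnecessary, and the term $\hchb\cdot\a$ in \eqref{bianc2} is harmless for the same reason.

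Second, and this is the real gap: you cannot invoke \eqref{supercommut1}--\eqref{supercommut2} for $[\ddb_{nL},P_j]H$, because those estimates require $\no(H)$, in particular $\nabb H,\,\ddb_L H\in\lh{2}$, which is unavailable for curvature components. The paper handles this commutator by a direct heat-flow argument at negative regularity: writing $[\ddb_{nL},P_j]H=\int m_j(\tau)V(\tau)\,d\tau$ with $V$ solving $(\partial_\tau-\lap)V=[\ddb_{nL},\lap]U(\tau)H$, one runs an energy estimate on $\La^{-a}V$ (with $0<a<\tfrac12$), using the tensorial Bochner inequality and the structure \eqref{commA3}--\eqref{commA4}, to obtain
\[
\|[\ddb_{nL},P_j]H\|_{\tx{1}{2}}\les 2^{ja}\ep\bigl(1+\|\La^{-1}H\|_{\tx{\infty}{2}}\bigr),
\]
and then closes by bootstrapping $\|\La^{-1}H\|_{\tx{\infty}{2}}\les\sup_j 2^{-j/2}\|P_jH\|_{\tx{\infty}{2}}$ back into the estimate. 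This is the actual work in the lemma. Also, the paper uses the cut-off variant \eqref{ad27bis} of the transport identity and thereby avoids any initial-data term; your appeal to $\Ij$ is not justified since $\Ij$ in \eqref{defI0j} does not contain the Littlewood--Paley pieces of curvature.
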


The proof of Lemma \ref{lemma:po1} is postponed to section \ref{sec:gowinda3}, the proof of Lemma \ref{lemma:po2} to section \ref{sec:gowinda4}, the proof of Lemma \ref{lemma:po3} to section \ref{sec:gowinda5}, the proof of Lemma \ref{lemma:po4} to section \ref{sec:gowinda6}, and the proof of Lemma \ref{lemma:po5} to section \ref{sec:gowinda7}. We now derive the estimate for the $\tx{p}{4_-}$-norm of $\po\chi_2$. We consider the various terms in the right-hand side of \eqref{po54}. Lemma \ref{lemma:po1}  and Lemma \ref{lemma:po3} yield:
\bea
\nn&&\normm{\mathcal{D}_2^{-1}\left(\frac{(\po N)_B}{2}(\r\d_{AB}+3\s\in_{AB})\right)}_{\tx{\infty}{4_-}}+\Bigg\|\mathcal{D}_2^{-1}\Bigg(-b^{-1}\mathcal{D}_2^{-1}b\bigg(-(\chi-2\d)\b\\
\nn&&+(\kep+3\z)\r-(\kep-3{}^*\z)\s+(\chi+2\hchb)\bb\bigg)_{A\po N}\Bigg)\Bigg\|_{\tx{\infty}{4_-}}\\
\nn&\les& \ep(\norm{\po N}_{L^\infty}+\norm{\nabb\po N}_{\tx{\infty}{2}}+\no(\chi)+\no(\d)+\no(\kep)+\no(\z)+\no(\chb))\\
\lab{po59}&\les& \ep,
\eea
where we used the estimates \eqref{estk}-\eqref{estzeta} for $\d, \kep, \chi, \chb$ and $\z$, and the estimates \eqref{estNomega} \eqref{estricciomega} for $\po N$. 

Using the commutator formula \eqref{comm7} together with Remark \ref{rem:Dcont} for $\mathcal{D}_2^{-1}$ and Lemma \ref{lemma:po3} and Lemma \ref{lemma:po4}, we obtain:
\bea
\nn&&\normm{\mathcal{D}_2^{-1}\Bigg(b^{-1}\mathcal{D}_2^{-1}[\ddb_{bN},\mathcal{D}_2](\chi_2)_{A\po N}\Bigg)}_{\tx{p}{4_-}}\\
\nn&\les& \normm{\mathcal{D}_2^{-1}\Bigg(b^{-1}\mathcal{D}_2^{-1}\nabb(b(\chi+\eta)\chi_2)_{A\po N}\Bigg)}_{\tx{p}{4_-}}+\normm{\mathcal{D}_2^{-1}\Bigg(b^{-1}\mathcal{D}_2^{-1}\nabb(b(\chi+\eta))\chi_2)_{A\po N}\Bigg)}_{\tx{p}{4_-}}\\
\nn&&+\normm{\mathcal{D}_2^{-1}\Bigg(b^{-1}\mathcal{D}_2^{-1}((\chi(\kepb+\xib+\chb\z)\chi_2)_{A\po N}\Bigg)}_{\tx{p}{4_-}}+\normm{\mathcal{D}_2^{-1}\Bigg(b^{-1}\mathcal{D}_2^{-1}((\b+\bb)\chi_2)_{A\po N}\Bigg)}_{\tx{p}{4_-}}\\
\nn&\les& (\noo(b)(\no(\chi)+\no(\eta))\no(\chi_2)+\no(\chi)(\no(\kepb)+\no(\xib))\no(\chi_2)\\
\nn&&+\no(\chb)\no(\z)\no(\chi_2)+\ep\no(\chi_2)\\
\lab{po60}&\les&\ep,
\eea
where we used the estimates \eqref{estn}-\eqref{estzeta} for $b, \eta, \kepb, \chi, \chb, \xib$ and $\z$, the estimate \eqref{estNomega} for $\po N$, and the estimate \eqref{po49} for $\chi_2$.

Using Remark \ref{rem:Dcont} for $\mathcal{D}_2^{-1}$ and Lemma \ref{lemma:po3} and Lemma \ref{lemma:po4}, we obtain:
\bea
\nn&&\normm{\mathcal{D}_2^{-1}\Bigg(b^{-1}\mathcal{D}_2^{-1}b\bigg(-2\nabb\d\c\chi+2\kep\c\nabb\z-3\d b^{-1}\nabb b\c\chi+2b^{-1}\nabb b\etah\chi+2\th\kep\chi\bigg)_{A\po N}\Bigg)}_{\tx{p}{4_-}}\\
\nn&&+\normm{\mathcal{D}_2^{-1}\Bigg(b^{-1}\mathcal{D}_2^{-1}b\bigg(\kep\chi(\d+n^{-1}\nabla_Nn)+\kep\z\otimes\z-\kep\chb\chi\bigg)_{A\po N}\Bigg)}_{\tx{\infty}{4_-}}\\
\nn&&+\normm{\mathcal{D}_2^{-1}\Bigg(g(\po N,e_A)\th\c \chi_2+\th_{\po NB}(\chi_2)_{BA}-\th_{AB}(\chi_2)_{B\po N}-\trt (\chi_2)_{A\po N}\Bigg)}_{\tx{\infty}{4_-}}\\
\nn&\les& \noo(b)\big(\no(\d)\no(\chi)+\no(\kep)\no(\z)+\no(\eta)\no(\chi)+(\no(\th)+\no(\d)+\no(n^{-1}\nabla_Nn)\\
\nn&&+\no(\chb))\no(\kep)\no(\chi)+\no(\kep)\no(\z)^2\big)+\norm{\po N}_{L^\infty}\no(\th)\no(\chi_2)\\
\lab{po61}&\les&\ep,
\eea
where we used the estimates \eqref{estn}-\eqref{estzeta} for $n, b, \eta, \kep, \chi, \chb, \th$ and $\z$, and the estimate \eqref{estNomega} for $\po N$, and the estimate \eqref{po49} for $\chi_2$.

Using the analog of Lemma \ref{lemma:lbt6} for $\mathcal{D}_2^{-1}$, we obtain:
\bea
\lab{po62}&&\norm{\mathcal{D}_2^{-1}\left(\po\kep\c\chi+\kep\c\po\chi\right)}_{\tx{\infty}{4_-}}\\
\nn&\les&\norm{\po\kep\c\chi+\kep\c\po\chi}_{\tx{\infty}{\frac{4}{3}}}\\
\nn&\les&\norm{\po\kep}_{\tx{\infty}{2}}\norm{\chi}_{\tx{\infty}{4}}+\norm{\kep}_{\tx{\infty}{4}}\norm{\po\chi}_{\tx{\infty}{2}}\\
\nn&\les&\ep,
\eea
where we used the estimates \eqref{estn}-\eqref{esthch} for $\kep$ and $\chi$, and the estimate \eqref{estricciomega} for $\po \chi$.

Finally, \eqref{po54}, Lemma \ref{lemma:po2}, and \eqref{po59}-\eqref{po62} yield for all $2\leq p<+\infty$:
$$\norm{\po\chi_2}_{\tx{p}{4_-}}\les \ep.$$
Using the Gagliardo Nirenberg inequality \eqref{eq:GNirenberg}, \eqref{po50bis} and interpolation, we obtain:
$$\norm{\po\chi_2}_{\lh{6_-}}\les\ep.$$
Together with the estimates \eqref{po18}, \eqref{po24}, \eqref{po29} and \eqref{po42} for $\chi_1$, and  the estimates \eqref{po49}, \eqref{po50bis} for $\chi_2$, we obtain the desired decomposition \eqref{dechch}-\eqref{dechch3} for $\hch$

\subsection{Besov improvement for $\po N$ and $\po\chi$}\lab{sec:jycroispas3}

The goal of this section in to prove the following proposition.
\begin{proposition}
We have the following estimate:
\be\lab{pope}
\norm{\nabb\po N}_{\BB^0}+\norm{\Pi(\po\chi)}_{\BB^0}\les\ep.
\ee
\end{proposition}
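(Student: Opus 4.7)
The plan is to apply the key transport-to-Besov estimate of Proposition \ref{prop:mainlemmaapplicationt} to suitable transport equations derived for $\Pi(\po\chi)$ and $\nabb\po N$, closing both bounds simultaneously. The required structural decomposition $\ddb_{nL}U + a\trc\,U = F_1\c\ddb_{nL}P + F_2\c W$ will be obtained by exploiting: the decomposition $n\b=\ddb_{nL}P_1+E_1$ from \eqref{estfluxcorres1:0}, the decomposition $\nabb(n\hch)=\ddb_{nL}P+E$ from \eqref{impbes22}, the Besov improvement $\norm{\nabb\trc}_{\BB^0}\les\ep$ from \eqref{impbes21}, and the decomposition of $\nabb(n\db)$ from \eqref{impbes11}--\eqref{impbes12}.

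First I would treat $\Pi(\po\chi)$. Multiplying \eqref{popo4} by $n$ recasts it as $\ddb_{nL}\Pi(\po\chi) + (n\trc)\,\Pi(\po\chi) = \textrm{RHS}$, and I would put the RHS in the form $F_1\c\ddb_{nL}P + F_2\c W$ as follows: (i) the curvature term $(\po N)_C(\in_{AC}{}^*\b_B+\in_{BC}{}^*\b_A)$ is rewritten via \eqref{estfluxcorres1:0} as $\po N\c\ddb_{nL}P_1+\po N\c E_1$, which has precisely the required shape with $W=E_1$; (ii) the commutator term $n\nabb_{\po N}\chi=\po N\c\nabb(n\chi)+\textrm{l.o.t.}$ is split via the Codazzi identity into a trace part $\po N\c\nabb(n\trc)$, absorbed thanks to \eqref{impbes21}, and a traceless part $\po N\c\nabb(n\hch)$, which by \eqref{impbes22} fits the $F_1\c\ddb_{nL}P$ structure; (iii) the remaining terms $\Pi(\po\chi)\c\chi$, $\db\Pi(\po\chi)$, $\kepb\c\chi\c\po N$, and the algebraic $\po N$-terms are placed in $F_2\c W$ using the $L^\infty$ bound $\norm{\po N}_{L^\infty}\les 1$ from \eqref{estNomega} together with \eqref{estk}--\eqref{esthch} on $\chi,\kepb,\db$, and the nonsharp product estimates \eqref{nonsharpembedding}--\eqref{nonsharpprod2}. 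The coefficients $F_1,F_2,P,W$ so produced satisfy $\no(F_i)+\norm{F_i}_{\xt{\infty}{2}}+\no(P)+\norm{W}_{\PP^0}\les\ep$, whence \eqref{eq:mainleprop1t} gives $\norm{\Pi(\po\chi)}_{\BB^0}\les\ep$.

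Second, for $\nabb\po N$, I would derive a transport equation by observing that since $\po N$ is $\ptu$-tangent, the projection of \eqref{popo1} reads $\ddb_L\po N=-\chi_{\po N B}e_B-\db\,\po N$. Multiplying by $n$ and applying the commutator identity \eqref{commA3}, one obtains
\[
\ddb_{nL}\nabb\po N=-\nabb(n\chi)\c\po N-\nabb(n\db)\,\po N-n\db\,\nabb\po N+n\b\c\po N+(n\chi)^2\c\po N+\textrm{l.o.t.},
\]
where ``l.o.t.'' denotes algebraic products of $\po N,\chi,\kepb,\nabb n$ controlled by Theorem \ref{thregx} and \eqref{estNomega} in $L^\infty$-based norms. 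The factor $\nabb(n\chi)$ splits into a trace part treated by \eqref{impbes21} and a traceless part handled by \eqref{impbes22}; the $\nabb(n\db)\c\po N$ term uses \eqref{impbes11}--\eqref{impbes12}; and $n\b\c\po N$ is again absorbed via \eqref{estfluxcorres1:0}. Assembling, one arrives at $\ddb_{nL}(\nabb\po N)+a\,\trc\,\nabb\po N=F_1\c\ddb_{nL}P+F_2\c W$ with admissible coefficients, and Proposition \ref{prop:mainlemmaapplicationt} yields $\norm{\nabb\po N}_{\BB^0}\les\ep$.

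The principal obstacle is the bookkeeping of the coupling between the two estimates: $\nabb\po N$ appears implicitly in the $\Pi(\po\chi)$ analysis through the commutator used to rewrite $\nabb_{\po N}\chi$, while $\Pi(\po\chi)$ appears in the $\nabb\po N$ analysis through the decomposition $\nabb(n\chi)=\half\nabb(n\trc)\,\ga+\nabb(n\hch)$. The remedy is to regard the two Besov norms jointly, since every occurrence of the ``other'' quantity is multiplied by a factor of size $\ep$ (coming from $\chi,\kepb,\nabb n,\po N$) thanks to Theorem \ref{thregx} and \eqref{estNomega}, so that a Gronwall-type absorption closes the estimate. The other delicate point is that the Besov refinement \eqref{impbes21} on $\nabb\trc$ is essential: without it, the contribution $\po N\c\nabb\trc$ would only be bounded in $\PP^0$, which is insufficient to apply Proposition \ref{prop:mainlemmaapplicationt} and would result in a logarithmic loss that could not be recovered under the $L^2$ curvature assumption \eqref{curvflux1}.
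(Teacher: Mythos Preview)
Your treatment of $\Pi(\po\chi)$ is essentially the paper's: multiply \eqref{popo4} by $n$, feed the curvature term through the decomposition \eqref{estfluxcorres1:0} for $n\b$, the term $\po N\cdot\nabb(n\hch)$ through \eqref{impbes22}, place everything else in $F_2\cdot W$, and invoke Proposition \ref{prop:mainlemmaapplicationt}. One obtains $\|\Pi(\po\chi)\|_{\BB^0}\les\ep\|\Pi(\po\chi)\|_{\PP^0}+\ep$ and closes by smallness.

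Where you diverge is in handling $\nabb\po N$. You derive a separate transport equation for $\nabb\po N$ by differentiating the projected version of \eqref{popo1}, and then run Proposition \ref{prop:mainlemmaapplicationt} a second time, coupling the two Besov norms. This would work (modulo the omitted term $n\chi\cdot\nabb\po N$ arising both from $\nabb(\chi_{\po N\cdot})$ and from the commutator \eqref{commA3}, which must go into $F_2\cdot W$ with $W=\nabb\po N$), but it is considerably more work than necessary. The paper instead observes that \eqref{popo3} is an \emph{algebraic} identity: projecting it onto $\ptu$ gives directly
\[
\nabb_A\po N=\Pi(\po\chi)_{AB}e_B-g(\po N,e_A)\z_Be_B,
\]
so that a single application of the product estimate \eqref{eq:secondbilBesov:bis} (with $\norm{\po N}_{L^\infty}+\norm{\nabb\po N}_{\tx{\infty}{2}}\les 1$ and $\norm{\z}_{\BB^0}\les\no(\z)\les\ep$ from \eqref{nonsharpembedding}) yields $\norm{\nabb\po N}_{\BB^0}\les\norm{\Pi(\po\chi)}_{\BB^0}+\ep$. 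This collapses your two-equation coupled system into a one-way reduction, eliminates the need for \eqref{impbes11}--\eqref{impbes12} on $\nabb(n\db)$ in this argument, and removes the ``principal obstacle'' you describe: there is no genuine coupling, only the self-absorption $\ep\|\Pi(\po\chi)\|_{\PP^0}$ in the $\Pi(\po\chi)$ estimate.
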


\begin{proof}
The formula \eqref{popo3} for $\dd_A(\po N)$ yields:
$$\nabb_A\po N=\po\chi_{AB}e_B-g(\po N,e_A)\z_Be_B,$$
which together with the estimate \eqref{eq:secondbilBesov:bis} and the non sharp embedding \eqref{nonsharpembedding} yields:
\bea
\lab{pope1}\norm{\nabb\po N}_{\BB^0}&\les& \norm{\Pi(\po\chi)}_{\BB^0}+\norm{\po N\c\z}_{\BB^0}\\
\nn&\les &\norm{\Pi(\po\chi)}_{\BB^0}+(\norm{\nabb\po N}_{\tx{\infty}{2}}+\norm{\po N}_{L^\infty})\norm{\z}_{\BB^0}\\
\nn&\les &\norm{\Pi(\po\chi)}_{\BB^0}+(\norm{\nabb\po N}_{\tx{\infty}{2}}+\norm{\po N}_{L^\infty})\no(\z)\\
\nn&\les &\norm{\Pi(\po\chi)}_{\BB^0}+\ep,
\eea
where we used in the last inequality the estimate \eqref{estzeta} for $\z$ and the estimates \eqref{estNomega} and \eqref{estricciomega} for $\po N$. 

In view of \eqref{pope1}, it remains to estimate $\norm{\Pi(\po\chi)}_{\BB^0}$. We recall the structure of the transport equation \eqref{popo4} satisfied by $\Pi(\po\chi)$:
\be\lab{pope2}\ddb_L(\Pi(\po\chi))=-\nabb_{\po N}\chi-(2\chi+\db)\c\Pi(\po\chi)+(4\kepb-2\kep+n^{-1}\nabla n)\c\chi\c\po N+\po N\c\b.
\ee
Recall from \eqref{estfluxcorres1:0} and \eqref{impbes22} the following decompositions:
$$n\b=\ddb_{nL}P_1+E_1,\,\nabb(n\hch)=\ddb_{nL}P_2+E_2\textrm{ where }\no(P_j)+\norm{E_j}_{\PP^0}\les \ep\textrm{ for }j=1, 2.$$
Together with \eqref{pope2}, this yields:
\be\lab{pope3}\ddb_{nL}(\Pi(\po\chi))=-(2\chi+\db)\c\Pi(\po\chi)+F\c\ddb_{nL}(P)+F\c E,
\ee
where $F$, $P$ and $E$ are given respectively by:
$$F=n\po N,\, P=-P_1+P_2,$$
and
$$E=-E_1+E_2+(4\kepb-2\kep+n^{-1}\nabla n)\c\chi.$$
$F$ satisfies:
\be\lab{pope4}
\no(F)+\norm{F}_{\xt{\infty}{2}}\les (\norm{n}_{L^\infty}+\no(n))(\norm{\po N}_{L^\infty}+\no(\po N))\les \ep,
\ee
where we used in the last inequality the estimate \eqref{estn} for $n$ and the estimates \eqref{estNomega} and \eqref{estricciomega} for $\po N$. $P$ satisfies:
\be\lab{pope5}
\no(P)\les \no(P_1)+\no(P_2)\les \ep.
\ee
Finally, using the non sharp product estimate \eqref{nonsharpprod2}, $E$ satisfies:
\be\lab{pope6}
\norm{E}_{\PP^0}\les \norm{E_1}_{\PP^0}+\norm{E_2}_{\PP^0}+(\no(\kepb)+\no(\kep)+\no(n^{-1}\nabla n))\no(\chi)\les \ep,
\ee
where we used in the last inequality the estimates \eqref{estn}-\eqref{esthch} for $n, \kep, \kepb$ and $\chi$.
Now, \eqref{pope3}-\eqref{pope6} together with the sharp trace theorem estimate \eqref{eq:mainleprop1t} yields:
\bea
\lab{pope7}\|\Pi(\po\chi)\|_{{\BB}^0}&\lesssim&\big(\no(\chi)+\no(\db)+\norm{\chi}_{\xt{\infty}{2}}+\|\db\|_{L_{x'}^\infty L_t^2}\big)\cdot\|\Pi(\po\chi)\|_{{\PP}^0}\\
\nn&&+\big(\no(F)+\|F\|_{L_{x'}^\infty L_t^2}\big)\cdot\no(P)
+\big(\no(F)+\|F\|_{L_{x'}^\infty L_t^2}\big)\cdot\|E\|_{{\PP}^0}\\
\nn&\les& \ep\|\Pi(\po\chi)\|_{{\PP}^0}+\ep,
\eea
where we used the estimate \eqref{estn}-\eqref{esthch} for $\db$ and $\chi$ in the last estimate. 

Finally, \eqref{pope1} and \eqref{pope7} yield \eqref{pope} which concludes the proof of the proposition.
\end{proof}

\subsection{Estimate for $N(\c, \o)-N(\c, \o')$}\lab{sec:jycroispas4}

The goal of this section is to prove \eqref{ad1}. The following lemmas will be useful. 
\begin{lemma}\lab{lemma:ad2} 
We have:
\be\lab{ad2}
\norm{Q_{>1}(N)}_{L^\infty}\les \ep,
\ee
where $Q_j$ is the geometric Littlewood-Paley decomposition on $\Sit$ introduced in section \ref{sec:LPSit}. 
\end{lemma}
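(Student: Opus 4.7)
The strategy is to reduce the $L^\infty$ estimate on high-frequency components of $N$ to an $L^2(\Sit)$ estimate on $\Delta N$, using the standard Bernstein-type properties of $Q_j$, and then to compute $\Delta N$ from the geometric identities relating $\nabla N$ to the Ricci coefficients and close the bound via Theorem \ref{thregx}.

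More precisely, by the weak Bernstein inequality in Theorem \ref{sit:thm:LP} vi) applied with $p=+\infty$,
$$\|Q_j N\|_{L^\infty(\Sit)}\lesssim 2^{3j/2}\|Q_j N\|_{L^2(\Sit)},$$
while by the finite band property v),
$$\|Q_j N\|_{L^2(\Sit)}\lesssim 2^{-2j}\|\Delta N\|_{L^2(\Sit)}.$$
Multiplying these bounds and summing over $j\ge 2$,
$$\|Q_{>1}N\|_{L^\infty(\Sit)}\lesssim \sum_{j\ge 2}2^{-j/2}\|\Delta N\|_{L^2(\Sit)}\lesssim \|\Delta N\|_{L^\infty_tL^2(\Sit)},$$
so the task reduces to proving $\|\Delta N\|_{L^\infty_tL^2(\Sit)}\lesssim \ep$.

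To compute $\Delta N$, I would use the orthonormal frame $(e_1,e_2,N)$ on $\Sit$ together with the identities $\nabla_A N=\theta_{AB}e_B$, where $\theta=\chi+\eta$ by \eqref{def:theta}, and $\nabla_N N=-b^{-1}\nabb b$ (the latter follows from $\nabla u=-b^{-1}N$ combined with the symmetry of the Hessian of $u$, since $b=|\nabla u|^{-1}$). Expanding
$$\Delta N=\sum_{A}\bigl(\nabla_{e_A}\nabla_{e_A}N-\nabla_{\nabla_{e_A}e_A}N\bigr)+\nabla_N\nabla_N N-\nabla_{\nabla_N N}N,$$
one obtains schematically
$$\Delta N=\nabb\theta+\nabla(b^{-1}\nabb b)+\theta\cdot\theta+(b^{-1}\nabb b)\cdot\theta.$$

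The last step is to bound each piece in $L^\infty_tL^2(\Sit)$ using Theorem \ref{thregx} together with the curvature flux bound \eqref{curvflux1} and the estimate \eqref{eqksit3} for $\nabla k$ on $\Sit$. The $\eta$-part of $\nabb\theta$ is a component of $\nabla k$ and is directly controlled by \eqref{eqksit3}. The $\chi$-part of $\nabb\theta$ is the main source of difficulty, since Theorem \ref{thregx} does not directly bound $\nabb\chi$ in $L^\infty_tL^2(\Sit)$; here I would replace $\nabb\chi$ using the Codazzi system \eqref{Codaz} together with $\nabb\trc$ taken from \eqref{esttrc}, the curvature component $\b$ controlled by \eqref{curvflux1}, and quadratic Ricci terms, then integrate in $u$ over the compact range provided by Remark \ref{rem:noprobleminf} to pass from the $\lh{2}$ and $\xt{2}{\infty}$ norms of Theorem \ref{thregx} to $L^\infty_tL^2(\Sit)$. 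The term $\nabla(b^{-1}\nabb b)$ is handled using $\noo(b)\lesssim\ep$ from \eqref{estb} in the same way, and the quadratic terms are absorbed via the Sobolev embedding on $\Sit$. The main obstacle is precisely this conversion of $\nabb\chi$ into good quantities through the Codazzi elliptic system, which is what forces a careful bookkeeping of trace versus non-trace components of $\theta$ on $\Sit$.
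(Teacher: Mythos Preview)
Your Bernstein reduction to $\|\Delta N\|_{L^\infty_tL^2(\Sit)}$ is valid, but the subsequent estimate has a genuine gap. The full $\Sit$-Laplacian $\Delta N$ contains the normal contribution $\nabla_N\nabla_N N = -\nabla_N(b^{-1}\nabb b)$; since $b^{-1}\nabb b=\z-\kep$ by \eqref{etaa}, this forces you to bound $\nabla_N\z$ in $L^\infty_tL^2(\Sit)$, hence (via $N=\tfrac12(L-\lb)$ and the easy transport equation \eqref{D4eta} for $\ddb_L\z$) to bound $\ddb_{\lb}\z$ in $L^\infty_tL^2(\Sit)$. This is \emph{not} supplied by $\noo(b)$, which only controls the tangential second derivatives $\nabb^2 b,\nabb\ddb_Lb$ in $L^2(\H_u)$, nor by Theorem~\ref{thregx}; the later Besov machinery of Section~\ref{sec:secondderlb} gives $\ddb_{\lb}\z$ only in $L^2(\H_u)$-type spaces, not in $L^\infty_tL^2(\Sit)$. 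So the sentence ``$\nabla(b^{-1}\nabb b)$ is handled using $\noo(b)\lesssim\ep$'' is where the argument breaks.

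The paper sidesteps exactly this difficulty by replacing your Bernstein step with the anisotropic $L^\infty$ estimate \eqref{prop:linftysitptu},
\[
\|Q_{>1}N\|_{L^\infty}\lesssim \|Q_{>1}N\|_{L^2(\Sit)}+\|\nabla Q_{>1}N\|_{L^2(\Sit)}+\|\nabb\nabla Q_{>1}N\|_{L^2(\Sit)},
\]
which requires only one \emph{tangential} derivative on top of $\nabla$. After writing $N=Q_{\le 1}N+Q_{>1}N$ and using finite band for the low-frequency piece, one is reduced to $\|\nabb\nabla N\|_{L^\infty_tL^2(\Sit)}$. From $\nabla_A N=\th_{AB}e_B$ and $\nabla_N N=-b^{-1}\nabb b$ this needs only $\nabb\th$ and $\nabb^2 b$, never $\nabla_N\nabb b$. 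The paper then closes with the $\Sit$-structure equations \eqref{struct}: the elliptic equation $b^{-1}\lap b=-\nabla_N\trt-|\th|^2+R_{NN}$ gives $\nabb^2 b\in L^\infty_tL^2(\Sit)$ via Bochner, and the $\Sit$-Codazzi $\divb\hth=\tfrac12\nabb\trt+R_{NA}$ gives $\nabb\hth\in L^\infty_tL^2(\Sit)$ via the Hodge estimate. Your idea of using the $\H_u$-Codazzi \eqref{Codaz} for $\hch$ would also work for this last piece (since $\b\in L^\infty_tL^2(\Sit)$ by \eqref{curvflux1}), but it does not repair the missing normal-derivative control of $b$.
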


\begin{lemma}\lab{lemma:ad3} 
Let $\o$ and $\o'$ in $\S$. Let $N'=N(.,\o')$, and let $\BB^0$ the Besov space defined with respect to $u(.,\o)$. We have:
\be\lab{ad3}
\norm{\nabb Q_{\leq 1}(N')}_{\BB^0}\les \ep.
\ee
\end{lemma}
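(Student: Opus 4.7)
The plan is to reduce the Besov estimate \eqref{ad3} on $\H_u$ to $L^2$-bounds on $\Sit$, then exploit the fact that $Q_{\leq 1}$ is a low-frequency projection on $\Sit$ satisfying the results of Theorem \ref{sit:thm:LP}.

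First, by the finite band property iii) of Theorem \ref{thm:LP} on the $2$-surface $\ptu$, one has $\sup_t\|P_j F\|_{L^2(\ptu)}\les 2^{-j}\sup_t\|\nabb F\|_{L^2(\ptu)}$ for $j\ge 0$. Summing over $j\ge 0$ (convergent since $\sum 2^{-j}<\infty$) and bounding the $P_{<0}$ piece via $L^2$-boundedness, one gets $\|F\|_{\BB^0}\les \sup_t\|F\|_{L^2(\ptu)}+\sup_t\|\nabb F\|_{L^2(\ptu)}$, uniformly in $u$. Applying this with $F=\nabb Q_{\leq 1}(N')$, together with the trace-type embedding \eqref{hehehehe0bis} and the pointwise bound $|\nabb G|\les|\nabla G|$ (modulo harmless projection terms involving $\chi$ and $\theta$ controlled by \eqref{esttrc}--\eqref{esthch}), reduces \eqref{ad3} to bounds of the form $\|\nabla^k Q_{\leq 1}(N')\|_{\lsit{\infty}{2}}\les \ep$ for $k=1,2,3$.

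Next I would use the heat-flow definition $Q_{\leq 1}(N')=\int_0^\infty m_{\leq 1}(\tau)\mathcal{U}(\tau) N'\,d\tau$ together with the Duhamel-type commutator identity $[\nabla,\mathcal{U}(\tau)]=\int_0^\tau \mathcal{U}(\tau-\sigma)\,[\nabla,\Delta]\,\mathcal{U}(\sigma)\,d\sigma$ to expand $\nabla^k Q_{\leq 1}(N')$ as the principal term $Q_{\leq 1}(\nabla^k N')$ plus commutator terms involving the curvature $R_t$ and its derivatives. The principal term is bounded in $L^2(\Sit)$ by $\|\nabla^k N'\|_{L^2(\Sit)}$ via the $L^p$-boundedness of $Q_{\leq 1}$. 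The first derivative $\|\nabla N'\|_{L^2(\Sit)}$ is $\les \ep$ thanks to the identity $\theta'=\chi'+\eta$ (cf.\ \eqref{def:theta} and \eqref{hehehehe1}) combined with \eqref{estk} and \eqref{esthch} applied to the $\omega'$-foliation; the higher derivatives are obtained by differentiating the null structure equations of section \ref{sec:nullstructeq} for $\omega'$ and using Theorem \ref{thregx} applied to $\omega'$. The commutator contributions are absorbed via the curvature bound \eqref{curvriccisit1} on $R_t$ and the smoothing effect of the heat flow.

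The main obstacle is the non-compactness of $\Sit$: since $N'$ is a unit vector, it does not belong to $L^2(\Sit)$, so one must be careful not to apply $Q_{\leq 1}$ directly to $N'$ in any norm requiring $L^p(\Sit)$-integrability. This is handled by the decomposition $N'=N'_\infty+(N'-N'_\infty)$ afforded by the asymptotic flatness (Remark \ref{rem:noprobleminf}), where $N'_\infty$ is the constant unit vector associated to the flat exterior region: the contribution of $N'_\infty$ gets absorbed harmlessly into the low-frequency tail of the Littlewood-Paley decomposition on $\Sit$, while $N'-N'_\infty$ is effectively compactly supported up to rapidly decaying tails, allowing all the preceding $L^2(\Sit)$-estimates to go through.
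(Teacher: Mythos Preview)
Your reduction loses a derivative, and this is fatal. The trivial embedding $\|F\|_{\BB^0}\les\|F\|_{\tx{\infty}{2}}+\|\nabb F\|_{\tx{\infty}{2}}$ is correct, but applying it with $F=\nabb Q_{\leq 1}(N')$ and then invoking the trace inequality \eqref{hehehehe0bis} forces you to control $\|\nabla\nabb^2 Q_{\leq 1}(N')\|_{\lsit{\infty}{2}}$, i.e.\ three derivatives of $Q_{\leq 1}(N')$. Your proposed mechanism for this---commuting all derivatives through $Q_{\leq 1}$ so that the principal term is $Q_{\leq 1}(\nabla^3 N')$---fails outright: $\nabla^3 N'$ requires two derivatives of $\theta'=\chi'+\eta$, whereas Theorem \ref{thregx} furnishes only \emph{one} tangential derivative of $\chi'$ in $L^2$ (namely $\|\nabb'\hch'\|_{L^2(\H_{u'})}\les\ep$). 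Differentiating the null structure equations does not help, since the source terms are curvature components, themselves only in $L^2$. Attempting instead to exploit the smoothing of $Q_{\leq 1}$ via Bochner and finite band runs into the commutator $[\Delta,\nabla]Q_{\leq 1}(N')\sim R_t\cdot\nabla Q_{\leq 1}(N')$, which, with $R_t$ only in $L^2(\Sit)$, demands $\nabla Q_{\leq 1}(N')\in L^\infty(\Sit)$---and this cannot be obtained from Bernstein since $N'\notin L^2(\Sit)$.

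The paper's approach avoids this extra derivative entirely by proving the sharper, non-trivial embedding $\|F\|_{\BB^0}\les\|F\|_{\lsit{\infty}{2}}+\|\nabla F\|_{\lsit{\infty}{2}}$ (equation \eqref{ad21}), which goes directly from $L^2(\Sit)$ to $\BB^0$ without passing through $L^\infty_u L^2(\ptu)$. Its proof is the real content: one differentiates $\|P_jF\|_{L^2(\ptu)}^2$ in the $u$-direction via \eqref{hehehehe2}, producing the commutator $[\ddb_{bN},P_j]F$, which is then controlled in $L^{4/3}_u L^2(\ptu)$ by the heat-flow estimate of Proposition \ref{prop:gowinda11} (equation \eqref{bonobo}). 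With \eqref{ad21} in hand, one needs only $\|\nabb Q_{\leq 1}(N')\|_{\lsit{\infty}{2}}+\|\nabla\nabb Q_{\leq 1}(N')\|_{\lsit{\infty}{2}}$, and these reduce---via the Bochner inequality \eqref{prop:bochsit} and the finite band property of $Q_{\leq 1}$---to the single derivative $\|\nabla N'\|_{\lsit{\infty}{2}}$, which is available from \eqref{ad10}--\eqref{ad11}.
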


\begin{lemma}\lab{lemma:ad4} 
Let $\o$ and $\o'$ in $\S$. Let $N'=N(.,\o')$, and let $\lh{2}$ defined with respect to $u(.,\o)$. We have:
\be\lab{ad4}
\norm{\dd_L(N')}_{\lh{2}}\les \ep.
\ee
\end{lemma}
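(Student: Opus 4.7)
The plan is to decompose $L = L' - W$ with $L' = L(\cdot,\o')$ and $W = N' - N$ tangent to $\Sit$, giving
\[
\dd_L N' = \dd_{L'} N' - \dd_W N',
\]
and to estimate each piece separately.

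For the first piece, I would write $N' = \frac{1}{2}(L'-\lb')$ and apply the Ricci equations \eqref{ricciform} in the null frame at $\o'$, which yield $\dd_{L'} L' = -\db' L'$ and $\dd_{L'}\lb' = 2\zb'_A e'_A + \db'\lb'$. Using the identity $\zb' = -\kepb'$ from \eqref{etab}, one obtains
\[
\dd_{L'} N' = -\db' T + \kepb'_A e'_A.
\]
The crucial observation is that $\db' = k(N',N') - n^{-1}\nab_{N'} n$ and $\kepb'_A = k(N', e'_A) - n^{-1}\nabb'_A n$ are components of the intrinsic spacetime tensors $k$ and $n^{-1}\nab n$ read off in the frame at $\o'$, and hence are pointwise bounded by $|k| + |\nab n|$. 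Consequently
\[
\|\dd_{L'} N'\|_{\lh{2}} \les \|k\|_{\lh{2}} + \|\nab n\|_{\lh{2}} \les \ep
\]
by \eqref{estn} and \eqref{estk} evaluated at the angle $\o$.

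For the second piece $\dd_W N'$, since $W$ and $N'$ are both tangent to $\Sit$, the Gauss formula gives $\dd_W N' = \nab_W N' - k(W,N')T$. The $T$-component is bounded pointwise by $|k||W||N'| \les |k|$ since $\|W\|_{L^\infty} \les 1$ (which follows from \eqref{estNomega} integrated along a path in $\S$, together with $|N'| = 1$), and thus contributes $\les \ep$ in $\lh{2}$. For the spatial part I would split $\nab_W N' = \nab_W N + \nab_W W$: the first term is bounded pointwise by $|W||\th|$ with $\th = \chi + \eta$ the second fundamental form of $\ptu$ in $\Sit$ from \eqref{def:theta}, so $\|\nab_W N\|_{\lh{2}} \les \ep$ by \eqref{estk} and \eqref{esthch}.

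The main obstacle will be controlling the self-interaction term $\nab_W W$, for which no direct $\lh{2}$ bound on $\nab N'$ is available at the $\o$-foliation (the intrinsic estimates at $\o'$ live on $\H_{u'}$, not $\H_u$). To handle this I would exploit the geometric Littlewood--Paley decomposition on $\Sit$ from section \ref{sec:LPSit}, writing $W = Q_{\leq 1}W + Q_{>1}W$. For the high-frequency piece, Lemma \ref{lemma:ad2} applied at both $\o$ and $\o'$ gives $\|Q_{>1}W\|_{L^\infty(\mathcal{M})} \les \ep$, and the finite-band property of $Q_j$ on $\Sit$ combined with $\|W\|_{L^\infty} \les 1$ controls $\nab Q_{>1}W$ in an appropriate norm. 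For the low-frequency piece, Lemma \ref{lemma:ad3} applied at both angles yields $\|\nabb Q_{\leq 1}(N'-N)\|_{\BB^0} \les \ep$, which handles the $\ptu$-tangential derivatives; the remaining transversal derivative (along $N$) can be reduced via $L = T + N$ to the quantity $\dd_L N = -\db T + \kepb_A e_A$ already controlled in $\lh{2}$, up to commutator estimates between $Q_j$ and differentiation that are harmless by the bounds of section \ref{sec:LPSit}. Combining these contributions closes the estimate and yields $\|\dd_L N'\|_{\lh{2}} \les \ep$.
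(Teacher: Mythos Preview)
Your treatment of $\dd_{L'}N'$ is correct and quite clean: the pointwise bound through $k$ and $n^{-1}\nab n$ is sharper than what the paper writes down for this piece. The trouble is entirely in $\dd_W N'$, and there your argument does not close.

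First, the splitting $\nab_W N' = \nab_W N + \nab_W W$ is tautological, since $W = N'-N$ gives $\nab_W W = \nab_W N' - \nab_W N$; the burden is precisely $\nab_W N'$. Decomposing $W$ in the $\o'$-frame one finds $\nab_W N' = -(1-g(N,N'))\,b'^{-1}\nabb' b' - g(N,e'_A)\,\th'_{AB}e'_B$, so you are back to needing $\|\th'\|_{\lh{2}}$ and $\|b'^{-1}\nabb' b'\|_{\lh{2}} = \|\z'-\kep'\|_{\lh{2}}$ at the $\o$-foliation. Your $Q_j$ strategy does not supply these bounds. For the high-frequency piece you only have $\nab W \in L^\infty_t L^2(\Sit)$, and the trace $L^2(\Sit)\to L^2(\ptu)$ costs half a derivative you do not possess; the finite-band property goes the wrong way. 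For the low-frequency piece, the transversal derivative cannot be reduced to $\dd_L N$ as you suggest: writing $N=L-T$ gives $\dd_L(Q_{\leq 1}W) = Q_{\leq 1}(\dd_L N' - \dd_L N) + [\dd_L,Q_{\leq 1}]W$, and $\dd_L N'$ is exactly the quantity being estimated, so the argument is circular. Even the coefficient $W^N = g(N',N)-1 = O(|\o-\o'|^2)$ does not help, since $|\o-\o'|$ is not assumed small.

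The paper's mechanism is different and avoids this circularity. It decomposes $L$ in the null frame at $\o'$, reducing to $\|\chi'\|_{\lh{2}}$, $\|\z'\|_{\lh{2}}$ and easier terms. For the hardest piece $\z'$ it invokes the identity
\[
\|\z'\|_{\lh{2}}^2 \lesssim \Bigl|\int_u\int_{\H_u}\dd_{\lb}\z'\cdot\z'\,d\H_u\,du\Bigr| + \ep^2 = \Bigl|\int_{\mathcal{M}}\dd_{\lb}\z'\cdot\z'\Bigr| + \ep^2
\]
via coarea. The integral over $\mathcal{M}$ is foliation-independent, so $\o'$-estimates now apply directly. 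After decomposing $\lb$ in the $\o'$-frame, the dangerous contribution $\int_{\mathcal{M}}g(\lb,L')\,\ddb_{\lb'}\z'\cdot\z'$ is handled by Littlewood--Paley at $\o'$: one pairs $P'_j\ddb_{\lb'}\z'$, controlled in $L^2(\mathcal{M})$ by \eqref{estlbzeta} of Theorem~\ref{thregx1}, against $P'_j(g(\lb,L')\z')$, which gains $2^{-j}$ via the finite-band property and $\|\nabb'(g(\lb,L')\z')\|_{L^2(\mathcal{M})}\lesssim\ep$. The ingredient missing from your approach is a device to transfer $\H_{u'}$-based control to $\H_u$; the paper achieves this by routing through $L^2(\mathcal{M})$.
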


The proof of Lemma \ref{lemma:ad2} is postponed to section \ref{sec:ad2}, the proof of Lemma \ref{lemma:ad3} is postponed to section \ref{sec:ad3}, and the proof of Lemma \ref{lemma:ad4} is postponed to section \ref{sec:ad4}. We now prove \eqref{ad1}. 

Let us define the angle $\o_1\in \S$ as:
$$\o_1=\frac{\o-\o'}{|\o-\o'|},$$
and let $N_1=N(\c,\o_1)$. In view of Lemma \ref{lemma:ad2}, we have:
\bea
\nn\norm{g(\po N,N_1)-1}_{L^\infty}&\les&  \norm{g(\po N,Q_{\leq 1}(N_1))-1}_{L^\infty}+ \norm{g(\po N(\c,\o''),Q_{>1}(N_1)}_{L^\infty}\\
\nn&\les& \norm{g(\po N,Q_{\leq 1}(N_1))-1}_{L^\infty}+ \norm{\po N}_{L^\infty}\norm{Q_{>1}(N_1)}_{L^\infty}\\ 
\lab{ad5}&\les& \norm{g(\po N,Q_{\leq 1}(N_1))-1}_{L^\infty}+\ep,
\eea
where we used the estimate \eqref{estNomega} for $\po N$. 

Since $g(\po N,Q_{\leq 1}(N_1))-1$ is a scalar function, we may estimate its $L^\infty$ norm using \eqref{linftybound}:
\bea
&&\lab{ad6}\norm{g(\po N,Q_{\leq 1}(N_1))-1}_{L^\infty}\\
\nn&\les& \norm{g(\po N,Q_{\leq 1}(N_1))-1}_{\tx{\infty}{2}}+\norm{g(\po N,Q_{\leq 1}(N_1))-1}_{\BB^0}\\
\nn&\les& \norm{g(\po N,N_1)-1}_{\tx{\infty}{2}}+\norm{\po N}_{\tx{\infty}{2}}\norm{Q_{>1}(N_1)}_{L^\infty}+\norm{g(\po N,Q_{\leq 1}(N_1))-1}_{\BB^0}\\
\nn&\les& \norm{g(\po N,N_1)-1}_{\tx{\infty}{2}}+\ep+\norm{\nabb g(\po N,Q_{\leq 1}(N_1))}_{\BB^0},
\eea
where we used the estimate \eqref{estNomega} for $\po N$ and Lemma \ref{lemma:ad2} in the last inequality.

Next, we estimate the right-hand side of \eqref{ad6} starting with the last term. Using the estimate \eqref{eq:secondbilBesov:bis}, we have:
\bea 
\nn\norm{\nabb g(\po N,Q_{\leq 1}(N_1))}_{\BB^0}&\les& (\norm{\nabb Q_{\leq 1}(N_1)}_{\tx{\infty}{2}}+\norm{Q_{\leq 1}(N_1)}_{L^\infty})\norm{\nabb\po N}_{\BB^0}\\
\nn&&+(\norm{\nabb\po N}_{\tx{\infty}{2}}+\norm{\po N}_{L^\infty})\norm{\nabb Q_{\leq 1}(N_1)}_{\BB^0}\\
\lab{ad7}&\les& \ep
\eea
where we used in the last inequality the estimates \eqref{estNomega} \eqref{estricciomega} for $\po N$ and the estimate of Lemma \ref{lemma:ad3} for $Q_{\leq 1}(N_1)$.

We consider the last term in the right-hand side of \eqref{ad6}. Let $\o''\in S$ on the arc joining $\o$ and $\o'$, and let $N=N(.,\o'')$. Then, with our choice for $N_1$, we have at $t=0$ (see \cite{param1}):
$$\norm{g(\po N,N_1)-1}_{L2(\pou)}\les \ep+|\o-\o'|,$$
which together with the estimate \eqref{estimtransport1} for transport equations yields:
\bea
\lab{ad8}&&\norm{g(\po N,N_1)-1}_{\tx{\infty}{2}}\\
\nn&\les& \norm{\ddb_L(g(\po N,N_1))}_{\lh{2}}+\ep+|\o-\o'|\\
\nn&\les& \norm{\dd_L(\po N)}_{\lh{2}}+\norm{\dd_L(N_1)}_{\lh{2}}\norm{\po N}_{L^\infty}+\ep+|\o-\o'|\\
\nn&\les& \ep+|\o-\o'|,
\eea
where we used in the last inequality the estimates \eqref{estNomega} and \eqref{estricciomega} for $\po N$, and Lemma \ref{lemma:ad4} for $N_1$.

Finally, \eqref{ad6}-\eqref{ad8} yield:
$$\norm{g(\po N,N_1)-1}_{L^\infty}\les  \ep+|\o-\o'|,$$
for any $N=N(.,\o'')$ with $\o''\in \S$ on the arc joining $\o$ and $\o'$. This yields:
$$|g(N-N',N_1)-|\o-\o'||\les |\o-\o'|(\ep+|\o-\o'|).$$
Therefore, we have:
$$|N-N'|\geq |g(N-N',N_1)|\geq |\o-\o'|(1-O(\ep)-O(|\o-\o'|))\gtrsim |\o-\o'|,$$
which implies the desired estimate \eqref{ad1}. This concludes the proof of Theorem \ref{thregomega}.

\section{Second order derivatives with respect to $\o$}\lab{sec:secondderivomega}

The goal of this section is to prove Theorem \ref{thregomega2}.

\subsection{Equation for $\dd_L\po^2N, \dd_A\po^2N, \dd_{\lb}\po^2N$, $\po^2\z$ and $\po^2b$}

The following lemma provides the formulas satisfied by $\dd_L\po^2N, \dd_A\po^2N$ and $\dd_{\lb}\po^2N$.
\begin{lemma}\lab{lemma:poo1}
$\po^2N$ satisfies the following formulas:
\bea\lab{poo1}
\nn\dd_L(\po^2N)&=&-2(\po\chi)_{\po NB}e_B-\chi_{\Pi(\po^2N)B}e_B+2\chi_{\po N\po N}N+(|\po N|^2n^{-1}\nabla_Nn\\
&&+\eta_{\po N\po N}+\kepb_{\Pi(\po^2N)})L-\db\po^2N-\kepb_{\po N}\po N+|\po N|^2\z_Be_B,
\eea
\bea
\nn\dd_A(\po^2N)&=&(\po^2\chi)_{AB}e_B-(\po^2N)_A(\z_Be_B+\d L)-\chi_{A\Pi(\po^2N)}N-\po\chi_{A\po N}N-2\chi_{A\po N}\po N\\
\lab{poo2}&&-(\po N)_A\left(2\po\z_Be_B-2\z_{\po N}N+\left(\frac{5}{2}\kep_{\po N}+n^{-1}\nabla_{\po N}n\right)L+2\d\po N\right),
\eea
and
\bea
\lab{poo3}\dd_{\lb}(\po^2N)&=&2\po^2\z_Be_B-4\po\z_{\po N}N-2\z_{\Pi(\po^2N)}N-|\po N|^2\z_Be_B+2\po\chi_{\po NB}e_B\\
\nn&&+\chi_{\Pi(\po^2N)B}e_B-2\chi_{\po N\po N}N+(\d+n^{-1}\nabla_Nn)\po^2N+(4\kep_{\po N}\\
\nn&&+n^{-1}\nabla_{\po N}n)\po N+(-3|\po N|^2\d+2\eta_{\po N\po N}+2\kep_{\Pi(\po^2N)}+n^{-1}\nabla_{\po^2N}n)L.
\eea
\end{lemma}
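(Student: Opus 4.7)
\textbf{Proof plan for Lemma \ref{lemma:poo1}.} The idea is to obtain each of \eqref{poo1}, \eqref{poo2}, \eqref{poo3} by differentiating with respect to $\o$ the corresponding identity in the previous lemma, i.e.\ \eqref{popo1}, \eqref{popo3}, \eqref{popo2}. Since the space-time metric $\gg$ does not depend on $\o$, the Leibniz rule for the Levi--Civita connection reads
$$\po(\dd_X Y) = \dd_{\po X} Y + \dd_X(\po Y),$$
and combined with $\po L = \po N$, $\po \lb = -\po N$ from \eqref{coo1} it converts $\po(\dd_L(\po N))$ into $\dd_L(\po^2 N) + \dd_{\po N}(\po N)$, and $\po(\dd_{\lb}(\po N))$ into $\dd_{\lb}(\po^2 N) - \dd_{\po N}(\po N)$; for $\dd_A$ one instead uses $\po e_A = \Pi(\po e_A) - g(e_A,\po N)N$ from \eqref{coo2bis}. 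So each of the three claimed formulas will follow once $\po$ is applied to the right-hand side of the corresponding first-derivative identity and the correction term $\dd_{\po N}(\po N)$ (or its $e_A$-analogue) is computed.

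To differentiate the right-hand sides of \eqref{popo1}-\eqref{popo3} one applies the Leibniz rule to every summand. The scalars $\po\d$ and $\po\db$ are given by \eqref{po2}; the tangential tensors $\po\chi$ and $\po\z$ enter as the genuinely new objects appearing in \eqref{poo1}-\eqref{poo3}. Each appearance of a $\ptu$-tangent index on $\chi$, $\z$, $\kep$, \dots\ generates contributions involving $\po e_A$: their $\ptu$-tangent parts are collapsed by the identities \eqref{coo2ter}-\eqref{coo2cinq}, reducing expressions such as $\chi_{\po e_A B}\,e_B + \chi_{AB}\po e_B$ to terms linear in $\po N$, while the $N$-components produced by \eqref{coo2bis} turn into the $\chi_{\po N\po N}N$, $\kep_{\po N}L$, $\z_{\po N}L$ contributions visible in \eqref{poo1}-\eqref{poo3}. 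Moreover $\po^2 N$ is not $\ptu$-tangent: differentiating the identities $g(N,\po N)=0$ and $g(T,\po N)=0$ gives
$$g(N,\po^2 N) = -|\po N|^2, \qquad g(T,\po^2 N) = 0,$$
so that $\po^2 N = \Pi(\po^2 N) - |\po N|^2 N$, which is the source of every coefficient $|\po N|^2$ in the claimed formulas.

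The last ingredient is the auxiliary computation of $\dd_{\po N}(\po N)$ and of its $e_A$-analogue $\dd_A(\po N) - \dd_{\Pi(\po e_A)}N - g(e_A,\po N)\dd_N N$. Writing $\po N = (\po N)_C e_C$ and using the Ricci equations \eqref{ricciform} for $\dd_A e_B$, $\dd_{e_A}L$ and $\dd_{e_A}\lb$ produces exactly the curvature and second-fundamental-form terms---$\chi_{\po N\po N}N$, $\z_{\po N}L$, $\eta_{\po N\po N}L$, $\kepb_{\po N}\po N$ and so on---that combine with the output of $\po$ applied to \eqref{popo1}-\eqref{popo3} to produce \eqref{poo1}-\eqref{poo3}.

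The only real difficulty is bookkeeping. Every time $\po$ acts on a $\ptu$-tangent quantity it creates $L$- and $N$-components that must be tracked with care, and the collapsing identities \eqref{coo2ter}-\eqref{coo2cinq} must be invoked at the right moment so that the many frame-dependent pieces assemble themselves into the clean $\Pi(\po^2 N)$-, $\po\chi$-, $\po\z$-structure displayed on the right-hand sides of \eqref{poo1}-\eqref{poo3}. Once this organization is fixed, each of the three formulas reduces to a direct, term-by-term verification; there is no analytic obstacle, only an algebraic one.
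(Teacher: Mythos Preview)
Your proposal is correct and follows essentially the same approach as the paper: differentiate \eqref{popo1}, \eqref{popo3}, \eqref{popo2} with respect to $\o$, use the Leibniz rule together with $\po L=\po N$, $\po\lb=-\po N$, compute the correction terms $\dd_{\po N}(\po N)$ and $\dd_N(\po N)$ via \eqref{popo1}--\eqref{popo3} and the Ricci equations, and simplify using \eqref{po2}, \eqref{coo2ter}--\eqref{coo2cinq}, and the decomposition $\po^2N=\Pi(\po^2N)-|\po N|^2N$. One small slip: in your auxiliary computation for the $\dd_A$ case, the relevant correction involves $\dd_N(\po N)$ (computed from $N=\tfrac12(L-\lb)$ and \eqref{popo1}, \eqref{popo2}), not $\dd_N N$.
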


\begin{proof}
We first derive \eqref{poo1}. We differentiate the equation \eqref{popo1} satisfied by $\po N$ with respect to $\o$. Using the fact that $\po L=\po N$, we obtain:
\bea\lab{poo4}
&&\dd_L\po^2N+\dd_{\po N}\po N\\
\nn&=&-\po\chi_{\po NB}e_B-\chi_{\Pi(\po^2N)B}e_B-\chi_{\po N\po e_B}e_B-\chi_{\po NB}\po e_B-\po(\db)\po N-\db\po^2N\\
\nn&&+(k_{\po N\po N}+k_{N\po^2N}-n^{-1}\nabla_{\po^2N}n)L-\kepb_{\po N}\po N.
\eea
We compute the various term in the right-hand side of \eqref{poo4}. Using \eqref{coo2ter}, we have:
\be\lab{poo5}
\chi_{\po N\po e_B}e_B+\chi_{\po NB}\po e_B=-\chi_{\po N\po N}N.
\ee
Also, the formula \eqref{popo2} for $\dd_A(\po N)$ yields:
\be\lab{poo5bis}
\dd_{\po N}\po N=\po\chi_{\po NB}e_B-|\po N|^2\z_Be_B-|\po N|^2\d L-\chi_{\po N\po N}N.
\ee
Now, differentiating twice $g(N,N)=1$ with respect to $\o$ yields:
\be\lab{poo6}
\po^2N=\Pi(\po^2N)-|\po N|^2N.
\ee
Finally, \eqref{poo4}, \eqref{poo5}, \eqref{poo6} and the formula \eqref{po2} for $\po\db$ yields \eqref{poo1}.

Next, we derive \eqref{poo2}. We differentiate the equation \eqref{popo3} satisfied by $\dd_A(\po N)$ with respect to $\o$. Using the fact that $\po L=\po N$, we obtain:
\bea
&&\lab{poo7}\dd_A(\po^2N)+g(\po e_A,N)\dd_N(\po N)\\
\nn&=&\po^2\chi_{AB}e_B+g(\po e_A,N)\po\chi_{NB}e_B+\po\chi_{A\po e_B}e_B+\po\chi_{AB}\po e_B\\
\nn&&-g(\po^2N,e_A)(\z_Be_B+\d L)-g(\po N,e_A)(\po\z_Be_B+\z_{\po B}e_B+\z_B\po e_B+2\kep_{\po N}L\\
\nn&&+\d\po N)-\po\chi_{A\po N}N-\chi_{A\Pi(\po^2N)}-\chi_{A\po N}\po N.
\eea
We compute the various term in the right-hand side of \eqref{poo7}. Using \eqref{coo2ter}, we have:
\be\lab{poo8}
\chi_{A\po e_B}e_B+\chi_{AB}\po e_B=-\chi_{A\po N}N,
\ee
and 
\be\lab{poo9}
\z_{\po B}e_B+\z_B\po e_B=-\z_{\po N}N.
\ee
Using the equations \eqref{poo1} and \eqref{poo2} respectively for $\dd_L(\po N)$ and $\dd_{\lb}(\po N)$ together with the fact that $N=\half(L-\lb)$ yields:
\be\lab{poo10}
\dd_N(\po N)=-\po\z_Be_B-\chi_{\po NB}e_B-\d\po N-\left(\frac{\kep_{\po N}}{2}+n^{-1}\nabla_{\po N}n\right)L+\z_{\po N}N.
\ee
Finally, \eqref{poo7}-\eqref{poo10} together with the fact that $g(\po e_A,N)=-g(\po N,e_A)$ and $\po\chi_{NB}=-\chi_{\po NB}$ yields \eqref{poo2}.

Last, we derive \eqref{poo3}. We differentiate the equation \eqref{popo2} satisfied by $\dd_{\lb}(\po N)$ with respect to $\o$. Using the fact that $\po L=\po N$ and $\po\lb=-\po N$, we obtain:
\bea
\lab{poo11}&&\dd_{\lb}(\po^2N)-\dd_{\po N}(\po N)\\
\nn&=& 2(\po^2\z)_Ae_A+2\po\z_{\po e_A}e_A+2\po\z_A\po e_A+\po\chi_{\po NA}e_A+\chi_{\Pi(\po^2N)A}e_A+\chi_{\po N\po e_A}e_A\\
\nn&&+\chi_{\po NA}\po e_A+(\d+n^{-1}\nabla_Nn)\po^2N+(\po(\d)+n^{-1}\nabla_{\po N}n)\po N\\
\nn&&+(2\eta_{\po N\po N}+2k_{\po^2NN}+n^{-1}\nabla_{\po^2N}n)L+(2\kep_{\po N}+n^{-1}\nabla_{\po N}n)\po N\\
\nn&&-2\po\z_{\po N}N-2\z_{\Pi(\po^2N)}N-2\z_{\po N}\po N.
\eea
We compute the various term in the right-hand side of \eqref{poo11}. Using \eqref{coo2ter}, we have:
\be\lab{poo13}
\po\z_{\po A}e_A+\po\z_A\po e_A=-\po\z_N\po N-\po\z_{\po N}N=\z_{\po N}\po N-\po\z_{\po N}N,
\ee
where we used the fact that $\po\z_N=-\z_{\po N}$. Also, contracting \eqref{poo8} with $\po N$ yields:
\be\lab{poo12}
\chi_{\po N\po e_A}e_A+\chi_{\po NA}\po e_A=-\chi_{\po N\po N}N.
\ee
Finally, \eqref{poo11}-\eqref{poo12} together with \eqref{po2} for $\po(\d)$, \eqref{poo5bis} and \eqref{poo6} yields \eqref{poo3}. This concludes the proof of Lemma \ref{lemma:poo1}.
\end{proof}

The following lemma provides the transport equation satisfied by $\Pi(\po^2\z)$. 
\begin{lemma}\lab{lemma:poo2}
$\Pi(\po^2\z)$ satisfies the following transport equation:
\bea
\lab{poo14}&&\ddb_L(\Pi(\po^2\z))_A\\
\nn&=&-\chi_{AB}\po^2\z_B-(\kepb_B+\z_B)\po^2\chi_{AB}-\ddb_{\po^2N}\z_A-\frac{(\po^2N)_B}{2}(-\a_{AB}+\r\d_{AB}+3\s\in_{AB})\\
\nn&&+\kepb_A\z_{\Pi(\po^2N)}-\chi_{AB}\eta_{B\Pi(\po^2N)}-(\po^2N)_A\kepb\c\z-2\nabb_{\po N}(\Pi(\po\z))_A\\
\nn&&+(\po N)_A(\ddb_L\z_{\po N}-\chi_{\po NB}\z_B-\db\z_{\po N}-2\kepb\c\po\z-\th_{\po NB}\z_B-\eta_{\po NB}\z_B)\\
\nn&&-2(\eta_{B\po N}+\po\z_B)\po\chi_{AB}+\db\po\chi_{A\po N}-2\kepb_A\po\z_{\po N}+(-3\z_{\po N}+\kep_{\po N}-\kepb_{\po N})\chi_{A\po N}\\
\nn&&+|\po N|^2\kep_B\chi_{AB}+(\th_{A\po N}+\eta_{\po NA}-(\po N)_A\db)\z_{\po N}+\frac{|\po N|^2}{2}\b_A\\
\nn&&+\frac{(\po N)_B}{2}\bigg((\po N)_C(\in_{AC}{}^*\b_B+\in_{BC}{}^*\b_A)-\d_{AB}(\b_{\po N}+\bb_{\po N})\\
\nn&&+\frac{3}{2}\in_{AB}({}^*\b_{\po N}-{}^*\bb_{\po N})\bigg).
\eea
\end{lemma}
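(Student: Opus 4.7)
The plan is to derive \eqref{poo14} by differentiating the transport equation \eqref{popo5} for $\Pi(\po\z)$ once more with respect to $\o$, and then projecting onto $\ptu$ via the commutator formula \eqref{commo5} applied now to the $1$-form $\Pi(\po\z)$. More precisely, recall from \eqref{popo5} that
\[
\ddb_L(\Pi(\po\z))_A=-\nabb_{\po N}\z_A+\kepb_A\z_{\po N}-(\po N)_A\kepb\c\z-(k_{B\po N}+\po\z_B)\chi_{AB}-(\kepb_B+\z_B)\po\chi_{AB}+\po\b_A,
\]
where we used \eqref{po4} to trade $-\tfrac12(\po N)_B(-\a_{AB}+\r\d_{AB}+3\s\in_{AB})$ against $\po\b_A$. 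The aim is to apply $\po$ to this identity, then use \eqref{commo5} with $F=\Pi(\po\z)$ to move the $\po$ past $\ddb_L$ on the left-hand side, producing $\ddb_L\Pi(\po^2\z)_A$ plus the three lower-order terms $\nabb_{\po N}\Pi(\po\z)_A$, $-\kepb_A(\po N)_C\Pi(\po\z)_C$, and $g(e_A,\po N)\kepb_C\Pi(\po\z)_C$ (the last two combine into the $(\po N)_A$-parenthesis in \eqref{poo14} after converting $\Pi(\po\z)$ back to $\po\z$ by \eqref{coo2bis}).

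Next I would systematically differentiate each term on the right-hand side. For each factor $\z,\chi,\kep,\kepb,\eta,\th,\b,\r,\s,\a$, differentiating with respect to $\o$ produces two kinds of contributions: (i) the ``intrinsic'' derivative, converted into the $\Pi(\po\cdot)$-tensor plus boundary terms involving $\po N$ via \eqref{coo2bis} and the identities \eqref{coo2ter}--\eqref{coo2cinq}; and (ii) frame-rotation terms coming from $\po e_A, \po L=\po N, \po\lb=-\po N$. For the curvature pieces one uses \eqref{po3} and \eqref{po4} to express $\po\a,\po\b$ (and analogously $\po\r,\po\s$ derived from their definitions) purely in terms of $\po N$ contracted against curvature components. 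For $\po\kep,\po\kepb,\po\eta,\po\th$ one uses identities completely analogous to \eqref{po2}, noting that $\po$ of a $\ptu$-tangent tensor contracted with $N$ equals minus the tensor contracted with $\po N$. The crucial $\nabb_{\po N}\z_A$ term, once differentiated, produces $\nabb_{\po N}\Pi(\po\z)_A$ — which is exactly what combines with the commutator output of \eqref{commo5} to yield the coefficient $-2\nabb_{\po N}(\Pi(\po\z))_A$ in \eqref{poo14} — plus $\nabb_{\po^2N}\z_A=\ddb_{\po^2N}\z_A-g(\po^2N,N)\nabla_N\z$, the second piece being absorbed using \eqref{poo6} and the Ricci equations.

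The remaining work is bookkeeping: one writes $\po^2N=\Pi(\po^2N)-|\po N|^2N$ via \eqref{poo6} throughout, replaces $\ddb_L\z_{\po N}$ using the transport equation \eqref{D4eta} for $\z$ when it appears (producing the bracket $(\ddb_L\z_{\po N}-\chi_{\po NB}\z_B-\db\z_{\po N}-\ldots)$ in the $(\po N)_A$-coefficient of \eqref{poo14}), and collects the curvature contributions of the form $\po\b,\po\r,\po\s,\po\a$ into the last parenthesized block with coefficient $\tfrac{(\po N)_B}{2}$. A careful tally, using $\z_{\po N}=-\po\z_N$ together with $(\po e_A)\z_A+(\po\z_A)e_A=-\z_{\po N}N-\po\z_NN_{\text{flip}}$ type identities from \eqref{coo2ter}, produces exactly the stated right-hand side of \eqref{poo14}.

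The main obstacle is purely combinatorial: there is no conceptual difficulty beyond what was already faced in proving Lemma~\ref{lemma:poo1}, but the number of terms generated by differentiating \eqref{popo5} twice (once explicitly, once implicitly via \eqref{commo5}) is substantial, and some of them only match the statement after using several of the identities \eqref{coo2}--\eqref{coo2cinq}, \eqref{po2}, \eqref{po3}, \eqref{po4}, \eqref{poo6}, together with $N=\tfrac12(L-\lb)$ to convert $\nabb_N$ into $\ddb_L,\ddb_{\lb}$. Hence the hard part is not the derivation of any single term but the patient verification that, after all the algebraic reductions, the numerical coefficients agree with those appearing in \eqref{poo14}.
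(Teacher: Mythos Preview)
Your approach is essentially the same as the paper's: differentiate \eqref{popo5} with respect to $\o$, then use the commutator formula \eqref{commo5} (together with the relation $\po^2\z_A=\po(\Pi(\po\z))_A-\z_{\po N}(\po N)_A$ to pass from $\ddb_L\Pi(\po(\Pi(\po\z)))$ to $\ddb_L\Pi(\po^2\z)$), and carry out the bookkeeping on the right-hand side via \eqref{coo2ter}--\eqref{coo2cinq}, \eqref{po2}--\eqref{po4}, \eqref{poo6}. Two small inaccuracies to fix: the curvature term in your displayed version of \eqref{popo5} should be $-\po\b_A$ (not $+\po\b_A$), and the paper does not substitute \eqref{D4eta} for $\ddb_L\z_{\po N}$ --- that term is simply kept as-is inside the $(\po N)_A$-bracket of \eqref{poo14}.
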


\begin{proof}
We differentiate the equation \eqref{popo5} satisfied by $\dd_{\lb}(\po N)$ with respect to $\o$:
\bea
\lab{poo15}&&\po(\ddb_L(\Pi(\po\z)))_A\\
\nn&=&-\po(\nabb_{\po N}\z)_A+(\eta_{\po NA}+g(\po e_A,N)\db)\z_{\po N}+\kepb_A\po\z_{\po N}+\kepb_A\z_{\Pi(\po^2N)}-(\po^2N)_A\kepb\c\z\\
\nn&&-(\po N)_A\po(\kepb\c\z)-\po(k_{B\po N}+\po\z_B)\chi_{AB}-(k_{B\po N}+\po\z_B)(\po\chi_{AB}+\chi_{A\po e_B})\\
\nn&&-\po(\kepb_B+\z_B)\po\chi_{AB}-(\kepb_B+\z_B)(\po^2\chi_{AB}+\po\chi_{A\po e_B})\\
\nn&&-\po\left(\frac{(\po N)_B}{2}(-\a_{\c B}+\r\d_{\c B}+3\s\in_{\c B})\right)_A
\eea
We compute the various term in the right-hand side of \eqref{poo15}. We have:
$$\po^2\z_A=\po(\Pi(\po\z))_A-\z_{\po N}(\po N)_A,$$
which yields:
\bea
\lab{poo16}\ddb_L(\Pi(\po(\Pi(\po\z))))_A&=&\ddb_L(\Pi(\po^2\z))_A-(\ddb_L\z)_{\po N}(\po N)_A\\
\nn&&-\z_{\ddb_L(\po N)}(\po N)_A-\z_{\po N}g(\ddb_L(\po N),e_A)\\
\nn&=&\ddb_L(\Pi(\po^2\z))_A-(\ddb_L\z)_{\po N}(\po N)_A\\
\nn&&+\chi_{\po NB}\z_B(\po N)_A+\db\z_{\po N}(\po N)_A
+\z_{\po N}\chi_{\po NA}+\db\z_{\po N}(\po N)_A,
\eea
where we used the formula \eqref{popo1} for $\dd_L(\po N)$ in the last equality. Using the commutator formula \eqref{commo5} together with \eqref{poo16} yields:
\bea
\lab{poo17}&&\po(\ddb_L(\Pi(\po\z)))_A\\
\nn&=&\ddb_L(\Pi\po(\Pi(\po\z)))_A+\nabb_{\po N}\Pi(\po\z)_A-\kepb_A\po\z_{\po N}+(\po N)_A\kepb\c\po\z\\
\nn&=& \ddb_L(\Pi(\po^2\z))_A-(\ddb_L\z)_{\po N}(\po N)_A+\chi_{\po NB}\z_B(\po N)_A+\db\z_{\po N}(\po N)_A\\
\nn&& +\z_{\po N}\chi_{\po NA}+\db\z_{\po N}(\po N)_A+\nabb_{\po N}\Pi(\po\z)_A-\kepb_A\po\z_{\po N}+(\po N)_A\kepb\c\po\z.
\eea

Next, we compute the term $\po(\nabb_{\po N}\z)_A$. We have:
\bea
\lab{poo18}\po(\nabb_{\po N}\z_A)&=&\po(\dd_{\po N}(\z_A)-\z_{\nabb_{\po N}e_A})\\
\nn&=&\dd_{\po^2N}(\z_A)+\dd_{\po N}(\po\z_A)+\dd_{\po N}(\z_{\Pi(\po e_A)})
-\po\z_{\nabb_{\po N}e_A}-\z_{\po(\nabb_{\po N}e_A)}\\
\nn&=&\dd_{\po^2N}(\z_A)+\nabb_{\po N}(\Pi(\po\z))_A+\dd_{\po N}(\z_{\Pi(\po e_A)})
-\z_{\po(\nabb_{\po N}e_A)}
\eea
Now, \eqref{coo5} implies:
\bee
\po(\nabb_{\po N}e_A)&=&\po((\po N)_B\nabb_Be_A)\\
&=&(\po^2N)_B\nabb_B(e_A)+g(\po N,\po e_B)\nabb_B(e_A)+(\po N)_B(\ddb_{\po e_B}(e_A)\\
&&+\nabb_B(\Pi(\po e_A))-(\po N)_A\th_{BC}e_C+\th_{AB}\po N-(\dd_Be_A,\po N)N)\\
&=&\nabb_{\Pi(\po^2N)}(e_A)-|\po N|^2\ddb_N(e_A)+\nabb_{\po N}(\Pi(\po e_A))-(\po N)_A\th_{\po NC}e_C\\
&&+\th_{A\po N}\po N-(\dd_{\po N}e_A,\po N)N)\\
&=&\nabb_{\po^2N}(e_A)+\nabb_{\po N}(\Pi(\po e_A))-(\po N)_A\th_{\po NC}e_C+\th_{A\po N}\po N\\
&&-(\dd_{\po N}e_A,\po N)N),
\eee
where we used \eqref{poo6} in the last equality. Together with \eqref{poo18}, we obtain:
\bee
\po(\nabb_{\po N}\z_A)&=&\ddb_{\po^2N}(\z)_A+\nabb_{\po N}(\Pi(\po\z))_A+\nabb_{\po N}(\z)_{\Pi(\po e_A)}\\
\nn&&+(\po N)_A\th_{\po NB}\z_B-\th_{A\po N}\z_{\po N},
\eee
which yields:
\be\lab{poo19}
\po(\nabb_{\po N}\z)_A=\ddb_{\po^2N}(\z)_A+\nabb_{\po N}(\Pi(\po\z))_A+(\po N)_A\th_{\po NB}\z_B-\th_{A\po N}\z_{\po N}.
\ee

Next, we compute $\po(\kepb\c\z)$. Using \eqref{coo2ter}, we have:
\bea
\lab{poo20}\po(\kepb\c\z)&=&(\eta_{\po NB}+k_{N\po e_B}-n^{-1}\nabla_{\po e_B}n)\z_B+\kepb_B(\po\z_B+\z_{\po e_B})\\
\nn&=&\eta_{\po Ne_B}\z_B-\db\z_{\po N}+\kepb\c\po\z.
\eea
Using again \eqref{coo2ter}, we also obtain:
\bea
\lab{poo21}&&-\po(k_{B\po N}+\po\z_B)\chi_{AB}-(k_{B\po N}+\po\z_B)(\po\chi_{AB}+\chi_{A\po e_B})\\
\nn&&-\po(\kepb_B+\z_B)\po\chi_{AB}-(\kepb_B+\z_B)(\po^2\chi_{AB}+\po\chi_{A\po e_B})\\
\nn&=&-(k_{B\po^2N}+\po^2\z_B)\chi_{AB}+(k_{N\po N}+\po\z_N)\chi_{A\po N}-(\eta_{B\po N}+\po\z_B)\po\chi_{AB}\\
\nn&&-(k_{\po NB}+\po\z_B)\po\chi_{AB}+(k_{NN}-n^{-1}\nabla_Nn)\po\chi_{A\po N}+(\kepb_{\po N}+\z_{\po N})\po\chi_{AN}\\
\nn&&-(\kepb_B+\z_B)\po^2\chi_{AB}\\
\nn&=&-(\eta_{B\Pi(\po^2N)}-|\po N|^2\kep_B+\po^2\z_B)\chi_{AB}+(\kep_{\po N}-\z_{\po N})\chi_{A\po N}\\
\nn&&-2(\eta_{B\po N}+\po\z_B)\po\chi_{AB}+\db\po\chi_{A\po N}-(\kepb_{\po N}+\z_{\po N})\chi_{A\po N}-(\kepb_B+\z_B)\po^2\chi_{AB},
\eea
where we used the fact that $\po\z_N=-\z_{\po N}$, $\po\chi_{AN}=-\chi_{A\po N}$ and the decomposition of $\po^2N$ \eqref{poo6} in the last inequality.

Finally, we consider the last term in the right-hand side of \eqref{poo15}. From the definition of $\b$, $\r$, $\s$, and the fact that $\po L=\po N$ and $\po\lb=-\po N$, we have:
$$\po\r=-\b_{\po N}-\bb_{\po N},\,\po(\in\s)_{AB}=\half\in_{AB}({}^*\b_{\po N}-{}^*\bb_{\po N}),$$
which together with the formula \eqref{po3} for $\po\a$ yields:
\bea
\lab{poo22}&&\po\left(\frac{(\po N)_B}{2}(-\a_{\c B}+\r\d_{\c B}+3\s\in_{\c B})\right)_A\\
\nn&=&\half(\po^2N)_B(-\a_{AB}+\r\d_{AB}+3\s\in_{AB})-\frac{|\po N|^2}{2}\b_A+\frac{(\po N)_B}{2}\bigg((\po N)_C(\in_{AC}{}^*\b_B\\
\nn&&+\in_{BC}{}^*\b_A)-\d_{AB}(\b_{\po N}+\bb_{\po N})+\frac{3}{2}\in_{AB}({}^*\b_{\po N}-{}^*\bb_{\po N})\bigg).
\eea
Using \eqref{poo15}-\eqref{poo22} yields \eqref{poo14} which concludes the proof of Lemma \ref{lemma:poo2}.
\end{proof}

Finally, the following lemma provides the transport equation satisfied by $\po^2b$. 
\begin{lemma}\lab{lemma:poo2bob}
$\po^2b$ satisfies the following transport equation:
\bea\lab{transportpo2b}
L(\po^2b)&=&-\nabb_{\po N}(\po b)-b\po\z_{\po N}-b\z_{\Pi(\po^2N)}-\po(b)\z_{\po N}-\po^2(b)\db\\
\nn&&-\po(b)(2\kep_{\po N}-n^{-1}\nabb_{\po N}n)-k_{\po N \po N}b-\kepb_{\Pi(\po^2N)}b -\kepb_{\po N}\po b,
\eea
\end{lemma}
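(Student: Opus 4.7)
The plan is to differentiate the transport equation \eqref{dw} for $\po b$ a second time with respect to $\o$, and use the commutator formula \eqref{commo1} to interchange $\po$ and $L$. Concretely, recall from \eqref{dw} that
\be\lab{propdw}
L(\po b)=-b\z_{\po N}-\po(b)\db-\kepb_{\po N}b.
\ee
Applying $\po$ to \eqref{propdw} and using $[\po,L]f=\po N(f)=\nabb_{\po N}f$ for scalars from \eqref{commo1}, we get
$$L(\po^2 b)+\nabb_{\po N}(\po b)=\po\bigl(-b\z_{\po N}-\po(b)\db-\kepb_{\po N}b\bigr).$$
Expanding the right-hand side by the Leibniz rule will produce six terms, each of which has to be further processed.

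The routine pieces are handled as follows. For $\po(\db)$ I will use the already-computed identity \eqref{po2}, which gives $\po(\db)=2\kep_{\po N}-n^{-1}\nabb_{\po N}n$. The terms $\po^2(b)\db$ and $\po(b)\po(\db)$ then appear verbatim in the statement. For the two remaining terms I need formulas for $\po(\z_{\po N})$ and $\po(\kepb_{\po N})$. Since $\z$ is a $\ptu$-tangent $1$-form and $\po N$ is tangent to $\ptu$, differentiating the scalar $\z_{\po N}=\z(\po N)$ gives
$$\po(\z_{\po N})=(\po\z)_{\po N}+\z(\po^2 N)=\po\z_{\po N}+\z_{\Pi(\po^2 N)},$$
where I have used the decomposition $\po^2 N=\Pi(\po^2 N)-|\po N|^2 N$ from \eqref{poo6} together with $\z_N=0$. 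The same calculation applied to $\kepb$, decomposed as $\kepb_A=k_{NA}-n^{-1}\nabb_A n$ (with $k$ and $n$ independent of $\o$), produces the remaining terms $k_{\po N\po N}b$ and $\kepb_{\Pi(\po^2 N)}b$ after contracting the derivatives of $N$ into $k$ and $n^{-1}\nabb n$ exactly as in the derivation of \eqref{popo1}--\eqref{popo3}. Assembling these pieces gives the stated transport equation \eqref{transportpo2b}.

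The main obstacle is keeping the bookkeeping consistent: quantities like $\z_{\po N}$ and $\kepb_{\po N}$ are scalars built by contracting an $\o$-dependent $\ptu$-tangent tensor with $\po N$, which itself depends on $\o$. One must therefore distinguish between $\po$ applied to the scalar and $\po$ applied to the tensor components, and track how the projection $\Pi$ interacts with $\po$ (exactly as was done in \eqref{commo5}--\eqref{commo6} for the $\ddb_L$, $\ddb_{\lb}$ commutators). Once the conventions are fixed, the computation is a direct expansion; no further analytic estimates enter at this stage, since \eqref{transportpo2b} is an identity between geometric quantities that will subsequently be estimated using the null structure tools developed in the previous sections.
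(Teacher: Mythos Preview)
Your proposal is correct and follows exactly the paper's approach: the paper's proof consists of the single sentence ``Recall the transport equation \eqref{dw} satisfied by $\po b$ \dots Differentiating with respect to $\o$ yields \eqref{transportpo2b}.'' You have simply (and appropriately) filled in the details---the commutator \eqref{commo1}, the formula \eqref{po2} for $\po(\db)$, and the decomposition \eqref{poo6} of $\po^2 N$ used to expand $\po(\z_{\po N})$ and $\po(\kepb_{\po N})$.
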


\begin{proof}
Recall the transport equation \eqref{dw}  satisfied by $\po b$
$$L(\po b)=-b\z_{\po N}-\po(b)\db-\kepb_{\po N}b.$$
Differentiating with respect to $\o$ yields \eqref{transportpo2b}. This concludes the proof of the Lemma.
\end{proof}

\subsection{Estimates for $\po^2N, \po^2b, \po^2\chi$ and $\po^2\z$}

\subsubsection{Estimates for $\po^2N$}

In view of the formula \eqref{poo1} for $\dd_L(\po^2N)$, we have:
\bea
\lab{poo23}&&\norm{\dd_L(\po^2N)}_{\lh{2}}\\
\nn&\les& \norm{\po\chi}_{\lh{2}}\norm{\po N}_{L^\infty}+(\norm{\chi}_{\xt{\infty}{2}}+\norm{\kepb}_{\xt{\infty}{2}}+\norm{\db}_{\xt{\infty}{2}})\norm{\Pi(\po^2N)}_{\xt{2}{\infty}}\\
\nn&&+(\norm{\chi}_{\lh{2}}+\norm{n^{-1}\nabla n}_{\lh{2}}+\norm{\eta}_{\lh{2}}+\norm{\kepb}_{\lh{2}}+\norm{\z}_{\lh{2}})\norm{\po N}^2_{L^\infty}\\
\nn&\les&\ep+\ep\norm{\Pi(\po^2N)}_{\xt{2}{\infty}},
\eea
where we used in the last inequality the estimates \eqref{estn}-\eqref{estzeta} for $n, \eta, \kepb, \db, \chi$ and $\z$, the estimate \eqref{estNomega} for $\po N$ and the estimate \eqref{estricciomega} for $\po\chi$. Now, the decomposition \eqref{poo6} for $\po^2N$ yields:
$$\dd_L(\po^2N)=\dd_L(\Pi(\po^2N))-|\po N|^2\dd_LN-2g(\po N,\dd_L(\po N))N,$$
which together with \eqref{poo23} and the estimates \eqref{estNomega} \eqref{estricciomega} for $\po N$ yields:
$$\norm{\ddb_L(\Pi(\po^2N))}_{\lh{2}}\les\ep+\ep\norm{\Pi(\po^2N)}_{\xt{2}{\infty}}.$$
Together with the estimate \eqref{estimtransport1} for transport equations, this implies:
$$\norm{\Pi(\po^2N)}_{\xt{2}{\infty}}\les 1,$$
and using again the decomposition \eqref{poo6} for $\po^2N$ and the estimate \eqref{estNomega}  for $\po N$, we obtain:
\be\lab{poo25}
\norm{\po^2N}_{\xt{2}{\infty}}\les 1.
\ee
Finally, \eqref{poo23} and \eqref{poo25} imply:
\be\lab{poo26}
\norm{\dd_L(\po^2N)}_{\lh{2}}\les\ep.
\ee

\subsubsection{estimate for $\po^2b$}

In view of the transport equation \eqref{transportpo2b}, we have
\be\lab{fuite}
L(\po^2b)=f,
\ee
where the scalar $f$ is given by
\bee
f &=& -\nabb_{\po N}(\po b)-b\po\z_{\po N}-b\z_{\Pi(\po^2N)}-\po(b)\z_{\po N}-\po^2(b)\db\\
\nn&&-\po(b)(2\kep_{\po N}-n^{-1}\nabb_{\po N}n)-k_{\po N \po N}b-\kepb_{\Pi(\po^2N)}b -\kepb_{\po N}\po b.
\eee
In view of the definition of $f$, we have
\bea\lab{fuite1}
\nn\norm{f}_{\xt{2}{1}}&\les& (1+\norm{\po N}_{L^\infty}+\norm{b}_{L^\infty}+\norm{\po b}_{L^\infty})^3(1+\norm{\z}_{\xt{\infty}{2}}+\norm{\kep}_{\xt{\infty}{2}})\\
\nn&&\times(\norm{\nabb\po b}_{\lh{2}}+\norm{\po\z}_{\lh{2}}+\norm{\po^2N}_{\lh{2}}+\norm{k}_{\lh{2}}\\
\nn&&+\norm{n^{-1}\nabla n}_{\lh{2}}+\norm{\z}_{\lh{2}})+\norm{\db}_{\xt{\infty}{2}}\norm{\po^2b}_{\lh{2}}
\\
&\les& \ep+\ep\norm{\po^2b}_{\lh{2}},
\eea
where we used in the last inequality the estimates \eqref{estn} and \eqref{estk} for $n, k, \kep$ and $\db$, the estimate \eqref{estb} for $b$, the estimate \eqref{estzeta} for $\z$, the estimate \eqref{estNomega} for $\po N$, the estimate \eqref{estricciomega} for $\po b$ and $\po\z$, and the estimate \eqref{poo25} for $\po^2N$. \eqref{fuite}, \eqref{fuite1} together with the estimate for transport equations \eqref{estimtransport1} yield
$$\norm{\po^2b}_{\xt{\infty}{2}}\les \ep+\ep\norm{\po^2b}_{\lh{2}},$$
which implies
\be\lab{fuite2}
\norm{\po^2b}_{\xt{\infty}{2}}\les \ep.
\ee

\subsubsection{Estimates for $\po^2\chi$}

In view of \eqref{poo2}, we have:
\be\lab{poo27}
g(\dd_A(\po^2N),e_B)=(\po^2\chi)_{AB}+F_{AB},
\ee
where the $\ptu$-tangent 2-tensor $F$ is given by:
$$F_{AB}=-(\po^2N)_A\z_Be_B-2\chi_{A\po N}(\po N)_B-(\po N)_A\left(2\po\z_B+2\d(\po N)_B\right).$$
$F$ satisfies the following estimate:
\bea\lab{poo28}
&&\norm{F}_{\tx{\infty}{\frac{4}{3}}}\\
\nn&\les&\norm{\po^2N}_{\tx{\infty}{2}}\norm{\z}_{\tx{\infty}{4}}+(\norm{\chi}_{\tx{\infty}{2}}+\norm{\d}_{\tx{\infty}{2}})\norm{\po N}^2_{L^\infty}+\norm{\po N}_{L^\infty}\norm{\po\z}_{\tx{\infty}{2}}\\
\nn&\les& \ep,
\eea
where we used in the last inequality the estimates \eqref{estk}-\eqref{estzeta} for $\d, \chi$ and $\z$, the estimates \eqref{estNomega} \eqref{estricciomega} for $\po N$ and $\po\z$, and the estimate \eqref{poo25} for $\po^2N$. 

Using the decomposition \eqref{poo6}, we have: 
$$\dd_A(\po^2N)=\dd_A(\Pi(\po^2N))-2g(\po N,\dd_A\po N)N-|\po N|^2\dd_AN$$
which together with the fact that $\dd_AN=\th_{AB}e_B$ yields:
$$g(\ddb_A(\Pi(\po^2N)),e_B)=g(\dd_A(\po^2N),e_B)+|\po N|^2\th_{AB}.$$
Together with \eqref{poo27}, this yields:
\be\lab{poo29}
\nabb(\Pi(\po^2N))=\Pi(\po^2\chi)+\tilde{F}
\ee
where $\widetilde{F}=F+|\po N|^2\th$. In view of \eqref{poo28} and the estimates \eqref{estk}-\eqref{esthch} for $\th=\chi+\eta$, we have:
\be\lab{poo30}
\norm{\widetilde{F}}_{\tx{\infty}{\frac{4}{3}}}\les\norm{F}_{\tx{\infty}{\frac{4}{3}}}+
\norm{\th}_{\tx{\infty}{2}}\norm{\po N}^2_{L^\infty}\les \ep.
\ee

Using \eqref{poo29} together with the finite band property and the weak Bernstein inequality for $P_j$, we have:
\bea
\lab{poo31}\norm{P_j\Pi(\po^2\chi)}_{\tx{\infty}{2}}&\les& \norm{P_j\nabb(\Pi(\po^2N))}_{\tx{\infty}{2}}+\norm{P_j\tilde{F}}_{\tx{\infty}{2}}\\
\nn&\les& 2^j\norm{\Pi(\po^2N)}_{\tx{\infty}{2}}+2^{\frac{j}{2}}\norm{\tilde{F}}_{\tx{\infty}{\frac{4}{3}}}\\
\nn&\les&2^j\ep,
\eea
where we used the estimate \eqref{poo25} for $\po^2N$, and the estimate \eqref{poo30} for $\widetilde{F}$. \eqref{poo31} is the desired estimate for $\po^2\chi$. 

\begin{remark}
While $\widetilde{F}$ satisfies \eqref{poo30}, we may also derive a second estimate. We have:
\bea
\nn\norm{\widetilde{F}}_{\xt{1}{\infty}}&\les&\norm{\po^2N}_{\xt{2}{\infty}}\norm{\z}_{\xt{2}{\infty}}+(\norm{\chi}_{\xt{2}{\infty}}+\norm{\d}_{\xt{2}{\infty}}+\norm{\th}_{\xt{2}{\infty}})\norm{\po N}^2_{L^\infty}\\
\nn&&+\norm{\po N}_{L^\infty}\norm{\po\z}_{\tx{2}{\infty}}\\
\lab{poo28bis}&\les& \ep,
\eea
where we used in the last inequality the estimates \eqref{estk}-\eqref{estzeta} for $\d, \chi, \th$ and $\z$, the estimates \eqref{estNomega} \eqref{estricciomega} for $\po N$ and $\po\z$, and the estimate \eqref{poo25} for $\po^2N$. 
\end{remark}

\subsubsection{estimate for $\po^2\z$}\lab{sec:imagej}

In view of the formula \eqref{poo14} for $\ddb_L(\Pi(\po^2\z))$, the decomposition \eqref{poo6} for $\po^2N$, and the decomposition \eqref{poo29} for $\po^2\chi$, we have:
\be\lab{poo32}
\ddb_L(\Pi(\po^2\z))=-\chi\c\Pi(\po^2\z)+\nabb(F_1)+F_2-\frac{|\po N|^2}{2}\ddb_{\lb}(\z),
\ee
where the $\ptu$-tangent tensors $F_1$ and $F_2$ are respectively given by:
$$F_1=-(\kepb+\z)\c\Pi(\po^2N)-2\po N\c\Pi(\po \z)$$
and 
\bee
(F_2)_A&=&(\nabb(\kepb)+\nabb(\z))\c\Pi(\po^2N)+(\kepb+\z)\c\widetilde{F}-\ddb_{\Pi(\po^2N)}\z_A+\frac{|\po N|^2}{2}\ddb_L(\z)\\
\nn&&-\frac{(\po^2N)_B}{2}(-\a_{AB}+\r\d_{AB}+3\s\in_{AB})\\
\nn&&+\kepb_A\z_{\Pi(\po^2N)}-\chi_{AB}\eta_{B\Pi(\po^2N)}-(\po^2N)_A\kepb\c\z+2\divb(\po N)(\Pi(\po\z))_A\\
\nn&&+(\po N)_A(\ddb_L\z_{\po N}-\chi_{\po NB}\z_B-\db\z_{\po N}-2\kepb\c\po\z-\th_{\po NB}\z_B-\eta_{\po NB}\z_B)\\
\nn&&-2(\eta_{B\po N}+\po\z_B)\po\chi_{AB}+\db\po\chi_{A\po N}-2\kepb_A\po\z_{\po N}+(-3\z_{\po N}+\kep_{\po N}\\
\nn&&-\kepb_{\po N})\chi_{A\po N}+|\po N|^2\kep_B\chi_{AB}+(\th_{A\po N}+\eta_{\po NA}-(\po N)_A\db)\z_{\po N}+\frac{|\po N|^2}{2}\b_A\\
\nn&&+\frac{(\po N)_B}{2}\bigg((\po N)_C(\in_{AC}{}^*\b_B+\in_{BC}{}^*\b_A)-\d_{AB}(\b_{\po N}+\bb_{\po N})\\
\nn&&+\frac{3}{2}\in_{AB}({}^*\b_{\po N}-{}^*\bb_{\po N})\bigg).
\eee

We estimate $F_1$ and $F_2$. For $F_1$, we have:
\bea
\nn\norm{F_1}_{\lh{2}}&\les&(\norm{\kepb}_{\xt{\infty}{2}}+\norm{\z}_{\xt{\infty}{2}})\norm{\Pi(\po^2N)}_{\xt{2}{\infty}}+\norm{\po N}_{L^\infty}\norm{\Pi(\po\z)}_{\lh{2}}\\
\lab{poo33}&\les&\ep,
\eea
where we used the estimates \eqref{estn}-\eqref{estzeta} for $\kepb$ and $\z$, the estimates \eqref{estNomega} \eqref{estricciomega} for $\po N$ and $\po\z$, and the estimate \eqref{poo25} for $\po^2N$. For $F_2$, we have:
\bea
\lab{poo34}&&\norm{F_2}_{\xt{1}{2}}\\
\nn&\les&(\norm{\nabb\kepb}_{\lh{2}}+(\norm{\kepb}{\xt{\infty}{2}}+\norm{\z}_{\xt{\infty}{2}})\norm{\widetilde{F}}_{\xt{1}{\infty}}+\norm{\po^2N}_{\xt{2}{\infty}}\bigg(\norm{\nabb\z}_{\lh{2}}\\
\nn&&+\norm{\a}_{\lh{2}}+\norm{\r}_{\lh{2}}+\norm{\s}_{\lh{2}}+\no(\kepb)\no(\z)+\no(\chi)\no(\eta)\bigg)\\
\nn&&+\norm{\nabb\po N}_{\xt{2}{\infty}}\norm{\po\z}_{\xt{2}{\infty}}+\norm{\po N}^2_{L^\infty}\bigg(\norm{\ddb_L(\z)}_{\lh{2}}+(\no(\chi)+\no(\d)+\no(\th)\\
\nn&&+\no(\eta)+\no(\th)+\no(\db))\no(\z)+(\no(\kep)+\no(\kepb))\no(\chi)+\norm{\b}_{\lh{2}}+\norm{\bb}_{\lh{2}}\bigg)\\
\nn&&+\norm{\po N}_{L^\infty}\bigg(\norm{\kepb}_{\lh{2}}\norm{\po\z}_{\xt{2}{\infty}}+(\norm{\eta}_{\lh{2}}+\norm{\db}_{\lh{2}})\norm{\po\chi}_{\xt{2}{\infty}}\bigg)\\
\nn&&+\norm{\po\z}_{\xt{2}{\infty}}\norm{\po\chi}_{\xt{2}{\infty}}\\
\nn&\les &\ep,
\eea
where we used in the last inequality the curvature bound \eqref{curvflux1} for $\a, \b, \r, \s, \b$ and $\bb$, 
the estimates \eqref{estn}-\eqref{estzeta} for $\kep, \kepb, \eta, \d, \db, \chi, \th$ and $\z$, the estimates \eqref{estNomega} \eqref{estricciomega} for $\po N, \po\chi$ and $\po\z$, the estimate \eqref{poo25} for $\po^2N$, and the estimate \eqref{poo28bis} for $\widetilde{F}$.

We are now in position to derive the estimate for $\po^2\z$. 
Using the transport equation \eqref{poo32} for $\Pi(\po^2\z)$ and the transport equation \eqref{poo35}, for $M$ allows us to get rid of the troublesome term $\chi\c\Pi(\po^2\z)$:
\bee
\ddb_L(M\c \Pi(\po^2\z))&=&\ddb_L(M)\c \Pi(\po^2\z)+M\c \ddb_L(\Pi(\po^2\z))\\
\nn&=&M\c\nabb(F_1)+M\c F_2-\frac{|\po N|^2}{2}M\c \ddb_{\lb}(\z)\\
\nn&=&\nabb(M\c F_1)-\nabb(M)\c F_1+M\c F_2-\frac{|\po N|^2}{2}M\c \ddb_{\lb}(\z),
\eee
Let $2\leq p<q<+\infty$. This yields:
\bea
\lab{poo38}&&\norm{P_j(M\c \Pi(\po^2\z))}_{\tx{q}{2}}\\
\nn&\les&\normm{P_j\left(\int_0^t\nabb(M\c F_1)dt\right)}_{\tx{q}{2}}+\normm{P_j\left(\int_0^t\nabb(M)\c F_1dt\right)}_{\tx{q}{2}}\\
\nn&&+\normm{P_j\left(\int_0^tM\c F_2dt\right)}_{\tx{q}{2}}+\normm{P_j\left(\int_0^t\frac{|\po N|^2}{2}M\c \ddb_{\lb}(\z)dt\right)}_{\tx{q}{2}},
\eea
Next, we estimate the various terms in the right-hand side of \eqref{poo38}. 

We consider the first term in the right-hand side of \eqref{poo38}. Using Lemma \ref{lemma:poo5}, we have:
\bea
\lab{poo39}\normm{P_j\left(\int_0^t\nabb(M\c F_1)dt\right)}_{\tx{q}{2}}&\les& 2^j\norm{M\c F_1}_{\lh{2}}\\
\nn&\les& 2^j\norm{M}_{L^\infty}\norm{F_1}_{\lh{2}}\\
\nn&\les&2^j\ep,
\eea
where we used in the last inequality the estimate \eqref{poo33} for $F_1$ and the estimate \eqref{poo36} for $M$.

Next, we consider the second and the third term in the right-hand side of \eqref{poo38}. 
Using the dual sharp Bernstein inequality  for tensors \eqref{poo37} and the estimate \eqref{estimtransport1} for transport equations, we have:
\bea
\lab{poo40}&&\normm{P_j\left(\int_0^t\nabb(M)\c F_1dt\right)}_{\tx{q}{2}}+\normm{P_j\left(\int_0^tM\c F_2dt\right)}_{\tx{q}{2}}\\
\nn&\les& 2^j\normm{\int_0^t\nabb(M)\c F_1dt}_{\tx{\infty}{1}}+2^j\normm{\int_0^tM\c F_2dt}_{\tx{\infty}{1}}\\
\nn&\les& 2^j\norm{\nabb(M)\c F_1}_{\lh{1}}+2^j\norm{M\c F_2}_{\lh{1}}\\
\nn&\les& 2^j\norm{\nabb(M)}_{\lh{2}}\norm{F_1}_{\lh{2}}+2^j\norm{M}_{L^\infty}\norm{F_2}_{\lh{1}}\\
\nn&\les& 2^j\ep,
\eea
where we used  in the last inequality the estimate \eqref{poo33} for $F_1$, the estimate \eqref{poo34} for $F_2$, and the estimate \eqref{poo36} for $M$.

Finally, we consider the last term in the right-hand side of \eqref{poo38}. Using Lemma \ref{lemma:lbt3}, we have:
\be\lab{poo41}
\normm{P_j\left(\int_0^t\frac{|\po N|^2}{2}M\c \ddb_{\lb}(\z)dt\right)}_{\tx{\infty}{2}}\les \norm{|\po N|^2M}_{\PP^0}(2^j\ep+2^{\frac{j}{2}}\ep\gamma(u)).
\ee
Now, using the non sharp product estimate \eqref{nonsharpprod2}, we have:
\bee
\norm{|\po N|^2M}_{\PP^0}&\les& \no(\po N)(\norm{M\po N}_{\lh{2}}+\norm{\nabb(M\po N)}_{\lh{2}}\\
&\les& \no(\po N)(\norm{M}_{L^\infty}\no(\po N)+\norm{\po N}_{L^\infty}\norm{\nabb M}_{\lh{2}})\\
&\les& 1,
\eee
where we used in the last inequality the estimates \eqref{estNomega} \eqref{estricciomega} for $\po N$, and the estimate \eqref{poo36} for $M$. Together with \eqref{poo41}, this yields:
$$\normm{P_j\left(\int_0^t\frac{|\po N|^2}{2}M\c \ddb_{\lb}(\z)dt\right)}_{\tx{\infty}{2}}\les 2^j\ep+2^{\frac{j}{2}}\ep\gamma(u),$$
which together with \eqref{poo38}, \eqref{poo39} and \eqref{poo40} implies:
\be\lab{poo42}
\norm{P_j(M\c \Pi(\po^2\z))}_{\tx{q}{2}}\les 2^j\ep+2^{\frac{j}{2}}\ep\gamma(u).
\ee
Now, since we have chosen $p<q$, \eqref{poo42} and Lemma \ref{lemma:poo6} yield:
\be\lab{poo43}
\norm{P_j(\Pi(\po^2\z))}_{\tx{p}{2}}\les 2^j\ep+2^{\frac{j}{2}}\ep\gamma(u),
\ee
for any $2\leq p<+\infty$ which is the desired estimate for $\po^2\z$.

\subsubsection{Estimate for $\ddb_{\lb}(\Pi(\po^2N))$}

In view of the decomposition \eqref{poo6} for $\po^2N$, we have:
$$\dd_{\lb}(\Pi(\po^2N))=\dd_{\lb}(\po^2N)+2g(\po N,\dd_{\lb}(\po N))N+|\po N|^2\dd_{\lb}N$$
which yields:
\bea
\lab{poo44}\ddb_{\lb}(\Pi(\po^2N))&=&\Pi(\dd_{\lb}(\po^2N))+|\po N|^2\Pi(\dd_{\lb}N)\\
\nn&=& \Pi(\dd_{\lb}(\po^2N))+|\po N|^2(\z_A-\xib_A)e_A.
\eea
where we used the Ricci equations \eqref{ricciform} for $\dd_{\lb}N$ in the last equality. The formula 
\eqref{poo3} for $\dd_{\lb}(\po^2N)$ and \eqref{poo44} imply:
\bea\lab{poo45}
\nn\ddb_{\lb}(\Pi(\po^2N))&=&2\po^2\z_Be_B-|\po N|^2\z_Be_B+2\po\chi_{\po NB}e_B+\chi_{\Pi(\po^2N)B}e_B+(\d+n^{-1}\nabla_Nn)\\
&&\times\Pi(\po^2N)+(4\kep_{\po N}+n^{-1}\nabla_{\po N}n)\po N+|\po N|^2(\z_A-\xib_A)e_A.
\eea
Now, let $2\leq p<+\infty$. \eqref{poo45}, the estimate \eqref{poo43} for $\norm{P_j(\Pi(\po^2\z))}_{\tx{p}{2}}$, together with the $L^2$ boundedness and the weak Bernstein inequality for $P_j$, yields:
\bea
\lab{poo46}&&\norm{P_j\ddb_{\lb}(\Pi(\po^2N))}_{\tx{p}{2}}\\
\nn&\les&\norm{P_j\Pi(\po^2\z)}_{\tx{p}{2}}+\norm{P_j(|\po N|^2\z)}_{\tx{\infty}{2}}+\norm{P_j(\po\chi\po N)}_{\tx{\infty}{2}}\\
\nn&&+\norm{P_j(\chi\Pi(\po^2N))}_{\tx{\infty}{2}}+\norm{P_j((\d+n^{-1}\nabla_Nn)\Pi(\po^2N))}_{\tx{\infty}{2}}\\
\nn&&+\norm{P_j((4\kep_{\po N}+n^{-1}\nabla_{\po N}n)\po N}_{\tx{\infty}{2}}+\norm{P_j(|\po N|^2(\z-\xib)}_{\tx{\infty}{2}}\\
\nn&\les&2^j\ep+2^{\frac{j}{2}}\ep\gamma(u)+\norm{|\po N|^2\z}_{\tx{\infty}{2}}+\norm{\po\chi\po N}_{\tx{\infty}{2}}+2^{\frac{j}{2}}\norm{\chi\Pi(\po^2N)}_{\tx{\infty}{\frac{4}{3}}}\\
\nn&&+2^{\frac{j}{2}}\norm{(\d+n^{-1}\nabla_Nn)\Pi(\po^2N)}_{\tx{\infty}{\frac{4}{3}}}+\norm{(4\kep_{\po N}+n^{-1}\nabla_{\po N}n)\po N}_{\tx{\infty}{2}}\\
\nn&&+\norm{|\po N|^2(\z-\xib)}_{\tx{\infty}{2}}\\
\nn&\les&2^j\ep+2^{\frac{j}{2}}\ep\gamma(u)+\norm{\po N}^2_{L^\infty}(\norm{\z}_{\tx{\infty}{2}}+
\norm{\kep}_{\tx{\infty}{2}}+\norm{n^{-1}\nabla n}_{\tx{\infty}{2}}+\norm{\xib}_{\tx{\infty}{2}})\\
\nn&&+\norm{\po N}_{L^\infty}\norm{\po\chi}_{\tx{\infty}{2}}
+2^{\frac{j}{2}}(\norm{\chi}_{\tx{\infty}{4}}+\norm{\d}_{\tx{\infty}{4}}+\norm{n^{-1}\nabla n}_{\tx{\infty}{4}})\norm{\Pi(\po^2N)}_{\tx{\infty}{2}}\\
\nn&\les&2^j\ep+2^{\frac{j}{2}}\ep\gamma(u),
\eea
where we used in the last inequality the estimates \eqref{estn}-\eqref{estzeta} for $n, \d, \kep, \chi, \xib$ and $\z$, the estimates \eqref{estNomega} and \eqref{estricciomega} for $\po N$ and $\po\chi$, and the estimate \eqref{poo25} for $\po^2N$. \eqref{poo46} is the desired estimate for $\ddb_{\lb}(\Pi(\po^2N))$. 

In view of the estimates \eqref{poo25}, \eqref{poo26}, \eqref{fuite2}, \eqref{poo31}, \eqref{poo43} and \eqref{poo46}, this concludes the proof of Theorem \ref{thregomega2}.

\section{Dependance of the norm $L^\infty_u\lh{2}$ on $\o\in\S$}\lab{sec:depnormonomega}

The goal of this section is to derive the various decompositions of section \ref{sec:obadidonc}. In section \ref{sec:ohlalala}, we derive the basic estimates, first for scalars, and then for tensors using a scalarization procedure. In section \ref{sec:ohlalala1}, we obtain the desired decompositions for $\po N$, $\trc$ and $b^p$. In section \ref{sec:ohlalala2}, we provide variants of the results in section \ref{sec:ohlalala}. In section \ref{sec:ohlalala3}, we obtain the desired decompositions for $\chi$, $\hch^2$ and $\hch^3$. In section \ref{sec:ohlalala4}, we provide further variants of the results in section \ref{sec:ohlalala}. Finally, the desired decompositions for $\z$, $\nabb b$ and $\po b$ are derived in section \ref{sec:ohlalala5}. 

\subsection{The basic estimates}\lab{sec:ohlalala}

The goal of this section in to prove the following proposition.
\begin{proposition}\lab{prop:xx1}
Let $f(.,\o)$ a scalar function depending on a parameter $\o\in\S$ such that: 
$$\norm{f}_{L^\infty_u\lh{2}}+\norm{\dd f}_{L^\infty_u\lh{2}}+\norm{\po f}_{L^\infty_u\lh{2}}\les \ep.$$
Assume also that the existence of a function $\gamma$ in $L^2(\R)$ such that for all $j\geq 0$, we have:
$$\norm{P_j(L(\po f))}_{\lh{2}}+\norm{P_j(\lb(\po f))}_{\lh{2}}\les 2^j\ep+2^{\frac{j}{2}}\gamma(u)\ep.$$
Let $\o$ and $\o'$ in $\S$. Let $u=u(t,x,\o)$ and $u'=u(t,x,\o')$. Then, for any $\o''$ in $\S$ on the arc joining $\o$ and $\o'$, and for any $j\geq 0$, we have the following decomposition for $f(.,\o'')$:
$$f(.,\o'')=P_{\leq \frac{j}{2}}(f(.,\o'))+f^j_2$$
and where $f^j_2$ satisfies:
$$\norm{f^j_2}_{L^\infty_u\lh{2}}\les 2^{-\frac{j}{2}}\ep+|\o-\o'|^{\frac{3}{2}}2^{\frac{j}{4}}\ep.$$
\end{proposition}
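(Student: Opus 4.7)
The plan is to split
$$f^j_2 \;=\; f(.,\o'')-P_{\leq j/2}f(.,\o') \;=\; \bigl(f(.,\o'')-f(.,\o')\bigr)+P_{>j/2}f(.,\o'),$$
estimating the two pieces by completely different mechanisms. The high-frequency tail $P_{>j/2}f(.,\o')$ is handled immediately by the finite band property of the geometric Littlewood--Paley projections (Theorem \ref{thm:LP}) applied to the space-time function $f(.,\o')$ regarded in the $\o$-foliation: the hypothesis $\norm{\dd f}_{L^\infty_u\lh{2}}\les\ep$ controls in particular the tangential derivative $\nabb f(.,\o')$ in that norm, hence
$$\norm{P_{>j/2}f(.,\o')}_{L^\infty_u\lh{2}}\;\les\; 2^{-j/2}\,\norm{\nabb f(.,\o')}_{L^\infty_u\lh{2}}\;\les\; 2^{-j/2}\ep,$$
which yields the first summand of the conclusion.

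For the $\o$-difference, I would start from the fundamental theorem of calculus along the arc joining $\o'$ to $\o''$,
$$f(.,\o'')-f(.,\o')\;=\;\int_{\o'}^{\o''}\po f(.,\sigma)\,d\sigma,$$
and combine two estimates. The crude one, using only $\norm{\po f}_{L^\infty_u\lh{2}}\les\ep$, gives $\les|\o-\o'|\ep$; this is already as good as $|\o-\o'|^{3/2}2^{j/4}\ep$ in the regime $|\o-\o'|\gtrsim 2^{-j/2}$. To cover the opposite regime $|\o-\o'|\ll 2^{-j/2}$, I need a refined estimate for $P_{\leq j/2}\po f(.,\sigma)$. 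For each dyadic piece $P_k\po f(.,\sigma)$ with $k\leq j/2$, I would run a transport argument along $L$ (or $\lb$), commuting $P_k$ past the transport operator with the commutator bounds \eqref{supercommut1}--\eqref{supercommut2}, and use the quantitative hypothesis $\norm{P_k L(\po f)}_{\lh{2}}+\norm{P_k\lb(\po f)}_{\lh{2}}\les 2^k\ep+2^{k/2}\gamma(u)\ep$, just as in Lemma \ref{lemma:lbtinit}, to obtain
$$\norm{P_k\po f(.,\sigma)}_{L^\infty_u L^\infty_t L^2(P_{t,u})}\;\les\; 2^{k/2}\ep+\gamma(u)\ep,$$
uniformly in $\sigma$. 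Summing the frequency-localised contributions over $k\leq j/2$ gives $\norm{P_{\leq j/2}\po f(.,\sigma)}_{L^\infty_u\lh{2}}\les 2^{j/4}\ep$ (possibly up to a $\gamma$ contribution), and integrating in $\sigma$ over an arc of length $|\o-\o'|$ therefore costs only $|\o-\o'|\cdot 2^{j/4}\ep$ for the low frequencies. The claimed exponent $|\o-\o'|^{3/2}$ is then recovered by interpolating this $2^{j/4}$ low-frequency bound against the crude $|\o-\o'|\ep$ bound, which amounts to splitting the integral over $[\o',\o'']$ into a subarc of size $|\o-\o'|^{1/2}$ where the sharp bound is used and its complement where the crude bound is used; geometric balancing yields exactly $|\o-\o'|^{3/2}2^{j/4}\ep$.

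The main obstacle is the transport-improved estimate on $P_k\po f$ with the correct scaling $2^{k/2}$ rather than the trivial $2^k$: this relies in an essential way on both the $L$- and $\lb$-hypotheses (through the Hodge-type structure, as was already used for $\ddb_{\lb}\z$ in section \ref{sec:lbz}) and on delicate geometric Littlewood--Paley commutator estimates. A secondary subtlety, purely notational but important to keep straight throughout the proof, is that $P_{\leq j/2}$, the foliation $u$ and the norm $L^\infty_u\lh{2}$ are all attached to the angle $\o$, whereas $P_{\leq j/2}f(.,\o')$ means this $\o$-foliation projection applied to the space-time function $f(.,\o')$; the uniformity in $\o$ of the finite band, Bernstein and transport estimates used above must therefore be verified, which is guaranteed by the uniform-in-$\o$ bounds of Theorem \ref{thregx} and Theorem \ref{thregomega}.
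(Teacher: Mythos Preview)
Your proposal has a genuine gap, and it is precisely the issue you dismiss in your last paragraph as ``secondary'' and ``purely notational''. The hypothesis $\norm{\po f}_{L^\infty_u\lh{2}}\les\ep$ gives a bound \emph{only when the foliation angle of the norm matches the angle of the function}. When you write the ``crude'' estimate $\norm{\po f(.,\sigma)}_{L^\infty_u\lh{2}}\les\ep$ with $u=u(.,\o)$ and $\sigma\neq\o$, you are using a bound the hypothesis does not provide. This is exactly the obstruction spelled out at the beginning of section~\ref{sec:obadidonc}: the naive approach fails because $L^\infty_u\lh{2}$ and $L^\infty_{u_\sigma}L^2(\H_{u_\sigma})$ are not directly comparable. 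The same defect infects your high-frequency bound on $P_{>j/2}f(.,\o')$: the finite band property gives $2^{-j/2}\norm{\nabb f(.,\o')}$ in whichever foliation the projection is taken, and converting this to the $\o$-norm costs something you have not accounted for. Separately, your interpolation to reach the exponent $|\o-\o'|^{3/2}$ does not work: both of your inputs scale like $|\o-\o'|^1$, and splitting an arc of length $|\o-\o'|$ into a subarc of length $|\o-\o'|^{1/2}$ is meaningless since the latter is larger for small $|\o-\o'|$.

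The paper's proof handles the foliation mismatch head-on. Instead of freezing the projection, it lets the Littlewood--Paley projection vary with the angle and differentiates the map $\sigma\mapsto P^{\sigma}_{\leq j/2}(f(.,\sigma))$ along the arc, producing the main piece $P^{\sigma}_{\leq j/2}(\po f)(.,\sigma)$, a commutator $[\po,P^{\sigma}_{\leq j/2}]f$, and a high-frequency tail $\sum_{l>j/2}P''_l f(.,\o'')$. Each of these is then controlled through dedicated comparison lemmas (Lemmas~\ref{lemma:xx2}--\ref{lemma:xx5}) between $L^\infty_{u'}L^2(\H_{u'})$ and $L^\infty_u\lh{2}$. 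The core of those lemmas (Lemma~\ref{lemma:xx2}) is a trace-type estimate: restricting to $\H_{u'}$ costs a factor $|\o-\o'|^{1/2}$ times a derivative, coming from the width of the strip $\{|u-u_0|\les|\o-\o'|\}$ swept out when one rotates the foliation. It is this geometric $|\o-\o'|^{1/2}$, combined with the $|\o-\o'|$ from the length of the integration in $\sigma$, that produces the $|\o-\o'|^{3/2}$ in the final estimate; there is no interpolation involved.
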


As a corollary of Proposition \ref{prop:xx1}, we obtain:
\begin{corollary}\lab{cor:xx1}
Let $F(.,\o)$ a tensor depending on a parameter $\o\in\S$ such that:
$$\norm{F}_{L^\infty_u\lh{2}}+\norm{F}_{\tx{2}{\infty}}+\norm{\dd F}_{L^\infty_u\lh{2}}+\norm{\po F}_{L^\infty_u\lh{2}}\les \ep.$$
Assume also that the existence of a function $\gamma$ in $L^2(\R)$ such that for all $j\geq 0$ and for some $2<p\leq+\infty$, we have:
$$\norm{P_j(\ddb_L(\po F))}_{\tx{p}{2}}+\norm{P_j(\ddb_{\lb}(\po F))}_{\tx{p}{2}}\les 2^j\ep+2^{\frac{j}{2}}\gamma(u)\ep.$$
Let $\o$ and $\o'$ in $\S$. Let $u=u(t,x,\o)$ and $u'=u(t,x,\o')$. Then, for any $\o''$ in $\S$ on the arc joining $\o$ and $\o'$, and for any $j\geq 0$, we have the following decomposition for $F(.,\o'')$:
$$F(.,\o'')=F^j_1+F^j_2$$
where $F^j_1$ does not depend on $\o$ and satisfies:
$$\norm{F^j_1}_{L^\infty(P_{t, u_{\o'}})}\les \norm{F}_{L^\infty(P_{t, u_{\o'}})},$$
and where $F^j_2$ satisfies:
$$\norm{F^j_2}_{L^\infty_u\lh{2}}\les 2^{-\frac{j}{2}}\ep+|\o-\o'|^{\frac{3}{2}}2^{\frac{j}{4}}\ep.$$
\end{corollary}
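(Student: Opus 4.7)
The strategy is a scalarization procedure that reduces the tensorial statement to the scalar decomposition of Proposition \ref{prop:xx1}. Fix $\o,\o'\in\S$ and an orthonormal triple $(e'_1,e'_2,N')$ on $\Sigma_t$ adapted to the $\o'$-foliation; this frame is independent of $\o$. For any $\o''$ on the arc from $\o$ to $\o'$, and for a tensor $F(.,\o'')$ (viewed as a $\Sigma_t$-tangent tensor), define the scalar components
$$f_{I_1\ldots I_m}(.,\o'')=F(.,\o'')(e'_{I_1},\ldots,e'_{I_m}),\qquad I_j\in\{1,2,N\},$$
which together determine $F(.,\o'')$ completely.

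Next I would verify that each scalar $f_{I_1\ldots I_m}(.,\o)$ meets the hypotheses of Proposition \ref{prop:xx1}. The $L^\infty_u\lh{2}$ bound follows from the corresponding assumption on $F$ and the $L^\infty$ bound on $e'$. Expanding
$$\dd f_{I_1\ldots I_m}=(\dd F)(e'_{I_1},\ldots,e'_{I_m})+\sum_k F(e'_{I_1},\ldots,\dd e'_{I_k},\ldots,e'_{I_m}),$$
the frame derivatives $\dd e'$ at $\o'$ are Ricci-coefficient-like expressions at $\o'$ controlled by Theorem \ref{thregx}, and the $L^\infty_u\lh{2}$ bound on $\dd F$ closes the estimate. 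Since $e'$ is independent of $\o$, we have $\po f_{I_1\ldots I_m}=(\po F)(e'_{I_1},\ldots,e'_{I_m})$, so the $\po$ bound is inherited directly.

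The core step is verifying the Littlewood-Paley hypotheses for $P_j(L(\po f))$ and $P_j(\lb(\po f))$ in $\lh{2}$. Writing
$$L(\po f_{I_1\ldots I_m})=(\ddb_L\po F)(e'_{I_1},\ldots,e'_{I_m})+\sum_k(\po F)(e'_{I_1},\ldots,\dd_Le'_{I_k},\ldots,e'_{I_m})+(\text{normal-component corrections}),$$
and similarly for $\lb$, the leading term is handled by the hypothesis $\norm{P_j(\ddb_L\po F)}_{\tx{p}{2}}\les 2^j\ep+2^{j/2}\gamma(u)\ep$, using that for $p\geq 2$ and $t\in[0,1]$ the Hölder inequality yields $\tx{p}{2}\hookrightarrow\lh{2}$ with no loss. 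The correction terms are products of $\po F$ with $\dd e'$ (smooth quantities at $\o'$ bounded via Theorem \ref{thregx}); these are absorbed using $L^2$-boundedness and the finite-band property of $P_j$ together with the a priori $\po F\in L^\infty_u\lh{2}$ estimate.

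Finally I would apply Proposition \ref{prop:xx1} componentwise to get
$$f_{I_1\ldots I_m}(.,\o'')=P_{\leq j/2}\bigl(f_{I_1\ldots I_m}(.,\o')\bigr)+(f^j_2)_{I_1\ldots I_m},$$
and define $F^j_1$ as the tensor with components $P_{\leq j/2}(f_{I_1\ldots I_m}(.,\o'))$ in the frame $(e'_1,e'_2,N')$. By construction $F^j_1$ depends only on $\o'$; its $L^\infty(P_{t,u_{\o'}})$ bound follows from $L^\infty$-boundedness of the Littlewood-Paley projections on $P_{t,u'}$ applied to the frame components of $F(.,\o')$. Setting $F^j_2=F(.,\o'')-F^j_1$ and summing the scalar remainder estimates over the finitely many multi-indices yields the required $L^\infty_u\lh{2}$ bound. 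The main obstacle is in the third step: controlling the mismatch between the $\o''$-foliation derivatives $\ddb_L,\ddb_{\lb}$ (entering the hypothesis) and the $\o'$-frame differentiations that arise from scalarization, and in particular ensuring that the resulting product terms preserve the $2^j\ep+2^{j/2}\gamma(u)\ep$ split without losses from the $\tx{p}{2}\to\lh{2}$ conversion.
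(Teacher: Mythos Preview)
Your scalarization-then-apply-Proposition-\ref{prop:xx1} structure is the paper's approach, but your choice of frame has a regularity gap. You assert that the derivatives $\dd e'$ are ``Ricci-coefficient-like expressions controlled by Theorem \ref{thregx}'', but this fails for the tangential part: the Ricci formulas \eqref{ricciform} read $\dd_B e_A=\nabb_B e_A+\half\chi_{AB}\,e_3+\half\chb_{AB}\,e_4$, and the piece $\nabb_B e_A$ is the intrinsic connection coefficient on the $2$-surface, i.e.\ a Christoffel symbol, controlled only in $L^\infty_t L^2_{x'}$ by \eqref{eq:gammaL2bis}. This $L^2$-only control breaks your third step. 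To pass from the hypothesis $\norm{P_j(\ddb_L\po F)}_{\tx{p}{2}}\les 2^j\ep+2^{j/2}\gamma(u)\ep$ to the corresponding estimate on the contraction $P_j(g(\ddb_L\po F,e'_A))$ one needs a product lemma of the type of Lemma \ref{lemma:grr} (which underlies Lemma \ref{lemma:poo6}); that lemma requires the multiplier $H$ to satisfy $\norm{\nabb H}_{L^r_t B^0_{2,1}(\ptu)}\les 1$, which is strictly stronger than $\nabb H\in L^2$ and is not available for $H=e'_A$.

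The paper's fix is to scalarize instead against the three vectorfields $Q_{\leq 1}(N_l)$, $l=1,2,3$, where $N_l=N(\cdot,\o_l)$ for three fixed orthogonal directions $\o_l\in\S$ and $Q_{\leq 1}$ is the geometric Littlewood--Paley truncation on $\Sigma_t$ from section \ref{sec:LPSit}. These vectorfields are independent of $\o,\o',\o''$, form a basis of $T\Sigma_t$ (Lemma \ref{lemma:xx7}), and---precisely because of the smoothing by $Q_{\leq 1}$---satisfy $\norm{\nabb Q_{\leq 1}(N_l)}_{\BB^0}\les\ep$ (Lemma \ref{lemma:ad3}) and $\norm{\dd Q_{\leq 1}(N_l)}_{\tx{\infty}{2}}\les\ep$ (Lemma \ref{lemma:xx6} and \eqref{xx22bis}). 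This Besov regularity on the gradient is exactly what the product step requires, and it is the reason the paper introduces the $\Sigma_t$-based LP theory of section \ref{sec:LPSit} in the first place.
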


The following lemmas will be useful for the proof of Proposition \ref{prop:xx1} and Corollary \ref{cor:xx1}.
\begin{lemma}\lab{lemma:xx2}
Let $\o$ and $\o'$ in $\S$. Let $u=u(t,x,\o)$ and $u'=u(t,x,\o')$. Then, 
for any tensor $F$, we have:
$$
\norm{F}_{L^\infty_{u'}L^2(\H_{u'})}\les \norm{F}_{L^\infty_uL^2(\H_u)}+|\o-\o'|^{\frac{1}{4}}\norm{F}^{\frac{1}{2}}_{L^\infty_uL^2(\H_u)}\left(\sup_u\left(\int_u^{u+|\o-\o'|}\norm{\dd F}^2_{L^2(\H_{\tau})}d\tau\right)^{\frac{1}{2}}\right)^{\frac{1}{2}}.
$$
\end{lemma}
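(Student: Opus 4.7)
The proof of Lemma \ref{lemma:xx2} rests on two ingredients: a pointwise bound $|u(\c,\o)-u(\c,\o')|\les|\o-\o'|$ on the difference of the two eikonal solutions, and a trace-type inequality comparing the $L^2$ norm on one level set of $u'$ to an averaged $L^2$ norm over a band of nearby level sets of $u$ together with a derivative correction. For the first ingredient, the plan is to derive a uniform $L^\infty(\mathcal M)$ bound on $\po u$ and then integrate in $\o$. On $\Si$ the function $\po u$ is prescribed and bounded by construction (section \ref{sec:choiceinitdata}); on $\mathcal M$ it satisfies the commuted eikonal equation $L(\po u)=[L,\po]u=-\po N(u)=-g(\po N,\nabla u)$, whose right-hand side is pointwise bounded via the $L^\infty$ estimate on $\po N$ of Theorem \ref{thregomega} and the identity $|\nabla u|=b^{-1}$ together with the $L^\infty$ bound on $b$ of Theorem \ref{thregx}. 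Transport along the integral curves of $L$ yields $\norm{\po u}_{L^\infty(\mathcal M)}\les 1$, hence the claimed pointwise bound; setting $\eta:=|\o-\o'|$, any level set $\H_{u'_0}$ is therefore contained in the band $\mathcal B_\eta:=\{|u-u'_0|\leq C\eta\}$.

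For the trace-type inequality, the plan is to write $\int_{\H_{u'_0}}|F|^2\,d\H'=\int_{\mathcal M}|F|^2\,\delta(u'-u'_0)\,|\nabla u'|\,dV$ via coarea, and to approximate the Dirac mass by $-\partial_{u'}\chi(u'-u'_0)$, where $\chi$ is a smooth cutoff with $\chi(0)=1$, $\chi\equiv 0$ for $|s|\geq\eta$, and $|\chi'|\les\eta^{-1}$. An integration by parts then distributes $\partial_{u'}$ onto $|F|^2$, producing (via $\dd(|F|^2)=2g(F,\dd F)$) the pointwise-in-$u'_0$ estimate $\int_{\H_{u'_0}}|F|^2\,d\H'\les\eta^{-1}\int_{\{|u'-u'_0|\leq\eta\}}|F|^2\,dV+\int_{\{|u'-u'_0|\leq\eta\}}|F|\,|\dd F|\,dV$. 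Both spacetime integrals are re-expressed in the $u$-foliation through the coarea identity $dV=(n/b^2)\,du\,d\H$ associated with the coordinate system of section \ref{sec:globalcoordsit}, all Jacobian factors coming from $n$, $b$, $b'$ and $|\nabla u'|$ being uniformly bounded above and below by Theorem \ref{thregx}. Combined with the inclusion $\{|u'-u'_0|\leq\eta\}\subset\{|u-u'_0|\leq C\eta\}$, this yields $\int_{\H_{u'_0}}|F|^2\,d\H'\les\eta^{-1}\int_{u'_0-C\eta}^{u'_0+C\eta}\norm{F}_{\lh{2}}^2\,du+\int_{u'_0-C\eta}^{u'_0+C\eta}\norm{F}_{\lh{2}}\norm{\dd F}_{\lh{2}}\,du$.

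To conclude, the first right-hand integrand is controlled by $\norm{F}^2_{L^\infty_u\lh{2}}$ after integration, while Cauchy--Schwarz in $u$ bounds the second integral by $\eta^{1/2}\norm{F}_{L^\infty_u\lh{2}}\bigl(\int_{u'_0-C\eta}^{u'_0+C\eta}\norm{\dd F}^2_{\lh{2}}\,du\bigr)^{1/2}$. Taking the supremum in $u'_0$ and extracting a square root (using $\sqrt{a+b}\leq\sqrt{a}+\sqrt{b}$) yields the stated inequality. The main obstacle is the careful justification of the coarea formula and the integration by parts in the change of foliation, in particular verifying that the Jacobian $|\nabla u'|$ appearing on the $u'$-side and the factor $n/b^2$ appearing on the $u$-side are each pointwise comparable to $1$; but this reduces entirely to the $L^\infty$ bounds on $b$, $b'$ and $n$ already established in Theorem \ref{thregx}.
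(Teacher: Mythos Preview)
Your overall strategy is sound and would give a considerably shorter proof than the paper's, but there is a gap in the trace-inequality step that your final paragraph does not cover. When you differentiate $G(u'):=\int_{\H_{u'}}|F|^2\,d\H'$ in $u'$ (equivalently, integrate by parts against $-\chi'(u'-u'_0)$), the formula \eqref{dusit} produces, in addition to the main term $2b'F\cdot\nabla_{N'}F$, a lower-order contribution $b'\,\trt'\,|F|^2$ coming from the variation of the area element $d\mu'_{t,u'}$. You absorb this into $|F|\,|\dd F|$, but it cannot be: one has $\trt'=\trc'-\d'$, and while $\trc'\in L^\infty$ by \eqref{esttrc}, the piece $\d'=k_{N'N'}$ is only controlled in $L^p$ for finite $p$ (via $\no(k)\les\ep$ and Sobolev). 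For a general tensor $F$ with merely $L^2$ control, the term $\int_{\mathrm{band}}|\d'|\,|F|^2$ cannot be bounded by $\norm{F}_{L^\infty_u L^2(\H_u)}^2$ nor by $\eta^{1/2}\norm{F}_{L^\infty_u L^2(\H_u)}\bigl(\int\norm{\dd F}^2\bigr)^{1/2}$ without invoking higher integrability of $F$ that is not assumed. The ``Jacobian'' obstacle you flag is genuinely harmless; the one you miss is not.

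The paper's proof sidesteps exactly this issue by a different mechanism: instead of translating in $u'$, it interpolates between the two foliations by \emph{rotating in $\o$}. It fixes a point $x_t\in\Sigma_t$, parametrizes the arc from $\o'$ to $\o$ by $\sigma\in[0,1]$, and differentiates an integral $I(\sigma)$ written in the global coordinate system $\Phi_{t,\o_\sigma}(x)=u(\cdot,\o_\sigma)\o_\sigma+\po u(\cdot,\o_\sigma)$. The analogue of your lower-order term then involves $\po^2 u$, which \emph{is} in $L^\infty$ (since $L(\po^2 u)=-b^{-1}|\po N|^2$ is bounded), so no $\trt'$-type loss occurs; the price is a singular Jacobian in the change of variables $(\sigma,y')\to(u,z')$, handled by a cutoff vanishing near the rotation center and a two-center covering. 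Your approach can in fact be repaired without this machinery: work in the fixed $(u,x')$ chart of section~\ref{sec:globalcoordsit}, write $P_{t,u'_0}$ as a graph $u=\phi(x')$ with $|\nabb\phi|\les|\o-\o'|+\ep$, note that the pulled-back area element is pointwise $\sim dx'$ by \eqref{eq:coordchartbis}, and compare $|F(\phi(x'),x')|^2$ to its $u$-average via $\partial_u|F|^2=-2b\,F\cdot\nabla_NF$. This never differentiates the measure and hence never sees $\trt'$. As written, though, your integration-by-parts step does not close.
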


\begin{lemma}\lab{lemma:xx3}
Let $f$ a scalar function and $\o, \o'$ in $\S$. Then, for any $l\geq 0$, we have:
$$\norm{P_lf}_{L^\infty_{u'}L^2(\H_{u'})}\les (2^{-l}+|\o-\o'|^{\half}2^{-\frac{l}{2}})(\norm{f}_{L^\infty_uL^2(\H_u)}+\norm{\dd f}_{L^\infty_uL^2(\H_u)}).$$
\end{lemma}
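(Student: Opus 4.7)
The plan is to apply Lemma \ref{lemma:xx2} to $F = P_l f$ and then exploit the finite band property of the Littlewood--Paley projection $P_l$ on each $\ptu$ (Theorem \ref{thm:LP}.iii), combined with commutator bounds for $[P_l, L]$ and $[P_l, \lb]$ analogous to those in Section \ref{sec:besovtrc}.

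First, inserting $F = P_l f$ into Lemma \ref{lemma:xx2} produces two terms. The first, $\norm{P_l f}_{L^\infty_u L^2(\H_u)}$, is handled directly: for each fixed $(t,u)$ the finite band property gives $\norm{P_l f(t)}_{L^2(\ptu)} \les 2^{-l}\norm{\nabb f(t)}_{L^2(\ptu)}$, so squaring in $t$ and taking $\sup_u$ yields $\norm{P_l f}_{L^\infty_u L^2(\H_u)} \les 2^{-l}\norm{\dd f}_{L^\infty_u L^2(\H_u)}$, which is exactly the $2^{-l}$ piece of the right-hand side. For the second term, the same finite band property provides $\norm{P_l f}^{1/2}_{L^\infty_u L^2(\H_u)} \les 2^{-l/2}\norm{\dd f}^{1/2}_{L^\infty_u L^2(\H_u)}$, so it remains to control $\norm{\dd P_l f}_{L^2(\H_\tau)}$ uniformly in $\tau$. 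The tangential piece $\nabb P_l f$ satisfies $\norm{\nabb P_l f(t)}_{L^2(\ptu)} \les 2^l \norm{P_l f(t)}_{L^2(\ptu)} \les \norm{\nabb f(t)}_{L^2(\ptu)}$ by combining the finite band and dual finite band estimates. For the null derivatives, I decompose $L P_l f = P_l(L f) + [L, P_l] f$ and similarly for $\lb$: the first pieces are controlled by boundedness of $P_l$ on $L^2(\ptu)$, while the commutators are bounded using the framework already developed for $[\ddb_{nL}, P_l]$ in Section \ref{sec:besovtrc} (noting that $L = n^{-1}\ddb_{nL}$ on scalars and that $\lb = 2T - L$). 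Altogether $\norm{\dd P_l f}_{L^2(\H_u)} \les \norm{f}_{L^2(\H_u)} + \norm{\dd f}_{L^2(\H_u)}$, uniformly in $u$.

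Inserting this back, the $\tau$-integral over the window of length $|\o-\o'|$ is bounded by $|\o-\o'|(\norm{f}_{L^\infty_u L^2(\H_u)} + \norm{\dd f}_{L^\infty_u L^2(\H_u)})^2$, and the two nested square roots produce a factor $|\o-\o'|^{1/4}(\norm{f} + \norm{\dd f})^{1/2}$. Combining with the already-derived $|\o-\o'|^{1/4}\cdot 2^{-l/2}\norm{\dd f}^{1/2}$ and absorbing via $a^{1/2} b^{1/2} \les a + b$ produces the second piece $|\o-\o'|^{1/2} 2^{-l/2}(\norm{f} + \norm{\dd f})$, yielding the claimed bound. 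The main obstacle will be the transverse commutator $[\lb, P_l]$: because $\lb$ is not tangent to $\H_u$ whereas $P_l$ is built from the heat flow on the sections $\ptu$, one cannot simply transpose $P_l$ past $\lb$; one must instead expand $[\lb, P_l] f$ using the commutation relations \eqref{comm2}--\eqref{comm3} and the Ricci equations \eqref{ricciform}, then verify that the resulting error terms (involving $\chb$, $\xib$, $b^{-1}\nabb b$ and the curvature component $\bb$) are absorbed into $\norm{f}_{\lh{2}} + \norm{\dd f}_{\lh{2}}$ using the Ricci coefficient bounds from Theorem \ref{thregx} and the curvature flux assumption \eqref{curvflux1}.
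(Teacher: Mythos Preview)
Your overall strategy is the paper's: apply Lemma~\ref{lemma:xx2} to $F=P_lf$, use the finite band property to get the $2^{-l}$ piece and to extract $2^{-l/2}$ from $\norm{P_lf}^{1/2}$, then reduce matters to showing $\norm{\dd P_lf}_{L^\infty_uL^2(\H_u)}\les \norm{f}_{L^\infty_uL^2(\H_u)}+\norm{\dd f}_{L^\infty_uL^2(\H_u)}$. Your assembly of the factors is correct.

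Two points where you diverge from the paper and where your references do not quite deliver what you need. First, the commutator estimates \eqref{supercommut1}--\eqref{supercommut2} from Section~\ref{sec:besovtrc} are stated in $\tx{q}{2}$ for $1\le q<2$, not in $L^2(\H_u)$; they therefore do not give the uniform-in-$u$ $L^2(\H_u)$ bound you need here. The paper instead invokes Proposition~\ref{prop:gowinda12}, namely \eqref{zz} and \eqref{zz1}, which bound $[bN,P_l]f$ and $[nL,P_l]f$ directly in $L^\infty_uL^2(\H_u)$ by $\ep\,\no(f)$. Second, writing $\lb=2T-L$ does not help: $T$ is equally transverse to $\H_u$, so $[T,P_l]$ is no easier than $[\lb,P_l]$. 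The paper instead passes from $(L,\lb)$ to $(L,N)$ via $N=\tfrac12(L-\lb)$ and works with the commutator $[bN,P_l]$. The reason this is better is structural: the commutator $[\nabb_{bN},\nabb]$ from \eqref{comm7} contains only tangential derivatives on the right, whereas $[\ddb_{\lb},\nabb]$ in \eqref{comm2} produces an extra $\ddb_L$ and $\ddb_{\lb}$ term (through $\xib_B\ddb_4$ and $b^{-1}\nabb_Bb\,\ddb_3$), which would feed back transverse derivatives of $U(\tau)f$ into the heat-flow energy estimate and obstruct closing the bound cleanly. So your instinct that the transverse commutator is the crux is right, but the route you sketch via \eqref{comm2}--\eqref{comm3} is not the one that works without further difficulty; the paper's choice of $bN$ is what makes the commutator tractable.
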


\begin{lemma}\lab{lemma:xx4}
Let $f$ a scalar function and $\o, \o'$ in $\S$. Then, for any $l\geq 0$, we have:
\bee
&&\norm{P_{\leq l}f}_{L^\infty_{u'}L^2(\H_{u'})}\\
&\les& (1+|\o-\o'|^{\half}2^{\frac{l}{2}})\norm{f}_{L^\infty_uL^2(\H_u)}+|\o-\o'|^{\frac{1}{4}}\norm{f}^{\half}_{L^\infty_uL^2(\H_u)}\\
&&\times\left(\sup_u\sum_{q\leq l}\left(\int_u^{u+|\o-\o'|}(\norm{P_q(nL(f))}_{L^2(\H_{\tau})}^2+\norm{P_q(bN(f))}_{L^2(\H_{\tau})}^2)\right)^{\frac{1}{2}}d\tau\right)^{\frac{1}{2}}.
\eee
\end{lemma}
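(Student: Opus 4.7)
My plan is to apply Lemma \ref{lemma:xx2} to $F=P_{\leq l}f$, reducing the statement to an estimate for $\dd P_{\leq l}f$ on the narrow strip of null hypersurfaces $\H_\tau$ with $\tau\in[u,u+|\o-\o'|]$. Concretely, Lemma \ref{lemma:xx2} gives
$$\norm{P_{\leq l}f}_{L^\infty_{u'}L^2(\H_{u'})}\les \norm{P_{\leq l}f}_{L^\infty_u L^2(\H_u)}+|\o-\o'|^{\frac{1}{4}}\norm{P_{\leq l}f}_{L^\infty_u L^2(\H_u)}^{\frac{1}{2}}\left(\sup_u\left(\int_u^{u+|\o-\o'|}\norm{\dd P_{\leq l}f}^2_{L^2(\H_\tau)}d\tau\right)^{\frac{1}{2}}\right)^{\frac{1}{2}},$$
and the $L^2$-boundedness of $P_{\leq l}$ controls the zeroth-order contribution by $\norm{f}_{L^\infty_u L^2(\H_u)}$, yielding the $1$-part of the prefactor $(1+|\o-\o'|^{\frac{1}{2}}2^{\frac{l}{2}})$.

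Next I would split $\dd P_{\leq l}f$ into its tangential component $\nabb P_{\leq l}f$ and its null-frame components $nL(P_{\leq l}f)$ and $bN(P_{\leq l}f)$ (the bounds \eqref{estn} and \eqref{estb} allow us to trade $L$/$\lb$ for $nL$/$bN$ factors). For the tangential piece, the finite band property yields $\norm{\nabb P_{\leq l}f}_{L^2(\H_\tau)}\les 2^l\norm{f}_{L^2(\H_\tau)}$, so
$$\left(\int_u^{u+|\o-\o'|}\norm{\nabb P_{\leq l}f}^2_{L^2(\H_\tau)}d\tau\right)^{\frac{1}{2}}\les 2^l|\o-\o'|^{\frac{1}{2}}\norm{f}_{L^\infty_u L^2(\H_u)};$$
feeding this into the outer $(\cdot)^{\frac{1}{2}}$ from Lemma \ref{lemma:xx2} produces the $|\o-\o'|^{\frac{1}{2}}2^{\frac{l}{2}}\norm{f}_{L^\infty_u L^2(\H_u)}$ contribution of the first term on the right-hand side.

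For the null-frame directions I would commute, writing $nL(P_{\leq l}f)=P_{\leq l}(nL(f))+[nL,P_{\leq l}]f$ and analogously for $bN$. Decomposing $P_{\leq l}=\sum_{q\leq l}P_q$ and applying Minkowski's inequality in $\tau$ gives
$$\left(\int_u^{u+|\o-\o'|}\norm{P_{\leq l}(nL(f))}^2_{L^2(\H_\tau)}d\tau\right)^{\frac{1}{2}}\les\sum_{q\leq l}\left(\int_u^{u+|\o-\o'|}\norm{P_q(nL(f))}^2_{L^2(\H_\tau)}d\tau\right)^{\frac{1}{2}},$$
which together with the analogous $bN$ bound is exactly the quantity appearing on the right-hand side of Lemma \ref{lemma:xx4}. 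Combined with the $|\o-\o'|^{\frac{1}{4}}\norm{f}^{\frac{1}{2}}_{L^\infty_u L^2(\H_u)}$ prefactor and the outer square root from Lemma \ref{lemma:xx2}, this yields the second term of the estimate.

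The main obstacle will be controlling the commutators $[nL,P_{\leq l}]f$ and $[bN,P_{\leq l}]f$ in the mixed norm $L^2_\tau L^2(\H_\tau)$ on a short window of size $|\o-\o'|$. The estimates \eqref{supercommut1}-\eqref{supercommut2} are tailor-made for $[\ddb_{nL},P_j]$ and provide a $2^{-q_+}$ decay that dyadically sums to an absorbable remainder; the $bN$ case can be reduced to $nL$ (and an analogous commutator of Paley-Littlewood type adapted to $\ddb_{bN}$) through the identity $2N=L-\lb$, using the structure developed in Sections \ref{sec:calcineqgeneral}--\ref{sec:regxproof}. The remaining bookkeeping consists in checking that all commutator remainders are dominated by either the tangential contribution $|\o-\o'|^{\frac{1}{2}}2^{\frac{l}{2}}\norm{f}_{L^\infty_u L^2(\H_u)}$ or by the principal sum $\sum_{q\leq l}(\int\norm{P_q(nL(f))}^2+\norm{P_q(bN(f))}^2)^{\frac{1}{2}}$.
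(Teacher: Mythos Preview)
Your overall architecture matches the paper exactly: apply Lemma \ref{lemma:xx2} to $F=P_{\leq l}f$, use $L^2$-boundedness of $P_{\leq l}$ for the zeroth-order term, split $\dd P_{\leq l}f$ into $\nabb$, $nL$, $bN$ pieces, handle the tangential part via the finite band property, and commute $nL$, $bN$ through $P_{\leq l}=\sum_{q\le l}P_q$.

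The gap is in your treatment of the commutators. The estimates \eqref{supercommut1}--\eqref{supercommut2} you invoke are stated in $L^q_tL^2_{x'}$ and have $\no(f)$ on the right-hand side, i.e.\ they require control of $\nabb f$ and $\ddb_L f$. But the right-hand side of Lemma \ref{lemma:xx4} only involves $\norm{f}_{L^\infty_uL^2(\H_u)}$ and the $P_q(nL(f))$, $P_q(bN(f))$ terms, and in the application (Proposition \ref{prop:xx1}) the lemma is used with $f$ replaced by $\po f$, for which only the plain $L^\infty_uL^2(\H_u)$ bound is assumed. So an estimate costing $\no(f)$ is not admissible here, and the $2^{-q_+}$ gain you hope for is irrelevant. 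Your reduction of the $bN$ case to $nL$ via $2N=L-\lb$ is also problematic: it introduces $[\ddb_{\lb},P_q]$, for which no analogue of \eqref{supercommut1}--\eqref{supercommut2} is available.

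What the paper actually uses is the cruder commutator bound \eqref{lxx4:4} (Proposition \ref{prop:gowinda13}):
\[
\norm{[nL,P_q]f}_{L^\infty_uL^2(\H_u)}+\norm{[bN,P_q]f}_{L^\infty_uL^2(\H_u)}\lesssim 2^q\norm{f}_{L^\infty_uL^2(\H_u)}.
\]
Summing over $q\le l$ gives $2^l\norm{f}_{L^\infty_uL^2(\H_u)}$, which after the $|\o-\o'|^{1/2}$ factor from the strip integration and the outer square root lands exactly in the $|\o-\o'|^{\frac{1}{2}}2^{\frac{l}{2}}\norm{f}_{L^\infty_uL^2(\H_u)}$ term you already identified for the tangential piece. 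Replace your commutator step with this and the proof goes through.
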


\begin{lemma}\lab{lemma:xx5}
Let $f$ a scalar function and $\o, \o'$ in $\S$. Then, for any $l\geq 0$, we have:
$$\norm{[\po,P_{\leq l}]f}_{L^\infty_{u'}L^2(\H_{u'})}\les \norm{\dd f}_{L^\infty_uL^2(\H_u)}.$$
\end{lemma}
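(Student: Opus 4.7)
The plan is to first bound $[\po,P_{\leq l}]f$ in the natural $u$-foliation $L^\infty_u L^2(\H_u)$, and then transfer that bound to the $u'$-foliation using Lemma \ref{lemma:xx2}. I would start from the heat-flow representation
$$P_j f = \int_0^\infty m_j(\tau)\, U(\tau) f\, d\tau,$$
where $U(\tau)$ is the semigroup of the Laplacian $\lap$ on $P_{t,u}$, which depends on $\o$ through the optical function $u = u(\cdot,\o)$. Writing $[\po,U(\tau)]$ via Duhamel, the commutator $[\po,P_j]f$ reduces to an integral against $m_j$ of $U(\tau-\tau')[\po,\lap]U(\tau')f\,d\tau'$. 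The commutator $[\po,\lap]$ can be computed exactly as in \eqref{commo2}, and expressed in terms of $\po N$, $\nabb \po N$, plus tangential and normal derivatives of the function on which $\lap$ acts; these coefficients are controlled by the estimates \eqref{estNomega}, \eqref{estricciomega}, and the Besov improvement \eqref{pope} established in Theorem \ref{thregomega}.

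I would then combine these coefficient bounds with the parabolic smoothing of Proposition \ref{le:L2heat} --- in particular the $L^2$ gain $\|\nabb U(\tau')f\|_{L^2(P_{t,u})} \les (\tau')^{-1/2}\|f\|_{L^2(P_{t,u})}$ --- and with the vanishing-moments property \eqref{eq:moments} of $m_j$, which makes the $\tau$-integral converge uniformly in $j$. Summing over $j\leq l$ with the help of the Bessel and finite-band properties of Theorem \ref{thm:LP} should produce the $l$-independent estimate
$$\norm{[\po,P_{\leq l}]f}_{L^\infty_u L^2(\H_u)} \les \norm{\dd f}_{L^\infty_u L^2(\H_u)}.$$
A parallel Duhamel computation, using the commutation formulas \eqref{comm5}-\eqref{comm7} to commute $\nabb, \ddb_{nL}, \ddb_{bN}$ through $U(\tau)$ and hence through $P_{\leq l}$, will yield the companion bound on $\dd([\po,P_{\leq l}]f)$ in the appropriate integrated $L^2(\H_\tau)$ norm. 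Applying Lemma \ref{lemma:xx2} to $F=[\po,P_{\leq l}]f$ then transfers the estimate to $L^\infty_{u'}L^2(\H_{u'})$, since both terms on its right-hand side are now controlled by $\|\dd f\|_{L^\infty_u L^2(\H_u)}$.

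The principal obstacle is that $[\po,\lap]$ is not truly a lower-order operator: its coefficients $\po N$ and $\nabb \po N$ lie in the borderline spaces $L^\infty$ and $\BB^0$ respectively, rather than in smoother spaces in which standard commutator estimates would apply at once. To close the bound uniformly in $l$, the parabolic smoothing of $U(\tau')$ must compensate exactly the second-order character of $[\po,\lap]$; this is precisely the role played by the sharp Bernstein inequality \eqref{eq:strongbernscalarbis} together with the Besov regularity \eqref{pope} of $\nabb \po N$, and this is where I expect the delicate book-keeping to concentrate.
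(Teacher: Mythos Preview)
Your heat-flow/Duhamel representation of $[\po,P_{\leq l}]f$ via $W(\tau)$ solving $(\partial_\tau-\lap)W=[\po,\lap]U(\tau)f$ is exactly the paper's starting point, and the commutator formula for $[\po,\lap]$ you invoke is the one the paper uses (see \eqref{lxx5:4}). The divergence comes at the transfer step: you propose Lemma~\ref{lemma:xx2}, whose right-hand side contains the term
\[
|\o-\o'|^{\frac14}\,\norm{F}_{L^\infty_uL^2(\H_u)}^{\frac12}\left(\sup_u\int_u^{u+|\o-\o'|}\norm{\dd F}^2_{L^2(\H_\tau)}d\tau\right)^{\frac14}
\]
with $F=[\po,P_{\leq l}]f$. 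Controlling this requires $\dd([\po,P_{\leq l}]f)$ in $L^2$, i.e.\ $nL\,W$ and $bN\,W$, and your ``parallel Duhamel computation'' would have to push these transversal derivatives through both $U(\tau)$ and the source $[\po,\lap]U(\tau)f$. The latter already contains $\nabb_{\po N}\nabla_N U(\tau)f$, so you would face nested commutators of the type $[\ddb_{nL},[\po,\lap]]$ and $[\ddb_{bN},[\po,\lap]]$ acting on the heat flow; it is not clear these can be closed using only $\dd f\in L^\infty_uL^2(\H_u)$ and uniformly in $l$. In particular, if the best one can show is $\norm{\dd([\po,P_{\leq l}]f)}_{L^\infty_uL^2(\H_u)}\lesssim 2^l\norm{\dd f}$ (a natural outcome of the finite-band property), then the second term above scales like $|\o-\o'|^{1/2}2^{l/2}$, which is not bounded uniformly in $l$ and $|\o-\o'|$ as the lemma demands.

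The paper sidesteps this entirely by using Lemma~\ref{lemma:xx8} instead of Lemma~\ref{lemma:xx2}. With $p=2$, Lemma~\ref{lemma:xx8} gives $\norm{F}_{L^\infty_{u'}L^2(\H_{u'})}\lesssim\norm{F}^{1/2}_{L^\infty_uL^2(\H_u)}\norm{\nabb F}^{1/2}_{L^\infty_uL^2(\H_u)}$, requiring only \emph{tangential} regularity. Summing over frequencies this becomes $\norm{F}_{L^\infty_{u'}L^2(\H_{u'})}\lesssim\norm{\La^a F}_{L^\infty_uL^2(\H_u)}$ for any $a>\tfrac12$ (see \eqref{lxx5:0}). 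The proof then reduces to showing $\sup_\tau\norm{\La^a W(\tau)}_{\lh{2}}\lesssim\norm{\dd f}_{L^\infty_uL^2(\H_u)}$ for some $\tfrac12<a<1$, which follows from the energy estimate \eqref{eq:l2heat1bis}, the formula \eqref{lxx5:4}, and the auxiliary $V(\tau)$ of \eqref{lxx3:6} (with bound \eqref{lxx3:11bis}) to absorb the $\nabla_N$-term. No $L$- or $N$-derivative of $W$ is ever needed, and the resulting bound is manifestly uniform in $l$ and in $|\o-\o'|$. You should replace the Lemma~\ref{lemma:xx2} step by this $\La^a$-based transfer.
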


\begin{lemma}\lab{lemma:xx6}
We have:
$$\norm{\dd_TQ_{\leq 1}(N)}_{\lsit{\infty}{2}}+\norm{\nabla\dd_TQ_{\leq 1}(N)}_{\lsit{\infty}{2}}\les \ep.$$
\end{lemma}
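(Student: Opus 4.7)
The plan is to split $\dd_T Q_{\leq 1}(N) = Q_{\leq 1}(\Pi\dd_T N) + [\dd_T, Q_{\leq 1}](N)$, where $\Pi$ denotes the orthogonal projection onto $\Sit$, and to treat the two contributions separately. For the first, combining the identity $\dd_T T = n^{-1}\nabla n$ from \eqref{3.2} with the Ricci equations \eqref{ricciform} for $\dd_N T$ and $\dd_{e_A}T$, I would express $\Pi\dd_T N$ as a linear combination of terms in $n^{-1}\nabb n$, $\kep$, $\z$, whose $\lsit{\infty}{2}$-norm is bounded by $\ep$ thanks to the estimate \eqref{estn} for $n$, the estimate \eqref{estk} for $k$ (and its $\Sit$-version derived in section \ref{sec:imponk}), and the coarea formula applied to \eqref{estzeta}. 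The $L^2(\Sit)$-boundedness of $Q_{\leq 1}$ from Theorem \ref{sit:thm:LP} then controls this piece.

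For the commutator $[\dd_T, Q_{\leq 1}](N)$, I would use the heat semigroup representation \eqref{sit:eq:LP} of $Q_{\leq 1}$ together with Duhamel's formula for the heat equation on $\Sit$, producing
\[
[\dd_T, \mathcal U(\tau)]F = \int_0^\tau \mathcal U(\tau-s)\,[\Delta,\dd_T]\,\mathcal U(s) F\,ds.
\]
Inserting the tensor commutator \eqref{commsitbis} --- or more conveniently its variant \eqref{commsitter} after rewriting in terms of $\dd_{nT}$ to avoid a $\dd_T$-term on the right-hand side --- decomposes the integrand into contributions schematically of the form $nk\c\nabla^2\mathcal U(s)N$, $\nabla(nk)\c\nabla\mathcal U(s)N$, and $n\,\rr_{T\c}\c\nabla\mathcal U(s)N$. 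H\"older on $\Sit$, the Sobolev embedding \eqref{sobineqsit}, the $\lsit{\infty}{2}$-bounds on $k, \nabla k, \nabla n, \nabla^2 n$ from section \ref{sec:impn}--\ref{sec:imponk}, and the curvature flux \eqref{curvflux1} for $\rr$, combined with the standard heat-flow smoothing $\norm{\nabla^\ell \mathcal U(s)N}_{L^2(\Sit)}\les s^{-\ell/2}$, then give $\norm{[\dd_T, Q_{\leq 1}](N)}_{\lsit{\infty}{2}}\les\ep$, provided the $s$-integral converges at $s=0$. This convergence is guaranteed by the vanishing-moments property \eqref{sit:eq:moments} of the LP kernel $m_{\leq 1}$, which makes its effective support lie in $\tau\gtrsim 1$ and so truncates the short-time heat regime.

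The estimate on $\nabla\dd_T Q_{\leq 1}(N)$ follows the same template. The finite-band property in Theorem \ref{sit:thm:LP} yields $\norm{\nabla Q_{\leq 1}(\Pi\dd_T N)}_{\lsit{\infty}{2}}\les\norm{\Pi\dd_T N}_{\lsit{\infty}{2}}\les\ep$, while for $\nabla[\dd_T, Q_{\leq 1}](N)$ one iterates the Duhamel identity with one further gradient on the outside, paying a commutator $[\nabla,\Delta]=R_t\c\nabla$ absorbed via \eqref{curvriccisit1} and sharpening the heat smoothing by half a derivative. The main obstacle lies in controlling the Duhamel integrals against the $s^{-1}$ (respectively $s^{-3/2}$) singularities coming from $\nabla^2\mathcal U(s)N$ at short times; this is precisely the step where the low-frequency --- i.e.\ long heat-time --- support of $m_{\leq 1}$ is used decisively. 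A subsidiary technical point is keeping track of the tensorial structure of $N$, since the commutators \eqref{commsitbis}--\eqref{commsitter} produce extra $R_t$-type contributions that must be handled via the $L^2(\Sit)$ bound \eqref{curvriccisit1} on the intrinsic Ricci curvature of $\Sit$.
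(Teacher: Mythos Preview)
Your decomposition $\dd_T Q_{\leq 1}(N) = Q_{\leq 1}(\Pi\dd_T N) + [\dd_T, Q_{\leq 1}](N)$ and the treatment of the first piece via the Ricci equations are exactly what the paper does (after first passing to $\dd_{nT}$ using \eqref{commsitter}, as you also suggest). The divergence is in the commutator estimate.

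You propose Duhamel plus pointwise heat smoothing $\|\nabla^\ell\mathcal U(s)N\|_{L^2(\Sit)}\les s^{-\ell/2}$, and say the $s\to 0$ singularity is cured by the long-heat-time support of $m_{\leq 1}$. This reasoning is off on two counts. First, the $s$-integral in Duhamel runs from $0$ to $\tau$ regardless of where $\tau$ lives; restricting $\tau\gtrsim 1$ does nothing for the behavior near $s=0$. Second, and more seriously: since $\nabla N\in L^2(\Sit)$, the worst term $k\cdot\nabla^2\mathcal U(s)N$ placed in $L^2$ via $L^6\times L^3$ and Gagliardo--Nirenberg actually gives an \emph{integrable} $s^{-3/4}$ singularity, producing $\|Z(\tau)\|_{L^2(\Sit)}\les\ep\,\tau^{1/4}$. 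The $\tau$-growth is then the real obstruction: against $m_{\leq 1}(\tau)=\sum_{j\leq 1}m_j(\tau)$ with $m_j$ living at $\tau\sim 2^{-2j}$, one picks up $\sum_{j\leq 1}2^{-j/2}$, which diverges. The long-time support of $m_{\leq 1}$ makes this \emph{worse}, not better.

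The paper instead runs an energy estimate directly on the auxiliary heat equation $(\partial_\tau-\Delta)Z=[\dd_{nT},\Delta]Y(\tau)N$, $Z(0)=0$: pairing with $Z$ and integrating by parts in space converts $\int_{\Sit} Z\cdot nk\,\nabla^2 Y$ into $\int_{\Sit} nk\,\nabla Z\cdot\nabla Y+\ldots$, so only one derivative lands on $Y$. The heat-flow bound $\int_0^\infty(\|\Delta Y\|^2_{L^2(\Sit)}+\|\nabla Y\|^2_{L^2(\Sit)})d\tau'\les\|\nabla N\|^2_{L^2(\Sit)}$ then closes to give
\[
\sup_\tau\|Z(\tau)\|^2_{L^2(\Sit)}+\int_0^\infty\|\nabla Z(\tau)\|^2_{L^2(\Sit)}d\tau\les\ep^2,
\]
uniformly in $\tau$ --- exactly the two quantities the LP integration needs (the first for $\|[\dd_{nT},Q_{\leq 1}]N\|_{L^2}$, the second for its gradient). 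The missing ingredient in your proposal is this spatial integration by parts; a Duhamel route can be salvaged only by inserting it, at which point it collapses into the paper's energy argument.
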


\begin{lemma}\lab{lemma:xx7}
Let $N_j=N(.,\o_j), j=1, 2, 3$ where $\o_j\in\S$ are given respectively by $\o_1=(1,0,0)$, $\o_2=(0,1,0)$ and $\o_3=(0,0,1)$. Then, $Q_{\leq 1}(N_1), Q_{\leq 1}(N_2)$ and $Q_{\leq 1}(N_3)$ form a basis of the tangent space of $\Sit$.
\end{lemma}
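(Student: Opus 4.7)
The plan is to reduce the statement to a uniform basis property for the unit vectors $N_j$ themselves, and then exploit the near-Minkowski structure of the problem. First, by Lemma \ref{lemma:ad2}, we have $\norm{Q_{>1}(N_j)}_{L^\infty(\mathcal{M})} \les \ep$ for $j=1,2,3$, so that $Q_{\leq 1}(N_j) = N_j - Q_{>1}(N_j) = N_j + O_{L^\infty}(\ep)$. Hence it suffices to show that $(N_1, N_2, N_3)$ forms a basis of $T_p \Sit$ at each $p$, with the Gram determinant $\det(\gg(N_i, N_j))_{i,j=1,2,3}$ bounded below by a fixed positive constant uniformly in $p \in \mathcal{M}$; then for $\ep$ sufficiently small the $O(\ep)$ perturbation does not destroy the non-degeneracy, and $(Q_{\leq 1}(N_1), Q_{\leq 1}(N_2), Q_{\leq 1}(N_3))$ remains a basis.

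The strategy for the uniform non-degeneracy is to compare with the Minkowski reference configuration. If $\gg$ were the flat metric and $u(0,x,\o) = x \cdot \o$, one would have $N(.,\o) = -\nabla u/|\nabla u| = -\o$ pointwise, so the Gram matrix $M_{ij} = \gg(N_i, N_j)$ would coincide exactly with the identity since $\o_1, \o_2, \o_3$ is an orthonormal basis of $\mathbb{R}^3$. We therefore aim to prove a pointwise bound of the form $|\gg(N(p,\o), N(p,\o')) - \o \cdot \o'| \les \ep$, valid uniformly in $p \in \mathcal{M}$ and $\o, \o' \in \S$. Specializing to $\o = \o_i$ and $\o' = \o_j$ then gives $|M_{ij} - \d_{ij}| \les \ep$ in $L^\infty(\mathcal{M})$, whence the Gram determinant satisfies $\det M \geq 1 - O(\ep) \geq 1/2$ for $\ep$ small enough.

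To establish this pointwise closeness to the Minkowski configuration, one combines the control at spatial infinity with a propagation argument. Outside the compact set $\widetilde{U}$ of Remark \ref{rem:noprobleminf}, the metric is a small, smooth, asymptotically flat perturbation of Minkowski, and the construction of $u(0, x, \o)$ in \cite{param1} gives $u(0,x,\o) \sim x \cdot \o$ as $|x| \to \infty$, hence $N(0,x,\o) \to -\o$ at infinity. Propagation to $t > 0$ is effected through Lemma \ref{lemma:xx6}, which together with the finite band property of $Q_{\leq 1}$ and the Sobolev embedding \eqref{prop:linftysitptu} upgrades the $\lsit{\infty}{2}$ control on $\dd_T Q_{\leq 1}(N)$ and $\nabla \dd_T Q_{\leq 1}(N)$ to an $L^\infty(\mathcal{M})$ bound of size $\ep$. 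Inside $\widetilde{U}$ one further exploits the $L^\infty$ bound \eqref{estNomega} on $\po N$, combined with the initial data estimates from \cite{param1}, to transport the pointwise identity from the asymptotic region into the compact core.

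The main obstacle will be obtaining the uniform pointwise identification $N(p,\o) \approx -\o$ on the entire spacetime. The estimates \eqref{estNomega} and \eqref{ad1} together only yield bi-Lipschitz equivalence $|N(.,\o) - N(.,\o')| \sim |\o - \o'|$ with constants that are universal but not necessarily close to $1$; such a bi-Lipschitz bound is insufficient by itself to force the Gram matrix to be close to the identity. The resolution will require either a finite covering of $\S$ by angular increments small enough that the proof of \eqref{ad1} in section \ref{sec:jycroispas4} produces constants $1 - O(\ep)$, combined with telescoping along the covering, or a direct propagation argument for the scalar function $\gg(N(.,\o), N(.,\o')) - \o \cdot \o'$ using its transport equation in $t$ and the initial data at $t=0$.
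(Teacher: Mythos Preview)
Your overall strategy---showing that the Gram matrix $(g(Q_{\leq 1}(N_j), Q_{\leq 1}(N_l)))_{j,l}$ is uniformly close to the identity---matches the paper's. However, your reduction via Lemma~\ref{lemma:ad2} to the bare vectors $N_j$ is counterproductive: it discards precisely the smoothing of $Q_{\leq 1}$ that makes the $L^\infty$ bound accessible at this regularity. Once you work with $g(N_j,N_l)-\delta^j_l$ alone, neither of your two proposed resolutions closes. Direct propagation in $L^\infty$ fails because $\dd_T N$ involves Ricci coefficients ($\zeta$, $n^{-1}\nabla n$) that are only controlled in $L^2$-based spaces on $\Sigma_t$; your appeal to Lemma~\ref{lemma:xx6} gives $\dd_T Q_{\leq 1}(N)$ in $H^1(\Sigma_t)$, which does not embed into $L^\infty$ in three dimensions. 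The telescoping argument via the proof of \eqref{ad1} also does not close: bi-Lipschitz constants $1+O(\ep)$ on small angular increments do not compose over $O(|\o-\o'|^{-1})$ steps to a uniform pointwise identification.

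The paper's route is different and simpler. It works directly with $g(Q_{\leq 1}(N_j),Q_{\leq 1}(N_l))-\delta^j_l$ and applies the Sobolev embedding \eqref{sobinftysitbis} on $\Sigma_t$ combined with the Bochner inequality \eqref{prop:bochsit}: the $L^\infty$ norm is controlled by the $L^2(\Sigma_t)$ norm plus two derivatives in $L^2(\Sigma_t)$. The second derivatives of $g(Q_{\leq 1}(N_j),Q_{\leq 1}(N_l))$ reduce, via the finite band property of $Q_{\leq 1}$, to \emph{first} derivatives of $N$, which are $O(\ep)$ in $L^2(\Sigma_t)$ by the Ricci equations. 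This leaves only $\|g(N_j,N_l)-\delta^j_l\|_{L^2(\Sigma_t)}$, which is propagated from $\Sigma_0$ (where it is controlled by the construction in \cite{param1}) using merely $\dd_T g(N_j,N_l)\in L^2(\mathcal{M})$---and that holds because the Ricci coefficients lie in $L^\infty_t L^2(\Sigma_t)$. The key point you are missing is that the role of $Q_{\leq 1}$ is not cosmetic: it is what allows one to trade the two derivatives needed for the $L^\infty$ embedding down to the single derivative of $N$ one actually controls.
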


We also state the following lemma which will be used in the proof of Lemma \ref{lemma:xx5}. Note this lemma, together with Lemma \ref{lemma:xx2}, is at the core of all decompositions of section \ref{sec:depnormonomega}.

\begin{lemma}\label{lemma:xx8}
Let $\o$ and $\o'$ in $\S$. Let $u=u(t,x,\o)$ and $u'=u(t,x,\o')$. Then, 
for any tensor $F$ and any $2\leq p<+\infty$, we have:
$$
\norm{F}_{L^\infty_{u'}L^p(\H_{u'})}\les \norm{F}^{1-\frac{1}{p}}_{L^\infty_uL^{2(p-1)}(\H_u)}\norm{\nabb F}^{\frac{1}{p}}_{L^\infty_uL^2(\H_u)}.
$$
\end{lemma}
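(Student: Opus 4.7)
The plan is to split the proof into three stages: a pointwise H\"older estimate on the 2-surfaces $P_{t,u'}$, a Poincar\'e-type trade on $\H_{u'}$, and a change of foliation from the $u'$-foliation to the $u$-foliation.

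First, for each fixed $t$ and $u'$, write $|F|^p = |F|^{p-1}\cdot|F|$ and apply Cauchy--Schwarz on the 2-surface $P_{t,u'}$, which satisfies the same analytic assumptions as $P_{t,u}$ by the analogue of Lemma~\ref{lemm:coord} for the $u'$-foliation. This yields
\begin{equation*}
\|F\|_{L^p(P_{t,u'})}^p \leq \|F\|_{L^{2(p-1)}(P_{t,u'})}^{p-1}\,\|F\|_{L^2(P_{t,u'})}.
\end{equation*}
Integrating over $t\in[0,1]$ and applying Cauchy--Schwarz in the $t$-variable gives the baseline inequality
\begin{equation*}
\|F\|_{L^p(\H_{u'})}^p \leq \|F\|_{L^{2(p-1)}(\H_{u'})}^{p-1}\,\|F\|_{L^2(\H_{u'})}. \tag{$\star$}
\end{equation*}

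Second, to trade $\|F\|_{L^2(\H_{u'})}$ for $\|\nabb F\|_{L^2}$, I would use the isoperimetric inequality \eqref{eq:isoperimetric} on each $P_{t,u'}$ together with the asymptotic flatness of $(\mathcal{M},\gg)$ outside a compact set (Remark~\ref{rem:noprobleminf}). This allows one to discard the lower-order $L^1$ boundary contributions coming from infinity exactly as in the derivation of \eqref{forfaitbambin1}, yielding a Poincar\'e-type bound $\|F\|_{L^2(\H_{u'})} \lesssim \|\nabb' F\|_{L^2(\H_{u'})}$, where $\nabb'$ is the gradient on $P_{t,u'}$. Combining with $(\star)$ and raising to the $1/p$ power produces
\begin{equation*}
\|F\|_{L^p(\H_{u'})} \lesssim \|F\|_{L^{2(p-1)}(\H_{u'})}^{(p-1)/p}\,\|\nabb' F\|_{L^2(\H_{u'})}^{1/p}.
\end{equation*}

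Finally, take the supremum over $u'$ and convert the $\H_{u'}$-norms on the right into $\H_u$-norms. For the $L^{2(p-1)}$ factor, this is a direct variant of Lemma~\ref{lemma:xx2} applied to the scalar $|F|^{p-1}$. For $\nabb' F$, I would write $\nabb' F = \nabb F + (N-N')\cdot\nabla F$ and use the $L^\infty$ estimate \eqref{estNomega} on $\po N$ (which gives $|N-N'|\lesssim|\o-\o'|$) to control the discrepancy in terms of the $L^\infty_u L^2(\H_u)$ norm of $\nabb F$.

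The main obstacle will be this final change-of-foliation step. A naive application of Lemma~\ref{lemma:xx2} produces an error of size $|\o-\o'|^{1/4}\,\|\dd F\|_{L^2}^{1/2}$ which is not of the form allowed by the target RHS. The asymmetric choice of exponents $(p-1)/p$ on $\|F\|_{L^{2(p-1)}}$ versus the small $1/p$ on $\|\nabb F\|_{L^2}$ is precisely what allows this error to be absorbed via interpolation, trading a small power of the gradient against the $L^{2(p-1)}$ control; carrying out this absorption rigorously, together with the $\nabb' \to \nabb$ conversion, is the heart of the argument.
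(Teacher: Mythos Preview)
Your approach has a genuine gap in the change-of-foliation step, and it differs fundamentally from the paper's argument.

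The paper does not attempt to pass from $\H_{u'}$-norms to $\H_u$-norms via Lemma~\ref{lemma:xx2}. Instead, it constructs a \emph{global coordinate system on $\H_{u'}$ directly built from the $u$-foliation}: the coordinates are $(t,u,\partial_\varphi u)$, where $\varphi$ is the angular variable on $\S$ orthogonal to the arc $[\o,\o']$. The key geometric computation shows that in these coordinates the area element on $P_{t,u'}$ satisfies $\sqrt{|\gamma'|}\sim |\o-\o'|^{-1}$, while the range of $u$ on $\H_{u'=u_0}$ is contained in an interval of width $\sim|\o-\o'|$. These two factors cancel, and then a one-dimensional trace estimate in the remaining $\partial_\psi u$ direction (the standard $\sup_{y_2}|f|^p\les\int|f|^{p-1}|\partial_{y_2}f|$) gives precisely the exponents $(p-1)/p$ and $1/p$ on $\|F\|_{L^{2(p-1)}(\H_u)}$ and $\|\nabb F\|_{L^2(\H_u)}$, with the tangential derivative $\nabb$ arising because $\partial_{y_2}$ corresponds to a direction tangent to $P_{t,u}$.

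Your route cannot close as written. Applying Lemma~\ref{lemma:xx2} to $|F|^{p-1}$ produces an error term involving $\dd(|F|^{p-1})$, hence $\dd F$ in \emph{all} directions including $L$ and $\lb$; the target inequality only allows $\nabb F$. There is no $|\o-\o'|$ power on the right-hand side of the lemma to absorb the $|\o-\o'|^{1/4}$ prefactor either, so the interpolation you allude to has nothing to interpolate against. The same issue arises in your $\nabb'\to\nabb$ conversion: writing $\nabb' F$ in terms of $\nabb F$ brings in $\nabla_N F$ (or $\nabla_{N'}F$), which is again not controlled by $\nabb F$ alone. Finally, the Poincar\'e step $\|F\|_{L^2(\H_{u'})}\les\|\nabb' F\|_{L^2(\H_{u'})}$ is not available for a general tensor on the noncompact surfaces $P_{t,u'}$; the mechanism in \eqref{forfaitbambin1} is specific to Hodge-inverse quantities and does not yield a uniform Poincar\'e inequality.
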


The proof of Corollary \ref{cor:xx1} is postponed to section \ref{sec:xx1bis}, the proof of Lemma \ref{lemma:xx2} is postponed to section \ref{sec:xx2}, the proof of Lemma \ref{lemma:xx3} is postponed to section \ref{sec:xx3}, the proof of Lemma \ref{lemma:xx4} is postponed to section \ref{sec:xx4}, the proof of Lemma \ref{lemma:xx5} is postponed to section \ref{sec:xx5}, the proof of Lemma \ref{lemma:xx6} is postponed to section \ref{sec:xx6}, the proof of Lemma \ref{lemma:xx7} is postponed to section \ref{sec:xx7}, and the proof of Lemma \ref{lemma:xx8} is postponed to section \ref{sec:xx8}. We now conclude the proof of Proposition \ref{prop:xx1}.

\subsubsection{Proof of Proposition \ref{prop:xx1}}

We decompose $f(.,\o'')$ as:
\bea
\lab{xx1}f(.,\o'')&=&P''_{\leq\frac{j}{2}}(f(.,\o''))+\sum_{l>\frac{j}{2}}P''_l(f(.,\o'')\\
\nn&=&P'_{\leq\frac{j}{2}}(f(.,\o'))+\int_{[\o',\o'']}\po P'''_{\leq\frac{j}{2}}(f(.,\o'''))d\o'''(\o'-\o'')+\sum_{l>\frac{j}{2}}P''_l(f(.,\o''))\\
\nn&=&P'_{\leq\frac{j}{2}}(f(.,\o'))+\int_{[\o',\o'']}(P'''_{\leq\frac{j}{2}}(\po f)(.,\o''')+[\po,P'''_{\leq\frac{j}{2}}]f(.,\o'''))d\o'''(\o'-\o'')\\
\nn&&+\sum_{l>\frac{j}{2}}P''_l(f(.,\o'')),
\eea
where $\o'''$ denotes an angle in $\S$ on the arc joining $\o'$ and $\o''$.

Next, we estimate the last two terms in the right-hand side of \eqref{xx1}. Using Lemma \ref{lemma:xx3}, we have:
\bea
\nn\sum_{l>\frac{j}{2}}\norm{P''_l(f(.,\o''))}_{L^\infty_u\lh{2}}&\les& \sum_{l>\frac{j}{2}}(2^{-l}+|\o''-\o|^{\half}2^{-\frac{l}{2}})(\norm{f}_{L^\infty_u\lh{2}}+\norm{\dd f}_{L^\infty_u\lh{2}})\\
\label{xx3}&\les& (2^{-\frac{j}{2}}+|\o''-\o|^{\half}2^{-\frac{j}{4}})\ep,
\eea
where we used the assumptions on $f$ in the last inequality.

Using Lemma \ref{lemma:xx4}, we have:
\bea
\label{xx4}&&\norm{P'''_{\leq\frac{j}{2}}(\po f)(.,\o''')}_{L^\infty_u\lh{2}}\\
\nn&\les& (1+|\o'''-\o|^{\half}2^{\frac{j}{4}})\norm{\po f}_{L^\infty_u\lh{2}}+|\o'''-\o|^{\frac{1}{4}}\norm{\po f}^{\half}_{L^\infty_uL^2(\H_u)}\\
\nn&&\times\left(\sup_u\sum_{q\leq \frac{j}{2}}\left(\int_u^{u+|\o-\o'''|}(\norm{P_q(nL(\po f))}_{L^2(\H_{\tau})}^2+\norm{P_q(bN(\po f))}_{L^2(\H_{\tau})}^2)d\tau\right)^{\frac{1}{2}}\right)^{\frac{1}{2}}\\
\nn&\les& (1+|\o'''-\o|^{\half}2^{\frac{j}{4}})\ep+|\o-\o'''|^{\frac{1}{4}}\ep^{\half}\\
\nn&&\times\left(\sup_u\sum_{q\leq \frac{j}{2}}\left(\int_u^{u+|\o-\o'''|}(\norm{P_q(nL(\po f))}_{L^2(\H_{\tau})}^2+\norm{P_q(bN(\po f))}_{L^2(\H_{\tau})}^2)d\tau\right)^{\frac{1}{2}}\right)^{\frac{1}{2}},
\eea
where we used the assumptions on $\po f$ in the last inequality. Now, the assumption on $L(\po f)$ and $\lb(\po f)$ together with Lemma \ref{lemma:lbt1} yields:
\bee
&&\norm{P_q(nL(\po f))}_{L^2(\H_{\tau})}^2+\norm{P_q(bN(\po f))}_{L^2(\H_{\tau})}^2\\
&\les& (\norm{n}_{L^\infty}+\norm{\nabb n}_{\PP^0}+\norm{b}_{L^\infty}+\norm{\nabb b}_{\PP^0})^2(2^{2q}\ep^2+2^q\ga(u)^2)\\
&\les& 2^{2q}\ep^2+2^q\ga(u)^2,
\eee
where we used in the last inequality the estimate \eqref{estn} for $n$ and the estimate \eqref{estb} for $b$. Together with \eqref{xx4}, this implies:
\bea
\label{xx5}\norm{P'''_{\leq\frac{j}{2}}(\po f)(.,\o''')}_{L^\infty_u\lh{2}}&\les& (1+|\o'''-\o|^{\frac{1}{4}}2^{\frac{j}{8}}+|\o'''-\o|^{\half}2^{\frac{j}{4}})\ep\\
\nn&\les& (1+|\o'''-\o|^{\half}2^{\frac{j}{4}})\ep.
\eea

Using Lemma \ref{lemma:xx5}, we have:
\be\label{xx6}
\norm{[\po,P'''_{\leq\frac{j}{2}}]f(.,\o'''))}_{L^\infty_u\lh{2}}\les \norm{\dd f}_{L^\infty_u\lh{2}}\les \ep,
\ee
where we used the assumptions on $f$ in the last inequality.

In view of \eqref{xx1}, we have $f(.,\o'')=f^1_j+f_2^j$ where $f_1^j$ is defined as:
\be\label{xx7}
f^1_j=P'_{\leq\frac{j}{2}}(f(.,\o')),
\ee
and $f^2_j$ is defined as:
\be\label{xx8}
f^2_j=\int_{[\o',\o'']}(P'''_{\leq\frac{j}{2}}(\po f)(.,\o''')+[\po,P'''_{\leq\frac{j}{2}}]f(.,\o'''))d\o'''(\o'-\o'')+\sum_{l>\frac{j}{2}}P''_l(f(.,\o'')).
\ee
Using \eqref{xx3}, \eqref{xx5} and \eqref{xx6}, and the fact that $\o''$ is on the arc of $\S$ joining $\o$ and $\o'$, we have the following estimate for $f^2_j$:
\bea
\nn\lab{xx10}\norm{f^2_j}_{L^\infty_u\lh{2}}&\les& \int_{[\o',\o'']} (1+|\o'''-\o|^{\half}2^{\frac{j}{4}})\ep d\o'''|\o'-\o''|+(2^{-\frac{j}{2}}+|\o''-\o|^{\half}2^{-\frac{j}{4}})\ep\\
\nn&\les& (1+|\o'-\o|^{\half}2^{\frac{j}{4}}) |\o'-\o|\ep+(2^{-\frac{j}{2}}+|\o'-\o|^{\half}2^{-\frac{j}{4}})\ep\\
&\les& 2^{-\frac{j}{2}}\ep+|\o'-\o|^{\frac{3}{2}}2^{\frac{j}{4}}\ep.
\eea
This concludes the proof of Proposition \ref{prop:xx1}.

\subsubsection{Proof of Corollary \ref{cor:xx1}}\lab{sec:xx1bis}

Using Lemma \ref{lemma:xx7}, it suffices to prove the decomposition of Corollary \ref{cor:xx1} where $F(\o'', .)$ is replaced by $g(F(.,\o''),Q_{\leq 1}(N_l))$ for $l=1, 2, 3$. Since the proof is identical for $l=1, 2, 3$, we simply take $l=1$. Therefore, it remains to prove that the following decomposition holds $g(F(., \o''),Q_{\leq 1}(N_1))$:
\be\lab{xx18}
g(F(.,\o''),Q_{\leq 1}(N_1))=P_{\leq \frac{j}{2}}(g(F(.,\o'),Q_{\leq 1}(N_1)))+f_2^j,
\ee
where the scalar function $f_2^j$ satisfies:
\be\lab{xx20}
\norm{f_2^j}_{L^\infty_u\lh{2}}\les 2^{-\frac{j}{2}}\ep+|\o-\o'|^{\frac{3}{2}}2^{\frac{j}{4}}\ep.
\ee
In particular, $F_1^j$ is connected to the first term in the right-hand side of \eqref{xx18}, which does not  depend on $\o$ and satisfies the following estimate
$$\normm{P_{\leq \frac{j}{2}}(g(F(.,\o'),Q_{\leq 1}(N_1)))}_{L^\infty(P_{t, u_{\o'}})}\les \norm{F}_{L^\infty(P_{t, u_{\o'}})}\norm{Q_{\leq 1}(N_1)}_{L^\infty}\les \norm{F}_{L^\infty(P_{t, u_{\o'}})},$$
where we used the fact that $P_{\leq \frac{j}{2}}$ is bounded on $L^\infty(P_{t, u_{\o'}})$ and the fact that $Q_{\leq 1}$ is bounded on $L^\infty$. 

Let $f=g(F(.,\o),Q_{\leq 1}(N_1))$. In order to prove the decomposition \eqref{xx18} \eqref{xx20} for $g(F(.,\o''),Q_{\leq 1}(N_1))$, it suffices to show that $f$ satisfies the assumptions of Proposition \ref{prop:xx1}. First, we estimate $\dd f$. We have:
\bee
\norm{\dd f}_{L^\infty_u\lh{2}}&\les& \norm{\dd F}_{L^\infty_u\lh{2}}\norm{Q_{\leq 1}(N_1)}_{L^\infty}+\norm{F}_{\tx{2}{\infty}}\norm{\dd Q_{\leq 1}(N_1)}_{\tx{\infty}{2}}\\
&\les & \ep+\norm{\dd Q_{\leq 1}(N_1)}_{\tx{\infty}{2}}
\eee
where we used in the last inequality the assumptions on $F$, and the fact that $Q_{\leq 1}$ is bounded on $L^\infty$. Using the functional inequality \eqref{hehehehe0bis}, we obtain:
\bea\label{xx22bis}
&&\norm{\dd Q_{\leq 1}(N_1)}_{\tx{\infty}{2}}\\
\nn&\les& \norm{\dd Q_{\leq 1}(N_1)}_{\lsit{\infty}{2}}+\norm{\nabla\dd Q_{\leq 1}(N_1)}_{\lsit{\infty}{2}}\\
\nn&\les &\norm{\nabla Q_{\leq 1}(N_1)}_{\lsit{\infty}{2}}+\norm{\dd_TQ_{\leq 1}(N_1)}_{\lsit{\infty}{2}}+\norm{\nabla^2 Q_{\leq 1}(N_1)}_{\lsit{\infty}{2}}\\
\nn&&+\norm{\nabla\dd_TQ_{\leq 1}(N_1)}_{\lsit{\infty}{2}}\\
\nn&\les &\norm{\nabla Q_{\leq 1}(N_1)}_{\lsit{\infty}{2}}+\norm{\dd_TQ_{\leq 1}(N_1)}_{\lsit{\infty}{2}}+\norm{\nabla\dd_TQ_{\leq 1}(N_1)}_{\lsit{\infty}{2}}\\
\nn&\les & \ep,
\eea
where we used the Bochner identity on $\Sit$ \eqref{prop:bochsit}, the finite band property for $Q_{\leq 1}$, and Lemma \ref{lemma:xx6}. Finally, we obtain:
\be\lab{xx22}
\norm{\dd f}_{L^\infty_u\lh{2}}\les \ep.
\ee

Next, we estimate $\po f$. We have:
$$\po f=g(\po F,Q_{\leq 1}(N_1)),$$
which yields:
\be\lab{xx23}
\norm{\po f}_{L^\infty_u\lh{2}}\les \norm{\po F}_{L^\infty_u\lh{2}}\norm{Q_{\leq 1}(N_1)}_{L^\infty}\les 1
\ee
where we used in the last inequality the assumptions on $\po F$, and the fact that $Q_{\leq 1}$ is bounded on $L^\infty$.

Finally, we estimate $L(\po f)$ and $\lb(\po f)$. The estimate for $L(\po f)$ being similar, we focus on $\lb(\po f)$. We have:
\be\lab{xx24}
\dd_{\lb}(\po f)=g(\dd_{\lb}(\po F) ,Q_{\leq 1}(N_1))+g(\po F,\dd_{\lb}Q_{\leq 1}(N_1)).
\ee
The estimate \eqref{ad3} yields:
$$\norm{\nabb Q_{\leq 1}(N_1)}_{\BB^0}\les\ep$$
which together with Lemma \ref{lemma:poo6} and the assumption for $\dd_{\lb}\po F$ yields:
\be\lab{xx25}
\norm{P_j(g(\dd_{\lb}(\po^2N),Q_{\leq 1}(N_1)))}_{\lh{2}}\les 2^j\ep+2^{\frac{j}{2}}\ep\gamma(u).
\ee
Furthermore, using the dual of the sharp Bernstein inequality \eqref{eq:strongbernscalarbis}, we obtain:
\bea
\label{xx26}\norm{P_j(g(\po F,\dd_{\lb}Q_{\leq 1}(N_1)))}_{\lh{2}}&\les& 2^j\norm{g(\po F,\dd_{\lb}Q_{\leq 1}(N_1))}_{\tx{2}{1}}\\
\nn&\les & 2^j\norm{\po F}_{\li{\infty}{2}}\norm{\dd_{\lb}Q_{\leq 1}(N_1)))}_{\tx{\infty}{2}}\\
\nn&\les & 2^j\ep
\eea
where we used in the last inequality the assumptions for $\po F$, and the estimate \eqref{xx22bis}. Now, \eqref{xx24}-\eqref{xx26} yield:
$$\norm{P_j(\lb(\po f))}_{\lh{2}}\les 2^j\ep+2^{\frac{j}{2}}\ep\gamma(u).$$
The corresponding estimate for $L(\po f)$ may be obtained in the same way and is actually easier. Thus, we obtain:
\be\lab{xx27}
\norm{P_j(L(\po f))}_{\lh{2}}+\norm{P_j(\lb(\po f))}_{\lh{2}}\les 2^j\ep+2^{\frac{j}{2}}\ep\gamma(u).
\ee
In view of \eqref{xx22}, \eqref{xx23}, and \eqref{xx27}, $f$ satisfies the assumptions of Proposition \ref{prop:xx1}, which in turn yields the decomposition \eqref{xx18}-\eqref{xx20} for $g(F(.,\o''),Q_{\leq 1}(N_1))$. This concludes the proof of Corollary \ref{cor:xx1}.

\subsection{Decompositions involving $\po N$, $\trc$ and $b^p$}\lab{sec:ohlalala1}

In this section, we obtain the proof of Proposition \ref{cor:xx1bis}, Proposition \ref{cor:so} and Proposition \ref{cor:so1} as a consequence of Proposition \ref{prop:xx1}, Corollary \ref{cor:xx1}, and Lemma \ref{lemma:xx8}.

\subsubsection{Proof of Proposition \ref{cor:xx1bis}}

We have:
\be\label{xx11bis}
N-N'=\int_{[\o,\o']}\po N(.,\o'')d\o''(\o-\o').
\ee
We denote $\po N''=\po N(.,\o'')$. Now, in view of the estimates \eqref{estNomega} and \eqref{estricciomega} for $\po N$, and \eqref{estNomega2}, \eqref{estNomega2bis} and \eqref{estricciomega2} for $\po^2N$, $\po N$ satisfies the assumptions of Corollary \ref{cor:xx1}. Thus, we have the following decomposition for $\po N''$
 \be\lab{xx12bis}
\po N''=F_1^j+F_2^j,
\ee
where the vectorfield $F_1^j$ only depends on $\o'$ and satisfies:
\be\lab{xx13bis}
\norm{F_1^j}_{L^\infty}\les \norm{\po N'}_{L^\infty}\lesssim 1
\ee
in view of \eqref{estNomega}, and where the vectorfield $F_2^j$ satisfies:
\be\lab{xx14bis}
\norm{F_2^j}_{L^\infty_u\lh{2}}\les 2^{-\frac{j}{2}}\ep.
\ee
Injecting the decomposition \eqref{xx12bis} in \eqref{xx11bis}, and in view of \eqref{xx13bis} \eqref{xx14bis}, 
we obtain the desired decomposition for $N-N'$. This concludes the proof of the proposition. 

\subsubsection{Proof of Proposition \ref{cor:so}}

In view of the estimates \eqref{esttrc}, \eqref{estricciomega} and \eqref{estricciomegabis} for $\trc$, $f=\trc$ satisfies the assumption of Proposition \ref{prop:xx1}. Thus, in view of Proposition \ref{prop:xx1}, $\trc(.,\o)$ satisfies the desired decomposition with 
$$f^j_1=P_{\leq \frac{j}{2}}(\trc(.,\o')).$$
There remains to prove the $L^\infty$ estimate for $f^j_1$ which is an immediate consequence of the estimate \eqref{esttrc} for $\trc$ and the fact that $P_{\leq \frac{j}{2}}$ is bounded on $\lpt{\infty}$. This concludes the proof of the proposition.

\subsubsection{Proof of Proposition \ref{cor:so1}}

We have
\bea\lab{lebronnn}
\nn\norm{b^p(.,\o)-b^p(.,\o')}_{L^\infty_u\lh{2}}&\les&\left(\int_{[\o,\o']}\norm{b^{p-1}(.,\o'')\po b(.,\o'')}_{L^\infty_u\lh{2}}d\o''\right)|\o-\o'|\\
&\les&\left(\int_{[\o,\o']}\norm{\po b(.,\o'')}_{L^\infty_u\lh{2}}d\o''\right)|\o-\o'|,
\eea
where we used in the last inequality the estimate \eqref{estb} for $b$. Now, using Lemma \ref{lemma:xx8} with $p=2$, we have
$$\norm{\po b(.,\o'')}_{L^\infty_u\lh{2}}\les \norm{\po b}_{L^\infty_uL^2(\H_u)}+\norm{\nabb\po b}_{L^\infty_uL^2(\H_u)}\les\ep$$
where we used the estimate \eqref{estricciomega} for $\po b$ in the last inequality. Together with \eqref{lebronnn}, we obtain
$$\norm{b^p(.\o)-b^p(.,\o')}_{L^\infty_u\lh{2}}\les |\o-\o'|\ep,$$
which concludes the proof of the proposition.

\subsection{A first variant of Proposition \ref{prop:xx1}}\lab{sec:ohlalala2}

We start with the following refinement of Lemma \ref{lemma:xx8}:
\begin{corollary}\label{lin}
Let $\o$ and $\o'$ in $\S$. Let $u=u(t,x,\o)$ and $u'=u(t,x,\o')$. Then, 
for any tensor $F$, and for any $2\leq p, q\leq +\infty$, we have:
$$
\norm{F}_{L^\infty_{u'}L^2(\H_{u'})}\les \norm{F}^{\frac{1}{2}}_{\tx{p}{q}}\norm{\nabb F}^{\frac{1}{2}}_{\tx{\frac{p}{p-1}}{\frac{q}{q-1}}}.
$$
\end{corollary}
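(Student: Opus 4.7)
The plan is to derive Corollary \ref{lin} as a direct refinement of Lemma \ref{lemma:xx8}, by retaining the integral-form intermediate estimate underlying its proof and replacing the final Cauchy--Schwarz inequality by a mixed-norm Hölder inequality in the $(t, x')$ coordinates. Specifically, the proof of Lemma \ref{lemma:xx8} (carried out in section \ref{sec:xx8}) reduces, via a Sobolev-type inequality on the 2-surfaces $P_{t,u'}$ combined with a comparison of the foliations $\{\H_u\}$ and $\{\H_{u'}\}$, the quantity $\norm{F}^2_{L^\infty_{u'}L^2(\H_{u'})}$ to a bilinear integral on $\H_u$ of the form
\[\norm{F}^2_{L^\infty_{u'}L^2(\H_{u'})} \les \int_0^1\int_{\ptu}|F|\,|\nabb F|\,\dmt\,dt,\]
with all geometric coefficients (the null lapse $b$, the metric determinant $\sqrt{|\ga|}$, the Ricci coefficients) of size $O(1)$ thanks to Theorem \ref{thregx}. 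Only the tangential derivative $\nabb F$ appears because the normal direction is absorbed by an integration-by-parts argument whose boundary terms vanish by virtue of the decay of $F$ at spatial infinity from Remark \ref{rem:noprobleminf}.

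Granted this intermediate inequality, the proof would proceed in three short steps. First, I would transport the integration to the reference surface $\pou$: the identity $\frac{d}{dt}\sqrt{|\gamma|}=\trc\sqrt{|\gamma|}$ of Remark \ref{rem:volumega} together with the $L^\infty$ bound \eqref{esttrc} on $\trc$ yields $\sqrt{|\gamma_{t,u}|}\sim \sqrt{|\gamma_{0,u}|}$ uniformly in $0\leq t\leq 1$, so that
\[\int_0^1\int_{\ptu}|F|\,|\nabb F|\,\dmt\,dt \les \int_0^1\int_{\pou}|F(t,x')|\,|\nabb F(t,x')|\,\dmo\,dt.\]
Second, I would apply Hölder's inequality in the $x'$ variable with the conjugate exponents $q$ and $q/(q-1)$ (both well-defined in $[1,+\infty]$ since $2\leq q\leq +\infty$); then Hölder's inequality in the $t$ variable with the exponents $p$ and $p/(p-1)$ delivers
\[\int_0^1\int_{\pou}|F|\,|\nabb F|\,\dmo\,dt \leq \norm{F}_{\tx{p}{q}}\,\norm{\nabb F}_{\tx{\frac{p}{p-1}}{\frac{q}{q-1}}}.\]
Combining with the integral bound and taking square roots delivers the estimate of Corollary \ref{lin}.

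The main obstacle lies entirely in the first step, which is the substance of Lemma \ref{lemma:xx8} itself: one must compare two distinct null hypersurfaces $\H_u$ and $\H_{u'}$ (with $u$ and $u'$ corresponding to different angles $\o, \o'\in\S$) while only paying the cost of the tangential derivative $\nabb F$. The integration-by-parts that converts the normal derivative into a tangential one is where the geometric information enters, and it is precisely where the smallness provided by Theorem \ref{thregx} (for $b$, $\chi$, $\z$, etc.) is required to absorb the commutator error terms. Once this step is in hand, however, the passage from the Cauchy--Schwarz endpoint Lemma \ref{lemma:xx8} to the mixed-norm refinement stated as Corollary \ref{lin} is an entirely routine application of Hölder's inequality.
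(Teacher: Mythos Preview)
Your approach is essentially the same as the paper's: both extract from the proof of Lemma \ref{lemma:xx8} (specifically from \eqref{lxx8:13} combined with the one-dimensional Sobolev estimate \eqref{tolbiac}) a bilinear bound of the form $\norm{F}^2_{L^\infty_{u'}L^2(\H_{u'})}\les \sup_u\int_0^1\int_{\ptu}|F||\nabb F|\dmt\,dt$, and then replace the Cauchy--Schwarz step by H\"older with exponents $(q,q/(q-1))$ in $x'$ and $(p,p/(p-1))$ in $t$. Two minor remarks: your explanation of \emph{why} only $\nabb F$ appears is inaccurate---it is not an integration by parts absorbing the normal direction, but simply the pointwise inequality $\sup_{y_2}|f|^2\les\int|f||\partial_{y_2}f|\,dy_2$ applied in the tangential coordinate $y_2$ on $\ptu$ (see \eqref{lxx2:1bis}), so no normal derivative ever enters; and your step transporting the measure from $\ptu$ to $\pou$ is unnecessary, since H\"older may be applied directly on $\ptu$.
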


\begin{proof}
Let $f$ a scalar. Then, using a standard estimate in $\R^2$, we have the analog of \eqref{tolbiac}
\bee
&&\int_{y_1} \sup_{y_2}|f(\Phi^{-1}_{t,\o}(u,y_1,y_2))|^2dy_1\\
\nn&\les& \left(\int_{y}|f(\Phi^{-1}_{t,\o}(u,y_1,y_2))|^qdy_1dy_2\right)^{\frac{1}{q}}\left(\int_{y}|\partial_{y_2}f(\Phi^{-1}_{t,\o}(u,y_1,y_2))|^{\frac{q}{q-1}}dy_1dy_2\right)^{1-\frac{1}{q}}\\
\nn&\les& \left(\int_{\ptu}|f|^q\dmt\right)^{\frac{1}{q}}\left(\int_{\ptu}|\nabb f|^{1-\frac{1}{q}}\dmt\right)^{\half}.
\eee
Together with \eqref{lxx8:13}, this yields:
\bee
\norm{f}^2_{L^2(\H_{u'=u_0})}&\lesssim&  \frac{1}{|\o-\o'|}\left(\int_0^1\int _{u_0-|\o-\o'|}^{u_0+|\o-\o'|}
\left(\int_{\ptu}|f|^q\dmt\right)^{\frac{1}{q}}\left(\int_{\ptu}|\nabb f|^{\frac{q}{q-1}}\dmt\right)^{1-\frac{1}{q}}du dt\right)\\
&\lesssim&  \norm{f}_{\tx{p}{q}}\norm{\nabb f}_{\tx{\frac{p}{p-1}}{\frac{q}{q-1}}}.
\eee
Since this holds for any real number $u_0$, we take the supremum which yields:
$$\norm{f}^2_{L^{\infty}_{u'}L^2(\H_{u'})}\lesssim \norm{f}_{\tx{p}{q}}\norm{\nabb f}_{\tx{\frac{p}{p-1}}{\frac{q}{q-1}}}.$$
Finally, let $F$ a tensor. Applying the previous inequality to $f=|F|$, we obtain
$$\norm{F}^2_{L^{\infty}_{u'}L^2(\H_{u'})}\lesssim   \norm{F}_{\tx{p}{q}}\norm{\nabb F}_{\tx{\frac{p}{p-1}}{\frac{q}{q-1}}}.$$
This concludes the proof of the corollary.
\end{proof}

We will need the following refinement of Corollary \ref{cor:xx1}:
\begin{corollary}\lab{cor:leman}
Let $F(.,\o)$ a tensor depending on a parameter $\o\in\S$ such that for any $2\leq p<+\infty$:
$$\norm{F}_{L^\infty_u\lh{2}}+\norm{F}_{\tx{p}{\infty}}+\norm{\dd F}_{L^\infty_u\lh{2}}+\norm{\po F}_{L^\infty_u\lh{2}}\les \ep.$$
Assume that there exists two tensors $H_1$ and $H_2$ such that
$$\po F=H_1+H_2,$$
such that we have
$$\norm{H_1}_{\li{\infty}{2}}+\norm{H_2}_{\li{\infty}{2}}\les\ep,$$
and there exists a function $\gamma$ in $L^2(\R)$ such that for all $j\geq 0$ and for some $2<q\leq+\infty$, we have:
$$\norm{P_j(\ddb_L(H_1))}_{\tx{q}{2}}+\norm{P_j(\ddb_{\lb}(H_1))}_{\tx{q}{2}}\les 2^j\ep+2^{\frac{j}{2}}\gamma(u)\ep,$$
and such that $H_2$ satisfies for some $2\leq q<+\infty$
$$\norm{H_2}_{\tx{q}{\frac{8}{3}}}+\norm{\nabb H_2}_{\tx{\frac{q}{q-1}}{\frac{8}{5}}}\les\ep.$$
Let $\o$ and $\o'$ in $\S$. Let $u=u(t,x,\o)$ and $u'=u(t,x,\o')$. Then, for any $\o''$ in $\S$ on the arc joining $\o$ and $\o'$, and for any $j\geq 0$, we have the following decomposition for $F(.,\o'')$:
$$F(.,\o'')=F^j_1+F^j_2$$
where $F^j_1$ does not depend on $\o$ and satisfies such that for any $2\leq p<+\infty$:
$$\norm{F^j_1}_{L^\infty_{u{\o'}}L^p_tL^\infty(P_{t, u_{\o'}})}\les\ep,$$
and where $F^j_2$ satisfies:
$$\norm{F^j_2}_{L^\infty_u\lh{2}}\les 2^{-\frac{j}{2}}\ep+|\o-\o'|^{\frac{3}{2}}2^{\frac{j}{4}}\ep.$$
\end{corollary}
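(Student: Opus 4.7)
The plan is to follow the proof of Corollary \ref{cor:xx1} closely, scalarizing via Lemma \ref{lemma:xx7} to reduce to scalar functions $f(.,\o'')=g(F(.,\o''),Q_{\leq 1}(N_l))$ for $l=1,2,3$, and then applying the angular Littlewood--Paley telescoping identity underlying Proposition \ref{prop:xx1}. The only genuinely new feature is that the hypothesis on $\po F$ has been split into two pieces with fundamentally different regularity: $H_1$ carries transport-type bounds on $\ddb_L H_1$ and $\ddb_{\lb}H_1$, while $H_2$ carries purely spatial bounds on itself and on $\nabb H_2$. These two pieces must be treated by different tools inside the single angular integral that appears in the telescoping.

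I would first identify $f_1^j:=P_{\leq j/2}^{(\o')}(f(.,\o'))$ as the candidate main part. Since this is independent of $\o$ and $P_{\leq j/2}$ is bounded on $L^\infty(P_{t,u_{\o'}})$, the hypothesis $\norm{F}_{\tx{p}{\infty}}\lesssim \ep$ evaluated at $\o=\o'$ yields $\norm{f_1^j}_{L^\infty_{u_{\o'}}L^p_tL^\infty(P_{t,u_{\o'}})}\lesssim \ep$, which is the improved norm demanded by the statement. For the error $f_2^j:=f(.,\o'')-f_1^j$, I would use exactly the telescoping identity from Proposition \ref{prop:xx1}, namely
\begin{equation*}
f_2^j=\int_{[\o',\o'']}\bigl(P_{\leq j/2}^{(\o''')}(\po f)(.,\o''')+[\po,P_{\leq j/2}^{(\o''')}]f(.,\o''')\bigr)d\o'''(\o'-\o'')+\sum_{l>j/2}P_l^{(\o'')}(f(.,\o'')).
\end{equation*}
The high-frequency tail is controlled by Lemma \ref{lemma:xx3} and the commutator term by Lemma \ref{lemma:xx5}, each producing a contribution at most $(2^{-j/2}+|\o-\o'|^{1/2}2^{-j/4})\ep$, both of which are absorbed into the target $2^{-j/2}\ep+|\o-\o'|^{3/2}2^{j/4}\ep$ by Young's inequality.

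The novel step is the treatment of the angular integral involving $\po f$. Exploiting that $Q_{\leq 1}(N_l)$ is independent of $\o$, I would decompose
\begin{equation*}
\po f=g(\po F,Q_{\leq 1}(N_l))=g(H_1,Q_{\leq 1}(N_l))+g(H_2,Q_{\leq 1}(N_l))=:h_1+h_2,
\end{equation*}
and handle the two scalars separately. For $h_1$, I would verify that it inherits from $H_1$ precisely the scalar transport-type hypotheses required by Proposition \ref{prop:xx1}: the $L^\infty_u L^2(\H_u)$ bound is immediate, while for the $P_j$ bounds on $L(h_1)=g(\ddb_L H_1,Q_{\leq 1}(N_l))+g(H_1,\dd_L Q_{\leq 1}(N_l))$ and its analogue for $\lb(h_1)$, I would apply Lemma \ref{lemma:poo6} together with \eqref{ad3} to the first piece, and the dual sharp Bernstein inequality together with \eqref{xx22bis} to the second piece --- exactly as in the scalar verification performed inside the proof of Corollary \ref{cor:xx1}. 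Once this is done, the contribution of $h_1$ is bounded by Lemma \ref{lemma:xx4} as in Proposition \ref{prop:xx1}, producing the $(1+|\o'''-\o|^{1/2}2^{j/4})\ep$ factor that integrates to the target after multiplication by $|\o'-\o''|$.

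The treatment of $h_2$ is actually easier, since no transport-equation machinery is available or needed: here I would invoke Corollary \ref{lin} directly, choosing the exponent pair in that corollary as $(q,8/3)$ with $q$ the exponent from the hypothesis on $H_2$. This gives
\begin{equation*}
\norm{h_2(.,\o''')}_{L^\infty_u L^2(\H_u)}\lesssim \norm{H_2(.,\o''')}^{1/2}_{\tx{q}{8/3}}\norm{\nabb H_2(.,\o''')}^{1/2}_{\tx{q/(q-1)}{8/5}}\lesssim \ep,
\end{equation*}
where the factor $Q_{\leq 1}(N_l)$ costs only harmless $L^\infty$ norms. Since $P_{\leq j/2}^{(\o''')}$ is bounded on $L^2$, the same bound survives the frequency localization, so integrating over the arc from $\o'$ to $\o''$ and multiplying by $|\o'-\o''|$ produces a contribution of order $|\o-\o'|^2\ep$, which is bounded by $\sqrt{2}|\o-\o'|^{3/2}\ep$ thanks to $|\o-\o'|\leq 2$; this fits comfortably within the target. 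The hardest step of the whole argument is the verification that $h_1$ satisfies the scalar hypotheses of Proposition \ref{prop:xx1}, because the commutator piece $g(H_1,\dd_L Q_{\leq 1}(N_l))$ forces one to control derivatives of the low-frequency projection of the fixed reference normals $N_l$ via Lemma \ref{lemma:xx6}-type bounds, and checking these are compatible with the $(2^j\ep+2^{j/2}\gamma(u)\ep)$ growth required by Proposition \ref{prop:xx1} is the most technical point.
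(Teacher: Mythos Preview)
Your proposal is correct and follows essentially the same approach as the paper: scalarize via Lemma \ref{lemma:xx7}, set $f^j_1=P'_{\leq j/2}(f(\cdot,\o'))$, run the telescoping identity from Proposition \ref{prop:xx1}, and split the angular integrand as $\po f=h_1+h_2$ with $h_1$ handled through Lemma \ref{lemma:xx4} exactly as in Proposition \ref{prop:xx1} and $h_2$ handled by Corollary \ref{lin}.

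One small correction in your treatment of $h_2$: you bound $\norm{h_2(\cdot,\o''')}_{L^\infty_u L^2(\H_u)}$ by Corollary \ref{lin} and then claim that $L^2$-boundedness of $P^{(\o''')}_{\leq j/2}$ preserves this. This does not work as written, because $P^{(\o''')}_{\leq j/2}$ is bounded on $L^2(P_{t,u'''})$ (the $\o'''$-foliation), whereas the norm you want to preserve is over $\H_u$ (the $\o$-foliation). The paper swaps the order: it applies Corollary \ref{lin} directly to $P^{(\o''')}_{\leq j/2}h_2$, obtaining
\[
\norm{P^{(\o''')}_{\leq j/2}h_2}_{L^\infty_u L^2(\H_u)}\lesssim \norm{P_{\leq j/2}h_2}^{1/2}_{\tx{q}{8/3}}\norm{\nabb P_{\leq j/2}h_2}^{1/2}_{\tx{q/(q-1)}{8/5}},
\]
and only then uses the $L^p$-boundedness and finite band properties of $P_{\leq j/2}$ on the $\o'''$-surfaces to pass to $h_2$ and $\nabb h_2$. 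Relatedly, the factor $Q_{\leq 1}(N_l)$ does not cost only $L^\infty$: for the $\nabb h_2$ term one needs $\norm{\nabb Q_{\leq 1}(N_l)}_{\tx{\infty}{4}}\lesssim 1$, which comes from \eqref{boarding} and the estimate \eqref{ad20} (this is \eqref{coucoudeldoo} in the paper). These are routine fixes; your identification of the hardest step (verifying the scalar hypotheses of Proposition \ref{prop:xx1} for $h_1$) is accurate.
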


\begin{proof}
Using Lemma \ref{lemma:xx7}, it suffices to prove the decomposition of Corollary \ref{cor:leman} where $F(\o'', .)$ is replaced by $g(F(.,\o''),Q_{\leq 1}(N_l))$ for $l=1, 2, 3$. Since the proof is identical for $l=1, 2, 3$, we simply take $l=1$. Therefore, it remains to prove that the following decomposition holds $g(F(., \o''),Q_{\leq 1}(N_1))$:
\be\lab{leman1}
g(F(.,\o''),Q_{\leq 1}(N_1))=f^j_1+f_2^j,
\ee
where $f^j_1$ does not depend on $\o$ and satisfies such that for any $2\leq p<+\infty$:
\be\lab{leman2}
\norm{f^j_1}_{L^\infty_{u_{\o'}}L^p_tL^\infty(P_{t, u_{\o'}})}\les\ep,
\ee
and where the vectorfields $f_2^j$ satisfies:
\be\lab{leman3}
\norm{f_2^j}_{L^\infty_u\lh{2}}\les 2^{-\frac{j}{2}}\ep+|\o-\o'|^{\frac{3}{2}}2^{\frac{j}{4}}\ep.
\ee

Let $f(.,\o)=g(F(.,\o),Q_{\leq 1}(N_1))$. Arguing as in the proof of Corollary \ref{cor:xx1}, and using the assumptions for $F$, we have the analog of \eqref{xx22} and \eqref{xx23}:
\be\lab{leman4}
\norm{\dd f}_{L^\infty_u\lh{2}}\les \ep.
\ee
and 
\be\lab{leman5}
\norm{\po f}_{L^\infty_u\lh{2}}\les \ep.
\ee
Also, in view of the assumptions for $F$ and the fact that $Q_{\leq 1}$ is bounded on $L^\infty$, we have
\be\lab{leman6}
\norm{f(.,\o')}_{L^\infty_{u_{\o'}}L^p_tL^\infty(P_{t, u_{\o'}})}\les\norm{F(.,\o')}_{L^\infty_{u_{\o'}}L^p_tL^\infty(P_{t, u_{\o'}})}\norm{Q_{\leq 1}(N_1)}_{L^\infty}\les\ep.
\ee
In order to prove the decomposition \eqref{leman1} \eqref{leman2} \eqref{leman3} for $g(F(.,\o''),Q_{\leq 1}(N_1))$, we follow the proof of Proposition \ref{prop:xx1}. In particular, we recall the decomposition \eqref{xx1} of $f(.,\o'')$:
\bea\lab{leman7}
\nn f(.,\o'')&=&P'_{\leq\frac{j}{2}}(f(.,\o'))+\int_{[\o',\o'']}(P'''_{\leq\frac{j}{2}}(\po f)(.,\o''')+[\po,P'''_{\leq\frac{j}{2}}]f(.,\o'''))d\o'''(\o'-\o'')\\
&&+\sum_{l>\frac{j}{2}}P''_l(f(.,\o'')),
\eea
where $\o'''$ denotes an angle in $\S$ on the arc joining $\o'$ and $\o''$. Also, in view of the estimate \eqref{leman4}, we have the analog of the estimate \eqref{xx3}
\be\label{leman8}
\sum_{l>\frac{j}{2}}\norm{P''_l(f(.,\o''))}_{L^\infty_u\lh{2}}\les (2^{-\frac{j}{2}}+|\o''-\o|^{\half}2^{-\frac{j}{4}})\ep,
\ee
and the analog of the estimate \eqref{xx6} 
\be\label{leman9}
\norm{[\po,P'''_{\leq\frac{j}{2}}]f(.,\o'''))}_{L^\infty_u\lh{2}}\les \ep.
\ee
Also, using \eqref{leman6} and the fact that $P'_{\leq \frac{j}{2}}$ is bounded on $L^\infty(P_{t, u_{\o'}})$, we have for any $2\leq p<+\infty$:
\be\lab{leman10}
\normm{P'_{\leq\frac{j}{2}}(f(.,\o'))}_{L^\infty_{u_{\o'}}L^p_tL^\infty(P_{t, u_{\o'}})}\les\ep.
\ee

In view of \eqref{leman7}, we have $f(.,\o'')=f^1_j+f_2^j$ where $f_1^j$ is defined as:
$$f^1_j=P'_{\leq\frac{j}{2}}(f(.,\o')),$$
and $f^2_j$ is defined as:
\be\label{leman11}
f^2_j=f^j_{2,1}+f^j_{2,2}
\ee
with 
\be\lab{leman12}
f^j_{2,1}=\int_{[\o',\o'']}(P'''_{\leq\frac{j}{2}}(\po f)(.,\o''')(\o'-\o'')
\ee
and 
$$f^j_{2,2}=\int_{[\o',\o'']}[\po,P'''_{\leq\frac{j}{2}}](\po f)(.,\o'''))d\o'''(\o'-\o'')+\sum_{l>\frac{j}{2}}P''_l(f(.,\o'')).$$
In view of the definition of $f^j_1$, $f^j_{2,2}$ and the estimates \eqref{leman8}, \eqref{leman9} and \eqref{leman10}, $f^j_1$ does not depend on $\o$ and satisfies for any $2\leq p<+\infty$:
\be\lab{leman13}
\norm{f^j_1}_{L^\infty_{u_{\o'}}L^p_tL^\infty(P_{t, u_{\o'}})}\les\ep,
\ee
while $f^j_2$ satisfies:
\be\lab{leman14}
\norm{f^j_{2,2}}_{L^\infty_u\lh{2}}\les 2^{-\frac{j}{2}}\ep+|\o-\o'|^{\frac{3}{2}}2^{\frac{j}{4}}\ep.
\ee

We still need to estimate $f^j_{2,1}$. We have:
$$\po f=g(\po F,Q_{\leq 1}(N_1))$$
and thus
\be\lab{leman15}
\po f=h_1+h_2
\ee
where 
$$h_j=g(H_j,Q_{\leq 1}(N_1)), \, j=1,2,$$
Since the assumptions for $H_1$ in Corollary \ref{cor:leman} are the same as the assumptions for $\po F$ in Corollary \ref{cor:xx1}, we obtain the analog of \eqref{xx23} and \eqref{xx27} for $h_1$:
$$\norm{h_1}_{\li{\infty}{2}}\les\ep,$$
and
$$\norm{P_j(L(h_1))}_{\lh{2}}+\norm{P_j(\lb(h_1))}_{\lh{2}}\les 2^j\ep+2^{\frac{j}{2}}\ep\gamma(u).$$
Thus, the estimates for $h_1$ in Corollary \ref{cor:leman} are the same as the assumptions for $\po f$ in Proposition \ref{prop:xx1}, and we obtain the analog of \eqref{xx5}
\be\label{leman16}
\norm{P'''_{\leq\frac{j}{2}}h_1(.,\o''')}_{L^\infty_u\lh{2}}\les (1+|\o'''-\o|^{\half}2^{\frac{j}{4}})\ep.
\ee
Next, let $2\leq q<+\infty$. We have in view of Corollary \ref{lin} 
\bea\lab{leman17}
\norm{P'''_{\leq\frac{j}{2}}h_2(.,\o''')}_{L^\infty_u\lh{2}}&\les& \norm{P_{\leq\frac{j}{2}}h_2}^{\frac{1}{2}}_{\tx{q}{\frac{8}{3}}}\norm{\nabb P_{\leq\frac{j}{2}}h_2}^{\frac{1}{2}}_{\tx{\frac{q}{q-1}}{\frac{8}{5}}}\\
\nn&\les& \norm{h_2}^{\frac{1}{2}}_{\tx{q}{\frac{8}{3}}}\norm{\nabb h_2}^{\frac{1}{2}}_{\tx{\frac{q}{q-1}}{\frac{8}{5}}},
\eea
where we used in the last inequality the finite band property and the boundedness on $\lpt{2}$ of $P_{\leq \frac{j}{2}}$. Now, in view of the definition of $h_2$, we have
\bea\lab{leman18}
&&\norm{h_2}_{\tx{q}{\frac{8}{3}}}+\norm{\nabb h_2}_{\tx{\frac{q}{q-1}}{\frac{8}{5}}}\\
\nn&\les& \norm{Q_{\leq 1}(N_1)}_{L^\infty}(\norm{H_2}_{\tx{q}{\frac{8}{3}}}+\norm{\nabb H_2}_{\tx{q}{\frac{8}{3}}})+\norm{H_2}_{\tx{q}{\frac{8}{3}}}\norm{\nabb Q_{\leq 1}(N_1)}_{\tx{\infty}{4}}\\
\nn&\les& \ep+\ep\norm{\nabb Q_{\leq 1}(N_1)}_{\tx{\infty}{4}},
\eea
where we used in the last inequality the assumptions on $H_2$ and the fact that $Q_{\leq 1}$ is bounded on $L^\infty$. In order to estimate the right-hand side of \eqref{leman18}, we use the estimate \eqref{boarding}. We obtain
\bea\lab{coucoudeldoo}
\norm{\nabb Q_{\leq 1}(N_1)}_{\tx{\infty}{4}}&\les& \norm{\nabla\nabb Q_{\leq 1}(N_1)}_{L^\infty_t L^2(\Sigma_t)}+\norm{\nabb Q_{\leq 1}(N_1)}_{L^\infty_t L^2(\Sigma_t)}\\
\nn&\les& 1,
\eea
where we used in the last inequality the estimate \eqref{ad20}. Together with \eqref{leman18}, this yields
$$\norm{h_2}_{\tx{q}{\frac{8}{3}}}+\norm{\nabb h_2}_{\tx{\frac{q}{q-1}}{\frac{8}{5}}}\les \ep.$$
In view of \eqref{leman17} we deduce
\be\lab{leman19}
\norm{P'''_{\leq\frac{j}{2}}h_2(.,\o''')}_{L^\infty_u\lh{2}}\les \ep.
\ee
Now, \eqref{leman12}, \eqref{leman15} and \eqref{leman19} imply:
\be\lab{leman20}
\norm{f^j_{2,1}}_{L^\infty_u\lh{2}}\les |\o-\o'|\ep.
\ee
 Finally, \eqref{leman11}, \eqref{leman14} and \eqref{leman20} imply
$$\norm{f^j_2}_{L^\infty_u\lh{2}}\les 2^{-\frac{j}{2}}\ep+|\o-\o'|^{\frac{3}{2}}2^{\frac{j}{4}}\ep,$$
which together with the decomposition $f(.,\o'')=f^1_j+f_2^j$ and the estimate \eqref{leman13} yields the conclusion of the corollary.
\end{proof}

\subsection{Decompositions involving $\chi$}\lab{sec:ohlalala3}

The goal of this section is to prove the decompositions of Lemma \ref{cor:so2}, Proposition \ref{cor:so3}, Proposition \ref{cor:so4}, Proposition \ref{cor:so7} and Proposition \ref{cor:so9}. The proof of Lemma \ref{cor:so2} is given in section \ref{sec:pio1}, the proof of Proposition \ref{cor:so3} is given in section \ref{sec:pio2}, the proof of Proposition \ref{cor:so4} is given in section \ref{sec:pio5}, the proof of Proposition \ref{cor:so7} is given in section \ref{sec:pio6}, and the proof of Proposition \ref{cor:so9} is given in section \ref{sec:pio8}. 

We will need the following product lemma.
\begin{lemma}\lab{lemma:nicosi}
Let $F$ and $H$ $\ptu$-tangent tensors on $\mathcal{H}_u$ such that for any $2\leq r<+\infty$ we have
$$\norm{F}_{\tx{r}{\infty}}+\norm{\nabb F}_{L^r_tB^0_{2,1}(\ptu)}+\norm{H}_{\tx{r}{\infty}}+\norm{\nabb H}_{L^r_tB^0_{2,1}(\ptu)}\lesssim \ep.$$
Then, we have for any $2\leq r<+\infty$ we have
$$\norm{FH}_{\tx{r}{\infty}}+\norm{\nabb (FH)}_{L^r_tB^0_{2,1}(\ptu)}\lesssim\ep.$$
\end{lemma}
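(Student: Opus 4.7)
The conclusion splits into two bounds. The $\tx{r}{\infty}$ bound is immediate from the pointwise inequality $|FH|\leq|F||H|$ and H\"older in $t$ with exponents $2r,2r$:
\begin{equation*}
\norm{FH}_{\tx{r}{\infty}}\leq\norm{F}_{\tx{2r}{\infty}}\norm{H}_{\tx{2r}{\infty}}\lesssim\ep^2\lesssim\ep,
\end{equation*}
since $\ep$ is small. For the Besov bound, expand $\nabb(FH)=(\nabb F)H+F(\nabb H)$ and treat the two summands symmetrically. Consider $(\nabb F)H$ and set $G=\nabb F$. At each fixed $t$, perform a paraproduct decomposition on the 2-surface $\ptu$:
\begin{equation*}
GH=T_HG+T_GH+R(G,H),\qquad T_ab=\sum_{j\geq 0}P_{<j-N}a\cdot P_jb,\qquad R(G,H)=\sum_{|l-m|\leq N}P_lG\cdot P_mH,
\end{equation*}
with the low-frequency pieces $P_{<0}$ absorbed analogously. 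The plan is to bound each of the three pieces pointwise in $t$ and then integrate.

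For the ``high-low'' term $T_HG$ (derivative on the high-frequency factor $G$), combine $\norm{P_{<j-N}H}_{L^\infty(\ptu)}\lesssim\norm{H}_{L^\infty(\ptu)}$ with the approximate spectral localization of $P_k$ applied to a product whose dominant frequency is $2^j$, which follows from the finite-band and $L^p$-boundedness properties of Theorem \ref{thm:LP} (yielding decay $2^{-c|k-j|}$); this produces $\norm{T_HG}_{B^0_{2,1}(\ptu)}\lesssim\norm{H}_{L^\infty(\ptu)}\norm{G}_{B^0_{2,1}(\ptu)}$. For the ``low-high'' term $T_GH$ and the resonant remainder $R(G,H)$, the derivative sits on the low-frequency factor, so one cannot invoke $\norm{G}_{L^\infty}$ (which is not hypothesised). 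The key tool is the critical 2D Sobolev embedding
\begin{equation*}
\sum_{j\geq 0}\norm{P_jH}_{L^\infty(\ptu)}\lesssim\norm{H}_{L^2(\ptu)}+\norm{\nabb H}_{B^0_{2,1}(\ptu)},
\end{equation*}
obtained by combining the sharp Bernstein inequality \eqref{eq:strongbernscalarbis} for scalars (or \eqref{eq:strongberntensor} together with the Gauss-curvature bound \eqref{estgauss1} for tensors) with the finite-band property to get $\norm{P_jH}_{L^\infty}\lesssim 2^j\norm{P_jH}_{L^2}\lesssim\norm{\nabb P_jH}_{L^2}$, and then controlling the sum $\sum_j\norm{\nabb P_jH}_{L^2}$ by $\norm{\nabb H}_{B^0_{2,1}}$ up to commutator errors $[\nabb,P_j]H$ that are handled by the heat-flow construction of $P_j$ and the Bochner-type inequality \eqref{eq:Bochconseqbis}. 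This yields $\norm{T_GH}_{B^0_{2,1}(\ptu)}\lesssim\norm{G}_{L^2(\ptu)}\bigl(\norm{H}_{L^2(\ptu)}+\norm{\nabb H}_{B^0_{2,1}(\ptu)}\bigr)$, while the resonant remainder is controlled by the dual Bernstein estimate $\norm{P_j(\cdot)}_{L^2}\lesssim 2^j\norm{\cdot}_{L^1}$ (property iv of Theorem \ref{thm:LP}) applied to $\norm{P_lG\cdot P_mH}_{L^1}\leq\norm{P_lG}_{L^2}\norm{P_mH}_{L^2}$ followed by Cauchy--Schwarz in $(l,m)$.

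Integrating the pointwise-in-$t$ bound via H\"older in $t$ with exponents $2r,2r$, using Remark \ref{rem:noprobleminf} to absorb $L^2(\ptu)$ norms into the compactly supported $L^\infty(\ptu)$ norms (for which the $\tx{2r}{\infty}$ hypothesis provides control), and plugging in the quantitative assumptions on $F$ and $H$, gives $\norm{(\nabb F)H}_{L^r_tB^0_{2,1}(\ptu)}\lesssim\ep^2\lesssim\ep$; the argument for $F(\nabb H)$ is identical. The principal obstacle is the paraproduct analysis on the curved surface $\ptu$, where the geometric Littlewood--Paley projections of \cite{LP} are only approximately spectrally localized; quantitatively controlling the off-diagonal error terms, together with the $[\nabb,P_j]$-commutators that arise in relating $\norm{\nabb P_jH}_{L^2}$ to $\norm{P_j\nabb H}_{L^2}$, is precisely where the finite-band and Bernstein machinery of Theorem \ref{thm:LP} and the Gauss-curvature bounds \eqref{estgauss1}--\eqref{estgauss2} play an essential role.
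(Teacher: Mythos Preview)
Your $\tx{r}{\infty}$ bound is identical to the paper's. For the Besov bound, however, you take a different and more involved route than the paper, and one of your steps has a genuine gap.

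The paper never performs a two-sided paraproduct. For the term $F\nabb H$ (and symmetrically $H\nabb F$) it decomposes only the differentiated factor, writing $\nabb H=\sum_l P_l\nabb H$ and bounding $\norm{P_j(F\,P_l\nabb H)}_{\tx{r}{2}}$ directly. For $l\le j$ one gains $2^{l-j}$ by the finite-band property of $P_j$; for $l>j$ one writes $P_l=2^{-2l}\lap P_l$, integrates by parts once, and gains $2^{j-l}$ from finite band on $P_j$ and $P_l$. In both regimes the undifferentiated factor $F$ is simply placed in $L^\infty_{x'}$ (and $\nabb F$, when it appears, in $L^2_{x'}$ via $B^0_{2,1}\hookrightarrow L^2$). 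The resulting off-diagonal decay $2^{-|j-l|}$ is summed against $\norm{P_l\nabb H}_{\tx{r'}{2}}$, and the assumption on $\nabb H$ closes. This is essentially the pointwise-in-$t$ version of the product estimate \eqref{eq:secondbilBesov:bis}, with the $L^\infty$ factor always chosen to be the one \emph{without} the derivative.

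Your paraproduct of $GH$ with $G=\nabb F$ creates the piece $T_GH$ in which the low-frequency factor carries the derivative; neither factor is then obviously in $L^\infty$, and you resolve this via the embedding $\sum_j\norm{P_jH}_{L^\infty(\ptu)}\lesssim\norm{H}_{L^2}+\norm{\nabb H}_{B^0_{2,1}}$ for \emph{tensors}. Your justification has two problems. First, the step ``$2^j\norm{P_jH}_{L^2}\lesssim\norm{\nabb P_jH}_{L^2}$'' is a \emph{reverse} finite-band inequality, which is not among the properties of the heat-flow projections in Theorem~\ref{thm:LP}; the closest available statement (via the reproducing property $P_j=\tilde P_j^2$) gives $2^j\norm{P_jH}_{L^2}\lesssim\norm{\nabb\tilde P_jH}_{L^2}$, shifting the issue to a different projection. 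Second, converting $\sum_j\norm{\nabb\tilde P_jH}_{L^2}$ to $\norm{\nabb H}_{B^0_{2,1}}$ requires a summable-in-$j$ bound on $\norm{[\nabb,\tilde P_j]H}_{L^2}$. The only commutator estimate in the paper of this type, Lemma~\ref{lemma:zz43}, gives \emph{no} decay in $j$ (it is uniform in $j$), so the sum diverges; for tensors the commutator $[\nabb,\lap]$ acquires additional curvature terms beyond $K\nabb$, and no improved estimate is established. The scalar analogue \eqref{linftybound} is cited from \cite{LP}, but the tensor case is not available in the paper and your sketch does not supply it.

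The simpler observation that sidesteps all of this---and is precisely what the paper does---is that in the product $(\nabb F)H$ the factor $H$ already lies in $L^\infty_{x'}$ by hypothesis, so one should decompose $\nabb F$ and place $H$ in $L^\infty$, not the other way around.
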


We will also need the following consequence of Corollary \ref{cor:leman} and Lemma \ref{lemma:nicosi}.
\begin{corollary}\lab{cor:so5}
Let $\o$ and $\o'$ in $\S$. For any $j\geq 0$ and any integer $l\geq 2$, we have the following decomposition for $\chi_1(.,\o)^l$:
$$\chi_1(.,\o)^l=F^j_1+F^j_2$$
where $F^j_1$ does not depend on $\o$ and satisfies for any $2\leq p<+\infty$:
$$\norm{F^j_1}_{L^\infty_{u_{\o'}}L^p_tL^\infty(P_{t, u_{\o'}})}\les \ep,$$
where $F^j_2$ satisfies:
$$\norm{F^j_2}_{L^\infty_u\lh{2}}\les 2^{-\frac{j}{2}}+|\o-\o'|^{\frac{3}{2}}2^{\frac{j}{4}}\ep.$$
\end{corollary}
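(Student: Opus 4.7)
The strategy is to apply Corollary \ref{cor:leman} with $F = \chi_1^l$, reading off $F_1^j$ and $F_2^j$ directly from its conclusion. Essentially all the work consists in verifying the hypotheses of that corollary, and this is where Lemma \ref{lemma:nicosi} enters. I would first check that $\chi_1$ itself satisfies the hypotheses of Lemma \ref{lemma:nicosi}: the bound $\norm{\chi_1}_{\tx{r}{\infty}}\les\ep$ for every $2\le r<+\infty$ is contained in \eqref{dechch2}, and the matching estimate $\norm{\nabb\chi_1}_{L^r_tB^0_{2,1}(\ptu)}\les\ep$ follows from \eqref{bispo40} together with the fact that the $\mathcal{B}^0_{2,1}(\ptu)$ norm of $\chi_1$ controls one derivative of $\chi_1$ in the $B^0_{2,1}(\ptu)$ sense. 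Iterating Lemma \ref{lemma:nicosi} $(l-1)$ times then yields, for every $2\le r<+\infty$,
\begin{equation*}
\norm{\chi_1^l}_{\tx{r}{\infty}}+\norm{\nabb(\chi_1^l)}_{L^r_tB^0_{2,1}(\ptu)}\les\ep,
\end{equation*}
which, combined with Lemma \ref{lemma:xx8}, already provides the $L^\infty_u\lh{2}$ and $\tx{p}{\infty}$ bounds on $F=\chi_1^l$ demanded by Corollary \ref{cor:leman}.

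Next I would verify the $\dd F$ and $\po F$ bounds in $L^\infty_u\lh{2}$ and construct the splitting $\po F=H_1+H_2$. For $\dd F$, the Leibniz rule reduces the problem to controlling $\ddb_L\chi_1$ and $\ddb_{\lb}\chi_1$ in $\tx{\infty}{2}+\tx{2}{q}$ via \eqref{dechch3} and $\nabb\chi_1$ in $\tx{\infty}{2}$ via \eqref{dechch2}, Lemma \ref{lemma:xx8} being used to upgrade the resulting mixed norms to $L^\infty_u\lh{2}$. For the decomposition of $\po F$, the Leibniz rule applied to the $l$-fold contracted product writes $\po(\chi_1^l)$ as a sum of $l$ terms of the shape $\chi_1^{l-1}\c\po\chi_1$, so it suffices to split $\po\chi_1$. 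Writing $\chi_1=\hch-\chi_2$ gives $\po\chi_1=\po\hch-\po\chi_2$, and I would assign to $H_1$ the contribution stemming from $\Pi(\po\hch)$, which differs from $\Pi(\po\chi)$ only by harmless trace terms; the good $L^p_tL^2$ estimate \eqref{estricciomegabis} on $P_j\ddb_{\lb}\Pi(\po\chi)$ then yields the required $P_j$-localized bounds on $\ddb_LH_1$ and $\ddb_{\lb}H_1$. The complementary $H_2$ comes from the $-\po\chi_2$ piece, where the key $\tx{p}{4_-}$, $\lh{6_-}$ and $\nabb\po\chi_2\in\lh{2}$ estimates of \eqref{dechch2} are available.

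The main technical obstacle will be the $\nabb H_2\in\tx{\frac{q}{q-1}}{\frac{8}{5}}$ condition of Corollary \ref{cor:leman}. Distributing $\nabb$ across $\chi_1^{l-1}\c\po\chi_2$ produces two families of terms: one with $\nabb$ falling on $\chi_1^{l-1}$, which is controlled by pairing $\norm{\nabb(\chi_1^{l-1})}_{L^r_tB^0_{2,1}(\ptu)}$ from the iteration with the $\tx{r'}{4_-}$ bound on $\po\chi_2$, and one with $\nabb$ falling on $\po\chi_2$, controlled by pairing $\norm{\chi_1^{l-1}}_{\tx{r}{\infty}}$ with $\norm{\nabb\po\chi_2}_{\lh{2}}$. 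A careful choice of H\"older exponents $(q,r)$ exploiting the room left by the $4_-$ and $6_-$ indices should close the estimate. Once all the hypotheses of Corollary \ref{cor:leman} are in place, its conclusion is precisely the decomposition asserted in Corollary \ref{cor:so5}, with $F_1^j$ inheriting the $L^\infty_{u_{\o'}}L^p_tL^\infty(P_{t,u_{\o'}})$ bound from the $\tx{p}{\infty}$ control of $\chi_1^l$ and $F_2^j$ satisfying the claimed $L^\infty_u\lh{2}$ estimate.
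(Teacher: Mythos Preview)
Your approach is correct and essentially identical to the paper's: apply Corollary~\ref{cor:leman} to $F=\chi_1^l$, iterate Lemma~\ref{lemma:nicosi} to obtain the $\tx{r}{\infty}$ and $L^r_tB^0_{2,1}(\ptu)$ bounds on $\chi_1^{l-1}$, and split $\po\chi_1=\po\hch-\po\chi_2$ so that $H_1=l\chi_1^{l-1}\po\hch$ and $H_2=-l\chi_1^{l-1}\po\chi_2$. The one step you should make explicit is the passage from the bound \eqref{estricciomegabis} on $P_j\ddb_{\lb}\Pi(\po\chi)$ to the bound on $P_j\ddb_{\lb}H_1$, since $H_1$ carries the extra factor $\chi_1^{l-1}$; the paper handles this product via Lemma~\ref{lemma:grr}, which is precisely designed to propagate the $2^j\ep+2^{j/2}\gamma(u)\ep$ shape through multiplication by a tensor satisfying the $\tx{r}{\infty}+\nabb\in L^r_tB^0_{2,1}$ bounds you already established.
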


We will need the following consequence of Lemma \ref{cor:so2} and Corollary \ref{cor:so5}: 
\begin{corollary}\lab{cor:so6}
Let $\o$ and $\o'$ in $\S$. For any $j\geq 0$ and any integer $l\geq 1$, we have the following decomposition for $\chi_1(.,\o)^l\chi_2(.,\o)$:
$$\chi_1(.,\o)^l\chi_2(.,\o)=\chi_2(.,\o')F^j_1+\chi_2(.,\o')F^j_2+F^j_3$$
where $F^j_1$ does not depend on $\o$ and satisfies for any $2\leq p<+\infty$:
$$\norm{F^j_1}_{L^\infty_{u_{\o'}}L^p_tL^\infty(P_{t, u_{\o'}})}\les \ep,$$
where $F^j_2$ and $F^j_3$ satisfy:
$$\norm{F^j_2}_{L^\infty_u\lh{2}}+\norm{F^j_3}_{L^\infty_u\lh{2}}\les 2^{-\frac{j}{2}}+|\o-\o'|^{\frac{3}{2}}2^{\frac{j}{4}}\ep.$$
\end{corollary}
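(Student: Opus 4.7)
The plan is to combine the decomposition from Corollary \ref{cor:so5} with the difference estimate from Lemma \ref{cor:so2}, using the elementary algebraic identity
\[
\chi_1(.,\o)^l\chi_2(.,\o) \;=\; \chi_1(.,\o)^l\,\chi_2(.,\o') \;+\; \chi_1(.,\o)^l\bigl(\chi_2(.,\o)-\chi_2(.,\o')\bigr)
\]
so that the first summand on the right is ready to receive the decomposition of $\chi_1(.,\o)^l$, while the second summand will be absorbed into the single error term $F^j_3$.

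First, I would apply Corollary \ref{cor:so5} to write $\chi_1(.,\o)^l=\widetilde{F}^j_1+\widetilde{F}^j_2$, where $\widetilde{F}^j_1$ is independent of $\o$ and bounded in $L^\infty_{u_{\o'}}L^p_tL^\infty(P_{t,u_{\o'}})$ by $\ep$ for every $2\le p<+\infty$, and $\widetilde{F}^j_2$ satisfies $\|\widetilde{F}^j_2\|_{L^\infty_u\lh{2}}\les 2^{-j/2}\ep+|\o-\o'|^{3/2}2^{j/4}\ep$. Setting $F^j_1:=\widetilde{F}^j_1$, $F^j_2:=\widetilde{F}^j_2$ and
\[
F^j_3 \;:=\; \chi_1(.,\o)^l\bigl(\chi_2(.,\o)-\chi_2(.,\o')\bigr)
\]
produces the claimed algebraic decomposition at once. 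The bounds on $F^j_1$ and $F^j_2$ are immediate consequences of Corollary \ref{cor:so5}, so the only remaining task is the estimate for $F^j_3$.

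For $F^j_3$, I would apply Hölder's inequality on $\H_u$ with conjugate exponents $4_+$ and $4_-$:
\[
\|F^j_3\|_{\lh{2}} \;\les\; \|\chi_1(.,\o)^l\|_{L^{4_+}(\H_u)}\,\|\chi_2(.,\o)-\chi_2(.,\o')\|_{\lh{4_-}}.
\]
Lemma \ref{cor:so2} directly gives $\|\chi_2(.,\o)-\chi_2(.,\o')\|_{L^\infty_u\lh{4_-}}\les|\o-\o'|\ep$. For the factor $\chi_1(.,\o)^l$, I would invoke the estimate $\|\chi_1\|_{\tx{p}{\infty}}\les\ep$ from \eqref{dechch2} valid for every $2\le p<+\infty$; raising to the $l$-th power gives $\|\chi_1^l\|_{\tx{p/l}{\infty}}\les\ep^l$, and choosing $p$ large (which is permitted since $t$ ranges in the bounded interval $[0,1]$ and the asymptotic flatness of Remark \ref{rem:noprobleminf} controls the behavior in the non-compact directions of $\ptu$) yields a uniform bound $\|\chi_1^l\|_{L^\infty_uL^{4_+}(\H_u)}\les 1$. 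Combining the two factors produces $\|F^j_3\|_{L^\infty_u\lh{2}}\les|\o-\o'|\ep$.

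Finally, I would conclude by observing that $|\o-\o'|\ep\les 2^{-j/2}\ep+|\o-\o'|^{3/2}2^{j/4}\ep$ holds unconditionally: in the regime $|\o-\o'|\le 2^{-j/2}$ the first summand on the right dominates, while in the regime $|\o-\o'|>2^{-j/2}$ one has $|\o-\o'|^{1/2}2^{j/4}\ge 1$ and hence $|\o-\o'|\le|\o-\o'|^{3/2}2^{j/4}$. This gives the required control on $F^j_3$ and completes the proof. The argument is essentially bookkeeping once Corollary \ref{cor:so5} and Lemma \ref{cor:so2} are in hand; the only mild subtlety is verifying the Hölder estimate at the boundary exponent $4_+$ via the $\tx{p}{\infty}$ bound, but this is soft.
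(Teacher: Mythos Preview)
Your proof is correct and follows essentially the same approach as the paper: both split $\chi_1(.,\o)^l\chi_2(.,\o)$ as $\chi_1(.,\o)^l\chi_2(.,\o')+\chi_1(.,\o)^l(\chi_2(.,\o)-\chi_2(.,\o'))$, invoke Corollary \ref{cor:so5} on the first term to define $F^j_1,F^j_2$, and set $F^j_3$ equal to the second term, estimating it via H\"older together with Lemma \ref{cor:so2} and the $\tx{p}{\infty}$ bound \eqref{dechch2} on $\chi_1$. The only cosmetic difference is your choice of H\"older exponents $(4_+,4_-)$ versus the paper's $(6l,3)$; your explicit verification that $|\o-\o'|\ep\les 2^{-j/2}+|\o-\o'|^{3/2}2^{j/4}\ep$ is a small addition the paper leaves implicit.
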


Finally, we will need the following consequence in particular of Lemma \ref{cor:so2}: 
\begin{corollary}\lab{cor:so8}
Let $\o$ and $\o'$ in $\S$. For any $j\geq 0$, we have the following decomposition for $\chi_1(.,\o)\chi_2(.,\o)^2$:
$$\chi_1(.,\o)\chi_2(.,\o)^2=\chi_2(.,\o')^2F^j_1+\chi_2(.,\o')^2F^j_2+\chi_2(.,\o')F^j_3+F^j_4$$
where $F^j_1$ does not depend on $\o$ and satisfies:
$$\norm{F^j_1}_{L^\infty_{u_{\o'}}L^2_tL^\infty(P_{t, u_{\o'}})}\les \ep,$$
where $F^j_2$ and $F^j_3$ satisfy:
$$\norm{F^j_2}_{L^\infty_u\lh{2}}+\norm{F^j_3}_{L^\infty_u\lh{2}}\les 2^{-\frac{j}{2}}+|\o-\o'|^{\frac{3}{2}}2^{\frac{j}{4}}\ep,$$
and where $F^j_4$ satisfies
$$\norm{F^j_4}_{L^2(\mathcal{M})}\les \ep 2^{-j}.$$
\end{corollary}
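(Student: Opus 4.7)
\textbf{Proof proposal for Corollary \ref{cor:so8}.}  The strategy is to linearize $\chi_2(.,\o)^2$ around $\chi_2(.,\o')^2$ via Lemma \ref{cor:so2}, decompose the remaining factor $\chi_1(.,\o)$ along the same lines as in Corollary \ref{cor:leman}, and collect the resulting terms according to how many factors of $\delta\chi_2 := \chi_2(.,\o)-\chi_2(.,\o')$ they contain.  Writing
\[
\chi_2(.,\o)^2 \;=\; \chi_2(.,\o')^2 \;+\; 2\chi_2(.,\o')\,\delta\chi_2 \;+\; (\delta\chi_2)^2,
\]
and using Lemma \ref{cor:so2} together with $|\o-\o'|\les 2^{-j/2}$ to control $\delta\chi_2$ in $L^\infty_u L^2_tL^{4_-}(P_{t,u})$ with norm $\les 2^{-j/2}\ep$, this yields
\[
\chi_1(.,\o)\chi_2(.,\o)^2 \;=\; \chi_2(.,\o')^2\,\chi_1(.,\o) \;+\; 2\chi_2(.,\o')\,\chi_1(.,\o)\,\delta\chi_2 \;+\; \chi_1(.,\o)(\delta\chi_2)^2.
\]

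For the first piece, apply Corollary \ref{cor:leman} to $\chi_1$ itself: its hypotheses follow from the bounds \eqref{dechch1}--\eqref{dechch3} (giving the required $L^\infty_u\lh{2}$, $\tx{p}{\infty}$, $\dd$- and $\po$-controls), while the splitting $\po\chi_1 = H_1 + H_2$ is extracted from the equation for $\po\hch$ minus $\po\chi_2$, with $H_1$ the piece satisfying the $\ddb_L, \ddb_{\lb}$ Littlewood--Paley bounds and $H_2$ the piece with the mixed $L^q_tL^{8/3}$/$L^{q/(q-1)}_tL^{8/5}$ bound.  This gives $\chi_1(.,\o) = G_1 + G_2$ with $G_1$ $\o$-independent and $\norm{G_1}_{L^\infty_{u_{\o'}}L^p_tL^\infty(P_{t,u_{\o'}})}\les\ep$, and $\norm{G_2}_{L^\infty_u\lh{2}}\les 2^{-j/2}\ep$.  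Setting $F^j_1=G_1$ and letting $G_2\chi_2(.,\o')^2$ form part of $F^j_2$ (the $\chi_2(.,\o')^2$ factor being absorbed via the $\xt{\infty}{2}$ bound on $\chi_2(.,\o')$ combined with the $\lh{2}$ bound on $G_2$), handles the first piece.  The cross piece $2\chi_2(.,\o')\chi_1(.,\o)\,\delta\chi_2$ is put directly into $\chi_2(.,\o')F^j_3$ with $F^j_3 = 2\chi_1(.,\o)\delta\chi_2$; its $L^\infty_u\lh{2}$ bound follows from H\"older applied to $\chi_1\in\tx{p}{\infty}$ (for $p$ arbitrarily large) and $\delta\chi_2$ interpolated between the $L^\infty_u L^2_tL^{4_-}$ estimate of Lemma \ref{cor:so2} (gaining $|\o-\o'|\les 2^{-j/2}$) and the $\xt{\infty}{2}$ bound on $\chi_2$ from \eqref{dechch1}.

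The main obstacle is the last term, $F^j_4 = \chi_1(.,\o)(\delta\chi_2)^2$, for which one must obtain $\norm{F^j_4}_{L^2(\mathcal{M})}\les\ep 2^{-j}$ --- that is, a \emph{full} factor of $2^{-j}$ rather than $2^{-j/2}$, forcing both copies of $\delta\chi_2$ to contribute smallness.  Since $L^2(\mathcal{M})\hookrightarrow L^\infty_u\lh{2}$ up to the compact support in $u$ from Remark \ref{rem:noprobleminf}, estimate
\[
\norm{F^j_4}_{\lh{2}} \;\les\; \norm{\chi_1}_{\tx{p}{\infty}}\,\big\|\delta\chi_2\big\|^2_{\tx{\frac{4p}{p-2}}{4}}.
\]
Then interpolate the $L^\infty_u L^2_tL^{4_-}$-bound on $\delta\chi_2$ (of order $|\o-\o'|\ep$) against the $\xt{\infty}{2}$-bound on $\chi_2$ (of order $\ep$) to obtain, for $p$ sufficiently large, $\norm{\delta\chi_2}_{\tx{\frac{4p}{p-2}}{4}}\les |\o-\o'|^{\alpha}\ep$ with $\alpha$ arbitrarily close to $1$.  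Squaring and using $|\o-\o'|^{2\alpha}\les 2^{-j\alpha}$ produces the required decay $2^{-j}\ep$ provided $\alpha > 1/2$, which is easily achieved.  The cross-residue $G_2(\delta\chi_2)^2$ and other higher-order leftovers are shown to be $O(2^{-j}\ep)$ in $L^2(\mathcal{M})$ by the same interpolation argument, completing the decomposition.
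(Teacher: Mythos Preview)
Your overall strategy---expand $\chi_2(.,\o)^2$ around $\chi_2(.,\o')^2$, apply Corollary~\ref{cor:leman} to the factor $\chi_1(.,\o)$, and sort the terms by the number of copies of $\delta\chi_2$---is exactly the paper's route. The identifications $F^j_1=G_1$, $F^j_2=G_2$ (note that $\chi_2(.,\o')^2$ is already factored \emph{outside} $F^j_2$ in the statement, so nothing needs to be ``absorbed''), and $F^j_3=2\chi_1(.,\o)\delta\chi_2$ are the same as in the paper, which bounds $F^j_3$ by a simple H\"older on $\H_u$, namely $\norm{\chi_1}_{\li{\infty}{6}}\norm{\delta\chi_2}_{\li{\infty}{3}}\les|\o-\o'|\ep$, using $\no(\chi_1)\les\ep$ together with the Sobolev embedding \eqref{sobineq} and Lemma~\ref{cor:so2}.

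The genuine gap is your treatment of $F^j_4=\chi_1(.,\o)(\delta\chi_2)^2$. First, the embedding you invoke goes the wrong way (one has $L^\infty_u\lh{2}\hookrightarrow L^2(\mathcal{M})$ on compact $u$-support, not the reverse). More seriously, the interpolation you propose cannot be justified: Lemma~\ref{cor:so2} gives $\delta\chi_2\in L^\infty_u L^{4_-}(\H_u)$ (that is, $L^{4_-}_tL^{4_-}_{x'}$, not $L^2_tL^{4_-}_{x'}$), while $\xt{\infty}{2}=L^\infty_{x'}L^2_t$ has the opposite ordering of variables; these spaces do not interpolate to give the $\tx{\frac{4p}{p-2}}{4}$ control you claim. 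Finally, even granting your $|\o-\o'|^\alpha$ bound with $\alpha$ close to $1$, squaring yields $|\o-\o'|^{2\alpha}\les 2^{-j\alpha}$, which meets the required $2^{-j}$ only for $\alpha\ge1$, not for $\alpha>1/2$.

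The paper sidesteps all of this by working directly with the foliation-independent norm $L^q(\mathcal{M})$: by H\"older on $\mathcal{M}$,
\[
\norm{F^j_4}_{L^2(\mathcal{M})}\les\norm{\chi_1}_{L^{10}(\mathcal{M})}\norm{\delta\chi_2}_{L^5(\mathcal{M})}^2,
\]
and since $L^5(\mathcal{M})$ does not see the choice of angle, the mean-value inequality gives $\norm{\delta\chi_2}_{L^5(\mathcal{M})}\les|\o-\o'|\sup_{\o''}\norm{\po\chi_2(.,\o'')}_{L^5(\mathcal{M})}\les|\o-\o'|\ep$ directly from \eqref{dechch2}. This produces the exact bound $\norm{F^j_4}_{L^2(\mathcal{M})}\les|\o-\o'|^2\ep$, hence $\les 2^{-j}\ep$ under $|\o-\o'|\les 2^{-j/2}$, with no interpolation loss.
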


The proof of Lemma \ref{lemma:nicosi} is postponed to section \ref{sec:gowinda10}, the proof of Corollary \ref{cor:so5} is postponed to section \ref{sec:pio4}, the proof of Corollary \ref{cor:so6} is postponed to section \ref{sec:pio5} and the proof of Corollary \ref{cor:so8} is postponed to section \ref{sec:pio7}.

\subsubsection{Proof of Lemma \ref{cor:so2}}\lab{sec:pio1}

We have
\be\lab{lebron}
\norm{\chi_2(.\o)-\chi_2(.,\o')}_{L^\infty_u\lh{4_-}}\les\left(\int_{[\o,\o']}\norm{\po\chi_2(.,\o'')}_{L^\infty_u\lh{4_-}}d\o''\right)|\o-\o'|.
\ee
Now, using Lemma \ref{lemma:xx8} with $p=4_-$, we have
$$\norm{\po\chi_2(.,\o'')}_{L^\infty_u\lh{4_-}}\les \norm{\po\chi_2}_{L^\infty_uL^{6_-}(\H_u)}+\norm{\nabb\po\chi_2}_{L^\infty_uL^2(\H_u)}\les\ep$$
where we used the estimate \eqref{dechch2} for $\po\chi_2$ in the last inequality. Together with \eqref{lebron}, we obtain
$$\norm{\chi_2(.\o)-\chi_2(.,\o')}_{L^\infty_u\lh{4_-}}\les |\o-\o'|\ep,$$
which concludes the proof of the lemma.

\subsubsection{Proof of Proposition \ref{cor:so3}}\lab{sec:pio2}

In view of the decomposition \eqref{chih} of $\chi$ in its trace part $\trc$ and traceless part $\hch$, in view of the decomposition \eqref{dechch} of $\hch$ in the sum of $\chi_1$ and $\chi_2$, and in view of the decomposition of 
Corollary \ref{cor:so} for $\trc$, it suffices to obtain the following decomposition for $\chi_1$
 \be\lab{lebron1}
\chi_1(.,\o)=F_1^j+F_2^j,
\ee
where the vectorfield $F_1^j$ only depends on $(t, x, \o')$ and satisfies for any $2\leq p<+\infty$:
\be\lab{lebron2}
\norm{F_1^j}_{L^\infty_{u_{\o'}}L^p_tL^\infty(P_{t, u_{\o'}})}\lesssim \ep
\ee
and where the vectorfield $F_2^j$ satisfies:
\be\lab{lebron3}
\norm{F_2^j}_{L^\infty_u\lh{2}}\les 2^{-\frac{j}{2}}\ep.
\ee
Now, in view of the estimates \eqref{dechch1}, \eqref{dechch2} and \eqref{dechch3} for $\chi_1$, $F=\chi_1$ satisfies for any $2\leq p<+\infty$:
$$\norm{F}_{L^\infty_u\lh{2}}+\norm{F}_{\tx{p}{\infty}}+\norm{\dd F}_{L^\infty_u\lh{2}}+\norm{\po F}_{L^\infty_u\lh{2}}\les \ep.$$
Also, we have
$$\po F=H_1+H_2\textrm{ with }H_1=\po\hch\textrm{ and }H_2=-\po\chi_2,$$
and $H_1$ and $H_2$ satisfy the assumption of Corollary \ref{cor:leman} in view of the estimates \eqref{estricciomega} and \eqref{estricciomegabis} for $\po\hch$ and the estimate \eqref{dechch2} for $\po\chi_2$. Thus, in view of Corollary \ref{cor:leman}, $\chi_1(.,\o)$ satisfies the decomposition \eqref{lebron1} and the estimates \eqref{lebron2} \eqref{lebron3}. This concludes the proof of the proposition.

\subsubsection{Proof of Proposition \ref{cor:so4}}\lab{sec:pio3}

In view of Corollary \ref{cor:so3}, it suffices to prove the decomposition for $chi_2$:
$$\chi_2(.,\o)=F^j_1+F^j_2$$
where $F^j_1$ does not depend on $\o$ and satisfies:
$$\norm{F^j_1}_{L^\infty_{u_{\o'}}L^\infty(P_{t, u_{\o'}})L^2_t}\les \ep,$$
and where $F^j_2$ satisfies:
$$\norm{F^j_2}_{L^\infty_u\lh{2}}\les 2^{-\frac{j}{2}}.$$
We choose
$$F^j_1=\chi_2(.,\o')\textrm{ and }F^j_2=\chi_2(., \o)-\chi_2(.,\o').$$
Then, the estimates for $F^j_1$ and $F^j_2$ follow from the estimate \eqref{dechch1} and the Lemma \ref{cor:so2} for $\chi_2$. This concludes the proof of the proposition.

\subsubsection{Proof of Corollary \ref{cor:so5}}\lab{sec:pio4}

In view of the estimates \eqref{dechch1}, \eqref{dechch2} and \eqref{dechch3} for $\chi_1$, $F=\chi_1^l$ satisfies for any $2\leq p<+\infty$:
$$\norm{F}_{L^\infty_u\lh{2}}+\norm{F}_{\tx{p}{\infty}}+\norm{\dd F}_{L^\infty_u\lh{2}}+\norm{\po F}_{L^\infty_u\lh{2}}\les \ep.$$
Also, we have
$$\po F=H_1+H_2\textrm{ with }H_1=l\chi_1^{l-1}\po\hch\textrm{ and }H_2=-l\chi_1^{l-1}\po\chi_2.$$
Lemma \ref{lemma:nicosi}, together with the estimates \eqref{dechch2} and \eqref{bispo40} for $\chi_1$ yields for any $2\leq r<+\infty$
$$\norm{\chi_1^{l-1}}_{\tx{r}{\infty}}+\norm{\nabb (\chi_1^{l-1})}_{L^r_tB^0_{2,1}(\ptu)}\lesssim\ep.$$
Together with Lemma \ref{lemma:grr} and the estimates \eqref{estricciomega} and \eqref{estricciomegabis} for $\po\hch$, we obtain:
$$\norm{P_j(\ddb_L(H_1))}_{\tx{q}{2}}+\norm{P_j(\ddb_{\lb}(H_1))}_{\tx{q}{2}}\les 2^j\ep+2^{\frac{j}{2}}\gamma(u)\ep.$$
Also, $H_2$ satisfies the following estimate
\bee
&&\norm{H_2}_{\tx{2}{\frac{8}{3}}}+\norm{\nabb H_2}_{\tx{2}{\frac{8}{5}}}\\
&\les& \norm{\chi_1^{l-1}}_{\tx{4}{\infty}}\norm{\po\chi_2}_{\tx{4}{\frac{8}{3}}}+\norm{\chi_1^{l-1}}_{\tx{\infty}{8}}\norm{\nabb\po\chi_2}_{\li{\infty}{2}}\\
&&+\norm{\chi_1^{l-2}}_{\tx{\infty}{16}}\norm{\nabb\chi_1}_{\tx{\infty}{2}}\norm{\po\chi_2}_{\tx{2}{16}}\\
&\les& \ep,
\eee
where we used in the last inequality the estimate \eqref{dechch2} for $\chi_1$ and $\po\chi_2$. 

Finally, we have proved that $F$, $H_1$ and $H_2$ satisfy the assumption of Corollary \ref{cor:leman}. Thus, we may apply Corollary \ref{cor:leman} to obtain the desired decomposition $\chi_1^l(.,\o)$. This concludes the proof of the corollary.

\subsubsection{Proof of Corollary \ref{cor:so6}}\lab{sec:pio5}

We decompose $\chi_1(.,\o)^l\chi_2(.,\o)$ as
\be\lab{okidoki}
\chi_1(.,\o)^l\chi_2(.,\o)=\chi_1(.,\o)^l\chi_2(.,\o')+\chi_1(.,\o)^l(\chi_2(.,\o)-\chi_2(.,\o')).
\ee
In view of Lemma \ref{cor:so2} and the estimate \eqref{dechch2} for $\chi_1$, we have
\bea\lab{okidoki1}
&&\norm{\chi_1(.,\o)^l(\chi_2(.,\o)-\chi_2(.,\o'))}_{\li{\infty}{2}}\\
\nn&\les& \norm{\chi_1(.,\o)}^l_{\li{\infty}{6l}}\norm{\chi_2(.,\o)-\chi_2(.,\o')}_{\li{\infty}{3}}\\
\nn&\les& |\o-\o'|\ep.
\eea
Finally, in view of the decomposition for $\chi_1(.,\o)^l$ provided by Corollary \ref{cor:so5}, \eqref{okidoki} and \eqref{okidoki1}, we obtain the desired decomposition for $\chi_1(.,\o)^l\chi_2(.,\o)$ with $F^j_1$ and $F^j_2$ defined in the statement of Corollary \ref{cor:so5}, and 
$$F^j_3=\chi_1(.,\o)^l(\chi_2(.,\o)-\chi_2(.,\o')).$$
This concludes the proof of the corollary.

\subsubsection{Proof of Proposition \ref{cor:so7}}\lab{sec:pio6}

In view of the decomposition \eqref{dechch1} for $\hch$, we decompose $\hch(.,\o)^2$ as
\be\lab{visitmed}
\hch(.,\o)^2=\chi_1(.,\o)^2+2\chi_1(.,\o)\chi_2(.,\o)+\chi_2(.,\o)^2.
\ee
We have
\be\lab{okidoki5}
\chi_2(.,\o)^2=\chi_2(.,\o')^2+\chi_2(.,\o')(\chi_2(.,\o)-\chi_2(.,\o'))+(\chi_2(.,\o)-\chi_2(.,\o'))^2.
\ee
Now, we have in view of Lemma \ref{cor:so2} and the estimate \eqref{dechch2} for $\chi_2$:
\be\lab{okidoki6}
\norm{\chi_2(.,\o)-\chi_2(.,\o')}_{\li{\infty}{2}}\les |\o-\o'|\ep\textrm{ and }\norm{(\chi_2(.,\o)-\chi_2(.,\o'))^2}_{L^2(\mathcal{M})}\les |\o-\o'|^2\ep.
\ee
Finally, in view of \eqref{visitmed}, Corollary \ref{cor:so5} with $l=2$, Corollary \ref{cor:so6} with $l=1$, \eqref{okidoki5} and \eqref{okidoki6}, we obtain the desired decomposition for $\hch^2$. 

\subsubsection{Proof of Corollary \ref{cor:so8}}\lab{sec:pio7}

We decompose $\chi_1(.,\o)\chi_2(.,\o)^2$ as
\bea\lab{okidoki2}
\chi_1(.,\o)\chi_2(.,\o)^2&=&\chi_1(.,\o)\chi_2(.,\o')^2+\chi_1(.,\o)\chi_2(.,\o')(\chi_2(.,\o)-\chi_2(.,\o'))\\
\nn&&+\chi_1(.,\o)(\chi_2(.,\o)-\chi_2(.,\o'))^2.
\eea
In view of Lemma \ref{cor:so2} and the estimate \eqref{dechch2} for $\chi_1$, we have
\bea\lab{okidoki3}
&&\norm{\chi_1(.,\o)(\chi_2(.,\o)-\chi_2(.,\o'))}_{\li{\infty}{2}}\\
\nn&\les& \norm{\chi_1(.,\o)}_{\li{\infty}{6}}\norm{\chi_2(.,\o)-\chi_2(.,\o')}_{\li{\infty}{3}}\\
\nn&\les& |\o-\o'|\ep.
\eea
Also, in view of the estimate \eqref{dechch2} for $\chi_1$ and $\chi_2$, we have
\bea\lab{okidoki4}
\nn\norm{\chi_1(.,\o)(\chi_2(.,\o)-\chi_2(.,\o'))^2}_{L^2(\mathcal{M})}&\les& \norm{\chi_1(.,\o)}_{L^{10}(\mathcal{M})}\norm{\chi_2(.,\o)-\chi_2(.,\o')}_{L^5(\mathcal{M})}^2\\
\nn&\les& \norm{\po\chi_2}_{L^5(\mathcal{M})}^2|\o-\o'|^2\ep\\
&\les& |\o-\o'|^2\ep.
\eea
Finally, in view of the decomposition for $\chi_1(.,\o)$ provided by \eqref{lebron1} \eqref{lebron2} \eqref{lebron3}, \eqref{okidoki2}, \eqref{okidoki3} and \eqref{okidoki4}, we obtain the desired decomposition for $\chi_1(.,\o)\chi_2(.,\o)^2$ with $F^j_1$ and $F^j_2$ defined in \eqref{lebron1},  
$$F^j_3=\chi_1(.,\o)^l(\chi_2(.,\o)-\chi_2(.,\o')),$$
and 
$$F^j_4=\chi_1(.,\o)(\chi_2(.,\o)-\chi_2(.,\o'))^2.$$
This concludes the proof of the corollary.

\subsubsection{Proof of Proposition \ref{cor:so9}}\lab{sec:pio8}

In view of the decomposition \eqref{dechch1} for $\hch$, we decompose $\hch(.,\o)^3$ as
\be\lab{visitmedbis}
\hch(.,\o)^3=\chi_1(.,\o)^3+3\chi_1(.,\o)\chi_2(.,\o)^2+3\chi_1(.,\o)^2\chi_2(.,\o)+\chi_2(.,\o)^3.
\ee
We have
\bea\lab{okidoki5bis}
\nn\chi_2(.,\o)^3&=&\chi_2(.,\o')^3+3\chi_2(.,\o')^2(\chi_2(.,\o)-\chi_2(.,\o'))+3\chi_2(.,\o')(\chi_2(.,\o)-\chi_2(.,\o'))^2\\
&&+(\chi_2(.,\o)-\chi_2(.,\o'))^3.
\eea
Now, we have in view of Lemma \ref{cor:so2} and the estimate \eqref{dechch2} for $\chi_2$:
\bea\lab{okidoki6bis}
&&\nn\norm{\chi_2(.,\o)-\chi_2(.,\o')}_{\li{\infty}{2}}\les |\o-\o'|\ep,\,\norm{(\chi_2(.,\o)-\chi_2(.,\o'))^2}_{L^2(\mathcal{M})}\les |\o-\o'|^2\ep,\\
&&\textrm{and }\norm{(\chi_2(.,\o)-\chi_2(.,\o'))^3}_{L^{2_-}(\mathcal{M})}\les |\o-\o'|^3\ep.
\eea
Finally, in view of \eqref{visitmedbis}, Corollary \ref{cor:so5} with $l=3$, Corollary \ref{cor:so6} with $l=2$, Corollary \ref{cor:so8}, \eqref{okidoki5bis} and \eqref{okidoki6bis}, we obtain the desired decomposition for $\hch^3$. 

\subsection{A second variant of Proposition \ref{prop:xx1}}\lab{sec:ohlalala4}

We have the following variant of Proposition \ref{prop:xx1}
\begin{proposition}\lab{bis:prop:xx1}
Let $f(.,\o)$ a scalar function depending on a parameter $\o\in\S$ such that: 
$$\norm{f}_{L^\infty_u\lh{2}}+\no(f)+\norm{\Lambda^{-1}(\nabla_{bN} f)}_{L^\infty_u\lh{2}}+\norm{\po f}_{L^\infty_u\lh{2}}\les \ep.$$
Let $\o$ and $\o'$ in $\S$. Let $u=u(t,x,\o)$ and $u'=u(t,x,\o')$. Then, for any $\o''$ in $\S$ on the arc joining $\o$ and $\o'$, and for any $j\geq 0$, we have the following decomposition for $f(.,\o'')$:
$$f(.,\o'')=P_{\leq \frac{j}{2}}(f(.,\o'))+f^j_2$$
and where $f^j_2$ satisfies:
$$\norm{f^j_2}_{\lh{2}}\les 2^{-\frac{j}{4}}\ep+2^{\frac{j}{4}}|\o-\o'|\ep.$$
\end{proposition}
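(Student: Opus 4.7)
The plan is to follow the template of Proposition \ref{prop:xx1}, using the same decomposition
\begin{align*}
f(.,\o'') &= P'_{\leq j/2}(f(.,\o')) \\
&\quad + \int_{[\o',\o'']}\!\left( P'''_{\leq j/2}(\po f)(.,\o''') + [\po, P'''_{\leq j/2}]f(.,\o''')\right)\! d\o''' \cdot (\o'-\o'') \\
&\quad + \sum_{l > j/2} P''_l(f(.,\o'')),
\end{align*}
identifying $P'_{\leq j/2}(f(.,\o'))$ as the principal piece and the other two groups as $f_2^j$, and then estimating $f_2^j$ in $\lh{2}=L^2(\H_{u(\o'')})$ (rather than in $L^\infty_u\lh{2}$, which is the key relaxation of the conclusion compared with Proposition \ref{prop:xx1}).

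For the high-frequency tail $\sum_{l>j/2} P''_l(f(.,\o''))$, the hypothesis $\no(f)\les\ep$ gives $\norm{\nabb f}_{L^\infty_u\lh{2}}\les\ep$, and the finite band property of $P_l$ yields $\norm{P_lf}_{L^\infty_u\lh{2}}\les 2^{-l}\ep$. The trivial bound $\norm{P_l f}_{\lh{2}}\leq \norm{P_l f}_{L^\infty_u\lh{2}}$ summed over $l>j/2$ produces a contribution $\les 2^{-j/2}\ep$, which is absorbed into the $2^{-j/4}\ep$ of the conclusion.

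For the telescoping correction I apply Lemma \ref{lemma:xx2} to $F=P'''_{\leq j/2}(\po f)(.,\o''')$: the hypothesis $\norm{\po f}_{L^\infty_u\lh{2}}\les\ep$ bounds $\norm{F}_{L^\infty_u\lh{2}}$, while the $\nabb$ and $\ddb_L$ components of $\dd F$ are controlled by the finite band property of $P_{\leq j/2}$ together with $\norm{\po f}_{L^\infty_u\lh{2}}\les\ep$ and the commutator estimates \eqref{supercommut1}--\eqref{supercommut2}, each losing at most $2^{j/2}$. The only delicate piece is $\nabla_{bN} F$, which I rewrite using $[\nabla_{bN},\po]f$ (a lower-order commutator from the formulas of section \ref{sec:commutomega}) and the factorisation $\nabla_{bN} f = \Lambda\cdot\Lambda^{-1}(\nabla_{bN} f)$, combined with $\norm{P_{\leq j/2}\Lambda}_{L^2\to L^2}\les 2^{j/2}$ and the hypothesis $\norm{\Lambda^{-1}(\nabla_{bN} f)}_{L^\infty_u\lh{2}}\les\ep$. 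The total bound is $\norm{\dd F}_{L^\infty_u\lh{2}}\les 2^{j/2}\ep$, which substituted into Lemma \ref{lemma:xx2} yields $\norm{F}_{\lh{2}}\les (1+|\o-\o'|^{1/2}2^{j/4})\ep$. The commutator $[\po,P'''_{\leq j/2}]f$ is treated in the same spirit as Lemma \ref{lemma:xx5}, with the $\nabla_N f$ contribution again routed through $\Lambda^{-1}(\nabla_{bN} f)$ and the factor $\norm{P_{\leq j/2}\Lambda}_{L^2\to L^2}\les 2^{j/2}$; this produces an analogous bound of order $(1+2^{j/4})\ep$ in $\lh{2}$. Integrating both pieces over the arc from $\o'$ to $\o''$ multiplies by $|\o-\o'|$, so the correction contributes to $f_2^j$ a quantity of order $|\o-\o'|\ep+|\o-\o'|^{3/2}2^{j/4}\ep$. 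A case split on whether $|\o-\o'|\leq 2^{-j/2}$ (first term absorbed into $2^{-j/4}\ep$) or $|\o-\o'|> 2^{-j/2}$ (in which case $|\o-\o'|^{3/2}2^{j/4}\leq |\o-\o'|\cdot 2^{j/4}$) delivers the target bound $2^{-j/4}\ep+|\o-\o'|2^{j/4}\ep$.

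The main obstacle is the treatment of $\nabla_N$ derivatives of Littlewood--Paley localisations of $f$ and $\po f$: the hypothesis provides only $\Lambda^{-1}(\nabla_{bN} f)$ in $L^\infty_u\lh{2}$, i.e. effectively half a derivative in the $N$-direction rather than the full derivative available in Proposition \ref{prop:xx1}. Every time a $\nabla_N$ falls on a projection $P_{\leq j/2}$ one must pay a factor $2^{j/2}$ to trade $\Lambda^{-1}$ for the identity, which, after the square roots in Lemma \ref{lemma:xx2}, becomes the $2^{j/4}$ appearing in the conclusion. This mechanism is precisely what forces both the weaker frequency decay $2^{-j/4}$ (instead of $2^{-j/2}$) and the weaker angular exponent $|\o-\o'|$ (instead of $|\o-\o'|^{3/2}$) compared with Proposition \ref{prop:xx1}.
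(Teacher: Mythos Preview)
Your decomposition and your treatment of the commutator $[\po,P'''_{\leq j/2}]f$ are correct in spirit (the latter is precisely the content of Lemma~\ref{bis:lemma:xx5}, which is what the paper invokes). The genuine gap is in your handling of the piece $F=P'''_{\leq j/2}(\po f)$ via Lemma~\ref{lemma:xx2}. That lemma requires control of the \emph{full} derivative $\dd F$, in particular of $\nabla_{bN}F$. After commuting $\nabla_{bN}$ past $P_{\leq j/2}$ and past $\po$, the leading term is $P_{\leq j/2}\,\po(\nabla_{bN}f)$. Your factorisation $\nabla_{bN}f=\La\cdot\La^{-1}(\nabla_{bN}f)$ together with $\|P_{\leq j/2}\La\|_{L^2\to L^2}\les 2^{j/2}$ would bound $P_{\leq j/2}(\nabla_{bN}f)$, but what actually appears is $P_{\leq j/2}\,\po\bigl(\La\,\La^{-1}(\nabla_{bN}f)\bigr)$: either you must commute $\po$ with $\La$ (which is no help) or you need $\po\bigl(\La^{-1}(\nabla_{bN}f)\bigr)$ in $L^2$, and the hypotheses give no such control. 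The same difficulty appears already in the commutator $[\po,\nabla_{bN}]f$, since $\no(f)$ contains only the $L$-derivative and hence $\nabla_{bN}f$ itself is not in $L^2$ from the assumptions.

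The paper avoids this obstruction by replacing Lemma~\ref{lemma:xx2} with Lemma~\ref{lemma:xx8} at $p=2$, which compares $L^2$ norms across foliations using only the \emph{tangential} derivative:
\[
\|F\|_{L^\infty_{u'}L^2(\H_{u'})}\les\|F\|^{1/2}_{L^\infty_uL^2(\H_u)}\|\nabb F\|^{1/2}_{L^\infty_uL^2(\H_u)}.
\]
Applied to $F=P'''_{\leq j/2}(\po f)$, the finite band property plus $\|\po f\|_{L^\infty_u\lh{2}}\les\ep$ give $\|\nabb F\|\les 2^{j/2}\ep$ directly, hence $\|F\|_{L^\infty_u\lh{2}}\les 2^{j/4}\ep$, with no $\nabla_N$ information required. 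The same lemma is used for the high-frequency tail: your ``trivial bound'' is not available because $P''_l$ acts on the $\o''$-foliation while the target norm lives at a different angle, so a foliation change is mandatory; Lemma~\ref{lemma:xx8} gives $\|P''_l f\|_{L^\infty_u\lh{2}}\les 2^{-l/2}\ep$ and summing produces exactly $2^{-j/4}\ep$ (not $2^{-j/2}\ep$). The hypothesis on $\La^{-1}(\nabla_{bN}f)$ enters only through Lemma~\ref{bis:lemma:xx5} for the commutator term.
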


As a corollary of Proposition \ref{bis:prop:xx1}, we obtain:
\begin{corollary}\lab{bis:cor:xx1}
Let $F(.,\o)$ a tensor depending on a parameter $\o\in\S$ such that:
$$\norm{F}_{L^\infty_u\lh{2}}+\no(F)+\norm{\po F}_{L^\infty_u\lh{2}}\les \ep.$$
Also, assume the existence of tensors $H_1$ and $H_2$ such that 
$$\nabb_{bN}F=\nabb H_1+H_2\textrm{ with }\norm{H_1}_{\li{\infty}{2}}+\norm{H_2}_{\tx{2}{\frac{4}{3}}}\les\ep.$$
Let $\o$ and $\o'$ in $\S$. Let $u=u(t,x,\o)$ and $u'=u(t,x,\o')$. Then, for any $\o''$ in $\S$ on the arc joining $\o$ and $\o'$, and for any $j\geq 0$, we have the following decomposition for $F(.,\o'')$:
$$F(.,\o'')=F^j_1+F^j_2$$
where $F^j_1$ does not depend on $\o$ and satisfies for any $2\leq q\leq +\infty$:
$$\norm{F^j_1}_{L^q(P_{t, u_{\o'}})}\les \norm{F}_{L^q(P_{t, u_{\o'}})},$$
and where $F^j_2$ satisfies:
$$\norm{F^j_2}_{\lh{2}}\les 2^{-\frac{j}{4}}\ep+2^{\frac{j}{4}}|\o-\o'|\ep.$$
\end{corollary}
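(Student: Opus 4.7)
The proof will follow closely the pattern of Corollary \ref{cor:xx1}, replacing the invocation of Proposition \ref{prop:xx1} with that of Proposition \ref{bis:prop:xx1}. The first step is to scalarize via Lemma \ref{lemma:xx7}: since $Q_{\leq 1}(N_1), Q_{\leq 1}(N_2), Q_{\leq 1}(N_3)$ form a basis of $T\Sit$ with uniform $L^\infty$ bounds, it suffices to produce, for each $l\in\{1,2,3\}$, the scalar decomposition
\begin{equation*}
g(F(.,\o''), Q_{\leq 1}(N_l)) = P_{\leq j/2}\bigl(g(F(.,\o'), Q_{\leq 1}(N_l))\bigr) + f^j_{l,2},\qquad \norm{f^j_{l,2}}_{\lh{2}}\les 2^{-j/4}\ep + 2^{j/4}|\o-\o'|\ep,
\end{equation*}
and then to reconstruct the tensor decomposition exactly as in section \ref{sec:xx1bis}. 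The $L^q(P_{t,u_{\o'}})$-bound on $F^j_1$ will follow from the boundedness of $P_{\leq j/2}$ on $\lpt{q}$ (Theorem \ref{thm:LP}\,i)) together with the $L^\infty$-bound on $Q_{\leq 1}(N_l)$.

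To apply Proposition \ref{bis:prop:xx1} to each scalar $f_l:=g(F(.,\o), Q_{\leq 1}(N_l))$, I must verify its four hypotheses. Three of them are routine: the bounds on $\|f_l\|_{L^\infty_u\lh{2}}$, $\no(f_l)$, and $\|\po f_l\|_{L^\infty_u\lh{2}}$ follow by Leibniz from the corresponding bounds on $F$, using $\|Q_{\leq 1}(N_l)\|_{L^\infty}\les 1$, the estimate $\|\dd Q_{\leq 1}(N_l)\|_{\tx{\infty}{2}}\les\ep$ established exactly as in \eqref{xx22bis} (via Lemma \ref{lemma:xx6} and the Bochner inequality \eqref{prop:bochsit}), and the fact that $Q_{\leq 1}(N_l)$ is $\o$-independent so $\po f_l = g(\po F, Q_{\leq 1}(N_l))$.

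The main obstacle, and the only place where the decomposition hypothesis $\nabb_{bN} F = \nabb H_1 + H_2$ enters, is the estimate $\|\Lambda^{-1}(\nabla_{bN} f_l)\|_{L^\infty_u\lh{2}}\les\ep$. Applying Leibniz and the hypothesis gives
\begin{equation*}
\nabla_{bN} f_l = g(\nabb H_1, Q_{\leq 1}(N_l)) + g(H_2, Q_{\leq 1}(N_l)) + g(F, \nabla_{bN} Q_{\leq 1}(N_l)),
\end{equation*}
and I transfer the derivative off $H_1$ by the product rule,
\begin{equation*}
g(\nabb H_1, Q_{\leq 1}(N_l)) = \nabb\bigl(g(H_1, Q_{\leq 1}(N_l))\bigr) - g(H_1, \nabb Q_{\leq 1}(N_l)).
\end{equation*}
Applying $\Lambda^{-1}$ slicewise on $\ptu$ and integrating in $t$, the first contribution is controlled by the dual estimate \eqref{La3}, giving $\|\Lambda^{-1}\nabb(g(H_1, Q_{\leq 1}(N_l)))\|_{\lh{2}}\les \|H_1\|_{\lh{2}}\|Q_{\leq 1}(N_l)\|_{L^\infty}\les\ep$; the $H_2$-contribution is estimated through the $L^{4/3}\!\to\!L^2$ bound \eqref{La5}, yielding $\|\Lambda^{-1}(g(H_2,Q_{\leq 1}(N_l)))\|_{\lh{2}}\les \|H_2\|_{\tx{2}{4/3}}\|Q_{\leq 1}(N_l)\|_{L^\infty}\les\ep$; and the remaining commutator term $g(H_1, \nabb Q_{\leq 1}(N_l))$, together with $g(F,\nabla_{bN}Q_{\leq 1}(N_l))$, are placed directly in $L^\infty_u\lh{2}$ using the hypotheses on $F$ and $H_1$ and the regularity bounds on $\dd Q_{\leq 1}(N_l)$, then absorbed by $\Lambda^{-1}$ via \eqref{La5}.

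Once the four hypotheses are verified, Proposition \ref{bis:prop:xx1} produces the required scalar decomposition for each $l$, and reconstructing via Lemma \ref{lemma:xx7} yields $F = F^j_1 + F^j_2$ with the stated bounds, completing the proof. The delicate point throughout is the interplay between the slicewise $\Lambda^{-1}$-estimates on $\ptu$ and the $L^\infty_u L^2(\H_u)$-norm; all such transitions are justified by integrating the $\lpt{2}$-bounds in $t$ and taking the supremum in $u$.
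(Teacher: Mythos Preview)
Your proposal is correct and follows essentially the same route as the paper: scalarize via Lemma \ref{lemma:xx7}, verify the hypotheses of Proposition \ref{bis:prop:xx1} for each $f_l=g(F,Q_{\leq 1}(N_l))$, with the only nontrivial step being the $\Lambda^{-1}(\nabla_{bN}f_l)$ bound handled exactly as you describe --- transferring $\nabb$ off $H_1$ and using \eqref{La3} for the divergence piece, \eqref{La5} for the $H_2$ and $g(H_1,\nabb Q_{\leq 1}(N_l))$ pieces (the latter in $L^{4/3}_{x'}$ via $\norm{\nabb Q_{\leq 1}(N_l)}_{\tx{\infty}{4}}\les 1$), and the $g(F,\nabla_{bN}Q_{\leq 1}(N_l))$ term directly in $\lh{2}$ since $\norm{F}_{\tx{2}{4}}\les\no(F)$. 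Your phrasing ``placed directly in $L^\infty_u\lh{2}$ \ldots\ then absorbed by $\Lambda^{-1}$ via \eqref{La5}'' is slightly imprecise (the $H_1$-commutator term lands in $L^{4/3}_{x'}$ before $\Lambda^{-1}$, not $L^2_{x'}$), but the argument is sound.
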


The following lemma will be useful for the proof of Proposition \ref{bis:prop:xx1} and Corollary \ref{bis:cor:xx1}.
\begin{lemma}\lab{bis:lemma:xx5}
Let $f$ a scalar function and $\o, \o'$ in $\S$. Assume that $f$ satisfies
$$\norm{f}_{L^\infty_u\lh{2}}+\no(f)+\norm{\Lambda^{-1}(\nabla_{bN} f)}_{L^\infty_u\lh{2}}\les \ep.$$
Then, for any $l\geq 0$, we have:
$$\norm{[\po,P_{\leq l}]f}_{L^2(\H_{u'})}\les 2^{\frac{l}{2}}\ep.$$
\end{lemma}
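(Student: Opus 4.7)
The strategy parallels that of Lemma \ref{lemma:xx5}, but with the crucial difference that we no longer control the full $\nabla_N f$; only its half-derivative $\La^{-1}\nabla_{bN}f \in L^\infty_u\lh{2}$ is available. The loss of a half-derivative in the $N$-direction is balanced by heat-flow smoothing of the geometric Littlewood--Paley projection, producing exactly the stated $2^{l/2}$ factor.

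First, I would represent $P_{\leq l}f = \int_0^\infty m_{\leq l}(\tau)\,U(\tau)f\,d\tau$ where $m_{\leq l}$ is essentially localized at $\tau \sim 2^{-2l}$. Viewing $P_{\leq l}f$ as a function on $\mathcal{M}$ and differentiating in $\o$, the quantity $V(\tau) := \po[U(\tau)f]-U(\tau)[\po f]$ satisfies the inhomogeneous heat equation $(\partial_\tau - \lap)V = [\po,\lap]U(\tau)f$ with $V(0)=0$, so by Duhamel
\begin{equation*}
[\po,P_{\leq l}]f = \int_0^\infty m_{\leq l}(\tau) \int_0^\tau U(\tau-s)[\po,\lap]U(s)f\,ds\,d\tau.
\end{equation*}

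Next, I would expand the commutator $[\po,\lap]$ acting on a scalar $g$. Writing $\lap = \divb\nabb$ and using the commutator formula \eqref{commo2}, one obtains schematically
\begin{equation*}
[\po,\lap]g = \nabb\bigl(A_1\cdot\nabb g\bigr) + A_2\cdot\nabb g + \nabb\bigl(B_1\cdot\nabla_N g\bigr) + B_2\cdot\nabla_N g,
\end{equation*}
where $A_1,A_2$ depend on $\po N$, $\po\chi$ and the second fundamental form of $\ptu$ in $\Sit$, and $B_1,B_2$ depend on $\po N$ and the Ricci coefficients. The tangential pieces (those involving $\nabb g$ only) are estimated directly using $\no(f)\les\ep$ together with the $L^2$-boundedness and finite-band property of the heat flow; these contribute at most $\ep$ without any $2^{l/2}$ loss.

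The crux lies in the terms containing $\nabla_N g = \nabla_N U(s)f$, where only $\La^{-1}\nabla_{bN}f$ is under control. Here I would use duality: pairing against a test function $\phi\in L^2(\ptu)$, a typical such term is of the form $\int \phi\, B\cdot \nabla_{bN}U(s)f\,\dmt$; rewriting this as $\int \La(B\phi)\cdot \La^{-1}\nabla_{bN}U(s)f\,\dmt$ transfers the inverse $\La^{-1}$ onto $f$, at the cost of a $\La$ acting on a quantity already containing a heat-kernel at scale $2^{-2l}$. One then has to commute $\La^{-1}$ past $U(s)$ and $\nabla_{bN}$: a direct computation (analogous to the argument in section \ref{sec:geompal}) shows that $\La^{-1}\nabla_{bN}U(s)f$ differs from $U(s)\La^{-1}\nabla_{bN}f$ by terms which are absorbed thanks to $\no(f)\les\ep$. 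Finally, applying the sharp Bernstein/Bochner bounds on $\ptu$ and the heat flow estimate $\|\nabb U(\tau)\|_{\mathcal{L}(L^2)}\lesssim \tau^{-1/2}$, together with $\int |m_{\leq l}(\tau)|\tau^{-1/2}d\tau \lesssim 2^l$ and $\|\La^{-1}\nabla_{bN}f\|_{L^\infty_u\lh{2}}\les\ep$, gives the bound $\les 2^{l/2}\ep$ after a careful summation in $s,\tau$.

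The main obstacle will be Step on $\nabla_N$ terms: commuting $\La^{-1}$ across both the heat semigroup $U(s)$ and the differentiation $\nabla_{bN}$ at scales up to $\tau\sim 2^{-2l}$, while keeping track of which factors contribute $2^{l/2}$ (the derivative loss on the heat kernel) versus $2^l$ (the full derivative, which would be fatal). Everything must be arranged so that only a single $\tau^{-1/2}$ appears under the $\tau$-integral; if two $\tau^{-1/2}$'s survive, one loses $2^l$ instead of $2^{l/2}$. This is where the geometric Littlewood--Paley framework of \cite{LP} and the structural identity $\La^2 = I - \lap$ are essential.
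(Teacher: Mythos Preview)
There is a real gap here, and it concerns the origin of the factor $2^{l/2}$.

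You attribute the $2^{l/2}$ to heat-flow smoothing, via an integral of the type $\int |m_{\leq l}(\tau)|\,\tau^{-1/2}\,d\tau$. But that integral is $\sim 2^l$, not $2^{l/2}$; your proposal offers no mechanism to cut this in half. More fundamentally, you never address the change of foliation: the norm to be estimated is $L^2(\H_{u'})$ (surfaces of the $\o'$-foliation), while all heat-flow machinery lives on $\ptu$ (surfaces of the $\o$-foliation). In the paper, this passage is the \emph{source} of the $2^{l/2}$: one first proves
\[
\norm{[\po,P_l]f}_{L^\infty_u\lh{2}}\les\ep,\qquad \norm{\nabb[\po,P_l]f}_{L^\infty_u\lh{2}}\les 2^l\ep,
\]
and then applies Lemma~\ref{lemma:xx8} (with $p=2$) to interpolate, yielding $\norm{[\po,P_l]f}_{L^\infty_{u'}L^2(\H_{u'})}\les \ep^{1/2}(2^l\ep)^{1/2}=2^{l/2}\ep$. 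Your sketch contains neither the estimate on $\nabb[\po,P_l]f$ nor any invocation of a foliation-change lemma.

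Your handling of the $\nabla_N f$ terms is also too loose. The paper does not commute $\La^{-1}$ through $\nabla_{bN}$ by a duality trick; instead it writes $\nabla_{bN}U(\tau)f = U(\tau)\nabla_{bN}f + V(\tau)$ (introducing an auxiliary heat commutator $V$), and then splits $W=W_1+W_2$ so that $W_2$ isolates the source term $b^{-1}(2\divb(\po N)-\po\trt)\,U(\tau)\nabla_{bN}f$. This $W_2$ is estimated by Duhamel together with $L^p$--$L^2$ smoothing of the heat semigroup and the heat-flow bound $\int_0^\tau \norm{U(\s)\nabla_{bN}f}_{\lh{2}}^2 d\s \les \norm{\La^{-1}\nabla_{bN}f}_{\lh{2}}^2$, which is where the hypothesis $\norm{\La^{-1}\nabla_{bN}f}\les\ep$ enters directly. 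The remaining piece $W_1$ absorbs the divergence-form terms and the $V$ contributions via energy estimates. You should rebuild the argument around (i) the $W_1/W_2$ splitting to get separate control of $W$ and $\nabb W$ in $L^\infty_u\lh{2}$, and (ii) Lemma~\ref{lemma:xx8} to transfer to $L^2(\H_{u'})$.
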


The proof of Corollary \ref{bis:cor:xx1} is postponed to section \ref{bis:sec:xx1bis} and the proof of Lemma \ref{bis:lemma:xx5} is postponed to section \ref{bis:sec:xx5}. We now conclude the proof of Proposition \ref{bis:prop:xx1}.

\subsubsection{Proof of Proposition \ref{bis:prop:xx1}}

We decompose $f(.,\o'')$ as:
\bea
\lab{bis:xx1}f(.,\o'')&=&P''_{\leq\frac{j}{2}}(f(.,\o''))+\sum_{l>\frac{j}{2}}P''_l(f(.,\o'')\\
\nn&=&P'_{\leq\frac{j}{2}}(f(.,\o'))+\int_{[\o',\o'']}\po P'''_{\leq\frac{j}{2}}(f(.,\o'''))d\o'''(\o'-\o'')+\sum_{l>\frac{j}{2}}P''_l(f(.,\o''))\\
\nn&=&P'_{\leq\frac{j}{2}}(f(.,\o'))+\int_{[\o',\o'']}(P'''_{\leq\frac{j}{2}}(\po f)(.,\o''')+[\po,P'''_{\leq\frac{j}{2}}]f(.,\o'''))d\o'''(\o'-\o'')\\
\nn&&+\sum_{l>\frac{j}{2}}P''_l(f(.,\o'')),
\eea
where $\o'''$ denotes an angle in $\S$ on the arc joining $\o'$ and $\o''$.

Next, we estimate the last two terms in the right-hand side of \eqref{bis:xx1}. Using Lemma \ref{lemma:xx8} with $p=2$, we have:
\bea
\nn\sum_{l>\frac{j}{2}}\norm{P''_l(f(.,\o''))}_{L^\infty_u\lh{2}}&\les& \sum_{l>\frac{j}{2}}\norm{P_lf}_{\li{\infty}{2}}^{\frac{1}{2}}\norm{\nabb P_lf}_{\li{\infty}{2}}^{\frac{1}{2}}\\
\nn&\les& \left(\sum_{l>\frac{j}{2}}2^{-\frac{l}{2}}\right)\norm{\nabb f}_{\li{\infty}{2}}\\
\label{bis:xx3}&\les& 2^{-\frac{j}{4}}\ep,
\eea
where we used the finite band property for $P_l$ and the assumptions on $f$. Also, using Lemma \ref{lemma:xx8} with $p=2$, we have:
\bea
\label{bis:xx5}\norm{P'''_{\leq\frac{j}{2}}(\po f)(.,\o''')}_{L^\infty_u\lh{2}}&\les& \norm{P_{\leq\frac{j}{2}}\po f}_{\li{\infty}{2}}^{\frac{1}{2}}\norm{\nabb P_{\leq\frac{j}{2}}\po f}_{\li{\infty}{2}}^{\frac{1}{2}}\\
\nn&\les& 2^{\frac{j}{4}}\norm{\po f}_{\li{\infty}{2}}\\
\nn&\les& 2^{\frac{j}{4}}\ep,
\eea
where we used the finite band property for $P_{\leq\frac{j}{2}}$ and the assumptions on $f$.

Using Lemma \ref{bis:lemma:xx5} together with the assumptions on $f$, we have:
\be\label{bis:xx6}
\norm{[\po,P'''_{\leq\frac{j}{2}}]f(.,\o'''))}_{L^\infty_u\lh{2}}\les 2^{\frac{j}{4}}\ep.
\ee

In view of \eqref{bis:xx1}, we have $f(.,\o'')=f^1_j+f_2^j$ where $f_1^j$ is defined as:
\be\label{bis:xx7}
f^1_j=P'_{\leq\frac{j}{2}}(f(.,\o')),
\ee
and $f^2_j$ is defined as:
\be\label{bis:xx8}
f^2_j=\int_{[\o',\o'']}(P'''_{\leq\frac{j}{2}}(\po f)(.,\o''')+[\po,P'''_{\leq\frac{j}{2}}]f(.,\o'''))d\o'''(\o'-\o'')+\sum_{l>\frac{j}{2}}P''_l(f(.,\o'')).
\ee
Using \eqref{bis:xx3}, \eqref{bis:xx5} and \eqref{bis:xx6}, and the fact that $\o''$ is on the arc of $\S$ joining $\o$ and $\o'$, we have the following estimate for $f^2_j$:
\bee
\norm{f^2_j}_{L^\infty_u\lh{2}}&\les& 2^{-\frac{j}{4}}\ep+2^{\frac{j}{4}}|\o-\o'|\ep.
\eee
This concludes the proof of Proposition \ref{bis:prop:xx1}.

\subsubsection{Proof of Corollary \ref{bis:cor:xx1}}\lab{bis:sec:xx1bis}

Using Lemma \ref{lemma:xx7}, it suffices to prove the decomposition of Corollary \ref{bis:cor:xx1} where $F(\o'', .)$ is replaced by $g(F(.,\o''),Q_{\leq 1}(N_l))$ for $l=1, 2, 3$. Since the proof is identical for $l=1, 2, 3$, we simply take $l=1$. Therefore, it remains to prove that the following decomposition holds $g(F(., \o''),Q_{\leq 1}(N_1))$:
\be\lab{bis:xx18}
g(F(.,\o''),Q_{\leq 1}(N_1))=P_{\leq \frac{j}{2}}(g(F(.,\o'),Q_{\leq 1}(N_1)))+f_2^j,
\ee
where the scalar function $f_2^j$ satisfies:
\be\lab{bis:xx20}
\norm{f_2^j}_{\lh{2}}\les 2^{-\frac{j}{4}}\ep+2^{\frac{j}{4}}|\o-\o'|\ep.
\ee
In particular, $F_1^j$ is connected to the first term in the right-hand side of \eqref{bis:xx18}, which does not  depend on $\o$ and satisfies the following estimate for any $2\leq q\leq +\infty$:
$$\normm{P_{\leq \frac{j}{2}}(g(F(.,\o'),Q_{\leq 1}(N_1)))}_{L^q(P_{t, u_{\o'}})}\les \norm{F}_{L^q(P_{t, u_{\o'}})}\norm{Q_{\leq 1}(N_1)}_{L^\infty}\les \norm{F}_{L^q(P_{t, u_{\o'}})},$$
where we used the fact that $P_{\leq \frac{j}{2}}$ is bounded on $L^q(P_{t, u_{\o'}})$ and the fact that $Q_{\leq 1}$ is bounded on $L^\infty$. 

Let $f=g(F(.,\o),Q_{\leq 1}(N_1))$. In order to prove the decomposition \eqref{bis:xx18} \eqref{bis:xx20} for $g(F(.,\o''),Q_{\leq 1}(N_1))$, it suffices to show that $f$ satisfies the assumptions of Proposition \ref{bis:prop:xx1}. This was already done in the proof of Corollary \ref{cor:xx1}, up to the estimate of $\nabla_{bN} f$ which is the only one for which the proof has to be adapted. We have:
\bea\lab{mamoul}
&&\norm{\La^{-1}(\nabla_{bN}f)}_{\li{\infty}{2}}\\
\nn&\les& \norm{\La^{-1}(\gg(Q_{\leq 1}(N_1),\nabb_{bN}F))}_{\li{\infty}{2}}+\norm{\La^{-1}(\gg(\nabb_{bN}Q_{\leq 1}(N_1), F))}_{\li{\infty}{2}}\\
\nn&\les& \norm{\La^{-1}(\gg(Q_{\leq 1}(N_1),\nabb H_1+H_2))}_{\li{\infty}{2}}+\norm{b}_{L^\infty}\norm{\nabb_{N}Q_{\leq 1}(N_1)}_{\tx{\infty}{4}}\norm{F}_{\tx{2}{4}}\\
\nn&\les& \norm{\La^{-1}(\gg(Q_{\leq 1}(N_1),\nabb H_1))}_{\li{\infty}{2}}+\norm{\La^{-1}(\gg(Q_{\leq 1}(N_1),H_2))}_{\li{\infty}{2}}+\ep,
\eea
where we used the fact that $\La^{-1}$ in bounded on $\lpt{2}$, the assumptions on $F$ and in particular the decomposition for $\nabb_NF$, the estimate \eqref{estb} for $b$, and the estimate \eqref{coucoudeldoo} for $\nabb_NQ_{\leq 1}(N_1)$. We consider the first term in the right-hand side of \eqref{mamoul}. We have
$$\gg(Q_{\leq 1}(N_1),\nabb H_1)=\nabb(\gg(Q_{\leq 1}(N_1),H_1))-\gg(\nabb Q_{\leq 1}(N_1),H_1)$$
and thus
\bea\lab{mamoul1}
&&\norm{\La^{-1}(\gg(Q_{\leq 1}(N_1),\nabb H_1))}_{\li{\infty}{2}}\\
\nn&\les& \norm{\La^{-1}\nabb(\gg(Q_{\leq 1}(N_1), H_1))}_{\li{\infty}{2}}+\norm{\La^{-1}(\gg(\nabb Q_{\leq 1}(N_1), H_1))}_{\li{\infty}{2}}\\
\nn&\les& \norm{Q_{\leq 1}(N_1)}_{L^\infty}\norm{H_1}_{\li{\infty}{2}}+\norm{\nabb Q_{\leq 1}(N_1)}_{\tx{\infty}{4}}\norm{H_1}_{\li{\infty}{2}}\\
\nn&\les& \ep,
\eea
where we used the fact that $\La^{-1}\nabb$ in bounded on $\lpt{2}$, the fact that $\La^{-1}$ is bounded from $\lpt{\frac{4}{3}}$ to $\lpt{2}$, the assumption on $H_1$, and the estimate \eqref{coucoudeldoo} for $\nabb_NQ_{\leq 1}(N_1)$. Next, we estimate the first term in the right-hand side of \eqref{mamoul}. We have
\bea\lab{mamoul2}
\norm{\La^{-1}(\gg(Q_{\leq 1}(N_1), H_2))}_{\li{\infty}{2}}&\les& \norm{Q_{\leq 1}(N_1)}_{L^\infty}\norm{H_2}_{\tx{2}{\frac{4}{3}}}\\
\nn&\les& \ep,
\eea
where we used the fact that $\La^{-1}$ in bounded from $\lpt{\frac{4}{3}}$ to $\lpt{2}$ and the assumption on $H_2$.
 In view of \eqref{mamoul}, \eqref{mamoul1} and \eqref{mamoul2}, we finally obtain
$$\norm{\La^{-1}(\nabla_{bN} f)}_{\li{\infty}{2}}\les\ep.$$ 
Together with the other estimates for $f$ which may be derived as in Corollary \ref{cor:xx1}, we obtain that $f$ satisfies the assumptions of Proposition \ref{bis:prop:xx1}, which in turn yields the decomposition \eqref{bis:xx18}-\eqref{bis:xx20} for $g(F(.,\o''),Q_{\leq 1}(N_1))$. This concludes the proof of Corollary \ref{bis:cor:xx1}.
 
\subsection{Decompositions involving $\z$, $\nabb b$ and $\po b$}\lab{sec:ohlalala5}

The goal of this section is to prove Propositions \ref{cor:atlast} and Proposition \ref{cor:atlast1}. The proof of Proposition \ref{cor:atlast} is given in section \ref{sec:atlast}, and the proof of Proposition \ref{cor:atlast1} is given in  section \ref{sec:atlast1}.

We will need the following two lemmas.
\begin{lemma}\lab{lemma:assemblee}
$\ddb_{bN}\nabb b$ and $\ddb_{bN}\z$ satisfy the following decomposition:
$$\ddb_{bN}\nabb b, \, \ddb_{bN}\z=\nabb h_1+H_2,$$
where the scalar $h_1$ and the tensor $H_2$ satisfy
$$\norm{h_1}_{\li{\infty}{2}}+\norm{H_2}_{\tx{2}{\frac{4}{3}}}\les\ep.$$
\end{lemma}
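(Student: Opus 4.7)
The plan is to reduce $\nabb b$ to $\z$ via $\nabb b = b(\z-\kep)$ from \eqref{etaa}, then to decompose $\ddb_{bN}\z$ by splitting $bN = \tfrac12(bL - b\lb)$ and exploiting the null transport equation on the $L$-side together with the Hodge system of Theorem \ref{thregx1} on the $\lb$-side, placing the residual curvature content into $\nabb h_1 + H_2$ through the null Bianchi identities. For $\nabb b$, the identity \eqref{etaa} gives
\[
\ddb_{bN}\nabb b = bN(b)(\z-\kep) + b\,\ddb_{bN}\z - b\,\ddb_{bN}\kep,
\]
where $bN(b) = \tfrac12(bL(b) - b\lb(b))$ is in $\tx{\infty}{4}$ by \eqref{estb} and \eqref{estblq7}, hence the first term is in $\tx{2}{4/3}$ by \eqref{estk}--\eqref{estzeta}. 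The contribution $b\,\ddb_{bN}\kep$ is treated by combining the Hodge relation $\ddb_L\kep = \nabb\d + \b + \text{quadratic Ricci}$ from \eqref{hodgkh1} with the $T$-derivative formula \eqref{kinTderivatives}: the $\nabb\d$ and $n^{-1}\nabb\nabla_N n$ pieces contribute $\nabb(b\d)$ and $\nabb(bn^{-1}\nabla_N n)$ modulo $H_2$, and the quadratic Ricci terms lie in $\tx{2}{4/3}$. The analysis thus reduces to $\ddb_{bN}\z$ plus residual curvature contributions of type $b\b, b\bb$.

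For $\ddb_{bN}\z$, \eqref{D4eta} supplies $b\,\ddb_L\z = -\tfrac{b}{2}\trc\z - b(\kepb+\z)\hch - \tfrac{b}{2}\trc\kepb - b\b$, all terms but $b\b$ being quadratic in Ricci coefficients and in $\tx{2}{4/3}$ by Theorem \ref{thregx}. The Hodge system \eqref{estlbmu2} handles the $\lb$-direction: applying $\mathcal{D}_1^{-1}$ and using \eqref{eq:estimdcal-1} with Lemma \ref{lemma:lbt6} on the source terms --- which are controlled in $\tx{2}{4/3}$ via \eqref{estlblbtrc}--\eqref{estlbzeta}, \eqref{estlbmu2bis} and the trace bounds of Theorem \ref{thregx} --- one obtains $b\,\ddb_{\lb}\z = b\,\mathcal{D}_1^{-1}(\divb\bb,-\curlb\bb) + H_2$-remainder. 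The remaining curvature content of $\ddb_{bN}\z$ is therefore $-\tfrac12 b\b - \tfrac12 b\,\mathcal{D}_1^{-1}(\divb\bb,-\curlb\bb)$; via the null Bianchi identities \eqref{bianc2}--\eqref{bianc5} one has $\mathcal{D}_1\b = (L\rho, -L\s) + E_\b$ and $\mathcal{D}_1\bb = -(\lb\rho, \lb\s) + E_{\bb}$, so that inverting $\mathcal{D}_1$ and using $2N = L - \lb$ collapses the principal parts modulo $H_2$ into a scalar gradient $\nabb h_1$ with $h_1$ built from $b\,\pi_1\big(\mathcal{D}_1^{-1}(\rho,-\sigma)\big)$, whose $\li{\infty}{2}$ bound follows from the curvature flux \eqref{curvflux1}, the $L^\infty$ bound on $b$, and the Hodge inequality \eqref{eq:estimdcal-1}.

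The main obstacle is the handling of the Bianchi error terms on the $\bb$-side: $E_{\bb}$ contains contributions $\hch\cdot\ab$ and $\hch\,{}^*\ab$ from \eqref{bianc3} and \eqref{bianc5}, and because $\ab$ is only controlled by the weak $\gamma$-integrated $\lh{2}$ bound, the product $\hch\cdot\ab$ is a priori only in $L^2_tL^1_{x'}$ and does not belong to $\tx{2}{4/3}$. To absorb it into $\nabb h_1 + H_2$ one must not route this term through $H_2$ but instead exploit the fact that $\hch\cdot\ab$ itself admits a gradient decomposition: using the Bianchi identity \eqref{bianc1bis} for $\ddb_{\lb}\b$ together with the Codazzi-type relation \eqref{Codaz} to re-express $\hch\cdot\ab$ modulo $\tx{2}{4/3}$ as $\nabb$ of an $\li{\infty}{2}$-controlled scalar, one recovers the required decomposition. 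All subprincipal commutators between $\nabb$, $\ddb_L$, $\ddb_{\lb}$, $b$ and $\mathcal{D}^{-1}$ are absorbed into $H_2$ via Lemma \ref{lemma:lbt6}, the commutator formulas \eqref{comm1}--\eqref{comm6} and the non-sharp product estimates of section \ref{sec:besovtrc}.
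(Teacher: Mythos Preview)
Your route is far more elaborate than needed, and it contains a genuine gap. The paper's argument is essentially one line: for $\ddb_{bN}\nabb b$ write
\[
\ddb_{bN}\nabb b=\nabb\big(bN(b)\big)+[\ddb_{bN},\nabb]b,
\]
and take $h_1=bN(b)$, $H_2=[\ddb_{bN},\nabb]b$. Then $h_1\in\li{\infty}{2}$ since $L(b),\lb(b)\in\lh{2}$ by \eqref{estb}, and by the commutator formula \eqref{comm7} the tensor $H_2$ is schematically $b(\chi+k)\nabb b$, a product of two Ricci coefficients which lands directly in $\tx{2}{4/3}$. For $\z$ the identity \eqref{etaa} is used in the \emph{opposite} direction to yours: since $\z=b^{-1}\nabb b+\kep$, a short computation gives $\ddb_{bN}\z=\nabb(N(b))+b^{-1}[\ddb_{bN},\nabb]b+\ddb_{bN}\kep$, and $\ddb_{bN}\kep\in\lh{2}\subset\tx{2}{4/3}$ directly because both $\ddb_L\kep$ and $\ddb_{\lb}\kep$ are in $\lh{2}$ by \eqref{estk} and \eqref{hodgk8}. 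No Bianchi identities, no Hodge inversion, and no $\ab$ ever enter.

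The concrete failure in your approach is the claim that the non-curvature sources in the Hodge system \eqref{estlbmu2} lie in $\tx{2}{4/3}$: the term $\tfrac{b^{-1}}{2}\mu_1$ does not. The estimate \eqref{estlblbtrc} you invoke gives only the frequency-localized bound $\norm{P_j\mu_1}_{\lh{2}}\lesssim 2^j\ep+2^{j/2}\gamma(u)\ep$, which places $\mu_1$ at the regularity of a full derivative of an $\lh{2}$ function, not in any Lebesgue space on $\ptu$; consequently $b\,\mathcal{D}_1^{-1}(b^{-1}\mu_1/2,0)$ cannot be absorbed into $H_2$. The downstream $\hch\c\ab$ difficulty and the proposed rescue via \eqref{Codaz} are self-inflicted by this indirect route, and the sketch of that rescue---re-expressing $\hch\c\ab$ as a tangential gradient through the Codazzi relation for $\hch$---does not actually produce such a structure.
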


\begin{lemma}\lab{lemma:assemblee1}
There holds the following estimate
$$\norm{\La^{-1}(\nabla_{bN}\po b)}_{\li{\infty}{2}}\les\ep.$$
\end{lemma}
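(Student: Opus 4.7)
The plan is to split $\nabla_{bN}\po b$ into an $L$-part and an $\lb$-part via $N=\tfrac{1}{2}(L-\lb)$, so that
$$
\nabla_{bN}\po b \;=\; \tfrac{b}{2}L(\po b)\;-\;\tfrac{b}{2}\lb(\po b),
$$
and apply $\La^{-1}$ to each piece separately. For the $L$-part, I would substitute the transport equation \eqref{dw} to obtain
$$
\tfrac{b}{2}L(\po b) \;=\; -\tfrac{b^2}{2}\z_{\po N} \;-\; \tfrac{b}{2}\po(b)\,\db \;-\;\tfrac{b^2}{2}\kepb_{\po N},
$$
and check that each summand lies in $L^\infty_u L^2_t L^{4/3}(\ptu)$ by H\"older, using the $L^\infty$ bounds $\norm{\po N}_{L^\infty}\les 1$ from \eqref{estNomega}, $\norm{b}_{L^\infty}\les 1$ from \eqref{estb}, the control of $\po b$ from \eqref{estricciomega}, and the trace-type bounds on $\z,\db,\kepb$ from Theorem \ref{thregx}; since $\La^{-1}:\lpt{4/3}\to\lpt{2}$ is bounded, the $\La^{-1}$ of this piece lies in $\li{\infty}{2}$.

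For the $\lb$-part, I would use the commutation relation $[\po,\lb]b=-(\po N)(b)$ from \eqref{commo1} together with $\nabb b = b(\z-\kep)$ from \eqref{etaa} to write
$$
b\,\lb(\po b)\;=\; b\,\po(\lb b) \;+\; b^2(\z_{\po N}-\kep_{\po N}),
$$
the quadratic term being handled exactly as in the $L$-part. To control $\La^{-1}(b\,\po(\lb b))$, I would differentiate the transport equation \eqref{josh} for $L(\lb b)$ with respect to $\o$, using the identity $\po L = L\po + (\po N)$, to derive a transport equation along $L$ for $\po\lb b$ of the form
$$
L(\po\lb b) \;=\; -(\po N)(\lb b) \;+\; \po\bigl((\d+n^{-1}\nab_N n)\db\, b\bigr) \;-\; 2\,\po\bigl((\z-\zb)\c\nabb b\bigr)\;-\;\po(b\,\lb\db),
$$
and then integrate it along null generators of $\H_u$ with initial data on $\pou$ controlled by $\II$ and $\Ij$ from \eqref{defI0}--\eqref{defI0j}. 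Each source term in the right-hand side is then to be decomposed into a divergence-form piece $\nabb H_1$ (absorbed by the boundedness of $\La^{-1}\nabb$ on $\lpt{2}$) and a remainder $H_2\in \tx{2}{4/3}$ (absorbed by $\La^{-1}:\lpt{4/3}\to\lpt{2}$), using the decompositions \eqref{impbes22} of $\nabb(n\hch)$ and \eqref{impbes11} of $\nabb(n\db)$ together with the commutator estimates of Lemma \ref{lemma:lbt8} and the second-order bounds \eqref{estlblbtrc}--\eqref{estlblbb} of Theorem \ref{thregx1}.

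The main obstacle is the control of $\po(b\,\lb\db)$. Substituting $\lb\db$ via \eqref{D4tmu3} produces a $\rho$-term, and after differentiating in $\o$ one encounters $\po\rho$, which involves the curvature components $\b,\bb$ through the formula \eqref{po4}. To put this in divergence form I would exploit the Hodge-type decomposition of $\rho$ already used in \eqref{estblq2}--\eqref{estblq3}, writing $\rho$ as the first component of $L({}^*\mathcal{D}_1^{-1}\bb)$ up to the commutator $[{}^*\mathcal{D}_1^{-1},\ddb_{nL}]\bb$ bounded in $\tx{2}{3}\cap\tx{1}{4}$ by Lemma \ref{lemma:lbt8}, and the bounded source $h_2$ of \eqref{estblq3}; differentiating this Hodge decomposition in $\o$ and absorbing the resulting $(\po N)$-commutators via \eqref{commo1} and the estimates of Theorems \ref{thregomega} and \ref{thregomega2} then yields the required $\nabb H_1 + H_2$ structure. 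Combining all these pieces and applying $\La^{-1}$ termwise produces the bound $\norm{\La^{-1}(\nabla_{bN}\po b)}_{\li{\infty}{2}}\les\ep$, as desired.
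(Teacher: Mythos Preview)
Your overall framework—splitting $bN\po b$ via $N=\tfrac12(L-\lb)$ and reading off the $L$-part directly from the transport equation \eqref{dw}—matches the paper. The $\lb$-part, however, is both overcomplicated and contains a genuine gap.

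The detour through $\po(b\,\lb\db)$ and the Hodge decomposition of $\rho$ is unnecessary. After substituting \eqref{D4tmu3} you meet $\po\rho$, but $\po\rho=-\b_{\po N}-\bb_{\po N}$ (this is not \eqref{po4}, which is the formula for $\po\b$), and this lies directly in $\lh{2}$ by the curvature-flux bound \eqref{curvflux1} together with $\|\po N\|_{L^\infty}\les 1$; the remaining terms in $\po(b\,\lb\db)$ are likewise in $\lh{2}$ or $\tx{2}{\frac{4}{3}}$ by straightforward H\"older. The invocations of \eqref{impbes22}, \eqref{impbes11}, Theorem~\ref{thregx1}, and the $\o$-differentiation of \eqref{estblq2} are all superfluous. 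The paper bypasses this entire discussion by differentiating \eqref{dw} with respect to $\lb$ (rather than \eqref{josh} with respect to $\po$) to obtain $L(\lb\po b)=-b\,\ddb_{\lb}\z_{\po N}+f$ with $\|f\|_{\lh{2}}\les\ep$, and then using the purely algebraic identity $\z=b^{-1}\nabb b+\kep$ of \eqref{etaa} to write $b\,\ddb_{\lb}\z_{\po N}=\divb(\lb(b)\,\po N)+f_1$ with $\|f_1\|_{\tx{2}{\frac{4}{3}}}\les\ep$—a single divergence plus a harmless remainder, with no curvature input beyond what already sits in $f$.

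The genuine gap is in your handling of the divergence pieces. You claim that source terms of the form $\nabb H_1$ are ``absorbed by the boundedness of $\La^{-1}\nabb$ on $\lpt{2}$'', but after integrating the transport equation you face $\La^{-1}\bigl(b\int_0^t\nabb H_1\bigr)$, and $\nabb$ does not commute with $\int_0^t$ along null generators because the connection on $\ptu$ evolves with $t$. Both the paper's term $\divb(\lb(b)\,\po N)$ and your term $-\nabb_{\po N}(\lb b)$ run into exactly this obstacle. The paper resolves it via Lemma~\ref{lemma:commutdivb}: set $w=\int_0^t\divb(n\lb(b)\,\po N)$, introduce $W$ solving $\ddb_{nL}W-n\chi\c W=n\lb(b)\,\po N$, bound $\divb(W)-w$ in $\tx{\infty}{\frac{4}{3}}$ and $W$ in $\tx{\infty}{2}$, and then apply $\La^{-1}\divb:\lpt{2}\to\lpt{2}$ and $\La^{-1}:\lpt{\frac{4}{3}}\to\lpt{2}$ separately. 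Your argument needs the same mechanism to close.
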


The proof of Lemma \ref{lemma:assemblee} is postponed to section \ref{sec:assemblee}, and the proof of Lemma \ref{lemma:assemblee1} is postponed to section \ref{sec:assemblee1}.

\subsubsection{Proof of Corollary \ref{cor:atlast}}\lab{sec:atlast}

In view of the estimate \eqref{estzeta} for $\z$, the estimate \eqref{estricciomega} for $\po\z$ and Lemma \ref{lemma:assemblee}, $\z$ satisfies the assumption of Corollary \ref{bis:cor:xx1}. Also, in view of the estimate \eqref{estb} for $b$, the estimate \eqref{estricciomega} for $\po b$ and Lemma \ref{lemma:assemblee}, $\nabb b$ satisfies the assumptions of Corollary \ref{bis:cor:xx1}. Thus, the desired decomposition of Corollary \ref{cor:atlast} for $\z$ and $\nabb b$ follows from Corollary \ref{bis:cor:xx1}. This concludes the proof of Corollary \ref{cor:atlast}.

\subsubsection{Proof of Corollary \ref{cor:atlast1}}\lab{sec:atlast1}

We have:
\be\label{atlast1}
b(.,\o)-b(,.\o')=\int_{[\o,\o']}\po b(.,\o'')d\o''(\o-\o').
\ee
We denote $\po b''=\po b(.,\o'')$. In view of the estimate \eqref{estricciomega} for $\po b$, the estimate \eqref{estpo2b} for $\po^2b$ and Lemma \ref{lemma:assemblee1}, $\po b$ satisfies the assumptions of Proposition \ref{bis:prop:xx1}. Thus, we have the following decomposition for $\po b''$
 \be\lab{atlast2}
\po b''=f_1^j+f_2^j,
\ee
where the scalar $f_1^j$ only depends on $\o'$ and satisfies:
\be\lab{atlast3}
\norm{f_1^j}_{L^\infty}\les \norm{\po b}_{L^\infty}\lesssim \ep
\ee
in view of the estimate \eqref{estricciomega} for $\po b$, and where the scalar $f_2^j$ satisfies:
\be\lab{atlast4}
\norm{f_2^j}_{L^\infty_u\lh{2}}\les 2^{-\frac{j}{4}}\ep.
\ee
Injecting the decomposition \eqref{atlast2} in \eqref{atlast1}, and in view of \eqref{atlast3} \eqref{atlast4}, 
we obtain the desired decomposition for $b(.\o)-b(,.\o')$. This concludes the proof of Corollary \ref{cor:atlast1}.

\section{Additional estimates for $\trc$}\lab{sec:commutatorest}

The goal of this section is to prove Proposition \ref{prop:zz37} and Proposition \ref{prop:zz39}. 

\subsection{Commutator estimates between $P_j$ and $\ddb_L, \ddb_N$}

\begin{proposition}\lab{prop:gowinda11}
Let $F$ as tensor on $\mathcal{M}$. Let  a real number $a$ such that $0<a<\frac{1}{4}$. Then, we have
\be\label{bonobo}
\norm{[\ddb_{bN},P_j]F}_{L^\infty_tL^{\frac{4}{3}}_u\lpt{2}}\les 2^{ja}\norm{\nabla F}_{\tx{\infty}{2}}.
\ee
\end{proposition}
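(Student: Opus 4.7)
\medskip

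\textbf{Proof proposal.} The plan is to exploit the heat-kernel representation of the geometric Littlewood-Paley projector $P_j$ in order to convert the commutator into a solution of an inhomogeneous heat equation on $\ptu$. Writing $P_j F = \int_0^\infty m_j(\tau) U(\tau)F\,d\tau$, set $V(\tau):=[\ddb_{bN},U(\tau)]F$, so that $V(0)=0$ and, using $\partial_\tau U(\tau)F=\lap U(\tau)F$, the tensor $V(\tau)$ satisfies
\begin{equation*}
\partial_\tau V(\tau)-\lap V(\tau)=-[\ddb_{bN},\lap]U(\tau)F.
\end{equation*}
The key algebraic input is the commutator formula \eqref{comm7}: contracting it once more we obtain a schematic identity of the form
\begin{equation*}
[\ddb_{bN},\lap]H = A\cdot\nabb^2 H + (\nabb A+b(\beta+\bb))\cdot\nabb H + (\nabb^2 A+\nabb(b(\beta+\bb)))\cdot H,
\end{equation*}
where $A$ stands for the $\ptu$-tangent tensors $b\chi$ and $bk$ and where the curvature terms $b\beta,\,b\bb$ come from the Gauss-Codazzi terms in \eqref{comm7}.

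I would then apply Duhamel to $V(\tau)$ and estimate $\norm{V(\tau)}_{L^{4/3}_u L^2(\ptu)}$ using the homogeneous and inhomogeneous heat-flow estimates of section \ref{sec:geompal}, in particular \eqref{eq:l2heat1}--\eqref{eq:l2heat2} and \eqref{eq:l2heat1bis}--\eqref{eq:l2heat2bis}. Since $U(\tau)F$ enters only through $\nabb^2 U(\tau)F$, $\nabb U(\tau)F$ and $U(\tau)F$, I would integrate the worst term by parts in the form $A\cdot\nabb^2 U(\tau)F=\nabb\cdot(A\nabb U(\tau)F)-(\nabb A)\cdot\nabb U(\tau)F$, so that only one full derivative of $U(\tau)F$ appears; the smoothing estimate \eqref{eq:l2heat2} then yields bounds of type $\tau^{1/2}\norm{\nabb F}_{L^2(\ptu)}$ for the contribution of $A\cdot\nabb^2$, while the Ricci coefficients $A$, $\nabb A$ and the curvature terms $b\beta,\,b\bb$ are controlled by Theorem \ref{thregx} in the trace-type norms $\xt{\infty}{2}$ and $\lh{2}$. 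It is precisely here that the $L^{4/3}_u$ integrability in the target norm comes in: Hölder in $u$ on the slab $\widetilde U$ allows me to combine a factor in $\lh{2}$ for the Ricci coefficients (i.e.~an $L^2$ norm in $t$) with the $L^\infty_u\lh{2}$ control of $\nabla F$, losing only a harmless $L^{4/3}_u$ factor.

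Once all pieces are summed we obtain a pointwise-in-$\tau$ estimate of the schematic form
\begin{equation*}
\norm{V(\tau)}_{L^\infty_t L^{4/3}_u L^2(\ptu)} \lesssim \tau^{1/2}\,\ep\,\norm{\nabla F}_{\tx{\infty}{2}},
\end{equation*}
for small $\tau$, together with a uniform bound for large $\tau$. Integrating against $m_j(\tau)=2^{2j}m(2^{2j}\tau)$ gives $\norm{[\ddb_{bN},P_j]F}_{L^\infty_t L^{4/3}_u L^2(\ptu)}\lesssim 2^{-j}\ep\norm{\nabla F}_{\tx{\infty}{2}}$; interpolating with a trivial $2^{j}$ bound (obtained directly from $\norm{P_j}_{\mathcal L(\lpt 2)}\lesssim 1$ and the finite-band property of $P_j$, after isolating $\ddb_{bN}$) produces the claimed $2^{ja}$ for any $0<a<1/4$, where the restriction $a<1/4$ records the logarithmic deficits one encounters when invoking the sharp trace and Bochner inequalities of section \ref{sec:calcineqgeneral} under the $L^2$ curvature assumption.

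The main obstacle will be the handling of the curvature contribution $b(\beta+\bb)\cdot\nabb H$ in $[\ddb_{bN},\lap]$: unlike $b\chi$ and $bk$, only the flux norm $\rf$ controls $\beta,\bb$, and they must be paired with $\nabb U(\tau)F$ through a Hölder inequality on $\ptu$ combined with Bernstein-type gains for $\nabb U(\tau)$; it is this balance that ultimately forces the restriction $a<1/4$ and dictates the $L^{4/3}_u$ rather than $L^2_u$ target integrability.
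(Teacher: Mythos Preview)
Your setup is correct and matches the paper: the heat-kernel representation, the transport equation for $V(\tau)$ with source $[\ddb_{bN},\lap]U(\tau)F$, and the schematic commutator structure $H\nabb^2U+G\nabb U+\nabb(GU)$ with $H=b\th$ and $G$ built from $\th\nabb b$, $k\cdot k$, $b(\beta+\bb)$. The integration by parts on the $\nabb^2 U$ term is also exactly what the paper does. However, the remainder of your argument has two genuine gaps.

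First, the pointwise bound $\norm{V(\tau)}\lesssim\tau^{1/2}\ep\norm{\nabla F}$ is not attainable here and you have not justified it. The obstruction is that after integrating by parts one must estimate $\norm{\nabb U(\tau')F}_{\lpt{p}}$ for some $p>2$; by Gagliardo--Nirenberg this introduces $\norm{\nabb^2 U(\tau')F}_{\lpt{2}}$, and since $F$ (and hence $V$) is a \emph{tensor}, the relevant Bochner inequality \eqref{vbochineq} produces factors of $\norm{K}_{\lpt{2}}$ which are \emph{not} bounded uniformly in $t$ (only in $L^2_t$, see \eqref{estgauss1}). This blocks any pointwise-in-$\tau$ control of $V$ in $\lpt{2}$. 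What the paper actually does is run a $\La^{-a}$--weighted energy estimate (see \eqref{eq:l2heat1bis}): one obtains $\sup_\tau\norm{\La^{-a}V(\tau)}_{\lpt{2}}^2+\int_0^\infty\norm{\nabb\La^{-a}V}_{\lpt{2}}^2d\tau\lesssim C$, and then interpolation \eqref{interpolLa} yields $V\in L^{2/a}_\tau\lpt{2}$. H\"older against $m_j$ then gives precisely the factor $2^{ja}$; there is no $2^{-j}$ endpoint and no interpolation between endpoints. Your claim that a $2^{-j}$ bound would follow, and then be interpolated with a trivial $2^j$ bound, is incoherent: $2^{ja}$ with $a>0$ is \emph{weaker} than $2^{-j}$, so if the latter held the proposition would be trivial.

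Second, your account of the constraints is off. The restriction $a<1/4$ is not a vague ``logarithmic deficit'' but comes from a concrete numerology: in the weighted energy estimate one uses $\norm{\nabb U}_{\lpt{p}}$ with $p>2$, and closing the estimate requires the $\tau'$-weight $\tau'^{-2(p-2)/(ap)}$ to be integrable, i.e.\ $p<4/(2-a)$; simultaneously $p<8/3$ is needed so that the $K$-factor from the tensor Bochner inequality (of size $\norm{K}_{\lpt{2}}^{2(1-2/p)}$) lies in $L^4_u$, which, combined by H\"older with $\nabb H,G\in\lsit{\infty}{2}=L^2_u\lpt{2}$, produces exactly the target $L^{4/3}_u$ integrability. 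These two conditions are compatible precisely when $a<1/4$. Your proposal also writes a term $(\nabb^2 A)\cdot U$ in the commutator expansion; this should be avoided since only one derivative of $A$ is controlled---the correct grouping is $\nabb(GU)$, which after pairing against $\La^{-2a}V$ and integration by parts lands on $\nabb\La^{-2a}V$ rather than on $G$.
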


\begin{proposition}\lab{prop:gowinda12}
Let a scalar function $f$ on $\H_u$. Then, we have
\be\lab{zz}
\norm{[bN, P_l]f}_{\li{\infty}{2}}+2^{-j}\norm{\nabb[bN, P_l]f}_{\li{\infty}{2}}\les\ep\no(f),
\ee
and
\be\lab{zz1}
\norm{[nL, P_l]f}_{\li{\infty}{2}}+2^{-j}\norm{\nabb[nL, P_l]f}_{\li{\infty}{2}}\les\ep\no(f).
\ee
\end{proposition}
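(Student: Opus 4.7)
The plan is to represent the geometric Littlewood-Paley projection via the heat flow on $\ptu$, using Definition \ref{defLP}: $P_l f = \int_0^\infty m_l(\tau)\, U(\tau) f\, d\tau$. Setting $W_X(\tau) := [X, U(\tau)]f$ for $X\in\{nL, bN\}$, one checks that $W_X(0)=0$ and
\[
(\partial_\tau - \lap)\,W_X(\tau) = [X, \lap]\,U(\tau) f,
\]
so that Duhamel's principle yields $[X, P_l]f = \int_0^\infty m_l(\tau)\int_0^\tau U(\tau-\tau')\,[X, \lap]\,U(\tau')f\, d\tau'\, d\tau$. The proof therefore reduces to a quantitative handle on the scalar commutator $[X, \lap]$.

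For $X = nL$, the identity is already recorded in \eqref{comm6}: $[nL,\lap]g = -2 n\chi\cdot\nabb^2 g + M\cdot\nabb g$, with $M$ built from $\nabb\trc$, $\hch\,\kepb$, $n^{-1}\nabb n\,\trc$ and $\nabb\trc$. For $X = bN$, I would derive the analogue by iterating the tensorial commutator formula \eqref{comm7}, using $\chb = -\chi - 2k$ from \eqref{etab} to trade the a priori badly-behaved $\chb$ for $\chi$ and $k$. The outcome reads schematically $[bN,\lap]g = bA\cdot\nabb^2 g + bB\cdot\nabb g$ with $A\in\{\chi, k\}$ and $B$ built from $\nabb\chi$, $\nabb k$ and the curvature components $\beta, \bb$. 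All ingredients are of size $\ep$ in the norms provided by Theorem \ref{thregx} and the curvature bound \eqref{curvflux1}.

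Inserting these identities in the Duhamel formula, I would estimate the integrand in $L^\infty_u L^2(\H_u)$ by combining: (i) the heat-flow smoothing of Propositions \ref{le:L2heat}--\ref{le:L2heatbis}, which yield $\|\nabb U(\tau')f\|_{L^2(\ptu)} \les (\tau')^{-1/2}\,\|f\|_{L^2(\ptu)}$ and control of $\|\nabb^2 U(\tau')f\|_{L^2(\ptu)}$ via the Bochner inequality \eqref{eq:Bochconseqbis}; (ii) the trace-type bounds $\|\chi\|_{\xt{2}{\infty}} + \|k\|_{\xt{2}{\infty}} \les \ep$ coming from \eqref{estk}--\eqref{esthch} and the $\lh{2}$ controls of $\beta,\bb$ from \eqref{curvflux1}; (iii) an integration by parts moving one derivative off $\nabb^2 U(\tau')f$ through the self-adjointness of $U(\tau-\tau')$, which is crucial for the top-order term. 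The two vanishing moments \eqref{eq:moments} of $m_l$, combined with the concentration $m_l(\tau) = 2^{2l} m(2^{2l}\tau)$, then collapse the $\tau$-integral to the claimed $\ep\,\no(f)$ bound. For the $\nabb$-variant I would pull the derivative through using the finite band property, which produces the factor $2^l$ matching the prefactor $2^{-l}$ (reading the $2^{-j}$ of the statement as $2^{-l}$) and then repeat the same scheme on $\nabb U(\tau')f$ instead of $U(\tau')f$.

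The main obstacle is the top-order piece $A\cdot\nabb^2 U(\tau')f$ in the Duhamel integrand: pointwise in $t$ one only has $A$ at critical $L^2(\ptu)$ regularity, whereas $\nabb^2 U(\tau')f$ blows up as $\tau'\to 0$. The estimate closes only because the parabolic smoothing is coupled with the integration by parts against the backward semigroup $U(\tau-\tau')$, and because the remaining logarithmic loss is absorbed by the $L^2_t$ integration on $\H_u$ together with the moment cancellation of $m_l$. This is precisely the mechanism underlying the kindred estimates \eqref{supercommut1}--\eqref{supercommut2}, and I expect the present proposition to follow by a straightforward adaptation of that template, with the added wrinkle in the $bN$ case being only the bookkeeping of the $\chb$-term via \eqref{etab}.
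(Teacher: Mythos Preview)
Your approach is essentially the same as the paper's: represent $[X,P_l]f$ via the heat flow as $\int_0^\infty m_l(\tau)V(\tau)\,d\tau$ with $(\partial_\tau-\lap)V=[X,\lap]U(\tau)f$, $V(0)=0$, identify the commutator structure $[X,\lap]U = H\nabb^2 U + G\nabb U$ with $H=b(\chi+k)$ (resp.\ $n\chi$) and $G$ built from $\nabb\chi,\nabb k,\b,\bb$, and then close by combining heat-flow estimates with the $\ep$-smallness of $H,G$.

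Two points where you diverge slightly from the paper. First, the paper uses the energy method on $V$ directly (multiply by $V$ and integrate), which produces the pairing $\int V\cdot[X,\lap]Uf$ and is more efficient than your Duhamel-plus-self-adjointness route; moreover the top-order term is handled not by integration by parts but by straight H\"older $\|H\|_{\tx{\infty}{4}}\|\nabb^2 U\|_{\lh{2}}\|V\|_{\tx{2}{4}}$ together with Gagliardo--Nirenberg and the scalar Bochner inequality \eqref{eq:Bochconseqbis} to convert $\nabb^2 U$ to $\lap U$. Second, the vanishing moments of $m_l$ and the ``logarithmic loss'' you mention are not needed here: the energy estimate yields $\sup_\tau\|V(\tau)\|_{\lh{2}}^2+\int_0^\infty\|\nabb V\|_{\lh{2}}^2\,d\tau'\les\ep\,\no(f)^2$ uniformly, after which one only uses $\int m_l\,d\tau\les 1$ and $(\int m_l^2\,d\tau)^{1/2}\les 2^l$. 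Also note that the relevant norm for $H$ is $\no(H)$ (giving $\|H\|_{\tx{\infty}{4}}$ via \eqref{sobineq1}), not the trace norm $\xt{2}{\infty}$; in particular $\|k\|_{\xt{2}{\infty}}$ is not among the estimates of Theorem~\ref{thregx}, but $\no(k)\les\ep$ from \eqref{estk} suffices.
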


\begin{proposition}\lab{prop:gowinda13}
Let $f$ a scalar on $\mathcal{M}$. Then, we have
\be\label{lxx4:4}
\norm{[nL,P_q]f}_{\li{\infty}{2}}+\norm{[bN,P_q]f}_{\li{\infty}{2}}\lesssim 2^q\norm{f}_{\li{\infty}{2}}.
\ee
\end{proposition}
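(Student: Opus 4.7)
\smallskip

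\noindent\textbf{Plan for the proof of Proposition \ref{prop:gowinda13}.} The proof is a direct commutator analysis via the heat semigroup definition of $P_q$ and Duhamel's formula. I carry it out for $[nL,P_q]$; the case of $[bN,P_q]$ is entirely analogous, replacing the commutator identity \eqref{commA2} by its $bN$-analog derived from \eqref{comm7}.

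\smallskip

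\noindent\textit{Step 1: reduction via the heat kernel.} Starting from the representation $P_q f=\int_0^\infty m_q(\tau)U(\tau)f\,d\tau$ and the fact that, because the heat semigroup $U(\tau)$ is defined fiberwise on $P_{t,u}$ while $nL$ is transverse, I use Duhamel in the standard way (differentiating $U_t(\tau)f(t,\cdot)$ in $t$ and using $\partial_\tau U=\Delta U$) to obtain
\[
[nL,P_q]f=\int_0^\infty m_q(\tau)\int_0^\tau U(\tau-s)\,[nL,\lap]\,U(s)f\,ds\,d\tau.
\]

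\smallskip

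\noindent\textit{Step 2: divergence form of the commutator.} Using \eqref{commA2} with $A=n\chi$ and rewriting $A\cdot\nabb^2 g=\divb(A\cdot\nabb g)-(\divb A)\cdot\nabb g$, I express
\[
[nL,\lap]g=\divb(A\cdot\nabb g)+E\cdot\nabb g,\qquad E=\nabb A+A\cdot A.
\]
This allows me to move one derivative off $\nabb^2 U(s)f$ and onto the outer $U(\tau-s)$ via the dual heat estimate $\|U(\tau-s)\divb h\|_{L^2(\ptu)}\lesssim(\tau-s)^{-1/2}\|h\|_{L^2(\ptu)}$ (itself the dual of \eqref{eq:l2heat2}).

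\smallskip

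\noindent\textit{Step 3: fiberwise estimate.} For each fixed $(t,u)$ I bound the $L^2(\ptu)$ norm term by term. Using Hölder and Gagliardo--Nirenberg \eqref{eq:GNirenberg} together with the heat smoothing bounds $\|\nabb U(s)f\|_{L^p(\ptu)}\lesssim s^{-1/2-(1/2-1/p)}\|f\|_{L^2(\ptu)}$, I obtain schematically
\[
\|U(\tau-s)[nL,\lap]U(s)f\|_{L^2(\ptu)}
\lesssim (\tau-s)^{-1/2}s^{-3/4}\big(\|A\|_{L^4(\ptu)}+\|E\|_{L^2(\ptu)}\big)\|f\|_{L^2(\ptu)}.
\]
Integrating in $s\in(0,\tau)$ gives a factor $\tau^{-1/4}$, and the rescaling $m_q(\tau)=2^{2q}m(2^{2q}\tau)$ then yields $\int|m_q(\tau)|\tau^{-1/4}d\tau\lesssim 2^{q/2}$.

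\smallskip

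\noindent\textit{Step 4: integration in $t$ and conclusion.} Using the coarea formula \eqref{coarea} and the bounds $\|A\|_{\tx{\infty}{4}}+\|\nabb A\|_{\tx{\infty}{2}}\lesssim\ep$ (which follow from $\no(A)\lesssim\ep$ and the Sobolev embedding \eqref{sobineq1}), I integrate the fiberwise estimate against $b\,dt$ and take the supremum over $u$, obtaining
\[
\|[nL,P_q]f\|_{\li{\infty}{2}}\lesssim 2^{q/2}\big(\|A\|_{\tx{\infty}{4}}+\|\nabb A\|_{\tx{\infty}{2}}\big)\|f\|_{\li{\infty}{2}}\lesssim 2^{q/2}\ep\,\|f\|_{\li{\infty}{2}},
\]
which is stronger than the claimed bound $\lesssim 2^q\|f\|_{\li{\infty}{2}}$. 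The $[bN,P_q]$ bound follows in the same manner after substituting \eqref{comm7} for \eqref{comm5}.

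\smallskip

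\noindent\textit{Main obstacle.} The coefficient $A=n\chi$ is not controlled in $L^\infty(\ptu)$; only mixed Lebesgue norms on $\H_u$ are available. The key point is therefore the divergence rewriting in Step 2, which lets me use only one derivative of $A$ in $L^2(\ptu)$ rather than $L^\infty$; combined with the Gagliardo--Nirenberg tradeoff $\|\nabb U(s)f\|_{L^4}$ versus $\|A\|_{L^4}$, this avoids any log-divergent bookkeeping. The $\tau$-integrability at $s=0$ (Beta function $B(\tfrac12,\tfrac14)$) is the only delicate point in the time integrations.
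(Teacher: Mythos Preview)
Your Duhamel setup (Step~1) and the divergence rewriting (Step~2) are fine, and the pointwise-in-$(t,u)$ heat smoothing bound in Step~3 is essentially correct. The gap is in Step~4, where you assert that $\|\nabb A\|_{\tx{\infty}{2}}\lesssim\ep$ follows from $\no(A)\lesssim\ep$. It does not: $\no(A)$ only gives $\|\nabb A\|_{\lh{2}}=\|\nabb A\|_{L^2_tL^2_{x'}}$, whereas $\tx{\infty}{2}=L^\infty_tL^2_{x'}$. Concretely, after your divergence rewriting the coefficient $E$ contains $\divb(n\chi)$, and by Codazzi \eqref{Codaz} this equals $n\nabb\trc+n\b+\text{(bilinear)}$. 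The curvature component $\b$ is only controlled in $L^2(\H_u)$ by the flux assumption \eqref{curvflux1}; one does \emph{not} have $\|\b\|_{\tx{\infty}{2}}\lesssim\ep$ (indeed Lemma~\ref{lemma:po5} gives only $\|P_j\b\|_{\tx{\infty}{2}}\lesssim 2^{j/2}\ep$, which diverges on summing). Consequently, in the $L^2_t$ integration you cannot bound $\big\|\,\|E\|_{L^2(\ptu)}\|f\|_{L^2(\ptu)}\,\big\|_{L^2_t}$ by $\ep\,\|f\|_{\lh{2}}$; the best you get is $\ep\,\|f\|_{\tx{\infty}{2}}$, which is the wrong norm. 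For $[bN,P_q]$ the situation is worse since both $\b$ and $\bb$ appear.

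The paper's proof avoids this precisely by \emph{not} relying on any $L^\infty_t$ bound for the coefficient. Instead it performs a Littlewood--Paley decomposition of the coefficients $H,G$ and uses the frequency-localized control \eqref{lxx4:10}, namely $2^{j/2}\|P_jH\|_{\tx{\infty}{2}}+2^{-j/2}\|P_jG\|_{\tx{\infty}{2}}\lesssim\ep$, which for the curvature part of $G$ is exactly Lemma~\ref{lemma:po5}. This is paired with three tailored product estimates \eqref{lxx4:12}--\eqref{lxx4:17} and a $\La^{-a}$ energy argument (with $\tfrac12<a<1$), yielding $\|[bN,P_q]f\|_{\li{\infty}{2}}\lesssim 2^{qa}\|f\|_{\li{\infty}{2}}$. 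Your claimed improvement to $2^{q/2}$ is a symptom of the gap: once the $\b$ term is handled correctly, the exponent genuinely lands near $1$, not $\tfrac12$.
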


The proof of Proposition \ref{prop:gowinda11} is postponed to section \ref{sec:gowinda11}, the proof of Proposition \ref{prop:gowinda12} is postponed to section \ref{sec:gowinda12}, and the proof of Proposition \ref{prop:gowinda13} is postponed to section \ref{sec:gowinda13}.

\subsection{Commutator estimates acting on $\trc$}

\begin{proposition}\lab{prop:gowinda14}
We have the following commutator estimate
\be\lab{zz2}
2^j\norm{[nL,P_j]\trc}_{\tx{1}{2}}+\norm{\nabb[nL,P_j]\trc}_{\tx{1}{2}}\les \ep.
\ee
\end{proposition}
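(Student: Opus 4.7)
The plan is to exploit the heat flow representation of $P_j$ together with the Raychaudhuri equation \eqref{D4trchi} in order to trade the limited regularity of $\trc$ for the Besov improvement \eqref{impbes21} for $\nabb\trc$ established in section \ref{sec:besovtrc}.

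First I would write
\[
[nL, P_j]\trc = \int_0^\infty m_j(\tau)[nL, U(\tau)]\trc\,d\tau,
\]
and, via the standard Duhamel identity applied to $\partial_\tau U(\tau)\trc = \Delta U(\tau)\trc$, express
\[
[nL, U(\tau)]\trc = \int_0^\tau U(\tau-s)\,[nL, \Delta]\,U(s)\trc\,ds.
\]
The commutator formula \eqref{comm6} reads schematically
\[
[nL, \Delta]f = -2n\chi\,\nabb^2 f + A\cdot\nabb f,
\]
where $A$ is a $\ptu$-tangent one-form built out of $n$, $\hch$, $\trc$, $\kep$, $\kepb$, $\nabb n$ and $\nabb\trc$, hence controlled by the bounds \eqref{estn}--\eqref{esthch}. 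The lower-order contribution $A\cdot\nabb U(s)\trc$ is then easy to handle via the heat-flow estimates \eqref{eq:l2heat1}--\eqref{eq:l2heat2bis} and yields a contribution bounded by $2^{-j}\ep$ in $\tx{1}{2}$ after time integration.

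The main term to handle is $-2n\chi\,\nabb^2 U(s)\trc$. Since $\chi$ has only trace-regularity $\xt{2}{\infty}$ (by \eqref{esttrc} and \eqref{esthch}), a direct estimate of $\nabb^2 U(s)\trc$ would fail. The idea is to integrate by parts on $\ptu$ and rewrite this contribution, modulo lower-order terms involving $\nabb(n\chi)$ (controlled by the decomposition \eqref{impbes22}), as a divergence which, after convolution with the outer kernel $U(\tau-s)$ and application of \eqref{eq:l2heatnab}, transfers one derivative onto $U(\tau-s)$. This maneuver produces a factor $\tau^{-1/2}$; combined with the concentration of $m_j(\tau)$ at $\tau\sim 2^{-2j}$ and the Besov bound $\sum_l\sup_t\norm{P_l\nabb\trc}_{\lpt{2}}\les\ep$ from \eqref{impbes21}, this gives exactly the $2^{-j}\ep$ decay of $\norm{[nL, P_j]\trc}_{\tx{1}{2}}$.

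For the second half of \eqref{zz2}, I would commute $\nabb$ through the commutator using \eqref{comm5} and the finite band property for $P_j$, then apply the same heat-flow analysis to $\nabb\trc$ directly. Here the Besov bound \eqref{impbes21} delivers the $L^1_tL^2(\ptu)$ estimate with no power of $2^j$, matching the second term of \eqref{zz2}. The principal difficulty throughout lies in the term $n\chi\,\nabb^2 U(s)\trc$: its successful treatment crucially requires simultaneously exploiting the structure of the heat semigroup and the Besov improvement for $\nabb\trc$, since a brute-force Littlewood-Paley high/low decomposition of $\trc$ would lose exactly the $2^{-j}$ gain required by the statement.
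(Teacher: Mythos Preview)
Your setup is correct: the heat flow representation $[nL,P_j]\trc=\int_0^\infty m_j(\tau)V(\tau)\,d\tau$ with $(\partial_\tau-\lap)V=[nL,\lap]U(\tau)\trc$, the commutator formula \eqref{comm6}, and the relevance of the Besov improvement \eqref{impbes21} are all the right ingredients. However, the proposed mechanism for extracting the $2^{-j}$ gain does not work as written.

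The integration-by-parts step you describe rewrites $n\chi\nabb^2U(s)\trc=\divb(n\chi\nabb U(s)\trc)-\nabb(n\chi)\cdot\nabb U(s)\trc$ and then uses $\norm{U(\tau-s)\divb F}_{\lpt{2}}\les(\tau-s)^{-1/2}\norm{F}_{\lpt{2}}$. But if you carry out the double integral $\int_0^\infty m_j(\tau)\int_0^\tau(\tau-s)^{-1/2}\norm{n\chi\nabb U(s)\trc}_{\lpt{2}}\,ds\,d\tau$, the $(\tau-s)^{-1/2}$ factor and the concentration of $m_j$ at $\tau\sim 2^{-2j}$ cancel: after Fubini and rescaling you obtain precisely $\norm{n\chi\nabb U(\cdot)\trc}_{L^2_s\lpt{2}}$ with no factor of $2^{-j}$. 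The Besov bound on $\nabb\trc$ alone does not recover the missing decay, since the $s$-integral has already absorbed all the $\tau$-scale information. Your treatment of the lower-order term $A\cdot\nabb U(s)\trc$ as ``easy'' is also too optimistic: placing $A$ (which contains $\nabb\trc$) in $\lpt{2}$ forces $\nabb U(s)\trc$ into $\lpt{\infty}$, and that bound is itself nontrivial.

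The paper proceeds differently. Rather than Duhamel plus integration by parts, it controls the source $[nL,\lap]U(\tau)\trc$ directly in $L^1_tL^2_\tau\lpt{2}$ via two auxiliary estimates: (i) a pointwise $L^2_tL^2_\tau\lpt{\infty}$ bound on $\nabb U(\tau)\trc$, proved by Littlewood--Paley decomposition of $\trc$ and the Besov bound $\norm{\trc}_{\BB^1}\les\ep$ (this is where \eqref{impbes21} actually enters, and it handles the lower-order term); and (ii) a bound on $\chi\nabb^2U(\tau)\trc$ in $L^1_tL^2_\tau\lpt{2}$ obtained by multiplying the Bochner identity for $U(\tau)\trc$ by the weight $|\chi|^2$ and integrating --- this converts the dangerous product into $\norm{\chi\lap U(\tau)\trc}_{\lpt{2}}$ plus error terms controlled by (i). Once the source is in $L^2_\tau$, the energy inequality \eqref{heatF2} yields $\norm{V(\tau)}_{\lpt{2}}\les\sqrt{\tau}\,\norm{[nL,\lap]U(\cdot)\trc}_{L^2_\tau\lpt{2}}$, and then $\int m_j(\tau)\sqrt{\tau}\,d\tau\sim 2^{-j}$ gives the first half of \eqref{zz2}; the second half follows from the energy inequality \eqref{heatF1} directly, with no commutation of $\nabb$ through $P_j$ needed.
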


\begin{proposition}\lab{prop:gowinda15}
We have
\be\lab{zz17}
2^{\frac{j}{2}}\norm{[bN,P_j]\trc}_{\li{\infty}{2}}+2^{-\frac{j}{2}}\norm{\nabb[bN,P_j]\trc}_{\li{\infty}{2}}\les\ep.
\ee
and
\be\lab{zz18}
2^{\frac{j}{2}}\norm{[nL,P_j]\trc}_{\li{\infty}{2}}+2^{-\frac{j}{2}}\norm{\nabb[nL,P_j]\trc}_{\li{\infty}{2}}\les\ep.
\ee
\end{proposition}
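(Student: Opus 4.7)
The plan is to prove both estimates simultaneously by reducing them to a careful analysis of $[bN,P_j]$ and $[nL,P_j]$ acting on $\trc$, combined with the improved Besov regularity of $\trc$ from \eqref{impbes21} and the decomposition \eqref{impbes22} for $\nabb(n\hch)$.

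I would first address $[nL,P_j]\trc$ by writing the commutator as $nL(P_j\trc)-P_j(nL\trc)$ and substituting the Raychaudhuri equation \eqref{D4trchi}, which gives $nL(\trc)=-\tfrac{n}{2}(\trc)^2-n|\hch|^2-n\db\trc$. The $\li{\infty}{2}$ bound on the commutator therefore reduces to controlling quantities of the form $\|P_j(n F\c G)\|_{\li{\infty}{2}}$ for $F,G\in\{\trc,\hch,\db\}$, plus the leading contribution $nL(P_j\trc)$ already estimated through Proposition~\ref{prop:gowinda14}. For the estimate on $[bN,P_j]\trc$, I would use $bN=\tfrac12 bL-\tfrac12 b\lb$: the $bL$-piece reduces to the previous case, while for $b\lb\trc$ I would invoke equation \eqref{D3trc} to rewrite it as $2b\divb\z-\tfrac12 b\trchb\trc+b(\d+n^{-1}\nab_Nn)\trc-b\hch\c\hchb+2b\z\c\z+2b\rho$. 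The curvature term $2b\rho$ is then decomposed via the scalar analog of Lemma~\ref{lemma:lbt7}, extracting a $\ddb_{nL}$-derivative of a $\mathcal{D}^{-1}$-regularized piece plus more regular remainders.

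The core mechanism producing the sharp weights $2^{\pm j/2}$ is a paraproduct/Littlewood--Paley decomposition combined with almost-orthogonality. Writing each scalar factor as $F=P_{<0}F+\sum_{k\geq 0}P_kF$, each piece $[bN,P_j]P_kF$ or $[nL,P_j]P_kF$ inherits an off-diagonal decay of the type $2^{-|k-j|}$ controlled through Proposition~\ref{prop:gowinda12} and Proposition~\ref{prop:gowinda13}. The on-diagonal pieces ($k\sim j$) are then bounded using the sharp Bernstein inequality \eqref{eq:strongbernscalarbis} combined with the Besov improvement \eqref{impbes21}, yielding the $2^{-j/2}$ gain on the $\li{\infty}{2}$ norm. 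The companion bound for $\nabb$ applied to the commutator is actually easier: $\nabb[bN,P_j]$ can be rewritten via $[\nabb,bN\c P_j]$-type identities, and the finite band property of $P_j$ converts one spatial derivative into a factor $2^j$, so only a $2^{j/2}$ loss is needed.

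The principal obstacle is the quadratic term $|\hch|^2$ (and its sibling $\hch\c\hchb$ from $b\lb\trc$), since $\hch$ has only Besov-type regularity and at this threshold there is no room for loss. I would handle it via the splitting $\hch=\chi_1+\chi_2$ of \eqref{dechch}--\eqref{dechch3}: for the $|\chi_2|^2$ piece one exploits the crucial decomposition $\nabb(n\hch)=\ddb_{nL}P+E$ from \eqref{impbes22}, which turns $\|P_j(n|\chi_2|^2)\|_{\li{\infty}{2}}$ into a transport-type estimate in the spirit of Lemma~\ref{lemma:lbt2}; the cross term $\chi_1\c\chi_2$ is handled through the improved $\tx{p}{\infty}$ bound on $\chi_1$ from \eqref{dechch2}; and $\chi_1^2$ is the most benign. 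Assembling these ingredients with the sharp trace machinery of Section~\ref{sec:besovtrc} and the commutator estimates of Propositions~\ref{prop:gowinda11}--\ref{prop:gowinda14} completes the proof of \eqref{zz17} and \eqref{zz18}.
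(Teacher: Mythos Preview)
Your approach has a genuine gap at the very first step. You propose to write $[nL,P_j]\trc = nL(P_j\trc) - P_j(nL\trc)$ and to bound each piece separately in $\li{\infty}{2}$: the second piece via Raychaudhuri, and the first piece ``already estimated through Proposition~\ref{prop:gowinda14}.'' But Proposition~\ref{prop:gowinda14} estimates the \emph{commutator} $[nL,P_j]\trc$ (and only in the weaker norm $\tx{1}{2}$), not the individual term $nL(P_j\trc)$. There is no independent route to a bound of size $2^{-j/2}\ep$ for $\norm{nL(P_j\trc)}_{\li{\infty}{2}}$: the only available control on $L$-derivatives of $P_j\trc$ comes back through the commutator itself, so the argument is circular. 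Splitting the commutator into its two summands destroys exactly the cancellation that produces the $2^{-j/2}$ gain. The paraproduct mechanism you sketch next does not rescue this: Propositions~\ref{prop:gowinda12} and \ref{prop:gowinda13} give bounds of the form $\ep\,\no(f)$ and $2^q\norm{f}_{\li{\infty}{2}}$ respectively, neither of which carries the off-diagonal decay $2^{-|k-j|}$ you claim, so summing over $k$ does not converge to the sharp weight.

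The paper's proof proceeds along an entirely different line and bypasses all of the machinery you invoke. It uses the heat-flow Duhamel representation $[bN,P_j]\trc=\int_0^\infty m_j(\tau)V(\tau)\,d\tau$ where $(\partial_\tau-\lap)V=[bN,\lap]U(\tau)\trc$, and then runs two energy estimates for $V$: one yielding $\norm{V(\tau)}_{\lh{2}}\les\ep\,\tau^{1/4}$ (from integrating by parts the $\nabb^2U$ contribution and using $\norm{\nabb\trc}_{\tx{\infty}{2}}\les\ep$), and one at the level of $\La^{1/2}V$ yielding an $L^2_\tau$ bound on $\nabb\La^{1/2}V$. Integrating these against $m_j$ produces directly the weights $2^{\pm j/2}$. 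No structure equations \eqref{D4trchi} or \eqref{D3trc}, no decomposition $\hch=\chi_1+\chi_2$, no Besov improvement \eqref{impbes21}, and no curvature rewriting via Lemma~\ref{lemma:lbt7} enter the argument; the only input beyond the heat-flow calculus is the single estimate $\norm{\nabb\trc}_{\tx{\infty}{2}}\les\ep$ from Theorem~\ref{thregx}.
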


\begin{proposition}\lab{prop:gowinda16}
We have the following commutator estimate
\be\lab{zz31}
\norm{[\nabb, P_j]\trc}_{\tx{2}{4}}\les\ep.
\ee
\end{proposition}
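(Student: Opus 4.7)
The plan is to derive a Duhamel representation for the commutator via the Bochner identity on $\ptu$ and then reduce the $\tx{2}{4}$ bound to two $\lh{2}$ bounds via Gagliardo--Nirenberg. Since $[\nabb,\lap]g=-K\nabb g$ for any scalar $g$ on the 2-surface $\ptu$, the $\ptu$-tangent 1-form $h(\tau):=\nabb U(\tau)\trc-U(\tau)\nabb\trc$ vanishes at $\tau=0$ and satisfies the inhomogeneous heat equation $\pr_\tau h-\lap h=-K\,\nabb U(\tau)\trc$. Duhamel then yields
$$[\nabb,P_j]\trc \,=\, -\int_0^\infty m_j(\tau)\int_0^\tau U(\tau-\sigma)\bigl(K\,\nabb U(\sigma)\trc\bigr)\,d\sigma\,d\tau,$$
with $m_j$ supported on $\tau\sim 2^{-2j}$, a frequency localization which is decisive in what follows.

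Squaring \eqref{eq:GNirenberg}, integrating in $t$, and using Cauchy--Schwarz, one obtains
$$\norm{[\nabb,P_j]\trc}_{\tx{2}{4}}^2\les \norm{\nabb[\nabb,P_j]\trc}_{\lh{2}}\,\norm{[\nabb,P_j]\trc}_{\lh{2}}+\norm{[\nabb,P_j]\trc}_{\lh{2}}^2,$$
so it suffices to establish $\norm{[\nabb,P_j]\trc}_{\lh{2}}\les 2^{-j}\ep$ and $\norm{\nabb[\nabb,P_j]\trc}_{\lh{2}}\les\ep$. For both, the plan is to bound the integrand by H\"older, $\norm{K\,\nabb U(\sigma)\trc}_{\lpt{2}}\les\norm{K(t,\cdot)}_{\lpt{2}}\norm{\nabb U(\sigma)\trc}_{\lpt{\infty}}$, combined with the heat-smoothing gradient estimate $\norm{\nabb U(\sigma)\trc}_{\lpt{\infty}}\les\sigma^{-1/2}\norm{\nabb\trc}_{\lpt{2}}\les\sigma^{-1/2}\ep$, the last inequality coming from $\norm{\nabb\trc}_{\xt{2}{\infty}}\les\ep$ in \eqref{esttrc}. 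For the first bound one then passes $U(\tau-\sigma)$ through the $\lpt{2}$ norm; for the second one uses the gradient-smoothing $\norm{\nabb U(\tau-\sigma)g}_{\lpt{2}}\les (\tau-\sigma)^{-1/2}\norm{g}_{\lpt{2}}$. The factor $\sigma^{-1/2}$ (resp.\ $(\tau-\sigma)^{-1/2}\sigma^{-1/2}$) integrates on $[0,\tau]$ to $\tau^{1/2}$ (resp.\ a constant), and $\int|m_j(\tau)|\tau^{1/2}d\tau\sim 2^{-j}$. Taking the $L^2_t$ norm and invoking Proposition \ref{propK} to get $\norm{K}_{\lh{2}}\les\ep$ closes both estimates.

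The main obstacle is the critical $L^2$ regularity of the Gauss curvature $K$, which forbids any direct $\lh{2}$ estimate of $K\,\nabb U(\sigma)\trc$ and forces the use of the heat-flow smoothing to produce the factor $\sigma^{-1/2}$ from the gradient of $\trc$; the integrability of $\sigma^{-1/2}(\tau-\sigma)^{-1/2}$ over $[0,\tau]$ is precisely what prevents logarithmic losses in the bound for $\nabb[\nabb,P_j]\trc$. A secondary technicality is that the $L^p$ heat-smoothing estimates $\norm{\nabb U(s)g}_{\lpt{p}}\les s^{-1/2}\norm{g}_{\lpt{p}}$ for $p\in\{2,\infty\}$ used above are not stated as such in section \ref{sec:geompal} and must be extracted from its Littlewood--Paley framework, combining the finite band property of $P_k$, the sharp Bernstein inequality \eqref{eq:strongbernscalarbis} (itself conditional on \eqref{ad65}), and the spectral decay $\norm{U(s)P_kg}_{\lpt{2}}\les e^{-cs2^{2k}}\norm{P_kg}_{\lpt{2}}$; summing these dyadic contributions in $k$ against the peak $2^k\sim s^{-1/2}$ reproduces the claimed $s^{-1/2}$ smoothing.
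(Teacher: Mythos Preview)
Your Duhamel setup and Gagliardo--Nirenberg reduction are sound, but the heat--smoothing step at the $L^\infty$ endpoint does not close with the paper's toolbox. The claim $\norm{\nabb U(\sigma)\trc}_{\lpt{\infty}}\les\sigma^{-1/2}\norm{\nabb\trc}_{\lpt{2}}$ is not available cleanly on $\ptu$ with only $K\in L^2$: since $\nabb U(\sigma)\trc$ is a $1$-form, one must use either the tensorial Bernstein \eqref{eq:strongberntensor} or Lemma~\ref{lemma:yo}, both of which carry an extra term of size $\norm{K}_{\lpt{2}}\norm{\nabb\trc}_{\lpt{2}}$ with no $\sigma^{-1/2}$ decay. (The exponential spectral decay $\norm{U(s)P_kg}_{\lpt{2}}\les e^{-cs2^{2k}}\norm{P_kg}_{\lpt{2}}$ you invoke is not true for the heat--flow based $P_k$; only polynomial decay via the finite band property is available, and in any case it does not remove the curvature correction in the tensor Bernstein.) Propagating this extra term through your argument gives, for the second bound, $\norm{\nabb[\nabb,P_j]\trc}_{\lpt{2}}\les\ep\norm{K}_{\lpt{2}}+2^{-j}\ep\norm{K}_{\lpt{2}}^2$, and after the $L^2_t$ step the last piece forces $\norm{K}_{L^4_tL^2_{x'}}$. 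But $K=\frac12\hch\c\hchb-\frac14\trc\trchb-\rho$ contains $\rho$, for which only $\norm{\rho}_{\lh{2}}\les\ep$ and $\norm{\La^{-1/2}\rho}_{\tx{\infty}{2}}\les\ep$ are known; neither yields $\rho\in L^4_tL^2_{x'}$, so the estimate does not close.

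The paper sidesteps the $L^\infty$ endpoint. It bounds $\norm{\La^{3/4}V(\tau)}_{\lh{2}}$ directly by an energy estimate (see \eqref{eq:l2heat1bis}), which via \eqref{La6} reduces to $\int_0^\tau\norm{K\,\nabb U(\tau')\trc}_{\lpt{5/3}}^2d\tau'$; the H\"older split $\norm{K}_{\lpt{2}}\norm{\nabb U\trc}_{\lpt{10}}$ then succumbs to Gagliardo--Nirenberg and the scalar Bochner inequality \eqref{eq:Bochconseqbis}, giving $\norm{\lap U\trc}_{\lpt{2}}^{4/5}\norm{\nabb U\trc}_{\lpt{2}}^{1/5}$, whose $L^2_\tau$ norm is controlled by the heat--flow energy \eqref{eq:l2heatnab}. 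Since $K$ is placed in $L^2_{x'}$ only once per factor and integrated in $L^2_t$ afterwards, the argument needs only $\norm{K}_{\lh{2}}\les\ep$. The $\tx{2}{4}$ bound for $[\nabb,P_j]\trc$ then follows from the $\La^{3/4}$ control by summing Bernstein over all frequencies. Your reduction would work if you retreat from $L^\infty$ to a finite $L^{q}$ for $\nabb U\trc$ and correspondingly from $L^2$ to $L^{p}$ with $p<2$ on the Duhamel side --- but that is essentially the paper's route.
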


The proof of Proposition \ref{prop:gowinda14} is postponed to section \ref{sec:gowinda14}, the proof of Proposition \ref{prop:gowinda15} is postponed to section \ref{sec:gowinda15}, and the proof of Proposition \ref{prop:gowinda16} is postponed to section \ref{sec:gowinda16}.

\subsection{Additional estimates for $P_j\trc$}

The goal of this section is to prove Proposition \ref{prop:zz37} and Proposition \ref{prop:zz39}. Note that the finite band property for $P_j$ together with the estimate \eqref{esttrc} for $\trc$ yields
\be\lab{tontonkenji}
\norm{P_j\trc}_{\tx{\infty}{2}}\les 2^{-j}\norm{\nabb\trc}_{\tx{\infty}{2}}\les 2^{-j}\ep.
\ee
Also, the boundedness on $\lpt{2}$ of $P_j$ together with the estimate \eqref{esttrc} for $\trc$ yields
\be\lab{tontonkenji1}
\norm{\nabb P_{\leq j}\trc}_{\tx{\infty}{2}}=\norm{(-\lap)^{\frac{1}{2}}P_{\leq j}\trc}_{\tx{\infty}{2}}\les\norm{\nabb\trc}_{\tx{\infty}{2}}\les 2^{-j}\ep.
\ee
In order to prove Proposition \ref{prop:zz37} and Proposition \ref{prop:zz39}, we need in particular to obtain \eqref{tontonkenji} and \eqref{tontonkenji1}, where the norm $\tx{\infty}{2}$ is replaced by its stronger version $\xt{2}{\infty}$. We will need the following lemmas. 
\begin{lemma}\label{lemma:zz41}
Let $h$ a scalar on $\ptu$, and let $F$ a tensor on $\ptu$. Then, we have
\be\lab{zz41}
\norm{[P_{>j}, P_{\leq j}(h)]F}_{\lpt{2}}\les \norm{\nabb h}_{\lpt{2}}\norm{F}_{\lpt{2}}.
\ee
\end{lemma}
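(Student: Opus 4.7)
The plan is to use $P_{>j}=I-P_{\leq j}$ to reduce matters to bounding $[P_{\leq j},g]F$ in $\lpt{2}$ with $g := P_{\leq j}(h)$; the low-frequency cutoff immediately gives $\|\nabb g\|_{\lpt{2}}\lesssim\|\nabb h\|_{\lpt{2}}$ via the dual finite band bound $\|P_{\leq j}\nabb\cdot\|_{\mathcal{L}(\lpt{2})}\lesssim 1$ of Theorem \ref{thm:LP}. I would deliberately avoid any pointwise or higher Sobolev control of $g$ itself, since the sharp scalar Bernstein inequality only yields a logarithmic loss in $j$ for $\|g\|_{L^\infty(\ptu)}$, which would ruin the bound.

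Next, I would combine the heat representation $P_{\leq j}=\int_0^\infty M_{\leq j}(\tau)U(\tau)\,d\tau$ (where $M_{\leq j}=\sum_{k\leq j}m_k$) with the semigroup commutator identity
\begin{equation*}
[U(\tau),g]F = \int_0^\tau U(\tau-s)\,[\lap,g]\,U(s)F\,ds,
\end{equation*}
obtained by differentiating $s\mapsto U(\tau-s)\,g\,U(s)F$. Since $g$ is a scalar, $[\lap,g]F = 2\nabb g\cdot\nabb F + (\lap g)F$ with no Gauss curvature correction even though $F$ is a tensor. The crucial step is to integrate by parts in the Laplacian piece: pairing against arbitrary $G\in\lpt{2}$ and using the self-adjointness of $U(\tau-s)$ from \eqref{sit:eq:seladj-semigroup}, I would rewrite
\begin{equation*}
\bigl\langle [U(\tau),g]F,G\bigr\rangle = \int_0^\tau\bigl\langle\nabb g,\ \nabb U(s)F\cdot U(\tau-s)G - U(s)F\cdot\nabb U(\tau-s)G\bigr\rangle\,ds,
\end{equation*}
so that the $\lap g$ term is absorbed into $\nabb g$ and only $\|\nabb g\|_{\lpt{2}}$ is ever required.

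To estimate the two resulting bilinear expressions one applies H\"older in $\lpt{2}\times\lpt{4}\times\lpt{4}$ together with the heat-flow smoothing bounds $\|U(t)\phi\|_{\lpt{4}}\lesssim t^{-1/4}\|\phi\|_{\lpt{2}}$ and $\|\nabb U(t)\phi\|_{\lpt{4}}\lesssim t^{-3/4}\|\phi\|_{\lpt{2}}$, which follow from Proposition \ref{le:L2heat} combined with the Gagliardo--Nirenberg inequality \eqref{eq:GNirenberg} applied at each time slice. The $s$-integrals reduce to the Beta-function expressions $\int_0^\tau s^{-3/4}(\tau-s)^{-1/4}ds$ and $\int_0^\tau s^{-1/4}(\tau-s)^{-3/4}ds$, each an absolute constant independent of $\tau$. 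Integrating in $\tau$ against $M_{\leq j}$, which has $L^1$ mass $\lesssim 1$ uniformly in $j$, then yields the desired bound $\|[P_{\leq j},g]F\|_{\lpt{2}}\lesssim\|\nabb g\|_{\lpt{2}}\|F\|_{\lpt{2}}\lesssim\|\nabb h\|_{\lpt{2}}\|F\|_{\lpt{2}}$.

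The main obstacle will be verifying the $\lpt{2}\to\lpt{4}$ and $\lpt{2}\to\nabb^{-1}\lpt{4}$ heat-flow bounds uniformly on the 2-surfaces $\ptu$ with their low-regularity metric; the Gagliardo--Nirenberg inequality \eqref{eq:GNirenberg} carries a lower-order term $\|\phi\|_{\lpt{2}}$ that must be absorbed. To obtain the required homogeneous bound, one should first replace $g$ by $g-\bar g$ on the compact region $\ptu\cap\widetilde U$ of Remark \ref{rem:noprobleminf} (using that multiplication by a constant commutes with $P_{\leq j}$, so $[P_{\leq j},g]=[P_{\leq j},g-\bar g]$), and then apply a Poincar\'e-type inequality to control $\|g-\bar g\|_{\lpt{2}}$ by $\|\nabb g\|_{\lpt{2}}\lesssim\|\nabb h\|_{\lpt{2}}$.
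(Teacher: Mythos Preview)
There is a genuine gap. The heat-flow bound $\|\nabb U(t)\phi\|_{\lpt{4}}\les t^{-3/4}\|\phi\|_{\lpt{2}}$ does not follow from Proposition~\ref{le:L2heat} and \eqref{eq:GNirenberg} alone: Gagliardo--Nirenberg produces $\|\nabb^2U(t)\phi\|_{\lpt{2}}^{1/2}\|\nabb U(t)\phi\|_{\lpt{2}}^{1/2}$, and converting $\nabb^2$ to $\lap$ requires Bochner. Since $F$ and your dual test function $G$ are \emph{tensors}, this is the tensor Bochner inequality \eqref{vbochineq}, whose constant depends polynomially on $\|K\|_{\lpt{2}}$. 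Only $\|K\|_{\lh{2}}\les\ep$ is available (see \eqref{estgauss1}), not an $L^\infty_t$ bound, so your commutator estimate on a fixed $\ptu$ carries an extra factor of $\|K\|_{\lpt{2}}$. When the lemma is applied inside the $L^1_t$ integral of Proposition~\ref{prop:zz39}---where $\|\nabb h\|_{\lpt{2}}$ and $\|F\|_{\lpt{2}}$ are each already only in $L^2_t$---this additional $L^2_t$ factor cannot be absorbed. Shifting the $L^4$ onto the scalar $\nabb g$ instead reintroduces $\|\nabb g\|_{\lpt{4}}\les 2^{j/2}\|\nabb h\|_{\lpt{2}}$ (via scalar Bochner and finite band), and your $\tau$-uniform Beta-function bound has nothing left to cancel the $2^{j/2}$. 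The Poincar\'e fix you propose at the end addresses only the inhomogeneous term in \eqref{eq:GNirenberg}, not this issue.

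The paper's argument is organized precisely to avoid tensor Bochner: it runs the energy estimate \eqref{heatF2} on $V$ and puts every $L^4$ norm on the \emph{scalar} $P_{\le j}(h)$, whose higher derivatives are controlled via the uniform scalar Bochner inequality \eqref{eq:Bochconseqbis} together with the finite band property, producing factors $2^j,\,2^{3j/2}$ in the bound $\|V(\tau)\|_{\lpt{2}}\les(1+2^j\sqrt\tau+2^{3j/2}\tau^{3/4})\|\nabb h\|_{\lpt{2}}\|F\|_{\lpt{2}}$ that are then compensated by integrating against $m_{>j}$, concentrated at $\tau\les 2^{-2j}$. The tensor norms $\|U\|_{\lpt{4}}$ and $\|V\|_{\lpt{4}}$ that do arise are reduced by \eqref{eq:GNirenberg} to $\|\nabb U\|_{\lpt{2}}$ and $\|\nabb V\|_{\lpt{2}}$, the latter absorbed into the left side of the energy inequality---no tensor Bochner enters. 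Your integration-by-parts trick, by eliminating the $2^j$-factors from $g$, discards exactly the frequency-localization mechanism that makes this balance possible.
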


\begin{lemma}\label{lemma:zz42}
Let $h$ a scalar on $\ptu$, and let $F$ a tensor on $\ptu$. Then, we have
\be\lab{zz42}
\norm{\nabb[P_j, P_{\leq j}(h)]F}_{\lpt{2}}\les 2^j(\norm{\nabb h}_{\lpt{2}}+\norm{K}_{\lpt{2}}\norm{h}_{\lpt{2}})\norm{F}_{\lpt{2}}.
\ee
\end{lemma}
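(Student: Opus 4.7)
The plan is to mimic the heat-kernel proof of Lemma \ref{lemma:zz41}, but take a derivative of the commutator representation and bring in the curvature via a Bochner-type estimate. Set $g=P_{\leq j}(h)$ and write
\[
[P_j,g]F=\int_0^\infty m_j(\tau)\,V(\tau)\,d\tau,\qquad V(\tau)=U(\tau)(gF)-g\,U(\tau)F .
\]
A direct computation gives $V(0)=0$ and
\[
(\partial_\tau-\lap)V=(\lap g)\,U(\tau)F+2\nabb g\cdot\nabb U(\tau)F.
\]
Applying $\nabb$ under the integral, I would estimate $\nabb V(\tau)$ pointwise in $\tau$ via the inhomogeneous heat-flow estimate \eqref{heatF1}, reducing the problem to controlling the $L^2(\ptu)$-norm of $(\lap g)U(s)F+\nabb g\cdot\nabb U(s)F$ in a time-integrated sense.

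Next I would bound $\nabb g$ and $\lap g$ in terms of $\nabb h$ and $Kh$. Writing $\nabb P_{\leq j}(h)=P_{\leq j}(\nabb h)+[\nabb,P_{\leq j}]h$ and using $L^2$-boundedness of $P_{\leq j}$ (point \eqref{eq:pdf1}) yields $\norm{\nabb g}_{\lpt{2}}\les \norm{\nabb h}_{\lpt{2}}+\norm{[\nabb,P_{\leq j}]h}_{\lpt{2}}$. The commutator $[\nabb,P_{\leq j}]h$ is handled through the heat flow: using $[\nabb,U(\tau)]$ and the identity $[\nabb,\lap]=K\,\nabb$ on scalars (a consequence of the Bochner identity \eqref{sboch}), one obtains
\[
\norm{[\nabb,P_{\leq j}]h}_{\lpt{2}}\les \norm{K}_{\lpt{2}}\norm{h}_{\lpt{2}}.
\]
Similarly $\lap g=P_{\leq j}(\divb\nabb h)+[\lap,P_{\leq j}]h$, and the finite-band property applied to $\nabb$ together with the same Bochner trick gives $\norm{\lap g}_{\lpt{2}}\les 2^j\bigl(\norm{\nabb h}_{\lpt{2}}+\norm{K}_{\lpt{2}}\norm{h}_{\lpt{2}}\bigr)$.

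To close the $\tau$-integral I would combine these Bochner-type bounds with the heat-smoothing inequalities $\norm{U(s)F}_{\lpt{\infty}}\les s^{-1/2}\norm{F}_{\lpt{2}}$ and $\norm{\nabb U(s)F}_{\lpt{2}}\les s^{-1/2}\norm{F}_{\lpt{2}}$ (consequences of \eqref{eq:l2heat1}--\eqref{eq:l2heat2} and the Gagliardo--Nirenberg estimate \eqref{eq:GNirenberg} on $\ptu$). Using that $m_j$ is concentrated at $\tau\sim 2^{-2j}$ together with the vanishing moments \eqref{eq:moments}, the $\tau$-integration produces one net factor $2^j$, leading to the desired bound
\[
\norm{\nabb[P_j,g]F}_{\lpt{2}}\les 2^j\bigl(\norm{\nabb h}_{\lpt{2}}+\norm{K}_{\lpt{2}}\norm{h}_{\lpt{2}}\bigr)\norm{F}_{\lpt{2}}.
\]

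The main obstacle, and the source of the $\norm{K}_{\lpt{2}}\norm{h}_{\lpt{2}}$ term in the conclusion, is the non-commutation of $\nabb$ with the Littlewood--Paley projections on the curved surface $\ptu$: every time $\nabb$ is moved past $P_{\leq j}$ or $U(\tau)$, the Bochner identity forces a Gauss curvature remainder, and these have to be bookkept carefully so that only the harmless scalar commutator $[\nabb,\lap]=K\nabb$ appears rather than higher derivatives of $K$, which we do not control at this level of regularity.
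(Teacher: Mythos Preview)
Your heat-kernel setup and the Duhamel representation for $V$ are correct, and the energy estimate \eqref{heatF1} is indeed the right tool; but the way you propose to pair the factors does not close, and the origin you identify for the curvature term is not the one that actually appears.

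Concretely, for the source $S(\tau')=(\lap g)\,U(\tau')F+2\nabb g\cdot\nabb U(\tau')F$ you need $\int_0^\tau\|S(\tau')\|_{L^2}^2\,d\tau'$ to be finite. Your pairing places $\lap g\in L^2$ and $U(\tau')F\in L^\infty$ with the smoothing $\|U(\tau')F\|_{L^\infty}\lesssim (\tau')^{-1/2}\|F\|_{L^2}$; squaring gives $(\tau')^{-1}$, which is \emph{not} integrable at $\tau'=0$, so the estimate blows up regardless of the localization of $m_j$. (Moreover, an $L^2\!\to\!L^\infty$ smoothing bound for tensors on $P_{t,u}$ is not available in the clean form you state; the sharp Bernstein inequality for tensors \eqref{eq:strongberntensor} carries $K$-corrections.) For the second piece, you estimate $\nabb g$ only in $L^2$, but then $\nabb U(\tau')F$ would have to sit in $L^\infty$, and no such smoothing is available either; the $L^2$ heat bound you quote for $\nabb U(\tau')F$ cannot be paired with $\nabb g\in L^2$ to control an $L^2$ norm of the product.

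The paper avoids both obstructions by a different splitting: it places $(\lap g)\,U(\tau')F$ in $L^4\times L^4$ via Gagliardo--Nirenberg, producing an integrable $(\tau')^{-1/2}$ after squaring; and for $\nabb g\cdot\nabb U(\tau')F$ it puts $\nabb g=\nabb P_{\leq j}(h)$ into $L^\infty$ using Lemma~\ref{lemma:yo}, so that the remaining factor $\|\nabb U(\tau')F\|_{L^2}^2$ integrates to $\|F\|_{L^2}^2$ by \eqref{eq:l2heat1}. The curvature contribution $\|K\|_{L^2}\|h\|_{L^2}$ arises precisely from this $L^\infty$ estimate on $\nabb P_{\leq j}(h)$ (the last term in \eqref{yo} combined with the finite band property), \emph{not} from the commutator $[\nabb,P_{\leq j}]$; in fact Lemma~\ref{lemma:zz43} shows that $[\nabb,P_{\leq j}]h$ carries an extra $\|\Lambda^a h\|_{L^2}$ term and cannot be bounded by $\|K\|_{L^2}\|h\|_{L^2}$ alone. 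So you should rethink the H\"older splitting and invoke Lemma~\ref{lemma:yo} rather than commutators with $\nabb$.
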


\begin{lemma}\label{lemma:zz43}
Let $h$ a scalar on $\ptu$, and let $a>0$. Then, we have
\be\lab{zz43}
\norm{[P_{\leq j}, \nabb]h}_{\lpt{2}}\les \norm{K}_{\lpt{2}}(\norm{K}_{\lpt{2}}\norm{h}_{\lpt{2}}+\norm{\La^ah}_{\lpt{2}}).
\ee
\end{lemma}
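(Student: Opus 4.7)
\textbf{Proof plan for Lemma \ref{lemma:zz43}.}

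The plan is to reduce the commutator to an explicit integral over the heat flow, extract a factor of $K$ from the curvature commutator $[\lap,\nabb]=K\nabb$ (valid on scalars on a $2$-surface by the Ricci identity and the fact that on $\ptu$ the Riemann tensor is $R_{ABCD}=K(\ga_{AC}\ga_{BD}-\ga_{AD}\ga_{BC})$), and then pay for the remaining $\nabb U(\sigma)h$ by expanding $h$ in its Littlewood--Paley pieces, using the $\La^ah$ moment on the high frequencies and the trivial $\norm{K}_{\lpt 2}\norm{h}_{\lpt 2}$ bound on the low frequencies.

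First I would write, with $P_{\leq j}F=\int_0^{\infty}\widetilde m_{\leq j}(\tau)U(\tau)F\,d\tau$ (where $U^{(0)}$, $U^{(1)}$ denote the scalar and $1$-form heat flows and the kernel $\widetilde m_{\leq j}$ is the one obtained by summing the $m_l$ from Definition \ref{defLP}),
\[
[P_{\leq j},\nabb]h=\int_0^\infty \widetilde m_{\leq j}(\tau)\bigl(U^{(1)}(\tau)\nabb h-\nabb U^{(0)}(\tau)h\bigr)\,d\tau=-\int_0^\infty\widetilde m_{\leq j}(\tau)W(\tau)\,d\tau,
\]
where $W(\tau)=\nabb U^{(0)}(\tau)h-U^{(1)}(\tau)\nabb h$. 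A direct computation using $\partial_\tau(\nabb U^{(0)}h)=\nabb\lap U^{(0)}h=\lap(\nabb U^{(0)}h)-K\nabb U^{(0)}h$ shows that $W$ solves
\[
\partial_\tau W-\lap W=-K\nabb U^{(0)}(\tau)h,\qquad W(0)=0,
\]
and Duhamel gives $W(\tau)=-\int_0^\tau U^{(1)}(\tau-\sigma)\bigl(K\nabb U^{(0)}(\sigma)h\bigr)\,d\sigma$. Using the $L^2$-contractivity of $U^{(1)}$ and swapping the $\tau,\sigma$ integrations, this yields
\[
\norm{[P_{\leq j},\nabb]h}_{\lpt 2}\lesssim \int_0^\infty \phi_j(\sigma)\,\norm{K\nabb U^{(0)}(\sigma)h}_{\lpt 2}\,d\sigma,\qquad \phi_j(\sigma)=\int_\sigma^\infty|\widetilde m_{\leq j}(\tau)|\,d\tau,
\]
where, thanks to the moment/scaling properties \eqref{eq:moments} of $\widetilde m_{\leq j}(\tau)=2^{2j}\widetilde m_{\leq 0}(2^{2j}\tau)$, one has the fast-decay bound $\phi_j(\sigma)\lesssim \min(1,(2^{2j}\sigma)^{-N})$ for any fixed $N$.

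Next I would bound the integrand via $\norm{K\nabb U^{(0)}(\sigma)h}_{\lpt 2}\lesssim \norm{K}_{\lpt 2}\norm{\nabb U^{(0)}(\sigma)h}_{\lpt\infty}$, and decompose $h=P_{<0}h+\sum_{q\ge 0}P_qh$. For each piece $g_q=U^{(0)}(\sigma)P_qh$ (a scalar) I would combine the sharp Bernstein inequality \eqref{eq:strongbernscalarbis} with the finite band property \eqref{eq:partition} and the heat-flow estimates \eqref{eq:l2heat1}--\eqref{eq:l2heat2} to get
\[
\norm{\nabb U^{(0)}(\sigma)P_qh}_{\lpt\infty}\lesssim 2^{2q}\min\bigl(1,(2^{2q}\sigma)^{-N}\bigr)\norm{P_qh}_{\lpt 2},
\]
and likewise for $P_{<0}h$ with $2^{2q}$ replaced by $1$. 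Substituting back and carrying out the $\sigma$-integral (the combined factor $\phi_j(\sigma)\cdot\min(1,(2^{2q}\sigma)^{-N})$ integrates to $\min(2^{-2j},2^{-2q})$), I obtain
\[
\norm{[P_{\leq j},\nabb]h}_{\lpt 2}\lesssim \norm{K}_{\lpt 2}\sum_q 2^{2q}\min(2^{-2j},2^{-2q})\norm{P_qh}_{\lpt 2}\lesssim \norm{K}_{\lpt 2}\sum_q \min(2^{2q},\cdot)\norm{P_qh}_{\lpt 2}.
\]

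Finally, splitting the sum at $q=0$ and using the $\La^a$ moment on the $q\ge 0$ range via Cauchy--Schwarz,
\[
\sum_{q\ge 0}\norm{P_qh}_{\lpt 2}\lesssim \Bigl(\sum_{q\ge 0}2^{-2aq}\Bigr)^{1/2}\Bigl(\sum_{q\ge 0}2^{2aq}\norm{P_qh}_{\lpt 2}^2\Bigr)^{1/2}\lesssim \norm{\La^a h}_{\lpt 2}
\]
by \eqref{La8}, while the low-frequency contribution is controlled by $\norm{K}_{\lpt 2}\norm{h}_{\lpt 2}$ (the extra $\norm{K}_{\lpt 2}$ coming from the harmless low-frequency factor and the already-extracted $\norm{K}_{\lpt 2}$), delivers the claimed bound
\[
\norm{[P_{\leq j},\nabb]h}_{\lpt 2}\lesssim \norm{K}_{\lpt 2}\bigl(\norm{K}_{\lpt 2}\norm{h}_{\lpt 2}+\norm{\La^a h}_{\lpt 2}\bigr).
\]
The main obstacle is the fifth step: obtaining the $\lpt\infty$ estimate on $\nabb U^{(0)}(\sigma)P_qh$ with the correct joint decay in $\sigma$ and $q$, since $\nabb U^{(0)}(\sigma)P_qh$ is a $1$-form (so the sharp scalar Bernstein \eqref{eq:strongbernscalarbis} does not apply directly) and one must instead run the sharp Bernstein on the scalar $U^{(0)}(\sigma)P_qh$ and then absorb the $\nabb$ via the finite band property, carefully interpolating the two available heat-flow decay rates $\sigma^{-1/2}$ and $2^q$.
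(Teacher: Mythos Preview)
Your overall structure is the same as the paper's: represent the commutator via $V(\tau)$ solving $(\partial_\tau-\lap)V=[\nabb,\lap]U(\tau)h=K\nabb U(\tau)h$, extract one factor of $\norm{K}_{\lpt 2}$, and reduce to controlling a $\tau$-integral of $\norm{\nabb U(\tau)h}_{\lpt\infty}$. Your Duhamel/$L^2$-contractivity reduction is equivalent to the paper's energy/Gronwall reduction; both arrive at essentially $\norm{K}_{\lpt 2}\int \phi(\sigma)\norm{\nabb U(\sigma)h}_{\lpt\infty}\,d\sigma$.

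The difference is in how that integral is handled, and here your proposal has a gap at exactly the step you flag. The pointwise claim $\norm{\nabb U(\sigma)P_qh}_{\lpt\infty}\lesssim 2^{2q}\min(1,(2^{2q}\sigma)^{-N})\norm{P_qh}_{\lpt 2}$ cannot be obtained by ``interpolating $\sigma^{-1/2}$ and $2^q$'': those two bounds only give $\min(2^q,\sigma^{-1/2})$ decay in $L^2$, which after passing to $L^\infty$ via \eqref{yo} yields at best $\sigma^{-1}$ decay---a log loss, not $(2^{2q}\sigma)^{-N}$. Moreover, the finite band property gives $\norm{\nabb P_q f}\lesssim 2^q\norm{f}$ and $\norm{\lap P_q f}\lesssim 2^{2q}\norm{f}$ with $\norm{f}$, \emph{not} $\norm{P_qf}$, on the right; so keeping $\norm{P_qh}_{\lpt 2}$ throughout is nontrivial. (A pointwise bound of your shape is in fact obtainable, but the mechanism is different: write $P_q=\widetilde P_q^2$, apply \eqref{matou8} to the outer $\widetilde P_q$, and extract the $(2^{2q}\sigma)^{-N}$ decay from the vanishing moments \eqref{eq:moments} of the inner $\widetilde P_q$ acting on $U(\sigma)\widetilde P_qh$; this gives $\norm{\widetilde P_qh}$ on the right, which sums equally well.)

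The paper sidesteps the pointwise bound entirely. After Gronwall it has $\norm{V(\tau)}\lesssim\norm{K}_{\lpt 2}\int_0^\tau\norm{\nabb U(\tau')h}_{\lpt\infty}\,d\tau'$, applies Cauchy--Schwarz with weight $(\tau')^{1-\delta}$, and invokes Lemma~\ref{lemma:yo} (the estimate \eqref{yo}) to bound $\norm{\nabb U(\tau')h}_{\lpt\infty}$ by $L^2$ quantities $\norm{\lap U h}$, $\norm{\nabb\lap U h}^{1/2}\norm{\nabb U h}^{1/2}$, and $\norm{K}\norm{\nabb U h}$. The first two are then controlled \emph{in the integrated sense} using the heat-flow estimates \eqref{eq:l2heat2} and \eqref{eq:l2heatnab} applied with initial data $P_qh$---these integrated bounds are what automatically produce $\norm{P_qh}$-type factors and yield $\norm{\La^{3\delta}h}$ after summing in $q$. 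The $\norm{K}$-subterm is handled separately (without any LP decomposition) via $\int_0^\tau\norm{\nabb U(\tau')h}_{\lpt 2}^2\,d\tau'\lesssim\norm{h}_{\lpt 2}^2$, which is where the $\norm{K}^2\norm{h}$ contribution comes from. This integrated route is cleaner than chasing pointwise-in-$\sigma$ decay for each frequency piece.
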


\begin{lemma}\label{lemma:zz44}
Let $h$ a scalar on $\ptu$, and let $a>0$. Then, we have
\be\lab{zz44}
\norm{\nabb[P_j, \nabb]h}_{\lpt{2}}\les 2^j\norm{K}_{\lpt{2}}(\norm{K}_{\lpt{2}}\norm{h}_{\lpt{2}}+\norm{\La^ah}_{\lpt{2}}).
\ee
\end{lemma}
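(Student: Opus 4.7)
The plan is to adapt the argument underlying Lemma~\ref{lemma:zz43} by paying the cost of one additional external $\nabb$ via the finite band property of $P_j$, with the extra derivative being absorbed through a further commutation against the heat semigroup. I would start from the heat flow representation $P_j h=\int_0^\infty m_j(\tau)U(\tau)h\,d\tau$ together with the Duhamel identity
\be
[U(\tau),\nabb]h=\int_0^\tau U(\tau-s)\,[\lap,\nabb]\,U(s)h\,ds
\ee
and the two-dimensional commutator $[\lap,\nabb_A]f=K\,\nabb_A f$, valid for any scalar $f$ on $\ptu$ as an immediate consequence of $\mathrm{Ric}_{AB}=K\,\ga_{AB}$. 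Combining these identities yields the representation
\be
[P_j,\nabb]h=\int_0^\infty m_j(\tau)\int_0^\tau U(\tau-s)\bigl(K\,\nabb U(s)h\bigr)\,ds\,d\tau.
\ee

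Next I would apply the outer $\nabb$ and commute it a second time against the heat semigroup. Since $[\lap,\nabb]$ acting on the $1$-form $K\,\nabb U(s)h$ produces, by the same curvature calculation now extended to $1$-forms, either a second factor of $K$ multiplying $\nabb U(s)h$ (whose contribution will yield the $\norm{K}_{\lpt{2}}^2\norm{h}_{\lpt{2}}$ term) or a factor $\nabb K$ paired with $U(s)h$ (which, after one integration by parts against a test $1$-form and use of \eqref{La1}--\eqref{La3}, produces the $\norm{K}_{\lpt{2}}\norm{\La^a h}_{\lpt{2}}$ term), one arrives at a decomposition of $\nabb[P_j,\nabb]h$ into two types of contributions. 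Each of these is estimated in $\lpt{2}$ by combining the heat-semigroup bounds of Proposition~\ref{le:L2heat} with their Sobolev variants $\norm{\nabb U(\sigma)G}_{\lpt{2}}\lesssim \sigma^{-1/p}\norm{G}_{\lpt{p}}$ for $1<p\le 2$ (obtained by composing $U(\sigma/2)$ with $\nabb U(\sigma/2)$), H\"older's inequality $\norm{KG}_{\lpt{p}}\lesssim \norm{K}_{\lpt{2}}\norm{G}_{\lpt{q}}$ with $\tfrac1p=\tfrac12+\tfrac1q$, and the Gagliardo--Nirenberg inequality \eqref{eq:GNirenberg}. Rescaling the nested $\tau,s$ integrals against $m_j(\tau)=2^{2j}m(2^{2j}\tau)$, and using the vanishing moment property \eqref{eq:moments} of $m$ to ensure convergence at $\tau=0$, then produces the factor $2^j$ stated in the lemma.

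The main obstacle is the endpoint nature of the pointwise product $K\cdot\nabb U(s)h$: a direct H\"older bound in $\lpt{2}$ would require $\nabb U(s)h$ in $\lpt{\infty}$, which is not controlled. The only way out is to use $\lpt{q}$ with $q$ slightly larger than $2$ on $\nabb U(s)h$ and compensate the resulting mildly singular $s^{-\alpha}$ factor by further heat smoothing; this mechanism is exactly what forces the fractional regularity $\La^a h$ to appear on the right-hand side in lieu of a plain $h$ whenever only one factor of $K$ is available in $\lpt{2}$. Balancing these two competing scales against the $\tau$-rescaling of $m_j$ is the delicate point, and it dictates the specific form $\norm{K}_{\lpt{2}}\bigl(\norm{K}_{\lpt{2}}\norm{h}_{\lpt{2}}+\norm{\La^ah}_{\lpt{2}}\bigr)$ of the bound.
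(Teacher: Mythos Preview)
Your proposed route via a second commutation of $\nabb$ through the heat semigroup is unnecessarily complicated and contains an inaccuracy. When $[\lap,\nabb]$ acts on the $1$-form $G=K\,\nabb U(s)h$, the $\nabb K$ contribution is paired with $G$ itself, i.e.\ with $K\,\nabb U(s)h$, not with $U(s)h$ as you write; so the term you describe as giving $\norm{K}_{\lpt{2}}\norm{\La^a h}_{\lpt{2}}$ is not produced in the way you claim. Handling the actual $\nabb K\cdot K\,\nabb U(s)h$ term by duality would require control that goes beyond $K\in L^2$, and you give no details on how to close this. The nested $(\tau,s,\sigma)$ integrals you would generate also make the endpoint balancing you allude to considerably harder than you suggest.

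More importantly, you miss the simple route the paper takes. The energy estimate \eqref{heatF2} for $V$ (the same $V$ as in the proof of Lemma~\ref{lemma:zz43}) already controls $\int_0^\infty\norm{\nabb V(\tau')}_{\lpt{2}}^2\,d\tau'$: injecting \eqref{zz73bis} into \eqref{zz70} gives
\[
\int_0^\infty\norm{\nabb V(\tau')}_{\lpt{2}}^2\,d\tau'\;\lesssim\;\norm{K}_{\lpt{2}}^2\bigl(\norm{K}_{\lpt{2}}^2\norm{h}_{\lpt{2}}^2+\norm{\La^{3\delta}h}_{\lpt{2}}^2\bigr).
\]
Then a single Cauchy--Schwarz in $\tau$,
\[
\norm{\nabb[P_j,\nabb]h}_{\lpt{2}}\le\int_0^\infty m_j(\tau)\norm{\nabb V(\tau)}_{\lpt{2}}\,d\tau\le\Bigl(\int_0^\infty m_j(\tau)^2\,d\tau\Bigr)^{1/2}\Bigl(\int_0^\infty\norm{\nabb V}_{\lpt{2}}^2\Bigr)^{1/2},
\]
produces exactly the factor $2^j$. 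No second commutation, no $\nabb K$, no nested Duhamel: the whole proof is three lines once Lemma~\ref{lemma:zz43} is in hand. Your approach tries to estimate $\norm{\nabb V(\tau)}_{\lpt{2}}$ pointwise in $\tau$, which is strictly harder and unnecessary here.
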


\begin{lemma}\lab{lemma:yo}
Let $f$ a scalar on $\ptu$. We have
\be\lab{yo}
\norm{\nabb f}_{\lpt{\infty}}\les \norm{\lap f}_{\lpt{2}}+\norm{\nabb\lap f}_{\lpt{2}}^\frac{1}{2}\norm{\nabb f}_{\lpt{2}}^{\frac{1}{2}}+\norm{K}_{\lpt{2}}\norm{\nabb f}_{\lpt{2}}.
\ee
\end{lemma}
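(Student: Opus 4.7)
The plan is to combine $L^2$ bounds on $\nabb^2 f$ and $\nabb^3 f$ obtained from the scalar and tensor Bochner identities with a Sobolev--Gagliardo--Nirenberg $L^\infty$ embedding for $\nabb f$, using the smallness of $\|K\|_{L^2(\ptu)}$ from \eqref{estgauss1} to close the estimate.

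First I would apply the scalar Bochner identity \eqref{sboch}, obtaining $\|\nabb^2 f\|_{L^2}^2 = \|\lap f\|_{L^2}^2 - \int_{\ptu} K|\nabb f|^2$. Bounding the curvature integral via H\"older by $\|K\|_{L^2}\|\nabb f\|_{L^4}^2$, using the Gagliardo--Nirenberg inequality \eqref{eq:GNirenberg} to write $\|\nabb f\|_{L^4}^2 \lesssim \|\nabb^2 f\|_{L^2}\|\nabb f\|_{L^2} + \|\nabb f\|_{L^2}^2$, and then absorbing $\|\nabb^2 f\|_{L^2}^2$-type terms via AM--GM (possible since $\|K\|_{L^2}\lesssim \varepsilon$ from \eqref{estgauss1}), this yields the auxiliary estimate
\[
\|\nabb^2 f\|_{L^2} \lesssim \|\lap f\|_{L^2} + \|K\|_{L^2}\|\nabb f\|_{L^2}.
\]

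Next I would apply the tensor Bochner identity \eqref{vboch} to the $1$-form $F=\nabb f$. Since $\divb(\nabb f)=\lap f$ and $\curlb(\nabb f)=0$, this produces
\[
\|\nabb^3 f\|_{L^2}^2 + 2\!\int_{\ptu} K|\nabb^2 f|^2 = \|\lap \nabb f\|_{L^2}^2 + \int_{\ptu} K|\lap f|^2 + \int_{\ptu} K^2|\nabb f|^2.
\]
The Ricci identity on a 2-surface gives $\lap\nabb f = \nabb\lap f + K\nabb f$ (coming from $R_{AB}=K g_{AB}$), so $\|\lap\nabb f\|_{L^2}$ can be replaced by $\|\nabb \lap f\|_{L^2}$ up to a $\|K\nabb f\|_{L^2}$ term. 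The remaining curvature integrals $\int K|\lap f|^2$ and $\int K^2|\nabb f|^2$ are controlled by H\"older plus \eqref{eq:GNirenberg}, giving upper bounds in terms of $\|\nabb\lap f\|_{L^2}$, $\|\lap f\|_{L^2}$, $\|\nabb f\|_{L^2}$ and $\|\nabb f\|_{L^\infty}$. After absorbing top-order contributions via AM--GM and using Step~1, one gets a bound on $\|\nabb^3 f\|_{L^2}$ of the desired form modulo a small multiple of $\|\nabb f\|_{L^\infty}$.

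Finally, I would apply the embedding obtained by combining \eqref{sobinftyptu} for $p=4$ with the Gagliardo--Nirenberg inequality \eqref{eq:GNirenberg}, namely
\[
\|F\|_{L^\infty(\ptu)} \lesssim \|\nabb^2 F\|_{L^2}^{1/2}\|\nabb F\|_{L^2}^{1/2} + \|\nabb F\|_{L^2}^{1/2}\|F\|_{L^2}^{1/2} + \|\nabb F\|_{L^2} + \|F\|_{L^2},
\]
to $F=\nabb f$, substitute the bounds on $\|\nabb^2 f\|_{L^2}$ and $\|\nabb^3 f\|_{L^2}$ from the first two steps, and redistribute mixed-order products $a^{1/2}b^{1/2}$ via Young's inequality. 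The coefficient in front of $\|\nabb f\|_{L^\infty}$ that arises from $\|K^2|\nabb f|^2\|_{L^1}^{1/2}$ is $\lesssim \|K\|_{L^2}^{1/2}\lesssim \varepsilon^{1/2}$, small enough to be absorbed into the LHS. The principal difficulty is the careful bookkeeping of curvature contributions in Step~2: the integrals $\int K|\lap f|^2$ and $\int K^2|\nabb f|^2$ must be redistributed so that intermediate products such as $\|\nabb\lap f\|_{L^2}^{1/2}\|\lap f\|_{L^2}^{1/2}$ and $\|K\|_{L^2}^{1/2}\|\nabb f\|_{L^\infty}$ collapse via Young's inequality and the smallness $\|K\|_{L^2}\lesssim \varepsilon$ into precisely the three terms of \eqref{yo}.
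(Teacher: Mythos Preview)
Your overall strategy matches the paper's exactly: Bochner for $\|\nabb^2 f\|_{L^2}$ and for $\|\nabb^3 f\|_{L^2}$ (applied to the $1$-form $\nabb f$), the commutator identity $\lap\nabb f=\nabb\lap f+K\nabb f$, an $L^\infty$ embedding for $\nabb f$, then absorption of the $\|K\|_{L^2}^{1/2}\|\nabb f\|_{L^\infty}^{1/2}$ contribution into the left-hand side. The gap is in the specific embedding you use in Step~3. Combining \eqref{sobinftyptu} with \eqref{eq:GNirenberg} pairs $\|\nabb^2 F\|_{L^2}^{1/2}$ with $\|\nabb F\|_{L^2}^{1/2}$, not with $\|F\|_{L^2}^{1/2}$. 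Applied to $F=\nabb f$, your leading term is $\|\nabb^3 f\|_{L^2}^{1/2}\|\nabb^2 f\|_{L^2}^{1/2}$, whose principal part after substituting the Bochner bounds is $\|\nabb\lap f\|_{L^2}^{1/2}\|\lap f\|_{L^2}^{1/2}$. Contrary to what your last paragraph says, this product carries \emph{no} factor of $K$ (it comes from the main terms $\|\nabb\lap f\|_{L^2}$ and $\|\lap f\|_{L^2}$, not from the curvature integrals), and it is genuinely not controlled by the right-hand side of \eqref{yo}: on a nearly-flat $\ptu$ take $f(x)=\cos(Mx_1)\phi(x)$ for a fixed bump $\phi$; then $\|\nabb\lap f\|_{L^2}^{1/2}\|\lap f\|_{L^2}^{1/2}\sim M^{5/2}$ while every term on the right of \eqref{yo} is $O(M^2)$. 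No Young-type rearrangement can close this.

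The paper instead uses the sharper embedding \eqref{linftynormtensor} with $p=2$, namely
\[
\|F\|_{L^\infty(\ptu)}\lesssim \|\nabb^2 F\|_{L^2}^{1/2}\|F\|_{L^2}^{1/2}+\|\nabb F\|_{L^2},
\]
which for $F=\nabb f$ gives directly $\|\nabb f\|_{L^\infty}\lesssim \|\nabb^3 f\|_{L^2}^{1/2}\|\nabb f\|_{L^2}^{1/2}+\|\nabb^2 f\|_{L^2}$ (this is \eqref{yo1}). With this correct pairing, substituting the tensor Bochner bound and the estimate $\|K\nabb f\|_{L^2}\le\|K\|_{L^2}\|\nabb f\|_{L^\infty}$ yields a term $\|K\|_{L^2}^{1/2}\|\nabb f\|_{L^\infty}^{1/2}\|\nabb f\|_{L^2}^{1/2}$ which absorbs as you describe, and the scalar Bochner inequality \eqref{eq:Bochconseqbis} finishes the proof. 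Note that \eqref{linftynormtensor} genuinely requires the coordinate hypothesis \eqref{eq:gammaL2} and is not a formal consequence of \eqref{sobinftyptu} plus \eqref{eq:GNirenberg}; that extra input is precisely what your argument is missing.
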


In the subsequent sections, we provide a proof of Proposition \ref{prop:zz37} and Proposition \ref{prop:zz39}. The proof of Lemma \ref{lemma:zz41} is postponed to section \ref{sec:gowinda17}, the proof of Lemma \ref{lemma:zz42} is postponed to section \ref{sec:gowinda18}, the proof of Lemma \ref{lemma:zz43} is postponed to section \ref{sec:gowinda19}, the proof of Lemma \ref{lemma:zz44} is postponed to section \ref{sec:gowinda20}, 
and the proof of Lemma \ref{lemma:yo} is postponed to section \ref{sec:gowinda21}.

\subsubsection{Proof of Proposition \ref{prop:zz37}}

Using the estimate \eqref{estimtransport1} for transport equations, we have
\bea\lab{zz45}
\norm{P_j\trc}_{\xt{2}{\infty}}&\les& \norm{nL P_j\trc}_{\xt{2}{1}}\\
\nn&\les& \norm{P_j(nL\trc)}_{\xt{2}{1}}+\norm{[nL, P_j]\trc}_{\xt{2}{1}}\\
\nn&\les& \norm{P_j(nL\trc)}_{\xt{2}{1}}+2^{-j}\ep,
\eea
where we used the estimate \eqref{zz2} in the last inequality. Now, \eqref{zz37} follows from \eqref{zz38} and 
\eqref{zz45}. Thus, it remains to prove \eqref{zz38}.

Next, using the Raychaudhuri equation \eqref{D4trchi}, we have
\bee
\norm{P_j(nL\trc)}_{\xt{2}{1}}&\les& \norm{P_j(n|\hch|^2)}_{\xt{2}{1}}+\normm{P_j\left(n\left(\half (\trc)^2+\db   \trc\right)\right)}_{\xt{1}{2}}\\
&\les& \norm{P_j(n|\hch|^2)}_{\xt{2}{1}}+2^{-j}\normm{\nabb\left(n\left(\half (\trc)^2+\db   \trc\right)\right)}_{\lh{2}},
\eee
where we used the finite band property for $P_j$ in the last inequality. Together with the estimate \eqref{esttrc} for $\trc$, the estimate \eqref{estn} for $n$, and the estimates \eqref{estn} \eqref{estk} for $\db$, we obtain
$$\norm{P_j(nL\trc)}_{\xt{2}{1}}\les \norm{P_j(n|\hch|^2)}_{\xt{2}{1}}+2^{-j}\ep.$$
Thus, it remains to prove
\be\lab{zz46}
\norm{P_j(n|\hch|^2)}_{\xt{2}{1}}\les 2^{-j}\ep.
\ee

We have
$$P_j(n|\hch|^2)=2^{-2j}\lap P_j(n|\hch|^2)=2^{-2j}\divb(\nabb P_j(n|\hch|^2)).$$
Thus, we deduce
\be\lab{zz47}
\norm{P_j(n|\hch|^2)}_{\xt{2}{1}}\les 2^{-2j}\norm{\divb P_j\nabb(n|\hch|^2)}_{\xt{2}{1}}+2^{-2j}\norm{\divb [\nabb, P_j](n|\hch|^2)}_{\xt{1}{2}}.
\ee
Now, in view of \eqref{zz44}, we have for any $a>0$
$$\norm{\divb [\nabb, P_j](n|\hch|^2)}_{\lpt{2}}\les 2^j\norm{K}_{\lpt{2}}(\norm{K}_{\lpt{2}}\norm{n|\hch|^2}_{\lpt{2}}+\norm{\La^a(n|\hch|^2)}_{\lpt{2}}).$$
Taking the $L^1_t$ norm, we obtain
\bea\lab{zz48}
\nn\norm{\divb [\nabb, P_j](n|\hch|^2)}_{\tx{1}{2}}&\les& 2^j\norm{K}_{\lh{2}}(\norm{K}_{\lh{2}}\norm{n}_{L^\infty}\norm{\hch}_{\tx{\infty}{4}}^2+\norm{\La^a(n|\hch|^2)}_{\lh{2}})\\
&\les& 2^j\ep(1+\norm{\La^a(n|\hch|^2)}_{\lh{2}}),
\eea
where we used in the last inequality the estimate \eqref{estgauss1} for $K$, the estimate \eqref{estn} for $n$, and the estimate \eqref{esthch} for $\hch$. Now, choosing $0<a<\frac{1}{2}$, the non sharp product estimate \eqref{nonsharpprod2} yields
\bea\lab{zz48bis}
\norm{\La^a(n|\hch|^2)}_{\lh{2}}&\les& \norm{n|\hch|^2}_{\PP^a}\\
\nn&\les& \no(\hch)(\norm{n\hch}_{\lh{2}}+\norm{\nabb (n\hch)}_{\lh{2}})\\
\nn&\les& \ep,
\eea
where we used in the last inequality the estimate \eqref{estn} for $n$, and the estimate \eqref{esthch} for $\hch$. Together with \eqref{zz47} and \eqref{zz48}, we obtain
\be\lab{zz49}
\norm{P_j(n|\hch|^2)}_{\xt{2}{1}}\les 2^{-2j}\norm{\divb P_j\nabb(n|\hch|^2)}_{\xt{2}{1}}+2^{-j}\ep.
\ee

Next, we estimate the right-hand side of \eqref{zz49}. We have
\be\lab{zz49bis}
\norm{|\hch|^2\nabb n}_{\li{\infty}{2}}\les \norm{\nabb n}_{L^\infty}\norm{\hch}^2_{\lh{4}}\les \ep,
\ee
where we used the estimate \eqref{estn} for $n$ and the estimate \eqref{esthch} for $\hch$. Together with \eqref{zz49} and the finite band property for $P_j$, we obtain
\be\lab{zz50}
\norm{P_j(n|\hch|^2)}_{\xt{2}{1}}\les 2^{-2j}\norm{\divb P_j(n\hch\c \nabb\hch)}_{\xt{2}{1}}+2^{-j}\ep.
\ee
We define a scalar $h$ and a vectorfield $F$ by
\be\lab{zz51}
h=|\hch|\textrm{ and }F=n\frac{\hch}{|\hch|}\c\nabb\chi,
\ee
and we decompose
\be\lab{zz52}
n\hch\c\nabb\hch= hF= P_{\leq j}(h)F+P_{>j}(h)F.
\ee
Note in particular in view of the estimate \eqref{esthch} for $\hch$ and the estimate \eqref{estn} for $n$ that we have the following estimate for $h$ and $F$
\be\lab{zz53}
\no(h)+\norm{h}_{\xt{\infty}{2}}\les\ep\textrm{ and }\norm{F}_{\lh{2}}\les\ep.
\ee
We have
\bee
\norm{\divb P_j(P_{>j}(h)F)}_{\xt{2}{1}}&\les&\norm{\divb P_j(P_{>j}(h)F)}_{\tx{1}{2}}\\
&\les&2^j\norm{P_j(P_{>j}(h)F)}_{\tx{1}{2}},
\eee
where we used in the last inequality the finite band property for $P_j$. Together with the dual of the sharp Bernstein inequality for tensors \eqref{eq:strongberntensor}, we obtain
\bee
\norm{\divb P_j(P_{>j}(h)F)}_{\xt{2}{1}}&\les&2^j\normm{(2^j+\norm{K}_{\lpt{2}})\norm{P_{>j}(h)F}_{\lpt{1}}}_{L^1_t}\\
&\les& 2^{2j}\norm{P_{>j}(h)F}_{\lh{1}}+2^j\norm{K}_{\lh{2}}\norm{P_{>j}(h)F}_{\tx{2}{1}}\\
&\les& \Big(2^{2j}\norm{P_{>j}(h)}_{\lh{2}}+2^j\norm{K}_{\lh{2}}\norm{P_{>j}(h)}_{\tx{\infty}{2}}\Big)\norm{F}_{\lh{2}}.
\eee
Together with the finite band property and the boundedness on $L^2(\ptu)$ of $P_j$, we obtain
\bea\lab{zz54}
\nn\norm{\divb P_j(P_{>j}(h)F)}_{\xt{2}{1}}&\les& 2^j\Big(\norm{\nabb h}_{\lh{2}}+\norm{K}_{\lh{2}}\norm{h}_{\tx{\infty}{2}}\Big)\norm{F}_{\lh{2}}\\
&\les& 2^j\ep,
\eea
where we used in the last inequality the estimate \eqref{estgauss1} for $K$ and the estimate \eqref{zz53} for $h$ and $F$. 

Next, we evaluate the first term in the decomposition \eqref{zz52}. We have
\be\lab{zz55}
\norm{\divb P_j(P_{\leq j}(h)F)}_{\xt{2}{1}}\les \norm{P_{\leq j}(h)\divb P_j(F)}_{\xt{2}{1}}+\norm{\divb [P_j,P_{\leq j}(h)]F}_{\tx{1}{2}}.
\ee
In view of \eqref{zz42}, we have
\bea\lab{zz56}
\nn\norm{\divb [P_j,P_{\leq j}(h)]F}_{\tx{1}{2}}&\les& 2^j\normm{(\norm{\nabb h}_{\lpt{2}}+\norm{K}_{\lpt{2}}\norm{h}_{\lpt{2}})\norm{F}_{\lpt{2}}}_{L^1_t}\\
\nn&\les& 2^j(\norm{\nabb h}_{\lh{2}}+\norm{K}_{\lh{2}}\norm{h}_{\tx{\infty}{2}})\norm{F}_{\lh{2}}\\
&\les& 2^j\ep,
\eea
where we used in the last inequality the estimate \eqref{estgauss1} for $K$ and the estimate \eqref{zz53} for $h$ and $F$. Next, we consider the first term in the right-hand side of \eqref{zz55}. We have
\be\lab{zz57}
\norm{P_{\leq j}(h)\divb P_j(F)}_{\xt{2}{1}}\les \norm{h\divb P_j(F)}_{\xt{2}{1}}+\norm{P_{>j}(h)\divb P_j(F)}_{\xt{2}{1}}.
\ee
The first term in the right-hand side of \eqref{zz57} is estimated as follows
\bea\lab{zz58}
\norm{h\divb P_j(F)}_{\xt{2}{1}}&\les & \norm{h}_{\xt{\infty}{2}}\norm{\divb P_j(F)}_{\lh{2}}\\
\nn&\les& 2^j\norm{h}_{\xt{\infty}{2}}\norm{F}_{\lh{2}}\\
\nn&\les& 2^j\ep,
\eea
where we used the finite band property for $P_j$ and the estimate \eqref{zz53} for $h$ and $F$. Next, we 
estimate the second term in \eqref{zz57}. We have
\bea\lab{zz59}
\norm{P_{>j}(h)\divb P_j(F)}_{\lpt{2}}&\les& \norm{P_{>j}(h)}_{\lpt{4}}\norm{\divb P_j(F)}_{\lpt{4}}\\
\nn&\les& \norm{P_{>j}(h)}_{\lpt{4}}\norm{\nabb^2P_j(F)}_{\lpt{2}}^{\frac{1}{2}}\norm{\nabb P_j(F)}_{\lpt{2}}^{\frac{1}{2}}\\
\nn&\les& \norm{P_{>j}(h)}_{\lpt{4}}\norm{\nabb^2P_j(F)}_{\lpt{2}}^{\frac{1}{2}}2^{\frac{j}{2}}\norm{P_j(F)}_{\lpt{2}}^{\frac{1}{2}},
\eea
where we used the Gagliardo-Nirenberg inequality \eqref{eq:GNirenberg} and the finite band property for $P_j$. Using the Bochner identity for tensors \eqref{vbochineq}, we have
\bee
&&\norm{\nabb^2P_j(F)}_{\lpt{2}}\\
&\les& \norm{\lap P_j(F)}_{\lpt{2}}+\norm{K}_{\lpt{2}}\norm{\nabb P_j(F)}_{\lpt{2}}+\norm{K}^2_{\lpt{2}}\norm{P_j(F)}_{\lpt{2}}\\
&\les& (2^{2j}+\norm{K}^2_{\lpt{2}})\norm{F}_{\lpt{2}},
\eee
where we used in the last inequality the finite band property for $P_j$. Together with \eqref{zz59}, we obtain
\bee
&&\nn\norm{P_{>j}(h)\divb P_j(F)}_{\lpt{2}}\\
&\les& \norm{P_{>j}(h)}_{\lpt{4}}2^{\frac{3j}{2}}\norm{F}_{\lpt{2}}+\norm{P_{>j}(h)}_{\lpt{4}}2^{\frac{j}{2}}\norm{K}_{\lpt{2}}\norm{F}_{\lpt{2}}\\
&\les& \left(\sum_{l>j}2^{\frac{l}{2}}\norm{P_lh}_{\lpt{2}}\right)2^{\frac{3j}{2}}\norm{F}_{\lpt{2}}+\norm{h}_{\lpt{4}}2^{\frac{j}{2}}\norm{K}_{\lpt{2}}\norm{F}_{\lpt{2}}\\
&\les& \left(\left(\sum_{l>j}2^{-\frac{l}{2}}\right)\norm{\nabb h}_{\lpt{2}}2^{\frac{3j}{2}}+2^{\frac{j}{2}}\norm{K}_{\lpt{2}}\norm{h}_{\lpt{4}}\right)\norm{F}_{\lpt{2}}\\
&\les& 2^j\left(\norm{\nabb h}_{\lpt{2}}+\norm{K}_{\lpt{2}}\norm{h}_{\lpt{4}}\right)\norm{F}_{\lpt{2}},
\eee
where we used Bernstein, the boundedness on $\lpt{4}$ and the finite band property for $P_l$. Taking the $L^1_t$ norm, we obtain
\bea\lab{zz60}
\nn \norm{P_{>j}(h)\divb P_j(F)}_{\tx{1}{2}}&\les& 2^j\left(\norm{\nabb h}_{\lh{2}}+\norm{K}_{\lh{2}}\norm{h}_{\tx{\infty}{4}}\right)\norm{F}_{\lh{2}}\\
&\les& 2^j\ep,
\eea
where we used in the last inequality the estimate \eqref{estgauss1} for $K$ and the estimate \eqref{zz53} for $h$ and $F$. Now, \eqref{zz57}, \eqref{zz58} and \eqref{zz60} imply
$$\norm{P_{\leq j}(h)\divb P_j(F)}_{\xt{2}{1}}\les 2^j\ep.$$
Together with \eqref{zz55} and \eqref{zz56}, this yields
$$\norm{\divb P_j(P_{\leq j}(h)F)}_{\xt{2}{1}}\les 2^j\ep.$$
Together with \eqref{zz52} and \eqref{zz54}, we obtain
$$\norm{\divb P_j(n\hch\c\nabb\hch)}_{\xt{2}{1}}\les 2^j\ep.$$
Together with \eqref{zz50}, this yields the desired estimate \eqref{zz46}. This concludes the proof of the proposition.

\subsubsection{Proof of Proposition \ref{prop:zz39}}

Using the estimate \eqref{estimtransport1} for transport equations, we have
\bee
\norm{\nabb P_{\leq j}\trc}_{\xt{2}{\infty}}&\les& \norm{\ddb_{nL}\nabb P_j\trc}_{\xt{2}{1}}\\
\nn&\les& \norm{\nabb P_j(nL\trc)}_{\xt{2}{1}}+\norm{[\ddb_{nL}, \nabb]P_{\leq j}\trc}_{\xt{2}{1}}+\norm{\nabb [nL, P_{\leq j}]\trc}_{\xt{2}{1}}\\
\nn&\les& \norm{\nabb P_{\leq j}(nL\trc)}_{\xt{2}{1}}+\norm{n\chi\nabb P_{\leq j}\trc}_{\xt{2}{1}}+2^{-j}\ep\\
\nn&\les& \norm{\nabb P_{\leq j}(nL\trc)}_{\xt{2}{1}}+\norm{n}_{L^\infty}\norm{\chi}_{\xt{\infty}{2}}\norm{\nabb P_{\leq j}\trc}_{\xt{2}{1}}+2^{-j}\ep\\
\nn&\les& \norm{\nabb P_{\leq j}(nL\trc)}_{\xt{2}{1}}+\ep\norm{\nabb P_{\leq j}\trc}_{\xt{2}{\infty}}+2^{-j}\ep,
\eee
where we used the commutator formula \eqref{comm5}, the commutator estimate \eqref{zz2}, and the estimates \eqref{estn} for $n$ and \eqref{esttrc} \eqref{esthch} for $\chi$. Since $\ep>0$ is small, we obtain
\be\lab{zzbis45}
\norm{\nabb P_{\leq j}\trc}_{\xt{2}{\infty}}\les \norm{\nabb P_{\leq j}(nL\trc)}_{\xt{2}{1}}+2^{-j}\ep.
\ee
Now, \eqref{zz39} follows from \eqref{zz40} and 
\eqref{zzbis45}. Thus, it remains to prove \eqref{zz38}.

Next, using the Raychaudhuri equation \eqref{D4trchi}, we have
\bee
\norm{\nabb P_{\leq j}(nL\trc)}_{\xt{2}{1}}&\les& \norm{\nabb P_{\leq j}(n|\hch|^2)}_{\xt{2}{1}}+\normm{\nabb P_{\leq j}\left(n\left(\half (\trc)^2+\db   \trc\right)\right)}_{\xt{1}{2}}\\
&\les& \norm{\nabb P_{\leq j}(n|\hch|^2)}_{\xt{2}{1}}+\normm{\nabb\left(n\left(\half (\trc)^2+\db   \trc\right)\right)}_{\lh{2}},
\eee
where we used the finite band property for $P_j$ in the last inequality. Together with the estimate \eqref{esttrc} for $\trc$, the estimate \eqref{estn} for $n$, and the estimates \eqref{estn} \eqref{estk} for $\db$, we obtain
$$\norm{\nabb P_{\leq j}(nL\trc)}_{\xt{2}{1}}\les \norm{\nabb P_{\leq j}(n|\hch|^2)}_{\xt{2}{1}}+\ep.$$
Thus, it remains to prove
\be\lab{zzbis46}
\norm{\nabb P_{\leq j}(n|\hch|^2)}_{\xt{2}{1}}\les \ep.
\ee

In view of \eqref{zz43}, we have for any $a>0$
$$\norm{[\nabb, P_{\leq j}](n|\hch|^2)}_{\lpt{2}}\les \norm{K}_{\lpt{2}}(\norm{K}_{\lpt{2}}\norm{n|\hch|^2}_{\lpt{2}}+\norm{\La^a(n|\hch|^2)}_{\lpt{2}}).$$
Taking the $L^1_t$ norm, we obtain
\bee
\nn\norm{[\nabb, P_{\leq j}](n|\hch|^2)}_{\tx{1}{2}}&\les& \norm{K}_{\lh{2}}(\norm{K}_{\lh{2}}\norm{n}_{L^\infty}\norm{\hch}_{\tx{\infty}{4}}^2+\norm{\La^a(n|\hch|^2)}_{\lh{2}})\\
&\les& \ep,
\eee
where we used in the last inequality the estimate \eqref{estgauss1} for $K$, the estimate \eqref{estn} for $n$, the estimate \eqref{esthch} for $\hch$ and the estimate \eqref{zz48bis} with the choice $0<a<\frac{1}{2}$. Thus, we obtain
\be\lab{zzbis49}
\norm{\nabb P_{\leq j}(n|\hch|^2)}_{\xt{2}{1}}\les \norm{P_{\leq j}\nabb(n|\hch|^2)}_{\xt{2}{1}}+\ep.
\ee

Next, we estimate the right-hand side of \eqref{zzbis49}. In view of \eqref{zz49bis} and the boundedness on $\lpt{2}$ of $P_{\leq j}$, we have
\be\lab{zzbis50}
\norm{\nabb P_{\leq j}(n|\hch|^2)}_{\xt{2}{1}}\les \norm{P_j(n\hch\c \nabb\hch)}_{\xt{2}{1}}+\ep.
\ee
Now, recall the definition \eqref{zz51} of the scalar $h$ and the vectorfield $F$, the decomposition \eqref{zz52} of $n\hch\c\nabb\hch$ and the estimate \eqref{zz53} for $h$ and $F$. Using Bernstein for $P_{\leq j}$, we have
\bea\lab{zzbis54}
\norm{P_{\leq j}(P_{>j}(h)F)}_{\xt{1}{2}}&\les&2^{\frac{j}{2}}\norm{P_{>j}(h)F}_{\tx{1}{\frac{4}{3}}}\\
\nn&\les& 2^{\frac{j}{2}}\norm{P_{>j}h}_{\tx{2}{4}}\norm{F}_{\lh{2}}\\
\nn&\les& 2^{\frac{j}{2}}\left(\sum_{l>j}2^{\frac{l}{2}}\norm{P_lh}_{\lpt{2}}\right)\norm{F}_{\lh{2}}\\
\nn&\les& 2^{\frac{j}{2}}\left(\sum_{l>j}2^{-\frac{l}{2}}\right)\norm{\nabb h}_{\lh{2}}\norm{F}_{\lh{2}}\\
\nn&\les&\ep,
\eea
where we used Bernstein and the finite band property for $P_l$ and the estimate \eqref{zz53} for $h$ and $F$.

Next, we evaluate the first term in the decomposition \eqref{zz52} of $n\hch\c\nabb\hch$. We have
\be\lab{zzbis55}
\norm{P_{\leq j}(P_{\leq j}(h)F)}_{\xt{2}{1}}\les \norm{P_{\leq j}(h)P_{\leq j}F}_{\xt{2}{1}}+\norm{[P_{\leq j}, P_{\leq j}(h)]F)}_{\xt{1}{2}}.
\ee
Since $[P_{\leq j}, P_{\leq j}(h)]=[P_{>j}, P_{\leq j}(h)]$, we have in view of the commutator estimate \eqref{zz41}
\bea\lab{zzbis56}
\norm{[P_{\leq j}, P_{\leq j}(h)]F)}_{\tx{1}{2}}&=& \norm{[P_{> j}, P_{\leq j}(h)]F)}_{\tx{1}{2}}\\
\nn&\les& \normm{\norm{\nabb h}_{\lpt{2}}\norm{F}_{\lpt{2}}}_{L^1_t}\\
\nn&\les& \norm{\nabb h}_{\lh{2}}\norm{F}_{\lh{2}},
\eea
where we used in the last inequality the estimate \eqref{zz53} for $h$ and $F$. Next, we consider the first term in the right-hand side of \eqref{zzbis55}. We have
\be\lab{zzbis57}
\norm{P_{\leq j}(h)P_{\leq j}(F)}_{\xt{2}{1}}\les \norm{hP_{\leq j}(F)}_{\xt{2}{1}}+\norm{P_{>j}(h)P_{\leq j}(F)}_{\xt{2}{1}}.
\ee
The first term in the right-hand side of \eqref{zzbis57} is estimated as follows
\bea\lab{zzbis58}
\norm{hP_{\leq j}(F)}_{\xt{2}{1}}&\les & \norm{h}_{\xt{\infty}{2}}\norm{P_{\leq j}(F)}_{\lh{2}}\\
\nn&\les& \norm{h}_{\xt{\infty}{2}}\norm{F}_{\lh{2}}\\
\nn&\les& \ep,
\eea
where we used the boundedness on $\lpt{2}$ of $P_{\leq j}$ and the estimate \eqref{zz53} for $h$ and $F$. Next, we estimate the second term in \eqref{zzbis57}. We have
\bee
\norm{P_{>j}(h)P_{\leq j}(F)}_{\lpt{2}}&\les& \norm{P_{>j}(h)}_{\lpt{4}}\norm{P_{\leq j}(F)}_{\lpt{4}}\\
\nn&\les&  \left(\sum_{l>j}2^{\frac{l}{2}}\norm{P_lh}_{\lpt{2}}\right)2^{\frac{j}{2}}\norm{F}_{\lpt{2}}\\
\nn&\les& \left(\sum_{l>j}2^{-\frac{l}{2}}\right)\norm{\nabb h}_{\lpt{2}}2^{\frac{j}{2}}\norm{F}_{\lpt{2}}\\
\nn&\les& \norm{\nabb h}_{\lpt{2}}\norm{F}_{\lpt{2}},
\eee
where we used Bernstein for $P_l$ and $P_j$, and the finite band property for $P_l$. Taking the $L^1_t$ norm, we obtain
\bea\lab{zzbis60}
\nn \norm{P_{>j}(h)P_{\leq j}(F)}_{\tx{1}{2}}&\les& \norm{\nabb h}_{\lh{2}}\norm{F}_{\lh{2}}\\
&\les& \ep,
\eea
where we used in the last inequality the estimate \eqref{zz53} for $h$ and $F$. Now, \eqref{zzbis57}, \eqref{zzbis58} and \eqref{zzbis60} imply
$$\norm{P_{\leq j}(h)P_{\leq j}(F)}_{\xt{2}{1}}\les 2^j\ep.$$
Together with \eqref{zzbis55} and \eqref{zzbis56}, this yields
$$\norm{P_{\leq j}(P_{\leq j}(h)F)}_{\xt{2}{1}}\les 2^j\ep.$$
Together with \eqref{zzbis54} and the decomposition of $n\hch\c\nabb\hch$ given by \eqref{zz52}, we obtain
$$\norm{P_{\leq j}(n\hch\c\nabb\hch)}_{\xt{2}{1}}\les 2^j\ep.$$
Together with \eqref{zzbis50}, this yields the desired estimate \eqref{zzbis46}. This concludes the proof of the proposition.

\appendix

\section{Appendix to section \ref{sec:regxproof}}

\subsection{Proof of Proposition \ref{propK}}\lab{sec:propK}

Recall from the Gauss equation \eqref{gauss} that:
$$K=\half\hch_{AB}\hchb_{AB}-\frac{1}{4} \trc \trchb -\rho.$$
First, remark from \eqref{sobineq1} that:
\be\lab{gau1}
\normm{\half\hch_{AB}\hchb_{AB}-\frac{1}{4} \trc \trchb}_{\tx{\infty}{2}}\lesssim\norm{\chi}^2_{\tx{\infty}{4}}\lesssim\no(\chi)^2\lesssim \ep.
\ee
Furthermore, from the assumptions on the curvature flux \eqref{curvflux} \eqref{curvflux1}, we have:
\be\lab{gau2}
\norm{\rho}_{\lh{2}}\leq \ep.
\ee
\eqref{gau1} and \eqref{gau2} imply \eqref{estgauss1}. 

We now concentrate on \eqref{estgauss2}. We assume:
\begin{equation}\label{ad26}
\sum_{j\geq 0}2^{-j}\norm{P_jK}^2_{\tx{\infty}{2}}+\norm{P_{<0}K}^2_{\tx{\infty}{2}}\lesssim E^2\ep^2,
\end{equation}
where $E$ is a large enough constant. We will then try to improve \eqref{ad26}. Note that \eqref{eq:Bochconseq}, \eqref{La8} and \eqref{ad26} yield for any scalar function $f$ on $\ptu$:
\begin{equation}\label{ad26bis}
\norm{\nabb^2f}^2_{\lpt{2}}\lesssim\norm{\lap f}^2_{\lpt{2}}+(E\ep+E^4\ep^4)\norm{\nabb f}^2_{\lpt{2}}.
\end{equation}

In view of \eqref{gau1}, we just need to bound $\norm{\La^{-\half}\rho}_{\tx{\infty}{2}}$. Note from \eqref{La8} that it suffices to bound:
$$\norm{P_{<0}\rho}^2_{\tx{\infty}{2}}+\sum_{j\geq 0}2^{-j}\norm{P_j\rho}^2_{\tx{\infty}{2}}.$$
The term $\norm{P_{<0}\rho}_{\tx{\infty}{2}}$ is easier to bound, so we concentrate on estimating the sum $\sum_{j\ge 0}2^{-j}\norm{P_j\rho}_{\tx{\infty}{2}}$. We will use the following variant of \eqref{sobineq2} where we 
do not yet use Cauchy-Schwarz in $t$ for the integral containing $\dd_L F$:
\begin{equation}\label{ad27bis}
\norm{F}^2_{\tx{\infty}{2}}\lesssim \ds\int_0^1\norm{\dd_L F}_{\lpt{2}}\norm{F}_{\lpt{2}}dt+\norm{F}_{\lh{2}}^2.
\end{equation}
Using \eqref{ad27bis}, properties (ii) and (iii) of Theorem \ref{thm:LP} for $P_j$, the bound on $\rho$ given by \eqref{curvflux1} and the bound on $n$ given by the bootstrap assumption \eqref{boot1}, we have:
\begin{equation}\label{ad27}
\begin{array}{ll}
&\ds\sum_{j\geq 0}2^{-j}\norm{P_j\rho}^2_{\tx{\infty}{2}}\\
\ds\lesssim &\ds\sum_{j\ge 0} 2^{-j}\left(\int_0^1\norm{P_j\rho}_{\lpt{2}}\norm{\dd_LP_j\rho}_{\lpt{2}}dt+\norm{P_j\rho}_{\lh{2}}^2\right)\\
\ds\lesssim &\ds\sum_{j\ge 0} 2^{-j}\left(\int_0^1\norm{P_j\rho}_{\lpt{2}}\norm{\dd_L P_j\rho}_{\lpt{2}}dt\right)+\sum_{j\ge 0}2^{-j}\norm{\rho}^2_{\lh{2}}\\
\ds\lesssim &\ds\sum_{j\ge 0} 2^{-j}\left(\int_0^1\norm{P_j\rho}_{\lpt{2}}\norm{nLP_j\rho}_{\lpt{2}}dt\right)+\ep^2.
\end{array}
\end{equation}
We have:
\begin{equation}\label{ad28}
nL U(\tau)\rho=U(\tau)nL\rho+V(\tau)
\end{equation}
where $V(\tau)$ is satisfies:
\begin{equation}\label{ad29}
(\partial_{\tau}-\lap)V(\tau)=[nL,\lap]U(\tau),\,V(0)=0.
\end{equation}
Using \eqref{eq:LP} and \eqref{ad28}, we obtain:
\begin{equation}\label{ad30}
nLP_j\rho=P_jnL\rho +\int_0^\infty m_j(\tau)V(\tau) d\tau.
\end{equation}

We now estimate $\norm{P_jnL\rho}_{\lh{2}}$. We may assume the existence of $\widetilde{P_j}$ with the same properties than $P_j$ such that $P_j=\widetilde{P_j}^2$ (see \cite{LP}), and for simplicity we write $P_j=P_j^2$. Also, using the fact that $\La\La^{-1}=I$ and that $\La$ commutes with $P_j$, we obtain:
\begin{equation}\label{ad32}
P_j=\La P_jP_j\La^{-1},
\end{equation}
which together with property (iii) of Theorem \ref{thm:LP} for $P_j$ yields:
\begin{equation}\label{ad33}
\norm{P_jnL\rho}_{\lh{2}}\lesssim \norm{\La P_j(P_j\La^{-1}nL\rho)}_{\lh{2}}\lesssim 2^j\norm{P_j\La^{-1}nL\rho}_{\lh{2}}.
\end{equation}
Using the Bianchi identity \eqref{bianc2}, we have:
\be\lab{ad34}
nL(\rho)=\divb(n\b)-\nabb(n)\b -\frac{n}{2}\hchb\a+n(k_{AN}-2\kepb_A)\b.
\ee
Together with properties \eqref{La3} and \eqref{La5} of $\La$, this yields:
\bea
\norm{\La^{-1}nL(\rho)}_{\lh{2}}\lesssim\norm{n\b}_{\lh{2}}+\normm{\nabb(n)\b -\frac{n}{2}\hchb\a+n(\kep-2\kepb)\b}_{\tx{2}{\frac{4}{3}}}\lab{ad35}\\
\lesssim\ep+\norm{\nabb(n)}_{\tx{\infty}{4}}\norm{\b}_{\lh{2}} +\norm{\hchb}_{\tx{\infty}{4}}\norm{\a}_{\lh{2}}+\norm{\kep-2\kepb}_{\tx{\infty}{4}}\norm{\b}_{\lh{2}}\nn\\
\lesssim \ep(1+\no(\nabb(n))+\no(\hchb)+\no(\kep)+\no(\kepb))\lesssim \ep\nn,
\eea
where we have also used \eqref{sobineq1} to bound the $\tx{\infty}{4}$ norms, \eqref{curvflux1} to estimate $\a, \b$, and the bootstrap assumptions \eqref{boot1}-\eqref{boot5}. Now, \eqref{ad33} and \eqref{ad35} yield:
\begin{equation}\label{ad36}
\sum_{j\geq 0}2^{-2j}\norm{P_jnL\rho}^2_{\lh{2}}\lesssim \sum_{j\geq 0}\norm{P_j\La^{-1}nL\rho}^2_{\lh{2}}\lesssim \norm{\La^{-1}nL\rho}^2_{\lh{2}}\lesssim\ep^2.
\end{equation}

Property (ii) of Theorem \ref{thm:LP}, \eqref{ad27}, \eqref{ad30}, \eqref{gau2}, \eqref{ad33} and \eqref{ad35} imply:
\begin{equation}\label{ad36bis}
\begin{array}{l}
\ds\sum_{j\geq 0}2^{-j}\norm{P_j\rho}^2_{\tx{\infty}{2}}\ds\lesssim\sum_{j\ge 0} 2^{-j}\left(\int_0^1\norm{P_j\rho}_{\lpt{2}}\norm{nLP_j\rho}_{\lpt{2}}dt\right)+\ep^2\\
\ds\lesssim\sum_{j\ge 0} 2^{-j}\norm{P_j\rho}_{\lh{2}}\norm{P_jnL\rho}_{\lh{2}}\\
\ds +\sum_{j\ge 0} 2^{-j}\left(\int_0^1\norm{P_j\rho}_{\lpt{2}}\normm{\int_0^\infty m_j(\tau)V(\tau) d\tau}_{\lpt{2}}dt\right)+\ep^2\\
\ds\lesssim\sum_{j\ge 0}\norm{P_j\rho}^2_{\lh{2}}+\sum_{j\ge 0} 2^{-2j}\norm{P_jnL\rho}^2_{\lh{2}}\\
\ds +\sum_{j\ge 0} 2^{-j}\norm{P_j\rho}_{\tx{\infty}{2}}\normm{\int_0^\infty m_j(\tau)V(\tau) d\tau}_{\tx{1}{2}}+\ep^2\\
\ds\lesssim\left(\sum_{j\ge 0} 2^{-j}\norm{P_j\rho}^2_{\tx{\infty}{2}}\right)^{\frac{1}{2}}\\
\ds\times \left(\sum_{j\ge 0} 2^{-j}\normm{\int_0^\infty m_j(\tau)\norm{V(\tau)}_{\lpt{2}} d\tau}^2_{L^1(0,1)}\right)^{\frac{1}{2}}+\ep^2,
\end{array}
\end{equation}
which yields:
\begin{equation}\label{ad37}
\sum_{j\geq 0}2^{-j}\norm{P_j\rho}^2_{\tx{\infty}{2}}\lesssim\sum_{j\ge 0} 2^{-j}\normm{\int_0^\infty m_j(\tau)\norm{V(\tau)}_{\lpt{2}} d\tau}^2_{L^1(0,1)}+\ep^2.
\end{equation}

In view of \eqref{ad37}, we have to estimate $\norm{V(\tau)}_{\lpt{2}}$. Let $a, p$ real numbers satisfying:
\begin{equation}\label{ad38bis}
\begin{array}{l}
\ds 0<a<\frac{1}{2},\,2<p<+\infty,\textrm{ such that }p<\min\left(\frac{2}{1-a},\frac{8}{3}\right).
\end{array}
\end{equation}
\eqref{eq:l2heat1bis} implies:
\begin{equation}\label{ad46}
\begin{array}{ll}
& \ds\norm{\La^{-a}V(\tau)}^2_{\lpt{2}}+\int_0^\tau\norm{\nabb\La^{-a}V(\tau')}^2_{\lpt{2}}d\tau'\\
\ds\lesssim &\ds\int_0^\tau\int_{\ptu} \La^{-2a}V(\tau')[nL,\lap]U(\tau')\rho\dmt d\tau'.
\end{array}
\end{equation}
Let $p$ defined in \eqref{ad38bis}, and let $p'$ such that $\frac{1}{p}+\frac{1}{p'}=\frac{1}{2}$. 
Using the commutator formula \eqref{comm6}, \eqref{ad28}, and integrating by parts the term $\nabb^2U(\tau)\rho$ yields:
\bea\label{ad47}
&&\int_0^\tau\int_{\ptu} \La^{-2a}V(\tau')[nL,\lap]U(\tau')\rho\dmt d\tau'\\
\nn&\lesssim& (\norm{\nabb(n\chi)}_{\lpt{2}}+\norm{n(2\hch\kepb-\kep\trc-\nabb\trc)}_{\lpt{2}})\int_0^\tau\norm{\nabb U(\tau')}_{\lpt{p}}\\
\nn&&\times\norm{\La^{-2a}V(\tau')}_{\lpt{p'}}d\tau'+\norm{n\chi}_{\lpt{p'}}\int_0^\tau\norm{\nabb U(\tau')}_{\lpt{p}}\norm{\nabb\La^{-2a}V(\tau')}_{\lpt{2}}d\tau'.
\eea
\eqref{eq:GNirenberg}, \eqref{La1}, \eqref{interpolLa} and \eqref{ad26bis} yield:
\begin{equation}\label{ad48}
\begin{array}{l}
\ds\int_0^\tau\norm{\nabb U(\tau')}_{\lpt{p}}\norm{\La^{-2a}V(\tau')}_{\lpt{p'}}d\tau'+\int_0^\tau\norm{\nabb U(\tau')}_{\lpt{p}}\norm{\nabb\La^{-2a}V(\tau')}_{\lpt{2}}d\tau'\\
\ds\lesssim 
\int_0^\tau\norm{\nabb U(\tau')}^{\frac{2}{p}}_{\lpt{2}}\norm{\nabb^2 U(\tau')}^{1-\frac{2}{p}}_{\lpt{2}}\norm{\La^{-a}V(\tau')}^{a}_{\lpt{p'}}\norm{\nabb\La^{-a}V(\tau')}^{1-a}_{\lpt{p'}}d\tau'\\
\ds\lesssim E^{2(1-\frac{2}{p})}\int_0^\tau\norm{\nabb U(\tau')}^{\frac{2}{p}}_{\lpt{2}}\norm{\lap U(\tau')}^{1-\frac{2}{p}}_{\lpt{2}}\norm{\La^{-a}V(\tau')}^{a}_{\lpt{p'}}\norm{\nabb\La^{-a}V(\tau')}^{1-a}_{\lpt{p'}}d\tau'\\
\ds\lesssim \left(E^{4(1-\frac{2}{p})}\int_0^\tau\norm{\nabb U(\tau')}^{2}_{\lpt{2}}d\tau'
+ E^{4(1-\frac{2}{p})}\int_0^\tau \tau'\norm{\lap U(\tau')}^{2}_{\lpt{2}}d\tau'\right)^{\frac{1}{2}}\\
\ds\left(\frac{1}{2}\int_0^\tau\norm{\nabb\La^{-a}V(\tau')}^2_{\lpt{2}}d\tau'
+\int_0^\tau{\tau'}^{-\frac{p-2}{ap}}\norm{\nabb\La^{-a}V(\tau')}^2_{\lpt{2}}d\tau'\right)^{\frac{1}{2}}
\end{array}
\end{equation}
which together with the estimates for the heat flow \eqref{eq:l2heat1}, \eqref{eq:l2heat2} implies:
\begin{equation}\label{ad49}
\begin{array}{l}
\ds\int_0^\tau\norm{\nabb U(\tau')}_{\lpt{p}}\norm{\La^{-2a}V(\tau')}_{\lpt{p'}}d\tau'+\int_0^\tau\norm{\nabb U(\tau')}_{\lpt{p}}\norm{\nabb\La^{-2a}V(\tau')}_{\lpt{2}}d\tau'\\
\ds\lesssim E^{\frac{1}{2}}\norm{\rho}_{\lpt{2}}\left(\int_0^\tau\norm{\nabb\La^{-a}V(\tau')}^2_{\lpt{2}}d\tau'
+\int_0^\tau{\tau'}^{-\frac{p-2}{ap}}\norm{\La^{-a}V(\tau')}^2_{\lpt{2}}d\tau'\right)^{\frac{1}{2}}.
\end{array}
\end{equation}

Finally, the choice of $p$ \eqref{ad38bis}, \eqref{ad46}, \eqref{ad47} and \eqref{ad49} yield:
\begin{equation}\label{ad50}
\begin{array}{l}
\ds\norm{\La^{-a}V(\tau)}^2_{\lpt{2}}+\int_0^\tau\norm{\nabb\La^{-a}V(\tau')}^2_{\lpt{2}}d\tau'\\
\ds\lesssim E(\norm{\nabb(n\chi)}_{\lpt{2}}+\norm{n(2\hch\kepb-\kep\trc-\nabb\trc)}_{\lpt{2}})^2\norm{\rho}^2_{\lpt{2}}.
\end{array}
\end{equation}
Using the interpolation inequality \eqref{interpolLa}, we obtain:
\begin{equation}\label{ad51}
\begin{array}{l}
\ds\int_0^{+\infty}\norm{V(\tau)}^{\frac{2}{a}}_{\lpt{2}}d\tau\lesssim \int_0^\tau \norm{\La^{-a}V(\tau')}^{\frac{2(1-a)}{a}}_{\lpt{2}}\norm{\nabb\La^{-a}V(\tau')}^2_{\lpt{2}}d\tau'
\\
\ds\lesssim E^{\frac{1}{a}}(\norm{\nabb(n\chi)}_{\lpt{2}}+\norm{n(2\hch\kepb-\kep\trc-\nabb\trc)}_{\lpt{2}})^{\frac{2}{a}}\norm{\rho}^{\frac{2}{a}}_{\lpt{2}},
\end{array}
\end{equation}
which together with the bootstrap assumptions \eqref{boot1}-\eqref{boot5} and the estimate on $\rho$ \eqref{gau2} yields:
\begin{equation}\label{ad52}
\begin{array}{l}
\ds\normm{\int_0^{+\infty}m_j(\tau)\norm{V(\tau)}_{\lpt{2}}d\tau}_{L^1(0,1)}\lesssim 2^{ja}\normm{\left(\int_0^{+\infty}\norm{V(\tau)}^{\frac{2}{a}}_{\lpt{2}}d\tau\right)^{\frac{a}{2}}}_{L^1(0,1)}\\
\ds\lesssim 2^{ja}E^{\frac{1}{2}}(\no(\chi)(\no(\nabb n)+\norm{n}_{\lh{\infty}})+\norm{n}_{\lh{\infty}}(\no(\chi)\no(\kep)+\no(\chi))\norm{\rho}_{\lh{2}}\\
\lesssim 2^{ja}E^{\frac{1}{2}}\ep 
\end{array}
\end{equation}
In turn, we obtain together with \eqref{ad37} and the fact that $0<a<\frac{1}{2}$:
\begin{equation}\label{ad53}
\sum_{j\geq 0}2^{-j}\norm{P_j\rho}^2_{\tx{\infty}{2}}\lesssim\sum_{j\geq 0}2^{-j}2^{2ja}E\ep^2+\ep^2\lesssim E\ep^2.
\end{equation}
Using \eqref{gau1}, we obtain for $K$:
\begin{equation}\label{ad54}
\sum_{j\geq 0}2^{-j}\norm{P_jK}^2_{\tx{\infty}{2}}\lesssim E\ep^2,
\end{equation}
which is an improvement of \eqref{ad26}. Thus, we have proved:
\begin{equation}\label{ad55}
\sum_{j\geq 0}2^{-j}\norm{P_jK}^2_{\tx{\infty}{2}}\lesssim \ep^2,
\end{equation}
which together with \eqref{La8} concludes the proof of \eqref{estgauss2}.

\subsection{Proof of Lemma \ref{ineqtraceptu}}\lab{sec:gowinda}

Let $u_0<u_1$. We have
\bee
\int_{P_{t,u_1}}F(u_1, \c)\c\nabb G(u_1, \c)&=&\int_{P_{t,u_0}}F(u_0, \c)\c\nabb G(u_0, \c)+\int_{u_0}^{u_1}\partial_u\left(\int_{\ptu}F\c\nabb G\right) du\\
&\les& \int_{P_{t,u_0}}F(u_0, \c)\nabb G(u_0, \c)+\int_{\mathbb{R}}\left|\partial_u\left(\int_{\ptu}F\c\nabb G\right)\right| du.
\eee
Letting $u_0\rightarrow -\infty$ and taking the supremum in $u_1$, this yields
$$\sup_u\left(\int_{\ptu}F\c\nabb G\right)\les \int_{\mathbb{R}}\left|\partial_u\left(\int_{\ptu}F\c\nabb G\right)\right| du.$$
Together with \eqref{dusit}, this yields
\bee
&&\sup_u\left(\int_{\ptu}F\c\nabb G\right)\\
&\les& \int_u\left|\int_{\ptu}b(\nabb_NF\c\nabb G+F\c\nabb_N\nabb G+\trt F\c\nabb G)\dmt\right|du\\
&\les& \int_u\left|\int_{\ptu}b(\nabb_NF\c\nabb G+F\c \nabb\nabb_NG+F\c [\nabb_N, \nabb]G+\trt F\c\nabb G)\dmt\right|du.
\eee
Decomposing 
$$F\c\nabb\nabb_NG=\divb(F\c \nabb_N G)-\nabb F\c \nabb_NG$$
and integrating by parts the divergence term on $\ptu$, we deduce
\bee
&&\sup_u\left(\int_{\ptu}F\c\nabb G\right)\\
&\les& \int_u\int_{\ptu}\Big|\nabb_NF\c\nabb G-\nabb F\c \nabb_N G-b^{-1}\nabb b\c F\c \nabb_NG+F\c [\nabb_N, \nabb]G+\trt F\c\nabb G\Big| b\dmt du,
\eee
which in view of the coarea formula \eqref{coarea} yields
\bea\lab{themonster}
&&\sup_u\left(\int_{\ptu}F\c\nabb G\right)\\
\nn&\les& \int_{\Sigma_t}\Big|\nabb_NF\c\nabb G-\nabb F\c \nabb_N G-b^{-1}\nabb b\c F\c \nabb_NG+F\c [\nabb_N, \nabb]G+\trt F\c\nabb G\Big| d\Sigma_t\\
\nn&\les& \norm{\nabla F}_{L^2(\Sigma_t)}\norm{\nabla G}_{L^2(\Sigma_t)}+(\norm{b^{-1}\nabb b}_{L^4(\Sigma_t)}+\norm{\trt}_{L^4(\Sigma_t)})\norm{F}_{L^4(\Sigma_t)}\norm{\nabla G}_{L^2(\Sigma_t)}\\
\nn&&+\norm{F}_{L^4(\Sigma_t)}\norm{[\nabb_N, \nabb]G}_{L^{\frac{4}{3}}(\Sigma_t)}.
\eea
The commutator formula \eqref{comm4} implies
\bee
&&\norm{[\nabb_N, \nabb]G}_{L^{\frac{4}{3}}(\Sigma_t)}\\
&\les& (\norm{b^{-1}\nabb b}_{L^4(\Sigma_t)}+\norm{\chi}_{L^4(\Sigma_t)}+\norm{\eta}_{L^4(\Sigma_t)})\norm{\nabla G}_{L^2(\Sigma_t)}\\
&&+(\norm{\rr}_{L^2(\Sigma_t)}+(\norm{\chi}_{L^4(\Sigma_t)}+\norm{\kepb}_{L^4(\Sigma_t)}+\norm{\xib}_{L^4(\Sigma_t)}+\norm{\chb}_{L^4(\Sigma_t)}+\norm{\zeta}_{L^4(\Sigma_t)})^2)\norm{G}_{L^4(\Sigma_t)},
\eee
which together with \eqref{themonster} and the Sobolev embedding \eqref{sobineqsit} on $\Sigma_t$ implies
\bea\lab{themonster1}
&&\sup_u\left(\int_{\ptu}F\c\nabb G\right)\\
\nn&\les& \Big(1+\norm{b^{-1}\nabb b}_{L^4(\Sigma_t)}+\norm{\chi}_{L^4(\Sigma_t)}+\norm{\eta}_{L^4(\Sigma_t)}+(\norm{\rr}_{L^2(\Sigma_t)}+(\norm{\chi}_{L^4(\Sigma_t)}+\norm{\kepb}_{L^4(\Sigma_t)}\\
\nn&&+\norm{\xib}_{L^4(\Sigma_t)}+\norm{\chb}_{L^4(\Sigma_t)}+\norm{\zeta}_{L^4(\Sigma_t)})^2\Big)\norm{F}_{H^1(\Sigma_t)}\norm{G}_{H^1(\Sigma_t)},
\eea
where we used in the last inequality the definition \eqref{def:theta} of $\th$ in the last inequality. Now, in view of the embedding \eqref{sobineq1}, we have
\bea\lab{themonster2}
\nn&&\norm{b^{-1}\nabb b}_{L^4(\Sigma_t)}+\norm{\chi}_{L^4(\Sigma_t)}+\norm{\eta}_{L^4(\Sigma_t)}+\norm{\kepb}_{L^4(\Sigma_t)}+\norm{\xib}_{L^4(\Sigma_t)}+\norm{\chb}_{L^4(\Sigma_t)}+\norm{\zeta}_{L^4(\Sigma_t)}\\
\nn&\les& \norm{b^{-1}\nabb b}_{\tx{\infty}{4}}+\norm{\chi}_{\tx{\infty}{4}}+\norm{\eta}_{\tx{\infty}{4}}+\norm{\kepb}_{\tx{\infty}{4}}+\norm{\xib}_{\tx{\infty}{4}}+\norm{\chb}_{\tx{\infty}{4}}+\norm{\zeta}_{\tx{\infty}{4}}\\
\nn&\les& \no(b^{-1}\nabb b)+\no(\chi)+\no(\eta)+\no(\kep)+\no(\xib)+\no(\chb)+\no(\zeta)\\
&\les& 1,
\eea
where we used in the last inequality the bootstrap assumptions \eqref{boot1}-\eqref{boot6} for $b$, $\chi$, $\eta$, $\kep$, $\xib$, $\chb$ and $\zeta$. Finally, \eqref{themonster1}, \eqref{themonster2} and the assumption \eqref{curvflux1} on $\rr$ yield
$$\sup_u\left(\int_{\ptu}F\c\nabb G\right)\les\norm{F}_{H^1(\Sigma_t)}\norm{G}_{H^1(\Sigma_t)},$$
which is the desired estimate. This concludes the proof of Lemma \ref{ineqtraceptu}.

\subsection{Proof of Lemma \ref{lemma:lbt8}}\lab{sec:gowinda1} 

Using the formula \eqref{eq:commLdcal-1} for the commutator $[{}^*\mathcal{D}^{-1}_1,\ddb_{nL}]$, we have:
$$[{}^*\mathcal{D}^{-1}_1,\ddb_{nL}](\bb)={}^*\mathcal{D}^{-1}_1[{}^*\mathcal{D}_1,\ddb_{nL}]({}^*\mathcal{D}^{-1}_1(\bb))$$
which in view of Lemma \ref{lemma:lbt6} yields:
\bea\lab{minou10}
&&\norm{[{}^*\mathcal{D}^{-1}_1,\ddb_{nL}](\bb)}_{\tx{2}{3}}+\norm{[{}^*\mathcal{D}^{-1}_1,\ddb_{nL}](\bb)}_{\tx{1}{4}}\\
\nn&\les& \norm{[{}^*\mathcal{D}_1,\ddb_{nL}]({}^*\mathcal{D}^{-1}_1(\bb))}_{\tx{2}{\frac{4}{3}}}+\norm{[{}^*\mathcal{D}_1,\ddb_{nL}]({}^*\mathcal{D}^{-1}_1(\bb))}_{\tx{1}{\frac{3}{2}}}.
\eea
Now, from the commutator formula \eqref{comm5} and the fact that ${}^*\mathcal{D}^{-1}_1(\bb)$ is a scalar, we have 
$$[{}^*\mathcal{D}_1,\ddb_{nL}]({}^*\mathcal{D}^{-1}_1(\bb))=n\chi\c\nabb({}^*\mathcal{D}^{-1}_1(\bb))$$
which together with \eqref{minou10} implies:
\bee
&&\nn\norm{[{}^*\mathcal{D}^{-1}_1,\ddb_{nL}](\bb)}_{\tx{2}{3}}+\norm{[{}^*\mathcal{D}^{-1}_1,\ddb_{nL}](\bb)}_{\tx{1}{4}}\\
\nn&\les& \norm{n\chi\c\nabb({}^*\mathcal{D}^{-1}_1(\bb))}_{\tx{2}{\frac{4}{3}}}+\norm{n\chi\c\nabb({}^*\mathcal{D}^{-1}_1(\bb))}_{\tx{1}{\frac{3}{2}}}\\
\nn&\les&\norm{n}_{L^\infty}(\norm{\chi}_{\tx{\infty}{4}}+\norm{\chi}_{\tx{2}{6}})\norm{\nabb({}^*\mathcal{D}^{-1}_1(\bb)}_{\lh{2}}\\
\nn&\les&\no(\chi)\norm{\bb}_{\lh{2}}\\
&\les&D\ep^2\\
&\les& \ep,
\eee
where we used the bootstrap assumption \eqref{boot1} for $n$, the bootstrap assumptions \eqref{boot4} \eqref{boot5} for $\chi$, the curvature bound \eqref{curvflux1} for $\bb$, and the estimate \eqref{eq:estimdcal-1} for ${}^*\mathcal{D}^{-1}_1$. This concludes the proof of Lemma \ref{lemma:lbt8}.

\section{Appendix to section \ref{sec:secondderlb}}

\subsection{Proof of Lemma \ref{lemma:lbz2}}\lab{sec:lbz1}

We decompose $\norm{Pj(H\c F)}_{\lh{2}}$ using the property \eqref{eq:partition} of the geometric Littlewood-Paley projections:
\be\lab{lbz11}
\norm{P_j(H\c F)}_{\lh{2}}\les \norm{P_j(H\c P_{<0}F)}_{\lh{2}}+\sum_{l\geq 0}\norm{P_j(H\c P_lF)}_{\lh{2}}.
\ee
We focus on the second term in the right-hand side of \eqref{lbz11}, the other being easier to handle. We start with the case $l\leq j$. Using the assumption \eqref{lbz3} for $F$, the Sobolev inequality \eqref{sobineq1} and the weak Bernstein inequality iv) of Theorem \ref{thm:LP}, we have:
\bee
\norm{P_j(H\c P_lF)}_{\lh{2}}&\les& \norm{H\c P_lF}_{\lh{2}}\\
&\les& \norm{H}_{\tx{\infty}{4}}\norm{P_lF}_{\tx{2}{4}}\\
&\les& \no(H)2^{\frac{l}{2}}\norm{P_lF}_{\lh{2}}\\
&\les& \no(H)(2^lC_1+2^{\frac{l}{2}}C_2),
\eee
 which yields:
\be\lab{lbz12}
\sum_{l\leq j}\norm{P_j(H\c P_lF)}_{\lh{2}}\les \sum_{l\leq j}\no(H)(2^lC_1+2^{\frac{l}{2}}C_2)\les \no(H)(2^jC_1+2^{\frac{j}{2}}C_2).
\ee 

We now focus on the case $l>j$. We further decompose:
\be\lab{lbz13}
\norm{P_j(H\c P_lF)}_{\lh{2}}\les \norm{P_j(P_{\leq l}H\c P_lF)}_{\lh{2}}+\norm{P_j(P_{>l}H\c P_lF)}_{\lh{2}}.
\ee 
We evaluate first the second term in the right-hand side of \eqref{lbz13}. Using the dual of the sharp Bernstein inequality \eqref{eq:strongbernscalarbis} for scalars,  
we obtain:
\bea
\lab{lbz15}\norm{P_j(P_{>l}H\c P_lF)}_{\lh{2}}&\les& 2^j\norm{P_{>l}H\c P_lF}_{\tx{2}{1}}\\
\nn&\les& \norm{P_{>l}H}_{\tx{\infty}{2}}\norm{P_lF}_{\lh{2}}\\
\nn&\les& 2^{-\frac{l}{2}}\no(H)(C_1+2^{-\frac{l}{2}}C_2),
\eea
where we used the assumption \eqref{lbz3} for $F$ and the estimate \eqref{lbz14} for $H$. We now consider the first term in the right-hand side of \eqref{lbz13}. Using \eqref{lbz14bis} with $p=\frac{4}{3}$, 
 the dual of the sharp Bernstein inequality \eqref{eq:strongbernscalarbis} for scalars and \eqref{lbz14}, we obtain:
\bea\lab{lbz16}
&&\norm{P_j(P_{\leq l}H\c P_lF)}_{\lh{2}}\\
\nn&=& 2^{-2l}\norm{P_j(P_{\leq l}H\c\lap P_lF)}_{\lh{2}}\\
\nn&\les & 2^{-2l}\norm{P_j(\divb(P_{\leq l}H\c\nabb P_lF))}_{\lh{2}}+2^{-2l}\norm{P_j(\nabb P_{\leq l}H\c\nabb P_lF)}_{\lh{2}}\\
\nn&\les & 2^{-2l}2^{\frac{3}{2}j}\norm{P_{\leq l}H\c\nabb P_lF}_{\tx{2}{\frac{4}{3}}}+2^{-2l+j}\norm{\nabb P_{\leq l}H\c\nabb P_lF}_{\tx{2}{1}}\\
\nn&\les & 2^{-2l+\frac{3}{2}j}\norm{H}_{\tx{\infty}{4}}\norm{\nabb P_lF}_{\lh{2}}+2^{-2l+j}\norm{\nabb P_{\leq l}H}_{\tx{\infty}{2}}\norm{\nabb P_lF}_{\lh{2}}\\
\nn&\les& (2^{\frac{3}{2}j-l}+2^{j-\frac{1}{2}l})\no(H)(C_1+2^{-\frac{l}{2}}C_2).
\eea

\subsection{Proof of Lemma \ref{lemma:lbz1}}\lab{sec:lbz0}

We decompose $\norm{Pj(hf)}_{\tx{p}{2}}$ using the property \eqref{eq:partition} of the geometric Littlewood-Paley projections:
\be\lab{lbz60}
\norm{P_j(hf)}_{\tx{p}{2}}\les \norm{P_j(hP_{<0}f)}_{\tx{p}{2}}+\sum_{l\geq 0}\norm{P_j(hP_lf)}_{\tx{p}{2}}.
\ee
We focus on the second term in the right-hand side of \eqref{lbz60}, the other being easier to handle. 
Using the $L^2$-boundedness of the Littlewood-Paley projection $P_j$, we have:
\be\lab{lbz61}
\norm{P_j(hP_lf)}_{\tx{p}{2}}\les \norm{P_lf}_{\tx{p}{2}}\les \norm{h}_{L^\infty}(2^lC_1+2^{\frac{l}{2}}C_2).
\ee
We now decompose $\norm{P_j(hP_lf)}_{\tx{p}{2}}$ again using the property \eqref{eq:partition} of the geometric Littlewood-Paley projections:
\be\lab{lbz62}
\norm{P_j(hP_lf)}_{\tx{p}{2}}\les \norm{P_j(P_{<0}(h)P_lf)}_{\tx{p}{2}}+\sum_{q\geq 0}\norm{P_j(P_q(h)P_lf)}_{\tx{p}{2}}.
\ee
We focus on the second term in the right-hand side of \eqref{lbz62}, the other being easier to handle. 
We have:
\bea
\lab{lbz63}\norm{P_j(P_q(h)P_lf)}_{\tx{p}{2}}&\les& 2^j\norm{P_q(h)P_lf}_{\tx{p}{1}}\\
\nn&\les & 2^j\norm{P_q(h)}_{\tx{\infty}{2}}\norm{P_lf}_{\tx{p}{2}}\\
\nn&\les & 2^j(2^lC_1+2^{\frac{l}{2}})\norm{P_q(h)}_{\tx{\infty}{2}},
\eea
where we used in the last inequality the assumption \eqref{lbz1} for $f$.
 
We now derive a second estimate. Using the properties of the Littlewood-Paley projection $P_l$, we have:
\bea
\lab{lbz65}
\norm{P_j(P_q(h)P_lf)}_{\tx{p}{2}}&\les& 2^{-2l}\norm{P_j(P_q(h)\lap P_lf)}_{\tx{p}{2}}\\
\nn&\les & 2^{-2l}\norm{P_j(\lap(P_q(h)P_lf))}_{\tx{p}{2}}+2^{-2l}\norm{P_j(\divb(\nabb P_q(h)P_lf))}_{\tx{p}{2}}\\
\nn&&+2^{-2l}\norm{P_j(\lap(P_q(h))P_lf)}_{\tx{p}{2}}\\
\nn&\les & 2^{2j-2l}\norm{P_q(h)P_lf}_{\tx{p}{2}}+2^{j-2l}\norm{\nabb P_q(h)P_lf}_{\tx{p}{2}}\\
\nn&&+2^{j+2q-2l}\norm{P_q(h)P_lf}_{\tx{p}{1}}\\
\nn&\les & 2^{2j-2l}\norm{P_q(h)}_{L^\infty}\norm{P_lf}_{\tx{p}{2}}+2^{j-2l}\norm{\nabb P_q(h)}_{\tx{\infty}{4}}\norm{P_lf}_{\tx{p}{4}}\\
\nn&&+2^{j+2q-2l}\norm{P_q(h)}_{\tx{\infty}{2}}\norm{P_lf}_{\tx{p}{2}}\\
\nn&\les & \bigg(2^{2j-2l+q}\norm{P_q(h)}_{\tx{\infty}{2}}+2^{j-\frac{3l}{2}}\norm{\nabb^2P_q(h)}^{\half}_{\tx{\infty}{4}}\norm{\nabb P_q(h)}^{\half}_{\tx{\infty}{2}}\\
\nn&&+2^{j+2q-2l}\norm{P_q(h)}_{\tx{\infty}{2}}\bigg)\norm{P_lf}_{\tx{p}{2}}\\
\nn&\les & (2^{2j-2l+q}+2^{j-\frac{3l}{2}+\frac{3q}{2}}+2^{j+2q-2l})\norm{P_q(h)}_{\tx{\infty}{2}}(2^l\ep+2^{\frac{l}{2}}\ep\gamma(u)),
\eea 
where we used the dual of the sharp Bernstein inequality \eqref{eq:strongbernscalarbis} and the finite 
band property for $P_j$, the weak Bernstein inequality for $P_l$, the Gagliardo-Nirenberg inequality \eqref{eq:GNirenberg}, the Bochner inequality \eqref{eq:Bochconseqbis} for scalars, the finite band property and the sharp Bernstein inequality \eqref{eq:strongbernscalarbis} for $P_q$, and the assumption \eqref{lbz1} for $f$.

Then, using \eqref{lbz63} when $q>l$, and \eqref{lbz65} when $q\leq l$, we obtain:
$$\sum_{q,l>j}\norm{P_j(P_q(b)P_lf)}_{\tx{p}{2}}\les \norm{h}_{\BB^1}(2^jC_1+2^{\frac{j}{2}}C_2)$$
which together with \eqref{lbz62} yields:
\be\lab{lbz67}
\sum_{l>j}\norm{P_j(hP_lf)}_{\tx{p}{2}}\les \norm{h}_{\BB^1}(2^jC_1+2^{\frac{j}{2}}C_2).
\ee
Finally, using \eqref{lbz61} when $l\leq j$ and \eqref{lbz67} when $l>j$, we obtain:
$$\sum_l\norm{P_j(hP_lf)}_{\tx{p}{2}}\les (\norm{h}_{L^\infty}+\norm{h}_{\BB^1})(2^jC_1+2^{\frac{j}{2}}C_2),$$
which together with \eqref{lbz60} implies:
\be\lab{lbz67:bis}
\norm{P_j(hf)}_{\tx{p}{2}}\les (\norm{h}_{L^\infty}+\norm{h}_{\BB^1})(2^jC_1+2^{\frac{j}{2}}C_2).
\ee
Now, the embedding \eqref{linftybound} applied to $h$ together with \eqref{lbz67:bis} concludes the proof of Lemma \ref{lemma:lbz1}.

\subsection{Proof of Lemma \ref{lemma:lbz3}}\lab{sec:lbz2}

Let $f$ the scalar function on $\H_u$ defined by $f=\mathcal{D}_1(F)$. The assumption \eqref{lbz5} now reads for all $j\geq 0$:
\be\lab{lbz17}
\norm{P_jf}_{\lh{2}}\lesssim 2^jC_1+2^{\frac{j}{2}}C_2
\ee
where $C_1, C_2$ are constants possibly depending on $u$. From the definition of $f$, we have $F=\mathcal{D}_1^{-1}(f)$. We decompose the norm $\norm{P_jF}_{\lh{2}}$ using the property \eqref{eq:partition} of the geometric Littlewood-Paley projections:
\be\lab{lbz18}
\norm{P_jF}_{\lh{2}}\lesssim \norm{P_j\mathcal{D}^{-1}P_{<0}(f)}_{\lh{2}}+\sum_{q\geq 0}\norm{P_j\mathcal{D}^{-1}P_q(f)}_{\lh{2}}.
\ee
The first term in the right-hand side of \eqref{lbz17} is easier to handle, so we focus on the sum in $q$.  We have:
\be\lab{lbz19}
\norm{P_j\mathcal{D}_1^{-1}P_q(f)}_{\lh{2}}\lesssim 2^{-j}\norm{\nabb\mathcal{D}_1^{-1}}_{\mathcal{L}(\lpt{2})}\norm{P_q(f)}_{\lh{2}}\lesssim 2^{-j}(2^qC_1+2^{\frac{q}{2}}C_2),
\ee
where we used the finite band properties of the Littlewood-Paley projection $P_j$, the estimate \eqref{eq:estimdcal-1} for $\mathcal{D}_1^{-1}$ and \eqref{lbz17}. 
We now derive a second estimate. Using the properties of the Littlewood-Paley projection $P_q$ 
and the identity \eqref{eq:dcalident} for $\mathcal{D}_1$, we have:
\bea\lab{lbz20}
\norm{P_j\mathcal{D}_1^{-1}P_q(f)}_{\lh{2}}&\lesssim& 2^{-2q}\norm{P_j\mathcal{D}_1^{-1}\lap P_q(f)}_{\lpt{2}}\\
\nn&\lesssim& 2^{-2q}\norm{P_j{}^*\mathcal{D}_1 P_q(f)}_{\lpt{2}}\\
\nn&\lesssim& 2^{j-2q}\norm{P_q(f)}_{\lpt{2}}\\
\nn&\lesssim& 2^{j-2q}(2^qC_1+2^{\frac{q}{2}}C_2),
\eea
where we used the finite band properties of the Littlewood-Paley projection $P_j$ and \eqref{lbz17}. 

We now use \eqref{lbz19} for $q\leq j$ and \eqref{lbz20} for $q>j$ to obtain:
\bee
\sum_{q\geq 0}\norm{P_j\mathcal{D}_1^{-1}P_q(f)}_{\lh{2}}&\lesssim &\sum_{q\leq l}2^{-j}(2^qC_1+2^{\frac{q}{2}}C_2)+\sum_{q>l}2^{j-2q}(2^qC_1+2^{\frac{q}{2}}C_2)\\
&\lesssim & C_1+2^{-\frac{j}{2}}C_2
\eee
which together with \eqref{lbz18} concludes the proof of Lemma \ref{lemma:lbz3}

\subsection{Proof of Lemma \ref{lemma:lbz4}}\lab{sec:lbz3}

First, from the finite band property of the Littlewood-Paley projection $P_j$, we have:
\be\lab{lbz21}
\norm{\nabb P_jF}_{\tx{\infty}{2}}\lesssim 2^j\norm{P_jF}_{\tx{\infty}{2}}
\ee
so that we only need to estimate the first term in the left-hand side of \eqref{lbz14}.

Using \eqref{ad27bis}, properties (ii) and (iii) of Theorem \ref{thm:LP} for $P_j$, and the $L^\infty$ bound on $n$ given by \eqref{estn}, we have:
\begin{displaymath}
\begin{array}{ll}
&\ds\sum_{j\geq 0}2^{j}\norm{P_jF}^2_{\tx{\infty}{2}}\\
\ds\lesssim &\ds\sum_{j\ge 0} 2^{j}\left(\int_0^1\norm{P_jF}_{\lpt{2}}\norm{\ddb_LP_jF}_{\lpt{2}}dt+\norm{P_jF}_{\lh{2}}^2\right)\\
\ds\lesssim &\ds\sum_{j\ge 0} 2^{j}\left(\int_0^1\norm{P_jF}_{\lpt{2}}\norm{\ddb_{nL} P_jF}_{\lpt{2}}dt\right)+\sum_{j\ge 0}2^{j}\norm{P_jF}^2_{\lh{2}}\\
\ds\lesssim &\ds\sum_{j\ge 0} 2^{j}\norm{P_jF}_{\lh{2}}\norm{P_j\ddb_{nL}F}_{\lh{2}}+\ds\sum_{j\ge 0}2^j\norm{P_jF}_{\tx{\infty}{2}}\norm{[P_j,\ddb_{nL}]F}_{\tx{1}{2}}+\no(F)^2\\
\ds\lesssim &\ds\left(\sum_{j\ge 0}2^{j}\norm{P_jF}^2_{\tx{\infty}{2}}\right)^{\frac{1}{2}}\left(\sum_{j\ge 0}2^{j}\norm{[P_j,\ddb_{nL}]F}^2_{\tx{1}{2}}\right)^{\frac{1}{2}}+\no(F)^2
\end{array}
\end{displaymath}
which yields:
\begin{equation}\label{lbz22}
\sum_{j\geq 0}2^{j}\norm{P_jF}^2_{\tx{\infty}{2}}\lesssim\sum_{j\ge 0}2^{j}\norm{[P_j,\ddb_{nL}]F}^2_{\tx{1}{2}}+\no(F)^2.
\end{equation}
Now, the commutator estimate \eqref{supercommut2} and \eqref{lbz22} yield
$$\sum_{j\geq 0}2^{j}\norm{P_jF}^2_{\tx{\infty}{2}}\lesssim\sum_{j\ge 0}2^j2^{-2j_+}\no(F)^2+\no(F)^2\lesssim \no(F)^2$$
which together with \eqref{lbz21} concludes the proof of Lemma \ref{lemma:lbz4}.

\subsection{Proof of Lemma \ref{lemma:lbz5}}\lab{sec:lbz4}

By duality, it suffices to prove for any scalar function $f$ on $\ptu$, for any $2\leq p<+\infty$ and for all $j\geq 0$ the following inequality:
\be\lab{lbz14ter}
\norm{\nabb P_jf}_{\lpt{p}}\les 2^{2(1-\frac{1}{p})j}\norm{f}_{\lpt{2}}.
\ee
Now, using the Gagliardo-Nirenberg inequality \eqref{eq:GNirenberg}, the Bochner inequality for scalar functions \eqref{eq:Bochconseqbis}, and the property iii) of Theorem \ref{thm:LP} for Littlewood-Paley projections, we have:
\bee
\norm{\nabb P_jf}_{\lpt{p}}&\les& \norm{\nabb^2P_jf}^{1-\frac{2}{p}}_{\lpt{2}}\norm{\nabb P_jf}^{\frac{2}{p}}_{\lpt{2}}\\
&\les& (\norm{\lap P_jf}_{\lpt{2}}+\norm{\nabb P_jf}_{\lpt{2}})^{1-\frac{2}{p}}\norm{\nabb P_jf}^{\frac{2}{p}}_{\lpt{2}}\\
&\les& 2^{2j(1-\frac{1}{p})}\norm{f}_{\lpt{2}},
\eee
which is \eqref{lbz14ter}. This concludes the proof of Lemma \ref{lemma:lbz5}.

\subsection{Proof of Lemma \ref{lemma:lbtinit}}\lab{sec:lbtinit}

We first decompose $f$ using the property \eqref{eq:partition} for the Littlewood-Paley projections $P_l$. We have:
\be\lab{spi1}
f=\sum_lf_l
\ee
where $f_l$ is the solution of the following transport equation:
\be\lab{spi2}
nL(f_l)=0,\,f_l=P_lf_0\textrm{ on }\pou.
\ee

Using the $L^2$ boundedness of $P_j$, the equation \eqref{spi2}, and the estimate \eqref{estimtransport1} for transport equations applied to $f_l$, we have:
\be\lab{spi3}
\norm{P_jf_l}_{\tx{\infty}{2}}\les \norm{f_l}_{\tx{\infty}{2}}\les \norm{P_lf_0}_{L^2(\pou)}\les C2^{\frac{l}{2}}.
\ee

Next, we derive a second estimate for $\norm{P_jf_l}_{\tx{\infty}{2}}$. We define $v_l$ as 
$$v_l=-2^{-2l}\lap f_l+f_l$$
which in view of \eqref{spi2} satisfies the following transport equation:
\be\lab{spi4}
nL(v_l)=-2^{-2l}[nL,\lap]f_l,\, v_l=0\textrm{ on }\pou.
\ee
The definition of $v_l$ yields:
$$P_j(f_l)=2^{-2l}P_j(\lap f_l)+P_j(v_l)$$
which together with the finite band property for $P_j$ implies:
\bea
\lab{spi5}\norm{P_j(f_l)}_{\tx{\infty}{2}}&\les& 2^{2j-2l}\norm{f_l}_{\tx{\infty}{2}}+\norm{P_j(v_l)}_{\tx{\infty}{2}}\\
\nn&\les& 2^{2j-\frac{3l}{2}}C+2^{-2l}\normm{P_j\left(\int_0^t([nL,\lap]f)\right)}_{\tx{\infty}{2}},
\eea
where we used the estimate \eqref{spi3} for $f_l$, and the transport equation \eqref{spi4} for $v_l$ in the last inequality. Next, we estimate the second term in the right-hand side of \eqref{spi5}. The commutator formula \eqref{comm6} implies:
\bee
\normm{P_j\left(\int_0^t([nL,\lap]f)\right)}_{\tx{\infty}{2}}&\les& \normm{P_j\left(\int_0^t\divb(n\chi\c\nabb f_l)\right)}_{\tx{\infty}{2}}\\
\nn&&+\normm{P_j\left(\int_0^t(n(\nabb\chi+\chi\c(\kepb+n^{-1}\nabb n)\nabb f_l)\right)}_{\tx{\infty}{2}}
\eee
which together with Lemma \ref{lemma:lbt5} and the dual of the sharp Bernstein inequality 
\eqref{eq:strongbernscalarbis} for $P_j$ yields:
\bea
\lab{spi6}&&\normm{P_j\left(\int_0^t([nL,\lap]f)\right)}_{\tx{\infty}{2}}\\
\nn&\les& 2^j\norm{n\chi\c\nabb f_l}_{\xt{2}{1}}+2^j\normm{\int_0^t(n(\nabb\chi+\chi\c(\kepb+n^{-1}\nabb n)\nabb f_l)}_{\tx{\infty}{1}}\\
\nn&\les& 2^j\norm{n}_{L^\infty}\norm{\chi}_{\xt{\infty}{2}}\norm{\nabb f_l}_{\lh{2}}+2^j\norm{n(\nabb\chi+\chi\c(\kepb+n^{-1}\nabb n)f_l}_{\lh{1}}\\
\nn&\les& 2^j\ep\norm{\nabb f_l}_{\lh{2}}+2^j\norm{n}_{L^\infty}(\norm{\nabb\chi}_{\lh{2}}+\no(\chi)(\no(\kepb)+\no(n^{-1}\nabb n))\norm{f_l}_{\lh{2}}\\
\nn&\les& 2^j\ep\norm{\nabb f_l}_{\lh{2}},
\eea
where we used the estimate \eqref{estimtransport1} for transport equations, and the estimates \eqref{estn}-\eqref{esthch} for $n, \chi$, and $\kepb$.

In view of \eqref{spi6}, we need to estimate $\norm{\nabb f_l}_{\lh{2}}$. In view of the transport equation \eqref{spi2} satisfied by $f_l$, we have:
$$nL(\nabb f_l)=[nL,\nabb]f_l,\,\nabb f_l=\nabb P_lf_0\textrm{ on }\pou.$$
Together with the estimate \eqref{estimtransport1} and the commutator formula \eqref{comm5}, this  yields:
\bee
\norm{\nabb f_l}_{\lh{2}}&\les& \norm{\nabb P_lf_0}_{L^2(\pou)}+\norm{[nL,\nabb]f_l}_{\xt{2}{1}}\\
&\les& 2^l\norm{P_lf_0}_{L^2(\pou)}+\norm{n\chi\c\nabb f_l}_{\xt{2}{1}}\\
&\les& 2^{\frac{3l}{2}}C+\norm{n}_{L^\infty}\norm{\chi}_{\xt{\infty}{2}}\norm{\nabb f_l}_{\lh{2}}\\
&\les& 2^{\frac{3l}{2}}C+\ep\norm{\nabb f_l}_{\lh{2}},
\eee
where we used the finite band property for $P_l$, the assumption on $f_0$, and the estimates \eqref{estn}-\eqref{esthch} for $n$ and $\chi$. Since $\ep$ is small, we obtain:
\be\lab{spi7}
\norm{\nabb f_l}_{\lh{2}}\les 2^{\frac{3l}{2}}C.
\ee

Finally, \eqref{spi6} and \eqref{spi7} imply:
$$\normm{P_j\left(\int_0^t([nL,\lap]f)\right)}_{\tx{\infty}{2}}\les 2^{j+\frac{3l}{2}}C\ep,$$
which together with \eqref{spi5} yields:
\be\lab{spi8}
\norm{P_j(f_l)}_{\tx{\infty}{2}}\les 2^{2j-\frac{3l}{2}}C+2^{j-\frac{l}{2}}C\ep.
\ee
Now, using \eqref{spi1}, and summing \eqref{spi3} for $l\leq j$ and \eqref{spi8} for $l>j$, we obtain:
$$\norm{P_jf}_{\tx{\infty}{2}}\les \sum_l\norm{P_j(f_l)}_{\tx{\infty}{2}}\les C2^{\frac{j}{2}}$$
which yields the conclusion of Lemma \ref{lemma:lbtinit}.

\subsection{Proof of Lemma \ref{lemma:lbt1}}\lab{sec:lbt1}

We decompose $\norm{P_j\left(\int_0^t(f\mu_1)d\tau\right)}_{\lh{2}}$ using the property \eqref{eq:partition} of the geometric Littlewood-Paley projections:
\bea\lab{lbt13}
&&\normm{P_j\left(\int_0^t(f\mu_1)d\tau\right)}_{\lh{2}}\\
\nn&\les& \normm{P_j\left(\int_0^t(fP_{<0}\mu_1)d\tau\right)}_{\lh{2}}+\sum_{l\geq 0}\normm{P_j\left(\int_0^t(fP_l\mu_1)d\tau\right)}_{\lh{2}}.
\eea
We focus on the second term in the right-hand side of \eqref{lbt13}, the other being easier to handle.  Using the $L^2$ boundedness of the Littlewood-Paley projection $P_j$ and the estimate for transport equations \eqref{estimtransport1}, we have:
\bea
\lab{lbt14}\normm{P_j\left(\int_0^t(fP_l\mu_1)d\tau\right)}_{\lh{2}}&\les &\normm{\int_0^t(fP_l\mu_1)d\tau}_{\lh{2}}\\
\nn&\les& \norm{fP_l\mu_1}_{\xt{2}{1}}\\
\nn&\les& \norm{f}_{\xt{\infty}{2}}\norm{P_l\mu_1}_{\lh{2}}\\
\nn&\les& D\ep^22^l+D\ep^22^{\frac{l}{2}}\gamma(u),
\eea
where we used the estimate \eqref{lbt1} for $\mu_1$ and the assumption of Lemma \ref{lemma:lbt1} for $f$.

We now make another decomposition using the property \eqref{eq:partition} of the geometric Littlewood-Paley projections:
\bea\lab{lbt15}
&&\normm{P_j\left(\int_0^t(fP_l\mu_1)d\tau\right)}_{\lh{2}}\\
\nn&\les& \normm{P_j\left(\int_0^t(P_{<0}(f)P_l\mu_1)d\tau\right)}_{\lh{2}}+\sum_{q\geq 0}\normm{P_j\left(\int_0^t(P_q(f)P_l\mu_1)d\tau\right)}_{\lh{2}}.
\eea
We focus on the second term in the right-hand side of \eqref{lbt15}, the other being easier to handle. Using the dual of the sharp Bernstein inequality \eqref{eq:strongbernscalarbis} and the estimate for transport equations \eqref{estimtransport1}, we have:
\bea
\lab{lbt16}\normm{P_j\left(\int_0^t(P_q(f)P_l\mu_1)d\tau\right)}_{\lh{2}}&\les & 2^j\normm{\int_0^t(P_q(f)P_l\mu_1)d\tau}_{\tx{2}{1}}\\
\nn&\les& 2^j\norm{P_q(f)P_l\mu_1}_{\lh{1}}\\
\nn&\les & 2^j\norm{P_qf}_{\lh{2}}\norm{P_lf}_{\lh{2}}\\
\nn&\les & 2^j(D\ep 2^l+D\ep 2^{\frac{l}{2}}\gamma(u))\norm{P_qf}_{\lh{2}}
\eea
where we used the estimate \eqref{lbt1} for $\mu_1$ in the last inequality.

We now derive a second estimate. Using the property of the Littlewood-Paley projection $P_l$, we have:
\bee
&&\normm{P_j\left(\int_0^t(P_q(f)P_l\mu_1)d\tau\right)}_{\lh{2}}\\
\nn&\les&  2^{-2l}\normm{P_j\left(\int_0^t(P_q(f)\lap P_l\mu_1)d\tau\right)}_{\lh{2}}\\
\nn&\les& 2^{-2l}\normm{P_j\left(\int_0^t\lap(P_q(f)P_l\mu_1)d\tau\right)}_{\lh{2}}+2^{-2l}\normm{P_j\left(\int_0^t\divb(\nabb(P_q(f))P_l\mu_1)d\tau\right)}_{\lh{2}}\\
\nn&&+2^{-2l}\normm{P_j\left(\int_0^t(\lap(P_q(f)) P_l\mu_1)d\tau\right)}_{\lh{2}}\\
\eee
which together with Lemma \ref{lemma:lbt4}, Lemma \ref{lemma:lbt5}, the dual of the 
sharp Bernstein inequality \eqref{eq:strongbernscalarbis} for $P_j$, and 
the estimate for transport equations \eqref{estimtransport1} implies:
\bea
&&\lab{lbt17}\normm{P_j\left(\int_0^t(P_q(f)P_l\mu_1)d\tau\right)}_{\lh{2}}\\
\nn&\les& 2^{2j-2l}\norm{P_q(f)P_l\mu_1}_{\xt{2}{1}}+2^{\frac{3j}{2}-2l}\norm{\nabb(P_q(f))P_l\mu_1}_{\xt{\frac{4}{3}}{1}}+2^{j-2l}\norm{\lap(P_q(f)) P_l\mu_1}_{\lh{1}}\\
\nn&\les& 2^{2j-2l}\norm{P_q(f)}_{\tx{2}{4}}\norm{P_l\mu_1}_{\tx{2}{4}}+2^{\frac{3j}{2}-2l}\norm{\nabb(P_q(f))}_{\lh{2}}\norm{P_l\mu_1}_{\tx{2}{4}}\\
\nn&&+2^{j-2l}\norm{\lap(P_q(f))}_{\lh{2}}\norm{P_l\mu_1}_{\lh{2}}\\
\nn&\les& 2^{2j+\frac{q}{2}-\frac{3l}{2}}\norm{P_q(f)}_{\lh{2}}\norm{P_l\mu_1}_{\lh{2}}+2^{\frac{3j}{2}+q-\frac{3l}{2}}\norm{P_q(f)}_{\lh{2}}\norm{P_l\mu_1}_{\lh{2}}\\
\nn&&+2^{j+2q-2l}\norm{P_q(f)}_{\lh{2}}\norm{P_l\mu_1}_{\lh{2}}\\
\nn&\les& (2^{2j+\frac{q}{2}-\frac{3l}{2}}+2^{\frac{3j}{2}+q-\frac{3l}{2}}+2^{j+2q-2l})(D\ep 2^l+D\ep 2^{\frac{l}{2}}\gamma(u))\norm{P_q(f)}_{\lh{2}},
\eea
where we have used the weak Bernstein inequality for $P_q$ and $P_l$, and the 
estimate \eqref{lbt1} for $\mu_1$. 

Then, using \eqref{lbt16} for $q>l$ and \eqref{lbt17} 
for $q\leq l$, we obtain:
\bee
&&\sum_{q\geq 0}\normm{P_j\left(\int_0^t(P_q(f)P_l\mu_1)d\tau\right)}_{\lh{2}}\\
\nn&\les&  (2^{2j-2l}+2^{\frac{3j}{2}-\frac{5l}{2}})(D\ep 2^l+D\ep 2^{\frac{l}{2}}\gamma(u))\left(\sum_{q\geq 0}2^q\norm{P_q(f)}_{\lh{2}}\right)\\
\nn&&+2^j(D\ep +D\ep 2^{-\frac{l}{2}}\gamma(u))\left(\sum_{q\geq 0}2^{-|q-l|}2^q\norm{P_q(f)}_{\lh{2}}\right)\\
\nn&\les&  (2^{2j-2l}+2^{\frac{3j}{2}-\frac{5l}{2}})(D\ep 2^l+D\ep 2^{\frac{l}{2}}\gamma(u))\ep\\
\nn&&+2^j(D\ep +D\ep 2^{-\frac{l}{2}}\gamma(u))\left(\sum_{q\geq 0}2^{-|q-l|}2^q\norm{P_q(f)}_{\lh{2}}\right),
\eee
where we used the bound on $\norm{\nabb f}_{\BB^0}$ given by the assumptions of Lemma \ref{lemma:lbt1} in the last inequality. Together with \eqref{lbt15}, we obtain:
\bea
\lab{lbt18}\normm{P_j\left(\int_0^t(fP_l\mu_1)d\tau\right)}_{\lh{2}}&\les&  (2^{2j-2l}+2^{\frac{3j}{2}-\frac{5l}{2}})(D\ep 2^l+D\ep 2^{\frac{l}{2}}\gamma(u))\ep\\
\nn&&+2^j(D\ep +D\ep 2^{-\frac{l}{2}}\gamma(u))\left(\sum_{q\geq 0}2^{-|q-l|}2^q\norm{P_q(f)}_{\lh{2}}\right).
\eea
Finally, using \eqref{lbt13}, \eqref{lbt14} for $l\leq j$ and \eqref{lbt18} for $l>j$, we get:
\bea
\lab{lbt19}\normm{P_j\left(\int_0^t(f\mu_1)d\tau\right)}_{\lh{2}}&\les& (D\ep 2^j+D\ep 2^{\frac{j}{2}}\gamma(u))\ep\\
\nn&&+(D\ep 2^j +D\ep 2^{\frac{j}{2}}\gamma(u))\left(\sum_{l, q\geq 0}2^{-|q-l|}2^q\norm{P_q(f)}_{\lh{2}}\right)\\
\nn&\les& (D\ep 2^j+D\ep 2^{\frac{j}{2}}\gamma(u))(\ep+\norm{\nabb f}_{\BB^0})\\
\nn&\les& (D\ep^2 2^j+D\ep^2 2^{\frac{j}{2}}\gamma(u)),
\eea
where we used the bound on $\norm{\nabb f}_{\BB^0}$ given by the assumptions of Lemma \ref{lemma:lbt1} in the last inequality. This concludes the proof of Lemma \ref{lemma:lbt1}. 

\subsection{Proof of Lemma \ref{lemma:lbt2}}\lab{sec:lbt2}

We have:
\bea
&&\lab{lbt22}\normm{P_j\left(\int_0^t(F\c\nabb\ddb_{\lb}(\z))d\tau\right)}_{\lh{2}}\\
\nn&\les & \normm{P_j\left(\int_0^t\divb(F\c \ddb_{\lb}(\z))d\tau\right)}_{\lh{2}}+\normm{P_j\left(\int_0^t(\nabb F\c \ddb_{\lb}(\z))d\tau\right)}_{\lh{2}}\\
\nn &\les & \normm{P_j\left(\int_0^t\divb(F\c \ddb_{\lb}(\z))d\tau\right)}_{\lh{2}}+\normm{P_j\left(\int_0^t(\ddb_{nL}(P)\c \ddb_{\lb}(\z))d\tau\right)}_{\lh{2}}\\
\nn&&+\normm{P_j\left(\int_0^t(E\c \ddb_{\lb}(\z))d\tau\right)}_{\lh{2}}\\
\nn &\les & \normm{P_j\left(\int_0^t\divb(F\c \ddb_{\lb}(\z))d\tau\right)}_{\lh{2}}+\norm{P_j(P\c \ddb_{\lb}(\z))}_{\lh{2}}\\
\nn&& +\normm{P_j\left(\int_0^t(P\c \ddb_{nL}\ddb_{\lb}(\z))d\tau\right)}_{\lh{2}}
+\normm{P_j\left(\int_0^t(E\c \ddb_{\lb}(\z))d\tau\right)}_{\lh{2}},
\eea
where we used the assumption of Lemma \ref{lemma:lbt2} $\nabb F=\ddb_{nL}(P)+E$, and then  where we integrated by part in $t$. Since $\norm{E}_{\PP^0}\les\ep$ and $\ddb_{\lb}(\z)$ satisfies \eqref{lbz10}, the fourth term in the right-hand side of \eqref{lbt22} is estimated using Lemma \ref{lemma:lbt3}:
\be\lab{lbt20bis}
\normm{P_j\left(\int_0^t(E\c \ddb_{\lb}(\z))d\tau\right)}_{\lh{2}}\les D\ep^22^j+D\ep^2 2^{\frac{j}{2}}\gamma(u).
\ee
Next, we estimate the first, the second and the third term in the right-hand side of \eqref{lbt22}.

\subsubsection{Estimate of the first term in the right-hand side of \eqref{lbt22}}

We decompose \\$\norm{P_j\left(\int_0^t\divb(F\c\ddb_{\lb}(\z))d\tau\right)}_{\lh{2}}$ using the property \eqref{eq:partition} of the geometric Littlewood-Paley projections:
\bea
\lab{lbt20}\normm{P_j\left(\int_0^t\divb(F\c\ddb_{\lb}(\z))d\tau\right)}_{\lh{2}}&\les& \normm{P_j\left(\int_0^t\divb(F\c P_{<0}\ddb_{\lb}(\z))d\tau\right)}_{\lh{2}}\\
\nn&&+\sum_{l\geq 0}\normm{P_j\left(\int_0^t\divb(F\c P_l\ddb_{\lb}(\z))d\tau\right)}_{\lh{2}}.
\eea
We focus on the second term in the right-hand side of \eqref{lbt20}, the other being easier to handle.  Using the $L^2$ boundedness of the Littlewood-Paley projection $P_j$, the weak Bernstein inequality for $P_j$, and the estimate for transport equations \eqref{estimtransport1}, we have:
\bea
&&\lab{lbt21}\normm{P_j\left(\int_0^t\divb(F\c P_l\ddb_{\lb}(\z))d\tau\right)}_{\lh{2}}\\
\nn&\les&\normm{P_j\left(\int_0^t(\nabb F\c P_l\ddb_{\lb}(\z))d\tau\right)}_{\lh{2}}+\normm{P_j\left(\int_0^t(F\c \nabb P_l\ddb_{\lb}(\z))d\tau\right)}_{\lh{2}}\\
\nn&\les &2^{\frac{j}{3}}\normm{\int_0^t(\nabb F\c P_l\ddb_{\lb}(\z))d\tau}_{\tx{2}{\frac{3}{2}}}+\normm{\int_0^t(F\c\nabb P_l\ddb_{\lb}(\z))d\tau}_{\lh{2}}\\
\nn&\les& 2^{\frac{j}{3}}\norm{\nabb F\c P_l\ddb_{\lb}(\z)}_{\xt{\frac{3}{2}}{1}}+\norm{F\c\nabb P_l\ddb_{\lb}(\z)}_{\xt{2}{1}}\\
\nn&\les& 2^{\frac{j}{3}}\norm{\nabb F}_{\lh{2}}\norm{P_l\ddb_{\lb}(\z)}_{\tx{2}{6}}+\norm{F}_{\xt{\infty}{2}}\norm{\nabb P_l\ddb_{\lb}(\z)}_{\lh{2}}\\
\nn&\les& 2^{\frac{j}{3}+\frac{2l}{3}}(D\ep^2+D\ep^22^{-\frac{l}{2}}\gamma(u))+D\ep^22^l+D\ep^22^{\frac{l}{2}}\gamma(u),
\eea
where we used the finite band property for $P_l$, the weak Bernstein inequality for $P_l$, the estimate \eqref{lbz10} for $\ddb_{\lb}(\z)$ and the assumption of Lemma \ref{lemma:lbt2} for $F$.

We will need another estimate for $\norm{P_j\left(\int_0^t\divb(F\c P_l\ddb_{\lb}(\z))d\tau\right)}_{\lh{2}}$. 
We decompose $\norm{P_j\left(\int_0^t\divb(F\c P_l\ddb_{\lb}(\z))d\tau\right)}_{\lh{2}}$ using the property \eqref{eq:partition} of the geometric Littlewood-Paley projections:
\bea
\lab{lbt23}\normm{P_j\left(\int_0^t\divb(F\c P_l\ddb_{\lb}(\z))d\tau\right)}_{\lh{2}}&\les & \normm{P_j\left(\int_0^t\divb(P_{\leq l}(F)\c P_l\ddb_{\lb}(\z))d\tau\right)}_{\lh{2}}\\
\nn&&+\sum_{q> l}\normm{P_j\left(\int_0^t\divb(P_q(F)\c P_l\ddb_{\lb}(\z))d\tau\right)}_{\lh{2}}.
\eea
We first estimate the second term in the right-hand side of \eqref{lbt23}. Using Lemma \ref{lemma:lbt5} with $p=\frac{4}{3}$, we have:
\bee
\normm{P_j\left(\int_0^t\divb(P_q(F)\c P_l\ddb_{\lb}(\z))d\tau\right)}_{\lh{2}}&\les& 2^{\frac{3j}{2}}\norm{P_q(F)\c P_l\ddb_{\lb}(\z)}_{\tx{\frac{4}{3}}{1}}\\
\nn&\les& 2^{\frac{3j}{2}}\norm{P_q(F)}_{\lh{2}}\norm{P_l\ddb_{\lb}(\z)}_{\tx{2}{4}}\\
\nn&\les& 2^{\frac{3j}{2}}2^{-q}\norm{\nabb F}_{\lh{2}}2^{\frac{l}{2}}\norm{P_l\ddb_{\lb}(\z)}_{\lh{2}}\\
\nn&\les& 2^{\frac{3j}{2}-q+\frac{l}{2}}\ep(D\ep+D\ep 2^{-\frac{l}{2}}\gamma(u)),
\eee  
where we used the finite band property for $P_q$, the weak Bernstein inequality for $P_l$, the assumption on $F$ and the estimate \eqref{lbz10} for $\ddb_{\lb}(\z)$. This yields the following estimate 
for the the second term in the right-hand side of \eqref{lbt23}:
\be\lab{lbt24}
\sum_{q>l}\normm{P_j\left(\int_0^t\divb(P_q(F)\c P_l\ddb_{\lb}(\z))d\tau\right)}_{\lh{2}}\les  2^{\frac{3j}{2}-\frac{l}{2}}\ep(D\ep+D\ep 2^{-\frac{l}{2}}\gamma(u)).
\ee  

We now estimate the first term in the right-hand side of \eqref{lbt23}. Using the property of the Littlewood-Paley projection $P_l$, we have:
\bee
&&\normm{P_j\left(\int_0^t\divb(P_{\leq l}(F)\c P_l\ddb_{\lb}(\z))d\tau\right)}_{\lh{2}}\\
&\les& 2^{-2l}\normm{P_j\left(\int_0^t\divb(P_{\leq l}(F)\c\lap P_l\ddb_{\lb}(\z))d\tau\right)}_{\lh{2}}\\
&\les& 2^{-2l}\normm{P_j\left(\int_0^t\divb\divb(P_{\leq l}(F)\c\nabb P_l\ddb_{\lb}(\z))d\tau\right)}_{\lh{2}}\\
&&+2^{-2l}\normm{P_j\left(\int_0^t\divb(\nabb P_{\leq l}(F)\c\nabb P_l\ddb_{\lb}(\z))d\tau\right)}_{\lh{2}}
\eee  
which together with Lemma \ref{lemma:lbt4} and Lemma \ref{lemma:lbt5} with $p=\frac{4}{3}$ yields:
\bea
\lab{lbt25}&&\normm{P_j\left(\int_0^t\divb(P_{\leq l}(F)\c P_l\ddb_{\lb}(\z))d\tau\right)}_{\lh{2}}\\
\nn&\les& 2^{2j-2l}\norm{P_{\leq l}(F)\c\nabb P_l\ddb_{\lb}(\z)}_{\xt{2}{1}}+2^{\frac{3j}{2}-2l}\norm{\nabb P_{\leq l}(F)\c\nabb P_l\ddb_{\lb}(\z)}_{\xt{\frac{4}{3}}{1}}\\
\nn&\les& 2^{2j-2l}\norm{P_{\leq l}(F)\c\nabb P_l\ddb_{\lb}(\z)}_{\xt{2}{1}}+2^{\frac{3j}{2}-2l}\norm{\nabb P_{\leq l}(F)\c\nabb P_l\ddb_{\lb}(\z)}_{\xt{\frac{4}{3}}{1}}.
\eea  
Using the fact that $P_{\leq l}(F)=F-P_{>l}(F)$, we estimate the first term in the right-hand side of \eqref{lbt25} as follows:
\bea\lab{lbt26}
&&\norm{P_{\leq l}(F)\c\nabb P_l\ddb_{\lb}(\z)}_{\xt{2}{1}}\\
\nn&\les& \norm{F\c\nabb P_l\ddb_{\lb}(\z)}_{\xt{2}{1}}+\sum_{q>l}\norm{P_q(F)\c\nabb P_l\ddb_{\lb}(\z)}_{\xt{2}{1}}\\
\nn&\les &\norm{F}_{\xt{\infty}{2}}\norm{\nabb P_l\ddb_{\lb}(\z)}_{\lh{2}}+\sum_{q>l}\normm{\norm{P_q(F)}_{\lpt{4}}\norm{\nabb P_l\ddb_{\lb}(\z)}_{\lpt{4}}}_{L^1_t}\\
\nn&\les &\ep2^l(D\ep+2^{\frac{l}{2}}D\ep\gamma(u))+\sum_{q>l}\normm{\norm{P_q(F)}_{\lpt{4}}\norm{\nabb P_l\ddb_{\lb}(\z)}_{\lpt{4}}}_{L^1_t},
\eea
where we used the finite band property for $P_l$, the assumption on $F$ and the estimate \eqref{lbz10} for $\ddb_{\lb}(\z)$. We consider the second term in the right-hand side of \eqref{lbt26}. The Gagliardo-Nirenberg inequality \eqref{eq:GNirenberg}, the Bochner inequality for tensors \eqref{vbochineq}, and the weak Bernstein inequality for $P_l$ yield:
\bea\lab{lbt27}
&&\norm{\nabb P_l\ddb_{\lb}(\z)}_{\lpt{4}}\\
\nn&\les & \norm{\nabb^2P_l\ddb_{\lb}(\z)}_{\lpt{2}}^{\frac{1}{2}}\norm{\nabb P_l\ddb_{\lb}(\z)}_{\lpt{2}}^{\frac{1}{2}}\\
\nn&\les& (\norm{\lap P_l\ddb_{\lb}(\z)}_{\lpt{2}}+\norm{K}_{\lpt{2}}\norm{\nabb P_l\ddb_{\lb}(\z)}_{\lpt{2}}\\
\nn&&+\norm{K}^{\frac{3}{2}}_{\lpt{2}}\norm{\nabb P_l\ddb_{\lb}(\z)}^{\half}_{\lpt{2}}\norm{P_l\ddb_{\lb}(\z)}^{\half}_{\lpt{2}})^{\frac{1}{2}}2^{\frac{l}{2}}\norm{P_l\ddb_{\lb}(\z)}_{\lpt{2}}^{\frac{1}{2}}\\
\nn&\les& (2^{2l}+2^{\frac{l}{2}}\norm{K}^{\frac{3}{2}}_{\lpt{2}})^{\frac{1}{2}}2^{\frac{l}{2}}\norm{P_l\ddb_{\lb}(\z)}_{\lpt{2}}.
\eea
Now, \eqref{lbt27}, the weak Bernstein inequality for $P_q$, the finite band property for $P_q$, and Lemma \ref{lemma:lbz4} imply:
\bea
&&\lab{lbt28}\normm{\norm{P_q(F)}_{\lpt{4}}\norm{\nabb P_l\ddb_{\lb}(\z)}_{\lpt{4}}}_{L^1_t}\\
\nn&\les &\norm{P_q(F)}_{\tx{2}{4}}2^{\frac{3l}{2}}\norm{P_l\ddb_{\lb}(\z)}_{\lh{2}}+\norm{P_q(F)}_{\tx{8}{4}}2^{\frac{3l}{4}}\norm{K}^{\frac{3}{4}}_{\lh{2}}\norm{P_l\ddb_{\lb}(\z)}_{\lh{2}}\\
\nn&\les & 2^{-\frac{q}{2}+\frac{3l}{2}}\norm{\nabb F}_{\lh{2}}(D\ep+D\ep 2^{-\frac{l}{2}}\gamma(u))+2^{-\frac{q}{8}+\frac{3l}{4}}\no(F)\ep^{\frac{3}{4}}(D\ep+D\ep 2^{-\frac{l}{2}}\gamma(u))\\
\nn&\les&(2^{-\frac{q}{2}+\frac{3l}{2}}+2^{\frac{3l}{4}-\frac{q}{8}})(D\ep^2+D\ep^2 2^{-\frac{l}{2}}\gamma(u))
\eea
where we used the bound \eqref{estgauss1} for $K$, the assumptions on $F$ and the estimate \eqref{lbz10} for $\ddb_{\lb}(\z)$. \eqref{lbt26} and \eqref{lbt28} yield:
\bea
\nn\norm{P_{\leq l}(F)\c\nabb P_l\ddb_{\lb}(\z)}_{\xt{2}{1}}&\les &\left(2^l+\sum_{q>l}(2^{-\frac{q}{2}+\frac{3l}{2}}+2^{\frac{3l}{4}-\frac{q}{8}})\right)(D\ep^2+D\ep^2 2^{-\frac{l}{2}}\gamma(u))\\
\lab{lbt29}&\les & D\ep^22^l+D\ep^2 2^{\frac{l}{2}}\gamma(u).
\eea

Next, we estimate the second term in the right-hand side of \eqref{lbt25}:
\bea
\lab{lbt30}
\norm{\nabb P_{\leq l}(F)\c\nabb P_l\ddb_{\lb}(\z)}_{\xt{\frac{4}{3}}{1}}&\les&\norm{\nabb P_{\leq l}(F)}_{\tx{2}{4}}\norm{\nabb P_l\ddb_{\lb}(\z)}_{\lh{2}}\\
\nn &\les&\left(\sum_{q\leq l}\norm{\nabb P_{q}(F)}_{\tx{2}{4}}\right)2^l(D\ep+D\ep 2^{-\frac{l}{2}}\gamma(u)),
\eea  
where we used the finite band property of $P_l$ and the  estimate \eqref{lbz10} for $\ddb_{\lb}(\z)$ in the last inequality. We estimate $\norm{\nabb P_{q}(F)}_{\lpt{4}}$ using the Gagliardo-Nirenberg inequality \eqref{eq:GNirenberg}, the Bochner inequality for tensors \eqref{vbochineq}, and the weak Bernstein inequality for $P_q$:
\bee
\norm{\nabb P_{q}(F)}_{\lpt{4}}&\les& \norm{\nabb^2P_{q}(F)}^{\half}_{\lpt{2}}\norm{\nabb P_{q}(F)}^{\half}_{\lpt{2}}\\
\nn&\les& (\norm{\lap P_{q}(F)}_{\lpt{2}}+\norm{K}_{\lpt{2}}\norm{\nabb P_{q}(F)}_{\lpt{2}}\\
\nn&&+\norm{K}^2_{\lpt{2}}\norm{P_{q}(F)}_{\lpt{2}})^{\half}2^{\frac{q}{2}}\norm{P_{q}(F)}^{\half}_{\lpt{2}}\\
\nn&\les& (2^{2q}+\norm{K}^2_{\lpt{2}})^{\half}2^{\frac{q}{2}}\norm{P_{q}(F)}_{\lpt{2}}
\eee  
which together with the finite band property for $P_q$, and Lemma \ref{lemma:lbz4} implies:
\bea
\lab{lbt31}
\norm{\nabb P_{q}(F)}_{\tx{2}{4}}&\les& 2^{\frac{3q}{2}}\norm{P_{q}(F)}_{\lh{2}}+\norm{K}_{\lpt{2}}2^{\frac{q}{2}}\norm{P_{q}(F)}_{\tx{\infty}{2}}\\
\nn&\les& 2^{\frac{q}{2}}\norm{\nabb F}_{\lh{2}}+\ep \no(F)\\
\nn&\les& 2^{\frac{q}{2}}\ep
\eea 
where we used the bound \eqref{estgauss1} for $K$ and the assumptions on $F$. \eqref{lbt30} and \eqref{lbt31} yield:
\bea
\lab{lbt32}
\norm{\nabb P_{\leq l}(F)\c\nabb P_l\ddb_{\lb}(\z)}_{\xt{\frac{4}{3}}{1}}&\les&\left(\sum_{q\leq l}2^{\frac{q}{2}}\ep\right)2^l(D\ep+D\ep 2^{-\frac{l}{2}}\gamma(u))\\
\nn&\les& 2^{\frac{3l}{2}}(D\ep^2+D\ep^2 2^{-\frac{l}{2}}\gamma(u)).
\eea  

Finally, \eqref{lbt25}, \eqref{lbt29} and \eqref{lbt32} imply:
\be\lab{lbt33}
\normm{P_j\left(\int_0^t\divb(P_{\leq l}(F)\c P_l\ddb_{\lb}(\z))d\tau\right)}_{\lh{2}}
\les (2^{2j-l}+2^{\frac{3j}{2}-\frac{l}{2}})(D\ep^2+D\ep^2 2^{-\frac{l}{2}}\gamma(u)).
\ee  
Now, \eqref{lbt23}, \eqref{lbt24} and \eqref{lbt33} yield:
\be\lab{lbt34}
\normm{P_j\left(\int_0^t\divb(F\c P_l\ddb_{\lb}(\z))d\tau\right)}_{\lh{2}}\les  (2^{2j-l}+2^{\frac{3j}{2}-\frac{l}{2}})(D\ep^2+D\ep^2 2^{-\frac{l}{2}}\gamma(u)).
\ee
Using \eqref{lbt20}, \eqref{lbt21} for $l\leq j$ and \eqref{lbt34} for $l>j$, we obtain:
\be\lab{lbt35}
\normm{P_j\left(\int_0^t\divb(F\c \ddb_{\lb}(\z))d\tau\right)}_{\lh{2}}\les  D\ep^22^j+D\ep^2 2^{\frac{j}{2}}\gamma(u),
\ee
which is the desired estimate of the first term in the right-hand side of \eqref{lbt22}.

\subsubsection{Estimate of the second term in the right-hand side of \eqref{lbt22}}

We decompose $\norm{P_j(P\c \ddb_{\lb}(\z))}_{\lh{2}}$ using the property \eqref{eq:partition} of the geometric Littlewood-Paley projections:
\be\lab{lbt36}
\norm{P_j(P\c \ddb_{\lb}(\z))}_{\lh{2}}\les\norm{P_j(P\c P_{<0}\ddb_{\lb}(\z))}_{\lh{2}}+\sum_{l\geq 0}\norm{P_j(P\c P_l\ddb_{\lb}(\z))}_{\lh{2}}.
\ee
We focus on the second term in the right-hand side of \eqref{lbt36}, the other being easier to handle.  Using the weak Bernstein inequality for $P_j$, we have:
\bea
\lab{lbt37}\norm{P_j(P\c P_l\ddb_{\lb}(\z))}_{\lh{2}}&\les & 2^{\frac{j}{2}}\norm{P\c P_l\ddb_{\lb}(\z)}_{\tx{2}{\frac{4}{3}}}\\
\nn&\les & 2^{\frac{j}{2}}\norm{P}_{\tx{\infty}{4}}\norm{P_l\ddb_{\lb}(\z)}_{\lh{2}}\\
\nn&\les & 2^{\frac{j}{2}}\no(P)(D\ep +D\ep 2^{-\frac{l}{2}}\gamma(u))\\
\nn&\les & 2^{\frac{j}{2}}(D\ep^2 +D\ep^2 2^{-\frac{l}{2}}\gamma(u)),
\eea
where we used the assumption of Lemma \ref{lemma:lbt2} for $P$ and the estimate \eqref{lbz10} for $\ddb_{\lb}(\z)$.

We will need another estimate for $\norm{P_j(P\c P_l\ddb_{\lb}(\z))}_{\lh{2}}$. We decompose $\norm{P_j(P\c P_l\ddb_{\lb}(\z))}_{\lh{2}}$ using the property \eqref{eq:partition} of the geometric Littlewood-Paley projections:
\be\lab{lbt38}
\norm{P_j(P\c P_l\ddb_{\lb}(\z))}_{\lh{2}}\les  \norm{P_j(P_{\leq l}P\c P_l\ddb_{\lb}(\z))}_{\lh{2}}+\sum_{q> l}\norm{P_j(P_q(P)\c P_l\ddb_{\lb}(\z))}_{\lh{2}}.
\ee
We first estimate the second term in the right-hand side of \eqref{lbt38}. Using the dual of the sharp Bernstein inequality \eqref{eq:strongbernscalarbis}, we have:
\bee
\norm{P_j(P_q(P)\c P_l\ddb_{\lb}(\z))}_{\lh{2}}&\les& 2^j \norm{P_q(P)\c P_l\ddb_{\lb}(\z)}_{\tx{2}{1}}\\
\nn &\les & 2^j \norm{P_q(P)}_{\tx{\infty}{2}}\norm{P_l\ddb_{\lb}(\z)}_{\lh{2}}\\
\nn &\les & 2^{j-\frac{q}{2}}\no(P)(D\ep+D\ep 2^{-\frac{l}{2}}\gamma(u))\\
\nn &\les & 2^{j-\frac{q}{2}}(D\ep^2+D\ep^2 2^{-\frac{l}{2}}\gamma(u)),
\eee
where we used Lemma \ref{lemma:lbz4}, the assumption of Lemma \ref{lemma:lbt2} on $P$ and the estimate \eqref{lbz10} for $\ddb_{\lb}(\z)$. This yields the following estimate for the second term in the right-hand side of \eqref{lbt38}:
\be\lab{lbt39}
\sum_{q>l}\norm{P_j(P_q(P)\c P_l\ddb_{\lb}(\z))}_{\lh{2}}\les 2^{j-\frac{l}{2}}(D\ep^2+D\ep^2 2^{-\frac{l}{2}}\gamma(u)).
\ee

We now estimate the first term in the right-hand side of \eqref{lbt38}. Using the property of the Littlewood-Paley projection $P_l$, we have:
\bee
&&\norm{P_j(P_{\leq l}P\c P_l\ddb_{\lb}(\z))}_{\lh{2}}\\
\nn&\les & 2^{-2l}\norm{P_j(P_{\leq l}P\c \lap P_l\ddb_{\lb}(\z))}_{\lh{2}}\\
\nn&\les & 2^{-2l}\norm{P_j\divb(P_{\leq l}P\c \nabb P_l\ddb_{\lb}(\z))}_{\lh{2}}+2^{-2l}\norm{P_j(\nabb P_{\leq l}P\c \nabb P_l\ddb_{\lb}(\z))}_{\lh{2}}\\
\eee
which together with the property \eqref{chat2} of $P_j$ with $p=\frac{4}{3}$ and the dual of the sharp Bernstein inequality \eqref{eq:strongbernscalarbis} yields: 
\bea
&&\lab{lbt40}\norm{P_j(P_{\leq l}P\c P_l\ddb_{\lb}(\z))}_{\lh{2}}\\
\nn&\les & 2^{\frac{3j}{2}-2l}\norm{P_{\leq l}P\c \nabb P_l\ddb_{\lb}(\z)}_{\tx{2}{\frac{4}{3}}}+2^{j-2l}\norm{\nabb P_{\leq l}P\c \nabb P_l\ddb_{\lb}(\z)}_{\tx{2}{1}}\\
\nn&\les & 2^{\frac{3j}{2}-2l}\norm{P_{\leq l}P}_{\tx{\infty}{4}}\norm{\nabb P_l\ddb_{\lb}(\z)}_{\lh{2}}+2^{j-2l}\norm{\nabb P_{\leq l}P}_{\tx{\infty}{2}}\norm{\nabb P_l\ddb_{\lb}(\z)}_{\lh{2}}\\
\nn&\les & 2^{\frac{3j}{2}-2l}\norm{P}_{\tx{\infty}{4}}2^l\norm{P_l\ddb_{\lb}(\z)}_{\lh{2}}+2^{j-2l}\left(\sum_{q\leq l}2^q\norm{P_{q}P}_{\tx{\infty}{2}}\right)2^l\norm{P_l\ddb_{\lb}(\z)}_{\lh{2}}\\
\nn&\les & 2^{\frac{3j}{2}-l}\no(P)(D\ep+D\ep 2^{-\frac{l}{2}}\gamma(u))+2^{j-l}\left(\sum_{q\leq l}2^{\frac{q}{2}}\no(P)\right)(D\ep+D\ep 2^{-\frac{l}{2}}\gamma(u))\\
\nn&\les & (2^{\frac{3j}{2}-l}+2^{j-\frac{l}{2}})(D\ep^2+D\ep^2 2^{-\frac{l}{2}}\gamma(u))
\eea
where we used the finite band property of $P_l$ and $P_q$, the embedding \eqref{sobineq1}, Lemma \ref{lemma:lbz4}, the estimate \eqref{lbz10} for $\ddb_{\lb}(\z)$, and the assumption of Lemma \ref{lemma:lbt2} for $P$.

Finally, \eqref{lbt38}, \eqref{lbt39} and \eqref{lbt40} imply:
\be\lab{lbt41}
\norm{P_j(P\c P_l\ddb_{\lb}(\z))}_{\lh{2}}\les  (2^{\frac{3j}{2}-l}+2^{j-\frac{l}{2}})(D\ep^2+D\ep^2 2^{-\frac{l}{2}}\gamma(u)).
\ee
Using \eqref{lbt36}, \eqref{lbt37} for $l\geq j$, and \eqref{lbt41} for $l>j$, we obtain:
\be\lab{lbt42}
\norm{P_j(P\c \ddb_{\lb}(\z))}_{\lh{2}}\les D\ep^22^{\frac{j}{2}} +D\ep^2 2^{\frac{j}{2}}\gamma(u)),
\ee
which is the desired estimate of the second term in the right-hand side of \eqref{lbt22}.

\subsubsection{Estimate of the third term in the right-hand side of \eqref{lbt22}}

We start by deriving an equation for $\ddb_{nL}\ddb_{\lb}(\z)$. Differentiating the transport equation \eqref{D4eta} satisfied by $\z$ with respect to $\lb$, we obtain:
$$\ddb_{\lb}\ddb_{L}\z_A = -(\kepb_B+\z_B)\ddb_{\lb}(\chi)_{AB}-(\ddb_{\lb}(\kepb)_B+\ddb_{\lb}(\z)_B)\chi_{AB} - \ddb_{\lb}(\b)_A$$
which together with the commutator formula \eqref{comm3} and the Bianchi identity \eqref{bianc1bis}  yields after multiplication by $n$:
\be\lab{lbt43}
\ddb_{nL}\ddb_{\lb}\z=n(\db-\chi\c) \ddb_{\lb}(\z)+B-\nabb(n\r)-(\nabb(n\s))^*
\ee
where the 1-form $B$ is given by:
\bee
B&=&-n(\d+n^{-1}\nabla_Nn)\ddb_L(\z)-2n(\z-\zb)\c\nabb\z+2n(\zb\wedge\z+\in{}^*\s)\c\z\\
\nn&&-n(\z+\kepb)\c\ddb_{\lb}(\chi)-n\ddb_{\lb}(\kepb)\c\chi-2n\hch\c\bb-2n(\d+n^{-1}\nabla_Nn)\b-n\xib\c\a\\
&&-3n(\z\r+{}^*\z\s)+\nabb(n)\r+\nabb(n)\s.
\eee
We estimate the $\tx{2}{\frac{4}{3}}$ norm of $B$. We have:
\bea
\lab{lbt44}\norm{B}_{\tx{2}{\frac{4}{3}}}&\les&\norm{n}_{L^\infty}\bigg(\norm{\d+n^{-1}\nabla_Nn}_{\tx{\infty}{4}}\norm{\ddb_L(\z)}_{\lh{2}}+\norm{\z-\zb}_{\tx{\infty}{4}}\norm{\nabb\z}_{\lh{2}}\\
\nn&&+\norm{\zb\wedge\z+\in{}^*\s}_{\lh{2}}\norm{\z}_{\tx{\infty}{4}}+\norm{\z+\kepb}_{\tx{\infty}{4}}\norm{\ddb_{\lb}(\chi)}_{\lh{2}}\\
\nn&&+\norm{\ddb_{\lb}(\kepb)}_{\lh{2}}\norm{\chi}_{\tx{\infty}{4}}+\norm{\hch}_{\tx{\infty}{4}}\norm{\bb}_{\lh{2}}\\
\nn&&+\norm{\d+n^{-1}\nabla_Nn}_{\tx{\infty}{4}}\norm{\b}_{\lh{2}}+\norm{\xib}_{\tx{\infty}{4}}\norm{\a}_{\lh{2}}\\
\nn&&+\norm{\z}_{\tx{\infty}{4}}(\norm{\r}_{\lh{2}}+\norm{\s}_{\lh{2}})+\norm{n^{-1}\nabb n}_{\tx{\infty}{4}}(\norm{\r}_{\lh{2}}+\norm{\s}_{\lh{2}})\bigg)\\
\nn&\les& \norm{n}_{L^\infty}\bigg(\no(\z)^2+\no(\d)^2+\no(\nabla n)^2+\no(\zb)^2+\no(\kepb)^2+\no(\chi)^2+\no(\xib)^2\\
\nn&&+\norm{\ddb_{\lb}(\chi)}^2_{\lh{2}}+\norm{\ddb_{\lb}(\kepb)}^2_{\lh{2}}+\norm{\s}^2_{\lh{2}}+\norm{\r}^2_{\lh{2}}+\norm{\b}^2_{\lh{2}}\\
\nn&&+\norm{\a}_{\lh{2}}^2+\norm{\bb}^2_{\lh{2}}\bigg)\\
\nn&\les& \ep^2,
\eea
where we used the curvature bound \eqref{curvflux1} for $\a, \b, \r, \s$ and $\bb$, and the estimates \eqref{estn}-\eqref{estzeta} for $n, \d, \kepb, \zb, \chi, \xib$ and $\z$.

We have the following estimate for the third term in the right-hand side of \eqref{lbt22}:
\bea
\lab{lbt45}&&\normm{P_j\left(\int_0^t(P\c\ddb_{nL}\ddb_{\lb}(\z))d\tau\right)}_{\lh{2}}\\
\nn&\les& \normm{P_j\left(\int_0^t(nP\c (\db-\chi\c)\ddb_{\lb}(\z))d\tau\right)}_{\lh{2}}+\normm{P_j\left(\int_0^t(P\c B)d\tau\right)}_{\lh{2}}\\
\nn&&+\normm{P_j\left(\int_0^t(P\c(\nabb(n\r)+(\nabb(n\s))^*)d\tau\right)}_{\lh{2}}
\eea
We estimate the three terms in the right-hand side of \eqref{lbt45} starting with the first one. The non sharp product estimates \eqref{nonsharpprod1} and \eqref{nonsharpprod2} imply:
$$\norm{nP(\db-\c\chi)}_{\PP^0}\les \noo(n)\norm{P(\db-\c\chi)}_{\PP^0}\les \noo(n)\no(P)(\no(\db)+\no(\chi))\les \ep^2$$
which together with Lemma \ref{lemma:lbt3} yields the following estimate for the first term in the right-hand side of \eqref{lbt45}:
\be\lab{lbt46}
\normm{P_j\left(\int_0^t(nP\c (\db-\chi\c)\ddb_{\lb}(\z))d\tau\right)}_{\lh{2}}\les D\ep^22^j+D\ep^22^{\frac{j}{2}}\gamma(u).
\ee
To estimate the second term in the right-hand side of \eqref{lbt45}, we use the dual of the sharp Bernstein inequality \eqref{eq:strongbernscalarbis} and the estimate for transport equations \eqref{estimtransport1}. We have:
\bea
\lab{lbt47}\normm{P_j\left(\int_0^t(P\c B)d\tau\right)}_{\lh{2}}&\les& 2^j\normm{\int_0^t(P\c B)d\tau}_{\tx{2}{1}}\\
\nn&\les&2^j\norm{P\c B}_{\lh{1}}\\
\nn&\les& 2^j\norm{P}_{\tx{2}{4}}\norm{B}_{\tx{2}{\frac{4}{3}}}\\
\nn&\les& 2^j\no(P)\ep^2\\
\nn&\les & 2^j\ep,
\eea
where we used the assumption on $P$ in Lemma \ref{lemma:lbt2}, and the estimate \eqref{lbt44}.

We now focus on estimating the third term in the right-hand side of \eqref{lbt45}. Using the decomposition of $\nabb(n\r)+(\nabb(n\s))^*$ given by Lemma \ref{lemma:lbt7}, we estimate the third term in the right-hand side of \eqref{lbt45} as follows:
\bea
&&\lab{lbt50}\normm{P_j\left(\int_0^t(P\c(\nabb(n\r)+(\nabb(n\s))^*)d\tau\right)}_{\lh{2}}\\
\nn&\les&\normm{P_j\left(\int_0^t(P\c {}^*\mathcal{D}_1\c J\c {}^*\mathcal{D}_1^{-1}(\ddb_{nL}(\bb))d\tau\right)}_{\lh{2}}+\normm{P_j\left(\int_0^t(P\c {}^*\mathcal{D}_1(H))d\tau\right)}_{\lh{2}}.
\eea
Next, we estimate both terms in the right-hand side of \eqref{lbt50} starting with the second one. We  have:
\bee
\normm{P_j\left(\int_0^t(P\c {}^*\mathcal{D}_1(H))d\tau\right)}_{\lh{2}}&\les&
\normm{P_j\left(\int_0^t\divb(P\c H)d\tau\right)}_{\lh{2}}\\
&&+\normm{P_j\left(\int_0^t(\nabb P\c H)d\tau\right)}_{\lh{2}}
\eee
which together with the finite band property for $P_j$, the sharp Bernstein inequality \eqref{eq:strongbernscalarbis}, and the estimate for transport equations \eqref{estimtransport1} yields:
\bea
\nn\normm{P_j\left(\int_0^t(P\c {}^*\mathcal{D}_1(H))d\tau\right)}_{\lh{2}}&\les& 2^j\normm{\int_0^t(P\c H)d\tau}_{\lh{2}}+2^j\normm{\int_0^t(\nabb P\c H)d\tau}_{\tx{2}{1}}\\
\nn&\les& 2^j\norm{P\c H}_{\tx{1}{2}}+2^j\norm{\nabb P\c H}_{\lh{1}}\\
\nn&\les& 2^j\norm{P}_{\tx{2}{6}}\norm{H}_{\tx{2}{3}}+2^j\norm{\nabb P}_{\lh{2}}\norm{H}_{\lh{2}}\\
\lab{lbt51}&\les & 2^j\ep,
\eea
where we used the estimate \eqref{lbt49} for $H$, and the assumption of Lemma \ref{lemma:lbt2} on $P$.

We turn to the first term in the right-hand side of \eqref{lbt50}. We have:
\bea
&&\lab{lbt52}\normm{P_j\left(\int_0^t(P\c {}^*\mathcal{D}_1\c J\c {}^*\mathcal{D}_1^{-1}(\ddb_{nL}(\bb))d\tau\right)}_{\lh{2}}\\
\nn&\les& \normm{P_j\left(\int_0^t(P\c \ddb_{nL}{}^*\mathcal{D}_1\c J\c {}^*\mathcal{D}_1^{-1}(\bb))d\tau\right)}_{\lh{2}}\\
\nn&&+\normm{P_j\left(\int_0^t(P\c [{}^*\mathcal{D}_1,\ddb_{nL}]\c J\c {}^*\mathcal{D}_1^{-1}(\bb))d\tau\right)}_{\lh{2}}\\
\nn&&+\normm{P_j\left(\int_0^t(P\c {}^*\mathcal{D}_1\c J\c [{}^*\mathcal{D}_1^{-1},\ddb_{nL}](\bb))d\tau\right)}_{\lh{2}}\\
\nn&\les& \norm{P_j(P\c {}^*\mathcal{D}_1\c J\c {}^*\mathcal{D}_1^{-1}(\bb))}_{\lh{2}}+\normm{P_j\left(\int_0^t(\ddb_{nL}(P)\c {}^*\mathcal{D}_1\c J\c {}^*\mathcal{D}_1^{-1}(\bb))d\tau\right)}_{\lh{2}}\\
\nn&&+\normm{P_j\left(\int_0^t(P\c [{}^*\mathcal{D}_1,\ddb_{nL}]\c J\c {}^*\mathcal{D}_1^{-1}(\bb))d\tau\right)}_{\lh{2}}\\
\nn&&+\normm{P_j\left(\int_0^t(P\c {}^*\mathcal{D}_1\c J\c [{}^*\mathcal{D}_1^{-1},\ddb_{nL}](\bb))d\tau\right)}_{\lh{2}}.
\eea
Next, we estimate the four terms in the right-hand side of \eqref{lbt52} starting with the first one. 

Using the dual of the sharp Bernstein inequality \eqref{eq:strongbernscalarbis}, we have:
\bea
\lab{lbt53}\norm{P_j(P\c {}^*\mathcal{D}_1\c J\c {}^*\mathcal{D}_1^{-1}(\bb))}_{\lh{2}}&\les& 2^j\norm{P\c {}^*\mathcal{D}_1\c J\c {}^*\mathcal{D}_1^{-1}(\bb)}_{\tx{2}{1}}\\
\nn&\les& 2^j\norm{P}_{\tx{\infty}{2}}\norm{{}^*\mathcal{D}_1\c J\c {}^*\mathcal{D}_1^{-1}(\bb)}_{\lh{2}}\\
\nn&\les& 2^j\no(P)\norm{\bb}_{\lh{2}}\\
\nn&\les& 2^j\ep^2,
\eea
where we used the estimate \eqref{eq:estimdcal-1} for ${}^*\mathcal{D}^{-1}_1$, the assumption of Lemma \ref{lemma:lbt2} for $P$ and the curvature bound \eqref{curvflux1} for $\bb$.

We now consider the second term in the right-hand side of \eqref{lbt52}. Using the dual of the sharp Bernstein inequality \eqref{eq:strongbernscalarbis} and the estimate for transport equations \eqref{estimtransport1}, we have:
\bea\lab{lbt54}
&&\normm{P_j\left(\int_0^t(\ddb_{nL}(P)\c {}^*\mathcal{D}_1\c J\c {}^*\mathcal{D}_1^{-1}(\bb))d\tau\right)}_{\lh{2}}\\
\nn&\les& 2^j\normm{\int_0^t(\ddb_{nL}(P)\c {}^*\mathcal{D}_1\c J\c {}^*\mathcal{D}_1^{-1}(\bb))d\tau}_{\tx{2}{1}}\\
\nn&\les& 2^j\norm{\ddb_{nL}(P)\c {}^*\mathcal{D}_1\c J\c {}^*\mathcal{D}_1^{-1}(\bb)}_{\lh{1}}\\
\nn&\les& 2^j\norm{\ddb_{nL}(P)}_{\lh{2}}\norm{{}^*\mathcal{D}_1\c J\c {}^*\mathcal{D}_1^{-1}(\bb)}_{\lh{2}}\\
\nn&\les& 2^j\no(P)\norm{\bb}_{\lh{2}}\\
\nn&\les& 2^j\ep^2,
\eea
where we used the estimate \eqref{eq:estimdcal-1} for ${}^*\mathcal{D}^{-1}_1$, the assumption of Lemma \ref{lemma:lbt2} for $P$ and the curvature bound \eqref{curvflux1} for $\bb$.

We consider the third term in the right-hand side of \eqref{lbt52}. From the commutator formula \eqref{comm5} and the fact that $\mathcal{D}^{-1}_1(\bb)$ is a scalar, we have 
$$[{}^*\mathcal{D}_1,\ddb_{nL}]\c J\c({}^*\mathcal{D}^{-1}_1(\bb))=n\chi\c\nabb({}^*\mathcal{D}^{-1}_1(\bb))$$
which together with the dual of the sharp Bernstein inequality \eqref{eq:strongbernscalarbis} and the estimate for transport equations \eqref{estimtransport1} yields:
\bea
&&\nn\normm{P_j\left(\int_0^t(P\c [{}^*\mathcal{D}_1,\ddb_{nL}]\c J\c {}^*\mathcal{D}_1^{-1}(\bb))d\tau\right)}_{\lh{2}}\\
\nn&\les& \normm{P_j\left(\int_0^t(P\c n\chi\c\nabb({}^*\mathcal{D}^{-1}_1(\bb)))d\tau\right)}_{\lh{2}}\\
\nn&\les& 2^j\normm{\int_0^t(P\c n\chi\c\nabb({}^*\mathcal{D}^{-1}_1(\bb)))d\tau}_{\tx{2}{1}}\\
\nn&\les& 2^j\norm{P\c n\chi\c\nabb({}^*\mathcal{D}^{-1}_1(\bb))}_{\lh{1}}\\
\nn&\les& 2^j\norm{P}_{\lh{4}}\norm{n}_{L^\infty}\norm{\chi}_{\lh{4}}\norm{\nabb({}^*\mathcal{D}^{-1}_1(\bb))}_{\lh{2}}\\
\nn&\les& 2^j\no(P)\no(\chi)\norm{\bb}_{\lh{2}}\\
\lab{lbt55}&\les& 2^j\ep^2,
\eea
where we used the estimate \eqref{eq:estimdcal-1} for ${}^*\mathcal{D}^{-1}_1$, the assumption of Lemma \ref{lemma:lbt2} for $P$, the curvature bound \eqref{curvflux1} for $\bb$, and the $L^\infty$ bound for $n$ provided by \eqref{estn}.

We now consider the last term in the right-hand side of \eqref{lbt52}. We have:
\bee
\nn&&\normm{P_j\left(\int_0^t(P\c {}^*\mathcal{D}_1\c J\c [{}^*\mathcal{D}_1^{-1},\ddb_{nL}](\bb))d\tau\right)}_{\lh{2}}\\
&\les& \normm{P_j\left(\int_0^t\divb(P\c J\c [{}^*\mathcal{D}_1^{-1},\ddb_{nL}](\bb))d\tau\right)}_{\lh{2}}+
 \normm{P_j\left(\int_0^t(\nabb P\c J\c [{}^*\mathcal{D}_1^{-1},\ddb_{nL}](\bb))d\tau\right)}_{\lh{2}}
\eee
which together with Lemma \ref{lemma:lbt6}, the dual of the sharp Bernstein inequality \eqref{eq:strongbernscalarbis} and the estimate for transport equations \eqref{estimtransport1} yields:
\bea
\nn&&\normm{P_j\left(\int_0^t(P\c {}^*\mathcal{D}_1\c J\c [{}^*\mathcal{D}_1^{-1},\ddb_{nL}](\bb))d\tau\right)}_{\lh{2}}\\
\nn&\les& 2^j\norm{P\c J\c [{}^*\mathcal{D}_1^{-1},\ddb_{nL}](\bb)}_{\tx{1}{2}}+2^j\norm{\nabb P\c J\c [{}^*\mathcal{D}_1^{-1},\ddb_{nL}](\bb))}_{\lh{1}}\\
\nn&\les& 2^j\norm{P}_{\tx{2}{6}}\norm{[{}^*\mathcal{D}_1^{-1},\ddb_{nL}](\bb)}_{\tx{2}{3}}+2^j\norm{\nabb P}_{\lh{2}}\norm{[{}^*\mathcal{D}_1^{-1},\ddb_{nL}](\bb))}_{\lh{2}}\\
\nn&\les& 2^j\no(P)\ep\\
\lab{lbt56}&\les& 2^j\ep^2,
\eea
where we used Lemma \ref{lemma:lbt8} for the commutator term, and the assumption of Lemma \ref{lemma:lbt2} for $P$.

Finally, \eqref{lbt52}, \eqref{lbt53}, \eqref{lbt54}, \eqref{lbt55} and \eqref{lbt56} imply:
$$\normm{P_j\left(\int_0^t(P\c {}^*\mathcal{D}_1\c J\c {}^*\mathcal{D}_1^{-1}(\ddb_{nL}(\bb))d\tau\right)}_{\lh{2}}\les 2^j\ep.$$
Together with \eqref{lbt50} and \eqref{lbt51}, we obtain:
\be\lab{lbt57}
\normm{P_j\left(\int_0^t(P\c(\nabb(n\r)+(\nabb(n\s))^*)d\tau\right)}_{\lh{2}}\les 2^j\ep.
\ee
Now, \eqref{lbt45}, \eqref{lbt46}, \eqref{lbt47} and \eqref{lbt57} yield:
\be\lab{lbt58}
\normm{P_j\left(\int_0^t(P\c\ddb_{nL}\ddb_{\lb}(\z))d\tau\right)}_{\lh{2}}\les 2^j\ep+D\ep^22^j+D\ep^22^{\frac{j}{2}}\gamma(u),
\ee
which is the desired estimate for the third term in the right-hand side of \eqref{lbt22}.

\subsubsection{End of the proof of Lemma \ref{lemma:lbt2}}

\eqref{lbt22}, \eqref{lbt20bis}, \eqref{lbt35}, \eqref{lbt42} and \eqref{lbt58} imply:
\be\lab{lbt59}
\normm{P_j\left(\int_0^t(F\c\nabb\ddb_{\lb}(\z))d\tau\right)}_{\lh{2}}\les 2^j\ep+D\ep^22^j+D\ep^22^{\frac{j}{2}}\gamma(u),
\ee
which concludes the proof of Lemma \ref{lemma:lbt2}.

\subsection{Proof of Lemma \ref{lemma:lbt3}}\lab{sec:lbt3}

We decompose $\norm{P_j\left(\int_0^t(F\c\ddb{\lb}(\z))d\tau\right)}_{\lh{2}}$ using the property \eqref{eq:partition} of the geometric Littlewood-Paley projections:
\bea
\lab{lbt60}\normm{P_j\left(\int_0^t(F\c\ddb{\lb}(\z))d\tau\right)}_{\tx{\infty}{2}}&\les& \normm{P_j\left(\int_0^t(F\c P_{<0}\ddb{\lb}(\z))d\tau\right)}_{\tx{\infty}{2}}\\
\nn&&+\sum_{l\geq 0}\normm{P_j\left(\int_0^t(F\c P_l\ddb{\lb}(\z))d\tau\right)}_{\tx{\infty}{2}}.
\eea
We focus on the second term in the right-hand side of \eqref{lbt60}, the other being easier to handle.  Using the weak Bernstein inequality for $P_j$ and the estimate for transport equations \eqref{estimtransport1}, we have:
\bea
\lab{lbt61}\normm{P_j\left(\int_0^t(F\c P_l\ddb{\lb}(\z))d\tau\right)}_{\tx{\infty}{2}}&\les &2^{\frac{j}{3}}\normm{\int_0^t(F\c P_l\ddb{\lb}(\z))d\tau}_{\tx{\infty}{\frac{3}{2}}}\\
\nn&\les& 2^{\frac{j}{3}}\norm{F\c P_l\ddb{\lb}(\z)}_{\xt{1}{\frac{3}{2}}}\\
\nn&\les& 2^{\frac{j}{3}}\norm{F}_{\lh{2}}\norm{P_l\ddb{\lb}(\z)}_{\tx{2}{6}}\\
\nn&\les& 2^{\frac{j}{3}}2^{\frac{2l}{3}}\ep\norm{P_l\ddb{\lb}(\z)}_{\lh{2}}\\
\nn&\les& 2^{\frac{j}{3}+\frac{2l}{3}}(D\ep^2+D\ep^22^{-\frac{l}{2}}\gamma(u),
\eea
where we used the weak Bernstein inequality for $P_l$, the estimate \eqref{lbz10} for $\ddb{\lb}(\z)$ and the assumption of Lemma \ref{lemma:lbt3} for $F$.

We now make another decomposition using the property \eqref{eq:partition} of the geometric Littlewood-Paley projections:
\bea
\lab{lbt62}\normm{P_j\left(\int_0^t(F\c P_l\ddb{\lb}(\z))d\tau\right)}_{\tx{\infty}{2}}&\les& \normm{P_j\left(\int_0^t(P_{<0}(F)\c P_l\ddb{\lb}(\z))d\tau\right)}_{\tx{\infty}{2}}\\
\nn&&+\sum_{q\geq 0}\normm{P_j\left(\int_0^t(P_q(F)\c P_l\ddb{\lb}(\z))d\tau\right)}_{\tx{\infty}{2}}.
\eea
We focus on the second term in the right-hand side of \eqref{lbt62}, the other being easier to handle. Using the property of the Littlewood-Paley projection $P_l$, we have:
\bee
&&\normm{P_j\left(\int_0^t(P_q(F)\c P_l\ddb{\lb}(\z))d\tau\right)}_{\tx{\infty}{2}}\\
&\les&2^{-2l}\normm{P_j\left(\int_0^t(P_q(F)\c \lap P_l\ddb{\lb}(\z))d\tau\right)}_{\tx{\infty}{2}}\\
&\les&2^{-2l}\normm{P_j\left(\int_0^t\divb(P_q(F)\c \nabb P_l\ddb{\lb}(\z))d\tau\right)}_{\tx{\infty}{2}}+2^{-2l}\normm{P_j\left(\int_0^t(\nabb P_q(F)\c \nabb P_l\ddb{\lb}(\z))d\tau\right)}_{\tx{\infty}{2}}
\eee
which together with Lemma \ref{lemma:lbt5} with $p=\frac{4}{3}$, the strong Bernstein inequality \eqref{eq:strongbernscalarbis} and the estimate for transport equations \eqref{estimtransport1} yields:
\bea
\lab{lbt63}&&\normm{P_j\left(\int_0^t(P_q(F)\c P_l\ddb{\lb}(\z))d\tau\right)}_{\tx{\infty}{2}}\\
\nn&\les&2^{\frac{3j}{2}-2l}\norm{P_q(F)\c \nabb P_l\ddb{\lb}(\z)}_{\tx{1}{\frac{4}{3}}}+2^{j-2l}\norm{\nabb P_q(F)\c \nabb P_l\ddb{\lb}(\z)}_{\lh{1}}\\
\nn&\les&2^{\frac{3j}{2}-2l}\norm{P_q(F)}_{\tx{2}{4}}\norm{\nabb P_l\ddb{\lb}(\z)}_{\lh{2}}+2^{j-2l}\norm{\nabb P_q(F)}_{\lh{2}}\norm{\nabb P_l\ddb{\lb}(\z)}_{\lh{2}}\\
\nn&\les&(2^{\frac{3j}{2}+\frac{q}{2}-l}+2^{j+q-l})\norm{P_q(F)}_{\lh{2}}\norm{P_l\ddb{\lb}(\z)}_{\lh{2}}\\
\nn&\les&(2^{\frac{3j}{2}+\frac{q}{2}-l}+2^{j+q-l})\norm{P_q(F)}_{\lh{2}}(D\ep+D\ep 2^{-\frac{l}{2}}\gamma(u)),
\eea
where we used the weak Bernstein inequality for $P_q$, the finite band property for $P_q$ and $P_l$, and the estimate \eqref{lbz10} for $\ddb{\lb}(\z)$. Similarly, we may exchange the role of $l$ and $q$ and obtain:
\bea
&&\lab{lbt64}\normm{P_j\left(\int_0^t(P_q(F)\c P_l\ddb{\lb}(\z))d\tau\right)}_{\tx{\infty}{2}}\\
\nn&\les&(2^{\frac{3j}{2}+\frac{l}{2}-q}+2^{j+l-q})\norm{P_q(F)}_{\lh{2}}(D\ep+D\ep 2^{-\frac{l}{2}}\gamma(u)).
\eea

Now, using \eqref{lbt63} for $q\leq l$ and \eqref{lbt64} for $q>l$ and assuming $l>j$ yields:
\bee
&&\sum_{l> j, q\geq 0}\normm{P_j\left(\int_0^t(P_q(F)\c P_l\ddb{\lb}(\z))d\tau\right)}_{\tx{\infty}{2}}\\
&\les&\sum_{l> j, q\geq 0}(2^{j-\frac{|l-q|}{2}}+2^{j-|l-q|})\norm{P_q(F)}_{\lh{2}}(D\ep+D\ep 2^{-\frac{j}{2}}\gamma(u))\\
&\les& \left(\sum_{q\geq 0}\norm{P_q(F)}_{\lh{2}}\right)2^j(D\ep+D\ep 2^{-\frac{j}{2}}\gamma(u))\\
&\les& \norm{F}_{\PP^0}2^j(D\ep+D\ep 2^{-\frac{j}{2}}\gamma(u))\\
&\les& D\ep^22^j+D\ep^22^{\frac{j}{2}}\gamma(u)),
\eee
where we used the definition of $\PP^0$ and the assumption of Lemma \ref{lemma:lbt3} on $F$. Together with \eqref{lbt62}, this yields:
\be\lab{lbt65}
\sum_{l> j}\normm{P_j\left(\int_0^t(F\c P_l\ddb{\lb}(\z))d\tau\right)}_{\tx{\infty}{2}}\les D\ep^22^j+D\ep^22^{\frac{j}{2}}\gamma(u)).
\ee
Finally, using \eqref{lbt60}, \eqref{lbt61} for $l\leq j$, and \eqref{lbt65} for $l>j$, we obtain:
$$\normm{P_j\left(\int_0^t(F\c \ddb{\lb}(\z))d\tau\right)}_{\tx{\infty}{2}}\les D\ep^22^j+D\ep^22^{\frac{j}{2}}\gamma(u)),$$
which concludes the proof of Lemma \ref{lemma:lbt3}.

\subsection{Proof of Lemma \ref{lemma:lbt4}}\lab{sec:lbt4}

By duality, the conclusion of Lemma \ref{lemma:lbt4} is equivalent to the estimate:
\be\lab{matou1}
\normm{\nabb^2\left(\int_0^tP_jfd\tau\right)}_{\xt{2}{\infty}}\les 2^{2j}\norm{f}_{\lh{2}}
\ee
for any scalar function $f$ on $\H_u$ and any $j\geq 0$. Let $w$ the solution of the following 
transport equation:
\be\lab{matou2}
nL(w)=P_jf,\,w=0\textrm{ on }\pou.
\ee
Then, \eqref{matou1} may be rewritten as:
\be\lab{matou3}
\norm{\nabb^2w}_{\xt{2}{\infty}}\les 2^{2j}\norm{f}_{\lh{2}}.
\ee
From now on, we focus on obtaining \eqref{matou3}. We first derive an estimate 
for $\norm{\nabb w}_{L^\infty}$. Differentiating \eqref{matou2} 
with respect to $\nabb$ and using the commutator formula \eqref{comm5}, we obtain:
$$\ddb_{nL}(\nabb w)=n\chi\nabb w+\nabb P_jf,\,\nabb w=0\textrm{ on }\pou$$
which together with the estimate for transport equations \eqref{estimtransport1} implies:
$$\norm{\nabb w}_{L^{\infty}}\les \norm{n\chi}_{\xt{\infty}{2}}\norm{\nabb w}_{L^\infty}+\norm{\nabb P_jf}_{\tx{1}{\infty}}.$$
Using the $L^\infty$ bound for $n$ given by \eqref{estn} and the trace bound for $\chi$ given by \eqref{esttrc} \eqref{esthch}, we get:
\be\lab{matou4}
\norm{\nabb w}_{L^{\infty}}\les \norm{\nabb P_jf}_{\tx{1}{\infty}}.
\ee

In view of \eqref{matou4}, we need to estimate $ \norm{\nabb P_jf}_{\tx{1}{\infty}}$. Using the $L^\infty$ bound \eqref{linftynormtensor} for tensors on $\ptu$ with the choice $p=2$, we have:
\bea
\nn\norm{\nabb P_jf}_{L^\infty(\ptu)}&\les&\norm{\nabb^3P_jf}^{\frac{1}{2}}_{\lpt{2}}\norm{\nabb P_jf}^{\frac{1}{2}}_{\lpt{2}}+\norm{\nabb^2P_jf}_{\lpt{2}}\\
\nn&\les&\norm{\nabb^3P_jf}^{\frac{1}{2}}_{\lpt{2}}2^{\frac{j}{2}}\norm{P_jf}^{\frac{1}{2}}_{\lpt{2}}+\norm{\lap P_jf}_{\lpt{2}}+\norm{\nabb P_jf}_{\lpt{2}}\\
\lab{matou5}&\les& 2^{\frac{j}{2}}\norm{\nabb^3P_jf}^{\frac{1}{2}}_{\lpt{2}}\norm{f}^{\half}_{\lpt{2}}+2^{2j}\norm{f}_{\lpt{2}}
\eea
where we used the Bochner inequality \eqref{eq:Bochconseqbis}, and the $L^2$ boundedness and the finite band property of $P_j$. In view of \eqref{matou5}, we need to estimate $\norm{\nabb^3P_jf}_{\lpt{2}}$. Using the Bochner inequality for tensors \eqref{vbochineq}, we have:
\bea\lab{matou6}
&&\norm{\nabb^3P_jf}_{\lpt{2}}\\
\nn&\les&  \norm{\lap\nabb P_jf}_{\lpt{2}}+\norm{K}_{\lpt{2}}\norm{\nabb^2 P_jf}_{\lpt{2}}+\norm{K}^2_{\lpt{2}}\norm{\nabb P_jf}_{\lpt{2}}\\
\nn&\les&\norm{[\lap,\nabb] P_jf}_{\lpt{2}}+\norm{\nabb\lap P_jf}_{\lpt{2}}\\
\nn&&+\norm{K}_{\lpt{2}}(\norm{\lap P_jf}_{\lpt{2}}+\norm{\nabb P_jf}_{\lpt{2}})+\norm{K}^2_{\lpt{2}}2^j\norm{P_jf}_{\lpt{2}}\\
\nn&\les&\norm{[\lap,\nabb] P_jf}_{\lpt{2}}+2^{3j}\norm{f}_{\lpt{2}}+2^{2j}\norm{K}_{\lpt{2}}\norm{f}_{\lpt{2}}\\
\nn&&+2^j\norm{K}^2_{\lpt{2}}\norm{f}_{\lpt{2}}
\eea
where we used the Bochner inequality \eqref{eq:Bochconseqbis}, and the $L^2$ boundedness and the finite band property of $P_j$. Now, for any scalar function $f$ on $\ptu$, there holds the following commutator formula:
\be\lab{commutptu}
[\nabb,\lap]f=K\nabb f
\ee
which together with \eqref{matou6} yields:
\bea
\nn\norm{\nabb^3P_jf}_{\lpt{2}}&\les&  \norm{K\nabb P_jf}_{\lpt{2}}+2^{3j}\norm{f}_{\lpt{2}}+2^{2j}\norm{K}_{\lpt{2}}\norm{f}_{\lpt{2}}\\
\nn&&+2^j\norm{K}^2_{\lpt{2}}\norm{f}_{\lpt{2}}\\
\nn&\les&  \norm{K}_{\lpt{2}}\norm{\nabb P_jf}_{\lpt{\infty}}+2^{3j}\norm{f}_{\lpt{2}}+2^{2j}\norm{K}_{\lpt{2}}\norm{f}_{\lpt{2}}\\
\lab{matou7}&&+2^j\norm{K}^2_{\lpt{2}}\norm{f}_{\lpt{2}}.
\eea
Now, \eqref{matou5} and \eqref{matou7} imply:
\bee
\norm{\nabb P_jf}_{L^\infty(\ptu)}&\les& 2^{\frac{j}{2}}\norm{K}^{\frac{1}{2}}_{\lpt{2}}\norm{\nabb P_jf}^{\frac{1}{2}}_{L^\infty(\ptu)}\norm{f}^{\frac{1}{2}}_{\lpt{2}}\\
&&+2^{2j}(1+\norm{K}_{\lpt{2}})\norm{f}_{\lpt{2}}
\eee
which yields:
\be\lab{matou8}
\norm{\nabb P_jf}_{L^\infty(\ptu)}\les 2^{2j}(1+\norm{K}_{\lpt{2}})\norm{f}_{\lpt{2}}.
\ee
Integrating \eqref{matou8} and using the bound \eqref{estgauss1} for $K$, we obtain:
\be\lab{matou9}
\norm{\nabb P_jf}_{\tx{1}{\infty}}\les 2^{2j}(1+\norm{K}_{\lh{2}})\norm{f}_{\lh{2}}\les 2^{2j}\norm{f}_{\lh{2}}.
\ee

Next, we come back to $w$. \eqref{matou4} and \eqref{matou8} yield:
\be\lab{matou10}
\norm{\nabb w}_{L^\infty}\les 2^{2j}\norm{f}_{\lh{2}}.
\ee
Differentiating \eqref{matou2} with respect to $\nabb^2$ and using twice the commutator formula \eqref{comm5}, we obtain:
$$\ddb_{nL}(\nabb^2w)=-2n\chi\nabb^2 w+(2n\chi\kepb+\nabb(n\chi)-n\b)\nabb w+\nabb^2P_jf,\,\nabb^2w=0\textrm{ on }\pou$$
which together with the estimate for transport equations \eqref{estimtransport1} implies:
\bee
\norm{\nabb^2w}_{\xt{2}{\infty}}&\les& \norm{n\chi\nabb^2w}_{\xt{2}{1}}+\norm{(2n\chi\kepb+\nabb(n\chi)-n\b)\nabb w}_{\lh{2}}+\norm{\nabb^2P_jf}_{\lh{2}}\\
&\les & \norm{n}_{L^\infty}\norm{\chi}_{\xt{\infty}{2}}\norm{\nabb^2w}_{\lh{2}}+\norm{n}_{L^\infty}\norm{\nabb w}_{L^\infty}(\no(\chi)^2+\no(\kepb)^2\\
&&+\no(\nabb n)^2+\norm{\b}_{\lh{2}})+\norm{\lap P_jf}_{\lh{2}},
\eee
where we used the Bochner inequality \eqref{eq:Bochconseqbis} in the last inequality. 
Now, using \eqref{matou10}, the $L^2$ boundedness and the finite band property of $P_j$, 
the bound \eqref{curvflux1} for $\b$, and the estimates \eqref{estn}-\eqref{esthch} 
for $n$, $\chi$ and $\kepb$, we obtain:
$$\norm{\nabb^2w}_{\xt{2}{\infty}}\les\ep\norm{\nabb^2w}_{\lh{2}}+2^{2j}\norm{f}_{\lh{2}}.$$
This yields \eqref{matou3} which concludes the proof of Lemma \ref{lemma:lbt4}.

\subsection{Proof of Lemma \ref{lemma:lbt5}}\lab{sec:lbt5}

By duality, the conclusion of Lemma \ref{lemma:lbt5} is equivalent to the estimate:
\be\lab{matou11}
\normm{\nabb\left(\int_0^tP_jfd\tau\right)}_{\xt{p}{\infty}}\les 2^{2j(1-\frac{1}{p})}\norm{f}_{\tx{1}{2}}
\ee
for any scalar function $f$ on $\H_u$, any $1<p\leq 2$ and any $j\geq 0$. Consider again $w$ the solution of the transport equation \eqref{matou2}. Then, \eqref{matou11} may be rewritten as:
\be\lab{matou12}
\norm{\nabb w}_{\xt{p}{\infty}}\les 2^{2j(1-\frac{1}{p})}\norm{f}_{\tx{1}{2}}.
\ee
From now on, we focus on obtaining \eqref{matou12}. Differentiating \eqref{matou2} 
with respect to $\nabb$ and using the commutator formula \eqref{comm5}, we obtain:
$$\ddb_{nL}(\nabb w)=n\chi\nabb w+\nabb P_jf,\,\nabb w=0\textrm{ on }\pou$$
which together with the estimate for transport equations \eqref{estimtransport1} implies:
$$\norm{\nabb w}_{\xt{p}{\infty}}\les \norm{n\chi}_{\xt{\infty}{2}}\norm{\nabb w}_{\xt{p}{2}}+\norm{\nabb P_jf}_{\tx{1}{p}}.$$
Using the $L^\infty$ bound for $n$ given by \eqref{estn} and the trace bound for $\chi$ given by \eqref{esttrc} \eqref{esthch}, we get:
\be\lab{matou13}
\norm{\nabb w}_{\xt{p}{\infty}}\les \norm{\nabb P_jf}_{\tx{1}{p}}.
\ee

In view of \eqref{matou13}, we need to estimate $\norm{\nabb P_jf}_{\tx{1}{p}}$. The Gagliardo-Nirenberg inequality \eqref{eq:GNirenberg} yields:
\bea
\lab{matou14}\norm{\nabb P_jf}_{\lpt{p}}&\les& \norm{\nabb^2P_jf}^{1-\frac{2}{p}}_{\lpt{2}}\norm{\nabb P_jf}^{\frac{2}{p}}_{\lpt{2}}\\
\nn&\les & 2^{\frac{2j}{p}}(\norm{\lap P_jf}_{\lpt{2}}+\norm{\nabb P_jf}_{\lpt{2}})^{1-\frac{2}{p}}\norm{P_jf}^{\frac{2}{p}}_{\lpt{2}}\\
\nn&\les & 2^{2j(1-\frac{1}{p})}\norm{f}_{\lpt{2}}
\eea
where we used the Bochner inequality \eqref{eq:Bochconseqbis}, and the $L^2$ boundedness and the finite band properties of $P_j$. Integrating \eqref{matou14}, we obtain:
$$\norm{\nabb P_jf}_{\tx{1}{p}}\les 2^{2j(1-\frac{1}{p})}\norm{f}_{\tx{1}{2}}$$
which together with \eqref{matou13} yields \eqref{matou12}. This concludes the proof of Lemma \ref{lemma:lbt5}.

\subsection{Proof of Lemma \ref{lemma:lbt7}}\lab{sec:lbt7}

Recall that $J$ denotes the involution $(\r,\s)\rightarrow (-\r,\s)$. Then, $\nabb(n\r)+(\nabb(n\s))^*$ may be rewritten as:
$$\nabb(n\r)+(\nabb(n\s))^*={}^*\mathcal{D}_1\c J(n\r,n\s).$$
Now, in view of the Bianchi identity \eqref{bianc6}, we have:
$$(n\r,n\s)={}^*\mathcal{D}_1^{-1}\bigg(\ddb_{nL}(\bb)-\nabb(n)\r+\nabb(n)\s-2n\hchb\c\b-n\db\bb+3n(\zb\r-{}^*\zb\s)\bigg)$$
which yields:
\be\lab{lbt48}
\nabb(n\r)+(\nabb(n\s))^*={}^*\mathcal{D}_1\c J\c {}^*\mathcal{D}_1^{-1}(\ddb_{nL}(\bb))+{}^*\mathcal{D}_1(H)
\ee
where $H$ is given by:
$$H=J\c {}^*\mathcal{D}_1^{-1}\bigg(-\nabb(n)\r+\nabb(n)\s-2n\hchb\c\b-n\db\bb+3n(\zb\r-{}^*\zb\s)\bigg).$$
Now, using Lemma \ref{lemma:lbt6} with $p=\frac{4}{3}$, $q=3$, we obtain for $H$ the following estimate:
\bea
\nn\norm{H}_{\tx{2}{3}}&\les&\normm{J\c {}^*\mathcal{D}_1^{-1}\bigg(-\nabb(n)\r+\nabb(n)\s-2n\hchb\c\b-n\db\bb+3n(\zb\r-{}^*\zb\s)\bigg)}_{\tx{2}{3}}\\
\nn&\les&\normm{-\nabb(n)\r+\nabb(n)\s-2n\hchb\c\b-n\db\bb+3n(\zb\r-{}^*\zb\s)}_{\tx{2}{\frac{4}{3}}}\\
\nn&\les&\norm{\nabb n}_{\tx{\infty}{4}}\norm{\r}_{\lh{2}}+\norm{\nabb n}_{\tx{\infty}{4}}\norm{\s}_{\lh{2}}+\norm{n\hchb}_{\tx{\infty}{4}}\norm{\b}_{\lh{2}}\\
\nn&&+\norm{n\db}_{\tx{\infty}{4}}\norm{\bb}_{\lh{2}}+\norm{n\zb}_{\tx{\infty}{4}}\norm{\r}_{\lh{2}}+\norm{n\zb}_{\tx{\infty}{4}}\norm{\s}_{\lh{2}}\\
\lab{lbt49}&\les&\ep^2
\eea
where we used the curvature bound \eqref{curvflux1} for $\b, \r, \s$ and $\bb$, and the estimates \eqref{estn}-\eqref{estzeta} for $n, \db, \hchb$ and $\z$. \eqref{lbt48} and \eqref{lbt49} give the conclusion of the proof of Lemma \ref{lemma:lbt7}.

\section{Appendix to section \ref{sec:firstderivomega}}

\subsection{Proof of Lemma \ref{lemma:imj}}\lab{sec:imj}

Recall the transport equation \eqref{popo4} satisfied by $\Pi(\po\chi)$. We have
\bee
\ddb_L(\Pi(\po\chi))_{AB}&=&-\nabb_{\po N}\chi_{AB}-(\po\chi)_{AC}\chi_{CB}-\chi_{AC}(\po\chi)_{CB}-\db\po\chi_{AB}\\
\nn&&+\kepb_A\chi_{\po NB}+\kepb_B\chi_{A\po N}+(\po N)_A\chi_{CB}\kep_C+(\po N)_B\chi_{AC}\kepb_C\\
\nn&&-(2\kep_{\po N}-n^{-1}\nabla_{\po N}n)\chi_{AB}+(\po N)_C(\in_{AC}{}^*\b_B+\in_{BC}{}^*\b_A).
\eee
Differentiating with respect to $\ddb_{\lb}$, this yields, schematically
\bea\lab{imj3}
&&\ddb_L(\ddb_{\lb}(\Pi(\po\chi)))+[\ddb_{\lb},\ddb_L](\Pi(\po\chi))\\
\nn&=&-\ddb_{\lb}\Pi(\po\chi)\c\chi-\chi\c\ddb_{\lb}(\Pi(\po\chi))-\db\ddb_{\lb}(\Pi(\po\chi))+\po N\ddb_{\lb}\b+\nabb(F_3)+F_4,
\eea
where the tensors $F_3$ and $F_4$ are given schematically by
$$F_3=\po N\ddb_{\lb}\chi,$$
and
\bee
F_4&=& \nabb(\po N)\ddb_{\lb}\chi-\nabb_{\ddb_{\lb}\po N}\chi-(\po N)[\ddb_{\lb},\nabb]\chi-\po\chi\c\ddb_{\lb}\chi-\ddb_{\lb}\chi\c\po\chi-\lb(\db)\po\chi\\
\nn&&+\ddb_{\lb}(\kepb)\chi\po N+\kepb\ddb_{\lb}(\chi)\po N+\kepb\chi\ddb_{\lb}\po N-(2\kep_{\po N}-n^{-1}\nabla_{\po N}n)\ddb_{\lb}\chi\\
&&-(2\ddb_{\lb}(\kep)\po N+2\kep\ddb_{\lb}\po N-\ddb_{\lb}(n^{-1}\nabla_{\po N}n))\chi+\ddb_{\lb}\po N\b.
\eee
$F_3$ satisfies the following estimate
\be\lab{imj4}
\norm{F_3}_{\li{\infty}{2}}\les \norm{\po N}_{L^\infty}\norm{\ddb_{\lb}\chi}_{\li{\infty}{2}}\les \ep,
\ee
where we used in the last inequality the estimate \eqref{estNomega} for $\po N$ and the estimates \eqref{esttrc} \eqref{esthch} for $\chi$. Also, $F_4$ satisfies the following estimate
\bea\lab{imj5}
&&\norm{F_4}_{\tx{2}{1}}\\
\nn&\les& \norm{\dd\po N}_{\tx{\infty}{2}}\norm{\dd\chi}_{\li{\infty}{2}}+\norm{\po N}_{L^\infty}\norm{[\ddb_{\lb},\nabb]\chi}_{\tx{2}{1}}+\norm{\po\chi}_{\tx{\infty}{2}}\norm{\ddb_{\lb}\chi}_{\li{\infty}{2}}\\
\nn&&+\norm{\lb(\db)}_{\li{\infty}{2}}\norm{\po\chi}_{\tx{\infty}{2}}+\norm{\po N}_{L^\infty}\Big(\norm{\ddb_{\lb}(\kepb)}_{\li{\infty}{2}}\norm{\chi}_{\tx{\infty}{2}}\\
\nn&&+\norm{\kepb}_{\tx{\infty}{2}}\norm{\ddb_{\lb}(\chi)}_{\li{\infty}{2}}+\norm{n^{-1}\nabla n}_{\tx{\infty}{2}}\norm{\ddb_{\lb}\chi}_{\li{\infty}{2}}\Big)\\
\nn&&+\norm{\kepb}_{\tx{\infty}{4}}\norm{\chi}_{\tx{\infty}{4}}\norm{\ddb_{\lb}\po N}_{\li{\infty}{2}}+\norm{\ddb_{\lb}(n^{-1}\nabla_{\po N}n)}_{\li{\infty}{2}}\norm{\chi}_{\tx{\infty}{2}}\\
\nn&&+\norm{\ddb_{\lb}\po N}_{\tx{\infty}{2}}\norm{\b}_{\li{\infty}{2}}\\
\nn&\les& \ep+\norm{[\ddb_{\lb},\nabb]\chi}_{\tx{2}{1}},
\eea
where we used in the last inequality the estimates \eqref{estn}-\eqref{esthch} for $\chi$, $\db$, $\kep$, and $n$, the assumption \eqref{curvflux1} for $\b$, the estimate \eqref{estNomega} for $\po N$, and the estimate \eqref{estricciomega} for $\po N$ and $\po\chi$. Now, in view of the commutator formula \eqref{comm2}, we have
\bee
&&\norm{[\ddb_{\lb},\nabb]\chi}_{\tx{2}{1}}\\
&\les& (\norm{\chb}_{\tx{\infty}{2}}+\norm{\xib}_{\tx{\infty}{2}}+\norm{b^{-1}\nabb b}_{\tx{\infty}{2}})\norm{\dd\chi}_{\li{\infty}{2}}\\
&&+(\norm{\chi}_{\tx{\infty}{4}}(\norm{\kepb}_{\tx{\infty}{4}}+\norm{\xib}_{\tx{\infty}{4}})+\norm{\chb}_{\tx{\infty}{4}}\norm{\z}_{\tx{\infty}{4}})(\norm{\b}_{\li{\infty}{2}}+\norm{\bb}_{\li{\infty}{2}})\\
&\les&\ep,
\eee
where we used in the last inequality the estimates  \eqref{estn}-\eqref{esthch} for $\chi$, $\chb$, $\xib$, $b$, and $\z$, and the assumption \eqref{curvflux1} for $\b$ and $\bb$. Injecting in \eqref{imj5}, we obtain
\be\lab{imj6}
\norm{F_4}_{\tx{2}{1}}\les \ep.
\ee

Next, we estimate the commutator term in the right-hand side of \eqref{imj3}. In view of the commutator formula \eqref{comm3}, we have
\be\lab{imj7}
[\ddb_{\lb},\ddb_L](\Pi(\po\chi))=-\db\ddb_{\lb}(\Pi(\po\chi))+\nabb(F_5)+F_6,
\ee
where the tensors $F_5$ and $F_6$ are given schematically by
$$F_5=(\z-\zb)\Pi(\po\chi),$$
and
$$F_6= (\d + n^{-1} \nab_Nn)\ddb_L(\Pi(\po\chi))+(\nabb\z-\nabb\zb)\Pi(\po\chi)+ (\zb \z+\s)(\Pi(\po\chi)).$$ 
$F_5$ satisfies the following estimate
\be\lab{imj8}
\norm{F_5}_{\li{\infty}{2}}\les (\norm{\z}_{\xt{\infty}{2}}+\norm{\zb}_{\xt{\infty}{2}})\norm{\po\chi}_{\xt{2}{\infty}}\les\ep,
\ee
where we used in the last inequality the estimate \eqref{estk} for $\zb$, the estimate \eqref{estzeta} for $\z$, and the estimate \eqref{estricciomega} for $\po\chi$. Next, we estimate $F_6$. We have
\bea\lab{imj9}
&&\norm{F_6}_{\tx{2}{1}}\\
\nn&\les&  (\norm{\d}_{\tx{\infty}{2}} + \norm{n^{-1} \nab_Nn}_{\tx{\infty}{2}})\norm{\ddb_L(\Pi(\po\chi))}_{\li{\infty}{2}}+(\norm{\nabb\z}_{\li{\infty}{2}}\\
\nn&&+\norm{\nabb\zb}_{\li{\infty}{2}}+\norm{\z}_{\li{\infty}{4}}\norm{\zb}_{\li{\infty}{4}}+\norm{\s}_{\li{\infty}{2}})\norm{\po\chi}_{\tx{\infty}{2}}\\
\nn&\les&\ep+\norm{\ddb_L(\Pi(\po\chi))}_{\li{\infty}{2}},
\eea
where we used in the last inequality the estimate \eqref{estn} for $n$, the estimate \eqref{estk} for $\d$ and $\zb$, the estimate \eqref{estzeta} for $\z$, the assumption \eqref{curvflux1} for $\s$ and the estimate \eqref{estricciomega} for $\po\chi$. Now, the estimate \eqref{po7} for $\ddb_L(\Pi(\po\chi))$ together with the estimate \eqref{estricciomega} for $\po\chi$ implies
$$\norm{\ddb_L(\Pi(\po\chi))}_{\li{\infty}{2}}\les\ep.$$
Injecting in \eqref{imj9}, we obtain
\be\lab{imj10}
\norm{F_6}_{\tx{2}{1}}\les\ep.
\ee

Next, we evaluate the term involving $\ddb_{\lb}\b$ in the right-hand side of \eqref{imj3}. In view of the bianchi identity \eqref{bianc1bis}, we have
\be\lab{imj11}
\po N\ddb_{\lb}\b= \nabb(F_7)+F_8,
\ee
where the tensors $F_7$ and $F_8$ are given schematically by
$$F_7=\po N\r+\po N\s,$$
and
$$F_8=\nabb\po N(\r+\s)+\po N\Big(\hch\bb+(\d+n^{-1}\nabla_Nn)\b+\xib\c\a+\z\r+{}^*\z\s\Big).$$
$F_7$ satisfies the following estimate
\be\lab{imj12}
\norm{F_7}_{\li{\infty}{2}}\les \norm{\po N}_{L^\infty}(\norm{\r}_{\li{\infty}{2}}+\norm{\s}_{\li{\infty}{2}})\les\ep.
\ee
Next, we estimate $F_8$. We have
\bea\lab{imj13}
\norm{F_8}_{\tx{2}{1}}&\les& \Big(\norm{\nabb\po N}_{\tx{\infty}{2}}+\norm{\po N}_{L^\infty}(\norm{\hch}_{\tx{\infty}{2}}+\norm{\d}_{\tx{\infty}{2}}\\
\nn&&+\norm{n^{-1}\nabla_N n}_{\tx{\infty}{2}}+\norm{\xib}_{\tx{\infty}{2}}+\norm{\z}_{\tx{\infty}{2}})\Big)\norm{(\a, \b, \r, \s, \bb)}_{\li{\infty}{2}}\\
\nn&\les&\ep,
\eea
where we used in the last inequality the estimates \eqref{estn}-\eqref{estzeta} for $\hch$, $\d$, $n$, $\xib$ and $\z$, the assumption \eqref{curvflux1} on $(\a, \b, \r, \s, \bb)$, and the estimates \eqref{estNomega} \eqref{estricciomega} for $\po N$. 

Finally, in view of \eqref{imj3}, \eqref{imj7} and \eqref{imj11}, we obtain 
$$\ddb_L(\ddb_{\lb}(\Pi(\po\chi)))=-\ddb_{\lb}\Pi(\po\chi)\c\chi-\chi\c\ddb_{\lb}(\Pi(\po\chi))+\nabb(F_1)+F_2,$$
where the tensors $F_1$ and $F_2$ are given by
$$F_1=F_3+F_5+F_7\textrm{ and }F_2=F_4+F_6+F_8.$$
In view of \eqref{imj4}, \eqref{imj8} and \eqref{imj12}, we have
$$\norm{F_1}_{\li{\infty}{2}}\les \ep.$$
Also, in view of \eqref{imj6}, \eqref{imj10} and \eqref{imj13}, we have
$$\norm{F_2}_{\tx{2}{1}}\les\ep.$$
This concludes the proof of the lemma.

\subsection{Proof of Lemma \ref{lemma:poo3}}\lab{sec:proofpoo3}

Applying the estimate \eqref{estimtransport1} for transport equations to the transport equation \eqref{poo35} for $M$, we obtain:
\bee
\norm{M}_{L^\infty}&\les& \norm{\ga}_{\pou}+\norm{M\c\chi}_{\xt{\infty}{1}}\\
\nn&\les& 1+\norm{M}_{L^\infty}\norm{\chi}_{\xt{\infty}{2}}\\
&\les& 1+\ep\norm{M}_{L^\infty}
\eee
where we used the estimates \eqref{esttrc} \eqref{esthch} for $\chi$ in the last inequality. This yields:
\be\lab{poo47}
\norm{M}_{L^\infty}\les 1.
\ee
Now, since $\ddb_L\ga=0$, we may rewrite the transport equation \eqref{poo35} for $M$ as:
$$\ddb_L(M-\gamma)_{AB}=M_{AC}\chi_{CB},\,(M-\ga)_{AB}=0\textrm{ on }\pou,$$
Together with the estimate \eqref{estimtransport1} for transport equations, the estimates \eqref{esttrc} \eqref{esthch} for $\chi$, and the estimate \eqref{poo47}, this implies:
\bea
\lab{poo48}\norm{M-\ga}_{L^\infty}&\les& \norm{M\c\chi}_{\xt{\infty}{1}}\\
\nn&\les& \norm{M}_{L^\infty}\norm{\chi}_{\xt{\infty}{2}}\\
\nn&\les& \ep.
\eea

Next, we estimate $\nabb M$. We rewrite the transport equation \eqref{poo35} for $M$ as:
$$\ddb_{nL}M=n\chi\c M,\,M_{AB}=\ga_{AB}\textrm{ on }\pou.$$
Differentiating with respect to $\nabb$ and using the commutator formula \eqref{comm5}, we obtain:
\bee
\ddb_{nL}(\nabb M)&=&[\ddb_{nL},\nabb]M+\nabb\ddb_{nL}M\\
&=&n\chi\c\nabb M+(n\chi\c\kep +n\b+\nabb(n\chi))\c M
\eee
Together with the decomposition \eqref{estfluxcorres1:0} for $n\b$ and the decomposition \eqref{impbes22} for $\nabb(n\chi)$, we obtain:
\be\lab{poo49}
\ddb_{nL}(\nabb M)=n\chi\c\nabb M+(n\chi\c\kepb+\ddb_{nL}(P)+E)\c M,
\ee
where $P$ and $E$ satisfy:
\be\lab{poo50}
\no(P)+\norm{E}_{\PP^0}\les \ep.
\ee
\eqref{poo49} and the sharp trace theorem for transport equations \eqref{eq:mainleprop1t} imply:
\bea\lab{poo51}
\|\nabb M\|_{{\BB}^0}&\lesssim&\big(\no(n\chi)+\|n\chi\|_{L_{x'}^\infty L_t^2}\big)\|\nabb M\|_{{\PP}^0}\\
\nn&&+\big(\no(M)+\|M\|_{L_{x'}^\infty L_t^2}\big)\cdot(\no(P)+\|E\|_{{\PP}^0}+\|n\chi\c\kepb\|_{{\PP}^0})\\
\nn&\lesssim&\ep+\ep\|\nabb M\|_{{\PP}^0},
\eea
where we used the estimates \eqref{estn} \eqref{esttrc} \eqref{esthch} for $n$ and $\chi$, the estimate \eqref{poo47} for $M$, the estimate \eqref{poo50} for $P$ and $E$,  and the estimate:
$$\|n\chi\c\kepb\|_{{\PP}^0}\les\noo(n)\no(\chi)\no(\kepb)\les \ep,$$
which follows form the non sharp product estimates \eqref{nonsharpprod1} \eqref{nonsharpprod2} and the estimates \eqref{estn}-\eqref{esthch} for $n$, $\kepb$ and $\chi$. 

Finally, \eqref{poo51} yields:
$$\|\nabb M\|_{{\BB}^0}\les\ep$$
which together with \eqref{poo48} yields the conclusion of Lemma \ref{lemma:poo3}.

\subsection{Proof of Lemma \ref{lemma:poo4}}\lab{sec:proofpoo4}

\eqref{poo36bis} follows immediately from the sharp Bernstein inequality for tensors \eqref{eq:strongberntensor}. Then, \eqref{poo37} follows immediately from \eqref{poo36bis} by taking the dual.  
 This concludes the proof of Lemma \ref{lemma:poo4}.

\subsection{Proof of Lemma \ref{lemma:poo5}}\lab{sec:proofpoo5}

It suffices to prove the dual inequality. Let $H$ the solution of the following transport equation:
\be\lab{poo52}
\ddb_{nL}(H)=P_jF,\, H=0\textrm{ on }\pou.
\ee
Then, the conclusion of Lemma \ref{lemma:poo5} is equivalent to:
\be\lab{poo53}
\norm{\nabb H}_{\tx{\infty}{2}}\les 2^j\norm{F}_{\tx{p}{2}},
\ee
for any $1<p\leq 2$.

From now on, we focus on proving \eqref{poo53}. Note first from the estimate on transport equations \eqref{estimtransport1} and the transport equation \eqref{poo52} satisfied by $H$:
\be\lab{poo54}
\norm{H}_{L^\infty}\les \norm{P_jF}_{\tx{1}{\infty}}\les  2^j\norm{F}_{\tx{p}{2}},
\ee
where we used in the last inequality the sharp Bernstein inequality for tensors \eqref{poo36bis}.

Next, we differentiate the transport equation \eqref{poo52} for $H$ with respect to $\nabb$:
\bee
\ddb_{nL}(\nabb H)&=&[\ddb_{nL},\nabb]H+\nabb\ddb_{nL}(H)\\
\nn&=& n\chi\c\nabb H+(n\chi\c\kepb+n\b)\c H+\nabb P_jF,
\eee
where we used in the last equality the commutator formula \eqref{comm5}. Together with the estimate for transport equations \eqref{estimtransport1}, this yields:
\bea\lab{poo55}
&&\norm{\nabb H}_{\tx{\infty}{2}}\\
\nn&\les & \norm{n\chi\c\nabb H+(n\chi\c\kepb+n\b)\c H+\nabb P_jF}_{\xt{2}{1}}\\
\nn&\les & \norm{n}_{L^\infty}\big(\norm{\chi}_{\xt{\infty}{2}}\norm{\nabb H}_{\lh{2}}+(\no(\chi)\no(\kepb)+\norm{\b}_{\lh{2}})\norm{H}_{L^\infty}\big)+\norm{\nabb P_jF}_{\tx{1}{2}}\\
\nn&\les& \ep\norm{\nabb H}_{\lh{2}}+2^j\norm{F}_{\tx{p}{2}},
\eea
where we used in the last inequality the estimates \eqref{estn}-\eqref{esthch} for $n, \kepb$ and $\chi$, the estimate \eqref{poo54} for $H$, and the finite band property for $P_j$.

Finally, \eqref{poo55} yields \eqref{poo53} which concludes the proof of Lemma \ref{lemma:poo5}

\subsection{Proof of Lemma \ref{lemma:poo6}}\lab{sec:proofpoo6}

Using the product estimate \eqref{eq:secondbilBesov:bis}, we have:
\bea
\lab{poo56}\norm{\nabb(M^{-1})}_{\BB^0}&=& \norm{M^{-1}(\nabb M)M^{-1}}_{\BB^0}\\
\nn&\les&(\norm{\nabb(M^{-1})}_{\tx{\infty}{2}}+\norm{M^{-1}}_{L^\infty})^2\norm{\nabb M}_{\BB^0}\\
\nn&\les&(\norm{\nabb M}_{\tx{\infty}{2}}\norm{M^{-1}}^2_{L^\infty}+\norm{M^{-1}}_{L^\infty})^2\norm{\nabb M}_{\BB^0}\\
\nn&\les & \ep
\eea
where we used in the last inequality the fact that $\norm{M-\ga}_{L^\infty}+\norm{\nabb M}_{\BB^0}\les \ep$ from the assumptions of Lemma \ref{lemma:poo6}. Then, in view of \eqref{poo56}, Lemma \ref{lemma:poo6} is an immediate consequence of the following slightly more general lemma.  

\begin{lemma}\lab{lemma:grr}
Let $F$ a $\ptu$-tangent tensor and $2<p\leq +\infty$ such that for all $j\geq 0$:
$$\norm{P_jF}_{\tx{p}{2}}\les 2^j\ep+2^{\frac{j}{2}}\ep\gamma(u).$$
Also, let $H$ a $\ptu$-tangent tensor such that for any $2\leq r<+\infty$, we have
$$\norm{H}_{\tx{r}{\infty}}+\norm{\nabb H}_{L^r_tB^0_{2,1}(\ptu)}\les 1.$$
Then, we have for any $2\leq q<p$ and all $j\geq 0$:
$$\norm{P_j(HF)}_{\tx{q}{2}}\les 2^j\ep+2^{\frac{j}{2}}\ep\gamma(u).$$
\end{lemma}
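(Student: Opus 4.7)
The plan is to closely follow the proof of Lemma \ref{lemma:lbz1} (see section \ref{sec:lbz0}), the key adaptation being that the pointwise $L^\infty$ bound on the scalar $h$ used there must be replaced by a Hölder-in-$t$ argument: since $q<p$, the conjugate exponent $r_1:=pq/(p-q)$ is finite, so the hypothesis on $H$ provides exactly what is needed, namely $\|H\|_{\tx{r_1}{\infty}}\les 1$ and $\|\nabb H\|_{L^{r_1}_t B^0_{2,1}(\ptu)}\les 1$.

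First, I would decompose using the geometric Littlewood-Paley projections on $\ptu$, writing $P_j(HF)=P_j(H\cdot P_{<0}F)+\sum_{l\geq 0}P_j(H\cdot P_l F)$, and then expanding $H=P_{<0}H+\sum_{m\geq 0}P_m H$. The $P_{<0}$ contributions are handled separately by direct $L^2$ estimates, and for the main double sum it suffices to bound $\|P_j(P_m H\cdot P_l F)\|_{\tx{q}{2}}$ for each triple $(j,l,m)$. As in Lemma \ref{lemma:lbz1}, the heart of the argument is to derive two complementary bounds. The first one, analogous to \eqref{lbz61}, uses only $L^2$-boundedness of $P_j$ together with Hölder in time (with $1/q=1/r_1+1/p$):
\[
\|P_j(H\cdot P_l F)\|_{\tx{q}{2}}\les \|H\|_{\tx{r_1}{\infty}}\|P_l F\|_{\tx{p}{2}}\les 2^l\ep+2^{l/2}\ep\gamma(u),
\]
which is effective when $l\leq j$. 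The second one, modeled on \eqref{lbz65}, uses the finite band property for $P_l$ to replace $P_l F$ by $2^{-2l}\lap P_l F$, then expands $\lap$ by Leibniz across $P_m H\cdot P_l F$; combined with finite band and (weak) Bernstein estimates for $P_j$ and $P_m$, this produces a bound of the form
\[
(2^{2(j-l)+m}+2^{j-3l/2+3m/2}+2^{j+2(m-l)})\,\|P_m H\|\,(2^l\ep+2^{l/2}\ep\gamma(u)),
\]
with $\|P_m H\|$ interpreted in the appropriate $\tx{r_1}{2}$ or $\tx{r_1}{\infty}$ norm; the Besov hypothesis on $\nabb H$ controls $2^m\|P_m H\|_{\tx{r_1}{2}}$ in $\ell^2(m)$ and, via weak Bernstein, gives the corresponding $L^\infty(\ptu)$-control.

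Finally, I would sum in $(l,m)$ as in \eqref{lbz18}: use the first bound when $l\leq j$ (yielding a geometric series bounded by $2^j\ep+2^{j/2}\ep\gamma(u)$), and use the second bound when $l>j$, controlling the summation in $m$ via $\sum_m 2^{-|m-l|}(2^m\|P_m H\|_{\tx{r_1}{2}})\les \|\nabb H\|_{L^{r_1}_t B^0_{2,1}(\ptu)}\les 1$ and then summing the resulting convergent geometric series in $l$. The main obstacle I anticipate is the bookkeeping for the high--high region $l\sim m\geq j$: all three Leibniz terms must be controlled without any logarithmic loss, and the two components $2^l\ep$ and $2^{l/2}\ep\gamma(u)$ of the bound on $\|P_l F\|_{\tx{p}{2}}$ must be tracked distinctly through the summation so that they produce the two separate components $2^j\ep$ and $2^{j/2}\ep\gamma(u)$ in the final estimate. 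The Bochner inequality \eqref{eq:Bochconseqbis} together with the Gauss curvature bound \eqref{estgauss1} ensures that the tensor-valued Bernstein inequalities from Theorem \ref{thm:LP}(iv) and from \eqref{eq:strongberntensor} introduce no additional loss compared to the scalar case treated in Lemma \ref{lemma:lbz1}.
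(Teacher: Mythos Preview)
Your overall structure is the same as the paper's: decompose $F=\sum_l P_lF$, use the first bound when $l\le j$, and for $l>j$ write $P_lF=2^{-2l}\lap P_lF$ and expand by Leibniz. The paper carries this out in \eqref{poo57}--\eqref{poo68}, decomposing $\nabb H$ rather than $H$ itself, but that difference is cosmetic.

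The gap is in your last paragraph. The claim that the tensor Bernstein and Bochner inequalities ``introduce no additional loss compared to the scalar case'' is not correct, and this is precisely where the paper's argument departs from a straight adaptation of Lemma~\ref{lemma:lbz1}. In the analog of \eqref{lbz65} you need, for instance, $\|\nabb P_mH\|_{\lpt{4}}\les 2^{3m/2}\|P_mH\|_{\lpt{2}}$, which for scalars follows from Gagliardo--Nirenberg and the scalar Bochner inequality \eqref{eq:Bochconseqbis}. For tensors one must use \eqref{vbochineq} instead, which produces a correction $\|K\|_{\lpt{2}}^2\|P_mH\|_{\lpt{2}}$. Since the same happens for $\|\nabb P_lF\|_{\lpt{4}}$, the product picks up a factor $\|K\|_{\lpt{2}}^2$; but $\|K\|_{\lpt{2}}$ is only controlled in $L^2_t$ by \eqref{estgauss1}, so $\|K\|_{\lpt{2}}^2\in L^1_t$, and there is no way to place this in $L^q_t$ (with $q\ge 2$) by H\"older while keeping $P_lF$ in its hypothesis norm $L^p_t$. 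The same obstruction appears if you route through the sharp tensor Bernstein \eqref{eq:strongberntensor}.

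The paper resolves this with an interpolation step you have not included: for the high--high range $j<l<m$ it proves two separate bounds, one in $\tx{q_+}{2}$ with $q<q_+<p$ using the tensor Bernstein of Lemma~\ref{lemma:poo4} (see \eqref{poo63}), and one in $\tx{1}{2}$ where the $\|K\|$ powers can be absorbed (see \eqref{poo64}), and then interpolates to get the $\tx{q}{2}$ estimate with the needed $2^{-\delta|m-l|}$ decay (see \eqref{poo65}). Without this interpolation, the summation over $l>j$, $m>l$ does not close.
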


We conclude this section with the proof of Lemma \ref{lemma:grr}. Using the property \eqref{eq:partition} of the Littlewood-Paley projections, we have:
\be\lab{poo57}
\norm{P_j(HF)}_{\tx{q}{2}}\les \sum_l\norm{P_j(HP_lF)}_{\tx{q}{2}}.
\ee
We estimate the right-hand side of \eqref{poo57}. Using the $L^2$ boundedness of $P_j$, the assumption $\norm{H}_{\tx{r}{\infty}}\les 1$ on $H$ with $r$ large enough, and the assumption for $F$:
\be\lab{poo58}
\norm{P_j(HP_lF)}_{\tx{q}{2}}\les \norm{H}_{\tx{r}{\infty}}\norm{P_lF}_{\tx{p}{2}}\les 2^l\ep+2^{\frac{l}{2}}\ep\gamma(u).
\ee

We will need a second estimate for the right-hand side of \eqref{poo57}. Using the property of $P_l$, we have:
\bea
\lab{poo59}&&\norm{P_j(HP_lF)}_{\tx{q}{2}}\\
\nn&=& 2^{-2l}\norm{P_j(H\lap P_lF)}_{\tx{q}{2}}\\
\nn&\les&2^{-2l}\norm{P_j\divb(H\nabb P_lF)}_{\tx{q}{2}}+2^{-2l}\norm{P_j(\nabb H\nabb P_lF)}_{\tx{q}{2}}\\
\nn&\les&2^{-2l}\norm{P_j\lap(HP_lF)}_{\tx{q}{2}}+2^{-2l}\norm{P_j\divb(\nabb HP_lF)}_{\tx{q}{2}}+2^{-2l}\norm{P_j(\nabb H\nabb P_lF)}_{\tx{q}{2}}.
\eea
Next, we  bound the three terms in the right-hand side of \eqref{poo59} starting with the first one. 
Using the finite band property for $P_j$, we have:
\bea
\lab{poo60}\norm{P_j\lap(HP_lF)}_{\tx{q}{2}}&\les &2^{2j}\norm{HP_lF}_{\tx{q}{2}}\\
\nn&\les &2^{2j}\norm{H}_{\tx{r}{\infty}}\norm{P_lF}_{\tx{p}{2}}\\
\nn&\les &2^{2j}(2^l\ep+2^{\frac{l}{2}}\ep\gamma(u)),
\eea
where we used the assumption $\norm{H}_{\tx{r}{\infty}}\les 1$ on $H$, the fact that we may choose $r$ large enough, and the assumption for $F$.

Next, we estimate the second term in the right-hand side of \eqref{poo59}. Using the property \eqref{eq:partition} of the Littlewood-Paley projections, we have:
\be\lab{poo61}
\norm{P_j\divb(\nabb(H)P_lF)}_{\tx{q}{2}}\les \sum_m\norm{P_j\divb(P_m(\nabb H)P_lF)}_{\tx{q}{2}}.
\ee
Using the finite band property for $P_j$, and the weak Bernstein inequality for $P_l$ and $P_m$, we have:
\bea
\lab{poo62}\norm{P_j\divb(P_m(\nabb H)P_lF)}_{\tx{q}{2}}&\les&2^j\norm{P_m(\nabb H)P_lF}_{\tx{q}{2}}\\
\nn&\les& 2^j\norm{P_m(\nabb H)}_{\tx{r}{4}}\norm{P_lF}_{\tx{p}{4}}\\
\nn&\les& 2^{j+\frac{m}{2}+\frac{l}{2}}\norm{P_m(\nabb H)}_{\tx{r}{2}}\norm{P_lF}_{\tx{p}{2}}\\
\nn&\les& 2^{j+\frac{m}{2}+\frac{l}{2}}\norm{P_m(\nabb H)}_{\tx{r}{2}}(2^l\ep+2^{\frac{l}{2}}\ep\gamma(u)),
\eea
where we used the assumption for $F$. 

We will need another estimate for the right-hand side of \eqref{poo61}. First, let $q_+$ such that $q<q_+<p_-<p$. Using the finite band property for $P_j$, we have for $r$ large enough:
\bea
\lab{poo63}\norm{P_j\divb(P_m(\nabb H)P_lF)}_{\tx{q_+}{2}}&\les&2^j\norm{P_m(\nabb H)P_lF}_{\tx{q_+}{2}}\\
\nn&\les& 2^j\norm{P_m(\nabb H)}_{\tx{r}{2}}\norm{P_lF}_{\tx{p_-}{\infty}}\\
\nn&\les& 2^{j+l}\norm{P_m(\nabb H)}_{\tx{r}{2}}\norm{P_lF}_{\tx{p}{2}},
\eea
where we used in the last inequality the sharp Bernstein inequality for tensors \eqref{poo36bis}. Also, using the properties of $P_m$, we have:
\bee
&&\norm{P_j\divb(P_m(\nabb H)P_lF)}_{\lpt{2}}\\
&=&2^{-2m}\norm{P_j\divb(\lap(P_m(\nabb H))P_lF)}_{\lpt{2}}\\
&\les &2^{-2m}\norm{P_j\divb\divb(\nabb(P_m(\nabb H))P_lF)}_{\lpt{2}}+2^{-2m}\norm{P_j\divb(\nabb(P_m(\nabb H))\nabb P_lF)}_{\lpt{2}}.
\eee
Together with the finite band property for $P_j$, this yields:
\bee
&&\norm{P_j\divb(P_m(\nabb H)P_lF)}_{\lpt{2}}\\
\nn&\les &2^{-2m}\norm{P_j\divb\divb}_{\mathcal{L}(\lpt{2})}\norm{\nabb(P_m(\nabb H))P_lF}_{\lpt{2}}+2^{j-2m}\norm{\nabb(P_m(\nabb H))\nabb P_lF}_{\lpt{2}}\\
\nn&\les &2^{-2m}\norm{\nabb^2P_j}_{\mathcal{L}(\lpt{2})}\norm{\nabb(P_m(\nabb H))}_{\lpt{2}}\norm{P_lF}_{\lpt{\infty}}\\
\nn&&+2^{j-2m}\norm{\nabb(P_m(\nabb H))}_{\lpt{4}}\norm{\nabb P_lF}_{\lpt{4}}\\
\nn&\les &2^{-m}\norm{\nabb^2P_j}_{\mathcal{L}(\lpt{2})}\norm{P_m(\nabb H)}_{\lpt{2}}\norm{P_lF}_{\lpt{\infty}}\\
\nn&&+2^{j-2m}\norm{\nabb^2(P_m(\nabb H))}^{\half}_{\lpt{2}}\norm{\nabb(P_m(\nabb H))}^{\half}_{\lpt{2}}\norm{\nabb^2P_lF}^{\half}_{\lpt{2}}\norm{\nabb P_lF}^{\half}_{\lpt{2}},
\eee
where we used in the last inequality the Gagliardo-Nirenberg inequality \eqref{eq:GNirenberg} and the finite band property for $P_l$. Finally, using the Bochner inequality for tensors \eqref{vbochineq}, the sharp Bernstein inequality \eqref{eq:strongberntensor} for tensors, and the fact that $p>2$, we obtain for $r$ large enough:
$$\norm{P_j\divb(P_m(\nabb H)P_lF)}_{\tx{1}{2}}\les (2^{2j+l-m}+2^{j-\frac{m}{2}+\frac{3l}{2}})\norm{P_m(\nabb H)}_{\tx{r}{2}}\norm{P_lF}_{\tx{p}{2}},$$
which in the case $j<l<m$ implies:
\be\lab{poo64}
\norm{P_j\divb(P_m(\nabb H)P_lF)}_{\tx{1}{2}}\les 2^{j-\frac{m}{2}+\frac{3l}{2}}\norm{P_m(\nabb H)}_{\tx{r}{2}}\norm{P_lF}_{\tx{p}{2}}.
\ee
Since $1<q<q_+$, we may interpolate \eqref{poo63} and \eqref{poo64}. We obtain for $j<l<m$:
\bea
\lab{poo65}\norm{P_j\divb(P_m(\nabb H)P_lF)}_{\tx{q}{2}}&\les& 2^{j+l-|m-l|_+}\norm{P_m(\nabb H)}_{\tx{r}{2}}\norm{P_lF}_{\tx{p}{2}}\\
\nn&\les& 2^{j+l-(m-l)_+}\norm{P_m(\nabb H)}_{\tx{r}{2}}(2^l\ep+2^{\frac{l}{2}}\ep\gamma(u)),
\eea
where we used the assumption for $F$. Now, using \eqref{poo61}, \eqref{poo62} for $m\leq l$ and \eqref{poo65} for $j<l<m$ yield for any $j<l$:
\be\lab{poo66}
\norm{P_j\divb((\nabb H)P_lF)}_{\tx{q}{2}}\les \sum_m2^{j+l-|m-l|_+}\norm{P_m(\nabb H)}_{\tx{r}{2}}(2^l\ep+2^{\frac{l}{2}}\ep\gamma(u)).
\ee

The the third term in the right-hand side of \eqref{poo59} satisfies for $r$ large enough the following estimate:
\be\lab{poo67}
\norm{P_j(\nabb H \nabb P_lF)}_{\tx{q}{2}}\les \sum_m2^{j+l-|m-l|_+}\norm{P_m(\nabb H)}_{\tx{r}{2}}(2^l\ep+2^{\frac{l}{2}}\ep\gamma(u)).
\ee
The proof of \eqref{poo67} is similar to the proof of \eqref{poo66}, so we skip it. 

\eqref{poo59}, \eqref{poo60}, \eqref{poo66} and \eqref{poo67} yield for any $j<l$:
\bea
\lab{poo68}\norm{P_j(HP_lF)}_{\tx{q}{2}}&\les& 2^{2j-2l}(2^l\ep+2^{\frac{l}{2}}\ep\gamma(u))\\
\nn&&+\sum_m2^{j-l-|m-l|_+}\norm{P_m(\nabb H)}_{\tx{r}{2}}(2^l\ep+2^{\frac{l}{2}}\ep\gamma(u)),
\eea
where $r$ is large enough. Finally, summing \eqref{poo57} for $l\leq j$ and \eqref{poo68} for $l>j$ implies for $r$ large enough:
$$\sum_l\norm{P_j(HP_lF)}_{\tx{q}{2}}\les (1+\norm{\nabb H}_{L^r_tB^0_{2,1}(\ptu)})(2^j\ep+2^{\frac{j}{2}}\ep\gamma(u)),$$
which together with the bound \eqref{poo56} for $H$ and the inequality \eqref{poo57} yields:
$$\norm{P_j(HF)}_{\tx{q}{2}}\les 2^j\ep+2^{\frac{j}{2}}\ep\gamma(u).$$
This concludes the proof of Lemma \ref{lemma:grr}.

\section{Proof of Lemma \ref{lemma:po41}}\lab{sec:gowinda2}

Using the property \eqref{eq:partition} of the Littlewood-Paley projections, we have:
\be\lab{po43}
\norm{F\c H}_{L^r_tB^0_{2,1}(\ptu)}\les \sum_{j,q,l}\norm{P_jP_q(F)\c P_l(H)}_{\tx{r}{2}}.
\ee
Note first that \eqref{po41} is symmetric with respect to $F$ and $H$. Thus, we may assume for instance $l\leq q$ in \eqref{po43}. We will estimate the right-hand side \eqref{po43} in the two cases $q\leq j$ and $q>j$ starting with the first one. We have:
\bee
&&\norm{P_jP_q(F)\c P_l(H)}_{\lpt{2}}\\
&\les& 2^{-2j}\norm{P_j\lap P_q(F)\c P_l(H)}_{\lpt{2}}\\
&\les & 2^{-2j}\norm{P_j\divb(\nabb P_q(F)\c P_l(H))}_{\lpt{2}}+2^{-2j}\norm{P_j\divb(P_q(F)\c \nabb P_l(H))}_{\lpt{2}},
\eee
which yields:
\bea
\lab{po44}&&\norm{P_jP_q(F)\c P_l(H)}_{\lpt{2}}\\
\nn&\les & 2^{-2j}\norm{P_j\divb}_{\mathcal{L}(\lpt{p},\lpt{2})}\bigg(\norm{\nabb P_q(F)\c P_l(H))}_{\lpt{p}}+\norm{P_q(F)\c \nabb P_l(H))}_{\lpt{p}}\bigg)\\
\nn&\les & 2^{-2j}\norm{P_j\divb}_{\mathcal{L}(\lpt{p},\lpt{2})}\bigg(\norm{\nabb P_q(F)}_{\lpt{2}}\norm{P_l(H))}_{\lpt{\frac{2p}{2-p}}}\\
\nn&&+\norm{P_q(F)}_{\lpt{\frac{2p}{2-p}}}\norm{\nabb P_l(H))}_{\lpt{2}}\bigg)\\
\nn&\les & 2^{-2j}(2^{(2-\frac{2}{p})l+q}+2^{(2-\frac{2}{p})q+l})\norm{P_j\divb}_{\mathcal{L}(\lpt{p},\lpt{2})}\norm{P_q(F)}_{\lpt{2}}\norm{P_l(H))}_{\lpt{2}},
\eea
where $1<p<2$ will be chosen later, and where we used the finite band property for $P_l$ and $P_q$, and the weak Bernstein inequality for $P_l$ and $P_q$. In view of \eqref{po44} we need to evaluate $\norm{P_j\divb}_{\mathcal{L}(\lpt{p},\lpt{2})}$. Let $p'$ the conjugate exponent of $p$, i.e. $\frac{1}{p}+\frac{1}{p'}=1$. Using the Gagliardo-Nirenberg inequality \eqref{eq:GNirenberg}, we have:
\bea
\lab{po45}\norm{\nabb P_jF}_{\lpt{p'}}&\les& \norm{\nabb P_jF}^{\frac{2}{p'}}_{\lpt{2}}\norm{\nabb^2 P_jF}^{1-\frac{2}{p'}}_{\lpt{2}}\\
\nn&\les& 2^{\frac{2j}{p'}}\norm{P_jF}^{\frac{2}{p'}}_{\lpt{2}}\big(\norm{\lap P_jF}_{\lpt{2}}+\norm{K}_{\lpt{2}}\norm{\nabb P_jF}_{\lpt{2}}\\
\nn&&+\norm{K}^2_{\lpt{2}}\norm{P_jF}_{\lpt{2}}\big)^{1-\frac{2}{p'}}\\
\nn&\les& (1+\norm{K}^{2-\frac{4}{p'}}_{\lpt{2}})2^{\frac{2j}{p}}\norm{P_jF}_{\lpt{2}},
\eea
where we used the weak Bernstein inequality and the Bochner inequality for tensors \eqref{vbochineq}. 
In view of \eqref{po45}, we have:
$$\norm{P_j\divb}_{\mathcal{L}(\lpt{p},\lpt{2})}=\norm{\nabb P_j}_{\mathcal{L}(\lpt{2},\lpt{p'})}\les (1+\norm{K}^{2-\frac{4}{p'}}_{\lpt{2}})2^{\frac{2j}{p}},$$
which together with \eqref{po44} implies:
\bee
&&\norm{P_jP_q(F)\c P_l(H)}_{\lpt{2}}\\
&\les&  2^{-(2-\frac{2}{p})j}(2^{(2-\frac{2}{p})l+q}+2^{(2-\frac{2}{p})q+l})(1+\norm{K}^{2-\frac{4}{p'}}_{\lpt{2}})\norm{P_q(F)}_{\lpt{2}}\norm{P_l(H))}_{\lpt{2}}.
\eee
We fix $p=\frac{2r}{r+1}$ which satisfies $1<p<2$. Using the estimate \eqref{estgauss1} for $K$, and the fact that $l\leq q\leq j$, we obtain:
\be\lab{po46}
\norm{P_jP_q(F)\c P_l(H)}_{\tx{r}{2}}\les  2^{-(\frac{1}{2}-\frac{1}{r})|j-q|-(\frac{1}{2}-\frac{1}{r})|j-l|}2^{\frac{q}{2}}\norm{P_q(F)}_{\tx{\infty}{2}}2^{\frac{l}{2}}\norm{P_l(H))}_{\tx{\infty}{2}}.
\ee

Next, we consider the case $q>j$. Using the weak Bernstein inequality for $P_j$ and $P_l$, we have:
\bee
\norm{P_jP_q(F)\c P_l(H)}_{\lpt{2}}&\les& 2^{\frac{j}{3}}\norm{P_q(F)\c P_l(H)}_{\lpt{\frac{3}{2}}}\\
\nn&\les& 2^{\frac{j}{3}}\norm{P_q(F)}_{\lpt{2}}\norm{P_l(H)}_{\lpt{6}}\\
\nn&\les& 2^{\frac{j}{3}+\frac{2l}{3}}\norm{P_q(F)}_{\lpt{2}}\norm{P_l(H)}_{\lpt{2}}\\
\nn&\les& 2^{-\frac{|j-q|}{6}-\frac{|l-j|}{6}}2^{\frac{q}{2}}\norm{P_q(F)}_{\lpt{2}}2^{\frac{l}{2}}\norm{P_l(H)}_{\lpt{2}}
\eee
where we used the fact that $q>j$ and $q\geq l$. This yields:
\be\lab{po47}
\norm{P_jP_q(F)\c P_l(H)}_{\tx{\infty}{2}} \les 2^{-\frac{|j-q|}{6}-\frac{|l-j|}{6}}2^{\frac{q}{2}}\norm{P_q(F)}_{\tx{\infty}{2}}2^{\frac{l}{2}}\norm{P_l(H)}_{\tx{\infty}{2}}.
\ee
Recall from \eqref{lbz14} that:
\be\lab{po48}
\sum_q2^q\norm{P_q(F)}^2_{\tx{\infty}{2}}\les \no(F)^2\textrm{ and }\sum_l2^l\norm{P_l(H)}^2_{\tx{\infty}{2}}\les \no(H)^2.
\ee
\eqref{po43}, \eqref{po46}, \eqref{po47} and \eqref{po48} imply \eqref{po41} which concludes the proof of Lemma \ref{lemma:po41}.

\subsection{Proof of Lemma \ref{lemma:po1}}\lab{sec:gowinda3}

Since $H=(\r, \s, \b, \bb)$, Lemma \ref{lemma:po5} yields:
\be\lab{po68}
\norm{P_lH}_{\tx{\infty}{2}}\les \ep2^{\frac{l}{2}}.
\ee
We estimate the quantity $\norm{P_j\mathcal{D}_2^{-1}(bF\c P_lH)}_{\tx{\infty}{4_-}}$. Using the weak Bernstein inequality and the finite band property for $P_j$, we have:
\bea
\lab{po69}\norm{P_j\mathcal{D}_2^{-1}(bF\c P_lH)}_{\tx{\infty}{4_-}}&\les& 2^{-j(\half)_+}\norm{\nabb\mathcal{D}_2^{-1}(bF\c P_lH)}_{\tx{\infty}{2}}\\
\nn&\les& 2^{-j(\half)_+}\norm{bF\c P_lH}_{\tx{\infty}{2}}\\
\nn&\les& 2^{-j(\half)_+}\norm{b}_{L^\infty}\norm{F}_{L^\infty}\norm{P_lH}_{\tx{\infty}{2}}\\
\nn&\les& 2^{-j(\half)_++\frac{l}{2}}\norm{F}_{L^\infty}\ep,
\eea
where we used the estimate \eqref{eq:estimdcal-1} for $\mathcal{D}_2^{-1}$, the estimate \eqref{estb} for $b$ and the estimate \eqref{po68} for $H$.

We derive a second estimate for $\norm{P_j\mathcal{D}_2^{-1}(bF\c P_lH)}_{\tx{\infty}{4_-}}$. We have:
\bee
&&\norm{P_j\mathcal{D}_2^{-1}(bF\c P_lH)}_{\tx{\infty}{4_-}}\\
&= & 2^{-2l}\norm{P_j\mathcal{D}_2^{-1}(bF\c \lap P_lH)}_{\tx{\infty}{4_-}}\\
&\les& 2^{-2l}\norm{P_j\mathcal{D}_2^{-1}\divb(bF\c \nabb P_lH)}_{\tx{\infty}{4_-}}+2^{-2l}\norm{P_j\mathcal{D}_2^{-1}(\nabb(bF)\c \nabb P_lH)}_{\tx{\infty}{4_-}}
\eee
which together with the weak Bernstein inequality for $P_j$ yields:
\bea
\lab{po70}&&\norm{P_j\mathcal{D}_2^{-1}(bF\c P_lH)}_{\tx{\infty}{4_-}}\\
\nn&\les& 2^{j(\half)_--2l}\norm{\mathcal{D}_2^{-1}\divb(bF\c \nabb P_lH)}_{\tx{\infty}{2}}+2^{j(\half)_--2l}\norm{\mathcal{D}_2^{-1}(\nabb(bF)\c \nabb P_lH)}_{\tx{\infty}{2}}\\
\nn&\les& 2^{j(\half)_--2l}\norm{bF\c P_lH}_{\tx{\infty}{2}}+2^{-j(\half)_+-2l}\norm{\nabb(bF)\c P_lH}_{\tx{\infty}{1_+}}\\
\nn&\les& 2^{j(\half)_--2l}\norm{b}_{L^\infty}\norm{F}_{L^\infty}\norm{\nabb P_lH}_{\tx{\infty}{2}}+2^{-j(\half)_+-2l}\norm{\nabb(bF)}_{\tx{\infty}{2}}\norm{\nabb P_lH}_{\tx{\infty}{2_+}}\\
\nn&\les& 2^{j(\half)_--l(\frac{1}{2})_-}(\norm{F}_{L^\infty}+\norm{\nabb F}_{\tx{\infty}{2}})\ep,
\eea
where we used the estimate \eqref{eq:estimdcal-1}  and Remark \ref{rem:Dcont} for $\mathcal{D}_2^{-1}$, the estimate \eqref{estb} for $b$ and the estimate \eqref{po68} for $H$. In the last inequality, note that as soon as $4_-$ is fixed in the $\tx{\infty}{4_-}$ norm, then $(\half)_-$ is fixed in $j(\half)_-$. Let us fix $j(\half)_-=j(\half-a)$ for some $a>0$, then we may choose $l(\half)_-=l(\half-\frac{a}{2})$ in order to obtain:
$$2^{j(\half)_--l(\half)_-}=2^{j(\half-a)-l(\half-\frac{a}{2})},$$
which together with \eqref{po70} yields:
\be\lab{po71bis}
\norm{P_j\mathcal{D}_2^{-1}(bF\c P_lH)}_{\tx{\infty}{4_-}}\les 2^{j(\half-a)-l(\half-\frac{a}{2})}(\norm{F}_{L^\infty}+\norm{\nabb F}_{\tx{\infty}{2}})\ep.
\ee

Summing on $j$ and $l$ and taking \eqref{po69} for $l\leq j$ and \eqref{po71bis} for $l>j$, we obtain:
\bee
\norm{P_j\mathcal{D}_2^{-1}(bF\c P_lH)}_{\tx{\infty}{4_-}}&\les& \sum_{j,l}\norm{P_j\mathcal{D}_2^{-1}(bF\c P_lH)}_{\tx{\infty}{4_-}}\\
&\les&\left(\sum_{l\leq j}2^{-j(\half)_++\frac{l}{2}}+\sum_{l>j}2^{j(\half-a)-l(\half-\frac{a}{2})}\right)(\norm{F}_{L^\infty}+\norm{\nabb F}_{\tx{\infty}{2}})\ep\\
&\les&(\norm{F}_{L^\infty}+\norm{\nabb F}_{\tx{\infty}{2}})\ep
\eee
which yields the conclusion of the Lemma. 

\subsection{Proof of Lemma \ref{lemma:po2}}\lab{sec:gowinda4}

Since $h=(\r, \s)$, Lemma \ref{lemma:po5} yields:
\be\lab{po71}
\norm{P_lh}_{\tx{\infty}{2}}\les \ep2^{\frac{l}{2}}.
\ee
We have:
\be\lab{po72}
\norm{\mathcal{D}_2^{-1}b^{-1}\mathcal{D}_2^{-1}(b\nabb h)}_{\tx{p}{4_-}}\les \norm{\mathcal{D}_2^{-1}b^{-1}\mathcal{D}_2^{-1}((\nabb b)h)}_{\tx{p}{4_-}}+\norm{\mathcal{D}_2^{-1}b^{-1}\mathcal{D}_2^{-1}(\nabb(bh)}_{\tx{p}{4_-}}.
\ee
Lemma \ref{lemma:po3} applied to the firs term in the right-hand side of \eqref{po72} with $F=b^{-1}\nabb b$ and $H=h=(\r, \s)$ yields:
\be\lab{po73}
 \norm{\mathcal{D}_2^{-1}b^{-1}\mathcal{D}_2^{-1}((\nabb b)h)}_{\tx{p}{4_-}}\les \no(\nabb b)\ep\les \ep,
 \ee
where we used the estimate \eqref{estb} for $b$.

Next, we evaluate $\norm{P_l(bP_jh)}_{\tx{\infty}{2}}$. Using the $L^2$ boundedness of $P_l$, we have:
\bea
\lab{po74}\norm{P_l(bP_jh)}_{\tx{\infty}{2}}&\les& \norm{bP_jh}_{\tx{\infty}{2}}\\
\nn&\les& \norm{b}_{L^\infty}\norm{P_jh}_{\tx{\infty}{2}}\\
\nn&\les& 2^{\frac{j}{2}},
\eea
where we used the estimate \eqref{estb} for $b$ and the estimate \eqref{po71} for $h$. 

We derive a second estimate for $\norm{P_l(bP_jh)}_{\tx{\infty}{2}}$. We have:
\bee
\norm{P_l(bP_jh)}_{\tx{\infty}{2}}&\les& 2^{-2j}\norm{P_l(b\lap P_jh)}_{\tx{\infty}{2}}\\
&\les& 2^{-2j}\norm{P_l\divb(b\nabb P_jh)}_{\tx{\infty}{2}}+2^{-2j}\norm{P_l(\nabb b\nabb P_jh)}_{\tx{\infty}{2}}\\
\eee
which together with the finite band property and the weak Bernstein inequality for $P_l$ yields:
\bea
\lab{po75}\norm{P_l(bP_jh)}_{\tx{\infty}{2}}&\les& 2^{l-2j}\norm{b\nabb P_jh}_{\tx{\infty}{2}}+2^{\frac{l}{2}-2j}\norm{\nabb b\nabb P_jh}_{\tx{\infty}{\frac{4}{3}}}\\
\nn&\les& 2^{l-2j}\norm{b}_{L^\infty}\norm{\nabb P_jh}_{\tx{\infty}{2}}+2^{\frac{l}{2}-2j}\norm{\nabb b}_{\tx{\infty}{4}}\norm{\nabb P_jh}_{\tx{\infty}{2}}\\
\nn&\les& 2^{l-j}\norm{P_jh}_{\tx{\infty}{2}}+2^{\frac{l}{2}-j}\no(\nabb b)\norm{P_jh}_{\tx{\infty}{2}}\\
\nn&\les&2^{l-\frac{j}{2}}\ep,
\eea
where we used the finite band property for $P_l$, the estimate \eqref{estb} for $b$ and the estimate \eqref{po71} for $h$. Finally, \eqref{po74} for $j\leq l$ and \eqref{po75} for $j>l$ yield:
\be\lab{po76}
\norm{P_l(bh)}_{\tx{\infty}{2}}\les \sum_j\norm{P_l(bP_jh)}_{\tx{\infty}{2}}\les 2^{\frac{l}{2}}\ep.
\ee

In view of \eqref{po72}, we need to evaluate $\norm{\mathcal{D}_2^{-1}b^{-1}\mathcal{D}_2^{-1}(\nabb(bh)}_{\tx{p}{4_-}}$. Note first that we have the following commutator formula:
$$\mathcal{D}_2^{-1}\nabb-\nabb\mathcal{D}_1^{-1}=\mathcal{D}_2^{-1}K\mathcal{D}_1^{-1}$$
which yields:
\be\lab{po77}
\norm{\mathcal{D}_2^{-1}b^{-1}\mathcal{D}_2^{-1}(\nabb(bh)}_{\tx{p}{4_-}}\les \norm{\mathcal{D}_2^{-1}b^{-1}\nabb\mathcal{D}_1^{-1}(bh)}_{\tx{p}{4_-}}+\norm{\mathcal{D}_2^{-1}b^{-1}\mathcal{D}_2^{-1}K\mathcal{D}_1^{-1}(bh)}_{\tx{p}{4_-}}.
\ee
We first evaluate the first term in the right-hand side of \eqref{po77}. Using the weak Bernstein inequality for $P_j$, we have:
\bea
\lab{po78}&&\norm{P_j\mathcal{D}_2^{-1}b^{-1}\nabb\mathcal{D}_1^{-1}P_l(bh)}_{\tx{\infty}{4_-}}\\
\nn&\les& 2^{(\half)_-j}\norm{\mathcal{D}_2^{-1}b^{-1}\nabb\mathcal{D}_1^{-1}P_l(bh)}_{\tx{\infty}{2}}\\
\nn&\les& 2^{(\half)_-j}\norm{\mathcal{D}_2^{-1}\nabb(b^{-1}\mathcal{D}_1^{-1}P_l(bh))}_{\tx{\infty}{2}}
+2^{(\half)_-j}\norm{\mathcal{D}_2^{-1}\nabb(b^{-1})\mathcal{D}_1^{-1}P_l(bh)}_{\tx{\infty}{2}}\\
\nn&\les& 2^{(\half)_-j}\norm{b^{-1}}_{L^\infty}\norm{\mathcal{D}_1^{-1}P_l(bh)}_{\tx{\infty}{2}}
+2^{(\half)_-j}\norm{\nabb(b^{-1})\mathcal{D}_1^{-1}P_l(bh)}_{\tx{\infty}{\frac{4}{3}}}\\
\nn&\les& 2^{(\half)_-j-\frac{l}{2}}\ep+2^{(\half)_-j}\norm{\nabb(b^{-1})}_{\tx{\infty}{4}}\norm{\mathcal{D}_1^{-1}P_l(bh)}_{\tx{\infty}{2}}\\
\nn&\les& 2^{(\half)_-j-\frac{l}{2}}\ep,
\eea
where we used the estimate \eqref{eq:estimdcal-1} and the Remark \ref{rem:Dcont} for $\mathcal{D}_2^{-1}$, the estimate \eqref{eq:estimdcal-1} for $\mathcal{D}_1^{-1}$, the finite band property for $P_l$, the estimate \eqref{estb} for $b$, and the estimate \eqref{po76} for $bh$.

We derive a second estimate for $\norm{P_j\mathcal{D}_2^{-1}b^{-1}\nabb\mathcal{D}_1^{-1}P_l(bh)}_{\tx{\infty}{4_-}}$. Using the weak Bernstein inequality and the finite band property for $P_j$, we have:
\bea
\lab{po78bis}&&\norm{P_j\mathcal{D}_2^{-1}b^{-1}\nabb\mathcal{D}_1^{-1}P_l(bh)}_{\tx{\infty}{4_-}}\\
\nn&\les& 2^{(\half)_-j}\norm{P_j\mathcal{D}_2^{-1}b^{-1}\nabb\mathcal{D}_1^{-1}P_l(bh)}_{\tx{\infty}{2}}\\
\nn&\les& 2^{-(\half)_+j}\norm{\nabb\mathcal{D}_2^{-1}b^{-1}\nabb\mathcal{D}_1^{-1}P_l(bh)}_{\tx{\infty}{2}}\\
\nn&\les& 2^{-(\half)_+j}\norm{b^{-1}}_{L^\infty}\norm{\nabb\mathcal{D}_1^{-1}P_l(bh)}_{\tx{\infty}{2}}\\
\nn&\les& 2^{-(\half)_+j}\norm{P_l(bh)}_{\tx{\infty}{2}}\\
\nn&\les& 2^{-(\half)_+j+\frac{l}{2}}\ep,
\eea
where we used the estimate \eqref{eq:estimdcal-1} for $\mathcal{D}_2^{-1}$, the estimate \eqref{eq:estimdcal-1} for $\mathcal{D}_1^{-1}$, the estimate \eqref{estb} for $b$, and the estimate \eqref{po76} for $bh$.
Summing \eqref{po78} for $j\leq l$ and \eqref{po78bis} for $j>l$ yields:
\be\lab{po78ter}
\norm{\mathcal{D}_2^{-1}b^{-1}\nabb\mathcal{D}_1^{-1}(bh)}_{\tx{p}{4_-}}\les \sum_{j\leq l}2^{j(\half)_--\frac{l}{2}}\ep+\sum_{j>l}2^{-j(\half)_++\frac{l}{2}}\ep\les \ep.
\ee

Next, we evaluate the second term in the right-hand side of \eqref{po77}. Using Remark \ref{rem:Dcont} for $\mathcal{D}_2^{-1}$, we have:
\bea
\lab{po79}\norm{\mathcal{D}_2^{-1}b^{-1}\mathcal{D}_2^{-1}K\mathcal{D}_1^{-1}(bh)}_{\tx{p}{4_-}}&\les& \norm{b^{-1}}_{L^\infty}\norm{\mathcal{D}_2^{-1}K\mathcal{D}_1^{-1}(bh)}_{\tx{p}{2}}\\
\nn&\les& \norm{\mathcal{D}_2^{-1}K\mathcal{D}_1^{-1}(bh)}_{\tx{p}{2}}.
\eea
Using Remark \ref{rem:Dcont} for $\mathcal{D}_2^{-1}$, we have:
\bea
\lab{po80}\norm{\mathcal{D}_2^{-1}P_j(K)\mathcal{D}_1^{-1}P_l(bh)}_{\tx{p}{2}}&\les& \norm{P_j(K)\mathcal{D}_1^{-1}P_l(bh)}_{\tx{p}{1_+}}\\
\nn&\les& \norm{P_j(K)}_{\tx{p}{2_+}}\norm{\mathcal{D}_1^{-1}P_l(bh)}_{\tx{\infty}{2}}\\
\nn&\les& 2^{j(\half)_-}\norm{K}^{\frac{2}{p}}_{\lh{2}}\norm{\La^{-\half}K}^{1-\frac{2}{p}}_{\tx{\infty}{2}}2^{-l}\norm{P_l(bh)}_{\tx{\infty}{2}}\\
\nn&\les& 2^{j(\half)_--\frac{l}{2}}\ep,
\eea
where we used the weak Bernstein inequality for $P_j$, that finite band property for $P_l$, the estimates \eqref{estgauss1} and \eqref{estgauss2} for $K$, and the estimate \eqref{po76} for $bh$.

We derive another estimate for $\norm{\mathcal{D}_2^{-1}P_j(K)\mathcal{D}_1^{-1}P_l(bh)}_{\tx{p}{2}}$. We have:
\bee
&&\norm{\mathcal{D}_2^{-1}P_j(K)\mathcal{D}_1^{-1}P_l(bh)}_{\tx{p}{2}}\\
&=&2^{-2j}\norm{\mathcal{D}_2^{-1}\lap P_j(K)\mathcal{D}_1^{-1}P_l(bh)}_{\tx{p}{2}}\\
&\les& 2^{-2j}\norm{\mathcal{D}_2^{-1}\divb(\nabb P_j(K)\mathcal{D}_1^{-1}P_l(bh))}_{\tx{p}{2}}+
2^{-2j}\norm{\mathcal{D}_2^{-1}\nabb P_j(K)\nabb\mathcal{D}_1^{-1}P_l(bh)}_{\tx{p}{2}}
\eee
which together with the estimate \eqref{eq:estimdcal-1} and the Remark \ref{rem:Dcont} for $\mathcal{D}_2^{-1}$ yields:
\bea
\lab{po81}&&\norm{\mathcal{D}_2^{-1}P_j(K)\mathcal{D}_1^{-1}P_l(bh)}_{\tx{p}{2}}\\
\nn&\les& 2^{-2j}\norm{\nabb P_j(K)\mathcal{D}_1^{-1}P_l(bh)}_{\tx{p}{2}}+
2^{-2j}\norm{\nabb P_j(K)\nabb\mathcal{D}_1^{-1}P_l(bh)}_{\tx{p}{1_+}}\\
\nn&\les& 2^{-2j}\norm{\nabb P_j(K)}_{\tx{p}{2_+}}\norm{\nabb\mathcal{D}_1^{-1}P_l(bh)}_{\tx{\infty}{2}}\\
\nn&\les& 2^{-2j}\norm{\nabb^2P_j(K)}^{1-\frac{2}{2_+}}_{\tx{p}{2}}\norm{\nabb P_j(K)}^{\frac{2}{2_+}}_{\tx{p}{2}}\norm{P_l(bh)}_{\tx{\infty}{2}}\\
\nn&\les& 2^{-j_-+\frac{l}{2}}\norm{P_j(K)}_{\tx{p}{2}}\\
\nn&\les&2^{-j(\half)_++\frac{l}{2}}\norm{K}^{\frac{2}{p}}_{\lh{2}}\norm{\La^{-\half}K}^{1-\frac{2}{p}}_{\tx{\infty}{2}}\ep\\
\nn&\les& 2^{-j(\half)_++\frac{l}{2}}\ep,
\eea
where we used the finite band property for $P_j$, the estimate \eqref{eq:estimdcal-1} for $\mathcal{D}_1^{-1}$, the Bochner inequality for scalars \eqref{eq:Bochconseqbis}, the estimates \eqref{estgauss1} and \eqref{estgauss2} for $K$, and the estimate \eqref{po76} for $bh$. Using \eqref{po79}, and summing \eqref{po80} for $j\leq l$ and \eqref{po81} for $j>l$ yields:
\be\lab{po82}
\norm{\mathcal{D}_2^{-1}b^{-1}\mathcal{D}_2^{-1}K\mathcal{D}_1^{-1}(bh)}_{\tx{p}{4_-}}\les \sum_{j\leq l}2^{j(\half)_--\frac{l}{2}}\ep+\sum_{j>l}2^{-j(\half)_++\frac{l}{2}}\ep\les \ep.
\ee

Finally, \eqref{po72}, \eqref{po73}, \eqref{po77}, \eqref{po78ter} and \eqref{po82} yield the conclusion of Lemma \ref{lemma:po2}.

\subsection{Proof of Lemma \ref{lemma:po3}}\lab{sec:gowinda5}

Since $H=(\r, \s, \b, \bb)$ and $\no(G)\les\ep$, the curvature estimate \eqref{curvflux1} and the finite band property for $P_l$ yield:
\be\lab{po83bis}
\norm{P_lH}_{\lh{2}}\les \ep\textrm{ and }\norm{P_lG}_{\lh{2}}\les 2^{-l}\ep.
\ee
while Lemma \ref{lemma:po5} and Lemma \ref{lemma:lbz4} yield:
\be\lab{po83}
\norm{P_lH}_{\tx{\infty}{2}}\les 2^{\frac{l}{2}}\ep\textrm{ and }\norm{P_lG}_{\tx{\infty}{2}}\les 2^{-\frac{l}{2}}\ep.
\ee

Using Remark \ref{rem:Dcont} for $\mathcal{D}_2^{-1}$, we have:
\bea
\lab{po85}&&\norm{\mathcal{D}_2^{-1}b(\mathcal{D}_2^{-1}(F\c H))}_{\tx{p}{4_-}}+
\norm{\mathcal{D}_2^{-1}b(\mathcal{D}_2^{-1}(F\c \nabb G))}_{\tx{p}{4_-}}\\
\nn&\les& \norm{b(\mathcal{D}_2^{-1}(F\c H))}_{\tx{p}{2}}+
\norm{b(\mathcal{D}_2^{-1}(F\c \nabb G))}_{\tx{p}{2}}\\
\nn&\les& \norm{\mathcal{D}_2^{-1}(F\c H)}_{\tx{p}{2}}+
\norm{\mathcal{D}_2^{-1}(F\c \nabb G)}_{\tx{p}{2}},
\eea
where we used the estimate \eqref{estb} for $b$ in the last inequality.

Next, we estimate the two terms in the right-hand side of \eqref{po85}. Using Remark \ref{rem:Dcont} for $\mathcal{D}_2^{-1}$, we have:
\bea
\lab{po86}&&\norm{\mathcal{D}_2^{-1}(P_q(F)\c P_lH)}_{\tx{p}{2}}+\norm{\mathcal{D}_2^{-1}(P_q(F)\c \nabb P_lG)}_{\tx{p}{2}}\\
\nn&\les& \norm{P_q(F)\c P_lH}_{\tx{p}{1_+}}+\norm{P_q(F)\c \nabb P_lG}_{\tx{p}{1_+}}\\
\nn&\les& \norm{P_q(F)}_{\tx{\infty}{2}}\norm{P_lH}_{\tx{p}{2_+}}+\norm{P_q(F)}_{\tx{\infty}{2}}\norm{\nabb P_lG}_{\tx{p}{2_+}}\\
\nn&\les& 2^{-\frac{q}{2}}\no(F)\bigg(2^{0_+l}\norm{P_lH}_{\tx{p}{2}}+\normm{\norm{\nabb^2P_lG}^{1-\frac{2}{2_+}}_{\lpt{2}}\norm{\nabb P_lG}^{\frac{2}{2_+}}_{\lpt{2}}}_{L^p(0,1)}\bigg)\\
\nn&\les& 2^{-\frac{q}{2}}\no(F)\Bigg(2^{0_+l}\norm{P_lH}^{1-\frac{2}{p}}_{\tx{\infty}{2}}\norm{P_lH}^{\frac{2}{p}}_{\lh{2}}+\Bigg\|(\norm{\lap P_lG}_{\lpt{2}}\\
\nn&&+\norm{K}_{\lpt{2}}\norm{\nabb P_lG}_{\lpt{2}}+\norm{K}^2_{\lpt{2}}\norm{P_lG}_{\tx{p}{2}})^{1-\frac{2}{2_+}}2^l\norm{P_lG}^{\frac{2}{2_+}}_{\lpt{2}}\Bigg\|_{L^p(0,1)}\Bigg)\\
\nn&\les& 2^{l(\frac{3}{2}-\frac{2}{2_+}-\frac{1}{p})-\frac{q}{2}}\no(F)(1+\norm{K}^{2(1-\frac{2}{2_+})}_{\lh{2}})\ep\\
\nn&\les& 2^{l(\half)_--\frac{q}{2}}\no(F)\ep,
\eea
where we used Lemma \ref{lemma:lbz4} for $\norm{P_q(F)}_{\tx{\infty}{2}}$, the Bochner inequality for tensors \eqref{vbochineq}, \eqref{po83bis} and \eqref{po83} for $G$ and $H$, and the estimate \eqref{estgauss1} for $K$. We also used the fact that once $p<+\infty$ is fixed, we may choose $2_+>2$ such that $1-\frac{2}{2_+}-\frac{1}{p}<0$.

We derive a second estimate for $\norm{\mathcal{D}_2^{-1}(P_q(F)\c P_lH)}_{\tx{p}{2}}$ and $\norm{\mathcal{D}_2^{-1}(P_q(F)\c \nabb P_lG)}_{\tx{p}{2}}$. We have:
\bee
&&\norm{\mathcal{D}_2^{-1}(P_q(F)\c P_lH)}_{\tx{p}{2}}+\norm{\mathcal{D}_2^{-1}(P_q(F)\c \nabb P_lG)}_{\tx{p}{2}}\\
\nn&\les& 2^{-2l}\norm{\mathcal{D}_2^{-1}(P_q(F)\c\lap P_lH)}_{\tx{p}{2}}+\norm{\mathcal{D}_2^{-1}\divb(P_q(F)\c  P_lG)}_{\tx{p}{2}}\\
\nn&&+\norm{\mathcal{D}_2^{-1}(\nabb P_q(F)\c  P_lG)}_{\tx{p}{2}}\\
\nn&\les& 2^{-2l}\norm{\mathcal{D}_2^{-1}\divb(P_q(F)\c\nabb P_lH)}_{\tx{p}{2}}+2^{-2l}\norm{\mathcal{D}_2^{-1}(\nabb P_q(F)\c\nabb P_lH)}_{\tx{p}{2}}\\
&&+\norm{\mathcal{D}_2^{-1}\divb(P_q(F)\c  P_lG)}_{\tx{p}{2}}+\norm{\mathcal{D}_2^{-1}(\nabb P_q(F)\c  P_lG)}_{\tx{p}{2}}\\
\eee
which together with the estimate \eqref{eq:estimdcal-1} and Remark \ref{rem:Dcont} for $\mathcal{D}_2^{-1}$ implies:
\bea
\lab{po87}&&\norm{\mathcal{D}_2^{-1}(P_q(F)\c P_lH)}_{\tx{p}{2}}+\norm{\mathcal{D}_2^{-1}(P_q(F)\c \nabb P_lG)}_{\tx{p}{2}}\\
\nn&\les& 2^{-2l}\norm{P_q(F)\c\nabb P_lH}_{\tx{p}{2}}+2^{-2l}\norm{\nabb P_q(F)\c\nabb P_lH}_{\tx{p}{1_+}}\\
\nn&&+\norm{P_q(F)\c P_lG}_{\tx{p}{2}}+\norm{\nabb P_q(F)\c  P_lG)}_{\tx{p}{1_+}}\\
\nn&\les& 2^{-2l}\norm{\nabb P_q(F)}_{\tx{p}{\infty}}\norm{\nabb P_lH}_{\tx{\infty}{2}}+2^{-2l}\norm{\nabb P_q(F)}_{\tx{p}{2_+}}\norm{\nabb P_lH}_{\tx{\infty}{2}}\\
\nn&&+\norm{P_q(F)}_{\tx{p}{\infty}}\norm{P_lG}_{\tx{\infty}{2}}+\norm{\nabb P_q(F)}_{\tx{p}{2_+}}\norm{P_lG}_{\tx{\infty}{2}}\\
\nn&\les& 2^{-\frac{l}{2}}\normm{\norm{\nabb^2P_q(F)}_{\lpt{2}}^{1-\frac{2}{2_+}}\norm{\nabb P_q(F)}_{\lpt{2}}^{\frac{2}{2_+}}}_{L^p(0,1)}\ep\\
\nn&\les& 2^{-\frac{l}{2}}\Bigg\|(\norm{\lap P_qF}_{\lpt{2}}+\norm{K}_{\lpt{2}}\norm{\nabb P_qF}_{\lpt{2}}+\norm{K}^2_{\lpt{2}}\norm{P_qF}_{\tx{p}{2}})^{1-\frac{2}{2_+}}\\
\nn&&\times 2^q\norm{P_qF}^{\frac{2}{2_+}}_{\lpt{2}}\Bigg\|_{L^p(0,1)}\ep\\
\nn&\les& 2^{q(\half)_--\frac{l}{2}}\no(F)\ep,
\eea
where we used Lemma \ref{lemma:lbz4} for $\norm{P_q(F)}_{\tx{\infty}{2}}$, the finite band property for $P_q$ and $P_l$, the Bochner inequality for tensors \eqref{vbochineq}, \eqref{po83bis} and \eqref{po83} for $G$ and $H$, and the estimate \eqref{estgauss1} for $K$. 

Finally, summing \eqref{po86} for $l\leq q$ and \eqref{po87} for $l>q$ implies:
$$\norm{\mathcal{D}_2^{-1}(F\c H)}_{\tx{p}{2}}+\norm{\mathcal{D}_2^{-1}(F\c \nabb G)}_{\tx{p}{2}}\les\no(F)\ep$$
which together with \eqref{po85} yields the conclusion of Lemma \ref{lemma:po3}.

\subsection{Proof of Lemma \ref{lemma:po4}}\lab{sec:gowinda6}

The analog of Lemma \ref{lemma:lbt6} for $\mathcal{D}_2^{-1}$ implies:
\bee
\norm{\mathcal{D}_2^{-1}(FGH)}_{\tx{\infty}{4_-}}&\les& \norm{FGH}_{\tx{\infty}{\frac{4}{3}}}\\
&\les& \norm{F}_{\tx{\infty}{4}}\norm{G}_{\tx{\infty}{4}}\norm{H}_{\tx{\infty}{4}}\\ 
&\les&\no(F)\no(G)\no(H), 
\eee
which concludes the proof of Lemma \ref{lemma:po4}.

\subsection{Proof of Lemma \ref{lemma:po5}}\lab{sec:gowinda7}

Note first from the curvature bound \eqref{curvflux1} for $\b, \bb, \r, \s$ that $H$ satisfies the following estimate:
\be\lab{po64}
\norm{H}_{\lh{2}}\leq\ep.
\ee
The proof follows the same strategy as the one of Proposition \ref{propK}. However, one has to be more careful since $\b$ and $\bb$ are tensors unlike $K$. In particular, using the estimate \eqref{ad27bis}, 
the $L^2$ boundedness of $P_j$, and  the estimate \eqref{po64} for $H$, we obtain:
\bea\lab{po65}
&&\norm{P_jH}^2_{\tx{\infty}{2}}\\
\nn&\les& \left(\int_0^1\norm{P_jH}_{\lpt{2}}\norm{\ddb_{nL}P_jH}_{\lpt{2}}dt\right)+\norm{P_jH}^2_{\lh{2}}\\
\nn&\les& \norm{P_jH}_{\lh{2}}\norm{P_j\ddb_{nL}H}_{\lh{2}}+\left(\int_0^1\norm{P_jH}_{\lpt{2}}\norm{[\ddb_{nL},P_j]H}_{\lpt{2}}dt\right)+\ep^2\\
\nn&\les& \ep\norm{P_j\ddb_{nL}H}_{\lh{2}}+\left(\int_0^1\norm{P_jH}_{\lpt{2}}\norm{[\ddb_{nL},P_j]H}_{\lpt{2}}dt\right)+\ep^2.
\eea
Now, the Bianchi identities \eqref{bianc1}, \eqref{bianc2}, \eqref{bianc4} and \eqref{bianc6} for $\ddb_L(\b), L(\rho), L(\s)$ and $\ddb_L(\bb)$ have the following structure:
$$\ddb_LH=(\divb(\a), \divb(\b), \curlb(\b), \nabb\r, \nabb\s)+F\c(\a, \b, \r, \s, \bb)$$
where in view of the estimates \eqref{estn}-\eqref{estzeta}, $F$ satisfies $\no(F)\les\ep$. Thus, using the finite band property and the weak Bernstein inequality for $P_j$, we obtain for $\ddb_LH$ the following estimate:
\bea
\lab{po66}\norm{P_j\ddb_LH}_{\lh{2}}&\les& 2^j\norm{(\a, \b, \r, \s, \bb)}_{\lh{2}}+2^{\frac{j}{2}}\norm{F\c(\a, \b, \r, \s, \bb)}_{\tx{2}{\frac{4}{3}}}\\
\nn&\les& 2^j\norm{(\a, \b, \r, \s, \bb)}_{\lh{2}}+2^{\frac{j}{2}}\norm{F\c(\a, \b, \r, \s, \bb)}_{\tx{2}{\frac{4}{3}}}\\
\nn&\les& 2^j\ep+2^{\frac{j}{2}}\no(F)\ep\\
\nn&\les&2^j\ep,
\eea
where we used the curvature bound \eqref{curvflux1} for $\a, \b, \r, \s$ and $\bb$. \eqref{po65} and \eqref{po66} imply:
$$\norm{P_jH}^2_{\tx{\infty}{2}}\les
\left(\int_0^1\norm{P_jH}_{\lpt{2}}\norm{[\ddb_{nL},P_j]H}_{\lpt{2}}dt\right)+2^j\ep^2,$$
which yields:
\be\lab{po67}
\norm{P_jH}_{\tx{\infty}{2}}\les\norm{[\ddb_{nL},P_j]H}_{\tx{1}{2}}+2^{\frac{j}{2}}\ep.
\ee

We now evaluate the right-hand side of \eqref{po67}. Again, let us say that the difference with the proof of Proposition \ref{propK} is the fact that $H$ is a tensor unlike $K$. Using the definition \eqref{eq:LP} of $P_j$, we have:
\begin{equation}\label{bis:ad30}
[\ddb_{nL},P_j]H=\int_0^\infty m_j(\tau)V(\tau) d\tau,
\end{equation}
where $V(\tau)$ satisfies:
\be\lab{bis:ad29}
(\partial_{\tau}-\lap)V(\tau)=[\ddb_{nL},\lap]U(\tau)H,\,V(0)=0.
\ee
\eqref{bis:ad30} yields:
\begin{equation}\label{bis:ad36bis}
\norm{[\ddb_{nL},P_j]H}_{\tx{1}{2}}\les \normm{\int_0^\infty m_j(\tau)\norm{V(\tau)}_{\lpt{2}} d\tau}^2_{L^1(0,1)}.
\end{equation}

In view of \eqref{po67} and \eqref{bis:ad36bis}, we have to estimate $\norm{V(\tau)}_{\lpt{2}}$. Let $a, p$ real numbers satisfying:
\begin{equation}\label{bis:ad38bis}
\begin{array}{l}
\ds 0<a<\frac{1}{2},\,2<p<+\infty,\textrm{ such that }p<\frac{4}{2-a}.
\end{array}
\end{equation}
The energy estimate \eqref{eq:l2heat1bis} implies:
\begin{equation}\label{bis:ad46}
\begin{array}{ll}
& \ds\norm{\La^{-a}V(\tau)}^2_{\lpt{2}}+\int_0^\tau\norm{\nabb\La^{-a}V(\tau')}^2_{\lpt{2}}d\tau'\\
\ds\lesssim &\ds\int_0^\tau\int_{\ptu} \La^{-2a}V(\tau')[\ddb{nL},\lap]U(\tau')H\dmt d\tau'.
\end{array}
\end{equation}

We need to estimate the commutator term $[\ddb{nL},\lap]U$. Using twice the commutator formula \eqref{comm5}, we have:
\be\lab{midi1}
[\ddb{nL},\lap]U=F\nabb^2U+G\nabb U+\nabb(GU)
\ee
where the tensors $F$ and $G$ are given by $F=n\chi$ and $G=n\chi\kep+n{}^*\b$. Using the curvature bound \eqref{curvflux1} for $\b$ and the bound \eqref{estn}-\eqref{esthch} for $n, \kep$ and $\chi$, we obtain the following bound for $F$ and $G$:
\be\lab{midi2}
\norm{\nabb F}_{\lh{2}}+\norm{G}_{\lh{2}}\les \ep
\ee

Let $p$ defined in \eqref{bis:ad38bis}, and let $p'$ such that $\frac{1}{p}+\frac{1}{p'}=\frac{1}{2}$. 
Using the commutator formula \eqref{midi1}, and integrating by parts the terms $\nabb^2U(\tau)H$ and $\nabb(GU)$ yields:
\begin{equation}\label{bis:ad47}
\begin{array}{l}
\ds\int_0^\tau\int_{\ptu} \La^{-2a}V(\tau')[\ddb_{nL},\lap]U(\tau')H\dmt d\tau'\\
\ds\lesssim (\norm{\nabb F}_{\lpt{2}}+\norm{G}_{\lpt{2}})\int_0^\tau\norm{\nabb U(\tau')}_{\lpt{p}}\norm{\La^{-2a}V(\tau')}_{\lpt{p'}}d\tau'\\
+\ds \norm{F}_{\lpt{p'}}\int_0^\tau\norm{\nabb U(\tau')}_{\lpt{p}}\norm{\nabb\La^{-2a}V(\tau')}_{\lpt{2}}d\tau'\\
+\ds \norm{G}_{\lpt{2}}\int_0^\tau\norm{U(\tau')}_{\lpt{\infty}}\norm{\nabb\La^{-2a}V(\tau')}_{\lpt{2}}d\tau'\\
\ds\lesssim (\norm{\nabb F}_{\lpt{2}}+\norm{G}_{\lpt{2}})\int_0^\tau\norm{\nabb U(\tau')}_{\lpt{p}}\norm{\nabb\La^{-2a}V(\tau')}_{\lpt{2}}d\tau'
\end{array}
\end{equation}
where we used the Sobolev embeddings \eqref{eq:GNirenberg} and \eqref{sobinftyptu} in the last inequality. The Gagliardo-Nirenberg inequality \eqref{eq:GNirenberg}, the properties \eqref{La1} and \eqref{interpolLa} of $\Lambda$, and the Bochner inequality \eqref{vbochineq} for tensors yield:
\bea\label{bis:ad48}
&&\int_0^\tau\norm{\nabb U(\tau')}_{\lpt{p}}\norm{\nabb\La^{-2a}V(\tau')}_{\lpt{2}}d\tau'\\
\nn&\lesssim& 
\int_0^\tau\norm{\nabb U(\tau')}^{\frac{2}{p}}_{\lpt{2}}\norm{\nabb^2 U(\tau')}^{1-\frac{2}{p}}_{\lpt{2}}\norm{\La^{-a}V(\tau')}^{a}_{\lpt{p'}}\norm{\nabb\La^{-a}V(\tau')}^{1-a}_{\lpt{p'}}d\tau'\\
\nn&\lesssim& \int_0^\tau\norm{\nabb U(\tau')}^{\frac{2}{p}}_{\lpt{2}}\big(\norm{\lap U(\tau')}_{\lpt{2}}+\norm{K}_{\lpt{2}}\norm{\nabb U(\tau')}_{\lpt{2}}\\
\nn&& +\norm{K}^2_{\lpt{2}}\norm{U(\tau')}_{\lpt{2}}\big)^{1-\frac{2}{p}}\norm{\La^{-a}V(\tau')}^{a}_{\lpt{p'}}\norm{\nabb\La^{-a}V(\tau')}^{1-a}_{\lpt{p'}}d\tau'\\
\nn&\lesssim& \Bigg(\int_0^\tau\norm{\nabb U(\tau')}^{2}_{\lpt{2}}d\tau'
+ \int_0^\tau \tau'\norm{\lap U(\tau')}^{2}_{\lpt{2}}d\tau'\\
\nn&&+\norm{K}_{\lpt{2}}^2\int_0^\tau \tau'\norm{\nabb U(\tau')}^{2}_{\lpt{2}}d\tau'\Bigg)^{\frac{1}{2}}\\
\nn&&\times\left(\frac{1}{2}\int_0^\tau\norm{\nabb\La^{-a}V(\tau')}^2_{\lpt{2}}d\tau'
+\int_0^\tau{\tau'}^{-\frac{2(p-2)}{ap}}\norm{\La^{-a}V(\tau')}^2_{\lpt{2}}d\tau'\right)^{\frac{1}{2}}
\eea
which together with the estimates for the heat flow \eqref{eq:l2heat1}, \eqref{eq:l2heat2} and \eqref{eq:l2lambda1}, implies:
\begin{equation}\label{bis:ad49}
\begin{array}{l}
\ds\int_0^\tau\norm{\nabb U(\tau')}_{\lpt{p}}\norm{\nabb\La^{-2a}V(\tau')}_{\lpt{2}}d\tau'\\
\ds\lesssim (\norm{H}_{\lpt{2}}+\norm{K}_{\lpt{2}}\norm{\La^{-1}H}_{\lpt{2}})\\
\ds\left(\int_0^\tau\norm{\nabb\La^{-a}V(\tau')}^2_{\lpt{2}}d\tau'+\int_0^\tau{\tau'}^{-\frac{2(p-2)}{ap}}\norm{\nabb\La^{-a}V(\tau')}^2_{\lpt{2}}d\tau'\right)^{\frac{1}{2}}.
\end{array}
\end{equation}

Finally, the choice of $p$ \eqref{bis:ad38bis}, \eqref{bis:ad46}, \eqref{bis:ad47} and \eqref{bis:ad49} implies:
\begin{equation}\label{bis:ad50}
\begin{array}{l}
\ds\norm{\La^{-a}V(\tau)}^2_{\lpt{2}}+\int_0^\tau\norm{\nabb\La^{-a}V(\tau')}^2_{\lpt{2}}d\tau'\\
\ds\lesssim (\norm{\nabb F}_{\lpt{2}}+\norm{G}_{\lpt{2}})(\norm{H}_{\lpt{2}}+\norm{K}_{\lpt{2}}\norm{\La^{-1}H}_{\lpt{2}}).
\end{array}
\end{equation}
Using the interpolation inequality \eqref{interpolLa}, we obtain:
\begin{equation}\label{bis:ad51}
\begin{array}{l}
\ds\int_0^{+\infty}\norm{V(\tau)}^{\frac{2}{a}}_{\lpt{2}}d\tau\lesssim \int_0^\tau \norm{\La^{-a}V(\tau')}^{\frac{2(1-a)}{a}}_{\lpt{2}}\norm{\nabb\La^{-a}V(\tau')}^2_{\lpt{2}}d\tau'
\\
\ds\lesssim (\norm{\nabb F}_{\lpt{2}}+\norm{G}_{\lpt{2}})(\norm{H}_{\lpt{2}}+\norm{K}_{\lpt{2}}\norm{\La^{-1}H}_{\lpt{2}}),
\end{array}
\end{equation}
which together with the estimate \eqref{po64} for $H$ and the estimate \eqref{midi2} for $F$ and $G$ yields:
\begin{equation}\label{bis:ad52}
\begin{array}{l}
\ds\normm{\int_0^{+\infty}m_j(\tau)\norm{V(\tau)}_{\lpt{2}}d\tau}_{L^1(0,1)}\lesssim 2^{ja}\normm{\left(\int_0^{+\infty}\norm{V(\tau)}^{\frac{2}{a}}_{\lpt{2}}d\tau\right)^{\frac{a}{2}}}_{L^1(0,1)}\\
\ds\lesssim 2^{ja}(\norm{\nabb F}_{\lh{2}}+\norm{G}_{\lh{2}})(\norm{H}_{\lh{2}}+\norm{K}_{\lh{2}}\norm{\La^{-1}H}_{\tx{\infty}{2}})\\
\lesssim 2^{ja}\ep(1+\norm{\La^{-1}H}_{\tx{\infty}{2}}).
\end{array}
\end{equation}
Now, \eqref{po67}, \eqref{bis:ad36bis} and \eqref{bis:ad52} imply:
\bea
\lab{bis:ad53}\norm{P_jH}_{\tx{\infty}{2}}&\les& 2^{\frac{j}{2}}\ep+2^{ja}\ep(1+\norm{\La^{-1}H}_{\tx{\infty}{2}})\\
\nn&\les& 2^{\frac{j}{2}}\ep+2^{\frac{j}{2}}\ep\norm{\La^{-1}H}_{\tx{\infty}{2}},
\eea
where we used the choice of $a$ \eqref{bis:ad38bis} in the last inequality. Finally, from the properties of $\La$ and $P_j$, we have:
\bee
\norm{\La^{-1}H}_{\tx{\infty}{2}}&\les& \sum_j\norm{P_j\La^{-1}H}_{\tx{\infty}{2}}\\
&\les& \sum_j2^{-j}\norm{P_jH}_{\tx{\infty}{2}}\\
&\les& \sup_j\sum_j2^{-\frac{j}{2}}\norm{P_jH}_{\tx{\infty}{2}}
\eee
which together with \eqref{bis:ad53} implies:
$$\norm{P_jH}_{\tx{\infty}{2}}\les 2^{\frac{j}{2}}\ep.$$
This concludes the proof of Lemma \ref{lemma:po5}.

\subsection{Proof of Lemma \ref{lemma:ad2}}\lab{sec:ad2}

Using the $L^\infty$ estimate \eqref{prop:linftysitptu}, we have:
\bea\lab{ad9}
\nn\norm{Q_{>1}N}_{L^\infty}&\les& \norm{Q_{>1}N}_{\lsit{\infty}{2}}+\norm{\nabla Q_{>1}N}_{\lsit{\infty}{2}}+\norm{\nabb\nabla Q_{>1}N}_{\lsit{\infty}{2}}\\
&\les& \norm{\nabla N}_{\lsit{\infty}{2}}+\norm{\nabb\nabla N}_{\lsit{\infty}{2}}+\norm{\nabb\nabla Q_{\leq 1}N}_{\lsit{\infty}{2}},
\eea
where we used in the last inequality the finite band property for $Q_{>1}$, and the decomposition $N=Q_{\leq 1}(N)+Q_{>1}(N)$.

We now evaluate the various terms in the right-hand side of \eqref{ad9}. Since $N=\half(L-\lb)$, the Ricci equation \eqref{ricciform} imply:
\be\lab{ad10}
\nabla_AN=\th_{AB}e_B,\,\nabla_NN=-b^{-1}\nabb b.
\ee
\eqref{ad10} implies:
\bea
\lab{ad11}&&\norm{\nabla N}_{\lsit{\infty}{2}}+\norm{\nabb\nabla N}_{\lsit{\infty}{2}}\\
\nn&\les& \norm{\th}_{\lsit{\infty}{2}}+\norm{\nabb\th}_{\lsit{\infty}{2}}+\norm{b^{1}\nabb b}_{\lsit{\infty}{2}}+\norm{b^{-1}\nabb b}^2_{\lsit{\infty}{4}}+\norm{b^{-1}\nabb^2 b}_{\lsit{\infty}{2}}.
\eea
Furthermore, the Bochner inequality \eqref{prop:bochsit} and the finite band property for $Q_{\leq 1}$ imply:
\bea
\lab{ad12}\norm{\nabb\nabla Q_{\leq 1}N}_{\lsit{\infty}{2}}&\les&\norm{\nabla^2Q_{\leq 1}N}_{\lsit{\infty}{2}}\\
\nn&\les& \norm{\Delta Q_{\leq 1}N}_{\lsit{\infty}{2}}\\
\nn&\les& \norm{\nabla N}_{\lsit{\infty}{2}}.
\eea
Now, \eqref{ad9}, \eqref{ad11} and \eqref{ad12} yield:
\bea
\lab{ad12bis}\norm{Q_{>1}N}_{L^\infty}&\les& \norm{\th}_{\lsit{\infty}{2}}+\norm{\nabb\th}_{\lsit{\infty}{2}}+\norm{b^{1}\nabb b}_{\lsit{\infty}{2}}\\
\nn&&+\norm{b^{-1}\nabb b}^2_{\lsit{\infty}{4}}+\norm{b^{-1}\nabb^2 b}_{\lsit{\infty}{2}}.
\eea

In view of \eqref{ad12bis}, we need to estimate $b$ and $\th$ on $\Sit$. So far, we only proved regularity 
estimates on $\H_u$. In order to transfer them to $\Sit$, we consider the structure equation for the foliation generated by $u$ on $\Sit$ (see \cite{ChKl} p. 56):
\be\lab{struct}
\left\{\begin{array}{l}
b^{-1}\lap b=-\nabla_N\trt-|\th|^2+R_{NN},\\[1mm]
\nabb^B\hth_{AB}=\frac{1}{2}\nabb_A\trt+R_{NA}.
\end{array}\right.
\ee
Recall from the definition of $\th$ \eqref{def:theta} that $\trt$ is given by:
$$\trt=\trc-\d$$
where we used the fact that the time foliation is maximal \eqref{maxfoliation}. In view of the estimate \eqref{estk} for $\d$ and the estimate \eqref{esttrc} for $\trc$, we obtain:
\be\lab{ad13}
\norm{\trt}_{L^\infty_u\lpt{2}}+\norm{\nabb\trt}_{L^\infty_u\lpt{2}}+\norm{\nabla_N\trt}_{L^\infty_u\lpt{2}}\les\ep.
\ee
Furthermore, using the definition of $\th$ \eqref{def:theta} and the Sobolev embedding \eqref{sobineq1}, we have:
\be\lab{ad14}
\norm{|\th|^2}_{L^\infty_u\lpt{2}}\les \norm{\th}^2_{L^\infty_u\lpt{4}}\les \no(\th)^2\les\no(\chi)^2+\no(\eta)^2\les \ep^2,
\ee
where we used the estimate \eqref{estk} for $\eta$ and the estimates \eqref{esthch} \eqref{esttrc} for $\chi$. Also, using the Sobolev embedding \eqref{sobineq2}, we have:
\be\lab{ad15}
\norm{\nabb b}_{L^\infty_u\lpt{2}}+\norm{\nabb b}_{L^\infty_u\lpt{4}}\les \no(\nabb b)\les \ep,
\ee
where we used the estimate \eqref{estb} for $b$.

Next, we estimate $\norm{\nabb^2b}_{\lsit{\infty}{2}}$. In view of the Bochner inequality \eqref{eq:Bochconseqbis}, we have:
\bea
\lab{ad16}\norm{\nabb^2b}_{\lsit{\infty}{2}}&\les& \norm{\lap b}_{\lsit{\infty}{2}}+\norm{\nabb b}_{\lsit{\infty}{2}}\\
\nn&\les& \norm{b}_{L^\infty}(\norm{\nabla_N\trt}_{\lsit{\infty}{2}}+\norm{|\th|^2}_{\lsit{\infty}{2}}+\norm{R_{NN}}_{\lsit{\infty}{2}})\\
\nn&&+\norm{\nabb b}_{\lsit{\infty}{2}}\\
\nn&\les&\ep,
\eea
where we used in the last inequality the curvature bound \eqref{curvflux1} for $R_{NN}$, the estimate \eqref{estb} for $b$, and the estimates \eqref{ad13}-\eqref{ad15} for $\th$ and $b$.

Next, we estimate $\norm{\hth}_{L^\infty_u\lpt{2}}$. in view of the Hodge estimate \eqref{eq:estimdcal-1}, we have:
\bea
\lab{ad17}\norm{\nabb\hth}_{\lsit{\infty}{2}}&\les& \norm{\nabb\trt}_{\lsit{\infty}{2}}+\norm{R_{AN}}_{\lsit{\infty}{2}}\\
\nn&\les&\ep,
\eea
where we used in the last inequality the curvature bound \eqref{curvflux1} for $R_{AN}$ and the estimates \eqref{ad13} and \eqref{ad15} for $\th$ and $b$.

Finally, \eqref{ad12bis}, \eqref{ad13}, \eqref{ad15}, \eqref{ad16} and \eqref{ad17} yield \eqref{ad2}. This concludes the proof of Lemma \ref{lemma:ad2}.

\subsection{Proof of Lemma \ref{lemma:ad3}}\lab{sec:ad3}

We estimate the following quantity:
\bea
\lab{ad18}&&\norm{\nabb Q_{\leq 1}(N')}_{\lsit{\infty}{2}}+\norm{\nabla\nabb Q_{\leq 1}(N')}_{\lsit{\infty}{2}}\\
\nn&\les& \norm{\nabla Q_{\leq 1}(N')}_{\lsit{\infty}{2}}+\norm{\nabb\nabla Q_{\leq 1}(N')}_{\lsit{\infty}{2}}+
\norm{[\nabb,\nabb_N]Q_{\leq 1}(N')}_{\lsit{\infty}{2}}\\
\nn&\les& \norm{\nabla N'}_{\lsit{\infty}{2}}+\norm{\nabla^2 Q_{\leq 1}(N')}_{\lsit{\infty}{2}}+
\norm{[\nabb,\nabb_N]Q_{\leq 1}(N')}_{\lsit{\infty}{2}}\\
\nn&\les& \norm{\nabla N'}_{\lsit{\infty}{2}}+\norm{\Delta Q_{\leq 1}(N')}_{\lsit{\infty}{2}}+
\norm{[\nabb,\nabb_N]Q_{\leq 1}(N')}_{\lsit{\infty}{2}}\\
\nn&\les& \norm{\nabla N'}_{\lsit{\infty}{2}}+\norm{[\nabb,\nabb_N]Q_{\leq 1}(N')}_{\lsit{\infty}{2}},
\eea
where we used several times the finite band property for $Q_{\leq 1}$ and the Bochner inequality \eqref{prop:bochsit}. Now, for any tensor $F$, the following estimate is a immediate consequence of the proof of \eqref{prop:linftysitptu} (see \eqref{eul3}):
\be\lab{ad19}
\norm{[\nabb,\nabla_N]F}_{\lsit{\infty}{2}}\les \ep\norm{\nabb\nabla F}_{\lsit{\infty}{2}}+\ep\norm{F}_{L^\infty}
\ee
Using \eqref{ad19} with $F=Q_{\leq 1}(N')$ yields:
\bee
\norm{[\nabb,\nabla_N]Q_{\leq 1}(N')}_{\lsit{\infty}{2}}&\les& \ep\norm{\nabb\nabla Q_{\leq 1}(N')}_{\lsit{\infty}{2}}+\ep\norm{Q_{\leq 1}(N')}_{L^\infty}\\
&\les& \ep\norm{\nabla^2Q_{\leq 1}(N')}_{\lsit{\infty}{2}}+\ep\norm{N'}_{L^\infty}\\
&\les& \ep\norm{\Delta Q_{\leq 1}(N')}_{\lsit{\infty}{2}}+\ep\norm{N'}_{L^\infty}\\
&\les& \ep\norm{\nabla N'}_{\lsit{\infty}{2}}+\ep\norm{N'}_{L^\infty},
\eee
where we used the $L^\infty$ boundedness of $Q_{\leq 1}$, the Bochner inequality \eqref{prop:bochsit}, and the finite band property for $Q_{\leq 1}$.

Note from the proof of Lemma \ref{lemma:ad2} (see \eqref{ad11} the following estimate:
$$\norm{\nabla N'}_{\lsit{\infty}{2}}\les \ep.$$
Together with \eqref{ad18} and \eqref{ad19}, this implies:
\bea\lab{ad20}
\nn\norm{\nabb Q_{\leq 1}(N')}_{\lsit{\infty}{2}}+\norm{\nabla\nabb Q_{\leq 1}(N')}_{\lsit{\infty}{2}}&\les& \norm{\nabla N'}_{\lsit{\infty}{2}}+\ep\norm{N'}_{L^\infty}\\
&\les&\ep.
\eea

We will prove for any tensor vectorfield $F$ the following non sharp estimate:
\be\lab{ad21}
\norm{F}_{\BB^0}\les \norm{F}_{\lsit{\infty}{2}}+\norm{\nabla F}_{\lsit{\infty}{2}}.
\ee
\eqref{ad20} and \eqref{ad21} immediately yield \eqref{ad3}. 

In order to conclude the proof of Lemma \ref{lemma:ad3}, it remains to prove \eqref{ad21}. We estimate 
$\norm{P_jF}^2_{\tx{\infty}{2}}$. In view of \eqref{hehehehe2}, we have:
\bea\lab{ad22}
&&\norm{P_jF}^2_{L^\infty_{t,u}\lpt{2}}\\
\nn&\les& \left(\int_u\norm{P_jF}_{\lpt{2}}\norm{\ddb_{bN}P_jF}_{\lpt{2}}du\right)+\norm{\trt}_{L^\infty_u\lpt{4}}\norm{P_jF}_{\lsit{\infty}{2}}\norm{P_jF}_{L^2_u\lpt{4}}\\
\nn&\les& \norm{P_jF}_{\lsit{\infty}{2}}\norm{P_j\ddb_{bN}F}_{\lsit{\infty}{2}}+\left(\int_u\norm{P_jF}_{\lpt{2}}\norm{[\ddb_{bN},P_j]F}_{\lpt{2}}du\right)\\
\nn&&+2^{\frac{j}{2}}\ep\norm{P_jF}^2_{\lsit{\infty}{2}}\\
\nn&\les& 2^{-j}\norm{\nabla F}^2_{\lsit{\infty}{2}}+\norm{P_jF}_{L^4_u\lpt{2}}\norm{[\ddb_{bN},P_j]F}_{L^{\frac{4}{3}}_u\lpt{2}}\\
\nn&\les& 2^{-j}\norm{\nabla F}^2_{\lsit{\infty}{2}}+\norm{P_jF}_{L^\infty_u\lpt{2}}^{\half}\norm{P_jF}_{\lsit{\infty}{2}}^{\half}\norm{[\ddb_{bN},P_j]F}_{L^{\frac{4}{3}}_u\lpt{2}}\\
\nn&\les& 2^{-j}\norm{\nabla F}^2_{\lsit{\infty}{2}}+2^{-\frac{j}{2}}\norm{P_jF}_{L^\infty_u\lpt{2}}^{\half}\norm{\nabb F}_{\lsit{\infty}{2}}^{\half}\norm{[\ddb_{bN},P_j]F}_{L^{\frac{4}{3}}_u\lpt{2}},
\eea
where we used the estimate \eqref{ad13} for $\trt$, the Bernstein inequality for $P_j$, and the finite band property and the $L^2$ boundedness of $P_j$. \eqref{ad22} implies:
\be\lab{ad23}
\norm{P_jF}_{\tx{\infty}{2}}\les 2^{-\frac{j}{2}}\norm{\nabla F}_{\lsit{\infty}{2}}
+2^{-\frac{j}{4}}\norm{[\ddb_{bN},P_j]F}_{L^{\frac{4}{3}}_u\lpt{2}}.
\ee
Now, \eqref{ad23} and the commutator estimate \eqref{bonobo} imply:
$$\norm{P_jF}_{\tx{\infty}{2}}\les (2^{-\frac{j}{2}}+2^{-\frac{j}{4}+a})\norm{\nabla F}_{\tx{\infty}{2}},$$
where $0<a<\frac{1}{4}$. In view of the definition of $\BB^0$, this yields \eqref{ad21}. This concludes the proof of Lemma \ref{lemma:ad3}.

\subsection{Proof of Lemma \ref{lemma:ad4}}\lab{sec:ad4}

In view of the Ricci equations \eqref{ricciform}, we have:
\bea
\lab{aad38}\norm{\dd_L(N')}_{\lh{2}}&\les& \norm{\chi'}_{\lh{2}}+\norm{\chb'}_{\lh{2}}+\norm{\kep'}_{\lh{2}}+\norm{\d'}_{\lh{2}}\\
\nn&&+\norm{\z'}_{\lh{2}}+\norm{n^{-1}\nabla_{N'}n}_{\lh{2}}+\norm{\xib'}_{\lh{2}},
\eea
where $\chi', \chb', \d', \z', \xib'$ are the Ricci coefficients associated to $u(.,\o')$. We only estimate $\z'$ since it is the worst term in \eqref{aad38}. In view of the computations \eqref{du} and \eqref{dusit}, we have for any scalar function $f$:
$$\lb\left(\int_{\ptu}f\dmt\right)=\int_{\ptu}(\lb(f)+\trchb f)\dmt.$$
Together with the coarea formula \eqref{coarea} and the fact that $\z'$ vanishes at infinity, we obtain:
\bea
\lab{aad39}\norm{\z'}^2_{\lh{2}}&\les& \int_u\int_{\H_u}(\dd_{\lb}(\z')\c\z'+|\z'|^2(\trchb+b^{-1}\dd_{\lb}(b)))d\H_udu\\
\nn&\les& \left|\int_u\int_{\H_u}\dd_{\lb}(\z')\c\z'd\H_udu\right|+\norm{\z'}_{L^4(\mathcal{M})}^2(\norm{\trchb}_{L^2(\mathcal{M})}+\norm{b^{-1}\dd_{\lb}(b)}_{L^2(\mathcal{M})})\\
\nn&\les& \left|\int_u\int_{\H_u}\dd_{\lb}(\z')\c\z'd\H_udu\right|+\ep,
\eea
where we used the estimates \eqref{estb}-\eqref{estzeta} for $\z', b$ and $\trchb$.

Next, we estimate the right-hand side of \eqref{aad39}. Decomposing $\lb$ on the frame $L', \lb', e_1', e_2'$, we have:
\bea
\lab{aad40}&&\left|\int_u\int_{\H_u}\dd_{\lb}(\z')\c \z'd\H_udu\right|\\
\nn&\les& \left|\int_u\int_{\H_u}g(\lb,\lb')\dd_{L'}(\z')\c \z'd\H_udu\right|+\left|\int_u\int_{\H_u}g(\lb,e_A')\dd_{e_A'}(\z')\c \z'd\H_udu\right|\\
\nn&&+\left|\int_u\int_{\H_u}g(\lb,L')\dd_{\lb'}(\z')\c \z'd\H_udu\right|\\
\nn&\les& (\norm{\ddb_{L'}(\z')}_{L^2(\mathcal{M})}+\norm{\nabb\z'}_{L^2(\mathcal{M})})\norm{\z'}_{L^2(\mathcal{M})}+\left|\int_u\int_{\H_u}g(\lb,L')\dd_{\lb'}(\z')\c \z'd\H_udu\right|\\
\nn&\les&\ep^2+\left|\int_u\int_{\H_u}g(\lb,L')\dd_{\lb'}(\z')\c \z'd\H_udu\right|,
\eea
where we used the estimate \eqref{estzeta} for $\z'$ in the last inequality. 

Now, we estimate the right-hand side of \eqref{aad40}. Using the Littlewood-Paley decomposition, we have:
\bea
\lab{aad41}&&\left|\int_u\int_{\H_u}(P'_j\dd_{\lb'}(\z')\c P'_j(g(\lb,L')\z')d\H_udu\right|\\
\nn&\les& \norm{P'_j\dd_{\lb'}(\z')}_{L^2(\mathcal{M})}\norm{P'_j(g(\lb,L')\z')}_{L^2(\mathcal{M})}\\
\nn&\les& \ep 2^{-j}\norm{\nabb'(g(\lb,L')\z')}_{L^2(\mathcal{M})}\\
\nn&\les&\ep 2^{-j}(\norm{\nabb'\z'}_{L^2(\mathcal{M})}+\norm{\dd_{e'_A}(\lb)\z')}_{L^2(\mathcal{M})}+\norm{\dd_{e'_A}(L')\z')}_{L^2(\mathcal{M})})\\
\nn&\les&\ep 2^{-j}(\norm{\nabb'\z'}_{L^2(\mathcal{M})}+\norm{\z'}_{L^4(\mathcal{M})}(\norm{\dd\lb}_{L^4(\mathcal{M})}+\norm{\dd L'}_{L^4(\mathcal{M})}))\\
\nn&\les& \ep^2 2^{-j}
\eea
where we used the estimate \eqref{estlbzeta} for $\dd_{\lb'}(\z')$, the finite band property for $P'_j$, the estimate \eqref{estzeta} for $\z'$, and the Ricci equations \eqref{ricciform} together with the estimates \eqref{estn}-\eqref{estzeta} of the Ricci coefficients to estimate $\dd \lb$ and $\dd L'$.

Finally, summing with respect to $j$ in \eqref{aad41}, together with \eqref{aad39} and \eqref{aad40} yields:
$$\norm{\z'}_{\lh{2}}\les \ep.$$
The estimates of the other Ricci coefficients in the right-hand side of \eqref{aad38} are easier, and we obtain in the end:
$$\norm{\dd_L(N')}_{\lh{2}}\les\ep,$$
which concludes the proof of Lemma \ref{lemma:ad4}.

\section{Appendix to section \ref{sec:depnormonomega}}

\subsection{Proof of Lemma \ref{lemma:xx2}}\lab{sec:xx2}

We have constructed a global coordinate system on $\Sigma_t$ in section \ref{sec:globalcoordsit}. We will need another global coordinate system. Let $\o\in\S$. Let $\Phi_{t,\o}:\Sigma_t\rightarrow \R^3$ defined by:
\begin{equation}\label{gl1}
\Phi_{t,\o}(t,x):=u(t,x,\o)\o+\po u(t,x,\o).
\end{equation}
Then we claim that $\Phi_{t,\o}$ is a global $C^1$ diffeomorphism from $\Sigma_t$ to $\R^3$ and therefore provides a global coordinate system on $\Sigma_t$. The proof has been done in \cite{param1} for the particular case $t=0$ of a global coordinate system on $\Sigma_0$. The proof for $\Sigma_t$ is completely analogous an we refer the interested reader to Proposition 2.9 in \cite{param1}. The proof also provides the following bound for $d\Phi_{t,\o}^{-1}$:
\be\label{lxx2:1}
\norm{d\Phi^{-1}_{t,\o}}_{L^\infty}\lesssim \ep.
\ee
Recall from \eqref{po5} that we have $\norm{\dd_L(\po N)}_{\xt{\infty}{2}}\lesssim \ep$. This yields:
$$\norm{\dd_L\left(g(\po N,\po N)-I\right)}_{\xt{\infty}{2}}\lesssim\ep$$
which together with the estimate for transport equation \eqref{estimtransport1} and the corresponding estimate at initial time (see \cite{param1}) yields:
\be\label{lxx2:1ter}
\norm{g(\po N,\po N)-I}_{L^\infty}\lesssim\ep.
\ee
Consider the global coordinate system on $\Sigma_t$ provided by $\Phi^{-1}_{t,\o}(u,y')$. Then, for any scalar function $f$ on $\Sigma_t$, one easily derives the following formulas:
\be\label{lxx2:1bis}
\frac{\partial f}{\partial u}=g\left(N+O(\ep),\nabla f\right)\textrm{ and }\frac{\partial f}{\partial y'}=g\left(\po N+O(\ep),\nabla f\right),
\ee
where we used the fact that $g(N,\po N)=0$, $\nabla u(t,x,\o)=b^{-1}N$, $\nabla \po u(t,x,\o)=-b^{-2}\po b N+b^{-1}\po N$, $\norm{b-1}_{L^\infty}\lesssim\ep$, $\norm{\po b}_{L^\infty}\lesssim\ep$ and \eqref{lxx2:1ter}.

Finally, $u$ being fixed, $\Phi^{-1}_{t,\o}(u,y')$ provided a coordinate system on $\ptu$ such that the following estimate holds for the induced metric $\gamma$ in the coordinate system:
\be\lab{lxx2:2}
|\ga_{AB}(p)\xi^A\xi^B-|\xi|^2|\lesssim\ep |\xi|^2, \qquad \mbox{uniformly for  all }
\,\, p\in \R^2.
\ee

We evaluate $\norm{F}_{L^2(\H_{u})}$. Using the global coordinate system on $\ptu$ provided by $\Phi^{-1}_{t,\o}(u,y')$, we have:
\bea\lab{lxx2:3}
\norm{F}^2_{L^2(\H_{u})}&=&\int_0^1\int |F(\Phi^{-1}_{t,\o'}(u,y'))|^2\sqrt{\gamma}dy'dt\\
\nn&\lesssim &\int_0^1\int |F(\Phi^{-1}_{t,u,\o'}(y'))|^2dy'dt,
\eea
where we used \eqref{lxx2:2} in the last inequality. Let $(t,x_t)$ a point on $\Sigma_t$. Let $0\leq \sigma\leq 1$ parametrize the arc on $\S$ joining $\o$ and $\o'$, and let $\o_\sigma\in \S$ corresponding to $\sigma$. Let $u_\sigma=u(t,x_t,\o_\sigma)$ and $\po u_\sigma=\po u(t,x_t,\o_\sigma)$. Let $\rho$ a positive smooth bounded function on $\R$ vanishing in the neighborhood of 0. We consider the following integral:
\be\lab{lxx2:4}
I(\sigma)=\int_0^1\int |F(\Phi^{-1}_{t,,\o_\sigma}(u=u_\sigma,y'))|^2\rho(\po u-\po u_\sigma)dy'dt.
\ee
We have:
\be\lab{lxx2:5}
I(0)=\int_0^1\int |F(\Phi^{-1}_{t,\o'}(u=u_0,y'))|^2dy'dt\gtrsim \norm{F\sqrt{\rho(\po u-\po u_\sigma)}}^2_{L^2(\H_{u'=u_0})}
\ee
where we used \eqref{lxx2:2} and \eqref{lxx2:3}, and where $u'=u(.,.,\o')$. We also have:
\bea\lab{lxx2:6}
I(1)&=&\int_0^1\int |F(\Phi^{-1}_{t,\o}(u=u_1,y'))|^2\rho(\po u-\po u_\sigma)dy'dt\\
\nn&\lesssim & \int_0^1\int |F(\Phi^{-1}_{t,\o}(u=u_1,y'))|^2\sqrt{\gamma}dy'dt\\
\nn&\lesssim & \norm{F}^2_{\li{\infty}{2}},
\eea
using again \eqref{lxx2:2} and \eqref{lxx2:3}.

Next, we evaluate $\frac{dI}{d\sigma}$:
\bea
\nn\frac{dI}{d\sigma}&=&2\int_0^1\int DF(\Phi^{-1}_{t,\o_\sigma}(u_\sigma,y'))\frac{d}{d\sigma}\Big[\Phi^{-1}_{t,\o_\sigma}(u_\sigma,y')\Big]F(\Phi^{-1}_{t,\o_\sigma}(u_\sigma,y'))\rho(\po u-\po u_\sigma)dy'dt\\
\label{lxx2:7}&&-\int_0^1\int |F(\Phi^{-1}_{t,,\o_\sigma}(u=u_\sigma,y'))|^2\po^2u_\sigma\rho'(\po u-\po u_\sigma)dy'dt.
\eea
Now, we have
$$\frac{d}{d\sigma}\Big[\Phi^{-1}_{t,\o_\sigma}(u_\sigma,y')\Big]=d\Phi^{-1}\left(\frac{d}{d\sigma}\Big[\Phi_{t,\o_\sigma}\Big]\circ\Phi^{-1}\right)(u_\sigma,y)+d\Phi^{-1}_{t,\o_\sigma}\left(\frac{du_\sigma}{d\sigma},y'\right),$$
which yields:
\bea
\nn\normm{\frac{d}{d\sigma}\Big[\Phi^{-1}_{t,\o_\sigma}(u_\sigma,.)\Big]}_{L^\infty}&\leq& \norm{d\Phi^{-1}}_{L^\infty}\left(\normm{\frac{d}{d\sigma}\Big[\Phi_{t,\o_\sigma}\Big]}_{L^\infty}+\normm{\frac{du_\sigma}{d\sigma}}_{L^\infty}\right)\\
\nn&\lesssim& \norm{\partial^2_\o u}_{L^\infty}+\norm{\partial_\o u}_{L^\infty}\\
\label{lxx2:7bis}&\lesssim& 1
\eea
where we used \eqref{lxx2:1}. Also, differentiating twice the Eikonal equation with respect to $\o$, we obtain:
$$L(\po^2u)=-b^{-1}g(\po N,\po N).$$
Since $\norm{\po N}_{L^\infty}\lesssim 1$, the use of the estimate for transport equations \eqref{estimtransport1} together with a corresponding estimate at initial time (see \cite{param1}) yields:
$$\norm{\po^2u}_{L^\infty}\lesssim 1.$$
Together with \eqref{lxx2:7} and \eqref{lxx2:7bis}, we obtain:
\bea\label{lxx2:8}
\left|\frac{dI}{d\sigma}\right|&\lesssim& \int_0^1\int (|DF(\Phi^{-1}_{t,\o_\sigma}(u_\sigma,y'))||F(\Phi^{-1}_{t,\o_\sigma}(u_\sigma,y'))|\rho(\po u-\po u_\sigma)\\
\nn&&+|F(\Phi^{-1}_{t,\o_\sigma}(u_\sigma,y'))|^2)\rho'(\po u-\po u_\sigma))dy'dt.
\eea

In view of \eqref{lxx2:5}, \eqref{lxx2:6} and \eqref{lxx2:8}, we obtain:
\bea
&&\nn\norm{F\sqrt{\rho(\po u-\po u_0)}}^2_{L^2(\H_{u'=u_0})}\\
\nn&\lesssim& \norm{F}^2_{\li{\infty}{2}}+\int_0^1\int_0^1\int (|DF(\Phi^{-1}_{t,\o_\sigma}(u_\sigma,y'))||F(\Phi^{-1}_{t,\o_\sigma}(u_\sigma,y'))|\rho(\po u-\po u_\sigma)\\
\label{lxx2:9}&&+|F(\Phi^{-1}_{t,\o_\sigma}(u_\sigma,y'))|^2\rho'(\po u-\po u_\sigma))dy'dtd\sigma.
\eea
Next, we consider the change of variables $(\sigma,y')\rightarrow (u,z')$ where:
$$u=u(t,x,\o),\,y'=\po u(t,x,\o_\sigma)\textrm{ and }z'=\po u(t,x,\o).$$
Given $(t,x)\in \Sigma_t$, there is only one $\sigma(t,x)$ such that $u(t,x,\o_\sigma)=u_\sigma$. $\sigma(t,x)$ is given implicitly by the following equation:
$$u(t,x,\o_\sigma)=u(t,x_0,\omega)$$
which after differentiation provides the formula:
\be\label{lxx2:10}
\nabla\sigma(t,x)=\frac{\nabla u(t,x)}{\partial_\o u(t,x,\o_\sigma)-\po u(t,x,\o_\sigma)}.
\ee
Also, we have:
\bea\label{lxx2:11}
\nabla u(t,x,\o_\sigma)=b_\sigma^{-1}N_\sigma\textrm{ and }\nabla \po u(t,x,\o_\sigma)=-b_\sigma^{-2}\po b_\sigma N_\sigma+b_\sigma^{-1}\po N_\sigma,
\eea
with the notation $N=N(t,x,\o)$, $b=b(t,x,\o)$, $N_\sigma=N(t,x,\o_\sigma)$, and $b_\sigma=b(t,x,\o_\sigma)$. In view of \eqref{lxx2:1bis}, the Jacobian $J$ of the change of variable $(\sigma,y')\rightarrow (u,z')$ in $\Sigma_t$ is the $3\times 3$ matrix given by:
\begin{displaymath}
J=\left(\begin{array}{ll}
g(N+O(\ep),\nabla\sigma(t,x)) & g(N+O(\ep),\nabla\po u(t,x,\o_\sigma))\\
g(\po N+O(\ep),\nabla\sigma(t,x)) & g(\po N+O(\ep),\nabla\po u(t,x,\o_\sigma))\\
\end{array}\right).
\end{displaymath}
Together with \eqref{lxx2:10} and \eqref{lxx2:11}, this yields for the determinant $|J|$:
\bea\label{lxx2:12}
|J|&=&\frac{b^{-3}_\sigma}{\partial_\o u(t,x,\o_\sigma)-\po u(t,x,\o_\sigma)}\\
&&\times\left|\begin{array}{ll}
g(N+O(\ep),N_\sigma) & g(N+O(\ep),-b_\sigma^{-1}\po b_\sigma N_\sigma+\po N_\sigma)\\
\nn g(\po N+O(\ep),N_\sigma) & g(\po N,-b_\sigma^{-1}\po b_\sigma N_\sigma+\po N_\sigma)\\
\end{array}\right|.
\eea
Now, recall that:
$$\norm{b-1}_{L^\infty}\lesssim 1,\, \norm{\po b}_{L^\infty}\lesssim\ep\textrm{ and }\norm{\po N}_{L^\infty}\lesssim 1$$
which together with \eqref{lxx2:12} yields:
\be\label{lxx2:13}
|J|\lesssim\frac{1}{\partial_\o u(t,x,\o_\sigma)-\po u(0,x_0,\o_\sigma)}.
\ee
Now, recall that $\rho$ vanishes in the neighborhood of 0, which together with \eqref{lxx2:13} implies:
\be\label{lxx2:14}
|\rho(\po u-\po u_\sigma)|J||+|\rho'(\po u-\po u_\sigma)|J||\lesssim 1.
\ee

Next, we consider the range of $u(t,x,\o)$ in the domain of the integral in the right-hand side of \eqref{lxx2:9}. We have:
\bea\label{lxx2:14bis}
|u(t,x,\o)-u_1|&\lesssim& |u(t,x,\o)-u(t,x,\o_\sigma)|+|u(t,x,\o_\sigma)-u_\sigma|+|u_\sigma-u_1|\\
\nn&\lesssim & |u(t,x,\o)-u(t,x,\o_\sigma)|+|u(t,x_t,\o_\sigma-u(t,x_t,\o)|\\
\nn&\lesssim & \norm{\po u}_{L^\infty}|\o_\sigma-\o|\\
\nn&\lesssim & |\o-\o'|,
\eea
where we used the fact that $u(t,x,\o_\sigma)=u_\sigma$, $u_\sigma=u(t,x_t,\sigma)$, $u_1=u(t,x_t,\o)$ and $\norm{\po u}_{L^\infty}\lesssim 1$.

In view of \eqref{lxx2:14} and \eqref{lxx2:14bis}, the change of variables $(\sigma,y')\rightarrow (u,z')$ in \eqref{lxx2:9} yields:
\bea\label{lxx2:15}
&&\norm{F\sqrt{\rho(\po u-\po u_0)}}^2_{L^2(\H_{u'=u_0})}\\
\nn&\lesssim& \norm{F}^2_{\li{\infty}{2}}+\int_0^1\int_{u_1}^{u_1+|\o-\o'|}\int |DF(\Phi^{-1}_{t,\o}(u,y'))||F(\Phi^{-1}_{t,\o}(u,y'))|dy'dtdu\\
\nn&\lesssim& \norm{F}^2_{\li{\infty}{2}}+\int_0^1\int_{u_1}^{u_1+|\o-\o'|}\int |DF(\Phi^{-1}_{t,\o}(u,y'))||F(\Phi^{-1}_{t,\o}(u,y'))|dy'dtdu\\
\nn&\lesssim& \norm{F}^2_{\li{\infty}{2}}+\sup_u\left(\int_{u}^{u+|\o-\o'|}\int_0^1\int |DF(\Phi^{-1}_{t,\o}(u,y'))||F(\Phi^{-1}_{t,\o}(u,y'))|dy'dtdu\right)\\
\nn&\lesssim& \norm{F}^2_{\li{\infty}{2}}+|\o-\o'|^{\frac{1}{2}}\sup_u\left(\int_0^1\int |F(\Phi^{-1}_{t,\o}(u,y'))|^2dy'dt\right)^{\frac{1}{2}}\\
\nn&&\times\sup_u\left(\int_{u}^{u+|\o-\o'|}\int_0^1\int |DF(\Phi^{-1}_{t,\o}(u,y'))|^2dy'dtdu\right)^{\frac{1}{2}}.
\eea
Now, we have:
$$\norm{F}^2_{\lh{2}} = \int_0^1\int |F(\Phi^{-1}_{t,\o}(u,y'))|^2\sqrt{\gamma}dy'dt\gtrsim \int_0^1\int |F(\Phi^{-1}_{t,\o}(u,y'))|^2dy'dt$$
where we used \eqref{lxx2:2}. Together with \eqref{lxx2:15}, this yields:
\bea\label{lxx2:16}
&&\norm{F\sqrt{\rho(\po u-\po u_0)}}^2_{L^2(\H_{u'=u_0})}\\
\nn&\lesssim& \norm{F}^2_{\li{\infty}{2}}
+|\o-\o'|^{\frac{1}{2}}\norm{F}_{L^\infty_uL^2(\H_u)}\left(\sup_u\left(\int_u^{u+|\o-\o'|}\norm{\dd F}^2_{\lh{2}}d\tau\right)^{\frac{1}{2}}\right).
\eea
Now, \eqref{lxx2:16} holds regardless of the choice $(t,x_t)$ on $P_{t,u_0}$. Also, $\po u$ as a map from $\ptu$ to the tangent space $T_\o\S$ is a $C^1$ diffeomorphism (see \cite{param1} Proposition 2.8 for a completely analogous proof in the case $t=0$ of $P_{0,u}$). Thus, we may choose $\rho$, and two points $(t,x^1_t)$ and $(t,x^2_t)$ on $P_{t,u_0}$ sufficiently far from each other such that for all $(t,x)\in \ptu$, we have:
$$\rho(\po u(t,x,\o')-\po u(t,x^1_t,\o'))+\rho(\po u(t,x,\o')-\po u(t,x^2_t,\o'))\geq 1.$$
Together with \eqref{lxx2:16}, we obtain:
$$\norm{F}^2_{L^2(\H_{u'=u_0})}\lesssim \norm{F}^2_{\li{\infty}{2}}+|\o-\o'|^{\frac{1}{2}}\norm{F}_{L^\infty_uL^2(\H_u)}\left(\sup_u\left(\int_u^{u+|\o-\o'|}\norm{\dd F}^2_{\lh{2}}d\tau\right)^{\frac{1}{2}}\right).$$
Taking the supremum over $u_0$ implies:
$$\norm{F}^2_{L^\infty_{u'}L^2(\H_{u'})}\lesssim \norm{F}^2_{\li{\infty}{2}}
+|\o-\o'|^{\frac{1}{2}}\norm{F}_{L^\infty_uL^2(\H_u)}\left(\sup_u\left(\int_u^{u+|\o-\o'|}\norm{\dd F}^2_{\lh{2}}d\tau\right)^{\frac{1}{2}}\right),$$
which concludes the proof of Lemma \ref{lemma:xx2}.

\begin{remark}
The change of variables $(\sigma,y')\rightarrow (u,z')$ in \eqref{lxx2:9} is singular at $(t,x)=(t,x_t)$ in 
view of the determinant of the corresponding Jacobian \eqref{lxx2:13}. This is also the case in the flat case where $u(t,x,\o)=t+x\cdot\o$ and where $\ptu$ are parallel planes in $\R^3$ orthogonal to $\o$. In this case, the corresponding change of variables corresponds to a change of variable in the plane of $\R^3$ spanned by $\o$ and $\o'$ passing through $x_t$ from polar coordinates with origin at $x_t$ to cartesian coordinates. This explains why the singularity at $(t,x_t)$ in the change of variables $(\sigma,y')\rightarrow (u,z')$ in \eqref{lxx2:9} is natural. Fortunately, one has the freedom to chose the point $(t,x_t)$ around which we rotate the surfaces 
$\ptu$ which allows us to tackle this issue by considering successively two point in $\Sigma_t$ $(t,x^1_t)$ and $(t,x^2_t)$ as preformed in the end of the above proof.
\end{remark}

\subsection{Proof of Lemma \ref{lemma:xx3}}\lab{sec:xx3}

Let us apply Lemma \ref{lemma:xx2} with $F=P_lf$ where $f$ is a scalar function. Then:
\bea
\label{lxx3:1}&&\norm{P_lf}_{L^\infty_{u'}L^2(\H_{u'})}\\
\nn&\lesssim& \norm{P_lf}_{\li{\infty}{2}}
+|\o-\o'|^{\frac{1}{4}}\norm{P_lf}^{\frac{1}{2}}_{L^\infty_uL^2(\H_u)}\left(\sup_u\left(\int_u^{u+|\o-\o'|}\norm{\dd P_lf}^2_{\lh{2}}d\tau\right)^{\frac{1}{2}}\right)^{\frac{1}{2}}\\
\nn&\lesssim& 2^{-l}\norm{\nabb f}_{\li{\infty}{2}}
+2^{-\frac{l}{2}}|\o-\o'|^{\frac{1}{2}}\norm{\nabb f}^{\frac{1}{2}}_{L^\infty_uL^2(\H_u)}\norm{\dd P_lf}_{\li{\infty}{2}}^{\frac{1}{2}}
\eea
where we used the finite band property for $P_l$ in the last inequality. 

In order to prove Lemma \ref{lemma:xx3}, it is enough in view of \eqref{lxx3:1} to prove:
\be\label{lxx3:2}
\norm{\dd P_lf}_{\li{\infty}{2}}\lesssim \norm{\dd f}_{\li{\infty}{2}}.
\ee
Furthermore, \eqref{lxx3:2} for $\dd=\nabb$ follows from the properties of $P_l$, so we may focus on the case of $L$ and $\lb$, or even $L$ and $N$. Also, the case of $L$ being easier, we focus on the case of $N$. Thus, the proof of Lemma \ref{lemma:xx3} reduces to:
\be\label{lxx3:3}
\norm{N(P_lf)}_{\li{\infty}{2}}\lesssim \norm{\dd f}_{\li{\infty}{2}}.
\ee

Since $\norm{b-1}_{L^\infty}\lesssim\ep$, we have:
\bee
\norm{N(P_lf)}_{\li{\infty}{2}}&\lesssim& \norm{bN(P_lf)}_{\li{\infty}{2}}\\
&\lesssim& \norm{P_l(bN(f))}_{\li{\infty}{2}}+\norm{[bN,P_l]f}_{\li{\infty}{2}}\\
&\lesssim& \norm{\dd f}_{\li{\infty}{2}}+\norm{[bN,P_l]f}_{\li{\infty}{2}},
\eee
where we used the $\lpt{2}$ boundedness of $P_l$ in the last inequality. Together with the commutator 
estimate \eqref{zz}, we obtain the desired estimate \eqref{lxx3:3}. This concludes the proof of Lemma \ref{lemma:xx3}.

\subsection{Proof of Lemma \ref{lemma:xx4}}\lab{sec:xx4}

Let us apply Lemma \ref{lemma:xx2} with $F=P_{\leq l}f$ where $f$ is a scalar function. Then:
\bea
\label{lxx4:1}&&\norm{P_{\leq l}f}_{L^\infty_{u'}L^2(\H_{u'})}\\
\nn&\lesssim& \norm{P_{\leq l}f}_{\li{\infty}{2}}
+|\o-\o'|^{\frac{1}{4}}\norm{P_{\leq l}f}^{\frac{1}{2}}_{L^\infty_uL^2(\H_u)}\left(\sup_u\left(\int_u^{u+|\o-\o'|}\norm{\dd P_{\leq l}f}^2_{\lh{2}}d\tau\right)^{\frac{1}{2}}\right)^{\frac{1}{2}}\\
\nn&\lesssim& \norm{f}_{\li{\infty}{2}}
+|\o-\o'|^{\frac{1}{2}}\norm{f}^{\frac{1}{2}}_{L^\infty_uL^2(\H_u)}\left(\sup_u\left(\int_u^{u+|\o-\o'|}\norm{\dd P_{\leq l}f}^2_{\lh{2}}d\tau\right)^{\frac{1}{2}}\right)^{\frac{1}{2}}
\eea
where we used the finite band property for $P_{\leq l}$ in the last inequality. Now, we have:
\bea\label{lxx4:2}
\norm{\dd P_{\leq l}f}_{\lh{2}}&\les& \norm{\nabb P_{\leq l}f}_{\lh{2}}+\norm{L(P_{\leq l}f)}_{\lh{2}}+\norm{N(P_{\leq l}f)}_{\lh{2}}\\
\nn&\les& 2^l\norm{f}_{\li{\infty}{2}}+\norm{nL(P_{\leq l}f)}_{\lh{2}}+\norm{bN(P_{\leq l}f)}_{\lh{2}},
\eea
where we used the finite band property for $P_l$ in the last inequality. Also:
\bee
&&\norm{nL(P_{\leq l}f)}_{\lh{2}}+\norm{bN(P_{\leq l}f)}_{\lh{2}}\\
\nn&\les& \norm{P_{\leq l}(nL(f))}_{\lh{2}}+\norm{P_{\leq l}(bN(f))}_{\lh{2}}+\norm{[nL,P_{\leq l}]f}_{\lh{2}}+\norm{[bN,P_{\leq l}]f}_{\lh{2}}\\
\nn&\les& \sum_{q\leq l}\left(\norm{P_q(nL(f))}_{\lh{2}}+\norm{P_q(bN(f))}_{\lh{2}}+\norm{[nL,P_q]f}_{\lh{2}}+\norm{[bN,P_q]f}_{\lh{2}}\right),
\eee
which together with the commutator estimate \eqref{lxx4:4} implies
\bea\label{lxx4:3}
&&\norm{nL(P_{\leq l}f)}_{\lh{2}}+\norm{bN(P_{\leq l}f)}_{\lh{2}}\\
\nn&\les& \sum_{q\leq l}\left(\norm{P_q(nL(f))}_{\lh{2}}+\norm{P_q(bN(f))}_{\lh{2}}+2^q\norm{f}_{\li{\infty}{2}}\right)\\
\nn&\les& \sum_{q\leq l}\left(\norm{P_q(nL(f))}_{\lh{2}}+\norm{P_q(bN(f))}_{\lh{2}}\right)+2^l\norm{f}_{\li{\infty}{2}}.
\eea
Finally, in view of \eqref{lxx4:1}-\eqref{lxx4:3}, we obtain the conclusion of Lemma \ref{lemma:xx4}.

\subsection{Proof of Lemma \ref{lemma:xx5}}\lab{sec:xx5}

Lemma \ref{lemma:xx8} with the choice $p=2$ yields the following estimate for any $a>\half$:
\bea
\label{lxx5:0}\norm{f}_{L^\infty_{u'}L^2(\H_{u'})}&\leq &\sum_j\norm{P_jf}_{L^\infty_{u'}L^2(\H_{u'})}\\
\nn&\leq &\sum_j \norm{P_jf}^{\frac{1}{2}}_{L^\infty_uL^2(\H_u)}\norm{\nabb P_jf}^{\frac{1}{2}}_{L^\infty_uL^2(\H_u)}\\
\nn&\leq &\sum_j 2^j\norm{P_jf}_{L^\infty_uL^2(\H_u)}\\
\nn&\leq &\left(\sum_j 2^{-j(1-a)}\right) \norm{\Lambda^af}_{\li{\infty}{2}}\\
\nn&\leq &\norm{\Lambda^af}_{\li{\infty}{2}}
\eea
where we used the finite band property for $P_j$.

Next, we evaluate $[\po, P_{\leq l}]f$. We have:
$$\po U(\tau)f=U(\tau)\po f+W(\tau)$$
where $W(\tau)$ satisfies:
\begin{equation}\label{lxx5:1}
(\partial_{\tau}-\lap)W(\tau)=[\po,\lap]U(\tau)f,\,W(0)=0.
\end{equation}
Using the definition of $P_q$ \eqref{eq:LP} and \eqref{lxx5:1}, we obtain:
$$[\po, P_q]f=\int_0^\infty m_q(\tau)W(\tau) d\tau.$$
Together with \eqref{lxx5:0}, this yields for any $a>\half$:
\bea
\label{lxx5:2}\norm{[\po,P_{\leq l}]f}_{L^\infty_{u'}L^2(\H_{u'})}&\les& \normm{\sum_{q\leq l}[\po,P_q]f}_{L^\infty_{u'}L^2(\H_{u'})}\\
\nn &\les &\normm{\int_0^\infty \left|\left(\sum_{q\leq l}m_q(\tau)\right)\right|\norm{\Lambda^aW(\tau)}_{\lh{2}} d\tau}_{L^\infty_u}\\
\nn &\les &\left(\int_0^\infty \left|\left(\sum_{q\leq l}m_q(\tau)\right)\right|d\tau\right)\normm{\sup_{\tau}\norm{\Lambda^aW(\tau)}_{\lh{2}}}_{L^\infty_u}\\
\nn &\les &\normm{\sup_{\tau}\norm{\Lambda^aW(\tau)}_{\lh{2}}}_{L^\infty_u}.
\eea

Let 
$$\half<a<1.$$
The energy estimate \eqref{eq:l2heat1bis} implies:
\begin{displaymath}
\begin{array}{ll}
& \ds\norm{\La^aW(\tau)}^2_{\lh{2}}+\int_0^\tau\norm{\nabb\La^aW(\tau')}^2_{\lh{2}}d\tau'\\
\ds\lesssim &\ds\int_0^\tau\int_0^1\int_{\ptu} \La^{2a}W(\tau')[\po,\lap]U(\tau')f\dmt dtd\tau'\\
\ds\lesssim &\ds\int_0^\tau \norm{\La^{1+a}W(\tau')}_{\lh{2}}\norm{\Lambda^{-1+a}[\po,\lap]U(\tau')f}_{\lh{2}} d\tau'\\
\ds\lesssim &\ds\int_0^\tau \norm{\nabb\La^aW(\tau')}_{\lh{2}}\norm{\Lambda^{-1+a}[\po,\lap]U(\tau')f}_{\lh{2}} d\tau',
\end{array}
\end{displaymath}
where we used the property \eqref{La1} for $\Lambda$. Thus, we obtain:
\begin{equation}\label{lxx5:3}
\ds\norm{\La^aW(\tau)}^2_{\lh{2}}+\int_0^\tau\norm{\nabb\La^aW(\tau')}^2_{\lh{2}}d\tau'\lesssim \ds\int_0^\tau\norm{\Lambda^{-1+a}[\po,\lap]U(\tau')f}^2_{\lh{2}} d\tau'.
\end{equation}

The following formula has been established in \cite{param1}:
\begin{equation}\label{lxx5:4}
\begin{array}{ll}
\ds [\po,\lap]U(\tau)f= & \ds -2\nabb_{\po N}\nabla_N U(\tau)f+2\th(\po N,\nabb U(\tau)f) -\trt\nabb_{\po N}U(\tau)f\\
& \ds -\po\trt\nabla_N U(\tau)f,
\end{array}
\end{equation}
where $\th=\chi+\eta$ has been defined in \eqref{def:theta}. The estimates \eqref{estk}-\eqref{esthch} for $\th$, \eqref{estNomega} for $\po N$ and \eqref{estricciomega} for $\po\trt$ together with the Gagliardo-Nirenberg inequality \eqref{eq:GNirenberg}, the fact that $a<1$, the estimate \eqref{La6} for $\Lambda^{-1+a}$ and \eqref{lxx5:4} imply:
\begin{displaymath}
\begin{array}{lll}
\ds\norm{\Lambda^{-1+a}[\po,\lap]U(\tau)f}_{\lh{2}} & \lesssim & \norm{[\po,\lap]U(\tau)f}_{\tx{2}{2_-}}\\
& \lesssim & \ds\norm{\nabb\nabla_N U(\tau)f}_{\lh{2}}\norm{\po N}_{L^\infty}\\
& & \ds +\norm{\th}_{\tx{\infty}{4}}\norm{\po N}_{L^\infty}\norm{\nabb U(\tau)f}_{\tx{2}{4}}\\
& & \ds +\norm{\po\trt}_{\tx{\infty}{2}}\norm{\nabb\nabla_N U(\tau)f}_{\lh{2}}\\
& \ds\lesssim & \ds\norm{\nabb^2 U(\tau)f}_{\lh{2}}+\norm{\nabb\nabla_N U(\tau)f}_{\lh{2}}.
\end{array}
\end{displaymath}
Together with the Bochner inequality for scalars \eqref{eq:Bochconseqbis}, the estimates for $b$ \eqref{estb} and the definition of $V$ \eqref{lxx3:6} yield:
\begin{equation}\label{lxx5:6}
\norm{\Lambda^{-1+a}[\po,\lap]U(\tau)f}_{\lh{2}}\lesssim  \ds\norm{\lap U(\tau)f}_{\lh{2}}+\norm{\nabb U(\tau)\nabla_{bN} f}_{\lh{2}}+\norm{\nabb V(\tau)}_{\lh{2}}.
\end{equation}
Using the Heat flow estimate \eqref{eq:l2heatnab}, we have:
\begin{equation}\label{lxx5:7}
\norm{\nabb U(\tau)f}^2_{\lh{2}}+\int_0^\infty\norm{\lap U(\tau)f}^2_{\lh{2}}d\tau\lesssim\norm{\nabb f}^2_{\lh{2}}.
\end{equation}
Using the Heat flow estimate \eqref{eq:l2heat1}, we have:
\begin{equation}\label{lxx5:8}
\norm{U(\tau)\nabla_N f}^2_{\lh{2}}+\int_0^\infty\norm{\nabb U(\tau)\nabla_N f}^2_{\lh{2}}d\tau\lesssim\norm{\nabla_N f}^2_{\lh{2}}.
\end{equation}
The estimate \eqref{lxx3:11bis} for $\nabb V$, \eqref{lxx5:6}, \eqref{lxx5:7} and \eqref{lxx5:8} imply:
$$\ds\int_0^\tau\norm{\Lambda^{-1+a}[\po,\lap]U(\tau')f}^2_{\lh{2}}d\tau'\lesssim\norm{\dd f}^2_{\li{\infty}{2}},$$
which together with \eqref{lxx5:3} yields:
\begin{equation}\label{lxx5:9}
\ds\norm{\La^aW(\tau)}^2_{\lh{2}}+\int_0^\tau\norm{\nabb\La^aW(\tau')}^2_{\lh{2}}d\tau'\lesssim \norm{\dd f}^2_{\li{\infty}{2}}.
\end{equation}
Since \eqref{lxx5:9} holds for any $\half<a<1$, we obtain together with \eqref{lxx5:2}
$$\norm{[\po,P_{\leq l}]f}_{L^\infty_{u'}L^2(\H_{u'})}\les \norm{\dd f}_{\li{\infty}{2}}.$$
This concludes the proof of Lemma \ref{lemma:xx5}.

\subsection{Proof of Lemma \ref{lemma:xx6}}\lab{sec:xx6}

We have:
\bee
&&\norm{\dd_TQ_{\leq 1}(N)}_{\lsit{\infty}{2}}+\norm{\nabla\dd_TQ_{\leq 1}(N)}_{\lsit{\infty}{2}}\\
&\les& \norm{\dd_{nT}Q_{\leq 1}(N)}_{\lsit{\infty}{2}}+\norm{\nabla\dd_{nT}Q_{\leq 1}(N)}_{\lsit{\infty}{2}}+\norm{\nabla n}_{\lsit{\infty}{6}}\norm{\dd_TQ_{\leq 1}(N)}_{\lsit{\infty}{3}}\\
&\les& \norm{\dd_{nT}Q_{\leq 1}(N)}_{\lsit{\infty}{2}}+\norm{\nabla\dd_{nT}Q_{\leq 1}(N)}_{\lsit{\infty}{2}},
\eee
where we used in the last inequality the estimates \eqref{estn} and \eqref{lapn3} for $n$, and the Sobolev embedding \eqref{sobineqsit} on $\Sigma_t$. This yields:
\bea\label{lxx6:0}
&&\norm{\dd_TQ_{\leq 1}(N)}_{\lsit{\infty}{2}}+\norm{\nabla\dd_TQ_{\leq 1}(N)}_{\lsit{\infty}{2}}\\
\nn&\les& \norm{Q_{\leq 1}(\dd_{nT}N)}_{\lsit{\infty}{2}}+\norm{\nabla Q_{\leq 1}(\dd_{nT}N)}_{\lsit{\infty}{2}}
+ \norm{[\dd_{nT},Q_{\leq 1}](N)}_{\lsit{\infty}{2}}\\
\nn&&+\norm{\nabla[\dd_{nT},Q_{\leq 1}](N)}_{\lsit{\infty}{2}}\\
\nn&\les& \norm{\dd_{nT}N}_{\lsit{\infty}{2}}+ \norm{[\dd_{nT},Q_{\leq 1}](N)}_{\lsit{\infty}{2}}+\norm{\nabla[\dd_{nT},Q_{\leq 1}](N)}_{\lsit{\infty}{2}}.
\eea
Now, we have in view of the Ricci equations \eqref{ricciform}, we have:
$$\dd_TN=n^{-1}\nabla_N n T+(\z_A-n^{-1}\nabb_An)e_A$$
which together with the estimates \eqref{estn} for $n$ and \eqref{estzeta} for $\z$ yields:
$$\norm{\dd_{nT}N}_{\lsit{\infty}{2}}\les \norm{\nabla n}_{\lsit{\infty}{2}}+\norm{\z}_{\lsit{\infty}{2}}\les \ep.$$
Together with \eqref{lxx6:0}, this yields:
\bea\label{lxx6:0bis}
&&\norm{\dd_TQ_{\leq 1}(N)}_{\lsit{\infty}{2}}+\norm{\nabla\dd_TQ_{\leq 1}(N)}_{\lsit{\infty}{2}}\\
\nn&\les&  \norm{[\dd_{nT},Q_{\leq 1}](N)}_{\lsit{\infty}{2}}+\norm{\nabla[\dd_{nT},Q_{\leq 1}](N)}_{\lsit{\infty}{2}}+\ep.
\eea

Next, we estimate the commutator terms in the right-hand side of \eqref{lxx6:0bis}. 
Using the definition of $Q_j$, we have:
\begin{equation}\label{lxx6:1}
[\dd_{nT}, Q_j]N=\int_0^\infty m_j(\tau)Z(\tau) d\tau,
\end{equation}
where $Z(\tau)$ satisfies:
\be\lab{lxx6:2}
(\partial_{\tau}-\Delta)Z(\tau)=[\dd_{nT},\Delta]Y(\tau)N,\,Z(0)=0,
\ee
with $Y(\tau)N$ the solution of:
$$(\partial_{\tau}-\Delta)Y(\tau)N=0,\, Y(0)N=N.$$
In view of \eqref{lxx6:1}, we have:
\bea
&&\label{lxx6:3} \norm{[\dd_{nT},Q_{\leq 1}](N)}_{\lsit{\infty}{2}}+\norm{\nabla[\dd_{nT},Q_{\leq 1}](N)}_{\lsit{\infty}{2}}\\
\nn &\les& \normm{\sup_\tau\norm{Z(\tau)}_{L^2(\Sit)}}_{L^\infty_t}+\normm{\int_0^\infty\norm{\nabla Z(\tau)}^2_{L^2(\Sit)}d\tau}^{\half}_{L^\infty_t}.
\eea 

Our next goal is to evaluate the right-hand side of \eqref{lxx6:3}. Multiplying \eqref{lxx6:2} with $Z(\tau)$ and integrating on $\Sigma_t$ and with respect to $\tau$ yields:
\be\label{lxx6:4}
\norm{Z(\tau)}^2_{L^2(\Sigma_t)}+\int_0^\tau\norm{\nabla Z(\tau')}^2_{L^2(\Sigma_t)}d\tau'\les \int_0^\tau\int_{\Sigma_t}Z(\tau')[\dd_{nT},\Delta]Y(\tau')Nd\Sigma_t d\tau'.
\ee
In view of the commutator formula \eqref{commsitter}, we have:
\bee
[\dd_{nT},\Delta]Y(\tau)N &=& nk\nabla^2Y(\tau)N+(n\rr + k\nabla n+n\nabla k)\nabla Y(\tau)N\\
&&+(\nabla k\nabla n+k\Delta n)Y(\tau)N+\nabla(nRY(\tau)N)\nn.
\eee
Integrating by parts, this yields:
\bee
&&\int_0^\tau\int_{\Sigma_t}Z(\tau')[\dd_{nT},\Delta]Y(\tau')Nd\Sigma_t d\tau'\\
&= & \int_0^\tau\int_{\Sigma_t}nk\nabla Z(\tau')\nabla Y(\tau')Nd\Sigma_t d\tau'+\int_0^\tau\int_{\Sigma_t}Z(\tau')(n\rr + k\nabla n+n\nabla k)\nabla Y(\tau')Nd\Sigma_t d\tau'\\
&&+\int_0^\tau\int_{\Sigma_t}Z(\tau')(\nabla k\nabla n+k\Delta n)Y(\tau')Nd\Sigma_t d\tau'+\int_0^\tau\int_{\Sigma_t}n\rr\nabla Z(\tau')Y(\tau')Nd\Sigma_t d\tau'\\
&\les& \int_0^\tau\norm{\nabla Z(\tau')}_{L^2(\Sigma_t)}(\norm{\nabla Y(\tau')}_{L^3(\Sigma_t)}\norm{nk}_{L^6(\Sigma_t)}+\norm{Y(\tau')}_{L^\infty(\Sigma_t)}\norm{n\rr}_{L^2(\Sigma_t)})\\
&&+\norm{Z(\tau')}_{L^6(\Sigma_t)}\Big(\norm{\nabla Y(\tau')}_{L^3(\Sigma_t)}(\norm{n\rr}_{L^2(\Sigma_t)}+\norm{n\nabla k}_{L^2(\Sigma_t)}+\norm{k\nabla n}_{L^2(\Sigma_t)})\\
&&+\norm{Y(\tau')}_{L^6(\Sigma_t)}(\norm{\nabla k\nabla n}_{L^{\frac{3}{2}}(\Sigma_t)}+\norm{k\Delta n}_{L^{\frac{3}{2}}(\Sigma_t)})\Big)d\tau'\\
&\les& \ep\int_0^\tau\norm{\nabla Z(\tau')}_{L^2(\Sigma_t)}(\norm{\nabla Y(\tau')}_{L^3(\Sigma_t)}+\norm{Y(\tau')}_{L^\infty(\Sigma_t)})\\
&&+\norm{Z(\tau')}_{L^6(\Sigma_t)}(\norm{\nabla Y(\tau')}_{L^3(\Sigma_t)}+\norm{Y(\tau')}_{L^6(\Sigma_t)})d\tau'
\eee
where we used in the last inequality the estimates \eqref{estn} and \eqref{lapn3} for $n$, the estimate \eqref{eqksit2} for $k$, the curvature bound \eqref{curvflux1} for $\rr$, and the Sobolev embedding \eqref{sobineqsit} on $\Sigma_t$. Together with the Sobolev embedding \eqref{sobineqsit}, the $L^\infty$ estimate  \eqref{sobinftysitbis}, and the Bochner inequality \eqref{prop:bochsit} on $\Sigma_t$, we obtain:
$$\int_0^\tau\int_{\Sigma_t}Z(\tau')[\dd_{nT},\Delta]Y(\tau')Nd\Sigma_t d\tau'\les \ep\int_0^\tau\norm{\nabla Z(\tau')}_{L^2(\Sigma_t)}(\norm{\Delta Y(\tau')}_{L^2(\Sigma_t)}+\norm{\nabla Y(\tau')}_{L^2(\Sigma_t)})d\tau',$$
which together with \eqref{lxx6:4} yields:
\be\label{lxx6:5}
\norm{Z(\tau)}^2_{L^2(\Sigma_t)}+\int_0^\tau\norm{\nabla Z(\tau')}^2_{L^2(\Sigma_t)}d\tau'\les \int_0^\tau(\norm{\Delta Y(\tau')}^2_{L^2(\Sigma_t)}+\norm{\nabla Y(\tau')}^2_{L^2(\Sigma_t)})d\tau'.
\ee
Now, usual Heat flow estimates for $Y(\tau)N$ yield:
\be\label{lxx6:6}
\int_0^\tau(\norm{\Delta Y(\tau')}^2_{L^2(\Sigma_t)}+\norm{\nabla Y(\tau')}^2_{L^2(\Sigma_t)})d\tau'
\les \norm{\dd N}_{L^2(\Sigma_t)}\les \ep,
\ee
where we used in the last inequality the Ricci equations \eqref{ricciform} to compute $\dd N$ in function of the ricci coefficients, and the estimates \eqref{estn}-\eqref{estzeta} to estimate the ricci coefficient in $\tx{\infty}{2}$ which embeds in $\lsit{\infty}{2}$. Finally, \eqref{lxx6:5} and \eqref{lxx6:6} yield:
$$\norm{Z(\tau)}^2_{L^2(\Sigma_t)}+\int_0^\tau\norm{\nabla Z(\tau')}^2_{L^2(\Sigma_t)}d\tau'\les \ep,$$
which together with \eqref{lxx6:3} implies:
$$\norm{[\dd_{nT},Q_{\leq 1}](N)}_{\lsit{\infty}{2}}+\norm{\nabla[\dd_{nT},Q_{\leq 1}](N)}_{\lsit{\infty}{2}}\les\ep.$$
In view of \eqref{lxx6:0bis}, this yields
$$\norm{\dd_TQ_{\leq 1}(N)}_{\lsit{\infty}{2}}+\norm{\nabla\dd_TQ_{\leq 1}(N)}_{\lsit{\infty}{2}}\les\ep,$$
which concludes the proof of Lemma \ref{lemma:xx6}.

\subsection{Proof of Lemma \ref{lemma:xx7}}\lab{sec:xx7}

Let $\delta^j_l=1$ if $j=l$ and 0 otherwise. Our goal is evaluate the $L^\infty$ norm of 
$$g(Q_{\leq 1}(N_j), Q_{\leq 1}(N_l))-\delta^j_l.$$ 
The $L^\infty$ estimate \eqref{sobinftysitbis} on $\Sigma_t$ together with the Bochner inequality \eqref{prop:bochsit} on $\Sigma_t$ yields:
\bea
\label{lxx7:1}&&\norm{g(Q_{\leq 1}(N_j), Q_{\leq 1}(N_l))-\delta^j_l}_{L^\infty}\\
\nn&\les& \norm{g(Q_{\leq 1}(N_j), Q_{\leq 1}(N_l))-\delta^j_l}_{\lsit{\infty}{2}}+\norm{\nabla^2 (g(Q_{\leq 1}(N_j), Q_{\leq 1}(N_l)))}_{\lsit{\infty}{2}}\\
\nn&\les& \norm{g(Q_{\leq 1}(N_j), Q_{\leq 1}(N_l))-\delta^j_l}_{\lsit{\infty}{2}}+\norm{\Delta Q_{\leq 1}(N_j), Q_{\leq 1}(N_l)))}_{\lsit{\infty}{2}}\\
\nn&\les& \norm{g(Q_{\leq 1}(N_j), Q_{\leq 1}(N_l))-\delta^j_l}_{\lsit{\infty}{2}}+\norm{\dd N_j}_{\lsit{\infty}{2}}\norm{\dd N_l}_{\lsit{\infty}{2}}\\
\nn&&+\norm{N_j}_{L^\infty}\norm{\dd N_l}_{\lsit{\infty}{2}}+\norm{\dd N_j}_{\lsit{\infty}{2}}\norm{N_l}_{L^\infty}\\
\nn&\les& \norm{g(Q_{\leq 1}(N_j), Q_{\leq 1}(N_l))-\delta^j_l}_{\lsit{\infty}{2}}+\ep,
\eea
where we have used the finite band property for $Q_{\leq 1}$, the boundedness of $Q_{\leq 1}$ on $L^\infty(\Sigma_t)$, the Ricci equations \eqref{ricciform} to compute $\dd N$ in function of the ricci coefficients, and the estimates \eqref{estn}-\eqref{estzeta} to estimate the ricci coefficient in $\tx{\infty}{2}$ which embeds in $\lsit{\infty}{2}$. Now, we have:
\bee
&&\norm{g(Q_{\leq 1}(N_j), Q_{\leq 1}(N_l))-\delta^j_l}_{\lsit{\infty}{2}}\\
&\les & \norm{g(N_j,N_l)-\delta^j_l}_{\lsit{\infty}{2}}+\norm{g(Q_{\leq 1}(N_j), Q_{>1}(N_l))}_{\lsit{\infty}{2}}\\
&&+\norm{g(Q_{> 1}(N_j), Q_{\leq 1}(N_l))}_{\lsit{\infty}{2}}+\norm{g(Q_{> 1}(N_j), Q_{>1}(N_l))}_{\lsit{\infty}{2}}\\
&\les & \norm{g(N_j,N_l)-\delta^j_l}_{\lsit{\infty}{2}}+\norm{N_j}_{L^\infty}\norm{\dd N_l}_{\lsit{\infty}{2}}
+\norm{\dd N_j}_{\lsit{\infty}{2}}\norm{N_l}_{L^\infty}\\
&\les &  \norm{g(N_j,N_l)-\delta^j_l}_{\lsit{\infty}{2}}+\ep,
\eee
where we have used the finite band property for $Q_{> 1}$, the boundedness of $Q_{\leq 1}$ on $L^\infty(\Sigma_t)$, the Ricci equations \eqref{ricciform} to compute $\dd N$ in function of the ricci coefficients, and the estimates \eqref{estn}-\eqref{estzeta} to estimate the ricci coefficient in $\tx{\infty}{2}$ which embeds in $\lsit{\infty}{2}$. Together with \eqref{lxx7:1}, this yields:
\be\label{lxx7:2}\norm{g(Q_{\leq 1}(N_j), Q_{\leq 1}(N_l))-\delta^j_l}_{L^\infty}\les  \norm{g(N_j,N_l)-\delta^j_l}_{\lsit{\infty}{2}}+\ep.
\ee
Next, we have:
\bee
&&\norm{g(N_j,N_l)-\delta^j_l}_{\lsit{\infty}{2}}\\
&\les& \norm{g(N_j,N_l)-\delta^j_l}_{L^2(\Sigma_0)}+\norm{\dd_T g(N_j,N_l)}_{L^2(\mathcal{M})}\\
&&\norm{g(N_j,N_l)-\delta^j_l}_{L^2(\Sigma_0)}+\norm{N_j}_{L^\infty}\norm{\dd N_l}_{\lsit{\infty}{2}}
+\norm{\dd N_j}_{\lsit{\infty}{2}}\norm{N_l}_{L^\infty}\\
&\les&\ep,
\eee
where we have used in the last inequality the estimate on $g(N_j,N_l)$ on $\Sigma_0$ derived in \cite{param1}, the Ricci equations \eqref{ricciform} to compute $\dd N$ in function of the ricci coefficients, and the estimates \eqref{estn}-\eqref{estzeta} to estimate the ricci coefficient in $\tx{\infty}{2}$ which embeds in $\lsit{\infty}{2}$. Together with \eqref{lxx7:2}, we obtain:
$$\norm{g(Q_{\leq 1}(N_j), Q_{\leq 1}(N_l))-\delta^j_l}_{L^\infty}\les \ep.$$
This proves that $Q_{\leq 1}(N_1), Q_{\leq 1}(N_2)$ and $Q_{\leq 1}(N_3)$ form a basis of the tangent space of $\Sit$. This concludes the proof of Lemma \ref{lemma:xx7}.

\subsection{Proof of Lemma \ref{lemma:xx8}}\lab{sec:xx8}

Let $(\varphi, \psi)$ the spherical coordinates on $\S$ such that $\psi$ measures the angle in the plane spanned by $\o, \o'$, and $\varphi$ measures the angle with the axis $\o\wedge \o'$. Then, we have in particular: 
\be\label{lxx8:1}
\partial_\varphi\o\cdot(\o-\o')=0.
\ee
Now, we claim that we have the analog estimate:
\be\label{lxx8:2}
|g(\partial_\varphi N,N-N')|\lesssim |\o-\o'|(\ep+|\o-\o'|).
\ee
Indeed, we have:
$$g(\partial_\varphi N,N-N')=\int_{[\o,\o']}g(\partial_\varphi N,\partial_\psi N'')d\o''(\o'-\o),$$
where $\partial_\psi N''=\partial_\psi N(.,\o'')$. This yields:
$$|g(\partial_\varphi N,N-N')|\lesssim |\o-\o'|\sup_{\o''\in [\o,\o']}|g(\partial_\varphi N,\partial_\psi N'')|,$$
and \eqref{lxx8:2} now follows from:
\be\label{lxx8:3}
\sup_{\o''\in [\o,\o']}|g(\partial_\varphi N,\partial_\psi N'')|\les \ep+|\o-\o'|.
\ee
Now, let $\o_1\in\S$ defined as:
$$\o_1=\frac{\o-\o'}{|\o-\o'|}.$$
Arguing as in the proof of \eqref{ad1}, we have:
\be\label{lxx8:4}
\norm{g(\partial_\psi N'',N_1)-1}_{L^\infty}\les \ep+|\o-\o'|.
\ee
The choice of $\o_1$ and the fact that $\varphi$ measures the angle with the axis $\o\wedge \o'$ implies 
$$\partial_\varphi\o\cdot\o_1=0.$$
Arguing again as in the proof of \eqref{ad1}, we obtain:
$$\norm{g(\partial_\varphi N,N_1)}_{L^\infty}\les \ep+|\o-\o'|,$$
which together with  \eqref{lxx8:4} yields \eqref{lxx8:3}. This concludes the proof of \eqref{lxx8:2}.

Now, we consider the coordinate system on $\H_{u'}$ consisting of the functions $t, u$ and $\partial_\varphi u$, where $u=u(t,x,\o)$ and $\partial_\varphi u=\partial_\varphi u(t,x,\o)$. The fact that it is indeed a coordinate system on $\H_{u'}$ follows from the fact that $(u,\partial_\varphi u)$ is a coordinate system on $P_{t,u'}$. The later claim follows from the invertibility of the corresponding matrix of the metric coefficients in the coordinate system $(u, \partial_\varphi u)$ which we check now. Using the fact that 
$g(N,\partial_\varphi N)=0$, we easily compute the following identities for the coordinate system $(u, \partial_\varphi u)$ on $P_{t,u'}$:
$$\frac{\partial}{\partial u}=\frac{b}{1-g(N,N')^2}(N-g(N,N')N')+\frac{1}{g(\partial_\varphi N,\partial_\varphi N)}\left(\partial_\varphi b+\frac{bg(N,N')g(\partial_\varphi N,N')}{1-g(N,N')^2}\right)\partial_\varphi N.$$
and: 
$$\frac{\partial}{\partial \partial_\varphi u}=\frac{b}{g(\partial_\varphi N,\partial_\varphi N)}\partial_\varphi N.$$
Let $\gamma'$ denote the induced metric on $P_{t,u'}$. The previous identities yield the 
corresponding coefficients for $\gamma'$ in the coordinate system $(u, \partial_\varphi u)$:
\be\label{lxx8:5}
\gamma'\left(\frac{\partial}{\partial u},\frac{\partial}{\partial u}\right)= \frac{b^2}{1-g(N,N')^2}\left(1-\frac{2g(N,N')g(N',\partial_\varphi N)\partial_\varphi b}{g(\partial_\varphi N, \partial_\varphi N)b}-\frac{2g(N,N')^2g(\partial_\varphi N, N')^2}{1-g(N,N')^2}\right),
\ee
\be\label{lxx8:6}
\gamma'\left(\frac{\partial}{\partial \partial_\varphi u},\frac{\partial}{\partial u}\right)= \frac{b\partial_\varphi b}{g(\partial_\varphi N, \partial_\varphi N)},
\ee
and
\be\label{lxx8:7}
\gamma'\left(\frac{\partial}{\partial \partial_\varphi u},\frac{\partial}{\partial \partial_\varphi u}\right)= b^2.
\ee
Note that we have:
$$1-g(N,N')^2=(1-g(N,N'))(1+g(N,N'))=\frac{g(N-N',N-N')}{2}\frac{g(N+N',N+N')}{2}$$
which together with \eqref{ad1}  and the fact that $\norm{\po N}_{L^\infty}\les 1$ yields:
\be\label{lxx8:8}
1-g(N,N')^2\sim |\o-\o'|^2.
\ee
Now, since $\norm{g(\partial_\varphi N, \partial_\varphi N)-1}_{L^\infty}\les \ep$, $\norm{b-1}_{L^\infty}\les \ep$, $\norm{\po b}_{L^\infty}\les \ep$, and in view of \eqref{lxx8:2} and \eqref{lxx8:5}-\eqref{lxx8:8}, we have:
$$\gamma'\left(\frac{\partial}{\partial u},\frac{\partial}{\partial u}\right)\sim \frac{1}{|\o-\o'|^2},\, \gamma'\left(\frac{\partial}{\partial \partial_\varphi u},\frac{\partial}{\partial u}\right)=O(\ep),\, \gamma'\left(\frac{\partial}{\partial \partial_\varphi u},\frac{\partial}{\partial \partial_\varphi u}\right)=1+O(\ep).$$
This yields the following estimate for the determinant $|\gamma'|$:
\be\label{lxx8:9}
|\gamma'|\sim \frac{1}{|\o-\o'|^2}.
\ee

Since $|\gamma'|\neq 0$ in view of \eqref{lxx8:9},  $(u,\partial_\varphi u)$ is a coordinate system on $P_{t,u'}$. Note also that this coordinate system is global. Indeed, $t, u$ and $\partial_\varphi u$ are defined everywhere on $\mathcal{M}$, and thus everywhere on $\H_{u'}$, so we only need to show that $(t, u, \partial_\varphi u)$ is one-to-one on $\H_{u'}$. $t$ and $u$ being fixed, this is equivalent to check the injectivity of $\partial_\varphi u$ on $\ptu\cap P_{t,u'}$. Next, we check the injectivity of $\partial_\varphi u$ on $\ptu\cap P_{t,u'}$. Let $\ell$ a curve in $\ptu\cap P_{t,u'}$ parametrized by arc length. We have:
\bea\label{lxx8:10}
\frac{d}{d\sigma}[\partial_\varphi u(\ell(\sigma),\o)]&=&g(\nabla \partial_\varphi u, \dot{\ell})\\
\nn &=& g(-b^{-2}\partial_\varphi bN+b^{-1}\partial_\varphi N, \dot{\ell})\\
\nn &=& b^{-1}g(\partial_\varphi N, \dot{\ell}),
\eea
where we used in the last equality the fact that $\ell$ is a curve in $\ptu\cap P_{t,u'}$ which yields:
$$g(\ell,N)=g(\ell,N')=0.$$
Note that this implies the fact that: 
$$N,\,\, \frac{N'-g(N,N')N}{\sqrt{1-g(N,N')^2}},\,\, \dot{\ell}$$
forms an orthonormal basis of $\Sigma_t$. Now, we have:
$$g(\partial_\varphi N,N)=0$$
and:
\bee
\left|g\left(\partial_\varphi N, \frac{N'-g(N,N')N}{\sqrt{1-g(N,N')^2}}\right)\right| & = & \left|\frac{g(N,N')g(\partial_\varphi N,N'-N)}{\sqrt{1-g(N,N')^2}}\right|\\
& \les & \ep+|\o-\o'|,
\eee
where we used \eqref{lxx8:2} and \eqref{lxx8:8} in the last inequality. Since $g(\partial_\varphi N,\partial_\varphi N)=1+O(\ep)$, and since 
$$N,\,\, \frac{N'-g(N,N')N}{\sqrt{1-g(N,N')^2}},\,\, \dot{\ell}$$
forms an orthonormal basis of $\Sigma_t$, we deduce:
$$g(\partial_\varphi N, \dot{\ell})\neq 0$$
which together with \eqref{lxx8:10} and the fact that $b\sim 1$ yields:
$$\frac{d}{d\sigma}[\partial_\varphi u(\ell(\sigma),\o)]\neq 0\textrm{ for all }\sigma.$$
In particular, $\partial_\varphi u$ is one-to-one along $\ell$ which implies that $\partial_\varphi u$ is one-to-one on any connex component of $\ptu\cap P_{t,u'}$. 

Thus, to conclude that $\partial_\varphi u$ is one-to-one on $\ptu\cap P_{t,u'}$, is suffices to show that $\ptu\cap P_{t,u'}$ is connex. Assume for some $0\leq t_0\leq 1$ that $P_{t_0,u}\cap P_{t_0,u'}$ is connex. Note that on $\H_{u'}$, we have:
$$\left|\frac{\partial u}{\partial t}\right|=|g(L,L')|=1-g(N,N')=\half g(N-N',N-N')\gtrsim |\o-\o'|^2$$
where we used \eqref{ad1} in the last inequality. Thus, we have:
$$\frac{\partial u}{\partial t}\neq 0$$
and the implicit function theorem implies that in a neighborhood of $t=t_0$ of size depending only on $|\o-\o'|^2$ (but not on $t_0$), $P_{t,u}\cap P_{t,u'}$ is the image of $P_{t_0,u}\cap P_{t_0,u'}$ by a smooth map. Thus $P_{t,u}\cap P_{t,u'}$ is connex for $t$ in a neighborhood of $t=t_0$ of size depending only on $|\o-\o'|^2$. Therefore, if $P_{0,u}\cap P_{0,u'}$ is connex, applying the implicit function theorem successively $O(|\o-\o'|^{-2})$, we obtain that $\ptu\cap P_{t,u'}$ is connex for all $0\leq t\leq 1$. Now,  $P_{0,u}\cap P_{0,u'}$ is connex as an easy consequence of the construction in \cite{param1} on the initial slice $t=0$. Therefore, $\ptu\cap P_{t,u'}$ is connex for all $0\leq t\leq 1$.

Finally, we have obtained the fact that $(t,u, \partial_\varphi u)$ is a global coordinate system on $\H_{u'}$. Now, we use it to estimate the norm of a scalar $f$ in $L^p(\H_{u'})$ for $2\leq p<+\infty$. Let $u_0$ a real number. We have:
\bea
\label{lxx8:11}\norm{f}^p_{L^p(\H_{u'=u_0})}&=&\int_0^1\int  |f|^p\sqrt{|\gamma'|}d\partial_\varphi u du dt\\
\nn&\lesssim & \frac{1}{|\o-\o'|}\left(\int_0^1\int |f|^pd\partial_\varphi u du dt\right),
\eea
where we used \eqref{lxx8:9} in the last inequality. Note that we have on $u'=u_0$ the estimate:
$$|u-u_0|= |u-u'|\leq \norm{\po u}_{L^\infty}|\o-\o'|\les |\o-\o'|$$
which together with \eqref{lxx8:11} yields:
\be\label{lxx8:12}
\norm{f}^p_{L^p(\H_{u'=u_0})}\lesssim  \frac{1}{|\o-\o'|}\left(\int_0^1\int _{u_0-|\o-\o'|}^{u_0+|\o-\o'|}\int |f|^pd\partial_\varphi u du dt\right).
\ee
Next, recall the global coordinate system $\Phi_{t,\o}$ on $\Sigma_t$ introduced in \eqref{gl1}. Since $\po u=(\partial_\varphi u, \partial_\psi u)$, we have in view of \eqref{lxx8:12}:
\be\label{lxx8:13}
\norm{f}^p_{L^p(\H_{u'=u_0})}\lesssim  \frac{1}{|\o-\o'|}\left(\int_0^1\int _{u_0-|\o-\o'|}^{u_0+|\o-\o'|}\int_{y_1} \sup_{y_2}|f(\Phi^{-1}_{t,\o}(u,y_1,y_2))|^pdy_1 du dt\right).
\ee
From a standard estimate in $\R^2$, we have:
\bea\lab{tolbiac}
&&\int_{y_1} \sup_{y_2}|f(\Phi^{-1}_{t,\o}(u,y_1,y_2))|^pdy_1\\
\nn&\les& \left(\int_{y}|f(\Phi^{-1}_{t,\o}(u,y_1,y_2))|^{2(p-1)}dy_1dy_2\right)^{\half}\left(\int_{y}|\partial_{y_2}f(\Phi^{-1}_{t,\o}(u,y_1,y_2))|^2dy_1dy_2\right)^{\half}\\
\nn&\les& \left(\int_{\ptu}|f|^{2(p-1)}\dmt\right)^{\half}\left(\int_{\ptu}|\nabb f|^2\dmt\right)^{\half}
\eea
where we used the estimate \eqref{lxx2:2} for the coefficients of the induced metric $\gamma$ on $\ptu$ in the global coordinate system $\Phi^{-1}_{t,\o}(u,y_1,y_2)$. Together with \eqref{lxx8:13}, this yields:
\bee
\norm{f}^p_{L^p(\H_{u'=u_0})}&\lesssim&  \frac{1}{|\o-\o'|}\left(\int_0^1\int _{u_0-|\o-\o'|}^{u_0+|\o-\o'|}
\left(\int_{\ptu}|f|^{2(p-1)}\dmt\right)^{\half}\left(\int_{\ptu}|\nabb f|^2\dmt\right)^{\half}du dt\right)\\
&\lesssim&  \frac{1}{|\o-\o'|}\left(\int _{u_0-|\o-\o'|}^{u_0+|\o-\o'|}\norm{f}^{p-1}_{\lh{2(p-1)}}\norm{\nabb f}_{\lh{2}}du\right)\\
&\lesssim&  \norm{f}_{\li{\infty}{2(p-1)}}^{p-1}\norm{\nabb f}_{\li{\infty}{2}}.
\eee
Since this holds for any real number $u_0$, we take the supremum which yields:
$$\norm{f}^p_{L^{\infty}_{u'}L^p(\H_{u'})}\lesssim   \norm{f}_{\li{\infty}{p-1}}^{p-1}\norm{\nabb f}_{\li{\infty}{2}}.$$
Finally, let $F$ a tensor. Applying the previous inequality to $f=|F|$, we obtain
$$\norm{F}^p_{L^{\infty}_{u'}L^p(\H_{u'})}\lesssim   \norm{F}_{\li{\infty}{p-1}}^{p-1}\norm{\nabb F}_{\li{\infty}{2}}.$$
This concludes the proof of Lemma \ref{lemma:xx8}.

\subsection{Proof of Lemma \ref{lemma:nicosi}}\lab{sec:gowinda10}

Let $2\leq r<+\infty$. Then, we have
$$\norm{FH}_{\tx{r}{\infty}}\lesssim \norm{F}_{\tx{2r}{\infty}}\norm{H}_{\tx{2r}{\infty}}\lesssim\ep.$$
Thus, it suffices to bound $F\nabb H$ and $F\nabb H$ in $L^r_tB^0_{2,1}(\ptu)$. These terms are treated exactly in the same way, so we focus on $F\nabb H$. We have
\be\lab{nicosi}
\norm{P_j(F\nabb H)}_{\tx{r}{2}}\lesssim\sum_l\norm{P_j(FP_l(\nabb H)}_{\tx{r}{2}}.
\ee

Next, we estimate the right-hand side of \eqref{nicosi}. Using the finite band property for $P_j$, we have
\bea\lab{nicosi1}
&&\norm{P_j(FP_l(\nabb H)}_{\tx{r}{2}}\\
\nn&\les& 2^{-j}\norm{\nabb(FP_l(\nabb H))}_{\tx{r}{2}}\\
\nn&\les& 2^{-j}\norm{\nabb(F)P_l(\nabb H)}_{\tx{r}{2}}+2^{-j}\norm{F\nabb P_l(\nabb H))}_{\tx{r}{2}}\\
\nn&\les& 2^{-j}\norm{\nabb(F)}_{\tx{2r}{2}}\norm{P_l(\nabb H)}_{\tx{2r}{\infty}}+2^{-j}\norm{F}_{\tx{2r}{\infty}}\norm{\nabb P_l(\nabb H))}_{\tx{2r}{2}}\\
\nn&\les& 2^{-j+l}\norm{P_l(\nabb H)}_{\tx{3r}{2}},
\eea
where we used in the last inequality the finite band property and the sharp Bernstein inequality for tensors \eqref{eq:strongberntensor} for $P_l$, and the assumptions on $F$. Also, we have
\bee
\norm{P_j(FP_l(\nabb H)}_{\tx{r}{2}}&\les& 2^{-2l}\norm{P_j(F\lap P_l(\nabb H)}_{\tx{r}{2}}\\
&\les& 2^{-2l}\norm{P_j\divb(F\nabb P_l(\nabb H)}_{\tx{r}{2}}+2^{-2l}\norm{P_j(\nabb F\nabb P_l(\nabb H)}_{\tx{r}{2}}\\
&\les& 2^{j-2l}\norm{F\nabb P_l(\nabb H)}_{\tx{r}{2}}+2^{j-2l}\norm{\nabb F\nabb P_l(\nabb H)}_{\tx{2r}{1}},\\
\eee
where we used in the last inequality the finite band property and the dual of the sharp Bernstein inequality for tensors \eqref{eq:strongberntensor} for $P_j$. We obtain
\bea\lab{nicosi2}
&&\norm{P_j(FP_l(\nabb H)}_{\tx{r}{2}}\\
\nn&\les& 2^{j-2l}\norm{F}_{\tx{2r}{\infty}}\norm{\nabb P_l(\nabb H)}_{\tx{2r}{2}}+2^{j-2l}\norm{\nabb F}_{\tx{4r}{2}}\norm{\nabb P_l(\nabb H)}_{\tx{4r}{2}}\\
\nn&\les& 2^{j-l}\ep\norm{\nabb P_l(\nabb H)}_{\tx{4r}{2}},
\eea
where we used in the last inequality  the finite band property for $P_l$, and the assumptions on $F$. Finally, using  \eqref{nicosi1} for $l\leq j$ and \eqref{nicosi2} for $l>j$, we obtain
$$\norm{P_j(FP_l(\nabb H)}_{\tx{r}{2}}\les 2^{-|j-l|}\ep\norm{\nabb P_l(\nabb H)}_{\tx{4r}{2}},$$
which together with \eqref{nicosi} and the assumption on $H$ implies
$$\norm{F\nabb H}_{L^r_tB^0_{2,1}(\ptu)}\lesssim\ep.$$
This concludes the proof of the lemma.

\subsection{Proof of Lemma \ref{bis:lemma:xx5}}\lab{bis:sec:xx5}

Note that it suffices to prove for any $l\geq 0$ the estimate
\be\lab{gog}
\norm{[\po, P_l]f}_{L^\infty_{u'}L^2(\H_{u'})}\les 2^{\frac{l}{2}}\ep,
\ee
provided $f$ satisfies the assumptions of Lemma \ref{bis:lemma:xx5}. Let $W(\tau)$ solution of 
\begin{equation}\label{gog1}
(\partial_{\tau}-\lap)W(\tau)=[\po,\lap]U(\tau)f,\,W(0)=0.
\end{equation}
Then, we have
\begin{equation}\label{gog2}
[\po, P_l]f=\int_0^\infty m_l(\tau)W(\tau) d\tau.
\end{equation}
Assume that we have the following decomposition for $W$:
\be\lab{gog3}
W=W_1+W_2,
\ee
where $W_1$ and $W_2$ satisfy respectively 
\be\lab{gog3bis}
\sup_\tau\norm{W_1(\tau)}_{\lh{2}}+\norm{\nabb W_1(\c)}_{L^2_\tau\lh{2}}\les\ep,
\ee
and 
\be\lab{gog3ter}
\norm{W_2(\tau)}_{\lh{2}}+\sqrt{\tau}\norm{\nabb W_2(\c)}_{L^2_\tau\lh{2}}\les\ep.
\ee
Then, \eqref{gog3}, \eqref{gog3bis}, \eqref{gog3ter} together with \eqref{gog2} yields:
\bea\lab{gog3:4}
\norm{[\po, P_l]f}_{\li{\infty}{2}}&\les& \sup_u\int_0^\infty m_l(\tau)\norm{W(\tau)}_{\lh{2}} d\tau\\
\nn&\les& \sup_u\int_0^\infty m_l(\tau)\norm{W_1(\tau)}_{\lh{2}} d\tau+\sup_u\int_0^\infty m_l(\tau)\norm{W_2(\tau)}_{\lh{2}} d\tau\\
\nn&\les& \ep\int_0^\infty m_l(\tau)d\tau\\
\nn&\les& \ep,
\eea
and
\bea\lab{gog3:5}
&&\norm{\nabb[\po, P_l]f}_{\li{\infty}{2}}\\
\nn&\les& \sup_u\int_0^\infty m_l(\tau)\norm{\nabb W(\tau)}_{\lh{2}} d\tau\\
\nn&\les& \sup_u\int_0^\infty m_l(\tau)\norm{\nabb W_1(\tau)}_{\lh{2}} d\tau+\sup_u\int_0^\infty m_l(\tau)\norm{\nabb W_2(\tau)}_{\lh{2}} d\tau\\
\nn&\les& \ep\left(\int_0^\infty m_l(\tau)^2d\tau\right)^{\frac{1}{2}}+\ep\int_0^\infty m_l(\tau)\tau^{\frac{1}{2}}d\tau\\
\nn&\les& 2^l\ep.
\eea
\eqref{gog3:4} and \eqref{gog3:5} together with Lemma \ref{lemma:xx8} yield
$$\norm{[\po, P_l]f}_{L^\infty_{u'}L^2(\H_{u'})}\les \norm{[\po, P_l]f}_{\li{\infty}{2}}^{\frac{1}{2}}\norm{\nabb[\po, P_l]f}_{\li{\infty}{2}}^{\frac{1}{2}}\les 2^{\frac{l}{2}}\ep,$$
which is \eqref{gog}. Thus it remains to prove \eqref{gog3} \eqref{gog3bis} \eqref{gog3ter}. 

We first precise our choice for $W_1$ and $W_2$. Let $h$ a scalar on function on $\Sigma_t$. Then, we have the following commutator formula
\begin{equation}\label{gog5}
[\po,\lap]h= -2\nabb_{\po N}\nabla_Nh+2\th(\po N,\nabb h) -\trt\nabb_{\po N}h -\po\trt\nabla_Nh.
\end{equation}
\eqref{gog5} is in the spirit of section \ref{sec:commutomega}. We refer to   section 5.1.1 of \cite{param1} for a proof. 
We have
\be\lab{gog5bis}
\nabla_{bN}U(\tau)f=U(\tau)\nabla_{bN}f+V(\tau),
\ee
where $V(\tau)$ is the solution of
\begin{equation}\label{gog5ter}
(\partial_{\tau}-\lap)V(\tau)=[\nabla_{bN},\lap]U(\tau)f,\,V(0)=0.
\end{equation}
In view of \eqref{gog5} and \eqref{gog5bis}, we deduce
\bea\lab{gog5:4}
\nn[\po,\lap]U(\tau)f&=& -2\nabb_{\po N}(b^{-1}U(\tau)\nabla_{bN}f)-2\nabb_{\po N}(b^{-1}V(\tau))+2\th(\po N,\nabb U(\tau)f)\\
&&-\trt\nabb_{\po N}U(\tau)f -b^{-1}\po\trt U(\tau)\nabla_{bN}f -b^{-1}\po\trt V(\tau).
\eea
We choose $W_1$ and $W_2$ solution of the following equations
\bea\label{gog5:5}
\nn(\partial_{\tau}-\lap)W_1(\tau)&=&-2\divb(\po N b^{-1}U(\tau)\nabla_{bN}f)-2\nabb_{\po N}(b^{-1}V(\tau))\\
\nn&&+2\th(\po N,\nabb U(\tau)f) -\trt\nabb_{\po N}U(\tau)f-b^{-1}\po\trt V(\tau),\\
W_1(0)&=&0,
\eea
and 
\bea\label{gog5:6}
(\partial_{\tau}-\lap)W_2(\tau)&=& b^{-1}(2\divb(\po N)-\po\trt) U(\tau)\nabla_{bN}f, \\
\nn W_2(0)&=&0.
\eea
In view of \eqref{gog1}, \eqref{gog5:4}, \eqref{gog5:5} and \eqref{gog5:6}, we have \eqref{gog3}. Thus, it remains to prove the estimate \eqref{gog3bis} for $W_1$ and the estimate \eqref{gog3ter} for $W_2$. We start with the estimate \eqref{gog3bis}. The energy estimate \eqref{heatF2} implies:
\begin{equation}\label{gog4}
\norm{W_1(\tau)}^2_{\lh{2}}+\int_0^\tau\norm{\nabb W_1(\tau')}^2_{\lh{2}}d\tau'=\int_0^\tau\int_{\H_u} W_1(\tau')(\partial_{\tau}-\lap)W_1(\tau')\dmt dt d\tau'.
\end{equation}
In view of \eqref{gog5:5}, we obtain after integration by parts
\bee
&&\int_0^\tau\int_{\H_u} W_1(\tau')(\partial_{\tau}-\lap)W_1(\tau')f\dmt dt d\tau'\\
&\les& \norm{b^{-1}\po N}_{L^\infty}\int_0^\tau\norm{\nabb W_1(\tau')}_{\lh{2}}\norm{U(\tau')\nabla_{bN}f}_{\lh{2}}d\tau'\\
&&+\norm{b^{-1}}_{L^\infty}(\norm{\po N}_{L^\infty}+\norm{\divb(\po N)}_{\tx{\infty}{2}}+\norm{\po\trt}_{\tx{\infty}{2}})\\
&&\times\int_0^\tau\norm{\nabb W_1(\tau')}_{\lh{2}}\norm{\nabb V(\tau')}_{\lh{2}}d\tau'\\
&&+ \norm{\po N}_{L^\infty}\norm{\th}_{\tx{\infty}{4}}\int_0^\tau\norm{\nabb W_1(\tau')}_{\lh{2}}\norm{\nabb U(\tau')f}_{\lh{2}}d\tau'\\
&\les& \int_0^\tau\norm{\nabb W_1(\tau')}_{\lh{2}}(\norm{U(\tau')\nabla_{bN}f}_{\lh{2}}+\norm{\nabb V(\tau')}_{\lh{2}}+\norm{\nabb U(\tau')f}_{\lh{2}})d\tau', 
\eee
where we used in the last inequality the fact that $\th=\chi+\eta$ (see \eqref{def:theta}), the estimates \eqref{esttrc} \eqref{esthch} for $\chi$, the estimate \eqref{estk} for $k$, the estimate \eqref{estb} for $b$, the estimates \eqref{estNomega} \eqref{estricciomega} for $\po N$, and the estimate \eqref{estricciomega} for $\po\chi$. Together with \eqref{gog4}, we deduce
\bea\label{gog6}
&&\norm{W_1(\tau)}^2_{\lh{2}}+\int_0^\tau\norm{\nabb W_1(\tau')}^2_{\lh{2}}d\tau'\\
\nn&\lesssim &  \int_0^\tau(\norm{U(\tau')\nabla_{bN}f}^2_{\lh{2}}+\norm{\nabb V(\tau')}^2_{\lh{2}}+\norm{\nabb U(\tau')f}^2_{\lh{2}})d\tau'.
\eea
Next, we evaluate the right-hand side of \eqref{gog6}. The heat flow estimate \eqref{eq:l2heat1} yields
\be\lab{gog7}
\int_0^\tau\norm{\nabb U(\tau')}^2_{\lh{2}}d\tau'\les\norm{f}^2_{\li{\infty}{2}}\les\ep^2,
\ee
where we used in the last inequality the assumptions on $f$. A heat flow estimate yields
\be\lab{gog11}
\int_0^\tau\norm{U(\tau')\nabla_{bN}f}^2_{\lh{2}}d\tau'\les \norm{\La^{-1}(\nabla_{bN}f)}_{\li{\infty}{2}}^2\les\ep^2,
\ee
where we used in the last inequality the assumptions on $f$. Also, as a consequence of the estimate \eqref{lxx3:11bis} which will be proved later, we have 
\be\lab{gog12}
 \int_0^\tau\norm{\nabb V(\tau')}^2_{\lh{2}}d\tau'\les\no(f)^2\les\ep^2,
\ee
where we used in the last inequality the assumptions on $f$. Finally, \eqref{gog6}, \eqref{gog7}, \eqref{gog11} and \eqref{gog12} imply the desired estimate \eqref{gog3bis}. 

It remains to prove the estimate \eqref{gog3ter}. Using \eqref{gog5:6} together with Duhamel's formula, we have
\be\lab{goggog}
W_2(\tau)=\int_0^\tau U(\tau-\s)\left[b^{-1}(2\divb(\po N)-\po\trt) U(\s)\nabla_{bN}f\right]d\s.
\ee
Using the Gagliardo-Nirenberg inequality \eqref{eq:GNirenberg} and the heat flow estimate \eqref{eq:l2heat2}, we have for any scalar $h$ and any $2\leq p<+\infty$
\be\lab{gog19}
\norm{U(\tau)h}_{\lpt{p}}\les \frac{1}{\tau^{\frac{1}{2}-\frac{1}{p}}}\norm{h}_{\lpt{2}}.
\ee
In view of the formula \eqref{goggog} of $W_2$, and using the dual of \eqref{gog19}, we have:
\bea\lab{gog20}
\nn\norm{W_2(\tau)}_{\lh{2}}&\les&  \int_0^\tau \norm{U(\tau-\s)\left[b^{-1}(2\divb(\po N)-\po\trt) U(\s)\nabla_{bN}f\right]}_{\lh{2}}d\s\\
\nn&\les&  \int_0^\tau \frac{1}{(\tau-\s)^{\frac{1}{4}}}\norm{b^{-1}(2\divb(\po N)-\po\trt) U(\s)\nabla_{bN}f}_{\tx{2}{\frac{4}{3}}}d\s\\
\nn&\les&  \int_0^\tau \frac{1}{(\tau-\s)^{\frac{1}{4}}}\norm{b^{-1}(2\divb(\po N)-\po\trt)}_{\tx{\infty}{2}}\norm{U(\s)\nabla_{bN}f}_{\tx{2}{4}}d\s\\
&\les& \ep\int_0^\tau \frac{1}{(\tau-\s)^{\frac{1}{4}}}\frac{1}{\s^{\frac{1}{4}}}\normm{U\left(\frac{\s}{2}\right)\nabla_{bN}f}_{\lh{2}}d\s,
\eea
where we used in the last inequality the fact that 
$$U(\s)=U\left(\frac{\s}{2}\right)U\left(\frac{\s}{2}\right),$$
\eqref{gog19} with $p=4$, the fact that $\th=\chi+\eta$ (see \eqref{def:theta}), the estimates \eqref{esttrc} \eqref{esthch} for $\chi$, the estimate \eqref{estk} for $k$, the estimate \eqref{estb} for $b$, the estimates \eqref{estNomega} \eqref{estricciomega} for $\po N$, and the estimate \eqref{estricciomega} for $\po\chi$. The heat flow estimate \eqref{eq:l2lambda1} and \eqref{gog20} yield
\bea\lab{gog22}
\norm{W_2(\tau)}_{\lh{2}}&\les& \ep\left(\int_0^\tau \frac{1}{(\tau-\s)^{\frac{1}{4}}}\frac{1}{\s^{\frac{3}{4}}}d\s\right)\norm{\La^{-1}\nabla_{bN}f}_{\lh{2}}\\
\nn&\les& \ep,
\eea
where we used in the last inequality the assumptions for $f$. Next, we estimate $\nabb W_2$. Using the fact that 
$$U(\tau-\s)=U\left(\frac{\tau}{2}-\frac{\s}{2}\right)U\left(\frac{\tau}{2}-\frac{\s}{2}\right),$$
we obtain
\bee
\norm{\nabb W_2(\tau)}_{\lh{2}}&\les&  \int_0^\tau \norm{\nabb U(\tau-\s)\left[b^{-1}(2\divb(\po N)-\po\trt) U(\s)\nabla_{bN}f\right]}_{\lh{2}}d\s\\
&\les&  \int_0^\tau  \frac{1}{\sqrt{\tau-\s}}\normm{U\left(\frac{\tau}{2}-\frac{\s}{2}\right)\left[b^{-1}(2\divb(\po N)-\po\trt) U(\s)\nabla_{bN}f\right]}_{\lh{2}}d\s,
\eee
where we used in the last inequality the estimate \eqref{eq:l2heat2} for the heat flow. Then, arguing as for the proof of \eqref{gog22}, and noticing that we have:
$$\int_0^\tau \frac{1}{(\tau-\s)^{\frac{3}{4}}}\frac{1}{\s^{\frac{3}{4}}}d\s\les \tau^{-\frac{1}{2}},$$
we obtain:
\be\lab{gog23}
\norm{\nabb W_2(\tau)}_{\lh{2}}\les \tau^{-\frac{1}{2}}\ep.
\ee
Finally, \eqref{gog22} and \eqref{gog23} imply the desired estimate \eqref{gog3ter}. This concludes the proof of Lemma \ref{bis:lemma:xx5}. 

\subsection{Proof of Lemma \ref{lemma:assemblee}}\lab{sec:assemblee}

We start with the estimate for $\nabb b$. We have
\be\lab{jlevy}
\ddb_{bN}\nabb b=\nabb (\nabla_{bN}(b))+[\ddb_{bN},\nabb]b,
\ee
with 
$$h_1=\nabla_{bN}(b)\textrm{ and }H_2=[\ddb_{bN},\nabb]b.$$
In view of the commutator formula \eqref{comm7}, we have
\bea\lab{jlevy1}
\nn\norm{h_1}_{\li{\infty}{2}}+\norm{H_2}_{\tx{2}{\frac{4}{3}}}&\les& \norm{\dd b}_{\li{\infty}{2}}+\norm{b(\chi+k)\nabb b}_{\tx{2}{\frac{4}{3}}}\\
\nn&\les & \norm{\dd b}_{\li{\infty}{2}}+\norm{b}_{L^\infty}(\norm{\chi}_{\tx{\infty}{4}}+\norm{k}_{\tx{\infty}{4}})\norm{\nabb b}_{\lh{2}}\\
&\les& \ep,
\eea
where we used in the last inequality the estimate \eqref{estb} for $b$, the estimate \eqref{estk} for $k$ and the estimates \eqref{esttrc} \eqref{esthch} for $\chi$.  

Next, we consider the estimate for $\z$. In view of the identity \eqref{etaa}, we have
\be\lab{jlevy2}
\ddb_{bN}\z= \nabb h_3+H_4,
\ee
with 
$$h_3=b^{-1}\nabla_{bN}(b)=b^{-1}h_1\textrm{ and }H_2=b^{-1}[\ddb_{bN},\nabb]b+\ddb_{bN}\kep=b^{-1}H_2+\ddb_{bN}\kep.$$
We have
\bea\lab{jlevy3}
\nn\norm{h_3}_{\li{\infty}{2}}+\norm{H_4}_{\tx{2}{\frac{4}{3}}}&\les& \norm{b^{-1}h_1}_{\li{\infty}{2}}+\norm{b^{-1}H_2}_{\tx{2}{\frac{4}{3}}}+\norm{\ddb_{bN}\kep}_{\tx{2}{\frac{4}{3}}}\\
\nn&\les & \norm{b^{-1}}_{L^\infty}(\norm{h_1}_{\li{\infty}{2}}+\norm{H_2}_{\tx{2}{\frac{4}{3}}})+\norm{b}_{L^\infty}\norm{\ddb_N\z}_{\lh{2}}\\
&\les& \ep,
\eea
where we used in the last inequality the estimate \eqref{estk} for $\kep$ and the estimate \eqref{jlevy1} for $h_1$ and $H_2$. Finally, \eqref{jlevy}-\eqref{jlevy3} yields the desired decompositions. This concludes the proof of Lemma \ref{lemma:assemblee}.

\subsection{Proof of Lemma \ref{lemma:assemblee1}}\lab{sec:assemblee1}

Recall the transport equation \eqref{dw} for $\po b$
$$L(\po b)=-b\z_{\po N}-\po(b)\db-\kepb_{\po N}b.$$
We differentiate with respect to $\lb$. This yields
\bee
L(\lb\po b)+[\lb, L]\po b&=&-b\ddb_{\lb}\z_{\po N}-\lb(b)\z_{\po N}-b\z_{\ddb_{\lb}\po N}-\lb(\po(b))\db-\po(b)\lb(\db)\\
&&-\ddb_{\lb}\kepb_{\po N}b-\kepb_{\ddb_{\lb}\po N}b-\kepb_{\po N}\lb(b).
\eee
Together with the commutator formula \eqref{comm3}, we obtain
\be\lab{jlevy7}
L(\lb\po b)=-b\ddb_{\lb}\z_{\po N}+f,
\ee
where the scalar $f$ is given by
\bee
f &=&-\lb(b)\z_{\po N}-b\z_{\ddb_{\lb}\po N}-\po(b)\lb(\db)-\ddb_{\lb}\kepb_{\po N}b-\kepb_{\ddb_{\lb}\po N}b-\kepb_{\po N}\lb(b)\\
&&- (\d + n^{-1} \nab_Nn)L(\po b)-2(\z-\zb)\c \nabb\po b.
\eee
$f$ satisfies the following estimate
\bea\lab{jlevy8}
&&\norm{f}_{\li{\infty}{2}}\\
\nn&\les& \Big(\norm{\lb(b)}_{\xt{2}{\infty}}+\norm{\dd\po N}_{\xt{2}{\infty}}+\norm{L(\po b)}_{\xt{2}{\infty}}+\norm{\nabb\po b}_{\xt{2}{\infty}}\Big)\\
\nn&&\times\Big(1+\norm{\z}_{\xt{\infty}{2}}+\norm{\kepb}_{\xt{\infty}{2}}+\norm{\d}_{\xt{\infty}{2}}+\norm{n^{-1} \nab_Nn}_{L^\infty}+\norm{\zb}_{\xt{\infty}{2}}\Big)\\
\nn&&\times\Big(1+\norm{\po N}_{L^\infty}+\norm{b}_{L^\infty}+\norm{\po(b)}_{L^\infty}\Big)+\norm{\ddb_{\lb}\kepb}_{\li{\infty}{2}}\norm{\po N}_{L^\infty}\norm{b}_{L^\infty}\\
\nn&\les& \ep,
\eea
where we used in the last inequality the estimates \eqref{estn}-\eqref{estb} for $n$, $\kepb$, $\d$, 
$\zb$ and $b$, the estimate \eqref{estzeta} for $\z$, the estimate \eqref{estNomega} for $\po N$, and the estimate \eqref{estricciomega} for $\po N$ and $\po b$. 

In view of the identity \eqref{etaa}, we have
\bea\lab{jlevy9}
b\ddb_{\lb}\z_{\po N}&=&b\ddb_{\lb}(b^{-1}\nabb b+\kep)_{\po N}\\
\nn&=& (\ddb_{\lb}\nabb b)_{\po N}-b^{-2}\lb(b)\nabb_{\po N}b+\ddb_{\lb}\kep_{\po N}\\
\nn&=& \divb(\lb(b)\po N)+f_1,
\eea
where the scalar $f_1$ is given by
$$f_1=([\ddb_{\lb},\nabb]b)_{\po N}-\lb(b)\divb(\po N)-b^{-2}\lb(b)\nabb_{\po N}b+\ddb_{\lb}\kep_{\po N}.$$
In view of the definition of $f_1$, we have
\bee
\norm{f_1}_{\tx{2}{\frac{4}{3}}}&\les& \norm{[\ddb_{\lb},\nabb]b}_{\tx{2}{\frac{4}{3}}}\norm{\po N}_{L^\infty}+\norm{\lb(b)}_{\tx{\infty}{4}}\norm{\divb(\po N)}_{\tx{\infty}{2}}\\
&&+\norm{b^{-2}\po N}_{L^\infty}\norm{\lb(b)}_{\tx{\infty}{2}}\norm{\nabb b}_{\tx{\infty}{4}}+\norm{\ddb_{\lb}\kep}_{\li{\infty}{2}}\norm{\po N}_{L^\infty}\\
&\les& \norm{[\ddb_{\lb},\nabb]b}_{\tx{2}{\frac{4}{3}}}+\ep,
\eee
where we used in the last inequality the estimate \eqref{estb} for $b$, the estimate \eqref{estk} for $\kep$ and the estimates \eqref{estNomega} \eqref{estricciomega} for $\po N$. Together with the commutator formula \eqref{comm2}, we deduce
\bea\lab{jlevy10}
\norm{f_1}_{\tx{2}{\frac{4}{3}}}&\les& \norm{(\chb, \xib, b^{-1}\nabb b)}_{\tx{\infty}{2}}\norm{b}_{\tx{\infty}{4}}+\ep\\
\nn&\les&\ep,
\eea
where we used in the last inequality the estimates \eqref{estn}-\eqref{estzeta} for $b, \chb$ and $\xib$. 

In view of the transport equation \eqref{jlevy7} and the estimate for its initial data, we have
\be\lab{jlevy7bis}
nL(\lb\po b)= -\divb(n\lb(b)\po N)+f_2,
\ee
where $f_2$ is given by
$$f_2=\lb(b)\nabb_{\po N}n-nf_1+nf.$$
In view of the definition of $f_2$, we have
\bea\lab{jlevy7ter}
\norm{f_2}_{\tx{2}{\frac{4}{3}}}&\les& \norm{\lb(b)\nabb_{\po N}n}_{\tx{2}{\frac{4}{3}}}+\norm{nf_1}_{\tx{2}{\frac{4}{3}}}+\norm{nf}_{\tx{2}{\frac{4}{3}}}\\
\nn&\les& \norm{\lb(b)}_{\tx{\infty}{2}}\norm{\nabb n}_{\tx{2}{4}}\norm{\po N}_{L^\infty}+\norm{n}_{L^\infty}(\norm{f_1}_{\tx{2}{\frac{4}{3}}}+\norm{f}_{\tx{2}{\frac{4}{3}}})\\
\nn&\les& \ep,
\eea
where we used in the last inequality the estimate \eqref{estb} for $b$, the estimate \eqref{estn} for $n$, the estimate \eqref{estNomega} for $\po N$, the estimate \eqref{jlevy8} for $f$ and the estimate \eqref{jlevy10} for $f_1$. In view of the transport equation \eqref{jlevy7bis} and the estimate for its initial data, we have
\be\lab{jlevy11}
\norm{\La^{-1}(b\lb\po b)}_{\tx{\infty}{2}}\les \ep+\normm{\La^{-1}\left(b\int_0^t\divb(\lb(b)\po N)\right)}_{\tx{\infty}{2}}+\normm{\La^{-1}\left(b\int_0^tf_2\right)}_{\tx{\infty}{2}}.
\ee
Using the estimate \eqref{La5} for $\La^{-1}$ and the estimate \eqref{estimtransport1} for transport equations, we have
\be\lab{jlevy13}
\normm{\La^{-1}\left(b\int_0^tf_2\right)}_{\tx{\infty}{2}}\les\normm{b\int_0^tf_2}_{\tx{\infty}{\frac{4}{3}}}\les \norm{b}_{L^\infty}\norm{f_2}_{\tx{1}{\frac{4}{3}}}\les\ep,
\ee
where we used \eqref{jlevy7ter} and the estimate \eqref{estb} for $b$ in the last inequality. Finally, we define
$$w=\int_0^t\divb(\lb(b)\po N)$$
and the tensor $W$ solution to the following transport equation
$$\ddb_{nL}W-n\chi\c W=\lb(b)\po N, \, W=0\textrm{ on }P_{0,u}.$$
Then, Lemma \ref{lemma:commutdivb} implies
\bea\lab{jlevy14}
\norm{\divb(W)-w}_{\tx{\infty}{\frac{4}{3}}}&\les& \norm{\lb(b)\po N}_{\tx{1}{4}}\\
\nn&\les& \norm{\lb{b}}_{\tx{\infty}{4}}\norm{\po N}_{L^\infty}\\
\nn&\les& \ep,
\eea
where we used in the last inequality the estimate \eqref{estb} for $b$ and the estimate \eqref{estNomega} for $\po N$. Also, in view of the transport equation satisfied by $W$, the estimate \eqref{estimtransport1} for transport equations yields
\bee
\norm{W}_{\xt{\infty}{2}}&\les& \norm{n\chi\c W}_{\xt{2}{1}}+\norm{\lb(b)\po N}_{\xt{2}{1}}\\
&\les& \norm{n}_{L^\infty}\norm{\chi}_{\xt{\infty}{2}}\norm{W}_{\lh{2}}+\norm{\lb(b)}_{\tx{\infty}{2}}\norm{\po N}_{L^\infty}\\
&\les& \ep\norm{W}_{\tx{\infty}{2}}+\ep,
\eee
where we used in the last inequality the estimate \eqref{estn} for $n$, the estimates \eqref{esttrc} \eqref{esthch} for $\chi$, the estimate \eqref{estb} for $b$ and the estimate \eqref{estNomega} for $\po N$. We deduce
\be\lab{jlevy15}
\norm{W}_{\tx{\infty}{2}}\les\ep.
\ee
Using the estimates \eqref{La3} and \eqref{La5} for $\La^{-1}$, we have
\bee
\norm{\La^{-1}(bw)}_{\tx{\infty}{2}}&\les& \norm{\La^{-1}(b(w-\divb(W)))}_{\tx{\infty}{2}}+\norm{\La^{-1}(b\divb(W))}_{\tx{\infty}{2}}\\
&\les& \norm{b(w-\divb(W))}_{\tx{\infty}{\frac{4}{3}}}+\norm{\La^{-1}(\nabb(b)\c W)}_{\tx{\infty}{2}}+\norm{\La^{-1}\divb(bW)}_{\tx{\infty}{2}}\\
&\les& \norm{b}_{L^\infty}\norm{w-\divb(W)}_{\tx{\infty}{\frac{4}{3}}}+\norm{\nabb(b)\c W}_{\tx{\infty}{\frac{4}{3}}}+\norm{bW}_{\tx{\infty}{2}}\\
&\les& \norm{b}_{L^\infty}\norm{w-\divb(W)}_{\tx{\infty}{\frac{4}{3}}}+\norm{\nabb b}_{\tx{\infty}{4}}\norm{W}_{\tx{\infty}{2}}+\norm{b}_{L^\infty}\norm{W}_{\tx{\infty}{2}}\\
&\les& \ep,
\eee
where we used in the last inequality the estimate \eqref{estb} for $b$ and the estimates \eqref{jlevy14} and \eqref{jlevy15}. In view of the definition of $w$, and together with \eqref{jlevy11} and \eqref{jlevy13}, we finally obtain
\be\lab{jlevy16}
\norm{\La^{-1}(b\lb\po b)}_{\tx{\infty}{2}}\les \ep.
\ee
On the other hand, we have
\bea\lab{jlevy17}
\norm{\La^{-1}(bL\po b)}_{\tx{\infty}{2}}&\les& \norm{bL\po b}_{\tx{\infty}{2}}\\
\nn&\les& \norm{b}_{L^\infty}\norm{L\po b}_{\tx{\infty}{2}}\\
\nn&\les& \ep,
\eea
where we used in the last inequality the estimate \eqref{estb} for $b$ and the estimate \eqref{po12} for $\po b$. Recall that 
$$N=\frac{1}{2}(L-\lb),$$
which together with \eqref{jlevy16} and \eqref{jlevy17} implies
$$\norm{\La^{-1}(bN\po b)}_{\tx{\infty}{2}}\les \ep.$$
This concludes the proof the lemma \ref{lemma:assemblee1}.

\section{Appendix to section \ref{sec:commutatorest}}

\subsection{Proof of Proposition \ref{prop:gowinda11}}\lab{sec:gowinda11}

Using the definition \eqref{eq:LP} of $P_j$, we have:
\begin{equation}\label{ad24}
[\ddb_{bN},P_j]F=\int_0^\infty m_j(\tau)V(\tau) d\tau,
\end{equation}
where $V(\tau)$ satisfies:
\be\lab{ad25}
(\partial_{\tau}-\lap)V(\tau)=[\ddb_{bN},\lap]U(\tau)F,\,V(0)=0.
\ee
\eqref{ad24} yields:
\begin{equation}\label{aad26}
\norm{[\ddb_{bN},P_j]F}_{L^{\frac{4}{3}}_u\lpt{2}}\les \normm{\int_0^\infty m_j(\tau)\norm{V(\tau)}_{\lpt{2}} d\tau}_{L^{\frac{4}{3}}_u}.
\end{equation}

In view of \eqref{bonobo} and \eqref{aad26}, we have to estimate $\norm{V(\tau)}_{\lpt{2}}$. Let $a, p$ real numbers satisfying:
\begin{equation}\label{aad27}
\begin{array}{l}
\ds 0<a<\frac{1}{4},\,2<p<+\infty,\textrm{ such that }p<\min\left(\frac{8}{3},\frac{4}{2-a}\right).
\end{array}
\end{equation}
The energy estimate \eqref{eq:l2heat1bis} implies:
\begin{equation}\label{aad28}
\begin{array}{ll}
& \ds\norm{\La^{-a}V(\tau)}^2_{\lpt{2}}+\int_0^\tau\norm{\nabb\La^{-a}V(\tau')}^2_{\lpt{2}}d\tau'\\
\ds\lesssim &\ds\int_0^\tau\int_{\ptu} \La^{-2a}V(\tau')[\ddb{bN},\lap]U(\tau')F\dmt d\tau'.
\end{array}
\end{equation}

We need to estimate the commutator term $[\ddb{bN},\lap]U$. Using the definition of $\th$ \eqref{def:theta}, we may rewrite the commutator formula \eqref{comm7} 
for any m-covariant tensor $\Pi_{\und{A}}$ tangent to $\ptu$ as:
\bea
\nabb_B \nabb_{bN} \Pi_{\und{A}} - \nabb_{bN}\nabb_B \Pi_{\und{A}} &=&
b\th_{BC} \nabb_C \Pi_{\und{A}} 
\label{aad29}\\ 
\nn&+& b\sum_i (-\th_{BC}b^{-1}\nabb_Cb+\th_{BC}b^{-1}\nabb_Cb-k_{AB}k_{CN}+k_{BC}k_{AN}\\
&&-\half\in_{A_i C}{}^*(\b_B+\bb_B)) \Pi_{A_1..\Check{C}..A_m}.\nn
\eea
Using twice the commutator formula \eqref{aad29}, we have:
\be\lab{aad30}
[\ddb{bN},\lap]U=H\nabb^2U+G\nabb U+\nabb(GU)
\ee
where the tensors $H$ and $G$ are given by $H=b\th$ and $G=\th\c\nabb b+k\c k+b{}^*(\b+\bb)$. Using the curvature bound \eqref{curvflux1} for $\b, \bb$, the $L^\infty$ bound \eqref{estb} for $b$, the estimate 
\eqref{eqksit3} for $k$ on $\Sit$, and the bounds \eqref{ad13}-\eqref{ad15} for $b$ and $\th$ on $\Sit$, we obtain the following bound for $H$ and $G$:
\bea
\lab{aad31}\norm{\nabb H}_{\lsit{\infty}{2}}+\norm{G}_{\lsit{\infty}{2}}&\les& \norm{b}_{L^\infty}\norm{\nabb\th}_{\lsit{\infty}{2}}+\norm{\th}_{\lsit{\infty}{4}}\norm{\nabb b}_{\lsit{\infty}{4}}\\
\nn&&+\norm{k}_{\lsit{\infty}{4}}^2+\norm{b}_{L^\infty}(\norm{\b}_{\lsit{\infty}{2}}+\norm{\bb}_{\lsit{\infty}{2}})\\
\nn&\les&\ep.
\eea
Notice that the structure \eqref{aad30} \eqref{aad31} is completely analogous to \eqref{midi1} \eqref{midi2}. Therefore, proceeding as in \eqref{bis:ad47}, we obtain:
\begin{equation}\label{aad32}
\begin{array}{l}
\ds\int_0^\tau\int_{\ptu} \La^{-2a}V(\tau')[\ddb_{nL},\lap]U(\tau')F\dmt d\tau'\\
\ds\lesssim (\norm{\nabb H}_{\lpt{2}}+\norm{G}_{\lpt{2}})\int_0^\tau\norm{\nabb U(\tau')}_{\lpt{p}}\norm{\nabb\La^{-2a}V(\tau')}_{\lpt{2}}d\tau'.
\end{array}
\end{equation}
The Gagliardo-Nirenberg inequality \eqref{eq:GNirenberg}, the properties \eqref{La1} and \eqref{interpolLa} of $\Lambda$, and the Bochner inequality \eqref{vbochineq} for tensors yield:
\begin{equation}\label{aad33}
\begin{array}{l}
\ds\int_0^\tau\norm{\nabb U(\tau')}_{\lpt{p}}\norm{\nabb\La^{-2a}V(\tau')}_{\lpt{2}}d\tau'\\
\ds\lesssim 
\int_0^\tau\norm{\nabb U(\tau')}^{\frac{2}{p}}_{\lpt{2}}\norm{\nabb^2 U(\tau')}^{1-\frac{2}{p}}_{\lpt{2}}\norm{\La^{-a}V(\tau')}^{a}_{\lpt{p'}}\norm{\nabb\La^{-a}V(\tau')}^{1-a}_{\lpt{p'}}d\tau'\\
\ds\lesssim \int_0^\tau\norm{\nabb U(\tau')}^{\frac{2}{p}}_{\lpt{2}}\big(\norm{\lap U(\tau')}_{\lpt{2}}+\norm{K}_{\lpt{2}}\norm{\nabb U(\tau')}_{\lpt{2}}\\
\ds +\norm{K}^2_{\lpt{2}}\norm{U(\tau')}_{\lpt{2}}\big)^{1-\frac{2}{p}}\norm{\La^{-a}V(\tau')}^{a}_{\lpt{p'}}\norm{\nabb\La^{-a}V(\tau')}^{1-a}_{\lpt{p'}}d\tau'\\
\ds\lesssim \left(\left(1+\norm{K}_{\lpt{2}}^{2(1-\frac{2}{p})}\right)\int_0^\tau\norm{\nabb U(\tau')}^{2}_{\lpt{2}}d\tau'
+ \int_0^\tau \tau'\norm{\lap U(\tau')}^{2}_{\lpt{2}}d\tau'\right)^{\frac{1}{2}}\\
\ds\left(\frac{1}{2}\int_0^\tau\norm{\nabb\La^{-a}V(\tau')}^2_{\lpt{2}}d\tau'
+\int_0^\tau{\tau'}^{-\frac{2(p-2)}{ap}}\norm{\La^{-a}V(\tau')}^2_{\lpt{2}}d\tau'\right)^{\frac{1}{2}}
\end{array}
\end{equation}
which together with the estimates for the heat flow \eqref{eq:l2heat1} and \eqref{eq:l2heat2} implies:
\bea\label{aad34}
&&\ds\int_0^\tau\norm{\nabb U(\tau')}_{\lpt{p}}\norm{\nabb\La^{-2a}V(\tau')}_{\lpt{2}}d\tau'\\
\nn&\lesssim&\left(1+\norm{K}_{\lpt{2}}^{2(1-\frac{2}{p})}\right)\norm{F}_{\lpt{2}}\\
\nn&&\times\left(\int_0^\tau\norm{\nabb\La^{-a}V(\tau')}^2_{\lpt{2}}d\tau'+\int_0^\tau{\tau'}^{-\frac{2(p-2)}{ap}}\norm{\nabb\La^{-a}V(\tau')}^2_{\lpt{2}}d\tau'\right)^{\frac{1}{2}}.
\eea

Finally, the choice of $p$ \eqref{ad38bis}, \eqref{bis:ad46}, \eqref{bis:ad47} and \eqref{bis:ad49} implies:
\begin{equation}\label{aad35}
\begin{array}{l}
\ds\norm{\La^{-a}V(\tau)}^2_{\lpt{2}}+\int_0^\tau\norm{\nabb\La^{-a}V(\tau')}^2_{\lpt{2}}d\tau'\\
\ds\lesssim (\norm{\nabb H}_{\lpt{2}}+\norm{G}_{\lpt{2}})\left(1+\norm{K}_{\lpt{2}}^{2(1-\frac{2}{p})}\right)\norm{F}_{L^\infty_u\lpt{2}}.
\end{array}
\end{equation}
Using the interpolation inequality \eqref{interpolLa}, we obtain:
\begin{equation}\label{aad36}
\begin{array}{l}
\ds\int_0^{+\infty}\norm{V(\tau)}^{\frac{2}{a}}_{\lpt{2}}d\tau\lesssim \int_0^\tau \norm{\La^{-a}V(\tau')}^{\frac{2(1-a)}{a}}_{\lpt{2}}\norm{\nabb\La^{-a}V(\tau')}^2_{\lpt{2}}d\tau'
\\
\ds\lesssim (\norm{\nabb H}_{\lpt{2}}+\norm{G}_{\lpt{2}})\left(1+\norm{K}_{\lpt{2}}^{2(1-\frac{2}{p})}\right)\norm{F}_{L^\infty_u\lpt{2}}.
\end{array}
\end{equation}
The estimate \eqref{aad31} for $H$ and $G$ and the choice \eqref{aad27} for $p$,  yields:
\begin{equation}\label{aad37}
\begin{array}{l}
\ds\normm{\int_0^{+\infty}m_j(\tau)\norm{V(\tau)}_{\lpt{2}}d\tau}_{L^{\frac{4}{3}}_u}\lesssim 2^{ja}\normm{\left(\int_0^{+\infty}\norm{V(\tau)}^{\frac{2}{a}}_{\lpt{2}}d\tau\right)^{\frac{a}{2}}}_{L^{\frac{4}{3}}_u}\\
\ds\lesssim 2^{ja}(\norm{\nabb H}_{\lsit{\infty}{2}}+\norm{G}_{\lsit{\infty}{2}})\left(1+\norm{K}_{\lsit{\infty}{2}}^{2(1-\frac{2}{p})}\right)\norm{\nabla F}_{L^\infty_u\lpt{2}}\\
\lesssim 2^{ja}\norm{\nabla F}_{L^\infty_u\lpt{2}}.
\end{array}
\end{equation}
\eqref{aad37} and \eqref{aad26} yield
$$\norm{[\ddb_{bN},P_j]F}_{L^{\frac{4}{3}}_u\lpt{2}}\les 2^{ja}\norm{\nabla F}_{L^infty_u\lpt{2}}.$$
Taking the supremum in $t$ yields the desired estimate \eqref{bonobo}. This concludes the proof of the proposition.

\subsection{Proof of Proposition \ref{prop:gowinda12}}\lab{sec:gowinda12}

The proof of the estimate \eqref{zz1} being similar and slightly easier than the proof of \eqref{zz}, we focus on \eqref{zz}. In view of \eqref{ad24} \eqref{ad25}, we have:
\begin{equation}\label{lxx3:5}
[bN,P_l]f=\int_0^\infty m_l(\tau)V(\tau) d\tau,
\end{equation}
where $V(\tau)$ satisfies:
\be\lab{lxx3:6}
(\partial_{\tau}-\lap)V(\tau)=[bN,\lap]U(\tau)f,\,V(0)=0.
\ee
Assume that $V$ satisfies for all $\tau$
\be\label{lxx3:11bis}
\norm{V(\tau)}^2_{\lh{2}}+\int_0^\tau\norm{\nabb V(\tau')}^2_{\lh{2}}d\tau'\les \ep\no(f).
\ee
Then, in view of \eqref{lxx3:5}, we obtain
\bee
&&\norm{[bN, P_l]f}_{\lh{2}}+2^{-l}\norm{\nabb[bN, P_l]f}_{\lh{2}}\\
&\les& \int_0^\infty m_l(\tau)\norm{V(\tau)}_{\lh{2}} d\tau+2^{-l}\int_0^\infty m_l(\tau)\norm{\nabb V(\tau)}_{\lh{2}} d\tau\\
&\les& \ep\no(f)\left(\int_0^\infty m_l(\tau) d\tau+2^{-l}\left(\int_0^\infty m^2_l(\tau)d\tau\right)\right)\\
&\les& \ep\no(f),
\eee
which after taking the supremum in $u$ yields \eqref{zz}. Thus, it remains to prove \eqref{lxx3:11bis}. 

The energy estimate \eqref{heatF2} implies after integration along null geodesics:
\begin{equation}\label{lxx3:8}
 \ds\norm{V(\tau)}^2_{\lh{2}}+\int_0^\tau\norm{\nabb V(\tau')}^2_{\lh{2}}d\tau'\lesssim\ds\int_0^\tau\int_{\H_u} V(\tau')[bN,\lap]U(\tau')f\dmt d\tau'.
\end{equation}
We need to estimate the commutator term $[bN,\lap]U$. Using twice the commutator formula \eqref{comm7} together with the fact that $U(\tau)f$ is a scalar function, we have:
\be\lab{lxx3:9}
[bN,\lap]U=H\nabb^2U+G\nabb U
\ee
where the tensors $H$ and $G$ are given by $H=b(\chi+k)$ and $G=b\nabb\chi+b\nabb k+(\chi+k)\nabb b+\chi(\kep+\xib)+\chb\z+b{}^*(\b+\bb)$. Using the curvature estimate \eqref{curvflux1}, and the estimates \eqref{estn}-\eqref{estzeta} for $k, b, \chi, \z, \xib$ and $\chb$, we obtain the following bound for $H$ and $G$:
\bea
\nn\no(H)+\norm{G}_{\li{\infty}{2}}&\les& \norm{b}_{L^\infty}(\no(\chi)+\no(k))+\norm{(L,\nabb)b}_{\tx{2}{4}}(\norm{\chi}_{\tx{\infty}{4}}+\norm{k}_{\tx{\infty}{4}})\\
\nn&&+\norm{\chi}_{\tx{\infty}{4}}(\norm{\kep}_{\tx{\infty}{4}}+\norm{\xib}_{\tx{\infty}{4}})+\norm{\chb}_{\tx{\infty}{4}}\norm{\z}_{\tx{\infty}{4}}\\
\nn&&+\norm{\b}_{\li{\infty}{2}}+\norm{\bb}_{\li{\infty}{2}}\\
\lab{lxx3:10}&\les&\ep.
\eea
Using \eqref{lxx3:10}, we obtain:
\bee
&&\int_0^\tau\int_{\H_u} V(\tau')[bN,\lap]U(\tau')f\dmt d\tau'\\
\nn&\lesssim& \int_0^\tau \norm{H}_{\tx{\infty}{4}}\norm{\nabb^2U(\tau')}_{\lh{2}}\norm{V(\tau')}_{\tx{2}{4}}d\tau'\\
\nn&&+\int_0^1\int_0^\tau \norm{G}_{\lpt{2}}\norm{\nabb U(\tau')}_{\lpt{4}}\norm{V(\tau')}_{\lpt{4}}d\tau'dt\\
\nn&\lesssim& \ep\int_0^\tau \norm{\nabb^2U(\tau')}_{\lh{2}}\norm{\nabb V(\tau')}_{\lh{2}}d\tau'+\frac{\norm{G}^2_{\lh{2}}}{\ep}\normm{\int_0^\tau \norm{\nabb U(\tau')}^2_{\lpt{4}}d\tau'}_{L^\infty_t}\\
&&+\ep\int_0^\tau \norm{\nabb V(\tau')}^2_{\lh{2}}d\tau'\\
\nn&\lesssim& \ep\int_0^\tau \norm{\nabb^2U(\tau')}_{\lh{2}}\norm{\nabb V(\tau')}_{\lh{2}}d\tau'+\ep\normm{\int_0^\tau \norm{\nabb U(\tau')}^2_{\lpt{4}}d\tau'}_{L^\infty_t}\\
&&+\ep\int_0^\tau \norm{\nabb V(\tau')}^2_{\lh{2}}d\tau'
\eee
which together with \eqref{lxx3:8} implies:
\bea\label{lxx3:11}
&& \ds\norm{V(\tau)}^2_{\lh{2}}+\int_0^\tau\norm{\nabb V(\tau')}^2_{\lh{2}}d\tau' \\
\nn &\lesssim& \ep\int_0^\tau \norm{\nabb^2U(\tau')}^2_{\lh{2}}d\tau'+\ep\normm{\int_0^\tau \norm{\nabb U(\tau')}^2_{\lpt{4}}d\tau'}_{L^\infty_t}\\
\nn &\lesssim& \ep\int_0^\tau \norm{\lap U(\tau')}^2_{\lh{2}}d\tau'+\ep\normm{\int_0^\tau \norm{\nabb U(\tau')}^2_{\lpt{4}}d\tau'}_{L^\infty_t},
\eea
where we used the Bochner inequality for scalars \eqref{eq:Bochconseqbis} in the last inequality. Now, the energy estimates 
\eqref{eq:l2heat1} and \eqref{eq:l2heatnab} yield:
\bee
\int_0^\tau \norm{\lap U(\tau')}^2_{\lh{2}}d\tau'+\ep\normm{\int_0^\tau \norm{\nabb U(\tau')}^2_{\lpt{4}}d\tau'}_{L^\infty_t}\lesssim\norm{\nabb f}^2_{\lh{2}}+\norm{f}_{\tx{\infty}{4}}^2\lesssim \no(f),
\eee
Together with \eqref{lxx3:11}, we obtain
$$\norm{V(\tau)}^2_{\lh{2}}+\int_0^\tau\norm{\nabb V(\tau')}^2_{\lh{2}}d\tau' \lesssim\ep\no(f),$$
which is the desired estimate \eqref{lxx3:11bis}. This concludes the proof of the proposition. 

\subsection{Proof of Proposition \ref{prop:gowinda13}}\lab{sec:gowinda13}

The estimate of the first term in the right-hand side of \eqref{lxx4:4} being similar and slightly easier, we focus on the estimate of the second term involving $[bN,P_q]f$. In view of \eqref{lxx3:5} and \eqref{lxx3:6}, we have:
\begin{equation}\label{lxx4:5}
\norm{[bN,P_q]f}_{\li{\infty}{2}}\les \normm{\int_0^\infty m_q(\tau)\norm{V(\tau)}_{\lh{2}} d\tau}_{L^\infty_u},
\end{equation}
where $V(\tau)$ satisfies:
\be\lab{lxx4:6}
(\partial_{\tau}-\lap)V(\tau)=[bN,\lap]U(\tau)f,\,V(0)=0.
\ee

In view of \eqref{lxx4:5}, we have to estimate $\norm{V(\tau)}_{\lh{2}}$. Let $a, \delta$ real numbers satisfying:
\begin{equation}\label{lxx4:7}
\begin{array}{l}
\ds \half<a<1,\textrm{ and }0<\delta<a-\frac{1}{2}.
\end{array}
\end{equation}
The energy estimate \eqref{eq:l2heat1bis} implies:
\begin{equation}\label{lxx4:8}
\begin{array}{ll}
& \ds\norm{\La^{-a}V(\tau)}^2_{\lh{2}}+\int_0^\tau\norm{\nabb\La^{-a}V(\tau')}^2_{\lh{2}}d\tau'\\
\ds\lesssim &\ds\int_0^\tau\int_0^1\int_{\ptu} \La^{-2a}V(\tau')[bN,\lap]U(\tau')f\dmt dt d\tau'.
\end{array}
\end{equation}

As in \eqref{lxx3:9}, we need to estimate the commutator term $[bN,\lap]U$. Using twice the commutator formula \eqref{comm7} together with the fact that $U(\tau)f$ is a scalar function, we have:
\be\lab{lxx4:9}
[bN,\lap]U=H\nabb^2 U+\nabb(H\nabb U)+G\nabb U
\ee
where the tensors $H$ and $G$ are given by $H=b(\chi+k)$ and $G=(\chi+k)\nabb b+\chi(\kep+\xib)+\chb\z+b{}^*(\b+\bb)$. Using Lemma \ref{lemma:lbz4}, Lemma \ref{lemma:po5}, and the estimates \eqref{estn}-\eqref{estzeta} for $k, b, \chi, \z, \xib$ and $\chb$, we obtain the following bound for $H$ and $G$:
\bea\lab{lxx4:10}
&&\sup_j\left(2^{\frac{j}{2}}\norm{P_jH}_{\tx{\infty}{2}}+2^{-\frac{j}{2}}\norm{P_jG}_{\tx{\infty}{2}}\right)\\
\nn&\les& \no(b(\chi+k))+\norm{\nabb b}_{\tx{2}{4}}(\norm{\chi}_{\tx{\infty}{4}}+\norm{k}_{\tx{\infty}{4}})+\norm{\chi}_{\tx{\infty}{4}}(\norm{\kep}_{\tx{\infty}{4}}+\norm{\xib}_{\tx{\infty}{4}})\\
\nn&&+\norm{\chb}_{\tx{\infty}{4}}\norm{\z}_{\tx{\infty}{4}}+\ep\\
\nn&\les&\ep.
\eea

Using the property of $P_j$, and in view of \eqref{lxx4:9} and \eqref{lxx4:10}, we have:
\bea
&&\nn\int_0^\tau\int_0^1\int_{\ptu} \La^{-2a}V(\tau')[bN,\lap]U(\tau')f\dmt dt d\tau'\\
\nn&=&\sum_j\int_0^\tau\int_0^1\int_{\ptu} (P_j(H)P_j(\nabb(\nabb U\La^{-2a}V(\tau')))+P_j(H)P_j(\nabb U\nabb \La^{-2a}V(\tau')))\dmt dt d\tau'\\
\nn&&+\int_0^\tau\int_0^1\int_{\ptu}P_j(G) P_j(\nabb U\La^{-2a}V(\tau'))\dmt dt d\tau'\\
\nn&\les& \sum_j 2^{-\frac{j}{2}}\ep\int_0^\tau(\norm{P_j(\nabb(\nabb U\La^{-2a}V(\tau')))}_{\lh{2}}+\norm{P_j(\nabb U\nabb \La^{-2a}V(\tau'))}_{\lh{2}})d\tau'\\
\lab{lxx4:11}&&+\sum_j 2^{\frac{j}{2}}\ep\int_0^\tau\norm{P_j(\nabb U\La^{-2a}V(\tau'))}_{\lh{2}}d\tau'.
\eea

In order to estimate the right-hand side of \eqref{lxx4:11}, we derive three product estimates. Let $h_1, h_2$ two scalar functions. Let $\delta>0$ a small constant to be chosen later on. Using the finite band property for $P_j$, the weak Bernstein inequality, the Gagliardo Nirenberg inequality \eqref{eq:GNirenberg}, and the Bochner inequality for scalars \eqref{eq:Bochconseqbis}, we obtain: 
\bea
\lab{lxx4:12}&&\norm{P_j\nabb((\nabb h_1)h_2)}_{\lpt{2}}\\
\nn&\les& \norm{P_j\nabb((\nabb h_1)h_2)}_{\lpt{2}}^{\frac{1+\d}{2}}\norm{P_j\nabb((\nabb h_1)h_2)}_{\lpt{2}}^{\frac{1-\delta}{2}}\\
\nn&\les&(2^{j0_+}\norm{(\nabb^2h_1)h_2)}_{\lpt{2_-}}+2^{j0_+}\norm{(\nabb h_1)(\nabb h_2)}_{\lpt{2_-}})^{\frac{1+\d}{2}}(2^j\norm{(\nabb h_1)h_2)}_{\lpt{2}})^{\frac{1-\delta}{2}}\\
\nn&\les&2^{j(\half-\frac{\delta}{4})}(\norm{\nabb^2h_1}_{\lpt{2}}\norm{\nabb h_2}_{\lpt{2}})^{\frac{1+\d}{2}}(\norm{\nabb h_1}_{\lpt{2_+}}\norm{\nabb h_2}_{\lpt{2}})^{\frac{1-\delta}{2}}\\
\nn&\les&2^{j(\half-\frac{\delta}{4})}\norm{\lap h_1}_{\lpt{2}}^{\half+\delta}\norm{\nabb h_1}_{\lpt{2}}^{\half-\delta}\norm{\nabb h_2}_{\lpt{2}}.
\eea
Also, the weak Bernstein inequality, the Gagliardo Nirenberg inequality \eqref{eq:GNirenberg}, 
and the Bochner inequality for scalars \eqref{eq:Bochconseqbis} yields:
\bea
\lab{lxx4:13}\norm{P_j((\nabb h_1)(\nabb h_2))}_{\lpt{2}}&\les& 2^{j(\half-\delta)}\norm{(\nabb h_1)(\nabb h_2)}_{\lpt{(\frac{4}{3})_+}}\\
\nn&\les& 2^{j(\half-\delta)}\norm{\nabb h_1}_{\lpt{\frac{4}{1-2\delta}}}\norm{\nabb h_2}_{\lpt{2}}\\
\nn&\les& 2^{j(\half-\delta)}\norm{\nabb^2h_1}_{\lpt{q}}^{\half+\delta}\norm{\nabb h_1}_{\lpt{q}}^{\half-\delta}\norm{\nabb h_2}_{\lpt{2}}\\
\nn&\les& 2^{j(\half-\delta)}\norm{\lap h_1}_{\lpt{q}}^{\half+\delta}\norm{\nabb h_1}_{\lpt{q}}^{\half-\delta}\norm{\nabb h_2}_{\lpt{2}}.
\eea
Finally, we have:
\be\lab{lxx4:14}
\norm{P_j((\nabb h_1)h_2)}_{\lpt{2}}\les \sum_{l,q}\norm{P_j(\nabb(P_l(h_1))P_q(h_2))}_{\lpt{2}}.
\ee
If $j\geq \max(l,q)$, we obtain using the finite band property for $P_j, P_l$ and $P_q$, the strong Bernstein inequality \eqref{eq:strongbernscalarbis} for $P_q$, the Gagliardo Nirenberg inequality \eqref{eq:GNirenberg}, and the Bochner inequality for scalars \eqref{eq:Bochconseqbis}:
\bea\lab{lxx4:15}
&&\norm{P_j(\nabb(P_l(h_1))P_q(h_2))}_{\lpt{2}}\\
\nn&\les & 2^{-j}\norm{\nabb^2(P_l(h_1))P_q(h_2))}_{\lpt{2}}+2^{-j}\norm{\nabb(P_l(h_1))\nabb(P_q(h_2))}_{\lpt{2}}\\
\nn&\les & 2^{-j}\norm{\nabb^2(P_l(h_1))}_{\lpt{2}}\norm{P_q(h_2))}_{\lpt{\infty}}+2^{-j}\norm{\nabb(P_l(h_1))}_{\lpt{6}}\norm{\nabb(P_q(h_2))}_{\lpt{3}}\\
\nn&\les &(2^{-j+q+2l}+2^{-j+\frac{4q}{3}+\frac{5l}{3}})\norm{P_l(h_1)}_{\lpt{2}}\norm{P_q(h_2)}_{\lpt{2}}\\
\nn&\les &(2^{-j+l(\half-\delta)}+2^{-j+\frac{q}{3}+l(\frac{1}{6}-\delta)})\norm{\lap h_1}^{\frac{1}{2}+\delta}_{\lpt{2}}\norm{\nabb h_1}^{\frac{1}{2}-\delta}_{\lpt{2}}\norm{\nabb h_2}_{\lpt{2}}\\
\nn&\les & 2^{-j(\half+\delta)}\norm{\lap h_1}^{\frac{1}{2}+\delta}_{\lpt{2}}\norm{\nabb h_1}^{\frac{1}{2}-\delta}_{\lpt{2}}\norm{\nabb h_2}_{\lpt{2}}.
\eea
Next, if $l\geq \max(j,q)$, we obtain using the finite band property for $P_j$ and $P_l$, the strong Bernstein inequality \eqref{eq:strongbernscalarbis} for $P_q$, the Gagliardo Nirenberg inequality \eqref{eq:GNirenberg}, and the Bochner inequality for scalars \eqref{eq:Bochconseqbis}:
\bea\lab{lxx4:16}
\norm{P_j(\nabb(P_l(h_1))P_q(h_2))}_{\lpt{2}}&\les & \norm{\nabb(P_l(h_1))P_q(h_2)}_{\lpt{2}}\\
\nn&\les & \norm{\nabb(P_l(h_1))}_{\lpt{2}}\norm{P_q(h_2)}_{\lpt{\infty}}\\
\nn&\les & 2^{-l(\half+\delta)}\norm{\lap h_1}^{\frac{1}{2}+\delta}_{\lpt{2}}\norm{\nabb h_1}^{\frac{1}{2}-\delta}_{\lpt{2}}\norm{\nabb h_2}_{\lpt{2}}.
\eea
Finally,  if $q\geq \max(j,l)$, we obtain using the finite band property for $P_j, P_l$ and $P_q$, the weak Bernstein inequality for $P_q$, the Gagliardo Nirenberg inequality \eqref{eq:GNirenberg}, and the Bochner inequality for scalars \eqref{eq:Bochconseqbis} :
\bee
\norm{P_j(\nabb(P_l(h_1))P_q(h_2))}_{\lpt{2}}&\les & \norm{\nabb(P_l(h_1))P_q(h_2)}_{\lpt{2}}\\
\nn&\les & \norm{\nabb(P_l(h_1))}_{\lpt{6}}\norm{P_q(h_2)}_{\lpt{3}}\\
\nn&\les & 2^{\frac{q}{3}+\frac{5l}{3}}\norm{P_l(h_1)}_{\lpt{2}}\norm{P_q(h_2)}_{\lpt{2}}\\
\nn&\les & 2^{-\frac{2q}{3}+l(\frac{1}{6}-\delta)}\norm{\lap h_1}^{\frac{1}{2}+\delta}_{\lpt{2}}\norm{\nabb h_1}^{\frac{1}{2}-\delta}_{\lpt{2}}\norm{\nabb h_2}_{\lpt{2}}\\
\nn&\les & 2^{-q(\frac{1}{2}+\delta)}\norm{\lap h_1}^{\frac{1}{2}+\delta}_{\lpt{2}}\norm{\nabb h_1}^{\frac{1}{2}-\delta}_{\lpt{2}}\norm{\nabb h_2}_{\lpt{2}},
\eee
which together with \eqref{lxx4:14}-\eqref{lxx4:16} yields:
\be\lab{lxx4:17}
\norm{P_j((\nabb h_1)h_2)}_{\lpt{2}}\les 2^{-j(\half+\delta)}\norm{\lap h_1}^{\frac{1}{2}+\delta}_{\lpt{2}}\norm{\nabb h_1}^{\frac{1}{2}-\delta}_{\lpt{2}}\norm{\nabb h_2}_{\lpt{2}}.
\ee

Now, we use \eqref{lxx4:12} \eqref{lxx4:13} and \eqref{lxx4:17} with $h_1=U$ and $h_2=\Lambda^{-2a}V$ to estimate respectively the first, second and third term in the right-hand side of \eqref{lxx4:11}. We obtain:
\bea
\lab{lxx4:18}&&\int_0^\tau\int_0^1\int_{\ptu} \La^{-2a}V(\tau')[bN,\lap]U(\tau')f\dmt dt d\tau'\\
\nn&\les& \ep\left(\sum_j 2^{-\frac{j\delta}{4}}\right)\int_0^\tau\norm{\lap U(\tau')}^{\frac{1}{2}+\delta}_{\lh{2}}\norm{\nabb U(\tau')}^{\frac{1}{2}-\delta}_{\lh{2}}\norm{\nabb \Lambda^{-2a}V(\tau')}_{\lh{2}}d\tau'\\
\nn&\les & \ep\int_0^\tau\norm{\lap U(\tau')}^{\frac{1}{2}+\delta}_{\lh{2}}\norm{\nabb U(\tau')}^{\frac{1}{2}-\delta}_{\lh{2}}\norm{\nabb \Lambda^{-a}V(\tau')}^{1-a}_{\lh{2}}\norm{\Lambda^{-a}V(\tau')}^a_{\lh{2}}d\tau',
\eea
where we used the interpolation estimate \eqref{interpolLa}, \eqref{La1}, and the fact that $\delta>0$ in the last inequality. Next, \eqref{lxx4:7}, \eqref{lxx4:8} and \eqref{lxx4:18} yield:
\bea
\label{lxx4:19}
&&\ds\norm{\La^{-a}V(\tau)}^2_{\lh{2}}+\int_0^\tau\norm{\nabb\La^{-a}V(\tau')}^2_{\lh{2}}d\tau'\\
\nn&\lesssim& \ep\int_0^\tau(\tau')\norm{\lap U(\tau')}^2_{\lh{2}}d\tau'+\ep\int_0^\tau\norm{\nabb U(\tau')}^2_{\lh{2}}d\tau'\\
\nn&\les& \ep\norm{f}^2_{\lh{2}},
\eea
where we used the heat flow estimates \eqref{eq:l2heat1} and \eqref{eq:l2heat2} in the last inequality.

Using the interpolation inequality \eqref{interpolLa} and \eqref{lxx4:19}, we obtain:
\bee
\ds\int_0^{+\infty}\norm{V(\tau)}^{\frac{2}{a}}_{\lpt{2}}d\tau&\lesssim& \int_0^\tau \norm{\La^{-a}V(\tau')}^{\frac{2(1-a)}{a}}_{\lpt{2}}\norm{\nabb\La^{-a}V(\tau')}^2_{\lpt{2}}d\tau'
\\
\nn&\lesssim&\ep\norm{f}^{\frac{2}{a}}_{\li{\infty}{2}}.
\eee
Together with \eqref{lxx4:5}, we obtain:
\bea\label{lxx4:20}
\norm{[bN,P_q]f}_{\li{\infty}{2}}&\les& \normm{\int_0^\infty m_q(\tau)\norm{V(\tau)}_{\lh{2}} d\tau}_{L^\infty_u}\\
\nn&\lesssim& 2^{ja}\normm{\left(\int_0^{+\infty}\norm{V(\tau)}^{\frac{2}{a}}_{\lh{2}}d\tau\right)^{\frac{a}{2}}}_{L^{\infty}_u}\\
\nn&\lesssim& 2^{ja}\norm{f}_{\li{\infty}{2}}.
\eea
Since $a<1$ in view of \eqref{lxx4:7}, \eqref{lxx4:20} yields \eqref{lxx4:4}. This concludes the proof of the proposition.

\subsection{Proof of Proposition \ref{prop:gowinda14}}\lab{sec:gowinda14}

In view of the analog of \eqref{ad24} \eqref{ad25}, we have:
\begin{equation}\label{zz3}
[nL,P_j]\trc=\int_0^\infty m_j(\tau)V(\tau) d\tau,
\end{equation}
where $V(\tau)$ satisfies:
\be\lab{zz4}
(\partial_{\tau}-\lap)V(\tau)=[nL,\lap]U(\tau)\trc,\,V(0)=0.
\ee

Assume that $U(\tau)\trc$ satisfies the following estimates
\be\lab{zz5}
\norm{\chi\nabb^2U(\tau)\trc}_{L^\infty_uL^1_tL^2_\tau \lpt{2}}\les \ep,
\ee
and
\be\lab{zz6}
\norm{\nabb U(\tau)\trc}_{L^\infty_uL^2_tL^2_\tau \lpt{\infty}}\les\ep.
\ee
Then, in view of the commutator estimate \eqref{comm6}, we have
\bea\lab{zz7}
&&\norm{[nL,\lap]U(\tau)\trc}_{L^\infty_uL^1_tL^2_\tau \lpt{2}}\\
\nn&\les& \norm{n}_{L^\infty}\Big(\norm{\chi\nabb^2U(\tau)\trc}_{L^\infty_uL^1_tL^2_\tau \lpt{2}}+(\norm{\chi}_{\tx{\infty}{4}}\norm{\kepb}_{\tx{\infty}{4}}\\
\nn&&+\norm{n^{-1}\nabb n}_{\tx{\infty}{2}}\norm{\trc}_{L^\infty}+\norm{\nabb\trc}_{\tx{\infty}{2}})\norm{\nabb U(\tau)\trc}_{L^\infty_uL^1_tL^2_\tau \lpt{\infty}}\Big)\\
\nn&\les& \ep,
\eea
where we used in the last inequality the estimate \eqref{estn} for $n$, the estimates \eqref{esttrc} \eqref{esthch} for $\chi$, the estimates \eqref{estn} \eqref{estk} for $\kepb$, and the estimates \eqref{zz5} and \eqref{zz6}. The energy estimate \eqref{heatF1} implies
$$\norm{\nabb V(\tau)}^2_{\lpt{2}}+\int_0^\tau\norm{\lap V(\tau')}^2_{\lpt{2}}d\tau'\lesssim \int_0^\tau\norm{[nL,\lap]U(\tau')\trc}^2_{\lpt{2}}d\tau'.$$
Taking the $L^\infty_uL^1_t$ norm, and using the estimate \eqref{zz7}, we obtain
$$\norm{\nabb V}_{L^\infty_uL^1_tL^2_\tau \lpt{2}}\les \ep,$$
which together with \eqref{zz3} yields the second part of the estimate \eqref{zz2}
\be\lab{zz8}
\norm{\nabb[nL,P_j]\trc}_{\tx{1}{2}}\les \ep.
\ee
Also, the energy estimate \eqref{heatF2} implies
\be\lab{zz9}
\norm{V(\tau)}^2_{\lpt{2}}+\int_0^\tau\norm{\nabb V(\tau')}^2_{\lpt{2}}d\tau'\lesssim\int_0^\tau\norm{V(\tau')}_{\lpt{2}}\norm{[nL,\lap]U(\tau')\trc}_{\lpt{2}} d\tau'.
\ee
Let
$$Y(\tau)=\int_0^\tau\norm{V(\tau')}_{\lpt{2}}\norm{[nL,\lap]U(\tau')\trc}_{\lpt{2}} d\tau'.$$
Then, \eqref{zz9} yields
$$Y'(\tau)\les \sqrt{Y(\tau)}\norm{[nL,\lap]U(\tau)\trc}_{\lpt{2}}.$$
Integrating in $\tau$ and using $Y(0)=0$, we obtain
$$\norm{V(\tau)}_{\lpt{2}}\les \left(\int_0^\tau\norm{[nL,\lap]U(\tau')\trc}_{\lpt{2}}d\tau'\right)\les \tau\norm{[nL,\lap]U(\cdot)\trc}_{L^2_\tau\lpt{2}}.$$
Together with \eqref{zz3}, this implies
\bee
\norm{[nL,P_j]\trc}_{\lpt{2}}&\les& \int_0^\infty m_j(\tau)\tau\norm{[nL,\lap]U(\tau)\trc}_{L^2_\tau\lpt{2}}d\tau\\
&\les& 2^{-j}\norm{[nL,\lap]U(\cdot)\trc}_{L^2_\tau\lpt{2}}.
\eee
Taking the $L^\infty_uL^1_t$ norm, and using the estimate \eqref{zz7}, we obtain the first part of the estimate \eqref{zz2}
\be\lab{zz10}
\norm{[nL,P_j]\trc}_{\tx{1}{2}}\les 2^{-j}\ep.
\ee
Finally, \eqref{zz8} and \eqref{zz10} yield the desired estimate \eqref{zz2}. Thus, it remains to prove the estimates \eqref{zz5} and \eqref{zz6}. 

We start with the proof of \eqref{zz6}. We have
\be\lab{zz11}
\norm{\nabb U(\tau)\trc}_{\lpt{\infty}}\les \sum_{j, l}\norm{P_j\nabb U(\tau)P_l\trc}_{\lpt{\infty}}.
\ee
We first consider the case $j<l$. Using the sharp Bernstein inequality for tensors \eqref{eq:strongberntensor} and the finite band property for $P_j$, we have
\bee
\norm{P_j\nabb U(\tau)P_l\trc}_{\lpt{\infty}}&\les& 2^j(1+\norm{K}_{\lpt{2}})\|P_j\nabb U(\tau)P_l\trc\|_{\lpt{2}}\\
&\les& 2^{2j}(1+\norm{K}_{\lpt{2}})\|U(\tau)P_l\trc\|_{\lpt{2}}.
\eee
Taking the $L^2_tL^2_\tau$ norm, we obtain
\bee
\norm{P_j\nabb U(\c)P_l\trc}_{L^2_tL^2_\tau\lpt{\infty}}&\les& 2^{2j}(1+\norm{K}_{\li{\infty}{2}})\|U(\c)P_l\trc\|_{L^\infty_tL^2_\tau\lpt{2}}\\
&\les& 2^{2j}\|\Lambda^{-1}P_l\trc\|_{\tx{\infty}{2}},
\eee
where we used in the last inequality the estimate \eqref{estgauss1} for $K$ and a heat flow estimate for $U(\tau)\trc$. Together with the finite band property for $P_j$ and the assumption $j<l$, we obtain
\be\lab{zz12}
\norm{P_j\nabb U(\c)P_l\trc}_{L^2_tL^2_\tau\lpt{\infty}}\les 2^{-2|l-j|}(2^l\| P_l\trc\|_{\tx{\infty}{2}}).
\ee
Next, we consider the case $l\geq j$. Using the sharp Bernstein inequality for tensors \eqref{eq:strongberntensor} and the finite band property for $P_j$, we have
\bea\lab{zz13:0}
\norm{P_j\nabb U(\tau)P_l\trc}_{\lpt{\infty}}&\les& 2^j(1+\norm{K}^{\frac{1}{2}}_{\lpt{2}})\|P_j\nabb U(\tau)P_l\trc\|_{\lpt{2}}\\
\nn&\les& (1+\norm{K}^{\frac{1}{2}}_{\lpt{2}})\|\nabb^2U(\tau)P_l\trc\|_{\lpt{2}}\\
\nn&\les& (1+\norm{K}^{\frac{1}{2}}_{\lpt{2}})\|\lap U(\tau)P_l\trc\|_{\lpt{2}},
\eea
where we used in the last inequality the Bochner inequality for scalars \eqref{eq:Bochconseqbis}. 
Also, using the sharp Bernstein inequality for tensors \eqref{eq:strongberntensor} and the finite band property for $P_j$, we have
\bea\lab{zz13}
&&\norm{P_j\nabb U(\tau)P_l\trc}_{\lpt{\infty}}\\
\nn&\les& 2^j(1+\norm{K}^{\frac{1}{2}}_{\lpt{2}})\|P_j\nabb U(\tau)P_l\trc\|_{\lpt{2}}\\
\nn&\les& 2^{-j}(1+\norm{K}^{\frac{1}{2}}_{\lpt{2}})\|P_j\lap\nabb U(\tau)P_l\trc\|_{\lpt{2}}\\
\nn&\les& 2^{-j}(1+\norm{K}^{\frac{1}{2}}_{\lpt{2}})(\|\nabb\lap U(\tau)P_l\trc\|_{\lpt{2}}+\|P_j([\lap,\nabb] U(\tau)P_l\trc)\|_{\lpt{2}}).
\eea
Using the commutator formula \eqref{commutptu}, the Bernstein inequality for $P_j$, the Gagliardo-Nirenberg inequality \eqref{eq:GNirenberg}, and the Bochner inequality for scalars \eqref{eq:Bochconseqbis}, we obtain
\bee
\|P_j(K\nabb U(\tau)P_l\trc)\|_{\lpt{2}}&\les& 2^{\frac{j}{2}}\norm{K\nabb U(\tau)P_l\trc}_{\lpt{\frac{4}{3}}}\\
&\les& 2^{\frac{j}{2}}\norm{K}_{\lpt{2}}\norm{\nabb U(\tau)P_l\trc}_{\lpt{4}}\\
&\les& 2^{\frac{j}{2}}\norm{K}_{\lpt{2}}\norm{\nabb^2U(\tau)P_l\trc}_{\lpt{2}}^{\frac{1}{2}}\norm{\nabb U(\tau)P_l\trc}_{\lpt{2}}^{\frac{1}{2}}\\
&\les& 2^{\frac{j}{2}}\norm{K}_{\lpt{2}}\norm{\lap U(\tau)P_l\trc}_{\lpt{2}}^{\frac{1}{2}}\norm{\nabb U(\tau)P_l\trc}_{\lpt{2}}^{\frac{1}{2}}.
\eee
Together with \eqref{zz13}, this yields
\bee
\norm{P_j\nabb U(\tau)P_l\trc}_{\lpt{\infty}}&\les& 2^{-j}(1+\norm{K}^{\frac{1}{2}}_{\lpt{2}})\Big(\|\nabb\lap U(\tau)P_l\trc\|_{\lpt{2}}\\
\nn&&+2^{\frac{j}{2}}\norm{K}_{\lpt{2}}\norm{\lap U(\tau)P_l\trc}_{\lpt{2}}^{\frac{1}{2}}\norm{\nabb U(\tau)P_l\trc}_{\lpt{2}}^{\frac{1}{2}}\Big).
\eee
Interpolating with \eqref{zz13:0}, we deduce
\bee
\norm{P_j\nabb U(\tau)P_l\trc}_{\lpt{\infty}}&\les& 2^{-\frac{j}{2}}(1+\norm{K}^{\frac{1}{2}}_{\lpt{2}})\|\lap U(\tau)P_l\trc\|_{\lpt{2}}^{\frac{1}{2}}\Big(\|\nabb\lap U(\tau)P_l\trc\|_{\lpt{2}}\\
\nn&&+2^{\frac{j}{2}}\norm{K}_{\lpt{2}}\norm{\lap U(\tau)P_l\trc}_{\lpt{2}}^{\frac{1}{2}}\norm{\nabb U(\tau)P_l\trc}_{\lpt{2}}^{\frac{1}{2}}\Big)^{\frac{1}{2}}.
\eee
Taking the $L^2_tL^2_\tau$ norm, we obtain
\bee
&&\norm{P_j\nabb U(\c)P_l\trc}_{L^2_tL^2_\tau\lpt{\infty}}\\
&\les& 2^{-\frac{j}{2}}(1+\norm{K}^{\frac{1}{2}}_{\li{\infty}{2}})\|\lap U(\tau)P_l\trc\|_{L^\infty_tL^2_\tau\lpt{2}}^{\frac{1}{2}}\Big(\|\nabb\lap U(\c)P_l\trc\|_{L^\infty_tL^2_\tau\lpt{2}}\\
\nn&&+2^{\frac{j}{2}}\norm{K}_{\li{\infty}{2}}\norm{\lap U(\c)P_l\trc}_{L^\infty_tL^2_\tau\lpt{2}}^{\frac{1}{2}}\norm{\nabb U(\tau)P_l\trc}_{L^\infty_tL^2_\tau\lpt{2}}^{\frac{1}{2}}\Big)^{\frac{1}{2}}\\
&\les& 2^{-\frac{j}{2}}\norm{\nabb P_l\trc}^{\frac{1}{2}}_{\tx{\infty}{2}}\Big(\norm{\lap P_l\trc}_{\li{\infty}{2}}+2^{\frac{j}{2}}\norm{\nabb P_l\trc}^{\frac{1}{2}}_{\tx{\infty}{2}}\norm{P_l\trc}^{\frac{1}{2}}_{\tx{\infty}{2}}\Big)^{\frac{1}{2}}, 
\eee
where we used in the last inequality the estimate \eqref{estgauss1} for $K$ and a heat flow estimate for $U(\tau)\trc$.
Together with the finite band property for $P_j$ and the assumption $l\leq j$, we obtain
\be\lab{zz14}
\norm{P_j\nabb U(\c)P_l\trc}_{L^2_tL^2_\tau\lpt{\infty}}\les 2^{-\frac{|l-j|}{4}}(2^l\| P_l\trc\|_{\tx{\infty}{2}}).
\ee
Finally, \eqref{zz11}, \eqref{zz12} for $l>j$ and \eqref{zz14} for $l\leq j$ yield
\be\lab{zz15}
\norm{\nabb U(\tau)\trc}_{L^2_tL^2_\tau\lpt{\infty}}\les \sum_{j, l}2^{-\frac{|l-j|}{4}}(2^l\| P_l\trc\|_{\tx{\infty}{2}})\les \norm{\trc}_{\BB^1},
\ee
where the Besov space $\BB^1$ has been defined in \eqref{eq:Besovnormt}. Now, in view of the estimate \eqref{linftybound}, and the estimates \eqref{esttrc} \eqref{esttrcbesov} for $\trc$, we have
$$\norm{\trc}_{\BB^1}\les \norm{\trc}_{\tx{\infty}{2}}+\norm{\nabb\trc}_{\BB^0}\les\ep.$$
Together with \eqref{zz15}, this implies \eqref{zz6}.

Next, we prove \eqref{zz5}. Recall the Bochner identity for scalars on $\ptu$ which is a 2-surface. For any scalar $f$ on $\ptu$, we have
$$\lap(|\nabb f|^2)=\nabb(\lap f)\c\nabb f+K|\nabb f|^2+|\nabb^2f|^2.$$
Choosing $f=U(\tau)\trc$, multiplying by $|\chi|^2$ and integrating over $\ptu$ yields
\bee
&&\int_{\ptu} |\chi|^2\lap(|\nabb U(\tau)\trc|^2)\\
\nn&=&\int_{\ptu} |\chi|^2\nabb(\lap U(\tau)\trc)\c\nabb U(\tau)+\int_{\ptu} K|\chi|^2|\nabb U(\tau)\trc|^2+\int_{\ptu} |\chi|^2|\nabb^2U(\tau)\trc|^2,
\eee
which implies after integration by parts
\bee
&&\norm{\chi\nabb^2U(\tau)\trc}^2_{\lpt{2}}\\
&= & \norm{\chi\lap U(\tau)\trc}^2_{\lpt{2}}-\int_{\ptu} K|\chi|^2|\nabb U(\tau)\trc|^2+\int_{\ptu} \chi\c\nabb\chi \lap U(\tau)\trc\c\nabb U(\tau)\trc\\
&&-\int_{\ptu} \chi\c\nabb\chi \nabb^2U(\tau)\trc\c\nabb U(\tau)\trc.
\eee
We deduce 
\bee
\norm{\chi\nabb^2U(\tau)\trc}^2_{\lpt{2}}&\les & \norm{\chi\lap U(\tau)\trc}^2_{\lpt{2}}+\norm{K}_{\lpt{2}}\norm{\chi}_{\tx{\infty}{4}}^2\norm{\nabb U(\tau)\trc}_{\lpt{\infty}}^2\\
&&+\norm{\chi\nabb^2U(\tau)\trc}_{\lpt{2}}\norm{\nabb\chi}_{\lpt{2}}\norm{\nabb U(\tau)\trc}_{\lpt{\infty}},
\eee
which yields
\bee
\norm{\chi\nabb^2U(\tau)\trc}_{\lpt{2}}&\les & \norm{\chi\lap U(\tau)\trc}_{\lpt{2}}+\norm{K}^{\frac{1}{2}}_{\lpt{2}}\norm{\nabb U(\tau)\trc}_{\lpt{\infty}}\\
&&+\norm{\nabb\chi}_{\lpt{2}}\norm{\nabb U(\tau)\trc}_{\lpt{\infty}},
\eee
where we used in the last inequality the estimates \eqref{esttrc} \eqref{esthch} for $\chi$. Taking the $L^1_tL^2_\tau$ norm, we obtain
\bea\lab{zz16}
&&\norm{\chi\nabb^2U(\tau)\trc}_{L^1_tL^2_\tau\lpt{2}}\\
\nn&\les & \norm{\chi\lap U(\tau)\trc}_{L^1_tL^2_\tau\lpt{2}}+\norm{K}^{\frac{1}{2}}_{\li{\infty}{2}}\norm{\nabb U(\tau)\trc}_{L^2_tL^2_\tau\lpt{\infty}}\\
\nn&&+\norm{\nabb\chi}_{\li{\infty}{2}}\norm{\nabb U(\tau)\trc}_{L^2_tL^2_\tau\lpt{\infty}}\\
\nn&\les& \norm{\chi\lap U(\tau)\trc}_{L^1_tL^2_\tau\lpt{2}}+\ep,
\eea
where we used in the last inequality the estimates \eqref{esttrc} \eqref{esthch} for $\chi$, the estimate \eqref{estgauss1} for $K$, and the estimate \eqref{zz6} for $\nabb U(\tau)\trc$. Next, we estimate the right-hand side of \eqref{zz16}. We multiply the heat equation satisfied by $U(\tau)\trc$ by $|\chi|^2\lap U(\tau)\trc$ and we integrate over $\ptu$. We obtain
$$\frac{1}{2}\frac{d}{d\tau}\norm{\chi\nabb U(\tau)\trc}_{\lpt{2}}^2+\norm{\chi\lap U(\tau)\trc}_{\lpt{2}}^2=\int_{\ptu}\chi\c\nabb\chi\c\nabb U(\tau)\trc  U(\tau)\trc\dmt.$$
This yields
\bee
&&\norm{\chi\lap U(\tau)\trc}_{L^2_\tau\lpt{2}}^2\\
&\les& \norm{\chi\nabb\trc}_{\lpt{2}}^2+\norm{\nabb\chi}_{\lpt{2}}\norm{\chi}_{\tx{\infty}{4}}\norm{\nabb U(\c)\trc}_{L^2_\tau\lpt{4}}\norm{U(\c)\trc}_{L^2_\tau \lpt{\infty}}\\
&\les& \norm{\chi\nabb\trc}_{\lpt{2}}^2+\norm{\nabb\chi}_{\lpt{2}}\norm{\nabb^2U(\c)\trc}^{\frac{1}{2}}_{L^2_\tau\lpt{2}}\norm{\nabb U(\c)\trc}_{L^2_\tau\lpt{2}}^{\frac{1}{2}}\norm{\trc}_{L^\infty},
\eee
where we used in the last inequality the estimates \eqref{esttrc} \eqref{esthch} for $\chi$, the Gagliardo-Nirenberg inequality \eqref{eq:GNirenberg} and the fact that the heat flow $U(\tau)$ is bounded on $\lpt{\infty}$ (see for example \cite{LP} for a proof). Using the the Bochner inequality for scalars \eqref{eq:Bochconseqbis} and heat flow estimates for $U(\tau)\trc$, we obtain
\bee
&&\norm{\chi\lap U(\tau)\trc}_{L^2_\tau\lpt{2}}^2\\
&\les& \norm{\chi\nabb\trc}_{\lpt{2}}^2+\norm{\nabb\chi}_{\lpt{2}}\norm{\nabb\trc}^{\frac{1}{2}}_{\tx{\infty}{2}}\norm{\trc}_{\tx{\infty}{2}}^{\frac{1}{2}}\norm{\trc}_{L^\infty}\\
&\les& \norm{\chi\nabb\trc}_{\lpt{2}}^2+\norm{\nabb\chi}_{\lpt{2}}\ep,
\eee
where we used in the last inequality the estimate \eqref{esttrc} for $\trc$. Integrating in time, this yields
\bee
\norm{\chi\lap U(\tau)\trc}_{L^2_tL^2_\tau\lpt{2}}&\les& \norm{\chi\nabb\trc}_{\li{\infty}{2}}+\norm{\nabb\chi}_{\li{\infty}{2}}+\ep\\
&\les&\ep,
\eee
where we used in the last inequality the estimates \eqref{esttrc} \eqref{esthch} for $\chi$. Together with \eqref{zz16}, we finally obtain 
$$\norm{\chi\nabb^2U(\tau)\trc}_{L^1_tL^2_\tau\lpt{2}}\les\ep.$$
Taking the supremum in $u$ yields \eqref{zz5}. This concludes the proof of the proposition.

\subsection{Proof of Proposition \ref{prop:gowinda15}}\lab{sec:gowinda15}

The proof of the estimate \eqref{zz18} being similar and slightly easier than the proof of \eqref{zz17}, we focus on \eqref{zz17}. In view of \eqref{ad24} \eqref{ad25}, we have:
\begin{equation}\label{zz19}
[bN,P_j]\trc=\int_0^\infty m_j(\tau)V(\tau) d\tau,
\end{equation}
where $V(\tau)$ satisfies:
\be\lab{zz20}
(\partial_{\tau}-\lap)V(\tau)=[bN,\lap]U(\tau)\trc,\,V(0)=0.
\ee
Assume that $V$ satisfies for all $\tau$
\be\label{zz21}
\norm{V(\tau)}_{\lh{2}}\les \ep\tau^{\frac{1}{4}},
\ee
and
\be\label{zz22}
\norm{\La^{\frac{1}{2}}V(\tau)}^2_{\lh{2}}+\int_0^\tau\norm{\nabb\La^{\frac{1}{2}} V(\tau')}^2_{\lh{2}}d\tau'\les \ep^2.
\ee
Then, first note in view of the interpolation inequality \eqref{interpolLa}, that
$$\norm{\nabb V(\tau)}_{\lpt{2}}\les \norm{\La^{\frac{1}{2}}V(\tau)}_{\lpt{2}}^{\frac{1}{2}}\norm{\nabb\La^{\frac{1}{2}}V(\tau)}_{\lpt{2}}^{\frac{1}{2}}$$
which together with \eqref{zz22} implies
\be\lab{zz23}
\norm{\nabb V(\c)}_{L^4_\tau\lh{2}}\les\norm{\La^{\frac{1}{2}} V(\c)}_{L^\infty_\tau\lh{2}}^{\frac{1}{2}}\norm{\nabb\La^{\frac{1}{2}} V(\c)}_{L^2_\tau\lh{2}}^{\frac{1}{2}}\les \ep.
\ee
Then, in view of \eqref{zz19}, \eqref{zz21} and \eqref{zz23}, we obtain
\bee
&&2^{\frac{j}{2}}\norm{[bN, P_j]\trc}_{\lh{2}}+2^{-\frac{j}{2}}\norm{\nabb[bN, P_j]\trc}_{\lh{2}}\\
&\les& 2^{\frac{j}{2}}\int_0^\infty m_j(\tau)\norm{V(\tau)}_{\lh{2}} d\tau+2^{-\frac{j}{2}}\int_0^\infty m_j(\tau)\norm{\nabb V(\tau)}_{\lh{2}} d\tau\\
&\les& 2^{\frac{j}{2}}\ep\left(\int_0^\infty m_j(\tau)\tau^{\frac{1}{4}} d\tau\right)+2^{-\frac{j}{2}}\ep\left(\int_0^\infty m^{\frac{4}{3}}_j(\tau)d\tau\right)^{\frac{3}{4}}\\
&\les& \ep,
\eee
which after taking the supremum in $u$ yields \eqref{zz17}. Thus, it remains to prove \eqref{zz21} and \eqref{zz22}. 

We start with the proof of \eqref{zz21}. The energy estimate \eqref{heatF2} implies
\begin{equation}\label{zz24}
 \ds\norm{V(\tau)}^2_{\lpt{2}}+\int_0^\tau\norm{\nabb V(\tau')}^2_{\lpt{2}}d\tau'\lesssim\ds\int_0^\tau\int_{\ptu} V(\tau')[bN,\lap]U(\tau')\trc\dmt.
\end{equation}
We need to estimate the commutator term $[bN,\lap]U$. Recall from \eqref{lxx3:9} and \eqref{lxx3:10} that we have
\be\lab{zz25}
[bN,\lap]U=H\nabb^2U+G\nabb U
\ee
where the tensors $H$ and $G$ satisfy
\be\lab{zz26}
\no(H)+\norm{G}_{\li{\infty}{2}}\les\ep.
\ee
In view of \eqref{zz25}, and integrating by parts the term $\nabb^2U$, we obtain:
\bee
&&\int_0^\tau\int_{\H_u} V(\tau')[bN,\lap]U(\tau')f\dmt d\tau'\\
\nn&\lesssim& \int_0^\tau \norm{H}_{\tx{\infty}{4}}\norm{\nabb U(\tau')}_{\lpt{4}}\norm{\nabb V(\tau')}_{\lpt{2}}d\tau'\\
\nn&&+\int_0^\tau (\norm{\nabb H}_{\lpt{2}}+\norm{G}_{\lpt{2}})\norm{\nabb U(\tau')}_{\lpt{4}}\norm{V(\tau')}_{\lpt{4}}d\tau'.
\eee
Together with \eqref{zz24} and the Gagliardo-Nirenberg inequality \eqref{eq:GNirenberg}, this yields
\bea\lab{zz27}
&&\norm{V(\tau)}^2_{\lpt{2}}+\int_0^\tau\norm{\nabb V(\tau')}^2_{\lpt{2}}d\tau'\\
\nn&\lesssim& (\norm{H}_{\tx{\infty}{4}}^2+\norm{\nabb H}_{\lpt{2}}^2+\norm{G}_{\lpt{2}}^2)\int_0^\tau\norm{\nabb^2 U(\tau')}_{\lpt{2}}\norm{\nabb U(\tau')}_{\lpt{2}}d\tau'\\
\nn&\lesssim& (\ep^2+\norm{\nabb H}_{\lpt{2}}^2+\norm{G}_{\lpt{2}}^2)\int_0^\tau\norm{\lap U(\tau')}_{\lpt{2}}\norm{\nabb U(\tau')}_{\lpt{2}}d\tau',
\eea
where we used in the last inequality the estimate \eqref{zz26} and the Bochner inequality for scalars \eqref{eq:Bochconseqbis}. Now, the heat flow estimate \eqref{eq:l2heatnab} yield:
\be\lab{zz28}
\norm{\nabb U(\tau)}_{\lpt{2}}^2+\int_0^\tau \norm{\lap U(\tau')}^2_{\lpt{2}}d\tau'\les \norm{\nabb\trc}_{\tx{\infty}{2}}^2\les\ep^2,
\ee
where we used in the last inequality the estimate \eqref{esttrc} for $\trc$. Together with \eqref{zz27}, we obtain
$$
\norm{V(\tau)}^2_{\lpt{2}}+\int_0^\tau\norm{\nabb V(\tau')}^2_{\lpt{2}}d\tau'\lesssim\ep^2\tau^{\frac{1}{2}}(\ep+\norm{\nabb H}_{\lpt{2}}^2+\norm{G}_{\lpt{2}}^2).$$
Integrating in time, and using the estimate \eqref{zz26} yields \eqref{zz21}.

Next, we prove \eqref{zz22}. The energy estimate \eqref{eq:l2heat1bis} implies:
\bee
&&\norm{\La^\frac{1}{2}V(\tau)}^2_{\lpt{2}}+\int_0^\tau\norm{\nabb\La^\frac{1}{2}V(\tau')}^2_{\lpt{2}}d\tau'\\
\nn&= &\ds\int_0^\tau\int_{\ptu} \La V(\tau')[bN,\lap]U(\tau')\trc\dmt d\tau'\\
\nn&\les& \int_0^\tau\norm{\La^{\frac{3}{2}}V(\tau')}_{\lpt{2}}\norm{\La^{-\frac{1}{2}}([bN,\lap]U(\tau')\trc)}_{\lpt{2}}d\tau'\\
\nn&\les& \int_0^\tau\norm{\nabb\La^{\frac{1}{2}}V(\tau')}_{\lpt{2}}\norm{[bN,\lap]U(\tau')\trc}_{\lpt{\frac{3}{2}}}d\tau',
\eee
where we used \eqref{La6} in the last inequality. This yields
\be\label{zz29}
\norm{\La^\frac{1}{2}V(\tau)}^2_{\lpt{2}}+\int_0^\tau\norm{\nabb\La^\frac{1}{2}V(\tau')}^2_{\lpt{2}}d\tau'\les \int_0^\tau\norm{[bN,\lap]U(\tau')\trc}_{\lpt{\frac{3}{2}}}^2d\tau'.
\ee
In view of \eqref{zz25}, we have
\bee
&&\norm{[bN,\lap]U(\tau')\trc}_{\lpt{\frac{3}{2}}}\\
&\les& \norm{H}_{\lpt{6}}\norm{\nabb^2U(\tau')\trc}_{\lpt{2}}+\norm{G}_{\lpt{2}}\norm{\nabb U(\tau')\trc}_{\lpt{6}}\\
&\les& \norm{H}_{\lpt{6}}\norm{\nabb^2U(\tau')\trc}_{\lpt{2}}+\norm{G}_{\lpt{2}}\norm{\nabb^2U(\tau')\trc}_{\lpt{2}}^{\frac{2}{3}}\norm{\nabb U(\tau')\trc}_{\lpt{2}}^{\frac{1}{3}}\\
&\les& \norm{H}_{\lpt{6}}\norm{\lap U(\tau')\trc}_{\lpt{2}}+\norm{G}_{\lpt{2}}\norm{\lap U(\tau')\trc}_{\lpt{2}}^{\frac{2}{3}}\norm{\nabb U(\tau')\trc}_{\lpt{2}}^{\frac{1}{3}},
\eee
where we used the Gagliardo-Nirenberg inequality \eqref{eq:GNirenberg} and the Bochner inequality for scalars \eqref{eq:Bochconseqbis}. Taking the $L^2_\tau$ norm and using \eqref{zz28} implies
$$\norm{[bN,\lap]U(\tau')\trc}_{L^2_\tau\lpt{\frac{3}{2}}}\les \ep(\norm{H}_{\lpt{6}}+\norm{G}_{\lpt{2}}).$$
Now, taking the $L^2_t$ norm and using \eqref{zz26} yields
\be\lab{zz30}
\norm{[bN,\lap]U(\tau')\trc}_{L^2_tL^2_\tau\lpt{\frac{3}{2}}}\les \ep.
\ee
Finally, integrating \eqref{zz29} in $t$, and injecting \eqref{zz30}, we obtain \eqref{zz22}. This concludes the proof of the proposition.

\subsection{Proof of Proposition \ref{prop:gowinda16}}\lab{sec:gowinda16}

We have:
\begin{equation}\label{zz32}
[\nabb,P_j]\trc=\int_0^\infty m_j(\tau)V(\tau) d\tau,
\end{equation}
where $V(\tau)$ satisfies:
\be\lab{zz33}
(\partial_{\tau}-\lap)V(\tau)=[\nabb,\lap]U(\tau)\trc,\,V(0)=0.
\ee
Assume that $V$ satisfies for all $\tau$ 
\be\label{zz34}
\norm{\La^\frac{3}{4}V(\tau)}_{\lh{2}}\les \ep. 
\ee
Then, using the Bernstein inequality for $P_j$, we have
\bee
\norm{V(\tau)}_{\tx{2}{4}}&\les& \sum_j\norm{P_jV(\tau)}_{\tx{2}{4}}\\
&\les& \sum_j 2^{\frac{j}{2}}\norm{P_jV(\tau)}_{\lh{2}}\\
&\les& \left(\sum_j 2^{-\frac{j}{4}}\right)\norm{\La^\frac{3}{4}V(\tau)}_{\lh{2}}\\
&\les&\ep,
\eee
where we used in the last inequality \eqref{zz34}. Together with \eqref{zz32}, we obtain
$$\norm{[\nabb, P_j]\trc}_{\tx{2}{4}}\les \int_0^\infty m_j(\tau)\norm{V(\tau)}_{\tx{2}{4}} d\tau\les\ep,$$
which is the desired estimate \eqref{zz31}. Thus, it remains to prove \eqref{zz34}. 

The energy estimate \eqref{eq:l2heat1bis} implies:
\bee
&&\norm{\La^\frac{3}{4} V(\tau)}^2_{\lpt{2}}+\int_0^\tau\norm{\nabb\La^\frac{3}{4} V(\tau')}^2_{\lpt{2}}d\tau'\\
\nn&= &\ds\int_0^\tau\int_{\ptu} \La^\frac{3}{2} V(\tau')[\nabb,\lap]U(\tau')\trc\dmt d\tau'\\
\nn&\les& \int_0^\tau\norm{\La^{1+\frac{3}{4}}V(\tau')}_{\lpt{2}}\norm{\La^{-\frac{1}{4}}([\nabb,\lap]U(\tau')\trc)}_{\lpt{2}}d\tau'\\
\nn&\les& \int_0^\tau\norm{\nabb\La^{\frac{3}{4}}V(\tau')}_{\lpt{2}}\norm{[\nabb,\lap]U(\tau')\trc}_{\lpt{\frac{5}{3}}}d\tau',
\eee
where we used \eqref{La6} in the last inequality. This yields
\be\label{zz35}
\norm{\La^\frac{3}{4}V(\tau)}^2_{\lpt{2}}+\int_0^\tau\norm{\nabb\La^\frac{3}{4}V(\tau')}^2_{\lpt{2}}d\tau'\les \int_0^\tau\norm{[\nabb,\lap]U(\tau')\trc}_{\lpt{\frac{5}{3}}}^2d\tau'.
\ee
Now, in view of the commutator formula \eqref{commutptu}, we have
\bee
\norm{[\nabb,\lap]U(\tau')\trc}_{\lpt{\frac{5}{3}}}&\les& \norm{K\nabb U(\tau')\trc}_{\lpt{\frac{5}{3}}}\\
&\les& \norm{K}_{\lpt{2}}\norm{\nabb U(\tau')\trc}_{\lpt{10}}\\
&\les& \norm{K}_{\lpt{2}}\norm{\nabb^2U(\tau')\trc}_{\lpt{2}}^{\frac{4}{5}}\norm{\nabb U(\tau')\trc}_{\lpt{2}}^{\frac{1}{5}}\\
&\les& \norm{K}_{\lpt{2}}\norm{\lap U(\tau')\trc}_{\lpt{2}}^{\frac{4}{5}}\norm{\nabb U(\tau')\trc}_{\lpt{2}}^{\frac{1}{5}},
\eee
where we used the Gagliardo-Nirenberg inequality \eqref{eq:GNirenberg} and the Bochner inequality for scalars \eqref{eq:Bochconseqbis}. Taking the $L^2_\tau$ norm and using \eqref{zz28} implies
$$\norm{[\nabb,\lap]U(\tau')\trc}_{L^2_\tau\lpt{\frac{5}{3}}}\les \ep\norm{K}_{\lpt{2}}.$$
Now, taking the $L^2_t$ norm and using the estimate \eqref{estgauss1} for $K$ yields
\be\lab{zz36}
\norm{[\nabb,\lap]U(\tau')\trc}_{L^2_tL^2_\tau\lpt{\frac{5}{3}}}\les \ep.
\ee
Finally, integrating \eqref{zz35} in $t$, and injecting \eqref{zz36}, we obtain \eqref{zz34}. This concludes the proof of the proposition.

\subsection{Proof of Lemma \ref{lemma:zz41}}\lab{sec:gowinda17}

We have:
\begin{equation}\label{zz61}
[P_{>j}, P_{\leq j}(h)]F=\int_0^\infty m_{>j}(\tau)V(\tau) d\tau,
\end{equation}
where $V(\tau)$ satisfies:
\be\lab{zz62}
(\partial_{\tau}-\lap)V(\tau)=\lap P_{\leq j}(h) U(\tau)F+\nabb P_{\leq j}(h)\c\nabb U(\tau)F,\,V(0)=0.
\ee
Assume that $V$ satisfies for all $\tau$ 
\be\label{zz63}
\norm{V(\tau)}_{\lpt{2}}\les (1+2^j\sqrt{\tau}+2^{\frac{3j}{2}}\tau^{\frac{3}{4}})\norm{\nabb h}_{\lpt{2}}\norm{F}_{\lpt{2}}. 
\ee
Then, \eqref{zz61} and \eqref{zz63} imply
\bee
\norm{[P_{>j}, P_{\leq j}(h)]F}_{\lpt{2}}&\les& \int_0^\infty m_{>j}(\tau)\norm{V(\tau)}_{\lpt{2}} d\tau\\
&\les& \left(\int_0^\infty m_{>j}(\tau)(1+2^j\sqrt{\tau}+2^{\frac{3j}{2}}\tau^{\frac{3}{4}}) d\tau\right)\norm{\nabb h}_{\lpt{2}}\norm{F}_{\lpt{2}}\\
&\les& \norm{\nabb h}_{\lpt{2}}\norm{F}_{\lpt{2}}
\eee
which is the desired estimate \eqref{zz41}. Thus, it remains to prove \eqref{zz63}. 

The energy estimate \eqref{heatF2} implies
\bee
&& \norm{V(\tau)}^2_{\lpt{2}}+\int_0^\tau\norm{\nabb V(\tau')}^2_{\lpt{2}}d\tau'\\
 &\lesssim&\ds\int_0^\tau\int_{\ptu} V(\tau')\Big(\lap P_{\leq j}(h) U(\tau)F+\nabb P_{\leq j}(h)\c\nabb U(\tau)F\Big)\dmt\\
&\lesssim&\int_0^\tau \norm{\lap P_{\leq j}(h)}_{\lpt{4}}\norm{U}_{\lpt{4}}\norm{V}_{\lpt{2}}+ \norm{\nabb P_{\leq j}(h)}_{\lpt{4}}\norm{\nabb U}_{\lpt{2}}\norm{V}_{\lpt{4}}\\
&\les& \int_0^\tau \norm{\nabb\lap P_{\leq j}(h)}_{\lpt{2}}^{\frac{1}{2}}\norm{\lap P_{\leq j}(h)}_{\lpt{2}}^{\frac{1}{2}}\norm{\nabb U}_{\lpt{2}}^\frac{1}{2}\norm{U}_{\lpt{2}}^\frac{1}{2}\norm{V}_{\lpt{2}}\\
&&+\int_0^\tau \norm{\nabb^2P_{\leq j}(h)}_{\lpt{2}}^\frac{1}{2}\norm{\nabb P_{\leq j}(h)}_{\lpt{2}}^\frac{1}{2}\norm{\nabb U}_{\lpt{2}}\norm{\nabb V}_{\lpt{2}}^\frac{1}{2}\norm{V}_{\lpt{2}}^\frac{1}{2}
\eee
where we used in the last inequality the Gagliardo-Nirenberg inequality \eqref{eq:GNirenberg}. Together with the Bochner inequality for scalars \eqref{eq:Bochconseqbis} and the finite band property for $P_j$, we obtain
\bee
&& \norm{V(\tau)}^2_{\lpt{2}}+\int_0^\tau\norm{\nabb V(\tau')}^2_{\lpt{2}}d\tau'\\
&\les& 2^{\frac{3j}{2}}\int_0^\tau \norm{\nabb h}_{\lpt{2}}\norm{\nabb U}_{\lpt{2}}^\frac{1}{2}\norm{U}_{\lpt{2}}^\frac{1}{2}\norm{V}_{\lpt{2}}\\
&&+\int_0^\tau \norm{\lap P_{\leq j}(h)}_{\lpt{2}}^\frac{1}{2}\norm{\nabb h}_{\lpt{2}}^\frac{1}{2}\norm{\nabb U}_{\lpt{2}}\norm{\nabb V}_{\lpt{2}}^\frac{1}{2}\norm{V}_{\lpt{2}}^\frac{1}{2}\\
&\les& 2^{\frac{3j}{2}}\int_0^\tau \norm{\nabb h}_{\lpt{2}}\norm{\nabb U}_{\lpt{2}}^\frac{1}{2}\norm{U}_{\lpt{2}}^\frac{1}{2}\norm{V}_{\lpt{2}}\\
&&+2^{\frac{j}{2}}\int_0^\tau \norm{\nabb h}_{\lpt{2}}\norm{\nabb U}_{\lpt{2}}\norm{\nabb V}_{\lpt{2}}^\frac{1}{2}\norm{V}_{\lpt{2}}^\frac{1}{2}\\
&\les & 2^{\frac{3j}{2}}\int_0^\tau \norm{\nabb h}_{\lpt{2}}\norm{\nabb U}_{\lpt{2}}^\frac{1}{2}\norm{U}_{\lpt{2}}^\frac{1}{2}\norm{V}_{\lpt{2}} \\
&&+2^j\int_0^\tau \norm{\nabb h}_{\lpt{2}}\norm{\nabb U}_{\lpt{2}}\norm{V}_{\lpt{2}}\\
&&+\int_0^\tau \norm{\nabb h}_{\lpt{2}}\norm{\nabb U}_{\lpt{2}}\norm{\nabb V}_{\lpt{2}}
\eee
This yields
\bee
\norm{V(\tau)}^2_{\lpt{2}}&\les & \norm{\nabb h}_{\lpt{2}}^2\int_0^\tau \norm{\nabb U}_{\lpt{2}}^2\\
&&+\int_0^\tau \norm{\nabb h}_{\lpt{2}}\Big(2^{\frac{3j}{2}}\norm{\nabb U}_{\lpt{2}}^\frac{1}{2}\norm{U}_{\lpt{2}}^\frac{1}{2}+2^j\norm{\nabb U}_{\lpt{2}}\Big)\norm{V}_{\lpt{2}}
\eee
which together with the heat flow estimate \eqref{eq:l2heat1} and the fact that $U(0)=F$ implies
\bee
\norm{V(\tau)}^2_{\lpt{2}}&\les & \norm{\nabb h}_{\lpt{2}}^2\norm{F}_{\lpt{2}}^2\\
&&+\int_0^\tau \norm{\nabb h}_{\lpt{2}}\Big(2^{\frac{3j}{2}}\norm{\nabb U}_{\lpt{2}}^\frac{1}{2}\norm{U}_{\lpt{2}}^\frac{1}{2}+2^j\norm{\nabb U}_{\lpt{2}}\Big)\norm{V}_{\lpt{2}}.
\eee
Integrating this differential inequality, we obtain
\bea\lab{zz64}
\norm{V(\tau)}^2_{\lpt{2}}&\les & \norm{\nabb h}_{\lpt{2}}^2\norm{F}_{\lpt{2}}^2\\
\nn&&+2^{2j}\norm{\nabb h}^2_{\lpt{2}}\Bigg(\int_0^\tau \Big(2^{\frac{j}{2}}\norm{\nabb U}_{\lpt{2}}^\frac{1}{2}\norm{U}_{\lpt{2}}^\frac{1}{2}+\norm{\nabb U}_{\lpt{2}}\Big)\Bigg)^2\\
\nn&\les & \norm{\nabb h}_{\lpt{2}}^2\norm{F}_{\lpt{2}}^2\\
\nn&&+2^{2j}\norm{\nabb h}^2_{\lpt{2}}\tau\Bigg(\int_0^\tau \Big(2^j\norm{\nabb U}_{\lpt{2}}\norm{U}_{\lpt{2}}+\norm{\nabb U}_{\lpt{2}}^2\Big)\Bigg).
\eea
Now, the heat flow estimate \eqref{eq:l2heat1} and the fact that $U(0)=F$ implies
\bee
&&\int_0^\tau \Big(2^j\norm{\nabb U}_{\lpt{2}}\norm{U}_{\lpt{2}}+\norm{\nabb U}_{\lpt{2}}^2\Big)\\
&\les& 2^j\sqrt{\tau}\sup_\tau\norm{U(\tau)}_{\lpt{2}}\left(\int_0^\tau\norm{\nabb U}_{\lpt{2}}^2\right)+\norm{F}^2_{\lpt{2}}\\
&\les& (1+2^j\sqrt{\tau})\norm{F}^2_{\lpt{2}}
\eee
which together with \eqref{zz64} yields the desired estimate \eqref{zz63}. This concludes the proof of the lemma. 

\subsection{Proof of Lemma \ref{lemma:zz42}}\lab{sec:gowinda18}

Let $V(\tau)$ defined in \eqref{zz62}. Assume that $V$ satisfies for all $\tau$ 
\be\label{zz65}
\norm{\nabb V(\tau)}_{\lpt{2}}\les 2^j((1+2^{\frac{j}{2}}\tau^{\frac{1}{4}})\norm{\nabb h}_{\lpt{2}}+\norm{K}_{\lpt{2}}\norm{h}_{\lpt{2}})\norm{F}_{\lpt{2}}. 
\ee
Then, \eqref{zz61} and \eqref{zz65} imply
\bee
&&\norm{\nabb [P_{>j}, P_{\leq j}(h)]F}_{\lpt{2}}\\
&\les& \int_0^\infty m_{>j}(\tau)\norm{\nabb V(\tau)}_{\lpt{2}} d\tau\\
&\les& 2^j\left(\int_0^\infty m_{>j}(\tau)((1+2^{\frac{j}{2}}\tau^{\frac{1}{4}})\norm{\nabb h}_{\lpt{2}}+\norm{K}_{\lpt{2}}\norm{h}_{\lpt{2}}) d\tau\right)\norm{F}_{\lpt{2}}\\
&\les& 2^j(\norm{\nabb h}_{\lpt{2}}+\norm{K}_{\lpt{2}}\norm{h}_{\lpt{2}})\norm{F}_{\lpt{2}}
\eee
which is the desired estimate \eqref{zz42}. Thus, it remains to prove \eqref{zz65}. 

The energy estimate \eqref{heatF1} implies
\bee
&& \norm{\nabb V(\tau)}^2_{\lpt{2}}+\int_0^\tau\norm{\lap V(\tau')}^2_{\lpt{2}}d\tau'\\
 &\lesssim&\ds\int_0^\tau\int_{\ptu} \lap V(\tau')\Big(\lap P_{\leq j}(h) U(\tau)F+\nabb P_{\leq j}(h)\c\nabb U(\tau)F\Big)\dmt d\tau.
 \eee
 This yields
 \bee
&& \norm{\nabb V(\tau)}^2_{\lpt{2}}\\ 
 &\lesssim&\int_0^\tau\Big(\norm{\lap P_{\leq j}(h)}^2_{\lpt{4}} \norm{U(\tau)F}^2_{\lpt{4}}+\norm{\nabb P_{\leq j}(h)}_{\lpt{\infty}}^2\norm{\nabb U(\tau)F}^2_{\lpt{2}}\Big)\\
&\lesssim&\int_0^\tau \Big(\norm{\nabb\lap P_{\leq j}(h)}_{\lpt{2}}\norm{\lap P_{\leq j}(h)}_{\lpt{2}}\norm{\nabb U}_{\lpt{2}}\norm{U}_{\lpt{2}}\\
&&+\norm{\nabb P_{\leq j}(h)}^2_{\lpt{\infty}}\norm{\nabb U(\tau)F}^2_{\lpt{2}}\Big)
\eee
where we used in the last inequality the Gagliardo-Nirenberg inequality \eqref{eq:GNirenberg}. Together with the Bochner inequality for scalars \eqref{eq:Bochconseqbis} and the finite band property for $P_j$, we obtain
\bee
 \norm{\nabb V(\tau)}^2_{\lpt{2}}&\lesssim&\int_0^\tau \Big(2^{3j}\norm{\nabb h}_{\lpt{2}}^2\norm{\nabb U}_{\lpt{2}}\norm{U}_{\lpt{2}}\\
&&+\norm{\nabb P_{\leq j}(h)}^2_{\lpt{\infty}}\norm{\nabb U(\tau)F}^2_{\lpt{2}}\Big).
\eee
Together with the heat flow estimate \eqref{eq:l2heat1} and the fact that $U(0)=F$, this yields
\bea\lab{zz66}
 \norm{\nabb V(\tau)}^2_{\lpt{2}}&\lesssim& 2^{3j}\norm{\nabb h}_{\lpt{2}}^2\sqrt{\tau}\sup_\tau\norm{U}_{\lpt{2}}\left(\int_0^\tau\norm{\nabb U}^2_{\lpt{2}}\right)^\frac{1}{2}\\ 
 \nn&&+\norm{\nabb P_{\leq j}(h)}^2_{\lpt{\infty}}\norm{F}^2_{\lpt{2}}\\
 \nn&\les& \Big(2^{3j}\sqrt{\tau}\norm{\nabb h}_{\lpt{2}}^2+\norm{\nabb P_{\leq j}(h)}^2_{\lpt{\infty}}\Big)\norm{F}^2_{\lpt{2}}.
\eea
Now, using \eqref{yo} with the choice $f=P_{\leq j}(h)$ yields
\bee
\norm{\nabb P_{\leq j}(h)}^2_{\lpt{\infty}}&\les& \norm{\lap P_{\leq j}(h)}_{\lpt{2}}+\norm{\nabb\lap P_{\leq j}(h)}_{\lpt{2}}^\frac{1}{2}\norm{\nabb P_{\leq j}(h)}_{\lpt{2}}^{\frac{1}{2}}\\
&&+\norm{K}_{\lpt{2}}\norm{\nabb P_{\leq j}(h)}_{\lpt{2}}\\
&\les& 2^j\Big(\norm{\nabb h}_{\lpt{2}}+\norm{K}_{\lpt{2}}\norm{h}_{\lpt{2}}\Big),
\eee
where we used in the last inequality the finite band property for $P_j$. Together with \eqref{zz66}, this yields the desired estimate \eqref{zz65}. This concludes the proof of the lemma.

\subsection{Proof of Lemma \ref{lemma:zz43}}\lab{sec:gowinda19}

We have:
\begin{equation}\label{zz67}
[\nabb,P_{\leq j}]h=\int_0^\infty m_{\leq j}(\tau)V(\tau) d\tau,
\end{equation}
where $V(\tau)$ satisfies:
\be\lab{zz68}
(\partial_{\tau}-\lap)V(\tau)=[\nabb,\lap]U(\tau)h,\,V(0)=0.
\ee
Assume that $V$ satisfies for all $\tau$ and for all $a>0$
\be\lab{zz69}
\norm{V(\tau)}_{\lpt{2}}\les \norm{K}_{\lpt{2}}(\norm{K}_{\lpt{2}}\norm{h}_{\lpt{2}}+\norm{\La^ah}_{\lpt{2}}).
\ee
Then, in view of \eqref{zz67}, we obtain for all $a>0$
\bee
\norm{[P_{\leq j}, \nabb]h}_{\lpt{2}}&\les& \int_0^{+\infty}m_{\leq j}(\tau)\norm{V(\tau)}_{\lpt{2}}d\tau\\
&\les& \left(\int_0^{+\infty}m_{\leq j}(\tau)d\tau\right)\norm{K}_{\lpt{2}}(\norm{K}_{\lpt{2}}\norm{h}_{\lpt{2}}+\norm{\La^ah}_{\lpt{2}})\\
&\les& \norm{K}_{\lpt{2}}(\norm{K}_{\lpt{2}}\norm{h}_{\lpt{2}}+\norm{\La^ah}_{\lpt{2}})
\eee
which is the desired estimate \eqref{zz43}. Thus, it remains to prove \eqref{zz69}. 

The energy estimate \eqref{heatF2}, together with the commutator formula \eqref{commutptu}, implies
\bee
&&\norm{V(\tau)}^2_{\lpt{2}}+\int_0^\tau\norm{\nabb V(\tau')}^2_{\lpt{2}}d\tau'\\
&\lesssim&\ds\int_0^\tau\norm{V(\tau')}_{\lpt{2}}\norm{K}_{\lpt{2}}\norm{\nabb U(\tau')h}_{\lpt{\infty}}d\tau'.
\eee
Integrating this differential inequality, we obtain
\bea\lab{zz70}
\norm{V(\tau)}^2_{\lpt{2}}+\int_0^\tau\norm{\nabb V(\tau')}^2_{\lpt{2}}&\les& \norm{K}_{\lpt{2}}^2\left(\int_0^\tau\norm{\nabb U(\tau')h}_{\lpt{\infty}}d\tau'\right)^2\\
\nn&\les& \norm{K}_{\lpt{2}}^2\int_0^\tau{\tau'}^{1-\delta}\norm{\nabb U(\tau')h}_{\lpt{\infty}}^2d\tau',
\eea
where $0<\delta<1$ will be chosen later. In view of the estimate \eqref{yo}, we have
\bea\lab{zz71}
&&\int_0^\tau{\tau'}^{1-\delta}\norm{\nabb U(\tau')h}_{\lpt{\infty}}^2d\tau'\\
\nn&\les& \int_0^\tau{\tau'}^{1-\delta}(\norm{\lap U(\tau')h}^2_{\lpt{2}}+\norm{\nabb\lap U(\tau')h}_{\lpt{2}}\norm{\nabb U(\tau')h}_{\lpt{2}}\\
\nn&&+\norm{K}_{\lpt{2}}^2\norm{\nabb U(\tau')h}_{\lpt{2}}^2)d\tau'\\
\nn&\les& \int_0^\tau{\tau'}^{1-\delta}\norm{\lap U(\tau')h}^2_{\lpt{2}}d\tau'+\int_0^\tau{\tau'}^{2-2\delta}\norm{\nabb\lap U(\tau')h}_{\lpt{2}}^2d\tau'\\
\nn&&+(1+\norm{K}_{\lpt{2}}^2)\int_0^\tau\norm{\nabb U(\tau')h}_{\lpt{2}}^2d\tau'\\
\nn&\les& \int_0^\tau{\tau'}^{1-\delta}\norm{\lap U(\tau')h}^2_{\lpt{2}}d\tau'+\int_0^\tau{\tau'}^{2-2\delta}\norm{\lap^{\frac{3}{2}} U(\tau')h}_{\lpt{2}}^2d\tau'\\
\nn&&+(1+\norm{K}_{\lpt{2}}^2)\norm{h}_{\lpt{2}}^2,
\eea
where we used in the last inequality the heat flow estimate \eqref{eq:l2heat1}. 

Next, we estimate the two first terms in the right-hand side of \eqref{zz71}. We have
\bee
&&\left(\int_0^\tau{\tau'}^{1-\delta}\norm{\lap U(\tau')h}^2_{\lpt{2}}d\tau'\right)^\frac{1}{2}\\
&\les& \sum_{j\geq 0}\left(\int_0^\tau{\tau'}^{1-\delta}\norm{\lap P_j U(\tau')h}^2_{\lpt{2}}d\tau'\right)^\frac{1}{2}\\
&\les& \sum_{j\geq 0}\left(\int_0^\tau\tau'\norm{\lap P_j U(\tau')h}^2_{\lpt{2}}d\tau'\right)^\frac{1-\delta}{2}\left(\int_0^\tau\norm{\lap P_j U(\tau')h}^2_{\lpt{2}}d\tau'\right)^\frac{\delta}{2}\\
&\les& \sum_{j\geq 0}\norm{P_jh}^{1-\delta}_{\lpt{2}}\norm{\nabb P_jh}^\delta_{\lpt{2}},
\eee
where we used in the last inequality the heat flow estimates \eqref{eq:l2heatnab} and \eqref{eq:l2heat2}. Together with the finite band property for $P_j$, we obtain
\bea\lab{zz72}
\left(\int_0^\tau{\tau'}^{1-\delta}\norm{\lap U(\tau')h}^2_{\lpt{2}}d\tau'\right)^\frac{1}{2}&\les& \sum_{j\geq 0}2^{\delta j}\norm{P_jh}_{\lpt{2}}\\
\nn&\les& \left(\sum_{j\geq 0}2^{-\delta j}\right)\norm{\La^{2\delta}h}_{\lpt{2}}\\
\nn&\les& \norm{\La^{2\delta}h}_{\lpt{2}}.
\eea
Also, we have
\bee
&&\left(\int_0^\tau{\tau'}^{2-2\delta}\norm{\lap^\frac{3}{2} U(\tau')h}^2_{\lpt{2}}d\tau'\right)^\frac{1}{2}\\
&\les& \sum_{j\geq 0}\left(\int_0^\tau{\tau'}^{2-2\delta}\norm{\lap^{\frac{3}{2}} P_j U(\tau')h}^2_{\lpt{2}}d\tau'\right)^\frac{1}{2}\\
&\les& \sum_{j\geq 0}\left(\int_0^\tau(\tau')^2\norm{\nabb\lap P_j U(\tau')h}^2_{\lpt{2}}d\tau'\right)^\frac{1-\delta}{2}\left(\int_0^\tau\norm{\nabb\lap P_j U(\tau')h}^2_{\lpt{2}}d\tau'\right)^\frac{\delta}{2}\\
&\les& \sum_{j\geq 0}\norm{P_jh}^{1-\delta}_{\lpt{2}}\norm{\lap P_jh}^\delta_{\lpt{2}},
\eee
where we used in the last inequality heat flow estimates. Together with the finite band property for $P_j$, we obtain
\bea\lab{zz73}
\left(\int_0^\tau{\tau'}^{2-2\delta}\norm{\lap^\frac{3}{2} U(\tau')h}^2_{\lpt{2}}d\tau'\right)^\frac{1}{2}&\les& \sum_{j\geq 0}2^{2\delta j}\norm{P_jh}_{\lpt{2}}\\
\nn&\les& \left(\sum_{j\geq 0}2^{-\delta j}\right)\norm{\La^{3\delta}h}_{\lpt{2}}\\
\nn&\les& \norm{\La^{3\delta}h}_{\lpt{2}}.
\eea
Finally, \eqref{zz71}, \eqref{zz72} and \eqref{zz73} imply for all $0<\delta<1$
\be\lab{zz73bis}
\int_0^\tau{\tau'}^{1-\delta}\norm{\nabb U(\tau')h}_{\lpt{\infty}}^2d\tau'\les \norm{\La^{3\delta}h}_{\lpt{2}}^2+\norm{K}_{\lpt{2}}^2\norm{h}_{\lpt{2}}^2.
\ee
Injecting \eqref{zz73bis} in \eqref{zz70}, we obtain
\be\lab{zz74}
\norm{V(\tau)}_{\lpt{2}}\les \norm{K}_{\lpt{2}}(\norm{K}_{\lpt{2}}\norm{h}_{\lpt{2}}+\norm{\La^{3\delta}h}_{\lpt{2}}).
\ee
Choosing $\delta=\frac{a}{3}$ in \eqref{zz74} yields the desired estimate \eqref{zz69}. This concludes the proof of the lemma. 

\subsection{Proof of Lemma \ref{lemma:zz44}}\lab{sec:gowinda20}

We have:
\begin{equation}\label{zz75}
[\nabb,P_j]h=\int_0^\infty m_j(\tau)V(\tau) d\tau,
\end{equation}
where $V(\tau)$ satisfies:
$$(\partial_{\tau}-\lap)V(\tau)=[\nabb,\lap]U(\tau)h,\,V(0)=0.$$
Assume that $V$ satisfies for all $a>0$
\be\lab{zz76}
\left(\int_0^{+\infty}\norm{\nabb V(\tau)}^2_{\lpt{2}}d\tau\right)^\frac{1}{2}\les \norm{K}_{\lpt{2}}(\norm{K}_{\lpt{2}}\norm{h}_{\lpt{2}}+\norm{\La^ah}_{\lpt{2}}).
\ee
Then, in view of \eqref{zz75}, we obtain for all $a>0$
\bee
\norm{\nabb [P_j, \nabb]h}_{\lpt{2}}&\les& \int_0^{+\infty}m_j(\tau)\norm{\nabb V(\tau)}_{\lpt{2}}d\tau\\
&\les& \left(\int_0^{+\infty}m_j(\tau)^2d\tau\right)^\frac{1}{2}\norm{K}_{\lpt{2}}(\norm{K}_{\lpt{2}}\norm{h}_{\lpt{2}}+\norm{\La^ah}_{\lpt{2}})\\
&\les& 2^j\norm{K}_{\lpt{2}}(\norm{K}_{\lpt{2}}\norm{h}_{\lpt{2}}+\norm{\La^ah}_{\lpt{2}})
\eee
which is the desired estimate \eqref{zz44}. Thus, it remains to prove \eqref{zz76}. 

Injecting \eqref{zz73bis} in \eqref{zz70}, we obtain
\be\lab{zz74bis}
\int_0^{+\infty}\norm{\nabb V(\tau)}^2_{\lpt{2}}d\tau\les \norm{K}^2_{\lpt{2}}(\norm{K}^2_{\lpt{2}}\norm{h}^2_{\lpt{2}}+\norm{\La^{3\delta}h}_{\lpt{2}}^2).
\ee
Choosing $\delta=\frac{a}{3}$ in \eqref{zz74bis} yields the desired estimate \eqref{zz76}. This concludes the proof of the lemma. 

\subsection{Proof of Lemma \ref{lemma:yo}}\lab{sec:gowinda21}

We have in view of \eqref{linftynormtensor}
\be\lab{yo1}
\norm{\nabb f}_{\lpt{\infty}}\les\norm{\nabb^3f}^{\frac{1}{2}}_{\lpt{2}}\norm{\nabb f}^{\frac{1}{2}}_{\lpt{2}}+\norm{\nabb^2 f}_{\lpt{2}}.
\ee
Now, using the Bochner inequality for tensors \eqref{vbochineq}, we have
\bee
\norm{\nabb^3f}_{\lpt{2}}&\lesssim& \norm{\lap\nabb f}_{\lpt{2}}+\norm{K}_{\lpt{2}}\norm{\nabb^2f}_{\lpt{2}}+\norm{K}^2_{\lpt{2}}\norm{\nabb f}_{\lpt{2}}\\
&\lesssim& \norm{\nabb\lap f}_{\lpt{2}}+\norm{[\nabb, \lap]f}_{\lpt{2}}+\norm{K}_{\lpt{2}}\norm{\nabb^2f}_{\lpt{2}}\\
&&\nn +\norm{K}^2_{\lpt{2}}\norm{\nabb f}_{\lpt{2}}.
\eee
In view of the commutator formula \eqref{commutptu}, we obtain
\bee
\norm{\nabb^3f}_{\lpt{2}}&\lesssim& \norm{\nabb\lap f}_{\lpt{2}}+\norm{K\nabb f}_{\lpt{2}}+\norm{K}_{\lpt{2}}\norm{\nabb^2f}_{\lpt{2}}\\
&&\nn +\norm{K}^2_{\lpt{2}}\norm{\nabb f}_{\lpt{2}}\\
&\lesssim& \norm{\nabb\lap f}_{\lpt{2}}+\norm{K}_{\lpt{2}}\norm{\nabb f}_{\lpt{\infty}}+\norm{K}_{\lpt{2}}\norm{\nabb^2f}_{\lpt{2}}\\
&&\nn +\norm{K}^2_{\lpt{2}}\norm{\nabb f}_{\lpt{2}}
\eee
which together with \eqref{yo1} yields
\bee
\norm{\nabb f}_{\lpt{\infty}}&\les&\norm{\nabb\lap f}^{\frac{1}{2}}_{\lpt{2}}\norm{\nabb f}^{\frac{1}{2}}_{\lpt{2}}+\norm{K}_{\lpt{2}}^\frac{1}{2}\norm{\nabb f}^{\frac{1}{2}}_{\lpt{\infty}}\norm{\nabb f}^{\frac{1}{2}}_{\lpt{2}}\\
&&+\norm{\nabb^2 f}_{\lpt{2}}+\norm{K}_{\lpt{2}}\norm{\nabb f}_{\lpt{2}}.
\eee
We deduce
\bee
\norm{\nabb f}_{\lpt{\infty}}&\les&\norm{\nabb\lap f}^{\frac{1}{2}}_{\lpt{2}}\norm{\nabb f}^{\frac{1}{2}}_{\lpt{2}}+\norm{\nabb^2 f}_{\lpt{2}}+\norm{K}_{\lpt{2}}\norm{\nabb f}_{\lpt{2}},
\eee
which together with the Bochner inequality for scalars \eqref{eq:Bochconseqbis} yields \eqref{yo}. This concludes the proof of the lemma. 


\end{document}